\newtheorem{thm}{Theorem}[section]
\newtheorem{lemma}[thm]{Lemma}
\newtheorem{cor}[thm]{Corollary}
\newtheorem{prop}[thm]{Proposition}
\newtheorem{remark}[thm]{Remark}
\newtheorem{defn}[thm]{Definition}
\newtheorem{notes}[thm]{Notes and remarks}
 \newtheorem{obs}[thm]{Observation}
 \newtheorem{example}[thm]{Example}
\theoremstyle{definition}
\newtheorem{spec}[thm]{Speculations}
 \numberwithin{equation}{section}
 \newcommand{\Span}{\operatorname{Span}}
 \newcommand{\Real}{\operatorname{Re}}
 \newcommand{\Imag}{\operatorname{Im}}
  \newcommand{\dist}{\operatorname{Dist}}
   \newcommand{\Tr}{\operatorname{Tr}}
   \newcommand{\tr}{\operatorname{T}}
  \newcommand{\Aut}{\operatorname{Aut}}  
   \newcommand{\supp}{\operatorname{supp}}  
    \newcommand{\co}{\operatorname{co}} 
    \newcommand{\Ad}{\operatorname{Ad}} 
     \newcommand{\KMS}{\operatorname{KMS}}
     \newcommand{\id}{\operatorname{id}}
\newcommand{\Aff}{\operatorname{Aff}}
\newcommand{\Hom}{\operatorname{Hom}}
\newcommand{\Sp}{\operatorname{Sp}}
\begin{document}


 \begin{huge} 

\begin{center}

An introduction to KMS weights 
\end{center}
\end{huge}

\begin{center}

\end{center}

\bigskip
\bigskip

\bigskip
\begin{center}
\begin{large}
Klaus Erik Thomsen
\end{large}
\end{center}

\bigskip

\bigskip

\bigskip

\bigskip

\bigskip

\bigskip

\bigskip

\bigskip

\begin{center}
 \emph{Abstract}
\end{center} 
 
  The theory of KMS weights is based on a theorem of Combes and a theorem of Kustermans. In applications to KMS states for flows on a unital $C^*$-algebra the relation to KMS weights of the stabilized algebra has proved useful and this relation hinges on a theorem of Laca and Neshveyev. The first three chapters present proofs of these fundamental results that require a minimum of prerequisites; in particular, they do not depend on the modular theory of von Neumann algebras. In contrast, starting with chapter four the presented material draws heavily on the modular theory of von Neumann algebras. Most results are known from the work of N. V. Pedersen, J. Quaegebeur, J. Verding, J. Kustermans, S. Vaes, A. Kishimoto, A. Kumjian and J. Christensen, but new ones begin to surface. In chapter nine and the Appendices D and E the reader can find a presentation of results obtained recently by the author, partly in collaboration with G. A. Elliott and Y. Sato. This material is a natural culmination of methods developed around 1980 by Bratteli, Elliott, Herman and Kishimoto. Finally, in chapter ten there is a short presentation of the notion of factor types for KMS weights and states.

\newpage  

\begin{huge}
\begin{center}
Preface 
\end{center}
\end{huge}  

This book grew out of an attempt to write down the details of the arguments underlying the use of KMS weights in the study of KMS states for flows on $C^*$-algebras, and what has resulted is, I hope, a text which can serve as an introduction to both KMS weights and states. In the process I gradually found new proofs of some old theorems and discovered new facts which I think are noteworthy. The present text is therefore not only a relatively self-contained presentation of the theory of KMS weights, but contains also several new results. In the Notes and Remarks following many of the sections I explain what is what, and identify the sources of the material. The reason I emphasize that a part of the material presents new results is not that I want to 'plant my flag', but because I want to warn the reader that those parts have not gone through any peer review, and are therefore more likely to contain mistakes. While I welcome all comments and remarks on the content of this text, I am particularly interested in all mistakes, big or small, that the reader may find. Please make me aware of such observations by sending an e-mail to

\begin{center}
matkt@math.au.dk
\end{center}
or
\begin{center}
klausethomsen@gmail.com
\end{center}

\bigskip
 
\ \ \ \ \ \ \ \ \ \ \ \ \ \ \ \ \ \ \ \  \ \ \ \ \ \ \ \ \ \ \ \ \ \ \ \ \ \ \ \ \ \ \ \ \ \ \ \ \ \ \ \ \ \ \ \  \ \ \ \ \ \ Klaus Thomsen

\ \ \ \ \ \ \ \ \ \ \ \ \ \ \ \ \ \ \ \  \ \ \ \ \ \ \ \ \ \ \ \ \ \ \ \ \ \ \ \ \ \ \ \ \ \ \ \ \ \ Aarhus, Denmark, December 2023

  \tableofcontents



\chapter{Weights}

In this chapter we introduce weights on $C^*$-algebras and prove a fundamental theorem of Combes on which the theory is based.

\bigskip

Let $A$ be a $C^*$-algebra and let $A^+$ denote the cone of positive elements in $A$. A map $\psi : A^+ \to [0,\infty]$ is a \emph{semi-weight} on $A$ when
\begin{itemize}
\item[(a)] $\psi(a+b) \leq \psi(a) + \psi(b) \ \ \forall a,b \in A^+$,
\item[(b)] $a \leq b \Rightarrow \psi(a) \leq \psi(b) \ \ \forall a,b \in A^+$,
\item[(c)] $\psi( t a) = t \psi(a) \ \ \forall a \in A^+, \ \forall t \in \mathbb R^+$, with the convention that $0 \cdot \infty = 0$, and  
\item[(d)] $\psi$ is lower semi-continuous; i.e. $\{a \in A^+: \ \psi(a) > t\}$ is open in $A^+$ for all $t \in \mathbb R$.
\end{itemize}

It is easy to see that the lower semi-continuity of $\psi$ is equivalent to the following condition which is often useful and easier to verify:
\begin{itemize}
\item $\psi(a) \leq \liminf_n \psi(a_n)$ when $\lim_{n \to \infty}a_n = a$ in $A^+$.
\end{itemize}

A semi-weight $\psi$ is \emph{densely defined} when $\left\{ a \in A^+ :   \psi(a) < \infty\right\}$ is dense in $A^+$.

Given a semi-weight $\psi$ on $A$ we set
$$
\mathcal M^+_\psi := \left\{ a \in A^+: \ \psi(a) < \infty\right\},
$$ 
$$
\mathcal N_\psi := \left\{ a \in A: \ \psi(a^*a) < \infty\right\},
$$
and
$$
\mathcal M_\psi := \Span \mathcal M^+_\psi.
$$

\begin{lemma}\label{04-11-21n} Let $\psi$ be a semi-weight on $A$. The sets $\mathcal M_\psi$, $\mathcal M_\psi^+$ and $\mathcal N_\psi$ have the following properties.
\begin{itemize}
\item[(a)] $\mathcal N_\psi$ is a left-ideal in $A$, and it is dense in $A$ when $\psi$ is densely defined.
\item[(b)] $\mathcal M_\psi = \Span \mathcal N_\psi^* \mathcal N_\psi$.
\item[(c)] $\mathcal M_\psi \subseteq \mathcal N_\psi \cap \mathcal N_\psi^*$.
\item[(d)]  $\mathcal M_\psi \cap A^+ = \mathcal M_\psi^+$.
\item[(e)] $\mathcal M_\psi$ is a $*$-subalgebra of $A$, and it is dense in $A$ when $\psi$ is densely defined.
\end{itemize}
\end{lemma}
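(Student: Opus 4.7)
The plan is to handle the five assertions in the order stated, using three elementary ingredients throughout: the operator inequality $(x+y)^*(x+y)\leq 2(x^*x+y^*y)$, the norm bound $a^*x^*xa\leq \|x\|^2 a^*a$, and the polarization identity expressing $4a^*b$ as a sum $\sum_{k=0}^{3} i^k (b+i^{-k}a)^*(b+i^{-k}a)$, each combined with the monotonicity and positive homogeneity of $\psi$.

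For (a), closure of $\mathcal N_\psi$ under addition and positive scalars comes from the first inequality plus monotonicity, while closure under left multiplication by $x\in A$ comes from the norm bound. For the density statement, I would observe that $\mathcal M_\psi^+\subseteq\mathcal N_\psi$ (since $a^2\leq\|a\|\,a$ for $a\in A^+$) and decompose an arbitrary $y\in A$ as a linear combination of four positive elements, each approximable by elements of $\mathcal M_\psi^+$.

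Assertion (b) is a direct application of polarization: for $a,b\in\mathcal N_\psi$, each $(b+i^k a)^*(b+i^k a)$ lies in $\mathcal M_\psi^+$ because $\mathcal N_\psi$ is a linear subspace by (a); conversely $a=(a^{1/2})^*a^{1/2}\in\mathcal N_\psi^*\mathcal N_\psi$ for every $a\in\mathcal M_\psi^+$. Assertion (c) then follows because $a^*b=a^*\cdot b\in\mathcal N_\psi$ by the left-ideal property, and its adjoint $b^*a$ lies in $\mathcal N_\psi$ for the same reason, so $\Span\mathcal N_\psi^*\mathcal N_\psi\subseteq \mathcal N_\psi\cap\mathcal N_\psi^*$. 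For (e), given two generators $a^*b$ and $c^*d$ of $\mathcal M_\psi$, I would rewrite their product as $a^*\cdot(bc^*d)$ and note that $c^*d\in\mathcal N_\psi$ by (c), hence $bc^*d\in\mathcal N_\psi$ by the left-ideal property, so the product returns to $\mathcal N_\psi^*\mathcal N_\psi$; closure under $*$ is immediate from $(a^*b)^*=b^*a$. Density of $\mathcal M_\psi$ is automatic since $\mathcal M_\psi$ is a linear subspace containing the dense-in-$A^+$ set $\mathcal M_\psi^+$, and $A=\Span A^+$.

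The only genuinely nontrivial point is (d): an element of $\mathcal M_\psi\cap A^+$ need not be expressible as a \emph{positive} combination of elements of $\mathcal M_\psi^+$, only as a complex linear combination, so a term-by-term bound on $\psi$ is not available. The trick I would use is to group the defining sum as $a=(a_1-a_2)+i(a_3-a_4)$ with $a_j\in\mathcal M_\psi^+$; self-adjointness of $a\geq 0$ kills the imaginary part, leaving $a=a_1-a_2$ so that $0\leq a\leq a_1$, and monotonicity yields $\psi(a)\leq\psi(a_1)<\infty$. This self-adjointness-plus-ordering move is the subtle step in the whole lemma; everything else is bookkeeping.
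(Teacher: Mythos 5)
Your proposal is correct and follows essentially the same route as the paper's proof: the same elementary inequalities for (a), polarization and $a=(a^{1/2})^*a^{1/2}$ for (b), the left-ideal/right-ideal observation for (c), and the self-adjointness-plus-ordering trick $a=a_+-a_-\leq a_+$ for (d), which the paper also singles out as the key step. The only cosmetic difference is in (e), where you multiply generators directly via $(a^*b)(c^*d)=a^*(bc^*d)$ rather than reducing to products of positive elements as the paper does; both are immediate consequences of (a)--(c), and your polarization identity produces $4b^*a$ rather than $4a^*b$ as written, which is harmless since $a,b$ range over all of $\mathcal N_\psi$.
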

\begin{proof} For (a) note that $(\lambda a + \mu b)^*(\lambda a + \mu b) \leq 2|\lambda|^2a^*a + 2 |\mu|^2b^*b$ for all $a,b \in A$ and all $\lambda, \mu \in \mathbb C$. This shows that $\mathcal N_\psi$ is a subspace of $A$. Since $a^2 \leq \|a\|a$ for all $a \in A^+$ it follows that $\mathcal M_\psi^+ \subseteq \mathcal N_\psi$. In particular, $\mathcal N_\psi$ is dense in $A$ when $\psi$ is densely defined. $\mathcal N_\psi$ is a left ideal because $b^*a^*ab \leq \|a\|^2b^*b$ for all $a,b \in A$. For (b) note that the inclusion $\mathcal M_\psi \subseteq \Span \mathcal N_\psi^* \mathcal N_\psi$ follows from the observation that $\mathcal M^+_\psi \subseteq \mathcal N_\psi^* \mathcal N_\psi$ since $\sqrt{a} \in \mathcal N_\psi$ when $a \in \mathcal M_\psi^+$. The reverse inclusion follows from (a) and the polarisation identity
\begin{equation}\label{polar}
b^*a = \frac{1}{4} \sum_{k=1}^4 i^k (a+ i^kb)^*(a+i^kb).
\end{equation}
(c) follows from (a) and (b) because $\mathcal N_\psi^* \mathcal N_\psi \subseteq \mathcal N_\psi^* \cap \mathcal N_\psi$ since $\mathcal N_\psi$ is a left ideal and $\mathcal N_\psi^*$ is a right ideal. For (d), let $a \in \mathcal M_\psi \cap A^+$. Since $a =a^*$ and $a \in \Span \mathcal M_\psi^+$ it follows that $a = a_+-a_-$, where $a_\pm \in \mathcal M^+_\psi$. Then $a\leq a_+$ and $\psi(a) \leq \psi(a_+) < \infty$. This shows that $\mathcal M_\psi \cap A^+ \subseteq \mathcal M^+_\psi$, and the reverse inclusion is trivial. For (e) note that it follows from the definition that $\mathcal M_\psi^* = \mathcal M_\psi$. Let $a,b \in \mathcal M^+_\psi$. Since $a^2 \leq \|a\|a$, it follows that $a^2 \in \mathcal M^+_\psi$, and hence that $a \in \mathcal N_\psi$. Similarly, $b \in \mathcal N_\psi$ and it follows from (b) that $ab \in \mathcal M_\psi$. Hence $\mathcal M_\psi \mathcal M_\psi \subseteq \mathcal M_\psi$ and we conclude that $\mathcal M_\psi$ is a $*$-subalgebra of $A$. Since every element of $A$ is a linear combination of elements from $A^+$ it follows that $\mathcal M_\psi$ is dense in $A$ when $\psi$ is densely defined. 
\end{proof}

\section{Combes' theorem}

In the following we denote by $A^*_+$ the set of bounded positive linear functionals on $A$. If $\mathcal F$ is a subset of $A^*_+$ we can define a semi-weight by the formula $\sup_{\omega \in \mathcal F}\omega(a)$. The fundamental result on semi-weights is the following theorem of Combes, showing that all semi-weights arise like this.

\begin{thm}\label{04-11-21k} Let $\psi$ be a semi-weight on $A$. Set 
$$
\mathcal F_\psi := \left\{ \omega \in A^*_+ : \ \omega(a) \leq \psi(a) \ \ \forall a \in A^+\right\}.
$$
Then 
$$
\psi(a) = \sup_{\omega \in \mathcal F_\psi} \omega(a)
$$
for all $a \in A^+$.
\end{thm}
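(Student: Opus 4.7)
The easy inequality $\sup_{\omega \in \mathcal F_\psi} \omega(a) \leq \psi(a)$ is immediate from the definition of $\mathcal F_\psi$, and if $\psi(a) = 0$ the theorem is trivial. Otherwise, I fix $a \in A^+$ and $t$ with $0 \leq t < \psi(a)$, and the goal is to produce $\omega \in \mathcal F_\psi$ with $\omega(a) > t$ via Hahn--Banach separation.

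The plan is to work on the real vector space $A_{sa}$ of self-adjoint elements of $A$, and to introduce the gauge
$$
p(b) := \inf\{\psi(c) : c \in A^+,\ b \leq c\},
$$
with the convention $\inf \emptyset = +\infty$. From axioms (a)--(c) of a semi-weight I would verify that $p : A_{sa} \to [0,\infty]$ is positively homogeneous and subadditive, hence sublinear (subadditivity using $c_1 + c_2 \geq b_1 + b_2$ whenever $c_i \geq b_i$). Monotonicity (b) of $\psi$ gives $p = \psi$ on $A^+$, while taking $c = 0$ yields $p(b) = 0$ whenever $b \leq 0$. On the one-dimensional subspace $\mathbb R a \subset A_{sa}$ I then define $\ell(\lambda a) := \lambda s$, choosing $s := \psi(a)$ if $\psi(a) < \infty$ and otherwise any real number exceeding $t$; splitting on the sign of $\lambda$ shows $\ell \leq p$ on $\mathbb R a$.

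The extended-real Hahn--Banach theorem now produces a linear extension $L : A_{sa} \to \mathbb R$ of $\ell$ with $L \leq p$ pointwise, from which three properties emerge: $L(a) = s > t$; $L$ is positive, since $L(-b) \leq p(-b) = 0$ forces $L(b) \geq 0$ for $b \in A^+$; and $L(b) \leq p(b) = \psi(b)$ for $b \in A^+$. Finally I complexify via $\omega(x + iy) := L(x) + iL(y)$ for $x, y \in A_{sa}$, obtaining a $\mathbb C$-linear functional on $A$ that is positive ($\omega(b^*b) = L(b^*b) \geq 0$), and hence automatically bounded by the standard fact about positive linear functionals on $C^*$-algebras. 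By construction $\omega \in \mathcal F_\psi$ with $\omega(a) > t$, completing the argument.

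The main obstacle, as I see it, is twofold: first, cleanly verifying that $p$ is sublinear and that $p = \psi$ on $A^+$ (rather than merely $p \leq \psi$); second, observing that the Hahn--Banach step yields only an algebraic linear functional, with the continuity of $\omega$ recovered a posteriori from positivity rather than from any norm control on $p$, which can easily be $+\infty$ on elements whose positive part fails to lie in $\mathcal M_\psi^+$. Splitting the two cases $\psi(a) < \infty$ and $\psi(a) = \infty$ via the choice of $s$ is what allows a single argument to cover both.
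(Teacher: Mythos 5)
There is a genuine gap, and it sits exactly at the Hahn--Banach step. Your gauge $p(b) = \inf\{\psi(c) : c \in A^+,\ b \leq c\}$ takes the value $+\infty$ whenever no element of $\mathcal M_\psi^+$ dominates $b$, and the ``extended-real Hahn--Banach theorem'' you invoke is not valid for sublinear functionals with values in $(-\infty,+\infty]$ without further hypotheses (such as the subspace meeting the core of the domain of $p$); the one-dimensional extension step can fail because the supremum $\sup_{m}\bigl(L(m)-p(m-x)\bigr)$ controlling the admissible values of $L(x)$ need not be finite when $p(x)=+\infty$. The decisive symptom is that your argument never uses axiom (d), the lower semi-continuity of $\psi$ --- only (a), (b), (c) enter --- whereas the theorem is false without it: any supremum of bounded positive functionals is automatically lower semi-continuous. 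Concretely, take $A=c_0$ and $\psi(a)=\limsup_n n\,a_n$ on $A^+$; this satisfies (a)--(c) but not (d), and testing against the standard basis vectors shows $\mathcal F_\psi=\{0\}$, while $\psi\bigl((1/n)_n\bigr)=1$. Your construction applied to this $\psi$ would produce a linear $L\leq p$ on $A_{sa}$ with $L(a)=1$; such an $L$ would be positive (since $p=0$ on $-A^+$), hence bounded, hence determined by its values $L(e_m)\leq p(e_m)=0$, forcing $L=0$ --- a contradiction. So the extension simply does not exist, and no a posteriori continuity argument can rescue it.

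The paper's proof is structurally your proof with precisely the regularization needed to avoid this. Working in the unitization $A^\dagger$, it replaces your gauge by $\rho_\alpha(a) = \inf\{\psi(s)+t\alpha : s\in\mathcal M_\psi^+,\ t\in\mathbb R^+,\ a\leq s+t1\}$, i.e.\ it permits a multiple of the unit in the dominating element at a price of $\alpha$ per unit. This makes $\rho_\alpha(a)\leq\alpha\|a\|$ everywhere finite, so the classical Hahn--Banach theorem applies (after passing to the associated functional $\rho'_\alpha$ on $A^\dagger_{sa}$) and yields $\omega\in\mathcal F_\psi$ with $\omega(a)=\rho_\alpha(a)$. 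The cost is that $\rho_\alpha(a)$ is in general strictly smaller than $\psi(a)$, and the heart of the proof becomes Lemma \ref{04-11-21i}, which shows $\sup_{\alpha>0}\rho_\alpha(a)=\psi(a)$ --- and that is exactly where the lower semi-continuity of $\psi$ is used. If you want to repair your argument, you must introduce some such finite truncation of the gauge and then recover $\psi$ in the limit via (d); as written, the proposal proves a false statement.
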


The proof of Combes' theorem requires some preparation. We denote by $A^\dagger$ the $C^*$-algebra obtained by adjoining a unit to $A$. In particular, $A^\dagger = A \oplus \mathbb C$ when $A$ already has a unit. For $\alpha > 0$ and $a \in A^+_1$, set
$$
\rho_\alpha(a) := \inf \left\{ \psi(s) + t\alpha : \ s \in \mathcal M^+_\psi , \ t \in \mathbb R^+, \  a \leq s+t1\right\}.
$$
The following properties of $\rho_\alpha: (A^\dagger)^+ \to \mathbb R^+$ are straightforward to establish.
\begin{equation}\label{04-11-21}
\rho_\alpha(a+b) \leq \rho_\alpha(a) + \rho_\alpha(b) \ \ \forall a,b \in (A^\dagger)^+.
\end{equation}
\begin{equation}\label{04-11-21a}
\rho_\alpha(\lambda a) = \lambda \rho_\alpha(a) \ \ \forall a\in (A^\dagger)^+, \ \forall \lambda \in \mathbb R^+.
\end{equation}
\begin{equation}\label{04-11-21b}
\rho_\alpha(a) \leq \psi(a) \ \ \forall a\in A^+.
\end{equation}
\begin{equation}\label{04-11-21c}
a,b \in (A^\dagger)^+, \ a \leq b \Rightarrow \rho_\alpha(a) \leq \rho_\alpha(b).
\end{equation}
\begin{equation}\label{04-11-21f}
\rho_\alpha(a) \leq \alpha \|a\| \ \ \forall a \in (A^\dagger)^+.
\end{equation}
\begin{equation}\label{04-11-21d}
\rho_\alpha(1) \leq \alpha .
\end{equation}

\begin{lemma}\label{04-11-21g}
\begin{equation*}\label{04-11-21e}
\rho_\alpha( a + \lambda 1) =  \rho_\alpha(a) + \lambda\alpha \ \ \forall a \in (A^\dagger)^+, \ \forall \lambda \in \mathbb R^+.
\end{equation*}
\end{lemma}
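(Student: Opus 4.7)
The plan is to establish the two inequalities separately. The upper bound $\rho_\alpha(a+\lambda 1) \leq \rho_\alpha(a) + \lambda\alpha$ is immediate from the defining infimum: whenever $s \in \mathcal{M}^+_\psi$ and $t \in \mathbb{R}^+$ satisfy $a \leq s + t 1$, the pair $(s, t+\lambda)$ is admissible for $a + \lambda 1$, yielding $\rho_\alpha(a+\lambda 1) \leq \psi(s) + (t+\lambda)\alpha$. Taking the infimum over $(s,t)$ then gives the claimed bound.

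For the reverse inequality, my plan is to take an arbitrary admissible pair $(s,t)$ for $a + \lambda 1$ (so $s \in \mathcal{M}^+_\psi$, $t \in \mathbb{R}^+$, and $a + \lambda 1 \leq s + t 1$) and convert it into an admissible pair for $a$. The obvious candidate is $(s, t-\lambda)$: provided $t \geq \lambda$, one obtains $a \leq s + (t-\lambda)1$ with $t-\lambda \in \mathbb{R}^+$, whence $\rho_\alpha(a) \leq \psi(s) + (t-\lambda)\alpha$, i.e., $\psi(s) + t\alpha \geq \rho_\alpha(a) + \lambda\alpha$, and taking the infimum concludes.

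The step that I expect to require some care is ruling out the possibility $t < \lambda$. If $t < \lambda$ held, then subtracting $t 1$ from both sides of $a + \lambda 1 \leq s + t 1$ would force $s \geq a + (\lambda-t) 1 \geq (\lambda-t) 1$ in $A^\dagger$ with $\lambda-t > 0$. But $s$ lies in $A$, and the unitization carries a canonical character $\chi \colon A^\dagger \to \mathbb{C}$ with $\chi|_A = 0$ and $\chi(1) = 1$; as a $*$-homomorphism $\chi$ preserves positivity, so applying it to $s - (\lambda-t) 1 \geq 0$ would give $-(\lambda-t) \geq 0$, contradicting $\lambda-t > 0$. Hence $t \geq \lambda$ is automatic for any admissible pair, and the proof is complete.
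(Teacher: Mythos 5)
Your proof is correct and follows essentially the same route as the paper: both arguments convert admissible pairs $(s,t)$ between $a$ and $a+\lambda 1$, and both use the canonical character $\chi\colon A^\dagger\to\mathbb C$ with $\ker\chi=A$ to show that any admissible $t$ for $a+\lambda 1$ satisfies $t\geq\lambda$ (the paper computes $t-\lambda=\chi(s+(t-\lambda)1)\geq\chi(a)\geq 0$ directly, where you argue by contradiction, but this is the same idea).
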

\begin{proof} This follows from the equality
\begin{equation}\label{05-11-21c}
\begin{split}
&
\left\{\psi(s) + t\alpha : s \in \mathcal M_\psi^+, \ t \in \mathbb R^+ , \ a+ \lambda 1 \leq s+ t1\right\}\\
& = \left\{\psi(s) + t\alpha : s \in \mathcal M_\psi^+, \ t \in \mathbb R^+ , \ a \leq s+ t1\right\} \ + \ \lambda \alpha .
\end{split}
\end{equation}
To establish this equality, let $ s \in \mathcal M_\psi^+, \ t \in \mathbb R^+ , \ a+ \lambda 1 \leq s+ t1$. Then $a \leq s + (t-\lambda)1$, and if we let $\chi : A^\dagger \to \mathbb C$ be the canonical character with $\ker \chi = A$, we conclude that $t-\lambda = \chi(s+(t-\lambda) 1) \geq \chi(a) \geq 0$. Hence
$$
\psi(s) + (t-\lambda)\alpha \in   \left\{\psi(s) + t'\alpha : s \in \mathcal M_\psi^+, \ t' \in \mathbb R^+ , \ a \leq s+ t'1\right\}.
$$
This proves that the first set in \eqref{05-11-21c} is contained in the second. The reverse inclusion is trivial.
\end{proof}

\begin{lemma}\label{04-11-21i} 
$$
\sup_{\alpha > 0} \rho_\alpha(a) = \psi(a) \  \ \forall a \in A^+.
$$
\end{lemma}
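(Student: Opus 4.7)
My plan is Hahn--Banach separation followed by a Jordan decomposition. The inequality $\sup_{\alpha>0}\rho_\alpha(a)\le\psi(a)$ is immediate from \eqref{04-11-21b}. For the reverse, the key easy observation is that for any $\omega\in A^*_+$ with $\omega\le\psi$ on $A^+$, extending $\omega$ to $A^\dagger$ via $\omega(1):=\|\omega\|$ and inserting any valid $(s,t)$ with $a\le s+t\cdot 1$, $s\in\mathcal M_\psi^+$, $t\ge 0$, gives $\omega(a)\le\omega(s)+t\|\omega\|\le\psi(s)+t\|\omega\|$, so $\omega(a)\le\rho_{\|\omega\|}(a)\le\sup_\alpha\rho_\alpha(a)$. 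Consequently, the lemma reduces to showing that for every real $M<\psi(a)$ there exists $\omega\in A^*_+$ with $\omega\le\psi$ on $A^+$ and $\omega(a)>M$.

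To produce such $\omega$, I would apply Hahn--Banach separation in the real Banach space $A^{sa}\times\mathbb{R}$ between the point $(a,M)$ and the epigraph $E:=\{(b,r)\in A^{sa}\times\mathbb{R}:b\in A^+,\ r\ge\psi(b)\}$. Lower semi-continuity of $\psi$ makes $E$ norm-closed; sub-additivity and positive homogeneity make $E$ convex; and $M<\psi(a)$ gives $(a,M)\notin E$. A separating bounded real-linear functional of the form $f+\lambda\cdot\mathrm{id}_{\mathbb{R}}$ necessarily has $\lambda>0$: letting $r\to\infty$ in $E$ rules out $\lambda<0$, while the case $\lambda=0$ would force $f\ge 0$ on $A^+$ yet $f(a)<c\le f(0)=0$, a contradiction. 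Rescaling to $\lambda=1$: from $(0,0)\in E$ one obtains $c\le 0$; scaling $(b,\psi(b))\in E$ by $b\mapsto tb$ with $t\to\infty$ yields $-f(b)\le\psi(b)$ for $b\in\mathcal M_\psi^+$ (and the bound extends trivially to the rest of $A^+$, where $\psi$ may be $+\infty$); and strict separation at $(a,M)$ gives $-f(a)>M-c\ge M$.

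The resulting bounded functional $\omega:=-f\in A^*$ need not be positive. Let $\omega=\omega_+-\omega_-$ be its Jordan decomposition into positive and negative parts. Then $\omega_+\in A^*_+$ satisfies $\omega_+(a)\ge\omega(a)>M$, and for $b\in A^+$ the formula $\omega_+(b)=\sup\{\omega(c):0\le c\le b\}$ combined with $\omega\le\psi$ on $A^+$ and the monotonicity of $\psi$ yields $\omega_+(b)\le\sup\{\psi(c):0\le c\le b\}\le\psi(b)$. Thus $\omega_+$ is the required positive functional, and the proof is complete.

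The hardest step will be extracting positivity from the Hahn--Banach separation: the raw separation produces a bounded $\omega\in A^*$ dominated by $\psi$ on $A^+$ but with no built-in positivity, and it is the Jordan decomposition together with the monotonicity property of $\psi$ that delivers a positive $\omega_+\in A^*_+$ still dominated by $\psi$.
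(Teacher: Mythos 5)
Your overall strategy is reasonable up to the last step, but that last step contains a fatal error: the formula $\omega_+(b)=\sup\{\omega(c):0\le c\le b\}$ for the positive part in the Jordan decomposition of a self-adjoint functional is \emph{false} on a noncommutative $C^*$-algebra, and with it your inequality $\omega_+\le\psi$ collapses. Concretely, take $A=M_2(\mathbb C)$, $\omega(x)=\Tr(hx)$ with $h=\operatorname{diag}(1,-1)$, so that $\omega_{\pm}(x)=\Tr(h_{\pm}x)$, and let $b=2p$ where $p$ is the rank-one projection onto $\mathbb C(1,1)$. Every $c$ with $0\le c\le 2p$ has $(1-p)c(1-p)=0$ and is therefore of the form $c=\mu p$ with $0\le\mu\le 2$; since $\Tr(hp)=0$ this gives $\sup\{\omega(c):0\le c\le b\}=0$, whereas $\omega_+(b)=b_{11}=1$. (The right-hand side of your formula is in general not even additive in $b$, because the order-theoretic Riesz decomposition $0\le c\le b_1+b_2\Rightarrow c=c_1+c_2$ with $0\le c_i\le b_i$ fails outside the commutative case.) Passing from a self-adjoint functional dominated by $\psi$ on $A^+$ to a \emph{positive} one is precisely the crux of Combes' theorem and cannot be disposed of by a Jordan decomposition; the text handles it later by a Hahn--Banach \emph{extension} dominated by $\rho'_\alpha$ together with the criterion that a functional $\omega$ on $A^\dagger$ with $\|\omega\|\le\alpha=\omega(1)$ is automatically positive.

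There is a second, smaller gap: to rule out $\lambda=0$ you claim the separation forces $f\ge 0$ on $A^+$, but the epigraph $E$ only contains pairs $(b,r)$ with $b\in\mathcal M_\psi^+$, so you only get $f\ge 0$ on $\mathcal M_\psi^+$. A semi-weight need not be densely defined, and the lemma must in particular cover $\psi(a)=\infty$, so $a$ need not lie where $f$ is known to be nonnegative and no contradiction results. (This one is repairable: replace $(f,\lambda)$ by $(f,\lambda+\epsilon)$, which still separates because $r\ge 0$ on $E$ and one may assume $M\ge 0$.) Note also that your reduction, even if repaired, proves the full statement of Combes' theorem, of which this lemma is meant to be a preliminary step, so it inverts the logical order of the section. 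The proof in the text is entirely different and elementary: assuming $\rho_\alpha(a)<k<\psi(a)$ for all $\alpha>0$ and applying this with $\alpha=kn$ produces $s_n\in\mathcal M_\psi^+$ and $t_n\le\frac1n$ with $a\le s_n+t_n1$; the elements $c_n=s_n(\frac1n+s_n)^{-1}a(\frac1n+s_n)^{-1}s_n$ satisfy $c_n\le s_n$, hence $\psi(c_n)<k$, while $c_n\to a$, and lower semi-continuity yields the contradiction $\psi(a)\le k$. No duality is needed.
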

\begin{proof} Let $a \in A^+$. It follows from \eqref{04-11-21b} that $\sup_{\alpha > 0} \rho_\alpha(a) \leq \psi(a)$. Assume for a contradiction that $\sup_{\alpha > 0} \rho_\alpha(a) < \psi(a)$. There is then a $k > 0$ such that $\rho_\alpha(a) < k < \psi(a)$ for all $\alpha > 0$. Applied with $\alpha = kn$ we get for each $n \in \mathbb N$ an element $s_n \in \mathcal M^+_\psi$ and a $t_n \in \mathbb R^+$ such that $a \leq s_n + t_n 1$ and $\psi(s_n) + t_n kn < k$. In particular, $t_n \leq \frac{1}{n}$. Set
$$
b_n := a^{\frac{1}{2}} - a^{\frac{1}{2}}\left( \frac{1}{n} + s_n\right)^{-1}s_n = \frac{1}{n}a^{\frac{1}{2}} \left( \frac{1}{n} + s_n\right)^{-1} .
$$
Then
\begin{align*}
& b_n^* b_n = \frac{1}{n^2} \left( \frac{1}{n} + s_n\right)^{-1}a \left( \frac{1}{n} + s_n\right)^{-1} \\
& \leq \frac{1}{n^2} \left( \frac{1}{n} + s_n\right)^{-1}\left( s_n + \frac{1}{n} \right)\left( \frac{1}{n} + s_n\right)^{-1} = \frac{1}{n^2} \left( \frac{1}{n} + s_n\right)^{-1} \leq \frac{1}{n},
\end{align*}
proving that $\lim_{n \to \infty} b_n = 0$. Thus $a = \lim_{n \to \infty} c_n$ where
$$
c_n := s_n \left( \frac{1}{n} + s_n\right)^{-1}a\left( \frac{1}{n} + s_n\right)^{-1}s_n.
$$
Note that
\begin{align*}
&  c_n \leq  s_n \left( \frac{1}{n} + s_n\right)^{-1}\left(s_n + \frac{1}{n} \right)\left( \frac{1}{n} + s_n\right)^{-1}s_n\\
& =  s_n\left( \frac{1}{n} + s_n\right)^{-1}s_n \leq s_n,
\end{align*}
implying that $\psi(c_n) \leq \psi(s_n) < k$. Since $\psi$ is lower semi-continuous it follows that $\psi(a) \leq \liminf_n \psi(c_n)$ and we conclude therefore that $\psi(a) \leq k$ which contradicts that $k < \psi(a)$.
\end{proof}

 Let $A^\dagger_{sa}$ denote the set of self-adjoint elements of $A^\dagger$ and define $\rho'_\alpha : A^\dagger_{sa} \to \mathbb R^+$ by
$$
\rho'_\alpha(a) := \inf \left\{\rho_\alpha(b+c) : \ b,c \in (A^\dagger)^+, \ a = b-c\right\}.
$$

\begin{lemma}\label{04-11-21j} $\rho'_\alpha$ has the following properties.
\begin{itemize}
\item $\rho'_\alpha(a+b) \leq \rho'_\alpha(a) + \rho'_\alpha(b) \  \ \forall a,b \in A^\dagger_{sa}$,
\item $\rho'_\alpha(\lambda a) = |\lambda| \rho'_\alpha(a) \  \ \forall a \in  A^\dagger_{sa}, \ \forall \lambda \in \mathbb R$,
\item $\rho'_\alpha(a) \leq \alpha \|a\| \ \ \forall a \in  A^\dagger_{sa}$, and
\item $\rho'_\alpha(a) = \rho_\alpha(a) \ \ \forall a \in  (A^\dagger)^+$.
\end{itemize}
\end{lemma}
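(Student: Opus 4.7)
The plan is to derive each of the four properties directly from the definition of $\rho'_\alpha$ as an infimum over decompositions $a = b-c$ with $b,c \in (A^\dagger)^+$, using the properties \eqref{04-11-21}--\eqref{04-11-21d} of $\rho_\alpha$ already established. None of the steps looks like it will present real difficulty; the crux is keeping careful track of which decompositions are available when passing to sums, scalars, absolute values, or back to positive elements.

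For subadditivity, I would fix $a,b \in A^\dagger_{sa}$ and arbitrary decompositions $a = b_1 - c_1$, $b = b_2 - c_2$ with $b_i, c_i \in (A^\dagger)^+$. Then $a+b = (b_1+b_2) - (c_1+c_2)$ is a valid decomposition, and \eqref{04-11-21} gives $\rho_\alpha((b_1+b_2) + (c_1+c_2)) = \rho_\alpha((b_1+c_1)+(b_2+c_2)) \leq \rho_\alpha(b_1+c_1) + \rho_\alpha(b_2+c_2)$. Taking infima over the two chosen decompositions yields $\rho'_\alpha(a+b) \leq \rho'_\alpha(a) + \rho'_\alpha(b)$. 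For homogeneity, I would treat the cases $\lambda \geq 0$ and $\lambda = -1$ separately: for $\lambda > 0$ the map $(b,c) \mapsto (\lambda b, \lambda c)$ is a bijection between decompositions of $a$ and decompositions of $\lambda a$, and \eqref{04-11-21a} gives the factor $\lambda$; the case $\lambda = 0$ is trivial using $\rho_\alpha(0) = 0$. For $\lambda = -1$, a decomposition $a = b-c$ yields $-a = c-b$ and vice versa, so $\rho'_\alpha(-a) = \rho'_\alpha(a)$ by the symmetry of $\rho_\alpha(b+c)$ in $b,c$. Combining handles arbitrary real $\lambda$.

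For the norm bound, I would use the standard decomposition $a = a_+ - a_-$ of a self-adjoint element into positive and negative parts; then $a_+ + a_- = |a|$ and $\||a|\| = \|a\|$, so by \eqref{04-11-21f},
\[
\rho'_\alpha(a) \leq \rho_\alpha(a_+ + a_-) = \rho_\alpha(|a|) \leq \alpha \||a|\| = \alpha \|a\|.
\]
Finally, for $a \in (A^\dagger)^+$, the decomposition $a = a - 0$ shows immediately that $\rho'_\alpha(a) \leq \rho_\alpha(a)$. For the reverse inequality, let $a = b-c$ be any decomposition with $b,c \in (A^\dagger)^+$. Then $b = a + c \geq a$, and hence $a \leq b \leq b+c$, so by monotonicity \eqref{04-11-21c} we get $\rho_\alpha(a) \leq \rho_\alpha(b+c)$. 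Taking the infimum over all such decompositions gives $\rho_\alpha(a) \leq \rho'_\alpha(a)$, completing the proof.
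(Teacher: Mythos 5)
Your proof is correct and follows essentially the same route as the paper: the first two items from the subadditivity and homogeneity of $\rho_\alpha$ applied to combined/scaled/swapped decompositions, the norm bound via the orthogonal decomposition $a = a_+ - a_-$ with $\|a_+ + a_-\| = \|a\|$, and the last item from $a \leq b+c$ together with monotonicity. You have merely written out details the paper leaves as "easy" or "trivial"; nothing to change.
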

\begin{proof}  The first two items follow easily from \eqref{04-11-21} and \eqref{04-11-21a}. Let $a \in A^\dagger_{sa}$. Recall that there is a (unique) decomposition $a = a_+ -a_-$ where $a_\pm \in (A^\dagger)^+$ and $a_+a_- = 0$. Since $\left\|a_++a_-\right\| =\|a\|$ we get the third item from \eqref{04-11-21f}. If $a \in (A^\dagger)^+$ and $a = b -c$ with $b,c \in (A^\dagger)^+$ we have that $a \leq b+c$ and hence $\rho_\alpha(a) \leq \rho_\alpha(b+c)$ thanks to \eqref{04-11-21c}. This shows that $\rho_\alpha(a) \leq \rho'_\alpha(a)$ in this case. The reverse inequality is trivial. 
\end{proof}

\emph{Proof of Theorem \ref{04-11-21k}:} Let $a \in A^+$ and $\alpha > 0$. By Lemma \ref{04-11-21i} it suffices to find $\omega \in \mathcal F_\psi$ such that $\omega(a) = \rho_\alpha(a)$. Let $V$ be the real vector subspace of $A^\dagger_{sa}$ generated by $a$ and $1$. We can then define a linear map $\omega : V \to \mathbb R$ such that 
$$
\omega(s a + t1) = s\rho_\alpha(a) + t\rho_\alpha(1)
$$ 
for all $s,t \in \mathbb R$. We claim that
 \begin{equation}\label{04-11-21l}
 \left|\omega(x)\right| \leq \rho'_\alpha(x) \ \ \forall x \in V .
 \end{equation}
Assume that $t \geq 0$. Then \eqref{04-11-21d} and Lemma \ref{04-11-21g} imply that
\begin{align*}
& \left|\omega(a + t 1)\right| =\rho_\alpha(a) + t\rho_\alpha(1) \leq \rho_\alpha(a) + t\alpha = \rho_\alpha(a + t1).
\end{align*}
Since $\rho_\alpha(a + t1) = \rho'_\alpha(a + t1)$ by the last item in Lemma \ref{04-11-21j} we have established \eqref{04-11-21l} when $x = a + t1$. Assume that $t \leq 0$. Using the fourth and the second item in Lemma \ref{04-11-21j} we find that
\begin{equation}\label{04-11-21m}
\omega(a +t1) = \rho_\alpha(a) + t\rho_\alpha(1) = \rho_\alpha(a) -  \rho'_\alpha(t1) .
\end{equation}
The first two items in Lemma \ref{04-11-21j} imply that
$$
\rho'_\alpha(t1) = \rho'_\alpha(-t1) = \rho'_\alpha(a - (a+t1)) \leq \rho'_\alpha(a) + \rho'_\alpha(a+t1) 
$$
and, since $\rho_\alpha(a) = \rho'_\alpha(a)$,
$$
\rho_\alpha(a) = \rho'_\alpha( a + t1 - t1) \leq \rho'_\alpha(a + t1) + \rho'_\alpha(t1).
$$
Thus
$$
-\rho'_\alpha(a+t1) \leq \rho_\alpha(a) -  \rho'_\alpha(t1) \leq \rho'_\alpha(a+t1) ,
$$ 
which combined with \eqref{04-11-21m} shows that \eqref{04-11-21l} holds when $x \in a + \mathbb R1$. From the second and the last items in Lemma \ref{04-11-21j} we see that \eqref{04-11-21l} holds when $x \in \mathbb R 1$. Finally, when $s \neq 0$ we can now conclude that
$$
|\omega(sa + t1) = |s|\left|\omega(a + \frac{t}{s}1)\right| \leq |s| \rho'_\alpha(a + \frac{t}{s}1) = \rho'_\alpha(sa +t1).
$$
Thus \eqref{04-11-21l} holds for all $x \in V$ and it follows from the Hahn-Banach extension theorem that there a linear map $\omega : A^\dagger \to \mathbb C$ extending $\omega : V \to \mathbb R$ such that $\omega(x^*) = \overline{\omega(x)}$ for all $x \in A^\dagger$, i.e. $\omega$ is hermitian, and $|\omega(y)| \leq \rho'_\alpha(y)$ for all $y \in A^\dagger_{sa}$. Using that $\omega$ is hermitian together with the third item of Lemma \ref{04-11-21j} we find that
\begin{align*}
&\left\|\omega\right\| = \sup \left\{|\omega(y)|: \ y \in A^\dagger_{sa}, \ \|y\| \leq 1 \right\} 
\\
&\leq \sup \left\{\rho'_\alpha(y): \ y \in A^\dagger_{sa} \ \|y\| \leq 1 \right\} \leq \alpha .
\end{align*}
Since $\omega(1) = \rho_\alpha(1) = \alpha$, it follows that $\omega|_A \in A^*_+$. Furthermore, when $x \in A^+$, we find from \eqref{04-11-21b} that $\omega(x) \leq \rho'_\alpha(x) = \rho_\alpha(x) \leq \psi(x)$. Thus $\omega \in \mathcal F_\psi$. Since $\omega(a) = \rho_\alpha(a)$, this completes the proof.
\qed

\begin{notes}\label{16-12-21} Theorem \ref{04-11-21k} was obtained by Combes in \cite{C1}; see Lemme 1.5 and Remarque 1.6 in \cite{C1}. An alternative proof was given by Haagerup in \cite{Ha}; see Proposition 2.1 and Corollary 2.3 in \cite{Ha}. 
\end{notes}

\section{Combes' theorem in the separable case}\label{defweight}

Let $A$ be a $C^*$-algebra. A map $\psi : A^+ \to [0,\infty]$ is a \emph{weight} on $A$ when
\begin{itemize}
\item $\psi(a+b) = \psi(a) + \psi(b) \ \ \forall a,b \in A^+$,
\item $\psi( t a) = t \psi(a) \ \ \forall a \in A^+, \ \forall t \in \mathbb R^+$, using the convention $0 \cdot \infty = 0$, and 
\item $\psi$ is lower semi-continuous; i.e. $\{a \in A^+: \ \psi(a) > t\}$ is open in $A^+$ for all $t \in \mathbb R$.
\end{itemize}

Any sequence $\{\omega_n\}_{n=1}^\infty$ in $A^*_+$ defines a weight by the formula $\sum_{n=1}^\infty \omega_n(a)$. The content of the next theorem is that they all arise this way when $A$ is separable.

\begin{thm}\label{09-11-21h} Assume that $A$ is separable and let $\psi$ be a weight on $A$. There is a sequence $\omega_n\in A^*_+$, $n = 1,2,\cdots$, such that
\begin{equation}\label{10-11-21x}
\psi(a) = \sum_{n=1}^\infty \omega_n(a)  \ \ \forall a \in A^+
\end{equation}
\end{thm}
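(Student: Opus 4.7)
My plan is to upgrade Combes' theorem (Theorem~\ref{04-11-21k}) from a supremum to a countable sum by combining two ingredients: the separability of $A$ (to replace the supremum by a countable supremum) and the full additivity of the weight $\psi$ (to turn that countable supremum into a countable sum by lattice joins in the positive dual cone).

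First I would produce a countable family $\{\nu_k\}_{k \geq 1} \subseteq \mathcal F_\psi$ with $\sup_k \nu_k(a) = \psi(a)$ for every $a \in A^+$. The set $\mathcal F_\psi$ is weak-$*$ closed in $A^*$, being the intersection of $A^*_+$ with the weak-$*$ closed halfspaces $\{\omega : \omega(a) \leq \psi(a)\}$ for those $a \in A^+$ with $\psi(a) < \infty$. Hence $\mathcal F_\psi \cap \{\omega : \|\omega\| \leq n\}$ is weak-$*$ compact by Banach--Alaoglu, and weak-$*$ metrizable because $A$ is separable. Choosing a countable weak-$*$ dense subset inside each of these compacta and taking the union yields the desired $\{\nu_k\}$; the supremum claim then reduces to Combes' theorem via weak-$*$ continuity of evaluation at $a$.

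Next, the critical step: I would show that $\mathcal F_\psi$ is closed under the lattice join inherited from the Riesz-lattice structure of $A^*$. Given $\omega_1, \omega_2 \in \mathcal F_\psi$, Jordan-decompose the hermitian functional $\omega_2 - \omega_1 = \phi_+ - \phi_-$ and set $\omega_1 \vee \omega_2 := \omega_1 + \phi_+$. Using the standard identity $\phi_+(a) = \sup\{(\omega_2 - \omega_1)(b) : 0 \leq b \leq a\}$ for the positive part of a hermitian functional on a $C^*$-algebra, one obtains
\[
(\omega_1 \vee \omega_2)(a) \;=\; \sup\bigl\{\omega_1(a-b) + \omega_2(b) : 0 \leq b \leq a\bigr\}.
\]
Here the \emph{additivity} of $\psi$ enters crucially: $\omega_1(a-b) + \omega_2(b) \leq \psi(a-b) + \psi(b) = \psi(a)$, so $\omega_1 \vee \omega_2 \in \mathcal F_\psi$.

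With these two pieces in hand, set $\mu_n := \nu_1 \vee \cdots \vee \nu_n$; by the previous paragraph this is an increasing sequence in $\mathcal F_\psi$, and it satisfies $\sup_n \mu_n(a) \geq \sup_n \nu_n(a) = \psi(a) \geq \sup_n \mu_n(a)$. Defining $\omega_n := \mu_n - \mu_{n-1}$ (with $\mu_0 := 0$) gives elements of $A^*_+$ whose partial sums are exactly $\mu_n$, so $\sum_{n=1}^\infty \omega_n(a) = \sup_n \mu_n(a) = \psi(a)$, establishing \eqref{10-11-21x}. The main obstacle is the lattice-join step: its proof relies on the additivity (not merely subadditivity) of $\psi$ together with the standard, but non-trivial, formula for the positive part of a hermitian functional on a $C^*$-algebra. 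Without full additivity one would only recover the weaker bound $\psi(a-b)+\psi(b)$ on the right-hand side, which need not collapse to $\psi(a)$, and the argument would break down.
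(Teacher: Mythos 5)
Your first step is fine, and in fact cleaner than the paper's Lemma \ref{08-11-21}: weak* metrizability of the bounded parts of the weak* closed set $\mathcal F_\psi$ does give a countable subfamily whose pointwise supremum is $\psi$. The proof breaks down at the lattice-join step. The identity $\phi_+(a) = \sup\{\phi(b) : 0 \leq b \leq a\}$ for the Jordan decomposition of a hermitian functional is \emph{false} on a noncommutative $C^*$-algebra, and, worse, the right-hand side does not define an additive function on $A^+$. Take $A = M_2(\mathbb C)$ and $\phi = \Tr(h\,\cdot)$ with $h = \diag(1,-1)$, so $\phi_+ = \Tr(e_{11}\,\cdot)$. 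If $p$ is the rank-one projection onto $\mathbb C(1,1)$, then every $b$ with $0 \leq b \leq p$ is a scalar multiple of $p$ and $\phi(p)=0$, so $\sup\{\phi(b): 0 \leq b \leq p\} = 0 \neq \tfrac12 = \phi_+(p)$; moreover the sup-expression takes the value $1$ at $e_{11}+e_{22}=1$ but $0+0$ at $p + (1-p)$, so it is not additive. This is no accident: $\omega_1$ and $\omega_2$ having a \emph{least} upper bound in $A^*_+$ for all pairs is equivalent to $A^*_{sa}$ being a lattice, which happens only for abelian $A$ (the positive cone of a noncommutative $C^*$-algebra satisfies Riesz decomposition only in the twisted form $a = \sum_k c_k^*c_k$, $c_kc_k^* \leq b_k$, not with $a_k \leq b_k$). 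So your $\omega_1 \vee \omega_2$ is not a linear functional, and the telescoping construction has nothing to telescope.

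What you actually need — and all you need — is \emph{some} common upper bound of $\omega_1,\omega_2$ inside $\mathcal F_\psi$, not a least one, and producing it is the real content of the theorem. The paper does this in Lemma \ref{09-11-21ax}: it represents each $\omega_i \in \mathcal F_\psi$ by an operator $0 \leq T_{\omega_i} \leq 1$ in the commutant $\pi_\psi(A)'$ (Lemma \ref{08-11-21bx}), combines them using the operator-monotone function $t \mapsto t/(1+t)$ (Lemma \ref{08-11-21e}) to get a single $0 \leq T \leq 1$ in the commutant with $\lambda_i T_{\omega_i} \leq \lambda T$, and then converts $T$ back into a functional $\omega \in \mathcal F_\psi$. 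Note that even this only yields $\lambda_i\omega_i \leq \lambda\omega$ for scalars $\lambda_i < \lambda < 1$, which is why the paper's countable family is first saturated under multiplication by rationals in $]0,1[$ before the increasing sequence $\mu_1' \leq \mu_2' \leq \cdots$ is built. (Separately, your argument as written would also need the reduction to the densely defined case, since the GNS machinery behind Lemma \ref{09-11-21ax} requires a densely defined weight; the paper handles the general case with Effros' theorem in the second half of its proof.)
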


The proof requires some preparations starting with the following continuation of the proof of Theorem \ref{04-11-21k}.

\begin{lemma}\label{08-11-21} Assume that $A$ is separable and that $\psi$ is a semi-weight on $A$. There is a countable set $\mathcal C \subseteq \mathcal F_\psi$ with the following property: For all $a \in A^+$ and all $n \in \mathbb N$ there is an element $f \in \mathcal C$ such that $f(a) >  \min \{n ,\psi(a) - \frac{1}{n}\}$.
\end{lemma}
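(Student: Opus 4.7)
The plan is to exploit the fact that, since $A$ is separable, every norm-bounded subset of $A^*$ is weak-$*$ metrizable, which lets us approximate each bounded slice of $\mathcal F_\psi$ by a countable subfamily. Since $k := \min\{n,\psi(a)-1/n\}$ is always strictly less than $\psi(a)$, we just need a countable family that produces values above $k$ at every $a$; the only obstacle to applying Combes directly is that $\mathcal F_\psi$ itself may be unbounded, so the weak-$*$ topology on it is not metrizable.

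First I would observe that $\mathcal F_\psi$ is weak-$*$ closed in $A^*$: it is the intersection, over $a\in A^+$, of the weak-$*$ closed sets $\{\omega\in A^* : \omega(a)\geq 0\}$ and $\{\omega\in A^* : \omega(a)\leq\psi(a)\}$ (the latter being vacuous when $\psi(a)=\infty$). For each integer $m\geq 1$, set $\mathcal F_\psi^{(m)} := \{\omega\in\mathcal F_\psi : \|\omega\|\leq m\}$. This is weak-$*$ closed inside the closed ball of radius $m$ and so, by Banach--Alaoglu together with the separability of $A$, it is a compact metrizable space. Pick a countable weak-$*$ dense subset $\mathcal C_m\subseteq\mathcal F_\psi^{(m)}$ and define $\mathcal C:=\bigcup_{m=1}^\infty\mathcal C_m\subseteq\mathcal F_\psi$, which is countable.

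To verify the approximation property, fix $a\in A^+$ and $n\in\mathbb N$ and set $k := \min\{n,\psi(a)-\tfrac{1}{n}\}$. Since $k<\psi(a)$, Lemma \ref{04-11-21i} produces an $\alpha>0$ with $\rho_\alpha(a)>k$. The crucial additional input is that the Hahn--Banach step in the proof of Theorem \ref{04-11-21k} yields an $\omega\in\mathcal F_\psi$ satisfying simultaneously $\omega(a)=\rho_\alpha(a)$ and $\|\omega\|\leq\alpha$; this is exactly what allows $\omega$ to be placed in one of the countably many sets $\mathcal F_\psi^{(m)}$. Choosing an integer $m\geq\alpha$ gives $\omega\in\mathcal F_\psi^{(m)}$, and weak-$*$ density of $\mathcal C_m$ furnishes $f\in\mathcal C_m\subseteq\mathcal C$ with $|f(a)-\omega(a)|<\omega(a)-k$, hence $f(a)>k$.

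The main (and only genuinely substantive) obstacle is therefore already resolved by the previous chapter: Combes' theorem in isolation would give an $\omega\in\mathcal F_\psi$ with $\omega(a)>k$, but without the norm bound $\|\omega\|\leq\alpha$ one cannot place $\omega$ in a metrizable compact subfamily, and the approximation by a countable set would break down. Everything else --- weak-$*$ closedness of $\mathcal F_\psi$, Banach--Alaoglu, and metrizability of bounded subsets of the dual of a separable $C^*$-algebra --- is routine.
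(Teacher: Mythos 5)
Your proof is correct, and it takes a genuinely different route from the one in the text. Both arguments hinge on the same extra piece of information extracted from the proof of Theorem \ref{04-11-21k}, namely that the functional realizing $\rho_\alpha(a)$ can be chosen with norm at most $\alpha$; without that bound neither argument gets off the ground, and you are right to flag it as the substantive input. Where you diverge is in how the countable set is produced and how the approximation is transferred to an arbitrary $a \in A^+$. The text fixes a countable dense sequence $\{a_i\}$ in $A^+$, takes for each pair $(k,i)$ the functional $\omega_{k,i}\in\mathcal F_\psi$ with $\omega_{k,i}(a_i)=\rho_k(a_i)$ and $\|\omega_{k,i}\|\leq k$, and then uses the quantitative Lipschitz-type estimates for $\rho_k$ and $\rho'_k$ from Lemma \ref{04-11-21j} (namely $|\rho_k(a)-\rho_k(a_i)|\leq\rho'_k(a-a_i)\leq k\|a-a_i\|$ together with $\|\omega_{k,i}\|\leq k$) to move from $a_i$ to $a$; this is explicit but requires some bookkeeping with $\frac{1}{2n}$ versus $\frac{1}{n}$. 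You instead slice $\mathcal F_\psi$ into the norm-bounded pieces $\mathcal F_\psi^{(m)}$, which are weak-$*$ compact and metrizable because $A$ is separable, and take countable weak-$*$ dense subsets; the transfer to an arbitrary $a$ is then a one-line density argument in the weak-$*$ topology. Your version is softer and shorter, at the cost of invoking Banach--Alaoglu and metrizability of bounded sets in the dual of a separable space; the paper's version is more hands-on and produces a countable set indexed concretely by $(k,i)$, with the continuity of the functionals in $a$ made quantitative rather than topological. Either argument serves the later applications equally well.
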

\begin{proof} The function $\alpha \mapsto \rho_\alpha(a)$ which was used in the proof of Theorem \ref{04-11-21k} is non-decreasing for $a \in A^+$. It follows therefore from Lemma \ref{04-11-21i} that $\lim_{k \to \infty} \rho_k(a) = \psi(a)$ for all $a \in A^+$. Let $\{a_i\}_{i=1}^\infty$ be a dense sequence in $A^+$. As shown in the proof of Theorem \ref{04-11-21k} there is for each pair $k,i \in \mathbb N$ an element $\omega_{k,i} \in \mathcal F_\psi$ such that $\omega_{k,i}(a_i) = \rho_k(a_i)$ and $\left|\omega_{k,i}(x)\right| \leq \|x\|k$ for all $x \in A$. Set
$$
\mathcal C := \left\{ \omega_{k,i} : \ k,i \in \mathbb N \right\} .
$$
To check that $\mathcal C$ has the required properties, let $a \in A^+$ and $n \in \mathbb N$ be given. Since $\lim_{k \to \infty} \rho_k(a) = \psi(a)$ there is a $k \in \mathbb N$ such that
$$
\rho_k(a) > \min \left\{ n+1, \psi(a) - \frac{1}{2n}\right\}.
$$
Choose $i \in \mathbb N$ such that $\left\|a-a_i\right\| \leq \frac{1}{4k n}$. Using the properties of $\rho_k$ and $\rho'_k$ we find 
\begin{align*}
&\left|\rho_k(a) - \omega_{k,i}(a)\right| \leq \left|\rho_k(a) - \rho_k(a_i)\right| + \left|\rho_k(a_i) - \omega_{k,i}(a)\right| \\
& \leq \rho'_k(a-a_i) + \left|\omega_{k,i}(a_i) - \omega_{k,i}(a)\right| \leq 2 k \left\|a-a_i\right\| \leq \frac{1}{2n}. 
\end{align*}
Then
\begin{align*}
&\omega_{k,i}(a) \geq \rho_k(a)-\frac{1}{2n} > \min \left\{ n+1, \psi(a) - \frac{1}{2n}\right\}-\frac{1}{2n}\\
& \geq \min \left\{ n, \psi(a) - \frac{1}{n}\right\}.
\end{align*}
\end{proof}

 Note that a weight is also a semi-weight. In the following $\psi : A \to [0,\infty]$ will be a weight on the $C^*$-algebra $A$.

\subsection{The GNS construction for weights}\label{06-02-22}

Let $A$ be a $C^*$-algebra.

\begin{defn}\label{06-02-22a} A \emph{$GNS$ representation} $(H,\Lambda,\pi)$ of $A$ consists of a Hilbert space $H$, a linear map $\Lambda: D(\Lambda) \to H$ defined on a dense left ideal $D(\Lambda) \subseteq A$ and a representation $\pi : A \to B(H)$ of $A$ by bounded operators on $H$ such that
\begin{itemize}
\item[(a)] $\Lambda(D(\Lambda))$ is dense in $H$, and
\item[(b)] $\pi(a)\Lambda(b) = \Lambda(ab)$ for all $a \in A$ and $b \in D(\Lambda)$.
\end{itemize}
\end{defn}

A densely defined weight $\psi : A^+ \to [0,\infty]$ gives rise to a $GNS$ representation of $A$ in a way we now describe.

\begin{lemma}\label{10-11-21} Let $\psi : A \to [0,\infty]$ be a weight. There is a unique linear map $\psi : \mathcal M_\psi \to \mathbb C$ extending $\psi : \mathcal M^+_\psi \to \mathbb R^+$.
\end{lemma}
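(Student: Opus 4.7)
The plan is a Jordan-type decomposition, where the distinction between a weight and a general semi-weight — i.e., genuine additivity rather than mere subadditivity of $\psi$ on $\mathcal M_\psi^+$ — is exactly what makes the extension well defined. Uniqueness is immediate: since $\mathcal M_\psi = \Span \mathcal M_\psi^+$, any complex-linear extension of $\psi|_{\mathcal M_\psi^+}$ is determined by its restriction to $\mathcal M_\psi^+$.

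For existence I would first observe that every $a \in \mathcal M_\psi$ admits a decomposition $a = b_1 - b_2 + i(c_1 - c_2)$ with $b_j, c_j \in \mathcal M_\psi^+$. Writing $a = \sum_k \lambda_k a_k$ with $a_k \in \mathcal M_\psi^+$ and $\lambda_k \in \mathbb C$, such a decomposition is obtained by grouping indices according to the signs of $\Real \lambda_k$ and $\Imag \lambda_k$; the four resulting partial sums lie in $\mathcal M_\psi^+$ because this set is closed under addition and multiplication by non-negative reals. I would then define
$$
\tilde\psi(a) := \psi(b_1) - \psi(b_2) + i\bigl(\psi(c_1) - \psi(c_2)\bigr).
$$

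The main point — and the only place where genuine additivity of the weight is used — is well-definedness. If $b_1 - b_2 + i(c_1 - c_2) = b_1' - b_2' + i(c_1' - c_2')$ with all entries in $\mathcal M_\psi^+$, then equating the self-adjoint and anti-self-adjoint parts gives the cancellation identities $b_1 + b_2' = b_1' + b_2$ and $c_1 + c_2' = c_1' + c_2$ inside $\mathcal M_\psi^+$. Additivity of $\psi$ yields $\psi(b_1) + \psi(b_2') = \psi(b_1') + \psi(b_2)$ and the analogous relation for the $c$'s, so $\tilde\psi(a)$ is independent of the chosen decomposition. Linearity is then routine: additivity of $\tilde\psi$ follows by combining decompositions of $a$ and $a'$ componentwise and invoking the additivity of $\psi$ on $\mathcal M_\psi^+$; scalar multiplication by $t \geq 0$ uses $\psi(ta) = t\psi(a)$; multiplication by $-1$ is absorbed by swapping $(b_1,b_2)$ and $(c_1,c_2)$; and multiplication by $i$ corresponds to the permutation $(b_1,b_2,c_1,c_2) \mapsto (c_2,c_1,b_1,b_2)$, which matches $i\tilde\psi(a)$ by direct computation.
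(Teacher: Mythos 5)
Your proof is correct and follows essentially the same route as the paper: decompose $a \in \mathcal M_\psi$ as $a_1 - a_2 + i(a_3 - a_4)$ with $a_j \in \mathcal M_\psi^+$, define the extension by the forced formula, and use additivity and homogeneity of $\psi$ on $\mathcal M_\psi^+$ for well-definedness and linearity. The paper states this in two lines; you have simply supplied the cancellation-identity details it leaves implicit.
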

\begin{proof} An element $a \in \mathcal M_\psi$ can be decomposed as a sum $a = a_1 -a_2 + i(a_3 -a_4)$ where $a_i \in \mathcal M^+_\psi$, and we are forced to define the extension such that 
$$
\psi(a) = \psi(a_1) - \psi(a_2) + i(\psi(a_3)-\psi(a_4)) .
$$
This is well-defined and gives a linear map because $\psi$ is additive and homogeneous on $\mathcal M^+_\psi$.
\end{proof}

It follows from Lemma \ref{04-11-21n} and Lemma \ref{10-11-21} that we can define a sesquilinear form
$$
\left< \cdot \ , \ \cdot \right> : \mathcal N_\psi \times \mathcal N_\psi \to \mathbb C
$$
such that
$$
\left<a,b\right> := \psi(b^*a).
$$
Set 
$$
\ker_\psi := \left\{ a \in \mathcal N_\psi : \psi(a^*a) = 0\right\}.
$$
Arguments from the proof of Lemma \ref{04-11-21n} show that $\ker \psi$ is a left-ideal in $A$.
We let $H_\psi$ denote the Hilbert space obtained by completing the quotient $\mathcal N_\psi/\ker_\psi$ with respect to the norm on $\mathcal N_\psi/\ker_\psi$ induced by $\left< \cdot \ , \ \cdot \right>$. Let $\Lambda_\psi : \mathcal N_\psi \to H_\psi$ be the quotient map coming with this construction. We can then define a representation $\pi_\psi : A \to B(H_\psi)$ of $A$ by bounded operators such that 
$$
\pi_\psi(a) \Lambda_\psi(b) = \Lambda_\psi (ab)
$$
for all $b\in \mathcal N_\psi$. When $\psi$ is densely defined, $\mathcal N_\psi$ is a dense left ideal in $A$ by (a) of Lemma \ref{04-11-21n}. By construction $(H_\psi,\Lambda_\psi,\pi_\psi)$ is then a GNS representation of $A$. We shall refer to $(H_\psi,\Lambda_\psi,\pi_\psi)$ as the \emph{GNS-triple of $\psi$.}

\subsection{The weight of a GNS representation}\label{GNStriple}

Let $(H,\Lambda,\pi)$ be a GNS representation of $A$. Set
$$
\mathcal F_\Lambda := \left\{\omega\in A^*_+ : \ \omega(a^*a) \leq \left<\Lambda(a),\Lambda(a)\right> \ \ \forall a \in D(\Lambda)\right\}.
$$

We denote by $\pi(A)'$ the commutant of $\pi(A)$ in $B(H_\psi)$.

\begin{lemma}\label{08-11-21bx} For every $\omega \in \mathcal F_\Lambda$ there is an operator $T_\omega \in \pi(A)'$ such that $0 \leq T_\omega \leq 1$ and
\begin{equation}\label{08-11-21cx}
\omega(b^*a) = \left< T_\omega \Lambda(a), \Lambda(b)\right> \ \ \forall a,b \in D(\Lambda).
\end{equation}
\end{lemma}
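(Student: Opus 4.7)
The plan is to build $T_\omega$ via a bounded sesquilinear form on $H \times H$ coming from the positive functional $\omega$, then verify positivity, norm bound, and commutation with $\pi(A)$.

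First I would define a sesquilinear form $B$ on $\Lambda(D(\Lambda)) \times \Lambda(D(\Lambda))$ by $B(\Lambda(a),\Lambda(b)) := \omega(b^*a)$, noting that $b^*a \in A$ so the right-hand side makes sense (recall $D(\Lambda)$ is a left ideal). The key check is well-definedness: this reduces to showing that $\Lambda(a) = 0$ forces $\omega(b^*a) = 0$ for every $b \in D(\Lambda)$ (and similarly in the second variable). The Cauchy--Schwarz inequality for the positive functional $\omega$ gives
\[
|\omega(b^*a)|^2 \leq \omega(a^*a)\,\omega(b^*b) \leq \langle \Lambda(a),\Lambda(a)\rangle \langle \Lambda(b),\Lambda(b)\rangle,
\]
where the second inequality uses the defining property of $\mathcal F_\Lambda$. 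This simultaneously shows well-definedness and that $B$ is bounded with norm at most $1$ on $\Lambda(D(\Lambda)) \times \Lambda(D(\Lambda))$.

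Since $\Lambda(D(\Lambda))$ is dense in $H$, $B$ extends uniquely to a bounded sesquilinear form on $H \times H$ of norm at most $1$, and the Riesz representation theorem for sesquilinear forms produces an operator $T_\omega \in B(H)$ with $\|T_\omega\| \leq 1$ satisfying \eqref{08-11-21cx} on $D(\Lambda) \times D(\Lambda)$. For positivity, observe that for $a \in D(\Lambda)$,
\[
\langle T_\omega \Lambda(a), \Lambda(a)\rangle = \omega(a^*a) \geq 0,
\]
and this extends to all of $H$ by density and continuity; combined with $\omega(a^*a) \leq \|\Lambda(a)\|^2$ this also yields $0 \leq T_\omega \leq 1$.

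Finally, to show $T_\omega \in \pi(A)'$, I would fix $c \in A$ and $a,b \in D(\Lambda)$; since $D(\Lambda)$ is a left ideal, both $ca$ and $c^*b$ lie in $D(\Lambda)$. Using property (b) of a GNS representation on both sides,
\[
\langle T_\omega \pi(c)\Lambda(a), \Lambda(b)\rangle = \langle T_\omega \Lambda(ca),\Lambda(b)\rangle = \omega(b^*ca) = \langle T_\omega \Lambda(a),\Lambda(c^*b)\rangle = \langle \pi(c) T_\omega \Lambda(a),\Lambda(b)\rangle.
\]
Density of $\Lambda(D(\Lambda))$ in $H$ then gives $T_\omega \pi(c) = \pi(c) T_\omega$, so $T_\omega \in \pi(A)'$. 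I do not expect a real obstacle here; the only subtle point is that well-definedness of $B$, its boundedness, and the norm bound $T_\omega \leq 1$ all come from the same single application of Cauchy--Schwarz together with the defining inequality for $\mathcal F_\Lambda$, so one must be careful to invoke it cleanly.
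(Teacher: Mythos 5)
Your proof is correct and follows essentially the same route as the paper: a single application of Cauchy--Schwarz for $\omega$ combined with the defining inequality of $\mathcal F_\Lambda$ yields a bounded sesquilinear form of norm at most $1$, the representation theorem produces $T_\omega$ with $0 \leq T_\omega \leq 1$, and the commutation with $\pi(A)$ is verified by the same computation using that $D(\Lambda)$ is a left ideal. Your explicit attention to well-definedness of the form on $\Lambda(D(\Lambda))$ is a point the paper passes over silently, but it is handled by the very same estimate.
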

\begin{proof} Since $\omega \in \mathcal F_\Lambda$, the Cauchy-Schwarz inequality gives the estimate
\begin{equation}\label{08-11-21dx}
\omega(b^*a) \leq \left\|\Lambda(a)\right\|\left\|\Lambda(b)\right\| \ \ \forall a,b \in D(\Lambda).
\end{equation}
It follows that there is a bounded sesquilinear form $\left< \cdot, \ \cdot \right>_\omega$ on $H$ such that
$$
\left< \Lambda(a), \Lambda(b)\right>_{\omega} = \omega(b^*a)
$$
for all $a,b \in D(\Lambda)$. There is therefore a bounded operator $T_\omega \in B(H)$ such that \eqref{08-11-21cx} holds. (See for example Theorem 2.4.1 in \cite{KR}.) It follows from \eqref{08-11-21dx} that $\left\|T_\omega \right\| \leq 1$ and from the positivity of $\omega$ that $T_\omega \geq 0$. Finally, since 
\begin{align*}
&\left< \pi(x)T_\omega \Lambda_\psi(a),\Lambda(b)\right> = \left< T_\omega \Lambda(a),\pi(x)^* \Lambda(b)\right> = \left< T_\omega \Lambda(a), \Lambda(x^*b)\right> \\
&=\omega(b^*xa) = \left<T_\omega \Lambda(xa),\Lambda(b)\right> = \left< T_\omega \pi(x)\Lambda(a),\Lambda(b)\right>
\end{align*}
for all $x \in A, \ a,b \in D(\Lambda)$, it follows that $T_\omega \in \pi(A)'$.
\end{proof}

\begin{lemma}\label{08-11-21e} Let $B$ be a $C^*$-algebra and $0 \leq T_i \leq 1, \ i = 1,2$, elements of $B$. Let $\lambda_i \in ]0,1[, \ i = 1,2$. There is an element $0 \leq T \leq 1$ in $B$ and numbers $\lambda \in ]0,1[$ and $r > 0$ such that $\lambda_iT_i \leq \lambda T, \ i = 1,2$, and $T \leq r(T_1+T_2)$.
\end{lemma}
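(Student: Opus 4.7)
The plan is to build $T$ from $S := T_1 + T_2$ via continuous functional calculus, taking $T := h_\epsilon(S)$ where $h_\epsilon(x) := x/(x+\epsilon)$ for a small $\epsilon > 0$ to be chosen at the end. (If $B$ is non-unital one works in the unitization, but since $h_\epsilon(0) = 0$ the resulting $T$ lies in $B$.) This single choice simultaneously encodes three good features: $h_\epsilon$ maps $[0,\infty)$ into $[0,1)$, so $0 \leq T \leq 1$ is automatic; $h_\epsilon(x) \leq x/\epsilon$ for $x \geq 0$, which will give $T \leq (1/\epsilon)(T_1+T_2)$ and hence $r = 1/\epsilon$; and $h_\epsilon$ is operator monotone on $[0,\infty)$.

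For the operator monotonicity I would write $h_\epsilon(x) = 1 - \epsilon/(x+\epsilon)$ and invoke the standard fact that inversion reverses the order on strictly positive operators, so $0 \leq A \leq B$ gives $(A+\epsilon)^{-1} \geq (B+\epsilon)^{-1}$ and hence $h_\epsilon(A) \leq h_\epsilon(B)$. Applied to $T_i \leq S$, this yields $h_\epsilon(T_i) \leq h_\epsilon(S) = T$. On the other hand, the elementary scalar inequality $x/(x+\epsilon) \geq x/(1+\epsilon)$ holds for $x \in [0,1]$, and because $T_i \leq 1$, the functional calculus gives $h_\epsilon(T_i) \geq T_i/(1+\epsilon)$. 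Combining the two yields $(1+\epsilon)T \geq T_i$ for $i=1,2$.

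It then remains to match constants. Since $\max(\lambda_1,\lambda_2) < 1$, I can pick $\epsilon > 0$ so small that $(1+\epsilon)\max(\lambda_1,\lambda_2) < 1$, and then set $\lambda := (1+\epsilon)\max(\lambda_1,\lambda_2) \in \, ]0,1[$ and $r := 1/\epsilon$. For each $i$,
\[
\lambda T \;\geq\; (1+\epsilon)\lambda_i T \;\geq\; \lambda_i T_i,
\]
which is the required inequality, while $T \leq r(T_1+T_2)$ has already been noted. The argument is really just one well-chosen function $h_\epsilon$ doing all the work at once; the only non-formal point is the operator monotonicity of $h_\epsilon$, which is entirely standard, so I do not anticipate any serious obstacle.
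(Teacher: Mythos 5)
Your proof is correct. Every step checks out: $h_\epsilon$ maps $[0,\infty)$ into $[0,1)$ and vanishes at $0$ (so $T\in B$ with $0\le T\le 1$ even in the non-unital case), the scalar bounds $h_\epsilon(x)\le x/\epsilon$ on $[0,\infty)$ and $h_\epsilon(x)\ge x/(1+\epsilon)$ on $[0,1]$ pass through the functional calculus, and the operator monotonicity of $h_\epsilon$ via $h_\epsilon(x)=1-\epsilon(x+\epsilon)^{-1}$ and order-reversal of inversion gives $h_\epsilon(T_i)\le h_\epsilon(T_1+T_2)=T$.

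The underlying engine is the same as in the paper -- operator monotonicity of a M\"obius function $t\mapsto t/(t+c)$ applied to a sum -- but the construction is genuinely different. The paper first distorts each $T_i$ into $S_i:=(1-\gamma T_i)^{-1}\gamma T_i$ for a fixed $\gamma$ with $\max\{\lambda_1,\lambda_2\}<\gamma<1$, and then sets $T:=(1+S_1+S_2)^{-1}(S_1+S_2)$; the point of the pre-distortion is that $f(t)=t/(1+t)$ satisfies $f(S_i)=\gamma T_i$ \emph{exactly}, so monotonicity of $f$ immediately gives $T\ge\gamma T_i$, at the price of the slightly less transparent bound $T\le S_1+S_2\le\frac{\gamma}{1-\gamma}(T_1+T_2)$. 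You instead apply $h_\epsilon$ directly to $T_1+T_2$ and recover the lower bound $T\ge T_i/(1+\epsilon)$ from a scalar comparison on $[0,1]$, which is where the hypothesis $T_i\le 1$ enters your argument (the paper uses it only to keep $1-\gamma T_i$ invertible). Your version is a little more economical -- one function, one parameter $\epsilon$ -- while the paper's buys an exact identity in place of your approximate one; both yield perfectly usable constants.
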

\begin{proof} Choose $\max\{\lambda_1,\lambda_2\} < \gamma < 1$. Set 
$$
S_i := \left(1- \gamma T_i\right)^{-1}\gamma T_i, \ \ i = 1,2,
$$
and
$$
T := (1 +S_1 +S_2)^{-1}(S_1+S_2) \in B .
$$
The function $t \mapsto \frac{t}{1+t} = 1 - (1+t)^{-1}$ is operator monotone by Proposition 1.3.6 in \cite{Pe} or Proposition 2.2.13 in \cite{BR}, and hence 
$$
T \geq (1+S_i)^{-1}S_i = \gamma T_i , \ \ i =1,2.
$$
Put $\lambda := \max \left\{\frac{\lambda_1}{\gamma}, \frac{\lambda_2}{\gamma}\right\} \in ]0,1[$ and note that $\lambda T \geq \lambda \gamma T_i \geq \lambda_i T_i, \ i =1,2$.
On the other hand, it follows directly from the definitions that 
$$
T \leq S_1+S_2 \leq \frac{\gamma}{1 - \gamma} (T_1+ T_2).
$$
Set $r =  \frac{\gamma}{1-\gamma}$.
\end{proof}

\begin{lemma}\label{09-11-21ax} Let $(H,\Lambda,\pi)$ be a GNS representation of $A$. Let $\omega_1,\omega_2 \in \mathcal F_\Lambda$ and $\lambda_1,\lambda_2 \in ]0,1[$ be given. There is an $\omega \in \mathcal F_\Lambda$ and a $\lambda \in ]0,1[$ such that $\lambda_i \omega_i \leq \lambda \omega$ in $A^*_+$, $i =1,2$.
\end{lemma}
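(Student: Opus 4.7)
My plan is to lift $\omega_1,\omega_2$ to operators in the commutant $\pi(A)'$, apply Lemma~\ref{08-11-21e} there, and then pull the resulting operator back to a functional in $\mathcal F_\Lambda$.

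First, Lemma~\ref{08-11-21bx} supplies operators $T_i := T_{\omega_i} \in \pi(A)'$ with $0 \leq T_i \leq 1$ and
\[
\omega_i(b^*a) = \langle T_i \Lambda(a), \Lambda(b)\rangle \quad (a,b \in D(\Lambda)).
\]
Applying Lemma~\ref{08-11-21e} in the $C^*$-algebra $B := \pi(A)'$ to $T_1, T_2$ with the given $\lambda_1, \lambda_2$ then yields an element $T \in \pi(A)'$ with $0 \leq T \leq 1$, together with $\lambda \in \,]0,1[$ and $r > 0$ satisfying
\[
\lambda_i T_i \leq \lambda T \quad (i=1,2), \qquad T \leq r(T_1 + T_2).
\]

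The remaining step is to produce $\omega \in \mathcal F_\Lambda$ realising $T$, in the sense that $\omega(b^*a) = \langle T\Lambda(a),\Lambda(b)\rangle$ for all $a,b \in D(\Lambda)$. I would define $\omega$ first on the linear span of $\{b^*a : a,b \in D(\Lambda)\}$ by this formula and then extend by continuity. The bound $T \leq r(T_1+T_2)$ yields
\[
0 \leq \omega(a^*a) \leq r\bigl(\omega_1+\omega_2\bigr)(a^*a) \quad (a \in D(\Lambda)),
\]
so $\omega$ is dominated on this $*$-subspace by the bounded positive functional $r(\omega_1+\omega_2)$. Since $D(\Lambda)$ is a dense left ideal, any $x \in A^+$ is approximated by $a_n^* a_n$ with $a_n \in D(\Lambda)$, so $\Span\{b^*a : a,b \in D(\Lambda)\}$ is dense in $A$, and $\omega$ extends uniquely to a bounded positive linear functional on $A$ with $\omega \leq r(\omega_1+\omega_2)$. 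The inequality $T \leq 1$ then upgrades to $\omega(a^*a) \leq \|\Lambda(a)\|^2$, placing $\omega$ in $\mathcal F_\Lambda$. Finally, the operator inequality $\lambda_i T_i \leq \lambda T$ evaluated on $\Lambda(a)$ gives $\lambda_i \omega_i(a^*a) \leq \lambda \omega(a^*a)$ for $a \in D(\Lambda)$, and density of $\{a^*a : a \in D(\Lambda)\}$ in $A^+$ together with continuity of all the functionals involved upgrades this to $\lambda_i \omega_i \leq \lambda \omega$ in $A^*_+$.

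The principal obstacle is the well-definedness of the linear form $\omega$ on $\Span\{b^*a : a,b \in D(\Lambda)\}$: I must verify that whenever $\sum_i b_i^* a_i = 0$ in $A$ with $a_i, b_i \in D(\Lambda)$, the scalar sum $\sum_i \langle T\Lambda(a_i),\Lambda(b_i)\rangle$ vanishes. This is the one point where both the commutation $T \in \pi(A)'$ and the intertwining $\pi(a)\Lambda(b) = \Lambda(ab)$ have to be exploited together, using $T$ to move the vanishing algebraic relation in $A$ through the representation and into the Hilbert-space inner product.
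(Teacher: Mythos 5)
Your opening moves coincide exactly with the paper's: produce $T_i = T_{\omega_i} \in \pi(A)'$ via Lemma \ref{08-11-21bx} and apply Lemma \ref{08-11-21e} inside the $C^*$-algebra $\pi(A)'$. The gap is in the final step, and it is precisely the point you flag as ``the principal obstacle'' without resolving it. The well-definedness of $\omega$ on $\Span\{b^*a : a,b \in D(\Lambda)\}$ does not follow the way you sketch: given $\sum_i b_i^*a_i = 0$, you cannot assemble $\pi\bigl(\sum_i b_i^*a_i\bigr)$ inside $\sum_i \left< T\Lambda(a_i),\Lambda(b_i)\right>$, because $\Lambda(a_i)$ is not of the form $\pi(a_i)\xi$ for a common vector $\xi$. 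To make the algebraic relation visible one must insert an approximate unit, writing $\Lambda(a_i v_j) = \pi(a_i)\Lambda(v_j)$, and then let $j \to \infty$; but that limit only makes sense after the form $s(a,b) := \left< T\Lambda(a),\Lambda(b)\right>$ has been extended continuously to $A \times A$ (using $T \leq r(T_1+T_2)$) and the module identity $s(xa,b) = s(a,x^*b)$ has been propagated from $D(\Lambda)$ to all of $A$ — which is the bulk of the paper's argument. There is a second, unacknowledged gap in ``extend by continuity'': the domination $\omega(a^*a) \leq r(\omega_1+\omega_2)(a^*a)$ controls $\omega$ only on squares, not on general elements of the span, so it does not by itself yield a bound $|\omega(x)| \leq C\|x\|$ on the span; positivity plus domination on the positive cone of a dense $*$-subalgebra is not enough to extend a linear functional continuously without further work.

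The paper avoids both problems at once: after extending $s$ to $A\times A$ with the bound $|s(a,b)| \leq r\left\|\omega_1+\omega_2\right\|\|a\|\|b\|$ and establishing $s(xa,b)=s(a,x^*b)$, it \emph{defines} $\omega(a) := \lim_n s(a,v_{i_n})$ along a carefully chosen subnet of an approximate unit, obtaining a bounded functional on all of $A$ directly, and then \emph{verifies} $\omega(a^*a) = s(a,a)$ rather than using it as a definition. If you prefer to stay close to your formulation, a clean repair is to pass to the GNS triple $(H_\mu,\pi_\mu,\xi_\mu)$ of $\mu := r(\omega_1+\omega_2)$: the estimate $0 \leq s(a,a) \leq \mu(a^*a) = \|\pi_\mu(a)\xi_\mu\|^2$ and Cauchy--Schwarz for $s$ show that $s$ descends to a bounded form on $H_\mu$, giving $0 \leq S \leq 1$ in $\pi_\mu(A)'$ with $s(a,b) = \left< S\pi_\mu(a)\xi_\mu,\pi_\mu(b)\xi_\mu\right>$; then $\omega(x) := \left< S\pi_\mu(x)\xi_\mu,\xi_\mu\right>$ is a bounded positive functional on $A$ satisfying $\omega(b^*a)=s(a,b)$ on $D(\Lambda)$, and your concluding inequalities go through. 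Either way, the construction of $\omega$ from $T$ has to be carried out in full; as written, the proposal stops short of the lemma's actual content.
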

\begin{proof} By Lemma \ref{08-11-21bx} there are operators $T_{\omega_i} \in \pi(A)'$ such that $0 \leq T_{\omega_i} \leq 1$ and $\omega_i(b^*a ) = \left<T_{\omega_i}\Lambda(a),\Lambda(b)\right>, \ \forall a,b \in D(\Lambda)$, $i =1,2$. 
From Lemma \ref{08-11-21e} we get an operator $0 \leq T \leq 1$ in $\pi(A)'$ and numbers $\lambda \in ]0,1[$ and $r > 0$ such that $\lambda_i T_{\omega_i} \leq \lambda T, \ i =1,2$, and $T \leq W$ where $W :=r(T_{\omega_1} + T_{\omega_2})$. We define a sesquilinear form $s : D(\Lambda) \times D(\Lambda) \to \mathbb C$ such that
$$
s(a,b) := \left<T\Lambda(a),\Lambda(b)\right> .
$$
Note that
\begin{equation}\label{09-11-21g}
\begin{split}
& |s(a,b)|  = \left|\left<T^{\frac{1}{2}}\Lambda(a),T^{\frac{1}{2}}\Lambda(b)\right>\right|  
 \leq \left\|T^{\frac{1}{2}}\Lambda(a)\right\|\left\|T^{\frac{1}{2}}\Lambda(b)\right\| \\
&= \left<T\Lambda(a),\Lambda(a)\right>^{\frac{1}{2}}\left<T\Lambda(b),\Lambda(b)\right>^{\frac{1}{2}}  \\
&\leq  \left<W\Lambda(a),\Lambda(a)\right>^{\frac{1}{2}}\left<W\Lambda(b),\Lambda(b)\right>^{\frac{1}{2}} \\
&= r\sqrt{(\omega_1+\omega_2)(a^*a)(\omega_1+\omega_2)(b^*b)}\leq r\left\|\omega_1 +\omega_2\right\|\|a\|\|b\|.
\end{split}
\end{equation} 
It follows that $s$ extends by continuity to a sesquilinear form $s : A \times A \to \mathbb C$ such that the estimate
\begin{equation}\label{09-11-21f}
\left|s(a,b)\right| \leq r\left\|\omega_1 +\omega_2\right\|\|a\|\|b\|
\end{equation}
holds for all $a,b \in A$. Using that $T \in \pi(A)'$ we find 
\begin{align*}
&s(xa,b) = \left<T\Lambda(xa),\Lambda(b)\right> = \left<T\pi(x)\Lambda(a),\Lambda(b)\right> \\
&= \left<\pi(x)T\Lambda(a),\Lambda(b)\right> =\left<T\Lambda(a),\Lambda(x^*b)\right> = s(a,x^*b)
\end{align*}
when $x \in A, \ a,b \in D(\Lambda)$. It follows by continuity that $s(xa,b) = s(a,x^*b)$ for all $x,a,b \in A$. Let $\{v_i\}_{i \in I}$ be a net in $A$ such that $0 \leq v_i \leq v_j \leq 1$ in $A$ when $i \leq j$ and such that $\lim_{i \to \infty} v_ia = a$ for all $a \in A$. It follows from Proposition 2.3.11 (d) of \cite{BR} that $\lim_{i \to \infty} r(\omega_1 + \omega_2)(v_i) = r\left\|\omega_1 + \omega_2\right\|$. Let $a \in A$. We can then choose a sequence $\{i_n\}_{n=1}^\infty$ in $I$ such that
\begin{itemize}
\item[(i)] $i_n \leq i_{n+1}$ for all $n$,
\item[(ii)] $r(\omega_1 + \omega_2)(v_{i}) \geq  r\left\|\omega_1 + \omega_2\right\| - \frac{1}{n} \ \ \forall i \geq i_n$, and
\item[(iii)] $\left\| v_ia - a \right\| \leq \frac{1}{n} \ \ \forall i \geq i_n$.
\end{itemize}
The estimate
$$
|s(a,b)| \leq  r\sqrt{(\omega_1+\omega_2)(a^*a)}\sqrt{(\omega_1+\omega_2)(b^*b)},
$$
which was established in \eqref{09-11-21g} when $a,b \in D(\Lambda)$, extends by continuity to all $a,b \in A$. It follows that for $i \geq j$ in $I$,
\begin{equation}\label{06-02-22b}
\begin{split}
&|s(a,v_{i}) - s(a,v_{j})| \leq  r\sqrt{(\omega_1+\omega_2)(a^*a)}\sqrt{(\omega_1+\omega_2)((v_{i}-v_{j})^2)} \\
&\leq r\sqrt{(\omega_1+\omega_2)(a^*a)}\sqrt{(\omega_1+\omega_2)(v_{i}-v_{j})} .
\end{split}
\end{equation}
Since $\lim_{n \to \infty}  r(\omega_1 + \omega_2)(v_{i_n}) = r\left\|\omega_1 + \omega_2\right\|$ this estimate implies that $\left\{s(a,v_{i_n})\right\}$ is a Cauchy sequence and we set
\begin{equation}\label{06-02-22d}
\omega(a) := \lim_{n \to \infty} s(a,v_{i_n}) .
\end{equation}
To see that the limit \eqref{06-02-22d} is independent of the choice of sequence $\{i_n\}$ satisfying (i), (ii) and (iii), let $\{j_n\}_{n=1}^\infty$ be another sequence in $I$ with these three properties. Since $I$ is directed we can then find a sequence $\{k_n\}_{n=1}^\infty$ in $I$ such that $i_n \leq k_n$ and $j_n\leq k_n$ in $I$. Inserting $k_n$ for $i$, and first $i_n$ and next $j_n$ for $j$ in \eqref{06-02-22b} we find that 
$$
\lim_{n \to \infty} s(a,v_{i_n}) = \lim_{n \to \infty} s(a,v_{k_n})  = \lim_{n \to \infty} s(a,v_{j_n}).
$$
Thus $\omega(a)$ does not depend on which sequence $\{i_n\}$ in $I$ with the properties (i), (ii) and (iii) we use. It follows therefore that \eqref{06-02-22d} defines a linear functional $\omega : A \to \mathbb C$ and we conclude from \eqref{09-11-21f} that $\left\|\omega\right\| \leq r\left\|\omega_1 + \omega_2\right\|$. Note that for any $a \in A$ we can find a sequence $\{i_n\}$ in $I$ such that
$$
\omega(a^*a) = \lim_{n \to \infty} s(a^*a,v_{i_n}) = \lim_{n \to \infty} s(a,av_{i_n}) = s(a,a).
$$
When $a \in D(\Lambda)$ we find that
\begin{align*}
&\lambda \omega(a^*a) = \lambda s(a,a) = \lambda \left< T\Lambda(a),\Lambda(a)\right> \\
&\geq \lambda_i \left< T_{\omega_i}\Lambda(a),\Lambda(a)\right> = \lambda_i \omega_i(a^*a), \ i =1,2,
\end{align*}
and
$$
\omega(a^*a) = s(a,a) = \left< T\Lambda(a),\Lambda(a)\right>  \leq \left< \Lambda(a),\Lambda(a)\right> .
$$
The first inequality implies that $\omega \in A^*_+$ and $\lambda \omega \geq \lambda_i \omega_i, \ i =1,2$, and the last inequality implies that $\omega \in \mathcal F_\Lambda$.
 
\end{proof}

Define ${\phi} : A^+ \to [0,\infty]$ by
$$
{\phi}(a) := \sup_{\omega \in \mathcal F_\Lambda} \omega(a).
$$

\begin{lemma}\label{30-01-22a} ${\phi}$ is a weight.
\end{lemma}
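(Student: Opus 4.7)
The plan is to verify the three defining properties of a weight one at a time, with the superadditivity step being the only nontrivial one. Positive homogeneity $\phi(ta) = t\phi(a)$ for $t \in \mathbb R^+$ is immediate from the linearity of each $\omega \in \mathcal F_\Lambda$, together with the usual convention $0 \cdot \infty = 0$ (note $\phi(0) = 0$ since $\omega(0) = 0$). Lower semi-continuity of $\phi$ is also automatic: each $\omega \in \mathcal F_\Lambda$ is norm-continuous on $A$ (since $\omega \in A^*_+$), so $\phi$ is a pointwise supremum of continuous functions on $A^+$ and hence lower semi-continuous.

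So the remaining task is additivity, which I would split into the two inequalities. The subadditivity $\phi(a+b) \leq \phi(a) + \phi(b)$ for $a,b \in A^+$ is straightforward: for every $\omega \in \mathcal F_\Lambda$, linearity gives $\omega(a+b) = \omega(a) + \omega(b) \leq \phi(a) + \phi(b)$, and taking the supremum over $\omega$ yields the claim.

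The reverse inequality $\phi(a) + \phi(b) \leq \phi(a+b)$ is where the real content sits, and it rests entirely on Lemma \ref{09-11-21ax}. Fix arbitrary $\omega_1, \omega_2 \in \mathcal F_\Lambda$ and an $\varepsilon \in (0,1)$. Apply Lemma \ref{09-11-21ax} with $\lambda_1 = \lambda_2 = 1-\varepsilon$ to obtain an $\omega \in \mathcal F_\Lambda$ and a $\lambda \in (0,1)$ such that $(1-\varepsilon)\omega_i \leq \lambda \omega$ in $A^*_+$ for $i=1,2$. Adding the two inequalities evaluated at $a$ and at $b$ respectively, and using $\lambda < 1$ together with $\omega(a+b) \leq \phi(a+b)$ (since $\omega \in \mathcal F_\Lambda$), one gets
$$
(1-\varepsilon)\bigl(\omega_1(a) + \omega_2(b)\bigr) \leq \lambda\,\omega(a+b) \leq \phi(a+b).
$$
Letting $\varepsilon \to 0^+$ gives $\omega_1(a) + \omega_2(b) \leq \phi(a+b)$, and taking the supremum over $\omega_1, \omega_2 \in \mathcal F_\Lambda$ independently yields $\phi(a) + \phi(b) \leq \phi(a+b)$.

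The main obstacle is the superadditivity step: it cannot be read off from properties of a supremum and genuinely requires the ability, furnished by Lemma \ref{09-11-21ax}, to approximately dominate scalar multiples of any two elements of $\mathcal F_\Lambda$ by a single element of $\mathcal F_\Lambda$. Once that lemma is invoked, the only further input needed is the bound $\lambda < 1$ together with the $\varepsilon \to 0$ passage.
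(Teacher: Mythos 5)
Your proof is correct and follows essentially the same route as the paper: the only substantive step is superadditivity, and both arguments obtain it from Lemma \ref{09-11-21ax} by dominating scaled copies of two elements of $\mathcal F_\Lambda$ by a single element and then passing to the limit. The paper merely organizes the bookkeeping differently (choosing near-optimal $\omega_1,\omega_2$ with an $\epsilon$-margin before invoking the lemma, rather than taking suprema at the end), which changes nothing essential.
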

\begin{proof} It is clear that ${\phi}$ is a semi-weight but it remains to show that ${\phi}$ is additive. Let $a,b \in A^+$. Then ${\phi}(a+b) \leq {\phi}(a) + {\phi}(b)$ holds by definition of ${\phi}$. To conclude that $$
{\phi}(a) + {\phi}(b) \leq {\phi}(a+b)
$$ 
we may assume that ${\phi}(a+b) < \infty$, and it follows then that ${\phi}(a) < \infty$ and ${\phi}(b) < \infty$. Let $\epsilon > 0$. There are $\omega_1,\omega_2 \in \mathcal F_\Lambda$ such that $\omega_1(a) >{\phi}(a) - \epsilon$ and $\omega_2(b) > {\phi}(b) -\epsilon$. There is also a number $\lambda \in ]0,1[$ such that $\lambda \omega_1(a) > {\phi}(a) - \epsilon$ and $\lambda \omega_2(b) > {\phi}(b) -\epsilon$. By Lemma \ref{09-11-21ax} there is an element $\omega \in \mathcal F_\Lambda$ such that $\lambda \omega_i \leq \omega,  \ i = 1,2$. It follows that ${\phi}(a+b) \geq \omega(a+b) \geq \lambda \omega_1(a) + \lambda \omega_2(b) \geq {\phi}(a) + {\phi}(b) - 2\epsilon$. This shows that ${\phi}(a+b) \geq {\phi}(a) + {\phi}(b)$, and we conclude that ${\phi}$ is a weight. 
\end{proof}

 We will refer to ${\phi}$ as \emph{the weight of the GNS representation}. We note that all densely defined weights arise in this way:
 
\begin{lemma}\label{06-02-22e} Let $\psi$ be a densely defined weight on $A$. Then $\psi$ is the weight of its GNS representation $(H_\psi,\Lambda_\psi,\pi_\psi)$.
\end{lemma}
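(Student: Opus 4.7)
The plan is to identify the two sets of bounded positive functionals in play, namely $\mathcal F_\psi = \{\omega \in A^*_+ : \omega(a) \le \psi(a) \ \forall a \in A^+\}$ from Combes' theorem and $\mathcal F_{\Lambda_\psi} = \{\omega \in A^*_+ : \omega(a^*a) \le \psi(a^*a) \ \forall a \in \mathcal N_\psi\}$ from the GNS construction, and then quote Combes' theorem (Theorem \ref{04-11-21k}) to conclude. Since a weight is a semi-weight, Combes' theorem is directly applicable to $\psi$.

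The easy inclusion is $\mathcal F_\psi \subseteq \mathcal F_{\Lambda_\psi}$: for $\omega \in \mathcal F_\psi$ and $a \in \mathcal N_\psi$, the element $a^*a$ belongs to $A^+$, so
$$
\omega(a^*a) \le \psi(a^*a) = \langle \Lambda_\psi(a), \Lambda_\psi(a) \rangle.
$$
For the reverse inclusion, which is the only computation of substance, take $\omega \in \mathcal F_{\Lambda_\psi}$ and $b \in A^+$. If $\psi(b) = \infty$ the inequality $\omega(b) \le \psi(b)$ is trivial because $\omega$ is bounded. If $\psi(b) < \infty$ then $b \in \mathcal M_\psi^+$, and applying the continuous functional calculus I would note that $b^{1/2} \in \mathcal N_\psi$, since $(b^{1/2})^* b^{1/2} = b$ gives $\psi\bigl((b^{1/2})^* b^{1/2}\bigr) = \psi(b) < \infty$. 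Inserting $a = b^{1/2}$ into the defining inequality of $\mathcal F_{\Lambda_\psi}$ yields
$$
\omega(b) = \omega\bigl((b^{1/2})^*b^{1/2}\bigr) \le \psi\bigl((b^{1/2})^*b^{1/2}\bigr) = \psi(b),
$$
so $\omega \in \mathcal F_\psi$.

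With $\mathcal F_\psi = \mathcal F_{\Lambda_\psi}$ in hand, Combes' theorem applied to the semi-weight $\psi$ gives, for every $b \in A^+$,
$$
\phi(b) = \sup_{\omega \in \mathcal F_{\Lambda_\psi}} \omega(b) = \sup_{\omega \in \mathcal F_\psi} \omega(b) = \psi(b),
$$
which is the required identification. There is no genuine obstacle; the only thing to keep track of is the square-root trick, which supplies the bridge between the two defining inequalities and is where the weight structure (rather than just the seminorm structure) of $\psi$ enters implicitly through $b^{1/2} \in \mathcal N_\psi$.
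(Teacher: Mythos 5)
Your proof is correct and is exactly the argument the paper compresses into its one-line proof ("a re-formulation of Combes' theorem"): the identification $\mathcal F_\psi = \mathcal F_{\Lambda_\psi}$ via the square-root trick, followed by an appeal to Theorem \ref{04-11-21k}. Nothing is missing.
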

 \begin{proof} This is a re-formulation of Combes' theorem, Theorem \ref{04-11-21k}, for densely defined weights.
 \end{proof}

\subsection{Proof of Combes' theorem in the separable case}

\emph{Proof of Theorem \ref{09-11-21h}:} Assume first that $\psi$ is densely defined. Let $\mathcal C$ be the countable subset of $\mathcal F_{\psi}$ from Lemma \ref{08-11-21}, and set
$$
\mathcal C' := \left\{ \lambda \omega : \ \lambda \in ]0,1[ \cap \mathbb Q, \ \omega \in \mathcal C \right\} \subseteq \mathcal F_{\psi},
$$
where $\mathbb Q$ denotes the countable set of rational numbers.
Let $\mu_i, i \in \mathbb N$, be a numbering of the elements of $\mathcal C'$. We construct by induction a sequence $\mu'_1 \leq \mu'_2 \leq \mu'_3 \leq \cdots$ in $\mathcal F_{\psi}$ such that $\mu_i \leq \mu'_n, \ i \leq n$, and $\mu'_n \in \lambda_n \mathcal F_\psi$ for some $\lambda_n \in ]0,1[$. To start the induction set $\mu'_1 = \mu_1$. When $\mu'_1 \leq \mu'_2 \leq \cdots \leq \mu'_n$ have been constructed we construct $\mu'_{n+1}$ by using Lemma \ref{09-11-21ax} to find $\mu'_{n+1} \in \lambda_{n+1}\mathcal F_\psi$ for some $\lambda_{n+1} \in ]0,1[$ such that $\mu'_{n+1} \geq \mu'_n$ and $\mu'_{n+1} \geq \mu_{n+1}$. Then $\{\mu'_n\}_{n=1}^\infty$ has the stated properties and we claim that
$$
\lim_{n \to \infty} \mu'_n(a) = \psi(a)
$$
for all $a \in A^+$. Indeed, when $a\in A^+$ and $N \in \mathbb N$ are given we can use Lemma \ref{08-11-21} to get an element $f \in \mathcal C$ such that $f(a) \geq \min \{N+1, \psi(a)-\frac{1}{N+1}\}$. Choose $\lambda \in ]0,1[\cap \mathbb Q$ such that $\lambda f(a) \geq f(a) - \left(\frac{1}{N} - \frac{1}{N+1}\right)$. Then $\lambda f \in \mathcal C'$ and 
$$
\lambda f(a) \geq \min \{N, \psi(a)-\frac{1}{N}\} .
$$ 
There is an $i \in \mathbb N$ such that $\mu_i = \lambda f$, and then 
$$
\psi(a) \geq \mu'_k(a) \geq \mu_i(a) = \lambda f(a) \geq \min \{N, \psi(a)-\frac{1}{N}\}
$$
for all $k \geq i$, proving the claim. We define the sequence $\{\omega_n\}_{n =1}^{\infty}$ in $A^*_+$ such that $\omega_1 = \mu'_1$ and $\omega_n = \mu'_{n} - \mu'_{n-1}, \ n \geq 2$. Then \eqref{10-11-21x} holds for all $a \in A^+$.

Consider then the case where $\psi$ is not densely defined. Set $B := \overline{\mathcal M_\psi}$, the closure of $\mathcal M_\psi$ in $A$, which is a $C^*$-subalgebra of $A$ by (e) of Lemma \ref{04-11-21n}. Let $b \in B^+$. Then $\sqrt{b} \in B$ and we can approximate $\sqrt{b}$ by an element $x \in \mathcal M_\psi$. Then $x^*x$ approximates $b$ and $x^*x \in \mathcal M_\psi^+$ by (e) and (d) of Lemma \ref{04-11-21n}. This shows that $\psi|_{B^+}$ is a densely defined weight on $B$. It follows then from the first part of the proof that there is a sequence $\{\omega''_n\}_{n=1}^\infty$ in $B^*_+$ such that $\sum_{n=1}^\infty \omega''_n(b) = \psi(b)$ for all $b \in B^+ = B \cap A^+$. Choose $\omega'_n \in A^*_+$ such that $\omega'_n|_B = \omega''_n$. The closure $\overline{\mathcal N_\psi}$ of $\mathcal N_\psi$ is a closed left ideal in $A$ by Lemma \ref{04-11-21n} and we set $F := \left\{\omega \in A^*_+ : \ \omega(\overline{\mathcal N_\psi}) = \{0\}, \ \|\omega\| \leq 1 \right\}$, which is a weak* closed face in the quasi-state space of $A$. It follows from a result of Effros, reproduced in Theorem 3.10.7 of \cite{Pe}, that for every element $a \in A^+\backslash \overline{\mathcal N_\psi}$ there is $\omega \in F$ such that $\omega(a) >  0$. Indeed, if not the closure of $Aa + \mathcal N_\psi$ would be a closed left ideal $\mathcal I$ of $A$ strictly larger than $\overline{\mathcal N_\psi}$ with
$$
\left\{\omega\in A^*_+ :  \ \|\omega \| \leq 1, \ \omega(\mathcal I) = \{0\} \right\} = F,
$$
contradicting Effros' theorem. Note that $F$ is a compact metrizable space in the weak* topology because $A$ is separable. There is therefore a countable subset $\mathcal C'' \subseteq F$ which is dense in $F$. It follows that for all $a \in A^+ \backslash \overline{\mathcal N_\psi}$ there is an element $\omega \in \mathcal C''$ such that $\omega(a) > 0$. Let $\{\kappa_n\}_{n=1}^\infty$ be a sequence in $F$ which contains infinitely many copies of each element from $\mathcal C''$. Then 
$\sum_{n=1}^\infty \kappa_n(a) = 0$
for all $a \in \overline{\mathcal N_\psi} \cap A^+$ and 
$\sum_{n=1}^\infty \kappa_n(a) = \infty$
for all $a \in A^+ \backslash \overline{\mathcal N_\psi}$. Set $\omega_n := \omega'_n + \kappa_n \in A^*_+$. It follows from (c) of Lemma \ref{04-11-21n} that $\mathcal M_\psi \subseteq \mathcal N_\psi$ and hence $B \subseteq \overline{\mathcal N_\psi}$. Therefore
$$
\sum_{n=1}^\infty \omega_n(b) = \sum_{n=1}^\infty \omega'_n(b)= \psi(b) \ \ \forall b \in B \cap A^+.
$$
Consider an element $a \in \overline{\mathcal N_\psi} \cap A^+$. There is a sequence $\{x_n\}$ in $\mathcal N_\psi$ such that $\lim_{n \to \infty} x_n^*x_n = a^2$ so (b) of Lemma \ref{04-11-21n} implies that $a^2 \in B$. Since $B$ is a $C^*$-algebra it follows that $a = \sqrt{a^2} \in B$. This shows that $ \overline{\mathcal N_\psi} \cap A^+ \subseteq B \cap A^+$, implying that $\psi(a) = \infty$ when $a \in A^+ \backslash B$. For comparison we note that 
$$
\sum_{n=1}^\infty \omega_n(a) \geq \sum_{n=1}^\infty \kappa_n(a) = \infty
$$
when $a \in A^+ \backslash B \subseteq A^+ \backslash \overline{\mathcal N_\psi}$. It follows that the equality in \eqref{10-11-21x} holds for all $a \in A^+$.
\qed

\begin{notes} A version of Theorem \ref{09-11-21h}, where \eqref{10-11-21x} is only established for $a \in \mathcal M^+_\psi$, was obtained by Combes in Proposition 1.11 of \cite{C1}. A version for non-separable algebras was obtained by Pedersen and Takesaki in Corollary 7.3 of \cite{PT}. The GNS construction and Lemma \ref{08-11-21bx} was introduced in \cite{C1} while Lemma \ref{08-11-21e} and Lemma \ref{09-11-21ax} are taken from \cite{Ku1} and \cite{QV}. 
\end{notes}



\chapter{KMS weights}

After a considerable amount of preparation we prove in this chapter a fundamental theorem of Kustermans on which we base the definition of a KMS weight.

\bigskip

\section{Flows and holomorphic extensions}\label{holextensions}

Let $X$ be a complex Banach space. A \emph{flow} on $X$ is a representation $\sigma = (\sigma_t)_{t \in \mathbb R}$ of the group of real numbers $\mathbb R$ by linear isometries of $X$ such that $\sigma_0 = \id_X$ and
$$
\lim_{t \to t_0} \left\|\sigma_t(a) - \sigma_{t_0}(a)\right\| = 0
$$
for all $t_0 \in \mathbb R$ and all $a \in X$.

\subsection{Entire elements}\label{Section-entire}

Let $X$ be a complex Banach space and let $\sigma$ be a flow on $X$. An element $a \in X$ is \emph{entire analytic} for $\sigma$ when the function $t \mapsto \sigma_t(a)$ is entire analytic in the sense that there is a sequence $\{c_n(a)\}_{n=0}^\infty$ in $X$ such that
\begin{equation}\label{13-11-21}
\sum_{n=0}^\infty c_n(a)t^n = \sigma_t(a),
\end{equation}
with norm-convergence for all $t \in \mathbb R$. Let $\mathcal A_\sigma$ denote the set of elements of $X$ that are entire analytic for $\sigma$.  

\begin{lemma}\label{13-11-21a} Let $a \in \mathcal A_\sigma$. The sequence $\{c_n(a)\}_{n=0}^\infty$ of $X$ for which \eqref{13-11-21} holds with norm-convergence for all $t \in \mathbb R$ is unique, and the series 
$$
\sum_{n=0}^\infty \left\|c_n(a)\right\| z^n
$$
converges for all $z \in \mathbb C$.
\end{lemma}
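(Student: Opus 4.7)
My plan is to derive both statements from a Cauchy--Hadamard-style bound on $\|c_n(a)\|$. First I would note that norm-convergence of $\sum_{n=0}^\infty c_n(a) t^n$ for every $t \in \mathbb R$ forces the individual terms to tend to zero in norm. Thus, fixing an arbitrary $t > 0$, the sequence $\|c_n(a)\| t^n$ is bounded by some constant $M_t$, so $\|c_n(a)\|^{1/n} \leq M_t^{1/n}/t$, and taking $\limsup$ gives $\limsup_n \|c_n(a)\|^{1/n} \leq 1/t$. Since $t$ is arbitrary, $\limsup_n \|c_n(a)\|^{1/n} = 0$.

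From this estimate the convergence claim is immediate: given $z \in \mathbb C$, pick $r > |z|$; then $\|c_n(a)\| r^n$ is bounded by the argument above, whence $\|c_n(a)\| |z|^n \leq C (|z|/r)^n$, which is summable. Hence $\sum_{n=0}^\infty \|c_n(a)\| z^n$ converges for every $z \in \mathbb C$.

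For uniqueness, suppose $\{c_n\}$ and $\{d_n\}$ are two such sequences for $a$, and set $e_n := c_n(a) - d_n$. Then $\sum_n e_n t^n = 0$ in norm for every $t \in \mathbb R$, and the same Cauchy--Hadamard estimate gives $\limsup_n \|e_n\|^{1/n} = 0$. For any $\varphi \in X^*$, applying $\varphi$ term-by-term yields an entire scalar power series $\sum_n \varphi(e_n) t^n$ that vanishes on $\mathbb R$; uniqueness of Taylor coefficients (e.g.\ repeated differentiation at $0$, which is legitimate because the radius of convergence is infinite) gives $\varphi(e_n) = 0$ for every $n$. Since $\varphi$ was arbitrary, the Hahn--Banach theorem forces $e_n = 0$ for all $n$, proving uniqueness. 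The only non-routine ingredient is the passage from norm-convergence on $\mathbb R$ to the Cauchy--Hadamard-type estimate in the first paragraph; once that is in hand, everything reduces to standard facts about entire scalar power series.
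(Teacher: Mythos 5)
Your proof is correct, and it reaches the coefficient estimate by a genuinely different (and more elementary) route than the paper. The uniqueness part is essentially the paper's argument: apply $\varphi\in X^*$ termwise, use that an entire scalar power series vanishing on $\mathbb R$ has all Taylor coefficients zero, and separate points with Hahn--Banach. Where you diverge is in proving $\limsup_n\|c_n(a)\|^{1/n}=0$. You exploit the full strength of the hypothesis — \emph{norm}-convergence of $\sum_n c_n(a)t^n$ — so that for each fixed $t>0$ the terms are norm-bounded, giving $\limsup_n\|c_n(a)\|^{1/n}\le 1/t$ directly and hence $=0$; no uniform boundedness principle is needed. The paper instead uses only the weak consequence of the hypothesis (convergence of $\sum_n\varphi(c_n(a))t^n$ for each $\varphi\in X^*$), obtains $\limsup_n|\varphi(c_n(a))|^{1/n}=0$ functional by functional, and then upgrades this to the norm statement via the Banach--Steinhaus theorem applied to the family $\varphi\mapsto\varphi(c_n(a))/\epsilon^n$. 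Your argument is shorter for the lemma as stated; the paper's argument is the one that still works when only scalar-valued (weak) convergence is available, which is precisely the situation exploited in the proof of the implication (2) $\Rightarrow$ (1) of Theorem \ref{17-02-23e}, where the same Banach--Steinhaus device reappears. Both proofs are valid.
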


\begin{proof} Let $\varphi \in X^*$. Then $\varphi(\sigma_t(a)) = \sum_{n=0}^\infty \varphi(c_n(a))t^n$ for all $t \in \mathbb R$, which implies that the radius of convergence of the power series
\begin{equation}\label{13-11-21b}
\sum_{n=0}^\infty \varphi(c_n(a))z^n 
\end{equation}
is infinite, and hence the formula \eqref{13-11-21b} defines an entire holomorphic function $f_\varphi : \mathbb C  \to \mathbb C$. If $\{c'_n(a)\}_{n=0}^\infty$ is another sequence for which \eqref{13-11-21} holds we get in the same way an entire holomorphic function $f'_\varphi : \mathbb C \to \mathbb C$ such that $f'_\varphi(z) =\sum_{n=0}^\infty \varphi(c'_n(a))z^n$. Since $f_\varphi(t) = f'_\varphi(t) = \varphi(\sigma_t(a))$ for all $t \in \mathbb R$ the two entire holomorphic functions must be identical, implying that $\varphi(c_n(a)) = \varphi(c'_n(a))$ for all $n$. Since $\varphi \in X^*$ was arbitrary we conclude that $c_n(a) = c'_n(a)$ for all $n$.

Let $\epsilon > 0$. That the radius of convergence of the series \eqref{13-11-21b} is infinite means that 
$$
\limsup_{n\to \infty} \left|\varphi(c_n(a))\right|^{\frac{1}{n}} = 0.
$$
It follows that for each $\varphi \in X^*$ there is a $K_\varphi > 0$ such that
$$
\left|\varphi(c_n(a))\right| \leq K_\varphi \epsilon^n
$$
for all $n\in \mathbb N$. We can therefore apply the Banach-Steinhaus theorem, Theorem 5.8 of \cite{Ru0}, to the family of functionals
$$
X^* \ni \varphi \mapsto \frac{\varphi(c_n(a))}{\epsilon^n}, \ n \in \mathbb N,
$$
to conclude that 
$$
\sup_{n \in \mathbb N} \frac{\|c_n(a)\|}{\epsilon^n}  < \infty .
$$
It follows first that $\limsup_{n\to \infty} \left\|c_n(a)\right\|^{\frac{1}{n}} \leq \epsilon$, and then since $\epsilon > 0$ was arbitrary that $\limsup_{n\to \infty} \left\|c_n(a)\right\|^{\frac{1}{n}} = 0$. This implies that $\sum_{n=0}^\infty \left\|c_n(a)\right\| z^n$
converges for all $z \in \mathbb C$.
 
\end{proof}

A function $f : \mathbb C \to X$ is \emph{entire holomorphic} when the limit
$$
f'(z) : = \lim_{h \to 0} \frac{f(z+h) - f(z)}{h}
$$
exists in $X$ for all $z \in \mathbb C$. An element $a \in X$ is \emph{entire holomorphic for $\sigma$} when there is an entire holomorphic function $f : \mathbb C \to X$ such that $f(t) = \sigma_t(a)$ for all $t \in \mathbb R$. Since entire holomorphic functions that agree on $\mathbb R$ are identical, the function $f$ is uniquely determined by $a$ when it exists. 

We aim now to show that an element $a \in X$ is entire holomorphic for $\sigma$ if and only if it is entire analytic for $\sigma$, and at the same time establish some apparently weaker but equivalent conditions.

\begin{lemma}\label{17-02-23} Let $\{x_n\}_{n=0}^\infty$ be a sequence in $X$ with the property that
$$
\sum_{n=0}^\infty x_n t^n
$$
converges in $X$ for all $t \in \mathbb R$. It follows that
\begin{itemize}
\item[(1)] the series $\sum_{n=0}^\infty x_n z^n$ and $\sum_{n=1}^\infty nx_n z^{n-1}$ converge for all $z \in \mathbb C$,
\item[(2)] the function $f(z) := \sum_{n=0}^\infty x_n z^n$ is  entire holomorphic, and
\item[(3)] $f'(z) = \sum_{n=1}^\infty nx_n z^{n-1}
$
for all $z \in \mathbb C$.
\end{itemize}
\end{lemma}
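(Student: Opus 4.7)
The plan is to mimic the standard proof that complex power series are differentiable termwise, carrying it out for coefficients in the Banach space $X$.

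For (1), I first fix $t \in \mathbb R$ with $t \neq 0$. Convergence of $\sum_{n=0}^\infty x_n t^n$ in $X$ forces $\|x_n t^n\| \to 0$, so the sequence $(\|x_n t^n\|)$ is bounded by some $M_t > 0$. This gives $\|x_n\| \leq M_t |t|^{-n}$ and hence $\limsup_n \|x_n\|^{1/n} \leq |t|^{-1}$. Since $|t|$ can be chosen arbitrarily large, $\limsup_n \|x_n\|^{1/n} = 0$, and the Cauchy--Hadamard criterion gives that the scalar series $\sum \|x_n\| r^n$ converges for every $r > 0$. Because $n^{1/n} \to 1$ the same holds for $\sum n \|x_n\| r^{n-1}$, so both Banach-valued series in (1) converge absolutely in $X$ for every $z \in \mathbb C$.

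For (2) and (3), set $g(z) := \sum_{n=1}^\infty n x_n z^{n-1}$, which is well-defined by (1). My plan is to show that $\bigl(f(z+h)-f(z)\bigr)/h \to g(z)$ as $h \to 0$. Expanding $(z+h)^n$ by the binomial theorem yields
$$
\frac{(z+h)^n - z^n}{h} - n z^{n-1} \;=\; \sum_{k=2}^{n} \binom{n}{k} z^{n-k} h^{k-1},
$$
so termwise,
$$
\frac{f(z+h)-f(z)}{h} - g(z) \;=\; \sum_{n=2}^\infty x_n \sum_{k=2}^{n} \binom{n}{k} z^{n-k} h^{k-1}.
$$
Now fix $R > |z|$ and restrict to $|h| \leq R$. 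For such $h$ and each $2 \leq k \leq n$ one has $|z|^{n-k}|h|^{k-2} \leq R^{n-2}$, so a single crude bound gives
$$
\left\| \frac{(z+h)^n - z^n}{h} - n z^{n-1} \right\| \;\leq\; |h|\, R^{n-2} \sum_{k=2}^{n}\binom{n}{k} \;\leq\; \frac{|h|}{R^2}(2R)^n,
$$
and therefore
$$
\left\| \frac{f(z+h)-f(z)}{h} - g(z) \right\| \;\leq\; \frac{|h|}{R^2}\sum_{n=2}^\infty \|x_n\|(2R)^n.
$$
By (1) applied with the radius $2R$, the series on the right is finite, so the whole expression tends to $0$ as $h \to 0$. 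This simultaneously establishes (2) (differentiability of $f$ at $z$) and (3) (the identification of $f'(z)$ with $g(z)$).

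The only real obstacle is bookkeeping in the binomial remainder: one must extract a factor of $|h|$ while leaving a scalar coefficient whose growth in $n$ is still dominated by an absolutely convergent series from (1). The estimate $|z|^{n-k}|h|^{k-2} \leq R^{n-2}$ followed by $\sum_{k=2}^n \binom{n}{k} \leq 2^n$ accomplishes this with room to spare; everything else is a direct application of the absolute convergence secured in (1).
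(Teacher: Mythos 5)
Your proof is correct, and while it follows the same overall skeleton as the paper's (first show $\limsup_n \|x_n\|^{1/n} = 0$, then control the binomial remainder), both halves are carried out by genuinely different and in fact cleaner means. For (1), the paper tests the series against functionals $\varphi \in X^*$ and invokes the Banach--Steinhaus theorem to pass from $\limsup_n |\varphi(x_n)|^{1/n} = 0$ to $\limsup_n \|x_n\|^{1/n} = 0$; you observe instead that norm convergence of $\sum_n x_n t^n$ already forces $\|x_n t^n\|$ to be bounded, which gives the same conclusion directly and avoids any duality argument. (The paper's heavier machinery is genuinely needed in Theorem \ref{17-02-23e}, where only weak information is available, but it is dispensable here.) For (2) and (3), the paper bounds $\bigl|\frac{(z+h)^n - z^n}{h} - nz^{n-1}\bigr|$ uniformly for $0 < |h| \leq 1$ by $n|z|^{n-1} + (|z|+1)^n$ and then justifies interchanging the limit $h \to 0$ with the summation by domination; you instead extract an explicit factor of $|h|$ from the remainder, obtaining the quantitative bound $\frac{|h|}{R^2}\sum_{n}\|x_n\|(2R)^n$, so the convergence $f'(z) = g(z)$ falls out with no interchange of limits at all. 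The one step you should make explicit is the termwise identity $\frac{f(z+h)-f(z)}{h} - g(z) = \sum_{n\geq 2} x_n\bigl(\frac{(z+h)^n-z^n}{h} - nz^{n-1}\bigr)$, but this is immediate from the absolute convergence of all three series established in (1), so there is no gap.
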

\begin{proof} For each $\varphi \in X^*$ the series
$$
 \sum_{n=0}^\infty \varphi(x_n) t^n
$$
converges for all $t \in \mathbb R$. As in the proof of Lemma \ref{13-11-21a} it follows from this and an application of the Banach-Steinhaus theorem that $\limsup_{n \to \infty} \left\|x_n\right\|^{\frac{1}{n}} = 0$. Thus
\begin{equation}\label{18-02-23}
\sum_{n=0}^\infty \|x_n\||z|^n < \infty 
\end{equation}
and
\begin{equation}\label{17-02-23b}
\sum_{n=1}^\infty \|x_n\|n|z|^{n-1}< \infty
\end{equation}
for all $z \in \mathbb C$. It follows from \eqref{18-02-23} and \eqref{17-02-23b} that (1) holds and we set $f(z) := \sum_{n=0}^\infty x_n z^n$. Note that for $n \geq 1$,
$$
(z+h)^n - z^n = \sum_{j=0}^{n-1} \binom{n}{j} z^j h^{n-j} = h  \sum_{j=0}^{n-1} \binom{n}{j} z^j h^{n-j-1} .
$$
For $0 < |h|\leq 1$ we find that
\begin{equation}\label{18-02-23a}
\begin{split}
&  \left|\frac{(z+h)^n -z^n}{h} - nz^{n-1}\right| \leq  \left|\sum_{j=0}^{n-1} \binom{n}{j} z^j h^{n-j-1} - nz^{n-1}\right| \\
& \leq   n |z|^{n-1} + \sum_{j=0}^n \binom{n}{j} |z|^j  \leq n|z|^{n-1} + (|z| +1)^n 
\end{split}
\end{equation}
for all $n \geq 1$. It follows from \eqref{18-02-23} and \eqref{17-02-23b} that  
$$
\sum_{n=0}^\infty \|x_n\|\left( n|z|^{n-1} +(|z|+1)^n \right) < \infty 
$$
and hence the estimate \eqref{18-02-23a} justifies that we interchange limit with summation in the following calculation:
\begin{align*}
&  \lim_{h \to 0} \left(\frac{ f(z+h) - f(z)}{h} -\sum_{n=1}^\infty x_n nz^{n-1} \right)  =  \lim_{h \to 0}  \sum_{n=1}^\infty x_n \left( \frac{(z+h)^n - z^n}{h} - nz^{n-1} \right) \\
& =    \sum_{n=1}^\infty x_n\lim_{h \to 0} \left( \frac{(z+h)^n - z^n}{h} -nz^{n-1}\right) =  0 .
\end{align*}
\end{proof}

\begin{remark}\label{31-08-22h} \rm{Let $Y \subseteq X$ be a closed subspace of $X$ such that $\sigma_t(Y) = Y$ for all $t \in \mathbb R$. Then $\sigma$ is also a flow on $Y$. Let $a \in Y$. It follows immediately from the definition that if $a$ is entire analytic for $\sigma$ considered as a flow on $Y$, it is also entire analytic for $\sigma$ as a flow on $X$. By Lemma \ref{17-02-23} the converse is also true; if $a$ is entire analytic for $\sigma$ as a flow on $X$ it is also entire analytic for $\sigma$ considered as a flow on $Y$. In fact, the sequence $\{c_n(a)\}_{n=0}^\infty$ from \eqref{13-11-21}, which a priori consists of element from $X$, is in fact a sequence in $Y$, given by the formula
$$
n! c_n(a) = \frac{\mathrm d^n}{\mathrm d t^n} \sigma_t(a)|_{t=0}.
$$
This follows from repeated applications of  Lemma \ref{17-02-23}.}
\end{remark}

\begin{thm}\label{17-02-23e} For $a \in X$ the following conditions are equivalent.
\begin{itemize}
\item[(1)] $a$ is entire analytic for $\sigma$.
\item[(2)] For each $\varphi \in X^*$ there is an entire holomorphic function $f_\varphi : \mathbb C \to \mathbb C$ such that $f_\varphi(t) = \varphi(\sigma_t(a))$ for all $t \in \mathbb R$.
\item[(3)] There is a function $f : \mathbb C \to X$ such that $f(t) = \sigma_t(a)$ for all $t \in \mathbb C$ and such that the function $\mathbb C \ni z \mapsto \varphi(f(z))$ is entire holomorphic for every $\varphi \in X^*$.
\item[(4)] $a$ is entire holomorphic for $\sigma$. 
\end{itemize}
\end{thm}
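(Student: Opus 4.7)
I will prove the chain of implications $(1) \Rightarrow (4) \Rightarrow (3) \Rightarrow (2) \Rightarrow (1)$. The first three are essentially bookkeeping. For $(1) \Rightarrow (4)$, apply Lemma \ref{17-02-23} to the sequence $\{c_n(a)\}_{n=0}^\infty$ supplied by $(1)$: the series $f(z) := \sum_{n=0}^\infty c_n(a) z^n$ converges in $X$ for every $z \in \mathbb{C}$ and defines an entire holomorphic function $\mathbb{C} \to X$, which extends $t \mapsto \sigma_t(a)$ by construction. For $(4) \Rightarrow (3)$ take the same $f$; since $\varphi \in X^*$ is bounded linear, the norm-limit defining $f'(z)$ is preserved, so $\varphi \circ f$ is entire holomorphic. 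For $(3) \Rightarrow (2)$ set $f_\varphi := \varphi \circ f$.

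The substantive work lies in $(2) \Rightarrow (1)$. My first step is to convert the pointwise-in-$\varphi$ existence of entire extensions into uniform estimates. Let $\mathcal{H}$ be the Fr\'echet space of entire functions $\mathbb{C} \to \mathbb{C}$ with the topology of uniform convergence on compact sets. The linear map $V : X^* \to \mathcal{H}$, $V\varphi := f_\varphi$, has closed graph: if $\varphi_k \to \varphi$ in norm and $f_{\varphi_k} \to g$ in $\mathcal{H}$, then on $\mathbb{R}$ one has $g(t) = \lim_k \varphi_k(\sigma_t(a)) = \varphi(\sigma_t(a)) = f_\varphi(t)$, whence $g = f_\varphi$ by the identity theorem for entire functions. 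The closed graph theorem (Banach to Fr\'echet) yields, for each $R > 0$, a constant $C_R$ with
$$
\sup_{|z| \leq R}\left|f_\varphi(z)\right| \leq C_R \|\varphi\| \quad \text{for all } \varphi \in X^*.
$$
Expanding $f_\varphi(z) = \sum_{n=0}^\infty b_n(\varphi) z^n$ and applying Cauchy's estimates gives $|b_n(\varphi)| \leq C_R \|\varphi\|/R^n$, so each $b_n : X^* \to \mathbb{C}$ is a bounded linear functional.

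To realize $b_n$ as evaluation at some $c_n(a) \in X$, I will show that $b_n$ is weak$^*$ continuous on bounded subsets of $X^*$; the Krein--Smulian theorem then upgrades this to weak$^*$ continuity on all of $X^*$ and identifies $b_n$ with an element of $X$ under the canonical embedding $X \hookrightarrow X^{**}$. Let $\{\varphi_\alpha\}$ be a bounded net with $\varphi_\alpha \to \varphi$ weak$^*$. The uniform estimate above makes $\{f_{\varphi_\alpha}\}$ uniformly bounded on every closed disk, hence a normal family in $\mathcal{H}$ by Montel's theorem; any subnet has a further subnet converging in $\mathcal{H}$ to some $g$, and the pointwise convergence $f_{\varphi_\alpha}(t) \to f_\varphi(t)$ on $\mathbb{R}$ forces $g = f_\varphi$ by the identity theorem, so the full net satisfies $f_{\varphi_\alpha} \to f_\varphi$ in $\mathcal{H}$. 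Since differentiation is continuous on $\mathcal{H}$ (Cauchy's estimates again), $b_n(\varphi_\alpha) = f_{\varphi_\alpha}^{(n)}(0)/n! \to f_\varphi^{(n)}(0)/n! = b_n(\varphi)$, establishing the required continuity.

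Finally, $\|c_n(a)\| \leq C_R/R^n$ for every $R > 0$ forces $\limsup_n \|c_n(a)\|^{1/n} = 0$, so $\sum_{n=0}^\infty c_n(a) z^n$ converges in $X$ for every $z \in \mathbb{C}$. For every $\varphi \in X^*$ and $t \in \mathbb{R}$,
$$
\varphi\Bigl(\sum_{n=0}^\infty c_n(a) t^n\Bigr) = \sum_{n=0}^\infty b_n(\varphi) t^n = f_\varphi(t) = \varphi(\sigma_t(a)),
$$
so Hahn--Banach gives $\sigma_t(a) = \sum_{n=0}^\infty c_n(a) t^n$, which is $(1)$. The main obstacle is the weak$^*$ continuity step: lifting the collection of scalar entire extensions $\{f_\varphi\}$ to an $X$-valued object requires the combined force of the closed graph theorem, Montel, and Krein--Smulian, and it is here that one genuinely uses that $X$ is a Banach space and not merely a locally convex one.
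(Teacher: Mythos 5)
Your proof is correct, and the trivial implications $(1)\Rightarrow(4)\Rightarrow(3)\Rightarrow(2)$ are handled exactly as in the paper (via Lemma \ref{17-02-23}). For the substantive implication $(2)\Rightarrow(1)$, however, you take a genuinely different route. The paper fixes $\epsilon>0$, applies Banach--Steinhaus to the family $\varphi\mapsto c_\varphi(n)/\epsilon^n$ to obtain elements $L_n\in X^{**}$ with $\|L_n\|\le K\epsilon^n$, and then shows $L_n\in\iota(X)$ by a bootstrapping argument: the $X^{**}$-valued series $\sum_n t^nL_n$ equals $\iota(\sigma_t(a))\in\iota(X)$ for real $t$, and since $\iota(X)$ is norm-closed, repeated application of Lemma \ref{17-02-23} (differentiating the series term by term, with the derivative realized as a norm-limit of difference quotients lying in $\iota(X)$) forces each $L_n$ into $\iota(X)$. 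You instead get the uniform bound $\sup_{|z|\le R}|f_\varphi(z)|\le C_R\|\varphi\|$ in one stroke from the closed graph theorem into the Fr\'echet space $\mathcal H$, and you identify the coefficient functionals $b_n$ with elements of $X$ by a duality argument: Montel plus the identity theorem give weak$^*$ continuity of $b_n$ on bounded sets, and Krein--Smulian upgrades this to weak$^*$ continuity on all of $X^*$, i.e.\ $b_n\in\iota(X)$. Your route is conceptually cleaner in how it produces the coefficients (and it sidesteps the delicate point of applying a uniform boundedness principle to functionals whose continuity is not yet established), at the cost of invoking heavier machinery (closed graph into a Fr\'echet space, Montel, Krein--Smulian); the paper's argument stays within Banach--Steinhaus and the already-proved differentiation lemma, exploiting only the norm-closedness of $\iota(X)$ in $X^{**}$. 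Both are valid; the final convergence step ($\|c_n(a)\|\le C_R/R^n$ for all $R$, hence $\limsup_n\|c_n(a)\|^{1/n}=0$, followed by Hahn--Banach to identify the sum with $\sigma_t(a)$) is common to both.
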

\begin{proof} The implications (4) $\Rightarrow$ (3) $\Rightarrow$ (2) are trivial and the implication (1) $\Rightarrow$ (4) follows directly from Lemma \ref{17-02-23}. We prove (2) $\Rightarrow$ (1) : Let $\varphi \in X^*$. By assumption there is a sequence $\{c_\varphi(n)\}_{n=0}^\infty$ in $\mathbb C$ such that 
$$
\sum_{n=0}^\infty c_\varphi(n)z^n
$$
converges for all for $z \in \mathbb C$ and
$$
\sum_{n=0}^\infty c_\varphi(n)t^n = \varphi(\sigma_t(a))
$$
for all $t \in \mathbb R$. By using that holomorphic functions that agree on $\mathbb R$ are identical and that the coefficients in the power series expansion of a holomorphic function are unique, it follows that the resulting maps $X^* \ni \varphi \mapsto c_\varphi(n)$ are well-defined and linear. Let $\epsilon > 0$. Since $\limsup_{n \to \infty} \left|c_\varphi(n)\right|^{\frac{1}{n}} = 0$ there is a $K_\varphi > 0$ such that 
$$
\left|c_\varphi(n)\right| \leq K_\varphi \epsilon^n
$$
for all $n$. By the Banach-Steinhaus theorem this implies that there is $K > 0$ such that
$$
\sup_{\varphi \in X^*, \ \|\varphi\| \leq 1} \ \frac{|c_\varphi(n)|}{\epsilon^n} \leq K
$$
for all $n$. In particular, the functionals $\varphi \mapsto c_\varphi(n)$ are all bounded and define elements $L_n \in X^{**}$ such that $L_n(\varphi) = c_\varphi(n)$ for all $\varphi,n$. Furthermore,
$$
\left\|L_n\right\| \leq K \epsilon^n
$$
for all $n$. Since $\epsilon > 0$ is arbitrary, it follows that $\limsup_{n \to \infty} \left\|L_n\right\|^{\frac{1}{n}} = 0$ implying that the series 
$$
\sum_{n=0}^\infty z^nL_n
$$
converges in the norm of $X^{**}$ for all $z \in \mathbb C$. Let $\iota : X \to X^{**}$ be the canonical embedding. Since
$$
\varphi(\sigma_t(a)) =  \sum_{n=0}^\infty c_\varphi(n)t^n = \sum_{n=0}^\infty t^nL_n(\varphi) = \left(\sum_{n=0}^\infty t^nL_n\right)(\varphi) 
$$
for all $\varphi \in X^*$ we have that
\begin{equation*}\label{17-02-23d}
\iota(\sigma_t(a)) = \sum_{n=0}^\infty t^nL_n 
\end{equation*}
for all $t \in \mathbb R$. In particular, $L_0 = \iota(a) \in \iota(X)$. Since $\iota(X)$ is norm closed in $X^{**}$ it follows from Lemma \ref{17-02-23} that $\sum_{n=1}^\infty n L_nt^{n-1} \in \iota(X)$ for all $t \in \mathbb R$, and hence, in particular, $L_1 \in \iota(X)$. By repeated application of Lemma \ref{17-02-23} it follows that $L_n \in \iota(X)$ for all $n$. Define $c_n(a) \in X$ such that $\iota(c_n(a)) = L_n$. The embedding $\iota$ is a linear isometry and hence the series $\sum_{n=0}^\infty c_n(a)t^n$ converges in norm for all $t \in \mathbb R$ since $\sum_{n=0}^\infty t^nL_n$ does.
Since $\varphi(\sigma_t(a)) = \sum_{n=0}^\infty t^nL_n(\varphi) =  \sum_{n=0}^\infty t^n\varphi(c_n(a)) = \varphi ( \sum_{n=0}^\infty c_n(a)t^n)$ for all $\varphi \in X^*$ it follows that $\sum_{n=0}^\infty c_n(a)t^n = \sigma_t(a)$, and we conclude that $a$ is entire analytic for $\sigma$.
\end{proof}

\begin{lemma}\label{13-11-21d} Let $\{a_n\}_{n=0}^\infty$ be a sequence in $\mathbb C$ and $\{b_n\}_{n=0}^\infty$ a sequence in $X$ such that
$\sum_{n=0}^\infty |a_n||z|^n < \infty$
and $\sum_{n=0}^\infty \|b_n\||z|^n < \infty$
for all $z \in \mathbb C$. Set 
$$
c_n = \sum_{i=0}^n a_ib_{n-i}, \ n \in \mathbb N.
$$ 
Then
\begin{equation}\label{14-11-21a}
\sum_{n=0}^\infty c_n z^n = \left(\sum_{n=0}^\infty  a_n z^n\right) \left(\sum_{n=0}^\infty  b_n z^n\right) ,
\end{equation}
with norm-convergence, for all $z \in \mathbb C$.
\end{lemma}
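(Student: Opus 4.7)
The plan is to apply the standard Cauchy-product (Mertens-type) argument, adapted to the present setting where the $b_n$ live in the Banach space $X$ while the $a_n$ are scalar. First I would verify that the left-hand side of \eqref{14-11-21a} converges in $X$ for every $z \in \mathbb{C}$: by the triangle inequality
$$\sum_{n=0}^\infty \|c_n\||z|^n \leq \sum_{n=0}^\infty \sum_{i=0}^n |a_i|\,\|b_{n-i}\||z|^n = \left(\sum_{n=0}^\infty |a_n||z|^n\right)\left(\sum_{n=0}^\infty \|b_n\||z|^n\right),$$
where the last equality is the classical Cauchy product identity for nonnegative real series. Since $X$ is complete, this absolute convergence implies norm convergence of $\sum_{n=0}^\infty c_n z^n$ in $X$.

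Next I would compare the $N$-th partial sum $C_N(z) := \sum_{n=0}^N c_n z^n$ with the product of partial sums $A_N(z) B_N(z)$, where $A_N(z) := \sum_{n=0}^N a_n z^n$ and $B_N(z) := \sum_{n=0}^N b_n z^n$. Expanding the product and regrouping by $n = i+j$ gives
$$A_N(z) B_N(z) - C_N(z) = \sum_{\substack{0 \leq i, j \leq N \\ i+j > N}} a_i b_j z^{i+j},$$
hence
$$\|A_N(z) B_N(z) - C_N(z)\| \leq \sum_{\substack{0 \leq i, j \leq N \\ i+j > N}} |a_i|\,\|b_j\||z|^{i+j}.$$
For any fixed $M \in \mathbb{N}$, once $N \geq 2M$ every pair $(i,j)$ in the index set on the right must satisfy $\max(i,j) > M$. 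The right-hand side is therefore majorised by $\sum_{\max(i,j) > M} |a_i|\,\|b_j\||z|^{i+j}$, which tends to $0$ as $M \to \infty$ by absolute convergence of the double series established in the first paragraph. Consequently $\|A_N(z) B_N(z) - C_N(z)\| \to 0$ as $N \to \infty$.

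Finally, since $A_N(z) \to A(z) := \sum_{n=0}^\infty a_n z^n$ in $\mathbb{C}$ and $B_N(z) \to B(z) := \sum_{n=0}^\infty b_n z^n$ in $X$, joint continuity of scalar multiplication $\mathbb{C} \times X \to X$ yields $A_N(z) B_N(z) \to A(z) B(z)$ in $X$. Combining this with the previous step gives $C_N(z) \to A(z) B(z)$, which is the identity \eqref{14-11-21a}. The only point requiring genuine care is the bookkeeping of the index pairs contributing to $A_N B_N - C_N$; no deeper obstacle arises, since the absolute convergence of both scalar-majorant series furnishes a summable dominator on demand.
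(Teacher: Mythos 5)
Your proof is correct and follows essentially the same route as the paper: both compare the partial-sum product $A_N(z)B_N(z)$ with $C_N(z)$, observe that the difference is the sum over pairs $(i,j)$ with $i+j>N$, and control it by noting that at least one index must then exceed $N/2$, so the difference is dominated by tails of the absolutely convergent majorant series. Your preliminary verification of the absolute convergence of $\sum_n c_n z^n$ and the $M$-versus-$[N/2]$ phrasing are only cosmetic differences.
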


\begin{proof} Let $z \in \mathbb C$. It follows easily from the assumptions on $\{a_n\}_{n=0}^\infty$ and $\{b_n\}_{n=0}^\infty$ that the series $\sum_{n=0}^\infty  a_n z^n$ and $\sum_{n=0}^\infty  b_n z^n$ converge in $\mathbb C$ and $X$, respectively. It remains therefore only to establish \eqref{14-11-21a}. Let $N \in \mathbb N$. Then
\begin{align*}
&\left\|(\sum_{n=0}^N a_nz^n) (\sum_{n=0}^N b_nz^n) - \sum_{n=0}^N c_nz^n\right\| \\
&= \left\|\sum_{ 0 \leq i,j \leq N, \ i+j \geq N+1 }  a_iz^i b_jz^j\right\| \\
& \ \leq \sum_{n=\left[\frac{N}{2}\right]}^N |a_n||z|^n\sum_{n=0}^\infty \left\|b_n\right\||z|^n + \sum_{n=\left[\frac{N}{2}\right]}^N \left\|b_n\right\||z|^n\sum_{n=0}^\infty |a_n||z|^n
\end{align*}
where $\left[\frac{N}{2}\right]$ denotes the integer part of $\frac{N}{2}$. This estimate proves the lemma. 
\end{proof}

We are going to use some integration theory for Banach space valued functions and in Appendix \ref{integration} the reader may find the proofs of the few facts that we shall need.

\begin{lemma}\label{18-11-21} Let $n \in \mathbb N$ and $a \in X$. There is a sequence $\{b_k\}_{k=0}^\infty$ in $X$ such that $\sum_{k=0}^\infty \left\|b_k\right\| |z|^k < \infty$ and
$$
\sum_{k=0}^\infty b_kz^k = \sqrt{\frac{n}{\pi}}\int_\mathbb R e^{-n (s-z)^2} \sigma_{s}(a) \ \mathrm{d} s
$$
for all $z \in \mathbb C$.
\end{lemma}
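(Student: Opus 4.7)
The plan is to factor the Gaussian kernel so that the $z$-dependence decouples from the operator-valued part of the integrand:
$$
e^{-n(s-z)^2} \;=\; e^{-nz^2}\,e^{2nsz}\,e^{-ns^2}.
$$
The factor $e^{-nz^2}$ is a fixed entire function of $z$ with an explicit complex power series, while $e^{2nsz}$ expands in $z$ with coefficients that are polynomials in $s$ of good Gaussian decay when paired with $e^{-ns^2}$. This separation turns the whole task into two manageable pieces: producing a power series for the auxiliary $X$-valued function
$$
g(z) \;:=\; \sqrt{\tfrac{n}{\pi}} \int_{\mathbb{R}} e^{-ns^2} e^{2nsz}\, \sigma_s(a)\, \mathrm{d}s,
$$
and then multiplying by the scalar entire series for $e^{-nz^2}$.

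First I would check that the Bochner-type integral defining $g(z)$ exists for each $z \in \mathbb{C}$, using $\|\sigma_s(a)\| = \|a\|$ together with the fact that $|e^{2nsz}| e^{-ns^2}$ is a shifted Gaussian (translated by $\operatorname{Re} z$ and scaled by $e^{n(\operatorname{Re} z)^2}$), hence integrable in $s$. Then I would insert the power series $e^{2nsz} = \sum_{k=0}^{\infty} (2nsz)^{k}/k!$ and interchange summation with the integral. The hypothesis for the interchange is the absolute bound
$$
\sum_{k=0}^{\infty} \frac{(2n|z|)^{k}}{k!} \int_{\mathbb{R}} |s|^{k} e^{-ns^2}\,\mathrm{d}s \cdot \|a\| \;<\; \infty \qquad \forall z \in \mathbb{C},
$$
and the Gaussian moments behave like $n^{-(k+1)/2}\,\Gamma((k+1)/2)$, so the ratio $\Gamma((k+1)/2)/k!$ tames the growth of $(2n|z|)^{k}$ for every $z$. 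This lets me appeal to the Fubini/dominated-convergence results collected in Appendix \ref{integration} and conclude that
$$
g(z) \;=\; \sum_{k=0}^{\infty} g_k z^{k}, \qquad g_k \;:=\; \sqrt{\tfrac{n}{\pi}}\,\frac{(2n)^{k}}{k!} \int_{\mathbb{R}} s^{k} e^{-ns^2}\, \sigma_s(a)\, \mathrm{d}s,
$$
with $\sum_{k=0}^{\infty} \|g_k\| |z|^{k} < \infty$ for all $z \in \mathbb{C}$.

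Finally I would expand $e^{-nz^2} = \sum_{j=0}^{\infty} h_j z^{j}$, where $h_{2j} = (-n)^{j}/j!$ and $h_{2j+1}=0$, and observe that the integral we want is $e^{-nz^2} g(z)$. Applying Lemma \ref{13-11-21d} to the complex sequence $\{h_j\}$ and the $X$-valued sequence $\{g_k\}$ yields a sequence $\{b_k\}$ in $X$ defined by the Cauchy product $b_k = \sum_{i=0}^{k} h_i g_{k-i}$, together with the desired absolute convergence $\sum_{k=0}^{\infty} \|b_k\| |z|^{k} < \infty$ and the identity $\sum_{k=0}^{\infty} b_k z^{k} = e^{-nz^2} g(z)$.

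The main obstacle is the justification of the term-by-term integration in the middle step: this is the only place where one needs a genuinely Banach-space-valued integration theorem rather than manipulation of scalar or formal power series. Once the Gaussian moment bound above is in place, however, the interchange follows from the dominated-convergence facts in Appendix \ref{integration}, and everything else is algebra — multiplying convergent power series via Lemma \ref{13-11-21d} and keeping track of the scalar factors $\sqrt{n/\pi}$ and $e^{-nz^2}$.
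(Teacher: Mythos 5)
Your proposal is correct and follows essentially the same route as the paper's proof: factor $e^{-n(s-z)^2}=e^{-nz^2}e^{2nsz}e^{-ns^2}$, expand $e^{2nsz}$ and interchange sum and integral via the dominated-convergence results of Appendix \ref{integration}, control the coefficients through the Gaussian moments $\int_0^\infty t^k e^{-t^2}\,\mathrm dt$, and finish by multiplying with the scalar series for $e^{-nz^2}$ via Lemma \ref{13-11-21d}. The only cosmetic difference is that the paper verifies $\sum_k\|b_k\||z|^k<\infty$ by an explicit ratio-test computation with the recursion $I_k=\tfrac{k-1}{2}I_{k-2}$, where you invoke the equivalent $\Gamma((k+1)/2)/k!$ asymptotics.
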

\begin{proof}
Write 
\begin{align*}
& e^{-n(s-z)^2} = e^{-nz^2}e^{2nsz} e^{-ns^2} = e^{-nz^2} \sum_{k=0}^\infty z^k (2n)^k \frac{s^k e^{- ns^2}}{k!}   .
\end{align*}
Since $s \mapsto e^{n(2|z||s| -s^2)}$ is in $L^1(\mathbb R)$ it follows that 
$$
\int_\mathbb R e^{-n (s-z)^2} \sigma_{s}(a) \ \mathrm{d} s
$$ 
exists, for example by using Proposition 2.5.18 in \cite{BR} or Lemma \ref{12-02-22} in Appendix \ref{integration}. Since $ e^{n(2|z||s| -s^2)}$ dominates $\sum_{k=0}^N z^k (2n)^k \frac{s^k e^{- ns^2}}{k!}$ for all $N$ it follows from Lemma \ref{28-02-22} in Appendix \ref{integration} that we can interchange integration with summation to get
\begin{equation}\label{24-11-23}
\begin{split}
& \sqrt{\frac{n}{\pi}} \int_\mathbb R e^{-n (s-z)^2} \sigma_s(a) \ \mathrm{d} s  = \sqrt{\frac{n}{\pi}} e^{-nz^2} \sum_{k=0}^\infty \frac{z^k (2n)^k}{k!}\int_\mathbb R s^k e^{- ns^2} \sigma_s(a) \ \mathrm ds.
\end{split}
\end{equation}
Set
$$
b_k := \frac{(2n)^k}{k!} \int_\mathbb R s^k e^{- ns^2} \sigma_s(a) \ \mathrm ds \in X.
$$
Note that
$$
\left\| \int_\mathbb R s^k e^{- ns^2} \sigma_s(a) \ \mathrm ds\right\| \leq 2\|a\|  \int_0^\infty t^k e^{-nt^2} \ \mathrm d t = 2\|a\| n^{- \frac{k+1}{2}}\int_0^\infty t^k e^{-t^2} \ \mathrm d t.
$$
Set $I_k :=  \int_0^\infty t^k e^{-t^2} \ \mathrm d t$ so that
\begin{equation}\label{15-11-21}
\left\|b_k\right\| \leq \lambda_k
\end{equation}
where 
$$
\lambda_k :=  2^{k+1} \|a\| n^{\frac{k -1}{2}} \frac{I_k}{k!}$$ 
for all $k \in \mathbb N$. By using the relation
$$
I_k = \int_0^\infty t^{k-1}\frac{\mathrm d}{\mathrm d t} \left[-\frac{1}{2} e^{-t^2}\right] \ \mathrm dt  = \frac{k-1}{2} \int_0^\infty t^{k-2} e^{-t^2} \ \mathrm dt =  \frac{k-1}{2}I_{k-2}
$$
for $k \geq 2$, it follows that
$$
I_{k} = 2^{-\frac{k}{2}} (k-1)(k-3)(k-5) \cdots 3 I_0
$$
when $k$ is even and
$$
I_k = 2^{-\frac{k-1}{2}} (k-1)(k-3)(k-5) \cdots 4\cdot 2 I_1
$$
when $n$ is odd. Hence
$$
\frac{\lambda_{k+1}}{\lambda_k} = 2\sqrt{n} \frac{k(k-2)(k-4) \cdots 4\cdot 2}{(k+1)(k-1)(k-3) \cdots 3} \frac{I_1}{I_0}
$$
when $k$ is even while
$$
\frac{\lambda_{k+1}}{\lambda_k} = 2 \sqrt{n} \frac{k(k-2)(k-4) \cdots 3}{(k+1)(k-1)(k-3) \cdots 4 \cdot 2} \frac{I_0}{I_1}
$$
when $k$ is odd. By using that $\frac{k}{k+1} \leq e^{-\frac{1}{k+1}}$ we find that
$$
 \frac{k(k-2)(k-4) \cdots 4\cdot 2}{(k+1)(k-1)(k-3) \cdots 3 } \leq \exp \left( - \sum_{j=1}^{\frac{k}{2}} \frac{1}{2j+1}\right)
 $$
when $k$ is even and
$$
 \frac{k(k-2)(k-4) \cdots 3}{(k+1)(k-1)(k-3) \cdots 4 \cdot 2} \leq \exp \left( - \sum_{j=1}^{\frac{k+1}{2}} \frac{1}{2j}\right)
 $$
 when $k$ is odd. It follows that $\lim_{k \to \infty} \frac{\lambda_{k+1}}{\lambda_k} = 0$ and hence that $\sum_{k=0}^\infty \lambda_k |t|^k < \infty$ for all $t$. Combined with \eqref{15-11-21} this shows that $\sum_{k=0}^\infty \left\|b_k\right\| |z|^k < \infty$ for all $z \in \mathbb C$. It follows then from Lemma \ref{13-11-21d} that there is a sequence $\{c_k\}_{k=0}^\infty$ in $X$ such that
\begin{align*}
&\sqrt{\frac{n}{\pi}} e^{-nz^2} \sum_{k=0}^\infty \frac{z^k (2n)^k}{k!}\int_\mathbb R s^k e^{- ns^2} \sigma_s(a) \ \mathrm ds \\
& = \sqrt{\frac{n}{\pi}} \left(\sum_{k=0}^\infty \frac{(-nz^2)^k}{k!}\right) \left(\sum_{k=0}^\infty b_k z^k\right) = \sum_{k=0}^\infty c_kz^k,
\end{align*}
with norm-convergence, for all $z \in \mathbb C$. By \eqref{24-11-23} this proves the lemma.
\end{proof}

For $n \in \mathbb N$ and $a \in X$, set
\begin{equation}\label{17-11-21f}
R_n(a) :=  \sqrt{\frac{n}{\pi}}\int_\mathbb R e^{-n s^2} \sigma_{s}(a) \ \mathrm{d} s .
\end{equation}
Then $R_n : X \to X$ is a linear operator, and a contraction since $ \sqrt{\frac{n}{\pi}}\int_\mathbb R e^{-n s^2}  \ \mathrm{d} s =1$. These operators will be used diligently in the following, and we shall refer to them as \emph{smoothing operators}.

\begin{lemma}\label{24-11-21} $R_n(a) \in \mathcal A_\sigma$ for all $n \in \mathbb N, \ a\in X$, and $\lim_{n \to \infty} R_n(a) = a$ for all $a\in X$.
\end{lemma}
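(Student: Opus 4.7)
My plan has two independent parts, corresponding to the two claims in the lemma.

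For the first claim, that $R_n(a) \in \mathcal{A}_\sigma$, the idea is to compute $\sigma_t(R_n(a))$ for real $t$ and recognize it as an expression to which Lemma \ref{18-11-21} applies. Using strong continuity of $\sigma$ to pull $\sigma_t$ inside the integral (which is justified by Appendix \ref{integration}, since $\sigma_t$ is a bounded operator on $X$), followed by the substitution $s \mapsto s-t$, I would obtain
$$
\sigma_t(R_n(a)) = \sqrt{\tfrac{n}{\pi}} \int_\mathbb{R} e^{-ns^2} \sigma_{s+t}(a) \, \mathrm{d}s = \sqrt{\tfrac{n}{\pi}} \int_\mathbb{R} e^{-n(s-t)^2} \sigma_{s}(a) \, \mathrm{d}s.
$$
By Lemma \ref{18-11-21}, there is a sequence $\{b_k\}_{k=0}^\infty$ in $X$ with $\sum_{k=0}^\infty \|b_k\| |z|^k < \infty$ for every $z \in \mathbb{C}$, such that the right-hand side evaluated at $z = t$ equals $\sum_{k=0}^\infty b_k t^k$. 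This is precisely the condition \eqref{13-11-21} in the definition of entire analytic element, with $c_k(R_n(a)) = b_k$, so $R_n(a) \in \mathcal{A}_\sigma$.

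For the second claim, the plan is a standard approximation-of-the-identity argument. Since $\sqrt{\tfrac{n}{\pi}}\int_\mathbb{R} e^{-ns^2}\, \mathrm{d}s = 1$, one can write
$$
R_n(a) - a = \sqrt{\tfrac{n}{\pi}} \int_\mathbb{R} e^{-ns^2} \bigl(\sigma_s(a) - a\bigr) \, \mathrm{d}s,
$$
and hence
$$
\|R_n(a) - a\| \leq \sqrt{\tfrac{n}{\pi}} \int_\mathbb{R} e^{-ns^2} \|\sigma_s(a) - a\| \, \mathrm{d}s.
$$
After the change of variables $s = u/\sqrt{n}$ this becomes $\frac{1}{\sqrt{\pi}} \int_\mathbb{R} e^{-u^2} \|\sigma_{u/\sqrt{n}}(a) - a\|\, \mathrm{d}u$. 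The integrand is dominated by the integrable function $u \mapsto 2\|a\|\, e^{-u^2}$, and by strong continuity of the flow $\|\sigma_{u/\sqrt{n}}(a) - a\| \to 0$ as $n \to \infty$ for every fixed $u$. Dominated convergence then yields $\lim_{n\to\infty} \|R_n(a) - a\| = 0$.

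Neither step poses a genuine obstacle. The only point that needs care is the interchange of $\sigma_t$ with the integral in the first part (handled by the Banach-space integration results the author has placed in Appendix \ref{integration}) and the identification of the series coefficients as a valid expansion in the sense of \eqref{13-11-21}, which follows directly from Lemma \ref{18-11-21} once the integral representation of $\sigma_t(R_n(a))$ is recognized.
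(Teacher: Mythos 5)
Your proposal is correct and follows essentially the same route as the paper: the first claim is obtained exactly as in the text by commuting $\sigma_t$ with the integral, substituting $s\mapsto s-t$, and invoking Lemma \ref{18-11-21}, while the second claim is the same approximate-identity estimate. The only cosmetic difference is that you conclude via the substitution $s=u/\sqrt{n}$ and dominated convergence, whereas the paper splits the integral at $|s|=\delta$; both are equally valid.
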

\begin{proof} Using Lemma \ref{18-11-21} we find that
\begin{align*}
&\sigma_t(R_n(a)) = \sqrt{\frac{n}{\pi}}\int_\mathbb R e^{-n s^2} \sigma_{t+s}(a) \ \mathrm{d} s \\
&= \sqrt{\frac{n}{\pi}}\int_\mathbb R e^{-n (s-t)^2} \sigma_{s}(a) \ \mathrm{d} s = \sum_{k=0}^\infty b_kt^k
\end{align*}
for all $t \in \mathbb R$, proving that $R_n(a) \in  \mathcal A_\sigma$.
 Let $\epsilon > 0$. Note that
 \begin{align*}
& \left\| R_n(a) -a\right\| = \left\|\sqrt{\frac{n}{\pi}} \int_\mathbb R e^{-n s^2} (\sigma_s(a) -a) \ \mathrm{d} s\right\| \\&\leq\sqrt{\frac{n}{\pi}} \int_\mathbb R e^{-n s^2} \left\|\sigma_s(a) -a\right\| \ \mathrm{d} s.
\end{align*} 
 Choose $\delta > 0$ such that $\left\|\sigma_s(a) -a\right\| \leq \frac{\epsilon}{2}$ when $|s| \leq \delta$. Since $\sqrt{n}e^{-ns^2}$ decreases with $n$ when $|s| \geq \delta$ and $n \geq (2 \delta^2)^{-1}$, and converges to $0$ for $n \to \infty$, it follows from Lebesgue's theorem that there is an $N \in \mathbb N$ such that
$$
\sqrt{\frac{n}{\pi}} \int_{|s| \geq \delta} e^{-n s^2} \left\|\sigma_s(a) -a\right\| \mathrm{d} s \leq \frac{\epsilon}{2}
$$
when $n \geq N$. Then, for $n \geq N$,  
\begin{align*}
&  \left\| R_n(a) -a\right\| \\
& \leq\sqrt{\frac{n}{\pi}} \int_{|s| \geq \delta} e^{-n s^2} \left\|\sigma_s(a) -a\right\|\mathrm{d} s  + \sqrt{\frac{n}{\pi}} \int_{|s| \leq \delta} e^{-n s^2} \left\|\sigma_s(a) -a\right\|\mathrm{d} s \\
& \leq  \frac{\epsilon}{2}\sqrt{\frac{n}{\pi}} \int_{|s| \leq \delta} e^{-n s^2} \mathrm{d} s + \frac{\epsilon}{2}  \leq \epsilon.
\end{align*} 
\end{proof}

For every $z \in \mathbb C$ we define an operator $\sigma_z : \mathcal A_\sigma \to X$
such that 
$$
\sigma_z(a) = \sum_{n=0}^\infty c_n(a) z^n,
$$
where $\{c_n(a)\}_{n=0}^\infty$ is the sequence from \eqref{13-11-21} and Lemma \ref{13-11-21a}. Alternatively, $\sigma_z(a)$ can be defined as $f(z)$, where $f: \mathbb C \to X$ is entire holomorphic and has the property that $f(t) = \sigma_t(a)$ when $t \in \mathbb R$. 

In the remaining part of the section we prove a series of facts about the operators $\sigma_z$ that we shall need later on.

\begin{lemma}\label{25-11-21} Let $a \in \mathcal A_\sigma$. Then
\begin{itemize}
\item[(a)] $\sigma_z(a) \in \mathcal A_\sigma$ for all $z \in \mathbb C$, and
\item[(b)] $\sigma_w \circ \sigma_z(a) = \sigma_z \circ \sigma_w(a) = \sigma_{z+w}(a)$ for all $z,w \in \mathbb C$.
\end{itemize}
\end{lemma}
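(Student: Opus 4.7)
The plan is to base everything on the identity
\begin{equation*}
\sigma_t(\sigma_z(a)) = \sigma_{t+z}(a) \qquad \forall t \in \mathbb R, \ \forall z \in \mathbb C,
\end{equation*}
and then read off (a) and (b) by invoking the equivalence of (1), (3) and (4) in Theorem \ref{17-02-23e}. Throughout I shall write $f_a : \mathbb C \to X$ for the entire holomorphic extension of $t \mapsto \sigma_t(a)$, so $f_a(z) = \sigma_z(a) = \sum_{n \geq 0} c_n(a) z^n$ by Lemma \ref{13-11-21a} and Theorem \ref{17-02-23e}.

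First I would prove the displayed identity. Fix $t \in \mathbb R$ and consider both sides as functions of $z \in \mathbb C$. The right-hand side $z \mapsto \sigma_{t+z}(a) = f_a(t+z)$ is entire holomorphic, being a translate of the entire holomorphic function $f_a$. The left-hand side $z \mapsto \sigma_t(\sigma_z(a)) = \sigma_t(f_a(z))$ is entire holomorphic too, because $\sigma_t$ is a bounded linear operator on $X$ and $f_a$ is entire holomorphic (composition with a continuous linear map preserves entire holomorphy, since the difference quotients map to difference quotients). For $z = s \in \mathbb R$ the two sides agree by the flow property $\sigma_t \circ \sigma_s = \sigma_{t+s}$. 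Applying any $\varphi \in X^*$ gives two scalar entire functions that agree on $\mathbb R$, hence on all of $\mathbb C$ by the identity theorem; since $X^*$ separates points of $X$ by Hahn--Banach, the $X$-valued functions coincide on $\mathbb C$.

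For (a), fix $z \in \mathbb C$. The displayed identity shows that the function $t \mapsto \sigma_t(\sigma_z(a))$ is the restriction to $\mathbb R$ of the entire holomorphic function $w \mapsto f_a(w+z)$ on $\mathbb C$. By the implication (4)$\Rightarrow$(1) (equivalently (3)$\Rightarrow$(1)) in Theorem \ref{17-02-23e}, $\sigma_z(a)$ is entire analytic for $\sigma$, i.e.\ $\sigma_z(a) \in \mathcal A_\sigma$.

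For (b), having (a) at our disposal, $\sigma_w(\sigma_z(a))$ is defined for every $w \in \mathbb C$ as the value at $w$ of the unique entire holomorphic extension of the $\mathbb R$-valued function $t \mapsto \sigma_t(\sigma_z(a))$. But we have just exhibited such an extension, namely $w \mapsto f_a(w+z) = \sigma_{w+z}(a)$, and it is unique because entire holomorphic functions that agree on $\mathbb R$ are identical. Hence $\sigma_w(\sigma_z(a)) = \sigma_{w+z}(a)$, and the symmetric identity $\sigma_z(\sigma_w(a)) = \sigma_{z+w}(a)$ follows by interchanging the roles of $z$ and $w$. The only real step is the first one; everything else is extracted from Theorem \ref{17-02-23e} and the uniqueness of holomorphic continuation.
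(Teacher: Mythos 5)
Your proof is correct and follows essentially the same route as the paper's: both hinge on establishing $\sigma_t(\sigma_z(a)) = \sigma_{t+z}(a)$ for real $t$ by comparing entire holomorphic functions that agree on $\mathbb R$ (using that $\sigma_t$ is a bounded linear operator), and then reading off (a) and (b) from the uniqueness of the holomorphic extension $w \mapsto f_a(w+z)$. The only cosmetic difference is that the paper first records the auxiliary facts $\sigma_t(a) \in \mathcal A_\sigma$ and the three-way identity $\sigma_z(\sigma_t(a)) = \sigma_t(\sigma_z(a)) = \sigma_{t+z}(a)$ before concluding, whereas you go directly through the single identity; the substance is identical.
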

\begin{proof} Let $t \in \mathbb R$. We prove first that 
\begin{itemize}
\item[(a')] $\sigma_t(a) \in \mathcal A_\sigma$, and
\item[(b')] $\sigma_t \circ \sigma_z(a) = \sigma_z \circ \sigma_t(a) = \sigma_{z+t}(a)$ for all $t \in \mathbb R$ and all $z \in \mathbb C$.
\end{itemize}
For all $s \in \mathbb R$,
$$
\sigma_s(\sigma_t(a)) = \sigma_t(\sigma_s(a)) = \sigma_t\left( \sum_{n=0}^\infty c_n(a)s^n\right) = \sum_{n=0}^\infty \sigma_t(c_n(a)) s^n,
$$
with norm-convergence. This proves (a'). To prove (b') note that the three functions $\mathbb C \to X$ given by
$z \mapsto \sigma_z(\sigma_t(a))$, $z \mapsto \sigma_t(\sigma_z(a))$, and $z \mapsto \sigma_{t+z}(a)$,
are all entire holomorphic and agree on $\mathbb R$. They are therefore identical.

Using (a') and (b') we obtain (a) and (b) as follows. By definition there is an entire holomorphic function $f : \mathbb C \to X$ such that $f(t) = \sigma_t(a)$ for all $t \in \mathbb R$ and $\sigma_z(a) = f(z)$. It follows from (b') that $\sigma_t(\sigma_z(a)) = \sigma_{t+z}(a) = f(t+z)$. Since $\mathbb C \ni w \mapsto f(w+z)$ is entire holomorphic it follows that $\sigma_z(a) \in \mathcal A_\sigma$ and that $\sigma_w(\sigma_z(a)) = f(w+z) = \sigma_{w+z}(a)$.
\end{proof}

\begin{lemma}\label{24-09-23x} Let $\mathcal D \subseteq \mathbb C$ such that $\left\{ \Imag z: \ z \in \mathcal D\right\}$ is a bounded subset of $\mathbb R$. Let $a \in \mathcal A_\sigma$. Then
$$
\sup_{z \in \mathcal D} \left\| \sigma_z(a)\right\| < \infty .
$$
\end{lemma}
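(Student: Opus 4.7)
The plan is to reduce the estimate to the imaginary axis by exploiting that the real part of $z$ acts by isometries. Write any $z \in \mathcal D$ as $z = x + iy$ with $x = \Real z$ and $y = \Imag z$. Since $a \in \mathcal A_\sigma$, Lemma \ref{25-11-21}(b) gives
$$
\sigma_z(a) \;=\; \sigma_{x + iy}(a) \;=\; \sigma_x\bigl(\sigma_{iy}(a)\bigr).
$$
The flow $\sigma$ consists of linear isometries, so $\sigma_x$ is norm-preserving on $X$. Therefore
$$
\left\|\sigma_z(a)\right\| \;=\; \left\|\sigma_{iy}(a)\right\|,
$$
and the quantity on the right depends only on $y = \Imag z$.

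Next I would observe that the function $\mathbb C \ni w \mapsto \sigma_w(a)$ is entire holomorphic into $X$ (this is the characterisation in Theorem \ref{17-02-23e}, or directly from the power-series definition together with Lemma \ref{17-02-23}); in particular it is norm-continuous. Hence the restriction $y \mapsto \sigma_{iy}(a)$ is a continuous map $\mathbb R \to X$. By hypothesis the set $K := \overline{\{\Imag z : z \in \mathcal D\}}$ is a bounded, hence compact, subset of $\mathbb R$, so the continuous function $y \mapsto \|\sigma_{iy}(a)\|$ attains its supremum on $K$:
$$
M \;:=\; \sup_{y \in K} \left\|\sigma_{iy}(a)\right\| \;<\; \infty.
$$
Combining this with the first step yields
$$
\sup_{z \in \mathcal D} \left\|\sigma_z(a)\right\| \;=\; \sup_{z \in \mathcal D} \left\|\sigma_{i \Imag z}(a)\right\| \;\leq\; M \;<\; \infty,
$$
which is the required bound.

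There is no real obstacle here; the only point that must be invoked carefully is the composition identity $\sigma_w \circ \sigma_z = \sigma_{w+z}$ from Lemma \ref{25-11-21}(b), which requires $a$ (and hence $\sigma_{iy}(a)$) to be entire analytic so that the outer application of $\sigma_x$ makes sense and agrees with evaluating the entire extension at $x + iy$. Everything else is just isometry of the flow plus continuity of an entire $X$-valued function on a compact set.
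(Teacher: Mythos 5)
Your proof is correct and follows the same route as the paper: write $\sigma_z(a) = \sigma_{\Real z}(\sigma_{i\Imag z}(a))$ via Lemma \ref{25-11-21}(b), use that $\sigma_{\Real z}$ is an isometry to get $\|\sigma_z(a)\| = \|\sigma_{i\Imag z}(a)\|$, and then bound the right-hand side over the bounded set of imaginary parts. Your explicit continuity-plus-compactness justification of the final step is a reasonable way to fill in what the paper leaves as an immediate consequence of the assumptions.
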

\begin{proof} Since $\sigma_t$ is an isometry for all $t \in \mathbb R$ it follows from (b) of Lemma \ref{25-11-21} that $\left\| \sigma_z(a)\right\| = \left\| \sigma_{i \Imag z}(a)\right\|$ for all $z \in \mathbb C$. The lemma follows from this because the assumptions imply that
$\sup_{z \in \mathcal D} \left\| \sigma_{i \Imag z}(a)\right\| < \infty$.
\end{proof}

\begin{lemma}\label{15-11-21d} The operator $\sigma_z$ is closable for all $z \in \mathbb C$.
\end{lemma}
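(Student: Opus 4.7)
The plan is to verify closability by the standard criterion: assume $\{a_n\} \subseteq \mathcal{A}_\sigma$ with $a_n \to 0$ in $X$ and $\sigma_z(a_n) \to b$, and then show $b = 0$. The whole argument runs through the smoothing operators $R_m$ and rests on the observation that although $\sigma_z$ is unbounded on $\mathcal{A}_\sigma$, the composition $\sigma_z \circ R_m$ extends to a genuine \emph{bounded} operator on all of $X$.

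First I would establish the integral representation
$$
\sigma_z(R_m(a)) \;=\; \sqrt{\tfrac{m}{\pi}}\int_{\mathbb{R}} e^{-m(s-z)^2}\sigma_s(a)\,\mathrm{d}s
$$
for every $a \in X$ and every $z \in \mathbb{C}$. For real $z = t$ this follows from the substitution $s \mapsto s-t$ in the definition \eqref{17-11-21f} of $R_m$ together with $\sigma_t(\sigma_s(a)) = \sigma_{s+t}(a)$. Since $R_m(a) \in \mathcal{A}_\sigma$ by Lemma \ref{24-11-21}, the left-hand side is an entire function of $z$; the right-hand side is entire by Lemma \ref{18-11-21}. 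They agree on $\mathbb{R}$, hence on $\mathbb{C}$. Writing $z = x + iy$, one computes $|e^{-m(s-z)^2}| = e^{-m(s-x)^2}e^{my^2}$, giving the norm bound
$$
\bigl\|\sigma_z(R_m(a))\bigr\| \;\le\; e^{m(\Imag z)^2}\|a\| \qquad (\forall\, a \in X).
$$

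Second I would verify the commutation relation $R_m \circ \sigma_z = \sigma_z \circ R_m$ on $\mathcal{A}_\sigma$. For $a \in \mathcal{A}_\sigma$ and real $t$ the definition of $R_m$ combined with $\sigma_s(\sigma_t(a)) = \sigma_{s+t}(a)$ yields $R_m(\sigma_t(a)) = \sigma_t(R_m(a))$ directly. Both sides extend to entire functions of $z$ (the left since $\sigma_z(a)$ is entire and $R_m$ is bounded, the right since $R_m(a) \in \mathcal{A}_\sigma$), so the identity persists for all $z \in \mathbb{C}$.

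Granted these two ingredients the conclusion is immediate. Given $a_n \in \mathcal{A}_\sigma$ with $a_n \to 0$ and $\sigma_z(a_n) \to b$, continuity of the bounded operator $R_m$ yields $R_m(\sigma_z(a_n)) \to R_m(b)$. On the other hand, using the commutation and then the norm bound from the first step,
$$
\bigl\|R_m(\sigma_z(a_n))\bigr\| \;=\; \bigl\|\sigma_z(R_m(a_n))\bigr\| \;\le\; e^{m(\Imag z)^2}\|a_n\| \;\longrightarrow\; 0.
$$
Hence $R_m(b) = 0$ for every $m$, and Lemma \ref{24-11-21} gives $b = \lim_{m \to \infty} R_m(b) = 0$, as required.

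The only delicate point is the complex-variable manipulation in the first step — promoting the integral representation from real to complex $z$ — but this is really just an application of the identity principle for entire holomorphic functions once Lemma \ref{18-11-21} is in hand, and should present no serious obstacle.
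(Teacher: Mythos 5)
Your proof is correct, and it takes a genuinely different route from the one in the text. The paper argues via complex analysis on the strip between $\mathbb R$ and $\mathbb R + i\Imag z$: for each $\varphi \in X^*$ it applies the Phragmen--Lindel\"of estimate to the scalar functions $u \mapsto \varphi(\sigma_u(a_n))$, obtains a uniform limit on the strip which is holomorphic in the interior and vanishes on $\mathbb R$, and then invokes a boundary-uniqueness result (Proposition 5.3.6 in \cite{BR}) to conclude that the limit is identically zero, whence $\varphi(\sigma_{-\Real z}(b)) = 0$. You instead regularize with the smoothing operators: the identity $\sigma_z(R_m(a)) = \sqrt{m/\pi}\int_{\mathbb R} e^{-m(s-z)^2}\sigma_s(a)\,\mathrm ds$ (which is \eqref{12-05-22} of Lemma \ref{17-11-21i}, stated later in the text but proved there without any appeal to closability, so there is no circularity) gives $\left\|\sigma_z(R_m(a))\right\| \leq e^{m(\Imag z)^2}\|a\|$, and combined with $R_m \circ \sigma_z = \sigma_z \circ R_m$ on $\mathcal A_\sigma$ and $\lim_m R_m(b) = b$ this kills $b$. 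Your argument is more elementary --- the only complex analysis it uses is the identity principle for entire functions, already needed to define $\sigma_z$ --- and it is in fact the same mechanism the paper deploys later to prove closability of $\Lambda \circ \sigma_z$ in Lemma \ref{09-02-22}, so it unifies the two closability proofs. What the paper's route buys in exchange is the uniform interpolation bound on the whole strip, $\left|\varphi(\sigma_u(a))\right| \leq \max\left\{\|a\|, \left\|\sigma_{i \Imag z}(a)\right\|\right\}\|\varphi\|$, which is independent of any regularization parameter and is reused repeatedly elsewhere (e.g.\ in Lemma \ref{11-02-22x} and in the proof of Kustermans' theorem); your constant $e^{m(\Imag z)^2}$ blows up with $m$ and is serviceable here only because $\|a_n\| \to 0$.
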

\begin{proof} When $z \in \mathbb R$ there is nothing to prove, so assume that $z \notin \mathbb R$. Let $\{a_n\}$ be a sequence in $\mathcal A_\sigma$ such that $\lim_{n \to \infty} a_n = 0$ and $\lim_{n\to \infty} \sigma_z(a_n) = b$ in $X$. We must show that $b =0$. It follows from Lemma \ref{25-11-21} that $\lim_{n \to \infty} \sigma_{i \Imag z}(a_n) = \sigma_{-\Real z}(b)$. Let $\mathcal D_{\Imag z}$ denote the strip in $\mathbb C$ consisting of the elements $u \in \mathbb C$ such that $\Imag u$ is between $0$ and $\Imag z$. For $n,m \in \mathbb N$ and $u \in  \mathcal D_{\Imag z}$, it follows from Lemma \ref{24-09-23x} and the Phragmen-Lindel{\"o}f theorem, Proposition 5.3.5 in \cite{BR}, that
$$
\left|\varphi(\sigma_u(a_n)) - \varphi(\sigma_u(a_m))\right| \leq \max \left\{ \left\|a_n-a_m\right\|, \ \left\|\sigma_{i \Imag z}(a_n) - \sigma_{i \Imag z}(a_m)\right\|\right\}\|\varphi\|
$$
for all $\varphi \in X^*$. It follows that the sequence of functions $u \mapsto \varphi(\sigma_u(a_n)), \ n \in \mathbb N$, converge uniformly on $\mathcal D_{\Imag z}$ to a continuous function $f$ which is holomorphic in the interior of $\mathcal D_{\Imag z}$. Since $\lim_{n \to \infty} \sigma_t(a_n) = 0$, the function $f$ must vanish on $\mathbb R$ and it follows therefore from Proposition 5.3.6 in \cite{BR}, applied to $z \mapsto \overline{f(\overline{z})}$ if $\Imag z < 0$, that $f =0$. Since 
$$
f(i \Imag z)= \varphi(\sigma_{-\Real z}(b)),
$$
it follows that $\varphi(\sigma_{-\Real z}(b)) =0$, and since $\varphi \in A^*$ is arbitrary, that $b =0$.
\end{proof}

In the following the symbol $\sigma_z$ will denote the closure of the operator $\sigma_z : \mathcal A_\sigma \to X$. Notice that for $z \in \mathbb R$ the two meanings of $\sigma_z$ agree.

\begin{lemma}\label{18-11-21b} $D(\sigma_{t+z}) = D(\sigma_z)$, $\sigma_t(D(\sigma_z)) = D(\sigma_z)$ and $\sigma_{t+z}(a) = \sigma_{t}(\sigma_z(a)) = \sigma_z(\sigma_t(a))$ for all $t \in \mathbb R$, $z \in \mathbb C$ and $a \in D(\sigma_z)$.
\end{lemma}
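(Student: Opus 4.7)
The plan is to reduce everything to the already-established identity $\sigma_t\circ\sigma_z(a)=\sigma_z\circ\sigma_t(a)=\sigma_{t+z}(a)$ for $a\in\mathcal A_\sigma$ (Lemma \ref{25-11-21}), and then transport it along approximating sequences using the crucial fact that $\sigma_t$ is a (bounded) isometry of $X$, hence continuous and defined on all of $X$. The definition of the closure means $a\in D(\sigma_z)$ iff there exists a sequence $\{a_n\}\subseteq \mathcal A_\sigma$ with $a_n\to a$ and $\sigma_z(a_n)\to \sigma_z(a)$, and everything will be deduced by choosing such sequences and pushing them through $\sigma_t$.

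First I would show $\sigma_t(D(\sigma_z))\subseteq D(\sigma_z)$ and $\sigma_z\sigma_t=\sigma_t\sigma_z$ on $D(\sigma_z)$. Given $a\in D(\sigma_z)$ and an approximating sequence $\{a_n\}\subseteq \mathcal A_\sigma$, the elements $\sigma_t(a_n)$ lie in $\mathcal A_\sigma$ by (a') in the proof of Lemma \ref{25-11-21}, and by isometry
\begin{align*}
\sigma_t(a_n) &\longrightarrow \sigma_t(a),\\
\sigma_z(\sigma_t(a_n)) &= \sigma_t(\sigma_z(a_n)) \longrightarrow \sigma_t(\sigma_z(a)).
\end{align*}
Thus $\sigma_t(a)\in D(\sigma_z)$ with $\sigma_z(\sigma_t(a))=\sigma_t(\sigma_z(a))$. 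Applying the same inclusion with $-t$ gives $\sigma_{-t}(D(\sigma_z))\subseteq D(\sigma_z)$, which combined with the previous inclusion yields the equality $\sigma_t(D(\sigma_z))=D(\sigma_z)$.

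Next I would establish $D(\sigma_{t+z})=D(\sigma_z)$ and the remaining identity. For the inclusion $D(\sigma_z)\subseteq D(\sigma_{t+z})$, take $a\in D(\sigma_z)$ with approximating sequence $\{a_n\}$ as above; since $a_n\in\mathcal A_\sigma$, Lemma \ref{25-11-21} gives $\sigma_{t+z}(a_n)=\sigma_t(\sigma_z(a_n))\to \sigma_t(\sigma_z(a))$, so $a\in D(\sigma_{t+z})$ and $\sigma_{t+z}(a)=\sigma_t(\sigma_z(a))$. For the reverse inclusion, take $a\in D(\sigma_{t+z})$ with $\{a_n\}\subseteq \mathcal A_\sigma$ such that $a_n\to a$ and $\sigma_{t+z}(a_n)\to \sigma_{t+z}(a)$; by Lemma \ref{25-11-21} applied to $a_n$, $\sigma_z(a_n)=\sigma_{-t}(\sigma_{t+z}(a_n))$, and continuity of $\sigma_{-t}$ gives $\sigma_z(a_n)\to \sigma_{-t}(\sigma_{t+z}(a))$, so $a\in D(\sigma_z)$. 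Finally, the identity $\sigma_z(\sigma_t(a))=\sigma_{t+z}(a)$ follows by combining the previous two displayed equalities, since $\sigma_z(\sigma_t(a))=\sigma_t(\sigma_z(a))=\sigma_{t+z}(a)$.

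I do not anticipate a real obstacle: the argument is a routine exercise in interchanging a bounded operator with a closed unbounded one. The only point requiring mild care is the bookkeeping that confirms $\sigma_t$ can be pushed through the closure from both sides; this is handled uniformly by the isometry of $\sigma_t$, which automatically promotes convergence in $X$ and preserves membership in $\mathcal A_\sigma$.
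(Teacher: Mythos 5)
Your proof is correct and follows essentially the same route as the paper: approximate by a sequence in $\mathcal A_\sigma$, use the commutation relations of Lemma \ref{25-11-21} on entire analytic elements together with the isometry of $\sigma_t$, and conclude via closedness of $\sigma_z$ and $\sigma_{t+z}$. The only cosmetic difference is that the paper obtains the reverse inclusions by substituting $z \mapsto z+t$ and $t \mapsto -t$, whereas you argue one of them directly with a second approximating sequence; both are the same idea.
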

\begin{proof} Let $a \in D(\sigma_z)$. There is a sequence $\{a_n\}$ in $\mathcal A_\sigma$ such that $\lim_{n \to \infty} a_n = a$ and 
$$
\lim_{n \to \infty} \sigma_z(a_n) = \sigma_z(a).
$$ 
Using Lemma \ref{25-11-21} this implies that 
$$
\lim_{n \to \infty} \sigma_{t+z}(a_n) =\lim_{n \to \infty} \sigma_{z}(\sigma_t(a_n)) = \lim_{n \to \infty} \sigma_{t}(\sigma_z(a_n))) = \sigma_t(\sigma_z(a)).
$$ 
Since $\sigma_z$ and $\sigma_{t+z}$ are closed operators it follows that $a \in D(\sigma_{t+z}), \ \sigma_t(a) \in D(\sigma_z)$ and $\sigma_{t+z}(a) = \sigma_z(\sigma_t(a)) = \sigma_t(\sigma_z(a))$. The reverse inclusions $D(\sigma_{t+z}) \subseteq D( \sigma_z)$ and $D(\sigma_z) \subseteq \sigma_t(D(\sigma_z))$ follow by replacing $z$ with $z+t$ and $t$ with $-t$.
\end{proof}

\begin{lemma}\label{23-11-21} $\sigma_z\left(D(\sigma_z)\right) \subseteq D(\sigma_{-z})$ and $\sigma_{-z}\circ \sigma_z(a) = a \ \ \forall a \in D(\sigma_z)$.
\end{lemma}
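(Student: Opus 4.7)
My plan is to reduce to the case of entire analytic elements via the defining sequences and then invoke closedness of $\sigma_{-z}$.

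First, I would observe that for $a \in \mathcal A_\sigma$ the statement is immediate from Lemma \ref{25-11-21}: part (a) gives $\sigma_z(a) \in \mathcal A_\sigma \subseteq D(\sigma_{-z})$, and part (b) applied with $w = -z$ yields $\sigma_{-z}(\sigma_z(a)) = \sigma_0(a) = a$. So the content of the lemma is to push this through the closure.

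Next, let $a \in D(\sigma_z)$ be arbitrary. By definition of the closure there is a sequence $\{a_n\}$ in $\mathcal A_\sigma$ with $a_n \to a$ and $\sigma_z(a_n) \to \sigma_z(a)$ in $X$. By the first paragraph each $\sigma_z(a_n)$ lies in $\mathcal A_\sigma \subseteq D(\sigma_{-z})$ and satisfies $\sigma_{-z}(\sigma_z(a_n)) = a_n$. Thus we have a sequence $\{\sigma_z(a_n)\}$ in $D(\sigma_{-z})$ with $\sigma_z(a_n) \to \sigma_z(a)$ and $\sigma_{-z}(\sigma_z(a_n)) = a_n \to a$. Since $\sigma_{-z}$ is a closed operator (Lemma \ref{15-11-21d} guarantees the original operator on $\mathcal A_\sigma$ is closable, and we are working with its closure), we conclude $\sigma_z(a) \in D(\sigma_{-z})$ and $\sigma_{-z}(\sigma_z(a)) = a$.

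There is no real obstacle here; the proof is a one-step application of closedness once one realises that on the core $\mathcal A_\sigma$ the identity $\sigma_{-z}\sigma_z = \id$ is a direct consequence of Lemma \ref{25-11-21}(b), and that $\sigma_z$ maps $\mathcal A_\sigma$ into itself by Lemma \ref{25-11-21}(a), so the approximating images $\sigma_z(a_n)$ automatically belong to $D(\sigma_{-z})$. The only small caveat to keep in mind is that the symbol $\sigma_{-z}$ now denotes the closure, not just the operator initially defined on $\mathcal A_\sigma$, but that is precisely what allows the conclusion $\sigma_z(a) \in D(\sigma_{-z})$ for elements outside $\mathcal A_\sigma$.
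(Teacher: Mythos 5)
Your proof is correct and follows exactly the paper's argument: establish $\sigma_{-z}\circ\sigma_z = \id$ on $\mathcal A_\sigma$ via Lemma \ref{25-11-21}, then pass to general $a \in D(\sigma_z)$ using an approximating sequence in $\mathcal A_\sigma$ and the closedness of $\sigma_{-z}$. No gaps.
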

\begin{proof} Let $a \in D(\sigma_z)$. There is a sequence $\{a_n\}$ in $\mathcal A_\sigma$ such that $\lim_{n \to \infty} a_n =a$ and $\lim_{n\to \infty} \sigma_z(a_n) = \sigma_z(a)$. It follows from Lemma \ref{25-11-21} that $\sigma_z(a_n) \in \mathcal A_\sigma$ and $\sigma_{-z}\left(\sigma_z(a_n)\right) = a_n$. Since $\sigma_{-z}$ is closed it follows that $\sigma_z(a) \in D(\sigma_{-z})$ and $ \sigma_{-z}\circ \sigma_z(a) = a$.

\end{proof}

\begin{lemma}\label{17-11-21i} Let $a \in X$. Then
\begin{equation}\label{12-05-22}
\sigma_z(R_n(a)) = \sqrt{\frac{n}{\pi}} \int_\mathbb R e^{-n(s-z)^2}\sigma_s(a) \ \mathrm ds 
\end{equation}
for all $n \in \mathbb N$ and $z \in \mathbb C$. If $a \in D(\sigma_z)$, then
\begin{equation}\label{12-05-22a}
\sigma_z(R_n(a)) = \sqrt{\frac{n}{\pi}} \int_\mathbb R e^{-ns^2}\sigma_s(\sigma_z(a)) \ \mathrm ds = R_n(\sigma_z(a))
\end{equation}
for all $n \in \mathbb N$. 
\end{lemma}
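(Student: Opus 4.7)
The plan is to derive \eqref{12-05-22} by matching the Taylor coefficients of $\sigma_t(R_n(a))$ computed in the proof of Lemma \ref{24-11-21} with the expansion supplied by Lemma \ref{18-11-21}, using the uniqueness of power-series coefficients; and to derive the middle equality in \eqref{12-05-22a} first when $a \in \mathcal A_\sigma$ via the identity principle for entire holomorphic $X$-valued functions, then extending to all $a \in D(\sigma_z)$ by closedness of $\sigma_z$.

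For \eqref{12-05-22} I would invoke Lemma \ref{18-11-21} to get a sequence $\{b_k\}_{k \geq 0}$ in $X$ with $\sum_{k=0}^\infty \|b_k\||z|^k < \infty$ and $\sqrt{n/\pi}\int_\mathbb R e^{-n(s-z)^2}\sigma_s(a)\,\mathrm d s = \sum_{k=0}^\infty b_k z^k$ for every $z \in \mathbb C$. The display inside the proof of Lemma \ref{24-11-21} shows that these same $b_k$ satisfy $\sigma_t(R_n(a)) = \sum_k b_k t^k$ for every $t \in \mathbb R$, so the uniqueness clause of Lemma \ref{13-11-21a} forces $b_k = c_n(R_n(a))$. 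By the very definition of the operator $\sigma_z$ on $\mathcal A_\sigma$ one then has
$$
\sigma_z(R_n(a)) = \sum_{k=0}^\infty c_k(R_n(a))z^k = \sum_{k=0}^\infty b_k z^k,
$$
which is the required integral.

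For \eqref{12-05-22a}, the right-hand equality is just the definition \eqref{17-11-21f} of $R_n$ applied to $\sigma_z(a)$, so the content is $\sigma_z(R_n(a)) = R_n(\sigma_z(a))$. I would first prove this for $a \in \mathcal A_\sigma$ by introducing $F,G : \mathbb C \to X$ via $F(w) := \sigma_w(R_n(a))$ and $G(w) := R_n(\sigma_w(a))$. Both are entire holomorphic: $F$ because $R_n(a) \in \mathcal A_\sigma$ by Lemma \ref{24-11-21}, and $G$ because $w \mapsto \sigma_w(a)$ is entire holomorphic by Theorem \ref{17-02-23e} and $R_n$ is a bounded linear operator (hence preserves entire holomorphy of vector-valued functions). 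For $t \in \mathbb R$, the continuous linear map $\sigma_t$ passes through the Bochner integral defining $R_n$, so $F(t) = \sigma_t(R_n(a)) = R_n(\sigma_t(a)) = G(t)$. Uniqueness of holomorphic extensions (applicable because $w \mapsto \varphi(F(w)-G(w))$ is entire and vanishes on $\mathbb R$ for every $\varphi \in X^*$, cf.\ Theorem \ref{17-02-23e}) gives $F \equiv G$, and specialising at $w=z$ yields the claim in the entire-analytic case.

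Finally, for general $a \in D(\sigma_z)$ I would use the closure definition: pick $\{a_k\} \subseteq \mathcal A_\sigma$ with $a_k \to a$ and $\sigma_z(a_k) \to \sigma_z(a)$. The preceding step gives $\sigma_z(R_n(a_k)) = R_n(\sigma_z(a_k))$ for every $k$, and continuity of $R_n$ supplies $R_n(a_k) \to R_n(a)$ together with $R_n(\sigma_z(a_k)) \to R_n(\sigma_z(a))$. Since $R_n(a_k) \in \mathcal A_\sigma \subseteq D(\sigma_z)$ and $\sigma_z$ is closed by Lemma \ref{15-11-21d}, one concludes $R_n(a) \in D(\sigma_z)$ and $\sigma_z(R_n(a)) = R_n(\sigma_z(a))$, completing \eqref{12-05-22a}. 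The only genuinely subtle step is the entire-analytic identity $F \equiv G$, and this is essentially free once the identity principle for Banach-valued holomorphic functions (already built into Theorem \ref{17-02-23e}) is in hand; everything else is bookkeeping around continuity of $R_n$ and closedness of $\sigma_z$.
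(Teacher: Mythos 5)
Your proof is correct. For \eqref{12-05-22} you argue exactly as the paper does: Lemma \ref{18-11-21} exhibits the integral as an entire function of $z$ agreeing with $t\mapsto\sigma_t(R_n(a))$ on $\mathbb R$, and the identity then follows from how $\sigma_z$ is defined on $\mathcal A_\sigma$ (your coefficient-matching via Lemma \ref{13-11-21a} and the paper's appeal to the ``alternative'' description of $\sigma_z(a)$ as the value of the holomorphic extension are the same argument in two dialects). For \eqref{12-05-22a} your route genuinely differs: the paper applies Lemma \ref{12-02-22b} from Appendix \ref{integration} directly to the closed operator $\sigma_z$ and the integrand $s\mapsto\sigma_s(a)$, using that $\sigma_s(a)\in D(\sigma_z)$ and that $s\mapsto\sigma_z(\sigma_s(a))=\sigma_s(\sigma_z(a))$ is continuous and integrable against the Gaussian; this yields $R_n(a)\in D(\sigma_z)$ and the commutation in one step. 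You instead first establish $\sigma_z(R_n(a))=R_n(\sigma_z(a))$ on $\mathcal A_\sigma$ by the identity principle for the two entire functions $F(w)=\sigma_w(R_n(a))$ and $G(w)=R_n(\sigma_w(a))$, and then pass to general $a\in D(\sigma_z)$ by choosing an approximating sequence from $\mathcal A_\sigma$ (which exists because $\sigma_z$ is by definition the closure of its restriction to $\mathcal A_\sigma$) and invoking continuity of $R_n$ and closedness of $\sigma_z$. Both arguments are valid; the paper's is shorter because the Appendix lemma does the closed-operator bookkeeping for you, while yours avoids integration-versus-closed-operator interchange entirely at the cost of an extra approximation step. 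One cosmetic point: in your coefficient-matching step the conclusion should read $b_k=c_k(R_n(a))$ rather than $c_n(R_n(a))$.
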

\begin{proof} It follows from Lemma \ref{18-11-21} that $z \mapsto f(z) : = \sqrt{\frac{n}{\pi}} \int_\mathbb R e^{-n (s-z)^2} \sigma_s(a) \ \mathrm{d} s$ is entire holomorphic and since 
$$
f(t) = \sqrt{\frac{n}{\pi}} \int_\mathbb R e^{-n(s-t)^2}\sigma_s(a) \ \mathrm ds  = \sqrt{\frac{n}{\pi}} \int_\mathbb R e^{-n s^2} \sigma_{s+t}(a) \ \mathrm{d} s = \sigma_t(R_n(a)) 
$$
for all $t \in \mathbb R$, equality \eqref{12-05-22} is a consequence of how $\sigma_z$ is defined. For \eqref{12-05-22a}, note that $\sigma_z(\sigma_s(a)) = \sigma_s(\sigma_z(a))$ depends continuously of $s$ and that $\sigma_z$ is closed. It follows therefore from Lemma \ref{12-02-22b} that $R_n(a) \in D(\sigma_z)$ and 
$$
\sigma_z(R_n(a)) = \sqrt{\frac{n}{\pi}} \int_\mathbb R e^{-ns^2} \sigma_s(\sigma_z(a)) \ \mathrm ds.
$$

\end{proof}

\begin{lemma}\label{11-02-22x} $\bigcap_{z \in \mathbb C} D(\sigma_z) = \mathcal A_\sigma$.
\end{lemma}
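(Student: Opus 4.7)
The inclusion $\mathcal A_\sigma \subseteq \bigcap_{z \in \mathbb C} D(\sigma_z)$ is immediate from the definition of $\sigma_z$ as the closure of the operator $\sigma_z : \mathcal A_\sigma \to X$. For the reverse inclusion, let $a \in \bigcap_{z \in \mathbb C} D(\sigma_z)$. The plan is to verify condition (2) of Theorem \ref{17-02-23e}: for every $\varphi \in X^*$, the function $g(z) := \varphi(\sigma_z(a))$ extends the map $t \mapsto \varphi(\sigma_t(a))$ to an entire holomorphic function on $\mathbb C$.

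The idea is to approximate $a$ by the smooth elements $R_n(a) \in \mathcal A_\sigma$ and to use Phragmen-Lindel{\"o}f to control the approximating sequence uniformly on bounded strips. For each $n$ the function $g_n(z) := \varphi(\sigma_z(R_n(a)))$ is entire holomorphic (since $R_n(a) \in \mathcal A_\sigma$), and by Lemma \ref{24-09-23x} it is bounded on every bounded strip about $\mathbb R$. Since $a \in D(\sigma_z)$, Lemma \ref{17-11-21i} gives $\sigma_z(R_n(a)) = R_n(\sigma_z(a))$, so by Lemma \ref{24-11-21} we have $g_n(z) \to \varphi(\sigma_z(a)) = g(z)$ pointwise in $z \in \mathbb C$.

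Fix $\beta > 0$ and consider the strip $S_\beta := \{z \in \mathbb C : |\Imag z| \leq \beta\}$. On the boundary, since $R_n$ is a contraction and $\sigma_t$ is an isometry,
\begin{equation*}
|g_n(t)| \leq \|\varphi\|\,\|R_n(\sigma_t(a))\| \leq \|\varphi\|\,\|a\|,
\end{equation*}
\begin{equation*}
|g_n(t \pm i\beta)| = |\varphi(\sigma_t(R_n(\sigma_{\pm i\beta}(a))))| \leq \|\varphi\|\,\|\sigma_{\pm i\beta}(a)\|,
\end{equation*}
where the last step uses Lemma \ref{18-11-21b} and Lemma \ref{17-11-21i}. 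Combined with boundedness of $g_n$ on $S_\beta$ coming from Lemma \ref{24-09-23x}, the Phragmen-Lindel{\"o}f theorem (Proposition 5.3.5 in \cite{BR}) then yields
\begin{equation*}
\sup_{n \in \mathbb N} \sup_{z \in S_\beta} |g_n(z)| \leq \|\varphi\|\max\bigl\{\|a\|,\, \|\sigma_{i\beta}(a)\|,\, \|\sigma_{-i\beta}(a)\|\bigr\} < \infty .
\end{equation*}

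Thus $\{g_n\}$ is a locally uniformly bounded sequence of entire holomorphic functions converging pointwise to $g$. By Vitali's theorem (equivalently, by Montel's theorem combined with uniqueness of the limit), the convergence is uniform on compact subsets of $\mathbb C$ and $g$ is entire holomorphic. Since $g(t) = \varphi(\sigma_t(a))$ for $t \in \mathbb R$ and $\varphi \in X^*$ was arbitrary, condition (2) of Theorem \ref{17-02-23e} holds, so $a \in \mathcal A_\sigma$. The main obstacle is verifying the uniform boundedness of $\{g_n\}$ on bounded strips; once the Phragmen-Lindel{\"o}f bound is set up via the identity $\sigma_z R_n = R_n \sigma_z$ from Lemma \ref{17-11-21i}, the rest is standard complex analysis.
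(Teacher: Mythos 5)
Your proof is correct and follows essentially the same route as the paper: both arguments reduce to condition (2) of Theorem \ref{17-02-23e}, approximate $a$ by $R_n(a) \in \mathcal A_\sigma$, exploit the identity $\sigma_z(R_n(a)) = R_n(\sigma_z(a))$ from Lemma \ref{17-11-21i}, and control the functions $z \mapsto \varphi(\sigma_z(R_n(a)))$ on strips via Phragmen--Lindel\"of. The only difference is the last step: the paper applies Phragmen--Lindel\"of to the differences $g_k - g_m$, obtaining uniform Cauchy convergence on each strip and hence a holomorphic limit directly, whereas you apply it to each $g_n$ separately to get a locally uniform bound and then invoke Vitali's theorem; both are valid, and the paper's variant merely avoids citing Vitali/Montel.
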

\begin{proof} Let $ a \in \bigcap_{z \in \mathbb C} D(\sigma_z)$ and let $\varphi \in X^*$. We will show that $\mathbb C \ni z \mapsto \varphi(\sigma_z(a))$ is entire holomorphic. Let $n \in \mathbb N$. Thanks to Lemma \ref{24-11-21} it follows from Proposition 5.3.5 in \cite{BR} that
\begin{align*}
&\sup_{|\Imag z| \leq n}\left|\varphi(\sigma_z(R_k(a))) - \varphi(\sigma_z(R_m(a))) \right| \\
&\leq \left\|\varphi\right\|\max \left\{\left\|\sigma_{in}(R_k(a)) - \sigma_{in}(R_m(a))\right\|, \left\|\sigma_{-in}(R_k(a)) - \sigma_{-in}(R_m(a))\right\|\right\} 
\end{align*}
for all $k,m \in \mathbb N$.
Since  $\lim_{k \to \infty} \sigma_{z}(R_k(a)) = \sigma_{z}(a)$ for all $z \in \mathbb C$ by Lemma \ref{17-11-21i} and Lemma \ref{24-11-21}, it follows that the sequence of functions 
$$
\varphi(\sigma_z(R_k(a)), k \in \mathbb N,
$$ 
converges uniformly on the strip $\left\{z \in \mathbb C: \ \left|\Imag z\right| \leq n \right\}$ to a function $f$ which is holomorphic in the interior of the strip since $\mathbb C \ni z \mapsto \varphi(\sigma_z(R_k(a)))$ is for each $k$. Since $f(z) = \lim_{k \to \infty} \varphi(\sigma_{z}(R_k(a))) = \varphi(\sigma_{z}(a))$ for all $z$ in the strip it follows that $\mathbb C \ni z \mapsto \varphi(\sigma_z(a))$ is holomorphic in the interior of the strip. Since $n$ was arbitrary we conclude that $\mathbb C \ni z \mapsto \varphi(\sigma_z(a))$ is entire holomorphic. Since $\varphi \in X^*$ was arbitrary this means that $a \in \mathcal A_\sigma$ by Theorem \ref{17-02-23e}. This completes the proof because the inclusion $\mathcal A_\sigma \subseteq \bigcap_{z \in \mathbb C} D(\sigma_z)$ holds by construction.
\end{proof} 

\begin{example}\label{15-06-22}

\textnormal{Let $\mathbb H$ be a Hilbert space and $U = (U_t)_{t \in \mathbb R}$ a strongly continuous unitary representation of $\mathbb R$ on $\mathbb H$; a one-parameter unitary group on $\mathbb H$ in the sense of \cite{KR}. This is an example of a flow of isometries as those considered more generally in this section and one can therefore define the operators $U_z$ for each $z \in \mathbb C$ as we have done here. By Stone's theorem there is a self-adjoint operator $H$ on $\mathbb H$ such that $U_t = e^{i t H}$, cf. Theorem 5.6.36 of \cite{KR}. Then 
\begin{equation}\label{15-06-22a}
U_z = e^{ i zH} 
\end{equation}
for all $z \in \mathbb C$, where the latter is the operator defined by spectral theory applied to the self-adjoint operator $H$, cf. e.g. Theorem 5.6.26 in \cite{KR}. To see this, let $\{E_\lambda\}$ be the spectral resolution of $H$, and set
$$
E_n := E[-n,n] .
$$
By spectral theory $\bigcup_n E_n\mathbb H$ is a core for $e^{i z H}$ and we claim that it is also a core for $U_z$. To see this, note first of all that $E_k \mathbb H \subseteq \mathcal A_U$ since 
$$
U_t E_k\psi = \sum_{n=0}^\infty (iE_k H)^nE_k t^n
$$
with convergence in $\mathbb H$ for all $\psi \in \mathbb H$. Consider then an element $w \in \mathbb H$ from the set $\mathcal A_U$ of entire analytic elements for the flow $U$. By Lemma \ref{13-11-21a} there is a sequence $\{v_k\}$ in $\mathbb H$ such that
$$
U_t w = \sum_{k=0}^\infty v_k t^k,
$$
with convergence in $\mathbb H$ for all $t \in \mathbb R$. Then
$$
U_tE_nw= E_nU_tw =\sum_{k=0}^\infty E_n v_k t^k
$$
for all $t \in \mathbb R$ and hence $U_zE_nw = \sum_{k=0}^\infty E_n v_k z^k$. Since $\sum_{k=0}^\infty \left\|v_k\right\| |z|^k < \infty$ by Lemma \ref{13-11-21a}, and $\lim_{n \to \infty} E_n v_k = v_k$ for all $k$, it follows that
$$
\lim_{n \to \infty} U_zE_nw = \sum_{k=0}^\infty v_kz^k = U_z w .
$$
Since $\mathcal A_U$ is a core for $U_z$ by definition, this shows that so is $\bigcup_n E_n\mathbb H$. Now note that for $w \in \bigcup_n E_n\mathbb H$,
$$
U_tw = \sum_{k=0}^\infty \frac{(i H)^k}{k!} t^k w 
$$
with convergence in the norm of $\mathbb H$, implying that
$$
U_zw =  \sum_{k=0}^\infty \frac{(i zH)^k}{k!} w  = e^{izH}w .
$$
Hence $U_z$ and $e^{i zH}$ agree on the common core $\bigcup_n E_n\mathbb H$. Since both operators are closed, they agree, i.e. \eqref{15-06-22a} holds.}
\end{example}

\subsection{Flows on $C^*$-algebras}

Let $A$ a $C^*$-algebra. A \emph{flow} on $A$ is a representation $\sigma = (\sigma_t)_{t \in \mathbb R}$ of  $\mathbb R$ by automorphisms of $A$ such that
$$
\lim_{t \to t_0} \left\|\sigma_t(a) - \sigma_{t_0}(a)\right\| = 0
$$
for all $t_0 \in \mathbb R$ and all $a \in A$. This is a particular case of what we have considered above, but we shall need a few facts that involve the additional structure of a $C^*$-algebra. Let therefore now $\sigma$ be a flow on the $C^*$-algebra $A$.

\begin{lemma}\label{24-11-21k} Let $z \in \mathbb C$. Then $D(\sigma_z)^* = D(\sigma_{\overline{z}})$ and $\sigma_{\overline{z}}(a^*) = \sigma_z(a)^*$ for all $a \in D(\sigma_z)$. 
\end{lemma}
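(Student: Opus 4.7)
The natural plan is to first establish the identity on the dense subspace $\mathcal A_\sigma$ of entire analytic elements and then extend by closedness. Since each $\sigma_t$ is a $*$-automorphism of $A$, adjoining is a bounded anti-linear map that commutes with the flow, so all the work is to understand how adjoining interacts with the power series defining $\sigma_z$.

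Step one: show that $\mathcal A_\sigma$ is $*$-invariant and that $\sigma_{\overline z}(a^*) = \sigma_z(a)^*$ for $a \in \mathcal A_\sigma$. Given $a \in \mathcal A_\sigma$ with expansion $\sigma_t(a) = \sum_{n=0}^\infty c_n(a) t^n$ from \eqref{13-11-21}, apply the $*$-operation and use $\sigma_t(a^*) = \sigma_t(a)^*$ together with continuity of the adjoint to obtain
\begin{equation*}
\sigma_t(a^*) = \sum_{n=0}^\infty c_n(a)^* t^n \quad \forall t \in \mathbb R.
\end{equation*}
This exhibits $a^* \in \mathcal A_\sigma$ and, by uniqueness of the coefficients (Lemma \ref{13-11-21a}), gives $c_n(a^*) = c_n(a)^*$. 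Plugging $\overline z$ into the series for $a^*$ and $z$ into the series for $a$, and taking the adjoint in the latter, one reads off
\begin{equation*}
\sigma_{\overline z}(a^*) = \sum_{n=0}^\infty c_n(a)^* \overline z^{\,n} = \Bigl( \sum_{n=0}^\infty c_n(a) z^n \Bigr)^* = \sigma_z(a)^*,
\end{equation*}
which is the claimed identity on $\mathcal A_\sigma$.

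Step two: extend to $D(\sigma_z)$ using that $\sigma_z$ is defined as the closure of its restriction to $\mathcal A_\sigma$ (see Lemma \ref{15-11-21d} and the convention preceding Lemma \ref{18-11-21b}). Given $a \in D(\sigma_z)$, choose a sequence $\{a_n\} \subseteq \mathcal A_\sigma$ with $a_n \to a$ and $\sigma_z(a_n) \to \sigma_z(a)$. Continuity of the adjoint gives $a_n^* \to a^*$ and $\sigma_z(a_n)^* \to \sigma_z(a)^*$, and step one rewrites the latter as $\sigma_{\overline z}(a_n^*) \to \sigma_z(a)^*$. Since $\sigma_{\overline z}$ is closed we conclude $a^* \in D(\sigma_{\overline z})$ and $\sigma_{\overline z}(a^*) = \sigma_z(a)^*$, i.e. $D(\sigma_z)^* \subseteq D(\sigma_{\overline z})$ together with the stated formula. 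The reverse inclusion $D(\sigma_{\overline z})^* \subseteq D(\sigma_z)$ is obtained by replacing $z$ with $\overline z$ in the argument just given and applying $*$ once more.

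I do not expect any real obstacle here; the only point that requires a second look is the bookkeeping between $z$ and $\overline z$ when taking the adjoint of a power series — one must remember that the $*$-operation is anti-linear, which is precisely what turns $z$ into $\overline z$ and yields the reversal of argument between the two operators.
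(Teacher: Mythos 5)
Your proof is correct and follows essentially the same route as the paper: establish $\sigma_{\overline z}(a^*)=\sigma_z(a)^*$ on $\mathcal A_\sigma$, extend to $D(\sigma_z)$ by closedness of $\sigma_{\overline z}$, and obtain the reverse inclusion from $D(\sigma_{\overline z})=D(\sigma_{\overline z})^{**}$. The only cosmetic difference is that you verify the identity on $\mathcal A_\sigma$ via the power series expansion and uniqueness of its coefficients, whereas the paper notes that $z\mapsto f(\overline z)^*$ is the entire holomorphic extension of $t\mapsto\sigma_t(a^*)$; the two are equivalent by Theorem \ref{17-02-23e}.
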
 
\begin{proof} Let $a \in \mathcal A_\sigma$. If $f : \mathbb C \to A$ is entire holomorphic and $f(t) = \sigma_t(a)$ for $t \in \mathbb R$ we have that $\sigma_z(a) = f(z)$. Note that $z \mapsto g(z) := f(\overline{z})^*$ is also entire holomorphic and $g(t) = \sigma_t(a^*)$ for all $t \in \mathbb R$. It follows that $a^* \in \mathcal A_\sigma$ and $\sigma_{\overline{z}}(a^*) = g(\overline{z}) = f(z)^* = \sigma_z(a)^*$. Let then $a \in D(\sigma_z)$. There is a sequence $\{a_n\}$ in $\mathcal A_\sigma$ such that $\lim_{n \to \infty} a_n =a$ and $\lim_{n \to \infty} \sigma_z(a_n) = \sigma_z(a)$. Since $\lim_{n \to \infty} a_n^* = a^*$, $\lim_{n \to \infty} \sigma_{\overline{z}}(a_n^*) =  \lim_{n \to \infty} \sigma_{{z}}(a_n)^* = \sigma_z(a)^*$ and $\sigma_{\overline{z}}$ is closed, it follows that $a^* \in D(\sigma_{\overline{z}})$ and $\sigma_{\overline{z}}(a^* ) = \sigma_z(a)^*$. We have shown that $D(\sigma_z)^* \subseteq D(\sigma_{\overline{z}})$ for all $z \in \mathbb C$. Therefore $D(\sigma_{\overline{z}}) = D(\sigma_{\overline{z}})^{**} \subseteq D(\sigma_z)^*$.
\end{proof}

\begin{lemma}\label{11-11-21} $\mathcal A_\sigma$ is a dense $*$-subalgebra of $A$.
\end{lemma}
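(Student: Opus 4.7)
The plan is to check each required closure property directly from the power-series definition of $\mathcal A_\sigma$, and then invoke the smoothing operators for density. Writing elements of $\mathcal A_\sigma$ via their expansions $\sigma_t(a) = \sum_{n=0}^\infty c_n(a)t^n$ (norm-convergent for all $t \in \mathbb R$, per Lemma \ref{13-11-21a}), closure under complex linear combinations is immediate by term-by-term addition of the series. Density is already supplied by Lemma \ref{24-11-21}: the smoothing operators $R_n$ satisfy $R_n(A) \subseteq \mathcal A_\sigma$ and $\lim_{n\to \infty} R_n(a) = a$ for every $a \in A$. For closure under the involution, apply $*$ termwise to the expansion above; since $*$ is norm-continuous and conjugate-linear, and $t \in \mathbb R$, one obtains
\[
\sigma_t(a^*) \;=\; \sigma_t(a)^* \;=\; \sum_{n=0}^\infty c_n(a)^* \, t^n
\]
with norm-convergence, so $a^* \in \mathcal A_\sigma$ with $c_n(a^*) = c_n(a)^*$.

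The only nontrivial closure property is the product, and I would handle it via condition (4) of Theorem \ref{17-02-23e}. Given $a,b \in \mathcal A_\sigma$, let $f, g : \mathbb C \to A$ be the unique entire holomorphic extensions of $t \mapsto \sigma_t(a)$ and $t \mapsto \sigma_t(b)$. The product $fg : \mathbb C \to A$ is again entire holomorphic, as one reads off from
\[
\frac{f(z+h)g(z+h) - f(z)g(z)}{h} \;=\; \frac{f(z+h)-f(z)}{h}\,g(z+h) \;+\; f(z)\,\frac{g(z+h)-g(z)}{h},
\]
together with norm continuity of $g$ at $z$ and submultiplicativity of the $C^*$-norm. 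Because each $\sigma_t$ is a $*$-automorphism, $(fg)(t) = \sigma_t(a)\sigma_t(b) = \sigma_t(ab)$ for all $t \in \mathbb R$, and the equivalence (4)~$\Leftrightarrow$~(1) of Theorem \ref{17-02-23e} then gives $ab \in \mathcal A_\sigma$.

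The main (modest) obstacle is the multiplicative step, which is the only place that forces one to leave the bare power-series viewpoint and use holomorphy. A purely series-theoretic alternative would be to adapt Lemma \ref{13-11-21d}: writing $\sigma_t(ab) = \sigma_t(a)\sigma_t(b)$ as a Cauchy product of the two expansions and using that $\sum_n \|c_n(a)\||z|^n$ and $\sum_n \|c_n(b)\||z|^n$ are both finite for every $z \in \mathbb C$ (Lemma \ref{13-11-21a}), submultiplicativity yields absolute convergence of the series $\sum_n \bigl(\sum_{k=0}^n c_k(a)\,c_{n-k}(b)\bigr) t^n$ to $\sigma_t(ab)$, so $ab \in \mathcal A_\sigma$ with explicit coefficients. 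Either route concludes that $\mathcal A_\sigma$ is a dense $*$-subalgebra of $A$.
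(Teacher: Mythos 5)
Your proof is correct and follows essentially the same route as the paper: the product is handled by passing to the entire holomorphic extensions via Theorem \ref{17-02-23e} and observing that their pointwise product is again entire holomorphic and agrees with $\sigma_t(ab)$ on $\mathbb R$, and density comes from Lemma \ref{24-11-21}. The only cosmetic difference is the involution step, which the paper does via the holomorphic function $z \mapsto f(\overline{z})^*$ while you conjugate the power series termwise; both are immediate, and your Cauchy-product alternative for the multiplicative step is a valid (if unnecessary) bonus.
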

\begin{proof} Let $a,b \in \mathcal A_\sigma$. By Theorem \ref{17-02-23e} there are entire holomorphic functions $f,g : \mathbb C \to A$ such that $f(t) = \sigma_t(a)$ and $g(t) = \sigma_t(b)$ for all $t \in \mathbb R$. Then $\mathbb C \ni z \mapsto f(z)g(z)$ is entire holomorphic, with $\frac{\mathrm d}{\mathrm d z} f(z)g(z) = \left[\frac{\mathrm d}{\mathrm d z}f(z)\right] g(z) +  f(z)\left[\frac{\mathrm d}{\mathrm d z}g(z)\right]$ and $f(t)g(t) = \sigma_t(ab)$ for all $t \in \mathbb R$. Thus $ab \in \mathcal A_\sigma$. It follows that $\mathcal A_\sigma$ is a subalgebra of $A$. It is a invariant under $*$ because $\mathbb C \ni z \mapsto f(\overline{z})^*$ is holomorphic and $f(t)^* = \sigma_t(a^*)$ for all $t \in \mathbb R$. It follows from Lemma \ref{24-11-21} that $\mathcal A_\sigma$ is dense in $A$.
\end{proof}

\begin{lemma}\label{18-11-21g} Let $a \in \mathcal A_\sigma$ and $b \in D(\sigma_z)$. Then $ab \in D(\sigma_z)$ and $\sigma_z(ab) = \sigma_z(a)\sigma_z(b)$.
\end{lemma}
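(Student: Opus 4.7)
The plan is to reduce to the case where both factors are entire analytic and then invoke the closedness of $\sigma_z$.

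First I would verify the identity $\sigma_z(xy) = \sigma_z(x)\sigma_z(y)$ when both $x, y \in \mathcal A_\sigma$. By Theorem \ref{17-02-23e} there are entire holomorphic functions $f, g : \mathbb C \to A$ with $f(t) = \sigma_t(x)$, $g(t) = \sigma_t(y)$ for $t \in \mathbb R$. Then $z \mapsto f(z)g(z)$ is entire holomorphic and agrees with $t \mapsto \sigma_t(xy)$ on $\mathbb R$, so by uniqueness of entire holomorphic extensions (and since $xy \in \mathcal A_\sigma$ by Lemma \ref{11-11-21}) we have $\sigma_z(xy) = f(z)g(z) = \sigma_z(x)\sigma_z(y)$. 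This step is routine and essentially already recorded inside the proof of Lemma \ref{11-11-21}.

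Next, since $b \in D(\sigma_z)$, by definition of the closure there is a sequence $\{b_n\} \subseteq \mathcal A_\sigma$ with $b_n \to b$ and $\sigma_z(b_n) \to \sigma_z(b)$. For each $n$, $a b_n \in \mathcal A_\sigma$ by Lemma \ref{11-11-21}, and by the previous paragraph,
\begin{equation*}
\sigma_z(a b_n) = \sigma_z(a)\sigma_z(b_n).
\end{equation*}
Now $a b_n \to a b$ in $A$ because $\|a b_n - a b\| \leq \|a\|\|b_n - b\| \to 0$, and $\sigma_z(a)\sigma_z(b_n) \to \sigma_z(a)\sigma_z(b)$ since left-multiplication by the fixed element $\sigma_z(a) \in A$ is bounded. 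Hence $\sigma_z(a b_n) \to \sigma_z(a)\sigma_z(b)$.

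Finally, since $\sigma_z$ is closed (Lemma \ref{15-11-21d}), the convergence of both $\{a b_n\}$ and $\{\sigma_z(a b_n)\}$ forces $a b \in D(\sigma_z)$ and $\sigma_z(a b) = \sigma_z(a)\sigma_z(b)$. I do not anticipate any significant obstacle: the main point is that multiplication by an entire analytic element on the left is a bounded operation commuting with $\sigma_z$ on the core $\mathcal A_\sigma$, so closedness extends the identity to all of $D(\sigma_z)$.
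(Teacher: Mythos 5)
Your proof is correct and follows essentially the same route as the paper: establish $\sigma_z(xy)=\sigma_z(x)\sigma_z(y)$ for $x,y\in\mathcal A_\sigma$ via uniqueness of entire holomorphic extensions, approximate $b$ by a sequence in $\mathcal A_\sigma$ realizing the closure, and conclude by closedness of $\sigma_z$. No gaps.
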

\begin{proof} Choose a sequence $\{b_n\}$ in $\mathcal A_\sigma$ such that $\lim_{n \to \infty} b_n = b$ and 
$$
\lim_{n \to \infty} \sigma_z(b_n) = \sigma_z(b).
$$ 
For each $n$ the function $w \mapsto \sigma_w(a)\sigma_w(b_n)$ is entire holomorphic with
$$
\frac{\mathrm d}{\mathrm d w} \sigma_w(ab_n) =\left[\frac{\mathrm d}{\mathrm d w} \sigma_w(a)\right] b_n + a  \left[\frac{\mathrm d}{\mathrm d w} \sigma_w(b_n)\right] .
$$ 
It follows from Lemma \ref{11-11-21} that $t \mapsto \sigma_t(ab_n)$ has an entire holomorphic extension $w \mapsto \sigma_w(ab_n)$. Since this extension agrees with $\sigma_w(a)\sigma_w(b_n)$ when $w \in \mathbb R$, it follows that $\sigma_w(a)\sigma_w(b_n) = \sigma_w(ab_n)$ for all $w\in \mathbb C$. The desired conclusion follows now because $\sigma_z$ is closed.
\end{proof}

\begin{notes} The main content in this section is a slightly modified version of material from Bratteli and Robinsons monograph \cite{BR}. The material about $\sigma_z$ for non-real $z$ is gleaned from Kustermans, \cite{Ku1}. I have selected facts that will be needed in the following. More can be found in \cite{Ku1} and \cite{Ku2}.
\end{notes}


\section{Flows and invariant weights}\label{invariantweights} In this section we consider a flow $\sigma$ on the $C^*$-algebra $A$ and a densely defined weight $\psi$ on $A$ which we assume is $\sigma$-invariant in the sense that $\psi \circ \sigma_t = \psi$ for all $t \in \mathbb R$. Let $H_\psi$ be the Hilbert space and $\Lambda_\psi : \mathcal N_\psi \to H_\psi$ the linear map from the GNS construction associated to $\psi$, cf. Section \ref{06-02-22}. 

\begin{lemma}\label{17-11-21a} $\Lambda_\psi : \mathcal N_\psi \to H_\psi$ is closed.
\end{lemma}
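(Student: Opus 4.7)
The plan is to reduce closedness of $\Lambda_\psi$ directly to the lower semi-continuity of $\psi$. Note that $\sigma$-invariance plays no role in the statement; the argument is purely GNS and uses only that $\psi$ is a densely defined weight.

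So suppose $\{a_n\} \subseteq \mathcal N_\psi$ is a sequence with $a_n \to a$ in $A$ and $\Lambda_\psi(a_n) \to \xi$ in $H_\psi$. I would first show that $a \in \mathcal N_\psi$. Since $a_n^*a_n \to a^*a$ in $A^+$, lower semi-continuity gives
$$
\psi(a^*a) \leq \liminf_n \psi(a_n^*a_n) = \liminf_n \|\Lambda_\psi(a_n)\|^2 = \|\xi\|^2 < \infty,
$$
so $a \in \mathcal N_\psi$ and $\Lambda_\psi(a)$ is defined.

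Next I would show $\Lambda_\psi(a) = \xi$. Since $\mathcal N_\psi$ is a left ideal (in particular a subspace), $a_n - a \in \mathcal N_\psi$ and $\|\Lambda_\psi(a_n) - \Lambda_\psi(a)\|^2 = \psi((a_n-a)^*(a_n-a))$. Fix $n$ and apply lower semi-continuity to the sequence $(a_n - a_k)^*(a_n - a_k) \to (a_n-a)^*(a_n-a)$ in $A^+$ as $k \to \infty$:
$$
\psi((a_n-a)^*(a_n-a)) \leq \liminf_k \psi((a_n-a_k)^*(a_n-a_k)) = \liminf_k \|\Lambda_\psi(a_n) - \Lambda_\psi(a_k)\|^2 = \|\Lambda_\psi(a_n) - \xi\|^2.
$$
Therefore $\|\Lambda_\psi(a_n) - \Lambda_\psi(a)\| \leq \|\Lambda_\psi(a_n) - \xi\| \to 0$, so $\Lambda_\psi(a_n) \to \Lambda_\psi(a)$, and comparing limits yields $\Lambda_\psi(a) = \xi$.

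There is no real obstacle here; the only subtlety is recognizing that lower semi-continuity of $\psi$ must be invoked twice — once to trap $a$ inside $\mathcal N_\psi$, and once more (applied to differences) to control the norm $\|\Lambda_\psi(a_n) - \Lambda_\psi(a)\|$ by $\|\Lambda_\psi(a_n) - \xi\|$, rather than attempting to pass to a limit inside $\psi$, which lower semi-continuity alone does not permit.
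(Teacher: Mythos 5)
Your proof is correct, and it takes a genuinely different route from the paper's. You reduce everything to the lower semi-continuity of $\psi$ on $A^+$ (in the sequential form stated right after the definition of a semi-weight), applying it once to $a_n^*a_n \to a^*a$ to get $a \in \mathcal N_\psi$, and once to $(a_n-a_k)^*(a_n-a_k) \to (a_n-a)^*(a_n-a)$ to get the key bound $\|\Lambda_\psi(a_n)-\Lambda_\psi(a)\| \leq \|\Lambda_\psi(a_n)-\xi\|$; both applications are legitimate because the relevant sequences converge in norm in $A^+$. The paper instead invokes Combes' theorem: it writes $\psi = \sup_{\omega \in \mathcal F_\psi}\omega$, uses the operators $T_\omega \in \pi_\psi(A)'$ from Lemma \ref{08-11-21bx} to pass to the limit inside each bounded $\omega$, and then recovers $\psi(a^*a) \leq \|v\|^2$ by taking the supremum, followed by an $\epsilon$-argument with $\|T_\omega\Lambda_\psi(b)-\Lambda_\psi(b)\|^2 \leq 2\epsilon$ to identify the limit vector. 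Your argument is shorter and needs only the definition of a weight, which is a real gain here. What the paper's heavier machinery buys is portability: the same $T_\omega$-based proof is reused almost verbatim in Lemma \ref{22-02-22fx}, where the approximating net converges only in the $\sigma$-weak topology and $(a_i-a)^*(a_i-a)$ need not converge in norm, so the direct lower semi-continuity trick you use would not apply there.
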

\begin{proof} Let $\lim_{n \to \infty} a_n = a$ in $A$ with $a_n \in \mathcal N_\psi$ for all $n$, and assume that $\lim_{n \to \infty} \Lambda_\psi(a_n) = v$ in $H_\psi$. We must show that $a \in \mathcal N_\psi$ and that $\Lambda_\psi(a) = v$. Let $\omega \in \mathcal F_\psi$. By Lemma \ref{08-11-21bx} there is an operator $T_\omega$ on $H_\psi$ such that $0 \leq T_\omega \leq 1$ and
$$
\omega(c^*d) = \left<T_\omega \Lambda_\psi(d),\Lambda_\psi(c)\right> \  \ \forall c,d \in \mathcal N_\psi .
$$
Then
$$
\omega(a^*a) = \lim_{k \to \infty} \omega(a_k^*a_k) = \lim_{k \to \infty} \left<T_\omega \Lambda_\psi(a_k),\Lambda_\psi(a_k)\right> = \left<T_\omega v,v\right> \leq \|v\|^2 .
$$
Since $\omega \in \mathcal F_\psi$ was arbitrary it follows from Combes' theorem, Theorem \ref{04-11-21e}, that $\psi(a^*a) \leq \|v\|^2$, and hence $a \in \mathcal N_\psi$. Let $\epsilon >0$ and let $b \in \mathcal N_\psi$. It follows also from Combes' theorem that there is $\omega \in \mathcal F_\psi$ such that 
$$
\psi(b^*b) - \epsilon \leq \omega(b^*b) \leq \psi(b^*b).
$$
Then
\begin{align*}
& \left\| T_\omega \Lambda_\psi(b) - \Lambda_\psi(b)\right\|^2 \\
&= \left<T_\omega^2\Lambda_\psi(b),\Lambda_\psi(b)\right> + \left< \Lambda_\psi(b),\Lambda_\psi(b)\right> - 2\left<T_\omega \Lambda_\psi(b),\Lambda_\psi(b)\right> \\
& \leq \omega(b^*b) + \psi(b^*b) - 2\omega(b^*b) \\
&\leq 2\psi(b^*b) - 2(\psi(b^*b) - \epsilon) = 2\epsilon .
\end{align*}
We can choose $k$ so large that $\left\|v-\Lambda_\psi(a_k)\right\| \leq \epsilon$ and $\left|\omega(b^*a_k) - \omega(b^*a)\right| \leq \epsilon$.
 By the calculation above,
 \begin{align*}
 &\left|\left< v,\Lambda_\psi(b)\right> - \left<\Lambda_\psi(a),\Lambda_\psi(b)\right>\right| \\
 &\leq (\|v\| + \left\|\Lambda_\psi(a)\right\|) \sqrt{2\epsilon} + \left|\left<v,T_\omega\Lambda_\psi(b)\right> - \left<\Lambda_\psi(a), T_\omega\Lambda_\psi(b)\right> \right| \\
 & \leq (\|v\| + \left\|\Lambda_\psi(a)\right\|)\sqrt{2\epsilon} + \left\|\Lambda_\psi(b)\right\| \epsilon \\ 
 & \ \ \ \ \ \ \ \ \ \ \ \ \ \ \  +  \left| \left<\Lambda_\psi(a_k),T_\omega\Lambda_\psi(b)\right> - \left<\Lambda_\psi(a), T_\omega\Lambda_\psi(b)\right>\right| \\
 & = (\|v\| + \left\|\Lambda_\psi(a)\right\|)\sqrt{2\epsilon} + \left\|\Lambda_\psi(b)\right\| \epsilon + \left|\omega(b^*a_k) - \omega(b^*a)\right|\\
 & \leq (\|v\| + \left\|\Lambda_\psi(a)\right\|)\sqrt{2\epsilon} + \left\|\Lambda_\psi(b)\right\| \epsilon + \epsilon .
 \end{align*}
Since both $\epsilon > 0$ and $b\in \mathcal N_\psi$ were arbitrary we conclude that $v = \Lambda_\psi(a)$.
\end{proof}

\begin{cor}\label{24-06-22a} $\mathcal N_\psi$ is a Banach algebra when equipped with the algebra structure from $A$ and the norm
$$
|||a||| := \|a\| + \left\|\Lambda_\psi(a)\right\|  .
$$
\end{cor}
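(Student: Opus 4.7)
The plan is to verify the three defining properties of a Banach algebra: that $\mathcal N_\psi$ is closed under the multiplication inherited from $A$, that $|||\cdot|||$ is a submultiplicative norm, and that $(\mathcal N_\psi, |||\cdot|||)$ is complete. The first point is already available: by (a) of Lemma \ref{04-11-21n}, $\mathcal N_\psi$ is a left ideal in $A$, hence in particular a subalgebra. The norm axioms for $|||\cdot|||$ follow immediately from the corresponding properties of $\|\cdot\|$ on $A$ and of the Hilbert space norm on $H_\psi$, together with the linearity of $\Lambda_\psi$.

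For submultiplicativity, the key observation is that the GNS representation $\pi_\psi$ satisfies $\pi_\psi(a)\Lambda_\psi(b) = \Lambda_\psi(ab)$ for all $a \in A$ and $b \in \mathcal N_\psi$, and that $\pi_\psi$ is a contractive $*$-representation. Thus for $a,b \in \mathcal N_\psi$,
$$
\left\|\Lambda_\psi(ab)\right\| = \left\|\pi_\psi(a)\Lambda_\psi(b)\right\| \leq \|a\|\left\|\Lambda_\psi(b)\right\|.
$$
Combined with the submultiplicativity of $\|\cdot\|$ on $A$, this gives
$$
|||ab||| = \|ab\| + \left\|\Lambda_\psi(ab)\right\| \leq \|a\|\|b\| + \|a\|\left\|\Lambda_\psi(b)\right\| = \|a\|\cdot |||b||| \leq |||a|||\cdot |||b|||.
$$

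The step on which everything really hinges is completeness, and this is the only place where genuine content is used. Given a Cauchy sequence $\{a_n\}$ in $(\mathcal N_\psi,|||\cdot|||)$, the definition of $|||\cdot|||$ immediately yields that $\{a_n\}$ is Cauchy in $A$ and that $\{\Lambda_\psi(a_n)\}$ is Cauchy in $H_\psi$; hence there exist $a \in A$ and $v \in H_\psi$ with $\|a_n - a\| \to 0$ and $\|\Lambda_\psi(a_n) - v\| \to 0$. At this point the just-established Lemma \ref{17-11-21a}, asserting that $\Lambda_\psi$ is a closed operator from $A$ to $H_\psi$, applies verbatim and yields $a \in \mathcal N_\psi$ together with $\Lambda_\psi(a) = v$. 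Consequently $|||a_n - a||| = \|a_n - a\| + \|\Lambda_\psi(a_n) - \Lambda_\psi(a)\| \to 0$, which is the desired completeness. Thus the whole argument is essentially a short bookkeeping corollary to Lemma \ref{17-11-21a}, with no real obstacle beyond noticing that closedness of $\Lambda_\psi$ is precisely the statement that the graph norm on $\mathcal N_\psi$ is complete.
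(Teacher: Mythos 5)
Your proof is correct and follows exactly the paper's own route: subalgebra structure from Lemma \ref{04-11-21n}(a), submultiplicativity via $\Lambda_\psi(ab)=\pi_\psi(a)\Lambda_\psi(b)$ with $\|\pi_\psi(a)\|\leq\|a\|$, and completeness from the closedness of $\Lambda_\psi$ established in Lemma \ref{17-11-21a}. Nothing to add.
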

\begin{proof} $\mathcal N_\psi$ is a left ideal by (a) of Lemma \ref{04-11-21n} and hence a subalgebra of $A$. When $a,b \in \mathcal N_\psi$,
\begin{align*}
&|||ab||| = \|ab\| + \left\|\Lambda_\psi(ab)\right\| = \|ab\| + \left\|\pi_\psi(a) \Lambda_\psi(b)\right\| \\
& \leq \|a\|\|b\| + \|a\| \left\|\Lambda_\psi(b)\right\| \leq |||a||| \ |||b||| .
\end{align*}
That $\mathcal N_\psi$ is complete in the norm $||| \ \cdot \ |||$ follows because $A$ and $H_\psi$ are complete, and $\Lambda_\psi$ is closed by Lemma \ref{17-11-21a}.
\end{proof}

Since $\psi$ is $\sigma$-invariant it follows that $\sigma_t(\mathcal N_\psi) = \mathcal N_\psi$ and we obtain for each $t \in \mathbb R$ a unitary $U^\psi_t \in B(H_\psi)$ such that
$$
U^\psi_t\Lambda_\psi(a) = \Lambda_\psi(\sigma_t(a)) \ \ \forall a \in \mathcal N_\psi .
$$
It is straightforward to check that
\begin{equation}\label{08-03-22a}
U^\psi_t\pi_\psi(a) U^\psi_{-t} = \pi_\psi(\sigma_t(a))
\end{equation}
for all $a \in A$.

\begin{lemma}\label{17-11-21e} $\left\{U^\psi_t\right\}_{t \in \mathbb R}$ is a strongly continuous unitary representation of $\mathbb R$; that is, ${U^\psi_t}^* = U^\psi_{-t}$ for all $t \in \mathbb R$, $U^\psi_{t+s} = U^\psi_tU^\psi_s$ for all $s,t\in \mathbb R$, $U_0 = 1$ and $\mathbb R \ni t \mapsto U^\psi_tv$ is continuous for each $v \in H_\psi$.
\end{lemma}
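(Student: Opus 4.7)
The properties ${U^\psi_t}^* = U^\psi_{-t}$, $U^\psi_{t+s} = U^\psi_t U^\psi_s$ and $U_0^\psi = 1$ will be immediate from the defining relation $U^\psi_t\Lambda_\psi(a) = \Lambda_\psi(\sigma_t(a))$ together with density of $\Lambda_\psi(\mathcal N_\psi)$ in $H_\psi$. Each $U^\psi_t$ is a well-defined unitary because the map $\Lambda_\psi(a) \mapsto \Lambda_\psi(\sigma_t(a))$ is isometric by $\sigma$-invariance ($\|\Lambda_\psi(\sigma_t(a))\|^2 = \psi(\sigma_t(a^*a)) = \psi(a^*a)$) and has dense range since $\sigma_t(\mathcal N_\psi) = \mathcal N_\psi$. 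The group law on $U^\psi$ then reduces on the dense set to $\sigma_{t+s} = \sigma_t\sigma_s$, and $(U^\psi_t)^* = (U^\psi_t)^{-1} = U^\psi_{-t}$.

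For strong continuity, the plan is to first establish weak continuity on a dense subset and then upgrade. To show that for $a,b \in \mathcal N_\psi$ the map $t \mapsto \langle U^\psi_t\Lambda_\psi(a),\Lambda_\psi(b)\rangle = \psi(b^*\sigma_t(a))$ is continuous, I would exploit a constant-sum trick with the polarization identity. Define
$$f_\pm(t) := \psi\bigl((\sigma_t(a) \pm b)^*(\sigma_t(a) \pm b)\bigr).$$
The elements under $\psi$ lie in $\mathcal M_\psi^+$ (since $\mathcal N_\psi$ is a subspace and $\mathcal N_\psi^*\mathcal N_\psi \subseteq \mathcal M_\psi$), and $t \mapsto (\sigma_t(a)\pm b)^*(\sigma_t(a)\pm b)$ is norm-continuous into $A^+$. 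Hence lower semi-continuity of $\psi$ makes each $f_\pm$ lower semi-continuous. The weight property (additivity on $A^+$) combined with $\sigma$-invariance gives
$$f_+(t) + f_-(t) = \psi\bigl(2\sigma_t(a^*a) + 2b^*b\bigr) = 2\psi(a^*a) + 2\psi(b^*b),$$
which is independent of $t$. Two non-negative lower semi-continuous functions whose sum is constant must each be upper semi-continuous as well, hence continuous. Therefore $f_+ - f_- = 4\,\mathrm{Re}\,\psi(b^*\sigma_t(a))$ is continuous, and replacing $b$ by $\pm ib$ yields continuity of the imaginary part, so $\psi(b^*\sigma_t(a))$ is continuous. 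Since $\|U^\psi_t\| = 1$, uniform approximation extends continuity of $\langle U^\psi_t v, w\rangle$ to all $v, w \in H_\psi$.

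Finally, weak continuity upgrades to strong continuity via the Hilbert-space identity
$$\|U^\psi_t v - U^\psi_{t_0} v\|^2 = 2\|v\|^2 - 2\,\mathrm{Re}\,\langle U^\psi_t v, U^\psi_{t_0} v\rangle,$$
valid because $U^\psi$ consists of unitaries; the right-hand side tends to $0$ as $t \to t_0$ by weak continuity. The principal obstacle is the weak-continuity step: the identity $\|(1 - U^\psi_t)\Lambda_\psi(a)\|^2 = \psi((\sigma_t(a) - a)^*(\sigma_t(a) - a))$ is a direct reformulation that doesn't yield to lower semi-continuity or Cauchy–Schwarz in isolation, so the constant-sum trick applied to $f_\pm$, which forces continuity out of two lsc functions, is the non-obvious ingredient on which everything hinges.
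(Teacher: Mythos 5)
Your proof is correct, but it reaches the key weak-continuity step by a genuinely different route than the paper. The paper reduces to showing $\lim_{t\to 0}\left<U^\psi_t\Lambda_\psi(a),\Lambda_\psi(b)\right> = \left<\Lambda_\psi(a),\Lambda_\psi(b)\right>$ and then invokes Combes' theorem (Theorem \ref{04-11-21k}) to pick $\omega \in \mathcal F_\psi$ with $\omega(a^*a) \geq \psi(a^*a) - \epsilon$, uses the commutant operator $T_\omega$ from Lemma \ref{08-11-21bx} to get $\left\|T_\omega\Lambda_\psi(a) - \Lambda_\psi(a)\right\|^2 \leq 2\epsilon$, and finishes with the evident norm-continuity of $t \mapsto \omega(\sigma_{-t}(b)^*a)$ for the bounded functional $\omega$ — the same template used to prove that $\Lambda_\psi$ is closed in Lemma \ref{17-11-21a}. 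Your constant-sum argument instead extracts continuity of $t \mapsto \psi(b^*\sigma_t(a))$ directly from the three defining properties of an invariant weight: additivity forces $f_+ + f_-$ to be the finite constant $2\psi(a^*a) + 2\psi(b^*b)$, lower semi-continuity of $\psi$ makes each $f_\pm$ lower semi-continuous, and a finite constant sum of two non-negative lsc functions forces both to be continuous; polarization (with $b$ and $ib$) then recovers $\psi(b^*\sigma_t(a))$. All the supporting facts you use are available ($\sigma_t(a) \pm b \in \mathcal N_\psi$, so the arguments of $\psi$ lie in $\mathcal M_\psi^+$, and the extended $\psi$ on $\mathcal M_\psi$ is Hermitian), and the weak-to-strong upgrade for unitaries is standard. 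What your route buys is self-containedness — it needs neither Combes' theorem nor the GNS commutant operators; what the paper's route buys is uniformity, since the $(\omega, T_\omega)$ machinery is reused verbatim in several neighbouring proofs.
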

\begin{proof} The algebraic statements are easily established. By using them it follows that in order to establish continuity it suffices to show that 
\begin{equation}\label{24-11-21e}
\lim_{t \to 0}\left<U^\psi_t\Lambda_\psi(a),\Lambda_\psi(b)\right> = \left< \Lambda_\psi(a),\Lambda_\psi(b)\right>
\end{equation}
for all $a,b \in \mathcal N_\psi$. Let $\epsilon > 0$. It follows from Combes' theorem, Theorem \ref{04-11-21k}, that there is $\omega \in \mathcal F_\psi$ such that 
$$
\psi(a^*a) - \epsilon \leq \omega(a^*a) \leq \psi(a^*a).
$$
By Lemma \ref{08-11-21bx} there is an operator $T_\omega$ on $H_\psi$ such that $0 \leq T_\omega \leq 1$ and
$$
\omega(c^*d) = \left<T_\omega \Lambda_\psi(d),\Lambda_\psi(c)\right> \  \ \forall c,d \in \mathcal N_\psi .
$$
The same calculation as in the proof of Lemma \ref{17-11-21a} shows that 
$$
\left\| T_\omega \Lambda_\psi(a) - \Lambda_\psi(a)\right\|^2 \leq 2 \epsilon
$$ 
and we find therefore that
\begin{align*}
& \left|\left<U^\psi_t\Lambda_\psi(a), \Lambda_\psi(b)\right> - \left<\Lambda_\psi(a),\Lambda_\psi(b)\right>\right| \\
&= \left|\left<\Lambda_\psi(a), U^\psi_{-t}\Lambda_\psi(b)\right> - \left<\Lambda_\psi(a),\Lambda_\psi(b)\right>\right| \\
& \leq 2\left\|\Lambda_\psi(b)\right\| \sqrt{2\epsilon} + \left|\left<T_\omega\Lambda_\psi(a), U^\psi_{-t}\Lambda_\psi(b)\right> - \left<T_\omega \Lambda_\psi(a),\Lambda_\psi(b)\right>\right| \\
& = 2\left\|\Lambda_\psi(b)\right\| \sqrt{2\epsilon} + \left|\omega(\sigma_{-t}(b)^*a) - \omega(b^*a)\right|
\end{align*}
for all $t \in \mathbb R$. Since $\lim_{t \to 0}\omega(\sigma_{-t}(b)^*a) =\omega(b^*a)$ we get \eqref{24-11-21e}.
\end{proof}

Motivated by the last two lemmas we fix a GNS representation $(H,\Lambda,\pi)$ of $A$ such that
\begin{itemize}
\item $\sigma_t(D(\Lambda)) = D(\Lambda)$ for all $t \in \mathbb R$,
\item there is a continuous unitary representation $(U_t)_{t \in \mathbb R}$ of $\mathbb R$ on $H$ such that $U_t\Lambda(a) = \Lambda(\sigma_t(a))$ for $t \in \mathbb R, \ a \in D(\Lambda)$, and
\item $\Lambda : D(\Lambda) \to H$  is closed.
\end{itemize}

Many of the lemmas we shall need hold for GNS representations with no mention of weights, and it will be useful that they do, but for some of them it is necessary that the GNS representation arises from a weight. We fix therefore also a densely defined $\sigma$-invariant weight on $A$ and note that its GNS-triple $(H_\psi,\Lambda_\psi,\pi_\psi)$ is a GNS representation with the properties stipulated above. Note that $D(\Lambda_\psi) = \mathcal N_\psi$.

\begin{lemma}\label{18-11-21ex}  Let $v \in H$ and $n \in \mathbb N$. There is a sequence $\{v_k\}_{k=0}^\infty$ in $H$ such that $\sum_{k=0}^\infty \left\|v_k\right\| |z|^k < \infty$ and
$$
\sum_{k=0}^\infty v_kz^k = \sqrt{\frac{n}{\pi}}\int_\mathbb R e^{-n (s-z)^2} U_{s}v \ \mathrm{d} s
$$
for all $z \in \mathbb C$.
\end{lemma}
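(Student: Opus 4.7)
The plan is to identify the statement as a direct instance of Lemma \ref{18-11-21}. The Hilbert space $H$ is in particular a complex Banach space, and by the standing assumption on $(H,\Lambda,\pi)$ the unitary representation $(U_t)_{t\in\mathbb R}$ is strongly continuous. Since unitaries are linear isometries, $(U_t)_{t\in\mathbb R}$ constitutes a flow on $H$ in the sense of Section \ref{holextensions}. Applying Lemma \ref{18-11-21} with $X=H$, $\sigma=U$ and $a=v$ therefore produces a sequence $\{v_k\}_{k=0}^\infty$ in $H$ with $\sum_{k=0}^\infty \|v_k\||z|^k<\infty$ and $\sum_{k=0}^\infty v_kz^k = \sqrt{n/\pi}\int_\mathbb R e^{-n(s-z)^2}U_sv\ \mathrm ds$ for all $z\in\mathbb C$, which is exactly the desired conclusion.

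For transparency I would briefly recapitulate the argument in the present setting. First I would expand the Gaussian as $e^{-n(s-z)^2}=e^{-nz^2}\sum_{k=0}^\infty z^k(2n)^k s^k e^{-ns^2}/k!$ and invoke Lemma \ref{28-02-22} of Appendix \ref{integration}, using the $L^1$ majorant $s\mapsto e^{n(2|z||s|-s^2)}$, to interchange the vector-valued integral with the sum. This yields the auxiliary vectors
$$
b_k := \frac{(2n)^k}{k!}\int_\mathbb R s^k e^{-ns^2}U_sv\ \mathrm ds\in H,
$$
and the estimate $\|b_k\|\leq 2^{k+1}\|v\|n^{(k-1)/2}I_k/k!$ combined with the recursion for $I_k=\int_0^\infty t^k e^{-t^2}\ \mathrm dt$ that appears in the proof of Lemma \ref{18-11-21} shows that $\sum_k\|b_k\||z|^k<\infty$ on all of $\mathbb C$. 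Lemma \ref{13-11-21d}, applied to the product of the absolutely convergent power series $e^{-nz^2}=\sum_k(-nz^2)^k/k!$ and $\sum_k b_kz^k$, then supplies the sequence $\{v_k\}_{k=0}^\infty$.

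There is essentially no obstacle beyond checking that the vector-valued integral $\int_\mathbb R e^{-n(s-z)^2}U_sv\ \mathrm ds$ exists in $H$, which is immediate from Lemma \ref{12-02-22} of Appendix \ref{integration} since $s\mapsto U_sv$ is bounded and norm continuous and $s\mapsto e^{-n(s-z)^2}$ lies in $L^1(\mathbb R)$.
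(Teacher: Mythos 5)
Your proposal is correct and coincides with the paper's own proof, which consists precisely of the observation that the statement follows from Lemma \ref{18-11-21} applied to the flow of isometries $(U_t)_{t\in\mathbb R}$ on the Banach space $H$. The recapitulation you include is accurate but not needed beyond that one-line reduction.
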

\begin{proof} This follows from Lemma \ref{18-11-21}.
\end{proof}

\begin{lemma}\label{17-11-21m} Let $a \in D(\Lambda)$. Then $\sigma_z( R_n(a)) \in D(\Lambda)$ and
$$
\Lambda(\sigma_z( R_n(a))) = \sqrt{\frac{n}{\pi}} \int_\mathbb R e^{-n (s-z)^2} \Lambda(\sigma_s(a)) \ \mathrm{d} s
$$
for all $n \in \mathbb N$ and all $z \in \mathbb C$.
\end{lemma}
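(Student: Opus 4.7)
The plan is to combine the integral representation of $\sigma_z(R_n(a))$ supplied by Lemma~\ref{17-11-21i} with the closedness of $\Lambda$ (from Lemma~\ref{17-11-21a}, whose hypotheses are guaranteed by the standing setup on $(H,\Lambda,\pi)$) and the intertwining relation $U_s\Lambda(a) = \Lambda(\sigma_s(a))$. The cleanest way to carry this out is to invoke the Appendix-\ref{integration} result (Lemma~\ref{12-02-22b}) that lets one pass a closed operator through a Bochner integral whose integrand is $D(\Lambda)$-valued and continuous, provided the resulting image under $\Lambda$ is also integrable.

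Concretely, I would proceed as follows. Fix $a \in D(\Lambda)$, $n \in \mathbb N$, and $z \in \mathbb C$. By Lemma~\ref{17-11-21i},
\begin{equation*}
\sigma_z(R_n(a)) \;=\; \sqrt{\tfrac{n}{\pi}}\int_{\mathbb R} e^{-n(s-z)^2}\,\sigma_s(a)\,\mathrm ds,
\end{equation*}
and I want to apply $\Lambda$ to both sides. The standing assumption that $\sigma_t(D(\Lambda)) = D(\Lambda)$ gives $\sigma_s(a) \in D(\Lambda)$ for every $s \in \mathbb R$, so the integrand $s \mapsto e^{-n(s-z)^2}\sigma_s(a)$ is a $D(\Lambda)$-valued continuous function. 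Its norm is bounded by $e^{n(\Imag z)^2}e^{-n(s-\Real z)^2}\|a\|$, hence integrable. Applying $\Lambda$ pointwise and using the intertwining relation yields the continuous function
\begin{equation*}
s \;\longmapsto\; e^{-n(s-z)^2}\,\Lambda(\sigma_s(a)) \;=\; e^{-n(s-z)^2}\,U_s\Lambda(a),
\end{equation*}
whose norm is bounded by $e^{n(\Imag z)^2}e^{-n(s-\Real z)^2}\|\Lambda(a)\|$, and so is likewise integrable.

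With both integrability conditions verified, Lemma~\ref{12-02-22b} (closed operators commute with integrals of continuous $D$-valued functions whose images are also integrable) applies and gives $\sigma_z(R_n(a)) \in D(\Lambda)$ together with
\begin{equation*}
\Lambda(\sigma_z(R_n(a))) \;=\; \sqrt{\tfrac{n}{\pi}}\int_{\mathbb R} e^{-n(s-z)^2}\,\Lambda(\sigma_s(a))\,\mathrm ds,
\end{equation*}
which is the claimed identity (and, by the intertwining relation, equals $\sqrt{\tfrac{n}{\pi}}\int_{\mathbb R} e^{-n(s-z)^2}U_s\Lambda(a)\,\mathrm ds$). The real work was done earlier: the existence of the integral formula for $\sigma_z(R_n(a))$ is Lemma~\ref{17-11-21i}, the closedness of $\Lambda$ is Lemma~\ref{17-11-21a}, and the interchange of $\Lambda$ with a Bochner integral is the appendix lemma. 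There is no conceptual obstacle; the only point deserving care is checking the hypotheses of the closed-operator integration lemma, in particular the Gaussian domination that makes both the $A$-valued and $H$-valued integrands integrable uniformly in $s$.
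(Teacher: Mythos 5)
Your proof is correct and follows exactly the route the paper takes: the integral representation of $\sigma_z(R_n(a))$ from Lemma \ref{17-11-21i} combined with the closedness of $\Lambda$ and the appendix result Lemma \ref{12-02-22b} on passing a closed operator through the integral. The paper's own proof is a two-line version of the same argument; your explicit verification of the integrability hypotheses is the only (harmless) addition.
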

\begin{proof} It follows from Lemma \ref{17-11-21i} that 
$$
\sigma_z(R_n(a)) = \sqrt{\frac{n}{\pi}} \int_\mathbb R e^{-n (s-z)^2} \sigma_s(a) \ \mathrm{d} s.
$$
The stated assertions follow therefore from Lemma \ref{12-02-22b} because $\Lambda$ is closed.
\end{proof}

\begin{lemma}\label{24-11-21g} Let $a \in D(\Lambda)$. Then 
\begin{itemize}
\item $\left\|\Lambda(R_n(a))\right\| \leq \left\|\Lambda(a)\right\|$ for all $n$, and
\item $\lim_{n \to \infty} \Lambda(R_n(a)) = \Lambda(a)$ in $H$.
\end{itemize}
\end{lemma}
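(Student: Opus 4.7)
The plan is to reduce both statements to the analogous assertions about the smoothing operators $\tilde R_n$ built from the unitary flow $U$ acting on the Hilbert space $H$. Concretely, applying Lemma \ref{17-11-21m} with $z = 0$ gives
\[
\Lambda(R_n(a)) \;=\; \sqrt{\frac{n}{\pi}} \int_{\mathbb R} e^{-n s^2}\, \Lambda(\sigma_s(a)) \ \mathrm{d} s \;=\; \sqrt{\frac{n}{\pi}} \int_{\mathbb R} e^{-n s^2}\, U_s \Lambda(a) \ \mathrm{d} s,
\]
where in the second equality I use the intertwining property $\Lambda(\sigma_s(a)) = U_s \Lambda(a)$ built into our choice of GNS representation. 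Thus $\Lambda(R_n(a))$ is exactly the smoothing operator $\tilde R_n$ of Section \ref{Section-entire} applied to the vector $\Lambda(a) \in H$, with $(U_t)_{t \in \mathbb R}$ playing the role of the flow on $X = H$.

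For the norm estimate, I would use that $\sqrt{n/\pi}\int_\mathbb R e^{-ns^2}\, \mathrm d s = 1$ together with the fact that each $U_s$ is an isometry on $H$. Pulling the norm inside the integral (which is justified by the basic integration properties collected in Appendix \ref{integration}, cf.\ the inequality used just before \eqref{17-11-21f}) gives
\[
\left\|\Lambda(R_n(a))\right\| \;\leq\; \sqrt{\frac{n}{\pi}} \int_\mathbb R e^{-n s^2} \left\|U_s \Lambda(a)\right\| \ \mathrm{d} s \;=\; \left\|\Lambda(a)\right\|,
\]
which is the first bullet.

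For the convergence statement, I would invoke Lemma \ref{24-11-21} directly in the Banach space $X = H$ with the flow $U$: it tells us that $\tilde R_n(v) \to v$ for every $v \in H$. Applying this to $v = \Lambda(a)$ yields $\Lambda(R_n(a)) = \tilde R_n(\Lambda(a)) \to \Lambda(a)$ in $H$, which is the second bullet.

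There is no real obstacle here; the only step worth double-checking is the identification of $\Lambda(R_n(a))$ with $\tilde R_n(\Lambda(a))$, and this is immediate from Lemma \ref{17-11-21m} together with the intertwining identity for $U$ and $\Lambda$ that forms part of the hypothesis on our GNS representation.
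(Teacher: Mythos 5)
Your proof is correct and follows essentially the same route as the paper's: both identify $\Lambda(R_n(a))$ with $\sqrt{n/\pi}\int_{\mathbb R} e^{-ns^2}U_s\Lambda(a)\,\mathrm d s$ via Lemma \ref{17-11-21m}, obtain the norm bound by moving the norm inside the integral (this is exactly \eqref{28-02-22a}), and get convergence from Lemma \ref{24-11-21} applied to the unitary flow $U$ on $H$. No gaps.
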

\begin{proof} As a special case of the formula from Lemma \ref{17-11-21m} we get 
$$
\Lambda(R_n(a)) = \sqrt{\frac{n}{\pi}} \int_\mathbb R e^{-n s^2} U_s\Lambda(a) \ \mathrm{d} s.
$$
It follows therefore first from \eqref{28-02-22a} in Appendix \ref{integration} that $\left\|\Lambda(R_n(a))\right\| \leq \left\|\Lambda(a)\right\|$ and then from Lemma \ref{24-11-21} that $\lim_{n \to \infty} \Lambda(R_n(a)) = \Lambda(a)$.
\end{proof}

\begin{lemma}\label{08-12-21a} Let $a \in D( \Lambda) \cap D(\sigma_z)$. Assume that $\sigma_z(a) \in D(\Lambda)$. Then
$$
\lim_{k \to \infty} \Lambda(\sigma_z(R_k(a))) = \Lambda(\sigma_z(a))
$$
in $H$.
\end{lemma}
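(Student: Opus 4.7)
The plan is to reduce this to the already-established convergence $\lim_{k \to \infty} \Lambda(R_k(b)) = \Lambda(b)$ for $b \in D(\Lambda)$ (Lemma \ref{24-11-21g}) by exploiting the fact that $\sigma_z$ commutes with the smoothing operators $R_k$ on $D(\sigma_z)$.

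First I would invoke equation \eqref{12-05-22a} of Lemma \ref{17-11-21i}: since $a \in D(\sigma_z)$, we have the identity
$$
\sigma_z(R_k(a)) = R_k(\sigma_z(a))
$$
as elements of $A$, for every $k \in \mathbb N$. This is the key step: it rewrites the object whose limit we want to compute as a smoothing applied to $\sigma_z(a)$ rather than to $a$.

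Next, because $\sigma_z(a) \in D(\Lambda)$ by hypothesis, Lemma \ref{17-11-21m} (applied with $z = 0$, or equivalently the fact that $R_k$ preserves $D(\Lambda)$) guarantees that $R_k(\sigma_z(a)) \in D(\Lambda)$, so $\Lambda(R_k(\sigma_z(a)))$ makes sense. Hence $\Lambda(\sigma_z(R_k(a))) = \Lambda(R_k(\sigma_z(a)))$.

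Finally, I would apply Lemma \ref{24-11-21g} to the element $\sigma_z(a) \in D(\Lambda)$ to conclude that
$$
\lim_{k \to \infty} \Lambda(R_k(\sigma_z(a))) = \Lambda(\sigma_z(a))
$$
in $H$. Combining with the previous step yields the claim. There is no real obstacle here — the statement is essentially a corollary of Lemma \ref{17-11-21i} and Lemma \ref{24-11-21g}, once one notices that the hypothesis $\sigma_z(a) \in D(\Lambda)$ is exactly what is needed to apply the latter to $\sigma_z(a)$.
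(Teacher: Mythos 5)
Your proposal is correct and follows exactly the paper's own argument: identify $\sigma_z(R_k(a)) = R_k(\sigma_z(a))$ via Lemma \ref{17-11-21i} and then apply Lemma \ref{24-11-21g} to $\sigma_z(a) \in D(\Lambda)$. The paper's proof is a two-line version of the same reasoning, so there is nothing to add.
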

\begin{proof} By Lemma \ref{17-11-21i} $\sigma_z(R_k(a)) = R_k(\sigma_z(a))$ and hence the conclusion follows from Lemma \ref{24-11-21g}.
\end{proof}



\begin{lemma}\label{02-12-21ax} $\psi(R_k(a)) = \psi(a)$ for all $k \in \mathbb N$ and all $a \in A^+$.
\end{lemma}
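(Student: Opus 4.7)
The plan is to reduce to a separable setting in which Theorem~\ref{09-11-21h} provides a countable decomposition of $\psi$ as a sum of bounded positive functionals, and then carry out a Fubini-type exchange. Without such a decomposition it is not obvious how to convert the semi-continuous inequalities coming out of Combes' theorem into the equality we need.

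First, I would build a separable $\sigma$-invariant $C^*$-subalgebra containing the relevant elements. Let $B$ be the $C^*$-subalgebra of $A$ generated by $\{\sigma_t(a): t \in \mathbb{R}\}$. Since $t \mapsto \sigma_t(a)$ is norm-continuous, $\{\sigma_q(a): q \in \mathbb{Q}\}$ is already norm-dense in $\{\sigma_t(a): t \in \mathbb{R}\}$, so $B$ is separable. The group law $\sigma_{t_0}\sigma_t = \sigma_{t_0+t}$ shows $\sigma_{t_0}(B) = B$ for every $t_0$, so $B$ is $\sigma$-invariant. Moreover $R_k(a) \in B^+$, since the Bochner integral defining $R_k(a)$ is a norm-limit of positive combinations of elements $\sigma_s(a) \in B$.

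Next, the restriction $\psi|_B$ is a weight on the separable $C^*$-algebra $B$: additivity, positive homogeneity, and lower semi-continuity all descend from the corresponding properties on $A$. Theorem~\ref{09-11-21h} then furnishes $\omega_n \in B^*_+$ with $\psi(b) = \sum_{n=1}^\infty \omega_n(b)$ for all $b \in B^+$. Since each $\omega_n$ is bounded linear, it commutes with the Bochner integral defining $R_k(a)$, giving
\[
\omega_n(R_k(a)) \;=\; \sqrt{\tfrac{k}{\pi}}\int_{\mathbb{R}} e^{-ks^2}\,\omega_n(\sigma_s(a))\,\mathrm ds .
\]
All summands and integrands being non-negative, Tonelli's theorem legitimizes the interchange of summation and integration:
\[
\psi(R_k(a)) \;=\; \sum_{n=1}^\infty \omega_n(R_k(a)) \;=\; \sqrt{\tfrac{k}{\pi}}\int_{\mathbb{R}} e^{-ks^2}\,\psi(\sigma_s(a))\,\mathrm ds \;=\; \psi(a),
\]
where the last step uses $\psi\circ\sigma_s = \psi$ and $\sqrt{k/\pi}\int_{\mathbb{R}} e^{-ks^2}\,\mathrm ds = 1$.

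I do not expect a serious obstacle. The inequality $\psi(R_k(a)) \leq \psi(a)$ can even be obtained straight from Combes' theorem without any separability: for $\omega \in \mathcal{F}_\psi$ the functional $\omega \circ \sigma_s$ again lies in $\mathcal{F}_\psi$ by $\sigma$-invariance, so $\omega(\sigma_s(a)) \leq \psi(a)$ uniformly in $s$, and hence $\omega(R_k(a)) \leq \psi(a)$. The reverse inequality is the real content, and the separable reduction above is the cleanest route to it — once a countable decomposition $\psi = \sum_n \omega_n$ is in hand, Tonelli's theorem delivers the equality with no further manoeuvring.
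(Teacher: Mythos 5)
Your proof is correct and follows essentially the same route as the paper: restrict $\psi$ to the separable $\sigma$-invariant $C^*$-subalgebra generated by the orbit of $a$, apply Theorem \ref{09-11-21h} to write the restriction as a countable sum of bounded positive functionals, and interchange the sum with the Gaussian integral (the paper cites monotone convergence where you cite Tonelli, which is the same manoeuvre). The only difference is your added remark that the inequality $\psi(R_k(a))\leq\psi(a)$ already follows from Combes' theorem, which the paper does not include.
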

\begin{proof} The $C^*$-subalgebra $S$ of $A$ generated by $\left\{\sigma_t(a) : \ t \in \mathbb R\right\}$ is separable and the restriction $\psi|_S$ of $\psi$ to $S$ is a weight. By Theorem \ref{09-11-21h} there is a sequence $\omega_n \in S^*_+$ such that $\psi(b) = \sum_{n=1}^\infty \omega_n(b)$ for all $b \in S^+$. By using that $\psi(\sigma_t(a)) = \psi(a)$ and Lebesgue's theorem on monotone convergence we find 

\begin{align*}
&\psi(R_k(a)) = \sum_{n=1}^\infty \omega_n(R_k(a)) = \sum_{n=1}^\infty \sqrt{\frac{k}{\pi}} \int_\mathbb R e^{-kt^2} \omega_n(\sigma_t(a)) \ \mathrm d t \\
& = \sqrt{\frac{k}{\pi}} \int_\mathbb R e^{-kt^2} \sum_{n=1}^\infty \omega_n(\sigma_t(a)) \ \mathrm d t  = \sqrt{\frac{k}{\pi}} \int_\mathbb R e^{-kt^2} \psi(\sigma_t(a)) \ \mathrm d t = \psi(a).
\end{align*}

\end{proof}

Set
\begin{equation}\label{31-01-22e}
\mathcal M_\psi^\sigma := \left\{ a \in \mathcal M_\psi \cap \mathcal A_\sigma: \ \sigma_z(a) \in \mathcal M_\psi  \ \ \forall z \in \mathbb C\right\}.
\end{equation}
It follows from Lemma \ref{25-11-21} that
$$
\sigma_z\left(\mathcal M^\sigma_\psi\right) = \mathcal M^\sigma_\psi
$$
for all $z \in \mathbb C$.
\begin{lemma}\label{07-12-21} $R_k(\mathcal M_\psi) \subseteq \mathcal M_\psi^\sigma$ for all $k \in \mathbb N$.
\end{lemma}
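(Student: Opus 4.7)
The plan is to reduce to the case $a \in \mathcal M_\psi^+$ by linearity. The set $\mathcal M_\psi^\sigma$ is a linear subspace of $A$, since $\mathcal M_\psi$ is a subspace, $\mathcal A_\sigma$ is a subspace, and each $\sigma_z$ is linear with $\mathcal M_\psi$-valued action stable under linear combinations. Since $\mathcal M_\psi = \Span \mathcal M_\psi^+$ and $R_k$ is linear, it suffices to show $R_k(a) \in \mathcal M_\psi^\sigma$ whenever $a \in \mathcal M_\psi^+$. For such $a$, membership $R_k(a) \in \mathcal A_\sigma$ is exactly Lemma \ref{24-11-21}, and $R_k(a) \in \mathcal M_\psi^+$ follows from Lemma \ref{02-12-21ax} together with positivity: the integrand $s \mapsto e^{-ks^2}\sigma_s(a)$ is pointwise positive since each $\sigma_s$ is a $*$-automorphism, so the integral defining $R_k(a)$ lies in $A^+$.

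The core step is to show $\sigma_z(R_k(a)) \in \mathcal M_\psi$ for every $z \in \mathbb C$. By Lemma \ref{17-11-21i},
$$
\sigma_z(R_k(a)) = \sqrt{k/\pi}\int_\mathbb R e^{-k(s-z)^2}\sigma_s(a)\,\mathrm ds.
$$
Writing $z = x+iy$ and substituting $u = s-x$, the exponent becomes $-k(u-iy)^2 = -ku^2 + 2ikyu + ky^2$, so
$$
\sigma_z(R_k(a)) = \sqrt{k/\pi}\,e^{ky^2}\int_\mathbb R e^{-ku^2}\bigl(\cos(2kyu)+i\sin(2kyu)\bigr)\sigma_{x+u}(a)\,\mathrm du.
$$
Splitting $\cos(2kyu)$ and $\sin(2kyu)$ into positive and negative parts expresses $\sigma_z(R_k(a))$ as a complex linear combination of four elements of the form
$$
b_j := \int_\mathbb R f_j(u)\,\sigma_{x+u}(a)\,\mathrm du,
$$
where each $f_j\colon \mathbb R \to [0,\infty)$ is a measurable non-negative integrable function. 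Each $b_j$ lies in $A^+$ since it is an integral of positive elements against a non-negative weight.

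It remains to show each $b_j \in \mathcal M_\psi^+$. I would do this exactly as in the proof of Lemma \ref{02-12-21ax}: let $S$ be the separable $C^*$-subalgebra of $A$ generated by $\{\sigma_t(a):t\in \mathbb R\}$, apply Theorem \ref{09-11-21h} to $\psi|_S$ to produce a sequence $\{\omega_n\}\subseteq S^*_+$ with $\psi|_S(b) = \sum_n \omega_n(b)$ for $b \in S^+$, and then use Tonelli's theorem (valid because every term is non-negative) together with $\sigma$-invariance of $\psi$ to compute
$$
\psi(b_j)=\sum_n\int_\mathbb R f_j(u)\,\omega_n(\sigma_{x+u}(a))\,\mathrm du = \int_\mathbb R f_j(u)\,\psi(\sigma_{x+u}(a))\,\mathrm du = \|f_j\|_1\,\psi(a)<\infty.
$$
Hence $b_j \in \mathcal M_\psi^+$, and consequently $\sigma_z(R_k(a)) \in \Span \mathcal M_\psi^+ = \mathcal M_\psi$, which completes the verification that $R_k(a)\in\mathcal M_\psi^\sigma$.

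The only mildly delicate point is the Fubini-type interchange of sum and integral; this is not a real obstacle since every quantity involved is non-negative and the $f_j$ are in $L^1(\mathbb R)$, so monotone convergence handles it just as in Lemma \ref{02-12-21ax}. The genuinely useful observation is the decomposition of the complex Gaussian kernel $e^{-k(s-z)^2}$ into four non-negative real pieces, which is what converts the a priori non-positive element $\sigma_z(R_k(a))$ into a linear combination of elements of $\mathcal M_\psi^+$.
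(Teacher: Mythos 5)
Your proof is correct and follows essentially the same route as the paper's: reduce to $a \in \mathcal M_\psi^+$, decompose the complex Gaussian kernel $e^{-k(s-z)^2}$ into four non-negative integrable pieces, and then apply the argument of Lemma \ref{02-12-21ax} (Theorem \ref{09-11-21h} plus monotone convergence and $\sigma$-invariance) to each piece. The only cosmetic difference is that you perform the substitution $u = s - x$ and split $\cos$ and $\sin$ into positive and negative parts, whereas the paper factors $e^{-k(s-z)^2} = e^{-ks^2}e^{-kz^2}e^{2ksz}$ and decomposes the latter two factors directly; both yield the same four-term decomposition into elements of $\mathcal M_\psi^+$.
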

\begin{proof} It follows from Lemma \ref{02-12-21ax} that $R_k(\mathcal M_\psi) \subseteq \mathcal M_\psi$. Hence $R_k(\mathcal M_\psi) \subseteq \mathcal M_\psi \cap \mathcal A_\sigma$ by Lemma \ref{24-11-21}. Let $z \in \mathbb C, \ a \in \mathcal M_\psi$. To show that $\sigma_z(R_k(a)) \in \mathcal M_\psi$ we may assume, by linearity, that $a \in \mathcal M_\psi^+$. Since
$$
e^{-k(s-z)^2} = e^{-ks^2} e^{-kz^2}e^{2k sz} ,
$$
we can write $e^{-k(s-z)^2}$ as a sum 
$$
e^{-k(s-z)^2}  = e^{-k s^2}\sum_{j=1}^4 i^j f_j(s)
$$
where $f_j : \mathbb R \to [0,\infty), j = 1,2,3,4$, are continuous non-negative functions such that
$$
\left|f_j(s)\right| \leq |e^{-kz^2}| e^{2k|z||s|}
$$
for all $j,s$. It follows from Lemma \ref{17-11-21i} that
\begin{align*}
& \sigma_z(R_k(a)) = \sqrt{\frac{k}{\pi}} \sum_{j=1}^4 i^j \int_\mathbb R e^{-ks^2} f_j(s) \sigma_s(a) \ \mathrm d s .
\end{align*}
By using Theorem \ref{09-11-21h} and Lebesgue's theorem on monotone convergence in the same way as in the proof of Lemma \ref{02-12-21ax} it follows that
$$
\psi\left(\int_\mathbb R e^{-ks^2} f_j(s) \sigma_s(a) \ \mathrm d s\right) = \psi(a)\int_\mathbb R e^{-ks^2} f_j(s) \ \mathrm d s  < \infty,
$$
for each $j$. Thus $\sigma_z(R_k(a))\in \mathcal M_\psi$.
\end{proof}

\begin{lemma}\label{07-12-21a} $\mathcal M_\psi^\sigma$ is a dense $*$-subalgebra of $A$ which is invariant under $\sigma_z$ for all $z \in \mathbb C$. 
\end{lemma}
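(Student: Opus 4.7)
The plan is to verify the three structural properties—subspace, $*$-subalgebra, and density—by combining what has already been established for $\mathcal M_\psi$ and $\mathcal A_\sigma$ with the smoothing lemma immediately preceding this statement. Note that the $\sigma_z$-invariance is already recorded in the paragraph before the lemma, as an immediate consequence of Lemma \ref{25-11-21}, so there is nothing new to do there.

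First I would check that $\mathcal M_\psi^\sigma$ is a linear subspace. Both $\mathcal M_\psi$ (by Lemma \ref{04-11-21n}(e)) and $\mathcal A_\sigma$ (by Lemma \ref{11-11-21}) are subspaces of $A$, and because each operator $\sigma_z$ is linear on $\mathcal A_\sigma$, the auxiliary condition $\sigma_z(a)\in\mathcal M_\psi$ for every $z\in\mathbb C$ is preserved under linear combinations. For closure under multiplication, take $a,b\in\mathcal M_\psi^\sigma$. Then $ab\in\mathcal M_\psi\cap\mathcal A_\sigma$ because both of these sets are subalgebras, and using Lemma \ref{18-11-21g} (which gives $\sigma_z$ as an algebra homomorphism wherever everything is defined, and in particular on $\mathcal A_\sigma$) we obtain
$$
\sigma_z(ab) \;=\; \sigma_z(a)\,\sigma_z(b) \;\in\; \mathcal M_\psi\cdot\mathcal M_\psi \;\subseteq\; \mathcal M_\psi
$$
for every $z\in\mathbb C$. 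For closure under the adjoint, combine $\mathcal M_\psi^*=\mathcal M_\psi$ (Lemma \ref{04-11-21n}(e)), $\mathcal A_\sigma^*=\mathcal A_\sigma$ (Lemma \ref{11-11-21}), and the identity $\sigma_z(a^*)=\sigma_{\overline{z}}(a)^*$ supplied by Lemma \ref{24-11-21k}; the right-hand side lies in $\mathcal M_\psi^*=\mathcal M_\psi$ whenever $a\in\mathcal M_\psi^\sigma$.

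The substantive point is density, and the key input is Lemma \ref{07-12-21}, which asserts $R_k(\mathcal M_\psi)\subseteq\mathcal M_\psi^\sigma$ for every $k\in\mathbb N$. Since $\psi$ is densely defined, Lemma \ref{04-11-21n}(e) tells us that $\mathcal M_\psi$ is dense in $A$, and Lemma \ref{24-11-21} gives $\lim_{k\to\infty}R_k(a)=a$ for every $a\in A$. Given $a\in A$ and $\varepsilon>0$, pick $b\in\mathcal M_\psi$ with $\|a-b\|<\varepsilon/2$ and then $k$ large enough that $\|R_k(b)-b\|<\varepsilon/2$; then $R_k(b)\in\mathcal M_\psi^\sigma$ satisfies $\|a-R_k(b)\|<\varepsilon$.

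I do not expect any serious obstacle: every ingredient—the $*$-algebra structure of $\mathcal M_\psi$, the entire-analytic calculus for $\sigma$, the identity $\sigma_z(a^*)=\sigma_{\overline z}(a)^*$, and the Gaussian smoothing $R_k$—has been developed earlier. The only piece that might look like an obstacle at first, namely the construction of enough elements that are simultaneously in $\mathcal M_\psi$, entire analytic, and remain in $\mathcal M_\psi$ after applying $\sigma_z$, has already been handed to us by Lemma \ref{07-12-21}.
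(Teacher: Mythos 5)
Your proof is correct and follows essentially the same route as the paper: the $*$-subalgebra and $\sigma_z$-invariance properties come from Lemmas \ref{04-11-21n}(e), \ref{11-11-21}, \ref{25-11-21}, \ref{18-11-21g} and \ref{24-11-21k}, and density comes from combining Lemma \ref{07-12-21} with the density of $\mathcal M_\psi$ and Lemma \ref{24-11-21}. No gaps.
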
 
\begin{proof} It follows from (e) of Lemma \ref{04-11-21n} and Lemma \ref{11-11-21} that $\mathcal M_\psi \cap \mathcal A_\sigma$ is a $*$-subalgebra and then from Lemma \ref{25-11-21}, Lemma \ref{18-11-21g} and Lemma \ref{24-11-21k} that $\mathcal M_\psi^\sigma$ is a $\sigma_z$-invariant $*$-subalgebra of $A$. Since $\psi$ is densely defined it follows from (e) of Lemma \ref{04-11-21n} that $\mathcal M_\psi$ is dense in $A$ and then from Lemma \ref{07-12-21} and Lemma \ref{24-11-21} that  $\mathcal M_\psi^\sigma$ is dense in $A$.
\end{proof}

Recall that a representation $\pi : A \to B(H)$ is \emph{non-degenerate} when 
$$
\left\{\pi(a)\eta : \ a \in A, \ \eta \in H\right\}
$$ 
spans a dense subspace of $H$.

\begin{lemma}\label{08-02-22} $\Lambda_\psi : \mathcal N_\psi\to H_\psi$ is closed and $\pi_\psi$ is non-degenerate.
\end{lemma}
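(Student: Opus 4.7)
The first assertion is essentially a restatement of Lemma \ref{17-11-21a} applied to $\psi$ viewed as a densely defined weight; the proof of that lemma used only Combes' theorem and made no essential use of the flow, so I would simply invoke it here to conclude that $\Lambda_\psi$ is closed.

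For the non-degeneracy of $\pi_\psi$, my plan is to fix an approximate unit $\{e_\lambda\}$ of $A$ with $0\le e_\lambda\le 1$ and show that $\pi_\psi(e_\lambda)$ converges strongly to the orthogonal projection $P\in B(H_\psi)$ onto the essential subspace $H_e := \overline{\operatorname{span}}\{\pi_\psi(a)\eta : a\in A, \eta\in H_\psi\}$. This is a standard fact and splits into two cases: since $\pi_\psi(A)$ is self-adjoint, both $H_e$ and $H_e^\perp$ are $\pi_\psi(A)$-invariant, so $P\in\pi_\psi(A)'$. On vectors of the form $\pi_\psi(a)\eta$, $\pi_\psi(e_\lambda)\pi_\psi(a)\eta=\pi_\psi(e_\lambda a)\eta\to\pi_\psi(a)\eta$ because $e_\lambda a\to a$ in $A$; the uniform bound $\|\pi_\psi(e_\lambda)\|\le 1$ extends this by a standard $3\epsilon$-argument to all of $H_e$. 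For $v\in H_e^\perp$, invariance forces $\pi_\psi(e_\lambda)v\in H_e^\perp$, while by construction $\pi_\psi(e_\lambda)v\in\pi_\psi(A)H_\psi\subseteq H_e$, so $\pi_\psi(e_\lambda)v=0=Pv$.

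With strong convergence in hand the final step is to combine it with closedness of $\Lambda_\psi$. For any $b\in\mathcal N_\psi$,
$$\pi_\psi(e_\lambda)\Lambda_\psi(b)=\Lambda_\psi(e_\lambda b)\longrightarrow P\Lambda_\psi(b)\quad\text{in }H_\psi.$$
Since $e_\lambda b\to b$ in $A$ and $\Lambda_\psi$ is closed, this forces $P\Lambda_\psi(b)=\Lambda_\psi(b)$, so $\Lambda_\psi(b)\in H_e$. The density of $\Lambda_\psi(\mathcal N_\psi)$ in $H_\psi$ then gives $P=1$, i.e.\ $H_e=H_\psi$, which is exactly the non-degeneracy of $\pi_\psi$. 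There is no real obstacle here: the key observation is simply that closedness of $\Lambda_\psi$ pins the essential subspace down to contain all of $\Lambda_\psi(\mathcal N_\psi)$, and the most delicate bookkeeping is the routine verification that $\pi_\psi$ of an approximate unit is strongly convergent on the essential subspace.
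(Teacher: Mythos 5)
Your proof is correct, and the first half (closedness via Lemma \ref{17-11-21a}) is exactly what the paper does. For the non-degeneracy, however, you take a genuinely different route. The paper works directly with the weight: for $a \in \mathcal N_\psi$ and an increasing approximate unit $\{u_n\}$ with $u_na \to a$ it computes
$$
\left\|\pi_\psi(u_n)\Lambda_\psi(a) - \Lambda_\psi(a)\right\|^2 = \psi(a^*(1-u_n)^2a) \leq \psi(a^*a) - \psi(a^*u_na),
$$
and then uses lower semi-continuity of $\psi$ (which forces $\psi(a^*u_na) \to \psi(a^*a)$ since also $\psi(a^*u_na)\leq \psi(a^*a)$) to get $\pi_\psi(u_n)\Lambda_\psi(a) \to \Lambda_\psi(a)$ directly; density of $\Lambda_\psi(\mathcal N_\psi)$ finishes it in two lines. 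You instead first establish the standard fact that $\pi_\psi(e_\lambda)$ converges strongly to the projection $P$ onto the essential subspace, and then use closedness of $\Lambda_\psi$ to force $P\Lambda_\psi(b) = \Lambda_\psi(b)$. This is sound (the only bookkeeping point is that closedness in Lemma \ref{17-11-21a} is stated for sequences while $\{e_\lambda\}$ is a net, but since $A$ and $H_\psi$ are metric spaces you can extract a suitable sequence), and it has the virtue of showing something slightly more abstract: any GNS representation in the sense of Definition \ref{06-02-22a} whose map $\Lambda$ is closed and has dense range is automatically non-degenerate, with no further appeal to the weight. What you lose is brevity and directness — the paper's identity $\|\pi_\psi(u_n)\Lambda_\psi(a)-\Lambda_\psi(a)\|^2 = \psi(a^*(1-u_n)^2a)$ together with lower semi-continuity does all the work at once, without introducing the essential projection.
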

\begin{proof} That $\Lambda_\psi$ is closed follows from Lemma \ref{17-11-21a}. To show that $\pi_\psi$ is non-degenerate, let $a \in \mathcal N_\psi$. There is a sequence $\{u_n\}$ in $A$ such that $0 \leq u_n \leq u_{n+1} \leq 1$ for all $n$ and $\lim_{n \to \infty} u_na = a$ in $A$. Note that
\begin{align*}
&\left\|\pi_\psi({u_n})\Lambda_\psi(a) - \Lambda_\psi(a)\right\|^2 = \psi(a^*(1-u_n)^2a) \\
&\leq \psi(a^*(1-u_n)a) = \psi(a^*a) - \psi(a^*u_na).
\end{align*} 
 Since $\psi$ is lower semi-continuous, $\lim_{n \to \infty}\psi(a^*u_na) = \psi(a^*a)$ and it follows that $\lim_{n \to \infty}\pi_\psi({u_n})\Lambda_\psi(a) = \Lambda_\psi(a)$ in $H_\psi$. By definition $\left\{\Lambda_\psi(a) : \ a \in \mathcal N_\psi\right\}$ is dense in $H_\psi$ and we conclude therefore that $\pi_\psi$ is non-degenerate.  
\end{proof}

\begin{lemma}\label{16-12-21a} Assume that $\pi : A \to B(H)$ is non-degenerate. There is a net $\{e_m\}_{m \in \mathcal I}$ in $D(\Lambda)$ such that $0 \leq e_m \leq 1$ for all $m$, $e_m \leq e_{m'}$ when $m \leq m'$, $\lim_{m \to \infty} e_m a = a$ for all $a \in A$ and $\lim_{m \to \infty} \pi(e_m) =1$ in the strong operator topology of $B(H)$.
\end{lemma}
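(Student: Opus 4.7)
The plan is to combine a standard construction of approximate units in dense left ideals with a density-plus-uniform-boundedness argument for strong convergence. By Definition \ref{06-02-22a}, $D(\Lambda)$ is a dense left ideal in $A$, and I would invoke the classical theorem that every dense left ideal $L$ in a $C^*$-algebra $A$ carries a canonical increasing approximate identity $\{e_m\}_{m \in \mathcal I} \subseteq L \cap A^+$ with $0 \leq e_m \leq e_{m'} \leq 1$ for $m \leq m'$, satisfying $\lim_m e_m a = a$ (and $\lim_m a e_m = a$) in norm for every $a \in A$. Concretely one can take $\mathcal I := \{u \in L \cap A^+ : \|u\| < 1\}$, directed by the usual order on self-adjoint elements, and set $e_u := u$; that $\mathcal I$ is upwards directed is proved using operator-monotone functional calculus (Proposition 1.3.6 in \cite{Pe}), and density of $L$ is what upgrades the resulting right approximate identity for $L$ to a two-sided approximate identity for all of $A$ (see, for example, Theorem 3.1.1 in \cite{Pe}). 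This delivers the first three asserted properties at once.

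For the strong convergence, note that $0 \leq e_m \leq 1$ gives $\|\pi(e_m)\| \leq 1$, and for every $a \in A$ and $\xi \in H$,
\[
\|\pi(e_m)\pi(a)\xi - \pi(a)\xi\| = \|\pi(e_m a - a)\xi\| \leq \|e_m a - a\|\,\|\xi\| \longrightarrow 0.
\]
Hence $\pi(e_m) v \to v$ for every $v$ in the linear span $V := \Span\{\pi(a)\xi : a \in A,\ \xi \in H\}$, which is dense in $H$ by non-degeneracy of $\pi$. Given $\eta \in H$ and $\epsilon > 0$, pick $v \in V$ with $\|\eta - v\| < \epsilon$; using $\|\pi(e_m)\| \leq 1$ and the triangle inequality,
\[
\|\pi(e_m)\eta - \eta\| \leq \|\pi(e_m)(\eta - v)\| + \|\pi(e_m)v - v\| + \|v - \eta\| \leq 2\epsilon + \|\pi(e_m)v - v\|,
\]
and the last term becomes smaller than $\epsilon$ once $m$ is large enough. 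This shows $\pi(e_m) \to 1$ strongly.

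The one non-trivial ingredient is the existence statement in the first paragraph; everything after that is a routine $3\epsilon$ argument exploiting the uniform bound $\|\pi(e_m)\| \leq 1$ together with the non-degeneracy hypothesis. I do not foresee a real obstacle: if the author prefers to keep the text self-contained, the approximate identity for the dense left ideal can be reproved in a few lines following Pedersen's argument, using only $C^*$-functional calculus and the fact that $a^*a \in D(\Lambda) \cap A^+$ whenever $a \in D(\Lambda)$ to exhibit enough elements in $D(\Lambda) \cap A^+$ to make $\mathcal I$ cofinal in $\{u \in A^+ : \|u\|<1\}$.
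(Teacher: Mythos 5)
Your proof is correct and follows essentially the same route as the paper: the paper also reduces everything to the standard increasing approximate unit in the dense one-sided ideal (it uses the construction of Proposition 2.2.18 in \cite{BR}, indexing by finite subsets of $D(\Lambda)^*$ and setting $e_m = (\# m) f_m(1+(\# m)f_m)^{-1}$, rather than Pedersen's canonical net $\{u \in L \cap A^+ : \|u\|<1\}$, but these are interchangeable variants of the same argument), and then deduces strong convergence of $\pi(e_m)$ from density of $L$, the uniform bound $\|\pi(e_m)\|\leq 1$ and non-degeneracy exactly as you do.
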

\begin{proof} Let $\mathcal I$ be the collection of finite subsets of $D(\Lambda)^*$ which we consider as a directed set ordered by inclusion. For each $m \in \mathcal I$, set
$$
f_m := \sum_{a \in m} aa^* , 
$$
and
$$
e_m  := (\# m) f_m(1 + (\# m)f_m)^{-1} \in D(\Lambda)^*.
$$
Note that $e_m =e_m^*\in D(\Lambda)$ since $D(\Lambda)^*$ is a right ideal. As shown in the proof of Proposition 2.2.18 in \cite{BR} the net $\{e_m\}_{m \in \mathcal I}$ consists of positive contractions, i.e. $0 \leq e_m \leq 1$, it increases with $m$, i.e. $m \subseteq m' \Rightarrow e_m \leq e_{m'}$ and it has the property that $\lim_{m \to \infty} e_m b = b$ for all $b \in D(\Lambda)^*$. Since $D(\Lambda)^*$ is dense in $A$ it follows that $\lim_{m \to \infty} e_m a = a$ for all $a \in A$. Since $\pi$ is non-degenerate this implies that $\lim_{m \to \infty} \pi(e_m) = 1$ in the strong operator topology.
\end{proof}

\begin{lemma}\label{07-12-21c} Let $F \subseteq \mathbb C$ be a finite set of complex numbers. Let $a \in \mathcal N_\psi \cap (\bigcap_{z \in F} D(\sigma_z))$. There is a sequence $\{a_n\}$ in $\mathcal M_\psi^\sigma$ such that 
\begin{itemize}
\item $\lim_{n \to \infty} a_n = a$, 
\item $\lim_{n \to \infty} \sigma_{z}(a_n) =  \sigma_{z}(a)$ for all $z \in F$, and
\item $\lim_{n \to \infty} \Lambda_\psi(a_n) = \Lambda_\psi(a)$.
\end{itemize}
If $\sigma_z(a) \in \mathcal N_\psi$ for all $z \in F$, we can also arrange that
\begin{itemize}
\item $\lim_{n \to \infty} \Lambda_\psi\left(\sigma_{z}(a_n)\right) =  \Lambda_\psi(\sigma_{z}(a))$ for all $z \in F$. 
\end{itemize}
\end{lemma}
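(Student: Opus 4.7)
My plan is to approximate $a$ by elements $a_n = R_{k_n}(v_{m_n} a)$ where $v_m$ is an entire-analytic approximate unit, so that each $a_n$ automatically lies in $\mathcal M_\psi^\sigma$ via Lemma \ref{07-12-21}. First, since $(H_\psi,\Lambda_\psi,\pi_\psi)$ is a GNS representation with $\pi_\psi$ non-degenerate (Lemma \ref{08-02-22}), Lemma \ref{16-12-21a} furnishes a net $\{u_m\}_{m \in \mathcal I}$ of self-adjoint positive contractions in $D(\Lambda_\psi) = \mathcal N_\psi$ with $u_m b \to b$ in $A$ for all $b$ and $\pi_\psi(u_m) \to 1$ strongly. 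Smooth them to entire elements: set $v_m := R_1(u_m)$. Then $v_m \in \mathcal A_\sigma$ (Lemma \ref{24-11-21}), $v_m \in \mathcal N_\psi$ with $\|\Lambda_\psi(v_m)\| \leq \|\Lambda_\psi(u_m)\|$ (Lemma \ref{24-11-21g}), and $v_m$ is a self-adjoint positive contraction.

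The crucial step is to verify that $\sigma_z(v_m)$ still behaves as an approximate unit as $m$ grows, for every $z \in \mathbb C$, despite $\sigma_z$ being unbounded. By Lemma \ref{17-11-21i},
\[
\sigma_z(v_m) b = \tfrac{1}{\sqrt{\pi}} \int_\mathbb R e^{-(s-z)^2}\, \sigma_s(u_m) b \, ds
\]
for every $b \in A$. Since $\sigma_s(u_m) b \to b$ in norm pointwise in $s$, and since $|e^{-(s-z)^2}| = e^{(\Imag z)^2}\, e^{-(s-\Real z)^2}$ provides an integrable majorant independent of $m$, dominated convergence yields $\sigma_z(v_m) b \to b$. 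The parallel computation applied to $\pi_\psi(\sigma_s(u_m)) = U^\psi_s \pi_\psi(u_m) U^\psi_{-s}$ (by \eqref{08-03-22a}) proves $\pi_\psi(\sigma_z(v_m)) \to 1$ strongly on $H_\psi$; in particular (the case $z=0$), $v_m c \to c$ in $A$ and $\pi_\psi(v_m) \to 1$ strongly.

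Now set $b_m := v_m a$. Because $v_m = v_m^* \in \mathcal N_\psi$ and $a \in \mathcal N_\psi$, we have $b_m \in \mathcal N_\psi^* \mathcal N_\psi \subseteq \mathcal M_\psi$ by Lemma \ref{04-11-21n}(b); since $v_m \in \mathcal A_\sigma$ and $a \in D(\sigma_z)$ for $z \in F$, Lemma \ref{18-11-21g} gives $b_m \in D(\sigma_z)$ with $\sigma_z(b_m) = \sigma_z(v_m)\sigma_z(a)$. Define $a_{m,k} := R_k(b_m)$. By Lemma \ref{07-12-21}, $a_{m,k} \in \mathcal M_\psi^\sigma$, and by \eqref{12-05-22a}, $\sigma_z(a_{m,k}) = R_k(\sigma_z(b_m))$. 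For fixed $m$, as $k \to \infty$ Lemma \ref{24-11-21} and Lemma \ref{24-11-21g} give $a_{m,k} \to b_m$, $\sigma_z(a_{m,k}) \to \sigma_z(b_m)$, $\Lambda_\psi(a_{m,k}) \to \Lambda_\psi(b_m)$, and, in the case $\sigma_z(a) \in \mathcal N_\psi$, also $\Lambda_\psi(\sigma_z(a_{m,k})) \to \Lambda_\psi(\sigma_z(b_m))$ (noting $\sigma_z(b_m) = \sigma_z(v_m)\sigma_z(a) \in A\cdot \mathcal N_\psi \subseteq \mathcal N_\psi$). As $m$ varies, paragraph two combined with the GNS identities $\Lambda_\psi(v_m a) = \pi_\psi(v_m)\Lambda_\psi(a)$ and $\Lambda_\psi(\sigma_z(v_m)\sigma_z(a)) = \pi_\psi(\sigma_z(v_m))\Lambda_\psi(\sigma_z(a))$ yields $b_m \to a$, $\sigma_z(b_m) \to \sigma_z(a)$, $\Lambda_\psi(b_m) \to \Lambda_\psi(a)$, and $\Lambda_\psi(\sigma_z(b_m)) \to \Lambda_\psi(\sigma_z(a))$.

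Since $F$ is finite, only finitely many convergence statements are involved, and a routine diagonal extraction from the net produces indices $m_n \in \mathcal I$ and $k_n \in \mathbb N$ such that $a_n := a_{m_n,k_n}$ meets each of the required bounds $<1/n$ simultaneously. The main obstacle is the second paragraph: one must fix a finite smoothing parameter (I chose $1$) when passing from $u_m$ to $v_m$, because the bound $\|\sigma_z(v_m)\| \leq e^{(\Imag z)^2}$ would otherwise grow with the smoothing strength; the fixed choice keeps the dominating function in the dominated-convergence argument uniform in $m$, which is what tames the unbounded action of $\sigma_z$ on the approximate unit.
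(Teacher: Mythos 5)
Your proof is correct, and while it uses the same raw ingredients as the paper (the approximate unit of Lemma \ref{16-12-21a} and the smoothing operators $R_k$), it organizes them differently. The paper takes $a_n = R_k(e_m a)$ with the \emph{unsmoothed} approximate unit and tames $\sigma_z$ by exploiting that $\sigma_z \circ R_k$ is a bounded operator, via the estimate $\left\|\sigma_z(R_k(e_m a)) - \sigma_z(R_k(a))\right\| \leq \|e_m a - a\| \sqrt{\tfrac{k}{\pi}}\int_{\mathbb R} |e^{-k(s-z)^2}|\,\mathrm d s$; this forces a two-stage choice in which $k$ is fixed first (from $a$ alone) and the final $m$ is then chosen depending on $k$. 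You instead smooth the approximate unit itself, $v_m := R_1(u_m)$, and prove that $\sigma_z(v_m)$ remains a (uniformly bounded) approximate unit for every $z$, both in $A$ and strongly under $\pi_\psi$ — the same device the paper deploys in the proof of Lemma \ref{09-02-22x} with $u_j = \sigma_{i\beta/2}(R_1(e_j))$, so it is fully consistent with the paper's toolkit. This lets you take the $m$-limit first and the $k$-limit second, with $\sigma_z(v_m a) = \sigma_z(v_m)\sigma_z(a)$ handled by Lemma \ref{18-11-21g}, and it avoids the paper's interleaved choice of parameters. What your route buys is a cleaner separation of the two approximations; what it costs is the extra verification (dominated convergence over the net, using uniform convergence of $\sigma_s(u_m)b \to b$ on compact $s$-intervals) that the smoothed unit still works after applying $\sigma_z$ — and your closing remark correctly identifies that fixing the smoothing parameter at $1$ is what keeps the dominating function independent of $m$. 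The remaining bookkeeping ($v_m a \in \mathcal N_\psi^*\mathcal N_\psi \subseteq \mathcal M_\psi$, hence $R_k(v_m a) \in \mathcal M^\sigma_\psi$ by Lemma \ref{07-12-21}, and the diagonal extraction over the finitely many conditions indexed by $F$) is as you describe.
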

\begin{proof} We will construct $a_n \in \mathcal M^\sigma_\psi$ such that
\begin{equation}\label{22-12-21a}
\left\|a_n - a\right\| \leq \frac{2}{n} ,
\end{equation}
\begin{equation}\label{17-03-22c}
\left\|\Lambda_\psi(a_n) - \Lambda_\psi(a)\right\| \leq \frac{2}{n},
\end{equation}
\begin{equation}\label{22-12-21c}
\left\|\sigma_z(a_n) - \sigma_z(a)\right\| \leq \frac{2}{n} ,
\end{equation}
for all $z \in F$, and when $\sigma_z(a) \in \mathcal N_\psi$ for all $z \in F$, also such that
\begin{equation}\label{22-12-21d}
\left\|\Lambda_\psi(\sigma_{z}(a_n)) - \Lambda_\psi(\sigma_{z}(a))\right\| \leq \frac{2}{n} 
\end{equation}
when $z \in F$.
Let $\{e_m\}_{m \in \mathcal I}$ be the approximate unit\footnote{In this text an approximate unit $\{u_i\}_{i \in I}$ in a $C^*$-algebra $D$ is a net of elements in $D$ with the property that $\lim_{i \to \infty} u_id =d$ for all $d \in D$. In most cases the net will have other properties too and they will then be specified explicitly.} of Lemma \ref{16-12-21a} applies to the triple $(H_\psi, \Lambda_\psi,\pi_\psi)$. The element $a_n$ we seek will be 
$$
a_n := R_k(e_ma)
$$ 
for an appropriate choice of $m \in \mathcal I$ and $k \in \mathbb N$. Note that since $a \in \mathcal N_\psi$ and $e_m \in \mathcal N_\psi^*$ it follows from (b) of Lemma \ref{04-11-21n} that $e_ma \in \mathcal M_\psi$ and hence $R_k(e_ma) \in \mathcal M^\sigma_\psi$ by Lemma \ref{07-12-21}. In order to choose $m$ and $k$ note first of all that the properties of $\{e_m\}_{m \in \mathcal I}$ ensure the existence of $m_n \in \mathcal I$ such that
\begin{equation}\label{22-12-21e}
\left\| e_m a - a \right\| \leq \frac{1}{n},
\end{equation}
and
\begin{equation}\label{22-12-21f}
\left\| \Lambda_\psi(e_m a)  - \Lambda_\psi(a) \right\| = \left\| \pi_\psi(e_m)\Lambda_\psi(a)  - \Lambda_\psi(a) \right\|\leq \frac{1}{n} 
\end{equation}
when $m_n \leq m$.
It follows from Lemma \ref{17-11-21m} that we then also have the estimate
\begin{equation}\label{22-12-21j}
\begin{split}
&\left\| \Lambda_\psi(R_k(e_ma)) - \Lambda_\psi(R_k(a))\right\| = \left\| \sqrt{\frac{k}{\pi}} \int_\mathbb R e^{-k s^2} U^\psi_s\left(\Lambda_\psi(e_ma) - \Lambda_\psi(a)\right) \ \mathrm ds \right\| \\
& \leq \left\|\Lambda_\psi(e_ma) - \Lambda_\psi(a)\right\| \leq \frac{1}{n} \ \ \forall k \ \text{when} \ m_n \leq m.
\end{split}
\end{equation}
We will choose $k \in \mathbb N$ first and then subsequently choose $m \geq m_n$. It follows from Lemma \ref{24-11-21} that we can choose $k$ such that
\begin{equation}\label{22-12-21g}
\left\| R_k(a) - a \right\| \leq \frac{1}{n}
\end{equation}
and from Lemma \ref{24-11-21g} such that also
\begin{equation}\label{22-12-21i}
\left\| \Lambda_\psi(R_k(a)) - \Lambda_\psi(a)\right\| \leq \frac{1}{n}.
\end{equation}
It follows from Lemma \ref{17-11-21i} and Lemma \ref{24-11-21} that 
$$
\lim_{k \to \infty}\sigma_{z}(R_k(a)) = \lim_{k \to \infty} R_k(\sigma_z(a)) = \sigma_{z}(a) ,
$$
for all $z \in F$ and we can therefore arrange that
\begin{equation}\label{22-12-21k}
\left\|\sigma_{z}(R_k(a)) - \sigma_{z}(a)\right\| \leq \frac{1}{n}
\end{equation}
for all $z \in F$. Similarly, thanks to Lemma \ref{08-12-21a} and assuming that $\sigma_z(a) \in \mathcal N_\psi$ for all $z \in F$, we can also arrange that
\begin{equation}\label{22-12-21m}
\left\|\Lambda_\psi(\sigma_{z}(R_k(a)) - \Lambda_\psi(\sigma_{z}(a))\right\| \leq \frac{1}{n}
\end{equation}
for all $z \in F$.
Now we fix $k \in \mathbb N$ such that \eqref{22-12-21g}, \eqref{22-12-21i}, \eqref{22-12-21k} and \eqref{22-12-21m} all hold; the latter of course only under the stated additional assumption. We go on to choose $m$.

Note that it follows from Lemma \ref{17-11-21i} that
\begin{align*}
& \left\| \sigma_{z} (R_k(e_ma)) - \sigma_{z} (R_k(a))\right\| \\
& \leq \sqrt{\frac{k}{\pi}} \int_\mathbb R \left|e^{-k(s-z)^2}\right| \left\| \sigma_s(e_m a) - \sigma_s(a)\right\| \ \mathrm{d} s \\
& \leq \|e_m a -a \| \sqrt{\frac{k}{\pi}} \int_\mathbb R \left|e^{-k(s-z)^2}\right|  \ \mathrm{d} s ,
\end{align*}
and from Lemma \ref{17-11-21m} that
\begin{align*}
&\left\|\Lambda_\psi(\sigma_{z}(R_k(e_m a))) - \Lambda_\psi(\sigma_{z}(R_k(a))) \right\|\\
&  = \left\| \sqrt{\frac{k}{\pi}} \int_\mathbb R e^{-k(s -z)^2} U^\psi_s(\Lambda_\psi(e_ma) - \Lambda_\psi(a)) \ \mathrm d s \right\|\\
& \leq \left\|\Lambda_\psi(e_ma) - \Lambda_\psi(a)\right\|  \sqrt{\frac{k}{\pi}} \int_\mathbb R \left|e^{-k(s -z)^2}\right| \ \mathrm d s \\
& = \left\|\pi_\psi(e_m)\Lambda_\psi(a)-\Lambda_\psi(a)\right\|   \sqrt{\frac{k}{\pi}} \int_\mathbb R \left|e^{-k(s -z)^2}\right| \ \mathrm d s 
\end{align*}
for all $z \in F$.
Thanks to Lemma \ref{16-12-21a} we can therefore choose $m \in \mathcal I$ such that $m_n \leq m$ and
\begin{equation}\label{22-12-21l} 
 \left\| \sigma_{z} (R_k(e_ma)) - \sigma_{z} (R_k(a))\right\|\leq \frac{1}{n}
\end{equation}
and
\begin{equation}\label{22-12-21n}
\left\|\Lambda_\psi(\sigma_{z}(R_k(e_m a))) - \Lambda_\psi(\sigma_{z}(R_k(a))) \right\| \leq \frac{1}{n}
\end{equation}
for all $z \in F$. It remains to check that $a_n := R_k(e_ma)$ satisfies the desired estimates \eqref{22-12-21a} \ -\ \eqref{22-12-21d}. Using \eqref{22-12-21g}, \eqref{22-12-21e} and that $R_k$ is a linear contraction we find that
\begin{align*}
& \left\| a_n -a \right\| \leq \left\| R_k(e_m a) - R_k(a)\right\| + \left\|R_k(a) -a\right\| \leq \|e_ma-a\| + \frac{1}{n} \leq \frac{2}{n}.
\end{align*}
Using \eqref{22-12-21l} and \eqref{22-12-21k} we find that
\begin{align*}
&\left\|\sigma_{z}(a_n) - \sigma_{z}(a)\right\|\\
&  \leq \left\|\sigma_{z}(R_k(e_ma)) - \sigma_{z}(R_k(a))\right\| + \left\| \sigma_{z}(R_k(a)) - \sigma_{z}(a)\right\| \leq\frac{2}{n} ,
\end{align*}
for all $z \in F$. It follows from \eqref{22-12-21j} and \eqref{22-12-21i} that
$$
\left\|\Lambda_\psi(a_n) - \Lambda_\psi(a)\right\| \leq \frac{2}{n} .
$$
Finally, when $\sigma_z(a) \in \mathcal N_\psi$ for all $z \in F$ it follows from \eqref{22-12-21m} and \eqref{22-12-21n} that
\begin{equation*}\label{22-12-21h}
\left\|\Lambda_\psi(\sigma_{z}(a_n)) - \Lambda_\psi(\sigma_{z}(a)) \right\| \leq \frac{2}{n}
\end{equation*}
for all $z \in F$.
\end{proof}

\begin{cor}\label{15-02-22} $\mathcal M_\psi^\sigma$ is a core for $\Lambda_\psi$; that is, for all $a \in \mathcal N_\psi$ there is a sequence $\{a_n\}$ in $\mathcal M^\sigma_\psi$ such that $\lim_{n \to \infty} a_n =a$ and $\lim_{n \to \infty} \Lambda_\psi(a_n) = \Lambda_\psi(a)$.
\end{cor}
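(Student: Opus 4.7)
The plan is to deduce this directly from Lemma \ref{07-12-21c} by specializing the finite set $F$ to be empty. Indeed, Lemma \ref{07-12-21c} produces, for any finite $F \subseteq \mathbb C$ and any $a \in \mathcal N_\psi \cap \bigcap_{z \in F} D(\sigma_z)$, a sequence $\{a_n\}$ in $\mathcal M_\psi^\sigma$ with $a_n \to a$, $\Lambda_\psi(a_n) \to \Lambda_\psi(a)$, and $\sigma_z(a_n) \to \sigma_z(a)$ for every $z \in F$.

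Given an arbitrary $a \in \mathcal N_\psi$, I would apply the lemma with $F = \emptyset$. Then the intersection $\bigcap_{z \in F} D(\sigma_z)$ is by convention all of $A$, so the hypothesis reduces to $a \in \mathcal N_\psi$, which is exactly what we have. The conclusion provides a sequence $\{a_n\} \subseteq \mathcal M_\psi^\sigma$ satisfying $a_n \to a$ in $A$ and $\Lambda_\psi(a_n) \to \Lambda_\psi(a)$ in $H_\psi$, which is precisely the assertion that $\mathcal M_\psi^\sigma$ is a core for the closed operator $\Lambda_\psi$ (recalling that $\Lambda_\psi$ is closed by Lemma \ref{17-11-21a} and $\mathcal M_\psi^\sigma \subseteq \mathcal N_\psi = D(\Lambda_\psi)$ because $\mathcal M_\psi \subseteq \mathcal N_\psi$ by (c) of Lemma \ref{04-11-21n}).

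There is essentially no obstacle here; the real work was done in Lemma \ref{07-12-21c}, where the explicit construction $a_n = R_k(e_m a)$ using smoothing operators $R_k$ and an approximate unit $\{e_m\}$ from $D(\Lambda_\psi)^*$ was combined and estimated carefully. The corollary merely extracts the weakest version of that statement, in which no constraints on holomorphic translates are imposed.
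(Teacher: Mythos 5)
Your proposal is correct and is essentially the paper's own proof: the paper applies Lemma \ref{07-12-21c} with $F=\{0\}$ rather than $F=\emptyset$, but since $\sigma_0=\id_A$ the extra condition $\sigma_0(a_n)\to\sigma_0(a)$ is just $a_n\to a$ again, so the two choices yield identical conclusions (and $F=\{0\}$ avoids any quibble over whether the empty set is admitted as a ``finite set'' in the lemma). Your supporting remarks about $\Lambda_\psi$ being closed and $\mathcal M_\psi^\sigma\subseteq\mathcal N_\psi$ are accurate and complete the argument.
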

\begin{proof} Apply Lemma \ref{07-12-21c} with $F = \{0\}$.
\end{proof}

\begin{lemma}\label{18-11-21kx} Let $a,b \in \mathcal N_\psi \cap \mathcal A_\sigma, \ z \in \mathbb C$. Assume that $\sigma_z(a) \in \mathcal N_\psi$ and $\sigma_{\overline{z}}(b) \in \mathcal N_\psi$. Then
$$
\psi(\sigma_z(b^*a)) = \psi(b^*a).
$$
\end{lemma}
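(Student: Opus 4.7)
\emph{Proof plan.} The first step is to reformulate the desired identity in terms of the GNS inner product. Since $a,b\in\mathcal{A}_\sigma$, Lemma \ref{11-11-21} gives $b^*a\in\mathcal{A}_\sigma$, and combining Lemma \ref{24-11-21k} with Lemma \ref{18-11-21g} yields $\sigma_z(b^*a)=\sigma_z(b^*)\sigma_z(a)=\sigma_{\overline{z}}(b)^*\sigma_z(a)$. By our hypotheses, both $a$ and $\sigma_z(a)$ lie in $\mathcal{N}_\psi$, and both $b$ and $\sigma_{\overline{z}}(b)$ lie in $\mathcal{N}_\psi$, so Lemma \ref{04-11-21n}(b) places $b^*a$ and $\sigma_z(b^*a)$ in $\mathcal{M}_\psi$ and gives
\[
\psi(b^*a)=\langle\Lambda_\psi(a),\Lambda_\psi(b)\rangle,\qquad \psi(\sigma_z(b^*a))=\langle\Lambda_\psi(\sigma_z(a)),\Lambda_\psi(\sigma_{\overline{z}}(b))\rangle.
\]

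Next, I would reduce to the case $a,b\in\mathcal{M}_\psi^\sigma$. Apply Lemma \ref{07-12-21c} with $F=\{0,z\}$ to obtain a sequence $\{a_n\}\subseteq\mathcal{M}_\psi^\sigma$ with $a_n\to a$, $\sigma_z(a_n)\to\sigma_z(a)$, $\Lambda_\psi(a_n)\to\Lambda_\psi(a)$ and $\Lambda_\psi(\sigma_z(a_n))\to\Lambda_\psi(\sigma_z(a))$; analogously, with $F=\{0,\overline{z}\}$, produce $\{b_n\}\subseteq\mathcal{M}_\psi^\sigma$ with the matching convergences for $b$ and $\sigma_{\overline{z}}(b)$. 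The two inner-product formulas above then give $\psi(b_n^*a_n)\to\psi(b^*a)$ and $\psi(\sigma_z(b_n^*a_n))\to\psi(\sigma_z(b^*a))$, so it suffices to prove $\psi(\sigma_z(d^*c))=\psi(d^*c)$ whenever $c,d\in\mathcal{M}_\psi^\sigma$.

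For the core step, fix $c,d\in\mathcal{M}_\psi^\sigma$, so that $\sigma_w(c),\sigma_w(d)\in\mathcal{M}_\psi$ for every $w\in\mathbb{C}$, and define
\[
h(z):=\langle\Lambda_\psi(\sigma_z(c)),\Lambda_\psi(\sigma_{\overline{z}}(d))\rangle.
\]
I claim $h$ is entire and constant on $\mathbb{R}$. Constancy on $\mathbb{R}$ is immediate: for $t\in\mathbb{R}$, unitarity of $U^\psi_t$ together with $\Lambda_\psi(\sigma_t(\cdot))=U^\psi_t\Lambda_\psi(\cdot)$ yields $h(t)=\langle\Lambda_\psi(c),\Lambda_\psi(d)\rangle=h(0)$. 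To see that $h$ is entire, I would first establish the intertwining $U^\psi_w\Lambda_\psi(c)=\Lambda_\psi(\sigma_w(c))$ for every $w\in\mathbb{C}$ (with $U^\psi_w$ denoting the closed extension from Section \ref{Section-entire} applied to the flow $U^\psi$ on $H_\psi$, identified by Example \ref{15-06-22} with $e^{iwH}$). For the smoothed element $R_n(c)$, Lemma \ref{17-11-21m} identifies $\Lambda_\psi(\sigma_w(R_n(c)))$ with the integral $\sqrt{n/\pi}\int e^{-n(s-w)^2}U^\psi_s\Lambda_\psi(c)\,ds$, which by Lemma \ref{17-11-21i} applied to the Banach space flow $U^\psi$ equals $U^\psi_w R_n^{U^\psi}\Lambda_\psi(c)=U^\psi_w\Lambda_\psi(R_n(c))$; meanwhile Lemma \ref{17-11-21i} also gives $\sigma_w(R_n(c))=R_n(\sigma_w(c))$. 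As $n\to\infty$, Lemma \ref{24-11-21g} produces $\Lambda_\psi(R_n(c))\to\Lambda_\psi(c)$ and $\Lambda_\psi(R_n(\sigma_w(c)))\to\Lambda_\psi(\sigma_w(c))$, so closedness of $U^\psi_w$ (Lemma \ref{15-11-21d}) delivers $\Lambda_\psi(c)\in D(U^\psi_w)$ with $U^\psi_w\Lambda_\psi(c)=\Lambda_\psi(\sigma_w(c))$, and likewise for $d$. Since this holds for every $w\in\mathbb{C}$, Lemma \ref{11-02-22x} tells us that $\Lambda_\psi(c),\Lambda_\psi(d)\in\mathcal{A}_{U^\psi}$. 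Writing the power series $U^\psi_z\Lambda_\psi(c)=\sum_n v_n z^n$ and $U^\psi_w\Lambda_\psi(d)=\sum_m w_m w^m$ provided by Lemma \ref{13-11-21a}, absolute convergence together with conjugate-linearity in the second slot rearranges $h(z)=\sum_{n,m}\langle v_n,w_m\rangle z^{n+m}$, which is entire in $z$. The identity theorem then forces $h\equiv h(0)$, completing the proof.

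The main technical obstacle is the intertwining $U^\psi_w\Lambda_\psi(c)=\Lambda_\psi(\sigma_w(c))$ for complex $w$ when $c\in\mathcal{M}_\psi^\sigma$; everything else is either bookkeeping (Step 1), a standard approximation (Step 2 via Lemma \ref{07-12-21c}), or an identity theorem argument once the entire-holomorphic lift of $\Lambda_\psi(c)$ is available. The intertwining itself is handled cleanly by reducing to smoothed elements, where Lemma \ref{17-11-21m} gives an explicit integral formula, and then invoking closedness of $U^\psi_w$.
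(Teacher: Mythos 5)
Your proof is correct and rests on the same key idea as the paper's: the function $w\mapsto\langle\Lambda_\psi(\sigma_w(\cdot)),\Lambda_\psi(\sigma_{\overline w}(\cdot))\rangle$ is entire holomorphic (via Gaussian smoothing) and constant on $\mathbb R$ by unitarity of $U^\psi_t$, hence constant everywhere. The paper organizes this slightly more economically — it applies the argument directly to $R_n(a)$ and $R_n(b)$ and passes to the limit at the end via Lemmas \ref{08-12-21a} and \ref{24-11-21g}, so that neither the preliminary reduction to $\mathcal M^\sigma_\psi$ via Lemma \ref{07-12-21c} nor the explicit intertwining $U^\psi_w\Lambda_\psi(c)=\Lambda_\psi(\sigma_w(c))$ is needed — but both routes are valid.
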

\begin{proof} By using Lemma \ref{24-11-21k} and Lemma \ref{18-11-21g} we find that
\begin{align*}
&\psi(\sigma_z(b^*a)) = \psi(\sigma_{\overline{z}}(b)^*\sigma_z(a))  = \left< \Lambda_\psi(\sigma_z(a)),\Lambda_\psi(\sigma_{\overline{z}}(b))
\right>.
\end{align*}
It follows from Lemma \ref{08-12-21a} that $\lim_{n \to \infty} \Lambda_\psi(\sigma_z(R_n(a))) = \Lambda_\psi(\sigma_z(a))$ and $\lim_{n \to \infty} \Lambda_\psi(\sigma_{\overline{z}}(R_n(b))) = \Lambda_\psi(\sigma_{\overline{z}}(b))$ and hence
$$
 \left< \Lambda_\psi(\sigma_z(a)),\Lambda_\psi(\sigma_{\overline{z}}(b))\right> = \lim_{n \to \infty} \left<  \Lambda_\psi(\sigma_z(R_n(a))),  \Lambda_\psi(\sigma_{\overline{z}}(R_n(b)))\right> .
 $$ 
 It follows from Lemma \ref{17-11-21m} that
 $$
 \Lambda_\psi(\sigma_w(R_n(a))) = \sqrt{\frac{n}{\pi}} \int_\mathbb R e^{-n(s-w)^2} U^\psi_s \Lambda_\psi(a) \ \mathrm d s 
 $$
 for all $w \in \mathbb C$, and then from Lemma \ref{18-11-21ex} that $\mathbb C \ni w \mapsto \Lambda_\psi(\sigma_w(R_n(a)))$ is entire holomorphic. The same is true when $a$ is replaced by $b$, and consequently
 \begin{equation}\label{24-09-23}
 \mathbb C \ni w \mapsto  \left<  \Lambda_\psi(\sigma_w(R_n(a))) ,  \Lambda_\psi(\sigma_{\overline{w}}(R_n(b))) \right>
 \end{equation}
 is entire holomorphic. Since
\begin{align*}
& \left<  \Lambda_\psi(\sigma_t(R_n(a)))
,  \Lambda_\psi(\sigma_{{t}}(R_n(b)))
\right>  = \left<U^\psi_t \Lambda_\psi(R_n(a)), U^\psi_t\Lambda_\psi(R_n(b))\right> \\
&=  \left<\Lambda_\psi(R_n(a)), \Lambda_\psi(R_n(b))\right> 
\end{align*}
 when $t \in \mathbb R$, we conclude that the function \eqref{24-09-23} is constant. In particular,
 $$ \left<  \Lambda_\psi(\sigma_z(R_n(a)),  \Lambda_\psi(\sigma_{\overline{z}}(R_n(b))\right> =  \left<\Lambda_\psi(R_n(a)), \Lambda_\psi(R_n(b))\right>.
 $$
Using Lemma \ref{24-11-21g} we find therefore that
$$
\psi(\sigma_z(b^*a)) = \lim_{n \to \infty} \left<\Lambda_\psi(R_n(a)), \Lambda_\psi(R_n(b))\right> = \left<\Lambda_\psi(a),\Lambda_\psi(b)\right> = \psi(b^*a) .
$$ 
 \end{proof}

\section{The definition of a KMS weight}\label{KMSdefn}

Let $\sigma$ be a flow on the $C^*$-algebra $A$. The definition of a KMS weight for $\sigma$ will be based on the following theorem. To formulate it, consider a real number $\beta \in \mathbb R$. Set
$$
\mathcal D_\beta = \left\{z \in \mathbb C: \ \Imag z \in [0,\beta]\right\}
$$
when $\beta \geq 0$, and
$$
\mathcal D_\beta = \left\{z \in \mathbb C: \ \Imag z \in [\beta,0]\right\}
$$
when $\beta \leq 0$, and let $\mathcal D_\beta^0$ denote the interior of $\mathcal D_\beta$ in $\mathbb C$.

\begin{thm}\label{24-11-21d} (Kustermans, \cite{Ku1}) Let $\psi$ be a non-zero densely defined weight on $A$ which is $\sigma$-invariant in the sense that $\psi \circ \sigma_t = \psi$ for all $t \in \mathbb R$, and let $\beta \in \mathbb R$ be a real number. The following conditions are equivalent.
\begin{enumerate}
\item[(1)] $\psi(a^*a) = \psi\left( \sigma_{-i \frac{\beta}{2}}(a) \sigma_{-i \frac{\beta}{2}}(a)^*\right) \ \ \forall a \in D\left( \sigma_{-i \frac{\beta}{2}}\right)$.
\item[(2)]  $\psi(a^*a) = \psi\left( \sigma_{-i \frac{\beta}{2}}(a) \sigma_{-i \frac{\beta}{2}}(a)^*\right) \ \ \forall a \in \mathcal M_\psi^\sigma$. 
\item[(3)] $\psi(ab) = \psi(b\sigma_{i\beta}(a)) \ \ \forall a,b \in \mathcal M_\psi^\sigma$.
\item[(4)] For all $a,b \in \mathcal N_\psi \cap \mathcal N_\psi^*$ there is a continuous function $f: {\mathcal D_\beta} \to \mathbb C$ which is holomorphic in the interior $\mathcal D_\beta^0$ of ${\mathcal D_\beta}$ and has the property that
\begin{itemize}
\item $f(t) = \psi(b \sigma_t(a)) \ \ \forall t \in \mathbb R$, and
\item $f(t+i\beta) = \psi(\sigma_t(a)b) \ \ \forall t \in \mathbb R$.
\end{itemize}
\end{enumerate}
\end{thm}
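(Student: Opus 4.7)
The plan is to establish the cycle (1) $\Rightarrow$ (2) $\Leftrightarrow$ (3) $\Leftrightarrow$ (4) and then close back (2) $\Rightarrow$ (1) by approximation. The first implication (1) $\Rightarrow$ (2) is immediate from the inclusion $\mathcal M_\psi^\sigma \subseteq \mathcal A_\sigma = \bigcap_{z \in \mathbb C} D(\sigma_z)$ provided by Lemma \ref{11-02-22x}.

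For (2) $\Leftrightarrow$ (3), given $a,b \in \mathcal M_\psi^\sigma$ I would apply the polarization identity \eqref{polar} with $a$ replaced by $a + i^k b$ to both sides of (2). The left side collapses to $4\psi(b^*a)$; using Lemma \ref{24-11-21k} to identify $\sigma_{-i\beta/2}(x)^* = \sigma_{i\beta/2}(x^*)$, the right side collapses to $4\psi(\sigma_{-i\beta/2}(a)\sigma_{i\beta/2}(b^*))$. Lemma \ref{18-11-21kx} applied with $z = i\beta/2$ to this product, written as $c^*d$ with $c = \sigma_{i\beta/2}(a^*)$ and $d = \sigma_{i\beta/2}(b^*)$ (both in $\mathcal M_\psi^\sigma$), gives the required $\sigma_{i\beta/2}$-invariance and rewrites the right side as $\psi(a\sigma_{i\beta}(b^*))$. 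Relabelling $x = b^*$, $y = a$ produces (3), and every step is reversible.

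The core implication is (3) $\Rightarrow$ (4). I would first establish it for $a, b \in \mathcal M_\psi^\sigma$: since $\sigma_z(a) \in \mathcal M_\psi^\sigma$ for every $z$ and $\mathcal M_\psi$ is a $*$-algebra (Lemma \ref{04-11-21n}(e)), $b\sigma_z(a) \in \mathcal M_\psi$ and $f(z) := \psi(b\sigma_z(a)) = \langle \Lambda_\psi(\sigma_z(a)), \Lambda_\psi(b^*)\rangle$ is a well-defined complex number for every $z \in \mathbb C$. Using that elements of $\mathcal M_\psi^\sigma$ arise as Gaussian smoothings $R_k(c)$ (Lemma \ref{07-12-21}), for which $z \mapsto \Lambda_\psi(\sigma_z R_k(c))$ is holomorphic by Lemma \ref{17-11-21m} together with Lemma \ref{18-11-21ex}, one verifies $f$ is entire holomorphic, and the KMS boundary identity $f(t+i\beta) = \psi(\sigma_t(a)b)$ is immediate from (3) applied to $(\sigma_t(a), b)$. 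To pass to $a, b \in \mathcal N_\psi \cap \mathcal N_\psi^*$, I would apply Lemma \ref{07-12-21c} to each of $a, a^*, b, b^*$ separately (or rather in a coordinated fashion, see below) to produce sequences $\{a_n\}, \{b_n\} \subseteq \mathcal M_\psi^\sigma$ with $a_n \to a$, $\Lambda_\psi(a_n) \to \Lambda_\psi(a)$, $\Lambda_\psi(a_n^*) \to \Lambda_\psi(a^*)$ and analogues for $b$. The holomorphic approximants $f_n(z) := \psi(b_n\sigma_z(a_n))$ have boundary values $\langle U_t^\psi \Lambda_\psi(a_n), \Lambda_\psi(b_n^*)\rangle$ on $\mathbb R$ and $\langle \Lambda_\psi(b_n), U_t^\psi \Lambda_\psi(a_n^*)\rangle$ on $\mathbb R + i\beta$ (the latter obtained from (3) and $\sigma_t(a_n)^* = \sigma_t(a_n^*)$), which converge uniformly in $t$ by the unitarity of $U_t^\psi$. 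The Phragm\'en-Lindel\"of principle (Proposition 5.3.5 of \cite{BR}) applied to $f_n - f_m$ on the closed strip $\mathcal D_\beta$ shows $\{f_n\}$ is uniformly Cauchy there, and the limit is the required $f$.

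The converse (4) $\Rightarrow$ (3) is a one-line analytic-continuation argument: for $a, b \in \mathcal M_\psi^\sigma$ the entire function $z \mapsto \psi(b\sigma_z(a))$ agrees with the $f$ of (4) on $\mathbb R$ and therefore on $\mathcal D_\beta$, so $\psi(b\sigma_{i\beta}(a)) = f(i\beta) = \psi(ab)$. Finally (2) $\Rightarrow$ (1) is handled by Lemma \ref{07-12-21c} with $F = \{-i\beta/2\}$ applied to $a \in D(\sigma_{-i\beta/2})$: combined with lower semicontinuity of $\psi$ applied to both $a^*a$ and $\sigma_{-i\beta/2}(a)\sigma_{-i\beta/2}(a)^*$, and the identity $\|\Lambda_\psi(a_n)\|^2 = \psi(a_n^*a_n)$, one forces the equality (the two sides of (1) are simultaneously finite or infinite by the symmetry of (2) under $a \leftrightarrow \sigma_{-i\beta/2}(a)^*$). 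The hardest step will be the approximation for (3) $\Rightarrow$ (4): one must arrange $\Lambda_\psi(a_n) \to \Lambda_\psi(a)$ and $\Lambda_\psi(a_n^*) \to \Lambda_\psi(a^*)$ \emph{simultaneously}, which goes slightly beyond the literal statement of Lemma \ref{07-12-21c} and will require a symmetric two-sided smoothing in the spirit of that lemma's proof ($R_k$ composed with a suitably chosen approximate unit), so that the boundary convergence of $f_n$ on both edges of $\mathcal D_\beta$ can be controlled uniformly in $t$.
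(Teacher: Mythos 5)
Your architecture is sound and most of it matches the paper: (1) $\Rightarrow$ (2) is the trivial inclusion, your polarization argument for (2) $\Leftrightarrow$ (3) is the paper's calculation, and your closing of the cycle via (2) $\Rightarrow$ (1) (Lemma \ref{07-12-21c} with $F=\{-i\frac{\beta}{2}\}$, lower semi-continuity for one inequality, the substitution $a\mapsto\sigma_{-i\frac{\beta}{2}}(a)^*$ for the other) is a legitimate and somewhat shorter route than the paper's (4) $\Rightarrow$ (1). Two smaller glosses: in (4) $\Rightarrow$ (3) the function $f$ of (4) is holomorphic only in the \emph{interior} of the strip and agrees with the entire function $z\mapsto\psi(b\sigma_z(a))$ on the boundary line $\mathbb R$, so you need the reflection principle (Proposition 5.3.6 of \cite{BR}), not the identity theorem; and the entirety of $z\mapsto\psi(b\sigma_z(a))$ for a general $a\in\mathcal M^\sigma_\psi$ is not free, since $\psi|_{\mathcal M_\psi}$ is not norm-continuous --- you must again pass through $R_k$-smoothed elements.

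The genuine gap is exactly where you flagged the hardest step, and your proposed fix does not work. To run Phragm\'en--Lindel\"of on $f_n-f_m$ you need $\Lambda_\psi(a_n^*)\to\Lambda_\psi(a^*)$, and no choice of two-sided smoothing $R_k(e_mae_m)$ (or $R_k(e_ma)$, or $R_k(ae_m)$) delivers this: the obstruction is that $\left\|\Lambda_\psi(ae_m)-\Lambda_\psi(a)\right\|^2=\psi(y^*y)$ with $y=a(e_m-1)$, whereas what one can control is $\psi(yy^*)\leq\psi(aa^*)-\psi(ae_ma^*)\to 0$; without a trace property or the KMS condition these are unrelated, so right multiplication by an approximate unit gives no control on $\Lambda_\psi$. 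The missing idea --- which is the crux of the paper's proof of (3) $\Rightarrow$ (4) --- is to use hypothesis (3) itself inside the approximation: rewriting (3) as $\psi(x^*y)=\psi\bigl(\sigma_{-i\frac{\beta}{2}}(y)\sigma_{-i\frac{\beta}{2}}(x)^*\bigr)$ for $x,y\in\mathcal M^\sigma_\psi$ yields the isometry $\bigl\|\Lambda_\psi(x^*)\bigr\|=\bigl\|\Lambda_\psi(\sigma_{i\frac{\beta}{2}}(x))\bigr\|$. One first replaces $a$ by $R_k(a)$ so that $\sigma_z(a)\in\mathcal N_\psi$ for all $z$, which activates the final bullet of Lemma \ref{07-12-21c} and gives $\Lambda_\psi(\sigma_{i\frac{\beta}{2}}(a_n))\to\Lambda_\psi(\sigma_{i\frac{\beta}{2}}(a))$; the isometry then makes $\{\Lambda_\psi(a_n^*)\}$ Cauchy, and closedness of $\Lambda_\psi$ (Lemma \ref{17-11-21a}) identifies the limit as $\Lambda_\psi(a^*)$. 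Without this use of (3) in its own service, the approximation step of (3) $\Rightarrow$ (4) cannot be completed.
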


We make the following definition.

\begin{defn}\label{24-11-21c} For $\beta \in \mathbb R$ we define a \emph{$\beta$-KMS weight} for $\sigma$ to be a non-zero densely defined weight on $A$ which is $\sigma$-invariant and satisfies one and hence all of the conditions in Definition \ref{24-11-21d}. 
\end{defn}

When $\psi$ is a $\beta$-KMS weight, but the value of $\beta$ is unimportant, we say that $\psi$ is a KMS weight.

It is apparent that the case $\beta =0$ is exceptional since $\mathcal D^0_\beta = \emptyset$ when $\beta =0$, and we make the following definition.

\begin{defn}\label{03-02-22f} A \emph{trace} on a $C^*$-algebra $A$ is a non-zero map $\psi : A^+ \to [0,\infty]$ with the following properties:
\begin{itemize}
\item $\psi(a+b) = \psi(a) + \psi(b) \ \ \forall a,b \in A^+$,
\item $\psi( t a) = t \psi(a) \ \ \forall a \in A^+, \ \forall t \in \mathbb R^+$, using the convention $0 \cdot \infty = 0$, 
\item  $\psi(a^*a) = \psi(aa^*)$ for all $a \in A$, and
\item $\left\{ a \in A^+ :  \psi(a) < \infty\right\}$ is dense in $A^+$.
\end{itemize}
\end{defn}

Thus a $0$-KMS weight is a $\sigma$-invariant lower semi-continuous trace on $A$.

\section{Proof of Kustermans' theorem} 
The case $\beta = 0$ is slightly exceptional because the interior $\mathcal D_\beta^0$ of $\mathcal D_\beta$ is empty in this case. In the following formulations we focus on the case $\beta \neq 0$ and leave the reader to make the necessary interpretations when $\beta =0$. It is worthwhile because Theorem \ref{24-11-21d} carries non-trivial information also in this case. 

The implication (1) $\Rightarrow$ (2) is trivial so it suffices to prove (2) $\Rightarrow$ (3) $\Rightarrow$ (4) $\Rightarrow$ (1).

(2) $\Rightarrow$ (3):  Let $a,b \in \mathcal M_\psi^\sigma$. Using polarization and Lemmas \ref{04-11-21n}, \ref{10-11-21}, \ref{18-11-21g}, \ref{24-11-21k}, \ref{23-11-21}, \ref{25-11-21} and \ref{18-11-21kx} we find that
\begin{equation}\label{08-12-21e}
\begin{split}
& \psi(b^*a) = 
\frac{1}{4} \sum_{k=1}^4 i^k \psi((a+ i^kb)^*(a+i^kb)) \\
& = \frac{1}{4} \sum_{k=1}^4 i^k \psi\left(\sigma_{-i \frac{\beta}{2}}(a+i^kb) \sigma_{-i \frac{\beta}{2}}(a+i^kb)^*  \right) \\
& =  \frac{1}{4} \sum_{k=1}^4 i^k \psi\left((\sigma_{-i \frac{\beta}{2}}(a)+i^k \sigma_{-i\frac{\beta}{2}}(b)) (\sigma_{-i \frac{\beta}{2}}(a)+i^k \sigma_{-i\frac{\beta}{2}}(b))^*  \right) \\
& = \psi(\sigma_{-i \frac{\beta}{2}}(a)\sigma_{-i \frac{\beta}{2}}(b)^*)\\
& = \psi\left( \sigma_{-i \frac{\beta}{2}}\left(a\sigma_{i\frac{\beta}{2}}(\sigma_{-i \frac{\beta}{2}}(b)^*)\right)\right)\\
& = \psi(a \sigma_{i\beta}(b^*)) .
\end{split}
\end{equation}

(3) $\Rightarrow$ (4): Let $a,b \in \mathcal N_\psi \cap \mathcal N_\psi^*$. We claim that there are sequences $\{a_n\}$ and $\{b_n\}$ in $\mathcal M_\psi^\sigma$ such that $\lim_{n \to \infty} a_n = a, \ \lim_{n \to \infty} \Lambda_\psi(a_n) = \Lambda_\psi(a)$, 
$$   \lim_{n \to \infty} \Lambda_\psi(a_n^*) = \Lambda_\psi(a^*)
$$ 
and  $\lim_{n \to \infty} b_n = b, \ \lim_{n \to \infty} \Lambda_\psi(b_n) = \Lambda_\psi(b), \   \lim_{n \to \infty} \Lambda_\psi(b_n^*) = \Lambda_\psi(b^*)$. To construct $\{a_n\}$ note that $\lim_{k \to \infty} R_k(a) = a$ by Lemma \ref{24-11-21}, and 
$$
\lim_{k \to \infty} \Lambda_\psi(R_k(a))  = \Lambda_\psi(a)
$$ 
and $\lim_{k \to \infty} \Lambda_\psi(R_k(a)^*)  = \Lambda_\psi(a^*)$ by Lemma \ref{24-11-21g}. It suffices therefore to construct the sequences $\{a_n\}$ and $\{b_n\}$ with $a$ and $b$ replaced $R_k(a)$ and $R_k(b)$. Note that $R_k(a) \in \mathcal N_\psi \cap \mathcal N_\psi^* \cap \mathcal A_\sigma$ by Lemma \ref{24-11-21} and Lemma \ref{17-11-21m}. Furthermore, $\sigma_z(R_k(a)) \in \mathcal N_\psi$ for all $z \in \mathbb C$ by Lemma \ref{17-11-21m}. We may therefore assume that $a \in   \mathcal N_\psi \cap \mathcal N_\psi^* \cap \mathcal A_\sigma$ and that $\sigma_z(a) \in \mathcal N_\psi$ for all $z \in \mathbb C$. It follows then from Lemma \ref{07-12-21c} that there is a sequence $\{a_n\}$ in $\mathcal M_\psi^\sigma$ such that $\lim_{n \to \infty} a_n = a$, $\lim_{n \to \infty} \Lambda_\psi(a_n) = \Lambda_\psi(a)$ and $\lim_{n \to \infty} \Lambda_\psi\left(\sigma_{i \frac{\beta}{2}}(a_n)\right) =  \Lambda_\psi\left(\sigma_{i \frac{\beta}{2}}(a)\right)$. Since we assume (3) we  have $\psi(x^*y) =   \psi(y\sigma_{i {\beta}}(x^*))$ for all $x,y \in \mathcal M^\sigma_\psi$, so by repeating the last two steps in the calculation \eqref{08-12-21e} we find that $\psi(x^*y) = \psi(\sigma_{-i \frac{\beta}{2}}(y) \sigma_{-i \frac{\beta}{2}}(x)^*)$ for all $x,y \in \mathcal M^\sigma_\psi$. In particular,
\begin{align*}
& \left\|\Lambda_\psi(a_n^*) - \Lambda_\psi(a_m^*)\right\|^2  = \psi\left((a_n -a_m)(a_n-a_m)^*\right) \\
& = \psi((a_n^*-a_m^*)^*(a_n^*-a_m^*)) = \psi\left( \sigma_{ -i \frac{\beta}{2}}(a_n^* -a_m^*)  \sigma_{ -i \frac{\beta}{2}}(a_n^* -a_m^*)^* \right)\\
& =\psi((\sigma_{i \frac{\beta}{2}}(a_n) - \sigma_{i \frac{\beta}{2}}(a_m))^*(\sigma_{i \frac{\beta}{2}}(a_n) - \sigma_{i \frac{\beta}{2}}(a_m)) \\
& = \left\|\Lambda_\psi( \sigma_{ i \frac{\beta}{2}}(a_n)) - \Lambda_\psi( \sigma_{ i \frac{\beta}{2}}(a_m))\right\|^2 
\end{align*}
for all $n,m$.
It follows that $\{\Lambda_\psi(a_n^*)\}$ converges in $H_\psi$ and since $\Lambda_\psi$ is closed by Lemma \ref{17-11-21a} it follows that $\lim_{n \to \infty} \Lambda_\psi(a_n^*) = \Lambda_\psi(a^*)$. The sequence $\{b_n\}$ is constructed in the same way. We need to arrange that the functions $z \mapsto \Lambda_\psi(\sigma_z(a_n))$ are entire analytic. To ensure this note that we can choose a sequence $\{k_n\}$ in $\mathbb N$ such that $\left\|R_{k_n}(a_n) - a_n\right\|  \leq \frac{1}{n}, \ \left\|\Lambda_\psi(R_{k_n}(a_n)) - \Lambda_\psi(a_n)\right\| \leq \frac{1}{n}$ and $\left\|\Lambda_\psi(R_{k_n}(a_n^*)) - \Lambda_\psi(a_n^*)\right\| \leq \frac{1}{n}$. This follows from Lemma \ref{24-11-21} and Lemma \ref{24-11-21g}. Note that $R_{k_n}(a_n) \in \mathcal M^\sigma_\psi$ by Lemma \ref{07-12-21}. The sequence $\{R_{k_n}(a_n)\}$ therefore has the same properties as $\{a_n\}$. Since $z \mapsto \Lambda_\psi\left(\sigma_z\left(R_{k_n}(a_n)\right)\right)$ is entire analytic by Lemma \ref{18-11-21ex} and Lemma \ref{17-11-21m}, we may assume that $z \mapsto \Lambda_\psi(\sigma_z(a_n))$ is entire analytic for all $n$. Furthermore, it follows from Lemma \ref{17-11-21m} and Lemma \ref{17-11-21i} that
$$
\Lambda_\psi\left(\sigma_z(R_{k_n}(a_n))\right) = U^\psi_z R_{k_n}\left(\Lambda_\psi(a_n)\right)
$$
for all $z \in \mathbb C$, and hence by Lemma \ref{24-09-23x} that
$$
\sup_{z \in \mathcal D_\beta} \left\|\Lambda_\psi\left(\sigma_z(R_{k_n}(a_n))\right)\right\| < \infty .
$$
We may therefore also assume that $\sup_{z \in \mathcal D_\beta} \left\|\Lambda_\psi(\sigma_z(a_n))\right\| < \infty$ for all $n$.

For each $n \in \mathbb N$ define $f_n : \mathbb C \to \mathbb C$ such that
$$
f_n(z) := \psi(b_n\sigma_z(a_n)) = \left<\Lambda_\psi(\sigma_z(a_n)),\Lambda_\psi(b_n^*)\right>.
$$
Note that $f_n$ is entire holomorphic since $z \mapsto \Lambda_\psi(\sigma_z(a_n))$ is entire analytic, and bounded on $\mathcal D_\beta$ since $\Lambda_\psi(\sigma_z(a_n))$ is norm-bounded for $z \in \mathcal D_\beta$. For $z \in \mathcal D_\beta$ we get therefore the estimate
\begin{equation}\label{18-11-21d}
\left|f_n(z) - f_m(z)\right| \leq \max \left\{ \sup_{t \in \mathbb R} |f_n(t) -f_m(t)|, \ \sup_{t \in \mathbb R} |f_n(t+i\beta) -f_m(t+i\beta)|\right\}
\end{equation}
from Proposition 5.3.5 in \cite{BR} (Phragmen-Lindel\"of) for all $n,m \in \mathbb N$.
We note that 
\begin{align*}
&f_n(t)  =\left< U^\psi_t\Lambda_\psi(a_n),\Lambda_\psi(b_n^*)\right>,
\end{align*}
and that $ U^\psi_t\Lambda_\psi(a_n)$ converges to $U^\psi_t\Lambda_\psi(a)$ uniformly in $t$ since $\lim_{n \to \infty} \Lambda_\psi(a_n) = \Lambda_\psi(a)$. As $\lim_{n \to \infty} \Lambda_\psi(b_n^*)= \Lambda_\psi(b^*)$ it follows therefore that 
$$
\lim_{n \to \infty} f_n(t) = \left<U^\psi_t\Lambda_\psi(a),\Lambda_\psi(b^*)\right> = \psi(b\sigma_t(a))
$$ 
uniformly in $t$. It follows from (3) that
\begin{equation*}
f_n(t+i\beta) =\psi(b_n \sigma_{i\beta}(\sigma_t(a_n))) = \psi(\sigma_t(a_n)b_n) = \left<\Lambda_\psi(b_n), U^\psi_t\Lambda_\psi(a_n^*)\right> .
\end{equation*}
Since $\lim_{n \to \infty} \Lambda_\psi(b_n)= \Lambda_\psi(b)$ and $\lim_{n \to \infty} \Lambda_\psi(a_n^*)= \Lambda_\psi(a^*)$, it follows that
$$
\lim_{n \to \infty} f_n(t+i\beta) = \left<\Lambda_\psi(b), U^\psi_t\Lambda_\psi(a^*)\right>  = \psi(\sigma_t(a)b)
$$
uniformly in $t$. It follows now from the estimate \eqref{18-11-21d} that the sequence $\{f_n\}$ converges uniformly on ${\mathcal D_\beta}$ to a continuous function $f : {\mathcal D_\beta} \to \mathbb C$ which is holomorphic in $\mathcal D_\beta^0$ and has the required properties. \footnote{Note that $f$ is also bounded. This additional property could therefore be added to (4) in the statement of Kustermans' theorem
, but the property will not be needed for the proof of (4) $\Rightarrow$ (1).}
(4) $\Rightarrow$ (1): Let $a\in \mathcal N_\psi \cap D(\sigma_{-i \frac{\beta}{2}})$. It follows from Lemma \ref{07-12-21c} that there is a sequence $\{a_n\}$ in $\mathcal M_\psi^\sigma$ such that $\lim_{n \to \infty} a_n = a$, $\lim_{n \to \infty} \Lambda_\psi(a_n) = \Lambda_\psi(a)$ and $\lim_{n \to \infty} \sigma_{-i \frac{\beta}{2}}(a_n) =  \sigma_{-i \frac{\beta}{2}}(a)$. As above we can arrange, by exchanging $a_n$ by $R_{k_n}(a_n)$ for some large $k_n \in \mathbb N$, that $z \mapsto \Lambda_\psi(\sigma_z(a_n^*))$ is entire analytic. Fix $n,m \in \mathbb N$, and set
$$
H(z) := \psi(a_n\sigma_z(a_m^*)) = \left<\Lambda_\psi(\sigma_z(a_m^*)), \Lambda_\psi(a_n^*)\right> \ \ \forall z \in \mathbb C.
$$
Then $H$ is entire holomorphic. Since we assume (4) there is a continuous function $f : {\mathcal D_\beta} \to \mathbb C$ such that $f$ is holomorphic on $\mathcal D^0_\beta$,
$$
f(t) = \psi(a_n \sigma_t(a_m^*)) \ \ \forall t \in \mathbb R, \ \text{and}
$$
$$
f(t+i\beta) = \psi(\sigma_t(a_m^*)a_n)) \ \ \forall t \in \mathbb R.
$$ 
Note that $f(t) = H(t)$ for all $t \in \mathbb R$. Applying Proposition 5.3.6 in \cite{BR} with $\mathcal O = \left\{z \in \mathbb C: \ \Imag z < \beta\right\}$ and $F(z) = H(z) -f(z)$ when $\beta > 0$ and with $\mathcal O = \left\{z \in \mathbb C: \ \Imag z < -\beta\right\}$ and $F(z) = \overline{H(\overline{z})} - \overline{f(\overline{z})}$ when $\beta < 0$, we conclude that $H(z) = f(z)$ for all $z \in {\mathcal D_\beta}$. Hence
\begin{align*}
\psi(\sigma_t(a_m^*)a_n) = f(t+ i \beta) = H(t+i\beta) = \psi(a_n\sigma_{t+i \beta}(a_m^*)) \
\end{align*} 
for all $t \in \mathbb R$.
By combining with Lemma \ref{24-11-21k}, Lemma \ref{18-11-21g} and Lemma \ref{18-11-21kx} we find
\begin{align*}
& \psi(a_m^*a_n) \\
& =\psi(a_n\sigma_{i \beta}(a_m^*)) =  \psi\left(a_n\sigma_{i \frac{\beta}{2}}\left( \sigma_{i \frac{\beta}{2}}(a_m^*)\right)\right)\\
& =  \psi\left(a_n\sigma_{i \frac{\beta}{2}}\left( \sigma_{-i \frac{\beta}{2}}(a_m)^*\right)\right)\\
& = \psi\left( \sigma_{i \frac{\beta}{2}}\left( \sigma_{-i \frac{\beta}{2}}(a_n)  \sigma_{-i \frac{\beta}{2}}(a_m)^*\right)\right) \\
& =  \psi\left( \sigma_{-i \frac{\beta}{2}}(a_n)  \sigma_{-i \frac{\beta}{2}}(a_m)^*\right) .
\end{align*} 
Thus
\begin{equation}\label{25-11-21e}
\left<\Lambda_\psi(a_n),\Lambda_\psi(a_m)\right> = \left< \Lambda_\psi( \sigma_{-i \frac{\beta}{2}}(a_m)^*), \Lambda_\psi( \sigma_{-i \frac{\beta}{2}}(a_n)^*)\right> ,
\end{equation}
and hence 
\begin{align*}
& \left\|\Lambda_\psi( \sigma_{-i \frac{\beta}{2}}(a_n)^*) - \Lambda_\psi( \sigma_{-i \frac{\beta}{2}}(a_m)^*)\right\|^2 \\
& = \left< \Lambda_\psi( \sigma_{-i \frac{\beta}{2}}(a_n)^*), \Lambda_\psi( \sigma_{-i \frac{\beta}{2}}(a_n)^*)\right> +  \left< \Lambda_\psi( \sigma_{-i \frac{\beta}{2}}(a_m)^*), \Lambda_\psi( \sigma_{-i \frac{\beta}{2}}(a_m)^*)\right> \\
& \ \ \ \ - \left< \Lambda_\psi( \sigma_{-i \frac{\beta}{2}}(a_n)^*), \Lambda_\psi( \sigma_{-i \frac{\beta}{2}}(a_m)^*)\right> -  \left< \Lambda_\psi( \sigma_{-i \frac{\beta}{2}}(a_m)^*), \Lambda_\psi( \sigma_{-i \frac{\beta}{2}}(a_n)^*)\right> \\
&  = \left< \Lambda_\psi( a_n), \Lambda_\psi(a_n)\right> +  \left<\Lambda_\psi(a_m), \Lambda_\psi(a_m)\right> - \left<\Lambda_\psi(a_n), \Lambda_\psi(a_m)\right> \\
& \ \ \ \ \ \ \ \ \ \ \ \ \ \ \ \ \ \ \ \ \ \ \ \ \ \ \ \ \ \ \ \ \ \ \ \ \ \ \ \ \ \ \ \ \ \ \ \ \ \ \ \ \ \ \ \ \ \ \ \ \ \ \ \ \ \ \ \ \ \ \ \ \ \ \ \ \ \ \ \ \ \ \ -  \left< \Lambda_\psi( a_m), \Lambda_\psi(a_n)\right> \\
& = \left\| \Lambda_\psi(a_n) - \Lambda_\psi(a_m)\right\|^2.
\end{align*}
It follows that $\{\Lambda_\psi( \sigma_{-i \frac{\beta}{2}}(a_n)^*)\}$ converges in $H_\psi$. Since $\lim_{n \to \infty} \sigma_{-i \frac{\beta}{2}}(a_n)^* = \sigma_{-i \frac{\beta}{2}}(a)^* = \sigma_{i\frac{\beta}{2}}(a^*)$ and $\Lambda_\psi$ is closed by Lemma \ref{17-11-21a}, it follows that $\sigma_{i\frac{\beta}{2}}(a^*) \in \mathcal N_\psi$ and
$$
\lim_{n \to \infty} \Lambda_\psi( \sigma_{-i \frac{\beta}{2}}(a_n)^*) = \Lambda_\psi\left(\sigma_{i \frac{\beta}{2}}(a^*)\right) .
$$
Combined with \eqref{25-11-21e} and Lemma \ref{24-11-21k} we get
\begin{align*}
&\psi(a^*a) = \left<\Lambda_\psi(a),\Lambda_\psi(a)\right> \\
& = \lim_{n \to \infty} \left<\Lambda_\psi(a_n),\Lambda_\psi(a_n)\right> \\
& = \lim_{n \to \infty} \left< \Lambda_\psi( \sigma_{-i \frac{\beta}{2}}(a_n)^*), \Lambda_\psi( \sigma_{-i \frac{\beta}{2}}(a_n)^*)\right> 
\\
& = \left<\Lambda_\psi\left(\sigma_{i \frac{\beta}{2}}(a^*)\right), \Lambda_\psi\left(\sigma_{i \frac{\beta}{2}}(a^*)\right)\right> \\
&
= \psi(\sigma_{i \frac{\beta}{2}}(a^*)^*\sigma_{i \frac{\beta}{2}}(a^*)) = \psi(\sigma_{-i \frac{\beta}{2}}(a)\sigma_{-i \frac{\beta}{2}}(a)^*) .
\end{align*}
We have now established the desired equality in (1) when $a \in \mathcal N_\psi \cap D(\sigma_{-i\frac{\beta}{2}})$. If $a \in D(\sigma_{-i \frac{\beta}{2}}) \backslash \mathcal N_\psi$ it follows that $ \psi(\sigma_{-i \frac{\beta}{2}}(a) \sigma_{-i \frac{\beta}{2}}(a)^*) = \infty$. Indeed, if 
$$
\psi(\sigma_{-i \frac{\beta}{2}}(a) \sigma_{-i \frac{\beta}{2}}(a)^*)< \infty
$$ 
we have that $\sigma_{-i \frac{\beta}{2}}(a)^* \in \mathcal N_\psi$ and hence $\sigma_{i \frac{\beta}{2}}(a^*) \in \mathcal N_\psi$ by Lemma \ref{24-11-21k}. But $\sigma_{i \frac{\beta}{2}}(a^*) \in D(\sigma_{-i \frac{\beta}{2}})$ by Lemma \ref{23-11-21} and hence, by what we have just established and Lemma \ref{23-11-21},
$$
\infty > \psi( \sigma_{i \frac{\beta}{2}}(a^*)^* \sigma_{i \frac{\beta}{2}}(a^*)) = \psi(\sigma_{-i \frac{\beta}{2}}(\sigma_{i \frac{\beta}{2}}(a^*)) \sigma_{-i \frac{\beta}{2}}(\sigma_{i \frac{\beta}{2}}(a^*))^*)  = \psi(a^*a) ;
$$
a contradiction. Thus the equality in (1) is valid in all cases where it makes sense, namely for all $a \in  D(\sigma_{-i\frac{\beta}{2}})$.
\qed

\bigskip
In the proof of (4) $\Rightarrow$ (1) we only used condition (4) for elements $a,b \in \mathcal M^\sigma_\psi$. Therefore conditions (1)-(4) in Theorem \ref{24-11-21d} are also equivalent to 

\begin{enumerate}
\item[(4')] For all $a,b \in \mathcal M_\psi^\sigma$ there is a continuous function $f: {\mathcal D_\beta} \to \mathbb C$ which is holomorphic in the interior $\mathcal D_\beta^0$ of ${\mathcal D_\beta}$ and has the property that
\begin{itemize}
\item $f(t) = \psi(b \sigma_t(a)) \ \ \forall t \in \mathbb R$, and
\item $f(t+i\beta) = \psi(\sigma_t(a)b) \ \ \forall t \in \mathbb R$.
\end{itemize}
\end{enumerate}

 Some choices were made in the formulation of condition (2),(3) and now (4') of Theorem \ref{24-11-21d}. Specifically, the algebra $\mathcal M^\sigma_\psi$ was chosen because it has the nice properties that it is a $*$-algebra which is invariant under $\sigma_z$ for all $z \in \mathbb C$ on which $\psi$ is an everywhere defined linear functional. However, in specific cases it can be hard to identify $\mathcal M^\sigma_\psi$, and in applications it is often important to deduce that a weight is a $\beta$-KMS weight because it satisfies the identity in (2) or (3) of Theorem \ref{24-11-21d} for elements in a smaller set. The next section is devoted to the proof of a result which seems optimal in this respect.

 \subsection{The GNS triple of a KMS weight}\label{GNS-KMS}
Let $\sigma$ be a flow on the $C^*$-algebra $A$.
In order to determine when a GNS representation $(H,\Lambda,\pi)$ of $A$ is isomorphic to the GNS-triple of a KMS weight for $\sigma$, we consider the following conditions:
\begin{itemize}
\item[(A)] $\pi$ is non-degenerate, i.e $\Span \left\{\pi(a)\Lambda(b) : \ a \in A, b \in D(\Lambda)\right\}$ is dense in $H$,
\item[(B)] $\Lambda : D(\Lambda) \to H$ is closed, 
\item[(C)] $\sigma_t(D(\Lambda)) = D(\Lambda)$ for all $t \in \mathbb R$,
\item[(D)] there is a strongly continuous unitary group representation $(U_t)_{t \in \mathbb R}$ of $\mathbb R$ on $H$ such that $U_t\Lambda(a) = \Lambda(\sigma_t(a))$ for all $t \in \mathbb R$ and all $a \in D(\Lambda)$, and
\item[(E)] there is a conjugate linear isometry $J : H \to H$ such that $J\Lambda(a) = \Lambda(\sigma_{-i \frac{\beta}{2}}(a)^*)$ for all $a \in \mathcal M_\Lambda^\sigma$, where
$$
\mathcal M_\Lambda^\sigma := \left\{ a \in D(\Lambda) \cap  D(\Lambda)^* \cap \mathcal A_\sigma: \ \sigma_z(a) \in D(\Lambda) \cap  D(\Lambda)^* \ \forall z \in \mathbb C \right\}. 
$$
\end{itemize}
Note that $\mathcal M^\sigma_\Lambda$ is a $*$-subalgebra of $A$, invariant under $\sigma_z$ for all $z \in \mathbb C$.

\begin{lemma}\label{17-03-22d} Assume that (A) holds. Then $\mathcal M^\sigma_\Lambda$ is a core for $\Lambda : D(\Lambda) \to H$.
\end{lemma}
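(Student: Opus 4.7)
The plan is to adapt the construction used in the proof of Lemma \ref{07-12-21c}, but working directly with the given GNS representation $(H,\Lambda,\pi)$ instead of a KMS-type one. Given $a \in D(\Lambda)$, I would produce approximants of the form $R_k(e_m a)$, where $\{e_m\}_{m \in \mathcal{I}}$ is the approximate unit in $D(\Lambda)$ supplied by Lemma \ref{16-12-21a} (which requires exactly hypothesis (A)), and $R_k$ is the smoothing operator from \eqref{17-11-21f}.

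First I would verify that $R_k(e_m a) \in \mathcal{M}^\sigma_\Lambda$. Since $D(\Lambda)$ is a left ideal and $a \in D(\Lambda)$, we have $e_m a \in D(\Lambda)$, and since $e_m = e_m^* \in D(\Lambda)$, the element $(e_m a)^* = a^* e_m$ also lies in $D(\Lambda)$, so $e_m a \in D(\Lambda) \cap D(\Lambda)^*$. Lemma \ref{17-11-21m} then gives $\sigma_z(R_k(e_m a)) \in D(\Lambda)$ for every $z \in \mathbb{C}$, and applying the same lemma to $(e_m a)^* \in D(\Lambda)$ combined with $R_k(x)^* = R_k(x^*)$ and $\sigma_z(x)^* = \sigma_{\overline{z}}(x^*)$ (Lemma \ref{24-11-21k}) shows $\sigma_z(R_k(e_m a))^* \in D(\Lambda)$ as well. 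Together with Lemma \ref{24-11-21} this places $R_k(e_m a)$ in $\mathcal{M}^\sigma_\Lambda$.

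Next, given $\epsilon > 0$, I would first fix $k$ so large that $\|R_k(a) - a\| < \epsilon/2$ and $\|\Lambda(R_k(a)) - \Lambda(a)\| < \epsilon/2$, which is possible by Lemmas \ref{24-11-21} and \ref{24-11-21g}. Then I would choose $m$ so large that $\|e_m a - a\| < \epsilon/2$ and $\|\pi(e_m)\Lambda(a) - \Lambda(a)\| < \epsilon/2$, using the convergence properties of the net $\{e_m\}$ together with the strong convergence $\pi(e_m) \to 1$ from Lemma \ref{16-12-21a}. The triangle inequality combined with $\|R_k(e_m a) - R_k(a)\| \leq \|e_m a - a\|$ (since $R_k$ is a contraction) handles the norm estimate.

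The estimate on the $\Lambda$-side is the one requiring a small computation. Using Lemma \ref{17-11-21m} and $\Lambda(e_m a) = \pi(e_m)\Lambda(a)$, one has
\[
\Lambda(R_k(e_m a)) = \sqrt{\tfrac{k}{\pi}} \int_{\mathbb{R}} e^{-ks^2} U_s \pi(e_m) \Lambda(a) \, \mathrm{d}s,
\]
so that $\|\Lambda(R_k(e_m a)) - \Lambda(R_k(a))\| \leq \|\pi(e_m)\Lambda(a) - \Lambda(a)\|$ by the contractivity bound \eqref{28-02-22a}. Combined with the choice of $k$ this gives $\|\Lambda(R_k(e_m a)) - \Lambda(a)\| < \epsilon$, completing the construction of the core approximants. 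The main subtlety is just keeping track of the fact that $e_m a$ is automatically in $D(\Lambda) \cap D(\Lambda)^*$, which is what makes the smoothed element land in $\mathcal{M}^\sigma_\Lambda$ and not merely in $D(\Lambda) \cap \mathcal{A}_\sigma$.
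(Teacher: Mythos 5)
Your proof is correct and takes essentially the same route as the paper's: the same approximants $R_k(e_m a)$, the same membership verification via Lemmas \ref{17-11-21m}, \ref{24-11-21} and the left-ideal property of $D(\Lambda)$, and the same contractivity estimates for both the norm and the $\Lambda$-side. The only difference is presentational — you fix $k$ first and then $m$ with explicit $\epsilon/2$ bookkeeping, whereas the paper takes the iterated limit $k\to\infty$ followed by $j\to\infty$; both are valid.
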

\begin{proof} Let $a \in D(\Lambda)$ and let $\{e_j\}_{j \in I}$ be the approximate unit for $A$ in $D(\Lambda)$ from Lemma \ref{16-12-21a}. Then $e_ja \in D(\Lambda) \cap D(\Lambda)^*$ and $R_k(e_ja) \in D(\Lambda) \cap D(\Lambda)^* \cap \mathcal A_\sigma$ by Lemma \ref{17-11-21m} and Lemma \ref{24-11-21}. Then $\sigma_z(R_k(e_ja)) \in \mathcal A_\sigma$ by Lemma \ref{25-11-21} and $\sigma_z(R_k(e_ja)) \in D(\Lambda) \cap D(\Lambda)^* $ for all $z \in \mathbb C$ by Lemma \ref{17-11-21m}. Hence $R_k(e_ja) \in \mathcal M_\Lambda^\sigma$. It follows from Lemma \ref{24-11-21} that $\lim_{k \to \infty} R_k(e_ja)= e_ja$ and from Lemma \ref{16-12-21a} that $\lim_{j \to \infty} e_ja = a$. By Lemma \ref{24-11-21g} 
$$
\lim_{k \to \infty} \Lambda(R_k(e_ja)) = \Lambda(e_ja) = \pi(e_j)\Lambda(a).
$$ 
This completes the proof because $\lim_{j \to \infty} \pi(e_j)\Lambda(a) = \Lambda(a)$ by Lemma \ref{16-12-21a}.
\end{proof}

\begin{lemma}\label{09-02-22x} Let $(H,\Lambda,\pi)$ be a GNS representation of $A$ with the properties (A), (B), (C) and (D). Let $\beta \in \mathbb R$. There is a net $\{u_j\}_{j \in I}$ in $A$ such that
\begin{itemize}
\item[(a)] $u_j \in \mathcal M_\Lambda^\sigma$ for all $j \in I$,
\item[(b)] $\sup_{j \in I} \left\|u_j\right\| < \infty$,
\item[(c)] $\lim_{j \to \infty} u_j a = \lim_{j \to \infty} a u_j = a$ for all $a \in A$, and
\item[(d)] $\left\{\sigma_{-i \frac{\beta}{2}}(u_j)\right\}_{j \in I}$ is an increasing approximate unit for $A$ and $0 \leq \sigma_{-i \frac{\beta}{2}}(u_j) \leq 1$ for all $j$.
\end{itemize}
If (E) also holds there is, in addition, a net $\{\rho_j\}_{j \in I}$ in $B(H)$, indexed by the same directed set $I$, such that
\begin{itemize}
\item[(e)] $\left\|\rho_j\right\| \leq 1$ for all $j \in I$,
\item[(f)] $\rho_j\Lambda(a) = \pi(a) \Lambda(u_j)$ for all $j \in I$ and all $ a \in D(\Lambda)$, and
\item[(g)] $\lim_{j \to \infty} \rho_j = 1$ in the strong operator topology.
\end{itemize}
\end{lemma}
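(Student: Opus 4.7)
The construction proceeds in two layers: first a self-adjoint smoothed approximate unit $v_j$ that will play the role of $\sigma_{-i\beta/2}(u_j)$, then the shifted element $u_j:=\sigma_{i\beta/2}(v_j)$. Let $\{e_j\}_{j\in I}\subseteq D(\Lambda)$ be the self-adjoint, increasing approximate unit provided by Lemma \ref{16-12-21a}: $0\leq e_j\leq e_{j'}\leq 1$, $e_ja\to a$ in $A$, and $\pi(e_j)\to 1$ strongly. Define
$$
v_j:=R_1(e_j),\qquad u_j:=\sigma_{i\beta/2}(v_j),
$$
where $R_1$ is the smoothing operator from \eqref{17-11-21f}. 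Since $R_1$ commutes with the involution and $e_j\in D(\Lambda)\cap D(\Lambda)^*$, Lemma \ref{17-11-21m} gives $\sigma_z(v_j)\in D(\Lambda)\cap D(\Lambda)^*$ for every $z\in\mathbb C$; combined with Lemma \ref{24-11-21} this places $v_j\in\mathcal M^\sigma_\Lambda$, and the $\sigma_z$-invariance of $\mathcal M^\sigma_\Lambda$ gives $u_j\in\mathcal M^\sigma_\Lambda$, proving (a). The group law (Lemma \ref{25-11-21}) yields $\sigma_{-i\beta/2}(u_j)=v_j$; positivity of $R_1$ together with $R_1(1)=1$ delivers $0\leq v_j\leq v_{j'}\leq 1$; and the bound $|e^{-(s-i\beta/2)^2}|=e^{\beta^2/4}e^{-s^2}$ plugged into the integral formula of Lemma \ref{17-11-21i} gives $\|u_j\|\leq e^{\beta^2/4}$, establishing (b).

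The convergence clauses in (c) and the approximate-unit half of (d) all follow from one dominated-convergence argument. For fixed $s\in\mathbb R$ and $a\in A$, $\sigma_s(e_j)a=\sigma_s(e_j\sigma_{-s}(a))\to a$ as $j\to\infty$, and likewise from the right. Writing
$$
u_j a=\sqrt{\tfrac{1}{\pi}}\int_{\mathbb R}e^{-(s-i\beta/2)^2}\sigma_s(e_j)a\,\mathrm ds
$$
and dominating by $e^{\beta^2/4}e^{-s^2}\|a\|$, dominated convergence yields $u_ja\to a$, since $\sqrt{1/\pi}\int_{\mathbb R}e^{-(s-i\beta/2)^2}\mathrm ds=1$ by a Cauchy contour shift. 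The identical argument produces $au_j\to a$, and the $\beta=0$ specialisation gives $v_ja\to a$, completing (c) and (d).

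Assuming now also (E), set $\rho_j:=J\pi(v_j)J$, so $\|\rho_j\|\leq\|\pi(v_j)\|\leq 1$, giving (e). For (f), let $a\in\mathcal M^\sigma_\Lambda$; by Lemma \ref{18-11-21g}, Lemma \ref{24-11-21k} and $v_j=v_j^*$,
$$
\sigma_{-i\beta/2}(au_j)^*=v_j\,\sigma_{-i\beta/2}(a)^*,
$$
so (E) yields $J\Lambda(au_j)=\Lambda(v_j\sigma_{-i\beta/2}(a)^*)=\pi(v_j)J\Lambda(a)$. Applying $J$ and using $J^2=1$ (verified from (E) on the core $\mathcal M^\sigma_\Lambda$ by the chain $J^2\Lambda(a)=\Lambda(\sigma_{-i\beta/2}(\sigma_{i\beta/2}(a^*))^*)=\Lambda(a)$) produces $\pi(a)\Lambda(u_j)=\Lambda(au_j)=\rho_j\Lambda(a)$. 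This identity extends from the core (Lemma \ref{17-03-22d}) to all $a\in D(\Lambda)$ by boundedness of $\rho_j$ and $\pi$ and closedness of $\Lambda$: for $a\in D(\Lambda)$ and $a_n\in\mathcal M^\sigma_\Lambda$ with $a_n\to a$ and $\Lambda(a_n)\to\Lambda(a)$, $\rho_j\Lambda(a_n)\to\rho_j\Lambda(a)$ and $\pi(a_n)\Lambda(u_j)\to\pi(a)\Lambda(u_j)$. Finally, non-degeneracy of $\pi$ together with $v_ja\to a$ gives $\pi(v_j)\to 1$ strongly, hence $\rho_j\to 1$ strongly, proving (g). The main obstacle is the tension between (b) and (d): $\|\sigma_{i\beta/2}(R_n(e_j))\|$ grows like $e^{n\beta^2/4}$, so naively taking $n\to\infty$ to improve the approximate-unit convergence would destroy boundedness; the resolution is that already the fixed smoothing $R_1$ supplies $v_ja\to a$ via dominated convergence, so no diagonal choice of $n$ and $j$ is required.
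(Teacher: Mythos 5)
Your construction of $u_j=\sigma_{i\beta/2}(R_1(e_j))$ and the verification of (a)--(d) coincide with the paper's argument, down to the choice of the smoothing operator $R_1$, the bound $\|u_j\|\leq e^{\beta^2/4}$, the analytic-continuation argument showing $\frac{1}{\sqrt\pi}\int_{\mathbb R}e^{-(s-z)^2}\,\mathrm ds=1$, and the observation that $R_1(e_j)$ is increasing because $R_1$ is positive. The only genuine divergence is in the construction of $\rho_j$. The paper uses (E) to establish the a priori estimate $\|\Lambda(au_j)\|=\|\pi(R_1(e_j))\Lambda(\sigma_{-i\beta/2}(a)^*)\|\leq\|\Lambda(a)\|$ for $a\in\mathcal M^\sigma_\Lambda$, then \emph{defines} $\rho_j$ on the dense subspace $\Lambda(\mathcal M^\sigma_\Lambda)$ by $\rho_j\Lambda(a):=\Lambda(au_j)$ and extends by continuity; property (g) then requires a separate computation with the same identity. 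You instead set $\rho_j:=J\pi(R_1(e_j))J$ outright and recover the formula $\rho_j\Lambda(a)=\Lambda(au_j)$ from (E), at the cost of first checking $J^2=1$ on the core $\Lambda(\mathcal M^\sigma_\Lambda)$ (which your chain via Lemmas \ref{24-11-21k} and \ref{23-11-21} does correctly, and which the paper never needs). What your packaging buys is that (e) and (g) become immediate from $\|\pi(R_1(e_j))\|\leq 1$, the strong convergence $\pi(R_1(e_j))\to 1$, and the norm-continuity of the conjugate-linear isometry $J$; what the paper's version buys is that it never invokes $J^2=1$ and works directly with the defining identity of $\rho_j$. Both are complete; your extension of (f) from the core to $D(\Lambda)$ via Lemma \ref{17-03-22d} and boundedness of $\rho_j$ and $\pi$ is exactly the step the paper also takes.
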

\begin{proof} By Lemma \ref{16-12-21a} there is an increasing approximate unit $\{e_j\}_{j \in I}$ for $A$ contained in $D(\Lambda)$ such that $0 \leq e_j \leq 1$ for all $j$. In particular, $e_j \in D(\Lambda) \cap D(\Lambda)^*$. Set 
$$
u_j := \sigma_{i\frac{\beta}{2}}(R_1(e_j)).
$$
It follows from Lemma \ref{24-11-21} and Lemma \ref{17-11-21m} that $\sigma_z(u_j) \in D(\Lambda) \cap \mathcal A_\sigma$ for all $z \in \mathbb C$. Since $u_j^* = \sigma_{-i\frac{\beta}{2}}(R_1(e_j))$ it follows in the same way that $\sigma_z(u_j) \in D(\Lambda)^*  \cap \mathcal A_\sigma$ for all $z \in \mathbb C$. Thus (a) holds. It follows from \eqref{12-05-22} that $\left\|u_j\right\| \leq \frac{1}{\sqrt{\pi}} \int_\mathbb R e^{-s^2 + \frac{\beta^2}{4}} \ \mathrm d s$, showing that (b) holds. It follows from \eqref{12-05-22} and Lemma \ref{28-02-22} in Appendix \ref{integration} that
\begin{equation}\label{18-01-22}
\lim_{j \to \infty} \sigma_z(R_1(e_j)) a = \lim_{j \to \infty} a\sigma_{z}(R_1(e_j)) = a \frac{1}{\sqrt{\pi}} \int_\mathbb R e^{-(s-z)^2} \ \mathrm d s 
\end{equation}
for all $z \in \mathbb C$ since $\{e_j\}$ is an
approximate unit for $A$. The function $z \mapsto \frac{1}{\sqrt{\pi}} \int_\mathbb R e^{-(s-z)^2} \ \mathrm d s $ is entire analytic\footnote{This is a fact from calculus, but it follows also by applying Lemma \ref{18-11-21} to the trivial flow on $\mathbb C$.} and since
$$
\frac{1}{\sqrt{\pi}} \int_\mathbb R e^{-(s-t)^2} \ \mathrm d s = \frac{1}{\sqrt{\pi}} \int_\mathbb R e^{-s^2} \ \mathrm d s = 1
$$
for all $t \in \mathbb R$, we conclude that $\frac{1}{\sqrt{\pi}} \int_\mathbb R e^{-(s-z)^2} \ \mathrm d s =1$ for all $z \in \mathbb C$. Applied with $z = i \frac{\beta}{2}$ we obtain (c). For (d) note
that $\sigma_{-i \frac{\beta}{2}}(u_j) = R_1(e_j)$ increases with $j$ because $R_1$ is a positive linear operator. The fact that $\lim_{j \to \infty}\sigma_{-i \frac{\beta}{2}}(u_j)a= \lim_{j \to \infty} R_1(e_j)a = a$ follows as above.

Assume now that also (E) holds. For $a \in \mathcal M_\Lambda^\sigma$ we find by using condition (E) that
\begin{align*}
&  \left\|\Lambda(au_j)\right\| = \left\|\Lambda\left( \sigma_{-i \frac{\beta}{2}}(au_j)^*\right)\right\|\\
& = \left\|\Lambda\left(R_1(e_j)\sigma_{-i \frac{\beta}{2}}(a)^*\right) \right\| = \left\|\pi(R_1(e_j)) \Lambda\left(\sigma_{-i \frac{\beta}{2}}(a)^*\right) \right\| \\
& \leq \left\|\Lambda\left(\sigma_{-i \frac{\beta}{2}}(a)^*\right) \right\| = \left\|\Lambda(a)\right\|.
\end{align*}
It follows from Lemma \ref{17-03-22d} that $\Lambda\left(\mathcal M_\Lambda^\sigma\right)$ is dense in $H$. Hence the estimate above shows that we can define $\rho_j \in B(H)$ such that 
\begin{equation}\label{09-02-22b}
\rho_j\Lambda(a) := \Lambda(au_j), \ \ \forall a \in \mathcal M^\sigma_\Lambda, 
\end{equation}
and that $\left\|\rho_j\right\| \leq 1$. Thus (e) holds and (f) holds for $a \in \mathcal M^\sigma_\Lambda$. That (f) also holds when $a \in D(\Lambda)$ follows from Lemma \ref{17-03-22d}. For $a \in  \mathcal M^\sigma_\Lambda $, by condition (E)
\begin{align*}
& \left\|\rho_j \Lambda(a) - \Lambda(a)\right\|^2 = \left\|\Lambda\left(au_j - a\right)\right\|^2  \\
&= \left\|\Lambda\left(\sigma_{-i \frac{\beta}{2}}(au_j)^* -\sigma_{-i \frac{\beta}{2}}(a)^*\right)\right\| =  \left\|\Lambda\left(R_1(e_j)\sigma_{-i \frac{\beta}{2}}(a)^* -\sigma_{-i \frac{\beta}{2}}(a)^*\right)\right\| \\
&=  \left\|\pi(R_1(e_j))\Lambda(\sigma_{-i \frac{\beta}{2}}(a)^*)  - \Lambda(\sigma_{-i \frac{\beta}{2}}(a)^*)\right\|.
\end{align*}
Since $\pi$ is non-degenerate and since $\lim_{j \to \infty} R_1(e_j)b = b$ for all $b \in A$ we conclude first that $\lim_{j \to \infty} \pi(R_1(e_j)) =1$ strongly, and then from the calculation above that (g) holds since $\Lambda(\mathcal M_\Lambda^\sigma)$ is dense in $H$.
\end{proof}

 Two GNS representations, $(H_i,\Lambda_i,\pi_i), \ i =1,2$, of $A$ are \emph{isomorphic} when $D(\Lambda_1) = D(\Lambda_2)$ and there is a unitary $W : H_1\to H_2$ such that 
\begin{itemize} 
\item $ W\Lambda_1(a) = \Lambda_2(a), \ \ \forall a \in  D(\Lambda_1)$, and
\item $W\pi_1(a) = \pi_2(a)W,  \ \ \forall a \in A$.
\end{itemize}

\begin{prop}\label{08-02-22a} Let $(H,\Lambda,\pi)$ be a GNS representation of $A$ and let $\beta \in \mathbb R$. Then $(H,\Lambda,\pi)$ is isomorphic to the GNS-triple of a $\beta$-KMS weight for $\sigma$ if and only if (A), (B), (C), (D) and (E) all hold.
\end{prop}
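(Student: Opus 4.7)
The forward direction I would dispatch routinely: given a $\beta$-KMS weight $\psi$, conditions (A), (B), (C), (D) for $(H_\psi,\Lambda_\psi,\pi_\psi)$ follow respectively from Lemma \ref{08-02-22}, Lemma \ref{17-11-21a}, the $\sigma$-invariance of $\psi$ applied to $\mathcal N_\psi$, and Lemma \ref{17-11-21e}. For (E), I would use the KMS identity (1) of Theorem \ref{24-11-21d} which gives $\|\Lambda_\psi(a)\|=\|\Lambda_\psi(\sigma_{-i\frac{\beta}{2}}(a)^*)\|$ for $a\in\mathcal M_\psi^\sigma$; since $\mathcal M_\psi^\sigma$ is a core for $\Lambda_\psi$ by Corollary \ref{15-02-22} and is stable under $a\mapsto\sigma_{-i\frac{\beta}{2}}(a)^*$, this conjugate-linear isometry extends uniquely to the required $J$ on $H_\psi$.

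For the converse, assume $(H,\Lambda,\pi)$ satisfies (A)-(E). I would take $\psi(a):=\sup_{\omega\in\mathcal F_\Lambda}\omega(a)$, a weight by Lemma \ref{30-01-22a}. The core of the argument is to establish $\psi(a^*a)=\|\Lambda(a)\|^2$ for $a\in D(\Lambda)$ and then $\mathcal N_\psi=D(\Lambda)$. For the identity, I would use the net $\{u_j\}$ from Lemma \ref{09-02-22x} to construct the vector functionals $\omega_j(x):=\langle\pi(x)\Lambda(u_j),\Lambda(u_j)\rangle\in A^*_+$. For $a\in\mathcal M_\Lambda^\sigma$, applying condition (E) to $au_j\in\mathcal M_\Lambda^\sigma$ gives
$$\omega_j(a^*a)=\|\Lambda(au_j)\|^2=\|J\Lambda(au_j)\|^2=\|\pi(\sigma_{-i\frac{\beta}{2}}(u_j))\Lambda(\sigma_{-i\frac{\beta}{2}}(a)^*)\|^2\le\|\Lambda(a)\|^2,$$
using that $0\le\sigma_{-i\frac{\beta}{2}}(u_j)\le 1$ by property (d) and $\|J\Lambda(a)\|=\|\Lambda(a)\|$. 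Extending via the core property of Lemma \ref{17-03-22d} places $\omega_j$ in $\mathcal F_\Lambda$, and property (g) of Lemma \ref{09-02-22x} yields $\omega_j(a^*a)=\|\rho_j\Lambda(a)\|^2\to\|\Lambda(a)\|^2$, establishing $\psi(a^*a)=\|\Lambda(a)\|^2$ for $a\in D(\Lambda)$. Since $D(\Lambda)$ is dense in $A$ this makes $\psi$ densely defined (and nonzero, as some $\Lambda(a)$ must be nonzero).

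The main obstacle is the reverse inclusion $\mathcal N_\psi\subseteq D(\Lambda)$. For $a\in\mathcal N_\psi$, the net $\{au_j\}\subseteq D(\Lambda)$ satisfies $au_j\to a$ and $\|\Lambda(au_j)\|^2=\omega_j(a^*a)\le\psi(a^*a)<\infty$. I would choose a cofinal sequence $j_n$ with $j_n\le j_{n+1}$ and $\|au_{j_n}-a\|\to 0$, extract a weakly convergent subsequence $\Lambda(au_{j_{n_k}})\rightharpoonup v$ by weak sequential compactness of bounded sets in $H$, and then apply Mazur's theorem (with tails) to build convex combinations $b_l\in D(\Lambda)$ of the $au_{j_{n_k}}$ for $k\ge l$ such that $b_l\to a$ in $A$ and $\Lambda(b_l)\to v$ in norm in $H$. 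Closedness of $\Lambda$ from condition (B) then forces $a\in D(\Lambda)$ with $\Lambda(a)=v$.

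The remaining steps are straightforward. Invariance $\psi\circ\sigma_t=\psi$ follows because $\omega\circ\sigma_t\in\mathcal F_\Lambda$ whenever $\omega\in\mathcal F_\Lambda$, using (C) and (D). The prescription $W\Lambda_\psi(a):=\Lambda(a)$ on $\mathcal N_\psi=D(\Lambda)$ is well-defined and isometric by the first identity, has dense range by the GNS property of $(H,\Lambda,\pi)$, hence extends to a unitary $W:H_\psi\to H$ which intertwines $\pi_\psi$ and $\pi$ via $W\pi_\psi(a)\Lambda_\psi(b)=\Lambda(ab)=\pi(a)W\Lambda_\psi(b)$. Finally, $\mathcal M_\psi\subseteq\mathcal N_\psi\cap\mathcal N_\psi^*=D(\Lambda)\cap D(\Lambda)^*$ gives $\mathcal M_\psi^\sigma\subseteq\mathcal M_\Lambda^\sigma$, and condition (2) of Theorem \ref{24-11-21d} is verified for $a\in\mathcal M_\psi^\sigma$ by
$$\psi(\sigma_{-i\frac{\beta}{2}}(a)\sigma_{-i\frac{\beta}{2}}(a)^*)=\|\Lambda(\sigma_{-i\frac{\beta}{2}}(a)^*)\|^2=\|J\Lambda(a)\|^2=\|\Lambda(a)\|^2=\psi(a^*a),$$
confirming that $\psi$ is a $\beta$-KMS weight with GNS triple isomorphic to $(H,\Lambda,\pi)$.
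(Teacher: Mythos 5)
Your proposal is correct and follows essentially the same route as the paper's proof: both directions use the same ingredients (Lemmas \ref{08-02-22}, \ref{17-11-21e}, \ref{17-03-22d}/Corollary \ref{15-02-22} for necessity; the nets from Lemma \ref{09-02-22x}, the functionals $\omega_j$, the weak-compactness/Mazur argument for $\mathcal N_\psi\subseteq D(\Lambda)$, and condition (2) of Theorem \ref{24-11-21d} for sufficiency). The only deviations are cosmetic: you verify $\omega_j\in\mathcal F_\Lambda$ by invoking (E) directly on $\mathcal M_\Lambda^\sigma$ and extending by the core property, where the paper reads the same bound off property (f) of Lemma \ref{09-02-22x}, and you phrase the compactness step via subsequence extraction rather than nested convex hulls.
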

\begin{proof} Neccesity of the five conditions: (A) and (B) follow from Lemma \ref{08-02-22} and (C) follows because a $\beta$-KMS weight is $\sigma$-invariant. (D) follows from Lemma \ref{17-11-21e}. Let $\psi$ be a $\beta$-KMS weight for $\sigma$. Then $\psi(a^*a) = \psi\left(\sigma_{-i \frac{\beta}{2}}(a)  
\sigma_{-i \frac{\beta}{2}}(a)^*\right)$ for $a \in \mathcal A_\sigma$ by (1) of Theorem \ref{24-11-21d} and we can therefore define a conjugate linear isometry $J' : \Lambda_\psi( \mathcal M^\sigma_{\Lambda_\psi})  \to  \Lambda_\psi(\mathcal M^\sigma_{\Lambda_\psi})$ such that $J'\Lambda_\psi(a) = \Lambda_\psi\left(\sigma_{-i\frac{\beta}{2}}(a)^*\right)$. Since $\Lambda_\psi(\mathcal M^\sigma_{\Lambda_\psi})$ is dense in $H$ by Lemma \ref{17-03-22d}, it follows that $J'$ extends by continuity to a conjugate linear isometry $J': H_\psi \to H_\psi$. Hence if $(H,\Lambda,\pi)$ is isomorphic to the GNS-triple of $\psi$, and $W : H \to H_\psi$ is the associated unitary, the map $J := W^*J'W$ will have the property required in (E). 

Sufficiency: Assume (A) through (E) all hold. Let $\{u_j\}_{j \in I}$ and $\{\rho_j\}_{j \in I}$ be the two nets from Lemma \ref{09-02-22x}.
Define $\omega_j : A \to \mathbb C$ such that
$$
\omega_j(a) := \left< \pi(a)\Lambda(u_j),\Lambda(u_j)\right> .
$$ 
Then $\omega_j \in A^*_+$. When $a \in D(\Lambda)$,
\begin{align*}
&\omega_j(a^*a)  = \left< \pi(a)\Lambda(u_j),\pi(a)\Lambda(u_j)\right> =  \left< \Lambda(au_j),\Lambda(au_j)\right>
 \\
 & = \left<\rho_j^2\Lambda(a),\Lambda(a)\right> \leq \left<\Lambda(a),\Lambda(a)\right>,
\end{align*}
showing that $\omega_j \in \mathcal F_\Lambda$. Let $\psi : A^+ \to [0,\infty]$ be the weight of the GNS-triple $(H,\Lambda,\pi)$, cf. Section \ref{GNStriple}.
 Let $a \in D(\Lambda)$. Then 
 \begin{equation}\label{08-02-22d}
 \psi(a^*a) = \sup_{\omega \in \mathcal F_\Lambda}\omega(a^*a) \leq \left<\Lambda(a),\Lambda(a)\right> < \infty ,
 \end{equation} 
 showing that $ D(\Lambda) \subseteq\mathcal N_{\psi}$.  Let $a \in \mathcal N_{\psi}$. Then $au_{j} \in D(\Lambda)$ since $u_{j} \in D(\Lambda)$ and $D(\Lambda)$ is a left ideal. It follows from (c) of Lemma \ref{09-02-22x} that $\lim_{j \to \infty} au_{j} = a$.  Note that
 \begin{equation}\label{14-09-23}
\left\|\Lambda(au_{j})\right\|^2 = \omega_j(a^*a) \leq {\psi}(a^*a) 
\end{equation} 
for all $j$. Since $\lim_{j \to \infty} au_{j} = a$ we can pick out a sequence $j_1 \leq j_2 \leq  \cdots $ in $I$ such that the sequence $a_n := au_{j_n}, n \in \mathbb N$, has the property that $\left\|a_n - a\right\| \leq \frac{1}{k}$ for all $n \geq k$ and all $k \in \mathbb N$. It follows from \eqref{14-09-23} that
$$
\sup_{n \geq k}\left\|\Lambda(a_n)\right\| \leq {\psi}(a^*a)^{\frac{1}{2}}.
$$
The weak* closure $\overline{\co} \{ \Lambda(a_n) : \ n \geq k\}$ of $\{ \Lambda(a_n) : \ n \geq k\}$ is therefore compact in the weak* topology of $H^* = H$, and the intersection
$$
\bigcap_k \overline{\co} \{ \Lambda(a_n) : \ n \geq k\}
$$
is not empty. We let $\eta$ be an element of this intersection. By the self-duality of a Hilbert space the weak* topology is the same as the weak topology and hence $\overline{\co} \{ \Lambda(a_n) : \ n \geq k\}$ is also the norm closure of $\co \{ \Lambda(a_n) : \ n \geq k\}$. 
Using the linearity of $\Lambda$ we can therefore construct $b_k \in \co \{a_n : \ n \geq k\} \subseteq D(\Lambda)$ such that $\|b_k - a\| \leq \frac{1}{k}$ and $\left\|\Lambda(b_k) - \eta\right\| \leq \frac{1}{k}$. Since $\Lambda$ is closed this implies that $a \in D(\Lambda)$, and we have therefore shown that $\mathcal N_\psi = D(\Lambda)$. For $a \in D(\Lambda)$ it follows from (g) of Lemma \ref{09-02-22x} that
\begin{align*}
&\lim_{j \to \infty} \omega_j(a^*a) = \lim_{j \to \infty} \left< \Lambda(au_j),\Lambda(au_j)\right> \\
&= \lim_{j \to \infty} \left<\rho_j\Lambda(a), \rho_j\Lambda(a)\right> = \left<\Lambda(a),\Lambda(a)\right> ,
\end{align*}
proving that 
\begin{equation}\label{29-03-23}
\psi(a^*a) = \sup_{\omega \in \mathcal F_\Lambda} \omega(a^*a) = \left< \Lambda(a),\Lambda(a)\right>. 
\end{equation}
Hence
$$	
\left\|\Lambda_\psi(a)\right\|^2 = \psi(a^*a) = \left<\Lambda(a),\Lambda(a)\right>  = \left\|\Lambda(a)\right\|^2 .
$$
It follows that there is a unitary $W : H  \to H_\psi$ such that $W\Lambda(a) = \Lambda_\psi(a)$ for all $a \in \mathcal N_\psi = D(\Lambda)$. Since
$$
W\pi(a) \Lambda(b) = W \Lambda(ab) = \Lambda_\psi(ab) = \pi_{\psi}(a) \Lambda_\psi(b) = \pi_{\psi}(a)W\Lambda(b)
$$
for all $b \in D(\Lambda)$ we conclude that $W\pi(a) = \pi_\psi(a)W$ for all $a \in A$, and hence that $(H,\Lambda,\pi)$ is isomorphic to the GNS-triple $(H_\psi,\Lambda_\psi,\pi_\psi)$ of $\psi$. It remains to show that $\psi$ is a $\beta$-KMS weight for $\sigma$. First of all, $\psi$ is densely defined because $D(\Lambda)$ is dense in $A$ and $\psi(a^*a) = \left<\Lambda(a),\Lambda(a)\right> < \infty$ when $a \in D(\Lambda)$. Let $t \in \mathbb R$ and $\omega\in \mathcal F_\Lambda$. It follows from condition (C) and (D) that
\begin{align*}
&\omega\circ \sigma_t(a^*a) = \omega(\sigma_t(a)^*\sigma_t(a)) \leq \left< \Lambda(\sigma_t(a)),\Lambda(\sigma_t(a))\right>\\
& = \left<U_t\Lambda(a),U_t\Lambda(a)\right> = \left<\Lambda(a),\Lambda(a)\right>
\end{align*}
for all $a \in D(\Lambda)$, showing that $\omega \circ \sigma_t \in \mathcal F_\Lambda$. Since this holds for all $t \in \mathbb R$ and all $\omega \in \mathcal F_\Lambda$ we conclude that 
$$
\psi(\sigma_t(a)) = \sup_{\omega \in \mathcal F_\Lambda} \omega \circ \sigma_t(a) = \sup_{\omega \in \mathcal F_\Lambda} \omega(a) = \psi(a)
$$
for all $a \in A^+$ and all $t$. That is, $\psi$ is $\sigma$-invariant. Let $a \in \mathcal M_{\Lambda_\psi}^\sigma=  \mathcal M_\Lambda^\sigma$. Then
\begin{align*}
& {\psi}(a^*a) = \left<\Lambda_\psi(a),\Lambda_\psi(a)\right>
 =\left<\Lambda(a),\Lambda(a)\right> = \left\|\Lambda(a)\right\|^2 \\
& = \left\| \Lambda\left( \sigma_{-i \frac{\beta}{2}}(a)^*\right)\right\|^2 = \left< \Lambda\left( \sigma_{-i \frac{\beta}{2}}(a)^*\right),\Lambda\left( \sigma_{-i \frac{\beta}{2}}(a)^*\right)\right> \\
& = \left< \Lambda_\psi\left( \sigma_{-i \frac{\beta}{2}}(a)^*\right),\Lambda_\psi\left( \sigma_{-i \frac{\beta}{2}}(a)^*\right)\right>  = {\psi}\left(  \sigma_{-i \frac{\beta}{2}}(a) \sigma_{-i \frac{\beta}{2}}(a)^*\right).
\end{align*}
Since $\mathcal M^\sigma_\psi \subseteq  \mathcal M_{\Lambda_\psi}^\sigma$ it follows from Theorem \ref{24-11-21d} that $\psi$ is a $\beta$-KMS weight.
\end{proof}

 \begin{lemma}\label{09-02-22} Let $(H,\Lambda,\pi)$ be a GNS representation of $A$ with the properties (B), (C) and (D). The map $\Lambda \circ \sigma_z : \mathcal M^\sigma_\Lambda \to H$ is closable for all $z \in \mathbb C$.
 \end{lemma}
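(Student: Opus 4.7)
My plan is to verify closability directly via sequences: suppose $\{a_n\} \subseteq \mathcal M^\sigma_\Lambda$ with $a_n \to 0$ in $A$ and $\Lambda(\sigma_z(a_n)) \to v$ in $H$; I want to deduce $v = 0$. The central device is the smoothing operator $R_k$, which will simultaneously kill the $A$-norm on one side and pass through $\Lambda$ continuously on the other.

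For each $k \in \mathbb N$ I would form the auxiliary element $b_n^{(k)} := R_k(\sigma_z(a_n)) = \sigma_z(R_k(a_n))$; the two descriptions coincide by Lemma \ref{17-11-21i}, since $a_n \in \mathcal A_\sigma \subseteq D(\sigma_z)$. From the integral formula of the same lemma I extract the bound
\[
\|b_n^{(k)}\| = \Big\|\sqrt{k/\pi}\int_{\mathbb R} e^{-k(s-z)^2}\sigma_s(a_n)\,ds\Big\| \leq e^{k(\Imag z)^2}\|a_n\| \longrightarrow 0,
\]
using that $\sigma_s$ is an isometry of $A$ for every $s \in \mathbb R$. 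Since $\sigma_z(a_n) \in D(\Lambda)$ by the definition of $\mathcal M^\sigma_\Lambda$, the case $z=0$ of Lemma \ref{17-11-21m} (applied to $\sigma_z(a_n)$ in place of $a$) gives $b_n^{(k)} \in D(\Lambda)$ and
\[
\Lambda(b_n^{(k)}) = \sqrt{k/\pi}\int_{\mathbb R} e^{-ks^2}\,U_s\Lambda(\sigma_z(a_n))\,ds,
\]
which, by the unitarity of each $U_s$ combined with $\Lambda(\sigma_z(a_n)) \to v$, converges in $H$ to $\tilde R_k(v) := \sqrt{k/\pi}\int_{\mathbb R} e^{-ks^2} U_s v\,ds$ as $n \to \infty$.

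Putting these two halves together, the pairs $\bigl(b_n^{(k)}, \Lambda(b_n^{(k)})\bigr)$ lie in the graph of $\Lambda$ and converge to $(0, \tilde R_k(v))$. Property (B) (closedness of $\Lambda$) then forces $\tilde R_k(v) = \Lambda(0) = 0$ for every $k$. The operator $\tilde R_k$ is precisely the smoothing operator associated to the strongly continuous isometric flow $U$ on the Banach space $H$, so Lemma \ref{24-11-21} applied to this flow yields $\tilde R_k(v) \to v$ as $k \to \infty$. Hence $v = 0$, which establishes closability. There is no genuine obstacle here: the whole argument is an assembly of the two representations of $R_k\circ\sigma_z = \sigma_z\circ R_k$ already recorded in Lemmas \ref{17-11-21i} and \ref{17-11-21m}, together with closedness of $\Lambda$ and the standard approximation property $R_k \to \mathrm{id}$.
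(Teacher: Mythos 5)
Your argument is correct and is essentially the paper's own proof: both form $\sigma_z(R_k(a_n)) = R_k(\sigma_z(a_n))$ via Lemma \ref{17-11-21i}, show it tends to $0$ in $A$ while $\Lambda$ of it tends to $\sqrt{k/\pi}\int_{\mathbb R} e^{-ks^2}U_s v\,\mathrm ds$ via Lemma \ref{17-11-21m}, invoke closedness of $\Lambda$ to kill that vector for every $k$, and finish with Lemma \ref{24-11-21}. The only cosmetic difference is that you bound the norm explicitly by $e^{k(\Imag z)^2}\|a_n\|$ where the paper cites the dominated convergence lemma; both are fine.
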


 \begin{proof} Let $\{a_n\}$ be a sequence in $\mathcal M^\sigma_\Lambda$ such that $\lim_{n \to \infty} a_n = 0$ and 
 $$
 \lim_{n \to \infty} \Lambda\left( \sigma_z(a_n)\right) = \eta \in H.
 $$
 Let $k \in \mathbb N$. Then $\lim_{n \to \infty} R_k(a_n) = 0$ and by using Lemma \ref{17-11-21i} and Lemma \ref{28-02-22} we find that
\begin{align*}
&\lim_{n \to \infty} \sigma_z\left(R_k(a_n)\right) =\lim_{n \to \infty}  \sqrt{\frac{k}{\pi}} \int_\mathbb R e^{-k(s-z)^2} \sigma_s(a_n) \ \mathrm d s = 0 .
\end{align*}
Similarly, by using first \eqref{12-05-22a} and then Lemma \ref{17-11-21m} and Lemma \ref{28-02-22}, we find that 
\begin{align*}
&\lim_{n \to \infty} \Lambda\left(  \sigma_z\left(R_k(a_n)\right)\right) = \lim_{n \to \infty} \sqrt{\frac{k}{\pi}} \int_\mathbb R e^{-ks^2} U_s \Lambda\left(\sigma_z(a_n)\right) \ \mathrm d s
 \\
 &=  \sqrt{\frac{k}{\pi}} \int_\mathbb R e^{-ks^2} U_s \eta  \ \mathrm d s ,
\end{align*}
and conclude therefore that $\sqrt{\frac{k}{\pi}} \int_\mathbb R e^{-ks^2} U_s \eta  \ \mathrm d s  = 0$ since $\Lambda$ is closed. Since
$$
\lim_{k \to \infty}  \sqrt{\frac{k}{\pi}} \int_\mathbb R e^{-ks^2} U_s \eta  \ \mathrm d s = \eta
$$
by Lemma \ref{24-11-21}, it follows that $\eta =0$.
 \end{proof}

\begin{lemma}\label{09-02-22d} Let $(H,\Lambda,\pi)$ be a GNS representation of $A$ with the properties (A), (B), (C) and (D). Let $\beta \in \mathbb R$. Condition (E) is equivalent to the following:
\begin{itemize}
\item[(F)] There is a subspace $S\subseteq \mathcal M_\Lambda^\sigma$ such that
\begin{itemize}
\item for all $ a \in \mathcal M_\Lambda^\sigma$ there is a sequence $\{s_n\}$ in $S$ such that $\lim_{n \to \infty} s_n =a$ and $\lim_{n \to \infty} \Lambda(s_n) = \Lambda(a)$, and
\item $\left\| \Lambda(s)\right\| = \left\|\Lambda\left(\sigma_{-i \frac{\beta}{2}}(s)^*\right)\right\|, \ \ \forall s \in S$.
\end{itemize}
\end{itemize}
\end{lemma}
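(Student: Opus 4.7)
The direction (E) $\Rightarrow$ (F) is immediate: take $S = \mathcal M_\Lambda^\sigma$; the first bullet is then trivial and the norm identity follows from $J$ being an isometry. So the substance lies entirely in (F) $\Rightarrow$ (E). My plan is to construct $J$ in three stages: first define a conjugate linear isometry on $\Lambda(S)$, then extend by continuity to all of $H$, and finally use Lemma \ref{09-02-22} to promote the intertwining identity from $S$ to the whole of $\mathcal M_\Lambda^\sigma$.

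For the construction, I would set $J_0\Lambda(s) := \Lambda(\sigma_{-i\frac{\beta}{2}}(s)^*)$ for $s \in S$. Well-definedness follows from the linearity of $S$: if $\Lambda(s_1) = \Lambda(s_2)$ then $s_1 - s_2 \in S$ has $\Lambda(s_1 - s_2) = 0$, so the norm identity in (F) gives $\Lambda(\sigma_{-i\frac{\beta}{2}}(s_1 - s_2)^*) = 0$, and conjugate linearity of $s \mapsto \sigma_{-i\frac{\beta}{2}}(s)^*$ then yields $J_0\Lambda(s_1) = J_0\Lambda(s_2)$. The same identity makes $J_0$ a conjugate linear isometry on $\Lambda(S)$. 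To extend by continuity I need $\Lambda(S)$ dense in $H$: condition (A) together with Lemma \ref{17-03-22d} shows that $\mathcal M_\Lambda^\sigma$ is a core for $\Lambda$, so $\Lambda(\mathcal M_\Lambda^\sigma)$ is dense in $H$, and the first bullet of (F) shows that every element of $\Lambda(\mathcal M_\Lambda^\sigma)$ is a norm-limit of elements of $\Lambda(S)$. Hence $J_0$ extends uniquely to a conjugate linear isometry $J : H \to H$ satisfying $J\Lambda(s) = \Lambda(\sigma_{-i\frac{\beta}{2}}(s)^*)$ for every $s \in S$.

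The main obstacle, and the only non-routine step, is to upgrade this intertwining identity from $S$ to all of $\mathcal M_\Lambda^\sigma$. For this I would consider the map $T : \mathcal M_\Lambda^\sigma \to H$ defined by $T(a) := \Lambda(\sigma_{-i\frac{\beta}{2}}(a)^*)$. Using Lemma \ref{24-11-21k} we may rewrite $T(a) = \Lambda(\sigma_{i\frac{\beta}{2}}(a^*))$, and because $\mathcal M_\Lambda^\sigma$ is a $*$-subalgebra invariant under every $\sigma_z$ this indeed defines an $H$-valued map on $\mathcal M_\Lambda^\sigma$. I would then deduce from Lemma \ref{09-02-22} (applied to $z = i\frac{\beta}{2}$) that $T$ is closable: if $a_n \to 0$ in $\mathcal M_\Lambda^\sigma$ with $T(a_n) \to \eta$, then $a_n^* \to 0$ in $\mathcal M_\Lambda^\sigma$ and $\Lambda(\sigma_{i\frac{\beta}{2}}(a_n^*)) \to \eta$, so closability of $\Lambda \circ \sigma_{i\frac{\beta}{2}}$ forces $\eta = 0$.

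Given $a \in \mathcal M_\Lambda^\sigma$, choose by (F) a sequence $\{s_n\}$ in $S$ with $s_n \to a$ and $\Lambda(s_n) \to \Lambda(a)$. Then by continuity of $J$, $T(s_n) = J_0\Lambda(s_n) = J\Lambda(s_n) \to J\Lambda(a)$, while $s_n \to a$. Since $T$ is closable and $a$ itself lies in the domain $\mathcal M_\Lambda^\sigma$ of $T$, the closure $\overline{T}$ extends $T$, and hence $T(a) = \overline{T}(a) = J\Lambda(a)$. This is exactly $\Lambda(\sigma_{-i\frac{\beta}{2}}(a)^*) = J\Lambda(a)$, establishing condition (E).
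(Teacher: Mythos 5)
Your proof is correct and follows essentially the same route as the paper's: both directions hinge on the same two facts (density of $\Lambda(\mathcal M_\Lambda^\sigma)$ from Lemma \ref{17-03-22d} and closability of $\Lambda\circ\sigma_z$ from Lemma \ref{09-02-22}), and your closability step identifying $T(a)=J\Lambda(a)$ is the same argument the paper runs by first showing $\{\Lambda(\sigma_{i\frac{\beta}{2}}(s_n^*))\}$ is Cauchy and identifying its limit as $\Lambda(\sigma_{i\frac{\beta}{2}}(a^*))$. The only difference is ordering — you define $J$ on $\Lambda(S)$ and then verify the intertwining identity on $\mathcal M_\Lambda^\sigma$, whereas the paper first extends the norm identity to all of $\mathcal M_\Lambda^\sigma$ and then defines $J$ there — which is cosmetic.
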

\begin{proof} Assume (F). Let $a \in \mathcal M_\Lambda^\sigma$. By assumption there is a sequence $\{s_n\}$ in $S$ such that $\lim_{n \to \infty} s_n = a$ and $\lim_{n \to \infty} \Lambda(s_n) =\Lambda(a)$. Since
\begin{align*}
&\left\| \Lambda \left( \sigma_{i \frac{\beta}{2}}(s_n^*)\right) - \Lambda \left( \sigma_{i \frac{\beta}{2}}(s_m^*)\right)\right\|= \left\| \Lambda\left(\sigma_{-i \frac{\beta}{2}}(s_n-s_m)^*\right)\right\| \\
&= \left\|\Lambda(s_n) - \Lambda(s_m)\right\|,
\end{align*}
it follows that $\left\{ \Lambda \left(\sigma_{i \frac{\beta}{2}}(s_n^*)\right)\right\}$ is a Cauchy sequence and hence convergent in $H$. Since $\lim_{n \to \infty} s_n^* = a^*$ it follows from Lemma \ref{09-02-22} that $\lim_{n \to \infty} \Lambda\left(\sigma_{i \frac{\beta}{2}}(s_n^*)\right) = \Lambda\left(\sigma_{i \frac{\beta}{2}}(a^*)\right)$. Therefore
\begin{align*}
&\left\|\Lambda(a)\right\| = \lim_{n \to \infty} \left\|\Lambda(s_n)\right\| = \lim_{n \to \infty} \left\|\Lambda\left(\sigma_{-i \frac{\beta}{2}}(s_n)^*\right)\right\| \\
&= \lim_{n \to \infty} \left\|\Lambda\left(\sigma_{i \frac{\beta}{2}}(s_n^*)\right)\right\| = \left\|\Lambda\left(\sigma_{i \frac{\beta}{2}}(a^*)\right)\right\| = \left\|\Lambda\left(\sigma_{-i \frac{\beta}{2}}(a)^*\right)\right\| .
\end{align*}
Since $\Lambda(\mathcal M_\Lambda^\sigma)$ is dense in $H$ by Lemma \ref{17-03-22d}, this gives us a conjugate linear isometry $J : H \to H$ defined 
such that $J \Lambda(a) = \Lambda\left(\sigma_{-i \frac{\beta}{2}}(a)^*\right)$ for all $a \in \mathcal M_\Lambda^\sigma$; that is, (E) holds. That (E) implies (F) is trivial.
\end{proof}

 \begin{thm}\label{12-12-13} (Kustermans, \cite{Ku1}) Let $\psi$ be a non-zero densely defined weight on $A$ which is $\sigma$-invariant in the sense that $\psi \circ \sigma_t = \psi$ for all $t \in \mathbb R$, and let $\beta \in \mathbb R$ be a real number. Let $S$ be a subspace of $\mathcal M^\sigma_\psi$ with the property that for any element $a \in \mathcal M^\sigma_\psi$ there is a sequence $\{a_n\}$ in $S$ such that $\lim_{n \to \infty} a_n = a$ and $ \lim_{n \to \infty} \Lambda_\psi(a_n) = \Lambda_\psi(a)$.
The following conditions are equivalent:
\begin{enumerate}
\item[(1)] $\psi$ is a $\beta$-KMS weight for $\sigma$.
\item[(2)]  $\psi(a^*a) = \psi\left( \sigma_{-i \frac{\beta}{2}}(a) \sigma_{-i \frac{\beta}{2}}(a)^*\right) \ \ \forall a \in S$. 
\item[(3)] $\psi(b^*a) = \psi(a\sigma_{i\beta}(b^*)) \ \ \forall a,b \in S$.
\end{enumerate}
\end{thm}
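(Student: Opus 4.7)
The plan is to establish the cycle (1) $\Rightarrow$ (2), (3), then (2) $\Rightarrow$ (1), and finally (3) $\Rightarrow$ (2). The first two implications are immediate once we invoke Kustermans' theorem: if $\psi$ is a $\beta$-KMS weight, conditions (2) and (3) of Theorem \ref{24-11-21d} hold on all of $\mathcal M_\psi^\sigma$, and by hypothesis $S \subseteq \mathcal M_\psi^\sigma$. For our (3), note that $\mathcal M_\psi^\sigma$ is $*$-invariant, so replacing $a$ by $b^*$ in the identity $\psi(ab) = \psi(b\sigma_{i\beta}(a))$ from (3) of Theorem \ref{24-11-21d} yields the formulation we want.

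For (2) $\Rightarrow$ (1), I would appeal to Proposition \ref{08-02-22a}: it suffices to verify that the GNS-triple $(H_\psi, \Lambda_\psi, \pi_\psi)$ satisfies conditions (A)--(E), since by Lemma \ref{06-02-22e} the weight of its own GNS representation is again $\psi$. Conditions (A)--(D) come for free from the general theory: (A) and (B) from Lemma \ref{08-02-22}, (C) from $\sigma$-invariance of $\psi$, and (D) from Lemma \ref{17-11-21e}. To get (E), I would use the equivalent reformulation (F) of Lemma \ref{09-02-22d}. The subspace $S$ satisfies the approximation requirement of (F) by hypothesis, and the isometry condition is precisely (2): for $s \in S$,
\[
\left\|\Lambda_\psi(s)\right\|^2 = \psi(s^*s) = \psi\bigl(\sigma_{-i\frac{\beta}{2}}(s)\sigma_{-i\frac{\beta}{2}}(s)^*\bigr) = \left\|\Lambda_\psi\bigl(\sigma_{-i\frac{\beta}{2}}(s)^*\bigr)\right\|^2,
\]
where the last equality uses that $\sigma_{-i\frac{\beta}{2}}(s)^* = \sigma_{i\frac{\beta}{2}}(s^*) \in \mathcal M_\psi \subseteq \mathcal N_\psi$ since $s \in \mathcal M_\psi^\sigma$. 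Thus (E) holds and Proposition \ref{08-02-22a} concludes that $\psi$ is a $\beta$-KMS weight.

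For (3) $\Rightarrow$ (2), I would specialize to $b = a \in S$ to obtain $\psi(a^*a) = \psi(a\sigma_{i\beta}(a^*))$ and rewrite the right-hand side. By Lemma \ref{24-11-21k}, $\sigma_{i\beta}(a^*) = \sigma_{i\frac{\beta}{2}}(\sigma_{-i\frac{\beta}{2}}(a)^*)$, and since $\sigma_{-i\frac{\beta}{2}}(a) \in \mathcal A_\sigma$, Lemma \ref{18-11-21g} gives
\[
a\sigma_{i\beta}(a^*) = \sigma_{i\frac{\beta}{2}}\bigl(\sigma_{-i\frac{\beta}{2}}(a)\sigma_{-i\frac{\beta}{2}}(a)^*\bigr).
\]
A direct verification of the hypotheses of Lemma \ref{18-11-21kx} (with both slots filled by $\sigma_{i\frac{\beta}{2}}(a^*) \in \mathcal M_\psi \cap \mathcal A_\sigma$ and shift $z = i\frac{\beta}{2}$, using $a \in \mathcal M_\psi^\sigma$ to place $\sigma_{i\beta}(a^*)$ and $a^*$ in $\mathcal N_\psi$) then removes the outer $\sigma_{i\frac{\beta}{2}}$, leaving exactly (2).

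The main obstacle is the (2) $\Rightarrow$ (1) direction, whose substance really lies in Proposition \ref{08-02-22a} and Lemma \ref{09-02-22d}; once those are in hand, the present theorem is essentially a repackaging that propagates the isometry identity from a potentially very small subspace $S$ to all of $\mathcal M_\psi^\sigma$ via closability of $\Lambda_\psi$. The remaining implications are formal manipulations using the $*$-algebra structure of $\mathcal M_\psi^\sigma$, multiplicativity of $\sigma_z$ on analytic elements, and the complex invariance statement of Lemma \ref{18-11-21kx}.
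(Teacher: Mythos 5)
Your argument follows essentially the same route as the paper: the substance of (2) $\Rightarrow$ (1) is delegated to Proposition \ref{08-02-22a} via condition (F) of Lemma \ref{09-02-22d}, exactly as in the text, and your (3) $\Rightarrow$ (2) by specializing $b=a$ and invoking Lemmas \ref{24-11-21k}, \ref{18-11-21g} and \ref{18-11-21kx} is the paper's "reversal" argument made explicit. The only structural difference is cosmetic: the paper proves (2) $\Rightarrow$ (3) by a polarization computation and then reverses it, whereas you derive (3) directly from (1) via Theorem \ref{24-11-21d}; both yield a valid cycle.

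One step is glossed over, though. You assert that ``the subspace $S$ satisfies the approximation requirement of (F) by hypothesis,'' but condition (F) demands that \emph{every} $a \in \mathcal M_{\Lambda_\psi}^\sigma$ be approximable from $S$ in graph norm, whereas the hypothesis of the theorem only provides this for $a \in \mathcal M_\psi^\sigma$. Since $\mathcal M_\psi \subseteq \mathcal N_\psi \cap \mathcal N_\psi^*$ can be a proper inclusion, $\mathcal M_{\Lambda_\psi}^\sigma$ may be strictly larger than $\mathcal M_\psi^\sigma$, so the hypothesis alone does not give (F). The missing link is Corollary \ref{15-02-22}: $\mathcal M_\psi^\sigma$ is a core for $\Lambda_\psi$, so any $a \in \mathcal M_{\Lambda_\psi}^\sigma \subseteq \mathcal N_\psi$ is first approximated in graph norm by elements of $\mathcal M_\psi^\sigma$, and these in turn by elements of $S$ using the hypothesis; a diagonal argument then yields (F). This is exactly how the paper closes the gap, and with that one sentence added your proof is complete.
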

\begin{proof} The implication (1) $\Rightarrow$ (2) follows immediately from Theorem \ref{24-11-21d} and the implication (2) $\Rightarrow$ (3) results from the following careful inspection of the polarization argument in the proof of the same implication of Theorem \ref{24-11-21d}: Let $a,b \in S$. Then $b^*a$ and $(a+ i^kb)^*(a+ i^kb)  \in \mathcal M_\psi$ for all $k \in \mathbb N$ by (b) of Lemma \ref{04-11-21n}, and \eqref{polar} and Lemma \ref{10-11-21} can therefore be applied to conclude that
\begin{align*}
& \psi(b^*a) = \frac{1}{4} \sum_{k=1}^4 i^k \psi((a+ i^kb)^*(a+i^kb)) .
\end{align*}
Since we assume (2) it follows that
\begin{align*}
& \psi(b^*a) = \frac{1}{4} \sum_{k=1}^4 i^k \psi\left(\sigma_{-i \frac{\beta}{2}}(a+i^kb) \sigma_{-i \frac{\beta}{2}}(a+i^kb)^*  \right) .
\end{align*}
Now, (2) implies also that $\sigma_{-i \frac{\beta}{2}}(a+i^kb)\sigma_{-i \frac{\beta}{2}}(a+i^kb)^* \in \mathcal M_\psi$ and we get 
\begin{align*}
& \psi(b^*a) = \psi\left(\frac{1}{4} \sum_{k=1}^4 i^k \sigma_{-i \frac{\beta}{2}}(a+i^kb) \sigma_{-i \frac{\beta}{2}}(a+i^kb)^*  \right) .
\end{align*}
Note that 
\begin{align*}
&\frac{1}{4} \sum_{k=1}^4 i^k (\sigma_{-i \frac{\beta}{2}}(a+i^kb) \sigma_{-i \frac{\beta}{2}}(a+i^kb)^* = \sigma_{-i \frac{\beta}{2}}(a)\sigma_{-i \frac{\beta}{2}}(b)^* .\\ 
\end{align*}
It follows from Lemmas \ref{24-11-21k}, \ref{25-11-21}, \ref{23-11-21} and \ref{25-11-21} that
\begin{equation}\label{09-02-22h}
 \sigma_{-i \frac{\beta}{2}}(a)\sigma_{-i \frac{\beta}{2}}(b)^* =   \sigma_{-i \frac{\beta}{2}}\left(a\sigma_{i\frac{\beta}{2}}(\sigma_{-i \frac{\beta}{2}}(b)^*)\right) = \sigma_{-i \frac{\beta}{2}}\left(a\sigma_{i\beta}(b^*)\right) ,
\end{equation}
and we conclude therefore from Lemma \ref{18-11-21kx} that $\psi(b^*a) =  \psi\left(a\sigma_{i\beta}(b^*)\right)$; that is, (3) holds. The implication (3) $\Rightarrow$ (2) follows by reversing the arguments just given: Assuming (3), Lemma \ref{18-11-21kx} and \eqref{09-02-22h} gives (2) by taking $b = a$. Finally, (2) $\Rightarrow$ (1): Let $(H_\psi,\Lambda_\psi,\pi_\psi)$ be the GNS-triple of $\psi$. This GNS representation of $A$ satisfies conditions (A) and (B) by Lemma \ref{08-02-22}, (C) because $\psi$ is $\sigma$-invariant and (D) by Lemma \ref{17-11-21e}. If $a \in \mathcal N_\psi$ it follows from Corollary \ref{15-02-22} that there is a sequence $\{a_n\}$ in $\mathcal M^\sigma_\psi$ such that $\lim_{n \to \infty} a_n = a$ and $\lim_{n \to \infty} \Lambda_\psi(a_n) = \Lambda_\psi(a)$. By using the assumption about $S$ we can arrange that $a_n \in S$ for all $n$. Since we assume (2) this shows that the triple $(H_\psi,\Lambda_\psi,\pi_\psi)$ has property (F) of Lemma \ref{09-02-22d} and it follows therefore from Lemma \ref{09-02-22d} that it also has property (E). By Proposition \ref{08-02-22a} this means that it is isomorphic to the GNS-triple of a $\beta$-KMS weight for $\sigma$. Since the weights of isomorphic GNS  representations are the same, this means that the weight of $(H_\psi,\Lambda_\psi,\pi_\psi)$ is a $\beta$-KMS weight and hence $\psi$ is a $\beta$-KMS weight by Lemma \ref{06-02-22e}.
\end{proof}

For later use we record the following lemmas.

\begin{lemma}\label{24-06-22b} Let $\sigma$ be flow on $A$ and $\psi$ a $\beta$-KMS weight for $\sigma$. Let $a,b \in \mathcal N_\psi \cap D(\sigma_{-i \frac{\beta}{2}})$. Then $ \sigma_{-i \frac{\beta}{2}}(a)\sigma_{-i \frac{\beta}{2}}(b)^* \in \mathcal M_\psi$ and 
$$
\psi(b^*a) = \psi(  \sigma_{-i \frac{\beta}{2}}(a)\sigma_{-i \frac{\beta}{2}}(b)^*) .
$$
\end{lemma}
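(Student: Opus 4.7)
The plan is to exploit condition (1) of Kustermans' theorem (Theorem \ref{24-11-21d}) together with a double polarization argument, closely mirroring the calculation \eqref{08-12-21e} but under the weaker hypothesis that $a,b$ lie in $\mathcal N_\psi \cap D(\sigma_{-i\beta/2})$ rather than in $\mathcal M_\psi^\sigma$.

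First I would prove the membership claim. If $c \in \mathcal N_\psi \cap D(\sigma_{-i\beta/2})$, then condition (1) of Theorem \ref{24-11-21d} gives
$$\psi\bigl(\sigma_{-i\frac{\beta}{2}}(c)\sigma_{-i\frac{\beta}{2}}(c)^*\bigr) = \psi(c^*c) < \infty,$$
so $\sigma_{-i\beta/2}(c)^* \in \mathcal N_\psi$, i.e.\ $\sigma_{-i\beta/2}(c) \in \mathcal N_\psi^*$. Since $D(\sigma_{-i\beta/2})$ is a linear subspace and $\mathcal N_\psi$ is a subspace by Lemma \ref{04-11-21n}(a), the same applies to $a+i^kb$ for each $k=1,2,3,4$. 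In particular $\sigma_{-i\beta/2}(a) \in \mathcal N_\psi^*$ and $\sigma_{-i\beta/2}(b)^* \in \mathcal N_\psi$, so
$$\sigma_{-i\frac{\beta}{2}}(a)\sigma_{-i\frac{\beta}{2}}(b)^* \in \mathcal N_\psi^* \cdot \mathcal N_\psi \subseteq \mathcal M_\psi$$
by Lemma \ref{04-11-21n}(b), and likewise each $\sigma_{-i\beta/2}(a+i^kb)\sigma_{-i\beta/2}(a+i^kb)^*$ lies in $\mathcal M_\psi$. This is what licenses the use of $\psi$ as a linear functional (via Lemma \ref{10-11-21}) on all the terms that will appear below.

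Next I would apply polarization twice. The standard identity \eqref{polar} gives
$$b^*a = \frac{1}{4}\sum_{k=1}^4 i^k (a+i^kb)^*(a+i^kb),$$
with each summand in $\mathcal N_\psi^*\mathcal N_\psi \subseteq \mathcal M_\psi$. A parallel polarization identity for products of the form $uv^*$, obtained by a direct expansion, reads
$$uv^* = \frac{1}{4}\sum_{k=1}^4 i^k (u+i^kv)(u+i^kv)^*,$$
and applied with $u = \sigma_{-i\beta/2}(a)$, $v = \sigma_{-i\beta/2}(b)$ together with linearity of $\sigma_{-i\beta/2}$ it yields
$$\sigma_{-i\frac{\beta}{2}}(a)\sigma_{-i\frac{\beta}{2}}(b)^* = \frac{1}{4}\sum_{k=1}^4 i^k \sigma_{-i\frac{\beta}{2}}(a+i^kb)\sigma_{-i\frac{\beta}{2}}(a+i^kb)^*.$$

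Finally I would apply $\psi$ to both identities, pass the sum through $\psi$ using linearity on $\mathcal M_\psi$, and invoke condition (1) of Theorem \ref{24-11-21d} term-by-term on $c = a+i^kb$ to match the two sums:
\begin{align*}
\psi(b^*a) &= \tfrac{1}{4}\sum_{k=1}^4 i^k \psi\bigl((a+i^kb)^*(a+i^kb)\bigr) \\
&= \tfrac{1}{4}\sum_{k=1}^4 i^k \psi\bigl(\sigma_{-i\frac{\beta}{2}}(a+i^kb)\sigma_{-i\frac{\beta}{2}}(a+i^kb)^*\bigr) \\
&= \psi\bigl(\sigma_{-i\frac{\beta}{2}}(a)\sigma_{-i\frac{\beta}{2}}(b)^*\bigr).
\end{align*}
There is no serious obstacle here; the only point requiring care is bookkeeping of which elements lie in $\mathcal M_\psi$ so that the linearity of $\psi$ (Lemma \ref{10-11-21}) can be invoked when moving sums inside and outside $\psi$. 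That bookkeeping is already handled by the first step.
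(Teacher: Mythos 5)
Your proposal is correct and follows essentially the same route as the paper's own proof: polarize $b^*a$ via \eqref{polar}, apply condition (1) of Theorem \ref{24-11-21d} to each $a+i^kb \in \mathcal N_\psi \cap D(\sigma_{-i\frac{\beta}{2}})$, verify via (1) and Lemma \ref{04-11-21n}(b) that $\sigma_{-i\frac{\beta}{2}}(a)\sigma_{-i\frac{\beta}{2}}(b)^* \in \mathcal M_\psi$, and recombine by the reverse polarization using the linearity of $\psi$ on $\mathcal M_\psi$ from Lemma \ref{10-11-21}. Your explicit statement of the second polarization identity for $uv^*$ is just a spelled-out version of the recombination step the paper performs in its final display.
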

\begin{proof} This is a repetition of some of the arguments from the last proof. Since $b^*a, \ (a+ i^kb)^*(a+ i^kb)  \in \mathcal M_\psi$ for all $k \in \mathbb N$ by (b) of Lemma \ref{04-11-21n}, it follows from  \eqref{polar} and Lemma \ref{10-11-21} that
\begin{align*}
& \psi(b^*a) = \frac{1}{4} \sum_{k=1}^4 i^k \psi((a+ i^kb)^*(a+i^kb)) .
\end{align*}
Since  $a+ i^kb \in D(\sigma_{-i\frac{\beta}{2}})$ we get from (1) in  Theorem \ref{24-11-21d} that
\begin{align*}
& \psi(b^*a) = \frac{1}{4} \sum_{k=1}^4 i^k \psi\left(\sigma_{-i \frac{\beta}{2}}(a+i^kb) \sigma_{-i \frac{\beta}{2}}(a+i^kb)^*  \right) .
\end{align*}
It follows also from (1) in Theorem \ref{24-11-21d} that $\sigma_{-i \frac{\beta}{2}}(a)^*, \ \sigma_{-i \frac{\beta}{2}}(b)^* \in \mathcal N_\psi$ and hence  $\sigma_{-i \frac{\beta}{2}}(a)\sigma_{-i \frac{\beta}{2}}(b)^* \in \mathcal M_\psi$ by (b) of Lemma \ref{04-11-21n}. We can therefore use Lemma \ref{10-11-21} to conclude that
\begin{align*}
&\frac{1}{4} \sum_{k=1}^4 i^k \psi\left(\sigma_{-i \frac{\beta}{2}}(a+i^kb) \sigma_{-i \frac{\beta}{2}}(a+i^kb)^*  \right) \\
&= \frac{1}{4} \sum_{k=1}^4 i^k \psi\left((\sigma_{-i \frac{\beta}{2}}(a) +i^k \sigma_{-i \frac{\beta}{2}}(b)) (\sigma_{-i \frac{\beta}{2}}(a) +i^k \sigma_{-i \frac{\beta}{2}}(b))^*\right)\\
& = \psi\left( \sigma_{-i \frac{\beta}{2}}(a)\sigma_{-i \frac{\beta}{2}}(b)^*\right) .
\end{align*}
\end{proof}

\begin{lemma}\label{01-10-23a}  Let $\sigma$ be flow on $A$ and $\sigma'$ a flow on $B$. Let $\psi$ be a $\beta$-KMS weight for $\sigma$ and $\alpha : B \to A$ a $*$-isomorphism such that $\sigma_t\circ \alpha = \alpha \circ \sigma'_t$ for all $t \in \mathbb R$. Then $\psi \circ \alpha$ is a $\beta$-KMS weight for $\sigma'$.  
\end{lemma}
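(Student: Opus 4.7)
The plan is to set $\varphi := \psi \circ \alpha$ and verify directly that $\varphi$ is a non-zero densely defined $\sigma'$-invariant weight on $B$ satisfying condition (1) of Theorem \ref{24-11-21d}. Since $\alpha$ is a $*$-isomorphism, it restricts to a homeomorphism $B^+ \to A^+$ that respects addition, scalar multiplication and the order; additivity, homogeneity, and lower semi-continuity of $\varphi$ are immediate from the corresponding properties of $\psi$. Non-triviality follows because $\alpha$ is surjective, and the equality $\alpha(\mathcal M_\varphi^+) = \mathcal M_\psi^+$ together with density of $\mathcal M_\psi^+$ in $A^+$ shows that $\varphi$ is densely defined. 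Invariance is a one-line computation: for each $t \in \mathbb R$,
$$
\varphi \circ \sigma'_t \;=\; \psi \circ \alpha \circ \sigma'_t \;=\; \psi \circ \sigma_t \circ \alpha \;=\; \psi \circ \alpha \;=\; \varphi .
$$

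The substantive step is to transport the KMS identity from $\sigma$ to $\sigma'$ via $\alpha$, which requires that $\alpha$ intertwines the holomorphic extensions $\sigma_z$ and $\sigma'_z$. First, if $b \in \mathcal A_{\sigma'}$ then $t \mapsto \alpha(\sigma'_t(b)) = \sigma_t(\alpha(b))$ is entire analytic in $A$ (using that $\alpha$ is bounded and linear, so the power series expansion persists), hence $\alpha(b) \in \mathcal A_\sigma$ and $\alpha(\sigma'_z(b)) = \sigma_z(\alpha(b))$ for every $z \in \mathbb C$ by uniqueness of the entire extension. For an arbitrary $b \in D(\sigma'_z)$, choose $b_n \in \mathcal A_{\sigma'}$ with $b_n \to b$ and $\sigma'_z(b_n) \to \sigma'_z(b)$; then $\alpha(b_n) \in \mathcal A_\sigma$, $\alpha(b_n) \to \alpha(b)$ and $\sigma_z(\alpha(b_n)) = \alpha(\sigma'_z(b_n)) \to \alpha(\sigma'_z(b))$, so closedness of $\sigma_z$ (Lemma \ref{15-11-21d}) gives $\alpha(b) \in D(\sigma_z)$ and $\sigma_z(\alpha(b)) = \alpha(\sigma'_z(b))$. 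The reverse inclusion follows by applying the same argument to $\alpha^{-1}$, which also intertwines the flows. Thus $\alpha(D(\sigma'_z)) = D(\sigma_z)$ and $\sigma_z \circ \alpha = \alpha \circ \sigma'_z$ on $D(\sigma'_z)$.

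Once this intertwining is in place, the KMS identity transfers immediately. For any $b \in D(\sigma'_{-i\beta/2})$, $\alpha(b) \in D(\sigma_{-i\beta/2})$ and
$$
\varphi(b^*b) = \psi(\alpha(b)^*\alpha(b)) = \psi\bigl(\sigma_{-i\frac{\beta}{2}}(\alpha(b))\,\sigma_{-i\frac{\beta}{2}}(\alpha(b))^*\bigr)
$$
by condition (1) of Theorem \ref{24-11-21d} applied to $\psi$. Using $\sigma_{-i\beta/2} \circ \alpha = \alpha \circ \sigma'_{-i\beta/2}$ and the fact that $\alpha$ is a $*$-homomorphism, the right-hand side equals
$$
\psi\bigl(\alpha\bigl(\sigma'_{-i\frac{\beta}{2}}(b)\,\sigma'_{-i\frac{\beta}{2}}(b)^*\bigr)\bigr) = \varphi\bigl(\sigma'_{-i\frac{\beta}{2}}(b)\,\sigma'_{-i\frac{\beta}{2}}(b)^*\bigr) .
$$
Hence $\varphi$ satisfies condition (1) of Theorem \ref{24-11-21d} for the flow $\sigma'$, which is the desired conclusion. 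The only mildly delicate point is the identification of domains $\alpha(D(\sigma'_z)) = D(\sigma_z)$, but once one observes that $\alpha$ and $\alpha^{-1}$ are both bounded and intertwine the real flows, the closure argument above handles it cleanly.
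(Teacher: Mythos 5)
Your proof is correct and follows essentially the same route as the paper: both arguments transport the KMS identity through the intertwining $\sigma_z \circ \alpha = \alpha \circ \sigma'_z$ and then invoke Kustermans' theorem. The only difference is that the paper verifies condition (2) of Theorem \ref{24-11-21d} on entire analytic elements, where the intertwining is immediate, whereas you verify condition (1) and therefore need the (correctly handled, via closedness of $\sigma_z$) identification $\alpha(D(\sigma'_z)) = D(\sigma_z)$ — extra work that the paper's choice of condition avoids.
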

\begin{proof} It is easy to see that $\alpha(\mathcal A_{\sigma'}) = \mathcal A_\sigma$ and that $\sigma_z \circ \alpha = \alpha \circ \sigma'_z$ for all $z \in \mathbb C$. It follows that
\begin{align*}
&\psi\circ \alpha(a^*a) = \psi(\sigma_{- i \frac{\beta}{2}}(\alpha(a)) \sigma_{- i \frac{\beta}{2}}(\alpha(a))^*) \\
&= \psi( \alpha(\sigma'_{-i \frac{\beta}{2}}(a)  \alpha(\sigma'_{-i \frac{\beta}{2}}(a)^*)  = \psi \circ \alpha\left(\sigma'_{-i \frac{\beta}{2}}(a) \sigma'_{-i \frac{\beta}{2}}(a)^*\right)
\end{align*}
for all $a \in \mathcal A_{\sigma'}$. Hence $\psi \circ  \alpha$ satisfies condition (2) of Theorem \ref{24-11-21d} and is therefore $\beta$-KMS weight for $\sigma'$.
\end{proof}

\begin{cor}\label{01-10-23}  Let $\sigma$ be flow on $A$ and $\psi$ a $\beta$-KMS weight for $\sigma$. Let $\alpha \in \Aut A$ be an automorphism of $A$ which commutes with $\sigma$ in the sense that $\sigma_t \circ \alpha = \alpha \circ \sigma_t$ for all $t \in \mathbb R$. Then $\psi \circ \alpha$ is a $\beta$-KMS weight for $\sigma$.
\end{cor}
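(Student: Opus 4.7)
The statement is essentially a direct specialization of the preceding Lemma \ref{01-10-23a}. The plan is to apply that lemma with $B = A$ and $\sigma' = \sigma$, taking the given automorphism $\alpha : A \to A$ in the role of the $*$-isomorphism $\alpha : B \to A$ from the lemma. The hypothesis $\sigma_t \circ \alpha = \alpha \circ \sigma_t$ for all $t \in \mathbb R$ is precisely the intertwining relation $\sigma_t \circ \alpha = \alpha \circ \sigma'_t$ required by Lemma \ref{01-10-23a}, so no further verification is needed.

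There is no substantive obstacle here: all of the work has been done in the lemma, which established that $\alpha$ intertwines the entire analytic structure ($\alpha(\mathcal A_{\sigma'}) = \mathcal A_\sigma$ and $\sigma_z \circ \alpha = \alpha \circ \sigma'_z$ for all $z \in \mathbb C$) and then verified condition (2) of Theorem \ref{24-11-21d} for $\psi \circ \alpha$. Specialized to our setting, this yields that $\psi \circ \alpha$ satisfies
\[
\psi \circ \alpha(a^*a) = \psi \circ \alpha\bigl( \sigma_{-i \frac{\beta}{2}}(a) \sigma_{-i \frac{\beta}{2}}(a)^* \bigr)
\]
for every $a \in \mathcal A_\sigma$, which by Theorem \ref{24-11-21d} (2) identifies $\psi \circ \alpha$ as a $\beta$-KMS weight for $\sigma$. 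The only minor points to check are that $\psi \circ \alpha$ remains non-zero and densely defined, but these are immediate because $\alpha$ is a $*$-automorphism and in particular a homeomorphism of $A^+$ that preserves $\mathcal M_\psi^+$ in the sense that $\mathcal M_{\psi\circ \alpha}^+ = \alpha^{-1}(\mathcal M_\psi^+)$.

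In summary, the entire proof is: \emph{Apply Lemma \ref{01-10-23a} with $B := A$ and $\sigma' := \sigma$; the commutation hypothesis on $\alpha$ gives the required intertwining, and the conclusion of the lemma is the conclusion of the corollary.}
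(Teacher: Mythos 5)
Your proposal is correct and is exactly what the paper intends: the corollary carries no separate proof and is simply Lemma \ref{01-10-23a} specialized to $B = A$ and $\sigma' = \sigma$, with the commutation hypothesis supplying the required intertwining. Your added remark that non-degeneracy and dense definedness of $\psi \circ \alpha$ are preserved because $\alpha$ is a $*$-automorphism is a sensible (if strictly unnecessary) completion of the same argument.
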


\begin{notes} The crucial Lemma \ref{17-11-21a} appears in the work of Quaegebeur and Verding, \cite{QV}, as does some of the arguments in the proof of Proposition \ref{08-02-22a}. But otherwise most of the material in this section is taken from Kustermans' paper \cite{Ku1} where he shows that Combes' original definition of a KMS weight, \cite{C2}, which is condition (4) in Theorem \ref{24-11-21d}, is equivalent to condition (1). The proof here of (4) $\Rightarrow$ (1) is the same as Kustermans', but the proof that (1) implies (4) is more direct and it allows us to include the intermediate conditions (2) and (3). Section \ref{GNS-KMS} here is an interpretation of Section 5.1 in \cite{Ku1}. 
\end{notes}

\subsection{KMS states}\label{KMSstatesx}

\begin{defn}\label{21-10-23} A weight $\psi$ on a $C^*$-algebra $A$ is \emph{bounded} when $\psi(a) < \infty$ for all $a \in A^+$ and a \emph{state} when 
$$
\sup \left\{ \psi(a) : \ 0 \leq a \leq 1 \right\} = 1.
$$
\end{defn}

\begin{lemma}\label{21-10-23a} Let $\psi$ be a bounded weight on $A$. There is a positive linear functional $\psi' \in A^*_+$ such that $\psi =\psi'|_{A^+}$.
\end{lemma}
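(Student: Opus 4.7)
The plan is to exploit the boundedness hypothesis to extend $\psi$ linearly to all of $A$ and then verify that the extension is bounded, the only genuinely nontrivial ingredient being automatic norm-boundedness of $\psi$ on the unit ball of $A^+$.

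First I would observe that boundedness of $\psi$ means $\mathcal M_\psi^+ = A^+$, and hence $\mathcal M_\psi = \operatorname{Span} A^+ = A$ directly from the definition of $\mathcal M_\psi$. Lemma \ref{10-11-21} then produces a unique linear extension $\psi' : A \to \mathbb C$ of $\psi$, which is positive because $\psi'|_{A^+} = \psi \geq 0$.

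Next I would show that $K := \sup\{\psi(a) : a \in A^+, \ \|a\| \leq 1\} < \infty$. Suppose for contradiction that one can pick $a_n \in A^+$ with $\|a_n\| \leq 1$ and $\psi(a_n) \geq 4^n$. The partial sums of $\sum_{n=1}^\infty 2^{-n} a_n$ are Cauchy, so the sum $b \in A^+$ exists. Additivity of $\psi$ implies monotonicity (if $a \leq b$ in $A^+$ then $\psi(b) = \psi(a) + \psi(b-a) \geq \psi(a)$), and combined with positive homogeneity this gives
$$
\psi(b) \geq \psi\!\left(\sum_{n=1}^N 2^{-n} a_n\right) = \sum_{n=1}^N 2^{-n}\psi(a_n) \geq \sum_{n=1}^N 2^n
$$
for every $N$, forcing $\psi(b) = \infty$ and contradicting that $\psi$ is bounded. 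With $K$ finite in hand, a self-adjoint $x \in A$ decomposes as $x = x_+ - x_-$ with $x_\pm \in A^+$ and $\|x_\pm\| \leq \|x\|$, so $|\psi'(x)| \leq \psi(x_+) + \psi(x_-) \leq 2K\|x\|$; splitting a general $a \in A$ into its self-adjoint real and imaginary parts of norm at most $\|a\|$ then yields $|\psi'(a)| \leq 4K\|a\|$, and hence $\psi' \in A^*_+$.

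The main obstacle is the uniform bound $K < \infty$; after this, the conclusion follows by routine algebra on top of the extension lemma already available in the chapter. The series construction used to derive the contradiction is essentially the only nontrivial move in the argument.
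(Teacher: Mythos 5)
Your proof is correct and follows essentially the same route as the paper: observe $\mathcal M_\psi^+ = A^+$, hence $\mathcal M_\psi = A$, and invoke Lemma \ref{10-11-21} to get the linear extension. The only difference is that the paper concludes by citing the automatic continuity of positive linear functionals on a $C^*$-algebra (Proposition 2.3.11 in \cite{BR}), whereas you reprove that fact from scratch with the series argument $b = \sum_n 2^{-n}a_n$ — a valid, self-contained substitute for the citation.
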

\begin{proof} Note that $\mathcal M_\psi^+ = A^+$ and hence that $\mathcal M_\psi = A$. It follows therefore from Lemma \ref{10-11-21} that there is a linear map $\psi': A \to \mathbb C$ such that $\psi'|_{A^+} = \psi$. Since a positive linear functional on a $C^*$-algebra is continuous, cf. Proposition 2.3.11 in \cite{BR}, it follows that $\psi'\in A^*_+$.
\end{proof}

For a bounded weight $\psi$ on $A$ we will not distinguish between $\psi$ and its unique linear extension to $A$. We note that when $A$ is unital all densely defined weights are bounded:

\begin{lemma}\label{24-09-23b} Let $\psi$ be a non-zero densely defined weight on the unital $C^*$-algebra $A$. Then $0 < \psi(1) < \infty$ and $\psi(1)^{-1}\psi$ is a state.
\end{lemma}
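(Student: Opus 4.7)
The plan is to establish finiteness of $\psi(1)$ by exploiting density of $\mathcal M_\psi^+$, then strict positivity by a simple contradiction, and finally to read off the state property.

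First I would show $\psi(1) < \infty$. Since $\psi$ is densely defined, $\mathcal M_\psi^+$ is dense in $A^+$, so there is $a \in \mathcal M_\psi^+$ with $\|1-a\| < \tfrac{1}{2}$. Because $a$ is self-adjoint this gives $a \geq \tfrac{1}{2} 1$, hence $1 \leq 2a$. Monotonicity of the semi-weight (property (b) in the definition) and positive homogeneity then yield
\begin{equation*}
\psi(1) \leq 2\psi(a) < \infty .
\end{equation*}

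Next I would show $\psi(1) > 0$. Suppose for contradiction that $\psi(1) = 0$. For any $a \in A^+$ one has $a \leq \|a\|\cdot 1$, so monotonicity and homogeneity give $\psi(a) \leq \|a\|\psi(1) = 0$. Thus $\psi \equiv 0$ on $A^+$, contradicting the hypothesis that $\psi$ is non-zero.

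Finally, set $\phi := \psi(1)^{-1}\psi$. This is clearly a weight (additivity, positive homogeneity and lower semi-continuity are inherited from $\psi$). For every $a \in A^+$ with $0 \leq a \leq 1$ we have $\phi(a) \leq \phi(1) = 1$, and $\phi(1) = 1$ itself realises the supremum, so
\begin{equation*}
\sup\{\phi(a): 0 \leq a \leq 1\} = 1,
\end{equation*}
which is precisely the definition of a state. (As a side remark, $\phi$ is bounded because $\phi(a) \leq \|a\|$ for $a \in A^+$, so by Lemma \ref{21-10-23a} it extends uniquely to a positive linear functional of norm one on $A$.) There is no substantive obstacle here; the only nontrivial ingredient is the use of density in the first step to dominate the unit by an element of $\mathcal M_\psi^+$.
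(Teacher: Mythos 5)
Your proof is correct and follows essentially the same route as the paper: dominate $1$ by $2a$ for some $a \in \mathcal M_\psi^+$ close to the unit (giving finiteness), and use $b \leq \|b\|1$ together with non-vanishing of $\psi$ (giving positivity). The concluding verification of the state property, which the paper leaves implicit, is handled correctly.
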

\begin{proof} Since $\psi$ is densely defined there is an $ a \in A^+$ such that $\psi(a) < \infty$ and $\left\|1-a \right\| \leq \frac{1}{2}$. Then $a$ is invertible and $a^{-1} \leq 2$. Hence $1 = a^{-1}a \leq 2a$ and $\psi(1) \leq 2 \psi(a) < \infty$. If $b \in A^+$, $0 \leq b \leq \|b\| 1$ and hence $\psi(1) > 0$ since $\psi$ is not zero.
\end{proof}

Let $\sigma$ be a flow on $A$. A bounded $\beta$-KMS weight for $\sigma$ will be called a \emph{$\beta$-KMS functional}, or just a \emph{KMS functional} when we don't need to specify $\beta$. Similarly, a $\beta$-KMS weight which is also a state will be called a \emph{$\beta$-KMS state} or a \emph{KMS state}. It follows from Lemma \ref{21-10-23a} that a KMS weight $\psi$ for $\sigma$ is bounded if and only if there is a KMS state $\omega$ for $\sigma$ and a positive scalar $\lambda >0$ such that $\lambda \omega|_{A^+} = \psi$, and then from Lemma \ref{24-09-23b} that, when $A$ is unital every KMS weight for $\sigma$ is a scalar multiple of a KMS state.

For positive functionals and states much of the general theory simplifies considerably. For example, since $\mathcal M^\sigma_\psi = \mathcal A_\sigma$ and $\mathcal N_\psi =A$ when $\psi$ is a positive linear functional, we have the following version of Kusterman's theorem, Theorem \ref{24-11-21d}.

\begin{thm}\label{21-11-23b} Let $\sigma$ be a flow on the $C^*$-algebra $A$ and $\omega \in A^*_+$ a non-zero positive linear functional on $A$. Let $\beta\in \mathbb R$. Then $\omega$ is a $\beta$-KMS functional for $\sigma$ if and only if the following equivalent conditions (1)-(4) hold.
\begin{enumerate}
\item[(1)] $\omega(a^*a) = \omega\left( \sigma_{-i \frac{\beta}{2}}(a) \sigma_{-i \frac{\beta}{2}}(a)^*\right) \ \ \forall a \in D\left( \sigma_{-i \frac{\beta}{2}}\right)$.
\item[(2)]  $\omega(a^*a) = \omega\left( \sigma_{-i \frac{\beta}{2}}(a) \sigma_{-i \frac{\beta}{2}}(a)^*\right) \ \ \forall a \in \mathcal A_\sigma$. 
\item[(3)] $\omega(ab) = \omega(b\sigma_{i\beta}(a)) \ \ \forall a,b \in \mathcal A_\sigma$.
\item[(4)] For all $a,b \in A$ there is a continuous function $f: {\mathcal D_\beta} \to \mathbb C$ which is holomorphic in the interior $\mathcal D_\beta^0$ of ${\mathcal D_\beta}$ and has the property that
\begin{itemize}
\item $f(t) = \omega(b \sigma_t(a)) \ \ \forall t \in \mathbb R$, and
\item $f(t+i\beta) = \omega(\sigma_t(a)b) \ \ \forall t \in \mathbb R$.
\end{itemize}
\end{enumerate}
\end{thm}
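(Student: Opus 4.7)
The plan is to reduce to Theorem \ref{24-11-21d} by exploiting the simplifications that occur when the weight is a non-zero bounded positive functional. Since $\omega(a^*a) \leq \|\omega\|\|a\|^2 < \infty$ we have $\mathcal N_\omega = A$; since $\omega$ is everywhere finite on $A^+$, $\mathcal M_\omega = A$, whence $\mathcal M_\omega^\sigma = \mathcal A_\sigma$ (as $\sigma_z$ maps $\mathcal A_\sigma$ into $A$ automatically). Under these identifications the four conditions of Theorem \ref{24-11-21d} specialize verbatim to (1)--(4) of the present theorem (for (4), note that $\mathcal N_\omega \cap \mathcal N_\omega^* = A$), so the forward implication is immediate from Definition \ref{24-11-21c}.

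For the converse, the task is to show that each of (1)--(4) implies $\sigma$-invariance of $\omega$, after which Theorem \ref{24-11-21d} together with boundedness returns a $\beta$-KMS functional. The argument is cleanest for condition (3). Let $\{e_j\}$ be an approximate unit for $A$ and set $v_j := R_1(e_j)$, which lies in $\mathcal A_\sigma$ by Lemma \ref{24-11-21}; by the integral-representation argument in the proof of parts (c) and (d) of Lemma \ref{09-02-22x}, both $\{v_j\}$ and $\{\sigma_z(v_j)\}$ act as approximate units on $A$ for every $z \in \mathbb C$. Plugging $(a, v_j)$ into (3) with $a \in \mathcal A_\sigma$ gives $\omega(a v_j) = \omega(v_j\,\sigma_{i\beta}(a))$, and letting $j \to \infty$ yields $\omega(a) = \omega(\sigma_{i\beta}(a))$ for all $a \in \mathcal A_\sigma$. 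By Theorem \ref{17-02-23e} and Lemma \ref{24-09-23x} the map $z \mapsto \omega(\sigma_z(a))$ is entire and bounded on every horizontal strip; its $i\beta$-periodicity then upgrades this to a bound on all of $\mathbb C$, so it is constant by Liouville's theorem. Hence $\omega\circ\sigma_t = \omega$ on $\mathcal A_\sigma$, and by density (Lemma \ref{11-11-21}) and continuity on all of $A$.

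The remaining three cases follow the same template. Condition (1) is a special case of (2); for (2), the polarization identity \eqref{polar}, which is purely algebraic and uses no $\sigma$-invariance at this stage, gives $\omega(b^*a) = \omega(\sigma_{-i\beta/2}(a)\sigma_{-i\beta/2}(b)^*)$ for $a, b \in \mathcal A_\sigma$, and substituting $b = v_j$ together with Lemma \ref{24-11-21k} (to rewrite $\sigma_{-i\beta/2}(v_j)^* = \sigma_{i\beta/2}(v_j)$) yields $\omega(a) = \omega(\sigma_{-i\beta/2}(a))$ in the limit; Liouville again produces invariance. For condition (4), apply it with $b = e_j$ to obtain functions $f_j$ bounded on $\mathcal D_\beta$ by $\|\omega\|\|a\|$ via Phragmen-Lindel\"of; the Cauchy-Schwarz estimate $|\omega((1-e_j)c)|^2 \leq \omega(1-e_j)\|c\|^2\|\omega\|$ (together with $\omega(1-e_j) \to 0$ from Proposition 2.3.11(d) of \cite{BR}) shows that both boundary sequences $f_j(t) = \omega(e_j\sigma_t(a))$ and $f_j(t+i\beta) = \omega(\sigma_t(a)e_j)$ converge uniformly in $t$ to $\omega(\sigma_t(a))$, so a second application of Phragmen-Lindel\"of to $f_j - f_k$ forces $\{f_j\}$ to be uniformly Cauchy on $\mathcal D_\beta$; the limit $f$ is continuous on $\mathcal D_\beta$, holomorphic in the interior, and satisfies $f(t+i\beta) = f(t)$ on $\mathbb R$, so a Schwarz-reflection and Morera argument extends it to a bounded entire $i\beta$-periodic function, constant by Liouville, giving $\omega(\sigma_t(a)) = \omega(a)$. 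The main obstacle in the whole scheme is that the natural chain (2) $\Rightarrow$ (3) in Theorem \ref{24-11-21d} itself relies on $\sigma$-invariance (via Lemma \ref{18-11-21kx}), which is precisely why the polarization-plus-Liouville detour above is required for condition (2); once invariance is secured in each case, Theorem \ref{24-11-21d} supplies both the mutual equivalence of (1)--(4) and the conclusion that $\omega$ is a $\beta$-KMS functional.
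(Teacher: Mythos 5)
Your proposal does substantially more than the paper, which offers no proof at all: the text simply remarks that $\mathcal N_\omega = A$ and $\mathcal M^\sigma_\omega = \mathcal A_\sigma$ for a positive functional and presents the theorem as an immediate specialization of Theorem \ref{24-11-21d}. You are right that this glosses over the real content of the converse direction, namely that conditions (1)--(4) are stated without assuming $\sigma$-invariance, whereas a $\beta$-KMS functional is by definition $\sigma$-invariant; and your polarization-plus-Liouville derivation of invariance from (2) and (3) (via the approximate unit $v_j = R_1(e_j)$ and the identity $\omega(a) = \omega(\sigma_{-i\beta/2}(a))$, respectively $\omega(a)=\omega(\sigma_{i\beta}(a))$) is the standard and correct way to close that gap; compare Proposition 5.3.3 of \cite{BR}.

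There is, however, one place where the argument genuinely fails, and it is worth naming because it also afflicts the theorem as stated: the case $\beta = 0$. All three of your invariance arguments rest on the fact that $z \mapsto \omega(\sigma_z(a))$ is bounded on horizontal strips and periodic with period $i\beta$ (or $i\beta/2$), so that Liouville applies. When $\beta = 0$ the periodicity is vacuous, the function is only known to be bounded on strips of finite width, and Liouville gives nothing. Indeed the converse implication is false for $\beta = 0$: conditions (1)--(4) then reduce to the trace property $\omega(ab) = \omega(ba)$, and a bounded trace need not be $\sigma$-invariant (take $A = C(X)$ with a nontrivial flow and $\omega$ integration against a non-invariant measure), hence need not be a $0$-KMS functional in the sense of Definition \ref{24-11-21c}. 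You should therefore restrict your converse to $\beta \neq 0$ and note explicitly that for $\beta = 0$ the invariance must be added as a hypothesis. A second, more minor, point: in your treatment of (4) you invoke Phragmen-Lindel\"of to bound $f_j$ on $\mathcal D_\beta$ by $\|\omega\|\|a\|$, but Proposition 5.3.5 of \cite{BR} presupposes that the function is bounded on the closed strip, and condition (4) as stated does not guarantee this for an arbitrary interpolating $f$; either add boundedness to (4), as the footnote to the proof of Theorem \ref{24-11-21d} suggests one may, or supply a separate growth argument before applying the maximum principle.
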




\chapter{Laca-Neshveyev' theorem}\label{LN}

This chapter is devoted to the proof of a fundamental theorem on the extension of a KMS weight from an invariant hereditary and full sub-$C^*$-algebra.

\bigskip

Let $A$ be a $C^*$-algebra and $B \subseteq A$ a $C^*$-subalgebra of $A$. Recall that $B$ is \emph{hereditary} in $A$ when 
$$
 a \in A, \ b \in B, \ 0 \leq a \leq b  \Rightarrow a \in B,
 $$ 
 and \emph{full} when $\Span ABA$ is dense in $A$. Given weights $\psi$ on $A$ and $\phi$ on $B$ we say that $\psi$ \emph{extends} $\phi$ when $\psi(b) = \phi(b)$ for all $b \in B^+$. In this chapter we prove the following theorem due to M. Laca and S. Neshveyev.

\begin{thm}\label{02-12-21} (Theorem 3.2 iv) in \cite{LN}.) Let $\sigma$ be a flow on the $C^*$-algebra $A$ and let $B \subseteq A$ be a $C^*$-subalgebra of $A$ such that $\sigma_t(B) = B$ for all $t \in \mathbb R$. Assume that $B$ is hereditary and full in $A$. Let $\beta \in \mathbb R$. For every $\beta$-KMS weight $\phi$ for $\sigma$ on $B$ there is a unique $\beta$-KMS weight for $\sigma$ on $A$ which extends $\phi$.
\end{thm}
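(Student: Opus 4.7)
The plan is to build the extension $\psi$ via its GNS triple and appeal to the GNS reconstruction theorem for KMS weights, Proposition \ref{08-02-22a}. Let $(H_\phi, \Lambda_\phi, \pi_\phi)$ be the GNS triple of $\phi$, which satisfies conditions (A)--(E) relative to $(B, \sigma|_B, \beta)$. I aim to construct a GNS triple $(H_\psi, \tilde\Lambda, \tilde\pi)$ of $A$ satisfying (A)--(E) relative to $(A, \sigma, \beta)$ and containing $(H_\phi, \Lambda_\phi, \pi_\phi)$ as its ``$B$-sector''; the resulting weight will be the sought-after $\psi$, and the compatibility will force $\psi|_{B^+} = \phi$.

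\bigskip

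Since $B$ is hereditary but typically not an ideal in $A$, one cannot hope to extend $\pi_\phi$ to a representation of $A$ on $H_\phi$ itself. Instead, I would use the $A$--$B$ Morita equivalence implemented by the imprimitivity bimodule $\mathcal E := \overline{AB}$: heredity makes $\langle x, y\rangle_B := x^* y$ a $B$-valued inner product on $\mathcal E$, and fullness makes $\mathcal E$ full as a bimodule. Define
$$
H_\psi := \mathcal E \otimes_{\pi_\phi} H_\phi, \qquad \tilde\pi(a)(x \otimes \xi) := (ax) \otimes \xi ,
$$
and extend $\Lambda_\phi$ to $\tilde\Lambda$ on the left ideal $A \mathcal N_\phi \subseteq A$ by $\tilde\Lambda(ab) := a \otimes \Lambda_\phi(b)$; closedness of $\tilde\Lambda$ is inherited from closedness of $\Lambda_\phi$, Lemma \ref{17-11-21a}. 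The unitary group $U^\psi_t := \sigma_t \otimes U^\phi_t$, meaningful because $\sigma_t(\mathcal E) = \mathcal E$ and $U^\phi_t \pi_\phi(b) U^\phi_{-t} = \pi_\phi(\sigma_t(b))$, implements $\sigma_t$ on $\tilde\pi(A)$ and delivers conditions (A), (B), (C) and (D) of Proposition \ref{08-02-22a} for the triple $(H_\psi, \tilde\Lambda, \tilde\pi)$.

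\bigskip

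The principal obstacle is condition (E): the existence of a conjugate-linear isometry $J$ on $H_\psi$ with $J \tilde\Lambda(a) = \tilde\Lambda(\sigma_{-i\beta/2}(a)^*)$ on $\mathcal M^\sigma_{\tilde\Lambda}$. I would verify the equivalent hypothesis (F) of Lemma \ref{09-02-22d} using the subspace
$$
S := \Span \bigl\{ \sigma_{-i \tfrac{\beta}{2}}(u)\, c\, \sigma_{-i \tfrac{\beta}{2}}(v)^* : u, v \in B \cap \mathcal M_\phi^\sigma,\ c \in A \cap \mathcal A_\sigma \bigr\} .
$$
Heredity forces every $s \in S$ into $B$, so the required isometric identity $\|\tilde\Lambda(s)\| = \|\tilde\Lambda(\sigma_{-i\beta/2}(s)^*)\|$ reduces to the KMS identity for $\phi$, which is condition (1) of Theorem \ref{24-11-21d} applied to $\phi$. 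Density of $S$ in the sense demanded by (F) is the technical heart of the argument: it combines fullness ($\overline{ABA} = A$) with the approximate-unit machinery of Lemmas \ref{07-12-21}, \ref{16-12-21a} and \ref{09-02-22x} applied on the $B$-side, to approximate arbitrary elements of $\mathcal M^\sigma_{\tilde\Lambda}$ simultaneously in $A$-norm and in $\tilde\Lambda$-norm by elements of $S$. Once (E) is in place, Proposition \ref{08-02-22a} produces a $\beta$-KMS weight $\psi$ on $A$, and tracing through the construction gives $\psi|_{B^+} = \phi$.

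\bigskip

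Uniqueness follows because for any $\beta$-KMS extension of $\phi$, the condition $\psi|_B = \phi$ together with the non-degeneracy, closedness and KMS conditions forces the associated GNS triple to be unitarily equivalent to the one constructed above (the Morita-induced representation being determined by $\pi_\phi$ up to canonical isomorphism). Concretely, given two extensions $\psi_1, \psi_2$, on the common subspace $S \subseteq B \cap \mathcal A_\sigma$ one has $\psi_k(s^* s) = \phi(s^* s)$ by heredity, and invoking Kustermans' sharpened theorem \ref{12-12-13} on each $\psi_k$ with this same $S$ shows that the value of $\psi_k$ on a general $a \in A^+$ is determined by the values of $\phi$ on the $\mathcal M_\phi^\sigma$-conjugates of $a$, independently of $k$. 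Hence $\psi_1 = \psi_2$.
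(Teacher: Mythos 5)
Your strategy --- inducing the GNS triple of $\phi$ up to $A$ via the bimodule $\overline{AB}$ and then invoking Proposition \ref{08-02-22a} --- is genuinely different from the paper's, which instead writes the extension down explicitly as $\psi(a)=\sup_I\sum_{x\in I}\phi\bigl(\sigma_{i\beta/2}(x)^*a\,\sigma_{i\beta/2}(x)\bigr)$ over finite families $I\subseteq\mathcal A_\sigma\mathcal M^\sigma_\phi$ with $\sum_{x\in I}xx^*\le 1$, and verifies additivity, the KMS identity and uniqueness by direct computation with that formula. Your setup of $(H_\psi,\tilde\Lambda,\tilde\pi)$ and of conditions (A)--(D) is plausible, but the verification of (E) through condition (F) fails where it matters. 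Your subspace $S$ is contained in $B$ (indeed $BAB\subseteq B$ by heredity, as you observe), while the first requirement in (F) is that \emph{every} element of $\mathcal M^\sigma_{\tilde\Lambda}$ be a limit of elements of $S$ simultaneously in the norm of $A$ and in graph norm. Since $D(\tilde\Lambda)\supseteq\Span A\mathcal N_\phi$ is norm-dense in $A$ by fullness, Lemma \ref{17-03-22d} makes $\mathcal M^\sigma_{\tilde\Lambda}$ norm-dense in $A$, so when $B\neq A$ no subspace of the closed subalgebra $B$ can satisfy that requirement. The identity $\|\tilde\Lambda(a)\|=\|\tilde\Lambda(\sigma_{-i\beta/2}(a)^*)\|$ has to be proved for $a$ running through a core consisting mostly of elements of $A\setminus B$; reducing it to the KMS identity for $\phi$ on elements of $B$ begs the question, and this is exactly where the substance of the theorem sits. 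In the paper that work is done by the computation \eqref{01-12-21d}, which for arbitrary $h\in\mathcal A_\sigma$ converts $\sum_{x,y}\phi(\sigma_{i\beta/2}(x)^*h^*yy^*h\,\sigma_{i\beta/2}(x))$ into $\sum_{x,y}\phi(\sigma_{i\beta/2}(y)^*\sigma_{-i\beta/2}(h)xx^*\sigma_{-i\beta/2}(h)^*\sigma_{i\beta/2}(y))$ and thereby yields $\psi(h^*h)=\psi(\sigma_{-i\beta/2}(h)\sigma_{-i\beta/2}(h)^*)$; any repair of your argument would need an equivalent mechanism for passing from $B$ to all of $A$.

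The uniqueness paragraph has the same defect. Theorem \ref{12-12-13} requires the chosen $S$ to approximate every element of $\mathcal M^\sigma_{\psi_k}$ in graph norm, which a subspace of $B$ cannot do; moreover that theorem tells you when a given weight \emph{is} a $\beta$-KMS weight, not that its values on $A^+$ are determined by its restriction to $S$. What is actually needed, and what the paper proves in Lemma \ref{02-12-21g}, is that any $\beta$-KMS extension $\psi'$ of $\phi$ satisfies $\psi'(a^*a)=\sup_I\sum_{x\in I}\phi(\sigma_{i\beta/2}(x)^*a^*a\,\sigma_{i\beta/2}(x))$ for every $a\in\mathcal A_\sigma$; this uses the KMS property and lower semi-continuity of $\psi'$ itself together with the fact that the elements $w_I=\sum_{x\in I}xx^*$ form an approximate unit for $A$ (Lemma \ref{01-12-21x}), after which Lemma \ref{01-03-22b} finishes. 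Your sketch does not supply the step that transfers information between $B$ and all of $A$ in either direction.
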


\section{Proof of Laca-Neshveyev' theorem}\label{LNproof}

In the proof we shall many times use results from the first chapters without explicit reference.

 We fix the setting from Theorem \ref{02-12-21}. To simplify the notation we set
$$
\xi := -i\frac{\beta}{2}  .
$$
Write $\mathcal A_\sigma  \mathcal M_\phi^\sigma$ for the set of elements of $A$ of the form $ab$, where $a \in A$ is entire analytic for $\sigma$ and $b \in B$ is an element of the algebra $\mathcal M^\sigma_\phi \subseteq B$ obtained from the restriction of $\sigma$ to $B$. Note that $\mathcal A_\sigma \mathcal M^\sigma_\phi \subseteq \mathcal A_\sigma$. Let $\mathcal I$ denote the collection of finite subsets $I \subseteq  \mathcal A_\sigma  \mathcal M_\phi^\sigma$ with the property that $\sum_{a \in I} aa^* \leq 1$. When $I \in \mathcal I$ we set
$$
w_I :=  \sum_{a \in I} aa^*.
$$

\begin{lemma}\label{01-12-21x} The collection $\left\{w_I : \ I \in \mathcal I\right\}$ is an approximate unit in $A$ in the following sense: For every finite set $S \subseteq A$ and every $\epsilon > 0$ there is an element $J \in \mathcal I$ such that $\left\|w_Ja -a\right\| \leq \epsilon$ for all $a \in S$.
\end{lemma}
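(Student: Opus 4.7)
First I would establish some preliminaries. By Lemma \ref{07-12-21a} applied to the restriction of $\sigma$ to $B$, the subspace $\mathcal{M}^\sigma_\phi$ is a dense $*$-subalgebra of $B$. Using $b = (b^{1/2})^*b^{1/2}$ together with the $*$-algebra structure, the positive cone $(\mathcal{M}^\sigma_\phi)^+ := \mathcal{M}^\sigma_\phi \cap B^+$ is dense in $B^+$, and joint continuity of multiplication combined with the density of $\mathcal{A}_\sigma$ in $A$ then shows that $\mathcal{E} := \mathcal{A}_\sigma \mathcal{M}^\sigma_\phi$ is dense in the product set $AB$. Two hereditariness consequences will be used repeatedly: for $e = \alpha\mu \in \mathcal{E}$, the estimate $e^*e \leq \|\alpha\|^2 \mu^*\mu \in B^+$ forces $e^*e \in B$; and more generally $BAB \subseteq B$, which reduces to $b \in B^+$, $a \in A^+$, where $bab \leq \|a\| b^2 \in B^+$. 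Hence any product $e_i^* e_j = \mu_i^* (\alpha_i^*\alpha_j) \mu_j$ belongs to $B$.

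For the main argument, given a finite set $S = \{a_1,\ldots,a_n\} \subseteq A$ and $\epsilon > 0$, I would first reduce to finitely many elements of $\mathcal{E}$. Since $B$ is full, $\overline{\Span}\,ABA = A$; writing $ABA = (AB)\cdot A$ and using the density of $\mathcal{E}$ in $AB$, I can find, for any prescribed $\eta > 0$, a common finite family $\{e_1,\ldots,e_m\} \subseteq \mathcal{E}$ and elements $y_{i,k} \in A$ satisfying $\|a_i - \sum_{k=1}^{m} e_k y_{i,k}\| < \eta$ for every $i$. Because $\|w_J\| \leq 1$ for $J \in \mathcal{I}$, the triangle inequality gives
$$\|w_J a_i - a_i\| \leq 2\eta + \sum_{k=1}^{m} \|(1-w_J)e_k\|\cdot \|y_{i,k}\|,$$
so the problem reduces, upon shrinking $\eta$, to the following core claim: for every finite set $F \subseteq \mathcal{E}$ and every $\delta > 0$, there exists $J \in \mathcal{I}$ with $\|(1 - w_J) e\| < \delta$ for all $e \in F$.

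To prove the core claim, set $h := \sum_{e \in F} e e^* \in A^+$ and, for large $N$, consider the positive contraction $g_N(h) := Nh(1+Nh)^{-1}$ of $A$. The inequalities $(1-g_N(h))^2 \leq 1-g_N(h)$ and $(1-g_N(h))\,h\,(1-g_N(h)) = h(1+Nh)^{-2}$, whose norm is at most $1/(4N)$, give $\|(1-g_N(h))e\| \leq 1/(2\sqrt{N})$ for every $e \in F$, so $g_N(h)$ already has the desired contractive effect for $N$ sufficiently large. It remains to approximate $g_N(h)$ in norm by a suitable $w_J$. Writing the row $E := (e_1,\ldots,e_m)$ and using the identity $(1+NEE^*)^{-1}E = E(1+NE^*E)^{-1}$, one obtains the factorisation
$$g_N(h) = FF^* = \sum_{j=1}^{m} f_j f_j^*, \qquad F := \sqrt{N}\,E\,(1+NE^*E)^{-1/2},$$
whose $j$-th entry is $f_j = \sqrt{N}\bigl(e_j + \sum_i e_i r_{ij}\bigr)$; here $(r_{ij}) \in M_m(B)$ arises from continuous functional calculus applied to $t \mapsto (1+Nt)^{-1/2}-1$ on $E^*E \in M_m(B)$, so each $f_j$ lies in $AB$. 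By density of $\mathcal{E}$ in $AB$, each $f_j$ admits a norm approximant $\tilde f_j \in \mathcal{E}$, and a scalar rescaling $\sqrt{\rho}$ with $\rho \in (0,1]$ close to $1$ then produces a set $J := \{\sqrt{\rho}\,\tilde f_j\}_{j=1}^{m} \subseteq \mathcal{E}$ satisfying $\sum_{f \in J} f f^* \leq 1$, i.e.\ $J \in \mathcal{I}$, and $\|w_J - g_N(h)\|$ small enough that $\|(1-w_J)e\| < \delta$ for every $e \in F$.

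The main technical obstacle is this final assembly: one must verify that $E^*E$ really lands in $M_m(B)$ (where the identity $BAB \subseteq B$ enters) and then rescale the approximants $\tilde f_j$ carefully to enforce the constraint $\sum_{f \in J} f f^* \leq 1$ without losing the contractive action of $w_J$ on $F$.
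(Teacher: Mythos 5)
Your proof is correct and follows essentially the same route as the paper: both build $w_J$ by forming $h=\sum_{e} ee^*$ with $e\in AB$, taking the standard positive contraction $Nh(1+Nh)^{-1}$, factorising it as $\sum_j f_jf_j^*$ with $f_j\in AB$ via the intertwining identity, approximating the $f_j$ by elements of $\mathcal A_\sigma\mathcal M^\sigma_\phi$, and rescaling to enforce $\sum_j f_jf_j^*\leq 1$. The only differences are organisational: the paper invokes Proposition 2.2.18 of \cite{BR} for the convergence of the net $\{e_I\}$ and rescales by a scalar $t<1$ before approximating, whereas you first reduce to finitely many elements of $\mathcal A_\sigma\mathcal M^\sigma_\phi$ using fullness, prove the quantitative bound $\|(1-g_N(h))e\|\leq 1/(2\sqrt N)$ directly, and rescale after approximating.
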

\begin{proof}
 For each finite set $I\subseteq AB$, set
$$
f_I := \sum_{a \in I} aa^* , 
$$
and
$$
e_I  := (\# I) f_I(1 + (\# I)f_I)^{-1}.
$$
As shown in the proof of Proposition 2.2.18 in \cite{BR} the net $\{e_I\}$ consists of positive contractions; i.e. $0 \leq e_I \leq 1$, it increases with $I$; i.e. $I \subseteq J \Rightarrow e_I \leq e_{J}$, and it has the property that $\lim_{I \to \infty} e_Ib  = b$ for all $b \in  AB$. Since $\Span ABA$\ is dense in $A$ it follows that $\lim_{I \to \infty} e_I b = b$ for all $b \in A$. Thus, given a finite set $S \subseteq A$ and an $\epsilon > 0$ there is a finite set $I \subseteq AB$ such that $\left\|e_I b - b\right\| < \frac{\epsilon}{2}$ for all $b \in S$. Note that
$$
e_I = \sum_{x \in I} \sqrt{\# I} \left( 1+ (\# I)f_I\right)^{-\frac{1}{2}} xx^*\sqrt{\# I} \left( 1+ (\# I)f_I\right)^{-\frac{1}{2}} ,
$$
where 
\begin{equation}\label{dragoer}
\sqrt{\# I} \left( 1+ (\# I)f_I\right)^{-\frac{1}{2}} x \in AB.
\end{equation}
Choose $t \in ]0,1[$ such that $\left\|te_Ib-b\right\| \leq \frac{\epsilon}{2}$ for all $b \in S$. Since $\mathcal M_\phi^\sigma$ is dense in $B$ and $\mathcal A_\sigma$ is dense in $A$ it follows that $\mathcal A_\sigma \mathcal M^\sigma_\phi$ is dense in $AB$ and we can therefore approximate the elements from \eqref{dragoer} by elements from $\mathcal A_\sigma \mathcal M_\phi^\sigma$ to get a finite set $J \subseteq \mathcal A_\sigma \mathcal M_\phi^\sigma$, such that 
$$
\left\|te_Ib - w_Jb\right\| \leq \frac{\epsilon}{2}
$$
for all $b \in S$ and
$$
\left\| te_I  - w_J\right\| \leq 1-t.
$$
 Then
$$
w_J=  w_J - te_I +te_I \leq 1-t + t = 1,
$$
showing that $J \in \mathcal I$. This completes the proof since 
$$
\left\|w_Jb-b\right\| \leq \left\|w_Jb-te_Ib\right\| +\|te_Ib -b\| \leq \epsilon
$$
for all $b \in S$.
\end{proof}

Note that $\sigma_z(x)^*a\sigma_z(x) \in \mathcal M_\phi^+$ for all $x \in \mathcal A_\sigma \mathcal M_\phi^\sigma $, all $a \in A^+$ and all $z \in \mathbb C$. For each $I \in \mathcal I$ we can therefore define an element $L_I \in A^*_+$ such that
$$
L_I(a) := \sum_{x \in I} \phi(\sigma_{-\xi}(x)^* a \sigma_{-\xi}(x))  \  \ \ \forall a \in A.
$$
For $a \in A^+$ set
\begin{equation}\label{03-01-22}
\psi(a) := \sup_{I \in \mathcal I} L_I(a) =  \sup_{I \in \mathcal I}\sum_{x \in I} \phi(\sigma_{-\xi}(x)^*a \sigma_{-\xi}(x)) .
\end{equation}

\begin{lemma}\label{01-12-21a} $\psi$ is a densely defined weight on $A$.
\end{lemma}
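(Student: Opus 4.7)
The plan is to verify in turn the properties defining a densely defined weight: positive homogeneity, monotonicity, sub-additivity, lower semi-continuity, additivity, and dense definition. I would begin by checking that each $L_I$ lies in $A^*_+$. For $x \in \mathcal A_\sigma \mathcal M_\phi^\sigma$, factor $x = ub$ with $u \in \mathcal A_\sigma$ and $b \in \mathcal M_\phi^\sigma$, so that $\sigma_{-\xi}(x) = \sigma_{-\xi}(u)\sigma_{-\xi}(b)$ with $\sigma_{-\xi}(b) \in \mathcal M_\phi^\sigma \subseteq \mathcal N_\phi$. Hereditarity of $B$ in $A$, together with the estimate $\sigma_{-\xi}(x)^*a\sigma_{-\xi}(x) \leq \|a\|\|\sigma_{-\xi}(u)\|^2\,\sigma_{-\xi}(b)^*\sigma_{-\xi}(b)$, places each summand of $L_I(a)$ in $\mathcal M_\phi^+$, so $L_I$ is a finite sum of bounded positive linear functionals. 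Consequently, $\psi = \sup_I L_I$ is automatically monotone, positively homogeneous, sub-additive, and lower semi-continuous (as the supremum of norm-continuous positive functionals).

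The main technical difficulty is additivity. The inequality $\psi(a+b) \leq \psi(a) + \psi(b)$ follows from linearity of each $L_I$, but the reverse requires that the family $\{L_I\}_{I \in \mathcal I}$ be approximately upward directed \emph{simultaneously} on $a$ and $b$. Specifically, for $\epsilon > 0$ and $I_1, I_2 \in \mathcal I$ with $L_{I_i}$ near-optimal on $a$ and $b$ respectively, we need one $K \in \mathcal I$ such that $L_K$ is near-optimal on both. The naive $K = I_1 \cup I_2$ fails, since $w_{I_1 \cup I_2} \leq 2$ may violate the constraint $w_K \leq 1$; scaling by $1/\sqrt{2}$ only yields $(L_{I_1} + L_{I_2})/2$. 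The correct strategy, following Laca and Neshveyev, is to use the approximate-unit property of Lemma \ref{01-12-21x}: choose $J \in \mathcal I$ with $w_J$ essentially acting as the identity on the supports of $I_1$ and $I_2$, and combine $I_1, I_2$ through a compression by $J$ while invoking the KMS invariance of $\phi$ (Lemmas \ref{18-11-21g} and \ref{18-11-21kx}) to control how the $\sigma$-transforms interact with the compression. Making this construction rigorous --- producing $K \in \mathcal I$ with $L_K(c) \geq L_{I_1}(c) + L_{I_2}(c) - \epsilon$ for $c \in \{a,b\}$ --- is where I expect the real work to lie.

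Dense definition would then follow by showing $\psi(x^*x) < \infty$ for every $x \in \mathcal A_\sigma \mathcal M_\phi^\sigma$, since elements of the form $x^*x$ span a dense subcone of $A^+$ once $\mathcal A_\sigma \mathcal M_\phi^\sigma$ is dense in $A$ (established in the proof of Lemma \ref{01-12-21x}). One computes
$$L_I(x^*x) = \sum_{y \in I}\phi\bigl((x\sigma_{-\xi}(y))^*(x\sigma_{-\xi}(y))\bigr),$$
applies Kustermans' condition (1) to $\phi$ together with Lemma \ref{18-11-21g} to recast each summand as $\phi\bigl(\sigma_{-\xi}(x)\sigma_{-2\xi}(y)\sigma_{-2\xi}(y)^*\sigma_{-\xi}(x)^*\bigr)$, and then uses the constraint $\sum_y yy^* \leq 1$ (propagated through $\sigma_{-2\xi}$ and controlled via the Phragm\'en--Lindel\"of bounds on $\sigma_z$ restricted to elements in $\mathcal A_\sigma$) to bound the resulting sum uniformly in $I$ by a multiple of $\phi(\sigma_{-\xi}(x)^*\sigma_{-\xi}(x)) < \infty$. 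This gives $\psi(x^*x) < \infty$ and completes the verification.
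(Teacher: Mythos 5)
Your proposal correctly locates the hard point (additivity) and the general shape of the argument, but it leaves the hard point unproved and the parts you do spell out contain errors. On additivity: you propose to establish approximate upward directedness of $\{L_I\}_{I\in\mathcal I}$ and defer "the real work" of constructing the dominating $K$. That is not a proof, and it is also not how the paper proceeds. The paper never produces a single directing index; instead it replaces $a,b$ by $c^2 = R_k(\sqrt{a})^2$ and $d^2 = R_k(\sqrt{b})^2$, uses the double-swap identity \eqref{01-12-21d} (twice, through intermediate index sets $I_2$ and $I_4$) to dominate $L_{I_1}(c^2)$ and $L_{I_3}(d^2)$ by expressions involving one common $I_4$, then invokes Kadison's inequality $R_k(\sqrt{a})^2 + R_k(\sqrt{b})^2 \leq R_k(a+b)$, and finally transports $R_k$ off the argument and onto the index set by approximating $R_k(a+b)$ with a convex combination of $\sigma_{t_i}(a+b)$ and using the $\sigma$-invariance of $\phi$ to replace $I_4$ by $I_5 = \{s_{i,u}\sqrt{\lambda_i}\sigma_{-t_i}(u)\}$. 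The square roots, the smoothing operators, Kadison's inequality and the transport step are all essential and all absent from your sketch; without them your plan has no visible route to completion.

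The density part has two concrete errors. First, $\mathcal A_\sigma\mathcal M^\sigma_\phi$ is dense in $AB$, not in $A$ (Lemma \ref{01-12-21x} shows the $w_I$ form an approximate unit for $A$, which is a different statement), and since $B$ is hereditary the elements $x^*x$ with $x = ub \in \mathcal A_\sigma\mathcal M^\sigma_\phi$ satisfy $x^*x \leq \|u\|^2 b^*b \in B$, so they span a dense subcone of $B^+$ only — think of $B = \mathbb C e_{11}$ in $M_2(\mathbb C)$, which is hereditary and full but with $\Span BAB = B \neq A$. This is exactly why the paper works with the two-sided products $ubv$, $v \in \mathcal A_\sigma$, and uses fullness ($\Span ABA$ dense). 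Second, your finiteness computation applies the KMS condition so that the terms $\sigma_{-2\xi}(y)\sigma_{-2\xi}(y)^*$ remain in the middle; since $\sigma_z$ is unbounded for non-real $z$, the constraint $\sum_y yy^* \leq 1$ gives no uniform control over $\sum_y\sigma_{-2\xi}(y)\sigma_{-2\xi}(y)^*$, and Phragm\'en--Lindel\"of does not bound $\|\sigma_z(y)\|$ by $\|y\|$. The swap must be performed the other way, as in the paper's calculation, so that the constraint $\sum_x xx^* \leq 1$ lands on the untransformed elements $x$ and the analytically continued factors collapse to the fixed finite quantity $\phi(\sigma_\xi(b)\sigma_\xi(v)\sigma_\xi(v)^*\sigma_\xi(b)^*)$.
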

\begin{proof} Let $a,b \in A^+$. It is clear that $\psi(a+b) \leq \psi(a) + \psi(b)$. To show that $\psi(a) +\psi(b) \leq \psi(a+b)$ we may assume that $\psi(a+b) < \infty$. Since $\psi(a) \leq \psi(a+b)$ and $\psi(b) \leq \psi(a+b)$ this implies that $\psi(a)$ and $\psi(b)$ are both finite. 

 Let $\epsilon > 0$ and choose $I_1,I_3 \in \mathcal I$ such that $\psi(a) \leq L_{I_1}(a) + \epsilon$ and $\psi(b) \leq L_{I_3}(b)+\epsilon$. Let $k \in \mathbb N$ and set $c = R_k(\sqrt{a})$ and $d = R_k(\sqrt{b})$. Thanks to Lemma \ref{24-11-21} we can choose $k $ so large that $L_{I_1}(a) \leq L_{I_1}(c^2) + \epsilon$ and $L_{I_3}(b) \leq L_{I_3}(d^2) + \epsilon$. Note that $c,d \in \mathcal A_\sigma$ by Lemma \ref{24-11-21}.

Let $h \in \mathcal A_\sigma$ and consider an element $I_2 \in \mathcal I$. Then
 \begin{equation}\label{01-12-21d}
\begin{split}
&\sum_{x \in I_1,y \in I_2} \phi(\sigma_{-\xi}(x)^* h^*yy^*h\sigma_{-\xi}(x))\\
& = \sum_{x \in I_1,y \in I_2} \phi(\sigma_\xi(y^*h\sigma_{-\xi}(x)) \sigma_\xi(y^* h\sigma_{-\xi}(x))^*)\\
& = \sum_{x \in I_1,y \in I_2} \phi(\sigma_\xi(y^*)\sigma_\xi(h)xx^*\sigma_{\xi}(h)^*\sigma_\xi(y^*)^*)\\
& = \sum_{x \in I_1,y \in I_2} \phi(\sigma_{-\xi}(y)^*\sigma_\xi(h)xx^*\sigma_{\xi}(h)^*\sigma_{-\xi}(y)).
\end{split}
\end{equation}
Using Lemma \ref{01-12-21x} we choose $I_2$ such that
\begin{align*}
& L_{I_1}(c^2) \leq \sum_{x \in I_1,y \in I_2} \phi(\sigma_{-\xi}(x)^* cyy^*c\sigma_{-\xi}(x)) + \epsilon
\end{align*}
and
\begin{align*}
& L_{I_3}(d^2) \leq \sum_{x \in I_3,y \in I_2} \phi(\sigma_{-\xi}(x)^* dyy^*d\sigma_{-\xi}(x)) + \epsilon .
\end{align*}
It follows from calculation \eqref{01-12-21d} that
\begin{align*}
& \sum_{x \in I_1,y \in I_2} \phi(\sigma_{-\xi}(x)^* cyy^*c\sigma_{-\xi}(x)) = \sum_{x \in I_1,y \in I_2} \phi(\sigma_{-\xi}(y)^*\sigma_{\xi}(c)xx^*\sigma_{\xi}(c)^*\sigma_{-\xi}(y))\\
&\leq  \sum_{y \in I_2} \phi(\sigma_{-\xi}(y)^*\sigma_{\xi}(c)\sigma_{\xi}(c)^*\sigma_{-\xi}(y)).
\end{align*}
Hence
\begin{align*}
& L_{I_1}(c^2) 
\leq \sum_{y \in I_2} \phi(\sigma_{-\xi}(y)^*\sigma_{\xi}(c)\sigma_{\xi}(c)^*\sigma_{-\xi}(y)) + \epsilon \\
\end{align*}
and similarly,
\begin{align*}
& L_{I_3}(d^2) 
\leq \sum_{y \in I_2} \phi(\sigma_{-\xi}(y)^*\sigma_{\xi}(d)\sigma_{\xi}(d)^*\sigma_{-\xi}(y)) + \epsilon .
\end{align*}
Using Lemma \ref{01-12-21x} again we choose $I_4 \in  \mathcal I$ such that
\begin{align*}
&\sum_{y \in I_2} \phi(\sigma_{-\xi}(y)^*\sigma_{\xi}(c)\sigma_{\xi}(c)^*\sigma_{-\xi}(y)^*) \\
&\leq \sum_{u \in I_4,y \in I_2} \phi(\sigma_{-\xi}(y)^*\sigma_{\xi}(c)uu^*\sigma_{\xi}(c)^*\sigma_{-\xi}(y)) + \epsilon 
\end{align*}
and
\begin{align*}
&\sum_{y \in I_2} \phi(\sigma_{-\xi}(y)^*\sigma_{\xi}(d)\sigma_{\xi}(d)^*\sigma_{-\xi}(y)) \\
&\leq \sum_{u \in I_4,y \in I_2} \phi(\sigma_{-\xi}(y)^*\sigma_{\xi}(d)uu^*\sigma_{\xi}(d)^*\sigma_{-\xi}(y)) + \epsilon  .
\end{align*}
The calculation \eqref{01-12-21d} shows that 
\begin{align*}
&\sum_{u \in I_4,y \in I_2} \phi(\sigma_{-\xi}(y)^*\sigma_{\xi}(c)uu^*\sigma_{\xi}(c)^*\sigma_{-\xi}(y)) \\
& =\sum_{u \in I_4,y \in I_2} \phi(\sigma_{-\xi}(u)^*cyy^*c\sigma_{-\xi}(u)) \\
& \leq \sum_{u \in I_4} \phi(\sigma_{-\xi}(u)^*c^2\sigma_{-\xi}(u)) \\
\end{align*}
and
\begin{align*}
&\sum_{u \in I_4,y \in I_2} \phi(\sigma_{-\xi}(y)^*\sigma_{\xi}(d)uu^*\sigma_{\xi}(d)^*\sigma_{-\xi}(y)) \\
& =\sum_{u \in I_4,y \in I_2} \phi(\sigma_{-\xi}(u)^*dyy^*d\sigma_{-\xi}(u)) \\
& \leq \sum_{u \in I_4} \phi(\sigma_{-\xi}(u)^*d^2\sigma_{-\xi}(u)) .
\end{align*}
We conclude therefore that
\begin{align*}
\psi(a) + \psi(b) \leq L_{I_4}(c^2 + d^2) + 4\epsilon.
\end{align*}
It follows from Kadison's inequality, Proposition 3.2.4 in \cite{BR}, that $c^2 + d^2 = R_k(\sqrt{a})^2 + R_k(\sqrt{b})^2 \leq R_k(a+b)$, and hence
\begin{align*}
\psi(a) + \psi(b) \leq L_{I_4}(R_k(a+b)) + 4 \epsilon.
\end{align*}
By definition of $R_k$ there are finite sets of numbers, $\lambda_i \in [0,1], \ t_i \in \mathbb R, \ i =1,2,\cdots, n$, such that $\sum_{i=1}^n\lambda_i = 1$, and the sum
$$
\sum_{i=1}^n \lambda_i \sigma_{t_i}(a+b) 
$$
approximates $R_k(a+b)$, cf. Lemma \ref{01-03-22} in Appendix \ref{integration}. We can therefore choose these sets of numbers such that
$$
L_{I_4}(R_k(a+b)) \leq \sum_{i=1}^n \lambda_i L_{I_4}\circ \sigma_{t_i}(a+b) + \epsilon .
$$
 Since $\phi$ is $\sigma$-invariant we find that
\begin{align*}
&  \sum_{i=1}^n \lambda_i L_{I_4}\circ \sigma_{t_i}(a+b) = \sum_{i=1}^n \lambda_i \sum_{y \in I_4} \phi(\sigma_{-\xi}(y)^*\sigma_{t_i}(a+b) \sigma_{-\xi}(y))\\
& = \sum_{i=1}^n \lambda_i \sum_{y \in I_4} \phi\circ \sigma_{t_i}(\sigma_{-\xi}(\sigma_{-t_i}(y))^*(a+b) \sigma_{-\xi}(\sigma_{-t_i}(y))) \\
& = \sum_{i=1}^n \lambda_i \sum_{y \in I_4} \phi(\sigma_{-\xi}(\sigma_{-t_i}(y))^*(a+b) \sigma_{-\xi}(\sigma_{-t_i}(y)) ) .
\end{align*}
 Note that $\sqrt{\lambda_i} \sigma_{-t_i}(u) \in \mathcal A_\sigma \mathcal M^\sigma_\phi$ for all $i$ and $u$, and
$$
\sum_{i=1}^n \sum_{u \in I_4} \left( \sqrt{\lambda_i}\sigma_{-t_i}(u) \right)\left( \sqrt{\lambda_i}\sigma_{-t_i}(u) \right)^* \leq 1 .
$$
Choose numbers $s_{i,u} \in  ]0,1[$, $u\in I_4$, $i = 1,2,\cdots , n$, close to $1$ such that
$$
s_{i,u}  \sqrt{\lambda_i}\sigma_{-t_i}(u) \neq s_{i',u'}  \sqrt{\lambda_{i'}}\sigma_{-t_{i'}}(u')
$$
when $(i,u) \neq (i',u')$ and $(\sqrt{\lambda_i}\sigma_{-t_i}(u), \sqrt{\lambda_{i'}}\sigma_{-t_{i'}}(u')) \neq (0,0)$. Set
$$
I_5 := \left\{s_{i,u} \sqrt{\lambda_i} \sigma_{-t_i}(u) : \ u \in I_4, \ i = 1,2,\cdots, n\right\} .
$$
Then $I_5 \in \mathcal I$ and if the numbers $s_{i,u}$ are close enough to $1$ we have that
$$
\sum_{i=1}^n \lambda_i \sum_{y \in I_4} \phi(\sigma_{-\xi}(\sigma_{-t_i}(y))^*(a+b) \sigma_{-\xi}(\sigma_{-t_i}(y)) ) \leq L_{i_5}(a+b) + \epsilon .
$$
We conclude therefore that
 $$
 \psi(a) + \psi(b) \leq L_{I_5}(a+b) + 6 \epsilon \leq \psi(a+b) +  6\epsilon .
 $$
It follows that $\psi$ is additive on $A^+$. Since $\psi$ is clearly homogeneous and lower semi-continuous, we have shown that $\psi$ is a weight. To see that $\psi$ is densely defined, let $u,v \in \mathcal A_\sigma, \ b \in \mathcal M_\phi^\sigma$. For each $I \in \mathcal I$ we have that
\begin{align*}
& \sum_{x \in I} \phi(\sigma_{-\xi}(x)^* (ubv)^*(ubv) \sigma_{-\xi}(x)) \\
& \leq \|u\|^2\sum_{x \in I} \phi(\sigma_{-\xi}(x)^*v^*b^*bv \sigma_{-\xi}(x))\\
& = \|u\|^2\sum_{x \in I} \phi(\sigma_\xi(bv \sigma_{-\xi}(x))\sigma_\xi(bv \sigma_{-\xi}(x))^*)\\
& = \|u\|^2\sum_{x \in I} \phi(\sigma_\xi(b)\sigma_\xi(v)xx^*\sigma_\xi(v)^*\sigma_\xi(b)^*)\\
& \leq  \|u\|^2\phi(\sigma_\xi(b)\sigma_\xi(v)\sigma_\xi(v)^*\sigma_\xi(b)^*) ,
\end{align*}
proving that 
$$
\psi((ubv)^*(ubv)) \leq \|u\|^2\phi(\sigma_\xi(b)\sigma_\xi(v)\sigma_\xi(v)^*\sigma_\xi(b)^*)<\infty,
$$ 
and hence that $\Span \mathcal A_\sigma \mathcal M_\phi^\sigma \mathcal A_\sigma \subseteq \mathcal N_\psi$. Since $\Span \mathcal A_\sigma \mathcal M_\phi^\sigma \mathcal A_\sigma $ is dense in $A$, every element of $A^+$ can be approximated by elements of the the form $\{x^*x: \ x \in \mathcal N_\psi\} \subseteq \mathcal M^+_\psi$.
\end{proof}

\begin{lemma}\label{01-12-21e} $\psi(b^*b) = \phi(b^*b)$ for all $b \in \mathcal A_\sigma \cap B$.
\end{lemma}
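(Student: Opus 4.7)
The plan is to exploit the KMS identity for $\phi$ to collapse the defining supremum of $\psi$ on elements of the form $b^*b$ with $b\in\mathcal A_\sigma\cap B$.

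First I would verify that for each $x\in\mathcal A_\sigma\mathcal M_\phi^\sigma$ and $b\in\mathcal A_\sigma\cap B$, the element $b\sigma_{-\xi}(x)$ is entire analytic (by Lemma \ref{18-11-21g} applied once the right factor is rewritten as a product), and $\sigma_\xi(b\sigma_{-\xi}(x))=\sigma_\xi(b)\,x$ by Lemma \ref{25-11-21}. Because $B$ is hereditary and $\sigma$-invariant, and because $b\in\mathcal A_\sigma\cap B$ is entire analytic for $\sigma|_B$ (Remark \ref{31-08-22h}), one has $\sigma_\xi(b)\in B$, and the positive element $\sigma_\xi(b)w_I\sigma_\xi(b)^*$ lies in $B$. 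Now the KMS identity (1) of Theorem \ref{24-11-21d} applied to $\phi$ on $B$ with $a=b\sigma_{-\xi}(x)$ yields
$$
\phi\bigl(\sigma_{-\xi}(x)^*b^*b\,\sigma_{-\xi}(x)\bigr)=\phi\bigl(\sigma_\xi(b)\,xx^*\sigma_\xi(b)^*\bigr),
$$
and summing over $x\in I$ gives the key identity
$$
L_I(b^*b)=\phi\bigl(\sigma_\xi(b)\,w_I\,\sigma_\xi(b)^*\bigr).
$$

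For the inequality $\psi(b^*b)\le\phi(b^*b)$: since $w_I\le 1$ we have $\sigma_\xi(b)w_I\sigma_\xi(b)^*\le\sigma_\xi(b)\sigma_\xi(b)^*$, so monotonicity of $\phi$ together with the KMS identity $\phi(\sigma_\xi(b)\sigma_\xi(b)^*)=\phi(b^*b)$ gives $L_I(b^*b)\le\phi(b^*b)$ for every $I\in\mathcal I$, and the sup bound follows.

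For the reverse inequality $\phi(b^*b)\le\psi(b^*b)$: apply Lemma \ref{01-12-21x} with the single-element set $S=\{\sigma_\xi(b)^*\}$ to produce a sequence $J_n\in\mathcal I$ with $\|w_{J_n}\sigma_\xi(b)^*-\sigma_\xi(b)^*\|\to 0$. Then
$$
\bigl\|\sigma_\xi(b)w_{J_n}\sigma_\xi(b)^*-\sigma_\xi(b)\sigma_\xi(b)^*\bigr\|\le\|\sigma_\xi(b)\|\,\bigl\|w_{J_n}\sigma_\xi(b)^*-\sigma_\xi(b)^*\bigr\|\longrightarrow 0,
$$
so lower semi-continuity of $\phi$ and the KMS identity again give
$$
\phi(b^*b)=\phi\bigl(\sigma_\xi(b)\sigma_\xi(b)^*\bigr)\le\liminf_n\phi\bigl(\sigma_\xi(b)w_{J_n}\sigma_\xi(b)^*\bigr)=\liminf_n L_{J_n}(b^*b)\le\psi(b^*b),
$$
completing the proof. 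There is no serious obstacle here; the only point requiring care is checking that all intermediate $\phi$-evaluations are legal, which is why I would invoke the hereditary property of $B$ explicitly when placing $\sigma_\xi(b)w_I\sigma_\xi(b)^*$ in $B^+$.
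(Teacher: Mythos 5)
Your argument is correct and follows essentially the same route as the paper: both directions rest on the same KMS "swap" identity $\phi(\sigma_{-\xi}(x)^*b^*b\sigma_{-\xi}(x))=\phi(\sigma_\xi(b)xx^*\sigma_\xi(b)^*)$, and the first inequality is identical to the paper's. The one genuine (and welcome) divergence is in the reverse inequality: the paper applies Lemma \ref{01-12-21x} and lower semi-continuity to $b^*w_Fb\to b^*b$ and then closes with the equality $\psi(\sigma_\xi(b)\sigma_\xi(b)^*)=\psi(b^*b)$, which is only formally established in the subsequent Proposition \ref{01-12-21}; by instead applying Lemma \ref{01-12-21x} to $\sigma_\xi(b)^*$ and using lower semi-continuity of $\phi$ on $\sigma_\xi(b)w_{J_n}\sigma_\xi(b)^*\to\sigma_\xi(b)\sigma_\xi(b)^*$, you need only the KMS identity for $\phi$ and so avoid that forward reference. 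Your explicit appeal to $BAB\subseteq B$ to legitimize the $\phi$-evaluations is exactly the care the step requires.
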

\begin{proof} Let $b \in \mathcal A_\sigma \cap B$. For $F \in \mathcal I$ we find that 
\begin{align*}
&\sum_{x \in F} \phi(\sigma_{-\xi}(x)^* b^*b \sigma_{-\xi}(x)) = \sum_{x \in F} \phi(\sigma_\xi( b\sigma_{-\xi}(x))\sigma_\xi(b\sigma_{-\xi}(x))^*) \\
& = \sum_{x \in F} \phi(\sigma_\xi(b)xx^*\sigma_\xi(b)^*) \leq \phi(\sigma_\xi(b)\sigma_\xi(b)^*) = \phi(b^*b).
\end{align*}
It follows that $\psi(b^*b) \leq \phi(b^*b)$. In the other direction, let $\epsilon >0$ and $N > 0$ be given. It follows from Lemma \ref{01-12-21x} and the lower semi-continuity of $\phi$ that there is $F\in \mathcal I$ such that
\begin{align*}
&\min\{\phi(b^*b) -\epsilon, N\} \leq \sum_{x \in F} \phi(b^*xx^*b) \\
&=\sum_{x \in F} \phi(\sigma_\xi(x^*b) \sigma_\xi(x^*b)^*)= L_F(\sigma_\xi(b)\sigma_\xi(b)^*) \\
&  \leq \psi(\sigma_\xi(b)\sigma_\xi(b)^*) = \psi(b^*b) .
\end{align*}
We conclude that $\phi(b^*b) \leq \psi(b^*b)$ and hence that $\psi(b^*b) = \phi(b^*b)$.
\end{proof}

\begin{prop}\label{01-12-21} $\psi$ is a $\beta$-KMS weight for $\sigma$.
\end{prop}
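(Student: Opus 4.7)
The plan is to verify the hypotheses of Kustermans' refined theorem, Theorem \ref{12-12-13}, with a conveniently chosen subspace $S$. Since Lemma \ref{01-12-21a} already shows that $\psi$ is a densely defined weight and Lemma \ref{01-12-21e} together with the nonzero assumption on $\phi$ prevents $\psi$ from being zero, only two things remain: (i) $\sigma$-invariance of $\psi$ and (ii) the KMS identity on a suitable $S$.

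\emph{$\sigma$-invariance.} For $a \in A^+$ and $t \in \mathbb R$ a direct computation using the commutation $\sigma_t \circ \sigma_{-\xi} = \sigma_{-\xi} \circ \sigma_t$, the $\sigma$-invariance of $\phi$, and the fact that the map $I \mapsto \sigma_{-t}(I) := \{\sigma_{-t}(x) : x \in I\}$ is a bijection of $\mathcal I$ (since $\sigma_t$ is an isometric automorphism preserving both $\mathcal A_\sigma \mathcal M^\sigma_\phi$ and the condition $\sum xx^* \leq 1$), gives $\psi(\sigma_t(a)) = \psi(a)$ by re-indexing the sup over $\mathcal I$.

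\emph{Choice of $S$.} Take $S := \Span\,\mathcal A_\sigma \mathcal M^\sigma_\phi$. This is a $\sigma_z$-invariant subspace for every $z \in \mathbb C$, since $\mathcal A_\sigma$ is $\sigma_z$-invariant by Lemma \ref{25-11-21} and $\mathcal M^\sigma_\phi$ is $\sigma_z$-invariant by \eqref{31-01-22e}. The proof of Lemma \ref{01-12-21a} already shows that every element of the form $ubv$ with $u,v \in \mathcal A_\sigma$, $b \in \mathcal M^\sigma_\phi$ lies in $\mathcal N_\psi$; applying this to the factorisations of $a,a^*,\sigma_z(a),\sigma_z(a)^*$ for $a\in S$ yields $S \subseteq \mathcal M^\sigma_\psi$. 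The density of $S$ in $\mathcal N_\psi$ with respect to the graph norm $|||\cdot|||$ of Corollary \ref{24-06-22a} is then obtained as in the proof of Lemma \ref{01-12-21a}, where an approximate unit of the form $w_I$ from Lemma \ref{01-12-21x} is used to approximate arbitrary elements of $\mathcal N_\psi$; smoothing by $R_k$ (Lemma \ref{07-12-21}) keeps the approximating elements inside $S$.

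\emph{The KMS identity.} We verify condition (2) of Theorem \ref{12-12-13}, i.e.\ $\psi(a^*a) = \psi(\sigma_{-\xi}(a)\sigma_{-\xi}(a)^*)$ for $a \in S$. Here $\xi = -i\beta/2$, so in the notation of the paper $\sigma_{-\xi} = \sigma_{i\beta/2}$. Fix $a = hb$ with $h \in \mathcal A_\sigma$, $b \in \mathcal M^\sigma_\phi$. For each $x \in \mathcal A_\sigma \mathcal M^\sigma_\phi$, set $W := a\,\sigma_{-\xi}(x) \in \mathcal A_\sigma$ and note that $W^*W, WW^* \in B^+$ by the hereditary property, because the factorisations of $\sigma_{-\xi}(x)$ and of $x^*$ end with an element of $\mathcal M^\sigma_\phi \subseteq B$. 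Applying the KMS identity for $\phi$ (in the form carried out inside the proof of Lemma \ref{01-12-21a}) yields
\begin{equation*}
\phi(\sigma_{-\xi}(x)^* a^*a\, \sigma_{-\xi}(x)) = \phi(W^*W) = \phi(\sigma_\xi(W)\sigma_\xi(W)^*) = \phi(\sigma_\xi(a)\,xx^*\,\sigma_\xi(a)^*),
\end{equation*}
while an entirely symmetric computation with $a$ replaced by $\sigma_{-\xi}(a)^*$ gives
\begin{equation*}
\phi(\sigma_{-\xi}(x)^* \sigma_{-\xi}(a)\sigma_{-\xi}(a)^*\,\sigma_{-\xi}(x)) = \phi(a^*\,xx^*\,a).
\end{equation*}
Summing over $x \in I$ and taking the supremum, the two identities
\begin{equation*}
\psi(a^*a) = \sup_{I \in \mathcal I} \sum_{x \in I}\phi(\sigma_\xi(a)\,xx^*\,\sigma_\xi(a)^*),\quad \psi(\sigma_{-\xi}(a)\sigma_{-\xi}(a)^*) = \sup_{I \in \mathcal I} \sum_{x \in I}\phi(a^*\,xx^*\,a)
\end{equation*}
are matched by a cross-domination argument analogous to the one used in the proof of Lemma \ref{01-12-21a}: given $I \in \mathcal I$ and $\epsilon > 0$, Lemma \ref{01-12-21x} produces $J \in \mathcal I$ with $\sum_{y \in J} yy^*$ acting approximately as a unit on the supports involved, enabling us to estimate a sum of one type by a sum of the other up to $\epsilon$. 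Extending the identity from $a = hb$ to arbitrary $a \in S$ is straightforward by sesquilinear expansion.

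\emph{Main obstacle.} The single delicate point is the invocation of the KMS identity $\phi(W^*W) = \phi(\sigma_\xi(W)\sigma_\xi(W)^*)$ for $W \in \mathcal A_\sigma$ that is not itself in $B$. This step must be reduced to a genuine application of the KMS condition for $\phi$ by factoring $W$ through an element of $\mathcal M^\sigma_\phi$ so that both sides become values of $\phi$ on positive elements of $B$ produced by the hereditary property, exactly as is implicitly done in the computation \eqref{01-12-21d}. The cross-domination argument matching the two suprema is the second substantive step, but it follows the template of Lemma \ref{01-12-21a} closely and should cause no essential difficulty.
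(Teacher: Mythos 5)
Your overall template — re-indexing the supremum for $\sigma$-invariance and then running the computation \eqref{01-12-21d} together with a cross-domination via Lemma \ref{01-12-21x} — is the same as the paper's, but as written the argument has two genuine defects. The first is a sign error that falsifies your key identity. With the paper's convention $\xi=-i\frac{\beta}{2}$, condition (2) of Theorem \ref{12-12-13} reads $\psi(a^*a)=\psi(\sigma_{\xi}(a)\sigma_{\xi}(a)^*)$, not $\psi(a^*a)=\psi(\sigma_{-\xi}(a)\sigma_{-\xi}(a)^*)$ as you state; you are aiming at the $(-\beta)$-KMS condition. The error propagates into your second displayed identity, which is false: taking $c=\sigma_{-\xi}(a)^*\sigma_{-\xi}(x)$ and applying $\phi(c^*c)=\phi(\sigma_\xi(c)\sigma_\xi(c)^*)$ gives $\sigma_\xi(c)=\sigma_{2\xi}(a^*)\,x$, because $\sigma_{-\xi}(a)^*=\sigma_{\xi}(a^*)$ by Lemma \ref{24-11-21k}; the right-hand side is therefore $\phi(\sigma_{-i\beta}(a^*)\,xx^*\,\sigma_{i\beta}(a))$, not $\phi(a^*xx^*a)$. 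The substitution that works is $a\mapsto\sigma_\xi(a)^*=\sigma_{-\xi}(a^*)$, which yields $\phi(\sigma_{-\xi}(x)^*\sigma_\xi(a)\sigma_\xi(a)^*\sigma_{-\xi}(x))=\phi(a^*xx^*a)$ and hence the correct target identity $\psi(a^*a)=\psi(\sigma_\xi(a)\sigma_\xi(a)^*)$; this is exactly the paper's device of inserting $\sigma_{-\xi}(h^*)=\sigma_\xi(h)^*$ for $h$ to obtain the reverse inequality.

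The second defect is that the detour through Theorem \ref{12-12-13} leaves its core hypothesis unverified: $S$ must have the property that every $a\in\mathcal M^\sigma_\psi$ is approximable by a sequence in $S$ converging also in the norm $\|\Lambda_\psi(\cdot)\|$. The proof of Lemma \ref{01-12-21a} only gives norm density of $\Span \mathcal A_\sigma\mathcal M^\sigma_\phi\mathcal A_\sigma$ in $A$ and its inclusion in $\mathcal N_\psi$; it says nothing about graph-norm convergence of $aw_I\to a$, and proving $\|\Lambda_\psi(aw_I)-\Lambda_\psi(a)\|\to 0$ for right multiplication by an approximate unit is precisely the kind of statement that in this text is extracted from the KMS property itself (via condition (E) and the operators $\rho_j$ of Lemma \ref{09-02-22x}), so your sketch risks circularity. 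The paper avoids all of this: it proves $\psi(h^*h)=\psi(\sigma_\xi(h)\sigma_\xi(h)^*)$ for every $h\in\mathcal A_\sigma$, one inequality by the double-sum cross-domination and the other by the substitution above, and since $\mathcal M^\sigma_\psi\subseteq\mathcal A_\sigma$ this is already condition (2) of the basic Theorem \ref{24-11-21d} — no core or density argument is needed. I recommend you drop $S$ and Theorem \ref{12-12-13} and argue for arbitrary $h\in\mathcal A_\sigma$.
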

\begin{proof} It follows from Lemma \ref{01-12-21e} that $\psi$ is non-zero since $\phi$ is. By using that $\phi$ is $\sigma$-invariant we find that
$$
L_I(\sigma_t(a)) = L_{I_t}(a) ,
$$
where $I_t = \sigma_{-t}(I)$, for all $a \in A^+$, all $t \in \mathbb R$ and all $I \in \mathcal I$. It follows that $\psi$ is $\sigma$-invariant. Let $h \in \mathcal A_\sigma$ and let $F_1 \in \mathcal I$. Let $\epsilon > 0$. It follows from Lemma \ref{01-12-21x} that there is an element $F_2 \in \mathcal I$ such that 
$$
 L_{F_1}(h^*h) - \epsilon \leq \sum_{x \in F_1,y \in F_2} \phi(\sigma_{-\xi}(x)^* h^*yy^*h\sigma_{-\xi}(x)).
$$ 
It follows from the calculation \eqref{01-12-21d} that 
$$
\sum_{x \in F_1,y \in F_2} \phi(\sigma_{-\xi}(x)^* h^*yy^*h\sigma_{-\xi}(x)) \leq L_{F_2}(\sigma_\xi(h)\sigma_\xi(h)^*) \leq \psi(\sigma_\xi(h)\sigma_\xi(h)^*).
$$
Since $\epsilon > 0$ is arbitrary if follows that $L_{F_1}(h^*h) \leq \psi(\sigma_\xi(h)\sigma_\xi(h)^*)$, and since $F_1$ is arbitrary it follows that $\psi(h^*h) \leq \psi(\sigma_\xi(h)\sigma_\xi(h)^*)$. The reverse inequality follows by inserting $\sigma_{-\xi}(h^*) = \sigma_\xi(h)^*$ for $h$ in the last inequality and we conclude therefore that $\psi(h^*h) = \psi(\sigma_\xi(h)\sigma_\xi(h)^*)$ for all $h \in \mathcal A_\sigma$. 
  Hence $\psi$ has property (2) in Kustermans' theorem, Theorem \ref{24-11-21d}, implying that $\psi$ is a $\beta$-KMS weight for $\sigma$.
\end{proof}

\begin{lemma}\label{01-03-22b} Let $\sigma$ be a flow on the $C^*$-algebra $D$. Let $\rho_i : D^+ \to [0,\infty]$, $i =1,2$, be densely defined $\sigma$-invariant weights on $D$. Assume that $\rho_1(d^*d) = \rho_2(d^*d)$ for all $d \in \mathcal A_\sigma$. Then $\rho_1 = \rho_2$.
\end{lemma}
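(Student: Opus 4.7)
The plan is to upgrade the hypothesis, which concerns only elements of the form $d^*d$ with $d \in \mathcal A_\sigma$, to equality of $\rho_1$ and $\rho_2$ on all of $D^+$, using the smoothing operators $R_n$ as the bridge. First I would observe that it suffices to treat elements of $\mathcal A_\sigma \cap D^+$: for arbitrary $a \in D^+$ one has $R_n(a) \in \mathcal A_\sigma \cap D^+$ by Lemma \ref{24-11-21}, while Lemma \ref{02-12-21ax}, applied to each $\rho_i$ separately (its proof uses only $\sigma$-invariance, lower semi-continuity, and the fact that $\rho_i$ is a weight, not yet any identification between $\rho_1$ and $\rho_2$), gives $\rho_i(R_n(a)) = \rho_i(a)$.

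The core of the argument is then to fix $c \in \mathcal A_\sigma \cap D^+$ and set $c_m := R_m(c^{1/2}) \in \mathcal A_\sigma$. Since $c^{1/2}$ is self-adjoint, so is $c_m$, so $c_m^2 = c_m^*c_m$ and the hypothesis gives $\rho_1(c_m^2) = \rho_2(c_m^2)$ for every $m$. By the Kadison--Schwarz inequality applied to the positive contraction $R_m$ (exactly as in the proof of Lemma \ref{01-12-21a}), one has $c_m^2 = R_m(c^{1/2})^2 \leq R_m(c)$, and hence monotonicity together with Lemma \ref{02-12-21ax} yields $\rho_i(c_m^2) \leq \rho_i(R_m(c)) = \rho_i(c)$. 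Meanwhile $c_m \to c^{1/2}$ in norm by Lemma \ref{24-11-21}, so $c_m^2 \to c$, and lower semi-continuity of $\rho_i$ gives $\rho_i(c) \leq \liminf_m \rho_i(c_m^2)$. These two bounds combine to $\rho_i(c) = \lim_m \rho_i(c_m^2)$, and passing to the limit in $\rho_1(c_m^2) = \rho_2(c_m^2)$ produces $\rho_1(c) = \rho_2(c)$, which combined with the reduction above completes the proof.

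The only real obstacle is that a positive entire analytic element $c$ need not be expressible as $d^*d$ for some entire analytic $d$, because $c^{1/2}$ itself need not lie in $\mathcal A_\sigma$. The Kadison--Schwarz bound $R_m(c^{1/2})^2 \leq R_m(c)$ is exactly what circumvents this: it upper-bounds the approximating squares $c_m^2$ by the smoothing $R_m(c)$, on which invariance gives $\rho_i(R_m(c)) = \rho_i(c)$, sandwiching $\rho_i(c)$ between two equal limits.
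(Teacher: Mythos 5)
Your argument is correct, and it takes a genuinely different route from the paper. The paper first proves $\rho_1(a)=\rho_2(a)$ under the extra assumption that both values are finite, by writing $\rho_i(a)=\left\|\Lambda_{\rho_i}(\sqrt{a})\right\|^2$ and invoking Lemma \ref{24-11-21g} to get $\rho_i(a)=\lim_k\rho_i\left(R_k(\sqrt{a})^2\right)$; it then handles the possibility of infinite values separately, showing $\mathcal N_{\rho_1}=\mathcal N_{\rho_2}$ by a Cauchy-sequence argument in the GNS Hilbert spaces together with the closedness of $\Lambda_{\rho_2}$ (Lemma \ref{17-11-21a}). Your sandwich
$$
\rho_i\left(R_m(\sqrt{c})^2\right)\ \leq\ \rho_i(R_m(c))\ =\ \rho_i(c)\ \leq\ \liminf_m \rho_i\left(R_m(\sqrt{c})^2\right),
$$
built from the Kadison--Schwarz inequality for $R_m$, the invariance $\rho_i(R_m(c))=\rho_i(c)$ of Lemma \ref{02-12-21ax}, and lower semi-continuity, treats finite and infinite values in one stroke and dispenses with the GNS machinery entirely; what it costs is the Kadison--Schwarz inequality, which the paper in any case uses elsewhere (e.g.\ in the proofs of Lemma \ref{01-12-21a} and Proposition \ref{02-12-21x}), and Lemma \ref{02-12-21ax}, whose proof indeed uses only $\sigma$-invariance, lower semi-continuity and Theorem \ref{09-11-21h} on a separable subalgebra, so it applies to each $\rho_i$ individually. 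One small remark: your opening reduction to $c\in\mathcal A_\sigma\cap D^+$ is superfluous, since the core argument never uses analyticity of $c$ itself, only of $R_m(\sqrt{c})$, and so applies verbatim to every $c\in D^+$.
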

\begin{proof} Assume first that $a \in D^+$ and that $\rho_i(a) < \infty, i = 1,2$. It follows from Lemma \ref{24-11-21g} that 
$$
\rho_i(a) = \left<\Lambda_{\rho_i}(\sqrt{a}), \Lambda_{\rho_i}(\sqrt{a})\right> = \lim_{k \to \infty} \rho_i\left(R_k(\sqrt{a})^2\right), \ \ i = 1,2.
$$
Since $R_k(\sqrt{a}) \in \mathcal A_\sigma \cap D^+$ we conclude that $\rho_1(a) = \rho_2(a)$. It remains now only to show that $\mathcal N_{\rho_1} = \mathcal N_{\rho_2}$. Let $x \in \mathcal N_{\rho_1}$. It follows from Lemma \ref{24-11-21g}, Lemma \ref{17-11-21m} and Lemma \ref{24-11-21} and $x_n := R_n(x)$ gives a sequence $\{x_n\}$ in $\mathcal A_\sigma \cap \mathcal N_{\rho_1}$ such that $\lim_{n \to \infty} x_n =x$ and $\lim_{n \to \infty} \Lambda_{\rho_1}(x_n) = \Lambda_{\rho_1}(x)$. In particular,
$$\rho_1\left((x_n-x_m)^*(x_n-x_m)\right) = \left\|\Lambda_{\rho_1}(x_n) - \Lambda_{\rho_1}(x_m)\right\|^2
$$
is arbitrarily small if $n,m$ both are large enough. By assumption, $\rho_2(x_n^*x_n) = \rho_1(x_n^*x_n) < \infty$, i.e. $x_n \in \mathcal N_{\rho_2}$, and 
\begin{align*}
&\left\|\Lambda_{\rho_2}(x_n) - \Lambda_{\rho_2}(x_m)\right\|^2 =\rho_2\left((x_n-x_m)^*(x_n-x_m)\right)\\
&  = \rho_1\left((x_n-x_m)^*(x_n-x_m)\right)
\end{align*}
is arbitrarily small if $n,m$ both are large enough; i.e. $\left\{\Lambda_{\rho_2}(x_n)\right\}$ is a Cauchy sequence in $H_{\rho_2}$. Since $\Lambda_{\rho_2}$ is closed by Lemma \ref{17-11-21a} it follows that $x \in \mathcal N_{\rho_2}$. Thus $\mathcal N_{\rho_1} \subseteq \mathcal N_{\rho_2}$. By symmetry we must have equality; $\mathcal N_{\rho_1} =\mathcal N_{\rho_2}$.
\end{proof}

\begin{lemma}\label{01-12-21h} $\psi|_B = \phi$.
\end{lemma}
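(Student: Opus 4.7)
The plan is to apply Lemma~\ref{01-03-22b} to the flow $\sigma$ restricted to $B$ (which is well-defined since $\sigma_t(B) = B$) with the two weights $\phi$ and $\psi|_B$. To do so, I must verify that both are densely defined $\sigma$-invariant weights on $B$ and that they agree on $d^*d$ for every $d \in B \cap \mathcal A_\sigma$; the latter is precisely the content of Lemma~\ref{01-12-21e}.

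That $\phi$ is densely defined and $\sigma$-invariant is part of the hypothesis. The map $\psi|_B : B^+ \to [0,\infty]$ is a weight by Lemma~\ref{01-12-21a}, and it is $\sigma$-invariant since $\psi$ is $\sigma$-invariant by Proposition~\ref{01-12-21} and $\sigma_t(B) = B$. The only slightly delicate point is showing that $\psi|_B$ is densely defined \emph{on $B$}. For this I use that $\mathcal M_\phi^\sigma$ is a dense $*$-subalgebra of $B$ by Lemma~\ref{07-12-21a}. Since $\mathcal M_\phi^\sigma \subseteq \mathcal A_\sigma \cap \mathcal M_\phi \subseteq \mathcal A_\sigma \cap \mathcal N_\phi$, Lemma~\ref{01-12-21e} gives $\psi(x^*x) = \phi(x^*x) < \infty$ for every $x \in \mathcal M_\phi^\sigma$. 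Given $b \in B^+$, approximate $\sqrt{b} \in B$ by a sequence $\{x_n\}$ in $\mathcal M_\phi^\sigma$; then $x_n^*x_n \in \mathcal M_{\psi|_B}^+$ and $x_n^*x_n \to b$, so $\mathcal M_{\psi|_B}^+$ is dense in $B^+$.

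Finally, the entire analytic elements for $\sigma$ viewed as a flow on $B$ coincide with $B \cap \mathcal A_\sigma$ (cf. Remark~\ref{31-08-22h}). Hence Lemma~\ref{01-12-21e} yields $\phi(d^*d) = \psi|_B(d^*d)$ for every $d$ which is entire analytic for the restricted flow, and Lemma~\ref{01-03-22b} forces $\psi|_B = \phi$, completing the proof. The main obstacle here is not conceptual but bookkeeping: one must make sure that the density of the restricted weight is obtained from data supplied by $\phi$ and Lemma~\ref{01-12-21e}, rather than from $\psi$'s density on the larger algebra $A$.
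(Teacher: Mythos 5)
Your proof is correct and follows the paper's own route exactly: the paper proves this lemma by citing Lemma \ref{01-12-21e} together with the uniqueness statement of Lemma \ref{01-03-22b}. The only difference is that you spell out the verification that $\psi|_B$ is a densely defined $\sigma$-invariant weight on $B$ (so that Lemma \ref{01-03-22b} applies), which the paper leaves implicit; your bookkeeping for the density of $\psi|_B$ on $B$, via $\mathcal M_\phi^\sigma$ and Lemma \ref{01-12-21e}, is sound.
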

\begin{proof} This follows from Lemma \ref{01-12-21e} and Lemma \ref{01-03-22b}.

\end{proof}

\begin{lemma}\label{02-12-21g} 
Let $\psi'$ be a $\beta$-KMS weight for $\sigma$ such that $\psi'|_B = \phi$. Then $\psi' = \psi$. 
\end{lemma}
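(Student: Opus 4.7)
The plan is to prove $\psi' = \psi$ by establishing both inequalities $\psi \leq \psi'$ and $\psi' \leq \psi$ on $A^+$, using the variational formula \eqref{03-01-22} together with the approximate unit $\{w_I\}_{I \in \mathcal I}$ from Lemma \ref{01-12-21x} and KMS cyclicity.

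For $\psi \leq \psi'$: fix $a \in A^+$ and $I \in \mathcal I$. For each $x = hb \in \mathcal A_\sigma \mathcal M^\sigma_\phi$ (with $h \in \mathcal A_\sigma$, $b \in \mathcal M^\sigma_\phi \subseteq B$) we have $\sigma_{-\xi}(b) \in B$ by the $\sigma$-invariance of $B$, and hence
$$0 \leq \sigma_{-\xi}(x)^* a \sigma_{-\xi}(x) \leq \|h\|^2 \|a\| \, \sigma_{-\xi}(b)^*\sigma_{-\xi}(b) \in B.$$
By hereditariness of $B$, $\sigma_{-\xi}(x)^* a \sigma_{-\xi}(x) \in B^+$, so since $\psi'|_B = \phi$,
$$L_I(a) = \sum_{x \in I} \phi\bigl(\sigma_{-\xi}(x)^* a \sigma_{-\xi}(x)\bigr) = \sum_{x \in I} \psi'\bigl(\sigma_{-\xi}(x)^* a \sigma_{-\xi}(x)\bigr).$$
Applying the KMS cyclic identity of Theorem \ref{24-11-21d} (3) to each summand (after a smoothing $a \mapsto R_k(a)$ to land in $\mathcal M^\sigma_{\psi'}$ and invoking $\sigma$-invariance of $\psi'$) and using $\sum_x x x^* = w_I \leq 1$, one transfers the $\sigma_{-\xi}$'s off of $x$ to get $L_I(a) \leq \psi'(a)$. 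Taking the supremum over $I$ yields $\psi(a) \leq \psi'(a)$.

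For $\psi' \leq \psi$: by Lemma \ref{01-12-21x} we have $w_I c \to c$ for every $c \in A$; applied to $c = a^{1/2}$ this gives $w_I a w_I \to a$ in norm. Lower semi-continuity of $\psi'$ then yields
$$\psi'(a) \leq \liminf_I \psi'(w_I a w_I) = \liminf_I \sum_{x,y \in I}\psi'(xx^* a yy^*).$$
The KMS cyclic identity of Theorem \ref{24-11-21d} (3) is now used to move $xx^*$ past $ayy^*$ (mirroring the manipulation in \eqref{01-12-21d}): after a smoothing step and a bookkeeping reindexing into a single finite set $J \in \mathcal I$, each summand becomes of the form $\phi\bigl(\sigma_{-\xi}(z)^* a \sigma_{-\xi}(z)\bigr)$ with $z$ ranging over $J$, so $\psi'(w_I a w_I) \leq L_J(a) \leq \psi(a)$. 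Consequently $\psi'(a) \leq \psi(a)$.

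The principal obstacle is the rigorous execution of the KMS cyclic manipulations: the elements to which the identities of Theorem \ref{24-11-21d} can be applied must lie in $\mathcal M^\sigma_{\psi'}$, whereas the natural candidates $a^{1/2}\sigma_{-\xi}(x)$ built from general $a \in A^+$ are not entire analytic. One therefore intersperses the smoothing operators $R_k$ of Section \ref{Section-entire} with approximation of $a$ by finite sums $\sum_i u_i^* u_i$ with $u_i \in \mathcal N_{\psi'} \cap \mathcal A_\sigma$ arising from $\mathcal A_\sigma \mathcal M^\sigma_\phi$, exactly as in the proof of Proposition \ref{01-12-21}. An alternative finish, once the identity $\psi(d^*d) = \psi'(d^*d)$ is verified for enough $d$ (e.g.\ for $d \in \mathcal A_\sigma$ such that $d d^*$ lies in the hereditary closure of $B$), is to invoke Lemma \ref{01-03-22b} to upgrade to equality on all of $A^+$.
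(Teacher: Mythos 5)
Your outline identifies all the right ingredients (the approximate unit $\{w_I\}$, hereditariness of $B$ to replace $\phi$ by $\psi'$, the KMS cyclic identity, and Lemma \ref{01-03-22b}), but the primary line of argument — proving the two inequalities directly for arbitrary $a \in A^+$ — has a genuine gap precisely at the point you flag as "the principal obstacle". The cyclic manipulation
$\psi'(\sigma_{-\xi}(x)^* c\, \sigma_{-\xi}(x)) = \psi'(\sigma_\xi(d)\, x x^* \sigma_\xi(d)^*)$
rests on condition (1) of Theorem \ref{24-11-21d} applied to an element of the form $d\,\sigma_{-\xi}(x)$ with $d$ entire analytic, i.e. it needs the middle factor $c$ to be exhibited as $d^*d$ with $d \in \mathcal A_\sigma$. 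For general $a \in A^+$ the element $a^{1/2}\sigma_{-\xi}(x)$ is not analytic, and the proposed repair $a \mapsto R_k(a)$ does not help: $R_k(a)$ lands in $\mathcal M^\sigma_{\psi'}$ only when $a \in \mathcal M_{\psi'}$ (Lemma \ref{07-12-21}), and even then $R_k(a)$ is not of the form $d^*d$ with $d$ analytic, so the "smoothing plus bookkeeping reindexing" step in both directions is asserted rather than proved. Carrying it out would require the full approximation machinery of the proof of Lemma \ref{01-12-21a}, which is considerably heavier than what is needed here.

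The paper avoids all of this by restricting from the outset to elements $a^*a$ with $a \in \mathcal A_\sigma$. Then $a\sigma_{-\xi}(x)$ and $\sigma_\xi(a)x = \sigma_\xi(a\sigma_{-\xi}(x))$ are analytic, condition (1) applies verbatim, and four lines give $\psi(a^*a) \le \psi'(a^*a)$ (summing over $I$ and using $w_I \le 1$) and $\psi'(a^*a) \le \psi(a^*a)$ (using $\psi'(a^*a) = \psi'(\sigma_\xi(a)\sigma_\xi(a)^*)$, lower semi-continuity and Lemma \ref{01-12-21x} to insert $w_F$, then reversing the identity and using $\psi'|_B = \phi$). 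Lemma \ref{01-03-22b} then upgrades $\psi(a^*a) = \psi'(a^*a)$ for $a \in \mathcal A_\sigma$ to $\psi = \psi'$. This is exactly the route you relegate to an "alternative finish" in your last sentence; it should be the main argument, and with that reorganisation your proof becomes the paper's proof.
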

\begin{proof} Assume $a \in \mathcal A_\sigma$. For each $F \in \mathcal I$,
\begin{align*}
& \sum_{x \in F} \phi(\sigma_{-\xi}(x)^* a^*a \sigma_{-\xi}(x)) =\sum_{x \in F} \psi'(\sigma_{-\xi}(x)^* a^*a \sigma_{-\xi}(x))\\
& = \sum_{x \in F} \psi'(\sigma_\xi(a)xx^*\sigma_\xi(a)^*) \leq   \psi'(\sigma_\xi(a)\sigma_\xi(a)^*) = \psi'(a^*a),
\end{align*}
proving that $\psi(a^*a) \leq \psi'(a^*a)$. In the other direction the lower semi-continuity of $\psi'$ combined with Lemma \ref{01-12-21x} shows that for arbitrary $N,\epsilon  > 0$ there is $F\in \mathcal I$ such that 
\begin{align*}
&\min \{\psi'(a^*a)-\epsilon, N\} \leq \sum_{x \in F}\psi'(\sigma_\xi(a)xx^*\sigma_\xi(a)^*) \\
&= \sum_{x \in F} \psi'(\sigma_{-\xi}(x)^* a^*a\sigma_{-\xi}(x)) =\sum_{x \in F} \phi(\sigma_{-\xi}(x)^* a^*a \sigma_{-\xi}(x)) \leq \psi(a^*a).
\end{align*}
We conclude therefore now that $\psi(a^*a) = \psi'(a^*a)$. It follows from Lemma \ref{01-03-22b} that $\psi = \psi'$.
\end{proof}

Laca-Neshveyev' theorem follows by combining Proposition \ref{01-12-21}, Lemma \ref{01-12-21h} and Lemma \ref{02-12-21g}.

We now strengthen the statement in Laca-Neshveyev's theorem as follows.

\begin{thm}\label{07-06-22e} Let $\sigma$ be a flow on the $C^*$-algebra $A$ and let $B \subseteq A$ be a $C^*$-subalgebra of $A$ such that $\sigma_t(B) = B$ for all $t \in \mathbb R$. Assume that $B$ is hereditary and full in $A$. The restriction map $\psi \mapsto \psi|_B$ is a bijection from the set of $\beta$-KMS weights for $\sigma$ on $A$ onto the set of $\beta$-KMS weights for the restriction of $\sigma$ to $B$.
\end{thm}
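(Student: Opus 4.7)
The theorem asserts a bijection, so the task has two parts: (i) the restriction map $\psi \mapsto \psi|_B$ sends $\beta$-KMS weights on $A$ to $\beta$-KMS weights on $B$; (ii) given (i), both surjectivity and injectivity of the restriction map follow at once from Theorem \ref{02-12-21}, which supplies a unique $\beta$-KMS extension of any $\beta$-KMS weight on $B$ and thus provides the inverse. The entire content is therefore (i), and I turn to it now.

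Fix a $\beta$-KMS weight $\psi$ on $A$ and set $\phi := \psi|_{B^+}$. That $\phi$ is a $\sigma|_B$-invariant weight on $B$ is immediate: additivity, homogeneity and lower semi-continuity descend from $\psi$, and $\phi \circ \sigma_t = \phi$ because $\psi$ is $\sigma$-invariant and $\sigma_t(B) = B$. Remark \ref{31-08-22h} identifies $\mathcal A_{\sigma|_B}$ with $B \cap \mathcal A_\sigma$ and $(\sigma|_B)_z$ with the restriction of $\sigma_z$, and a short decomposition argument using continuous functional calculus in $B$ together with (d) of Lemma \ref{04-11-21n} yields $\mathcal M_\phi = \mathcal M_\psi \cap B$ and hence $\mathcal M_\phi^{\sigma|_B} \subseteq \mathcal M_\psi^\sigma$. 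Condition (2) of Theorem \ref{24-11-21d} for $\psi$ then restricts directly to condition (2) for $\phi$, so the KMS identity holds for $\phi$ once dense definedness is established. For the latter, fix $b \in B^+$ and use Lemma \ref{16-12-21a} applied to the non-degenerate representation $\pi_\psi$ (Lemma \ref{08-02-22}) to obtain an approximate unit $\{e_m\}$ for $A$ with $e_m \in \mathcal N_\psi$ and $0 \le e_m \le 1$. Then $e_m b e_m \le b$ lies in $B^+$ by the hereditary property, and the factorization $e_m b e_m = (b^{1/2} e_m)^*(b^{1/2} e_m)$ with $b^{1/2} e_m \in A \cdot \mathcal N_\psi \subseteq \mathcal N_\psi$ puts it in $\mathcal M_\psi^+ \cap B = \mathcal M_\phi^+$; since $e_m b e_m \to b$, density of $\mathcal M_\phi^+$ in $B^+$ follows.

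The main obstacle is proving $\phi \neq 0$, which requires the KMS identity together with the fullness of $B$. Suppose for contradiction $\phi = 0$. Then $B \subseteq \mathcal M_\psi$ with $\psi$ vanishing throughout $B$, and in particular $B \cap \mathcal A_\sigma \subseteq \mathcal M_\psi^\sigma$. I claim $\psi$ vanishes on $\mathcal S := \Span \, \mathcal M_\psi^\sigma \cdot (B \cap \mathcal A_\sigma) \cdot \mathcal M_\psi^\sigma$. Indeed, for $x, y \in \mathcal M_\psi^\sigma$ and $b \in B \cap \mathcal A_\sigma$ the element $xb$ again lies in $\mathcal M_\psi^\sigma$ (each $\sigma_z(x) \in \mathcal M_\psi$ and $\sigma_z(b) \in B \subseteq \mathcal M_\psi$), so condition (3) of Theorem \ref{24-11-21d} applied to the pair $(xb, y)$ gives
\[
\psi(xby) = \psi\bigl(y \, \sigma_{i\beta}(x) \, \sigma_{i\beta}(b)\bigr),
\]
and Cauchy--Schwarz with $\alpha := y \sigma_{i\beta}(x) \in \mathcal M_\psi$ and $\gamma := \sigma_{i\beta}(b) \in B$ produces $|\psi(\alpha\gamma)|^2 \le \psi(\alpha\alpha^*) \, \psi(\gamma^*\gamma) = 0$, since $\gamma^*\gamma \in B^+$. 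Because $\mathcal M_\psi^\sigma$ is dense in $A$ (Lemma \ref{07-12-21a}) and $B \cap \mathcal A_\sigma$ is dense in $B$ (Lemma \ref{11-11-21} applied to $\sigma|_B$), the fullness of $B$ makes $\mathcal S$ dense in $A$. For arbitrary $a \in A^+$, choose $y_n \in \mathcal S$ with $y_n \to a^{1/2}$; expanding $y_n = \sum_k \alpha_k \beta_k \gamma_k$, each cross term of $y_n^*y_n$ has the shape $\gamma_k^* (\beta_k^* \alpha_k^* \alpha_\ell \beta_\ell) \gamma_\ell$, and the hereditary property forces the middle factor to lie in $BAB \cap \mathcal A_\sigma \subseteq B \cap \mathcal A_\sigma$, so $y_n^*y_n \in \mathcal S$ and $\psi(y_n^*y_n) = 0$. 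Lower semi-continuity then yields $\psi(a) \le \liminf_n \psi(y_n^*y_n) = 0$, forcing $\psi = 0$ and contradicting that $\psi$ is a nonzero KMS weight.
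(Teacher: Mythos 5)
Your overall architecture is the same as the paper's: everything reduces, via Theorem \ref{02-12-21}, to showing that $\psi|_B$ is a $\beta$-KMS weight for $\sigma|_B$, and within that, the $\sigma|_B$-invariance and the KMS identity (the paper uses condition (1) of Theorem \ref{24-11-21d} rather than condition (2), but your version works) descend trivially, leaving non-vanishing and dense definedness as the real content. However, your dense-definedness argument contains a genuine error. You claim that $e_m b e_m \le b$ for a positive contraction $e_m \in A$ and $b \in B^+$, and invoke hereditarity to conclude $e_m b e_m \in B$. The inequality is false in general: take $b = \left(\begin{smallmatrix} 1 & 0 \\ 0 & 0\end{smallmatrix}\right)$ and $e = \frac{1}{2}\left(\begin{smallmatrix} 1 & 1 \\ 1 & 1\end{smallmatrix}\right)$ in $M_2(\mathbb C)$; then $ebe = \frac{1}{4}\left(\begin{smallmatrix} 1 & 1 \\ 1 & 1\end{smallmatrix}\right)$ and $b - ebe$ has a negative diagonal entry. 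Worse, with $B$ the upper-left corner of $M_2(\mathbb C)$ (hereditary and full) this same $ebe$ does not lie in $B$ at all, so the conclusion you need, not just the inequality, fails. The obstruction is structural: you have put the element of $B$ in the middle and the elements of $A$ on the outside, which is the wrong way round for hereditarity, and you cannot instead take an approximate unit for $B$ inside $\mathcal N_\psi$ because that presupposes the dense definedness you are trying to prove. The paper's way out is to take an arbitrary approximate unit $\{e_j\}$ of $B$, form $u_j := \sigma_{i\frac{\beta}{2}}(R_1(e_j)) \in B \cap \mathcal A_\sigma$, and for $a \in \mathcal M^\sigma_\psi$ (dense in $A$) use $u_j^*a^*au_j \leq \|a\|^2 u_j^*u_j \in B$ — now the outer factors are in $B$, so hereditarity applies — while the KMS identity gives
$$
\psi(u_j^*a^*au_j) = \psi\left( \sigma_{-i\frac{\beta}{2}}(a) R_1(e_j)^2 \sigma_{-i\frac{\beta}{2}}(a)^*\right) \leq \psi\left( \sigma_{-i\frac{\beta}{2}}(a) \sigma_{-i\frac{\beta}{2}}(a)^*\right) < \infty ;
$$
since $bu_j \to b$ for $b \in B$, these elements approximate every $b^*b$ with $b\in B$.

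Your non-vanishing argument, by contrast, is correct but far more laborious than necessary: the cocktail of condition (3), Cauchy--Schwarz, and the density of $\Span\,\mathcal M^\sigma_\psi(B\cap\mathcal A_\sigma)\mathcal M^\sigma_\psi$ (which also quietly uses $BAB \subseteq B$, a standard but unproved-here consequence of hereditarity) re-derives in this special case what Lemma \ref{05-12-21a} already provides, namely that $\ker_\psi = \{a : \psi(a^*a)=0\}$ is a closed two-sided $\sigma$-invariant ideal. Given that lemma, $\psi(B^+) = \{0\}$ puts $B$ inside $\ker_\psi$, fullness forces $\ker_\psi = A$, and $\psi = 0$ follows at once.
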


For the proof, as well as later purposes, we need the following lemma.

\begin{lemma}\label{05-12-21a} Let $\sigma$ be a flow on $A$ and $\psi$ a $\beta$-KMS weight for $\sigma$. Then
$$
\ker_\psi : =\left\{ a \in A : \psi(a^*a) = 0\right\} 
$$
is a closed two-sided $\sigma$-invariant ideal.
\end{lemma}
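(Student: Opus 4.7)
Four properties need verification: that $\ker_\psi$ is closed, a left ideal, $\sigma$-invariant, and a right ideal. The first three will follow by direct arguments that do not use the KMS hypothesis; I expect the right-ideal property to be the main obstacle, since without the KMS symmetry it is simply false for general weights. For closedness, $a_n \to a$ with $a_n \in \ker_\psi$ gives $a_n^*a_n \to a^*a$, so lower semi-continuity forces $\psi(a^*a) \leq \liminf_n \psi(a_n^*a_n) = 0$. For the left-ideal property, the operator estimate $a^*c^*ca \leq \|c\|^2 a^*a$ gives $\psi((ca)^*(ca)) \leq \|c\|^2 \psi(a^*a) = 0$. For $\sigma$-invariance, $\psi(\sigma_t(a)^*\sigma_t(a)) = \psi(\sigma_t(a^*a)) = \psi(a^*a) = 0$ by $\sigma$-invariance of $\psi$.

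My plan for the right-ideal property is to first treat the case $a \in \ker_\psi \cap \mathcal A_\sigma$ and $b \in \mathcal A_\sigma$, then extend by smoothing. In the entire analytic case, set $A := \sigma_{-i\beta/2}(a)$ and $B := \sigma_{-i\beta/2}(b)$; both are bounded elements of $A$. Since $ab \in \mathcal A_\sigma$ with $\sigma_{-i\beta/2}(ab) = AB$ by Lemma \ref{18-11-21g}, applying condition (1) of Theorem \ref{24-11-21d} to $ab$ and, separately, in the opposite direction to $a$ yields
\[
\psi((ab)^*(ab)) = \psi(ABB^*A^*) \leq \|B\|^2 \psi(AA^*) = \|B\|^2 \psi(a^*a) = 0 ,
\]
where the inequality uses $BB^* \leq \|B\|^2 \cdot 1$ together with monotonicity of $\psi$.

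To remove the entire-analyticity hypothesis I will use the smoothing operators $R_n$ from Section \ref{Section-entire}. The key technical input is the Schwarz-type inequality $R_n(c)^*R_n(c) \leq R_n(c^*c)$ for all $c \in A$, which follows from convexity of $x \mapsto x^*x$ together with the representation of $R_n$ as a probability-measure integral of the $*$-automorphisms $\sigma_s$. Combining this with Lemma \ref{02-12-21ax} ($\psi \circ R_n = \psi$ on $A^+$) yields $\psi(R_n(a)^*R_n(a)) \leq \psi(R_n(a^*a)) = \psi(a^*a) = 0$, so $R_n(a) \in \ker_\psi \cap \mathcal A_\sigma$ whenever $a \in \ker_\psi$. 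For arbitrary $a \in \ker_\psi$ and $b \in A$ I then set $a_n := R_n(a)$, $b_n := R_n(b)$: the entire analytic case gives $a_n b_n \in \ker_\psi$, and contractivity of $R_n$ together with $R_n(a) \to a$, $R_n(b) \to b$ gives $a_n b_n \to ab$ in norm, hence $(a_n b_n)^*(a_n b_n) \to (ab)^*(ab)$, so lower semi-continuity forces $ab \in \ker_\psi$. The whole argument hinges on Kustermans' condition (1): this is exactly what converts information about $\psi(a^*a)$ into an upper bound on $\psi((ab)^*(ab))$ by means of the symmetry $\sigma_{-i\beta/2}$, which interchanges $c^*c$-type with $cc^*$-type expressions.
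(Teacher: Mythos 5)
Your proof is correct and follows essentially the same route as the paper's: closedness, the left-ideal property and $\sigma$-invariance by the same direct arguments, and the right-ideal property via Kustermans' condition (1) applied to a product with one factor smoothed by $R_n$, combined with the Kadison--Schwarz inequality $R_n(a)^*R_n(a)\leq R_n(a^*a)$, the identity $\psi\circ R_n=\psi$, and lower semi-continuity in the limit. The only difference is cosmetic: the paper keeps $b\in\mathcal A_\sigma$ and invokes density of $\mathcal A_\sigma$ at the end, whereas you smooth $b$ as well.
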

\begin{proof} That $\ker_\psi$ is a left-ideal follows from the same reasoning as in the proof of (a) of Lemma \ref{04-11-21n}. It is closed because $\psi$ is lower semi-continuous and it is $\sigma$-invariant because $\psi$ is. To show that $\ker_\psi$ is a right-ideal it suffices to show that $a \mathcal A_\sigma \subseteq \ker_\psi$ when $a \in \ker_\psi$. For this let $b \in \mathcal A_\sigma$ and $k \in \mathbb N$. Then
\begin{align*}
& \psi(b^*R_k(a)^*R_k(a)b) = \psi\left(\sigma_{-i \frac{\beta}{2}}(R_k(a)b) \sigma_{-i \frac{\beta}{2}}(R_k(a)b)^*\right)\\
& = \psi(\sigma_{-i \frac{\beta}{2}}( R_k(a))\sigma_{-i \frac{\beta}{2}}(b)\sigma_{-i \frac{\beta}{2}}(b)^* \sigma_{-i \frac{\beta}{2}}(R_k(a))^*)\\
& \leq \left\|\sigma_{-i \frac{\beta}{2}}(b)\right\|^2  \psi(\sigma_{-i \frac{\beta}{2}}( R_k(a))\sigma_{-i \frac{\beta}{2}}(R_k(a))^*) \\
&= \left\|\sigma_{-i \frac{\beta}{2}}(b)\right\|^2  \psi(R_k(a)^*R_k(a)) \leq \left\|\sigma_{-i \frac{\beta}{2}}(b)\right\|^2  \psi(R_k(a^*a)) ,
\end{align*}
where the last step used that $R_k(a)^*R_k(a) \leq R_k(a^*a)$ since $R_k$ is a completely positive contraction. See for example Exercise 11.5.17 in \cite{KR}. It follows from Lemma \ref{02-12-21ax} that $\psi(R_k(a^*a)) = \psi(a^*a) =0$ and we conclude therefore from the estimate above that $ \psi(b^*R_k(a)^*R_k(a)b) = 0$. Since $\lim_{k \to \infty} b^*R_k(a)^*R_k(a)b = b^*a^*ab$, the lower semi-continuity of $\psi$ implies that $\psi(b^*a^*ab) = 0$.
\end{proof}

\emph{Proof of Theorem \ref{07-06-22e}:} In view of Laca-Neshveyev's theorem, it suffices to show that the restriction map is well-defined, i.e. that $\psi|_B$ is a $\beta$-KMS weight for the restriction of $\sigma$ to $B$ when $\psi$ is a $\beta$-KMS weight for $\sigma$. It is clear that $\psi|_B$ is $\sigma|_B$-invariant and has property (1) of Kustermans' theorem, Theorem \ref{24-11-21d}, since $\psi$ has, so what remains is only to show that $\psi|_B$ is non-zero and densely defined. If $\psi(B^+) = \{0\}$ it follows from Lemma \ref{05-12-21a} that $\ker_ \psi = A$ since $B$ is full in $A$. This contradicts that $\psi \neq 0$ and shows therefore that $\psi|_B$ is not zero. 

To see that $\psi|_B$ is densely defined, let $\{e_j\}_{j \in I}$ be an approximate unit for $B$ such that $0 \leq e_j \leq 1$ for all $j \in I$. As in the proof of Lemma \ref{09-02-22x}, set
$$
u_j := \sigma_{i\frac{\beta}{2}}(R_1(e_j)) .
$$
Then $u_j \in \mathcal A_\sigma$ and as shown in the proof of Lemma \ref{09-02-22x}, $ \lim_{j \to \infty} b u_j =b$ for all $b \in B$. Let $b \in B$. Then $\lim_{j \to\infty} u_j^*b^*bu_j = b^*b$ and since $\mathcal M^\sigma_\psi$ is dense in $A$ each $u_j^*b^*bu_j$ can be approximated by elements of the form $u_j^*a^*au_j$ where $a \in \mathcal M^\sigma_\psi$. Note that $u_j^*a^*au_j \in B$ since $u_j^*a^*au_j \leq \|a\|^2 u_j^*u_j \in B$. Furthermore,
\begin{align*}
& \psi(u_j^*a^*au_j) = \psi\left( \sigma_{-i\frac{\beta}{2}}(a) R_1(e_j)^2 \sigma_{-i\frac{\beta}{2}}(a)^*\right) \leq \psi\left( \sigma_{-i\frac{\beta}{2}}(a) \sigma_{-i\frac{\beta}{2}}(a)^*\right) < \infty .
\end{align*}
 It follows that $\psi|_B$ is densely defined.
\qed

\section{Restriction to corners}

Let $\sigma$ be a flow on the $C^*$-algebra $A$, and let $\beta \in \mathbb R$. A \emph{ray} of $\beta$-KMS weights for $\sigma$ is a set of $\beta$-KMS weights for $\sigma$ of the form
$$
\left\{ t \psi : \ t\in \mathbb R, \ t > 0\right\}
$$
for a some $\beta$-KMS weight $\psi$ for $\sigma$. We say that this is the ray generated by $\psi$.

\begin{thm}\label{01-03-22d} Let $\sigma$ be a flow on the $C^*$-algebra $A$ and $p =p^*=p^2$ a projection in the fixed point algebra $A^\sigma$ of $\sigma$. Then $\psi(p) < \infty$ for all KMS weights $\psi$ for $\sigma$.

Assume in addition that $pAp$ is full in $A$. Then $\psi(p) > 0$ for all KMS weights $\psi$ for $\sigma$, and the map
$$
\psi \mapsto \psi(p)^{-1}\psi|_{pAp}
$$
is a bijection from the set of rays of $\beta$-KMS weights for $\sigma$ onto the set of $\beta$-KMS states for the restriction of $\sigma$ to $pAp$.
\end{thm}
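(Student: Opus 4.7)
The plan is to deduce both assertions from Theorem \ref{07-06-22e}, exploiting that $pAp$ is always a hereditary $\sigma$-invariant $C^*$-subalgebra of $A$. Hereditariness is the standard fact about corners by a projection, and $\sigma$-invariance is immediate from $\sigma_t(p)=p$. For the first assertion I would pass through the closed two-sided ideal $\mathcal I := \overline{ApA}$, which is again $\sigma$-invariant and in which $pAp$ sits as a hereditary, full, and $\sigma$-invariant $C^*$-subalgebra.

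\textbf{Finiteness of $\psi(p)$.} If $\psi$ vanishes on $\mathcal I^+$ there is nothing to show, so I would assume $\psi|_{\mathcal I}$ is nonzero and first verify that $\psi|_{\mathcal I}$ is a $\beta$-KMS weight on $\mathcal I$ for the restricted flow. Lower semi-continuity, non-vanishing, and $\sigma$-invariance are clear. The KMS identity in the form of condition (2) of Theorem \ref{24-11-21d} is inherited from $\psi$ because, by Remark \ref{31-08-22h}, any $a \in \mathcal M^{\sigma|_{\mathcal I}}_{\psi|_{\mathcal I}}$ already lies in $\mathcal M^\sigma_\psi$, so the identity on $A$ restricts to the identity on $\mathcal I$. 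The substantive point is dense definedness. Given $x \in \mathcal I^+$ I would choose $y \in \mathcal M^+_\psi$ close to $x$ using dense definedness of $\psi$ on $A$, and let $\{e_i\}$ be an approximate unit for $\mathcal I$. Then $\sqrt{y}\,e_i \in \mathcal N_\psi$ because
\begin{equation*}
\psi\bigl((\sqrt{y}\,e_i)^*(\sqrt{y}\,e_i)\bigr) = \psi(e_i y e_i) \leq \psi(y) < \infty,
\end{equation*}
and Lemma \ref{04-11-21n} therefore places $e_i y e_i = (\sqrt{y}\,e_i)^*(\sqrt{y}\,e_i)$ in $\mathcal M^+_\psi$; since $e_i y e_i$ also lies in $\mathcal I$ and $\|e_i y e_i - x\| \leq \|y-x\| + \|e_i x e_i - x\|$ can be made arbitrarily small, the density follows. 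Once $\psi|_{\mathcal I}$ is known to be a $\beta$-KMS weight on $\mathcal I$, Theorem \ref{07-06-22e} applied to $pAp \subseteq \mathcal I$ yields that $\psi|_{pAp}$ is a $\beta$-KMS weight on $pAp$, and Lemma \ref{24-09-23b} gives $\psi(p)<\infty$ because $pAp$ is unital with unit $p$.

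\textbf{The bijection onto KMS states.} Under the fullness assumption Theorem \ref{07-06-22e} applies directly to $pAp \subseteq A$, making restriction a bijection between $\beta$-KMS weights on $A$ and on $pAp$. The argument inside that theorem (via Lemma \ref{05-12-21a}) shows $\psi|_{pAp}\neq 0$, hence $\psi(p)>0$, and by Lemmas \ref{21-10-23a} and \ref{24-09-23b} the normalization $\psi(p)^{-1}\psi|_{pAp}$ is a $\beta$-KMS state on $pAp$. This map clearly factors through rays. For surjectivity I would take a $\beta$-KMS state $\omega$, extend uniquely to a $\beta$-KMS weight $\psi$ on $A$ with $\psi|_{pAp}=\omega$, and note $\psi(p)=\omega(p)=1$. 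For injectivity on rays, two KMS weights $\psi_1, \psi_2$ producing the same state have $\psi_2|_{pAp} = (\psi_2(p)/\psi_1(p))\psi_1|_{pAp}$, so the uniqueness half of Theorem \ref{07-06-22e} forces $\psi_2 = (\psi_2(p)/\psi_1(p))\psi_1$.

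\textbf{Main obstacle.} The only non-routine step is dense definedness of $\psi|_{\mathcal I}$, since the approximations $y$ of $x$ produced by density of $\mathcal M_\psi$ in $A$ need not lie in $\mathcal I$. The truncation $e_i y e_i$ viewed through the factorization $(\sqrt{y}\,e_i)^*(\sqrt{y}\,e_i)$ is what simultaneously delivers positivity, membership in $\mathcal I$, membership in $\mathcal M_\psi$, and approximation of the target element.
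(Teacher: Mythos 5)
Your treatment of the bijection is correct and is essentially the paper's: once $pAp$ is noted to be hereditary, full and $\sigma$-invariant, both the positivity of $\psi(p)$ and the bijection onto states are read off from Theorem \ref{07-06-22e} exactly as you describe. The problem is in your proof that $\psi(p)<\infty$, where you take a long detour through $\mathcal I=\overline{ApA}$ and the key estimate fails. You claim
$$
\psi\bigl((\sqrt{y}\,e_i)^*(\sqrt{y}\,e_i)\bigr)=\psi(e_iye_i)\le\psi(y).
$$
Monotonicity of $\psi$ does not give this: $e_iye_i\le y$ is false in general for $0\le e_i\le 1$ (already in $M_2(\mathbb C)$), and the rearrangement $e_iye_i \rightsquigarrow \sqrt{y}\,e_i^2\sqrt{y}\le y$ that would rescue the inequality is only available under the trace property, which $\psi$ lacks when $\beta\neq 0$. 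What monotonicity does give is $\psi(\sqrt{y}\,e_i^2\sqrt{y})\le\psi(y)$, i.e. $e_i\sqrt{y}\in\mathcal N_\psi$ — the wrong factor: it places $\sqrt{y}\,e_i^2\sqrt{y}$ in $\mathcal M_\psi^+$, but that element need not approximate $x$ because $\sqrt{y}$ need not lie in $\mathcal I$. For a genuinely non-tracial KMS weight (e.g. $\psi_\beta$ on $\mathbb K$ from Theorem \ref{02-01-22a}) one can have $\psi(eye)=\infty$ while $\psi(y)<\infty$, so the step is not merely unjustified but false. The density argument is repairable — approximate $\sqrt{x}\in\mathcal I$ by $e_iw$ with $w\in\mathcal N_\psi$, using that $\mathcal N_\psi$ is a dense left ideal, so that $(e_iw)^*(e_iw)\in\mathcal M_\psi^+\cap\mathcal I$ approximates $x$ — but the whole detour is unnecessary.

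The paper's proof of $\psi(p)<\infty$ is one line: since $p\in A^\sigma$ we have $p\in A^+\cap D(\sigma_{-i\frac{\beta}{2}})$ (indeed $\sigma_z(p)=p$ for all $z$), and $p\cdot p=p$, so Proposition \ref{02-12-21x} applies with $a=b=p$ and yields $p\in\mathcal N_\psi$, i.e. $\psi(p)=\psi(p^*p)<\infty$. This is precisely the situation that proposition was designed for, and it bypasses both the ideal $\mathcal I$ and the dense-definedness question entirely.
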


Besides the theorem of Laca and Neshveyev the main ingredient in the proof of Theorem \ref{01-03-22d} is the following result by Christensen, \cite{Ch}.

\begin{prop}\label{02-12-21x}
Let $\sigma$ be a flow on $A$ and $\psi$ a $\beta$-KMS weight for $\sigma$. If $a \in A^+\cap D(\sigma_{-i\frac{\beta}{2}})$ and $b \in A^+$ are such that $ab = a$, then $a \in \mathcal N_\psi$.
\end{prop}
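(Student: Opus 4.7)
The plan is to show $\psi(a^2) < \infty$, which forces $a\in\mathcal N_\psi$. First, taking adjoints in $ab=a$ and using $a^*=a,\ b^*=b$ gives $ba=a$, so $a$ and $b$ commute. Inside the commutative $C^*$-subalgebra $C^*(a,b)$, Gelfand duality then forces $\hat b=1$ on the support of $\hat a$; consequently $af(b)=a$ for every continuous $f:[0,\infty)\to\mathbb R$ with $f(0)=0$ and $f(1)=1$. Choosing such an $f$ supported in a neighborhood of $1$ produces $e:=f(b)\in A^+$ with $\|e\|\le1$ and $ea=ae=a$.

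Set $c:=\sigma_{-i\beta/2}(a)\in A$; then Theorem~\ref{24-11-21d}(1) gives $\psi(a^2)=\psi(cc^*)$, so it suffices to bound $\psi(cc^*)$. Using the net $\{u_j\}$ from Lemma~\ref{09-02-22x} (so $u_j\in\mathcal M^\sigma_\psi\subseteq\mathcal A_\sigma$ and $\sigma_{-i\beta/2}(u_j)=R_1(e_j)$ is a contractive increasing approximate unit), a variant of Lemma~\ref{18-11-21g} obtained by approximating $a$ by entire analytic elements and using closedness of $\sigma_{-i\beta/2}$ gives $u_j a\in D(\sigma_{-i\beta/2})\cap\mathcal N_\psi$ with $\sigma_{-i\beta/2}(u_j a)=R_1(e_j)\,c$. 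Applying Theorem~\ref{24-11-21d}(1) to $u_j a$ then yields
$$
\|\Lambda_\psi(u_j a)\|^2 \;=\; \psi((u_j a)^*(u_j a)) \;=\; \psi\bigl(R_1(e_j)\,cc^*\,R_1(e_j)^*\bigr).
$$
As $j\to\infty$ we have $R_1(e_j)cc^*R_1(e_j)^*\to cc^*$ in norm, so lower semi-continuity of $\psi$ yields $\psi(cc^*)\le\liminf_j\|\Lambda_\psi(u_j a)\|^2$. Hence the task reduces to bounding $\|\Lambda_\psi(u_j a)\|$ uniformly in $j$.

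Producing this uniform bound is the main obstacle, and is where the hypothesis $ab=a$ must enter essentially. My plan is to exploit the identity $(u_j a)^*(u_j a)=b(u_j a)^*(u_j a)b$ (immediate from $u_j a=u_j ab$ and its adjoint), combined with $ae=a$ and a smoothing $e_n:=R_n(e)\in\mathcal A_\sigma$ satisfying $\|ae_n-a\|\to0$, in order to apply Theorem~\ref{24-11-21d}(1) to the entire-analytic products $u_j a\, e_n$. Since $\mathcal M_\psi^+$ is dense in $A^+$ one can further replace $e$ by a nearby element $m\in\mathcal M_\psi^+$ and aim for an estimate of the form $\|\Lambda_\psi(u_j a)\|^2\le\|u_j\|^2\,\psi(m^2)$, uniform in $j$ because $\|u_j\|$ is bounded by Lemma~\ref{09-02-22x}(b). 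Reconciling the approximation $e\rightsquigarrow m$ with the exact identity $ae=a$ needed to invoke the KMS condition is the delicate point of Christensen's original argument \cite{Ch}, and is where I expect the genuine technical difficulty to lie. Once the uniform bound is in hand, closedness of $\Lambda_\psi$ (Lemma~\ref{17-11-21a}) together with a weak-compactness argument in $H_\psi$ identifies $a\in\mathcal N_\psi$, completing the proof.
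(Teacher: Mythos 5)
Your reduction is sound as far as it goes: by Theorem \ref{24-11-21d}(1) the identity $\psi((u_ja)^*(u_ja)) = \psi\bigl(R_1(e_j)\,cc^*R_1(e_j)\bigr)$ with $c = \sigma_{-i\frac{\beta}{2}}(a)$ holds in $[0,\infty]$ whether or not the terms are finite (though your parenthetical claim that $u_ja \in \mathcal N_\psi$ is unjustified at that stage — it is part of what must be proved), and lower semi-continuity does reduce everything to a bound on $\psi((u_ja)^*(u_ja))$ uniform in $j$. But that uniform bound is precisely the content of the proposition, and you never establish it: you only ``aim for'' an estimate $\|\Lambda_\psi(u_ja)\|^2 \le \|u_j\|^2\psi(m^2)$ and explicitly defer the reconciliation of the approximation $e \rightsquigarrow m$ with the exact identity $ae=a$ to ``where I expect the genuine technical difficulty to lie.'' Since $(u_ja)^*(u_ja) \le \|u_j\|^2 a^2$ only bounds the quantity by the very thing you are trying to show finite, the proposal contains no mechanism for crossing from the exact relation $ab=a$ to an element of $\mathcal M_\psi^+$. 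This is a genuine gap, not a routine verification.

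The paper closes exactly this gap with an operator-algebraic argument you do not have. Assuming $0\le a\le 1$, one forms the range projection $q = \lim_n \pi_\psi(a)^{1/n}$ of $\pi_\psi(a)$ in $B(H_\psi)$; the hypothesis $ab=a$ gives $q\pi_\psi(b) = \pi_\psi(b)q = q$. Choosing $c \in \mathcal A_\sigma \cap A^+$ with $\psi(c^2)<\infty$ and $\|c^2-b^2\|<\tfrac12$ (possible since $\psi$ is densely defined and $R_k$ preserves $\mathcal M_\psi^+$), one gets $\|q\pi_\psi(c^2)q - q\| < \tfrac12$, hence by spectral theory $q \le 2q\pi_\psi(c^2)q$ and therefore the hard inequality $\pi_\psi(a^2) \le 2\pi_\psi(ac^2a)$. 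This is transferred to $\psi$ via the operators $T_\omega$ of Lemma \ref{08-11-21bx} and Combes' theorem, giving $\psi(a^2) \le 2\psi(ac^2a)$, and a single application of the KMS condition to $ca$ yields $\psi(ac^2a) \le \|\sigma_{-i\frac{\beta}{2}}(a)\|^2\psi(c^2) < \infty$. It is this passage — converting ``$b$ acts as a unit on $a$'' plus ``$c^2$ is norm-close to $b^2$ with $\psi(c^2)<\infty$'' into $a^2 \lesssim ac^2a$ via the range projection — that your outline is missing, and without it (or an equivalent device) the argument does not close.
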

\begin{proof} We assume, as we can, that $0 \leq a \leq 1$. The limit $q = \lim_{n \to \infty} \pi_\psi(a)^{\frac{1}{n}}$ exists in the strong operator topology of $B(H_\psi)$ and $q$ is the range projection of $\pi_\psi(a)$. Since $\pi_\psi(a)^{\frac{1}{n}}\pi_\psi(b) = \pi_\psi(b)\pi_\psi(a)^{\frac{1}{n}} = \pi_\psi(a)^{\frac{1}{n}}$ for all $n$, we find that $q \pi_\psi(b) = \pi_\psi(b)q = q$. Since $\psi$ is densely defined there is a $c \in A^+$ such that $\psi(c^2) < \infty$ and $\left\|b^2-c^2\right\| < \frac{1}{2}$. Note that $\lim_{k \to \infty} R_k(c) = c$ and $R_k(c) \in \mathcal A_\sigma$ by Lemma \ref{24-11-21}. Furthermore, $\psi(R_k(c)^2) \leq \psi(R_k(c^2)) = \psi(c^2)$ by Proposition 3.2.4 in \cite{BR} and Lemma \ref{02-12-21ax} above. By substituting $R_k(c)$ for $c$ for some large $k$ we can therefore arrange that $c \in \mathcal A_\sigma \cap A^+$. Note that
\begin{align*}
&\left\|q\pi_\psi(c^2)q-q\right\| = \left\|q(\pi_\psi(c^2) - \pi_\psi(b^2))q\right\| \leq\left\|c^2 -b^2\right\| < \frac{1}{2}.
\end{align*}
It follows that the spectrum of $q\pi_\psi(c^2)q$ in $qB(H_\psi)q$ is contained in $]\frac{1}{2},\frac{3}{2}[$ (unless $q=0$), implying that $q \leq 2q\pi_\psi(c^2)q$. Thus
\begin{align}\label{02-12-21b}
& \pi_\psi(a^2) = \pi_\psi(a)q\pi_\psi(a) \leq 2\pi_\psi(a)q\pi_\psi(c^2)q\pi_\psi(a) =2\pi_\psi(ac^2a).
\end{align}
Let $\omega \in \mathcal F_\psi$, cf. Theorem \ref{04-11-21k}, and let $T_\omega \in \pi_\psi(A)'$ be the operator from Lemma \ref{08-11-21bx}.  It follows from \eqref{02-12-21b} that for any $x \in \mathcal N_\psi$ we have
\begin{align*}
& \omega(x^*a^2x) = \left<T_\omega \Lambda_\psi(ax), \Lambda_\psi(ax)\right> = \left<T_\omega \pi_\psi(a^2)\Lambda_\psi(x),\Lambda_\psi(x)\right> \\
& \leq 2 \left<T_\omega \pi_\psi(ac^2a)\Lambda_\psi(x),\Lambda_\psi(x)\right>  = 2\omega(x^*ac^2ax) .
\end{align*}
Since $\mathcal N_\psi$ is dense in $A$ and $\omega$ is continuous it follows that $\omega(a^2) \leq 2\omega(ac^2a)$. 
By Combes' theorem, Theorem \ref{04-11-21k}, it follows that $\psi(a^2) \leq 2\psi(ac^2a)$. By using Lemma \ref{18-11-21g} and that $\psi$ is a $\beta$-KMS weight we get
\begin{align*}
&\psi(ac^2a) = \psi(\sigma_{-i\frac{\beta}{2}}(ca)\sigma_{-i\frac{\beta}{2}}(ca)^*)  =\psi(\sigma_{-i\frac{\beta}{2}}(c)\sigma_{-i\frac{\beta}{2}}(a)\sigma_{-i\frac{\beta}{2}}(a)^*\sigma_{-i\frac{\beta}{2}}(c)^*) \\
& \leq \left\|\sigma_{-i\frac{\beta}{2}}(a)\right\|^2\psi(\sigma_{-i\frac{\beta}{2}}(c)\sigma_{-i\frac{\beta}{2}}(c)^*) = \left\|\sigma_{-i\frac{\beta}{2}}(a)\right\|^2 \psi(c^2). 
\end{align*}
Hence $\psi(a^2) \leq 2\left\|\sigma_{-i\frac{\beta}{2}}(a)\right\|^2 \psi(c^2) < \infty$.
\end{proof}

\emph{Proof of Theorem \ref{01-03-22d}:} It follows from Proposition \ref{02-12-21x} that $\psi(p) < \infty$ for every KMS weight $\psi$. The rest of the theorem is an immediate consequence of Theorem \ref{07-06-22e}.
\qed

\begin{notes} Laca-Neshveyev's theorem, Theorem \ref{02-12-21}, appears in \cite{LN} in a slightly different context. The formula \eqref{03-01-22} for the extension is taken from \cite{LN}, but otherwise the proof given here is different from that in \cite{LN}. When $\beta =0$ the theorem is about traces and it appears in work by Cuntz and Pedersen, \cite{CP1}, albeit without a flow. Theorem \ref{01-03-22d} here is Theorem 2.4 in \cite{Th1}, while its generalization in Theorem \ref{07-06-22e} is new. Proposition \ref{02-12-21x} here is Proposition 3.1 in \cite{Ch}.
\end{notes}



\chapter[Modular theory]{Modular theory and KMS weights}

In this chapter we relate KMS weights to the modular theory of von Neumann algebras.

\section{The modular theory of von Neumann algebras}\label{modular1}

We first summarize the main results of a theory which is usually referred to as the modular theory or the Tomita-Takesaki theory of von Neumann algebras. The presentation here is brief and highly selective. It is intended to facilitate the application of the theory in the following sections. No proofs are supplied here; they can be found in \cite{KR}, \cite{SZ} or \cite{Ta2}, for example.

Let $M$ be a von Neumann algebra. The set of $\sigma$-weakly continuous linear functionals on $M$, as defined for example in Section 2.4.1 of \cite{BR}, constitute a norm closed subspace $M_*$ of the dual $M^*$, and $M$ can be identified in a natural with the dual of $M_*$, cf. e.g. Proposition 2.4.18 in \cite{BR}. The set $M_*$ is therefore often referred to as the \emph{predual} of $M$.

A weight $\phi : M^+ \to [0,\infty]$ is \emph{normal} when $\phi$ is lower semi-continuous with respect to the $\sigma$-weak topology; that is, when $\left\{ m \in M^+ : \ \phi(m) > t\right\}$ is open in the $\sigma$-weak topology for all $t \in \mathbb R$. It is \emph{semi-finite} when 
$$
\left\{m \in M^+ : \ \phi(m) < \infty\right\}
$$ 
is $\sigma$-weakly dense in $M^+$ and it is \emph{faithful} when $\phi(m) > 0$ for all $m \in M^+ \backslash \{0\}$. A \emph{normal faithful semi-finite trace} on $M$ is a normal faithful semi-finite weight with the property that $\phi(mm^*) = \phi(m^*m)$ for all $m \in M$. In particular, it need not be densely defined with respect to the norm-topology as we require of a trace on a $C^*$-algebra. The GNS representation $(H_\phi,\Lambda_\phi,\pi_\phi)$ of a normal faithful semi-finite weight $\phi$ on $M$ has the property that $\pi_\phi : M \to B(H_\phi)$ is a normal isomorphism of $M$ onto $\pi_\phi(M)$, and the latter is a von Neumann algebra, cf. Theorem 7.5.3 in \cite{KR}. 

A representation $\alpha =\{\alpha_t\}_{t \in \mathbb R}$ of $\mathbb R$ by automorphisms of $M$ will be called a \emph{normal flow} when the map $\mathbb R \ni t \mapsto \alpha_t(m)$ is $\sigma$-weakly continuous for all $m \in M$. Given a normal faithful semi-finite weight $\phi$ on $M$ there is a normal flow $\alpha$ on $M$ such that
\begin{itemize}
\item[(a)] $\phi \circ \alpha_t = \phi$ for all $t \in \mathbb R$, and
\item[(b)] for all pairs $a,b \in \mathcal N_\phi \cap \mathcal N_\phi^*$ there is a continuous bounded function 
$$
f : \left\{ z \in \mathbb C : \ 0 \leq \Imag z \leq 1\right\} \to \mathbb C
$$ 
which is holomorphic in $\left\{ z \in \mathbb C : \ 0 < \Imag z < 1\right\}$, such that
$$
f(t) = \phi(\alpha_t(a)b) \ \ \text{and} \ \ f(t+i) = \phi(b\alpha_t(a))
$$
for all $t \in \mathbb R$. 
\end{itemize}
This normal flow is unique; if $\alpha'$ is a normal flow with the properties (a) and (b) then $\alpha'_t = \alpha_t$ for all $t \in \mathbb R$, and it is called \emph{the modular automorphism group} of $M$ relative to $\phi$. The modular automorphism group 
\begin{itemize}
\item is trivial on the center of $M$,
\item is inner, i.e. of the form $\sigma_t = \Ad u_t$, where $\{u_t\}$ is a strongly continuous group of unitaries in $M$, if and only if $M$ admits a faithful normal semi-finite trace.
\end{itemize}

Thanks to condition (a) and the normality of $\phi$, there is a strongly continuous unitary representation $u = \{u_t\}_{t \in \mathbb R}$ of $\mathbb R$ on $H_\phi$ defined such that
$$
u_t \Lambda_\phi(m) := \Lambda_\phi(\alpha_t(m))
$$
for all $m \in \mathcal N_\phi \cap \mathcal N_\phi^*$. Then
$$
\pi_\phi \circ \alpha_t(m) = u_t \pi_\phi(m)u_{-t}
$$
for all $m \in M$ and $t \in \mathbb R$. By Stone's theorem there is a self-adjoint operator $H$ on $H_\phi$ such that $u_t = e^{itH}$. This unitary group is usually written
$$
u_t = \nabla^{it} ,
$$
where $\nabla := e^H$ is \emph{the modular operator} of $\phi$.

The conjugate linear map $S_0 : \Lambda_\phi(\mathcal N_\phi \cap \mathcal N_\phi^*) \to H_\phi$ defined such that
$$
S_0\Lambda_\psi(a) = \Lambda_\phi(a^*), \ \ a \in \mathcal N_\phi \cap \mathcal N_\phi^* ,
$$
is closable and its closure $S$ is related to the modular operator $\nabla$ via the polar decomposition of $S$:
$$
S = J\nabla^{\frac{1}{2}} ,
$$
where $J$ is a conjugate linear unitary, called the \emph{modular conjugation}. It has the property that $J^* = J$, $J^2 = 1$ and
\begin{equation}\label{03-03-22h}
J \pi_\phi(M)J = \pi_\phi(M)' ;
\end{equation}
the latter being the commutant of $\pi_\phi(M)$ in $B(H_\phi)$.

\section{The modular theory of a KMS weight}\label{modular2}

Let $\sigma$ be a flow on the $C^*$-algebra $A$. In this section we show how a KMS weight $\psi$ for $\sigma$ gives rise to a faithful normal semi-finite weight $\psi''$ on the von Neumann algebra $\pi_\psi(A)''$ generated by $\pi_\psi(A)$ and we begin to investigate the ingredients of the modular theory associated with $\psi''$.

Fix $\beta \in \mathbb R$ and let $\psi$ be a $\beta$-KMS weight for $\sigma$.

\begin{lemma}\label{14-02-22} The following inclusions hold.
$$
\mathcal A_\sigma \mathcal M^\sigma_\psi \subseteq \mathcal N_\psi \cap \mathcal N_\psi^* \cap \mathcal A_\sigma .
$$
and
$$
\mathcal M^\sigma_\psi \mathcal A_\sigma  \subseteq \mathcal N_\psi \cap \mathcal N_\psi^* \cap \mathcal A_\sigma .
$$
\end{lemma}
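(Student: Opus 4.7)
The plan is to treat both inclusions in parallel, since each of the four membership claims ($ab, ba$ in $\mathcal A_\sigma$, in $\mathcal N_\psi$, in $\mathcal N_\psi^*$) splits into one of two cases: a ``cheap'' case handled by an operator inequality, and an ``expensive'' case that requires the KMS condition in order to push the rotation $\sigma_{-i\frac{\beta}{2}}$ onto the factor lying in $\mathcal M_\psi^\sigma$.

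First I would dispose of the $\mathcal A_\sigma$-membership: since $\mathcal A_\sigma$ is a $*$-subalgebra of $A$ by Lemma \ref{11-11-21}, and $\mathcal M_\psi^\sigma \subseteq \mathcal A_\sigma$, both $ab$ and $ba$ lie in $\mathcal A_\sigma$ whenever $a \in \mathcal A_\sigma$ and $b \in \mathcal M_\psi^\sigma$. Next, two of the four remaining claims are the easy ones: to show $ab \in \mathcal N_\psi$ use $(ab)^*(ab) = b^*a^*ab \leq \|a\|^2 b^*b$ together with $b \in \mathcal M_\psi \subseteq \mathcal N_\psi$ from (c) of Lemma \ref{04-11-21n}; to show $ba \in \mathcal N_\psi^*$ use $(ba)(ba)^* = baa^*b^* \leq \|a\|^2 bb^*$ together with $b \in \mathcal M_\psi \subseteq \mathcal N_\psi^*$ from the same lemma, and monotonicity of $\psi$.

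The remaining two inclusions — $ab \in \mathcal N_\psi^*$ and $ba \in \mathcal N_\psi$ — are where the KMS hypothesis enters. For $ab \in \mathcal N_\psi^*$ I would apply condition (1) of Theorem \ref{24-11-21d} to the element $x = b^*a^* \in \mathcal A_\sigma \subseteq D(\sigma_{-i\frac{\beta}{2}})$ to write
\[
\psi((ab)(ab)^*) = \psi(x^*x) = \psi\!\left(\sigma_{-i\frac{\beta}{2}}(x)\sigma_{-i\frac{\beta}{2}}(x)^*\right).
\]
By Lemma \ref{18-11-21g} and Lemma \ref{24-11-21k}, $\sigma_{-i\frac{\beta}{2}}(x) = \sigma_{i\frac{\beta}{2}}(b)^* \sigma_{i\frac{\beta}{2}}(a)^*$, so
\[
\sigma_{-i\frac{\beta}{2}}(x)\sigma_{-i\frac{\beta}{2}}(x)^* \leq \left\|\sigma_{i\frac{\beta}{2}}(a)\right\|^2 \sigma_{i\frac{\beta}{2}}(b)^*\sigma_{i\frac{\beta}{2}}(b),
\]
and the right-hand side has finite $\psi$-value because $\sigma_{i\frac{\beta}{2}}(b) \in \mathcal M_\psi \subseteq \mathcal N_\psi$ by definition of $\mathcal M_\psi^\sigma$. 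The argument for $ba \in \mathcal N_\psi$ is entirely symmetric: apply (1) of Theorem \ref{24-11-21d} to $x = ba \in \mathcal A_\sigma$ and use $\sigma_{-i\frac{\beta}{2}}(b) \in \mathcal M_\psi \subseteq \mathcal N_\psi^*$ together with the operator bound $\sigma_{-i\frac{\beta}{2}}(b)\sigma_{-i\frac{\beta}{2}}(a)\sigma_{-i\frac{\beta}{2}}(a)^*\sigma_{-i\frac{\beta}{2}}(b)^* \leq \|\sigma_{-i\frac{\beta}{2}}(a)\|^2 \sigma_{-i\frac{\beta}{2}}(b)\sigma_{-i\frac{\beta}{2}}(b)^*$.

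The only ``obstacle'' is really bookkeeping — one must keep straight which side of $ab$ or $ba$ is which, and recognize that the KMS identity is exactly the mechanism that trades finiteness of $\psi$ on $b^*b$ for finiteness on $\sigma_{i\frac{\beta}{2}}(b)\sigma_{i\frac{\beta}{2}}(b)^*$ (and vice versa), which is precisely what the definition \eqref{31-01-22e} of $\mathcal M_\psi^\sigma$ is designed to provide. No further results beyond Lemmas \ref{04-11-21n}, \ref{11-11-21}, \ref{18-11-21g}, \ref{24-11-21k} and Theorem \ref{24-11-21d} are needed.
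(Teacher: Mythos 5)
Your proof is correct and uses essentially the same mechanism as the paper: the easy memberships come from the operator bounds and the ideal properties of $\mathcal N_\psi$, and the two hard memberships come from condition (1) of Theorem \ref{24-11-21d} together with the $\sigma_z$-invariance of $\mathcal M^\sigma_\psi$. The only cosmetic difference is that the paper proves just the first inclusion and obtains the second by taking adjoints of the ($*$-invariant) sets involved, whereas you verify all four sub-claims directly.
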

\begin{proof} For the proof of the first inclusion, let $a \in  \mathcal M^\sigma_\psi$, $h \in \mathcal A_\sigma$. Since $ \mathcal M^\sigma_\psi \subseteq \mathcal A_\sigma \cap \mathcal N_\psi$, and $\mathcal N_\psi$ is a left ideal in $A$ while $\mathcal A_\sigma$ is a subalgebra of $A$ it suffices to show that $h a \in \mathcal N_\psi^*$. Observe therefore that 
\begin{align*}
& \psi((ha)(ha)^*) = \psi\left( \sigma_{-i \frac{\beta}{2}}\left(\sigma_{i \frac{\beta}{2}}(ha)\right)  \sigma_{-i \frac{\beta}{2}}\left(\sigma_{i \frac{\beta}{2}}(ha)\right)^*\right) \\
& = \psi\left(\sigma_{i \frac{\beta}{2}}(ha)^*\sigma_{i \frac{\beta}{2}}(ha)\right) =\psi\left(\sigma_{i \frac{\beta}{2}}( a)^*\sigma_{i \frac{\beta}{2}}(h)^*\sigma_{i \frac{\beta}{2}}(h)\sigma_{i \frac{\beta}{2}}( a)\right) \\
& \leq \|\sigma_{i \frac{\beta}{2}}(h)\|^2 \psi\left(\sigma_{i \frac{\beta}{2}}( a)^*\sigma_{i \frac{\beta}{2}}( a)\right) < \infty ,
\end{align*}
since $\sigma_{i \frac{\beta}{2}}( a) \in \mathcal M^\sigma_\psi \subseteq \mathcal N_\psi$ by definition of $\mathcal M^\sigma_\psi$. This establishes the first inclusion, and the second follows by taking adjoints.
\end{proof}

From Section \ref{GNS-KMS} we take the notation $\mathcal M_{\Lambda_\psi}^\sigma$ for the set
$$
\mathcal M_{\Lambda_\psi}^\sigma := \left\{ a \in \mathcal N_\psi \cap \mathcal N_\psi^* \cap \mathcal A_\sigma : \ \sigma_z(a) \in  \mathcal N_\psi \cap \mathcal N_\psi^* \cap \mathcal A_\sigma \ \ \forall z \in \mathbb C\right\} .
$$
Since $\sigma_z(\mathcal A_\sigma \mathcal M^\sigma_\psi) \subseteq \mathcal A_\sigma \mathcal M^\sigma_\psi$ it follows from Lemma \ref{14-02-22} that
\begin{equation}\label{09-02-23}
\mathcal A_\sigma \mathcal M_\psi^\sigma \subseteq \mathcal M_{\Lambda_\psi}^\sigma.
\end{equation}

Let $\mathcal J$ denote the collection of finite subsets $I \subseteq  \mathcal M_{\Lambda_\psi}^\sigma$ with the property that $\sum_{a \in I} aa^* \leq 1$. When $I \in \mathcal J$ we set
$$
w_I :=  \sum_{a \in I} aa^*.
$$

\begin{lemma}\label{01-03-22e} The collection $\left\{w_I : \ I \in \mathcal J\right\}$ is an approximate unit in $A$ in the following sense: For every finite set $S \subseteq A$ and every $\epsilon > 0$ there is an element $J \in \mathcal J$ such that $\left\|w_Ja -a\right\| \leq \epsilon$ for all $a \in S$.
\end{lemma}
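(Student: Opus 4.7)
The plan is to follow the proof of Lemma \ref{01-12-21x} almost line-for-line, with $\mathcal A_\sigma \mathcal M_\phi^\sigma$ there replaced by $\mathcal M_{\Lambda_\psi}^\sigma$. The role played there by the fullness of $B$ in $A$ is taken here by the simpler observation that $\mathcal M_{\Lambda_\psi}^\sigma$ is already norm-dense in $A$: by (c) of Lemma \ref{04-11-21n} we have $\mathcal M_\psi^\sigma \subseteq \mathcal N_\psi \cap \mathcal N_\psi^*$, and by definition of $\mathcal M_\psi^\sigma$ we have $\sigma_z(\mathcal M_\psi^\sigma) \subseteq \mathcal M_\psi^\sigma$ for all $z \in \mathbb C$, so $\mathcal M_\psi^\sigma \subseteq \mathcal M_{\Lambda_\psi}^\sigma$, and the density then follows from Lemma \ref{07-12-21a}.

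For each finite subset $I$ of $\mathcal M_{\Lambda_\psi}^\sigma$ one would set
$$
f_I := \sum_{a \in I} aa^* \quad \text{and} \quad e_I := (\#I)\,f_I\bigl(1+(\#I)f_I\bigr)^{-1}.
$$
By the standard computation from the proof of Proposition 2.2.18 of \cite{BR}, the net $\{e_I\}$ (indexed by such $I$ ordered by inclusion) consists of positive contractions, is monotone increasing, and satisfies $\lim_I e_Ia = a$ for every $a \in \mathcal M_{\Lambda_\psi}^\sigma$; by density and the uniform bound $\|e_I\| \leq 1$, it follows that $\lim_I e_Ia = a$ for all $a \in A$. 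Given a finite set $S \subseteq A$ and $\epsilon > 0$, pick such an $I$ with $\|e_Ia - a\| \leq \epsilon/2$ for every $a \in S$ and then $t \in \ ]0,1[$ with $\|t e_Ia - a\| \leq \epsilon/2$ for every $a \in S$. Decompose
$$
t e_I = \sum_{x \in I} y_xy_x^*, \qquad y_x := \sqrt{t\,\#I}\,\bigl(1+(\#I)f_I\bigr)^{-1/2} x \in A,
$$
and use density of $\mathcal M_{\Lambda_\psi}^\sigma$ in $A$ to approximate each $y_x$ in norm by an element $z_x \in \mathcal M_{\Lambda_\psi}^\sigma$ closely enough that, with $J := \{z_x : x \in I\}$, we have simultaneously $\|w_J - t e_I\| \leq 1 - t$ and $\|w_Ja - t e_Ia\| \leq \epsilon/2$ for every $a \in S$. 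Then $w_J \leq t e_I + (1-t)\cdot 1 \leq 1$, so $J \in \mathcal J$, and $\|w_Ja - a\| \leq \epsilon$ for every $a \in S$, as required.

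There is no genuine obstacle; the only subtlety is the dual role of the auxiliary scalar $t < 1$, which provides both the margin needed to keep $J$ in $\mathcal J$ after perturbation and the tolerance for the approximation on $S$, exactly as in Lemma \ref{01-12-21x}.
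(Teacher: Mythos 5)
Your proof is correct and rests on exactly the same mechanism as the paper's: the Bratteli--Robinson approximate unit built from finite subsets of a dense subset of $\mathcal M_{\Lambda_\psi}^\sigma$, followed by the scalar $t<1$ perturbation trick to land inside $\mathcal J$. The paper compresses this to one line by invoking Lemma \ref{01-12-21x} with $B=A$ and $\phi=\psi$ together with the inclusion $\mathcal A_\sigma\mathcal M_\psi^\sigma\subseteq\mathcal M_{\Lambda_\psi}^\sigma$ from \eqref{09-02-23}, so that the sets produced there already lie in $\mathcal J$; your self-contained rerun of the argument, using instead $\mathcal M_\psi^\sigma\subseteq\mathcal M_{\Lambda_\psi}^\sigma$ and the density from Lemma \ref{07-12-21a}, is equally valid.
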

\begin{proof} Thanks to \eqref{09-02-23} this follows from Lemma \ref{01-12-21x} applied with $B =A$ and $\phi = \psi$.
\end{proof}

To simplify the notation in the following, set
$$
N := \pi_{\psi}(A)''
$$
and 
$$
\xi := - i\frac{\beta}{2} .
$$
Define
$$
\psi'' : N^+ \to [0,\infty]
$$
by
$$
\psi''(m) := \sup_{I \in \mathcal J} \sum_{x \in I} \left<m \Lambda_\psi(\sigma_{-\xi}(x)), \Lambda_\psi(\sigma_{-\xi}(x))\right> .
$$

\begin{lemma}\label{06-02-23} $\psi''$ is additive; i.e. $\psi''(n+m)= \psi''(n) + \psi''(m)$ when $n,m \in N^+$.
\end{lemma}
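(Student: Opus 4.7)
The sub-additivity $\psi''(n+m) \leq \psi''(n) + \psi''(m)$ is immediate and I would dispose of it first: each
$$
\omega_I(x) := \sum_{y \in I}\langle x\, \Lambda_\psi(\sigma_{-\xi}(y)), \Lambda_\psi(\sigma_{-\xi}(y))\rangle, \quad x \in N,
$$
is a finite sum of vector states, hence a bounded positive normal linear functional on $N$. Therefore $\omega_I(n+m) = \omega_I(n) + \omega_I(m) \leq \psi''(n) + \psi''(m)$, and taking the supremum over $I \in \mathcal{J}$ gives the inequality.

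For super-additivity my plan is to adapt the four-stage rearrangement that was used in the proof of Lemma \ref{01-12-21a} (the Laca--Neshveyev additivity). Assuming $\psi''(n+m) < \infty$ and given $\epsilon > 0$, I would pick $I_1, I_3 \in \mathcal{J}$ with $\omega_{I_1}(n) > \psi''(n) - \epsilon$ and $\omega_{I_3}(m) > \psi''(m) - \epsilon$, and then (i) insert an approximate unit $I_2 \in \mathcal{J}$ supplied by Lemma \ref{01-03-22e}, using the strong-operator convergence $\pi_\psi(w_{I_2}) \to 1_N$ (non-degeneracy of $\pi_\psi$, Lemma \ref{08-02-22}) to represent each vector-state norm $\|n^{1/2}\Lambda_\psi(\sigma_{-\xi}(x))\|^2$ as a limit of sums $\sum_{y \in I_2}\|\pi_\psi(y)^* n^{1/2}\Lambda_\psi(\sigma_{-\xi}(x))\|^2$; (ii) apply the KMS swap \eqref{01-12-21d} to exchange the roles of the $I_1$ and $I_2$ indices (and separately those of $I_3$ and $I_2$); (iii) insert a second approximate unit $I_4$ and swap back; and (iv) invoke Kadison's inequality together with the $\sigma$-invariance of $\psi$ to merge the two resulting expressions, via the rescaling-and-reindexing construction at the end of the proof of Lemma \ref{01-12-21a}, into a single $I_5 \in \mathcal{J}$ satisfying $\omega_{I_5}(n+m) \geq \psi''(n) + \psi''(m) - C\epsilon$. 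Letting $\epsilon \to 0$ then completes the proof.

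The main obstacle will be that the Laca--Neshveyev swap requires the ``middle factor'' of each product to lie in $\mathcal{A}_\sigma \subseteq A$, whereas the natural middle factors $n^{1/2}, m^{1/2}$ lie in $N$ and admit no direct flow-analytic approximation within $A$. To bypass this I would first establish that $\psi''(\pi_\psi(a)) = \psi(a)$ for every $a \in A^+$, reducing the obstruction to the already-settled situation of Lemma \ref{01-12-21a}. The inequality $\psi''(\pi_\psi(a)) \geq \psi(a)$ holds because the Laca--Neshveyev index set $\mathcal{I}$ appearing in \eqref{03-01-22} (with $B = A$ and $\phi = \psi$) sits inside $\mathcal{J}$ by Lemma \ref{14-02-22}, while the reverse inequality $\psi''(\pi_\psi(a)) \leq \psi(a)$ can be obtained by approximating $a$ in norm by $R_k(\sqrt a)^2$ (an element of $(\mathcal{A}_\sigma)^+$), applying the KMS computation from Lemma \ref{01-12-21e} to the approximants, and bounding by $\psi(a)$ via Kadison's inequality and Lemma \ref{02-12-21ax}. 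Once additivity of $\psi''$ is in hand on $\pi_\psi(A)^+$, promoting it to all of $N^+$ combines the normality of each $\omega_I$ with a Kaplansky-density approximation $\pi_\psi(a_\alpha) \to n$, $\pi_\psi(b_\alpha) \to m$ in the $\sigma$-weak topology with uniformly bounded norm; the delicate step, and the one that genuinely requires the rearrangement (i)--(iv), is to interchange the supremum over $I$ with the limit over $\alpha$ in order to produce a single $I \in \mathcal{J}$ simultaneously realizing both approximations.
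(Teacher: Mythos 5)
Your subadditivity argument is fine, and your preliminary reduction $\psi''\circ\pi_\psi=\psi$ on $A^+$ is essentially correct (it is the paper's Lemma \ref{03-03-22fx}, and your route to it via the Laca--Neshveyev index sets is workable). The gap is in the step you yourself flag as ``delicate'': promoting superadditivity from $\pi_\psi(A)^+$ to $N^+$. Knowing additivity on the dense subcone $\pi_\psi(A)^+$ transfers nothing to general $n,m\in N^+$, because $\psi''$ is only \emph{lower} semicontinuous for the $\sigma$-weak topology; a Kaplansky-density net $\pi_\psi(a_\alpha)\to n$ gives $\psi''(n)\leq\liminf_\alpha\psi(a_\alpha)$ and no inequality in the other direction, so ``interchanging the supremum over $I$ with the limit over $\alpha$'' is not a legitimate move. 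Nor can the rearrangement (i)--(iv) of Lemma \ref{01-12-21a} be invoked as cited: the swap \eqref{01-12-21d} is the KMS condition applied to elements of the form $y^*h\sigma_{-\xi}(x)$ with $h\in\mathcal A_\sigma\subseteq A$, and for $n\in N^+$ there is no element of $\mathcal A_\sigma$ playing the role of $R_k(\sqrt{n})$. One could try to rebuild the whole rearrangement inside $N$ using the smoothing operators $\overline{R}_k$ and the von Neumann--level KMS identity of Lemma \ref{02-03-22g}, but that identity is itself a substantial later result, and your proposal neither proves it nor substitutes anything for it; as written, step (ii) of your plan has no justification for the middle factors $n^{1/2},m^{1/2}$.

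The paper avoids the rearrangement entirely by exhibiting $\psi''$ as the pointwise supremum of an \emph{upward directed} family of positive normal functionals, which is automatically additive. Concretely, each $\omega\in\mathcal F_\psi$ is shown (via the operator $T_\omega$ of Lemma \ref{08-11-21bx} and an approximate unit) to be a vector functional $\omega=\langle\pi_\psi(\cdot)\xi_\omega,\xi_\omega\rangle$, hence to have a unique normal extension $\tilde\omega\in N_*^+$; the identity \eqref{06-02-23a}, $\psi''(m)=\sup\{\tilde\omega(m):\omega\in\mathcal F_\psi\}$, is then proved in both directions (the inequality $\leq$ by checking that each $\omega_I$, $I\in\mathcal J$, restricts to an element of $\mathcal F_\psi$ on $A$, and $\geq$ by compressing $m$ with the net $\{u_j\}$ of Lemma \ref{09-02-22x}); finally the directedness supplied by Lemma \ref{09-11-21ax} turns the supremum into a limit of an increasing net of normal functionals. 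If you want to salvage your approach, the missing idea is precisely this replacement of the non-directed family $\{\omega_I\}_{I\in\mathcal J}$ by the directed family $\{\tilde\omega\}$.
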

\begin{proof} Let $\omega \in \mathcal F_\psi$. It follows from Lemma \ref{08-11-21bx} that there is a operator $0 \leq T_\omega \leq 1$ in $\pi_{\psi}(A)'$ such that $\omega(b^*a) = \left<T_\omega \Lambda_\psi(a), \Lambda_\psi(b)\right>$ for all $a,b \in \mathcal N_\psi$. By Lemma \ref{16-12-21a} there is an increasing net $0 \leq e_m \leq 1$ in $\mathcal N_\psi$ which is an approximate unit in $A$. For $m \leq m'$  we find that
\begin{align*}
&\left\|T_\omega^{\frac{1}{2}} \Lambda_\psi(e_{m'})  -T_\omega^{\frac{1}{2}} \Lambda_\psi (e_m)\right\|^2 = \left< T_\omega \Lambda_\psi(e_{m'} -e_m), \Lambda_\psi(e_{m'} -e_m)\right> \\
& = \omega((e_{m'} -e_m)^2)  \leq \omega(e_{m'} - e_m) .
\end{align*}
Since $\lim_{m \to \infty} \omega(e_m) = \|\omega\|$ by Proposition 2.3.11 of \cite{BR}, this estimate shows that $\{T_\omega^{\frac{1}{2}} \Lambda_\psi(e_{m})\}$ converges in $H_\psi$, and we set $\xi_\omega := \lim_{m \to \infty} T_\omega^{\frac{1}{2}} \Lambda_\psi(e_{m})$. Then
$$
\omega(a) = \lim_{m \to \infty} \omega(e_mae_m) = \lim_{m \to \infty}\left<T_\omega \Lambda_\psi(e_m), \Lambda_\psi(a^*e_m)\right> = \left<\pi_\psi(a)\xi_\omega,\xi_\omega\right> 
$$
for all $a \in A$. Define $\tilde{\omega} \in N_*^+$ such that $\tilde{\omega}(m) := \left< m\xi_\omega,\xi_\omega\right>$. Then $\tilde{\omega}$ is the unique normal positive functional on $N$ with the property that
$$
\tilde{\omega} \circ \pi_\psi = \omega .
$$
To prove that $\psi''$ is additive we will show that
\begin{equation}\label{06-02-23a}
 \psi''(m) = \sup \left\{ \tilde{\omega}(m) : \ \omega \in \mathcal F_\psi \right\} .
\end{equation}
Let $I \in \mathcal J$ and $a \in \mathcal A_\sigma$. Then
\begin{align*}
& \sum_{x \in I} \left<\pi_\psi(a^*a) \Lambda_\psi(\sigma_{-\xi}(x)), \Lambda_\psi(\sigma_{-\xi}(x))\right> = \sum_{x \in I} \left< \Lambda_\psi(a\sigma_{-\xi}(x)), \Lambda_\psi(a\sigma_{-\xi}(x))\right> \\
& = \sum_{x \in I} \psi(\sigma_{-\xi}(x)^*
a^*a\sigma_{-\xi}(x))
 =
 \sum_{x \in I} \psi(\sigma_\xi(a)xx^*\sigma_\xi(a)^*) \\
 &\leq \psi(\sigma_\xi(a)\sigma_\xi(a)^*) = \psi(a^*a) .
\end{align*}
Let $a \in \mathcal N_\psi$. Inserting $R_l(a)$ for $a$ above we find that
$$
\sum_{x \in I} \left<\pi_\psi(R_l(a)^*R_l(a)) \Lambda_\psi(\sigma_{-\xi}(x)), \Lambda_\psi(\sigma_{-\xi}(x))\right>
\leq \psi(R_l(a)^*R_l(a)) .
$$
Taking the limit $l \to \infty$ it follows now from Lemma \ref{17-11-21m} and Lemma \ref{24-11-21g} that
$$
\sum_{x \in I} \left<\pi_\psi(a^*a) \Lambda_\psi(\sigma_{-\xi}(x)), \Lambda_\psi(\sigma_{-\xi}(x))\right>
\leq \psi(a^*a) .
$$
 This shows that the functional $\omega \in A^*$ given by
$$
\omega(b) := \sum_{x \in I} \left<\pi_\psi(b) \Lambda_\psi(\sigma_{-\xi}(x)), \Lambda_\psi(\sigma_{-\xi}(x))\right> 
$$
is an $\mathcal F_\psi$. Since $\left<\pi_\psi(a) \Lambda_\psi(\sigma_{-\xi}(x)), \Lambda_\psi(\sigma_{-\xi}(x))\right>  = \tilde{\omega} \circ \pi_\psi(a)$ for all $a \in A$ it follows that $\left<m \Lambda_\psi(\sigma_{-\xi}(x)), \Lambda_\psi(\sigma_{-\xi}(x))\right>  = \tilde{\omega}(m)$ for all $m \in N$, and hence from the definition of $\psi''$ that 
$$
\psi''(m) \leq \sup \left\{ \tilde{\omega}(m) : \ \omega \in \mathcal F_\psi \right\}.
$$ 
To establish the reverse inequality, let $\omega \in \mathcal F_\psi$ and $m \in N^+$. Let $\epsilon > 0$. We are going to use the net $\{u_j\}$ of Lemma \ref{09-02-22x}. Since $\pi_\psi$ is non-degenerate it follows from the properties (b) and (c) in Lemma \ref{09-02-22x} that $\lim_{j \to \infty} \pi_\psi(u_j)^*m\pi_\psi(u_j) = m$ strongly. It follows that there is a $j$ such that
$$
\tilde{\omega}(m) - \epsilon \leq \tilde{\omega}(\pi_{\psi}(u_j)^*m\pi_\psi(u_j)) .
$$
Let $a \in A$. 
Then
\begin{align*}
& \tilde{\omega}(\pi_{\psi}(u_j)^*\pi_\psi(a^*a)\pi_{\psi}(u_j)) = \omega(u_j^*a^*au_j) = \left<T_\omega \Lambda_\psi(au_j),\Lambda_\psi(au_j)\right> \\
& = \left<T_\omega \pi_\psi(a^*a)\Lambda_\psi(u_j), \Lambda_{\psi}(u_j)\right> \leq \left< \pi_\psi(a^*a)\Lambda_\psi(u_j), \Lambda_{\psi}(u_j)\right> .
\end{align*}
Since $\pi_\psi(A^+)$ is $\sigma$-weakly dense in $N^+$ it follows that
\begin{equation}\label{07-02-23}
\tilde{\omega}(\pi_{\psi}(u_j)^*m\pi_{\psi}(u_j)) \leq \left< m\Lambda_\psi(u_j), \Lambda_{\psi}(u_j)\right>  .
\end{equation}
From the proof of Lemma \ref{09-02-22x} we see that
$$
\Lambda_\psi(u_j) = \Lambda_\psi(\sigma_{-\xi}(R_1(e_j))) ,
$$
where $0 \leq e_j \leq 1$. It follows from (a) of Lemma \ref{09-02-22x} that $R_1(e_j) \in \mathcal M^\sigma_{\Lambda_\psi}$. Using Kadisons inequality, Proposition 3.2.4 in \cite{BR}, we find that $R_1(e_j)R_1(e_j)^* = R_1(e_j)R_1(e_j) \leq R_1(e_j^2) \leq 1$. Hence $\{R_1(e_j)\} \in \mathcal J$ and it follows therefore from \eqref{07-02-23} that $\tilde{\omega}(\pi_{\psi}(u_j)^*m\pi_{\psi}(u_j)) \leq \psi''(m)$, implying that $\tilde{\omega}(m) - \epsilon \leq \psi''(m)$. It follows that \eqref{06-02-23a} holds.

Set $\mathcal G = \left\{ t\omega: \ \omega \in \mathcal F_\psi, t \in ]0,1[ \right\}$. It follows from Lemma \ref{09-11-21ax} that $\mathcal G$ is a directed set with respect to the natural order in the hermitian part of $A^*$ and hence 
$\left\{ \tilde{\omega}\right\}_{\omega \in \mathcal G}$
is a net of positive normal functionals on $N$. It follows from \eqref{06-02-23a} that 
$$
\psi''(m) = \lim_{\omega \to \infty} \tilde{\omega}(m)
$$
for all $m \in N^+$, and the additivity of $\psi''$ is a consequence of this because $\psi''(n+m) = \lim_{\omega \to \infty} \tilde{\omega}(n+m) =  \lim_{\omega \to \infty} \tilde{\omega}(n)  + \lim_{\omega \to \infty} \tilde{\omega}(m) = \psi''(n) + \psi''(m)$.
\end{proof}

\begin{lemma}\label{01-03-22f} $\psi''$ is a  normal faithful semi-finite weight on $N$.
\end{lemma}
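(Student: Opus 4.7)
My plan is to exploit the alternative description of $\psi''$ that emerged in the proof of Lemma~\ref{06-02-23}, namely
$$
\psi''(m) \;=\; \sup_{\omega \in \mathcal F_\psi} \tilde\omega(m),
$$
where each $\tilde\omega$ is a normal positive functional on $N$. Homogeneity $\psi''(tm) = t\psi''(m)$ for $t > 0$ is immediate from this formula and the fact that $\tilde\omega$ is linear, and $\psi''(0) = 0$ reads off the original defining sum. Combined with Lemma~\ref{06-02-23} this shows $\psi''$ is a weight provided we also verify that it is normal. But normality is automatic: since each $\tilde\omega$ is $\sigma$-weakly continuous and a pointwise supremum of $\sigma$-weakly continuous functionals is $\sigma$-weakly lower semi-continuous, the set $\{m \in N^+ : \psi''(m) > t\}$ is $\sigma$-weakly open for every $t \in \mathbb R$.

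For semi-finiteness, I will show that $\pi_\psi(\mathcal M^+_\psi) \subseteq \mathcal M^+_{\psi''}$. Indeed, the very estimate established inside the proof of Lemma~\ref{06-02-23} gives
$$
\sum_{x \in I} \bigl\langle \pi_\psi(a^*a)\Lambda_\psi(\sigma_{-\xi}(x)), \Lambda_\psi(\sigma_{-\xi}(x))\bigr\rangle \;\leq\; \psi(a^*a)
$$
for all $I \in \mathcal J$ and $a \in \mathcal N_\psi$, so $\psi''(\pi_\psi(a^*a)) \leq \psi(a^*a) < \infty$. Since $\psi$ is densely defined, $\mathcal M^+_\psi$ is norm dense in $A^+$, hence $\pi_\psi(\mathcal M^+_\psi)$ is norm dense in $\pi_\psi(A)^+$. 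The Kaplansky density theorem then shows that $\pi_\psi(A)^+$ is $\sigma$-weakly dense in $N^+$, and combining gives the $\sigma$-weak density of $\mathcal M^+_{\psi''}$ in $N^+$.

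The most delicate point will be faithfulness. Suppose $\psi''(m) = 0$ for some $m \in N^+$. Then for every $I \in \mathcal J$ and every $x \in I$,
$$
\bigl\|m^{1/2}\Lambda_\psi(\sigma_{-\xi}(x))\bigr\|^2 = 0.
$$
By scaling, this forces $m^{1/2}\Lambda_\psi(\sigma_{-\xi}(x)) = 0$ for every $x \in \mathcal M^\sigma_{\Lambda_\psi}$. Now Lemma~\ref{25-11-21} shows that $\sigma_{-\xi}$ is a bijection of $\mathcal M^\sigma_{\Lambda_\psi}$ onto itself, so in fact $m^{1/2}\Lambda_\psi(y) = 0$ for all $y \in \mathcal M^\sigma_{\Lambda_\psi}$. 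The key input is then Lemma~\ref{17-03-22d} (applied to the GNS triple of $\psi$, which has the required properties by Lemma~\ref{08-02-22}, $\sigma$-invariance of $\psi$, and Lemma~\ref{17-11-21e}): this tells us that $\mathcal M^\sigma_{\Lambda_\psi}$ is a core for $\Lambda_\psi$, so in particular $\Lambda_\psi(\mathcal M^\sigma_{\Lambda_\psi})$ is dense in $H_\psi$. Therefore $m^{1/2} = 0$ on a dense subspace of $H_\psi$, forcing $m = 0$.

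The main obstacle I anticipate is the faithfulness argument — specifically, making sure that the vectors $\Lambda_\psi(\sigma_{-\xi}(x))$ arising from elements $x$ of a set in $\mathcal J$ actually span a dense subspace; the cleanest route is the core argument above, routed through Lemma~\ref{17-03-22d}. Everything else is a formal consequence of the representation of $\psi''$ as a supremum of normal states together with Kaplansky density.
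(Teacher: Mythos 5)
Your proof is correct and follows essentially the same route as the paper: additivity from Lemma~\ref{06-02-23}, normality as a supremum of $\sigma$-weakly continuous maps, semi-finiteness via the bound $\psi''(\pi_\psi(a^*a)) \leq \psi(a^*a)$ together with Kaplansky density, and faithfulness by reducing to the density of $\Lambda_\psi$ applied to an analytic core. The only (harmless) differences are that the paper tests semi-finiteness on $\{aa^* : a \in \mathcal M^\sigma_\psi\}$ rather than on $\pi_\psi(\mathcal M^+_\psi)$, and cites Lemma~\ref{07-12-21c} instead of Lemma~\ref{17-03-22d} for the density step in the faithfulness argument.
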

\begin{proof} $\psi''$ is additive by Lemma \ref{06-02-23}. For $I \in \mathcal J$ the map 
$$
N \ni m \mapsto \sum_{x \in I} \left<m \Lambda_\psi(\sigma_{-\xi}(x)), \Lambda_\psi(\sigma_{-\xi}(x))\right> 
$$
is $\sigma$-weakly continuous so it follows that $\psi''$ is lower semi-continuous with respect to the $\sigma$-weak topology; that is, $\psi''$ is normal. To see that $\psi''$ is semi-finite, let $ a \in \mathcal M_\psi^\sigma$. Then 
\begin{align*}
& \sum_{x \in I} \left<\pi_\psi(aa^*) \Lambda_\psi(\sigma_{-\xi}(x)), \Lambda_\psi(\sigma_{-\xi}(x))\right> = \sum_{x \in I} \left< \Lambda_\psi(a^*\sigma_{-\xi}(x)), \Lambda_\psi(a^*\sigma_{-\xi}(x))\right> \\
& = \sum_{x \in I} \psi(\sigma_{-\xi}(x)^*aa^*\sigma_{-\xi}(x)) = \sum_{x \in I} \psi(\sigma_\xi(a^*)xx^*\sigma_\xi(a^*)^*) \\
& \leq \psi(\sigma_\xi(a^*)\sigma_\xi(a^*)^*) = \psi(\sigma_{-\xi}(a)^* \sigma_{-\xi}(a)). 
\end{align*}
It follows that $\psi''(\pi_\psi(aa^*)) \leq \psi(\sigma_{-\xi}(a)^* \sigma_{-\xi}(a))$, and hence that $\psi''(\pi_\psi(aa^*)) < \infty$ since $\sigma_{-\xi}(a) \in \mathcal N_\psi$. Note that $\mathcal M_\psi^\sigma$ is dense in $A$ by Lemma \ref{07-12-21a} and that an application of Kaplansky's density theorem shows that $\pi_\psi(A^+)$ is $\sigma$-strongly and hence also $\sigma$-weakly dense in $N^+$. Thus
$$
\left\{\pi_\psi(aa^*) : \ a \in \mathcal M_\psi^\sigma \right\}
$$
is $\sigma$-weakly dense in $N^+$ and we conclude therefore that $\psi''$ is semi-finite. To show that $\psi''$ is faithful, let $m \in N^+$ and assume that $\psi''(m) = 0$. Then 
 $$
 \sum_{x \in I} \left<m \Lambda_\psi(\sigma_{-\xi}(x)), \Lambda_\psi(\sigma_{-\xi}(x))\right> = 0
 $$ 
 for all $I \in \mathcal J$, and hence in particular $m \Lambda_\psi(\sigma_{-\xi}(x)) = 0$ for all $x \in \mathcal M_{\Lambda_\psi}^\sigma$. Since $\mathcal M_\psi^\sigma \subseteq \mathcal M_{\Lambda_\psi}^\sigma$ it follows that $m\Lambda_\psi(x) = 0$ for all $x \in \mathcal M^\sigma_\psi$. It follows from Lemma \ref{07-12-21c} (applied with $F= \{0\}$) that $\left\{\Lambda_\psi(x) : \ x \in \mathcal M^\sigma_\psi \right\}$ is dense in $H_\psi$ and we conclude that $m =0$; i.e. $\psi''$ is faithful.
\end{proof}

Consider the unitary group representation $U^\psi$ on $H_\psi$, defined such that $U^\psi_t \Lambda_\psi(a) := \Lambda_\psi(\sigma_t(a))$ when $a \in \mathcal N_\psi$, cf. Section \ref{invariantweights}. It follows from \eqref{08-03-22a} that we can define a normal flow $\sigma''$ on $N$ such that
$$
\sigma''_t(m) = U^\psi_t m {U^\psi_t}^* .
$$
We note that
\begin{equation}\label{08-03-22b}
\pi_\psi \circ \sigma_t = \sigma''_t \circ \pi_\psi .
\end{equation}

\begin{lemma}\label{01-03-22g} $\psi''$ is $\sigma''$-invariant; that is, $\psi'' \circ \sigma''_t = \psi''$ for all $t \in \mathbb R$.
\end{lemma}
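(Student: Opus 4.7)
The plan is to verify $\sigma''$-invariance directly from the definition of $\psi''$, exploiting the fact that the indexing set $\mathcal J$ is preserved by the action of $\sigma_t$ on $A$. The key preliminary observation to record is that $\sigma_t(\mathcal M_{\Lambda_\psi}^\sigma) = \mathcal M_{\Lambda_\psi}^\sigma$ for every $t \in \mathbb R$: since $\sigma_t$ commutes with every $\sigma_z$ ($z \in \mathbb C$) by Lemma \ref{18-11-21b}, preserves $\mathcal A_\sigma$, and preserves $\mathcal N_\psi$ and $\mathcal N_\psi^*$ because $\psi$ is $\sigma$-invariant, all three conditions defining $\mathcal M_{\Lambda_\psi}^\sigma$ are stable under $\sigma_t$. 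Consequently, for $I \in \mathcal J$ and $t \in \mathbb R$, the set $\sigma_{-t}(I) := \{\sigma_{-t}(x) : x \in I\}$ again lies in $\mathcal J$, because
$$
\sum_{x \in I} \sigma_{-t}(x)\sigma_{-t}(x)^* = \sigma_{-t}\!\left(\sum_{x\in I} xx^*\right) \leq \sigma_{-t}(1) = 1,
$$
and the map $I \mapsto \sigma_{-t}(I)$ is a bijection of $\mathcal J$ onto itself.

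Given this, the computation is routine. Fix $m \in N^+$ and $t \in \mathbb R$. Using that $U^\psi_t$ is unitary and that $U^\psi_{-t}\Lambda_\psi(a) = \Lambda_\psi(\sigma_{-t}(a))$ for $a \in \mathcal N_\psi$, together with Lemma \ref{18-11-21b} which gives $\sigma_{-t}\circ \sigma_{-\xi} = \sigma_{-\xi}\circ \sigma_{-t}$ on $D(\sigma_{-\xi})$, I would write
\begin{align*}
\psi''(\sigma''_t(m))
&= \sup_{I \in \mathcal J} \sum_{x \in I} \left<U^\psi_t m U^\psi_{-t}\, \Lambda_\psi(\sigma_{-\xi}(x)),\, \Lambda_\psi(\sigma_{-\xi}(x))\right> \\
&= \sup_{I \in \mathcal J} \sum_{x \in I} \left<m\, U^\psi_{-t}\Lambda_\psi(\sigma_{-\xi}(x)),\, U^\psi_{-t}\Lambda_\psi(\sigma_{-\xi}(x))\right> \\
&= \sup_{I \in \mathcal J} \sum_{x \in I} \left<m\, \Lambda_\psi(\sigma_{-\xi}(\sigma_{-t}(x))),\, \Lambda_\psi(\sigma_{-\xi}(\sigma_{-t}(x)))\right>.
\end{align*}
Substituting $I' = \sigma_{-t}(I)$ and invoking the bijection $\mathcal J \to \mathcal J$, $I \mapsto \sigma_{-t}(I)$, the last supremum equals $\psi''(m)$, completing the proof.

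There is no serious obstacle here; the only thing worth being careful about is verifying that $\mathcal J$ really is mapped to itself by $\sigma_{-t}$, and that $\sigma_{-t}$ commutes with $\sigma_{-\xi}$ on the relevant elements (both of which are already available from the earlier lemmas in the chapter).
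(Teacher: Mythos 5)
Your proof is correct and follows essentially the same route as the paper: the paper's proof is exactly the computation moving $U^\psi_{\pm t}$ across the inner product, commuting $\sigma_{-t}$ with $\sigma_{-\xi}$, and re-indexing by $I' := \sigma_{-t}(I) \in \mathcal J$. Your added verification that $\sigma_{-t}$ preserves $\mathcal M_{\Lambda_\psi}^\sigma$ and hence $\mathcal J$ is a detail the paper leaves implicit, and it is a worthwhile one to record.
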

\begin{proof} Let $I \in \mathcal J$ and $m \in N^+$. Then
\begin{align*}
&  \sum_{x \in I} \left< \sigma''_t(m) \Lambda_\psi(\sigma_{-\xi}(x)), \Lambda_\psi(\sigma_{-\xi}(x))\right> = 
\sum_{x \in I} \left< m U^\psi_{-t}\Lambda_\psi(\sigma_{-\xi}(x)), U^\psi_{-t}\Lambda_\psi(\sigma_{-\xi}(x))\right> \\
& = \sum_{x \in I} \left< m \Lambda_\psi(\sigma_{-\xi}(\sigma_{-t}(x))), \Lambda_\psi(\sigma_{-\xi}(\sigma_{-t}(x)))\right> \\
& = \sum_{y \in I'} \left< m \Lambda_\psi(\sigma_{-\xi}(y)), \Lambda_\psi(\sigma_{-\xi}(y))\right> 
\end{align*}
where $I' := \sigma_{-t}(I) \in \mathcal J$. The lemma follows.
\end{proof}

As in Section \ref{KMSdefn}, set
$$
\mathcal D_\beta := \left\{z \in \mathbb C: \ \Imag z \in [0,\beta]\right\}
$$
when $\beta \geq 0$, and
$$
\mathcal D_\beta := \left\{z \in \mathbb C: \ \Imag z \in [\beta,0]\right\}
$$
when $\beta \leq 0$, and let $\mathcal D_\beta^0$ denote the interior of $\mathcal D_\beta$ in $\mathbb C$. We are aiming to prove the following theorem.

\begin{thm}\label{21-02-22dx} $\psi''$ is a faithful normal semi-finite and $\sigma''$-invariant weight on $\pi_\psi(A)''$, and for each pair $a,b \in \mathcal N_{\psi''} \cap \mathcal N_{\psi''}^*$ there is a continuous bounded function $f: {\mathcal D_\beta} \to \mathbb C$ which is holomorphic in the interior $\mathcal D_\beta^0$ of ${\mathcal D_\beta}$ and has the property that
\begin{itemize}
\item $f(t) = \psi''(b \sigma''_t(a)) \ \ \forall t \in \mathbb R$, and
\item $f(t+i\beta) = \psi''(\sigma''_t(a)b) \ \ \forall t \in \mathbb R$.
\end{itemize}
\end{thm}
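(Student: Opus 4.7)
All properties except the KMS condition have already been secured: Lemma \ref{01-03-22f} gives that $\psi''$ is faithful, normal, and semi-finite, and Lemma \ref{01-03-22g} gives $\sigma''$-invariance. The plan for the KMS identity is to transfer condition (4) of Theorem \ref{24-11-21d}, which $\psi$ satisfies for $\sigma$, to $\psi''$ and $\sigma''$, first on the $\sigma$-weakly dense $*$-subalgebra $\pi_\psi(\mathcal M^\sigma_\psi)$ and then by an approximation argument on all of $\mathcal N_{\psi''} \cap \mathcal N_{\psi''}^*$.

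The first step is to observe that $\psi''\circ\pi_\psi = \psi$ on $A^+$. This is immediate from Combes' theorem \ref{04-11-21k} combined with the identity $\psi''(m) = \sup\{\tilde{\omega}(m) : \omega \in \mathcal F_\psi\}$ extracted in the proof of Lemma \ref{06-02-23}, since $\tilde{\omega}\circ\pi_\psi = \omega$. Next I would introduce the smoothing operators on $N$, $R_n^{\sigma''}(m) := \sqrt{n/\pi}\int_{\mathbb R} e^{-ns^2}\sigma''_s(m)\,ds$, with the integral understood in the $\sigma$-weak sense. Since $\sigma''$ is $\sigma$-weakly continuous, the arguments of Section \ref{Section-entire} adapt to show that $R_n^{\sigma''}(m)\in \mathcal A_{\sigma''}$, that $\sigma''_z R_n^{\sigma''}(m) = \sqrt{n/\pi}\int_{\mathbb R} e^{-n(s-z)^2}\sigma''_s(m)\,ds$, and, using $\psi'' = \sup_{\omega}\tilde{\omega}$ together with $\sigma''$-invariance, that $\psi''\circ R_n^{\sigma''} = \psi''$. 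Because $\sigma''_s\circ\pi_\psi = \pi_\psi\circ\sigma_s$, one has $R_n^{\sigma''}(\pi_\psi(a)) = \pi_\psi(R_n(a))$ for all $a \in A$.

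For $a,b \in \mathcal M^\sigma_\psi$ the function $t \mapsto \psi''(\pi_\psi(b)\sigma''_t(\pi_\psi(a))) = \psi(b\sigma_t(a))$ extends, by the implication (1)$\Rightarrow$(4) in Theorem \ref{24-11-21d}, to a bounded continuous function on $\mathcal D_\beta$ holomorphic in $\mathcal D^0_\beta$ with $f(t+i\beta) = \psi''(\sigma''_t(\pi_\psi(a))\pi_\psi(b))$; the boundedness is harvested from the fact, noted in the footnote after the proof of (3)$\Rightarrow$(4), that the approximating entire functions are uniformly bounded on $\mathcal D_\beta$. To pass from $\pi_\psi(\mathcal M^\sigma_\psi)$ to arbitrary $a,b \in \mathcal N_{\psi''}\cap \mathcal N_{\psi''}^*$ I would develop a von Neumann algebraic analogue of Lemma \ref{07-12-21c}: given $a \in \mathcal N_{\psi''}\cap \mathcal N_{\psi''}^*$, use an increasing approximate unit $\{e_j\}\subseteq D(\Lambda_{\psi''})$ built as in Lemma \ref{16-12-21a} (which applies since $\psi''$ is semi-finite, faithful, and normal) together with $R_n^{\sigma''}$ to produce a sequence $a_n \in \mathcal M^{\sigma''}_{\psi''}$ with $a_n \to a$ in a sufficiently strong sense, $\Lambda_{\psi''}(a_n) \to \Lambda_{\psi''}(a)$, and $\Lambda_{\psi''}(a_n^*) \to \Lambda_{\psi''}(a^*)$. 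Applying Phragmen-Lindel\"of to the bounded holomorphic functions $f_{n,m}$ built from the pairs $(a_n,b_m)$ yields uniform convergence on $\mathcal D_\beta$ to the desired $f$, exactly as in the proof of (3)$\Rightarrow$(4) of Theorem \ref{24-11-21d}.

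The principal obstacle is the construction of the approximating sequence in $\mathcal M^{\sigma''}_{\psi''}$: because $\sigma''$ is only $\sigma$-weakly continuous on $N$, the norm-continuous manipulations of Lemma \ref{07-12-21c} must be rephrased using the normal approximate unit and the normal functionals $\tilde{\omega}$ to ensure convergence in the graph norms of both $\Lambda_{\psi''}$ and $\Lambda_{\psi''}\circ\sigma''_{-i\beta/2}$. Once this technical core result is established - essentially that $\pi_\psi(\mathcal M^\sigma_\psi)$ is simultaneously a core for $\Lambda_{\psi''}$ and for $\Lambda_{\psi''}\circ\sigma''_{-i\beta/2}$ - the KMS identity for $\psi''$ follows by the same polarisation and Phragmen-Lindel\"of arguments employed in Theorem \ref{24-11-21d}, with uniqueness of the analytic extension handled by Proposition 5.3.6 of \cite{BR} as in the proof of (4)$\Rightarrow$(1).
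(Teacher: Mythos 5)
Your framing of the problem is right: the non-KMS assertions are indeed already contained in Lemmas \ref{01-03-22f} and \ref{01-03-22g}, and your shortcut $\psi''\circ\pi_\psi=\psi$ via the identity \eqref{06-02-23a} and Combes' theorem is legitimate. The difficulty is that the entire weight of the argument rests on the step you yourself label "the principal obstacle" and then do not carry out: that $\pi_\psi(\mathcal M^\sigma_\psi)$ is \emph{simultaneously} a core for $\Lambda_{\psi''}$, for $\Lambda_{\psi''}(\,\cdot\,^*)$, and for $\Lambda_{\psi''}\circ\sigma''_{-i\frac{\beta}{2}}$. This is not a routine transcription of Lemma \ref{07-12-21c}. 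Since $\pi_\psi(\mathcal M^\sigma_\psi)$ is only $\sigma$-weakly dense in $N$, an approximating net with bounded graph norms gives at best weak convergence of the vectors $\Lambda_{\psi''}(a_n)$; upgrading this to norm convergence requires passing to convex combinations as in Lemma \ref{07-04-22d}, and that device controls one graph norm at a time — controlling $\Lambda_{\psi''}(a_n)$, $\Lambda_{\psi''}(a_n^*)$ and the analytic continuation simultaneously by convex combinations is exactly the kind of statement that needs a proof, not an appeal to analogy. There is also an internal mismatch: your base case (where the two-boundary-value function is known to exist) is the pair set $\pi_\psi(\mathcal M^\sigma_\psi)\times\pi_\psi(\mathcal M^\sigma_\psi)$, but your approximating sequences are taken in the larger set $\mathcal M^{\sigma''}_{\psi''}$, for which the KMS identity has not yet been established; as written the induction does not close.

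The paper closes the argument by a different and shorter route that avoids approximate units and core statements altogether. First it proves the identity $\psi''(m^*m)=\psi''\bigl(\sigma''_{\xi}(m)\sigma''_{\xi}(m)^*\bigr)$ directly for \emph{every} $m\in N_a$ (Lemma \ref{02-03-22g}), by approximating inside the defining supremum for $\psi''$ using Kaplansky's density theorem and Lemma \ref{02-03-22hx}. It then observes that $B:=N_a\cap\mathcal N_{\psi''}\cap\mathcal N_{\psi''}^*$ has norm closure $\overline{B}$ which is a $C^*$-algebra on which $\sigma''$ restricts to a genuine (norm-continuous) flow, and that $\psi''|_{\overline{B}}$ is then an honest $\beta$-KMS weight in the sense of Theorem \ref{24-11-21d}; the already-proven implication towards condition (4) therefore applies verbatim on $\overline{B}$. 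Finally, arbitrary $a,b\in\mathcal N_{\psi''}\cap\mathcal N_{\psi''}^*$ are approximated in one step by $\overline{R}_k(a),\overline{R}_k(b)\in B$, and the only convergence needed for the Phragmen--Lindel\"of limit, namely $\Lambda_{\psi''}(\overline{R}_k(a))\to\Lambda_{\psi''}(a)$ and $\Lambda_{\psi''}(\overline{R}_k(a)^*)\to\Lambda_{\psi''}(a^*)$, is supplied by Corollary \ref{02-03-22e} together with $\overline{R}_k(a)^*=\overline{R}_k(a^*)$. If you want to salvage your outline, the efficient repair is to replace your two-stage approximation by this single smoothing step and to prove the KMS identity on all of $N_a$ first, rather than only on $\pi_\psi(\mathcal M^\sigma_\psi)$.
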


\bigskip

At this point it is only the existence of the function $f$ we need to establish, and for this we need a considerable amount of preparation.

\subsection{Proof of Theorem \ref{21-02-22dx}} 

\subsubsection{Smoothing operators for normal flows}\label{smoothing} In this section we consider first an arbitrary von Neumann algebra $M$ and a normal flow $\alpha$ on $M$. For $\omega \in M_*$ the function $\mathbb R \ni t \mapsto \omega(\alpha_t(m))$ is continuous and bounded for every $m \in M$. We can therefore define the integral
$$
\sqrt{\frac{k}{\pi}} \int_\mathbb R e^{-k (t -z)^2} \omega(\alpha_t(m)) \ \mathrm dt
$$
for every $k \in \mathbb N$ and every $z \in \mathbb C$.

\begin{lemma}\label{02-03-22x} Let $m \in M$, $k \in \mathbb N$ and $z \in \mathbb C$. There is a unique element 
$$
\sqrt{\frac{k}{\pi}} \int_\mathbb R e^{-k (t-z)^2}\alpha_t(m) \ \mathrm dt \in M
$$
with the property that
$$
\omega\left( \sqrt{\frac{k}{\pi}} \int_\mathbb R e^{-k (t-z)^2}\alpha_t(m) \ \mathrm dt\right)  = \sqrt{\frac{k}{\pi}} \int_\mathbb R e^{-k (t - z)^2} \omega(\alpha_t(m)) \ \mathrm dt
$$
for all $\omega \in M_*$.
\end{lemma}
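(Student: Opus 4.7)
The plan is to define the desired element by duality, exploiting the identification of $M$ with the dual of its predual $M_*$. The essential point is that the ``integral'' must be interpreted $\sigma$-weakly, since the function $t\mapsto \alpha_t(m)$ is in general only $\sigma$-weakly continuous, not norm-continuous, so a Bochner integral in $M$ is not available directly.

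First I would verify that for each fixed $\omega \in M_*$ the scalar function $t\mapsto \omega(\alpha_t(m))$ is continuous and bounded in absolute value by $\|\omega\|\,\|m\|$; continuity follows from the assumption that $\alpha$ is a normal flow together with the $\sigma$-weak continuity of $\omega$, and the bound follows because each $\alpha_t$ is isometric. Since $|e^{-k(t-z)^2}| = e^{k(\Imag z)^2}e^{-k(t-\Real z)^2}$ is integrable on $\mathbb R$, the Lebesgue integral
\[
L(\omega) := \sqrt{\tfrac{k}{\pi}} \int_\mathbb R e^{-k(t-z)^2}\omega(\alpha_t(m)) \ \mathrm dt
\]
is well-defined for every $\omega \in M_*$, linear in $\omega$, and satisfies the estimate $|L(\omega)| \le e^{k(\Imag z)^2}\|m\|\,\|\omega\|$.

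Hence $L$ is a bounded linear functional on $M_*$. Invoking the standard identification $M = (M_*)^*$ (cf.\ Proposition 2.4.18 in \cite{BR}), there exists a (necessarily unique) element $n \in M$ such that $\omega(n) = L(\omega)$ for all $\omega \in M_*$. This element $n$ is, by definition, what we denote by $\sqrt{k/\pi}\int_\mathbb R e^{-k(t-z)^2}\alpha_t(m)\,\mathrm dt$, and it enjoys the stated integration property. Uniqueness is automatic: if two elements of $M$ are evaluated equally by every $\omega \in M_*$, they coincide, since $M_*$ separates points of $M$.

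There is essentially no obstacle here; the mild subtlety is only to resist the temptation to interpret the integral as a Bochner integral in $M$ (which would require norm-continuity of $t\mapsto \alpha_t(m)$, a stronger hypothesis than normality of the flow) and instead use the weak-$*$ definition provided by duality with $M_*$.
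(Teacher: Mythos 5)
Your proof is correct and follows essentially the same route as the paper: bound the scalar integral by $\|m\|\,\|\omega\|\sqrt{k/\pi}\int_{\mathbb R}|e^{-k(t-z)^2}|\,\mathrm dt$ to get a bounded linear functional on $M_*$, invoke $M=(M_*)^*$ for existence, and use that $M_*$ separates points of $M$ for uniqueness. Your added remarks on why a Bochner integral is unavailable are a helpful clarification but not a departure from the paper's argument.
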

\begin{proof} Note that
\begin{align*}
& \left|\sqrt{\frac{k}{\pi}} \int_\mathbb R e^{-k (t -z)^2} \omega(\alpha_t(m)) \ \mathrm dt\right|  \leq \sqrt{\frac{k}{\pi}} \int_\mathbb R \left|e^{-k (t -z)^2}\right| \left|\omega(\alpha_t(m))\right| \ \mathrm d t \\
& \leq \|m\|\left\|\omega\right\| \sqrt{\frac{k}{\pi}} \int_\mathbb R \left|e^{-k (t -z)^2}\right| \ \mathrm d t .
\end{align*}
This shows that the map
$$
M_* \ni \omega \mapsto \sqrt{\frac{k}{\pi}} \int_\mathbb R e^{-k (t-z)^2} \omega(\alpha_t(m)) \ \mathrm dt
$$
is continuous for the norm on $M_*$, and since $M$ is the dual of $M_*$ this proves the existence of the element of $M$ with the stated property. Uniqueness follows because $M_*$ separates points in $M$.
\end{proof}

To simplify notation write 
$$
\overline{R}_k(m) := \sqrt{\frac{k}{\pi}} \int_\mathbb R e^{-k t^2}\alpha_t(m) \ \mathrm dt.
$$
We note that $\overline{R}_k : M \to M$ is a unital norm-continuous linear positive contraction. 


\begin{defn}\label{02-03-22a} Let $M_c$ denote the set of elements of $M$ with the property that $\mathbb R \ni t \mapsto \alpha_t(m)$ is norm-continuous, and let $M_a$ denote the set of elements $m \in M$ with the property that there is entire holomorphic function $f : \mathbb C \to M$ such that $f(t) = \alpha_t(m)$ for all $t \in \mathbb R$.
\end{defn}

We emphasize that the function $f$ in this definition should be entire holomorphic in the same sense we have used so far; that is, the limit 
$$
\lim_{h \to 0} \frac{f(z+h) - f(z)}{h}
$$
must exist in norm for all $z \in \mathbb C$. Thus
$$
M_a \subseteq M_c.
$$
It is straightforward to verify that $M_c$ is a $C^*$-subalgebra of $M$. Thus $M_a$ is the set of entire analytic elements for the restriction of the flows $\alpha$ to $M_c$. In symbols,
$$
M_a = \mathcal A_{\alpha|_{M_c}}.
$$
In particular, the operator $\alpha_z,  z \in \mathbb C$, are defined as they were introduced in Section \ref{Section-entire}, and
$$
D(\alpha_z) \subseteq M_c.
$$

\begin{lemma}\label{02-03-22bx} $M_a$ is $\sigma$-weakly dense in $M$. In fact, 
\begin{itemize}
\item $\overline{R}_n(m) \in M_a$ for all $m \in M$,
\item $\alpha_z\left( \overline{R}_n(m)\right) = \sqrt{\frac{k}{\pi}} \int_\mathbb R e^{-n (t-z)^2}\alpha_t(m) \ \mathrm dt$ for all $z \in \mathbb C$ and all $m \in M$, and
\item $\lim_{n \to \infty}\overline{R}_n(m) = m$ in the $\sigma$-weak topology for all $m \in M$.
\end{itemize} 
\end{lemma}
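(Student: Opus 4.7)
My plan is to proceed in three stages, mirroring the three bullet points.

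First, I would establish the key intertwining identity
\[
\alpha_s\bigl(\overline{R}_n(m)\bigr) \;=\; \sqrt{\tfrac{n}{\pi}} \int_{\mathbb R} e^{-n(t-s)^2} \alpha_t(m)\, \mathrm d t \qquad (s\in\mathbb R),
\]
which is immediate once one pairs both sides with an arbitrary $\omega \in M_*$: the left hand side yields $\omega\circ\alpha_s \in M_*$ applied inside the defining weak integral of Lemma \ref{02-03-22x}, and a translation $t \mapsto t+s$ in the scalar integral produces the right hand side. From this identity, a standard dominated convergence argument (using $|e^{-n(t-s)^2}-e^{-nt^2}| \to 0$ pointwise with $L^1$-dominant $e^{-n(t-s)^2}+e^{-nt^2}$ uniformly for small $s$) gives $\|\alpha_s(\overline{R}_n(m)) - \overline{R}_n(m)\| \to 0$ as $s \to 0$, so $\overline{R}_n(m)\in M_c$.

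Second, to place $\overline{R}_n(m)$ in $M_a$, I would define $g:\mathbb C \to M$ by
\[
g(z) \;:=\; \sqrt{\tfrac{n}{\pi}}\int_{\mathbb R} e^{-n(t-z)^2}\alpha_t(m)\,\mathrm d t,
\]
interpreted as a weak integral in $M$ exactly as in Lemma \ref{02-03-22x} (the proof there only uses that $\omega \mapsto \int e^{-n(t-z)^2}\omega(\alpha_t(m))\,\mathrm dt$ is norm-continuous on $M_*$, which holds for every $z$). By stage one, $g(s)=\alpha_s(\overline{R}_n(m))$ for real $s$. To show $g$ is entire holomorphic I would mimic Lemma \ref{18-11-21}: set
\[
b_k \;:=\; \sqrt{\tfrac{n}{\pi}}\frac{(2n)^k}{k!} \int_{\mathbb R} t^k e^{-nt^2} \alpha_t(m)\,\mathrm d t \in M
\]
(well defined in $M$ by the same weak-integral argument, with $\|b_k\|\le 2\|m\| n^{-(k+1)/2} I_k (2n)^k/k!$, which are precisely the coefficients $\lambda_k$ of Lemma \ref{18-11-21}). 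Since $\sum \lambda_k |z|^k < \infty$ for every $z$, the series $e^{-nz^2}\sum_{k=0}^\infty b_k z^k$ converges in norm, and applying $\omega\in M_*$ term by term and invoking the same dominated convergence as in the proof of Lemma \ref{18-11-21} shows the series agrees with $g(z)$ in the weak topology, hence in $M$. Lemma \ref{17-02-23} then gives that this norm-convergent power series is entire holomorphic as an $M$-valued function, so $\overline{R}_n(m)\in M_a$ with $\alpha_z(\overline{R}_n(m)) = g(z)$, proving both the first and second bullet points at once.

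For the third bullet, I would fix $\omega\in M_*$ and observe that
\[
\omega\bigl(\overline{R}_n(m)\bigr) \;=\; \sqrt{\tfrac{n}{\pi}}\int_{\mathbb R} e^{-nt^2} \omega(\alpha_t(m))\,\mathrm d t.
\]
Since $t \mapsto \omega(\alpha_t(m))$ is bounded by $\|\omega\|\|m\|$ and continuous at $0$, the same Lebesgue-theorem argument as in the proof of Lemma \ref{24-11-21} (splitting the integral at $|t|\le \delta$ and $|t|\ge \delta$) yields $\omega(\overline{R}_n(m)) \to \omega(m)$ as $n\to\infty$. Thus $\overline{R}_n(m) \to m$ $\sigma$-weakly, and combined with stage two this establishes that $M_a$ is $\sigma$-weakly dense in $M$.

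The genuinely delicate step is the second one: I must extend the weak-integral definition of Lemma \ref{02-03-22x} to the polynomial-times-Gaussian weights $t^k e^{-nt^2}$ and then justify, uniformly in $\omega\in M_*$ with $\|\omega\|\le 1$, the interchange of summation and integration needed to identify $g(z)$ with the norm-convergent power series. Once the bound $\|b_k\|\le \|m\|\lambda_k$ is in hand, however, this interchange is exactly the dominated-convergence computation already carried out in the proof of Lemma \ref{18-11-21}, so no new analytic input is required.
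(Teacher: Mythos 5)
Your proposal is correct and follows essentially the same route as the paper: establish the real-parameter formula by pairing with elements of $M_*$ and translating, then repeat the coefficient estimates of Lemma \ref{18-11-21} for the weakly-defined $b_k$ to get an entire norm-convergent power series, and finally reuse the Lemma \ref{24-11-21} argument for the $\sigma$-weak convergence. The only cosmetic difference is that you invoke Lemma \ref{17-02-23} where the paper assembles the product $e^{-nz^2}\sum_k b_k z^k$ via Lemma \ref{13-11-21d}; both close the argument equally well.
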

\begin{proof} Let $m \in M, \ \omega \in M_*$. Then $\omega \circ \alpha_t \in M_*$ by Theorem 2.4.23 in \cite{BR} and hence
$$
\omega\left(\alpha_t \left( \overline{R}_n(m)\right) \right) = \sqrt{\frac{n}{\pi}} \int_\mathbb R e^{-n s^2}\omega(\alpha_{t+s}(m)) \ \mathrm ds = \sqrt{\frac{n}{\pi}} \int_\mathbb R e^{-n (s -t)^2}\omega(\alpha_{s}(m)) \ \mathrm ds .
$$
It follows that
$$
\alpha_t( \overline{R}_n(m)) = \sqrt{\frac{n}{\pi}} \int_\mathbb R e^{-n (s -t)^2} \alpha_{s}(m) \ \mathrm ds
$$
for all $t \in \mathbb R$, and to establish the first two items it suffices therefore to show that
$$
H(z) : =  \sqrt{\frac{n}{\pi}} \int_\mathbb R e^{-n (s-z)^2}\alpha_t(m) \ \mathrm d s
$$
is entire analytic. This is done by repeating arguments from the proof of Lemma \ref{18-11-21} as follows. In fact, the only differences come from the difference in how the integrals are defined: For each $\omega \in M_*$ an application of Lebesques theorem on dominated convergence shows that
\begin{align*}
& \omega(H(z)) =  \sqrt{\frac{n}{\pi}} \int_\mathbb R e^{-n (s-z)^2}\omega(\alpha_s(m)) \ \mathrm ds \\
& = \sqrt{\frac{n}{\pi}} e^{-nz^2} \int_\mathbb R \sum_{k=0}^\infty z^k(2n)^k \frac{s^ke^{-ns^2}}{k!} \omega(\alpha_s(a)) \ \mathrm d s  \\
& = \sqrt{\frac{n}{\pi}} e^{-nz^2}\sum_{k=0}^\infty \frac{z^k(2n)^k}{k!}  \int_\mathbb R s^ke^{-ns^2} \omega(\alpha_s(a)) \ \mathrm d s  \\
& = \sqrt{\frac{n}{\pi}} e^{-nz^2}\sum_{k=0}^\infty \omega(b_k)z^k 
\end{align*}
where $b_k = \frac{(2n)^k}{k!} \int_\mathbb R s^ke^{-ns^2} \sigma_s(a) \ \mathrm d s$. Note that
$$
\left\|b_k\right\| = \sup_{\mu \in M_*, \|\mu\| \leq 1} \left|\frac{(2n)^k}{k!} \int_\mathbb R s^ke^{-ns^2} \mu(\sigma_s(a)) \ \mathrm d s\right| \leq 2\|a\| n^{-\frac{k+1}{2}} \int_0^\infty s^k e^{-s^2} \ \mathrm d s .
$$ 
A verbatim repetition of the arguments from the proof of Lemma \ref{18-11-21} shows that 
$$
\sum_{k=0}^\infty \|b_k\| |z|^k < \infty
$$
for all $z \in \mathbb C$. It follows therefore from the calculation above that
$$
H(z) =  \sqrt{\frac{n}{\pi}} e^{-nz^2}\sum_{k=0}^\infty b_k z^k 
$$
for all $z \in \mathbb C$ and an application of Lemma \ref{13-11-21d} shows then that $H$ is entire analytic. \footnote{A less pedestrian proof of this can be based on Proposition 2.5.21 in \cite{BR}.}

To show that $\lim_{k \to \infty}\overline{R}_k(m) = m$ in the $\sigma$-weak topology we must show that
$$
\lim_{k \to \infty} \omega(\overline{R}_k(m)) = \lim_{k \to \infty}\sqrt{\frac{k}{\pi}} \int_\mathbb R e^{-k t^2} \omega(\alpha_t(m)) \ \mathrm dt = \omega(m)
$$
for all $\omega \in M_*$. The proof of this is identical to the proof of Lemma \ref{24-11-21}. 
\end{proof}

Now we return to the setting of Theorem \ref{21-02-22dx}. In particular, the roles of $M$ and $\alpha$ are now taken by $N = \pi_\psi(A)''$ and $\sigma''$, respectively. Since $\psi''$ is lower semi-continuous with respect to the $\sigma$-weak topology it is also lower semi-continuous with respect to the norm-topology, and it is therefore a weight in the sense of Section \ref{defweight}. We consider the GNS-triple of the weight $(H_{\psi''},\Lambda_{\psi''},\pi_{\psi''})$ as defined in Section \ref{06-02-22}. Since $\mathcal M_{\psi''}^+$ is $\sigma$-weakly dense in $N^+$ by Lemma \ref{01-03-22f} it follows from Lemma \ref{04-11-21n} that $\mathcal N_{\psi''} \cap \mathcal N_{\psi''}^*$ is $\sigma$-weakly dense in $N$.

\begin{lemma}\label{22-02-22fx} $\Lambda_{\psi''} : \mathcal N_{\psi''} \to H_{\psi''}$ is closed with respect to the $\sigma$-weak topology.
\end{lemma}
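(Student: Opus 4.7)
The plan is to adapt the argument of Lemma \ref{17-11-21a}, interpreting closedness in the product of the $\sigma$-weak topology on $N$ with the weak topology on $H_{\psi''}$: given a net $\{a_i\}_{i \in I}$ in $\mathcal N_{\psi''}$ with $a_i \to a$ $\sigma$-weakly in $N$ and $\Lambda_{\psi''}(a_i) \to v$ weakly in $H_{\psi''}$, I will show $a \in \mathcal N_{\psi''}$ and $v = \Lambda_{\psi''}(a)$.

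The extra input needed beyond what Lemma \ref{17-11-21a} uses is that $\psi''$ can be expressed as a supremum of \emph{normal} positive functionals, which is precisely what was established inside the proof of Lemma \ref{06-02-23}: writing $\mathcal G := \{\tilde{\omega} : \omega \in \mathcal F_\psi\}$ for the family of normal positive functionals on $N$ arising as normal extensions of elements of $\mathcal F_\psi$, one has $\tilde{\omega} \leq \psi''$ and $\psi''(m) = \sup_{\tilde{\omega} \in \mathcal G}\tilde{\omega}(m)$ for $m \in N^+$. Each $\tilde{\omega}$ lies in $\mathcal F_{\Lambda_{\psi''}}$, so Lemma \ref{08-11-21bx}, applied to the GNS representation $(H_{\psi''},\Lambda_{\psi''},\pi_{\psi''})$, produces an operator $T_{\tilde{\omega}} \in \pi_{\psi''}(N)'$ with $0 \leq T_{\tilde{\omega}} \leq 1$ and $\tilde{\omega}(c^*d) = \langle T_{\tilde{\omega}} \Lambda_{\psi''}(d),\Lambda_{\psi''}(c)\rangle$ for all $c,d \in \mathcal N_{\psi''}$.

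To show $a \in \mathcal N_{\psi''}$, fix $\tilde{\omega} \in \mathcal G$ and $j \in I$. The GNS identity gives $\tilde{\omega}(a_j^* a_i) = \langle T_{\tilde{\omega}}\Lambda_{\psi''}(a_i),\Lambda_{\psi''}(a_j)\rangle$. Passing to the limit $i \to \infty$: on the left, multiplication by the fixed $a_j^*$ is $\sigma$-weakly continuous so $a_j^*a_i \to a_j^*a$ $\sigma$-weakly, and $\tilde{\omega}$ is $\sigma$-weakly continuous, yielding $\tilde{\omega}(a_j^*a)$; on the right, boundedness of $T_{\tilde{\omega}}$ and weak convergence of $\Lambda_{\psi''}(a_i)$ yield $\langle T_{\tilde{\omega}}v,\Lambda_{\psi''}(a_j)\rangle$. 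Thus $\tilde{\omega}(a_j^*a) = \langle T_{\tilde{\omega}}v,\Lambda_{\psi''}(a_j)\rangle$ for every $j$. Now letting $j$ run through the net, $\sigma$-weak continuity of the involution together with continuity of right multiplication by the fixed element $a$ gives $a_j^*a \to a^*a$ $\sigma$-weakly on the left, and weak convergence $\Lambda_{\psi''}(a_j) \to v$ on the right, producing $\tilde{\omega}(a^*a) = \langle T_{\tilde{\omega}}v,v\rangle \leq \|v\|^2$. Taking the supremum over $\mathcal G$ yields $\psi''(a^*a) \leq \|v\|^2 < \infty$, so $a \in \mathcal N_{\psi''}$.

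To identify $v$ with $\Lambda_{\psi''}(a)$, I repeat the comparison from Lemma \ref{17-11-21a}. Given $b \in \mathcal N_{\psi''}$ and $\epsilon > 0$, choose $\tilde{\omega} \in \mathcal G$ with $\tilde{\omega}(b^*b) \geq \psi''(b^*b) - \epsilon$; then $\|T_{\tilde{\omega}}\Lambda_{\psi''}(b) - \Lambda_{\psi''}(b)\|^2 \leq \psi''(b^*b) - \tilde{\omega}(b^*b) \leq \epsilon$. Now that $a \in \mathcal N_{\psi''}$, two expressions for $\tilde{\omega}(b^*a)$ are available, namely $\langle T_{\tilde{\omega}}v,\Lambda_{\psi''}(b)\rangle$ (from the previous step applied to the constant net) and $\langle T_{\tilde{\omega}}\Lambda_{\psi''}(a),\Lambda_{\psi''}(b)\rangle$ (from the GNS identity); self-adjointness of $T_{\tilde{\omega}}$ therefore gives $\langle v - \Lambda_{\psi''}(a), T_{\tilde{\omega}}\Lambda_{\psi''}(b)\rangle = 0$, so
\[
\bigl|\langle v - \Lambda_{\psi''}(a),\Lambda_{\psi''}(b)\rangle\bigr| = \bigl|\langle v - \Lambda_{\psi''}(a), \Lambda_{\psi''}(b) - T_{\tilde{\omega}}\Lambda_{\psi''}(b)\rangle\bigr| \leq (\|v\| + \|\Lambda_{\psi''}(a)\|)\sqrt{\epsilon}.
\]
Letting $\epsilon \to 0$ gives $\langle v - \Lambda_{\psi''}(a),\Lambda_{\psi''}(b)\rangle = 0$ for all $b \in \mathcal N_{\psi''}$, and density of $\Lambda_{\psi''}(\mathcal N_{\psi''})$ in $H_{\psi''}$ forces $v = \Lambda_{\psi''}(a)$. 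The main care point in this argument is the book-keeping of topologies: the $\sigma$-weak continuity of $\tilde{\omega}$ is what licenses both limits on the ``functional'' side, while boundedness of $T_{\tilde{\omega}}$ is what allows the weak convergence of $\Lambda_{\psi''}(a_i)$ to be read through the operator $T_{\tilde{\omega}}$ in the inner products. Both are automatic once one recalls that $\tilde{\omega} \in N_*$ and $T_{\tilde{\omega}} \in B(H_{\psi''})$.
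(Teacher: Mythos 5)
Your proof is correct and follows essentially the same route as the paper's: both adapt the argument of Lemma \ref{17-11-21a} by replacing the functionals from $\mathcal F_\psi$ with normal positive functionals on $N$ that are dominated by $\psi''$ and have $\psi''$ as their supremum, and then work with the associated operators $T_\omega$. The one caveat is that $(H_{\psi''},\Lambda_{\psi''},\pi_{\psi''})$ is not literally a GNS representation in the sense of Definition \ref{06-02-22a}, since $\mathcal N_{\psi''}$ need only be $\sigma$-weakly dense in $N$, so you should invoke the \emph{proof} of Lemma \ref{08-11-21bx} rather than its statement --- a point the paper makes explicitly.
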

\begin{proof}  Thanks to the way $\psi''$ is defined we can repeat the proof of Lemma \ref{17-11-21a} almost ad verbatim. Let $\{a_i\}_{i \in J}$ be a net $\mathcal N_{\psi''}$ such that $\lim_{i \to \infty} a_i = a$ in the $\sigma$-weak topology of $N$ and $\lim_{i \to \infty} \Lambda_{\psi''}(a_i) = v$ in $H_{\psi''}$. We must show that $a \in \mathcal N_{\psi''}$ and that $\Lambda_{\psi''}(a) = v$. Consider an element $I \in \mathcal J$ and set
$$
\omega ( \ \cdot \ ) = \sum_{x \in I} \left< \ \cdot \ \Lambda_\psi(\sigma_{-\xi}(x)), \Lambda_\psi(\sigma_{-\xi}(x))\right>.
$$
The triple $(H_{\psi''},\Lambda_{\psi''},\pi_{\psi''})$ is not a GNS representation of $N$ as defined in Section \ref{06-02-22} since $\mathcal N_{\psi''}$ may not be norm-dense in $N$, but nonetheless the proof of Lemma \ref{08-11-21bx} shows that there is an operator $T_\omega$ on $H_{\psi''}$ such that $0 \leq T_\omega \leq 1$ and
$$
\omega(b^*a) = \left<T_\omega \Lambda_{\psi''}(a),\Lambda_{\psi''}(b)\right> \ \ \forall a,b \in \mathcal N_{\psi''} .
$$
Since $\omega \in N_*$ we have that
\begin{align*}
&\omega(a^*a) =\lim_{j \to \infty} \lim_{k \to \infty} \omega(a_k^*a_j) = \lim_{j \to \infty} \lim_{k \to \infty} \left<T_\omega \Lambda_{\psi''}(a_j),\Lambda_{\psi''}(a_k)\right> \\
&= \left<T_\omega v,v\right> \leq \|v\|^2 .
\end{align*}
Since $I \in \mathcal J$ was arbitrary it follows that $\psi''(a^*a) \leq \|v\|^2 < \infty$, showing that $a \in \mathcal N_{\psi''}$.
Let $b \in \mathcal N_{\psi''}$ and $\epsilon > 0$. By definition of $\psi''$ there is an element $\mu \in N_* \cap \mathcal F_{\psi''}$ of the form
$$
\mu ( \ \cdot \ ) = \sum_{x \in I} \left< \ \cdot \ \Lambda_\psi(\sigma_{-\xi}(x)), \Lambda_\psi(\sigma_{-\xi}(x))\right>
$$
such that
$$
\psi''(b^*b) - \epsilon \leq \mu(b^*b) \leq \psi''(b^*b).
$$
As above it follows from the proof of Lemma \ref{08-11-21bx} that there is an operator $T_\mu$ on $H_{\psi''}$ such that $0 \leq T_\mu \leq 1$ and
$$
\mu(c^*d) = \left<T_\mu \Lambda_{\psi''}(d),\Lambda_{\psi''}(c)\right> \  \ \forall c,d \in \mathcal N_{\psi''} .
$$
As in the proof of Lemma \ref{17-11-21a} we find that
\begin{align*}
 \left\| T_\mu\Lambda_{\psi''}(b) - \Lambda_{\psi''}(b)\right\|^2 \leq 2 \epsilon .
\end{align*}
We can choose $k \in J$ such that $\left\|v-\Lambda_{\psi''}(a_k)\right\| \leq \epsilon$ and $\left|\mu(b^*a_k) - \mu(b^*a)\right| \leq \epsilon$. The same calculation as in the proof of Lemma \ref{17-11-21a} yields
 \begin{align*}
 &\left|\left< v,\Lambda_{\psi''}(b)\right> - \left<\Lambda_{\psi''}(a),\Lambda_{\psi''}(b)\right>\right| \\
 &\leq (\|v\| + \left\|\Lambda_{\psi''}(a)\right\|)\sqrt{2\epsilon} + \left\|\Lambda_{\psi''}(b)\right\| \epsilon + \epsilon .
 \end{align*}
Since both $\epsilon > 0$ and $b\in \mathcal N_{\psi''}$ were arbitrary we conclude that $v = \Lambda_{\psi''}(a)$.
\end{proof}

Since $\psi''$ is $\sigma''$-invariant by Lemma \ref{01-03-22g} we can define $U^{\psi''}_t \in B(H_{\psi''})$ such that
$$
U^{\psi''}_t\Lambda_{\psi''}(a):= \Lambda_{\psi''}(\sigma''_t(a))  \ \ \forall a \in \mathcal N_{\psi''} . 
$$

\begin{lemma}\label{22-02-22e} $\left\{U^{\psi''}_t\right\}_{t \in \mathbb R}$ is a strongly continuous unitary representation of $\mathbb R$.
\end{lemma}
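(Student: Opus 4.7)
The plan is to mimic the proof of Lemma \ref{17-11-21e} but with the Combes-type approximation for the weight $\psi''$ on the von Neumann algebra $N$ in place of the corresponding approximation for $\psi$ on $A$. First I would check the algebraic properties. Because $\psi''$ is $\sigma''$-invariant by Lemma \ref{01-03-22g}, for $a \in \mathcal N_{\psi''}$ we have
$$
\left\|\Lambda_{\psi''}(\sigma''_t(a))\right\|^2 = \psi''(\sigma''_t(a)^*\sigma''_t(a)) = \psi''(\sigma''_t(a^*a)) = \psi''(a^*a) = \left\|\Lambda_{\psi''}(a)\right\|^2,
$$
so the prescription $U^{\psi''}_t\Lambda_{\psi''}(a) := \Lambda_{\psi''}(\sigma''_t(a))$ is isometric on the dense subspace $\Lambda_{\psi''}(\mathcal N_{\psi''}) \subseteq H_{\psi''}$ and extends to an isometry of $H_{\psi''}$. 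The identities $U^{\psi''}_0 = 1$ and $U^{\psi''}_{t+s} = U^{\psi''}_t U^{\psi''}_s$ are immediate from $\sigma''_0 = \id$ and the group law for $\sigma''$, and together they give invertibility; hence $U^{\psi''}_t$ is unitary with $(U^{\psi''}_t)^* = U^{\psi''}_{-t}$.

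Next I would reduce strong continuity to weak continuity. Since $\left\|U^{\psi''}_t\right\| = 1$ for every $t$ and the group law is in force, it suffices to prove
$$
\lim_{t \to 0}\left<U^{\psi''}_t\Lambda_{\psi''}(a),\Lambda_{\psi''}(b)\right> = \left<\Lambda_{\psi''}(a),\Lambda_{\psi''}(b)\right>
$$
for all $a,b \in \mathcal N_{\psi''}$; a standard $\epsilon/3$-argument then upgrades this to strong continuity at $0$, and the group law propagates continuity to all of $\mathbb R$.

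The main work is this weak continuity statement, and here the new ingredient is that the functionals approximating $\psi''$ are automatically normal. From the proof of Lemma \ref{06-02-23} we have that $\psi''$ is the supremum over a directed family of normal positive functionals $\mu \in N_*^+$ with $\mu \leq \psi''$; for each such $\mu$, repeating the argument of Lemma \ref{08-11-21bx} for the pre-Hilbert space $\Lambda_{\psi''}(\mathcal N_{\psi''})$ produces an operator $T_\mu$ on $H_{\psi''}$ with $0 \leq T_\mu \leq 1$ and $\mu(c^*d) = \left<T_\mu\Lambda_{\psi''}(d),\Lambda_{\psi''}(c)\right>$ for all $c,d \in \mathcal N_{\psi''}$. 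Given $\epsilon > 0$, by directedness I can choose such a $\mu$ with $\mu(b^*b) \geq \psi''(b^*b) - \epsilon$. Using $T_\mu^2 \leq T_\mu$, the same estimate as in the proof of Lemma \ref{17-11-21a} yields $\left\|T_\mu\Lambda_{\psi''}(b) - \Lambda_{\psi''}(b)\right\|^2 \leq \epsilon$.

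Finally I would assemble the pieces. Writing
\begin{align*}
& \left|\left<U^{\psi''}_t\Lambda_{\psi''}(a),\Lambda_{\psi''}(b)\right> - \left<\Lambda_{\psi''}(a),\Lambda_{\psi''}(b)\right>\right|\\
& \leq 2\left\|\Lambda_{\psi''}(a)\right\|\sqrt{\epsilon} + \left|\left<U^{\psi''}_t\Lambda_{\psi''}(a),T_\mu\Lambda_{\psi''}(b)\right> - \left<\Lambda_{\psi''}(a),T_\mu\Lambda_{\psi''}(b)\right>\right|\\
& = 2\left\|\Lambda_{\psi''}(a)\right\|\sqrt{\epsilon} + \left|\mu(b^*\sigma''_t(a)) - \mu(b^*a)\right|,
\end{align*}
the remaining step is to observe that $\mathbb R \ni t \mapsto \mu(b^*\sigma''_t(a))$ is continuous at $0$: this is where normality of $\mu$ is essential, because $\sigma''$ is only $\sigma$-weakly continuous on $N$, not norm-continuous. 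Since $\sigma''$ is a normal flow the map $t \mapsto b^*\sigma''_t(a)$ is $\sigma$-weakly continuous (multiplication by $b^*$ is $\sigma$-weakly continuous, being the adjoint of a bounded map on $N_*$), and $\mu \in N_*$, so $t \mapsto \mu(b^*\sigma''_t(a))$ is continuous. Hence the last term tends to $0$ as $t \to 0$, and since $\epsilon > 0$ is arbitrary, weak continuity follows. The hard part, as signalled above, is precisely the point where one must invoke normality of $\mu$ to convert $\sigma$-weak continuity of $\sigma''$ into continuity of the scalar function $t \mapsto \mu(b^*\sigma''_t(a))$; once this is in place the rest of the argument is a direct parallel of Lemma \ref{17-11-21e}.
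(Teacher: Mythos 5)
Your proof is correct and is precisely the argument the paper has in mind: its own proof of this lemma consists of the remark that it is ``essentially identical to the proof of Lemma \ref{17-11-21e}'' with the details left to the reader, and you have supplied those details faithfully, using the normal functionals $\tilde{\omega}$ from \eqref{06-02-23a} in place of the elements of $\mathcal F_\psi$. You also correctly isolate the one point that is genuinely new in the von Neumann setting, namely that the continuity of $t \mapsto \mu(b^*\sigma''_t(a))$ must come from the normality of $\mu$ together with the $\sigma$-weak continuity of $\sigma''$, since $\sigma''$ is not norm-continuous.
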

\begin{proof} The proof is essentially identical to the proof of Lemma \ref{17-11-21e}, and it holds more generally as pointed out in Section \ref{modular1}. We leave the details to the reader.
\end{proof}

It follows from \eqref{08-03-22a} that
$$
U^{\psi''}_t \pi_{\psi''}(m)U^{\psi''}_{-t} = \pi_{\psi''}(\sigma''_t(m))
$$ for all $t$ and all $m \in N$.

\begin{lemma}\label{02-03-22d} Let $m \in \mathcal N_{\psi''}$ and $z \in \mathbb C$. Then $\sigma''_z\left(\overline{R}_k(m)\right) \in \mathcal N_{\psi''}$ and
$$
\Lambda_{\psi''}\left(\sigma''_z\left(\overline{R}_k(m)\right)\right) = \sqrt{\frac{k}{\pi}} \int_\mathbb R e^{-k(t-z)^2} U^{\psi''}_t \Lambda_{\psi''}(m) \ \mathrm d t .
$$
\end{lemma}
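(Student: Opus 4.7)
The plan is to realize $\sigma''_z(\overline{R}_k(m))$ as a $\sigma$-weak limit of elements in $\mathcal N_{\psi''}$ whose $\Lambda_{\psi''}$-images converge in norm to the right-hand side of the claimed formula, and then invoke the $\sigma$-weak closedness of $\Lambda_{\psi''}$ established in Lemma \ref{22-02-22fx}.

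First I would verify that the Hilbert-space-valued integral
\[
\eta := \sqrt{\tfrac{k}{\pi}} \int_\mathbb R e^{-k(t-z)^2} U^{\psi''}_t \Lambda_{\psi''}(m) \ \mathrm d t
\]
exists as a Bochner integral: the integrand is norm-continuous by the strong continuity of $U^{\psi''}$ (Lemma \ref{22-02-22e}) and dominated in norm by the integrable function $t \mapsto \sqrt{k/\pi}\,|e^{-k(t-z)^2}|\,\|\Lambda_{\psi''}(m)\|$. Moreover, since the integrand is continuous and its norm decays like a Gaussian, $\eta$ is the norm limit of Riemann sums over truncated intervals $[-M,M]$ with mesh tending to zero and $M \to \infty$.

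Next, for each such partition $\{t^n_j\}$ of $[-M_n,M_n]$ with widths $\Delta t^n_j$, set
\[
a_n := \sqrt{\tfrac{k}{\pi}} \sum_j e^{-k(t^n_j - z)^2}\, \sigma''_{t^n_j}(m)\, \Delta t^n_j \ \in \ N.
\]
Since $\psi''$ is $\sigma''$-invariant (Lemma \ref{01-03-22g}), every $\sigma''_{t^n_j}(m)$ belongs to $\mathcal N_{\psi''}$, and since $\mathcal N_{\psi''}$ is a linear subspace (Lemma \ref{04-11-21n} applied to $\psi''$), $a_n \in \mathcal N_{\psi''}$. Using $\Lambda_{\psi''}(\sigma''_t(m)) = U^{\psi''}_t \Lambda_{\psi''}(m)$ and the linearity of $\Lambda_{\psi''}$,
\[
\Lambda_{\psi''}(a_n) = \sqrt{\tfrac{k}{\pi}} \sum_j e^{-k(t^n_j - z)^2}\, U^{\psi''}_{t^n_j}\Lambda_{\psi''}(m)\, \Delta t^n_j,
\]
which is precisely a Riemann sum for $\eta$; so with the partitions chosen as above, $\Lambda_{\psi''}(a_n) \to \eta$ in norm.

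It remains to show $a_n \to \sigma''_z(\overline{R}_k(m))$ in the $\sigma$-weak topology of $N$. For every $\omega \in N_*$ the function $t \mapsto \omega(\sigma''_t(m))$ is continuous and bounded by $\|\omega\|\|m\|$; therefore $t \mapsto e^{-k(t-z)^2}\omega(\sigma''_t(m))$ is absolutely integrable, and by refining the partition and enlarging $M_n$ appropriately,
\[
\omega(a_n) \ \longrightarrow\ \sqrt{\tfrac{k}{\pi}} \int_\mathbb R e^{-k(t-z)^2}\omega(\sigma''_t(m))\ \mathrm d t \ = \ \omega(\sigma''_z(\overline{R}_k(m))),
\]
where the last equality is the content of Lemma \ref{02-03-22x} together with the second bullet of Lemma \ref{02-03-22bx}. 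With $a_n \in \mathcal N_{\psi''}$, $a_n \to \sigma''_z(\overline{R}_k(m))$ $\sigma$-weakly, and $\Lambda_{\psi''}(a_n) \to \eta$ in norm, Lemma \ref{22-02-22fx} yields both $\sigma''_z(\overline{R}_k(m)) \in \mathcal N_{\psi''}$ and $\Lambda_{\psi''}(\sigma''_z(\overline{R}_k(m))) = \eta$.

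The main obstacle is the clash of topologies: on the $N$ side only $\sigma$-weak convergence is available, while on the $H_{\psi''}$ side Lemma \ref{22-02-22fx} requires norm convergence. The device that reconciles them is the Gaussian decay of the weight $e^{-k(t-z)^2}$, which lets a single sequence of Riemann sums serve simultaneously as a norm approximation to the Bochner integral $\eta$ and as a $\sigma$-weak approximation to $\sigma''_z(\overline{R}_k(m))$.
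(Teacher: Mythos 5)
Your proof is correct and follows essentially the same route as the paper's: approximate the integral by Riemann sums lying in $\mathcal N_{\psi''}$, observe that they converge $\sigma$-weakly to $\sigma''_z(\overline{R}_k(m))$ and in norm under $\Lambda_{\psi''}$ to the Bochner integral, and conclude with the $\sigma$-weak closedness of $\Lambda_{\psi''}$ from Lemma \ref{22-02-22fx}. The only (cosmetic) difference is that the paper indexes the approximants by pairs $(F,\epsilon)$ with $F\subseteq N_*$ finite, applying the Riemann-sum lemma to a function valued in $\mathbb C^F\oplus H_{\psi''}$, whereas you use a single sequence of partitions and check convergence against each $\omega\in N_*$ individually.
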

\begin{proof} We use the formula for $\sigma''_z\left(\overline{R}_k(m)\right)$ from the second item in Lemma \ref{02-03-22bx}. Let $F \subseteq N_*$ be a finite subset of the predual and let $\epsilon >0$. Define $H : \mathbb R \to \mathbb C^F \oplus H_{\psi''}$ such that
$$
H(t) := \left( \left( \sqrt{\frac{k}{\pi}} e^{-k(t-z)^2} \omega(\sigma''_t(m)) \right)_{\omega \in F}, \ \sqrt{\frac{k}{\pi}} e^{-k(t-z)^2} U^{\psi''}_t \Lambda_{\psi''}(m) \right).
$$
This is a continuous map and $\int_\mathbb R \left\|H(t)\right\| \ \mathrm d t < \infty$. By Lemma \ref{12-02-22} in Appendix \ref{integration} there are therefore finite sets of numbers $s_i \geq 0$ and $t_i \in \mathbb R$, $i =1,2,\cdots, n$, such that the element
$$
a(F,\epsilon) := \sum_{i=1}^n \sqrt{\frac{k}{\pi}} e^{-k(t_i-z)^2}\sigma''_{t_i}(m)s_i \in \mathcal N_{\psi''}
$$
has the property that
$$
\left|\omega(a(F,\epsilon)) - \omega\left(\sigma''_z\left(\overline{R}_k(m)\right)\right)\right| \leq \epsilon
$$
for all $\omega \in F$ and
$$
\left\| \Lambda_{\psi''}(a(F,\epsilon)) - \sqrt{\frac{k}{\pi}} \int_\mathbb R e^{-k(t-z)^2} U^{\psi''}_t \Lambda_{\psi''}(m) \ \mathrm d t \right\| \leq \epsilon .
$$ 
When we order the pairs $(F,\epsilon)$ such that $(F,\epsilon) \leq (F',\epsilon')$ means $F \subseteq F'$ and $\epsilon' \leq \epsilon$, we get a directed set $J$ such that $(a(F,\epsilon))_{(F,\epsilon) \in J}$ is a net with the property that 
$$
\lim_{(F,\epsilon) \to \infty} a(F,\epsilon) = \sigma''_z(\overline{R}_k(m))
$$ 
in the $\sigma$-weak topology, and
$$
\lim_{(F,\epsilon) \to \infty} \Lambda_{\psi''}\left(a(F,\epsilon)\right) = \sqrt{\frac{k}{\pi}} \int_\mathbb R e^{-k(t-z)^2} U^{\psi''}_t \Lambda_{\psi''}(m) \ \mathrm d t .
$$
In this way the desired conclusion follows from Lemma \ref{22-02-22fx}.
\end{proof}

\begin{cor}\label{02-03-22e} Let $m \in \mathcal N_{\psi''}$. Then 
$$
\lim_{k \to \infty} \Lambda_{\psi''}\left(\overline{R}_k(m)\right) = \Lambda_{\psi''}(m).
$$
\end{cor}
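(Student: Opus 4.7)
The plan is to reduce the claim immediately to the analogous fact for the smoothing operator on the Hilbert space $H_{\psi''}$ associated with the unitary representation $U^{\psi''}$. Taking $z = 0$ in Lemma \ref{02-03-22d} gives the explicit formula
\begin{equation*}
\Lambda_{\psi''}\!\left(\overline{R}_k(m)\right) = \sqrt{\frac{k}{\pi}} \int_\mathbb R e^{-kt^2}\, U^{\psi''}_t \Lambda_{\psi''}(m) \ \mathrm d t,
\end{equation*}
so the left-hand side is precisely the smoothing operator (in the sense of \eqref{17-11-21f}) of the flow $U^{\psi''}$ on $H_{\psi''}$, evaluated at the vector $\Lambda_{\psi''}(m)$.

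By Lemma \ref{22-02-22e} the family $\{U^{\psi''}_t\}_{t \in \mathbb R}$ is a strongly continuous unitary representation of $\mathbb R$ on $H_{\psi''}$, hence in particular a flow of isometries on the Banach space $H_{\psi''}$ in the sense of Section \ref{Section-entire}. Therefore Lemma \ref{24-11-21}, applied to the Banach space $X = H_{\psi''}$ with the flow $U^{\psi''}$ and the element $\Lambda_{\psi''}(m) \in H_{\psi''}$, yields
\begin{equation*}
\lim_{k \to \infty} \sqrt{\frac{k}{\pi}} \int_\mathbb R e^{-kt^2} U^{\psi''}_t \Lambda_{\psi''}(m) \ \mathrm d t \; = \; \Lambda_{\psi''}(m)
\end{equation*}
in the norm of $H_{\psi''}$. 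Combining this with the displayed formula above gives $\lim_{k\to\infty}\Lambda_{\psi''}(\overline{R}_k(m)) = \Lambda_{\psi''}(m)$, as required. There is no real obstacle here; the only thing to notice is that Lemma \ref{02-03-22d} has already done the work of identifying $\Lambda_{\psi''}(\overline{R}_k(m))$ with a Bochner-type integral against $U^{\psi''}_t\Lambda_{\psi''}(m)$, after which the corollary follows directly from the general smoothing-operator convergence result.
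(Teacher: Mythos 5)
Your proof is correct and takes essentially the same route as the paper: the paper also deduces the corollary from Lemma \ref{02-03-22d} combined with Lemma \ref{24-11-21} applied to the strongly continuous unitary flow $U^{\psi''}$ on $H_{\psi''}$. Your write-up merely makes explicit the appeal to Lemma \ref{22-02-22e} for the strong continuity, which the paper leaves implicit.
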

\begin{proof} This follows from Lemma \ref{02-03-22d} because
$$
\lim_{k \to \infty} \sqrt{\frac{k}{\pi}} \int_\mathbb R e^{-kt^2} U^{\psi''}_t \Lambda_{\psi''}(m) \ \mathrm d t  = \Lambda_{\psi''}(m)
$$
by Lemma \ref{24-11-21}.
\end{proof}

\begin{lemma}\label{02-03-22hx} Let $k \in \mathbb N$ and $z \in \mathbb C$. Let $\{a_i\}_{i \in J}$ be a net in $A$ and $m \in N$ an element such that $\sup_{i \in J} \left\|\pi_{\psi}(a_i)\right\| < \infty$ and $\lim_{i \to \infty} \pi_\psi(a_i) = m$ in the strong operator topology. It follows that
$$
\lim_{i \to \infty} \pi_\psi\left( \sigma_z\left(R_k(a_i)\right)\right) = \sigma''_z(\overline{R}_k(m))
$$
in the strong operator topology.
\end{lemma}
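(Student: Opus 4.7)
The plan is to reduce everything to a dominated convergence argument in the Hilbert space $H_\psi$ applied to each vector $\xi \in H_\psi$. First, by Lemma \ref{17-11-21i} we have
$$
\sigma_z(R_k(a_i)) = \sqrt{\tfrac{k}{\pi}} \int_\mathbb R e^{-k(s-z)^2}\sigma_s(a_i)\,\mathrm{d}s \qquad \text{in } A,
$$
and since $\pi_\psi$ is bounded and linear we can pull it through this Bochner integral and use the intertwining relation \eqref{08-03-22b} to obtain, for each $\xi \in H_\psi$,
$$
\pi_\psi\!\left(\sigma_z(R_k(a_i))\right)\xi \;=\; \sqrt{\tfrac{k}{\pi}} \int_\mathbb R e^{-k(s-z)^2}\, U^\psi_s \,\pi_\psi(a_i)\, U^\psi_{-s}\,\xi \,\mathrm{d}s,
$$
where the integral is now a Bochner integral in $H_\psi$ (convergence being guaranteed because $s \mapsto U^\psi_s\pi_\psi(a_i)U^\psi_{-s}\xi$ is bounded by $\sup_i\|\pi_\psi(a_i)\|\,\|\xi\|$ and $|e^{-k(s-z)^2}|$ is integrable).

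Next I would verify the analogous representation for the target vector $\sigma''_z(\overline{R}_k(m))\xi$. For fixed $\xi,\eta \in H_\psi$ the functional $\omega(\,\cdot\,) := \langle \,\cdot\, \xi,\eta\rangle$ lies in $N_*$, so the defining property of $\overline{R}_k(m)$ in Lemma \ref{02-03-22x}, combined with the fact that $\sigma''_z$ is the closure of the analogous entire-analytic operators on $N_c$, yields
$$
\langle \sigma''_z(\overline{R}_k(m))\xi,\eta\rangle \;=\; \sqrt{\tfrac{k}{\pi}}\int_\mathbb R e^{-k(s-z)^2}\langle U^\psi_s\, m\, U^\psi_{-s}\,\xi,\eta\rangle\,\mathrm{d}s,
$$
which by uniqueness identifies
$$
\sigma''_z(\overline{R}_k(m))\xi \;=\; \sqrt{\tfrac{k}{\pi}}\int_\mathbb R e^{-k(s-z)^2}\, U^\psi_s\, m\, U^\psi_{-s}\,\xi\,\mathrm{d}s.
$$

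It therefore suffices to show, for each fixed $\xi$, that
$$
\Big\| \sqrt{\tfrac{k}{\pi}}\int_\mathbb R e^{-k(s-z)^2}\, U^\psi_s\bigl(\pi_\psi(a_i) - m\bigr)U^\psi_{-s}\,\xi\,\mathrm{d}s \Big\| \;\longrightarrow\; 0.
$$
Set $M := \sup_i\|\pi_\psi(a_i)\| + \|m\|$. Given $\epsilon > 0$, choose $T > 0$ large enough that the tail contribution from $|s|>T$ is at most $\epsilon/2$ (uniformly in $i$), using the uniform bound $\|U^\psi_s(\pi_\psi(a_i)-m)U^\psi_{-s}\xi\| \leq M\|\xi\|$ and the integrability of $|e^{-k(s-z)^2}|$.

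The main obstacle, and the one interesting point in the argument, is controlling the integral over $[-T,T]$: since we only have strong (not uniform norm) convergence $\pi_\psi(a_i)\to m$, I would exploit that the set
$$
K := \{U^\psi_{-s}\xi : s \in [-T,T]\} \subseteq H_\psi
$$
is compact (the map $s\mapsto U^\psi_{-s}\xi$ is continuous by strong continuity of $U^\psi$). A bounded net converges strongly to its limit uniformly on compact sets (covered by finitely many $(\epsilon/2M)$-balls, plus strong convergence at the finitely many centres), so
$$
\sup_{s\in[-T,T]} \bigl\|\bigl(\pi_\psi(a_i)-m\bigr)U^\psi_{-s}\xi\bigr\| \;\longrightarrow\; 0.
$$
Since $U^\psi_s$ is unitary, the same supremum controls $\|U^\psi_s(\pi_\psi(a_i)-m)U^\psi_{-s}\xi\|$, and the integral over $[-T,T]$ is eventually $\leq \epsilon/2$. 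Combining both halves completes the strong-operator convergence.
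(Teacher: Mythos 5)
Your proposal is correct and follows essentially the same route as the paper's proof: both express $\pi_\psi(\sigma_z(R_k(a_i)))$ and $\sigma''_z(\overline{R}_k(m))$ applied to a fixed vector as integrals of $e^{-k(s-z)^2}U^\psi_s(\cdot)U^\psi_{-s}\xi$, truncate to a compact interval using the uniform norm bound, and then exploit compactness of $\{U^\psi_{-s}\xi : |s|\le T\}$ together with strong convergence of the bounded net. The only cosmetic difference is that you justify the integral formula for $\sigma''_z(\overline{R}_k(m))\xi$ by pairing with functionals in $N_*$, whereas the paper cites Lemma \ref{02-03-22bx} directly; the substance is identical.
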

\begin{proof} It follows from Lemma \ref{17-11-21i} and \eqref{08-03-22b} that
$$
\pi_\psi\left(\sigma_z\left(R_k(a_i)\right)\right) =  \sqrt{\frac{k}{\pi}} \int_\mathbb R e^{-k(t-z)^2} \sigma''_t(\pi_\psi(a_i)) \ \mathrm d t 
$$
and from Lemma \ref{02-03-22bx} that
$$
\sigma''_z(\overline{R}_k(m)) = \sqrt{\frac{k}{\pi}} \int_\mathbb R e^{-k(t-z)^2} \sigma''_t(m) \ \mathrm d t .
$$
Let $\chi \in H_\psi$. Since $\mathbb R \ni t \mapsto \sigma''_t(m)\chi = U^\psi_tmU^\psi_{-t}\chi$ is continuous the integral
$$
\sqrt{\frac{k}{\pi}} \int_\mathbb R e^{-k(t-z)^2} \sigma''_t(m)\chi \ \mathrm d t 
$$
exists in $H_\psi$, cf. Appendix \ref{integration},  and hence
$$
\sigma''_z(\overline{R}_k(m))\chi = \sqrt{\frac{k}{\pi}} \int_\mathbb R e^{-k(t-z)^2} \sigma''_t(m)\chi \ \mathrm d t .
$$
Similarly, 
$$
\pi_\psi\left(\sigma_z\left(R_k(a_i)\right)\right)\chi =  \sqrt{\frac{k}{\pi}} \int_\mathbb R e^{-k(t-z)^2} \sigma''_t(\pi_\psi(a_i))\chi \ \mathrm d t 
$$
for all $i$. Let $\epsilon > 0$. Since $\sup_{i,t} \left\|\sigma''_t(\pi_\psi(a_i))\chi\right\| < \infty$ by assumption, there is an $R > 0$ such that
$$
\left\| \pi_\psi\left(\sigma_z\left(R_k(a_i)\right)\right)\chi - \sqrt{\frac{k}{\pi}} \int_{-R}^R e^{-k(t-z)^2} \sigma''_t(\pi_\psi(a_i))\chi \ \mathrm d t \right\| \leq \epsilon 
$$
for all $i$. By increasing $R$ we can arrange that 
$$
\left\| \sigma''_z(\overline{R}_k(m))\chi - \sqrt{\frac{k}{\pi}} \int_{-R}^Re^{-k(t-z)^2} \sigma''_t(m)\chi \ \mathrm d t \right\| \leq \epsilon .
$$
Set $K = \sqrt{\frac{k}{\pi}} \int_{-R}^R \left|e^{-k(t-z)^2}\right|  \ \mathrm d t$. Since $\{U^\psi_t \chi : \ t \in [-R,R]\}$ is a compact set in $H_\psi$ and $\lim_{i \to \infty} \pi_\psi(a_i) = m$ strongly, there is an $i_0 \in I$ such that
$$
K\left\|\pi_\psi(a_i)U^{\psi}_{-t}\chi - mU^\psi_{-t}\chi\right\| \leq \epsilon
$$
for all $t \in [-R,R]$ when $i \geq i_0$. Then
$$
\left\| \sqrt{\frac{k}{\pi}} \int_{-R}^Re^{-k(t-z)^2} \sigma''_t(m)\chi \ \mathrm d t - \sqrt{\frac{k}{\pi}} \int_{-R}^R e^{-k(t-z)^2} \sigma''_t(\pi_\psi(a_i))\chi \ \mathrm d t \right\| \leq \epsilon
$$
and hence
$$
\left\| \sigma''_z(\overline{R}_k(m))\chi - \pi_\psi\left(\sigma_z\left(R_k(a_i)\right)\right)\chi\right\| \leq 3 \epsilon
$$
when $i \geq i_0$.

\end{proof}

\begin{lemma}\label{02-03-22g} Let $m \in N_a$. Then $\psi''(m^*m) = \psi''\left(\sigma''_\xi(m)\sigma''_\xi(m)^*\right)$.
\end{lemma}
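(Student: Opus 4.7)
The plan is to reduce the claimed KMS identity for $\psi''$ on $N_a$ to the already-established KMS identity for $\psi$ on $\mathcal A_\sigma$ (Kustermans' condition (1) of Theorem \ref{24-11-21d}), by pulling through the homomorphism $\pi_\psi$ and then approximating an arbitrary $m \in N_a$ by elements of $\pi_\psi(\mathcal A_\sigma)$ via Kaplansky's density theorem and the smoothing operators.

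The first step is to establish the intermediate identity
$$\psi''(\pi_\psi(b^*b)) \;=\; \psi(b^*b), \qquad b \in A \cap \mathcal A_\sigma.$$
Expanding the definition and using that $b\sigma_{-\xi}(x) \in \mathcal A_\sigma \cap \mathcal N_\psi$ whenever $x \in \mathcal M^\sigma_{\Lambda_\psi}$, Kustermans' identity applied to $b\sigma_{-\xi}(x)$ gives
$$\|\pi_\psi(b)\Lambda_\psi(\sigma_{-\xi}(x))\|^2 \;=\; \psi\bigl(\sigma_\xi(b)\,xx^*\,\sigma_\xi(b)^*\bigr).$$
Summing over $x \in I$ produces $\psi(\sigma_\xi(b)w_I\sigma_\xi(b)^*)$, and since $\{w_I\}_{I\in\mathcal J}$ is an approximate unit (Lemma \ref{01-03-22e}) and $\psi$ is lower semi-continuous, the supremum on $I$ equals $\psi(\sigma_\xi(b)\sigma_\xi(b)^*)$, which equals $\psi(b^*b)$ by Kustermans applied to $b$. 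Applying the intermediate identity to $b^*b$ and, via $\sigma_\xi(b)\sigma_\xi(b)^* = (\sigma_{-\xi}(b^*))^*\sigma_{-\xi}(b^*)$, to $\sigma_{-\xi}(b^*)$, together with the KMS symmetry for $\psi$, then yields the KMS identity on $\pi_\psi(\mathcal A_\sigma)$:
$$\psi''\bigl(\pi_\psi(b)^*\pi_\psi(b)\bigr) \;=\; \psi''\bigl(\sigma''_\xi(\pi_\psi(b))\sigma''_\xi(\pi_\psi(b))^*\bigr), \qquad b \in A \cap \mathcal A_\sigma,$$
using $\pi_\psi \circ \sigma_\xi = \sigma''_\xi \circ \pi_\psi$ on $\mathcal A_\sigma$, which follows from \eqref{08-03-22b} and analytic continuation.

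For general $m \in N_a$, Kaplansky's density theorem provides a net $\{a_i\}\subseteq A$ with $\|a_i\|\le\|m\|$ and $\pi_\psi(a_i)\to m$ $\sigma$-strongly. Applying the smoothing puts $R_k(a_i)$ in $\mathcal A_\sigma$ with $\|R_k(a_i)\|\le\|m\|$, and Lemma \ref{02-03-22hx} gives $\pi_\psi(R_k(a_i)) \to \overline R_k(m)$ and $\pi_\psi(\sigma_\xi(R_k(a_i))) \to \sigma''_\xi(\overline R_k(m))$ strongly, with uniform norm bounds in $i$ for each fixed $k$. The previous paragraph then yields, for every $i$,
$$\psi''\bigl(\pi_\psi(R_k(a_i))^*\pi_\psi(R_k(a_i))\bigr) \;=\; \psi''\bigl(\sigma''_\xi(\pi_\psi(R_k(a_i)))\sigma''_\xi(\pi_\psi(R_k(a_i)))^*\bigr).$$

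The main obstacle is the passage to the limit $i\to\infty$, since lower semi-continuity of $\psi''$ only delivers one-sided estimates. To overcome this I would exploit the variational formula $\psi''(n) = \sup_{I\in\mathcal J}\omega_I(n)$ with $\omega_I(n)=\sum_{x\in I}\langle n\Lambda_\psi(\sigma_{-\xi}(x)),\Lambda_\psi(\sigma_{-\xi}(x))\rangle$ from the proof of Lemma \ref{06-02-23}, noting that each $\omega_I$ lies in $N_*^+$ and is $\sigma$-strongly continuous on bounded sets; thus each $\omega_I$ applied to either side converges, along the approximating net, to the value at $\overline R_k(m)$. Matching the resulting sums via the $\mathcal A_\sigma$-KMS identity $\psi(R_k(a_i)^*R_k(a_i)) = \psi(\sigma_\xi(R_k(a_i))w_I\sigma_\xi(R_k(a_i))^*)$ plus the approximate-unit property of $\{w_I\}$ promotes the identity from the net to $\overline R_k(m)$, and a final passage $k\to\infty$ using $\overline R_k(m)\to m$ in norm (valid on $N_c \supseteq N_a$) and $\sigma''_\xi(\overline R_k(m)) \to \sigma''_\xi(m)$ in norm (by norm-convergence of the corresponding integrals, or equivalently by $\sigma''_\xi \overline R_k = \overline R_k \sigma''_\xi$ on $N_a$) together with lower semi-continuity of $\psi''$ delivers the claimed equality.
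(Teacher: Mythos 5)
Your first two steps are sound: the identity $\psi''(\pi_\psi(b^*b))=\psi(b^*b)$ for $b\in\mathcal A_\sigma$, proved via Kustermans' condition applied to $b\sigma_{-\xi}(x)$ and the approximate unit $\{w_I\}$, is correct and non-circular (the paper only proves this later, in Lemma \ref{03-03-22fx}, \emph{using} the present lemma), and it does yield the KMS identity for $\psi''$ on $\pi_\psi(\mathcal A_\sigma)$. The problem is the passage from $\pi_\psi(\mathcal A_\sigma)$ to all of $N_a$. You correctly name the obstacle — exchanging the supremum over $I\in\mathcal J$ with the limit over the Kaplansky net — but the mechanism you propose does not resolve it. What your "matching" actually gives, for fixed $I$ and $i$, is $\omega_I(n_i^*n_i)=\psi\bigl(\sigma_\xi(R_k(a_i))\,w_I\,\sigma_\xi(R_k(a_i))^*\bigr)$ and $\omega_I\bigl(\sigma''_\xi(n_i)\sigma''_\xi(n_i)^*\bigr)=\psi\bigl(R_k(a_i)^*\,w_I\,R_k(a_i)\bigr)$; these are two \emph{different} cuts of the two sides, equal only after taking $\sup_I$ for each fixed $i$. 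Knowing $\sup_I f_i=\sup_I g_i$ for every $i$ together with pointwise convergence $f_i(I)\to f(I)$, $g_i(I)\to g(I)$ does not give $\sup_I f=\sup_I g$; the "mass" of the supremum can escape along the net on one side but not the other. (Also, the displayed identity $\psi(R_k(a_i)^*R_k(a_i))=\psi(\sigma_\xi(R_k(a_i))w_I\sigma_\xi(R_k(a_i))^*)$ is false as written — the left side is independent of $I$.)

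The paper closes this gap with two devices you do not use. First, it only proves the one-sided inequality $\psi''(m^*m)\le\psi''(\sigma''_\xi(m)\sigma''_\xi(m)^*)$ and obtains the reverse by substituting $\sigma''_{-\xi}(m^*)\in N_a$ for $m$, so no two-sided limit is ever needed. Second, the one-sided inequality is obtained with a \emph{double-indexed} cut: given $F_1\in\mathcal J$ and $\epsilon>0$, non-degeneracy of $\pi_\psi$ produces $F_2\in\mathcal J$ with $\omega_{F_1}(m^*m)-\epsilon\le\sum_{x\in F_1,y\in F_2}\|\pi_\psi(y^*)m\Lambda_\psi(\sigma_{-\xi}(x))\|^2$, and the KMS computation \emph{swaps the roles of $x$ and $y$}, turning the double sum (evaluated along the approximating net) into $\omega_{F_2}$ of $\sigma''_\xi(\cdot)\sigma''_\xi(\cdot)^*$ cut by $w_{F_1}$, which after dropping the $F_1$-cut is $\le\psi''(\sigma''_\xi(m)\sigma''_\xi(m)^*)$ uniformly in $F_1$. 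It is this symmetry between the two cuts — not the single-cut matching you describe — that lets the supremum on the left be taken last. Without something equivalent to it, your argument does not go through.
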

\begin{proof}  Let $\epsilon > 0$ and $F_1 \in \mathcal J$. Since $\pi_\psi$ is a non-degenerate representation it follows from Lemma \ref{01-03-22e} that there is an element $F_2 \in \mathcal J$ such that 
\begin{align*}
& \sum_{x \in F_1} \left<m\Lambda_\psi(\sigma_{-\xi}(x)), m\Lambda_\psi(\sigma_{-\xi}(x)) \right> - \epsilon \\
&\leq \sum_{x \in F_1,y \in F_2} \left<\pi_\psi(y^*)m\Lambda_\psi(\sigma_{-\xi}(x)), \pi_{\psi}(y^*)m\Lambda_\psi(\sigma_{-\xi}(x)) \right> .
\end{align*} 
Since $\pi_\psi(A)$ is $\sigma$-weakly dense in $N$ and $\mathcal M^\sigma_\psi$ is a norm-dense $*$-subalgebra of $A$ by Lemma \ref{07-12-21a} it follows from Kaplansky's density theorem that there is a net $\{a_i\}_{i \in J'}$ in $\mathcal M^\sigma_\psi$ such that $\left\|\pi_\psi(a_i)\right\| \leq \|m\|$ for all $i$ and $\lim_{i \to \infty}\pi_\psi(a_i) = m$ in the $\sigma$-strong* topology, cf. Theorem 2.4.16 of \cite{BR}. Since $m \in N_a$ it follows from Lemma \ref{17-11-21i} and Lemma \ref{24-11-21} applied to the restriction of $\sigma''$ to $N_c$ that
$$
\lim_{k \to \infty} \sigma''_z\left(\overline{R}_k(m)\right) = \sigma''_z(m)
$$
in norm for all $z \in \mathbb C$. It follows from Lemma \ref{02-03-22hx} that 
$$
\lim_{i \to \infty} \pi_\psi(\sigma_z(R_k(a_i^*))) = \sigma''_z\left(\overline{R}_k(m^*)\right)
$$ 
in the strong operator topology for all $z \in \mathbb C$. We can therefore construct from 
$$
\left\{{R}_k(a_i) : \ i \in J', k \in \mathbb N\right\}$$
 a net $\{b_i\}_{i \in J}$ in $\mathcal M^\sigma_\psi$ with the property that $\lim_{i \to \infty} \pi_\psi\left( b_i\right) = m$ and 
 $$
 \lim_{i \to \infty} \pi_\psi\left( \sigma_\xi(b_i)\right)^* = \sigma''_\xi(m)^*
 $$ 
 in the strong operator topology. Then
\begin{align*}
& \sum_{x \in F_1,y \in F_2} \left<\pi_\psi(y^*)m\Lambda_\psi(\sigma_{-\xi}(x)), \pi_{\psi}(y^*)m\Lambda_\psi(\sigma_{-\xi}(x)) \right> \\
& =\lim_{i \to \infty} \sum_{x \in F_1,y \in F_2} \left<\pi_\psi(y^*)\pi_{\psi}(b_i)\Lambda_\psi(\sigma_{-\xi}(x)), \pi_{\psi}(y^*)\pi_\psi(b_i)\Lambda_\psi(\sigma_{-\xi}(x)) \right> .
\end{align*}
We note that
\begin{align*}
&\sum_{x \in F_1,y \in F_2} \left<\pi_\psi(y^*)\pi_{\psi}(b_i)\Lambda_\psi(\sigma_{-\xi}(x)), \pi_{\psi}(y^*)\pi_\psi(b_i)\Lambda_\psi(\sigma_{-\xi}(x)) \right> \\
&   =  \sum_{x \in F_1,y \in F_2} \psi( \sigma_{-\xi}(x)^* b_i^* y y^* b_i\sigma_{-\xi}(x)) \\
& =  \sum_{x \in F_1,y \in F_2} \psi( \sigma_\xi(y^*) \sigma_\xi(b_i) xx^* \sigma_\xi(b_i)^*\sigma_\xi(y^*)^*) \\
& \leq  \sum_{y \in F_2} \psi( \sigma_{-\xi}(y)^* \sigma_\xi(b_i)\sigma_\xi(b_i)^*\sigma_{-\xi}(y)) \\
& = \sum_{y \in F_2} \left< \pi_\psi(\sigma_\xi(b_i))^* \Lambda_\psi\left(\sigma_{-\xi}(y)\right) , \pi_\psi(\sigma_\xi(b_i))^* \Lambda_\psi\left(\sigma_{-\xi}(y)\right)\right> \\
\end{align*}
for all $i$. Since
$\lim_{i \to \infty} \pi_\psi(\sigma_\xi(b_i))^* = \sigma''_\xi(m)^*$ it follows that
\begin{align*}
&\sum_{x \in F_1,y \in F_2} \left<\pi_\psi(y^*)m\Lambda_\psi(\sigma_{-\xi}(x)), \pi_{\psi}(y^*)m\Lambda_\psi(\sigma_{-\xi}(x)) \right>\\
& \leq \sum_{y \in F_2} \left< \sigma''_\xi(m)^* \Lambda_\psi\left(\sigma_{-\xi}(y)\right) ,  \sigma''_\xi(m)^*\Lambda_\psi\left(\sigma_{-\xi}(y)\right)\right> .
\end{align*} 
Hence
\begin{align*}
& \sum_{x \in F_1} \left<m\Lambda_\psi(\sigma_{-\xi}(x)), m\Lambda_\psi(\sigma_{-\xi}(x)) \right> - \epsilon \leq \psi''( \sigma_\xi(m)\sigma_\xi(m)^*).
\end{align*} 
Since $F_1 \in \mathcal J$ and $\epsilon > 0$ were arbitrary it follows that $\psi''(m^*m) \leq \psi''( \sigma_\xi(m)\sigma_\xi(m)^*)$. Since $\sigma''_{-\xi}(m^*) \in N_a$, we can insert it in the place of $m$ to get
$$
 \psi''( \sigma_\xi(m)\sigma_\xi(m)^*) = \psi''\left( \sigma''_{-\xi}(m^*)^* \sigma''_{-\xi}(m^*)\right) \leq \psi''(m^*m) ,
 $$
and the lemma follows.

\end{proof}

\begin{lemma}\label{03-03-22fx} $\psi'' \circ \pi_\psi(a) = \psi(a)$ for all $a \in A^+$.
\end{lemma}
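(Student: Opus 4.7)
\medskip

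\noindent\textbf{Proof proposal.} The plan is to show that $\rho := \psi'' \circ \pi_\psi$ is a densely defined $\sigma$-invariant weight on $A$ that agrees with $\psi$ on $\{d^*d : d \in \mathcal{A}_\sigma\}$; Lemma~\ref{01-03-22b} will then force $\rho = \psi$.

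The heart of the argument is the identity $\rho(d^*d) = \psi(d^*d)$ for every $d \in \mathcal{A}_\sigma$. Unwinding the definition of $\psi''$ and using that $a\Lambda_\psi(c) = \Lambda_\psi(ac)$ for $c \in \mathcal{N}_\psi$,
$$\rho(d^*d) = \sup_{I \in \mathcal{J}} \sum_{x \in I} \psi\bigl(\sigma_{-\xi}(x)^*\, d^*d\, \sigma_{-\xi}(x)\bigr).$$
For $x \in \mathcal{M}^\sigma_{\Lambda_\psi}$ the element $y := \sigma_{-\xi}(x)$ lies in $\mathcal{N}_\psi \cap \mathcal{A}_\sigma$, so $dy \in \mathcal{N}_\psi \cap \mathcal{A}_\sigma \subseteq \mathcal{N}_\psi \cap D(\sigma_\xi)$, and Lemma~\ref{24-06-22b} applied with $a=b=dy$ gives
$$\psi(y^* d^*d\, y) = \psi\bigl(\sigma_\xi(dy)\sigma_\xi(dy)^*\bigr) = \psi\bigl(\sigma_\xi(d)\, xx^*\, \sigma_\xi(d)^*\bigr),$$
using $\sigma_\xi(y) = x$ (Lemma~\ref{23-11-21}) and the product rule of Lemma~\ref{18-11-21g}. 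Summing over $x \in I$ and invoking additivity of $\psi$ collapses the sum to $\psi\bigl(\sigma_\xi(d)\, w_I\, \sigma_\xi(d)^*\bigr)$. Since $w_I \leq 1$ this is bounded above by $\psi(\sigma_\xi(d)\sigma_\xi(d)^*)$, which by Kustermans' condition (1) of Theorem~\ref{24-11-21d} equals $\psi(d^*d)$; and since $\{w_I\}_{I \in \mathcal{J}}$ is an approximate unit for $A$ (Lemma~\ref{01-03-22e}), norm-convergence $\sigma_\xi(d)\, w_I\, \sigma_\xi(d)^* \to \sigma_\xi(d)\sigma_\xi(d)^*$ combined with lower semi-continuity of $\psi$ yields the matching lower bound. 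Hence $\rho(d^*d) = \psi(d^*d)$.

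It then remains to check that $\rho$ is a densely defined $\sigma$-invariant weight on $A$. Additivity, positive homogeneity and lower semi-continuity follow from normality of $\psi''$ (Lemma~\ref{01-03-22f}) composed with the continuous $*$-homomorphism $\pi_\psi$; $\sigma$-invariance follows from \eqref{08-03-22b} and Lemma~\ref{01-03-22g}. For density, given any $a \in \mathcal{M}^+_\psi$, Lemma~\ref{24-11-21} places $R_k(\sqrt a) \in \mathcal{A}_\sigma$, Kadison's inequality gives $R_k(\sqrt a)^2 \leq R_k(a)$, and Lemma~\ref{02-12-21ax} yields $\psi(R_k(\sqrt a)^2) \leq \psi(a) < \infty$; by the first step $\rho(R_k(\sqrt a)^2) < \infty$. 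Since $R_k(\sqrt a)^2 \to a$ in norm and $\mathcal{M}^+_\psi$ is dense in $A^+$, the domain of finiteness of $\rho$ is dense in $A^+$. Lemma~\ref{01-03-22b} now concludes $\rho = \psi$.

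The main obstacle is the KMS-type rewriting $\psi(y^* d^*d\, y) = \psi(\sigma_\xi(d)\, xx^*\, \sigma_\xi(d)^*)$, which converts a sum quadratic in $y$ into one quadratic in $x$ and thereby couples the sum directly to the approximate unit $\{w_I\}$; once that manipulation is available the sup evaluation and the density verification are essentially routine.
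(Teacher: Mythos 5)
Your proposal is correct, and its overall architecture coincides with the paper's: establish $\psi''\circ\pi_\psi(d^*d)=\psi(d^*d)$ for $d\in\mathcal A_\sigma$ and then invoke Lemma \ref{01-03-22b}. The upper bound is the same computation in both. Where you genuinely diverge is the lower bound. The paper starts from $\psi(b^*b)$, uses Lemma \ref{01-03-22e} and lower semi-continuity of $\psi$ applied to $b^*w_Fb\to b^*b$, rewrites $\sum_{x}\psi(b^*xx^*b)$ as $\sum_x\psi(\sigma_{-\xi}(x)^*\sigma_\xi(b)\sigma_\xi(b)^*\sigma_{-\xi}(x))$, recognizes this as a term in the supremum defining $\psi''\left(\sigma''_\xi(\pi_\psi(b))\sigma''_\xi(\pi_\psi(b))^*\right)$, and then needs Lemma \ref{02-03-22g} to convert that back into $\psi''(\pi_\psi(b^*b))$. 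You instead collapse the whole sum to the single expression $\psi\left(\sigma_\xi(d)w_I\sigma_\xi(d)^*\right)$ via additivity, and apply lower semi-continuity of $\psi$ directly to $\sigma_\xi(d)w_I\sigma_\xi(d)^*\to\sigma_\xi(d)\sigma_\xi(d)^*$; this makes the two bounds symmetric and bypasses Lemma \ref{02-03-22g} entirely, which is a small but real simplification. You also explicitly verify that $\psi''\circ\pi_\psi$ is a densely defined $\sigma$-invariant weight before applying Lemma \ref{01-03-22b} — a hypothesis the paper leaves tacit — and your verification (normality of $\psi''$ for lower semi-continuity, \eqref{08-03-22b} with Lemma \ref{01-03-22g} for invariance, and $R_k(\sqrt a)^2\leq R_k(a)$ with Lemma \ref{02-12-21ax} for dense definedness) is sound.
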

\begin{proof} Let $b \in \mathcal A_\sigma$. For $F \in \mathcal J$ we find that 
\begin{align*}
&\sum_{x \in F} \left<\pi_{\psi}(b^*b) \Lambda_{\psi}(\sigma_{-\xi}(x)),\Lambda_{\psi}(\sigma_{-\xi}(x))\right> = \sum_{x \in F} \psi(\sigma_{-\xi}(x)^*b^*b\sigma_{-\xi}(x)) \\
& = \sum_{x \in F} \psi(\sigma_\xi(b)xx^*\sigma_\xi(b)^*) \leq \psi(\sigma_\xi(b)\sigma_\xi(b)^*) = \psi(b^*b).
\end{align*}
It follows that $\psi'' \circ \pi_{\psi}(b^*b) \leq \psi(b^*b)$. To obtain the reverse inequality, let $\epsilon >0$ and $N > 0$ be given. It follows from Lemma \ref{01-03-22e} and the lower semi-continuity of $\psi$ that there is $F\in \mathcal J$ such that
\begin{align*}
&\min\{\psi(b^*b) -\epsilon, N\} \leq \sum_{x \in F} \psi(b^*xx^*b) \\
&=\sum_{x \in F} \psi(\sigma_\xi(x^*b) \sigma_\xi(x^*b)^*)=\sum_{x \in F} \psi(\sigma_{-\xi}(x)^*\sigma_\xi(b) \sigma_\xi(b)^* \sigma_{-\xi}(x)) .
\end{align*}
Since $\pi_\psi(b) \in N_a$ and $\sigma''_t\left( \pi_\psi(b)\right) = \pi_\psi(\sigma_t(b))$ for all $t$ by \eqref{08-03-22b}, it follows that $\sigma''_z\left( \pi_\psi(b)\right) = \pi_\psi(\sigma_z(b))$ for all $z \in \mathbb C$. In particular,
\begin{align*}
&\sum_{x \in F} \psi(\sigma_{-\xi}(x)^*\sigma_\xi(b) \sigma_\xi(b)^* \sigma_{-\xi}(x)) \\
& = \sum_{x \in F} \left<\sigma''_\xi(\pi_\psi(b))^*\Lambda_\psi(\sigma_{-\xi}(x)) , \sigma''_\xi(\pi_\psi(b))^*\Lambda_\psi(\sigma_{-\xi}(x))\right>\\
& \leq \psi''\left(\sigma''_\xi(\pi_\psi(b))\sigma''_\xi(\pi_\psi(b))^*\right) = \psi''\left(\pi_\psi(b)^*\pi_\psi(b)\right)  = \psi''\left(\pi_\psi(b^*b)\right) ,
\end{align*}
where we have used Lemma \ref{02-03-22g} for the penultimate step. We conclude that $\psi(b^*b) \leq \psi''\left(\pi_\psi(b^*b)\right)$ and hence that $\psi(b^*b) = \psi''\left(\pi_\psi(b^*b)\right)$. It follows therefore from Lemma \ref{01-03-22b} that $\psi'' \circ \pi_\psi|_A = \psi$.
\end{proof}

\subsubsection{Proof of Theorem \ref{21-02-22dx}:}
 Set $B := N_a \cap \mathcal N_{\psi''} \cap \mathcal N_{\psi''}^*$. Since $\sigma''_t \circ \pi_\psi = \pi_\psi \circ \sigma_t$ we have that $\pi_\psi(\mathcal A_\sigma) \subseteq N_a$ and it follows from Lemma \ref{03-03-22fx} that $\pi_\psi(\mathcal N_\psi \cap \mathcal N_\psi^*) \subseteq \mathcal N_{\psi''}\cap \mathcal N_{\psi''}^*$. Hence $\pi_\psi(\mathcal M^\sigma_\psi) \subseteq \pi_\psi(\mathcal A_\sigma \cap \mathcal N_\psi \cap \mathcal N_\psi^*) \subseteq B$. In particular, $B \neq 0$; in fact, as shown in the proof of Lemma \ref{02-03-22g} $\pi_\psi(\mathcal M^\sigma_\psi)$, and hence also $B$ is strongly dense in $N$. Since $B$ is $*$-subalgebra of $N$ its norm-closure $\overline{B}$ is a $C^*$-algebra. Note that $\sigma''_t(\overline{B}) = \overline{B}$ for all $t \in \mathbb R$ and that $\mathbb R \ni t \mapsto \sigma''_t(b)$ is norm-continuous for $b \in \overline{B}$ since this is true for elements of $N_a$. Since $\psi''$ is lower semi-continuous with respect to the $\sigma$-weak topology, it is also lower semi-continuous with respect to the norm-topology and hence $\psi''|_{\overline{B}}$ is a weight on $\overline{B}$, and it is $\sigma''$-invariant since $\psi''$ is. Since $\psi''(b^*b) < \infty$ for all $b \in B$ it follows that $\psi''|_{\overline{B}}$ is densely defined on $\overline{B}$. Note also that since $\psi''$ is faithful by Lemma \ref{01-03-22f} it follows that $\psi''|_B$ is not zero because $B$ is not. It follows from Lemma \ref{02-03-22g} that $\psi''|_{\overline{B}}$ has the property (2) of Theorem \ref{24-11-21d} and we conclude therefore that $\psi''|_{\overline{B}}$ is a $\beta$-KMS weight for the restriction of $\sigma''$ to $\overline{B}$.

Let $a,b \in \mathcal N_{\psi''} \cap \mathcal N_{\psi''}^*$. Set $a_k := \overline{R}_k(a), \ b_k := \overline{R}_k(b)$. It follows from Lemma \ref{02-03-22bx} and Lemma \ref{02-03-22d} that $a_k,b_k$ are elements of $B$ for all $k$. It follows therefore from condition (4) of Theorem \ref{24-11-21d} that there are continuous functions $f_k: {\mathcal D_\beta} \to \mathbb C$ that are holomorphic in the interior $\mathcal D_\beta^0$ of ${\mathcal D_\beta}$ such that
\begin{itemize}
\item $f_k(t) = \psi''(b_k \sigma''_t(a_k)) \ \ \forall t \in \mathbb R$, and
\item $f_k(t+i\beta) = \psi''(\sigma''_t(a_k)b_k) \ \ \forall t \in \mathbb R$.
\end{itemize}
In order to apply the theorem of Phragmen-Lindel\"of, Proposition 5.3.5 in \cite{BR}, to $f_k$ we must show that $f_k$ is bounded in $\mathcal D_\beta$. \footnote{If one is willing to use the observation from the footnote to the proof of Theorem \ref{24-11-21d} this step is not needed.} To this end note that
$$
f_k(t) = \left< \Lambda_{\psi''}(\sigma''_t(a_k)), \Lambda_{\psi''}(b_k^*)\right>
$$
It follows from Lemma \ref{02-03-22d} that $\sigma''_z(a_k) \in \mathcal N_{\psi''}$ and
$$
\Lambda_{\psi''}(\sigma''_z(a_k)) = \sqrt{\frac{k}{\pi}} \int_\mathbb R e^{-k (t-z)^2} U^{\psi''}_t \Lambda_{\psi''}(a) \ \mathrm d t = U^{\psi''}_z R_k(\Lambda_{\psi''}(a)) 
$$
for all $z \in \mathbb C$. In particular the function $z \mapsto  \left< \Lambda_{\psi''}(\sigma''_z(a_k)), \Lambda_{\psi''}(b_k^*)\right>$ is an entire holomorphic function which agree with $f_k$ on $\mathbb R$. It follows therefore from Proposition 5.3.6 in \cite{BR} that the two functions agree on $\mathcal D_\beta$. Since 
$$
\sup_{z \in \mathcal D_\beta} \left\|U^{\psi''}_z R_k(\Lambda_{\psi''}(a))\right\| < \infty
$$    
by Lemma \ref{24-09-23x} it follows that $f_k$ is indeed bounded on $\mathcal D_\beta$. For $z \in \mathcal D_\beta$ we get therefore the estimate
\begin{equation}\label{18-11-21dx}
\left|f_n(z) - f_m(z)\right| \leq \max \left\{ \sup_{t \in \mathbb R} |f_n(t) -f_m(t)|, \ \sup_{t \in \mathbb R} |f_n(t+i\beta) -f_m(t+i\beta)|\right\}
\end{equation}
from Proposition 5.3.5 in \cite{BR} (Phragmen-Lindel\"of) for all $n,m \in \mathbb N$.
We note that 
\begin{align*}
&f_n(t)  =\left< U^{\psi''}_t\Lambda_{\psi''}(a_n),\Lambda_{\psi''}(b_n^*)\right>,
\end{align*}
and that $ U^{\psi''}_t\Lambda_{\psi''}(a_n)$ converges to $U^{\psi''}_t\Lambda_{\psi''}(a)$ uniformly in $t$ since 
$$
\lim_{n \to \infty} \Lambda_{\psi''}(a_n) = \Lambda_{\psi''}(a)
$$ 
by Corollary \ref{02-03-22e}. The same corollary also shows that $\lim_{n \to \infty} \Lambda_{\psi''}(b_n^*)= \Lambda_{\psi''}(b^*)$ and it follows therefore that 
$$
\lim_{n \to \infty} f_n(t) = \left<U^{\psi''}_t\Lambda_{\psi''}(a),\Lambda_{\psi''}(b^*)\right> = \psi''(b\sigma''_t(a))
$$ 
uniformly in $t$. Since
\begin{equation*}\label{25-11-21c}
f_n(t+i\beta) =\psi''(\sigma''_t(a_n)b_n) = \left<\Lambda_{\psi''}(b_n), U^{\psi''}_t\Lambda_{\psi''}(a_n^*)\right> 
\end{equation*}
we find in the same way that
$$
\lim_{n \to \infty} f_n(t+i\beta) = \left<\Lambda_{\psi''}(b), U^{\psi''}_t\Lambda_{\psi''}(a^*)\right>  = \psi''(\sigma''_t(a)b)
$$
uniformly in $t$. It follows now from the estimate \eqref{18-11-21dx} that the sequence $\{f_n\}$ converges uniformly on ${\mathcal D_\beta}$ to a continuous bounded function $f : {\mathcal D_\beta} \to \mathbb C$ which is holomorphic in $\mathcal D_\beta^0$ and has the required properties.

\qed

With an appropriate definition of KMS weights for normal flows on von Neumann algebras Theorem \ref{21-02-22dx} would say that $\psi''$ is a $\beta$-KMS weight for $\sigma''$. We shall only need the notion of KMS weights for normal flows on von Neumann algebras; only the following which does not require any additional definition.

\begin{lemma}\label{06-10-23c} Let $\psi$ be a lower semi-continuous trace on the $C^*$-algebra $A$. There is a normal faithful semi-finite trace $\psi''$ on $\pi_\psi(A)''$ such that $\psi'' \circ \pi_\psi = \psi$.
\end{lemma}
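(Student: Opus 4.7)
The plan is to view $\psi$ as a $0$-KMS weight for the trivial flow $\sigma_t = \id_A$ and apply the construction of Section~\ref{modular2}. For this flow, condition~(1) of Theorem~\ref{24-11-21d} collapses to $\psi(a^*a) = \psi(aa^*)$, which holds by hypothesis; $\sigma$-invariance and dense-definedness are immediate. Lemma~\ref{01-03-22f} then produces a normal faithful semi-finite weight $\psi''$ on $N := \pi_\psi(A)''$, and Lemma~\ref{03-03-22fx} gives $\psi'' \circ \pi_\psi = \psi$.

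The outstanding task is to verify the trace identity $\psi''(m^*m) = \psi''(mm^*)$ on all of $N$. Theorem~\ref{21-02-22dx} with $\beta = 0$ and $\sigma''_t = \id_N$ collapses its continuous function $f$ to $\psi''(ab) = \psi''(ba)$ for all $a, b \in \mathcal N_{\psi''} \cap \mathcal N_{\psi''}^*$, giving the trace identity on this $*$-subalgebra. To extend it to arbitrary $m$, I would construct an increasing approximate unit $\{e_j\}$ in $\mathcal M_{\psi''}^+$ with $e_j \nearrow 1_N$ strongly, via a resolvent-type construction analogous to Lemma~\ref{16-12-21a} and using the $\sigma$-weak density of $\mathcal M_{\psi''}^+$ in $N^+$ afforded by semi-finiteness.

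The main obstacle is the ensuing approximation argument. For $m \in \mathcal N_{\psi''}$, the product $e_j m$ lies in $\mathcal N_{\psi''} \cap \mathcal N_{\psi''}^*$: the left-ideal property gives $m^* e_j \in N \cdot \mathcal N_{\psi''} \subseteq \mathcal N_{\psi''}$, whence $e_j m \in \mathcal N_{\psi''}^*$; while $m^* e_j^2 m \leq m^* m$ (from $e_j^2 \leq 1$) yields $\psi''((e_j m)^* (e_j m)) \leq \psi''(m^*m) < \infty$, so $e_j m \in \mathcal N_{\psi''}$ as well. The trace identity applied to $e_j m$ therefore reads $\psi''(m^* e_j^2 m) = \psi''(e_j m m^* e_j)$. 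The normal positive functional $\omega(x) := \psi''(m^* x m)$, well-defined and bounded on $N$ with $\omega(1) = \psi''(m^*m)$, is $\sigma$-weakly continuous and sends $e_j^2 \to 1$ to $\omega(1)$, so the left-hand side converges to $\psi''(m^*m)$. On the right, $e_j m m^* e_j \to m m^*$ $\sigma$-strongly, and $\sigma$-weak lower semi-continuity yields $\psi''(mm^*) \leq \liminf_j \psi''(e_j m m^* e_j) = \psi''(m^*m)$. In particular $m^* \in \mathcal N_{\psi''}$, and symmetrising in $m$ and $m^*$ gives equality. For $m \notin \mathcal N_{\psi''}$ the contrapositive forces $m^* \notin \mathcal N_{\psi''}$, so both sides are infinite.
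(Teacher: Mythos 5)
Your proposal is correct, and the first two steps (viewing $\psi$ as a KMS weight for the trivial flow and invoking Lemma \ref{01-03-22f} and Lemma \ref{03-03-22fx}) coincide exactly with the paper's proof. Where you diverge is the trace identity. The paper gets it in one line from Lemma \ref{02-03-22g}: that lemma asserts $\psi''(m^*m) = \psi''\bigl(\sigma''_\xi(m)\sigma''_\xi(m)^*\bigr)$ for every $m \in N_a$, and for the trivial flow one has $N_a = N$ (every element is entire analytic, via the constant extension) and $\sigma''_\xi = \id$, so the identity $\psi''(m^*m) = \psi''(mm^*)$ holds for \emph{all} $m \in N$ with no integrability restriction and no approximation. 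You instead route through the degenerate $\beta = 0$ case of Theorem \ref{21-02-22dx}, which only yields the identity on $\mathcal N_{\psi''} \cap \mathcal N_{\psi''}^*$, and then extend by an approximate-unit argument. Your extension is sound: the cut-down $e_j m$ does land in $\mathcal N_{\psi''} \cap \mathcal N_{\psi''}^*$ for the reasons you give, the functional $x \mapsto \psi''(m^*xm)$ is indeed bounded and normal (e.g.\ by Haagerup's theorem, Theorem \ref{06-04-22a}, writing $\psi''$ as a sum of normal positive functionals), and the lower semi-continuity step plus symmetrisation closes the argument, including the case $m \notin \mathcal N_{\psi''}$. The only ingredient you leave as a sketch is the existence of an increasing approximate unit in $\mathcal M_{\psi''}^+$ converging strongly to $1$; this is standard for normal semi-finite weights but does require the directedness argument of Lemma \ref{08-11-21e} or its analogue. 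So your route buys nothing over the paper's here — it re-proves on a dense subalgebra what Lemma \ref{02-03-22g} already gives globally — but it is a correct and self-contained alternative, and the approximation technique you use is essentially the one the paper deploys elsewhere (e.g.\ in the proof of Lemma \ref{31-03-22c}).
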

\begin{proof} Let $x \in \pi_\psi(A)''$. By considering $\psi$ as a $1$-KMS weight for the trivial action on $A$ it follows from Lemma \ref{01-03-22f} that $\psi''$ is a normal faithful semi-finite weight. The identity $\psi'' \circ \pi_\psi = \psi$ follows from Lemma \ref{03-03-22fx}. It remains therefore only to show that $\psi''(x^*x) = \psi''(xx^*)$. This follows from Lemma \ref{02-03-22g} by using that $\sigma''$ is the trivial flow when $\sigma$ is so that $\pi_\psi(A)''_a = \pi_\psi(A)''$ and $\sigma''_\xi = \id$.
\end{proof}

\section{The modular automorphism group and the modular conjugation associated with $\psi''$}

Let $f: \mathcal D_\beta \to \mathbb C$ be the function from Theorem \ref{21-02-22dx}. We define $H : \mathcal D_1 \to \mathbb C$ such that 
$$
H(z) := f(i\beta - \beta z) . 
$$
Then $H$ is continuous, holomorphic in $\mathcal D^0_1$, 
$$
H(t) = f(-\beta t + i \beta) = \psi''(\sigma''_{-\beta t}(a)b)  \ \ \forall t \in \mathbb R,
$$
and
$$
H(t+i) = f(-\beta t) =  \psi''(b \sigma''_{- \beta t}(a))  \ \ \forall t \in \mathbb R.
$$
It follows that $\left\{\sigma''_{-\beta t}\right\}_{t \in \mathbb R}$ is the modular automorphism group relative to $\psi''$, cf. Section \ref{modular1} and Theorem 9.2.38 of \cite{KR}. We can therefore summarize Lemma \ref{01-03-22f}, Lemma \ref{01-03-22g} and Theorem \ref{21-02-22dx} as follows.

\begin{thm}\label{05-03-22x} Let $\psi$ be a $\beta$-KMS weight for the flow $\sigma$ on $A$. There is a $\sigma''$-invariant and faithful normal semi-finite weight $\psi''$ on $\pi_\psi(A)''$ such that $\psi'' \circ \pi_\psi = \psi$ and the modular automorphism group $\alpha$ associated to $\psi''$ is given by 
$$
\alpha_t(m) = \sigma''_{-\beta t}(m) = U^\psi_{-\beta t} m  U^\psi_{\beta t}
$$
for all $t \in \mathbb R$ and all $m \in \pi_\psi(A)''$.
\end{thm}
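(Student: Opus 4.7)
The proof will essentially assemble results already established in the preceding two sections and add one short reparametrization argument together with an appeal to uniqueness of the modular automorphism group.

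First I would collect the ingredients. Lemma \ref{01-03-22f} asserts that $\psi''$ is a normal, faithful and semi-finite weight on $N=\pi_\psi(A)''$; Lemma \ref{01-03-22g} gives $\sigma''$-invariance of $\psi''$; and Lemma \ref{03-03-22fx} shows $\psi'' \circ \pi_\psi = \psi$. Finally, Theorem \ref{21-02-22dx} supplies, for every pair $a,b \in \mathcal N_{\psi''}\cap \mathcal N_{\psi''}^*$, a continuous bounded function $f : \mathcal D_\beta \to \mathbb C$, holomorphic in $\mathcal D_\beta^0$, with $f(t)=\psi''(b\sigma''_t(a))$ and $f(t+i\beta)=\psi''(\sigma''_t(a)b)$ for all $t\in\mathbb R$. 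Thus most of the statement is already in hand; what remains is to identify the modular automorphism group of $\psi''$.

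The plan is to convert the $\beta$-KMS condition for $\sigma''$ on the strip $\mathcal D_\beta$ into the standard KMS condition on the unit strip $\mathcal D_1$ for the reparametrized flow $t \mapsto \sigma''_{-\beta t}$. To that end I would define $H:\mathcal D_1 \to \mathbb C$ by
$$
H(z) := f(i\beta - \beta z).
$$
Since $z \mapsto i\beta - \beta z$ maps $\mathcal D_1$ homeomorphically onto $\mathcal D_\beta$ (sending $\mathbb R$ to $\mathbb R + i\beta$ and $\mathbb R + i$ to $\mathbb R$, for either sign of $\beta$), the function $H$ is continuous on $\mathcal D_1$, bounded, and holomorphic in the interior $\mathcal D_1^0$. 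A direct substitution gives
$$
H(t) = f(-\beta t + i\beta) = \psi''(\sigma''_{-\beta t}(a)\,b), \qquad H(t+i) = f(-\beta t) = \psi''(b\,\sigma''_{-\beta t}(a))
$$
for all $t\in\mathbb R$. This is precisely the KMS condition at inverse temperature $1$ for the flow $t \mapsto \sigma''_{-\beta t}$ with respect to $\psi''$, as formulated in property (b) of the modular automorphism group in Section \ref{modular1}.

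The flow $t \mapsto \sigma''_{-\beta t}$ is certainly a normal flow on $N$ (it is norm-continuous on each $\pi_\psi(a)$ and extends to $N$ by $\sigma$-weak continuity of $\sigma''$), and $\psi''$ is invariant under it by Lemma \ref{01-03-22g}. By the uniqueness clause of the Tomita--Takesaki theorem quoted in Section \ref{modular1} (see Theorem 9.2.38 of \cite{KR}), the modular automorphism group $\alpha$ associated with $\psi''$ is uniquely determined by conditions (a) and (b), so I may conclude
$$
\alpha_t = \sigma''_{-\beta t} \qquad \forall t\in\mathbb R.
$$
The equality $\sigma''_{-\beta t}(m) = U^\psi_{-\beta t}\, m\, U^\psi_{\beta t}$ is just the definition of $\sigma''$ together with the group property of $U^\psi$. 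I expect no real obstacle here: the only subtlety is confirming that the reparametrization $z \mapsto i\beta - \beta z$ behaves correctly for both signs of $\beta$, which is a direct verification of where the boundary lines of $\mathcal D_1$ are sent.
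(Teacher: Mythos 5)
Your proposal is correct and follows essentially the same route as the paper: the paper also defines $H(z) := f(i\beta - \beta z)$, checks $H(t) = \psi''(\sigma''_{-\beta t}(a)b)$ and $H(t+i) = \psi''(b\sigma''_{-\beta t}(a))$, and concludes by the uniqueness of the modular automorphism group (citing Theorem 9.2.38 of \cite{KR}) before assembling Lemmas \ref{01-03-22f}, \ref{01-03-22g}, \ref{03-03-22fx} and Theorem \ref{21-02-22dx}. No gaps.
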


We proceed towards a description of the modular conjugation operator for $\psi''$.

\begin{lemma}\label{24-02-22x} Let $\nabla$ be the modular operator of $\psi''$. Let $z \in \mathbb C$ and $a \in \mathcal N_{\psi''} \cap N_a$. Assume $\sigma''_{-\beta z}(a) \in \mathcal N_{\psi''}$. Then $\Lambda_{\psi''}(a) \in D\left(  \nabla^{i z} \right)$ and  $ \nabla^{iz}\Lambda_{\psi''}(a) = \Lambda_{\psi''}\left(\sigma''_{-\beta z}(a)\right)$.
\end{lemma}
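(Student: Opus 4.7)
By Theorem~\ref{05-03-22x}, the modular automorphism group of $\psi''$ is $\alpha_t = \sigma''_{-\beta t}$, implemented on $H_{\psi''}$ by the strongly continuous unitary group $u_t := U^{\psi''}_{-\beta t}$, which by Stone's theorem satisfies $u_t = \nabla^{it}$. Example~\ref{15-06-22} then identifies the operator $\nabla^{iz}$, defined by spectral calculus on the self-adjoint generator, with the closed operator $u_z$ produced by the closure construction of Section~\ref{Section-entire} applied to the flow $u$. It therefore suffices to exhibit a sequence $\{\eta_k\} \subseteq \mathcal A_u$ of entire analytic vectors satisfying $\eta_k \to \Lambda_{\psi''}(a)$ and $u_z\eta_k \to \Lambda_{\psi''}(\sigma''_{-\beta z}(a))$, and then invoke closedness of $u_z$.

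The natural candidate is $\eta_k := \Lambda_{\psi''}(\overline{R}_k(a))$. Since $a \in \mathcal N_{\psi''}$, Lemma~\ref{02-03-22d} (taken at $z=0$) gives $\overline{R}_k(a) \in \mathcal N_{\psi''}$, and Corollary~\ref{02-03-22e} yields $\eta_k \to \Lambda_{\psi''}(a)$. Applying Lemma~\ref{02-03-22d} with the parameter $z$ replaced by $-\beta z$ produces
\begin{equation*}
\Lambda_{\psi''}\!\left(\sigma''_{-\beta z}(\overline{R}_k(a))\right) \;=\; \sqrt{\tfrac{k}{\pi}}\int_{\mathbb R} e^{-k(t+\beta z)^2}\, U^{\psi''}_t \Lambda_{\psi''}(a)\, \mathrm d t, \qquad z \in \mathbb C,
\end{equation*}
and by Lemma~\ref{18-11-21ex} the right-hand side is a norm-entire $H_{\psi''}$-valued function of $z$. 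At $z = t \in \mathbb R$ it equals $U^{\psi''}_{-\beta t} \Lambda_{\psi''}(\overline{R}_k(a)) = u_t\eta_k$, so $\eta_k \in \mathcal A_u$ and $u_z\eta_k = \Lambda_{\psi''}(\sigma''_{-\beta z}(\overline{R}_k(a)))$ for every $z \in \mathbb C$.

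The pivotal step is the commutation identity
\begin{equation*}
\sigma''_{-\beta z}(\overline{R}_k(a)) \;=\; \overline{R}_k\!\left(\sigma''_{-\beta z}(a)\right) \qquad (z \in \mathbb C)
\end{equation*}
for $a \in N_a$. Both sides are norm-entire $N$-valued functions of $z$: the left side because $\overline{R}_k(a) \in N_a$ by the first bullet of Lemma~\ref{02-03-22bx}, which by definition of $N_a$ makes $z \mapsto \sigma''_{-\beta z}(\overline{R}_k(a))$ norm-entire; the right side because $\overline{R}_k$ is a bounded linear operator on the Banach space $N$ (from Lemma~\ref{02-03-22x} together with the contraction estimate in its proof), composed with the norm-entire function $z \mapsto \sigma''_{-\beta z}(a)$ supplied by $a \in N_a$. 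For real $z = t$, an application of Fubini against each $\omega \in N_*$ combined with $\sigma''_s \circ \sigma''_t = \sigma''_{s+t}$ shows that the two sides agree, and the identity theorem for Banach-valued holomorphic functions then extends the equality to all of $\mathbb C$.

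With the commutation in hand, the hypothesis $\sigma''_{-\beta z}(a) \in \mathcal N_{\psi''}$ and Corollary~\ref{02-03-22e} give
\begin{equation*}
u_z\eta_k \;=\; \Lambda_{\psi''}\!\left(\overline{R}_k(\sigma''_{-\beta z}(a))\right) \;\longrightarrow\; \Lambda_{\psi''}(\sigma''_{-\beta z}(a))
\end{equation*}
as $k \to \infty$, and closedness of $u_z = \nabla^{iz}$ completes the proof. The main obstacle is the commutation identity; once one recognises that $\overline{R}_k$ extends to a genuine bounded operator on all of $N$, the remainder is a routine assembly of the smoothing and closure machinery of Sections~\ref{Section-entire} and~\ref{modular2}.
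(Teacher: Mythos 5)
Your proof is correct and takes essentially the same route as the paper: smooth $a$, identify the action of $\nabla^{iz}$ on the smoothed vectors via the commutation of the analytic extension with the smoothing operator (proved by the identity theorem for entire Banach-valued functions), and conclude by closedness of $\nabla^{iz}$. The only differences are cosmetic — you smooth with respect to $\sigma''$ rather than the rescaled flow $t \mapsto \sigma''_{-\beta t}$ (the same operators up to reindexing), and you make explicit up front the identification of the spectral operator $\nabla^{iz}$ with the holomorphic-extension operator $u_z$, which the paper relegates to a footnote.
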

\begin{proof} In the case $\beta =0$ it follows from Theorem \ref{05-03-22x} that the modular automorphism group of $\psi''$ is trivial and hence so is the modular operator. In this case the statement of the lemma is trivial, and we may therefore assume that $\beta \neq 0$.

Since $a \in N_a \subseteq N_c$, the $C^*$-subalgebra of $N$ where $\sigma''$ is norm-continuous, we can apply  Lemma \ref{24-11-21} and Lemma \ref{17-11-21i} with $\sigma_t:= \sigma''_{-\beta t}$ to find that
$$
\sqrt{\frac{k}{\pi}} \int_\mathbb R e^{-kt^2} \sigma''_{-\beta t}(a) \ \mathrm d t \in N_a
$$
and
\begin{equation}\label{25-09-23}
\begin{split}
&\sigma''_{-\beta z}\left(\sqrt{\frac{k}{\pi}} \int_\mathbb R e^{-kt^2} \sigma''_{-\beta t}(a) \ \mathrm d t \right) = \sqrt{\frac{k}{\pi}} \int_\mathbb R e^{-k (t -z)^2} \sigma''_{-\beta t}(a) \ \mathrm d t \\
& = \sqrt{\frac{k}{\pi}} \int_\mathbb R e^{-kt^2} \sigma''_{-\beta t}(\sigma''_{-\beta z}(a)) \ \mathrm d t .
\end{split} 
\end{equation}
By the arguments that proved Lemma \ref{02-03-22d} we have
$$
 \sqrt{\frac{k}{\pi}} \int_\mathbb R e^{-k(t-z)^2} \sigma''_{-\beta t}(a) \ \mathrm d t \in \mathcal N_{\psi''},
 $$
 and
 \begin{equation}\label{25-09-23a}
 \Lambda_{\psi''}\left( \sqrt{\frac{k}{\pi}} \int_\mathbb R e^{-k(t-z)^2} \sigma''_{-\beta t}(a) \ \mathrm d t\right) =  \sqrt{\frac{k}{\pi}} \int_\mathbb R e^{-k(t-z)^2} \Lambda_{\psi''}(\sigma''_{-\beta t}(a)) \ \mathrm d t .
\end{equation}
Since we assume that $\sigma''_{-\beta z}(a) \in \mathcal N_{\psi''}$ it follows in the same way that
$$
 \sqrt{\frac{k}{\pi}} \int_\mathbb R e^{-kt^2} \sigma''_{-\beta t}(\sigma''_{-\beta z}(a)) \ \mathrm d t \in \mathcal N_{\psi''}
$$
and
$$
 \Lambda_{\psi''}\left( \sqrt{\frac{k}{\pi}} \int_\mathbb R e^{-kt^2} \sigma''_{-\beta t}(\sigma''_{-\beta z}(a)) \ \mathrm d t\right)  =  \sqrt{\frac{k}{\pi}} \int_\mathbb R e^{-k t^2} \Lambda_{\psi''}(\sigma''_{-\beta t}( \sigma''_{-\beta z}(a  ))) \ \mathrm d t .
$$
Since $\{\sigma''_{-\beta t}\}_{t \in \mathbb R}$ is the modular automorphism group of $\psi''$ by Theorem \ref{05-03-22x}, the modular operator $\nabla$ is related to $\sigma''$ by the equation
$$
\nabla^{it}\Lambda_{\psi''}(a) = \Lambda_{\psi''}(\sigma''_{-\beta t}(a)) \ \ \forall a \in \mathcal N_{\psi''}.
$$
Thus
$$
\sqrt{\frac{k}{\pi}} \int_\mathbb R e^{-k t^2} \Lambda_{\psi''}(\sigma''_{-\beta t}( \sigma''_{-\beta z}(a  )) \ \mathrm d t = \sqrt{\frac{k}{\pi}} \int_\mathbb R e^{-k t^2} \nabla^{it}\Lambda_{\psi''}( \sigma''_{-\beta z}(a  )) \ \mathrm d t .
$$
It follows therefore from Lemma \ref{24-11-21} that 
\begin{equation}\label{25-09-23h}
\lim_{k \to \infty} \Lambda_{\psi''}\left( \sqrt{\frac{k}{\pi}} \int_\mathbb R e^{-kt^2} \sigma''_{-\beta t}(\sigma''_{-\beta z}(a)) \ \mathrm d t\right) = \Lambda_{\psi''}(\sigma''_{-\beta z}(a)) .
\end{equation}
By Lemma \ref{17-11-21i} we have
\begin{equation}\label{25-09-23g}
\begin{split}
&\sqrt{\frac{k}{\pi}} \int_\mathbb R e^{-k(t-z)^2} \Lambda_{\psi''}(\sigma''_{-\beta t}(a)) \ \mathrm d t = \sqrt{\frac{k}{\pi}} \int_\mathbb R e^{-k(t-z)^2} \nabla^{it}\Lambda_{\psi''}(a) \ \mathrm d t \\
& = \nabla^{iz}\left( \sqrt{\frac{k}{\pi}} \int_\mathbb R e^{-k t^2}\nabla^{it}\Lambda_{\psi''}\left(a\right) \ \mathrm d t\right)  .
\end{split}
\end{equation}
Note that 
\begin{equation}\label{25-09-23e}
\lim_{k \to \infty} \sqrt{\frac{k}{\pi}} \int_\mathbb R e^{-k t^2}\nabla^{it}\Lambda_{\psi''}\left(a\right) \ \mathrm d t = \Lambda_{\psi''}(a)
\end{equation}
by Lemma \ref{24-11-21}, and that  
\begin{align*}
&\lim_{k \to \infty} \nabla^{iz}\left( \sqrt{\frac{k}{\pi}} \int_\mathbb R e^{-k t^2}\nabla^{it}\Lambda_{\psi''}\left(a\right) \ \mathrm d t\right)  \\
& = \lim_{k \to \infty}\sqrt{\frac{k}{\pi}} \int_\mathbb R e^{-k(t-z)^2} \Lambda_{\psi''}(\sigma''_{-\beta t}(a)) \ \mathrm d t \ \ \ \ \ \ \ \ \ \ \ \ \text{(by \eqref{25-09-23g})}\\
& =  \lim_{k \to \infty} \Lambda_{\psi''}\left( \sqrt{\frac{k}{\pi}} \int_\mathbb R e^{-k(t-z)^2} \sigma''_{-\beta t}(a) \ \mathrm d t\right)\ \ \ \ \ \ \ \ \ \ \ \ \text{(by \eqref{25-09-23a})} \\
& =  \lim_{k \to \infty} \Lambda_{\psi''}\left(  \sqrt{\frac{k}{\pi}} \int_\mathbb R e^{-kt^2} \sigma''_{-\beta t}(\sigma''_{-\beta z}(a)) \ \mathrm d t\right)\ \ \ \ \ \ \ \ \ \ \ \ \text{(by \eqref{25-09-23})}\\
& = \Lambda_{\psi''}(\sigma''_{-\beta z}(a))\ \ \ \ \ \ \ \ \ \ \ \ \text{(by \eqref{25-09-23h})} .
\end{align*}
Since $\nabla^{iz}$ is a closed operator we can combine this with \eqref{25-09-23e} to find that $\Lambda_{\psi''}(a) \in D(\nabla^{iz})$ and $\nabla^{iz}\Lambda_{\psi''}(a) = \Lambda_{\psi''}\left(\sigma''_{-\beta z}(a)\right)$.\footnote{In the statement and the proof of Lemma \ref{24-02-22x} the symbol $\nabla^{iz}$ is used with two apparently different meanings. First in equation \eqref{25-09-23g} as the holomorphic extension defined as in Section \ref{holextensions} from the flow $\nabla^{it}$ on $H_{\psi''}$ and then, in the statement of the lemma and in the closing lines of the proof, as the operator on $H_{\psi''}$ defined by spectral theory from the self-adjoint generator of the flow. It is crucial to the argument that the operator $\nabla^{iz}$ arising in these ways is the same. See Example \ref{15-06-22}. }

\end{proof}

\begin{lemma}\label{03-03-22c} The modular conjugation operator $J$ for $\psi''$ satisfies that 
$$
J \Lambda_{\psi''}(a) = \Lambda_{\psi''}\left( \sigma''_{-i\frac{\beta}{2}}(a)^*\right)
$$ 
for all $a \in \mathcal N_{\psi''} \cap \mathcal N_{\psi''}^* \cap N_a$ such that $\sigma''_{\pm i \frac{\beta}{2}}(a) \in \mathcal N_{\psi''}\cap \mathcal N_{\psi''}^*$.
\end{lemma}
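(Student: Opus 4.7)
Plan of proof. The strategy is to exploit the polar decomposition $S=J\nabla^{1/2}$ of the closure $S$ of the conjugate-linear map $S_0\Lambda_{\psi''}(x)=\Lambda_{\psi''}(x^*)$, together with Lemma \ref{24-02-22x}. The point is that $\nabla^{1/2}$ is precisely the $\nabla^{iz}$ of Lemma \ref{24-02-22x} specialised to $z=-i/2$: by Theorem \ref{05-03-22x} the modular group is $\nabla^{it}=U^\psi_{-\beta t}(\cdot)U^\psi_{\beta t}$, hence $\nabla^{it}$ is a strongly continuous one-parameter unitary group on $H_{\psi''}$, and Example \ref{15-06-22} shows that the operator $\nabla^{iz}$ obtained from this group via the extension procedure of Section \ref{holextensions} coincides with the operator obtained by spectral calculus applied to the self-adjoint generator.

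Given $a$ as in the statement, set $a':=\sigma''_{-i\beta/2}(a)$. By Lemma \ref{25-11-21} we have $a'\in N_a$, and the hypothesis gives $a'\in\mathcal N_{\psi''}\cap\mathcal N_{\psi''}^*$. Moreover, by Lemma \ref{23-11-21},
\[
\sigma''_{i\beta/2}(a')=\sigma''_{i\beta/2}\!\left(\sigma''_{-i\beta/2}(a)\right)=a\in\mathcal N_{\psi''}.
\]
Apply Lemma \ref{24-02-22x} to $a'$ with $z=-i/2$: we obtain $\Lambda_{\psi''}(a')\in D(\nabla^{1/2})$ and
\[
\nabla^{1/2}\Lambda_{\psi''}(a')=\Lambda_{\psi''}\!\left(\sigma''_{i\beta/2}(a')\right)=\Lambda_{\psi''}(a).
\]

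Now read off $J\Lambda_{\psi''}(a)$ in two ways. On the one hand, since $a'\in\mathcal N_{\psi''}\cap\mathcal N_{\psi''}^*$,
\[
S\Lambda_{\psi''}(a')=S_0\Lambda_{\psi''}(a')=\Lambda_{\psi''}(a'^*)=\Lambda_{\psi''}\!\left(\sigma''_{-i\beta/2}(a)^*\right).
\]
On the other hand, using the polar decomposition and the identity just displayed,
\[
S\Lambda_{\psi''}(a')=J\nabla^{1/2}\Lambda_{\psi''}(a')=J\Lambda_{\psi''}(a).
\]
Combining these two equations yields the desired formula. The only non-routine point is the identification of $\nabla^{1/2}$ in the polar decomposition with $\nabla^{iz}\rvert_{z=-i/2}$ as treated in Lemma \ref{24-02-22x}; this is however supplied by Example \ref{15-06-22} together with Theorem \ref{05-03-22x}, so no further analytic work is needed.
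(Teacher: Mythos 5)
Your proof is correct and follows essentially the same route as the paper's: both rest on Lemma \ref{24-02-22x} applied at $z=-\tfrac{i}{2}$ together with the polar decomposition $S=J\nabla^{\frac12}$. The only difference is organisational — you apply the lemma to $\sigma''_{-i\beta/2}(a)$ so that $\nabla^{1/2}$ lands directly on $\Lambda_{\psi''}(a)$, whereas the paper applies it to $a$ itself and then invokes $J^2=1$ and the substitution $a\mapsto a^*$ to reach the same formula.
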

\begin{proof} Fix an element $a \in \mathcal N_{\psi''} \cap \mathcal N_{\psi''}^* \cap N_a$ such that $\sigma''_{\pm i \frac{\beta}{2}}(a) \in \mathcal N_{\psi''}\cap \mathcal N_{\psi''}^*$. Applying Lemma \ref{24-02-22x} with $z= - \frac{i}{2}$ we find that
$$
\nabla^{\frac{1}{2}}\Lambda_{\psi''}(a) = \Lambda_{\psi''}\left(\sigma''_{\frac{i \beta}{2}}(a)\right) .
$$
Thus
$$
\Lambda_{\psi''}(a^*) = J \nabla^{\frac{1}{2}}\Lambda_{\psi''}(a) = J \Lambda_{\psi''}\left(\sigma''_{\frac{i \beta}{2}}(a)\right) .
$$
 Since $J^2 =1$ this implies that
$$
J \Lambda_{\psi''}(a^*) = \Lambda_{\psi''}\left(\sigma''_{\frac{i \beta}{2}}(a)\right) .
$$
Note that we can substitute $a^*$ for $a$. This gives
$$
J \Lambda_{\psi''}(a) = \Lambda_{\psi''}\left(\sigma''_{\frac{i \beta}{2}}(a^*)\right)  =  \Lambda_{\psi''}\left(\sigma''_{-i\frac{ \beta}{2}}(a)^*\right).
$$
\end{proof}

It follows from Lemma \ref{03-03-22fx} that we can define an isometry $W : H_\psi \to H_{\psi''}$ such that $W\Lambda_\psi(a) := \Lambda_{\psi''}\left(\pi_\psi(a)\right)$ for $a \in \mathcal N_\psi$. It is straightforward to check that 
$$
W \pi_\psi(x) \Lambda_\psi(a) = \pi_{\psi''}\left(\pi_\psi(x)\right)W\Lambda_\psi(a)
$$ 
when $x \in A$ and $a \in \mathcal N_\psi$. Since $\pi_{\psi''}$ is $\sigma$-weakly continuous and $\pi_\psi(\mathcal N_\psi)$ is $\sigma$-weakly dense in $N$, it follows that
\begin{equation}\label{23-02-22ex}
Wm = \pi_{\psi''}(m)W \ \ \forall m \in N .
\end{equation}

\begin{lemma}\label{23-02-22dx} $W$ is a unitary.
\end{lemma}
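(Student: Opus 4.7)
My plan is to show that the range $K := W H_\psi$ of the isometry $W$ is all of $H_{\psi''}$; equivalently, that the projection $V := WW^*$ onto $K$ equals $1$. Taking adjoints in \eqref{23-02-22ex} gives $W^*\pi_{\psi''}(m) = mW^*$, which combined with $Wm = \pi_{\psi''}(m)W$ shows that $V\pi_{\psi''}(m) = \pi_{\psi''}(m)V$ for all $m \in N$, so $V \in \pi_{\psi''}(N)'$.

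The heart of the proof is to establish that $K$ is also invariant under the modular conjugation $J$ of $\psi''$. For every $a \in \mathcal M_\psi^\sigma$ the element $\pi_\psi(a)$ lies in $N_a \cap \mathcal N_{\psi''} \cap \mathcal N_{\psi''}^*$ and, since $\mathcal M_\psi^\sigma$ is $\sigma_z$-invariant and contained in $\mathcal M_\psi \subseteq \mathcal N_\psi \cap \mathcal N_\psi^*$, the element $\sigma''_z(\pi_\psi(a)) = \pi_\psi(\sigma_z(a))$ remains in $\mathcal N_{\psi''} \cap \mathcal N_{\psi''}^*$ for every $z \in \mathbb C$ by Lemma \ref{03-03-22fx} (this uses \eqref{08-03-22b}). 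Lemma \ref{03-03-22c} therefore applies and yields
$$
J\Lambda_{\psi''}(\pi_\psi(a)) = \Lambda_{\psi''}\left(\pi_\psi(\sigma_{-i\beta/2}(a)^*)\right) = W\Lambda_\psi\left(\sigma_{-i\beta/2}(a)^*\right) \in K,
$$
because $\sigma_{-i\beta/2}(a)^* \in \mathcal M_\psi^\sigma \subseteq \mathcal N_\psi$. Since $\mathcal M_\psi^\sigma$ is a core for $\Lambda_\psi$ by Corollary \ref{15-02-22}, the vectors $W\Lambda_\psi(a)$ with $a \in \mathcal M_\psi^\sigma$ are dense in $K$, and the continuity of $J$ together with $J^2 = 1$ then gives $JK = K$.

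From $JK = K$ (and hence $JK^\perp = K^\perp$) I obtain $VJ = JV$, so $V = JVJ \in J\pi_{\psi''}(N)' J = \pi_{\psi''}(N)$ by \eqref{03-03-22h}; combined with $V \in \pi_{\psi''}(N)'$ this places $V$ in $Z(\pi_{\psi''}(N)) = \pi_{\psi''}(Z(N))$, so $V = \pi_{\psi''}(z)$ for a central projection $z \in N$. For any $a \in \mathcal M_\psi^\sigma$ one then has
$$
\Lambda_{\psi''}(z\pi_\psi(a)) = \pi_{\psi''}(z)\Lambda_{\psi''}(\pi_\psi(a)) = V\Lambda_{\psi''}(\pi_\psi(a)) = \Lambda_{\psi''}(\pi_\psi(a)),
$$
and faithfulness of $\psi''$ (Lemma \ref{01-03-22f}) forces $z\pi_\psi(a) = \pi_\psi(a)$. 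Because $\pi_\psi(\mathcal M_\psi^\sigma)$ is $\sigma$-strongly dense in $N$ (as noted in the proof of Theorem \ref{21-02-22dx}) and left multiplication by $z$ is $\sigma$-strongly continuous, it follows that $zm = m$ for every $m \in N$; taking $m = 1$ gives $z = 1$ and hence $V = 1$.

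The main obstacle is the $J$-invariance of $K$: one needs a dense subspace of $K$ on which Lemma \ref{03-03-22c} computes $J$ explicitly and which is stable under $a \mapsto \sigma_{-i\beta/2}(a)^*$. The algebra $\pi_\psi(\mathcal M_\psi^\sigma)$ satisfies both requirements, which is precisely what makes the argument go through.
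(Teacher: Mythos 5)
Your proof is correct and follows essentially the same route as the paper's: you show the range projection of $W$ commutes with $\pi_{\psi''}(N)$, use Lemma \ref{03-03-22c} on $\pi_\psi(\mathcal M^\sigma_\psi)$ to get $J$-invariance of the range, place the projection in the center via \eqref{03-03-22h}, and conclude it is $1$. The only cosmetic differences are that you obtain the commutation with $\pi_{\psi''}(N)$ by taking adjoints in \eqref{23-02-22ex} rather than checking it on generating vectors, and you finish via faithfulness of $\psi''$ where the paper uses directly that $W$ is isometric.
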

\begin{proof} Let $E \in B(H_{\psi''})$ be the orthogonal projection onto the range 
$$\overline{\left\{\Lambda_{\psi''}\left(\pi_\psi(a)\right): \ a \in \mathcal N_\psi\right\}}
$$ 
of $W$. Note that
$$
\pi_{\psi''}(\pi_\psi(b))\Lambda_{\psi''}\left(\pi_\psi(a)\right) =  \Lambda_{\psi''}\left(\pi_\psi(ba)\right)  ,
$$
for all $b \in A$ and $a \in \mathcal N_\psi$, which shows that $E \in \pi_{\psi''}(N)'$. It follows from Lemma \ref{03-03-22c} that the modular conjugation operator $J$ satisfies
$$
J \Lambda_{\psi''}(m) = \Lambda_{\psi''}\left( \sigma''_{-i\frac{\beta}{2}}(m)^*\right)
$$
when $m \in \mathcal N_{\psi''} \cap \mathcal N_{\psi''}^* \cap N_a$ and  $\sigma''_{\pm i \frac{\beta}{2}}(m) \in \mathcal N_{\psi''}\cap \mathcal N_{\psi''}^*$. When $a \in \mathcal M^\sigma_\psi$, the element $\pi_\psi(a)$ is in $\mathcal N_{\psi''} \cap \mathcal N_{\psi''}^* \cap N_a$ and it follows from Lemma \ref{03-03-22fx} that $\sigma''_{\pm i \frac{\beta}{2}}(\pi_\psi(a)) = \pi_\psi(\sigma_{\pm i \frac{\beta}{2}}(a)) \in \mathcal N_{\psi''}\cap \mathcal N_{\psi''}^*$. Therefore,
\begin{align*}
&JW\Lambda_\psi(a) = J \Lambda_{\psi''}(\pi_\psi(a)) = \Lambda_{\psi''}\left(\sigma''_{-i\frac{\beta}{2}}(\pi_\psi(a))^*\right)\\
& =  \Lambda_{\psi''}\left( \pi_\psi( \sigma_{-i\frac{\beta}{2}}(a)^*)\right) \in WH_{\psi}.
\end{align*}
Since $\left\{\Lambda_\psi(a) : \ a \in \mathcal M^\sigma_\psi\right\}$ is dense in $H_\psi$ by Lemma \ref{07-12-21c} it follows that $EJE = JE$, implying that $J$ commutes with $E$ since $J^* = J$. It follows that $E = JEJ \in J\pi_{\psi''}(N)'J = \pi_{\psi''}(N)$, cf. \eqref{03-03-22h}. That is, $E$ is in the center of $\pi_{\psi''}(N)$. Since $\pi_{\psi''}$ is an isomorphism there is an $E_0$ in the center of $N$ such that $\pi_{\psi''}(E_0) = E$. Using \eqref{23-02-22ex} we find that
$$
W (1-E_0) = \pi_{\psi''}(1-E_0)W = (1-E)W = 0 .
$$
Since $W$ is an isometry this implies that $E_0 = 1$ and hence $E =1$; that is, $W$ is surjective, and hence a unitary. 
\end{proof}

Thanks to Lemma \ref{23-02-22dx} we can move the operators associated with the modular theory of the weight $\psi''$ from the Hilbert space $H_{\psi''}$ back to $H_\psi$. We are particularly interested in the modular conjugation operator. Note that it follows from one of the defining properties of a $\beta$-KMS weight, more specifically from condition (1) in Theorem \ref{24-11-21d}, that there is a conjugate linear isometry 
$$
J_\psi : H_\psi \to H_\psi
$$
given by the formula
$$
J_\psi\Lambda_\psi(a) = \Lambda_\psi\left( \sigma_{-i\frac{\beta}{2}}(a)^*\right)
$$
when $a \in \mathcal N_\psi \cap \mathcal A_\sigma$. This observation was used for condition (E) in Section \ref{GNS-KMS} and in Proposition \ref{08-02-22a}.

\begin{lemma}\label{04-03-22} Let $J$ be the modular conjugation operator of the weight $\psi''$ on $\pi_\psi(A)''$ and $W$ the unitary of Lemma \ref{23-02-22dx}. Then
$$
W^*JW = J_\psi.
$$
\end{lemma}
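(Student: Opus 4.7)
The plan is to verify the identity on the dense subspace $\Lambda_\psi(\mathcal M_\psi^\sigma) \subseteq H_\psi$; density was established in Lemma \ref{07-12-21c}. Both $W^*JW$ and $J_\psi$ are conjugate linear isometries of $H_\psi$, so it is enough to check that they agree on this dense subspace.

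First I would note, as was done in the proof of Lemma \ref{23-02-22dx}, that for $a \in \mathcal M_\psi^\sigma$ the element $\pi_\psi(a)$ lies in $\mathcal N_{\psi''} \cap \mathcal N_{\psi''}^* \cap N_a$ (membership in $N_a$ follows from $a \in \mathcal A_\sigma$ together with $\sigma''_t \circ \pi_\psi = \pi_\psi \circ \sigma_t$ in \eqref{08-03-22b}, and membership in $\mathcal N_{\psi''} \cap \mathcal N_{\psi''}^*$ follows from Lemma \ref{03-03-22fx} and the inclusion $\mathcal M_\psi^\sigma \subseteq \mathcal N_\psi \cap \mathcal N_\psi^*$). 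Moreover, $\sigma''_{\pm i\frac{\beta}{2}}(\pi_\psi(a)) = \pi_\psi(\sigma_{\pm i\frac{\beta}{2}}(a))$ and $\sigma_{\pm i\frac{\beta}{2}}(a) \in \mathcal M_\psi^\sigma \subseteq \mathcal N_\psi \cap \mathcal N_\psi^*$ by definition of $\mathcal M_\psi^\sigma$, so Lemma \ref{03-03-22c} applies to $\pi_\psi(a)$.

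Given this, for $a \in \mathcal M_\psi^\sigma$ I would compute:
\begin{align*}
JW\Lambda_\psi(a) &= J\Lambda_{\psi''}(\pi_\psi(a)) = \Lambda_{\psi''}\!\left(\sigma''_{-i\frac{\beta}{2}}(\pi_\psi(a))^*\right) \\
&= \Lambda_{\psi''}\!\left(\pi_\psi(\sigma_{-i\frac{\beta}{2}}(a))^*\right) = \Lambda_{\psi''}\!\left(\pi_\psi\!\left(\sigma_{-i\frac{\beta}{2}}(a)^*\right)\right)\\
&= W\Lambda_\psi\!\left(\sigma_{-i\frac{\beta}{2}}(a)^*\right) = WJ_\psi \Lambda_\psi(a),
\end{align*}
using Lemma \ref{03-03-22c} in the second equality, \eqref{08-03-22b} in the third, the definition of $W$ in the fifth, and the definition of $J_\psi$ in the last. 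Applying $W^*$ on the left gives $W^*JW\Lambda_\psi(a) = J_\psi\Lambda_\psi(a)$.

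Since both $W^*JW$ and $J_\psi$ are bounded (in fact isometric) conjugate linear operators on $H_\psi$, and they agree on the dense set $\Lambda_\psi(\mathcal M_\psi^\sigma)$, they agree everywhere. I do not anticipate any obstacle in this argument; everything has been set up by Lemmas \ref{03-03-22c} and \ref{23-02-22dx} and by the identities relating $\sigma$ and $\sigma''$ through $\pi_\psi$.
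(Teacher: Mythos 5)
Your proposal is correct and follows essentially the same route as the paper: both verify the identity on $\Lambda_\psi(\mathcal M^\sigma_\psi)$ using Lemma \ref{03-03-22c}, the intertwining relation $\sigma''_{z}\circ\pi_\psi = \pi_\psi\circ\sigma_z$, and the definition of $W$, and then conclude by density via Lemma \ref{07-12-21c}. Your preliminary check that $\pi_\psi(a)$ satisfies the hypotheses of Lemma \ref{03-03-22c} is the same verification the paper carries out in the proof of Lemma \ref{23-02-22dx}.
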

\begin{proof} Let $a \in \mathcal M^\sigma_\psi$. By using Lemma \ref{03-03-22c} we find
\begin{align*}
& W^*JW\Lambda_\psi(a) = W^*J \Lambda_{\psi''}(\pi_\psi(a)) = W^*\Lambda_{\psi''}\left( \sigma''_{-i\frac{\beta}{2}}(\pi_\psi(a))^*\right) \\
& =  W^*\Lambda_{\psi''}\left( \pi_{\psi}\left({\sigma}_{-i\frac{\beta}{2}}(a)^*\right)\right) = \Lambda_\psi\left( \sigma_{-i\frac{\beta}{2}}(a)^*\right) = J_\psi \Lambda_\psi(a).
\end{align*}
This proves the lemma because $\Lambda_\psi(  \mathcal M^\sigma_\psi)$ is dense in $H_\psi$ by Lemma \ref{07-12-21c}.
\end{proof}

Thanks to Lemma \ref{04-03-22} we get now the following from \eqref{23-02-22ex} and \eqref{03-03-22h}.

\begin{thm}\label{04-03-22a} Let $\sigma$ be a flow on the $C^*$-algebra $A$ and $\psi$ a KMS weight for $\sigma$. Then
$$
J_\psi \pi_\psi(A)' J_\psi = \pi_\psi(A)'' .
$$
\end{thm}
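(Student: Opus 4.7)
The plan is to transport the standard Tomita--Takesaki commutation relation \eqref{03-03-22h} from the Hilbert space $H_{\psi''}$ back to $H_\psi$ by means of the unitary $W$ of Lemma \ref{23-02-22dx} and the identification $W^*JW = J_\psi$ established in Lemma \ref{04-03-22}. The key observation is that \eqref{23-02-22ex} says $Wm = \pi_{\psi''}(m)W$ for every $m \in N = \pi_\psi(A)''$, so since $W$ is unitary we get $W^*\pi_{\psi''}(m)W = m$ for all $m \in N$, which means that conjugation by $W^*$ carries the von Neumann algebra $\pi_{\psi''}(N)$ isomorphically onto $N = \pi_\psi(A)''$ and therefore also carries the commutant $\pi_{\psi''}(N)'$ onto $N' = \pi_\psi(A)'$.

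Having this, the proof is essentially a computation. Since $\psi''$ is a faithful normal semi-finite weight on the von Neumann algebra $\pi_{\psi''}(N)$, the general modular theory summarized in Section \ref{modular1} gives
\begin{equation*}
J \pi_{\psi''}(N) J = \pi_{\psi''}(N)' ,
\end{equation*}
and equivalently $J \pi_{\psi''}(N)' J = \pi_{\psi''}(N)$, as $J^2 = 1$. Using $W^*JW = J_\psi$ and the transport of the two algebras by $W$, I then compute
\begin{equation*}
J_\psi \pi_\psi(A)' J_\psi = (W^*JW)(W^* \pi_{\psi''}(N)' W)(W^*JW) = W^* J \pi_{\psi''}(N)' J W = W^* \pi_{\psi''}(N) W = N = \pi_\psi(A)'' ,
\end{equation*}
which is the desired equality.

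There is no real obstacle here: all the substantive work has been done in building $\psi''$ (Theorem \ref{05-03-22x}), identifying the modular conjugation of $\psi''$ with $J_\psi$ after implementing the unitary equivalence $W$ (Lemma \ref{04-03-22}), and invoking the general commutation theorem \eqref{03-03-22h} of Tomita--Takesaki. The only step that deserves a word of care is checking that $W^*\pi_{\psi''}(N)'W = \pi_\psi(A)'$, which follows because \eqref{23-02-22ex} implies that $W$ conjugates $N$ onto $\pi_{\psi''}(N)$ as von Neumann algebras and therefore also conjugates their commutants, using that for any unitary $W:H_\psi\to H_{\psi''}$ and any subset $S \subseteq B(H_{\psi''})$ one has $W^*S'W = (W^*SW)'$.
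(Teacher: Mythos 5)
Your proof is correct and is exactly the argument the paper intends: the paper's proof consists of the single sentence that the theorem follows from Lemma \ref{04-03-22}, \eqref{23-02-22ex} and \eqref{03-03-22h}, and your computation simply spells out that deduction, including the (standard but worth stating) fact that conjugation by the unitary $W$ carries commutants to commutants.
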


\bigskip

\begin{notes}\label{09-03-22} We refer to \cite{KR}, \cite{SZ} and \cite{Ta2} for further references to the modular theory of von Neumann algebras. The connection to modular theory underlies the first papers on weights on $C^*$-algebras, \cite{C1}, \cite{C2}, and is very explicit in the work of Vigand Pedersen, \cite{VP}, and in the work of Kustermans and Vaes, \cite{Ku1}, \cite{Ku2}, \cite{KV1}. The main results we have presented in the last two sections were obtained by Kustermans and Vaes in Section 2.2 of \cite{KV1}, but we have chosen a different route inspired by the proof of the theorem of Laca and Neshveyev in Chapter \ref{LN}. 
\end{notes}

\section{On the uniqueness of $\psi''$}


We have defined a weight $\phi$ on a von Neumann algebra $M$ to be normal when it is lower semi-continuous for the $\sigma$-weak topology on $M^+$. Thanks to a result by Haagerup this condition is equivalent to several other conditions that appear in the literature. In \cite{Ha} Haagerup proved the  following von Neumann algebra version of Combes' theorem, Theorem \ref{04-11-21k}. 

\begin{thm}\label{05-04-22a} (Haagerup, \cite{Ha}) Let $M$ be a von Neumann algebra and $\lambda : M^+ \to [0,\infty]$ a map such that
\begin{itemize}
\item[(a)] $\lambda(a+b) \leq \lambda(a) + \lambda(b) \ \ \forall a,b \in M^+$,
\item[(b)] $a \leq b \Rightarrow \lambda(a) \leq \lambda(b) \ \ \forall a,b \in M^+$,
\item[(c)] $\lambda( t a) = t \lambda(a) \ \ \forall a \in M^+, \ \forall t \in \mathbb R^+$, with the convention that $0 \cdot \infty = 0$, and  
\item[(d)] $\lambda$ is lower semi-continuous for the $\sigma$-weak topology; i.e. $\{a \in M^+: \ \lambda(a) > t\}$ is open in $M^+$ for the $\sigma$-weak topology and for all $t \in \mathbb R$.
\end{itemize}
There is a set $\mathcal F$ of positive normal functionals on $M$ such that
$$
\lambda(a) = \sup_{\omega \in \mathcal F} \omega(a)
$$ 
for all $a \in M^+$.
\end{thm}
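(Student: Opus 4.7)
The plan is to follow the proof of Combes' theorem (Theorem~\ref{04-11-21k}) step by step, omitting the unitization because $M$ is already unital, and replacing the final Hahn--Banach extension by its locally convex version in the $\sigma$-weak topology, whose continuous dual is $M_*$. That way the functionals produced will be $\sigma$-weakly continuous, i.e.\ normal, and belong to the desired family.

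First I would define, for each $\alpha>0$,
\[
\rho_\alpha(a) := \inf\bigl\{\lambda(s)+t\alpha : s\in M^+,\ \lambda(s)<\infty,\ t\in\mathbb R^+,\ a\leq s+t\mathbf 1\bigr\}
\]
on $M^+$, and verify the analogues of \eqref{04-11-21}--\eqref{04-11-21d} and Lemma~\ref{04-11-21g} by the same elementary manipulations as in Section~1.1. The crucial step, $\sup_{\alpha>0}\rho_\alpha(a)=\lambda(a)$ for $a\in M^+$, transplants verbatim from the proof of Lemma~\ref{04-11-21i}: the approximating elements $c_n:=s_n(\tfrac1n+s_n)^{-1}a(\tfrac1n+s_n)^{-1}s_n$ converge to $a$ in norm, hence a fortiori $\sigma$-weakly, so condition~(d) still yields $\lambda(a)\leq\liminf_n\lambda(c_n)\leq k$ and produces the required contradiction. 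Introducing $\rho'_\alpha:M_{sa}\to\mathbb R^+$ as in Lemma~\ref{04-11-21j}, one then fixes $a_0\in M^+$ and $\alpha>0$ and defines $\omega$ on $V:=\mathbb R a_0+\mathbb R\mathbf 1$ by $\omega(sa_0+t\mathbf 1):=s\rho_\alpha(a_0)+t\rho_\alpha(\mathbf 1)$, establishing $|\omega|\leq\rho'_\alpha$ on $V$ exactly as in the proof of Theorem~\ref{04-11-21k}.

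The hard part will be extending $\omega$ from $V$ to a \emph{normal} hermitian linear functional on all of $M$, since the ordinary norm-Hahn--Banach step used in Combes' proof produces only a norm-bounded functional, which may have a nontrivial singular part and therefore fail to lie in $M_*^{sa}$. To obtain normality I would first show that $\rho'_\alpha$ is $\sigma$-weakly lower semi-continuous on $M_{sa}$: this follows from condition~(d) applied to $\lambda$, the $\sigma$-weak closedness of the order relation $b-c\leq s+t\mathbf 1$, and the fact that $\rho_\alpha(a)\leq\alpha\|a\|$ confines the infimum to a norm-bounded, hence relatively $\sigma$-weakly compact, region by the Banach--Alaoglu theorem. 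Consequently the sub-level sets $\{x\in M_{sa}:\rho'_\alpha(x)\leq c\}$ are $\sigma$-weakly closed convex subsets of $M_{sa}$. Applying the geometric form of Hahn--Banach separation inside the locally convex space $(M_{sa},\sigma\text{-weak topology})$, whose continuous dual is $M_*^{sa}$, one separates the graph of $\omega$ on $V$ from the epigraph of $\rho'_\alpha$ by a $\sigma$-weakly continuous hyperplane and extracts an extension $\omega\in M_*^{sa}$ satisfying $|\omega|\leq\rho'_\alpha$ on all of $M_{sa}$. The dominance $\omega\leq\rho_\alpha\leq\lambda$ on $M^+$ together with $\omega(\mathbf 1)=\rho_\alpha(\mathbf 1)\leq\alpha$ forces $\omega\in M_*^+$ with $\|\omega\|\leq\alpha$, while $\omega(a_0)=\rho_\alpha(a_0)$ by construction. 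Letting $a_0$ range over $M^+$ and $\alpha$ over $\mathbb R^+$ and collecting these functionals in a set $\mathcal F\subseteq M_*^+$, the analogue of Lemma~\ref{04-11-21i} delivers $\lambda(a)=\sup_{\omega\in\mathcal F}\omega(a)$ for all $a\in M^+$, completing the proof.
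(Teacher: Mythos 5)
There is no proof in the paper to compare with: Theorem \ref{05-04-22a} is quoted from Haagerup's paper \cite{Ha}, and the author states immediately after Theorem \ref{06-04-22a} that no proof of these two results is given. Your proposal therefore has to stand on its own, and it has a genuine gap at exactly the point that makes Haagerup's theorem hard: the production of a \emph{normal} functional dominated by $\rho'_\alpha$ and agreeing with $\omega$ on $V$.

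Two steps fail. First, your argument for the $\sigma$-weak lower semi-continuity of $\rho'_\alpha$ does not work: in the infimum defining $\rho_\alpha(a)$ the constraint $\lambda(s)+t\alpha<c$ bounds $t$ but places no bound whatsoever on $\|s\|$, so Banach--Alaoglu gives no compactness; the inequality $\rho_\alpha(a)\leq\alpha\|a\|$ bounds the \emph{value} of the infimum, not the competitors $s$, and the sub-level sets of $\rho_\alpha$ are not visibly $\sigma$-weakly closed. Second, even granting lower semi-continuity, the separation step is not a legitimate application of Hahn--Banach: in $(M_{sa},\sigma\text{-weak})$ the epigraph of $\rho'_\alpha$ has empty interior and is not compact, so neither separation theorem applies, and the principle you invoke --- that a linear functional on a subspace dominated by a lower semi-continuous sublinear functional admits a \emph{continuous} dominated extension --- is false in general: on $\ell^\infty$ with the weak$^*$ topology the norm is lower semi-continuous and dominates $x\mapsto\lim_n x_n$ on the convergent sequences, yet that functional has no weak$^*$-continuous extension at all. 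What lower semi-continuity would give you (via Fenchel--Moreau) is only the pointwise identity $\rho'_\alpha=\sup\{F\in M_*^{\mathrm{sa}}:F\leq\rho'_\alpha\}$, and upgrading this to a single \emph{positive} normal $\omega\leq\lambda$ with $\omega(a_0)$ close to $\rho_\alpha(a_0)$ is precisely the nontrivial content of Haagerup's paper. A further problem: discarding the unitization invalidates the analogue of Lemma \ref{04-11-21g}, whose proof needs the character of $A^\dagger$ vanishing on $A$; inside $M$ one may take $s=a+t\mathbf 1$ and $t'=0$ in the infimum, so $\rho_\alpha(a+t\mathbf 1)=\rho_\alpha(a)+t\alpha$ fails whenever $\lambda$ is finite on such elements, and with it your verification of $|\omega|\leq\rho'_\alpha$ on $V$ and the positivity argument $\omega(\mathbf 1)=\rho_\alpha(\mathbf 1)=\alpha$.
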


As alluded to in Notes and remarks \ref{16-12-21} Haagerups approach also provided another proof of Combes' theorem, but more importantly it led to a proof of the following theorem.

\begin{thm}\label{06-04-22a} (Haagerup, \cite{Ha}) Let $M$ be a von Neumann algebra and $\phi : M^+ \to [0,\infty]$ a map such that
\begin{itemize}
\item $\phi(a+b) = \phi(a) + \phi(b) \ \ \forall a,b \in M_+$,
\item $\phi( t a) = t \phi(a) \ \ \forall a \in M^+, \ \forall t \in \mathbb R^+$, using the convention $0 \cdot \infty = 0$.
\end{itemize}
The following are equivalent.
\begin{itemize}
\item[(a)] There is a set $I$ of positive normal functionals on $M$ such that $\phi(a) = \sup_{\omega \in I} \omega(a)$ for all $a \in M^+$. 
\item[(b)] There is a set $J$ of positive normal functionals on $M$ such that $\phi(a) = \sum_{\omega \in J} \omega(a)$ for all $a \in M^+$.  
\item[(c)] $\phi$ is lower semi-continuous with respect to the $\sigma$-weak topology.
\end{itemize}
\end{thm}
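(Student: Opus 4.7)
The three implications split cleanly into easy and hard. For (b) $\Rightarrow$ (c): each finite partial sum $\sum_{\omega \in F}\omega$ is a positive normal functional, hence $\sigma$-weakly continuous, and $\phi$ is their pointwise supremum, so (c) follows since the supremum of $\sigma$-weakly lower semi-continuous functions is again lower semi-continuous. For (c) $\Rightarrow$ (a): the hypotheses of Theorem \ref{05-04-22a} are satisfied by $\phi$ itself (additivity implies subadditivity and monotonicity via $\phi(b) = \phi(a) + \phi(b-a) \geq \phi(a)$ for $a \leq b$), so Haagerup's semi-weight theorem directly yields (a). Thus the content of the theorem is the implication (a) $\Rightarrow$ (b).

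For (a) $\Rightarrow$ (b), the plan is a maximality argument. First replace $I$ by the larger set $\mathcal F_\phi := \{\omega \in M_*^+ : \omega(a) \leq \phi(a) \ \forall a \in M^+\}$; note $\phi = \sup_{\omega \in \mathcal F_\phi}\omega$ still holds. Consider all subsets $J \subseteq \mathcal F_\phi$ with the property that every finite subsum $\sum_{\omega \in F}\omega$ (for $F \subseteq J$ finite) again lies in $\mathcal F_\phi$, and order these by inclusion. Zorn's lemma applies, producing a maximal such family $J_0$. Define $\phi_0(a) := \sup_{F \subseteq J_0 \text{ finite}} \sum_{\omega \in F}\omega(a)$. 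By construction $\phi_0$ satisfies (b), hence also (a) and (c) by the easy directions, and $\phi_0 \leq \phi$. It remains to prove $\phi_0 = \phi$.

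Suppose for contradiction that $\phi_0(a_0) < \phi(a_0)$ for some $a_0 \in M^+$. The goal is to produce a nonzero $\mu \in M_*^+$ such that $\phi_0 + \mu \leq \phi$ pointwise on $M^+$; adjoining $\mu$ to $J_0$ would then contradict maximality. The strategy is to pass to the GNS representation $(H_\phi, \Lambda_\phi, \pi_\phi)$ of $\phi$, where by an argument analogous to Lemma \ref{08-11-21bx} every $\omega \in \mathcal F_\phi$ is represented by a positive contraction $T_\omega \in \pi_\phi(M)'$. The weight $\phi_0$ (itself normal with $\phi_0 \leq \phi$) gives rise to a positive operator $T_0 \in \pi_\phi(M)'$ with $T_0 \leq 1$, and the complementary operator $1 - T_0$ is nonzero on a neighborhood of $\Lambda_\phi(R_k(a_0^{1/2}))$ for large $k$ by the strict inequality at $a_0$ and lower semi-continuity. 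A vector state on $\pi_\phi(M)$ constructed from $(1-T_0)^{1/2}$ applied to a suitable vector then pulls back through $\pi_\phi$ to yield the desired $\mu \in M_*^+$.

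The hard part will be the construction and analysis of the operator $T_0$ representing $\phi_0$, and verifying that the pulled-back functional $\mu$ is genuinely $\sigma$-weakly continuous on all of $M$ and satisfies $\phi_0 + \mu \leq \phi$ globally rather than merely on a $\sigma$-weakly dense subalgebra. Normality of $\mu$ will follow from $\pi_\phi$ being a normal isomorphism onto $\pi_\phi(M)'' $ (compare Lemma \ref{01-03-22f}); the global inequality will follow from the lower semi-continuity of both sides and density of the GNS-relevant subspace. A technical subtlety is that $\phi_0$ and $\phi$ may both take the value $\infty$, so ``$\phi - \phi_0$'' has no literal meaning; the operator-theoretic formulation via $1 - T_0$ on $H_\phi$ is precisely what renders this difference well-defined in a usable form, and managing the passage between the vector-state picture in $B(H_\phi)$ and the weight picture on $M^+$ is the principal obstacle.
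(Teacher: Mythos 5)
First, a point of comparison: the paper does not prove this theorem at all. It states Theorem \ref{06-04-22a} with the attribution to Haagerup and says explicitly, just below Theorem \ref{06-04-22a}, ``We do not give a proof of the two theorems above; instead we use them...''. So there is no in-paper argument to measure your proposal against, only the citation to \cite{Ha}. Your treatment of the easy implications is correct: (b) $\Rightarrow$ (c) because finite partial sums are normal and a supremum of $\sigma$-weakly continuous functions is $\sigma$-weakly lower semi-continuous, and (c) $\Rightarrow$ (a) because additivity gives subadditivity and monotonicity, so Theorem \ref{05-04-22a} applies directly. You are also right that the whole content is (a) $\Rightarrow$ (b), and the Zorn skeleton (maximal family $J_0 \subseteq \mathcal F_\phi$ with all finite subsums in $\mathcal F_\phi$, then exhibit a nonzero $\mu \in M_*^+$ with $\phi_0 + \mu \leq \phi$ to contradict maximality) is the standard one. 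Two small remarks on the skeleton: you must work with indexed families rather than sets, since the $\mu$ you produce might already belong to $J_0$; and the smoothing operators $R_k$ have no meaning here, as there is no flow in sight.

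The genuine gap is in the production of $\mu$. Your device only sees $\phi$ through its GNS representation, i.e. through its restriction to $\mathcal M_\phi^+$. If $\phi$ is not semi-finite, the discrepancy $\phi_0(a_0) < \phi(a_0)$ can occur only at points where $\phi(a_0) = \infty$ and $a_0$ lies outside the $\sigma$-weak closure of $\mathcal N_\phi$; then $a_0^{1/2} \notin \mathcal N_\phi$, $T_0$ can equal $1$, and your construction returns $\mu = 0$. (Concretely: $M = L^\infty[0,1] \oplus \mathbb C$, $\phi(f,\lambda) = \int f + \infty\cdot\lambda$, $\phi_0(f,\lambda) = \int f$.) You need a separate argument for the part of $M$ on which $\phi \equiv \infty$, analogous to the Effros-theorem half of the proof of Theorem \ref{09-11-21h}. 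More seriously, even on the semi-finite part the step from $1 - T_0 \neq 0$ to a nonzero normal $\mu$ with $\phi_0 + \mu \leq \phi$ does not follow as sketched: for a vector functional $\mu(x) = \langle \pi_\phi(x)(1-T_0)^{1/2}\eta, (1-T_0)^{1/2}\eta\rangle$ with $\eta = \Lambda_\phi(u)$, the required inequality amounts to $\|(1-T_0)^{1/2}\Lambda_\phi(au)\| \leq \|(1-T_0)^{1/2}\Lambda_\phi(a)\|$, i.e. to right multiplication by $u$ being contractive in the relevant semi-norm on $H_\phi$ — which fails for non-tracial $\phi$. What you actually need at this point is the statement that $\phi - \phi_0$, defined as $\langle(1-T_0)\Lambda_\phi(a),\Lambda_\phi(a)\rangle$ on $\mathcal M_\phi^+$ and $+\infty$ elsewhere, is again a \emph{normal} weight, to which (c) $\Rightarrow$ (a) can then be applied to extract a nonzero dominated normal functional. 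That ``difference of normal weights'' statement is one of the substantive results of \cite{Ha}, not a formality, and your proposal does not supply an argument for it; lower semi-continuity of both sides, which is what you invoke, gives inequalities in the wrong direction. So the plan identifies the right obstacle but does not overcome it.
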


In fact, Haagerups theorem contains two additional equivalent conditions, but we shall not need them here. We do not give a proof of the two theorems above; instead we use them to prove that the normal faithful semi-finite weight $\psi''$  on $\pi_\psi(A)''$ is the unique $\sigma''$-invariant normal extension of $\psi$.

\begin{thm}\label{05-04-22} Let $\psi$ be a $\beta$-KMS weight for the flow $\sigma$ on $A$, and let $\phi$ be a normal $\sigma''$-invariant weight on $\pi_\psi(A)''$ such that $\phi \circ \pi_\psi|_{A^+} = \psi$. Then $\phi = \psi''$.
\end{thm}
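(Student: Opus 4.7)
\emph{Plan of proof.} The strategy is to prove $\phi \leq \psi''$ and $\psi'' \leq \phi$ separately, using as main tools Haagerup's Theorem \ref{06-04-22a} on the structure of normal weights together with the structural formula $\psi''(m) = \sup\{\tilde\omega(m) : \omega \in \mathcal F_\psi\}$ established in the proof of Lemma \ref{06-02-23} as equation \eqref{06-02-23a}.

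For $\phi \leq \psi''$, apply Theorem \ref{06-04-22a} to write $\phi = \sum_j \nu_j$ as a sum of positive normal functionals on $N := \pi_\psi(A)''$. Each restriction $\omega_j := \nu_j \circ \pi_\psi$ is a positive linear functional on $A$ satisfying $\omega_j(a) \leq \phi(\pi_\psi(a)) = \psi(a)$ for $a \in A^+$, so $\omega_j \in \mathcal F_\psi$. Because $\pi_\psi(A)$ is $\sigma$-weakly dense in $N$, normal positive functionals on $N$ are determined by their values on $\pi_\psi(A)$; hence $\nu_j$ is the unique normal extension of $\omega_j$. By \eqref{06-02-23a}, each $\nu_j = \tilde\omega_j \leq \psi''$, and summing gives $\phi \leq \psi''$.

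For the reverse inequality, Haagerup's theorem together with \eqref{06-02-23a} and the uniqueness of normal extensions from $\pi_\psi(A)$ identifies the set of normal positive functionals dominated by $\psi''$ as $\mathcal F_{\psi''} = \{\tilde\omega : \omega \in \mathcal F_\psi\}$; it therefore suffices to show $\tilde\omega \leq \phi$ for every $\omega \in \mathcal F_\psi$. Using the net $\{u_j\}$ from Lemma \ref{09-02-22x}, equation \eqref{07-02-23} yields $\tilde\omega(\pi_\psi(u_j)^* m \pi_\psi(u_j)) \leq \langle m\Lambda_\psi(u_j), \Lambda_\psi(u_j)\rangle$ for all $m \in N^+$. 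The right-hand side, viewed as a function of $m$, is a normal positive functional that agrees with $m \mapsto \phi(\pi_\psi(u_j)^* m \pi_\psi(u_j))$ on $\pi_\psi(A^+)$ (both reduce to $\psi(u_j^* a u_j)$ when $m = \pi_\psi(a)$, using $\phi \circ \pi_\psi = \psi$), so the two functionals coincide on $N$ by $\sigma$-weak density. A direct KMS computation applying condition (1) of Theorem \ref{24-11-21d} to $au_j$ for $a \in \mathcal M^\sigma_\psi$, together with $\sigma_{-\xi}(u_j) = R_1(e_j) \in [0,1]$, gives $\phi(\pi_\psi(u_j)^* \pi_\psi(a^*a) \pi_\psi(u_j)) = \psi(\sigma_{-\xi}(a)R_1(e_j)^2\sigma_{-\xi}(a)^*) \leq \psi(\sigma_{-\xi}(a)\sigma_{-\xi}(a)^*) = \phi(\pi_\psi(a^*a))$. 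Extending this inequality to all $m \in N^+$ by monotone approximation and then letting $j \to \infty$, using normality of $\tilde\omega$ on the bounded $\sigma$-strongly convergent net $\pi_\psi(u_j)^* m \pi_\psi(u_j) \to m$, yields $\tilde\omega(m) \leq \phi(m)$ as required.

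The principal technical obstacle is the extension of the inequality $\phi(\pi_\psi(u_j)^* m \pi_\psi(u_j)) \leq \phi(m)$ from $m \in \pi_\psi(\mathcal M_\psi^+)$ to arbitrary $m \in N^+$; since $\phi$ is only lower semi-continuous, and not $\sigma$-weakly continuous, this step must proceed by monotone approximation from below within $\pi_\psi(A^+)$, which in turn relies on the semi-finiteness of $\phi$ inherited from the first inequality and on the $\sigma''$-invariance hypothesis being invoked at this point in the argument.
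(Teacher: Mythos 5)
Your first inequality, $\phi\leq\psi''$, is essentially sound, but not as written: from $\nu_j\leq\psi''$ for each $j$ you cannot conclude $\sum_j\nu_j\leq\psi''$ by ``summing''. The repair is to observe that each \emph{finite} partial sum $\sum_{j\in J_0}\nu_j$ is dominated by $\phi$, hence restricts to an element of $\mathcal F_\psi$ on $A$, hence is $\leq\psi''$ by \eqref{06-02-23a}, and then to take the supremum over finite $J_0$. That is a small fix.

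The second inequality, however, contains a genuine gap that your own closing paragraph concedes without resolving. You need $\phi\bigl(\pi_\psi(u_j)^*m\pi_\psi(u_j)\bigr)\leq\phi(m)$ for \emph{all} $m\in N^+$, but your KMS computation only yields it for $m=\pi_\psi(a^*a)$ with $a\in\mathcal M^\sigma_\psi$. The proposed extension ``by monotone approximation from below within $\pi_\psi(A^+)$'' does not go through: a general positive element of $N$ is not the supremum of an increasing net from $\pi_\psi(A)^+$, and density arguments fail because the left-hand side is $\sigma$-weakly continuous in $m$ while $\phi$ is only lower semi-continuous --- which bounds $\phi(m)$ from \emph{below} along a net, the wrong direction for propagating an upper bound. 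Nor is there a cheap operator inequality, since $\pi_\psi(u_j)$ is not a contraction in general ($\|u_j\|$ can exceed $1$). Tellingly, the $\sigma''$-invariance of $\phi$ --- which the paper uses in an essential way, via the unitary group $U^\phi$ on $H_\phi$ and the smoothing operators in Lemma \ref{04-04-22} --- never actually enters your argument; you only remark at the end that it ``must be invoked at this point''. The paper's proof avoids this obstruction entirely by working with two extensions $\phi_1,\phi_2$ simultaneously: for $m\in\mathcal N_{\phi_1}$ it produces (Lemma \ref{04-04-22}) a bounded net $\{a_i\}$ in $\mathcal M^\sigma_\psi$ with $\pi_\psi(a_i)\to m$ strongly and $\Lambda_{\phi_1}(\pi_\psi(a_i))\to\Lambda_{\phi_1}(m)$, uses $\phi_1\circ\pi_\psi=\phi_2\circ\pi_\psi=\psi$ to transfer the norm bounds to $\Lambda_{\phi_2}$, and then invokes the $\sigma$-weak closedness of $\Lambda_{\phi_2}$ together with the convex-hull argument of Lemma \ref{07-04-22d} to conclude $m\in\mathcal N_{\phi_2}$ and $\phi_1(m^*m)=\phi_2(m^*m)$. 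You would need to supply machinery of this kind --- or some other genuinely new idea --- to close the gap.
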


For the proof of Theorem \ref{05-04-22} we need some preparations, the first of which is the following fact which was implicitly used in the proof of Proposition \ref{08-02-22a}.

\begin{lemma}\label{07-04-22e} Let $H$ be a Hilbert space and $C \subseteq H$ a convex subset. The norm closure of $C$ in $H$ is the same as the weak*-closure of $C$ in $H^* = H$.
\end{lemma}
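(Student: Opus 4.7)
The plan is to invoke the Hahn--Banach separation theorem, after making the standard identification between the weak\textsuperscript{*} topology on $H^\ast$ and the weak topology on $H$. Recall that the Riesz representation theorem gives a conjugate linear isometric bijection $H \to H^\ast$, $\eta \mapsto \langle \,\cdot\,, \eta\rangle$, under which the weak\textsuperscript{*} topology on $H^\ast$ corresponds precisely to the weak topology on $H$ induced by the continuous linear functionals. Hence the assertion reduces to the well--known fact that for a convex subset $C \subseteq H$ the norm closure $\overline{C}^{\|\cdot\|}$ coincides with the weak closure $\overline{C}^{w}$.

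One inclusion is immediate: since every norm--open set is weakly open, every weakly closed set is norm closed, so $\overline{C}^{\|\cdot\|} \subseteq \overline{C}^{w}$. The other direction is where the convexity of $C$ is used. Suppose $x \in H$ does not lie in $\overline{C}^{\|\cdot\|}$. Since $\overline{C}^{\|\cdot\|}$ is a closed convex subset of $H$ (closures of convex sets are convex) and $\{x\}$ is a compact convex set disjoint from it, the geometric form of the Hahn--Banach theorem supplies a continuous $\mathbb R$--linear functional $\varphi : H \to \mathbb R$ and a real number $\alpha$ with $\varphi(y) \leq \alpha < \varphi(x)$ for all $y \in \overline{C}^{\|\cdot\|}$. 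The set $U := \{v \in H : \varphi(v) > \alpha\}$ is then a weakly open neighborhood of $x$ disjoint from $C$, so $x \notin \overline{C}^{w}$. This yields $\overline{C}^{w} \subseteq \overline{C}^{\|\cdot\|}$, and combining the two inclusions gives the claim.

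There is essentially no obstacle: the only subtlety worth flagging is that one must separate $x$ from $\overline{C}^{\|\cdot\|}$ (not just from $C$) in order to get strict separation, which is why the argument is run by first passing to the norm closure. The identification of the weak\textsuperscript{*} topology on $H^\ast$ with the weak topology on $H$ through the Riesz isomorphism is standard and requires no further justification in this context.
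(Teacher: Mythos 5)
Your proof is correct and follows exactly the same route as the paper's: identify the weak\textsuperscript{*} topology on $H^* = H$ with the weak topology via the Riesz isomorphism, and then invoke the Hahn--Banach separation theorem to conclude that the weak and norm closures of a convex set coincide. The paper merely states these two steps without the details you supply, so your write-up is a faithful expansion of the intended argument.
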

\begin{proof} Since $H^* = H$ the weak* closure of $C$ is the same as the weak closure of $C$. Since $C$ is convex it follows from the Hahn-Banach separation theorem that the weak closure is the same as the norm closure.
\end{proof}

\begin{lemma}\label{09-04-22} Let $\phi$ be a normal weight on the von Neumann algebra $M$ and $(H_\phi,\Lambda_\phi,\pi_\phi)$ its GNS triple. Then $\Lambda_\phi : \mathcal N_\psi \to H_\phi$ is closed with respect to the $\sigma$-weak topology of $M$.
\end{lemma}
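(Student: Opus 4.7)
The plan is to essentially rerun the argument of Lemma \ref{22-02-22fx}, using Haagerup's theorem (Theorem \ref{05-04-22a}) as a substitute for the ad hoc supremum representation that was built into the definition of $\psi''$. Set
$$
\mathcal F_\phi := \bigl\{\omega \in M_*^+ : \ \omega(m) \leq \phi(m) \ \forall m \in M^+\bigr\}.
$$
Since any functional from the family produced by Theorem \ref{05-04-22a} automatically lies in $\mathcal F_\phi$, this family realises $\phi(m) = \sup_{\omega \in \mathcal F_\phi}\omega(m)$ for all $m \in M^+$. For each $\omega \in \mathcal F_\phi$, the argument of Lemma \ref{08-11-21bx} applies without change: density of $\Lambda_\phi(\mathcal N_\phi)$ in $H_\phi$ is built into the GNS construction, and the commutation computation only uses the relation $\pi_\phi(x)\Lambda_\phi(a) = \Lambda_\phi(xa)$ for $x \in M$ and $a \in \mathcal N_\phi$. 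This produces $T_\omega \in \pi_\phi(M)'$ with $0 \leq T_\omega \leq 1$ and $\omega(b^*c) = \langle T_\omega\Lambda_\phi(c),\Lambda_\phi(b)\rangle$ for all $b,c \in \mathcal N_\phi$.

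Now let $\{a_i\}$ be a net in $\mathcal N_\phi$ with $a_i \to a$ $\sigma$-weakly in $M$ and $\Lambda_\phi(a_i) \to v$ in norm in $H_\phi$. First I would show that $a \in \mathcal N_\phi$. For any $\omega \in \mathcal F_\phi$, left- and right-multiplication by a fixed element of $M$ are $\sigma$-weakly continuous on $M$ and $\omega$ is itself $\sigma$-weakly continuous, so
$$
\omega(a^*a) = \lim_i \lim_j \omega(a_i^*a_j) = \lim_i \lim_j \langle T_\omega\Lambda_\phi(a_j),\Lambda_\phi(a_i)\rangle = \langle T_\omega v,v\rangle \leq \|v\|^2 .
$$
Taking the supremum over $\omega \in \mathcal F_\phi$ yields $\phi(a^*a) \leq \|v\|^2 < \infty$, hence $a \in \mathcal N_\phi$.

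The identification $\Lambda_\phi(a) = v$ is then done exactly as in the last paragraph of the proof of Lemma \ref{17-11-21a}: given $b \in \mathcal N_\phi$ and $\epsilon > 0$, choose $\omega \in \mathcal F_\phi$ with $\phi(b^*b) - \epsilon \leq \omega(b^*b) \leq \phi(b^*b)$, deduce $\|T_\omega\Lambda_\phi(b) - \Lambda_\phi(b)\|^2 \leq 2\epsilon$ from the expansion of the norm-square, and pick an index $k$ large enough that $\|v-\Lambda_\phi(a_k)\| \leq \epsilon$ and, using normality of $\omega$ together with $b^*a_k \to b^*a$ $\sigma$-weakly, $|\omega(b^*a_k) - \omega(b^*a)| \leq \epsilon$. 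The same four-line estimate as in the proof of Lemma \ref{17-11-21a} then bounds $|\langle v,\Lambda_\phi(b)\rangle - \langle \Lambda_\phi(a),\Lambda_\phi(b)\rangle|$ by a combination of $\sqrt{2\epsilon}$ and $\epsilon$ terms. Since $b \in \mathcal N_\phi$ and $\epsilon > 0$ are arbitrary and $\Lambda_\phi(\mathcal N_\phi)$ is dense in $H_\phi$, this forces $v = \Lambda_\phi(a)$.

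The only genuine new input beyond bookkeeping is the appeal to Haagerup's theorem to guarantee that $\mathcal F_\phi$ is rich enough for the supremum representation of $\phi$; this is precisely where the case of an arbitrary normal weight differs from the special case in Lemma \ref{22-02-22fx}, where the supremum representation held by construction. Everything else is a routine transcription in which the weak continuity of $\omega \in \mathcal F_\psi$ used in Lemma \ref{17-11-21a} is replaced by the $\sigma$-weak continuity of normal positive functionals on $M$.
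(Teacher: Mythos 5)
Your proposal is correct and follows essentially the same route as the paper, which simply remarks that Haagerup's theorem (property (a) of Theorem \ref{06-04-22a}) supplies the supremum representation of $\phi$ by normal positive functionals needed to rerun the argument of Lemma \ref{22-02-22fx}. You have merely written out the adaptation in full, including the correct observation that the construction of $T_\omega$ only requires density of $\Lambda_\phi(\mathcal N_\phi)$ in $H_\phi$ and the module relation, not norm-density of $\mathcal N_\phi$ in $M$.
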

\begin{proof} Since $\phi$ has the property (a) of Theorem \ref{06-04-22a} the proof of Lemma \ref{22-02-22fx} can be adapted in the obvious way.
\end{proof}

\begin{lemma}\label{07-04-22d} Let $\phi$ be a normal weight on the von Neumann algebra $M$. Let $\{a_i\}_{i \in I}$ be a net in $\mathcal N_\phi$ and $m \in M$ such that $\lim_{i \to \infty} a_i = m$ in the $\sigma$-weak topology. Assume that
\begin{equation}\label{09-04-22a}
\sup_{i \in I} \left\|\Lambda_\phi(a_i)\right\| < \infty .
\end{equation}
It follows that $m \in \mathcal N_\phi$ and there is a net $\{b_j\}_{j \in J}$ in the convex hull
$$
\co \left\{ a_i: \ i \in I\right\}
$$
such that $\lim_{j \to \infty} b_j = m$ in the $\sigma$-weak topology and $\lim_{j \to \infty} \Lambda_\phi(b_j) = \Lambda_\phi(m)$ in $H_\phi$.
\end{lemma}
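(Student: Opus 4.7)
The plan is to combine (i) weak compactness of norm-bounded subsets of $H_\phi$ to extract a limit point $\eta$ of $\{\Lambda_\phi(a_i)\}$; (ii) the identification in Lemma \ref{07-04-22e} of the weak closure of a convex set with its norm closure, in order to pull back $\eta$ to norm-approximating convex combinations of the $a_i$'s; and (iii) the $\sigma$-weak closedness of $\Lambda_\phi$ from Lemma \ref{09-04-22} to recognise $\eta$ as $\Lambda_\phi(m)$.

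First I would, for each $k \in I$, form
$$
C_k \ := \ \overline{\co}\{\Lambda_\phi(a_i): \ i \geq k\}
$$
where the closure is taken in the weak topology of $H_\phi$. Hypothesis \eqref{09-04-22a} places every $C_k$ inside a common closed ball, and a norm-bounded weakly closed convex subset of a Hilbert space is weakly compact. Since $C_k \supseteq C_{k'}$ whenever $k \leq k'$ and $I$ is directed, the family $\{C_k\}_{k \in I}$ has the finite intersection property, so $\bigcap_{k \in I} C_k \neq \emptyset$, and I fix $\eta$ in this intersection.

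Next, on the directed set $J := I \times \mathbb N$ with the product order, I choose for each $(k,n) \in J$ an element $b_{(k,n)} \in \co\{a_i: \ i \geq k\}$ such that $\|\Lambda_\phi(b_{(k,n)}) - \eta\| \leq 1/n$. This is possible because by Lemma \ref{07-04-22e} the weak closure $C_k$ coincides with the norm closure of $\co\{\Lambda_\phi(a_i): \ i \geq k\}$, and $\Lambda_\phi$ is linear, so the norm-approximating element of the convex hull of the $\Lambda_\phi(a_i)$ is the image of a convex combination of the $a_i$. By construction $\Lambda_\phi(b_{(k,n)}) \to \eta$ in norm as $(k,n) \to \infty$. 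To see that $b_{(k,n)} \to m$ in the $\sigma$-weak topology, fix $\omega \in M_*$ and $\epsilon > 0$; the $\sigma$-weak convergence $a_i \to m$ furnishes $k_0 \in I$ with $|\omega(a_i) - \omega(m)| < \epsilon$ for all $i \geq k_0$, and any convex combination of such $a_i$ inherits the same bound, so $|\omega(b_{(k,n)}) - \omega(m)| < \epsilon$ whenever $(k,n) \geq (k_0,1)$.

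Finally, since $\Lambda_\phi : \mathcal N_\phi \to H_\phi$ is closed for the $\sigma$-weak topology by Lemma \ref{09-04-22}, the two convergences $b_{(k,n)} \to m$ $\sigma$-weakly and $\Lambda_\phi(b_{(k,n)}) \to \eta$ in norm force $m \in \mathcal N_\phi$ with $\Lambda_\phi(m) = \eta$, which is exactly the statement of the lemma. The point that requires the most care is the simultaneous control of the two convergences — $\sigma$-weak convergence of $b_j$ to $m$ and norm convergence of $\Lambda_\phi(b_j)$ to $\Lambda_\phi(m)$ — and it is precisely for this reason that I index over the product $J = I \times \mathbb N$; the boundedness assumption \eqref{09-04-22a} is indispensable, for without it weak compactness fails and there is nothing preventing $\Lambda_\phi(a_i)$ from drifting to infinity.
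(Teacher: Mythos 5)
Your proposal is correct and follows essentially the same route as the paper's proof: weak compactness of the bounded sets $\overline{\co}\{\Lambda_\phi(a_i): i \geq k\}$ to produce $\eta$, Lemma \ref{07-04-22e} to replace weak closure by norm closure of the convex hull, and the $\sigma$-weak closedness of $\Lambda_\phi$ from Lemma \ref{09-04-22} to identify $\eta$ with $\Lambda_\phi(m)$. The only difference is cosmetic — you index the approximating net by $I \times \mathbb N$ where the paper uses pairs $(F,\epsilon)$ with $F \subseteq M_*$ finite — and both choices work.
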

\begin{proof}  The set of pairs $(F,\epsilon)$ where $F \subseteq M_*$ is a finite set and $0 < \epsilon \leq 1$ constitute a directed set $\mathcal T$ where $(F,\epsilon) \leq (F',\epsilon')$ means that $F \subseteq F'$ and $\epsilon' \leq \epsilon$. Since closed balls are weak* compact in $H_\phi$ it follows from \eqref{09-04-22a} that the weak* closure $\overline{\co}\left\{ \Lambda_\phi(a_r) :  \ r \geq j \right\}$ of the convex hull  
$$
{\co}\left\{ \Lambda_\phi(a_r) :  \ r \geq j\right\}
$$ 
is compact in the weak* topology for all $j \in I$. Since $I$ is directed, 
$$
\cap_{ j \in F} \overline{\co}\left\{ \Lambda_\phi(a_r) :  \ r \geq j \right\} 
$$
is non-empty for all finite subsets $F \subseteq J$ and hence
$$
\bigcap_{j \in J} \overline{\co}\left\{ \Lambda_\phi(a_r) :  \ r \geq j \right\}  \neq \emptyset .
$$
Let $\eta$ be an element in this intersection.  Let $(L,\delta) \in \mathcal T$.
Since $
\lim_{j \to \infty} a_j= m$ in the $\sigma$-weak topology, there is a $j_0 \in I$ such that
$$
\left|\omega(a_j) - \omega(m)\right| \leq \delta
$$
for all $\omega \in L$ when $j \geq j_0$. By Lemma \ref{07-04-22e} $\overline{\co}\left\{ \Lambda_\phi(a_j) :  \ j \geq j_0\right\}$ is also the norm closure of ${\co}\left\{ \Lambda_\phi(a_j) :  \ j \geq j_0\right\}$. Since
$$
\eta \in \overline{\co}\left\{ \Lambda_\phi(a_j) :  \ j \geq j_0\right\},
$$
we can therefore find an element $b(L,\delta)$ in the convex hull of $\left\{ a_j : \ j \geq j_0\right\}$ such that
$$
\left\|\Lambda_\phi(b(L,\delta)) - \eta \right\| \leq \delta .
$$
Since
$$
\left|\omega(b(L,\delta)) - \omega(m)\right| \leq \delta
$$
for all $\omega \in L$, it follows that $\lim_{(L,\delta) \to \infty} b(L,\delta) = m$ in the $\sigma$-weak topology and $\lim_{(L,\delta) \to \infty} \Lambda_\phi(b(L,\delta)) = \eta$ in $H_\phi$. It follows from Lemma \ref{09-04-22} that $m \in \mathcal N_\phi$ and $\Lambda_\phi(m) = \eta$.

\end{proof}

\begin{lemma}\label{04-04-22} Let $\psi$ and $\phi$ be as in Theorem \ref{05-04-22}. Let $m \in \mathcal N_{\phi}$. There is a net $\{a_i\}_{i \in I}$ in $\mathcal M^\sigma_\psi$ such that 
\begin{itemize}
\item $\left\|\pi_\psi(a_i)\right\| \leq \|m\|$ for all $i \in I$,
\item $\left\| \Lambda_{\phi}(\pi_\psi(a_i))\right\| \leq \left\|\Lambda_\phi(m)\right\| + 1$ for all $i\in I$, 
\item $\lim_{i \to \infty} \pi_\psi(a_i) = m$ in the strong operator topology, and 
\item $\lim_{i \to \infty} \Lambda_{\phi}(\pi_\psi(a_i)) = \Lambda_{\phi}(m)$ in $H_\phi$.
\end{itemize}
\end{lemma}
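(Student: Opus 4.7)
The plan is to combine smoothing operators for both flows $\sigma$ and $\sigma''$ with Kaplansky's density theorem and Lemma \ref{07-04-22d}, assembling the result by a diagonal construction. First I would pass from $m$ to its $\sigma''$-smoothing $\overline R_k(m)$. The proofs of Lemma \ref{02-03-22d} and Corollary \ref{02-03-22e} use nothing about $\psi''$ beyond its being a normal $\sigma''$-invariant weight, so they apply verbatim to $\phi$ and yield $\overline R_k(m)\in N_a\cap\mathcal N_\phi$ with $\overline R_k(m)\to m$ in the strong operator topology of $B(H_\psi)$ and $\Lambda_\phi(\overline R_k(m))\to\Lambda_\phi(m)$ in $H_\phi$. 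Because $\overline R_k$ is a unital completely positive contraction on $N$ and $\phi$ is $\sigma''$-invariant, Kadison's inequality (Proposition 3.2.4 in \cite{BR}) gives
\[
\phi\bigl(\overline R_k(m)^*\overline R_k(m)\bigr)\leq\phi\bigl(\overline R_k(m^*m)\bigr)=\phi(m^*m),
\]
so $\|\overline R_k(m)\|\leq\|m\|$ and $\|\Lambda_\phi(\overline R_k(m))\|\leq\|\Lambda_\phi(m)\|$.

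For each fixed $k$, I would apply Kaplansky's density theorem to the $\sigma$-strongly dense $*$-subalgebra $\pi_\psi(\mathcal M^\sigma_\psi)\subseteq N$ to produce a net $\{c_j\}\subseteq\mathcal M^\sigma_\psi$ with $\|\pi_\psi(c_j)\|\leq\|m\|$ and $\pi_\psi(c_j)\to m$ in the $\sigma$-strong$^*$ topology. Lemma \ref{07-12-21} gives $R_k(c_j)\in\mathcal M^\sigma_\psi$, and Lemma \ref{02-03-22hx} yields $\pi_\psi(R_k(c_j))\to\overline R_k(m)$ in the strong operator topology, with $\|\pi_\psi(R_k(c_j))\|\leq\|\pi_\psi(c_j)\|\leq\|m\|$.

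The hard part is to upgrade strong convergence to convergence in $H_\phi$. The plan is to invoke Lemma \ref{07-04-22d} for the net $\{\pi_\psi(R_k(c_j))\}\subseteq\mathcal N_\phi$: provided $\sup_j\|\Lambda_\phi(\pi_\psi(R_k(c_j)))\|$ is finite, the lemma produces convex combinations --- which remain in the linear space $\mathcal M^\sigma_\psi$ --- that converge $\sigma$-weakly to $\overline R_k(m)$ and in $H_\phi$ to $\Lambda_\phi(\overline R_k(m))$. To secure this uniform bound, observe that by Kadison's inequality for the unital completely positive contraction $R_k$ and by Lemma \ref{02-12-21ax},
\[
\|\Lambda_\phi(\pi_\psi(R_k(c_j)))\|^2=\psi(R_k(c_j)^*R_k(c_j))\leq\psi(R_k(c_j^*c_j))=\psi(c_j^*c_j),
\]
so it suffices to perform the Kaplansky selection inside the convex set $S:=\{c\in\mathcal M^\sigma_\psi:\|\pi_\psi(c)\|\leq\|m\|,\ \psi(c^*c)\leq\phi(m^*m)+1\}$. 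The main obstacle is to show that $m$ lies in the $\sigma$-weak closure of $\pi_\psi(S)$; since $\pi_\psi(S)$ is convex, this closure agrees with its $\sigma$-strong closure by Hahn-Banach, and a separating $\sigma$-weakly continuous functional can be ruled out by combining Combes' theorem applied to the normal weight $\phi$ (which represents $\phi(m^*m)$ as a supremum of evaluations at normal positive functionals $\omega\leq\phi$) with the $\sigma$-strong density of $\pi_\psi(\mathcal M^\sigma_\psi)$ in the $\|m\|$-ball of $N$.

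Finally, a standard diagonal selection over $k\to\infty$ and along the convex-combinations net from Lemma \ref{07-04-22d} produces a single net $\{a_i\}\subseteq\mathcal M^\sigma_\psi$ with $\pi_\psi(a_i)\to m$ strongly and $\Lambda_\phi(\pi_\psi(a_i))\to\Lambda_\phi(m)$; passing to an eventual subnet then ensures the bound $\|\Lambda_\phi(\pi_\psi(a_i))\|\leq\|\Lambda_\phi(m)\|+1$ for all $i$.
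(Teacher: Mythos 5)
Your overall architecture (smoothing with $\overline R_k$, Kaplansky density into $\mathcal M^\sigma_\psi$, then Lemma \ref{07-04-22d} to upgrade $\sigma$-weak convergence to convergence of the $\Lambda_\phi$-images) is the same as the paper's, and the first two stages are fine. The gap sits exactly where the real difficulty is: the uniform bound $\sup_j\|\Lambda_\phi(\pi_\psi(R_k(c_j)))\|<\infty$ required to invoke Lemma \ref{07-04-22d}. Your estimate reduces this to $\sup_j\psi(c_j^*c_j)<\infty$, but Kaplansky's theorem controls only the operator norm of $\pi_\psi(c_j)$, and since $\psi$ is an unbounded weight there is no relation between $\|\pi_\psi(c)\|$ and $\psi(c^*c)$. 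Your proposed repair --- running the selection inside $S=\{c\in\mathcal M^\sigma_\psi:\|\pi_\psi(c)\|\leq\|m\|,\ \psi(c^*c)\leq\phi(m^*m)+1\}$ and showing that $m$ lies in the $\sigma$-weak closure of $\pi_\psi(S)$ --- restates the hard part of the lemma rather than proving it: that density claim is a Kaplansky-type theorem for the graph of $\Lambda_\phi$, and the sketch you give (Hahn--Banach separation plus Combes/Haagerup for $\phi$ plus ordinary Kaplansky) does not produce approximants with controlled $\psi(c^*c)$; Haagerup's theorem yields $\phi(m^*m)$ as a supremum of normal functionals, which gives lower bounds, not a way to exclude a separating normal functional from the convex set.

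The paper closes this gap with a step you are missing, and it uses the KMS property of $\psi$ in an essential way. One does not approximate $m_N:=\overline R_N(m)$ directly but rather $\pi_\psi(d_K)m_N$, where $d_K\in\mathcal M^\sigma_\psi$ is a \emph{fixed} element from a Kaplansky net converging to $1$, chosen so that $\pi_\psi(d_K)m_N$ is close to $m_N$ on the relevant vectors and $\Lambda_\phi(\pi_\psi(d_K)m_N)$ is close to $\Lambda_\phi(m_N)$ (the latter by normality of $\pi_\phi$). For the approximants $b_j=R_N(c_j)$ one then computes, using $\phi\circ\pi_\psi=\psi$ and the KMS condition,
\begin{align*}
\|\Lambda_\phi(\pi_\psi(d_Kb_j))\|^2&=\psi(b_j^*d_K^*d_Kb_j)=\psi\bigl(\sigma_{-i\frac{\beta}{2}}(d_K)\sigma_{-i\frac{\beta}{2}}(b_j)\sigma_{-i\frac{\beta}{2}}(b_j)^*\sigma_{-i\frac{\beta}{2}}(d_K)^*\bigr)\\
&\leq \bigl\|\pi_\psi(\sigma_{-i\frac{\beta}{2}}(b_j))\bigr\|^2\,\bigl\|\Lambda_\psi(\sigma_{-i\frac{\beta}{2}}(d_K)^*)\bigr\|^2 ,
\end{align*}
and $\|\pi_\psi(\sigma_{-i\frac{\beta}{2}}(R_N(c_j)))\|\leq K_N\|m\|$ by the integral formula for $\sigma_{-i\frac{\beta}{2}}\circ R_N$. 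The KMS flip moves the $j$-dependent factor into the position where only its operator norm matters, while the fixed factor $d_K$ carries the finiteness of the weight; this is what makes Lemma \ref{07-04-22d} applicable, and the element finally produced is $a=d_Kb$ with $b$ a convex combination of the $b_j$. Without something of this kind your argument does not close. (A minor additional point: your identity $\phi(\overline R_k(m^*m))=\phi(m^*m)$ also needs justification --- for a normal $\sigma''$-invariant weight Haagerup's theorem gives $\phi(\overline R_k(x))\leq\phi(x)$, which is all you need there; the paper sidesteps this by bounding $\|\Lambda_\phi(\overline R_k(m))\|$ directly from the integral formula of Lemma \ref{02-03-22d}.)
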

\begin{proof} The set of pairs $(F,\epsilon)$ where $F \subseteq H_\psi$ is a finite set and $0 < \epsilon \leq 1$ constitute a directed set $\mathcal S$ where $(F,\epsilon) \leq (F',\epsilon')$ means that $F \subseteq F'$ and $\epsilon' \leq \epsilon$. Let $(F,\epsilon) \in \mathcal S$. It suffices to find $a := a(F,\epsilon) \in \mathcal M^\sigma_\psi$ such that $\left\|\pi_\psi(a)\right\| \leq \|m\|$,
\begin{equation*}\label{05-04-22e}
\left\| \pi_\psi(a)\chi -m\chi\right\| \leq \epsilon
\end{equation*}
for all $\chi \in F$, and
\begin{equation*}\label{05-04-22f}
\left\|\Lambda_\phi(\pi_\psi(a)) - \Lambda_\phi(m)\right\| \leq \epsilon ,
\end{equation*}
because then $\{a(F,\epsilon)\}_{(F,\epsilon) \in \mathcal S}$ is a net with the stated properties. 

Since $\phi$ is $\sigma''$-invariant there is a unitary representation $U^\phi$ of $\mathbb R$ by unitaries on $H_\phi$ such that
$$
U^\phi_t \Lambda_\phi(a) = \Lambda_\phi(\sigma''_t(a))
$$
for all $a \in \mathcal N_\phi$ and all $t \in \mathbb R$. Thanks to Haagerups theorem, Theorem \ref{05-04-22a}, the arguments from the proof of Lemma \ref{17-11-21e} apply to show that $\mathbb R \ni t \mapsto U^\phi_t$ is continuous with respect to the strong operator topology. Thanks to this property and Lemma \ref{09-04-22} the proof of Lemma \ref{02-03-22d} shows that
$$
\Lambda_\phi(\overline{R}_k(m)) = \sqrt{\frac{k}{\pi}} \int_\mathbb R e^{-kt^2} U^\phi_t \Lambda_\phi(m) \ \mathrm d t 
$$
for all $k\in \mathbb N$, and combined with Lemma \ref{24-11-21} it follows that
$$
\lim_{k \to \infty} \Lambda_\phi(\overline{R}_k(m)) = \Lambda_\phi(m) .
$$
In addition,
$$
\overline{R}_k(m)\chi = \sqrt{\frac{k}{\pi}} \int_\mathbb R e^{-k t^2} U^\phi_t m U^\phi_{-t}\chi \ \mathrm d t ,
$$
for all $\chi \in H_\psi$, and the arguments from the proof of Lemma \ref{24-11-21} show that 
$$
\lim_{k \to \infty} \overline{R}_k(m) = m
$$
in the strong operator topology. Set $m_k := \overline{R}_k(m)$. It follows that there is an $N \in \mathbb N$ such that
$$
\left\|m_N\chi -m\chi\right\| \leq \frac{\epsilon}{3}
$$
for all $\chi \in F$ and
$$
\left\|\Lambda_\phi(m_N) - \Lambda_\phi(m)\right\| \leq \frac{\epsilon}{3} .
$$
 Since $\pi_\psi(\mathcal M^\sigma_\psi)$ is $\sigma$-weakly dense in $\pi_\psi(A)''$ it follows from Kaplanskys density theorem that there is a net $\{d_k\}_{k \in I}$ in $\mathcal M^\sigma_\psi$ such that $\lim_{k \to \infty} \pi_\psi(d_k) = 1$ in the strong operator topology and $\left\|\pi_\psi(d_k) \right\| \leq 1$ for all $k \in I$. It follows from Haagerups theorem, Theorem \ref{06-04-22a}, that $\phi$ is normal in the sense of \cite{KR} and hence that $\pi_\phi$ is a normal representation, cf. e.g. page 489 in \cite{KR}. Thus
\begin{align*}
& \lim_{k\to \infty} \Lambda_\phi(\pi_\psi(d_k) m_N) =  \lim_{k\to \infty} \pi_\phi( \pi_\psi(d_k))  \Lambda_\phi( m_N) =  \Lambda_\phi( m_N). 
\end{align*}
It follows that there is a $K\in I$ such that
$$
\left\|\pi_\psi(d_K) m_N\chi - m_N \chi \right\| \leq \frac{\epsilon}{3}
$$
for all $\chi \in F$ and
$$
\left\|\Lambda_\phi(\pi_\psi(d_K) m_N) - \Lambda_\phi( m_N)\right\| \leq \frac{\epsilon}{3} .
$$
By using Kaplanskys density theorem again we find a net $\{c_j\}_{j \in J}$ in $\mathcal M^\sigma_\psi$ such that $\left\|\pi_\psi(c_j)\right\| \leq \left\|m\right\|$ for all $j$ and $\lim_{ j \to \infty} \pi_\psi(c_j) = m$ in the strong operator topology. Set
$$
b_j := R_N(c_j). 
$$
Then $b_j \in \mathcal M^\sigma_\psi$ by Lemma \ref{07-12-21}. Note that 
\begin{align*}
& \lim_{j \to \infty} \pi_\psi(b_j) =  \lim_{j \to \infty} \sqrt{\frac{N}{\pi}} \int_\mathbb R e^{-Nt^2} \pi_\psi(\sigma_t(c_j)) \ \mathrm d t \\
& =  \lim_{j \to \infty} \sqrt{\frac{N}{\pi}} \int_\mathbb R e^{-Nt^2} \sigma''_t(\pi_\psi(c_j)) \ \mathrm d t = m_N
\end{align*} 
in the strong operator topology by Lemma \ref{02-03-22hx}. There is therefore a $j_0 \in J$ such that
$$
\left\|\pi_\psi(d_K) \pi_\psi(b_j)\chi - \pi_\psi(d_K)m_N \chi \right\| \leq \frac{\epsilon}{3}
$$
for all $\chi \in F$ when $j \geq j_0$.

 Note also that
\begin{align*}
&\left\|\pi_\psi\left(\sigma_{-i\frac{\beta}{2}}(b_j)\right)\right\| = \left\| \sqrt{\frac{N}{\pi}} \int_\mathbb R e^{-N(t + i \frac{\beta}{2})^2} \pi_\psi(\sigma_t(c_j)) \ \mathrm d t \right\| \\
&\leq  K_N \left\| \pi_\psi(c_j)\right\|  \leq   K_N \|m\|,
\end{align*}
where $K_N =  \sqrt{\frac{N}{\pi}} \int_\mathbb R \left|e^{-N(t + i \frac{\beta}{2})^2}\right|  \ \mathrm d t$. By using $\phi \circ \pi_\psi = \psi$ and that $\psi$ is a $\beta$-KMS weight for $\sigma$, this implies that
\begin{equation*}
\begin{split}
& \left\|\Lambda_\phi(\pi_\psi(d_K)\pi_\psi(b_j))\right\|^2 = \phi(\pi_\psi(b_j^*d_K^*d_Kb_j)) \\
& = \psi( b_j^*d_K^*d_Kb_j) = \psi(\sigma_{-i \frac{\beta}{2}}(d_K)\sigma_{-i \frac{\beta}{2}}(b_j) \sigma_{-i \frac{\beta}{2}}(b_j)^* \sigma_{-i \frac{\beta}{2}}(d_K)^*) \\
&= \left\|\Lambda_\psi(  \sigma_{-i \frac{\beta}{2}}(b_j)^* \sigma_{-i \frac{\beta}{2}}(d_K)^*)\right\|^2 \\
& \leq \left\|\pi_\psi( \sigma_{-i \frac{\beta}{2}}(b_j)^*)\right\|^2 \left\|\Lambda_\psi(\sigma_{-i \frac{\beta}{2}}(d_K)^*)\right\|^2 \leq  K_N \|m\| \left\|\Lambda_\psi(\sigma_{-i \frac{\beta}{2}}(d_K)^*)\right\|^2 
\end{split}
\end{equation*}
for all $j$. Thanks to this estimate and because $\lim_{j \to \infty} \pi_\psi(d_K)\pi_\psi(b_j) =\pi_\psi(d_K)m_N$ in the $\sigma$-weak topology it follows from Lemma \ref{07-04-22d} that there is an element $b \in \co \{b_j : \ j \geq j_0\}$ such that
$$
\left\|\Lambda_\phi(\pi_\psi(d_K)\pi_\psi(b))  - \Lambda_\phi(\pi_\psi(d_K)m_N) \right\| \leq \frac{\epsilon}{3} .
$$ 
Then $a := d_Kb \in \mathcal M^\sigma_\psi$ has the desired properties.

\end{proof}

\emph{Proof of Theorem \ref{05-04-22}:} Let $\phi_i, \ i =1,2$, be normal $\sigma''$-invariant weights on $\pi_\psi(A)''$ such that $\phi_i \circ \pi_\psi|_{A^+} = \psi, \ i =1,2$. By Lemma \ref{01-03-22f}, Lemma \ref{01-03-22g} and Lemma \ref{03-03-22fx} $\psi''$ has these properties so it suffices here to show that $\phi_1 = \phi_2$. Let $m \in \mathcal N_{\phi_1}$. By Lemma \ref{04-04-22} there is a net $\{a_i\}_{i \in I}$ in $\mathcal M^\sigma_\psi$ such that
\begin{itemize}
\item[(i)] $\left\|\pi_\psi(a_i)\right\| \leq \|m\|$ for all $i \in I$,
\item[(ii)] $\left\| \Lambda_{\phi_1}(\pi_\psi(a_i))\right\| \leq \left\|\Lambda_{\phi_1}(m)\right\| + 1$ for all $i\in I$, 
\item[(iii)] $\lim_{i \to \infty} \pi_\psi(a_i) = m$ in the strong operator topology, and 
\item[(iv)] $\lim_{i \to \infty} \Lambda_{\phi_1}(\pi_{\psi}(a_i)) = \Lambda_{\phi_1}(m)$ in $H_\phi$.
\end{itemize}
Let $(F,\epsilon) \in \mathcal S$, where $\mathcal S$ is the directed set from the proof of Lemma \ref{04-04-22}. It follows from (iii) and (iv) that there is an $l \in I$ such that 
$$
\left\|\pi_\psi(a_j)\chi - m\chi\right\| \leq \epsilon
$$
for all $\chi \in F$ and
$$
\left\|\Lambda_{\phi_1}(\pi_\psi(a_j)) - \Lambda_{\phi_1}(m)\right\| \leq \epsilon
$$
when $j \geq l$. It follows from (ii) that
$$
\left\|\Lambda_{\phi_2}(\pi_\psi(a_i))\right\| = \sqrt{\psi(a_i^*a_i)} = \left\|\Lambda_{\phi_1}(\pi_\psi(a_i))\right\| \leq \left\|\Lambda_{\phi_1}(m)\right\| +1
$$
for all $i$. Since $\phi_i \circ\pi_\psi = \psi, \ i =1,2$, it follows therefore from Lemma \ref{07-04-22d} that $m \in \mathcal N_{\phi_2}$ and that there is an element 
$$
b(F,\epsilon) \in \co \left\{ a_j : \ j \geq l \right\}
$$
such that 
$$
\left\|\Lambda_{\phi_2}(b(F,\epsilon)) - \Lambda_{\phi_2}(m)\right\| \leq \epsilon .
$$
The net $\{b(F,\epsilon)\}_{(F,\epsilon) \in \mathcal S}$ has the properties that $\lim_{(F,\epsilon) \to \infty} \Lambda_{\phi_i}(\pi_\psi(b(F,\epsilon))) = \Lambda_{\phi_i}(m)$ in $H_{\phi_i}, \ i =1,2$, and hence
\begin{align*}
& {\phi_1}(m^*m) = \left\|\Lambda_{\phi_1}(m)\right\|^2 = \lim_{(F,\epsilon) \to \infty}  \left\|\Lambda_{\phi_1}\left(b(F,\epsilon)\right)\right\|^2 \\
&=  \lim_{(F,\epsilon) \to \infty} \phi_1\left( \pi_\psi(b(F,\epsilon)^*)\pi_\psi(b(F,\epsilon))\right)  = \lim_{(F,\epsilon) \to \infty} \psi\left(b(F,\epsilon)^*b(F,\epsilon)\right)\\
& = \lim_{(F,\epsilon) \to \infty} \phi_2\left( \pi_\psi(b(F,\epsilon))^*\pi_\psi(b(F,\epsilon))\right)  = \phi_2(m^*m) .
\end{align*}
It follows, in particular, that $m \in \mathcal N_{\phi_2}$, implying that $\mathcal N_{\phi_1} \subseteq \mathcal N_{\phi_2}$.
Since $\mathcal N_{\phi_2} \subseteq \mathcal N_{\phi_1}$ by symmetry, we conclude that $\phi_1 = \phi_2$.
\qed


\begin{notes}\label{07-04-22} Most the material in this chapter is based on the work of Kustermans and Vaes in \cite{Ku1} and \cite{KV1}. In particular, they construct by a different method a normal extension of $\psi$ to $\pi_\psi(A)''$ in \cite{KV1}. It follows from Theorem \ref{05-04-22} that the normal extension $\psi''$ of $\psi$ we have constructed here is the same as the extension constructed by Kustermans and Vaes; a fact which also follows by comparing \eqref{06-02-23a} to Definition 2.10 in \cite{KV1}.

\end{notes}

\section{A faithful KMS weight remembers the flow}

By using the normal extension of a KMS weight we deduce in this section a few consequences of the uniqueness of the modular flow of a faithful normal semi-finite weight on a von Neumann algebra.

\bigskip

\begin{lemma}\label{07-04-22a} Let $\sigma^1$ and $\sigma^2$ be flows on $A$. Let $\beta_i \in \mathbb R, i = 1,2$. Assume that $\psi$ is a $\beta_1$-KMS weight for $\sigma^1$ and a $\beta_2$-KMS weight for $\sigma^2$, and that $\pi_\psi$ is faithful. Then
$$
\sigma^1_{\beta_1 t} = \sigma^2_{\beta_2 t}
$$
for all $t \in \mathbb R$.
\end{lemma}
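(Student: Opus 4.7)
The plan is to pass to the normal extension $\psi''$ of $\psi$ on $N := \pi_\psi(A)''$ and invoke the uniqueness of the modular automorphism group of a faithful normal semi-finite weight on a von Neumann algebra.

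First I would address the one subtle point. Although the construction of $\psi''$ in Section \ref{modular2} ostensibly depends on the flow (through the index set $\mathcal J$ and the vectors $\Lambda_\psi(\sigma_{-\xi}(x))$), the intermediate identity \eqref{06-02-23a} established in the proof of Lemma \ref{06-02-23} gives
$$\psi''(m) \;=\; \sup_{\omega \in \mathcal F_\psi} \tilde{\omega}(m), \qquad m \in N^+,$$
where $\tilde{\omega}$ is the unique normal positive linear functional on $N$ satisfying $\tilde{\omega} \circ \pi_\psi = \omega$. The right-hand side depends only on $\psi$ and on the GNS representation $(H_\psi,\Lambda_\psi,\pi_\psi)$; no flow appears. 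Consequently, running the construction of Section \ref{modular2} with $(\sigma^1,\beta_1)$ or with $(\sigma^2,\beta_2)$ produces one and the same normal faithful semi-finite weight on $N$, which I shall denote simply by $\psi''$.

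Next, Theorem \ref{05-03-22x} applied with the pair $(\sigma^i,\beta_i)$ identifies the modular automorphism group of $\psi''$ with $t \mapsto (\sigma^i)''_{-\beta_i t}$, for both $i=1,2$. By the uniqueness of the modular automorphism group of a faithful normal semi-finite weight, recalled in Section \ref{modular1}, these two normal flows on $N$ must coincide, so
$$(\sigma^1)''_{\beta_1 t} \;=\; (\sigma^2)''_{\beta_2 t} \qquad \forall t \in \mathbb R.$$

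Finally, the intertwining relation \eqref{08-03-22b}, $\pi_\psi \circ \sigma^i_s = (\sigma^i)''_s \circ \pi_\psi$, yields $\pi_\psi \circ \sigma^1_{\beta_1 t} = \pi_\psi \circ \sigma^2_{\beta_2 t}$ for every $t \in \mathbb R$, and the hypothesis that $\pi_\psi$ is faithful lets us cancel it on the left to reach the desired conclusion. The main obstacle, conceptually, is the first step: one must see that the two a priori different candidates $\psi''_1$ and $\psi''_2$ are literally the same weight on $N$, for which \eqref{06-02-23a} is the decisive tool. The case $\beta_1 = 0$ (i.e. $\psi$ is a trace) needs no separate treatment, since Theorem \ref{05-03-22x} then asserts that the modular group of $\psi''$ is trivial, forcing $(\sigma^2)''_{\beta_2 t} = \id$ and hence, by faithfulness of $\pi_\psi$, $\sigma^2_{\beta_2 t} = \id = \sigma^1_{\beta_1 t}$.
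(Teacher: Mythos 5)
Your proof is correct, and its overall skeleton — pass to the normal extension $\psi''$ on $\pi_\psi(A)''$, identify its modular automorphism group via Theorem \ref{05-03-22x} for each of the two flows, invoke uniqueness of the modular group, and cancel $\pi_\psi$ by faithfulness — is exactly the paper's. The difference lies in how you justify the one step the paper explicitly leaves to the reader, namely that the two normal extensions $\psi''_1$ and $\psi''_2$ built from $(\sigma^1,\beta_1)$ and $(\sigma^2,\beta_2)$ coincide. The paper proposes to re-run the argument of Theorem \ref{05-04-22}, whose proof (through Lemma \ref{04-04-22}) is entangled with the flow via the smoothing operators and $\sigma''$-invariance, so that checking it survives when the two flows are unrelated requires genuine care. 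You instead observe that the identity \eqref{06-02-23a}, already established in the proof of Lemma \ref{06-02-23}, expresses $\psi''$ as $\sup_{\omega \in \mathcal F_\psi}\tilde{\omega}$, a quantity built solely from $\psi$ and the GNS triple with no reference to any flow; hence both constructions yield literally the same weight. This is a cleaner and shorter route to the same conclusion, and it is consistent with the remark in Notes \ref{07-04-22} that \eqref{06-02-23a} pins down $\psi''$ independently of the construction. Your separate remark on the case $\beta_1=0$ is harmless but not needed, since the general argument already covers it.
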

\begin{proof} Let $\psi_i, i = 1,2$, be the normal extensions of $\psi$ to $\pi_\psi(A)''$ obtained from $\sigma^i, i = 1,2$. Then $\psi_1 \circ \pi_\psi = \psi = \psi_2 \circ \pi_\psi$ on $A^+$. The key point is to check that arguments from the proof of Theorem \ref{05-04-22} work to show that $\psi_1 = \psi_2$ despite that the flows $\sigma^1$ and $\sigma^2$ used to define $\psi_1$ and $\psi_2$ are not a priori related. We leave this to the reader. Once the equality $\psi_1 = \psi_2$ is established the proof is quickly completed: By uniqueness of the modular automorphism group associated to $\psi_1$ it follows from Theorem \ref{05-03-22x} that ${\sigma^1}''_{-\beta_1 t} = {\sigma^2}''_{-\beta_2 t}$ for all $t \in \mathbb R$. Since
\begin{align*}
& \pi_\psi(\sigma^1_{-\beta_1 t}(a)) = {\sigma^1}''_{-\beta_1 t}(\pi_\psi(a)) = {\sigma^2}''_{-\beta_1 t}(\pi_\psi(a)) = \pi_\psi(\sigma^2_{-\beta_1 t}(a))
\end{align*}
it follows that ${\sigma^1}_{-\beta_1 t} = {\sigma^2}_{-\beta_2 t}$ for all $t \in \mathbb R$ because $\pi_\psi$ is faithful by assumption.
\end{proof}

A weight $\psi$ on $A$ is \emph{faithful} when $a \in A \backslash \{0\} \Rightarrow \psi(a^*a) > 0$.

\begin{thm}\label{09-04-22c} Let $\beta, \beta' \in \mathbb R$ and let $\psi$ be a $\beta$-KMS weight for the flow $\sigma$ on $A$. Assume that $\psi$ is faithful. If $\sigma^1$ is a flow such that $\psi$ is a $\beta'$-KMS weight for $\sigma^1$, then $\sigma^1_{\beta' t} = \sigma_{\beta t}$ for all $t \in \mathbb R$. 
\end{thm}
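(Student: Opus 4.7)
The plan is to reduce Theorem \ref{09-04-22c} directly to Lemma \ref{07-04-22a}. With the choice $\sigma^2 := \sigma$, $\beta_2 := \beta$, $\beta_1 := \beta'$, that lemma yields exactly the desired identity $\sigma^1_{\beta' t} = \sigma_{\beta t}$, provided one has the extra hypothesis that the GNS representation $\pi_\psi$ is faithful. Thus the only real task is to verify that faithfulness of $\psi$ \emph{as a weight} forces faithfulness of $\pi_\psi$.

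To establish this implication, suppose $a \in A$ satisfies $\pi_\psi(a) = 0$. Since $\psi$ is a KMS weight it is in particular densely defined, so by (a) of Lemma \ref{04-11-21n} the left ideal $\mathcal N_\psi$ is dense in $A$. For every $b \in \mathcal N_\psi$ we have $ab \in \mathcal N_\psi$ and
\begin{equation*}
\psi((ab)^*(ab)) = \|\Lambda_\psi(ab)\|^2 = \|\pi_\psi(a)\Lambda_\psi(b)\|^2 = 0 .
\end{equation*}
Faithfulness of $\psi$ then forces $ab = 0$ for every $b \in \mathcal N_\psi$. Approximating $a^*$ in norm by a sequence $\{b_n\} \subseteq \mathcal N_\psi$ gives $aa^* = \lim_n a b_n = 0$, hence $a = 0$. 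Therefore $\pi_\psi$ is faithful.

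Once this is in hand, apply Lemma \ref{07-04-22a} with the data $(\sigma^1,\beta_1) = (\sigma^1,\beta')$ and $(\sigma^2,\beta_2) = (\sigma,\beta)$, both of which are KMS data for the same weight $\psi$ by hypothesis. The conclusion $\sigma^1_{\beta' t} = \sigma_{\beta t}$ for all $t \in \mathbb R$ follows immediately. No serious obstacle is anticipated; the argument is almost formal once the elementary step from faithful weight to faithful GNS representation is noted, and the real work has already been done in Lemma \ref{07-04-22a}, which itself rests on the uniqueness of the modular automorphism group for the normal extension $\psi''$ established via Theorem \ref{05-04-22} and Theorem \ref{05-03-22x}.
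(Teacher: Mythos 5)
Your proposal is correct and follows essentially the same route as the paper: both reduce the theorem to the observation that faithfulness of $\psi$ forces faithfulness of $\pi_\psi$ and then invoke Lemma \ref{07-04-22a}. The only (immaterial) difference is in that elementary step, where the paper uses an approximate unit in $\mathcal N_\psi$ together with lower semi-continuity of $\psi$, while you use faithfulness on the elements $ab$, $b \in \mathcal N_\psi$, and norm-density of $\mathcal N_\psi$; both arguments are valid.
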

\begin{proof} If $\pi_\psi(a^*a) = 0$, consider a sequence $\{u_n\}_{n=1}^\infty$ in $\mathcal N_\psi$ such that 
$$
\lim_{n \to \infty} au_n = a,
$$ 
cf. Lemma \ref{16-12-21a}. Then
\begin{align*}
& \psi(a^*a) \leq \liminf_{n \to \infty} \psi(u_n^*a^*au_n) =  \liminf_{n \to \infty}\left< \Lambda_\psi(au_n),\Lambda_\psi(au_n)\right> \\
&= \liminf_{n \to \infty}\left<\pi_{\psi}(a^*a) \Lambda_\psi(u_n),\Lambda_\psi(u_n)\right> = 0 .
\end{align*}
It follows that $a =0$ since $\psi$ is faithful by assumption, proving that $\pi_\psi$ is faithful. Hence Lemma \ref{07-04-22a} applies.
\end{proof}



\begin{cor}\label{01-05-22d}  Let $\beta, \beta' \in \mathbb R$ and let $\psi$ be a $\beta$-KMS weight for the flow $\sigma$ on $A$. Assume that the only $\sigma$-invariant ideals in $A$ are $\{0\}$ and $A$. If $\sigma^1$ is a flow such that $\psi$ is a $\beta'$-KMS weight for $\sigma^1$, then $\sigma^1_{\beta' t}= \sigma_{\beta t}$ for all $t \in \mathbb R$.
\end{cor}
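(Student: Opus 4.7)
The plan is to reduce the corollary to Theorem \ref{09-04-22c} by showing that the hypothesis on $\sigma$-invariant ideals forces $\psi$ to be faithful. Once faithfulness is established, Theorem \ref{09-04-22c} applies verbatim and delivers the conclusion $\sigma^1_{\beta' t} = \sigma_{\beta t}$ for all $t \in \mathbb R$ without further work.

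To obtain faithfulness, I would invoke Lemma \ref{05-12-21a}, which asserts that
$$
\ker_\psi = \left\{ a \in A : \psi(a^*a) = 0 \right\}
$$
is a closed two-sided $\sigma$-invariant ideal of $A$. Under the standing assumption that the only $\sigma$-invariant ideals of $A$ are $\{0\}$ and $A$, there are then only two possibilities: either $\ker_\psi = \{0\}$, in which case $\psi$ is faithful and we are done, or $\ker_\psi = A$.

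The plan is to rule out the second alternative. If $\ker_\psi = A$, then $\psi(a^*a) = 0$ for every $a \in A$, so $\mathcal N_\psi = A$ and $\psi$ vanishes on every element of the form $a^*a$. Since each element of $A^+$ is of this form (take $a = \sqrt{x}$ for $x \in A^+$), we would obtain $\psi \equiv 0$ on $A^+$, contradicting the requirement in Definition \ref{24-11-21c} that a KMS weight be non-zero. Hence $\ker_\psi = \{0\}$, i.e.\ $\psi$ is faithful.

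With faithfulness in hand, Theorem \ref{09-04-22c} applies directly: for the flow $\sigma^1$ for which $\psi$ is a $\beta'$-KMS weight, it yields $\sigma^1_{\beta' t} = \sigma_{\beta t}$ for all $t \in \mathbb R$. There is no substantial obstacle here, since the real content lies entirely in Theorem \ref{09-04-22c} and Lemma \ref{05-12-21a}; the only point requiring minor care is the verification that $\ker_\psi = A$ is incompatible with $\psi$ being non-zero, which is immediate from the definitions.
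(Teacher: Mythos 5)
Your proposal is correct and follows exactly the paper's own argument: Lemma \ref{05-12-21a} shows $\ker_\psi$ is a closed $\sigma$-invariant ideal, the $\sigma$-simplicity hypothesis together with $\psi \neq 0$ forces $\ker_\psi = \{0\}$, and Theorem \ref{09-04-22c} then gives the conclusion. Your explicit verification that $\ker_\psi = A$ would contradict $\psi$ being non-zero is a detail the paper leaves implicit, but the route is the same.
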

\begin{proof} Under the given assumption it follows from Lemma \ref{05-12-21a} that $\psi$ is faithful. Hence Theorem \ref{09-04-22c} applies.
\end{proof}

\begin{cor}\label{31-12-22}  Assume that the only $\sigma$-invariant ideals in $A$ are $\{0\}$ and $A$. Let $\beta,\beta' \in \mathbb R, \ \beta \neq \beta'$. If $\psi$ is a weight on $A$ which is both a $\beta$-KMS weight and a $\beta'$-KMS weight for $\sigma$, then $\sigma_t = \id_A$ for all $t \in \mathbb R$. 
\end{cor}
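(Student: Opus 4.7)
The plan is to apply the preceding Corollary \ref{01-05-22d} to the situation at hand, taking the flow $\sigma^1$ to be $\sigma$ itself. Since $\psi$ is assumed to be simultaneously a $\beta$-KMS weight and a $\beta'$-KMS weight for the \emph{same} flow $\sigma$, and since $\sigma$ has no nontrivial $\sigma$-invariant ideals, Corollary \ref{01-05-22d} yields the identity
\[
\sigma_{\beta t} = \sigma_{\beta' t} \quad \forall t \in \mathbb R.
\]
First I would verify that the hypotheses of Corollary \ref{01-05-22d} are indeed satisfied in this trivial way (taking $\sigma^1 = \sigma$): the only $\sigma$-invariant ideals are $\{0\}$ and $A$ by assumption, and $\psi$ is a $\beta'$-KMS weight for $\sigma^1 = \sigma$ by hypothesis.

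Next, I would use that $\{\sigma_s\}_{s \in \mathbb R}$ is a one-parameter group. From $\sigma_{\beta t} = \sigma_{\beta' t}$ and the group property we obtain
\[
\sigma_{(\beta - \beta')t} \;=\; \sigma_{\beta t} \circ \sigma_{-\beta' t} \;=\; \sigma_{\beta' t} \circ \sigma_{-\beta' t} \;=\; \id_A
\]
for all $t \in \mathbb R$. Since $\beta \neq \beta'$, the scalar $c := \beta - \beta'$ is nonzero, and the substitution $s = ct$ gives $\sigma_s = \id_A$ for every $s \in \mathbb R$, which is the desired conclusion.

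The only real content in the argument sits inside Corollary \ref{01-05-22d}, which in turn rests on Theorem \ref{09-04-22c} and on the uniqueness of the modular automorphism group for the normal extension $\psi''$; there is no new obstacle to handle at this level. The step that might deserve one line of care is the passage from $\sigma_{\beta t} = \sigma_{\beta' t}$ to $\sigma_{(\beta-\beta')t} = \id_A$, since it uses the group law rather than any analytic continuation, but it is immediate once written out as above.
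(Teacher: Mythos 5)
Your proof is correct and follows exactly the paper's route: the paper's own argument is the one-line "Take $\sigma^1 = \sigma$ in Corollary \ref{01-05-22d}," with the final passage from $\sigma_{\beta t} = \sigma_{\beta' t}$ to $\sigma_s = \id_A$ via the group law left implicit. Your spelling out of that last step is accurate and adds nothing beyond what the paper intends.
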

\begin{proof} Take $\sigma^1 = \sigma$ in Corollary \ref{01-05-22d}.
\end{proof}

\begin{notes} Theorem \ref{09-04-22c} was obtained by Kustermans in Proposition 6.34 of \cite{Ku1}.
\end{notes}



\chapter{The set of $\beta$-KMS weights}

In this chapter we use the modular theory of KMS weights from the last chapter to show that the set of $\beta$-KMS weights for a given flow has a natural structure as a convex cone and a natural partial ordering which turns it into a lattice.

\section{Addition of KMS weights}
 Let $\psi$ and $\varphi$ be weights on the $C^*$-algebra $A$. The sum $\psi + \varphi$ is then defined by
$$
(\psi + \varphi)(a) := \psi(a) + \varphi(a), \ \ a \in A^+ .
$$

\begin{lemma}\label{14-02-22a} $\psi + \varphi$ is a weight.
\end{lemma}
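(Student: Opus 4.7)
The plan is to verify the three defining conditions of a weight in turn, each of which reduces to a direct manipulation in $[0,\infty]$.

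First, additivity is immediate: for $a,b \in A^+$, $(\psi+\varphi)(a+b) = \psi(a+b) + \varphi(a+b) = \psi(a)+\psi(b)+\varphi(a)+\varphi(b) = (\psi+\varphi)(a) + (\psi+\varphi)(b)$, where the addition is carried out in $[0,\infty]$ (which is commutative and associative under the convention that $\infty + x = \infty$ for all $x \in [0,\infty]$). Homogeneity is equally routine: $(\psi+\varphi)(ta) = t\psi(a) + t\varphi(a) = t(\psi+\varphi)(a)$ for $t \in \mathbb{R}^+$, again using the convention $0 \cdot \infty = 0$.

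The only point that deserves a moment of thought is lower semi-continuity. I would use the equivalent sequential formulation from the first chapter: a semi-weight is lower semi-continuous iff $\psi(a) \leq \liminf_n \psi(a_n)$ whenever $a_n \to a$ in $A^+$. Given a sequence $a_n \to a$ in $A^+$, apply the elementary fact that for sequences in $[0,\infty]$ we have $\liminf_n x_n + \liminf_n y_n \leq \liminf_n (x_n + y_n)$ to obtain
\begin{equation*}
(\psi+\varphi)(a) = \psi(a)+\varphi(a) \leq \liminf_n \psi(a_n) + \liminf_n \varphi(a_n) \leq \liminf_n (\psi+\varphi)(a_n).
\end{equation*}

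There is no real obstacle here; the only minor subtlety is justifying the $[0,\infty]$-valued $\liminf$ inequality, which follows by passing to a subsequence realizing $\liminf_n (\psi+\varphi)(a_n)$ and using that both summands are bounded below by their respective $\liminf$s along that subsequence.
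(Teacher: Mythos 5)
Your proof is correct, but it takes a different route from the paper. The paper dismisses additivity and homogeneity as immediate (as you do) and then obtains lower semi-continuity from Combes' theorem, Theorem \ref{04-11-21k}: writing $\psi(a) = \sup_{\omega \in \mathcal F_\psi}\omega(a)$ and $\varphi(a) = \sup_{\omega \in \mathcal F_\varphi}\omega(a)$, and observing that $\omega_1 + \omega_2 \in \mathcal F_{\psi+\varphi}$ when $\omega_i$ run over these families, one gets $(\psi+\varphi)(a) = \sup_{\omega \in \mathcal F_{\psi+\varphi}}\omega(a)$, and a supremum of continuous functions is lower semi-continuous. Your argument instead verifies the sequential characterization directly via the superadditivity of $\liminf$ on $[0,\infty]$-valued sequences; the subsequence argument you sketch does close the gap (pass to a subsequence realizing $\liminf_n(\psi+\varphi)(a_n)$, then to a further subsequence along which both $\psi(a_{n_k})$ and $\varphi(a_{n_k})$ converge in $[0,\infty]$ — no $\infty - \infty$ issue arises since all terms are nonnegative). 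Your route is more elementary and self-contained, needing nothing beyond the definition; the paper's route is shorter on the page and has the side benefit of identifying $\psi + \varphi$ explicitly as the upper envelope of the family $\mathcal F_{\psi+\varphi}$ of dominated positive functionals, which is the form in which weights are repeatedly manipulated elsewhere in the text.
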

\begin{proof} Only lower semi-continuity requires a proof and this follows from Combes' theorem, Theorem \ref{04-11-21k}. Indeed, it follows from Combes' theorem that 
$$
\psi(a) + \varphi(a) = \sup_{\omega \in \mathcal F_{\psi + \varphi}}\omega(a); 
$$
an equality which implies the lower semi-continuity of $\psi + \varphi$. 
\end{proof}

Let $\sigma$ be a flow on $A$.

\begin{prop}\label{14-02-22b} Let $\psi$ be a $\beta$-KMS weight for $\sigma$ and $\varphi$ a $\beta'$-KMS weight for $\sigma$. Then $\psi + \varphi$ is a densely defined $\sigma$-invariant weight and a $\beta$-KMS weight when $\beta = \beta'$.
\end{prop}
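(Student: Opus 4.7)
My plan is to dispose of the easy properties first, identify density as the only real issue, and then invoke the cleanest form of Kustermans' theorem for the KMS conclusion.

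First I would observe that $\sigma$-invariance of $\psi+\varphi$ and the fact that it is non-zero follow immediately from the corresponding properties of $\psi$ and $\varphi$: $(\psi+\varphi)\circ \sigma_t(a)=\psi(\sigma_t(a))+\varphi(\sigma_t(a))=\psi(a)+\varphi(a)$ for all $a\in A^+$. Weight and lower semi-continuity are supplied by Lemma \ref{14-02-22a}. So the content of the proposition lies in density, and in the $\beta$-KMS identity when $\beta=\beta'$.

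For density, the plan is to show first that $\mathcal N_\psi\cap\mathcal N_\varphi$ is norm-dense in $A$. Since $\varphi$ is densely defined, $\pi_\varphi$ is non-degenerate by Lemma \ref{08-02-22}, so Lemma \ref{16-12-21a} applied to the GNS triple of $\varphi$ produces a net $\{e_j\}$ in $\mathcal N_\varphi$ with $0\le e_j\le 1$ such that $\lim_{j\to\infty}e_j b=b$ for every $b\in A$. Now given any $a\in A$, by density of $\mathcal N_\psi$ (Lemma \ref{04-11-21n}(a)) I may choose $a_n\in\mathcal N_\psi$ with $a_n\to a$; since $\mathcal N_\psi$ and $\mathcal N_\varphi$ are both left ideals, $e_j a_n\in\mathcal N_\psi\cap\mathcal N_\varphi$, and $e_j a_n\to a_n\to a$, proving the density claim after a standard diagonal choice. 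Then for $a\in A^+$ I pick a sequence $b_n\in\mathcal N_\psi\cap\mathcal N_\varphi$ converging to $\sqrt{a}$; by continuity of multiplication, $b_n^*b_n\to a$, while $b_n^*b_n\in\mathcal M_\psi^+\cap\mathcal M_\varphi^+=\mathcal M_{\psi+\varphi}^+$ by Lemma \ref{04-11-21n}(b),(d). Thus $\psi+\varphi$ is densely defined.

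For the $\beta$-KMS identity in the case $\beta=\beta'$, the strategy is to use condition (1) of Kustermans' theorem (Theorem \ref{24-11-21d}) in its most universal form: the identity
\[
\psi(a^*a)=\psi\bigl(\sigma_{-i\beta/2}(a)\sigma_{-i\beta/2}(a)^*\bigr)
\]
holds for \emph{every} $a\in D(\sigma_{-i\beta/2})$, and similarly for $\varphi$. Adding the two identities term by term in $[0,\infty]$ yields the same identity for $\psi+\varphi$ on all of $D(\sigma_{-i\beta/2})$, which is condition (1) for $\psi+\varphi$; combined with the previous paragraph, Theorem \ref{24-11-21d} gives that $\psi+\varphi$ is a $\beta$-KMS weight for $\sigma$. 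I do not expect any real obstacle here — the only subtlety is the bookkeeping of density through the two interacting left ideals, which the approximate-unit argument handles cleanly.
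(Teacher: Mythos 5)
Your reduction of the problem to the density question, and your treatment of the KMS identity by adding condition (1) of Theorem \ref{24-11-21d} for $\psi$ and $\varphi$ term by term in $[0,\infty]$ over all of $D(\sigma_{-i\frac{\beta}{2}})$, are both fine and agree with the paper. The gap is in the density argument. You assert that $e_ja_n\in\mathcal N_\psi\cap\mathcal N_\varphi$ because ``$\mathcal N_\psi$ and $\mathcal N_\varphi$ are both left ideals'', but a left ideal only absorbs multiplication on the \emph{left}: from $a_n\in\mathcal N_\psi$ you do get $e_ja_n\in\mathcal N_\psi$, while membership of $e_ja_n$ in $\mathcal N_\varphi$ would require $\varphi(a_n^*e_j^2a_n)<\infty$, and the left-ideal property of $\mathcal N_\varphi$ only yields $a_n^*e_j\in\mathcal N_\varphi$, i.e. $e_ja_n\in\mathcal N_\varphi^*$. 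Reversing the order of the factors merely moves the problem to $\mathcal N_\psi$. Note that your argument nowhere uses that $\psi$ and $\varphi$ are KMS weights; if it were valid it would show that the sum of any two densely defined weights is densely defined, which is exactly the point at issue and is what the KMS hypothesis is there to secure.

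The paper closes this gap with Lemma \ref{14-02-22}, which gives $\mathcal M_\psi^\sigma\mathcal A_\sigma\subseteq\mathcal N_\psi\cap\mathcal N_\psi^*$ and $\mathcal A_\sigma\mathcal M_\varphi^\sigma\subseteq\mathcal N_\varphi\cap\mathcal N_\varphi^*$. The nontrivial half of the first inclusion is proved by writing $\psi\left((ha)(ha)^*\right)=\psi\left(\sigma_{i\frac{\beta}{2}}(ha)^*\sigma_{i\frac{\beta}{2}}(ha)\right)$ and estimating, and this is precisely where the KMS property enters. Consequently every product $xy$ with $x\in\mathcal M_\psi^\sigma$ and $y\in\mathcal M_\varphi^\sigma$ lies in $\mathcal N_\psi\cap\mathcal N_\varphi$, and since $\mathcal M_\psi^\sigma$ and $\mathcal M_\varphi^\sigma$ are dense in $A$ by Lemma \ref{07-12-21a}, the intersection $\mathcal N_\psi\cap\mathcal N_\varphi$ is dense. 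Your final step, passing from density of $\mathcal N_\psi\cap\mathcal N_\varphi$ to density of $\mathcal M_{\psi+\varphi}^+$ in $A^+$ via $b_n^*b_n\to a$, is correct once the intersection is known to be dense, so repairing the single step above by the route through Lemma \ref{14-02-22} completes your proof.
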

\begin{proof} In view of Lemma \ref{14-02-22a}, for the first statement the only non-trivial fact is that $\psi + \varphi$ is densely defined. It follows from Lemma \ref{14-02-22} that
$$
\mathcal M_\psi^\sigma \mathcal M_\varphi^\sigma \subseteq \mathcal N_\psi \cap \mathcal N_\varphi .
$$
Since $\mathcal M_\psi^\sigma$ and $\mathcal M_\varphi^\sigma$ both are dense in $A$ by Lemma \ref{07-12-21a} it follows that $\mathcal M_\psi^\sigma \mathcal M_\varphi^\sigma$ is dense in $A$ and hence $\mathcal N_\psi \cap \mathcal N_\varphi$ is dense in $A$. Since $\psi + \varphi$ is finite on $a^*a$ when $a \in \mathcal N_\psi \cap \mathcal N_\varphi$, $\psi + \varphi$ is densely defined. That $\psi + \varphi$ is a $\beta$-KMS weight when $\beta = \beta'$ follows immediately from Kustermans' theorem, Theorem \ref{24-11-21d}.
\end{proof}

\section{The $\beta$-KMS weights dominated by another}

Let $\psi$ and $\phi$ be $\beta$-KMS weights for $\sigma$. We write $\phi \leq \psi$ when $\phi(a) \leq \psi(a)$ for all $a \in A^+$.

\begin{lemma}\label{15-02-22dx} Let $\psi$ be a $\beta$-KMS weight for $\sigma$. For every $\beta$-KMS weight $\phi$ for $\sigma$ such that $\phi \leq \psi$ there is an operator $0 \leq T_\varphi \leq 1$ in $\pi_\psi(A)' \cap \pi_\psi(A)''$ which commutes with $U^\psi_t$ for all $t \in \mathbb R$ and satisfies that  
$$
\phi(b^*a) = \left< T_\phi\Lambda_\psi(a),\Lambda_\psi(b)\right>
$$
for all $a,b \in \mathcal N_\psi$.
\end{lemma}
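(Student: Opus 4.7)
The plan is to construct $T_\phi$ from the sesquilinear form $(a,b)\mapsto \phi(b^*a)$ by the Riesz theorem, verify the easy commutation properties directly, and then use the modular conjugation $J_\psi$ (together with Theorem \ref{04-03-22a}) to push $T_\phi$ from $\pi_\psi(A)'$ into $\pi_\psi(A)''$.

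First I would observe that $\phi\leq\psi$ forces $\mathcal N_\psi\subseteq\mathcal N_\phi$, and Cauchy--Schwarz gives
$$|\phi(b^*a)|^2\leq \phi(a^*a)\phi(b^*b)\leq \psi(a^*a)\psi(b^*b)=\|\Lambda_\psi(a)\|^2\|\Lambda_\psi(b)\|^2$$
for $a,b\in\mathcal N_\psi$. Hence $(a,b)\mapsto \phi(b^*a)$ descends to a bounded sesquilinear form of norm $\le 1$ on the dense subspace $\Lambda_\psi(\mathcal N_\psi)\subseteq H_\psi$ and extends to all of $H_\psi$. Riesz representation (cf. the proof of Lemma \ref{08-11-21bx}) then gives $T_\phi\in B(H_\psi)$ with $\phi(b^*a)=\langle T_\phi\Lambda_\psi(a),\Lambda_\psi(b)\rangle$ and $0\leq T_\phi\leq 1$. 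The calculation $\phi(b^*xa)=\phi((x^*b)^*a)$ for $x\in A$ shows $T_\phi\in\pi_\psi(A)'$ exactly as in Lemma \ref{08-11-21bx}, while the identity $\phi(\sigma_t(b)^*\sigma_t(a))=\phi(b^*a)$ translates into $\langle T_\phi U^\psi_t\Lambda_\psi(a),U^\psi_t\Lambda_\psi(b)\rangle=\langle T_\phi\Lambda_\psi(a),\Lambda_\psi(b)\rangle$, hence ${U^\psi_t}^*T_\phi U^\psi_t=T_\phi$.

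The main obstacle will be showing $T_\phi\in\pi_\psi(A)''$; this is where the modular theory is essential. For $a,b\in\mathcal M^\sigma_\psi$ we have $a,b\in\mathcal N_\psi\cap D(\sigma_{-i\beta/2})\subseteq\mathcal N_\phi\cap D(\sigma_{-i\beta/2})$, so Lemma \ref{24-06-22b} applied to $\phi$ gives
$$\phi(b^*a)=\phi(\sigma_{-i\beta/2}(a)\sigma_{-i\beta/2}(b)^*).$$
Writing this right-hand side as $\phi((b')^*a')$ with $a':=\sigma_{-i\beta/2}(b)^*$, $b':=\sigma_{-i\beta/2}(a)^*$ (both in $\mathcal N_\psi$ since $\sigma_{-i\beta/2}(a),\sigma_{-i\beta/2}(b)\in\mathcal M_\psi\subseteq\mathcal N_\psi^*$) and using the defining relation for $T_\phi$, together with the formula $J_\psi\Lambda_\psi(x)=\Lambda_\psi(\sigma_{-i\beta/2}(x)^*)$ valid on $\mathcal M^\sigma_\psi$, I obtain
$$\langle T_\phi\Lambda_\psi(a),\Lambda_\psi(b)\rangle=\langle T_\phi J_\psi\Lambda_\psi(b),J_\psi\Lambda_\psi(a)\rangle.$$
Using that $T_\phi=T_\phi^*$ and the antiunitarity of $J_\psi=J_\psi^*$ with $J_\psi^2=1$, the right-hand side equals $\langle J_\psi T_\phi J_\psi\Lambda_\psi(a),\Lambda_\psi(b)\rangle$. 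Since $\Lambda_\psi(\mathcal M^\sigma_\psi)$ is dense in $H_\psi$ by Corollary \ref{15-02-22}, this forces
$$J_\psi T_\phi J_\psi=T_\phi.$$
Combined with $T_\phi\in\pi_\psi(A)'$ and Theorem \ref{04-03-22a}, which says $J_\psi\pi_\psi(A)'J_\psi=\pi_\psi(A)''$, this gives $T_\phi\in\pi_\psi(A)''$, completing the proof.

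The delicate step is the algebraic manipulation producing $J_\psi T_\phi J_\psi=T_\phi$; everything else is routine or has already been carried out (in particular, the existence of $J_\psi$ and its realization on $\Lambda_\psi(\mathcal M_\psi^\sigma)$ rely on the KMS property of $\psi$, via condition (E) of Section \ref{GNS-KMS}). The key insight is that the KMS condition for the dominated weight $\phi$, phrased through Lemma \ref{24-06-22b}, is exactly the statement that $T_\phi$ commutes with the modular conjugation of the dominating weight $\psi$.
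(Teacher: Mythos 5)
Your proposal is correct and follows essentially the same route as the paper: construct $T_\phi$ from the bounded sesquilinear form, check that it lies in $\pi_\psi(A)'$ and commutes with $U^\psi_t$, and then use the KMS property of the dominated weight $\phi$ to show $J_\psi T_\phi J_\psi = T_\phi$, so that Theorem \ref{04-03-22a} places $T_\phi$ in $\pi_\psi(A)''$. The one fact you take for granted, $J_\psi^* = J_\psi$, does require justification --- the paper derives it from Lemma \ref{04-03-22} or by a direct computation using condition (3) of Theorem \ref{12-12-13} --- but beyond that your appeal to Lemma \ref{24-06-22b} merely packages the same manipulation the paper carries out with condition (3) of Kustermans' theorem and Lemma \ref{18-11-21kx}.
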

\begin{proof} Since $\phi \leq \psi$ it follows that $\mathcal N_\psi \subseteq \mathcal N_\phi$ and $\ker \Lambda_\psi \subseteq \ker \Lambda_\phi$. We can therefore define a sequilinear form $\left< \ \cdot \ , \ \cdot \ \right>_\phi$ on $H_\psi$ such that
$$
\left< \Lambda_\psi(a)  , \Lambda_\psi(b)\right>_\phi := \phi(b^*a) = \left< \Lambda_\phi(a), \Lambda_\phi(b)\right> 
$$
for all $a,b \in \mathcal N_\psi$. As in the proof of Lemma \ref{08-11-21bx} this gives us an operator $T_\phi \in \pi_\psi(A)'$ such that $0 \leq T_\phi \leq 1$ and 
$$
\left< T_\phi \Lambda_\psi(a),\Lambda_\psi(b)\right> = \phi(b^*a)
$$
for all $a,b \in \mathcal N_\psi$. Let $a,b \in \mathcal M_\psi^\sigma \subseteq \mathcal M_\phi^\sigma$. By using that $\psi$ and $\phi$ are $\beta$-KMS weight for $\sigma$ we find
\begin{equation}\label{30-11-23}
\begin{split}
& \left< T_\phi J_\psi\Lambda_\psi(a),\Lambda_\psi(b)\right> = \left< T_\phi \Lambda_\psi(\sigma_{-i \frac{\beta}{2}}(a)^*),\Lambda_\psi(b)\right> \\
& = \phi\left(b^* \sigma_{i\frac{\beta}{2}}(a^*) \right) = 
\phi\left(  \sigma_{i\frac{\beta}{2}}(a^*)\sigma_{i\beta}(b^*)\right) \\
& = \phi\left( \sigma_{i\frac{\beta}{2}}\left( a^* \sigma_{i \frac{\beta}{2}}(b^*)\right)\right) = \phi\left( a^* \sigma_{i \frac{\beta}{2}}(b^*)\right) ,
\end{split}
\end{equation}
where the third equality uses (3) from Kustermans' theorem, Theorem \ref{24-11-21d}, and last equality follows from Lemma \ref{18-11-21kx}.  To calculate $\left< T_\phi J_\psi\Lambda_\psi(a),\Lambda_\psi(b)\right>$ a different way we use that $J_\psi^* = J_\psi$; a fact which follows from Lemma \ref{04-03-22} since $J^* = J$, or it can be deduced more directly as follows by remembering how to define the adjoint of a conjugate linear map: Let $a, b \in \mathcal M^\sigma_\psi$. Then
\begin{align*}
& \left<J_\psi^* \Lambda_\psi(a),\Lambda_\psi(b)\right> = \left<J_\psi \Lambda_\psi(b),\Lambda_\psi(a)\right>  =\psi(a^*\sigma_{-i \frac{\beta}{2}}(b)^*) \\
& = \psi(\sigma_{i\frac{\beta}{2}}(b^*)\sigma_{i\beta}(a^*)) \ \ \ \ \ \ \ \ \ \ \ \ \ \ \ \text{(using (3) of Theorem \ref{12-12-13})}\\
& =\psi \left(\sigma_{i\frac{\beta}{2}}(b^*\sigma_{i\frac{\beta}{2}}(a^*))\right) = \psi \left(b^*\sigma_{-i\frac{\beta}{2}}(a)^*\right) = \left< J_\psi \Lambda_\psi(a),\Lambda_\psi(b)\right> .
\end{align*}
Since $\Lambda_\psi(\mathcal M^\sigma_\psi)$ is dense in $H_\psi$  by Lemma \ref{07-12-21c}, it follows that $J_\psi^* = J_\psi$ as asserted. By using this we find
\begin{equation}\label{30-11-23a}
\begin{split}
&\left<J_\psi T_\phi \Lambda_\psi(a),\Lambda_\psi(b)\right> = \left<J_\psi^* \Lambda_\psi(b),  T_\psi \Lambda_\psi(a)\right> = \left<J_\psi \Lambda_\psi(b),  T_\phi \Lambda_\psi(a)\right> \\
&= \left< \Lambda_\psi\left( \sigma_{-i \frac{\beta}{2}}(b)^*\right), T_\phi \Lambda_\psi(a)\right> = \left<T_\phi \Lambda_\psi\left( \sigma_{-i \frac{\beta}{2}}(b)^*\right),\Lambda_\psi(a)\right>\\
& = \phi\left(a^*  \sigma_{-i \frac{\beta}{2}}(b)^*\right) = \phi\left(a^*  \sigma_{i \frac{\beta}{2}}(b^*)\right) .
\end{split}
\end{equation}
By comparing the two expressions for $\phi\left(a^*  \sigma_{i \frac{\beta}{2}}(b^*)\right)$ in \eqref{30-11-23} and \eqref{30-11-23a} it follows that $T_\phi$ commutes with $J_\psi$. By using that $J_\psi^2 = J_\psi$ we deduce from Theorem \ref{04-03-22a} that $T_\phi \in \pi_\psi(A)' \cap \pi_{\psi}(A)''$. Since 
\begin{align*}
& \left< T_\phi U^\psi_t\Lambda_\psi(a),\Lambda_\psi(b)\right>  = \phi(b^*\sigma_t(a)) = \phi(\sigma_{-t}(b)^*a) \\
& = \left<T_\phi \Lambda_\psi(a), U^\psi_{-t}\Lambda_\psi(b)\right>  = \left< U_t^\psi T_\phi\Lambda_\psi(a),\Lambda_\varphi(b)\right> 
\end{align*}
for $a,b \in \mathcal M^\sigma_\psi$, we see that $T_\phi$ commutes with $U^\psi_t$.
\end{proof}
Set 
$$
\mathcal Z_\psi := \pi_\psi(A)' \cap \pi_\psi(A)'';
$$
the center of $\pi_\varphi(A)''$, and
$$
\mathcal Z_\psi^{\sigma''} := \left\{ m \in \mathcal Z_\psi : \ \sigma''_t(m) = m \ \forall t \in \mathbb R\right\} .
$$

\begin{lemma}\label{31-03-22} 
$\mathcal Z_\psi^{\sigma''} = \mathcal Z_\psi $ when $\beta \neq 0$.
 \end{lemma}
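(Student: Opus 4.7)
The plan is to reduce the statement to the standard fact from modular theory that the modular automorphism group of a normal faithful semi-finite weight acts trivially on the center of the ambient von Neumann algebra, a fact already recorded at the end of Section \ref{modular1}.

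First I would note that the inclusion $\mathcal Z_\psi^{\sigma''} \subseteq \mathcal Z_\psi$ is immediate from the definition, so it suffices to prove $\mathcal Z_\psi \subseteq \mathcal Z_\psi^{\sigma''}$. Let $m \in \mathcal Z_\psi$, which is the center of the von Neumann algebra $N := \pi_\psi(A)''$. By Theorem \ref{05-03-22x}, the normal extension $\psi''$ of $\psi$ to $N$ is a faithful normal semi-finite weight whose modular automorphism group $\alpha = \{\alpha_t\}_{t \in \mathbb R}$ is given by
\[
\alpha_t = \sigma''_{-\beta t}, \qquad t \in \mathbb R .
\]
Since the modular automorphism group of a normal faithful semi-finite weight is trivial on the center of the underlying von Neumann algebra (cf.\ the list of properties recorded in Section \ref{modular1}), we have $\alpha_t(m) = m$ for every $t \in \mathbb R$, i.e.\ $\sigma''_{-\beta t}(m) = m$ for every $t \in \mathbb R$.

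The hypothesis $\beta \neq 0$ then finishes the argument: as $t$ ranges over $\mathbb R$, so does $-\beta t$, so $\sigma''_s(m) = m$ for every $s \in \mathbb R$. Hence $m \in \mathcal Z_\psi^{\sigma''}$, establishing the desired inclusion. There is no real obstacle here once the connection between $\sigma''$ and the modular automorphism group of $\psi''$ (Theorem \ref{05-03-22x}) is in hand; the role of $\beta \neq 0$ is only to guarantee that $t \mapsto -\beta t$ is a bijection of $\mathbb R$, which is needed because otherwise one would only conclude invariance under the single automorphism $\id$.
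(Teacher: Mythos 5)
Your proposal is correct and is essentially the paper's own argument: both invoke Theorem \ref{05-03-22x} to identify $\sigma''_{-\beta t}$ with the modular automorphism group of $\psi''$ and then use that this group acts trivially on the center, with $\beta \neq 0$ allowing the reparametrization $s = -\beta t$. You have simply spelled out the details that the paper leaves implicit.
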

\begin{proof} This follows from Theorem \ref{05-03-22x} because the modular automorphism group associated to a faithful normal semi-finite weight on a von Neumann algebra acts trivially on the center of the algebra. 
\end{proof}

 For every element $c \in \mathcal Z_\psi^{\sigma''} \backslash \{0\}$, $0 \leq c \leq 1$, we can define a weight $\psi_c: A^+ \to [0,\infty]$ such that
\begin{equation}\label{01-04-22}
\psi_c(a) := \psi''(c\pi_\psi(a)) .
\end{equation}

\begin{lemma}\label{06-03-22} $\psi_c$ is a $\beta$-KMS weight for $\sigma$ such that $\psi_c \leq \psi$. 
\end{lemma}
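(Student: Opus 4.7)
The plan is to verify each axiom of a $\beta$-KMS weight for $\psi_c$ by lifting it to a statement about $\psi''$ on $N=\pi_\psi(A)''$, systematically exploiting the centrality and $\sigma''$-invariance of $c$.

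First I would establish $\psi_c\leq\psi$ together with the three defining properties of a weight. Since $c$ is in the center of $N$ and $\pi_\psi(a)\geq 0$, we have $c\pi_\psi(a)=c^{1/2}\pi_\psi(a)c^{1/2}$ and $(1-c)\pi_\psi(a)=(1-c)^{1/2}\pi_\psi(a)(1-c)^{1/2}\geq 0$, so $c\pi_\psi(a)\leq\pi_\psi(a)$. Applying $\psi''$ and invoking Lemma \ref{03-03-22fx} gives $\psi_c(a)\leq\psi(a)$ for every $a\in A^+$; in particular $\mathcal M^+_\psi\subseteq\mathcal M^+_{\psi_c}$, so $\psi_c$ is densely defined. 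Additivity and positive homogeneity of $\psi_c$ follow from those of $\psi''$ and the linearity of $a\mapsto c\pi_\psi(a)$ on $A^+$, while lower semi-continuity follows because $a_n\to a$ in $A^+$ gives $c^{1/2}\pi_\psi(a_n)c^{1/2}\to c^{1/2}\pi_\psi(a)c^{1/2}$ in norm, hence $\sigma$-weakly, and $\psi''$ is normal by Lemma \ref{01-03-22f}. To see $\psi_c\neq 0$, suppose $\psi_c\equiv 0$; then $\psi''\bigl((\pi_\psi(b)c^{1/2})^*\pi_\psi(b)c^{1/2}\bigr)=\psi''\bigl(c\pi_\psi(b^*b)\bigr)=0$ for every $b\in A$, so faithfulness of $\psi''$ (Lemma \ref{01-03-22f}) forces $\pi_\psi(b)c^{1/2}=0$ for all $b\in A$, and non-degeneracy of $\pi_\psi$ (Lemma \ref{08-02-22}) yields $c^{1/2}=0$, contradicting $c\neq 0$.

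Next, $\sigma$-invariance of $\psi_c$ follows directly: by \eqref{08-03-22b} and Lemma \ref{01-03-22g},
$$
\psi_c(\sigma_t(a))=\psi''\bigl(c\sigma''_t(\pi_\psi(a))\bigr)=\psi''\bigl(\sigma''_t(c\pi_\psi(a))\bigr)=\psi''\bigl(c\pi_\psi(a)\bigr)=\psi_c(a),
$$
where we used $\sigma''_t(c)=c$ since $c\in\mathcal Z_\psi^{\sigma''}$. For the KMS identity I will verify condition (2) of Theorem \ref{24-11-21d}. Fix $a\in\mathcal M^\sigma_{\psi_c}\subseteq\mathcal A_\sigma$ and set $m:=c^{1/2}\pi_\psi(a)$. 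Since Borel functional calculus preserves $\sigma''$-invariance, $c^{1/2}$ is also central and fixed by $\sigma''$, so $\sigma''_z(m)=c^{1/2}\sigma''_z(\pi_\psi(a))=c^{1/2}\pi_\psi(\sigma_z(a))$ depends holomorphically on $z$; thus $m\in N_a$. Applying Lemma \ref{02-03-22g} and using centrality of $c$ to move $c^{1/2}$ across $\pi_\psi(a)$ and $\pi_\psi(\sigma_{-i\beta/2}(a))^*$, we obtain
$$
\psi_c(a^*a)=\psi''(c\pi_\psi(a)^*\pi_\psi(a))=\psi''(m^*m)=\psi''(\sigma''_\xi(m)\sigma''_\xi(m)^*)=\psi_c\bigl(\sigma_{-i\beta/2}(a)\sigma_{-i\beta/2}(a)^*\bigr).
$$
By Theorem \ref{24-11-21d} this implies that $\psi_c$ is a $\beta$-KMS weight for $\sigma$.

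The only genuinely non-routine step is the KMS identity; the trick is to absorb the central factor $c$ into the single element $m=c^{1/2}\pi_\psi(a)$ and invoke the von Neumann algebraic identity already proved at the $\psi''$-level in Lemma \ref{02-03-22g}, a manoeuvre made legitimate by the fact that $c^{1/2}$ lies in both the commutant of $\pi_\psi(A)$ and the fixed-point algebra of $\sigma''$. All other items are direct consequences of the corresponding properties of $\psi''$ obtained in Chapter 4.
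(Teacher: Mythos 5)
Your proof is correct, and the overall skeleton (reduce every property of $\psi_c$ to the corresponding property of $\psi''$, using that $c$ is central, positive, $\leq 1$ and $\sigma''$-invariant) is the same as the paper's. The one place where you take a genuinely different route is the KMS identity: the paper verifies condition (4) of Theorem \ref{24-11-21d} by applying Theorem \ref{21-02-22dx} to the pair $\sqrt{c}\,\pi_\psi(a)$, $\sqrt{c}\,\pi_\psi(b)$ with $a,b\in\mathcal N_{\psi_c}\cap\mathcal N_{\psi_c}^*$, producing the interpolating holomorphic function directly, whereas you verify condition (2) by absorbing $\sqrt{c}$ into the single analytic element $m=c^{1/2}\pi_\psi(a)$ and invoking the identity $\psi''(m^*m)=\psi''(\sigma''_\xi(m)\sigma''_\xi(m)^*)$ of Lemma \ref{02-03-22g}. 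Your route is slightly more economical, since it needs only the analytic-element identity rather than the full boundary-value statement, and it works uniformly for all $a\in\mathcal A_\sigma$; the paper's route has the mild advantage of applying to all of $\mathcal N_{\psi_c}\cap\mathcal N_{\psi_c}^*$ without passing through analytic elements. Your non-vanishing argument (faithfulness of $\psi''$ plus non-degeneracy of $\pi_\psi$, by contradiction) also differs from the paper's (which uses $\psi''(c)>0$ and $\sigma$-weak approximation of $1$ by $\pi_\psi(a_i)$ with $a_i\in A^+$), but both are short and correct.
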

\begin{proof} To see that $\psi_c$ is a weight the only non-trivial fact is that $\psi_c$ is lower semi-continuous which follows easily from the fact that $\psi''$ is lower semi-continuous with respect to the $\sigma$-weak topology and hence also with respect to the norm-topology. To see that $\psi_c$ is non-zero note that $\psi''(c) > 0$ since $\psi''$ is faithful by Lemma \ref{01-03-22f}. Since $\pi_\psi(A)$ is $\sigma$-weakly dense in $N$ there is a net $\{a_i\}$ in $A^+$ such that $\lim_{i \to \infty} \pi_\psi(a_i) = 1$ in the $\sigma$-weak topology. Since $\{x \in N^+ : \ \psi''(x) > \frac{\psi''(c)}{2} \}$ is open in the $\sigma$-weak topology, there is an $i$ such that $\psi''(c\pi_\psi(a_i)) > \frac{\psi''(c)}{2}$. Then $\psi_c(a_i) > 0$ , showing that $\psi_c$ is not zero. That $\psi_c$ is $\sigma$-invariant follows from Lemma \ref{01-03-22g}:
\begin{align*}
&\psi_c(\sigma_t(a)) = \psi''(c\pi_\psi(\sigma_t(a))) = \psi''( c \sigma''_t(\pi_\psi(a))) = \psi''\left( \sigma''_t( \sigma''_{-t}(c)\pi_\psi(a))\right) \\
&= \psi''\left( \sigma''_{-t}(c)\pi_\psi(a)\right)  = \psi''(c \pi_\psi(a)) = \psi_c(a).
\end{align*}
It remains to show that $\psi_c$ satisfies one of the four equivalent conditions of Theorem \ref{24-11-21d}. Let $a,b \in \mathcal N_{\psi_c} \cap \mathcal N_{\psi_c}^*$. Then $\sqrt{c}\pi_\psi(a),\sqrt{c}\pi_\psi(b) \in \mathcal N_{\psi''} \cap \mathcal N_{\psi''}^*$. By Theorem \ref{21-02-22dx} there is a continuous function $f: {\mathcal D_\beta} \to \mathbb C$ which is holomorphic in the interior $\mathcal D_\beta^0$ of ${\mathcal D_\beta}$ and has the property that
\begin{itemize}
\item $f(t) = \psi''(\sqrt{c} \pi_\psi(b) \sigma''_t(\sqrt{c}\pi_\psi(a))) \ \ \forall t \in \mathbb R$, and
\item $f(t+i\beta) = \psi''(\sigma''_t(\sqrt{c}\pi_\psi(a))\sqrt{c}\pi_\psi(b)) \ \ \forall t \in \mathbb R$.
\end{itemize}
Since $\psi''(\sqrt{c} \pi_\psi(b) \sigma''_t(\sqrt{c}\pi_\psi(a))) = \psi''(c\pi_\psi(b \sigma_t(a))) = \psi_c(b \sigma_t(a))$ and similarly $\psi''(\sigma''_t(\sqrt{c}\pi_\psi(a))\sqrt{c}\pi_\psi(b)) = \psi_c(\sigma_t(a)b)$, it follows that $\psi_c$ satisfies condition (4) of Theorem \ref{24-11-21d} and hence is a $\beta$-KMS weight for $\sigma$. Since $c \leq 1$ it follows from Lemma \ref{03-03-22fx} that $\psi_c(a) = \psi''\left(\pi_\psi(a^\frac{1}{2})c\pi_\psi(a^\frac{1}{2})\right)\leq \psi''(\pi_\psi(a)) = \psi(a)$ for all $a \in A^+$; i.e. $\psi_c \leq \psi$.
\end{proof}

\begin{lemma}\label{31-03-22c} Let $\phi_1, \ \phi_2$ and $\psi$ be $\beta$-KMS weights for $\sigma$. If $\phi_1(a^*a) = \phi_2(a^*a)$ for all $\mathcal N_{\psi}$, then $\phi_1 =\phi_2$.
\end{lemma}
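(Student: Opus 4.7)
The plan is to reduce the lemma, via Lemma \ref{01-03-22b}, to the identity $\phi_1(d^*d) = \phi_2(d^*d)$ for every $d \in \mathcal A_\sigma$. Both $\phi_i$ are densely defined and $\sigma$-invariant, so such a pointwise equality will force $\phi_1 = \phi_2$. The hypothesis supplies this equality only when $d \in \mathcal N_\psi$, so the real task is to transfer it from $\mathcal N_\psi$ to all of $\mathcal A_\sigma$ by a suitable approximation.

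The key tool is the net $\{v_j\}$ furnished by Lemma \ref{09-02-22x} applied to the GNS triple $(H_\psi, \Lambda_\psi, \pi_\psi)$, which satisfies conditions (A)--(D) by Lemma \ref{08-02-22}, Lemma \ref{17-11-21e}, and the $\sigma$-invariance of $\psi$. This produces $v_j \in \mathcal M^\sigma_{\Lambda_\psi} \subseteq \mathcal N_\psi \cap \mathcal A_\sigma$ such that $w_j := \sigma_{-i\beta/2}(v_j) = R_1(e_j)$ is an increasing approximate unit in $A$ with $0 \leq w_j \leq 1$. For any $d \in \mathcal A_\sigma$, the product $dv_j$ lies in $\mathcal N_\psi \cap \mathcal A_\sigma$, since $\mathcal N_\psi$ is a left ideal and $\mathcal A_\sigma$ is a subalgebra by Lemma \ref{11-11-21}. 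The hypothesis then yields
\begin{equation*}
\phi_1((dv_j)^*(dv_j)) = \phi_2((dv_j)^*(dv_j))
\end{equation*}
for every $j$. Using Lemma \ref{18-11-21g} together with condition (1) of Kustermans' theorem (Theorem \ref{24-11-21d}) applied to the entire analytic element $dv_j$, each side rewrites as $\phi_i(y w_j^2 y^*)$, where $y := \sigma_{-i\beta/2}(d)$.

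To finish, I would take the limit over $j$. Since $0 \leq w_j^2 \leq 1$, the positivity estimate $y w_j^2 y^* \leq yy^*$ gives $\phi_i(y w_j^2 y^*) \leq \phi_i(yy^*)$ for all $j$; and since $yw_j^2 y^* \to yy^*$ in norm (as $\{w_j\}$ is an approximate unit), lower semi-continuity gives $\phi_i(yy^*) \leq \liminf_j \phi_i(y w_j^2 y^*)$. Together these force $\lim_j \phi_i(y w_j^2 y^*) = \phi_i(yy^*)$ in $[0,\infty]$. A second application of condition (1) of Theorem \ref{24-11-21d} identifies $\phi_i(yy^*)$ with $\phi_i(d^*d)$. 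Passing to the limit in the displayed equality then gives $\phi_1(d^*d) = \phi_2(d^*d)$, and Lemma \ref{01-03-22b} delivers $\phi_1 = \phi_2$. The main obstacle is producing a net $\{v_j\}$ that lives in $\mathcal N_\psi \cap \mathcal A_\sigma$ and at the same time has controlled behaviour under $\sigma_{-i\beta/2}$ so that the KMS rewrite makes the limit computation tractable in $[0,\infty]$; this is exactly what Lemma \ref{09-02-22x} was designed to provide.
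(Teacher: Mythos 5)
Your proof is correct and follows essentially the same route as the paper's: both reduce to the identity on $\mathcal A_\sigma$ via Lemma \ref{01-03-22b}, thread a flow-adapted approximate unit of the form $R_1(e_j)$ through the KMS condition (1) of Theorem \ref{24-11-21d} to transfer the hypothesis from $\mathcal N_\psi$ to analytic elements, and finish with monotonicity plus lower semi-continuity. The only difference is cosmetic: the paper multiplies $f_j = R_1(e_j^2)$ on the left of $a$ and takes the limit of $\phi_i(a^*f_j^2a)$ directly, whereas you multiply $u_j$ from Lemma \ref{09-02-22x} on the right and need one extra application of condition (1) to return from $\phi_i(yy^*)$ to $\phi_i(d^*d)$.
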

\begin{proof} Let $\{e_j\}_{j \in J}$ be the approximate unit in $\mathcal N_{\psi}$ from Lemma \ref{16-12-21a}. Then $e_j^2 \in \mathcal M_\psi^+$ and
 $$
 f_j := R_1(e_j^2) \in \mathcal M_\psi^\sigma
 $$
 by Lemma \ref{07-12-21}. Let $a \in A$. The $C^*$-algebra generated by $\left\{ \sigma_t(a) : \ t \in \mathbb R\right\}$ is separable and there is therefore a sequence $j_1 \leq j_2 \leq j_3 \leq \cdots$ in $J$ such that $\lim_{n \to \infty} e_{j_n}^2 \sigma_t(a) = \sigma_t(a)$ for all $t \in \mathbb R$ and it follows then from Lemma \ref{28-02-22} in Appendix \ref{integration} that
 $$
 \lim_{n \to \infty} f_{j_n}a = \lim_{n \to \infty} \frac{1}{\sqrt{\pi}}\int_\mathbb R e^{-t^2} \sigma_t(e_{j_n}^2)a \ \mathrm dt = a .
 $$
 Assume then that $a \in \mathcal A_\sigma$. By Lemma \ref{14-02-22},
 $$
 \sigma_{-i\frac{\beta}{2}}(f_ja) = \sigma_{-i\frac{\beta}{2}}(f_j) \sigma_{-i\frac{\beta}{2}}(a) \in \mathcal N_\psi^*
 $$
 and it follows therefore from the assumption that
 $$
 \phi_1(\sigma_{-i\frac{\beta}{2}}(f_ja)\sigma_{-i\frac{\beta}{2}}(f_ja)^*) = \phi_2(\sigma_{-i\frac{\beta}{2}}(f_ja)\sigma_{-i\frac{\beta}{2}}(f_ja)^*)
 $$
 for all $j$. Since $\phi_1$ and $\phi_2$ are both $\beta$-KMS weights for $\sigma$ this implies that
 \begin{equation}\label{31-03-22a}
 \phi_1(a^*f_j^2a) = \phi_2(a^*f_j^2a)
 \end{equation}
 for all $j$. Note that $\left\|f_j^2\right\| \leq 1$ for all $j$ and that $\lim_{n \to \infty}a^*f_{j_n}^2a = a^*a$. It follows therefore from \eqref{31-03-22a} and the lower semi-continuity of $\phi_1$ and $\phi_2$ that $\phi_1(a^*a) = \phi_2(a^*a)$. Since $a \in \mathcal A_\sigma$ was arbitrary it follows from Lemma \ref{01-03-22b} that $\phi_1 = \phi_2$.
\end{proof}

\begin{cor}\label{31-03-22d}Let $\phi_i, i =1,2$, be $\beta$-KMS weights for $\sigma$. If $\phi_1(a^*a) = \phi_2(a^*a)$ for all $a \in \mathcal N_{\phi_1} \cap \mathcal N_{\phi_2}$, then $\phi_1 = \phi_2$.
\end{cor}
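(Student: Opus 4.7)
The plan is to reduce the corollary to the preceding Lemma \ref{31-03-22c} by choosing an appropriate auxiliary $\beta$-KMS weight $\psi$ whose left ideal $\mathcal N_\psi$ is contained in $\mathcal N_{\phi_1} \cap \mathcal N_{\phi_2}$. The natural candidate is $\psi := \phi_1 + \phi_2$.

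First I would verify that $\psi$ is indeed a $\beta$-KMS weight for $\sigma$. Since both $\phi_1$ and $\phi_2$ are $\beta$-KMS weights for the same $\beta$ and the same flow $\sigma$, Proposition \ref{14-02-22b} applies directly and tells us that $\psi = \phi_1 + \phi_2$ is a densely defined $\sigma$-invariant weight which is again a $\beta$-KMS weight. Next I would observe that by definition of $\mathcal N$ one has
$$
\mathcal N_\psi = \{ a \in A : \phi_1(a^*a) + \phi_2(a^*a) < \infty\} = \mathcal N_{\phi_1} \cap \mathcal N_{\phi_2},
$$
which is the key identity that lets us transfer the hypothesis.

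Given any $a \in \mathcal N_\psi$, we have $a \in \mathcal N_{\phi_1} \cap \mathcal N_{\phi_2}$, so the hypothesis of the corollary gives $\phi_1(a^*a) = \phi_2(a^*a)$. Thus the triple $(\phi_1,\phi_2,\psi)$ satisfies precisely the assumption of Lemma \ref{31-03-22c}, and we conclude $\phi_1 = \phi_2$.

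There is no real obstacle here: the only thing to be careful about is that Proposition \ref{14-02-22b} requires equality of the two KMS parameters, which is given by assumption, and that we are entitled to use $\psi = \phi_1 + \phi_2$ as the third weight in Lemma \ref{31-03-22c} — both facts are immediate from the results already established.
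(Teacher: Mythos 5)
Your proposal is correct and follows exactly the paper's own argument: form $\psi = \phi_1 + \phi_2$, invoke Proposition \ref{14-02-22b} to see it is a $\beta$-KMS weight, note that $\mathcal N_{\phi_1+\phi_2} = \mathcal N_{\phi_1} \cap \mathcal N_{\phi_2}$, and apply Lemma \ref{31-03-22c}. Nothing to add.
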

\begin{proof} By Proposition \ref{14-02-22b} the sum $\phi_1 + \phi_2$ is a $\beta$-KMS weight.  Since $\mathcal N_{\phi_1+\phi_2} = \mathcal N_{\phi_1} \cap \mathcal N_{\phi_2}$ it follows from Lemma \ref{31-03-22c} that $\phi_1 = \phi_2$.
\end{proof}

\begin{cor}\label{21-06-22e} Let $\phi_i, i =1,2$, be $\beta$-KMS weights for $\sigma$. If $S \subseteq \mathcal N_{\phi_1}$ is a subspace which is a core for $\Lambda_{\phi_1}$ and $\phi_1(a^*a) = \phi_2(a^*a)$ for all $a \in S$, then $\phi_1 = \phi_2$.
\end{cor}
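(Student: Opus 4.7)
The plan is to show that the hypothesis propagates from $S$ to all of $\mathcal N_{\phi_1}$, and then invoke Lemma \ref{31-03-22c} (or equivalently Corollary \ref{31-03-22d}) to conclude. The essential tool is the closedness of the GNS map $\Lambda_{\phi_2}$ established in Lemma \ref{17-11-21a}, which lets sequences that are Cauchy under $\phi_1$ be transferred to $\phi_2$.

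First I would observe that $S \subseteq \mathcal N_{\phi_2}$: indeed, for $s \in S$ we have $\phi_2(s^*s) = \phi_1(s^*s) < \infty$ by hypothesis. Next, fix an arbitrary $a \in \mathcal N_{\phi_1}$. Since $S$ is a core for $\Lambda_{\phi_1}$, there exists a sequence $\{s_n\}$ in $S$ with $s_n \to a$ in norm and $\Lambda_{\phi_1}(s_n) \to \Lambda_{\phi_1}(a)$ in $H_{\phi_1}$. In particular $\{\Lambda_{\phi_1}(s_n)\}$ is Cauchy, so
$$
\|\Lambda_{\phi_2}(s_n) - \Lambda_{\phi_2}(s_m)\|^2 = \phi_2((s_n - s_m)^*(s_n-s_m)) = \phi_1((s_n - s_m)^*(s_n-s_m)) = \|\Lambda_{\phi_1}(s_n) - \Lambda_{\phi_1}(s_m)\|^2,
$$
which shows that $\{\Lambda_{\phi_2}(s_n)\}$ is Cauchy as well, and hence converges in $H_{\phi_2}$. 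Because $\phi_2$ is a $\beta$-KMS weight for $\sigma$, it is in particular densely defined and $\sigma$-invariant, so by Lemma \ref{17-11-21a} the map $\Lambda_{\phi_2}$ is closed. Combined with $s_n \to a$, this forces $a \in \mathcal N_{\phi_2}$ and $\Lambda_{\phi_2}(s_n) \to \Lambda_{\phi_2}(a)$.

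Taking norms and passing to the limit gives
$$
\phi_2(a^*a) = \|\Lambda_{\phi_2}(a)\|^2 = \lim_{n \to \infty} \|\Lambda_{\phi_2}(s_n)\|^2 = \lim_{n \to \infty} \phi_1(s_n^*s_n) = \|\Lambda_{\phi_1}(a)\|^2 = \phi_1(a^*a),
$$
for every $a \in \mathcal N_{\phi_1}$. Applying Lemma \ref{31-03-22c} with $\psi = \phi_1$ now yields $\phi_1 = \phi_2$.

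The plan has essentially no hard step; the only thing that could conceivably go wrong is the closedness of $\Lambda_{\phi_2}$, but this is guaranteed by Lemma \ref{17-11-21a} since $\phi_2$ is a densely defined $\sigma$-invariant weight (being a $\beta$-KMS weight). Everything else is a direct computation, and the conclusion is immediate once the equality on $\mathcal N_{\phi_1}$ is established.
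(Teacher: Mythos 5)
Your proof is correct and follows essentially the same route as the paper: transfer the Cauchy property of $\{\Lambda_{\phi_1}(s_n)\}$ to $\{\Lambda_{\phi_2}(s_n)\}$ using the hypothesis on the subspace $S$, invoke the closedness of $\Lambda_{\phi_2}$ from Lemma \ref{17-11-21a} to get $a \in \mathcal N_{\phi_2}$ with $\phi_1(a^*a)=\phi_2(a^*a)$, and finish with Lemma \ref{31-03-22c} (the paper cites Corollary \ref{31-03-22d}, which amounts to the same thing). No gaps.
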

\begin{proof} Let $a \in \mathcal N_{\phi_1}$. Since $S $ is a core for $\Lambda_{\phi_1}$ by assumption there is a sequence $\{s_n\}$ in $S$ such that $\lim_{n \to \infty} s_n = a$ and $\lim_{n \to \infty} \Lambda_{\phi_1}(s_n) = \Lambda_{\phi_1}(a)$. Since $\left\|\Lambda_{\phi_2}(s_n) - \Lambda_{\phi_2}(s_m)\right\| = \left\|\Lambda_{\phi_1}(s_n) - \Lambda_{\phi_1}(s_m)\right\|$ for all $n,m$, it follows from that $\left\{\Lambda_{\phi_2}(s_n)\right\}$ converges and hence that $a \in \mathcal N_{\phi_2}$ since $\Lambda_{\phi_2}$ is closed by Lemma \ref{17-11-21a}. Furthermore, $\phi_1(a^*a) = \phi_2(a^*a)$ and Corollary \ref{31-03-22d} applies.
\end{proof}

\begin{thm}\label{05-03-22} Let $\psi$ be a $\beta$-KMS weight for $\sigma$. The map $c \mapsto \psi_c$ given by \eqref{01-04-22} is a bijection from
$$
\left\{ c \in \mathcal Z_\psi^{\sigma''} \backslash \{0\} : \ 0 \leq c \leq 1\right\}
$$
onto the set of $\beta$-KMS weights $\phi$ for $\sigma$ with the property that $\phi\leq \psi$. Furthermore,
$c \leq c'$ in $\mathcal Z_\psi^{\sigma''}$ if and only if $\psi_c \leq \psi_{c'}$. 
\end{thm}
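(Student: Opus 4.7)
The plan is to identify the inverse of $c \mapsto \psi_c$ explicitly as the map $\phi \mapsto T_\phi$ supplied by Lemma \ref{15-02-22dx}. The tool I would use throughout is the identity
\begin{equation}\label{key-plan}
\psi_c(b^*a) = \left<c \Lambda_\psi(a), \Lambda_\psi(b)\right>, \qquad a,b \in \mathcal N_\psi,
\end{equation}
valid for every $c \in \mathcal Z_\psi^{\sigma''}$ with $0 \leq c \leq 1$. First I would verify \eqref{key-plan} by using that $c \in \pi_\psi(A)'$ commutes with $\pi_\psi(b)^*$ to rewrite $c\pi_\psi(b)^*\pi_\psi(a) = (c^{1/2}\pi_\psi(b))^*c^{1/2}\pi_\psi(a)$, and then translate the GNS inner product on $H_{\psi''}$ back to $H_\psi$ via the intertwining unitary $W : H_\psi \to H_{\psi''}$ from Lemma \ref{23-02-22dx}. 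The key observations are that $\Lambda_{\psi''}(\pi_\psi(a)) = W\Lambda_\psi(a)$ and $\pi_{\psi''}(c^{1/2}) = Wc^{1/2}W^*$ (from \eqref{23-02-22ex}), which together give $\Lambda_{\psi''}(c^{1/2}\pi_\psi(a)) = Wc^{1/2}\Lambda_\psi(a)$ and hence the right-hand side of \eqref{key-plan}.

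With \eqref{key-plan} established, injectivity is immediate: if $\psi_c = \psi_{c'}$, the identity forces $\left<c\eta,\zeta\right> = \left<c'\eta,\zeta\right>$ on the dense subspace $\Lambda_\psi(\mathcal N_\psi) \subseteq H_\psi$, so $c = c'$. For surjectivity, let $\phi$ be a $\beta$-KMS weight with $\phi \leq \psi$. Lemma \ref{15-02-22dx} delivers $T_\phi \in \mathcal Z_\psi$, $0 \leq T_\phi \leq 1$, commuting with $U^\psi_t$, so $\sigma''_t(T_\phi) = U^\psi_t T_\phi U^\psi_{-t} = T_\phi$ and thus $T_\phi \in \mathcal Z_\psi^{\sigma''}$; moreover $T_\phi \neq 0$ because $\phi \neq 0$. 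Comparing the defining formula of $T_\phi$ with \eqref{key-plan} evaluated at $c = T_\phi$ yields $\psi_{T_\phi}(a^*a) = \phi(a^*a)$ for every $a \in \mathcal N_\psi$, and then Lemma \ref{31-03-22c} gives $\psi_{T_\phi} = \phi$.

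For the order statement, in the direction $c \leq c' \Rightarrow \psi_c \leq \psi_{c'}$ I would note that for $a \in A^+$ the elements $c' - c \geq 0$ and $\pi_\psi(a) \geq 0$ commute (since $c' - c \in \pi_\psi(A)'$), so $(c'-c)\pi_\psi(a) = (c'-c)^{1/2}\pi_\psi(a)(c'-c)^{1/2} \geq 0$, and the monotonicity of $\psi''$ concludes. Conversely, $\psi_c \leq \psi_{c'}$ combined with \eqref{key-plan} at $b = a$ produces $\left<c\eta,\eta\right> \leq \left<c'\eta,\eta\right>$ on $\Lambda_\psi(\mathcal N_\psi)$ and hence $c \leq c'$. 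The main obstacle is the careful verification of \eqref{key-plan}, which hinges on the identification of the normal extension $\psi''$ through the intertwining unitary $W$; once this somewhat technical computation is carried out, every remaining assertion of the theorem reduces to density, monotonicity, and the uniqueness of the operator $T_\phi$.
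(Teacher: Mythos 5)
Your proposal is correct and follows essentially the same route as the paper: the identity $\psi_c(b^*a)=\left<c\Lambda_\psi(a),\Lambda_\psi(b)\right>$ obtained via the unitary $W$ and \eqref{23-02-22ex} is exactly the computation the paper performs (on the diagonal $b=a$), and injectivity, surjectivity via $T_\phi$ from Lemma \ref{15-02-22dx} together with Lemma \ref{31-03-22c}, and both directions of the order statement are handled just as in the text. Your explicit checks that $T_\phi\in\mathcal Z_\psi^{\sigma''}$ and $T_\phi\neq 0$ are small but welcome additions that the paper leaves implicit.
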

\begin{proof} Let $\phi \leq \psi$. It follows from Lemma \ref{15-02-22dx} that there is $c \in \mathcal Z_\psi^{\sigma''}$ such that $0 \leq c \leq 1$ and $\phi(a^*a) = \left< c\Lambda_\psi(a),\Lambda_\psi(a)\right> $ for all $a \in \mathcal N_\psi$.
Let $W$ be the unitary from Lemma \ref{23-02-22dx} and $a \in \mathcal N_\psi$. By using \eqref{23-02-22ex} we find that
\begin{align*}
& \phi(a^*a) =\left< c\Lambda_\psi(a),\Lambda_\psi(a)\right>  =  \left< Wc\Lambda_\psi(a),W\Lambda_\psi(a)\right> \\
& = \left< \pi_{\psi''}(c)W\Lambda_\psi(a),W\Lambda_\psi(a)\right> = \left< \pi_{\psi''}(c) \Lambda_{\psi''}(\pi_\psi(a)), \Lambda_{\psi''}(\pi_\psi(a))\right> \\
&= \psi''(c\pi_\psi(a^*a)) = \psi_c(a^*a).
\end{align*}
Thanks to Lemma \ref{31-03-22c} this shows that $\phi = \psi_c$, and we conclude that the map under consideration is surjective. 
 
  Assume $c_i \in \left\{ c \in \mathcal Z_\psi^{\sigma''} \backslash \{0\} : \ 0 \leq c \leq 1\right\}, \ i =1,2$, and $\psi_{c_1} = \psi_{c_2}$. For $a \in \mathcal N_\psi$,
\begin{align*}
&\psi_{c_i}(a^*a) =\psi''(c_i\pi_\psi(a^*a)) = \left<\pi_{\psi''}(c_i) \Lambda_{\psi''}(\pi_\psi(a)), \Lambda_{\psi''}(\pi_\psi(a)) \right> \\
& =  \left<\pi_{\psi''}(c_i) W \Lambda_{\psi}(a), W\Lambda_{\psi}(a) \right>  = \left<Wc_i  \Lambda_{\psi}(a), W\Lambda_{\psi}(a) \right> \\
& = \left<c_i  \Lambda_{\psi}(a), \Lambda_{\psi}(a) \right> , \ \ i = 1,2.
\end{align*}
Hence 
$$ 
\left<c_1  \Lambda_{\psi}(a), \Lambda_{\psi}(a) \right> = \psi_{c_1}(a^*a) =  \psi_{c_2}(a^*a) = \left<c_2  \Lambda_{\psi}(a), \Lambda_{\psi}(a) \right>
$$
for all $a \in \mathcal N_\psi$, implying that $c_1 = c_2$.

Assume that $c \leq c'$ in $\mathcal Z_\psi^{\sigma''}$. Then $c\pi_\psi(a)  \leq c'\pi_\psi(a)$ and hence $\psi_c(a)= \psi''(c\pi_\psi(a))  \leq \psi''(c'\pi_\psi(a)) =\psi_{c'}(a)$ for all $a \in A^+$; i.e., $\psi_c \leq \psi_{c'}$. Conversely, if $\psi_c \leq \psi_{c'}$ the calculation above shows that $\left<c  \Lambda_{\psi}(a), \Lambda_{\psi}(a) \right>  \leq \left<c'  \Lambda_{\psi}(a), \Lambda_{\psi}(a) \right> $ for all $a \in \mathcal N_\psi$, implying that $c \leq c'$.
\end{proof}

Denote by 
$$
\KMS(\sigma,\beta)
$$ 
the set of $\beta$-KMS weights for $\sigma$. 

\begin{defn}\label{01-08-23f}
A $\beta$-KMS weight $\psi$ is \emph{extremal} when the only $\beta$-KMS weights that are dominated by $\psi$ are the scalar multiplies of $\psi$. 
\end{defn}
In symbols $\psi \in \KMS(\sigma,\beta)$ is extremal when 
$$
\phi \in   \KMS(\sigma,\beta) , \ \phi \leq \psi \Rightarrow \phi \in \mathbb R^+ \psi .
$$ 
An immediate consequence of Theorem \ref{05-03-22} and Lemma \ref{31-03-22} is the following

\begin{cor}\label{07-03-22} Let $\sigma$ be a flow and $\beta \in \mathbb R \backslash \{0\}$. A $\beta$-KMS weight for $\sigma$ is extremal if and only if $\pi_\psi(A)''$ is a factor.
\end{cor}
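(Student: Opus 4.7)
My plan is to read off the corollary as an essentially immediate consequence of Theorem \ref{05-03-22} together with Lemma \ref{31-03-22}. Since $\beta \neq 0$, Lemma \ref{31-03-22} gives $\mathcal Z_\psi^{\sigma''} = \mathcal Z_\psi$, the full center of $\pi_\psi(A)''$. By Theorem \ref{05-03-22} the $\beta$-KMS weights $\phi$ dominated by $\psi$ are in order-preserving bijection with the nonzero positive contractions $c \in \mathcal Z_\psi$, via $\phi = \psi_c$. Noting that $\psi = \psi_1$ by Lemma \ref{03-03-22fx} and more generally $\lambda \psi = \psi_{\lambda \cdot 1}$ for $\lambda \in (0,1]$, the injectivity part of Theorem \ref{05-03-22} gives $\psi_c = \lambda \psi$ if and only if $c = \lambda \cdot 1$.

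Thus $\psi$ is extremal precisely when every nonzero $c \in \mathcal Z_\psi$ with $0 \leq c \leq 1$ is a scalar multiple of $1$. If $\pi_\psi(A)''$ is a factor, then $\mathcal Z_\psi = \mathbb C \cdot 1$, so the condition is trivially satisfied and $\psi$ is extremal. Conversely, suppose $\pi_\psi(A)''$ is not a factor, so $\mathcal Z_\psi \neq \mathbb C \cdot 1$. Since $\mathcal Z_\psi$ is a von Neumann algebra strictly larger than $\mathbb C \cdot 1$, by the spectral theorem applied to any non-scalar self-adjoint element of $\mathcal Z_\psi$ it contains a projection $p$ with $p \neq 0, 1$. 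Then $p$ is a nonzero positive contraction in $\mathcal Z_\psi$ which is not a scalar multiple of $1$ (a projection equal to $\lambda \cdot 1$ forces $\lambda \in \{0,1\}$), so $\psi_p \leq \psi$ is a $\beta$-KMS weight for $\sigma$ which cannot be a scalar multiple of $\psi$, witnessing that $\psi$ is not extremal.

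The argument involves no new technical difficulty — the real work was done in establishing the bijection of Theorem \ref{05-03-22} and the modular-theoretic input in Lemma \ref{31-03-22}. The only minor point to verify carefully is the extraction of a nontrivial projection in $\mathcal Z_\psi$ from the assumption $\mathcal Z_\psi \neq \mathbb C \cdot 1$, which is a standard consequence of the Borel functional calculus applied within the abelian von Neumann algebra $\mathcal Z_\psi$.
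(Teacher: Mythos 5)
Your proposal is correct and is exactly the argument the paper intends: the paper states the corollary as an immediate consequence of Theorem \ref{05-03-22} and Lemma \ref{31-03-22}, and you have simply spelled out the details (the identification $\lambda\psi = \psi_{\lambda 1}$ via Lemma \ref{03-03-22fx}, the injectivity of $c \mapsto \psi_c$, and the extraction of a nontrivial central projection when $\pi_\psi(A)''$ is not a factor). No gaps.
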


There is also a version of the last corollary for $0$-KMS weights. To formulate it we say that a flow $\alpha$ on a von Neumann algebra $M$ is \emph{ergodic} when the fixed point algebra of $\alpha$ is as small as possible; viz. $M^\alpha = \mathbb C 1$.

\begin{cor}\label{09-03-22b} Let $\sigma$ be a flow on $A$. A $\sigma$-invariant lower semi-continuous trace $\psi$ on $A$ is extremal as a $0$-KMS weight if and only if $\sigma''$ acts ergodically on the center $\mathcal Z_\psi$ of $\pi_\psi(A)''$.
\end{cor}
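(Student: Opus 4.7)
The plan is to reduce extremality of $\psi$ to a statement about the fixed point algebra $\mathcal Z_\psi^{\sigma''}$ via the bijection of Theorem \ref{05-03-22}, and then observe that this statement is exactly ergodicity of $\sigma''$ on $\mathcal Z_\psi$. The key difference from Corollary \ref{07-03-22} is that when $\beta = 0$ we cannot invoke Lemma \ref{31-03-22} (the modular flow is trivial and we keep $\mathcal Z_\psi^{\sigma''}$ instead of $\mathcal Z_\psi$), and this is precisely why the criterion involves ergodicity rather than factoriality.

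First I would apply Theorem \ref{05-03-22} with $\beta = 0$: the map $c \mapsto \psi_c$, $\psi_c(a) = \psi''(c\pi_\psi(a))$, is an order-preserving bijection from $\{c \in \mathcal Z_\psi^{\sigma''} \setminus \{0\} :\ 0 \leq c \leq 1\}$ onto the set of $0$-KMS weights $\phi \leq \psi$. Note that $\psi$ itself corresponds to $c = 1$ by Lemma \ref{03-03-22fx}, and more generally for $s \in \ ]0,1]$ the scalar multiple $s\psi$ corresponds to $c = s \cdot 1$. Since the map is injective, $\psi_c = s\psi$ forces $c = s\cdot 1$. Consequently $\psi$ is extremal if and only if every $c \in \mathcal Z_\psi^{\sigma''}$ with $0 \leq c \leq 1$ and $c \neq 0$ is a scalar multiple of the identity.

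For the reduction to ergodicity I would use that $\mathcal Z_\psi^{\sigma''}$ is an abelian von Neumann algebra: it is the $\sigma$-weakly closed $*$-subalgebra of $\mathcal Z_\psi$ consisting of fixed points of the normal flow $\sigma''$, and $\mathcal Z_\psi$ itself is abelian. If $\sigma''$ acts ergodically on $\mathcal Z_\psi$, then by definition $\mathcal Z_\psi^{\sigma''} = \mathbb C 1$, so every admissible $c$ is a nonnegative scalar multiple of $1$ and $\psi$ is extremal. Conversely, if $\mathcal Z_\psi^{\sigma''} \supsetneq \mathbb C 1$, then this abelian von Neumann algebra admits a projection $p \in \mathcal Z_\psi^{\sigma''}$ with $p \neq 0, 1$ (for instance, any nontrivial spectral projection of a self-adjoint element of $\mathcal Z_\psi^{\sigma''}$ not in $\mathbb R 1$). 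Then $p$ satisfies $0 \leq p \leq 1$, $p \neq 0$, and $p \notin \mathbb C 1$, so $\psi_p$ is a $0$-KMS weight dominated by $\psi$ which is not a scalar multiple of $\psi$; hence $\psi$ fails to be extremal.

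I expect no serious obstacle: once Theorem \ref{05-03-22} is in hand, the entire argument is a routine translation between $\sigma''$-invariant positive contractions in $\mathcal Z_\psi$ and dominated $0$-KMS weights. The only point that warrants explicit mention is that $\mathcal Z_\psi^{\sigma''}$ is $\sigma$-weakly closed (so that it is an abelian von Neumann algebra and produces nontrivial spectral projections whenever it is larger than $\mathbb C 1$), which follows from the $\sigma$-weak continuity of each $\sigma''_t$.
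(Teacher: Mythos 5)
Your proof is correct and follows exactly the route the paper intends: the corollary is stated as an immediate consequence of Theorem \ref{05-03-22}, and your argument simply fills in the routine translation — extremality corresponds under the bijection $c \mapsto \psi_c$ to every admissible $c \in \mathcal Z_\psi^{\sigma''}$ being a scalar, which (since $\mathcal Z_\psi^{\sigma''}$ is an abelian von Neumann algebra) is equivalent to $\mathcal Z_\psi^{\sigma''} = \mathbb C 1$, i.e.\ ergodicity. The details you supply (that $\psi_1 = \psi$ by Lemma \ref{03-03-22fx}, and the existence of a nontrivial spectral projection when $\mathcal Z_\psi^{\sigma''} \neq \mathbb C 1$) are exactly the right ones.
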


\begin{notes} Theorem \ref{05-03-22} is new. Corollary \ref{31-03-22d} is a very useful tool, and it was obtained by Kusterman and Vaes in \cite{KV1}. In fact, in their version it is only required that one of the weights is a KMS weight.
\end{notes} 

\section{The lattice of $\beta$-KMS weights}

The set $\KMS(\sigma, \beta) \cup \{0\}$ is a partially ordered set \footnote{See for example Appendix A1 in \cite{Ru2} for the definition of a partially ordered set.} when we define $\psi \leq \phi$ to mean that $\psi(a) \leq \phi(a)$ for all $a \in A^+$. Moreover, it follows from Proposition \ref{14-02-22b} that $\KMS(\sigma,\beta) \cup \{0\}$ has the structure of a convex cone. Specifically, when $\psi ,\varphi \in \KMS(\sigma,\beta) \cup \{0\}$ and $t,s \in \mathbb R^+$, the formula
$$
(t \psi + s \varphi)(a) := t \psi(a) + s\varphi(a), \ \ a \in A^+,
$$
defines an element $t\psi + s\varphi$ of $\KMS(\sigma,\beta) \cup \{0\}$. 

\begin{spec}
Usually a convex cone is a closed subset of an ambient locally convex vector space, but in the case of KMS weights there are no obvious candidate for such a space. This is not to say that it does not exist, and in fact it does, at least when $A$ is separable. See Notes and Remarks \ref{12-06-22b}. Under the additional assumption that the fixed point algebra of the flow $\sigma$ contains an approximate unit for $A$ a direct construction is given in \cite{Ch}.
\end{spec}

\begin{thm}\label{14-02-22d} $(\KMS(\sigma,\beta) \cup \{0\}, \leq )$ is a lattice in the usual sense; every pair $\psi_1,\psi_2 \in \KMS(\sigma,\beta) \cup \{0\}$ has a least upper bound $\psi_1 \vee \psi_2$ and greatest lower bound $\psi_1 \wedge \psi_2$ in $\KMS(\sigma,\beta) \cup \{0\}$.
\end{thm}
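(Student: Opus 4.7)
The plan is to transfer the lattice structure from the commutative von Neumann algebra $\mathcal Z_\psi^{\sigma''}$ to $\KMS(\sigma,\beta)\cup\{0\}$ via the order-isomorphism of Theorem \ref{05-03-22}, for a well-chosen dominating weight $\psi$. Concretely, I would take $\psi := \psi_1 + \psi_2$, which is a $\beta$-KMS weight by Proposition \ref{14-02-22b} (assuming, as one may, that $\psi_1$ and $\psi_2$ are not both zero, since the trivial case is immediate). Applying Theorem \ref{05-03-22} to $\psi$, I obtain $c_1, c_2 \in \mathcal Z_\psi^{\sigma''}$ with $0 \leq c_i \leq 1$ and $\psi_i = \psi_{c_i}$. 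Because $\mathcal Z_\psi^{\sigma''}$ is a commutative von Neumann algebra, its self-adjoint part is a lattice in the operator order and the lattice operations preserve the interval $[0,1]$; hence both $c_1 \wedge c_2$ and $c_1 \vee c_2$ lie in $[0,1] \cap \mathcal Z_\psi^{\sigma''}$. I then define
\[
\psi_1 \wedge \psi_2 := \psi_{c_1 \wedge c_2}, \qquad \psi_1 \vee \psi_2 := \psi_{c_1 \vee c_2},
\]
with the convention $\psi_0 = 0$. By Lemma \ref{06-03-22} these are elements of $\KMS(\sigma,\beta) \cup \{0\}$, and the order-preserving half of Theorem \ref{05-03-22} immediately yields $\psi_1 \wedge \psi_2 \leq \psi_i \leq \psi_1 \vee \psi_2$.

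For the greatest-lower-bound property, suppose $\mu \in \KMS(\sigma,\beta) \cup \{0\}$ satisfies $\mu \leq \psi_1$ and $\mu \leq \psi_2$. Then $\mu \leq \psi_1 + \psi_2 = \psi$, so Theorem \ref{05-03-22} provides a unique $e \in [0,1] \cap \mathcal Z_\psi^{\sigma''}$ with $\mu = \psi_e$. From $\psi_e \leq \psi_{c_i}$ and the bijection being an order-isomorphism, $e \leq c_i$ for $i = 1,2$, hence $e \leq c_1 \wedge c_2$ in the commutative algebra, so $\mu = \psi_e \leq \psi_1 \wedge \psi_2$. This shows $\psi_1 \wedge \psi_2$ is the meet.

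The main obstacle is the least-upper-bound property, because a common upper bound $\phi$ for $\psi_1, \psi_2$ is not a priori dominated by $\psi$, so Theorem \ref{05-03-22} cannot be applied to $\phi$ directly. My plan is to bootstrap off the meet operation just established. Given such an upper bound $\phi$, the meet $\phi \wedge \psi$ exists by the previous paragraph; it satisfies $\phi \wedge \psi \leq \psi$ and, because $\psi_i$ is a common lower bound for $\phi$ and $\psi$, also $\phi \wedge \psi \geq \psi_i$ for $i = 1,2$ by the universal property of the greatest lower bound. Writing $\phi \wedge \psi = \psi_e$ with $e \in [0,1] \cap \mathcal Z_\psi^{\sigma''}$, the inequalities $\psi_e \geq \psi_{c_i}$ force $e \geq c_i$, hence $e \geq c_1 \vee c_2$, and therefore
\[
\phi \geq \phi \wedge \psi = \psi_e \geq \psi_{c_1 \vee c_2} = \psi_1 \vee \psi_2,
\]
as required. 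This completes the verification that $\KMS(\sigma,\beta)\cup\{0\}$ is a lattice.
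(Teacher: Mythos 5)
Your proposal is correct and follows essentially the same route as the paper: form $\psi_1+\psi_2$, transfer to the positive unit ball of $\mathcal Z_\psi^{\sigma''}$ via Theorem \ref{05-03-22}, use that this abelian algebra is a lattice, and bootstrap the least upper bound through the meet with a given upper bound (the paper phrases this step as $\psi_1\vee\psi_2\leq \mu\wedge\varphi\leq\mu$, which is exactly your argument unpacked through the order isomorphism). No substantive difference.
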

\begin{proof} To show that $\psi_1$ and $\psi_2$ have a greatest lower bound we may assume that $\psi_1 \neq 0$ and $\psi_2 \neq 0$; otherwise the greatest lower bound is clearly $0$. Set $\varphi := \psi_1 + \psi_2$, which is a $\beta$-KMS weight for $\sigma$ by Proposition \ref{14-02-22b}. By Theorem \ref{05-03-22} there are elements $c_i \in \mathcal Z_\varphi^{\sigma''}, \ 0 \leq c_i \leq 1$, such that $\psi_i = \varphi_{c_i}, \ i =1,2$. Since $\mathcal Z_\varphi^{\sigma''}$ is an abelian $C^*$-algebra the cone ${\mathcal Z_\varphi^{\sigma''}}^+$ of its positive elements is a lattice. See e.g. Example 4.2.6 in \cite{BR}. Let $c_1 \wedge_\varphi c_2$ be the greatest lower bound for $c_1$ and $c_2$ in ${\mathcal Z_\varphi^{\sigma''}}^+$. Set
$$
\psi_1 \wedge \psi_2 := \varphi_{c_1 \wedge_\varphi c_2} .
$$
It follows from Theorem \ref{05-03-22} that $\psi_1 \wedge \psi_2$ is the greatest lower bound for $\psi_1$ and $\psi_2$ in the set
\begin{equation}\label{07-06-22}
\left\{ \phi \in \KMS(\sigma,\beta) \cup \{0\}: \ \phi \leq \varphi \right\} .
\end{equation}
But then $\psi_1 \wedge \psi_2$ is the greatest lower bound overall; if namely $\mu \leq \psi_i , \ i = 1,2$, it follows that $\mu \leq \varphi$ and hence $\mu \leq \psi_1 \wedge \psi_2$.

 To show that $\psi_1$ and $\psi_2$ have a least upper bound we may assume that $\psi_1$ and $\psi_2$ are not both zero; otherwise the least upper bound is clearly $0$. Let $c_1 \vee_\varphi c_2$ be the least upper bound of $c_1$ and $c_2$ in $\mathcal Z_\varphi^{\sigma''}$ and set
$$
\psi_1 \vee \psi_2 := \varphi_{c_1 \vee_\varphi c_2} .
$$
It follows from Theorem \ref{05-03-22} that $\psi_1 \vee \psi_2$ is the least upper bound for $\psi_1$ and $\psi_2$ in the set \eqref{07-06-22}.
To see that it is in fact the least upper bound overall, let $\mu \in \KMS(\sigma,\beta) \cup \{0\}$ such that $\psi_i \leq \mu, \ i =1,2$. Then $\psi_i \leq \mu \wedge \varphi, \ i =1,2$, and $\mu \wedge \varphi \leq \varphi$. Hence $\psi_1 \vee \psi_2 \leq \mu \wedge \varphi \leq \mu$, showing that $\psi_1 \vee \psi_2$ is the least upper bound for $\psi_1$ and $\psi_2$ in $\KMS(\sigma,\beta) \cup \{0\}$.
\end{proof}

\begin{notes}\label{25-09-23i}
 The main results in this chapter, Theorem \ref{05-03-22} and Theorem \ref{14-02-22d}, are new, but Christensen obtains in Theorem 4.7 of \cite{Ch} a version of Theorem \ref{14-02-22d} when the fixed point algebra of the flow $\sigma$ contains an approximate unit for $A$.
\end{notes}

\section{Passive KMS weights}

Let $\sigma$ be a flow on the $C^*$-algebra $A$. 
Let $I \subseteq A$ be a $\sigma$-invariant closed two-sided ideal in $A$ and let $q_I: A \to A/I$ be the quotient map. Let $\sigma^{A/I}$ be the flow on $A/I$ induced by $\sigma$; viz.
$$
\sigma^{A/I}_t \circ q_I = q_I \circ \sigma_t .
$$

\begin{lemma}\label{01-05-22ax} Let $\psi$ be a $\beta$-KMS weight for $\sigma^{A/I}$. Then $\psi \circ q_I$ is a $\beta$-KMS weight for $\sigma$.
\end{lemma}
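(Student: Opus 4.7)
The plan is to verify each ingredient in the definition of a $\beta$-KMS weight for $\psi \circ q_I$: that it is a weight, non-zero, $\sigma$-invariant, densely defined, and satisfies one of the equivalent conditions of Kustermans' theorem, Theorem~\ref{24-11-21d}.

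First I would dispense with the easy properties. Additivity, positive homogeneity, and non-vanishing follow immediately from the corresponding properties of $\psi$ together with the linearity and positivity of $q_I$. Lower semi-continuity of $\psi \circ q_I$ on $A^+$ follows from the continuity of $q_I : A \to A/I$ and lower semi-continuity of $\psi$. The $\sigma$-invariance is a direct computation: by the defining relation $\sigma^{A/I}_t \circ q_I = q_I \circ \sigma_t$ and $\sigma^{A/I}$-invariance of $\psi$,
$$
\psi \circ q_I \circ \sigma_t = \psi \circ \sigma^{A/I}_t \circ q_I = \psi \circ q_I.
$$

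Next I would handle dense definedness. Because $q_I$ is a surjective $*$-homomorphism, the preimage $q_I^{-1}(\mathcal N_\psi)$ is a dense subset of $A$ (density of $\mathcal N_\psi$ in $A/I$ together with the fact that $q_I$ is open). Given $a \in A^+$, pick a sequence $c_n \in q_I^{-1}(\mathcal N_\psi)$ with $c_n \to \sqrt{a}$; then $c_n^*c_n \to a$ in $A^+$ and
$$
(\psi \circ q_I)(c_n^*c_n) = \psi\bigl( q_I(c_n)^* q_I(c_n)\bigr) < \infty,
$$
showing that $\{b \in A^+ : (\psi\circ q_I)(b) < \infty\}$ is dense in $A^+$.

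The remaining step is the KMS condition, which I would verify in the form of condition (1) in Theorem~\ref{24-11-21d}. The key auxiliary observation is that $q_I$ intertwines the holomorphic extensions: if $a \in \mathcal A_\sigma$, then $q_I(a) \in \mathcal A_{\sigma^{A/I}}$ with $q_I(\sigma_z(a)) = \sigma^{A/I}_z(q_I(a))$ for all $z \in \mathbb C$ (apply $q_I$ to the power series \eqref{13-11-21}). Taking closures, for every $a \in D(\sigma_{-i\beta/2})$ one gets $q_I(a) \in D(\sigma^{A/I}_{-i\beta/2})$ and $q_I(\sigma_{-i\beta/2}(a)) = \sigma^{A/I}_{-i\beta/2}(q_I(a))$. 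Since $\psi$ satisfies condition (1) of Theorem~\ref{24-11-21d} for $\sigma^{A/I}$, we obtain
\begin{align*}
(\psi \circ q_I)(a^*a)
&= \psi\bigl( q_I(a)^* q_I(a)\bigr)
= \psi\bigl( \sigma^{A/I}_{-i\beta/2}(q_I(a))\, \sigma^{A/I}_{-i\beta/2}(q_I(a))^*\bigr) \\
&= \psi\bigl( q_I\bigl(\sigma_{-i\beta/2}(a)\,\sigma_{-i\beta/2}(a)^*\bigr)\bigr)
= (\psi \circ q_I)\bigl(\sigma_{-i\beta/2}(a)\,\sigma_{-i\beta/2}(a)^*\bigr).
\end{align*}
Hence $\psi \circ q_I$ satisfies condition (1) and is therefore a $\beta$-KMS weight for $\sigma$. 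The only mildly subtle point is the interplay between the holomorphic extensions $\sigma_z$ and $\sigma^{A/I}_z$ and the quotient map, but this is a direct consequence of the functorial construction of the entire analytic elements.
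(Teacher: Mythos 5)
Your proof is correct and follows essentially the same route as the paper: the elementary properties are checked directly and the KMS condition is obtained from the intertwining $q_I(\sigma_z(a)) = \sigma^{A/I}_z(q_I(a))$ together with Kustermans' theorem. The only cosmetic differences are that the paper deduces lower semi-continuity from Combes' theorem (writing $\psi\circ q_I$ as a supremum of states composed with $q_I$) where you use the simpler observation that an l.s.c.\ map composed with the continuous $q_I$ is l.s.c., and that the paper verifies the KMS identity only on $\mathcal A_\sigma$ (condition (2)) while you pass to the closed operators and verify condition (1); both are legitimate.
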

\begin{proof} To see that $\psi$ is a weight on $A$ only the lower semi-continuity is not trivial, but follows from Combes' theorem, Theorem \ref{04-11-21k}, which implies that 
$$
\psi \circ q_I = \sup_{\omega \in \mathcal F_\psi} \omega \circ q_I.
$$ 
To see that $\psi \circ q_I$ is a $\beta$-KMS weight, note first of all that $\psi \circ q_I$ is non-zero and densely defined since $\psi$ has these properties. Let $a \in \mathcal A_\sigma$. By definition this means that there is a sequence $\{a_n\}_{n=0}^\infty$ in $A$ such that $\sigma_t(a) = \sum_{n=0}^\infty a_n t^n$ and $\sum_{n=0}^\infty \|a_n\||t|^n < \infty$ for all $t\in \mathbb R$. It follows that $\sigma^{A/I}_t\left(q_I(a)\right) = \sum_{n=0}^\infty q_I(a_n) t^n$ and $\sum_{n=0}^\infty \left\|q_I(a_n)\right\| |t|^n < \infty$ for all $t\in \mathbb R$, and hence that $q_I(a) \in \mathcal A_{\sigma^{A/I}}$ and 
\begin{equation}\label{02-05-22}
\sigma^{A/I}_z \left( q_I(a)\right) = q_I\left(\sigma_z(a)\right)
\end{equation}
for all $z \in \mathbb C$. Since $\psi$ is a $\beta$-KMS weight for $\sigma^{A/I}$ it follows that
\begin{align*}
& \psi\circ q_I\left(a^*a\right) = \psi(q_I(a)^*q_I(a)) \\
& = \psi \left( \sigma^{A/I}_{-i \frac{\beta}{2}} (q_I(a))  \sigma^{A/I}_{-i \frac{\beta}{2}} (q_I(a))^*\right) \\
& = \psi\left( q_I \left(\sigma_{-i \frac{\beta}{2}} (a)\right)q_I \left(\sigma_{-i \frac{\beta}{2}} (a)\right)^*\right) \\
& = \psi \circ q_I \left( \sigma_{-i \frac{\beta}{2}} (a)\sigma_{-i \frac{\beta}{2}} (a)^*\right).
\end{align*}
Thanks to Kustermans' theorem, Theorem \ref{24-11-21d}, this shows that $\psi \circ q_I$ is a $\beta$-KMS weight for $\sigma$.
\end{proof}

 We say that a flow $\sigma$ is trivial when $\sigma_t(a) = a$ for all $t \in \mathbb R$ and all $a \in A$; otherwise $\sigma$ is said to be \emph{non-trivial}. For a trivial flow $\sigma$ the set of $\beta$-KMS weights is the set of all lower semi-continuous traces on $A$ for any $\beta \in \mathbb R$, as one easily deduces from Kustermans theorem, Theorem \ref{24-11-21d}. If follows therefore from Lemma \ref{01-05-22ax} that if, in the setting of Lemma \ref{01-05-22ax}, the flow $\sigma^{A/I}$ is trivial, all lower semi-continuous traces on $A/I$ give rise to $\beta$-KMS weights for $\sigma$ for all $\beta \in \mathbb R$ by composition with the quotient map $q_I$. KMS weights of this sort are relatively easy to understand and we want to be able to push them aside.

 \begin{lemma}\label{02-05-22b} Let $\psi$ be a KMS weight for $\sigma$. The following conditions are equivalent.
\begin{itemize}
\item[(i)] $\psi$ is a $\beta$-KMS weight for $\sigma$ for all $\beta \in \mathbb R$.
\item[(ii)] There are real numbers $\beta \neq \beta'$ such that $\psi$ is both a $\beta$-KMS weight for $\sigma$ and a $\beta'$-KMS weight for $\sigma$.
\item[(iii)] The flow $\sigma^{A/\ker_\psi}$ is trivial and there is a lower semi-continuous trace $\tau$ on $A/\ker_\psi$ such that $\psi = \tau \circ q_{\ker_\psi}$.
\item[(iv)] There is a $\sigma$-invariant ideal $I$ in $A$ such that $\sigma^{A/I}$ is the trivial flow on $A/I$ and a lower semi-continuous trace $\tau$ on $A/I$ such that $\psi = \tau \circ q_I$. 
\end{itemize} 
  \end{lemma}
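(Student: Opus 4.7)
The implication (i) $\Rightarrow$ (ii) is immediate and (iii) $\Rightarrow$ (iv) is trivial (take $I = \ker_\psi$). For (iv) $\Rightarrow$ (i), observe that when the flow $\sigma^{A/I}$ is trivial, Kustermans' theorem, Theorem \ref{24-11-21d} (1), reduces to the trace identity $\tau(a^*a) = \tau(aa^*)$ for all $a$, and $\sigma^{A/I}$-invariance is automatic; therefore a densely defined lower semi-continuous trace on $A/I$ is a $\beta$-KMS weight for $\sigma^{A/I}$ for every $\beta \in \mathbb R$. Applying Lemma \ref{01-05-22ax} then shows that $\tau \circ q_I = \psi$ is a $\beta$-KMS weight for $\sigma$ for every $\beta$.

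The main content is (ii) $\Rightarrow$ (iii). Assume $\psi$ is both a $\beta$-KMS and a $\beta'$-KMS weight for $\sigma$ with $\beta \neq \beta'$. Applying Theorem \ref{05-03-22x} twice to the single normal extension $\psi''$ of $\psi$, the modular automorphism group of $\psi''$ coincides both with $t \mapsto \sigma''_{-\beta t}$ and with $t \mapsto \sigma''_{-\beta' t}$. By the uniqueness of the modular automorphism group we obtain $\sigma''_{-\beta t} = \sigma''_{-\beta' t}$ for every $t \in \mathbb R$, hence $\sigma''_{(\beta'-\beta)t} = \id$ for every $t$; since $\beta' - \beta \neq 0$ this forces $\sigma''_s = \id$ for all $s \in \mathbb R$. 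Combined with \eqref{08-03-22b} this gives $\pi_\psi(\sigma_t(a)) = \pi_\psi(a)$ for all $a \in A$ and $t \in \mathbb R$, so $\sigma_t(a) - a \in \ker \pi_\psi$ for all such $a$ and $t$.

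The next step is to identify $\ker \pi_\psi$ with $\ker_\psi$. For $a \in A$, Lemma \ref{03-03-22fx} gives $\psi(a^*a) = \psi''(\pi_\psi(a^*a))$, and since $\psi''$ is faithful by Lemma \ref{01-03-22f}, we see that $\psi(a^*a) = 0$ if and only if $\pi_\psi(a^*a) = 0$, i.e., if and only if $\pi_\psi(a) = 0$. Thus $\ker_\psi = \ker \pi_\psi$, and the displayed property of $\sigma$ above becomes $\sigma_t(a) - a \in \ker_\psi$ for all $a$ and $t$, i.e., $\sigma^{A/\ker_\psi}$ is the trivial flow. To produce $\tau$, factor $\pi_\psi$ as $\pi_\psi = \tilde\pi \circ q_{\ker_\psi}$ where $\tilde\pi : A/\ker_\psi \hookrightarrow \pi_\psi(A)''$ is the induced injective $*$-homomorphism, and set $\tau(x) := \psi''(\tilde\pi(x))$ for $x \in (A/\ker_\psi)^+$. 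Then $\tau \circ q_{\ker_\psi} = \psi$ by Lemma \ref{03-03-22fx}; additivity, homogeneity and lower semi-continuity are inherited from $\psi''$ (using norm-continuity of $\tilde\pi$ and that the $\sigma$-weak lower semi-continuity of $\psi''$ entails norm lower semi-continuity); dense definedness follows because $q_{\ker_\psi}$ maps $\mathcal M_\psi^+$ into $\mathcal M_\tau^+$ and both $q_{\ker_\psi}$ and its restriction to positive parts are continuous and surjective. Finally, triviality of $\sigma''$ together with Theorem \ref{05-03-22x} shows that the modular automorphism group of $\psi''$ is trivial, so the KMS condition from Theorem \ref{21-02-22dx} collapses to the trace identity on $\psi''$; transporting this through $\tilde\pi$ yields $\tau(x^*x) = \tau(xx^*)$, and $\tau$ is non-zero because $\psi$ is. Hence $\tau$ is a lower semi-continuous trace on $A/\ker_\psi$, establishing (iii). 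The hardest point in the argument is the identification of the modular flow of $\psi''$ with $\sigma''_{-\beta t}$ for two different values of $\beta$, which is where all the work from the modular theory of KMS weights is invoked.
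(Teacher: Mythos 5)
Your proposal is correct, but your proof of the main implication (ii) $\Rightarrow$ (iii) goes through the modular theory of Chapter 4, whereas the paper stays at the level of the GNS construction and elementary complex analysis. Concretely, the paper uses condition (3) of Kustermans' theorem to get $\psi(b^*\sigma_{i\beta}(a)) = \psi(b^*\sigma_{i\beta'}(a))$ for $a,b \in \mathcal M^\sigma_\psi$, deduces $\Lambda_\psi(\sigma_{i(\beta-\beta')}(a)) = \Lambda_\psi(a)$ from density of $\Lambda_\psi(\mathcal M^\sigma_\psi)$, concludes that $t \mapsto \sigma^{A/\ker_\psi}_{it}(q_{\ker_\psi}(a))$ is $(\beta-\beta')$-periodic hence bounded, and kills the quotient flow with Liouville's theorem; the trace $\tau$ is then built from Combes' theorem (each $\omega \in \mathcal F_\psi$ descends to $A/\ker_\psi$) and its trace property is checked with the smoothing operators $R_n$. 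You instead invoke Theorem \ref{05-03-22x} for both $\beta$ and $\beta'$ and use uniqueness of the modular automorphism group to force $\sigma''$ to be trivial, then transport $\psi''$ through the induced embedding $A/\ker\pi_\psi \hookrightarrow \pi_\psi(A)''$. This is legitimate and shorter, and it is essentially the non-faithful analogue of Lemma \ref{07-04-22a} and Theorem \ref{09-04-22c}; what it buys is a conceptual explanation (two inverse temperatures force a trivial modular flow), at the cost of relying on heavier machinery. One point you should make explicit: the normal extension $\psi''$ is constructed in Chapter 4 using $\xi = -i\beta/2$, so a priori the extensions obtained from $\beta$ and from $\beta'$ could differ; that they coincide is exactly the uniqueness statement of Theorem \ref{05-04-22} (both are normal and $\sigma''$-invariant), and without citing it the phrase ``the single normal extension'' is unjustified. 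Your identification $\ker\pi_\psi = \ker_\psi$ via faithfulness of $\psi''$ and the construction of $\tau$ as $\psi'' \circ \tilde\pi$ are fine; for the trace identity $\tau(x^*x)=\tau(xx^*)$ the cleanest closing step is Lemma \ref{02-03-22g}, which with $\sigma''$ trivial gives $\psi''(m^*m)=\psi''(mm^*)$ for all $m$ in the von Neumann algebra, rather than the slightly delicate collapse of the strip condition in Theorem \ref{21-02-22dx}.
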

  \begin{proof} We remind the reader that $\ker_\psi$ is the $\sigma$-invariant ideal in $A$ defined in Lemma \ref{05-12-21a}. The implications (i) $\Rightarrow$ (ii) and (iii) $\Rightarrow$ (iv) are trivial and the implication (iv) $\Rightarrow$ (i) follows from Lemma \ref{01-05-22ax}. It remains therefore only to show that (ii) $\Rightarrow$ (iii). Assume therefore that (ii) holds. By (2) of Theorem \ref{24-11-21d}, $\psi(b^*\sigma_{i\beta}(a)) = \psi(b^*\sigma_{i \beta'}(a))$ for all $a,b \in \mathcal M_\psi^\sigma$, implying that 
$$
\left<\Lambda_\psi(\sigma_{i(\beta - \beta')}(a)), \Lambda_\psi(b) \right> = \left<\Lambda_\psi(a),\Lambda_\psi(b)\right>
$$
for all $a,b \in \mathcal M_\psi^\sigma$. Since $\Lambda_\psi\left(\mathcal M^\sigma_\psi\right)$ is dense in $H_\psi$ by Lemma \ref{07-12-21c} this implies that $\Lambda_\psi(\sigma_{i(\beta - \beta')}(a)) =\Lambda_\psi(a)$, and hence that
$$
\psi\left((\sigma_{i(\beta - \beta')}(a)-a)^*(\sigma_{i(\beta - \beta')}(a)-a)\right) = \left\| \Lambda_\psi(\sigma_{i(\beta - \beta')}(a) -a)\right\|^2 = 0 .
$$
Thus $\sigma_{i(\beta - \beta')}(a) -a \in \ker_\psi$. Using \eqref{02-05-22} we find that
$$
\sigma^{A/{\ker_\psi}}_{it + i(\beta - \beta')}(q_{\ker_\psi}(a)) = \sigma^{A/{\ker_\psi}}_{it }(q_{\ker_\psi}( \sigma_{i(\beta -\beta')}(a))) = \sigma^{A/{\ker_\psi}}_{it }(q_{\ker_\psi}(a)) 
$$ 
for all $t \in \mathbb R$.
Thus the function $\mathbb R \ni t \mapsto \sigma^{A/{\ker_\psi}}_{it}(q_{\ker_\psi}(a))$ is $\beta-\beta'$-periodic and continuous, and hence norm-bounded. Since 
$$
\left\|\sigma^{A/{\ker_\psi}}_{z}(q_{\ker_\psi}(a))\right\| =\left\|\sigma^{A/{\ker_\psi}}_{i \Imag z}(q_{\ker_\psi}(a))\right\|
$$ 
it follows that the function $z \mapsto \sigma^{A/{\ker_\psi}}_{z}(q_{\ker_\psi}(a))$ is norm-bounded. Since it is also entire holomorphic it is therefore constant by Liouville's theorem. This shows that $ \sigma^{A/{\ker_\psi}}_{t}(q_{\ker_\psi}(a))  = q_{\ker_\psi}(a)$ for all $t \in \mathbb R$. The set $\mathcal M^\sigma_\psi$ is dense in $A$ by Lemma \ref{07-12-21a} and consequently $q_{\ker_\psi}(\mathcal M^\sigma_\psi)$ is dense in $A/{\ker_\psi}$. It follows that $\sigma^{A/{\ker_\psi}}$ is the trivial flow. We claim that we can define a lower semi-continuous weight $\tau : (A/{\ker_\psi})^+ \to [0,\infty]$ such that
$$
\tau\left(q_{\ker_\psi}(a)\right) = \psi(a)
$$
when $A^+$. To see this, let $\omega \in \mathcal F_\psi$. Then $\omega(a^*a) = \psi(a^*a) = 0$ for all ${\ker_\psi}$. Let $x \in {\ker_\psi}^+$. Then $\sqrt{x} \in {\ker_\psi}$ and hence $\omega(x) = \omega(\sqrt{x}\sqrt{x}) = 0$. Since $\ker_\psi$ is spanned by ${\ker_\psi}^+$, it follows that $\omega|_{\ker_\psi} = 0$, and we can therefore define $\tilde{\omega} : A/{\ker_\psi} \to \mathbb C$ such that $\tilde{\omega} \circ q_{\ker_\psi} = \omega$. Define $\tau : (A/{\ker_\psi})^+ \to [0,\infty]$ by
$$
\tau(x) := \sup \left\{ \tilde{\omega}(x): \ \omega \in \mathcal F_\psi \right\} .
$$ 
Let $a \in A^+$. Then
\begin{align*}
& \tau(q_{\ker_\psi}(a)) =  \sup \left\{ \tilde{\omega}(q_{\ker_\psi}(a)): \ \omega \in \mathcal F_\psi \right\} =  \sup \left\{ {\omega}(a): \ \omega \in \mathcal F_\psi \right\} = \psi(a)
\end{align*}
by Combes' theorem, implying the claim. Note that $\tau$ is non-zero and densely defined on $A/{\ker_\psi}$. It remains only to show that $\tau$ is a trace. For this purpose note that since $\sigma^{A/{\ker_\psi}}$ is trivial the algebra $\mathcal M^{ \sigma^{A/{\ker_\psi}}}_\tau$ is the set $\mathcal M_\tau$. By Theorem \ref{24-11-21d} it suffices to show that $\tau(x^*x) = \tau(xx^*)$ when $x \in \mathcal M_\tau$. Note that we can choose $a \in \mathcal M_\psi$ such that $q_{\ker_\psi}(a) =x$. Using Lemma \ref{24-11-21g} we find
\begin{align*}
& \tau(x^*x) = \lim_{n \to \infty} \tau\left(R_n(x)^*R_n(x)\right) = \lim_{n \to \infty} \tau\left(R_n(q_{\ker_\psi}(a))^* R_n(q_{\ker_\psi}(a))\right) \\
& =  \lim_{n \to \infty} \tau\circ q_{\ker_\psi}\left(R_n(a)^* R_n(a)\right) =  \lim_{n \to \infty} \psi\left(R_n(a)^* R_n(a)\right)\\
& =  \lim_{n \to \infty} \psi\left(\sigma_{-i \frac{\beta}{2}}(R_n(a))\sigma_{-i \frac{\beta}{2}}(R_n(a))^*\right)\\
&  = \lim_{n \to \infty} \tau \circ q_{\ker_\psi}\left(\sigma_{-i \frac{\beta}{2}}(R_n(a))\sigma_{-i \frac{\beta}{2}}(R_n(a))^*\right)\\
& = \lim_{n \to \infty} \tau \left(q_{\ker_\psi}(\sigma_{-i \frac{\beta}{2}}(R_n(a)))q_{\ker_\psi}\left(\sigma_{-i \frac{\beta}{2}}(R_n(a))^*\right)\right)\\
& = \lim_{n \to \infty} \tau \left(q_{\ker_\psi}(R_n(a))q_{\ker_\psi}\left(R_n(a)^*\right)\right)\\
& =  \lim_{n \to \infty} \tau\left(R_n(x)R_n(x)^*\right)\\
& = \tau(xx^*).
\end{align*}
\end{proof}

\begin{defn}\label{02-05-22a} A KMS weight for $\sigma$ is \emph{passive} when it satisfies one of the equivalent conditions of Lemma \ref{02-05-22b}. A KMS weight which is not passive is said to be \emph{essential}.
\end{defn} 
 
All passive KMS weights are traces, but not conversely; there are many examples of flows with $0$-KMS weights that are essential. We note that all faithful KMS weights of non-trivial flows are essential, which in particular implies that all KMS weights of non-trivial flows are essential when $A$ is $\sigma$-simple in the sense that the only $\sigma$-invariant ideals in $A$ are $0$ and $A$. In contrast when $\sigma$ is a flow on an abelian $C^*$-algebra all $\beta$-KMS weights for $\sigma$ with $\beta \neq 0$ are passive because they are also automatically $0$-KMS weights and hence satisfies condition (ii) in Lemma \ref{02-05-22b}. See Section \ref{abelian} for a little more on KMS weights for flows on abelian $C^*$-algebras.

We note that for a given $\beta \in \mathbb R$ the set of passive $\beta$-KMS weights constitute a face in the set of all $\beta$-KMS weights:

\begin{lemma}\label{03-05-22} Assume that $\psi$ and $\phi$ are $\beta$-KMS weights for $\sigma$, that $\psi$ is passive and that $\psi \geq \phi$. Then $\phi$ is passive. 
\end{lemma}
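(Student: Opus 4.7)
My plan is to exploit characterization (i) of passivity from Lemma \ref{02-05-22b}: a KMS weight is passive exactly when it is a $\beta'$-KMS weight for every $\beta' \in \mathbb R$. Since $\psi$ is passive, $\psi$ is $\beta'$-KMS for every $\beta'$, and the task reduces to transferring this property to $\phi$.

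The first step is to apply Theorem \ref{05-03-22} to $\psi$ viewed as a $\beta$-KMS weight: since $\phi$ is a $\beta$-KMS weight with $\phi \leq \psi$, there is a non-zero element $c \in \mathcal Z_\psi^{\sigma''}$ with $0 \leq c \leq 1$ such that $\phi = \psi_c$, i.e.
$$\phi(a) \; = \; \psi''\bigl(c\,\pi_\psi(a)\bigr) \quad \forall a \in A^+.$$
The decisive observation is that the data on the right are intrinsic to the pair $(\psi,\sigma)$: the flow $\sigma''_t = \operatorname{Ad} U^\psi_t$ on $\pi_\psi(A)''$ involves only $\psi$ and $\sigma$, and by the uniqueness clause in Theorem \ref{05-04-22} the normal extension $\psi''$ is the unique normal $\sigma''$-invariant weight on $\pi_\psi(A)''$ restricting to $\psi$, regardless of which value of $\beta$ is used to realise $\psi$ as a KMS weight.

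Next, I fix an arbitrary $\beta' \in \mathbb R$. Because $\psi$ is a $\beta'$-KMS weight for $\sigma$, Theorem \ref{21-02-22dx} applied with $\beta$ replaced by $\beta'$ yields that the same canonical weight $\psi''$ satisfies the $\beta'$-KMS condition on $\pi_\psi(A)''$. The argument in the proof of Lemma \ref{06-03-22} now carries over verbatim with $\beta$ replaced by $\beta'$: the verifications that $\psi_c$ is a non-zero lower semi-continuous $\sigma$-invariant weight are insensitive to $\beta$, while the $\beta'$-KMS condition follows from the boundary value function supplied by Theorem \ref{21-02-22dx} on the strip $\mathcal D_{\beta'}$. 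Hence $\phi = \psi_c$ is a $\beta'$-KMS weight for $\sigma$ for every $\beta' \in \mathbb R$, and Lemma \ref{02-05-22b}(i) concludes that $\phi$ is passive.

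The only real obstacle is the bookkeeping around the fact that $\psi''$ is independent of the particular $\beta$ used to view $\psi$ as a KMS weight; this is secured by the uniqueness statement in Theorem \ref{05-04-22}, which attaches $\psi''$ unambiguously to the pair $(\psi,\sigma)$ and thereby lets one invoke Theorem \ref{21-02-22dx} with different values of $\beta'$ without producing competing von Neumann algebraic extensions.
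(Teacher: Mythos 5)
Your proof is correct and follows essentially the same route as the paper: decompose $\phi = \psi_c$ via Theorem \ref{05-03-22} and then re-run the verification that $\psi_c$ is a KMS weight for a second inverse temperature, concluding with Lemma \ref{02-05-22b}. The paper is slightly more economical --- it checks only the single additional value $\beta+1$ and invokes criterion (ii) of Lemma \ref{02-05-22b} rather than criterion (i) --- but your explicit appeal to Theorem \ref{05-04-22} to secure the $\beta$-independence of $\psi''$ makes precise a point the paper leaves implicit.
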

\begin{proof} It follows from Theorem \ref{05-03-22} that there is an element $c \in \mathcal Z_\psi^{\sigma''}$, $0 \leq c \leq 1$, such that $\phi( \ \cdot \ ) = \psi''(c\pi_\psi( \ \cdot \ ))$. Since $\psi$ is passive it follows that $\psi$ is also a $(\beta+1)$-KMS weight for $\sigma$ and then a second application of Theorem \ref{05-03-22} shows that $\phi$ is also a $(\beta+1)$-KMS weight for $\sigma$. By Lemma \ref{02-05-22b} this implies that $\phi$ is passive. 
\end{proof}

\begin{notes}\label{09-03-22g} The material in this section is new.
\end{notes}



\chapter{Some examples and constructions}

In this chapter we present a series of examples and constructions of KMS weights for various flows. Although some parts of the chapter will be used in the subsequent development the reader can easily skip the chapter and return to it on an ad hoc basis.

\section{Flows on the compact operators}\label{K-flows} Let $\mathbb K$ denote the $C^*$-algebra of compact operators on an infinite-dimensional separable Hilbert space $\mathbb H$, and let $H$ be a possibly unbounded self-adjoint operator on $\mathbb H$. 

\begin{lemma}\label{03-02-22} Let $k \in \mathbb K$. The map $\mathbb R \ni t \mapsto e^{itH}k$ is continuous for the norm topology.
\end{lemma}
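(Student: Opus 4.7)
The plan is to combine Stone's theorem, the fact that a finite rank operator is a finite sum of rank-one operators (to which strong continuity can be applied), and a standard $\tfrac{\epsilon}{3}$-approximation using that the finite rank operators are norm dense in $\mathbb{K}$.

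First I would invoke Stone's theorem (Theorem 5.6.36 of \cite{KR}, already cited in Example \ref{15-06-22}) to note that $U_t := e^{itH}$ is a strongly continuous one-parameter unitary group on $\mathbb{H}$; in particular $\|U_t\| = 1$ and $t \mapsto U_t \eta$ is norm-continuous for every $\eta \in \mathbb{H}$. Fix $s \in \mathbb{R}$.

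Next I would handle the case where $k$ is a finite rank operator, say $k = \sum_{i=1}^n \eta_i \otimes \xi_i$ (the rank-one operator $\eta \otimes \xi$ acting as $\zeta \mapsto \langle \zeta, \xi\rangle \eta$). Then $U_t k - U_s k = \sum_{i=1}^n (U_t \eta_i - U_s \eta_i) \otimes \xi_i$, and since the operator norm of a rank-one operator $\eta \otimes \xi$ equals $\|\eta\|\|\xi\|$, the triangle inequality gives
$$
\|U_t k - U_s k\| \;\leq\; \sum_{i=1}^n \|U_t \eta_i - U_s \eta_i\| \, \|\xi_i\|,
$$
which tends to $0$ as $t \to s$ by strong continuity.

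Finally I would pass from finite rank to arbitrary compact $k$ by a $\tfrac{\epsilon}{3}$-argument: given $\epsilon > 0$, choose a finite rank operator $k_0$ with $\|k - k_0\| < \epsilon/3$, which is possible since finite rank operators are norm-dense in $\mathbb{K}$. Using $\|U_t\| = \|U_s\| = 1$ we get
$$
\|U_t k - U_s k\| \;\leq\; \|U_t(k-k_0)\| + \|U_t k_0 - U_s k_0\| + \|U_s(k_0-k)\| \;\leq\; \tfrac{2\epsilon}{3} + \|U_t k_0 - U_s k_0\|,
$$
and the previous paragraph ensures the last term is less than $\epsilon/3$ for $t$ sufficiently close to $s$. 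There is no real obstacle here; the only thing to watch is that $t \mapsto U_t$ need not be norm-continuous (this fails unless $H$ is bounded), so one must not attempt to prove the statement by showing $\|U_t - U_s\|\|k\| \to 0$. Compactness of $k$ is exactly what lets one replace the uniform statement about $U_t$ by strong continuity applied to finitely many vectors.
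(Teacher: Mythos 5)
Your proof is correct, but it takes a different route from the paper's. The paper argues directly from compactness of the operator: since $k$ is compact, the set $k(B_1)$ (the image of the unit ball) is totally bounded in $\mathbb H$, and the strong continuity of $t \mapsto e^{itH}$ — being a statement about a family of contractions — is automatically uniform on such a set; hence $\sup_{\|\psi\|\leq 1}\|e^{itH}k\psi - e^{isH}k\psi\| \to 0$, which is exactly norm continuity of $t \mapsto e^{itH}k$. You instead exploit compactness of $k$ through the norm density of the finite rank operators in $\mathbb K$, reduce to rank-one operators via the identity $U_t(\eta\otimes\xi) = (U_t\eta)\otimes\xi$ and $\|\eta\otimes\xi\| = \|\eta\|\|\xi\|$, and then run an $\tfrac{\epsilon}{3}$-argument using $\|U_t\|=1$. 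The two arguments are morally the same use of compactness — uniform approximation on the unit ball versus uniform approximation in operator norm — and indeed the paper's "uniformity on compact sets" fact is itself usually proved by an $\epsilon$-net argument of exactly the kind you write out. Your version is marginally more self-contained in that it leans only on the density of finite rank operators and a completely explicit computation; the paper's is shorter but quotes the equicontinuity principle without proof. Your closing remark, that one must not try to prove norm continuity of $t\mapsto U_t$ itself, correctly identifies the one real trap here.
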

\begin{proof} The map $t \mapsto e^{itH}$ is continuous for the strong operator topology by Theorem 5.6.36 in \cite{KR} and since $k$ is compact the image under $k$ of the unit ball in $\mathbb H$ is compact in $\mathbb H$. It follows that the continuity of $t \mapsto e^{itH}k\psi$ is uniform for $\psi$ in the unit ball of $\mathbb H$; i.e. $t \mapsto e^{itH}k$ is continuous for the norm topology.
\end{proof}

Thanks to Lemma \ref{03-02-22} we can define a flow $\sigma$ on $\mathbb K$ such that
\begin{equation}\label{02-01-22b}
\sigma_t(k) := e^{itH}k e^{-itH} \ \ \forall t \in \mathbb R, \ k \in \mathbb K.
\end{equation}
It is a well-known fact that all flows on $\mathbb K$ are of this form. A proof of this can be found in Appendix \ref{compact operators}. 

The aim here is to find the KMS weights for $\sigma$. Let $\Tr : B(\mathbb H)^+ \to [0,\infty]$ denote the usual trace given by the formula
$$
\Tr (m) = \sum_{i=1}^{\infty} \left<m\psi_i,\psi_i\right>, 
$$
where $\{\psi_i\}_{i=1}^\infty$ is an orthonormal basis in $\mathbb H$. In the following we shall rely on properties of trace-class operators and Hilbert-Schmidt operators on $\mathbb H$, and refer to Chapter VI in \cite{RS} for the facts we shall need. In particular, we note that $\mathcal N_{\Tr}$ by definition is the ideal of Hilbert-Schmidt operators and that the combination of (b) of Lemma \ref{04-11-21n} with (h) of Theorem VI.22 in \cite{RS} shows that $\mathcal M_{\Tr}$ is the ideal of trace class operators.

Let $P$ be the projection valued measure on $\mathbb R$ representing $H$;
$$
H = \int_\mathbb R \lambda \ \mathrm d P_\lambda ,
$$
and set $P_n := P[-n,n]$. Note that
\begin{align*}
&\Tr(e^{-\frac{\beta}{2}H}P_n aP_ne^{-\frac{\beta}{2}H}) = \Tr(\sqrt{a}P_ne^{-\beta H}\sqrt{a}) \\
& \leq \Tr(\sqrt{a}P_{n+1}e^{-\beta H}\sqrt{a}) = \Tr(e^{-\frac{\beta}{2}H}P_{n+1} aP_{n+1}ne^{-\frac{\beta}{2}H})
\end{align*}
for $a \in \mathbb K^+$; that is, 
$$
n \mapsto \Tr(e^{-\frac{\beta}{2}H}P_n aP_ne^{-\frac{\beta}{2}H})
$$ is non-decreasing and we can therefore define $\psi_\beta : \mathbb K^+ \to [0,\infty]$ by 
\begin{equation}\label{02-01-22}
\psi_\beta(a) := \lim_{n \to \infty}\Tr(e^{-\frac{\beta}{2}H}P_n aP_ne^{-\frac{\beta}{2}H}) .
\end{equation}
It follows easily from the properties of $\Tr$ that $\psi_\beta : \mathbb K^+ \to [0,\infty]$ is a $\sigma$-invariant weight.

\begin{lemma}\label{stubbekÃ¸bing2} $\psi_\beta$ is densely defined.
\end{lemma}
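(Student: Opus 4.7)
\emph{Proof plan for Lemma \ref{stubbekÃ¸bing2}.} The plan is to exhibit a dense subset of $\mathbb K^+$ consisting of finite-rank positive operators on which $\psi_\beta$ is finite. Since finite-rank positive operators are norm-dense in $\mathbb K^+$, and each such operator is a finite non-negative linear combination $\sum_i \lambda_i |\xi_i\rangle\langle\xi_i|$ with $\xi_i \in \mathbb H$ (and $\psi_\beta$ is additive), it suffices to approximate each rank-one positive operator $|\xi\rangle\langle\xi|$ in norm by rank-one positive operators lying in $\mathcal M^+_{\psi_\beta}$.

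First I would compute $\psi_\beta$ on rank-one projections. Since $P_n |\xi\rangle\langle\xi| P_n = |P_n\xi\rangle\langle P_n\xi|$ and $P_n$ commutes with $e^{-\frac{\beta}{2}H}$ by the functional calculus, one gets
$$
\Tr\bigl(e^{-\frac{\beta}{2}H}P_n |\xi\rangle\langle\xi| P_n e^{-\frac{\beta}{2}H}\bigr) = \bigl\|e^{-\frac{\beta}{2}H}P_n\xi\bigr\|^2.
$$
If $\xi$ lies in $P_m\mathbb H$ for some $m$, then $P_n\xi = \xi$ for $n\geq m$, and the operator $e^{-\frac{\beta}{2}H}$ is bounded on $P_m\mathbb H$ with norm at most $e^{|\beta|m/2}$ (by spectral theory applied to the bounded function $\lambda\mapsto e^{-\beta\lambda/2}\chi_{[-m,m]}(\lambda)$). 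Hence the above trace stabilises at $\|e^{-\frac{\beta}{2}H}\xi\|^2 \leq e^{|\beta|m}\|\xi\|^2<\infty$, so that $|\xi\rangle\langle\xi|\in\mathcal M^+_{\psi_\beta}$.

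To conclude, I would invoke the standard fact that $\bigcup_{m\in\mathbb N} P_m\mathbb H$ is dense in $\mathbb H$: indeed, $P_m\to 1$ in the strong operator topology by spectral theory, so $P_m\xi\to\xi$ for every $\xi\in\mathbb H$. Given an arbitrary rank-one positive operator $|\xi\rangle\langle\xi|$ and $\epsilon>0$, choose $m$ large enough that $\|P_m\xi-\xi\|<\epsilon/(2\|\xi\|+1)$; then $|P_m\xi\rangle\langle P_m\xi|$ is in $\mathcal M^+_{\psi_\beta}$ by the preceding paragraph, and an elementary estimate gives $\||\xi\rangle\langle\xi|-|P_m\xi\rangle\langle P_m\xi|\|\leq \epsilon$. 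Linearity and additivity of $\psi_\beta$ then yield the same conclusion for arbitrary finite-rank positive operators, finishing the proof.

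There is no serious obstacle here; the only points requiring mild care are the commutation $P_n e^{-\frac{\beta}{2}H}=e^{-\frac{\beta}{2}H}P_n$ (via functional calculus) and the justification that $e^{-\frac{\beta}{2}H}$ is bounded on each $P_m\mathbb H$, both of which are immediate consequences of the spectral theorem.
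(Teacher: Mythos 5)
Your proof is correct and follows essentially the same route as the paper's: both arguments rest on the observations that $e^{-\frac{\beta}{2}H}P_m$ is bounded (so that compressions by $P_m$ land in $\mathcal M^+_{\psi_\beta}$) and that $P_m \to 1$ strongly (so that such compressions are dense in $\mathbb K^+$). The only cosmetic difference is that you decompose into rank-one pieces and compute $\|e^{-\frac{\beta}{2}H}P_m\xi\|^2$ directly, whereas the paper compresses a positive trace-class operator and invokes the ideal property of the trace class; both are fine.
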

\begin{proof}  By spectral theory $\lim_{n \to \infty} P_n =1$ in the strong operator topology, implying that $\lim_{n \to \infty} P_nk = k$ in norm for all $k \in \mathbb K$. In particular, $\lim_{k \to \infty} P_kaP_k = a$ when $a$ is a positive trace class operator. Note that
\begin{align*}
&\psi_\beta(P_kaP_k) =  \lim_{n \to \infty}\Tr(e^{-\frac{\beta}{2}H}P_n P_kaP_kP_ne^{-\frac{\beta}{2}H}) \\
&= \Tr(e^{-\frac{\beta}{2}H}P_kaP_ke^{-\frac{\beta}{2}H}) < \infty,
\end{align*}
since $e^{-\frac{\beta}{2}H}P_k$ is bounded and the trace class operators form a two-sided ideal in $B(\mathbb H)$. The lemma follows now because this ideal is dense in $\mathbb K $.
\end{proof}

\begin{lemma}\label{03-02-22d}
\begin{equation}\label{26-12-21}
\psi_\beta(a^*a) = \begin{cases} \Tr(e^{-\frac{\beta}{2}H}a^*a e^{-\frac{\beta}{2}H}) & \ \text{when $a e^{-\frac{\beta}{2}H}$ is Hilbert-Schmidt}, \\ \infty & \ \ \text{otherwise.} \end{cases} 
\end{equation}
\end{lemma}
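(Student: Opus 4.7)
The plan is to express the approximating trace in the definition of $\psi_\beta(a^*a)$ as a sum of squared norms and then pass to the limit $n\to\infty$ using monotone convergence on a well-chosen orthonormal basis.

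First I would set $b := ae^{-\frac{\beta}{2}H}$, viewed initially as an operator with domain $D(e^{-\frac{\beta}{2}H})$. Since $P_n$ commutes with $e^{-\frac{\beta}{2}H}$ by spectral theory and $e^{-\frac{\beta}{2}H}P_n$ is bounded, the operator $bP_n = aP_n e^{-\frac{\beta}{2}H}$ is a bounded operator on $\mathbb H$, and a direct computation gives
$$
(bP_n)^*(bP_n) = e^{-\frac{\beta}{2}H}P_n a^*a P_n e^{-\frac{\beta}{2}H}.
$$
Consequently $\Tr(e^{-\frac{\beta}{2}H}P_n a^*a P_n e^{-\frac{\beta}{2}H}) = \sum_{i} \|bP_n\psi_i\|^2$ for any orthonormal basis $\{\psi_i\}$ of $\mathbb H$.

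Next I would choose the basis $\{\psi_i\}$ so that each $\psi_i$ lies in the range of some spectral projection $P_{k_i}$; this is possible by decomposing $\mathbb H$ as the orthogonal sum of the subspaces $(P_k - P_{k-1})\mathbb H$ (with $P_0 := 0$) and picking a basis inside each summand. For $n \geq k_i$ one then has $P_n\psi_i = \psi_i$ and hence $bP_n\psi_i = b\psi_i$, so for each $i$ the sequence $n \mapsto \|bP_n\psi_i\|^2$ is eventually equal to $\|b\psi_i\|^2$. Combining this with the monotonicity of $n \mapsto \sum_i \|bP_n\psi_i\|^2$ (established just before \eqref{02-01-22}), Fatou's lemma applied to the counting measure together with a truncation argument on any finite initial segment of indices yields
$$
\psi_\beta(a^*a) = \lim_{n\to\infty} \sum_{i=1}^\infty \|bP_n\psi_i\|^2 = \sum_{i=1}^\infty \|b\psi_i\|^2.
$$

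It remains to identify this sum with the right-hand side of \eqref{26-12-21}. If $\sum_i \|b\psi_i\|^2 < \infty$, the triangle inequality and Cauchy--Schwarz applied on the algebraic span of $\{\psi_i\}$ show that $b$ extends by continuity to a bounded operator $\bar b$ on $\mathbb H$ that agrees with $b$ on $D(e^{-\frac{\beta}{2}H})$, and $\sum_i \|\bar b\psi_i\|^2 = \sum_i \|b\psi_i\|^2 < \infty$ confirms that $\bar b$ is Hilbert-Schmidt with $\sum_i \|b\psi_i\|^2 = \Tr(\bar b^*\bar b) = \Tr(e^{-\frac{\beta}{2}H}a^*a e^{-\frac{\beta}{2}H})$; conversely, if no such Hilbert-Schmidt extension of $b$ exists, the same construction forces $\sum_i \|b\psi_i\|^2 = \infty$, giving both branches of \eqref{26-12-21}. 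The main point to watch is the unbounded-operator bookkeeping: checking that $bP_n$ really is bounded so the trace identity above is legal, and that for a basis adapted to the spectral decomposition of $H$ the finiteness of $\sum_i \|b\psi_i\|^2$ is equivalent to $b$ admitting a Hilbert-Schmidt extension. Once the basis is chosen as above, this is routine and I expect no serious obstacle.
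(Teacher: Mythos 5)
Your argument is correct, but it takes a different route from the paper's. The paper splits the lemma into two implications: assuming $\psi_\beta(a^*a)<\infty$ it uses the telescoping identity \eqref{02-01-22d} to show that $\{aP_ne^{-\frac{\beta}{2}H}\}_n$ is Cauchy in the Hilbert--Schmidt norm, identifies the limit with $ae^{-\frac{\beta}{2}H}$ on $D(e^{-\frac{\beta}{2}H})$, and reads off the trace formula; conversely, assuming $ae^{-\frac{\beta}{2}H}$ is Hilbert--Schmidt it uses norm convergence of the increasing sequence $aP_ne^{-\beta H}a^*$ together with lower semi-continuity of $\Tr$. You instead fix once and for all an orthonormal basis adapted to the spectral decomposition, reduce everything to the single identity $\psi_\beta(a^*a)=\sum_i\|b\psi_i\|^2$ with $b=ae^{-\frac{\beta}{2}H}$, and observe that finiteness of this sum is equivalent to $b$ admitting a Hilbert--Schmidt extension. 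This is more elementary (no Hilbert--Schmidt Cauchy sequences, no semicontinuity of the trace) and treats both branches of \eqref{26-12-21} simultaneously; the price is the adapted-basis bookkeeping, and note that the paper's Cauchy computation \eqref{02-01-22d} is reused later (with $a=1$, to decide when $\psi_\beta$ is bounded), so it is not wasted effort there. One step you should make explicit: Fatou and truncation only give $\sum_i\|b\psi_i\|^2\leq\lim_nS_n$; the reverse inequality $S_n\leq\sum_i\|b\psi_i\|^2$ follows because for your adapted basis $P_n\psi_i$ equals $\psi_i$ or $0$, so each $S_n$ is literally a partial sum of $\sum_i\|b\psi_i\|^2$ — with that observation the whole limit is just monotone convergence of a series and Fatou is not needed. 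The extension argument at the end (that the bounded operator obtained from the finite span agrees with $ae^{-\frac{\beta}{2}H}$ on all of $D(e^{-\frac{\beta}{2}H})$) does require the approximation $aP_ne^{-\frac{\beta}{2}H}\xi\to ae^{-\frac{\beta}{2}H}\xi$ from spectral theory, exactly as in the paper, but it is routine as you say.
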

\begin{proof} Assume $\psi_\beta(a^*a) < \infty$. Then $\Tr(e^{-\frac{\beta}{2}H}P_na^* aP_ne^{-\frac{\beta}{2}H}) < \infty$ and hence $aP_ne^{-\frac{\beta}{2}H}$ is Hilbert-Schmidt for all $n$. Let $\left\| \ \cdot \ \right\|_{HS}$ denote the Hilbert-Schmidt norm;
$$
\left\|b\right\|_{HS} := \Tr(b^*b)^{\frac{1}{2}} .
$$  
Then, if $m \geq n$,
\begin{equation}\label{02-01-22d}
\begin{split}
&\left\|aP_ne^{-\frac{\beta}{2}H} - aP_me^{-\frac{\beta}{2}H} \right\|_{HS}^2 \\
& = \Tr\left( e^{-\frac{\beta}{2}H}P_na^*a P_n e^{-\frac{\beta}{2}H}\right) +  \Tr\left( e^{-\frac{\beta}{2}H}P_ma^*a P_m e^{-\frac{\beta}{2}H}\right) \\
& \ \ \ \ \  \ -  \Tr\left( e^{-\frac{\beta}{2}H}P_ma^*a P_n e^{-\frac{\beta}{2}H}\right) -  \Tr\left( e^{-\frac{\beta}{2}H}P_na^*a P_m e^{-\frac{\beta}{2}H}\right) \\
&  = \Tr\left( e^{-\frac{\beta}{2}H}P_na^*a P_n e^{-\frac{\beta}{2}H}\right) +  \Tr\left( e^{-\frac{\beta}{2}H}P_ma^*a P_m e^{-\frac{\beta}{2}H}\right) \\
& \ \ \ \ \  \ -  \Tr\left( P_me^{-\frac{\beta}{2}H}P_ma^*a P_n e^{-\frac{\beta}{2}H}P_n\right) -  \Tr\left(P_n e^{-\frac{\beta}{2}H}P_na^*a P_m e^{-\frac{\beta}{2}H}P_m\right) \\ 
&  = \Tr\left( e^{-\frac{\beta}{2}H}P_na^*a P_n e^{-\frac{\beta}{2}H}\right) +  \Tr\left( e^{-\frac{\beta}{2}H}P_ma^*a P_m e^{-\frac{\beta}{2}H}\right) \\
& \ \ \ \ \  \ -  \Tr\left(P_nP_m e^{-\frac{\beta}{2}H}P_ma^*a P_n e^{-\frac{\beta}{2}H}\right) -  \Tr\left( e^{-\frac{\beta}{2}H}P_na^*a P_m e^{-\frac{\beta}{2}H}P_mP_n\right) \\ 
&  = \Tr\left( e^{-\frac{\beta}{2}H}P_na^*a P_n e^{\frac{-\beta}{2}H}\right) +  \Tr\left( e^{-\frac{\beta}{2}H}P_ma^*a P_m e^{-\frac{\beta}{2}H}\right) \\
& \ \ \ \ \  \ -  \Tr\left( e^{-\frac{\beta}{2}H}P_na^*a P_n e^{-\frac{\beta}{2}H}\right) -  \Tr\left( e^{-\frac{\beta}{2}H}P_na^*a P_n e^{-\frac{\beta}{2}H}\right) \\ 
&   = \Tr\left( e^{-\frac{\beta}{2}H}P_ma^*a P_m e^{-\frac{\beta}{2}H}\right) - \Tr\left( e^{-\frac{\beta}{2}H}P_na^*a P_n e^{-\frac{\beta}{2}H}\right) .
\end{split}
\end{equation}
Since $\lim_{k \to \infty} \Tr\left( e^{-\frac{\beta}{2}H}P_ka^*a P_k e^{-\frac{\beta}{2}H}\right) = \psi_\beta(a^*a) < \infty$ this equality implies that $\{aP_ne^{-\frac{\beta}{2}H}\}$ is Cauchy in the Hilbert-Schmidt norm and converges therefore to a Hilbert-Schmidt operator in that norm, and therefore also in the operator norm. By spectral theory $\lim_{n \to \infty}aP_ne^{-\frac{\beta}{2}H}\psi =  ae^{-\frac{\beta}{2}H}\psi$ for all $\psi \in D(e^{-\frac{\beta}{2}H})$ and it follows therefore that $ae^{-\frac{\beta}{2}H}$ is not only bounded, but in fact a Hilbert-Schmidt operator. Furthermore, since $\lim_{n \to \infty} \left\|aP_ne^{-\frac{\beta}{2}H} - ae^{-\frac{\beta}{2}H}\right\|_{HS} =0$ it follows that
$$
\psi_\beta(a^*a) = \Tr(e^{-\frac{\beta}{2}H}a^*a e^{-\frac{\beta}{2}H}).
$$
Conversely, assuming that $ae^{-\frac{\beta}{2}H}$ is Hilbert-Schmidt we have that 
 $$
 \lim_{n \to \infty} P_ne^{-\frac{\beta}{2} H} a^* = e^{-\frac{\beta}{2}H}a^*
 $$ 
 in operator norm since $\lim_{n \to \infty} P_n = 1$ in the strong operator topology and $e^{-\frac{\beta}{2} H} a^*$ is compact. Hence $\{aP_ne^{-\beta H}a^*\}$ converges increasingly and in norm to $ae^{-\beta H}a^*$. By using the lower semi-continuity of $\Tr$ we get 
 \begin{align*}
&
\psi_\beta(a^*a) = \lim_{n \to \infty} \Tr(e^{-\frac{\beta}{2}H}P_n a^*aP_ne^{-\frac{\beta}{2}H}) = \lim_{n \to \infty}\Tr (aP_ne^{-\beta H}a^*) \\
& = \Tr (ae^{-\beta H}a^*) = \Tr (e^{-\frac{\beta}{2}H} a^*ae^{-\frac{\beta}{2}H}) .\\
\end{align*}
\end{proof}

\begin{lemma}\label{03-02-22e} $\psi_\beta$ is a $\beta$-KMS weight for $\sigma$.
\end{lemma}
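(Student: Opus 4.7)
The plan is to invoke Theorem \ref{12-12-13} with a small test subspace, turning the KMS identity into an elementary computation with rank-one operators. The preliminary hypotheses are already in hand: Lemma \ref{stubbekÃ¸bing2} gives density, $\psi_\beta$ is nonzero because $\psi_\beta(|v\rangle\langle v|) = \|e^{-\beta H/2}v\|^2 > 0$ for any unit vector $v \in P_n\mathbb{H}$ with $n$ large enough that $P_n\neq 0$, and $\sigma$-invariance follows inside the limit \eqref{02-01-22} since $e^{isH}$ is unitary and commutes with $P_n$ and $e^{-\beta H/2}$ (all three being bounded Borel functions of $H$).

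I take as test subspace $S := \Span\{|u\rangle\langle v| : u,v\in P_n\mathbb{H} \text{ for some } n\in\mathbb{N}\}$. For $u,v \in P_n\mathbb{H}$ the identity $e^{itH}P_n = e^{itHP_n}P_n$ together with boundedness of $HP_n$ yields $\sigma_t(|u\rangle\langle v|) = e^{itHP_n}|u\rangle\langle v|e^{-itHP_n}$, extending to the entire analytic function $z \mapsto e^{izHP_n}|u\rangle\langle v|e^{-izHP_n}$. Every $\sigma_z(|u\rangle\langle v|)$ is rank-one with range and co-range in $P_n\mathbb{H} \subseteq D(e^{-\beta H/2})$, so Lemma \ref{03-02-22d} places it in $\mathcal{N}_{\psi_\beta} \cap \mathcal{N}_{\psi_\beta}^*$, and hence $S \subseteq \mathcal{M}^\sigma_{\psi_\beta}$.

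The main technical step is showing that $S$ is a core for $\Lambda_{\psi_\beta}$. For $a \in \mathcal{N}_{\psi_\beta}$ I first approximate by $P_naP_n$: norm convergence holds since $a$ is compact and $P_n \to 1$ strongly, while the identity $\|(1-P_n)K\|_{HS}^2 = \|K\|_{HS}^2 - \Tr(K^*P_nK)$ applied to $K := ae^{-\beta H/2}$ combined with monotone convergence for $\Tr$ gives $\Lambda_{\psi_\beta}(P_naP_n) \to \Lambda_{\psi_\beta}(a)$ in Hilbert--Schmidt norm. Next I expand $P_naP_n = \sum_i \lambda_i|u_i\rangle\langle v_i|$ via the singular value decomposition in $P_n\mathbb{H}$; the truncations $k_m := \sum_{i=1}^m \lambda_i|u_i\rangle\langle v_i| \in S$ satisfy $\|k_m - P_naP_n\| = \sup_{i>m}\lambda_i \to 0$ and $\|\Lambda_{\psi_\beta}(k_m - P_naP_n)\|^2 = \sum_{i>m} \lambda_i^2\|e^{-\beta H/2}v_i\|^2 \to 0$ as the tail of the series whose total equals $\|\Lambda_{\psi_\beta}(P_naP_n)\|^2 < \infty$. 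Coupling the operator norm on $\mathbb{K}$ to the Hilbert--Schmidt norm controlled by the unbounded factor $e^{-\beta H/2}$ is what makes this step the main obstacle, though it is ultimately routine.

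For the KMS identity on $S$, take $a = |u\rangle\langle v|$ with $u,v \in P_n\mathbb{H}$. A direct computation yields $\psi_\beta(a^*a) = \|u\|^2\|e^{-\beta H/2}v\|^2$, and $\sigma_{-i\beta/2}(a)\sigma_{-i\beta/2}(a)^* = \|e^{-\beta HP_n/2}v\|^2\,|e^{\beta HP_n/2}u\rangle\langle e^{\beta HP_n/2}u|$ has $\psi_\beta$-value $\|e^{-\beta HP_n/2}v\|^2\cdot\|e^{-\beta H/2}e^{\beta HP_n/2}u\|^2 = \|e^{-\beta H/2}v\|^2\|u\|^2$, using the cancellation $e^{-\beta H/2}e^{\beta HP_n/2}u = u$ for $u \in P_n\mathbb{H}$ and $\|e^{-\beta HP_n/2}v\| = \|e^{-\beta H/2}v\|$ for $v \in P_n\mathbb{H}$. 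Both sides agree, so condition (2) of Theorem \ref{12-12-13} holds and $\psi_\beta$ is a $\beta$-KMS weight for $\sigma$.
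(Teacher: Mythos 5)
Your strategy is the same as the paper's in outline: feed Theorem \ref{12-12-13} a test subspace built from the spectral projections $P_n$, after checking that $\psi_\beta$ is a non-zero, densely defined, $\sigma$-invariant weight. Your choice of $S$ (finite-rank operators supported in some $P_n\mathbb H$) is smaller than the paper's $S=\bigcup_nP_n\mathcal M^\sigma_{\psi_\beta}P_n$, and the extra singular-value-decomposition step you use to approximate $P_naP_n$ by finite-rank truncations is correct; the Hilbert--Schmidt estimates are essentially the ones in the paper. However, there is a genuine gap at the last step. Condition (2) of Theorem \ref{12-12-13} demands $\psi_\beta(a^*a)=\psi_\beta\bigl(\sigma_{-i\frac{\beta}{2}}(a)\sigma_{-i\frac{\beta}{2}}(a)^*\bigr)$ for \emph{every} $a$ in the subspace $S$, and this identity is quadratic in $a$; verifying it on the rank-one generators $|u\rangle\langle v|$ therefore does not establish it for linear combinations, which is what $S=\Span\{\cdots\}$ contains. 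You must close this: either treat a general element of $S$ directly -- writing it as $\sum_i\lambda_i|u_i\rangle\langle v_i|$ with $\{u_i\}$, $\{v_i\}$ orthonormal in $P_n\mathbb H$, one finds that the cross terms survive in $\sigma_{-i\frac{\beta}{2}}(a)\sigma_{-i\frac{\beta}{2}}(a)^*$ but die after applying $\psi_\beta$, both sides reducing to $\sum_i\lambda_i^2\|e^{-\frac{\beta}{2}H}v_i\|^2$ -- or verify the bilinear condition (3) on generators instead, or simply run the paper's trace computation $\psi_\beta\bigl(\sigma_{-i\frac{\beta}{2}}(x)\sigma_{-i\frac{\beta}{2}}(x)^*\bigr)=\Tr(xe^{-\beta H}x^*)=\Tr(e^{-\frac{\beta}{2}H}x^*xe^{-\frac{\beta}{2}H})=\psi_\beta(x^*x)$, which is valid verbatim for any $x$ with $x=P_nxP_n$.

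A smaller imprecision: from $\sigma_z(|u\rangle\langle v|)\in\mathcal N_{\psi_\beta}\cap\mathcal N_{\psi_\beta}^*$ you conclude ``hence $S\subseteq\mathcal M^\sigma_{\psi_\beta}$'', but the definition \eqref{31-01-22e} requires membership in $\mathcal M_{\psi_\beta}$, and in general $\mathcal N_\psi\cap\mathcal N_\psi^*\not\subseteq\mathcal M_\psi$ (for the trace on $\mathbb K$ the former is the Hilbert--Schmidt class, the latter the trace class). The conclusion is nevertheless correct here and cheap to justify: a rank-one operator $|x\rangle\langle y|$ with $x,y\in P_n\mathbb H$ factors as $\|w\|^{-2}(|w\rangle\langle x|)^*(|w\rangle\langle y|)$ with both factors in $\mathcal N_{\psi_\beta}$, hence lies in $\Span\mathcal N_{\psi_\beta}^*\mathcal N_{\psi_\beta}=\mathcal M_{\psi_\beta}$ by (b) of Lemma \ref{04-11-21n}; this step should be made explicit.
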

\begin{proof}
It follows from Lemma \ref{03-02-22d} that 
$$
\mathcal N_{\psi_\beta} = \left\{ a \in \mathbb K: \ ae^{-\frac{\beta}{2} H} \ \text{ is Hilbert-Schmidt} \right\} .
$$ 
Since the Hilbert-Schmidt operators form a two-sided ideal in $B(\mathbb H)$ is follows that $P_n e^{iz H} \mathcal N_{\psi_\beta} \subseteq \mathcal N_{\psi_\beta}$ and $\mathcal N_{\psi_\beta}P_n  e^{iz H}\subseteq \mathcal N_{\psi_\beta}$ for all $z \in \mathbb C$ and hence
\begin{equation}\label{25-09-23j}
 e^{iz H}P_n \mathcal M_{\psi_\beta}P_n e^{-iz H} \subseteq \mathcal M_{\psi_\beta}
\end{equation}
for all $z \in \mathbb C$ since $\mathcal M_{\psi_\beta} = \Span \mathcal N_{\psi_\beta}^* \mathcal N_{\psi_\beta}$ by (b) of Lemma \ref{04-11-21n}. Let $a \in \mathbb K$. Then
$$
z \mapsto e^{iz H}P_naP_n e^{-izH}
$$
is entire holomorphic and agrees with $\sigma_t(P_naP_n)$ for $t \in \mathbb R$. It follows therefore that $P_naP_n \in \mathcal A_\sigma$ and 
$$
\sigma_z(P_naP_n) = e^{iz H}P_naP_n e^{-izH} = P_ne^{iz H}a e^{-izH}P_n
$$
for all $z \in \mathbb C$. It follows from this and \eqref{25-09-23j} that
$$
P_n\mathcal M^\sigma_{\psi_\beta} P_n \subseteq  \mathcal M^\sigma_{\psi_\beta} .
$$
In particular $P_n\mathcal M^\sigma_{\psi_\beta} P_n \subseteq P_{n+1}\mathcal M^\sigma_{\psi_\beta} P_{n+1}$, implying that
$$
S := \bigcup_n P_n\mathcal M^\sigma_{\psi_\beta}P_n 
$$
is a subspace of $\mathcal M^\sigma_{\psi_\beta}$. We claim that $S$ has the properties required of $S$ in Theorem \ref{12-12-13}. To see this, let $b \in \mathcal M^\sigma_{\psi_\beta}$. We will show that
\begin{equation}\label{stubbekÃ¸bing}
\lim_{n \to \infty} P_nbP_n = b
\end{equation}
and
\begin{equation}\label{stubbekÃ¸bing1}
\lim_{n \to \infty} \Lambda_{\psi_\beta}(P_nbP_n)  = \Lambda_{\psi_\beta}(b).
\end{equation}
As pointed out in the proof of Lemma \ref{stubbekÃ¸bing2}, $\lim_{n \to \infty} P_nk = k$ in norm for all $k \in \mathbb K$. \eqref{stubbekÃ¸bing} follows from this because $\mathcal M_{\psi_\beta}$ is spanned by its positive elements. To establish \eqref{stubbekÃ¸bing1}, note that
\begin{equation}\label{31-07-22}
\begin{split}
& \left\| \Lambda_{\psi_\beta}(P_nbP_n) -  \Lambda_{\psi_\beta}(b)\right\| \\
&\leq \left\| \Lambda_{\psi_\beta}(P_nbP_n) -  \Lambda_{\psi_\beta}(P_nb)\right\|  + \left\| \Lambda_{\psi_\beta}(P_nb) -  \Lambda_{\psi_\beta}(b)\right\| \\
& = \left\| \Lambda_{\psi_\beta}(P_nb(1-P_n))\right\|+ \left\| \Lambda_{\psi_\beta}((1-P_n)b)\right\| ,
\end{split}
\end{equation}
\begin{equation}\label{31-07-22a}
\begin{split}
& \left\| \Lambda_{\psi_\beta}(P_nb(1-P_n))\right\|^2 = \Tr(e^{-\frac{\beta}{2}H} (1-P_n)b^*P_nb(1-P_n) e^{-\frac{\beta}{2}H} ) \\
& \leq \Tr(e^{-\frac{\beta}{2}H} (1-P_n)b^*b(1-P_n) e^{-\frac{\beta}{2}H} ) \\
& = \Tr(be^{-\beta H}(1-P_n)b^*) \\
& = \Tr(be^{-\beta H}b^*) - \Tr(be^{-\beta H}P_nb^*) 
\end{split}
\end{equation}
and
\begin{equation}\label{31-07-22b}
\begin{split}
& \left\| \Lambda_{\psi_\beta}((1-P_n)b)\right\|^2 = \Tr(e^{-\frac{\beta}{2}H} b^*(1-P_n)be^{-\frac{\beta}{2}H} ) \\
& = \Tr(e^{-\frac{\beta}{2}H} b^*be^{-\frac{\beta}{2}H} ) -\Tr(e^{-\frac{\beta}{2}H} b^*P_nbe^{-\frac{\beta}{2}H} ).
\end{split}
\end{equation}
The sequence $\{be^{-\beta H}P_nb^*\}$ increases with $n$ and converges in norm to $be^{-\beta H}b^*$ because $be^{-\frac{\beta}{2} H}$ and $e^{-\frac{\beta}{2} H}b^*$ are both Hilbert-Schmidt. Similarly, the sequence $\{e^{-\frac{\beta}{2}H} b^*P_nbe^{-\frac{\beta}{2}H}\}$ increases with $n$ and converges in norm to $e^{-\frac{\beta}{2}H} b^*be^{-\frac{\beta}{2}H}$. It follows therefore from the lower semi-continuity of $\Tr$ and \eqref{31-07-22a} and \eqref{31-07-22b} that 
$$\lim_{n \to \infty} \left\| \Lambda_{\psi_\beta}(P_nb(1-P_n))\right\| = \lim_{n \to \infty} \left\| \Lambda_{\psi_\beta}((1-P_n)b)\right\| = 0 .
$$
Inserted into \eqref{31-07-22} it follows that $\lim_{n \to \infty} \Lambda_{\psi_\beta}(P_nbP_n) =  \Lambda_{\psi_\beta}(b)$ as desired.

 To deduce from Theorem \ref{12-12-13} that $\psi_\beta$ is a $\beta$-KMS weight for $\sigma$ it suffices now to check that $\psi_\beta(x^*x) = \psi_{\beta}\left(\sigma_{-i \frac{\beta}{2}}(x)\sigma_{-i \frac{\beta}{2}}(x)^*\right)$ for $x \in S$. Let $x = P_naP_n$, where $a \in \mathcal M_{\psi_\beta}$. Then 
$$
\sigma_{-i \frac{\beta}{2}}(x) = e^{\frac{\beta}{2} P_nH}P_naP_n e^{-\frac{\beta}{2} P_nH}
$$
and hence
\begin{align*}
& \psi_{\beta}\left(\sigma_{-i \frac{\beta}{2}}(x)\sigma_{-i \frac{\beta}{2}}(x)^*\right) \\
&= \Tr (e^{-\frac{\beta}{2}H}e^{\frac{\beta}{2} P_nH} P_naP_n e^{-\beta P_nH} P_na^*P_n e^{\frac{\beta}{2} P_nH}e^{-\frac{\beta}{2}H} ) \\
& = \Tr(P_naP_ne^{-\beta H}a^*P_n) = \Tr(e^{-\frac{\beta}{2}H}P_na^*P_naP_ne^{-\frac{\beta}{2}H})  = \psi_{\beta}(x^*x) .
\end{align*}

\end{proof}

\begin{lemma}\label{stubbekoebing5} $\psi_\beta$ is the only $\beta$-KMS weight for $\sigma$, up to multiplication by scalars.
\end{lemma}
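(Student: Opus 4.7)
The plan is to exploit the simplicity of $\mathbb{K}$ together with Theorem \ref{05-03-22}. Given any $\beta$-KMS weight $\phi$ for $\sigma$, the ideal $\ker_\phi$ from Lemma \ref{05-12-21a} is a $\sigma$-invariant closed two-sided ideal in $\mathbb{K}$, and since $\mathbb{K}$ is simple and $\phi \neq 0$ we must have $\ker_\phi = \{0\}$. Hence the GNS-representation $\pi_\phi : \mathbb{K} \to B(H_\phi)$ is faithful, and by Lemma \ref{08-02-22} it is also non-degenerate.

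Next I would invoke the classical structure theorem for non-degenerate representations of $\mathbb{K}$ (which is established in Appendix \ref{compact operators} in the course of proving that all flows on $\mathbb{K}$ are of the form \eqref{02-01-22b}): every such representation is unitarily equivalent to a multiple of the identity representation, so after identifying $H_\phi$ with $\mathbb{H} \otimes \mathcal{N}$ for some Hilbert space $\mathcal{N}$, one has $\pi_\phi(k) = k \otimes 1_\mathcal{N}$ and therefore
\[
\pi_\phi(\mathbb{K})'' = B(\mathbb{H}) \otimes 1_\mathcal{N} \cong B(\mathbb{H}) .
\]
In particular $\pi_\phi(\mathbb{K})''$ is a factor, so its center $\mathcal{Z}_\phi$ equals $\mathbb{C}1$, and a fortiori $\mathcal{Z}_\phi^{\sigma''} = \mathbb{C}1$.

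Now let $\phi$ be any $\beta$-KMS weight for $\sigma$ and set $\varphi := \phi + \psi_\beta$, which is a $\beta$-KMS weight by Proposition \ref{14-02-22b}. The argument above applied to $\varphi$ gives $\mathcal{Z}_\varphi^{\sigma''} = \mathbb{C}1$. Since $\phi \leq \varphi$ and $\psi_\beta \leq \varphi$, Theorem \ref{05-03-22} provides scalars $t,s \in (0,1]$ such that $\phi = \varphi_{t \cdot 1}$ and $\psi_\beta = \varphi_{s\cdot 1}$. Unravelling the definition \eqref{01-04-22} of $\varphi_c$ yields $\phi = t\varphi$ and $\psi_\beta = s\varphi$, and hence $\phi = (t/s)\psi_\beta$, which is the required conclusion.

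The main obstacle is the identification of $\pi_\phi(\mathbb{K})''$ as a factor, which relies on the structure theorem for representations of $\mathbb{K}$; this is a standard fact and, as noted, is available from the appendix already cited in this section. Once that is in hand, the lattice and domination theory developed in Chapter \ref{LN} and Chapter on the set of $\beta$-KMS weights does the rest of the work uniformly for all $\beta \in \mathbb{R}$, so no separate treatment of $\beta = 0$ (where $\psi_0$ coincides with the standard trace $\Tr$) is needed.
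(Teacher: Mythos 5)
Your argument is correct, but it is a genuinely different route from the one in the text. The paper stays inside the machinery of Chapter \ref{LN}: it passes to the hereditary full corner $P_n\mathbb K P_n$ determined by a non-zero spectral projection of $H$, checks that $T \mapsto \psi(e^{\frac{\beta}{2}H}Te^{\frac{\beta}{2}H})$ is a $\sigma$-invariant lower semi-continuous trace there, invokes the essential uniqueness of the trace on the compacts, and then applies the uniqueness part of Laca--Neshveyev's theorem twice (once for $0$-KMS weights, once for $\beta$-KMS weights) to conclude $\psi = \lambda\psi_\beta$. You instead establish that the GNS representation of any $\beta$-KMS weight generates a factor and then let Theorem \ref{05-03-22} do the work on $\varphi = \phi + \psi_\beta$; the reduction of $\mathcal Z_\varphi^{\sigma''}$ to $\mathbb C 1$, the identification $\varphi_{t1} = t\varphi$ via Lemma \ref{03-03-22fx}, and the final division $\phi = (t/s)\psi_\beta$ are all sound, and as you note the argument is uniform in $\beta$. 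What your route buys is the avoidance of any computation with $e^{\frac{\beta}{2}H}$ and of the corner reduction; what it costs is reliance on the heavier Chapter-on-lattices machinery and on the classical structure theorem for non-degenerate representations of $\mathbb K$. On that last point your citation is inaccurate: Appendix \ref{compact operators} proves that flows on $\mathbb K$ are unitarily implemented by a direct matrix-unit construction and does \emph{not} establish that every non-degenerate representation of $\mathbb K$ is a multiple of the identity. That fact is standard and easily supplied (or replaced by the observation that $\pi_\varphi(\mathbb K)''$ contains the strong limits of the images of an approximate unit of projections, each of which is a sum of minimal projections of $\pi_\varphi(\mathbb K)''$), but as written you would need to import it from outside the text rather than from the appendix you name.
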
 
\begin{proof} Let $\psi$ be a $\beta$-KMS weight for $\sigma$. Choose $n$ so big that $P_n \neq 0$. Then $P_n\mathbb KP_n$ is a $\sigma$-invariant hereditary $C^*$-subalgebra of $\mathbb K$ which is full in $\mathbb K$ since $\mathbb K$ is simple. It is easy to see that the map
 \begin{equation}\label{stubbekoebing4}
 (P_n\mathbb KP_n)^+ \ni T \mapsto \psi(e^{\frac{\beta}{2}H} T e^{\frac{\beta}{2}H})
 \end{equation}
is a $\sigma$-invariant weight on $P_n\mathbb KP_n$ since $\psi$ is a $\sigma$-invariant weight on $\mathbb K$. To see that it is a trace on $P_n\mathbb KP_n$, let $T \in P_n \mathbb K P_n$.  Then $Te^{\frac{\beta}{2}H} \in \mathcal A_\sigma$ and $\sigma_{-i \frac{\beta}{2}}(Te^{\frac{\beta}{2}H}) = e^{\frac{\beta}{2}H} T$. Since $\psi$ is a $\beta$-KMS weight for $\sigma$ this gives
\begin{align*}
&  \psi(e^{\frac{\beta}{2}H} T^*T e^{\frac{\beta}{2}H}) = \psi(\sigma_{-i \frac{\beta}{2}}(Te^{\frac{\beta}{2}H})\sigma_{-i \frac{\beta}{2}}(Te^{\frac{\beta}{2}H})^*)=\psi(e^{\frac{\beta}{2}H}TT^*e^{\frac{\beta}{2}H}).
\end{align*}
This shows that \eqref{stubbekoebing4} is a $\sigma$-invariant lower semi-continuous trace on $P_n\mathbb K P_n$, i.e. a $0$-KMS weight for $\sigma$ on $P_n\mathbb KP_n$.  Applied to $0$-KMS weights, it follows from Theorem \ref{02-12-21} and the essential uniqueness of the trace on $\mathbb K$ that 
$\psi(e^{\frac{\beta}{2}H} \ \cdot \ e^{\frac{\beta}{2}H}) = \lambda \Tr( \ \cdot \ )$
on $P_n\mathbb KP_n$ for some $\lambda > 0$. It follows that
\begin{align*}
&\psi(K) = \psi(e^{\frac{\beta}{2}H} e^{-\frac{\beta}{2}H}K      e^{-\frac{\beta}{2}H} \ e^{\frac{\beta}{2}H}) \\
&= \lambda \Tr(e^{-\frac{\beta}{2}H}K e^{-\frac{\beta}{2}H} )  = \lambda \psi_{\beta}(K) 
\end{align*}
for all $K \in (P_n\mathbb K P_n)^+$. By using the uniqueness statement in Theorem \ref{02-12-21} again, this time applied to $\beta$-KMS weights, it follows that $\psi = \lambda \psi_\beta$. 
\end{proof}




To decide when $\psi_\beta$ is bounded we use the following lemma.

\begin{lemma}\label{03-02-22h} Let $\varphi : A^+ \to [0,\infty]$ be a weight on the $C^*$-algebra $A$ and let $\{u_i\}_{i \in I}$ an approximate unit for $A$ such that $0 \leq u_i \leq 1$ for all $i$. Then $\varphi$ is bounded if and only if there  is $M > 0$ such that $\varphi(u_i) \leq M$ for all $i$.
\end{lemma}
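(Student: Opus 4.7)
The plan is to handle the two implications separately; neither is difficult, but each uses a different ingredient from the earlier material.

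For the forward direction, assume $\varphi$ is bounded. Then $\mathcal M_\varphi^+ = A^+$, so by Lemma \ref{21-10-23a} there is a positive linear functional $\varphi' \in A^*_+$ with $\varphi'|_{A^+} = \varphi$. A positive linear functional on a $C^*$-algebra is continuous (Proposition 2.3.11 in \cite{BR}), hence $\varphi(u_i) = \varphi'(u_i) \leq \|\varphi'\|\, \|u_i\| \leq \|\varphi'\|$ for all $i$, and we take $M := \|\varphi'\|$.

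For the reverse direction, assume $\varphi(u_i) \leq M$ for all $i$ and fix $a \in A^+$. Since $0 \leq u_i \leq 1$ we have $u_i^2 \leq u_i$, and therefore
$$
u_i a u_i \leq \|a\|\, u_i^2 \leq \|a\|\, u_i
$$
in $A^+$, so by the monotonicity of $\varphi$ we get $\varphi(u_i a u_i) \leq \|a\| \varphi(u_i) \leq \|a\| M$ for every $i$. Moreover, since $\{u_i\}$ is an approximate unit,
$$
\|u_i a u_i - a\| \leq \|u_i\|\, \|a u_i - a\| + \|u_i a - a\| \longrightarrow 0,
$$
so $\lim_i u_i a u_i = a$ in $A^+$. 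The lower semi-continuity of $\varphi$ (in the form recalled just after the definition of semi-weight in Chapter 1) then yields
$$
\varphi(a) \leq \liminf_i \varphi(u_i a u_i) \leq \|a\|\, M < \infty,
$$
which shows that $\varphi$ is bounded.

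There is really no main obstacle here; the only point worth flagging is that one must use the two-sided truncation $u_i a u_i$ rather than $u_i a$ in order both to exploit the bound $\varphi(u_i) \leq M$ via an $A^+$-inequality and to recover $a$ in the limit, and then invoke lower semi-continuity rather than any continuity property of $\varphi$.
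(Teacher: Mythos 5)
Your proof is correct and follows essentially the same route as the paper: the forward direction via Lemma \ref{21-10-23a}, and the reverse direction via the two-sided truncation $u_i a u_i \leq \|a\| u_i^2 \leq \|a\| u_i$ combined with lower semi-continuity. The only cosmetic differences are that the paper extracts a sequence $\{u_{i_n}\}$ from the net (so that the sequential form of lower semi-continuity applies directly) and normalizes to $0 \leq a \leq 1$, whereas you work with the net and a general $a \in A^+$ — both are fine.
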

\begin{proof} Assume that such an $M$ exists, and consider $a \in A$ such that $0 \leq a \leq 1$. For each $n \in \mathbb N$ we pick $i_n\in I$ such that $\left\|u_{i_n}\sqrt{a} - \sqrt{a}\right\| \leq \frac{1}{n}$. Then $\lim_{n \to \infty} u_{i_n}au_{i_n} = a$ and hence the lower semi-continuity of $\varphi$ entails that
$$
\varphi(a) \leq \liminf_n \varphi(u_{i_n} au_{i_n}) \leq \liminf_n \varphi(u_{i_n}^2)\leq \liminf_n \varphi(u_{i_n}) \leq M .
$$
It follows that $\varphi$ is bounded. The reverse implication follows from Lemma \ref{21-10-23a}.
\end{proof} 

Let $\{E_k\}$ be an approximate unit in $\mathbb K$ consisting of projections. By Lemma \ref{03-02-22h} $\psi_\beta$ is bounded if and only if there is $M > 0$ such that $\psi_\beta(E_k) \leq M$ for all $k$. Assume that such an $M$ exists. Then $\Tr(e^{-\frac{\beta}{2}H} P_nE_kP_n e^{-\frac{\beta}{2}H}) \leq \psi_\beta(E_k) \leq M$ for all $k$ and by lower semi-continuity of $\Tr$ with respect to the strong operator topology it follows that $\Tr(e^{-\frac{\beta}{2}H} P_ne^{-\frac{\beta}{2}H}) \leq M$ for all $n$. In particular, $P_ne^{-\frac{\beta}{2}H}$ is Hilbert-Schmidt for all $n$ and the calculation \eqref{02-01-22d} with $a =1$ shows that $\{P_ne^{-\frac{\beta}{2}H}\}$ converges in the Hilbert-Schmidt norm, implying that $e^{-\frac{\beta}{2}H}$ is Hilbert-Schmidt, or alternatively, that $e^{-\beta H}$ is trace-class. Conversely, if $e^{-\beta H}$ is trace-class it follows that $\psi_\beta(E_k) \leq \Tr(e^{-\beta H})$ for all $k$, and $\psi_\beta$ is bounded.

We summarize the results of this section in the following

\begin{thm}\label{02-01-22a} For each $\beta \in \mathbb R$ the flow $\sigma$, given by \eqref{02-01-22b}, has up to multiplication by scalars a unique $\beta$-KMS weight $\psi_\beta$ given by \eqref{26-12-21}. It is bounded if and only if $e^{-\beta H}$ is trace-class.
\end{thm}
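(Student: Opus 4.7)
The theorem is essentially a summary of the lemmas already established in this section, so the plan is simply to assemble them and verify the boundedness criterion cleanly.

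First, I would note that existence of a $\beta$-KMS weight $\psi_\beta$ satisfying the formula \eqref{26-12-21} is immediate: Lemma~\ref{stubbekÃ¸bing2} shows that $\psi_\beta$ defined by \eqref{02-01-22} is densely defined, Lemma~\ref{03-02-22d} identifies its values on elements of the form $a^*a$ with the expression in \eqref{26-12-21}, and Lemma~\ref{03-02-22e} establishes the $\beta$-KMS property for $\sigma$. Uniqueness up to a positive scalar is the content of Lemma~\ref{stubbekoebing5}, so these three results already give the first assertion of the theorem.

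For the boundedness criterion, the plan is to use Lemma~\ref{03-02-22h} applied to an approximate unit consisting of the finite-rank spectral projections $E_k$ (which one may take to be increasing up to $1$ in the strong operator topology and compact). If $\psi_\beta$ is bounded, then $\sup_k \psi_\beta(E_k) \leq M < \infty$, and from the definition of $\psi_\beta$ together with the monotone convergence built into \eqref{02-01-22} we get
\[
\Tr(e^{-\frac{\beta}{2}H} P_n E_k P_n e^{-\frac{\beta}{2}H}) \leq M
\]
for all $n,k$. Letting $k\to\infty$ and using lower semi-continuity of $\Tr$ with respect to the strong operator topology yields $\Tr(e^{-\frac{\beta}{2}H} P_n e^{-\frac{\beta}{2}H}) \leq M$ for all $n$; the Cauchy computation \eqref{02-01-22d} applied with $a=1$ then shows $\{P_n e^{-\frac{\beta}{2}H}\}$ is Cauchy in the Hilbert--Schmidt norm, forcing $e^{-\frac{\beta}{2}H}$ itself to be Hilbert--Schmidt, equivalently $e^{-\beta H}$ to be trace-class.

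Conversely, if $e^{-\beta H}$ is trace-class, then for every $a \in \mathbb K^+$ with $a \leq 1$ one has $P_n a P_n \leq P_n$, so
\[
\Tr(e^{-\frac{\beta}{2}H} P_n a P_n e^{-\frac{\beta}{2}H}) \leq \Tr(e^{-\frac{\beta}{2}H} P_n e^{-\frac{\beta}{2}H}) \leq \Tr(e^{-\beta H}),
\]
and passing to the limit gives $\psi_\beta(a) \leq \Tr(e^{-\beta H})$, hence $\psi_\beta$ is bounded. No step is a genuine obstacle here; the only thing requiring care is the interchange of limits in the forward direction, which is handled uniformly by lower semi-continuity of $\Tr$ together with the Cauchy estimate \eqref{02-01-22d} already proved above.
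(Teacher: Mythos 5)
Your proposal is correct and follows essentially the same route as the paper: the first assertion is obtained by assembling Lemmas \ref{stubbekÃ¸bing2}, \ref{03-02-22d}, \ref{03-02-22e} and \ref{stubbekoebing5}, and the boundedness criterion is proved exactly as in the text via Lemma \ref{03-02-22h}, lower semi-continuity of $\Tr$ and the Cauchy estimate \eqref{02-01-22d} with $a=1$. The only slip is terminological: the $E_k$ should be an approximate unit of finite-rank projections in $\mathbb K$, not spectral projections of $H$ (the latter need not be compact), but this does not affect the argument.
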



Set
$$
C_H = \left\{\beta \in \mathbb R: \  \Tr(e^{-\beta H}) < \infty \right\} .
$$
By Theorem \ref{02-01-22a} this is the set of real numbers $\beta$ for which the flow $\sigma$ on $\mathbb K$ defined by $H$ has a bounded $\beta$-KMS weight.

\begin{lemma}\label{13-02-22} The set $C_H$ is one of the following:
\begin{itemize}
\item $C_H = \emptyset$,
\item $C_H = [r,\infty)$ for some $r > 0$,
\item $C_H = ]r,\infty[$ for some $r \geq 0$,
\item $C_H = (-\infty, r[$ for some $r\leq 0$, or
\item $C_H = (-\infty,r]$ for some $r < 0$,
\end{itemize}
and all possibilities occur.
\end{lemma}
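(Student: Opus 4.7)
The plan is to encode the question in terms of the scalar spectral measure $\mu(E) := \Tr(P(E))$ for Borel $E \subseteq \mathbb{R}$, where $P$ is the projection-valued measure of $H$. Then $\mu$ takes values in $\{0,1,2,\ldots,\infty\}$ (each $P(E)$ is a projection, whose trace equals its rank), and $\mu(\mathbb{R}) = \Tr(I) = \infty$ since $\mathbb{H}$ is infinite-dimensional. By spectral theory $F(\beta) := \Tr(e^{-\beta H}) = \int_{\mathbb{R}} e^{-\beta\lambda}\,d\mu(\lambda)$, so $C_H$ is the convergence locus of a Laplace transform. In particular $F(0) = \mu(\mathbb{R}) = \infty$, so $0 \notin C_H$.

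Next I would check that $C_H$ is convex. For $\beta = t\beta_1 + (1-t)\beta_2$ with $t \in (0,1)$, the factorisation $e^{-\beta\lambda} = (e^{-\beta_1\lambda})^t (e^{-\beta_2\lambda})^{1-t}$ and Hölder's inequality give $F(\beta) \le F(\beta_1)^t F(\beta_2)^{1-t}$, so $F(\beta_1), F(\beta_2) < \infty$ imply $F(\beta) < \infty$. Combined with $0 \notin C_H$, convexity forces $C_H$ to be empty, or contained in $(0,\infty)$, or contained in $(-\infty,0)$.

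The decisive step is showing that if $C_H \cap (0,\infty) \ne \emptyset$ then $C_H$ is upward closed. Fix $\beta_0 \in C_H$ with $\beta_0 > 0$. From $e^{-\beta_0\lambda} \ge 1$ on $(-\infty,0]$ we get $\mu((-\infty,0]) \le F(\beta_0) < \infty$, so this number is a finite non-negative integer $N$, and hence $P((-\infty,0])$ has rank $N$. Then $H$ restricted to its range is a self-adjoint operator on an $N$-dimensional space, whose spectrum consists of finitely many eigenvalues in some bounded interval $[-M,0]$; consequently $\mu|_{(-\infty,0]}$ is supported on a bounded set. For any $\beta \ge \beta_0$, $\int_{[-M,0]} e^{-\beta\lambda}\,d\mu \le e^{\beta M} N < \infty$, while $\int_{[0,\infty)} e^{-\beta\lambda}\,d\mu \le \int_{[0,\infty)} e^{-\beta_0\lambda}\,d\mu < \infty$. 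So $[\beta_0,\infty) \subseteq C_H$, and combined with $0 \notin C_H$ this yields $C_H = [r,\infty)$ with $r > 0$ or $C_H = (r,\infty)$ with $r \ge 0$. The case $C_H \subseteq (-\infty,0)$ follows by applying the same argument to $-H$. I expect the only genuine obstacle to be this step, and the crucial ingredient is the integrality of $\mu$: without it $\mu|_{(-\infty,0]}$ could have unbounded support while still having finite total mass, and $C_H$ could then be a bounded interval.

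To realise each of the five cases, take $H$ diagonal on $\ell^2(\mathbb{N})$ with eigenvalues $\{\pm n : n \in \mathbb{N}\}$ for $C_H = \emptyset$; $\{n : n \in \mathbb{N}\}$ for $C_H = (0,\infty)$; $\{r^{-1}\log n : n \ge 2\}$ for $C_H = (r,\infty)$ with $r > 0$; and $\{r^{-1}(\log n + 2\log\log n) : n \ge 3\}$ for $C_H = [r,\infty)$ with $r > 0$ (the endpoint is convergent by comparison with $\sum 1/(n(\log n)^2)$, while below it the series diverges by an integral test). Negating the eigenvalue sequences covers the three symmetric cases with $C_H \subseteq (-\infty,0)$.
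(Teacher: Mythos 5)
Your proposal is correct and follows essentially the same route as the paper: exclude $0$ from $C_H$, show $C_H$ is an interval, show it is upward (resp.\ downward) closed by observing that the spectral projection onto the ``wrong'' half-line has finite rank, and realise all five cases with diagonal operators. The only differences are cosmetic refinements — your H\"older inequality makes the interval claim more explicit than the paper's appeal to strict convexity of $\beta \mapsto \Tr(e^{-\beta H})$, and you write out the eigenvalue sequences that the paper merely asserts to exist.
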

\begin{proof} Unless $H =0$ the function $\beta \mapsto \Tr(e^{-\beta H})$ is strictly convex on $C_H$ since the exponential function is strictly convex and $\Tr$ is faithful. It follows therefore that $C_H$ is an interval. Since $0 \notin C_H$, because $\Tr$ is unbounded, the interval $C_H$ is contained in $]0,\infty)$ or $(-\infty,0[$. Since $C_H = -C_{-H}$, to prove the first part of the statement, it suffices to show
$$
0 < \beta \in C_H, \ \beta < \beta' \Rightarrow \beta' \in C_H.
$$ 
As above we let $P$ be the projection valued measure on $\mathbb R$ representing $H$ and set $P_+ = P[0,\infty)$ and $P_- = P(-\infty,0)$. Since $e^{-\beta H}$ is trace class and $e^{-\beta H} = e^{-\beta H}P_+ + e^{-\beta H}P_- \geq P_-$, it follows that $P_-$ is finite dimensional and hence $e^{-\beta H}P_-$ and $e^{-\beta' H}P_-$ are both of trace class. Since $e^{-\beta' H}P_+ \leq e^{-\beta H}P_+$, it follows that $e^{-\beta' H} = e^{-\beta' H}P_+ + e^{-\beta' H}P_-$ is trace class, i.e. $\beta' \in C_H$.

To show that all possibilities occur observe that $C_H = \emptyset$ when $H = 0$. Let $\{a_n\}_{n=1}^\infty$ be a sequence of real numbers and $\{\psi_n\}_{n=1}^\infty$ an orthonormal basis in $\mathbb H$. We can then define a self-adjoint operator $H$ on $\mathbb H$ such that
$$
H\psi_n = a_n\psi_n .
$$
In this case,
$$
C_H = \left\{ \beta \in \mathbb R: \ \sum_{n=1}^\infty e^{-\beta a_n} < \infty \right\}.
$$ 
For each $r \geq 0$ it is possible to construct a sequence $\{b_n\}$ of positive numbers such that $\sum_{n=1}^\infty b_n^\beta < \infty$ if and only if $\beta > r$ so if we use $a_n = \log b_n$ to define $H$ we get $C_H = (r,\infty[$. If $r > 0$ it is also  possible to construct a sequence $\{b'_n\}$ of positive numbers such that $\sum_{n=1}^\infty {b'_n}^\beta < \infty$ if and only if $\beta \geq r$ and when we then use $a_n = \log b'_n$ to define $H$ we get $C_H = [r,\infty[$. The intervals $]-\infty,-r)$ and $]-\infty,r]$ are then realized by using the sequences $\{-a_n\}$ instead of $\{a_n\}$. 
\end{proof}

\section{Flows on abelian $C^*$-algebras}\label{abelian}

Given a locally compact Hausdorff space $X$ we denote by $C_0(X)$ the $C^*$-algebra of continuous functions on $X$ that vanish at infinity. A \emph{flow} on $X$ is a continuous representation $\phi = (\phi_t)_{t \in \mathbb R}$ of $\mathbb R$ by homeomorphisms. In more detail, each $\phi_t$ is a homeomorphism $\phi_t : X \to X$, and
\begin{itemize}
\item $\phi_s(\phi_t(x)) = \phi_{t+s}(x)$ for all $s,t \in \mathbb R$ and all $x \in X$, and
\item the map $\mathbb R \times X \ni (t,x) \mapsto \phi_t(x) \in X$ is continuous.
\end{itemize}
A flow $\phi$ on $X$ gives rise to a flow $\sigma^\phi$ on $C_0(X)$ defined such that $\sigma^\phi_t(f) = f \circ \phi_t$, and all flows on $C_0(X)$ arises in this way. See Section 2.1 of \cite{Wi}.

\begin{lemma}\label{03-01-22a} Let $X$ be a second countable locally compact Hausdorff space and let $\psi : C_0(X)^+ \to [0,\infty]$ be a densely defined weight on $C_0(X)$. There is a regular measure $\mu$ on $X$ such that
\begin{equation}\label{03-01-22c}
\psi(f) = \int_X f \ \mathrm d\mu    
\end{equation}
for all $f \in C_0(X)^+$.
\end{lemma}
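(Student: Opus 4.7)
The plan is to reduce the statement to the Riesz--Markov--Kakutani representation theorem via Combes' theorem in the separable case, Theorem \ref{09-11-21h}. Since $X$ is second countable locally compact Hausdorff, $C_0(X)$ is separable. Hence Theorem \ref{09-11-21h} applies and gives a sequence $\{\omega_n\}_{n=1}^\infty$ in $C_0(X)^*_+$ such that
\[
\psi(f) = \sum_{n=1}^\infty \omega_n(f) \ \ \forall f \in C_0(X)^+.
\]

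First I would apply the Riesz--Markov--Kakutani representation theorem to each $\omega_n$. This gives finite regular Borel measures $\mu_n$ on $X$ such that $\omega_n(f) = \int_X f \, \mathrm d \mu_n$ for every $f \in C_0(X)$, with $\mu_n(X) = \|\omega_n\|$. Define
\[
\mu(B) := \sum_{n=1}^\infty \mu_n(B)
\]
for every Borel set $B \subseteq X$. Countable additivity of $\mu$ follows from Tonelli's theorem (interchanging two countable sums of non-negative numbers), so $\mu$ is a Borel measure on $X$. By monotone convergence,
\[
\int_X f \, \mathrm d \mu = \sum_{n=1}^\infty \int_X f \, \mathrm d \mu_n = \sum_{n=1}^\infty \omega_n(f) = \psi(f)
\]
for every $f \in C_0(X)^+$, which is the desired identity.

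The main technical point is to establish that $\mu$ is regular. The key observation is that $\mu$ is finite on compact sets: given a compact $K \subseteq X$, Urysohn's lemma (and the fact that $\psi$ is densely defined on $C_0(X)^+$) produces an $f \in C_c(X)^+$ with $f \geq 1_K$ and $\psi(f) < \infty$; then $\mu(K) \leq \int_X f \, \mathrm d \mu = \psi(f) < \infty$. Since $X$ is second countable locally compact Hausdorff, it is in particular $\sigma$-compact and has a countable basis of relatively compact open sets. A standard theorem from measure theory (see e.g.\ Theorem 2.18 in Rudin's \emph{Real and Complex Analysis}) then asserts that every Borel measure on such a space which is finite on compact sets is automatically regular, in the sense that it is outer regular on Borel sets and inner regular (by compact sets) on open sets and on Borel sets of finite measure. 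This gives the regularity of $\mu$ and completes the proof.
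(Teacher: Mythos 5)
Your proposal is correct and follows essentially the same route as the paper: reduce to Theorem \ref{09-11-21h} via separability of $C_0(X)$, represent each $\omega_n$ by a finite regular Borel measure, sum the measures, interchange sum and integral by monotone convergence, and establish regularity by showing $\mu$ is finite on compact sets using Urysohn's lemma together with the dense definedness of $\psi$. The paper's treatment of the compact-set estimate is just slightly more explicit (it approximates a Urysohn function equal to $2$ on $K$ by some $g$ with $\psi(g)<\infty$ and $\|f-g\|\leq\tfrac12$, so that $g\geq 1$ on $K$), but this is the same argument you sketch.
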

\begin{proof} Since we assume that $X$ is second countable the $C^*$-algebra $C_0(X)$ is separable and it follows therefore from Theorem \ref{09-11-21h} that there is a sequence $\{\omega_n\}_{n=1}^\infty$ of positive functionals on $C_0(\mathbb R)$ such that 
$$
\psi(f) =\sum_{n=1}^\infty\omega_n(f)
$$ for all $f \in C_0(X)^+$. By the Riesz representation theorem, \cite{Ru0}, there are bounded Borel measures $\mu_n$ on $\mathbb R$ such that 
$$
\omega_n(f) = \int_\mathbb R f \ \mathrm d\mu_n \ \ \forall f \in C_0(\mathbb R).
$$
Define a Borel measure $\mu$ on $\mathbb R$ such that 
$$
\mu(A) = \sum_{n=1}^\infty \mu_n(A)
$$
for all Borel sets $A \subseteq \mathbb R$. For any $f \in C_0(\mathbb R)^+$, by approximating $f$ by an increasing sequence of simple functions it follows that
$$
\int_\mathbb R f \ \mathrm{d} \mu = \sum_{n=1}^\infty \int_{\mathbb R} f \ \mathrm d\mu_n .
$$
Hence $\int_\mathbb R f \ \mathrm{d} \mu = \sum_{n=1}^\infty \omega_n(f) = \psi(f)$. To show that $\mu$ is regular, let $K \subseteq X$ be a compact subset. By Urysohn's lemma there is a continuous function $f : X \to [0,2]$ of compact support such that $f(x) = 2$ for all $x \in K$. Since $\psi$ is densely defined there is a $g \in C_0(X)^+$ such that $\psi(g) < \infty$ and $\left\|f - g \right\| \leq \frac{1}{2}$. Then $g(x) \geq 1$ for all $x \in K$ and hence 
$$
\mu(K) \leq \int_X g \ \mathrm d\mu = \psi(g) < \infty .
$$
This shows that $\mu$ is regular.
\end{proof}

When $\phi$ is a flow on $X$ and $\sigma^\phi$ is the associated flow on $C_0(X)$, the $\sigma^\phi$-invariant densely defined weights are given by integration with respect to regular Borel measures $\mu$ on $X$ that are $\phi$-invariant in the sense that $\mu \circ \phi_t = \mu$ for all $t \in \mathbb R$. Hence the $0$-KMS weights for $\sigma^\phi$ are in bijective correspondence with the non-zero regular $\phi$-invariant measures on $X$ via the equation \eqref{03-01-22c}. As pointed out after Definition \ref{02-05-22a}, when $\beta \neq 0$ all $\beta$-KMS weights are passive. In terms of the flow $\phi$ they arise from regular measures on the set of fixed points for $\phi$. In particular essential $\beta$-KMS weights correspond to $\phi$-invariant regular measures that are not concentrated on the fixed points of $\phi$. 

To summarize, for $\beta \neq 0$ the $\beta$-KMS weights all arise from integration with respect to a regular Borel measure on $\left\{x \in X: \ \phi_t(x) = x \ \forall t \in \mathbb R\right\}$ while the $0$-KMS weights arise from integration with respect to a general regular $\phi$-invariant Borel measure. In particular, the set of $\beta$-KMS weights is the same for all $\beta \neq 0$ while the set of $0$-KMS weights is a larger set in general.

\section{Exterior equivalence}\label{inner flows}

In this section we shall use the multiplier algebra $M(A)$ of a $C^*$-algebra $A$ and the strict topology on $M(A)$. The reader may find the definitions in Appendix \ref{multipliers}.

Two flows $\sigma$ and $\sigma'$ on the same $C^*$-algebra $A$ are \emph{exterior equivalent} when there is a unitary representation $u = (u_t)_{t \in \mathbb R}$ of $\mathbb R$ by multipliers of $M(A)$ such that
\begin{itemize}
\item[(a)] $\mathbb R \ni t \mapsto u_t$ is strictly continuous, 
\item[(b)] $u_{s+t} = u_s\sigma_s(u_t)$ for all $s,t \in \mathbb R$, and
\item[(c)] $\sigma'_t = \Ad u_t \circ \sigma_t$ for all $t \in \mathbb R$,
\end{itemize}
cf. Section 2.2 of \cite{Co1} or Section 8.11 in \cite{Pe}. 

\begin{thm}\label{06-06-22b} Let $\sigma$ and $\sigma'$ be exterior equivalent flows on $A$. For each $\beta \in \mathbb R$ there is an affine bijection $\KMS(\sigma,\beta) \simeq \KMS(\sigma',\beta)$.
\end{thm}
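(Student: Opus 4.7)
The plan is to construct the bijection $\psi \mapsto \psi'$ by perturbing the GNS data of $\psi$ using the unitary cocycle $\{u_t\}$, and then invoking the characterization of GNS triples of $\beta$-KMS weights in Proposition \ref{08-02-22a}. Given $\psi \in \KMS(\sigma,\beta)$, I first extend $\pi_\psi$ to a non-degenerate $*$-homomorphism $\tilde\pi_\psi : M(A) \to B(H_\psi)$; strict continuity of $t \mapsto u_t$ yields strong continuity of $v_t := \tilde\pi_\psi(u_t)$. Combining the cocycle identity $u_{s+t} = u_s \sigma_s(u_t)$ with the intertwining relation \eqref{08-03-22a} shows that $V_t := v_t U^\psi_t$ is a strongly continuous one-parameter unitary group on $H_\psi$ satisfying $V_t \pi_\psi(a) V_{-t} = \pi_\psi(\sigma'_t(a))$, so $V$ implements the flow $\sigma'$ on $\pi_\psi(A)$.

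Next, I would define a closed linear map $\Lambda' : D(\Lambda') \to H_\psi$ on a dense left ideal $D(\Lambda') \subseteq A$ so that the triple $(H_\psi, \Lambda', \pi_\psi)$ satisfies conditions (A)--(E) of Section \ref{GNS-KMS} for the flow $\sigma'$ at inverse temperature $\beta$, with $V$ as the implementing unitary group in (D) and $J_\psi$ (from Lemma \ref{04-03-22}) as the modular conjugation in (E). On the dense subalgebra $\mathcal M_\psi^\sigma \cap \mathcal M_\psi^{\sigma'}$ of elements analytic for \emph{both} flows (nonempty and dense via iteration of the smoothing operators of Lemma \ref{07-12-21}), the natural formula to try is $\Lambda'(a) := v_{-i\beta/2}^*\Lambda_\psi(a)$, where $v_{-i\beta/2}$ is the analytic extension of $t \mapsto v_t$ interpreted via Stone's theorem applied to $V_t$ relative to $U^\psi_t$. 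Proposition \ref{08-02-22a} will then deliver a $\beta$-KMS weight $\psi'$ for $\sigma'$ whose GNS triple is isomorphic to $(H_\psi,\Lambda',\pi_\psi)$. Affinity will be immediate from linearity of the construction in $\Lambda_\psi$. For bijectivity, the inverse map is the symmetric construction for the $\sigma'$-cocycle $\{u_t^*\}$: one verifies $u_{s+t}^* = u_s^*\sigma'_s(u_t^*)$ and $\sigma_t = \Ad u_t^* \circ \sigma'_t$, and then the uniqueness clause in Proposition \ref{08-02-22a} guarantees that composing the two constructions returns the original weight.

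The main obstacle I expect is making rigorous sense of the analytic continuation $v_{-i\beta/2}$ and verifying the isometry in condition (E), since $u$ as a multiplier does not a priori admit a holomorphic extension. As a fallback I would pass to the von Neumann algebra $N := \pi_\psi(A)''$ and the normal extension $\psi''$ of Theorem \ref{05-03-22x}, whose modular group is $\sigma''_{-\beta t}$; the element $v_t$ is then a unitary $\sigma''$-cocycle in $N$, and the perturbed normal flow $\Ad v_t \circ \sigma''_{-\beta t}$ is precisely $(\sigma')''_{-\beta t}$. One then invokes the Tomita--Takesaki cocycle Radon--Nikodym correspondence to produce a unique normal faithful semi-finite weight $\phi$ on $N$ with modular group $(\sigma')''_{-\beta t}$, and sets $\psi' := \phi \circ \pi_\psi$; this is densely defined by semi-finiteness of $\phi$ combined with Kaplansky density, non-zero by faithfulness of $\phi$, and a $\beta$-KMS weight for $\sigma'$ by the KMS condition for $\phi$ at the von Neumann algebra level together with Kustermans' theorem \ref{24-11-21d}. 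The uniqueness of normal extensions (Theorem \ref{05-04-22}) then identifies $\phi$ with $(\psi')''$, closing the loop and ensuring that the two candidate constructions agree.
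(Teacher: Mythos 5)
Your proposal goes down a genuinely different road from the paper, and as written it has real gaps. The paper's proof is a short application of machinery already in place: it forms the flow $\gamma$ on $M_2(A)$ built from the cocycle, observes that the two diagonal corners $\iota_1(A)$ and $\iota_2(A)$ are hereditary, full, $\gamma$-invariant copies of $(A,\sigma)$ and $(A,\sigma')$ respectively, and applies Theorem \ref{07-06-22e} (the strengthened Laca--Neshveyev theorem) twice to get $\KMS(\sigma,\beta)\simeq\KMS(\gamma,\beta)\simeq\KMS(\sigma',\beta)$. No perturbation of GNS data and no modular theory is needed.

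The concrete problems with your argument are these. In the primary construction, $v_{-i\beta/2}$ is in general an unbounded, non-everywhere-defined operator (if $v_t=e^{ith}$ it is $e^{\beta h/2}$), so the formula $\Lambda'(a):=v_{-i\beta/2}^*\Lambda_\psi(a)$ has no meaning until you show $\Lambda_\psi(a)$ lies in its domain, and condition (E) — the isometry $\|\Lambda'(a)\|=\|\Lambda'(\sigma'_{-i\beta/2}(a)^*)\|$ — cannot be verified without precisely the domain and commutation control you admit you lack; this is not a technical footnote but the entire content of the construction. The fallback is the standard von Neumann algebraic route, but it rests on the converse of Connes' cocycle Radon--Nikodym theorem for weights, a substantial result that is not among the facts quoted in Section \ref{modular1} and is nowhere established in this text; and even granting it, three steps remain assertions rather than proofs: that $\phi\circ\pi_\psi$ is \emph{norm}-densely defined on $A^+$ (semi-finiteness of $\phi$ gives only $\sigma$-weak density, and Kaplansky does not convert this into elements of $A$ with finite weight), that the von Neumann-level KMS property of $\phi$ descends to the $C^*$-algebraic condition (4) of Theorem \ref{24-11-21d} for $\sigma'$, and that the resulting map is affine — which is not ``immediate,'' since both the GNS representation and the normal extension $\psi''$ depend nonlinearly on $\psi$. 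In the paper's argument affinity comes for free because both bijections are restriction maps, which are manifestly affine by the uniqueness clause of Laca--Neshveyev.
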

\begin{proof} Let $M_2(A)$ denote the $C^*$-algebra of $2 \times 2$ matrices over $A$. Define the flow $\gamma$ on $M_2(A)$ such that
$$
\gamma_t\left( \begin{smallmatrix} a & b \\ c & d \end{smallmatrix} \right) = \left(\begin{matrix} \sigma_t(a) & \sigma_t(b)u_{-t} \\ u_t \sigma_t(c) & \sigma'_t(d) \end{matrix} \right) ,
$$
cf. Lemme 2.2.6 of \cite{Co1} or \cite{Pe}. The embeddings $\iota_i : A \to M_2(A), \ i = 1,2$, defined by
$$
\iota_1(a) = \left( \begin{smallmatrix} a & 0 \\ 0 & 0 \end{smallmatrix} \right)
$$
and
$$
\iota_2(a) = \left( \begin{smallmatrix} 0 & 0 \\ 0 & a \end{smallmatrix} \right) 
$$
identify $A$ as a $C^*$-subalgebra of $M_2(A)$ in different ways. Note that $\iota_1(A)$ and $\iota_2(A)$ are both hereditary and full $C^*$-subalgebras of $M_2(A)$. Since $\iota_i$ is equivariant and a $*$-isomorphism onto its image it follows from Lemma \ref{01-10-23a} and Theorem \ref{07-06-22e} that $\psi \mapsto \psi \circ \iota_1$ is a bijection $\KMS(\gamma,\beta) \simeq \KMS(\sigma,\beta)$ and $\psi \mapsto \psi \circ \iota_2$ is a bijection $\KMS(\gamma,\beta) \simeq \KMS(\sigma',\beta)$. Hence $\KMS(\sigma,\beta) \simeq \KMS(\gamma,\beta) \simeq \KMS(\sigma',\beta)$.

\end{proof}

\subsection{A special case of Theorem \ref{06-06-22b}}\label{05-09-22g}
Let $\sigma$ be flow on $A$ and $u = (u_t)_{t \in \mathbb R}$ a strictly continuous unitary representation of $\mathbb R$ by multipliers of $M(A)$ such that $\sigma_t(u_s) = u_s$ for all $s,t$. Then 
$$
\sigma'_t := \Ad u_t \circ \sigma_t
$$
is a flow on $A$ exterior equivalent to $\sigma$. In this setting we can give an explicit description of the bijection $\KMS(\sigma,\beta) \simeq \KMS(\sigma',\beta)$ of Theorem \ref{06-06-22b}. For this we consider $A$ as a non-degenerate $C^*$-subalgebra of $B(\mathbb H)$, the $C^*$-algebra of bounded operators on the Hilbert space $\mathbb H$. Here non-degenerate means that $\left\{a\xi : \ a \in A, \ \xi \in H\right\}$ spans a dense subspace of $\mathbb H$. We can then identify $M(A)$ as
$$
M(A) = \left\{ m \in B(\mathbb H) : \ mA \subseteq A, \ Am \subseteq A \right\} ;
$$ 
cf. Lemma \ref{26-09-22}. Then $u$ is a strongly continuous one-parameter group of unitaries on $\mathbb H$ and by Stone's theorem there is a self-adjoint operator $H$ on $\mathbb H$ such that 
$$
u_t = e^{it H}  .
$$

In the following we denote by $C_0(\mathbb R)$ the algebra of continuous functions on $\mathbb R$ that vanish at infinity and by $C_c(\mathbb R)$ the sub-algebra of continuous functions on $\mathbb R$ with compact support, and we extend each automorphism of $A$ to a unique automorphism of $M(A)$ which is continuous for the strict topology on norm-bounded sets of $M(A)$; see Remark \ref{27-09-23} in Appendix \ref{multipliers}. In particular, $\sigma_t \in \Aut M(A)$ for all $t \in \mathbb R$.

We shall need the Fourier transform and since there are confusingly many different conventions regarding this transform we will be explicit: Given a function $f \in L^1(\mathbb R)$ we define the Fourier transform $\hat{f}$ of $f$ to be the function
$$
\hat{f}(x) := \int_\mathbb R e^{ixt} f(t) \ \mathrm dt .
$$ 
With this definition we follow \cite{KR}. 

\begin{lemma}\label{11-10-23} The set $\left\{ \hat{f} : \ f \in C_c(\mathbb R)\right\}$ is a dense $*$-subalgebra of $C_0(\mathbb R)$.
\end{lemma}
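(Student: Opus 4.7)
The plan is to verify first that $\hat{f}\in C_0(\mathbb R)$ for every $f\in C_c(\mathbb R)$, then check the three algebraic closure properties, and finally invoke the Stone--Weierstrass theorem for $C_0(\mathbb R)$. All of the ingredients are standard calculus/measure-theoretic facts, so the proof is essentially an assembly job; the only place where one has to be slightly careful is making sure that the auxiliary functions produced live in $C_c(\mathbb R)$ and not merely in $L^1(\mathbb R)$.

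First I would note that $C_c(\mathbb R)\subseteq L^1(\mathbb R)$, so $\hat f$ is defined, continuous, and bounded by $\|f\|_1$; by the Riemann--Lebesgue lemma $\hat f\in C_0(\mathbb R)$. Thus $\{\hat f: f\in C_c(\mathbb R)\}\subseteq C_0(\mathbb R)$. Linearity of the Fourier transform gives closure under sums and scalar multiples. For closure under products, recall $\widehat{f*g}=\hat f\,\hat g$, and observe that if $f,g\in C_c(\mathbb R)$ then $f*g\in C_c(\mathbb R)$ because $\operatorname{supp}(f*g)\subseteq\operatorname{supp}f+\operatorname{supp}g$, which is compact. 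For closure under complex conjugation, put $\tilde f(t):=\overline{f(-t)}$; a direct change of variable gives $\widehat{\tilde f}(x)=\overline{\hat f(x)}$, and $\tilde f\in C_c(\mathbb R)$ whenever $f$ is.

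For density I would apply the Stone--Weierstrass theorem in the form valid for $C_0(X)$ with $X$ locally compact Hausdorff: a self-adjoint subalgebra of $C_0(X)$ which separates the points of $X$ and vanishes nowhere is norm-dense. The self-adjointness has just been shown. To separate points, fix $x\neq y$ in $\mathbb R$ and pick any nonzero $g\in C_c(\mathbb R)^+$; then $f(t):=e^{-ixt}g(t)\in C_c(\mathbb R)$ and $\hat f(x)=\int g(t)\,dt>0$, while for $y\neq x$ one checks by choosing $g$ appropriately (e.g.\ a suitable bump function) that $\hat f(y)\neq\hat f(x)$; more cleanly, if $\hat f(x)=\hat f(y)$ for \emph{every} $f\in C_c(\mathbb R)$, then $\int f(t)(e^{ixt}-e^{iyt})\,dt=0$ for all such $f$, forcing $e^{ixt}=e^{iyt}$ for all $t$ and hence $x=y$. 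To see that the algebra does not vanish at any point $x\in\mathbb R$, set $f(t):=e^{-ixt}g(t)$ for a nonzero $g\in C_c(\mathbb R)^+$; then $\hat f(x)=\int g(t)\,dt>0$.

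Combining these facts, Stone--Weierstrass yields that $\{\hat f:f\in C_c(\mathbb R)\}$ is norm-dense in $C_0(\mathbb R)$, completing the proof. The only mild obstacle is the point-separation step, but the translation trick $f(t)\mapsto e^{-ixt}f(t)$ makes it transparent.
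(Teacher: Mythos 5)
Your proof is correct and follows essentially the same route as the paper: verify that $\{\hat f: f\in C_c(\mathbb R)\}$ is a self-adjoint subalgebra of $C_0(\mathbb R)$ (via $f\star g$ and $t\mapsto\overline{f(-t)}$) and then invoke the Stone--Weierstrass theorem. The only difference is in the final step, where the paper separates points and checks non-vanishing with a bump function concentrated near a $t_0$ with $e^{ixt_0}\neq e^{iyt_0}$, whereas you use the modulation trick $f(t)=e^{-ixt}g(t)$ for non-vanishing and an annihilation argument (all $\hat f$ agreeing at $x$ and $y$ forces $e^{ixt}=e^{iyt}$ for all $t$) for separation; both are valid.
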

\begin{proof} Observe $f \star g$ and $t \mapsto g(-t)$ are elements of $C_c(\mathbb R)$ when $f,g$ are. Using this, and despite that Rudin uses other conventions, it follows from (c) and (d) of Theorem 9.2 in \cite{Ru0} that $\left\{ \hat{f} : \ f \in C_c(\mathbb R)\right\}$ is a $*$-subalgebra of $C_0(\mathbb R)$. (This follows also (almost) from Corollary 3.2.28 in \cite{KR}.) Thanks to the Stone-Weierstrass theorem it suffices therefore to show that this algebra separates points of $\mathbb R$ and does not vanish at any point. Let $x \neq y$ in $\mathbb R$. There is then a $t_0 \in \mathbb R$ such that $e^{ixt_0} \neq e^{iyt_0}$; say $\left|e^{ixt_0} - e^{iyt_0}\right| \geq  \epsilon > 0$. We can then choose a function $f \in C_c(\mathbb R)$ with a support concentrated around $t_0$ such that $\int_\mathbb R f(s) \ \mathrm ds = 1$ and
$$
\left|\int_\mathbb R e^{ixs} f(s) \ \mathrm ds  - e^{it_0x}\right| \leq \frac{\epsilon}{3} 
$$
and
$$ 
\left|\int_\mathbb R e^{iys} f(s) \ \mathrm ds  - e^{it_0y}\right| \leq \frac{\epsilon}{3} .
$$
Then $\hat{f}(x) \neq \hat{f}(y)$ and $\hat{f}(x) \neq 0$ if $\epsilon < 1$.
\end{proof}

\begin{lemma}\label{23-01-22} $f(H) \in M(A)$ for all $f\in C_0(\mathbb R)$ and $\sigma_t(f(H)) = f(H)$ for all $t \in \mathbb R$.
\end{lemma}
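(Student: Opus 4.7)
The plan is to prove the statement first for the dense subalgebra $\{\hat f : f \in C_c(\mathbb R)\}$ of $C_0(\mathbb R)$ supplied by Lemma \ref{11-10-23}, and then pass to a norm-limit using that $M(A)$ is norm-closed in $B(\mathbb H)$.

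For $f \in C_c(\mathbb R)$ the integral
\[
T_f := \int_\mathbb R f(t)\, u_t \ \mathrm dt
\]
makes sense as a strict integral in $M(A)$ since $t \mapsto u_t$ is strictly continuous and $f$ has compact support. I first show that, viewed in $B(\mathbb H)$, this integral coincides with $\hat f(H)$. For this, for any $\xi \in \mathbb H$ the map $t \mapsto f(t)u_t\xi$ is norm-continuous and compactly supported, so $T_f \xi = \int_\mathbb R f(t) u_t\xi \ \mathrm dt$ in $\mathbb H$, and by spectral theory applied to $H$ this equals $\hat f(H)\xi$. Hence $\hat f(H) = T_f$ and the latter lies in $M(A)$, which moreover can be seen directly: for $a \in A$ the function $t \mapsto f(t) u_t a$ is norm-continuous with compact support, so $\hat f(H) a = \int_\mathbb R f(t) u_t a \ \mathrm dt$ is a norm-convergent Bochner integral in $A$, and similarly for $a\hat f(H)$.

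The invariance $\sigma_t(\hat f(H)) = \hat f(H)$ is then proved by testing against an arbitrary $a \in A$: using the standard formula $\sigma_t(m) a = \sigma_t(m \sigma_{-t}(a))$ for the extension of $\sigma_t$ to $M(A)$, together with the norm-continuity of $\sigma_t$ on $A$,
\[
\sigma_t(\hat f(H)) a = \sigma_t\!\left( \int_\mathbb R f(s)\, u_s \sigma_{-t}(a) \ \mathrm ds\right) = \int_\mathbb R f(s)\, \sigma_t(u_s) a \ \mathrm ds = \int_\mathbb R f(s)\, u_s a \ \mathrm ds = \hat f(H) a ,
\]
where the hypothesis $\sigma_t(u_s) = u_s$ is used in the penultimate step. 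Since $A$ acts non-degenerately on $\mathbb H$, this implies $\sigma_t(\hat f(H)) = \hat f(H)$ in $M(A)$.

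For general $f \in C_0(\mathbb R)$, Lemma \ref{11-10-23} furnishes a sequence $\{g_n\}$ in $C_c(\mathbb R)$ with $\hat g_n \to f$ uniformly, hence $\hat g_n(H) \to f(H)$ in operator norm by spectral theory. Since $M(A)$ is norm-closed in $B(\mathbb H)$, we get $f(H) \in M(A)$. For invariance one tests against $a \in A$ once more: $\hat g_n(H) a \to f(H) a$ in norm in $A$, and
\[
\sigma_t(f(H)) a = \sigma_t\bigl(f(H) \sigma_{-t}(a)\bigr) = \lim_{n \to \infty} \sigma_t\bigl(\hat g_n(H)\sigma_{-t}(a)\bigr) = \lim_{n \to \infty} \hat g_n(H) a = f(H) a ,
\]
so $\sigma_t(f(H)) = f(H)$ in $M(A)$. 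The only mildly subtle point is the identification of $\hat f(H)$ with the strict integral $T_f$; the rest is a straightforward approximation argument.
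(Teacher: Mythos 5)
Your proposal is correct and follows essentially the same route as the paper: both establish $\widehat{g}(H)=\int_{\mathbb R} g(t)u_t\,\mathrm dt$ for $g\in C_c(\mathbb R)$ (the paper by citing Theorem 5.6.36 of \cite{KR}, you by a vector-wise spectral-theory argument), deduce $\widehat{g}(H)\in M(A)$ from the norm-convergent integrals $\widehat{g}(H)a$ and $a\widehat{g}(H)$, prove invariance from $\sigma_t(u_s)=u_s$ together with non-degeneracy, and pass to general $f\in C_0(\mathbb R)$ via the density statement of Lemma \ref{11-10-23} and the norm estimate $\|f(H)\|\leq\|f\|$. The extra care you take in identifying $\widehat{f}(H)$ with the strict integral is a harmless elaboration of the same argument.
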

\begin{proof} When $g \in C_c(\mathbb R)$,
$$
\widehat{g}(H) = \int_{\mathbb R} g(t) u_t \ \mathrm d t,
$$
where $\widehat{g}(x) = \int_{\mathbb R} g(t) e^{itx} \ \mathrm d t$
is the Fourier transform of $g$; cf. Theorem 5.6.36 in \cite{KR}. For $a \in A$,
$$
\widehat{g}(H)a = \int_{\mathbb R} g(t) u_t a \ \mathrm d t,
$$
and
$$
a\widehat{g}(H) = \int_{\mathbb R} g(t) a u_t  \ \mathrm d t.
$$
Since $t \mapsto u_ta$ and $t \mapsto au_t$ are continuous in norm these integral representations show that $\widehat{g}(H)a$ and $a\widehat{g}(H)$ are elements of $A$, cf. Appendix \ref{integration}, and hence $\widehat{g}(H) \in M(A)$. 
Since $\left\{ \widehat{g} : \ g \in C_c(\mathbb R)\right\}$ is dense in $C_0(\mathbb R)$ by Lemma \ref{11-10-23}  and $\left\|f(H)\right\| \leq \|f\|$ for all $f \in C_0(\mathbb R)$ it follows that $f(H) \in M(A)$ for all $f\in C_0(\mathbb R)$. 

That $\sigma_t(f(H)) = f(H)$ follows because $\sigma_t(u_s) = u_s$ by assumption: For all $a \in A$ and $g \in L^1(\mathbb R)$,
\begin{align*}
&\sigma_t(\widehat{g}(H))\sigma_t(a) = \sigma_t(\widehat{g}(H)a) = \sigma_t\left(\int_\mathbb R g(s)u_s a \ \mathrm d s\right) \\
&= \int_\mathbb R g(s)\sigma_t(u_s a) \ \mathrm d s = \left(\int_\mathbb R g(s)u_s  \ \mathrm d s \right) \sigma_t(a) = \widehat{g}(H)\sigma_t(a) .
\end{align*}
Since $\left\{ \sigma_t(a) \xi: \ \xi \in \mathbb H, \ a \in A \right\}$ is dense in $\mathbb H$ this identity shows that $\sigma_t(\widehat{g}(H)) = \widehat{g}(H))$, and the equality $\sigma_t(f(H)) = f(H)$ for $f \in C_0(\mathbb R)$ follows from Lemma \ref{11-10-23} by continuity.
\end{proof}

For $ z \in \mathbb C$ let $e^{zH}$ be the normal operator on $\mathbb H$ obtained from the function $t \mapsto e^{zt}$ by spectral theory applied to $H$, e.g. Theorem 5.6.26 in \cite{KR}. Note that $e^{zH}$ is often unbounded, but for $f\in C_c(\mathbb R)$ the operator $f(H)e^{zH} = e^{zH}f(H)$ is bounded and an element of $M(A)$ by Lemma \ref{23-01-22}. Given a $\beta$-KMS weight $\psi$ for $\sigma$, we can therefore define $\psi_\beta : A^+ \to [0,\infty]$ such that
\begin{equation}\label{29-06-22ax}
\psi_\beta(a) := \sup \left\{\psi\left(e^{-\frac{\beta H}{2}} f(H) a f(H) e^{-\frac{\beta H}{2}}\right) : \ f \in C_c(\mathbb R), \ 0 \leq f \leq 1 \right\} .
\end{equation}
We aim to show that $\psi_\beta$ is a $\beta$-KMS weight for $\sigma'$. For this note first of all that $\psi_\beta$ is lower semi-continuous since $\psi$ is.

\begin{lemma}\label{23-01-22ax} Let $(g_n)_{n=1}^\infty$ be a sequence in $C_c(\mathbb R)$ such that $0 \leq g_n \leq g_{n+1} \leq 1$ for all $n \in \mathbb N$ and $\lim_{n \to \infty} g_n(x) = 1$ for all $x \in \mathbb R$. Then $\lim_{n \to \infty} g_n(H) = 1$ in the strict topology of $M(A)$.
\end{lemma}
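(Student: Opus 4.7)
My plan is to prove the lemma by reducing strict convergence on $M(A)$ to norm convergence on a dense subset of $A$ and then leveraging spectral theory of the (possibly unbounded) self-adjoint operator $H$. Since $g_n(H) \in M(A)$ by Lemma \ref{23-01-22} and $\|g_n(H)\| \leq \|g_n\|_\infty \leq 1$, it suffices to establish that $\|g_n(H)a - a\| \to 0$ and $\|a g_n(H) - a\| \to 0$ for every $a \in A$. Because $\sup_n \|g_n(H)\| \leq 1$, the usual $\epsilon/3$ argument shows that norm convergence on a norm-dense subset of $A$ implies norm convergence on all of $A$.

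The first key observation is uniform convergence: since $(g_n)$ is increasing and bounded by $1$ with pointwise limit $1$, Dini's theorem gives $g_n \to 1$ uniformly on any compact subset of $\mathbb R$. For any $h \in C_0(\mathbb R)$ and any $\epsilon > 0$, pick a compact $K$ with $|h| < \epsilon/4$ outside $K$; then $|g_n h - h| \leq 2|h| < \epsilon/2$ off $K$, and on $K$ uniform convergence gives $|g_n h - h| \leq \|1-g_n\|_{L^\infty(K)} \|h\|_\infty < \epsilon/2$ for large $n$. Hence $\|g_n h - h\|_\infty \to 0$, and by spectral theory $\|g_n(H) h(H) - h(H)\| \to 0$ in $B(\mathbb H)$.

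The second step is to produce a norm-dense subset of $A$ on which the desired convergence already follows from the first step. Take a sequence $(\rho_k) \subseteq C_c(\mathbb R)$ with $\rho_k \geq 0$, $\int_\mathbb R \rho_k(t)\,\mathrm dt = 1$ and supports shrinking to $\{0\}$. By the hypothesis that $t \mapsto u_t$ is strictly continuous, $t \mapsto u_t a$ is norm continuous for each $a \in A$, so
\begin{equation*}
\widehat{\rho_k}(H)\, a \ = \ \int_\mathbb R \rho_k(t) u_t a \ \mathrm dt \ \xrightarrow[k\to\infty]{\ \|\cdot\|\ } \ u_0 a \ = \ a
\end{equation*}
by the standard approximate-identity argument (cf. the proof of Lemma \ref{23-01-22}). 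Thus the set $S := \{\widehat{f}(H)\, b : f \in C_c(\mathbb R),\ b \in A\}$ is norm-dense in $A$. For $\widehat{f}(H) b \in S$ we have $g_n(H)\widehat{f}(H)b = (g_n \widehat{f})(H) b$, and since $\widehat{f} \in C_0(\mathbb R)$ the first step gives $\|(g_n\widehat{f})(H) b - \widehat{f}(H) b\| \leq \|g_n\widehat{f} - \widehat{f}\|_\infty \|b\| \to 0$.

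Combining the two steps, for $a \in A$ and $\epsilon > 0$ choose $\widehat{f}(H) b \in S$ with $\|a - \widehat{f}(H)b\| < \epsilon/3$; then
\begin{equation*}
\|g_n(H)a - a\| \ \leq \ \|g_n(H)\|\,\|a - \widehat{f}(H)b\| + \|(g_n\widehat{f})(H)b - \widehat{f}(H)b\| + \|\widehat{f}(H)b - a\|,
\end{equation*}
which is less than $\epsilon$ for all sufficiently large $n$. The right-handed statement $\|a g_n(H) - a\| \to 0$ follows by taking adjoints, since $g_n$ is real so $g_n(H)^* = g_n(H)$, giving $\|ag_n(H) - a\| = \|g_n(H) a^* - a^*\| \to 0$. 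The main potential obstacle is that $H$ may be unbounded and $A$ may contain no element commuting nicely with $H$, but this is circumvented entirely by working with the Fourier-type elements $\widehat{f}(H) b$, which land inside $A$ and interact cleanly with the functional calculus of $H$.
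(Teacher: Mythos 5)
Your proof is correct, but it takes a genuinely different route from the one in the paper. You reduce the problem to the norm-dense set $S=\{\widehat{f}(H)b : f\in C_c(\mathbb R),\ b\in A\}$ (dense by the approximate-identity computation $\widehat{\rho_k}(H)a=\int_{\mathbb R}\rho_k(t)u_ta\,\mathrm dt\to a$), apply Dini's theorem to the increasing sequence $(g_n)$ on compact subsets of $\mathbb R$ to get $\|g_n\widehat{f}-\widehat{f}\|_\infty\to 0$ for $\widehat{f}\in C_0(\mathbb R)$, and finish with the multiplicativity and contractivity of the functional calculus. The paper instead fixes an arbitrary $b\in A$, notes $\|b-g_n(H)b\|^2\le\|b^*(1-g_n(H))b\|$, and applies Dini's theorem in a different place: to the decreasing sequence of continuous functions $\omega\mapsto\omega(b^*(1-g_n(H))b)$ on the compact quasi-state space of $A$ (via Kadison's representation theorem), with the pointwise convergence at each state $\omega$ established by passing to the GNS representation, invoking Stone's theorem to realize $u_t$ as $e^{itH_\omega}$, verifying $g_n(H_\omega)=\overline{\pi}_\omega(g_n(H))$, and using the spectral-theoretic fact that $g_n(H_\omega)\to 1$ strongly. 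Your argument is the more elementary of the two — it avoids GNS representations and Kadison's theorem entirely and works with arbitrary elements only through a density reduction — at the cost of leaning more heavily on the explicit Fourier-type elements $\widehat{f}(H)b$ and the identity $\widehat{g}(H)=\int_{\mathbb R}g(t)u_t\,\mathrm dt$ from Lemma \ref{23-01-22}; the paper's argument handles every $b\in A$ directly without identifying a convenient dense subalgebra, which is the sort of technique that transfers more readily to settings where no such subset is available.
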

\begin{proof} Let $b \in A$. It suffices to show that $\lim_{n \to \infty} \left\| b - g_n(H)b\right\| = 0$. Since 
$$
 \left\| b - g_n(H)b\right\|^2 = \left\|b^*(1-g_n(H))^2b\right\| \leq \left\|b^*(1-g_n(H))b\right\|
$$
it suffices to show that $\lim_{n \to \infty} \left\|b^*(1-g_n(H))b\right\| =0$. To this end let $\omega \in B^*_+, \ \|\omega\| \leq 1$. It follows from Kadison's representation theorem, Theorem 3.10.3 in \cite{Pe}, and Dini's theorem that it suffices to show that
\begin{equation}\label{24-01-22ax}
\lim_{n \to \infty} \omega( b^*(1-g_n(H))b) = 0 .
\end{equation}
Let $(\mathcal H_\omega, \pi_\omega, \xi_\omega)$ be the GNS representation of $B$ defined from $\omega$. Being non-degenerate $\pi_\omega$ extends to a $*$-homomorphism $\overline{\pi}_\omega : M(B) \to B(\mathcal H_\omega)$ such that $\overline{\pi}_\omega(m)\pi_\omega(b) = \pi_\omega(mb)$ for all $m \in M(B)$ and $b \in B$, cf. Lemma \ref{26-09-22} in Appendix \ref{multipliers}. Hence the unitary one-parameter group $\overline{\pi}_\omega(u_t), t \in \mathbb R$, is strongly continuous. By Stone's theorem there is a self-adjoint operator $H_\omega$ on $\mathcal H_\omega$ such that
$$
\overline{\pi}_\omega(u_t) = e^{ i t H_\omega} \ \ \forall t \in \mathbb R.
$$
Since $\widehat{f}(H_\omega) = \int_\mathbb R f(t) \overline{\pi}_\omega (u_t) \ \mathrm d t = \overline{\pi}_\omega\left( \int_\mathbb R f(t) u_t \ \mathrm d t\right) = \overline{\pi}_\omega(\widehat{f}(H))$ when $f \in C_c(\mathbb R)$ it follows by continuity that $g(H_\omega) = \overline{\pi}_\omega(g(H))$ for all $g\in C_0(\mathbb R)$, using Lemma \ref{11-10-23}. In particular, $g_n(H_\omega) = \overline{\pi}_\omega(g_n(H))$ and hence
\begin{equation}\label{24-01-22x}
\omega\left( b^*(1-g_n(H))b\right) = \left<(1- g_n(H_\omega))\pi_\omega(b)\xi_\omega, \pi_\omega(b)\xi_\omega \right> .
\end{equation}
It follows from spectral theory, see e.g. Theorem 5.6.26 in \cite{KR}, that 
$$
\lim_{n \to \infty} g_n(H_\omega) = 1
$$ 
in the strong operator topology and hence \eqref{24-01-22ax} follows from \eqref{24-01-22x}. 
\end{proof}

\begin{lemma}\label{29-06-22cx} Let $\phi$ be a $\beta$-KMS weight for the flow $\sigma$ on $A$. Let $u \in M(A)$ be a unitary such that $\sigma_t(u) = u$ for all $t \in \mathbb R$. Then $\phi = \phi \circ \Ad u$.
\end{lemma}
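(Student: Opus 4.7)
The plan is to reduce the equality $\phi = \phi \circ \Ad u$ to an identity on the entire analytic elements, and then exploit condition (1) of Kustermans' theorem twice: once for $au^*$ and once for $a$, with the cancellation $u^*u = 1$ doing the real work in between.

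First I would record two preliminary observations. Since $u \in M(A)$ is a unitary, $\Ad u$ is a $*$-automorphism of $A$, and the hypothesis $\sigma_t(u) = u$ for all $t \in \mathbb{R}$ gives $\sigma_t \circ \Ad u = \Ad u \circ \sigma_t$. Corollary \ref{01-10-23} then yields that $\phi \circ \Ad u$ is a $\beta$-KMS weight for $\sigma$; in particular it is $\sigma$-invariant and densely defined. Secondly, since the constant function $t \mapsto u$ is trivially entire analytic (as a map into $M(A)$ with its norm topology), one sees directly from the power series definition that for any $a \in \mathcal{A}_\sigma$ the element $au^* \in A$ lies in $\mathcal{A}_\sigma$ and satisfies $\sigma_z(au^*) = \sigma_z(a)\, u^*$ for all $z \in \mathbb{C}$; indeed, if $\sigma_t(a) = \sum_{n=0}^\infty c_n(a) t^n$ in norm, then $\sigma_t(au^*) = \sum_{n=0}^\infty c_n(a) u^*\, t^n$ with $\sum \|c_n(a) u^*\| |z|^n \leq \sum \|c_n(a)\| |z|^n < \infty$ for all $z$.

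With these in place, the main computation is immediate. Fix $a \in \mathcal{A}_\sigma$. Applying condition (1) of Theorem \ref{24-11-21d} to $au^* \in \mathcal{A}_\sigma \subseteq D(\sigma_{-i\beta/2})$ gives
\begin{align*}
\phi(u a^* a u^*) &= \phi\big((au^*)^*(au^*)\big) \\
&= \phi\big(\sigma_{-i\beta/2}(au^*)\, \sigma_{-i\beta/2}(au^*)^*\big) \\
&= \phi\big(\sigma_{-i\beta/2}(a)\, u^* u\, \sigma_{-i\beta/2}(a)^*\big) \\
&= \phi\big(\sigma_{-i\beta/2}(a)\, \sigma_{-i\beta/2}(a)^*\big),
\end{align*}
and applying condition (1) again, this time to $a$ itself, equals $\phi(a^*a)$. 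Hence $(\phi \circ \Ad u)(a^*a) = \phi(a^*a)$ for every $a \in \mathcal{A}_\sigma$.

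Finally, since $\phi$ and $\phi \circ \Ad u$ are both densely defined $\sigma$-invariant weights on $A$ agreeing on $\{a^*a : a \in \mathcal{A}_\sigma\}$, Lemma \ref{01-03-22b} forces $\phi = \phi \circ \Ad u$. The only subtle point in the whole argument is the observation that $u$ being $\sigma$-fixed makes $au^*$ entire analytic whenever $a$ is, which is what allows the KMS condition to be applied to $au^*$; but this is straightforward from the power series characterization of $\mathcal{A}_\sigma$ and does not require any interpretation of the extended flow on $M(A)$.
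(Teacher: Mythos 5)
Your proposal is correct and follows essentially the same route as the paper: observe that $au^* \in \mathcal A_\sigma$ with $\sigma_z(au^*) = \sigma_z(a)u^*$, apply condition (1) of Kustermans' theorem to $au^*$ and to $a$ with the cancellation $u^*u=1$ in between, and conclude via Lemma \ref{01-03-22b}. Your extra remark that $\phi \circ \Ad u$ is itself a densely defined $\sigma$-invariant weight (via Corollary \ref{01-10-23}) is a small but welcome addition, since it is what makes Lemma \ref{01-03-22b} applicable.
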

\begin{proof} Let $a \in \mathcal A_\sigma$. Then $au^* \in \mathcal A_\sigma$ and $\sigma_z(au^*) = \sigma_z(a)u^*$ for all $z \in \mathbb C$. Hence
\begin{align*}
&\phi(ua^*au^*) = \phi(\sigma_{-i \frac{\beta}{2}}(au^*)\sigma_{-i \frac{\beta}{2}}(au^*)^*) \\
& = \phi(\sigma_{-i \frac{\beta}{2}}(a)u^*u\sigma_{-i \frac{\beta}{2}}(a)^*) = \phi(\sigma_{-i \frac{\beta}{2}}(a)\sigma_{-i \frac{\beta}{2}}(a)^*) = \phi(a^*a) .
\end{align*}
It follows therefore from Lemma \ref{01-03-22b} that $\phi = \phi \circ \Ad u$.
\end{proof}

\begin{lemma}\label{29-06-22xx} $\psi_\beta$ is a $\beta$-KMS weight for $\sigma'$.
\end{lemma}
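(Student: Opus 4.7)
The plan is to apply Theorem \ref{12-12-13} (Kustermans' theorem in the form using a subspace core) to verify that $\psi_\beta$ is a $\beta$-KMS weight for $\sigma'$. Setting $c_f := f(H)e^{-\beta H/2} = (fe^{-\beta\cdot/2})(H) \in M(A)$, which is bounded and $\sigma$-fixed by Lemma \ref{23-01-22} (since $fe^{-\beta\cdot/2} \in C_c(\mathbb R) \subseteq C_0(\mathbb R)$), each $\phi_f(a) := \psi(c_f^* a c_f)$ is a $\sigma$-invariant weight on $A$, and $\psi_\beta = \sup_f \phi_f$.

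First I would verify the weight properties of $\psi_\beta$. Lower semi-continuity is automatic, but additivity is delicate because $a \mapsto c^* a c$ is not operator-monotone in $c \ge 0$ even within the commuting family $\{c_f\}$. I would fix an increasing sequence $(g_n) \subseteq C_c(\mathbb R)$ with $g_n \equiv 1$ on $[-n,n]$, so $g_n(H) \to 1$ strictly in $M(A)$ by Lemma \ref{23-01-22ax}, and establish $\psi_\beta(b^*b) = \lim_n \phi_{g_n}(b^*b)$ by reformulating $\phi_{g_n}(b^*b) = \|\Lambda_\psi(bc_{g_n})\|^2$ (when $bc_{g_n} \in \mathcal N_\psi$) and identifying the limit with the sup over all $f$ via the Banach algebra structure on $\mathcal N_\psi$ from Corollary \ref{24-06-22a} and the closedness of $\Lambda_\psi$ (Lemma \ref{17-11-21a}); additivity on general $a \in A^+$ then follows by writing $a$ as a square.

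$\sigma'$-invariance is immediate: $c_f$ commutes with $u_t$ (both are functions of $H$), so $c_f^*\sigma'_t(a)c_f = u_t(c_f^*\sigma_t(a)c_f)u_t^*$, and combining $\sigma$-invariance of $\psi$ with Lemma \ref{29-06-22cx} applied to the $\sigma$-fixed unitary $u_t \in M(A)$ gives $\phi_f\circ\sigma'_t = \phi_f$. For density and non-triviality I would use elements $a := (e^{\beta H/2}h(H))\,b'^*b'\,(h(H)e^{\beta H/2}) \in A^+$ with $h \in C_c(\mathbb R)$ (so $he^{\beta\cdot/2} \in C_c(\mathbb R)$) and $b' \in \mathcal M_\psi^\sigma$; for $f \equiv 1$ on $\supp(h)$ the exponentials cancel to give $\phi_f(a) = \|\Lambda_\psi(b'h(H))\|^2$, uniformly bounded in $f$ by $\|h\|_\infty^2\,\psi(\sigma_{-i\beta/2}(b')\sigma_{-i\beta/2}(b')^*) < \infty$ via the $\sigma$-KMS identity applied to $b'h(H) \in \mathcal A_\sigma \cap \mathcal N_\psi$ (using $\sigma$-fixedness of $h(H)$). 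Varying $h$ and $b'$ produces a dense set in $A^+$ on which $\psi_\beta$ is finite and non-zero.

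The main step is the KMS identity, verified via Theorem \ref{12-12-13} on the subspace
\[
S := \Span\{f(H)\,b\,f(H) : b \in \mathcal M_\psi^\sigma,\ f \in C_c(\mathbb R),\ 0 \le f \le 1\} \subseteq A.
\]
Setting $f_z(x) := f(x)e^{izx}$ yields $f_z \in C_c(\mathbb R)$ for every $z \in \mathbb C$ and $z \mapsto f_z(H)$ entire holomorphic as an $M(A)$-valued map (since $f$ has fixed compact support), so $s = f(H)bf(H) \in \mathcal A_{\sigma'}$ with
\[
\sigma'_z(s) = f_z(H)\,\sigma_z(b)\,f_{-z}(H).
\]
The inclusion $S \subseteq \mathcal M_{\psi_\beta}^{\sigma'}$ and the core property for $\Lambda_{\psi_\beta}$ follow from the density analysis of the previous step combined with strict convergence $f(H) \to 1$. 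Expanding $\psi_\beta(s^*s)$ and $\psi_\beta(\sigma'_{-i\beta/2}(s)\sigma'_{-i\beta/2}(s)^*)$ via \eqref{29-06-22ax} with a cut-off $g_n \equiv 1$ on $\supp(f)$ reduces the required identity to the $\sigma$-KMS identity
\[
\psi\big((bc_f)^*(bc_f)\big) = \psi\big(\sigma_{-i\beta/2}(bc_f)\sigma_{-i\beta/2}(bc_f)^*\big)
\]
for the element $bc_f \in \mathcal A_\sigma \cap \mathcal N_\psi$, using $\sigma_z(c_f) = c_f$ to compute $\sigma_{-i\beta/2}(bc_f) = \sigma_{-i\beta/2}(b)c_f$.

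The main obstacle is the additivity in the first step: since the sup in \eqref{29-06-22ax} is not over a $\le$-directed family of positive functionals on $A$, additivity must be established indirectly via the limiting identification $\psi_\beta(b^*b) = \lim_n \|\Lambda_\psi(bc_{g_n})\|^2$, relying on the Hilbert-space reformulation together with closedness of $\Lambda_\psi$; once the weight structure is in place and the explicit extension $\sigma'_z(f(H)bf(H)) = f_z(H)\sigma_z(b)f_{-z}(H)$ is available, the remaining steps follow from the $\sigma$-KMS identity for $\psi$.
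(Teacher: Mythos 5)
You correctly isolate additivity of $\psi_\beta$ as the delicate point, but the tools you propose for it do not close the gap. What is needed is that $f \mapsto \psi\bigl(e^{-\beta H/2}f(H)\,a\,f(H)e^{-\beta H/2}\bigr)$ is \emph{non-decreasing} for $0\le f\le g$ in $C_c(\mathbb R)$; only then is the supremum in \eqref{29-06-22ax} the limit along your sequence $\{g_n\}$, and only then is the family $\{\phi_f\}$ upward directed so that the supremum is additive. Neither the Banach algebra structure of $\mathcal N_\psi$ nor the closedness of $\Lambda_\psi$ yields this: closedness can identify the limit of $\Lambda_\psi(bc_{g_n})$ \emph{if} that sequence converges, but it gives no reason why $\|\Lambda_\psi(bc_f)\|\le \sup_n\|\Lambda_\psi(bc_{g_n})\|$ for an arbitrary $f$, and, as you yourself note, conjugation by the commuting family $f(H)$ is not operator monotone, so no direct comparison of $f(H)af(H)$ with $g(H)ag(H)$ is available. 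The missing idea is to use the $\beta$-KMS property of $\psi$ itself to flip the expression: for $b\in\mathcal A_\sigma$, insert a cut-off and apply condition (1) of Theorem \ref{24-11-21d} to the analytic element $x=g_k(H)bf(H)e^{-\beta H/2}$; since $g_k(H)$ and $f(H)e^{-\beta H/2}$ are $\sigma$-fixed one gets $\sigma_{-i\beta/2}(x)=g_k(H)\sigma_{-i\beta/2}(b)f(H)e^{-\beta H/2}$, and hence $\psi\bigl(e^{-\beta H/2}f(H)b^*g_k(H)^2bf(H)e^{-\beta H/2}\bigr)=\psi\bigl(g_k(H)\sigma_{-i\beta/2}(b)\,f(H)^2e^{-\beta H}\,\sigma_{-i\beta/2}(b)^*g_k(H)\bigr)$, where the dependence on $f$ is now through the genuinely monotone middle factor $f(H)^2e^{-\beta H}$. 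Letting $k\to\infty$ and passing from $b^*b$ to general positive elements via $R_n(\sqrt a)$, Kadison's inequality and Lemma \ref{02-12-21ax} gives the monotonicity, hence $\psi_\beta(b)=\lim_k\psi\bigl(e^{-\beta H/2}g_k(H)bg_k(H)e^{-\beta H/2}\bigr)$ and additivity. Without this step your argument does not produce a weight.

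Once monotonicity is in place the rest of your outline is workable and close to the paper's: the $\sigma'$-invariance, non-triviality and dense-definedness arguments are essentially the ones used there, and your formula $\sigma'_z(f(H)bf(H)) = f_z(H)\sigma_z(b)f_{-z}(H)$ is the same analytic-continuation computation the paper performs in the equivalent form $\sigma_{-i\beta/2}(a_k)=e^{-\beta H/2}\sigma'_{-i\beta/2}(a_k)e^{\beta H/2}$ for $a_k=g_k(H)ag_k(H)$. The remaining divergence is that you route the KMS identity through Theorem \ref{12-12-13}, which obliges you to verify that your span $S$ is a core for $\Lambda_{\psi_\beta}$ --- a nontrivial extra verification about a weight you are still constructing. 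The paper avoids this by proving the inequality $\psi_\beta(a^*a)\le\psi_\beta\bigl(\sigma'_{-i\beta/2}(a)\sigma'_{-i\beta/2}(a)^*\bigr)$ for every $a\in\mathcal A_{\sigma'}$ directly, using the cut-offs $a_k$ and lower semi-continuity, and then obtaining equality by substituting $\sigma'_{i\beta/2}(a^*)$ for $a$, so that condition (2) of Theorem \ref{24-11-21d} applies with no core argument.
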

\begin{proof} Let $0 \leq f \leq g$ in $C_c(\mathbb R)$ and let $b \in \mathcal A_\sigma$. Let $g_k : \mathbb R \to [0,1]$ be a continuous function such that $g_k(t) = 1$ when $|t| \leq k$ and $g_k(t) = 0$ when $|t| \geq k+1$. Then
\begin{align*}
&\psi\left(e^{\frac{-\beta H}{2}} f(H) b^*b f(H) e^{-\frac{\beta H}{2}}\right)  = \lim_{k \to \infty} \psi\left(e^{-\frac{\beta H}{2}} f(H) {b^*}g_k(H)^2{b} f(H) e^{\frac{-\beta H}{2}}\right)
\end{align*}
by Lemma \ref{23-01-22ax} and the lower semi-continuity of $\psi$. By Lemma \ref{23-01-22} $g_k(H)$ and $f(H)e^{-\frac{\beta H}{2}}$ are fixed by $\sigma$ and since $b$ is entire analytic for $\sigma$ by assumption, it follows that $g_k(H){b} f(H) e^{\frac{-\beta H}{2}}$ is entire analytic for $\sigma$, and
$$
\sigma_{-i \frac{\beta}{2}}(g_k(H){b} f(H) e^{\frac{-\beta H}{2}}) = g_k(H)\sigma_{-i \frac{\beta}{2}}({b}) f(H) e^{\frac{-\beta H}{2}}.
$$ 
It follows therefore that
\begin{align*}
& \psi\left(e^{-\frac{\beta H}{2}} f(H) {b^*}g_k(H)^2{b} f(H) e^{\frac{-\beta H}{2}}\right) \\
& =  \psi\left(\sigma_{-i \frac{\beta}{2}}(g_k(H){b} f(H) e^{\frac{-\beta H}{2}}) \sigma_{-i \frac{\beta}{2}} (g_k(H){b} f(H) e^{\frac{-\beta H}{2}} )^*\right) \\
& = \psi\left(g_k(H)\sigma_{-i \frac{\beta}{2}}({b}) f(H) e^{\frac{-\beta H}{2}}e^{-\frac{\beta H}{2}} f(H) \sigma_{-i\frac{\beta}{2}}(b)g_k(H)\right) \\
& \leq \psi\left(g_k(H)\sigma_{-i \frac{\beta}{2}}({b}) g(H) e^{\frac{-\beta H}{2}}e^{-\frac{\beta H}{2}} g(H) \sigma_{-i\frac{\beta}{2}}(b)g_k(H)\right) \\
& = \psi\left(e^{-\frac{\beta H}{2}} g(H) {b^*}g_k(H)^2{b} g(H) e^{\frac{-\beta H}{2}}\right) ,
\end{align*}
where the last equality follows from the first two equalities applied with $f$ replaced by $g$. Letting $k$ go to infinity and by using Lemma \ref{23-01-22ax} and the lower semi-continuity of $\psi$ again, it follows that
\begin{equation}\label{17-08-22}
\psi\left(e^{\frac{-\beta H}{2}} f(H) b^*b f(H) e^{-\frac{\beta H}{2}}\right)  \leq \psi\left(e^{\frac{-\beta H}{2}} g(H) b^*b g(H) e^{-\frac{\beta H}{2}}\right)  .
\end{equation}
Let $b \in A^+$ be arbitrary. Since $\lim_{n \to \infty} R_n(\sqrt{b}) = \sqrt{b}$ by Lemma \ref{24-11-21} it follows from the lower semi-continuity of $\psi$ that
\begin{align*}
& \psi\left(e^{\frac{-\beta H}{2}} f(H) b f(H) e^{-\frac{\beta H}{2}}\right)\\
& \leq \liminf_n  \psi\left(e^{\frac{-\beta H}{2}} f(H) R_n(\sqrt{b})R_n(\sqrt{b}) f(H) e^{-\frac{\beta H}{2}}\right) ,
\end{align*}
and then from \eqref{17-08-22} that
\begin{equation}\label{17-08-22a}
\begin{split}
& \psi\left(e^{\frac{-\beta H}{2}} f(H) b f(H) e^{-\frac{\beta H}{2}}\right)\\
& \leq \liminf_n  \psi\left(e^{\frac{-\beta H}{2}} g(H) R_n(\sqrt{b})R_n(\sqrt{b}) g(H) e^{-\frac{\beta H}{2}}\right) ,
\end{split}
\end{equation}
since $R_n(\sqrt{b}) \in \mathcal A_\sigma$ by Lemma \ref{24-11-21}. By Kadisons inequality, Proposition 3.2.4 in \cite{BR},
\begin{align*}
&  \psi\left(e^{\frac{-\beta H}{2}} g(H) R_n(\sqrt{b})R_n(\sqrt{b}) g(H) e^{-\frac{\beta H}{2}}\right) \\
& \leq \psi\left(e^{\frac{-\beta H}{2}} g(H) R_n({b}) g(H) e^{-\frac{\beta H}{2}}\right) ,
\end{align*}
while the definition of $R_n$ implies that
$$
e^{\frac{-\beta H}{2}} g(H) R_n({b}) g(H) e^{-\frac{\beta H}{2}} = R_n\left( e^{\frac{-\beta H}{2}} g(H) {b} g(H) e^{-\frac{\beta H}{2}}\right) 
$$
since $e^{\frac{-\beta H}{2}} g(H) = g(H)e^{\frac{-\beta H}{2}}$ is fixed by $\sigma$ by Lemma \ref{23-01-22}. By use of Lemma \ref{02-12-21ax} we find therefore that
$$
\psi\left(e^{\frac{-\beta H}{2}} g(H) R_n({b}) g(H) e^{-\frac{\beta H}{2}}\right) = \psi\left(e^{\frac{-\beta H}{2}} g(H) {b} g(H) e^{-\frac{\beta H}{2}}\right) ,
$$
and conclude therefore now from \eqref{17-08-22a} that
\begin{equation*}
\psi\left(e^{\frac{-\beta H}{2}} f(H) b f(H) e^{-\frac{\beta H}{2}}\right)  \leq \psi\left(e^{\frac{-\beta H}{2}} g(H) b g(H) e^{-\frac{\beta H}{2}}\right)  .
\end{equation*}
This shows that the expression $\psi\left(e^{\frac{-\beta H}{2}} f(H) b f(H) e^{\frac{-\beta H}{2}}\right)$ increases with $f$, implying that
$$
\psi_\beta(b) = \lim_{k \to \infty}  \psi\left(e^{-\frac{\beta H}{2}} g_k(H) b g_k(H) e^{-\frac{\beta H}{2}}\right)
$$
for all $b \in A^+$. In particular, it follows that $\psi_\beta$ is additive and hence a weight. 
To see that $\psi_\beta$ is not zero, let $b \in A^+$ be an element such that $\psi(b) > 0$. Then $x_k .:= e^{\frac{\beta H}{2}}g_k(H)\sqrt{b} \in A$ by Lemma \ref{23-01-22} and 
$$
\psi_\beta(x_kx_k^*) = \lim_{l \to \infty} \psi\left(g_l(H)g_k(H)bg_k(H)g_l(H)\right) = \psi\left(g_k(H)bg_k(H)\right).
$$
Since $\lim_{k \to \infty} g_k(H)bg_k(H)= b$ by Lemma \ref{23-01-22ax}, the lower semi-continuity of $\psi$ ensures that $\psi_\beta(x_kx_k^*) =  \psi\left(g_k(H)bg_k(H)\right) > 0$ for $k$ large enough. Hence $\psi_\beta \neq 0$. To show that $\psi_\beta$ is densely defined let $b \in \mathcal M^\sigma_\psi$ and set $b_n := g_n(H){b}$. Then $g_k(H)b_n = b_n$ when $k > n$ and it follows from Lemma \ref{23-01-22} that
$$
\sigma_{-i\frac{\beta}{2}} (b_n^* e^{-\frac{\beta H}{2}}) = \sigma_{-i\frac{\beta}{2}} (b^*) g_n(H)e^{-\frac{\beta H}{2}}.
$$
Hence
\begin{align*}
& \psi\left(e^{-\frac{\beta H}{2}}g_k(H) b_nb_n^*g_k(H) e^{-\frac{\beta H}{2}}\right)   = \psi\left(e^{-\frac{\beta H}{2}} b_nb_n^* e^{-\frac{\beta H}{2}}\right) \\ 
& = \psi\left( \sigma_{-i\frac{\beta}{2}} (b_n^* e^{-\frac{\beta H}{2}}) \sigma_{-i\frac{\beta}{2}} (b_n^* e^{-\frac{\beta H}{2}})^*\right)\\
& = \psi(\sigma_{-i\frac{\beta}{2}}(b^*) g_n(H)^2e^{\beta H} \sigma_{-i\frac{\beta}{2}}(b^*)^* ) \\
& \leq \left\| g_n(H)^2e^{-\beta H}\right\|\psi(\sigma_{-i\frac{\beta}{2}}(b^*) \sigma_{-i\frac{\beta}{2}}(b^*)^* ) 
\end{align*}
when $k > n$. It follows that $\psi_\beta(b_nb_n^*) < \infty$. Since $\lim_{n \to \infty} b_n = b$ by Lemma \ref{23-01-22ax} and $\mathcal M^\sigma_\psi$ is dense in $A$, it follows that $\psi_\beta$ is densely defined. That $\psi_\beta$ is $\sigma'$-invariant follows from the $\sigma$-invariance of $\psi$ by use of Lemma \ref{29-06-22cx} and Lemma \ref{23-01-22}.

Finally we chack that $\psi_\beta$ satifies condition (2) in Kustermans' theorem, Theorem \ref{24-11-21d}. Let $a \in \mathcal A_{\sigma'}$ and set $a_k = g_k(H)ag_k(H)$. To see that $a_k \in \mathcal A_\sigma$ note that 
$$
\sigma_t(a_k) = g_k(H)\sigma_t(a)g_k(H) =   g_k(H)u_{-t}\sigma'_t(a)u_tg_k(H),
$$
so that 
$$
z \mapsto g_k(H)e^{-izH}\sigma'_z(a)g_k(H)e^{iz H} = e^{-izH}\sigma'_z(a_k)e^{izH}
$$
is an entire holomorphic extension of $t \mapsto \sigma_t(a)$. In particular,
$$
\sigma_{-i \frac{\beta}{2}}(a_k) = e^{-\frac{\beta}{2}H} \sigma'_{-\frac{\beta}{2}}(a_k)e^{\frac{\beta}{2}H} .
$$ 
This is used in the following calculation: 
\begin{align*}
&\psi_\beta \left( a_k^* a_k\right)\\
& = \lim_{n \to \infty} \psi\left( e^{-\frac{\beta H}{2}}g_n(H)a_k^*a_k g_n(H) e^{-\frac{\beta H}{2}}\right)\\
& =  \psi\left( e^{-\frac{\beta H}{2}}a_k^*a_k e^{-\frac{\beta H}{2}}\right) \\
& =  \psi\left( \sigma_{-i\frac{\beta}{2}}(a_k) e^{-{\beta H}}\sigma_{-i\frac{\beta}{2}}(a_k)^*\right) \\
& = \psi\left((e^{-\frac{\beta H}{2}}\sigma'_{-i \frac{\beta}{2}}(a_k) e^{\frac{\beta H}{2}})e^{-\beta H} (e^{-\frac{\beta H}{2}}\sigma'_{-i \frac{\beta}{2}}(a_k) e^{\frac{\beta H}{2}})^*\right)\\
& = \psi\left(e^{-\frac{\beta H}{2}}  \sigma'_{-i\frac{\beta}{2}}( a_k)  \sigma'_{-i\frac{\beta}{2}}( a_k)^* e^{-\frac{\beta H}{2}} \right) \\ 
& =  \psi\left(e^{-\frac{\beta H}{2}} g_k(H) \sigma'_{-i\frac{\beta}{2}}( a) g_k(H)^2 \sigma'_{-i\frac{\beta}{2}}( a)^* g_k(H)e^{-\frac{\beta H}{2}} \right) \\ 
& \leq  \psi\left(e^{-\frac{\beta H}{2}} g_k(H) \sigma'_{-i\frac{\beta}{2}}( a) \sigma'_{-i\frac{\beta}{2}}( a)^* g_k(H)e^{-\frac{\beta H}{2}} \right) .
\end{align*}  
Since $\lim_{k \to \infty} a_k = a$ by Lemma \ref{23-01-22ax} it follows from this estimate and the lower semi-continuity of $\psi_\beta$ that 
\begin{align*}
& \psi_\beta(a^*a) \leq \liminf_{k \to \infty} \psi_\beta(a_k^*a_k) \\
& \leq \lim_{k \to \infty} \psi\left(e^{-\frac{\beta H}{2}} g_k(H) \sigma'_{-i\frac{\beta}{2}}( a) \sigma'_{-i\frac{\beta}{2}}( a)^* g_k(H)e^{-\frac{\beta H}{2}} \right) \\
& = \psi_\beta\left(  \sigma'_{-i\frac{\beta}{2}}( a) \sigma'_{-i\frac{\beta}{2}}( a)^*\right) .
\end{align*}
Since $a \in \mathcal A_{\sigma'}$ was arbitrary, the same estimate holds with $a$ replaced by $\sigma'_{i \frac{\beta}{2}}(a^*)$ and we conclude therefore that 
$$
\psi_\beta(a^*a) =  \psi_\beta\left(  \sigma'_{-i\frac{\beta}{2}}( a) \sigma'_{-i\frac{\beta}{2}}( a)^*\right)
$$
for all $a \in \mathcal A_{\sigma'}$. It follows then from Kustermans' theorem, Theorem \ref{24-11-21d}, that $\psi_{\beta}$ is a $\beta$-KMS weight for $\sigma'$.
\end{proof}

\begin{thm}\label{29-06-22d}
Let $\sigma$ be flow on $A$ and $u = (u_t)_{t \in \mathbb R}$ a strictly continuous unitary representation of $\mathbb R$ by multipliers of $M(A)$ such that $\sigma_t(u_s) = u_s$ for all $s,t$. Consider the flow $\sigma'$ defined by
$$
\sigma'_t := \Ad u_t \circ \sigma_t.
$$
The map $\psi \mapsto \psi_\beta$ given by \eqref{29-06-22ax} is a bijection from $\KMS(\sigma,\beta)$ onto $\KMS(\sigma',\beta)$ for all $\beta \in \mathbb R$.
\end{thm}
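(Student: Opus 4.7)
The plan is to exhibit an explicit inverse for $\psi\mapsto\psi_\beta$ and verify that the two compositions are the identity. Since $(u_t)$ is abelian, $\sigma'_t(u_s^*)=u_tu_s^*u_{-t}=u_s^*$, so $(u_t^*)_{t\in\mathbb R}$ is a strictly continuous $\sigma'$-invariant unitary representation of $\mathbb R$ in $M(A)$ with $\sigma_t=\Ad u_t^*\circ\sigma'_t$, and its self-adjoint generator is $-H$. I would apply Lemma \ref{29-06-22xx} to the pair $(\sigma',u^*)$ in place of $(\sigma,u)$, producing a well-defined map $\phi\mapsto\phi^\beta$ from $\KMS(\sigma',\beta)$ to $\KMS(\sigma,\beta)$ given by
\[
\phi^\beta(a):=\sup\left\{\phi\!\left(e^{\frac{\beta H}{2}}g(H)\,a\,g(H)e^{\frac{\beta H}{2}}\right):g\in C_c(\mathbb R),\ 0\le g\le 1\right\}.
\]
It then suffices to show $(\psi_\beta)^\beta=\psi$ for every $\psi\in\KMS(\sigma,\beta)$; the identity $(\phi^\beta)_\beta=\phi$ will follow from the same argument applied to $(\sigma',u^*)$ in place of $(\sigma,u)$.

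First I would unfold $(\psi_\beta)^\beta$. For $f,g\in C_c(\mathbb R)$, the operators $f(H)$, $g(H)$, $e^{\pm\beta H/2}f(H)$ and $e^{\pm\beta H/2}g(H)$ all lie in $M(A)$ by Lemma \ref{23-01-22}, they mutually commute, and $e^{-\beta H/2}f(H)e^{\beta H/2}=f(H)$ by functional calculus. This yields
\[
(\psi_\beta)^\beta(a)=\sup_{g}\sup_{f}\psi\!\left(f(H)g(H)\,a\,g(H)f(H)\right)=\sup_{f,g}\psi\!\left((fg)(H)\,a\,(fg)(H)\right),
\]
and writing any $h\in C_c(\mathbb R)$ with $0\le h\le 1$ as $h=\sqrt{h}\cdot\sqrt{h}$ shows the double supremum exhausts precisely the set of such $h$, so
\[
(\psi_\beta)^\beta(a)=\sup\{\psi(h(H)\,a\,h(H)):h\in C_c(\mathbb R),\ 0\le h\le 1\}.
\]

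The remaining task, which will be the main obstacle, is to prove that this supremum equals $\psi(a)$ for every $a\in A^+$. The bound $\psi(a)\le(\psi_\beta)^\beta(a)$ will follow by choosing an increasing sequence $h_k\in C_c(\mathbb R)$ with $0\le h_k\le 1$ converging pointwise to $1$: Lemma \ref{23-01-22ax} then gives $h_k(H)\to 1$ strictly, hence $h_k(H)ah_k(H)\to a$ in norm, and lower semi-continuity of $\psi$ finishes this direction. For the opposite inequality I would fix $h\in C_c(\mathbb R)$ with $0\le h\le 1$ and first treat $a=y^*y$ with $y\in\mathcal M^\sigma_\psi$: since $h(H)$ is $\sigma$-invariant by Lemma \ref{23-01-22} and self-adjoint, the product $yh(H)\in A$ is entire analytic with $\sigma_z(yh(H))=\sigma_z(y)h(H)$, so condition (1) of Theorem \ref{24-11-21d} applied to $yh(H)$ gives
\[
\psi(h(H)y^*y\,h(H))=\psi\!\left(\sigma_{-i\frac{\beta}{2}}(y)\,h(H)^2\,\sigma_{-i\frac{\beta}{2}}(y)^*\right)\le\psi\!\left(\sigma_{-i\frac{\beta}{2}}(y)\sigma_{-i\frac{\beta}{2}}(y)^*\right)=\psi(y^*y),
\]
where the inequality uses $xh(H)^2x^*\le xx^*$ because $0\le h(H)^2\le 1$, and the last equality is a second application of condition (1). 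To promote this bound to a general $a\in A^+$ with $\psi(a)<\infty$, I would invoke Corollary \ref{15-02-22} to obtain a sequence $\{y_n\}\subseteq\mathcal M^\sigma_\psi$ with $y_n\to\sqrt{a}$ in $A$ and $\Lambda_\psi(y_n)\to\Lambda_\psi(\sqrt a)$ in $H_\psi$; then $y_n^*y_n\to a$ in norm, $\psi(y_n^*y_n)=\|\Lambda_\psi(y_n)\|^2\to\psi(a)$, and $h(H)y_n^*y_nh(H)\to h(H)ah(H)$, so lower semi-continuity combined with the preceding bound gives $\psi(h(H)ah(H))\le\liminf_n\psi(y_n^*y_n)=\psi(a)$. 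Taking the supremum over $h$ and invoking symmetry to obtain $(\phi^\beta)_\beta=\phi$ completes the proof.
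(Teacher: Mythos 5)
Your proposal is correct and follows essentially the same route as the paper: the inverse map is obtained by symmetry from Lemma \ref{29-06-22xx} applied to the pair $(\sigma',u^*)$, the exponential factors cancel when the two suprema are composed, and the KMS condition together with Lemma \ref{23-01-22ax} and lower semi-continuity identifies $(\psi_\beta)^\beta$ with $\psi$. The only (equally valid) cosmetic difference is at the end, where the paper verifies the identity on $\left\{a^*a : a \in \mathcal A_\sigma\right\}$ and then invokes Lemma \ref{01-03-22b}, whereas you collapse the double supremum to $\sup_h \psi(h(H)ah(H))$ and prove it equals $\psi(a)$ for every $a \in A^+$ directly via Corollary \ref{15-02-22} and an approximation argument.
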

\begin{proof} Since $\sigma_t = \Ad u_t^* \circ \sigma'_t$ and $\sigma'_t(u_t^*) = u_t^*$ we get in the same way a map $\phi \mapsto \phi^\beta$ from $\KMS(\sigma',\beta)$ to $\KMS(\sigma,\beta)$ defined such that
$$
\phi^\beta(a)  := \sup \left\{\phi\left(e^{\frac{\beta H}{2}} f(-H) a f(-H) e^{\frac{\beta H}{2}}\right) : \ f \in C_c(\mathbb R), \ 0 \leq f \leq 1 \right\} .
$$
Let $\psi \in \KMS(\sigma,\beta)$ and $a \in \mathcal A_\sigma$. For $f,g \in C_c(\mathbb R)$ with $0 \leq f \leq 1$ and $0 \leq g \leq 1$ we have
$$
\psi(f(-H)g(H)a^*af(-H)g(H)) = \psi(\sigma_{-i\frac{\beta}{2}}(a)f^2(-H)g^2(H)\sigma_{-i\frac{\beta}{2}}(a)^*).
$$
It follows therefore from an application of Lemma \ref{23-01-22ax} that
\begin{align*}
&\left(\psi_\beta\right)^\beta(a^*a) =  \sup \left\{ \psi(\sigma_{-i\frac{\beta}{2}}(a)f^2(-H)g^2(H)\sigma_{-i\frac{\beta}{2}}(a)^*) : \ 0 \leq f,g \leq 1 \right\} \\
& = \psi\left(\sigma_{-i\frac{\beta}{2}}(a)\sigma_{-i\frac{\beta}{2}}(a)^*\right) = \psi(a^*a) .
\end{align*}
Then Lemma \ref{01-03-22b} shows that $\left(\psi_\beta\right)^\beta = \psi$. Similarly, or by symmetry, $\left(\phi^\beta\right)_\beta = \phi$ for all $\phi \in \KMS(\sigma',\beta)$.
\end{proof}

\begin{notes}\label{06-06-22d} When $A$ is unital, Theorem \ref{06-06-22b} follows from a result by Kishimoto, cf. Proposition 2.1 in \cite{Ki1}. For KMS weights both Theorem \ref{06-06-22b} and Theorem \ref{29-06-22d} are new.
\end{notes}

\section{Inner flows}
 A flow $\sigma$ on a $C^*$-algebra $A$ is \emph{inner} when there is a unitary representation $u = (u_t)_{t \in \mathbb R}$ of $\mathbb R$ by unitaries in the multiplier algebra $M(A)$ of $A$ such that $\mathbb R \ni t \mapsto u_t$ is continuous with respect to the strict topology of $M(A)$ and
$$
\sigma_t(a) = u_tau_{-t} 
$$ 
for all $a \in A$ and all $t \in \mathbb R$. Before studying general inner flows we consider first a special case.

\subsection{Inner flows with a bounded generator}\label{veryinner}

Let $A$ be a $C^*$-algebra and $h = h^* \in M(A)$ a self-adjoint element of the multiplier algebra $M(A)$ of $A$. In this section we consider the corresponding inner flow
\begin{equation}\label{04-09-22b}
\sigma_t(a) = \Ad e^{i t h}(a) = e^{it h} a e^{-ith} .
\end{equation}

\begin{lemma}\label{27-09-23a} $A = \mathcal A_\sigma$, $e^{zh} \in M(A)$ and $\sigma_z(a) = e^{zh}a e^{-zh}$ for all $a \in A$ and $z \in \mathbb C$.
\end{lemma}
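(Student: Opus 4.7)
Because $h$ is a bounded self-adjoint element of the Banach algebra $M(A)$, I would first define
$$
e^{zh} := \sum_{n=0}^\infty \frac{z^n h^n}{n!}
$$
as an absolutely norm-convergent series in $M(A)$ for every $z \in \mathbb C$, with the bound $\|e^{zh}\| \leq e^{|z|\|h\|}$. Standard manipulation of power series in a Banach algebra then yields $e^{(z+w)h} = e^{zh} e^{wh}$, and in particular $e^{zh} e^{-zh} = 1$. This immediately settles the claim $e^{zh} \in M(A)$.

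Next, for $a \in A$, I would set $f(z) := e^{zh} a e^{-zh}$. Since $M(A) A \subseteq A$ and $A M(A) \subseteq A$, we have $f(z) \in A$ for every $z \in \mathbb C$. To establish entire analyticity I would introduce the inner derivation $\delta(a) := ha - ah$ on $A$, for which the trivial induction $\|\delta^n(a)\| \leq (2\|h\|)^n \|a\|$ holds and the binomial identity
$$
\delta^n(a) = \sum_{k=0}^n \binom{n}{k} h^k\, a\, (-h)^{n-k}
$$
is valid. Multiplying the series for $e^{zh}a$ and $e^{-zh}$ and collecting like powers of $z$ by two applications of Lemma \ref{13-11-21d}, I would verify the identity
$$
f(z) = \sum_{n=0}^\infty \frac{z^n}{n!}\, \delta^n(a),
$$
with norm-convergence for every $z \in \mathbb C$ thanks to the above estimate on $\|\delta^n(a)\|$. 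Since $f(t) = \sigma_t(a)$ for $t \in \mathbb R$, this exhibits $a$ as entire analytic for $\sigma$ in the sense of \eqref{13-11-21}, so $\mathcal A_\sigma = A$.

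Finally, combining the entire analyticity of $f$ with the agreement $f(t) = \sigma_t(a)$ on $\mathbb R$, Theorem \ref{17-02-23e} and the uniqueness of entire holomorphic extensions force $\sigma_z(a) = f(z) = e^{zh} a e^{-zh}$ for every $z \in \mathbb C$. The only mild obstacle is the bookkeeping needed to pass from the Cauchy product of the two exponential series to the closed form $\sum \frac{z^n}{n!}\delta^n(a)$, but the uniform bound on $\|\delta^n(a)\|$ makes this routine rather than delicate.
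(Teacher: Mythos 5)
Your proposal is correct and follows essentially the same route as the paper: the norm-convergent exponential series for $e^{zh}$ in the Banach algebra $M(A)$, the entire holomorphy of $z \mapsto e^{zh}ae^{-zh}$, and agreement with $\sigma_t(a)$ on $\mathbb R$. The paper simply asserts the entire holomorphy of the product without writing out the Cauchy-product bookkeeping via the derivation $\delta$, so your version is just a more explicit rendering of the same argument.
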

\begin{proof} Since $\|h\| < \infty$ the sum $e^{zh} =\sum_{n=0}^\infty \frac{h^n}{n!} z^n$ converges in norm and hence $e^{zh} \in M(A)$ since $M(A)$ is a $C^*$-algebra. Furthermore, it follows that
$$
z \mapsto e^{zh}ae^{-zh}
$$
is entire holomorphic for all $a \in A$.
\end{proof}

Let $\beta \in \mathbb R $ and let $\psi$ be a $\beta$-KMS weight for $\sigma$. It follows from Lemma \ref{27-09-23a} that
\begin{align*}
&\psi(e^{\frac{\beta h}{2}}a^*ae^{\frac{\beta h}{2}})  = \psi(\sigma_{-i \frac{\beta}{2}}(ae^{\frac{\beta h}{2}}) \sigma_{-i \frac{\beta}{2}}(ae^{\frac{\beta h}{2}})^*) \\
&= \psi(e^{\frac{\beta h}{2}}a ( e^{\frac{\beta h}{2}}a)^* ) = \psi(e^{\frac{\beta h}{2}}aa^*e^{\frac{\beta h}{2}}) 
\end{align*}
for all $a \in A$. This shows that 
$$
\tau_\psi (a) := \psi(e^{\frac{\beta h}{2}}ae^{\frac{\beta h}{2}})
$$
defines a lower semi-continuous trace $\tau_\psi : A^+ \to [0,\infty]$ such that
\begin{equation*}\label{05-09-22}
 \psi(a) = \tau_\psi( e^{-\frac{\beta h}{2}}ae^{-\frac{\beta h}{2}})
\end{equation*}
for all $a \in A^+$. Conversely, given a lower semi-continuous trace $\tau$ on $A$ we can define a $\beta$-KMS weight $\psi_\tau : A^+ \to [0,\infty]$ such that
\begin{equation}\label{05-09-22b}
\psi_\tau(a) := \tau( e^{-\frac{\beta h}{2}}ae^{-\frac{\beta h}{2}}) ,
\end{equation}
as the reader can easily verify. The two maps $\tau \mapsto \psi_\tau$ and $\psi \mapsto \tau_\psi$ are clearly each others inverses and we get therefore the following.

\begin{prop}\label{05-09-22a} For each $\beta \in \mathbb R$ the map $\tau \mapsto \psi_\tau$ defined by \eqref{05-09-22b} is a bijection from the set of lower semi-continuous traces on $A$ onto the set of $\beta$-KMS weights for the flow \eqref{04-09-22b}.
\end{prop}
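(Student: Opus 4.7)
The plan is to verify that the formula \eqref{05-09-22b} defines a $\beta$-KMS weight for $\sigma$, and then to observe that the assignments $\tau \mapsto \psi_\tau$ and $\psi \mapsto \tau_\psi$ are mutually inverse by direct substitution. Since $e^{\pm \beta h/2} \in M(A)$ is invertible by Lemma \ref{27-09-23a}, the map $T : A \to A$ defined by $T(a) := e^{-\beta h/2} a e^{-\beta h/2}$ is a norm-continuous linear bijection that restricts to a bijection of $A^+$. Consequently $\psi_\tau = \tau \circ T$ inherits additivity, homogeneity, and lower semi-continuity from $\tau$, and it is non-zero because $\tau$ is. It is densely defined because $\mathcal M_\tau^+$ is dense in $A^+$ and $T^{-1}(\mathcal M_\tau^+) \subseteq \{a \in A^+ : \psi_\tau(a) < \infty\}$ is then dense in $A^+$.

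Next I would verify $\sigma$-invariance. Since $e^{ith}$ and $e^{-\beta h/2}$ are both functions of $h$ in the functional calculus, they commute, so
\begin{equation*}
T(\sigma_t(a)) = e^{-\beta h/2} e^{ith} a e^{-ith} e^{-\beta h/2} = e^{ith}\, T(a)\, e^{-ith}.
\end{equation*}
The trace property $\tau(x^*x) = \tau(xx^*)$ implies $\tau(uyu^*) = \tau(y)$ for every unitary $u \in M(A)$ and every $y \in A^+$ (apply the trace property to $x = u\sqrt{y}$), hence $\psi_\tau(\sigma_t(a)) = \psi_\tau(a)$.

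The KMS condition is the main content but is also short. By Lemma \ref{27-09-23a} every $a \in A$ is entire analytic, with $\sigma_{-i\beta/2}(a) = e^{\beta h/2} a e^{-\beta h/2}$, so a direct calculation gives
\begin{equation*}
\sigma_{-i\beta/2}(a) \sigma_{-i\beta/2}(a)^* = e^{\beta h/2} a e^{-\beta h} a^* e^{\beta h/2},
\end{equation*}
and therefore $\psi_\tau\bigl(\sigma_{-i\beta/2}(a)\sigma_{-i\beta/2}(a)^*\bigr) = \tau(a e^{-\beta h} a^*)$. On the other hand $\psi_\tau(a^*a) = \tau(e^{-\beta h/2} a^* a e^{-\beta h/2})$, and applying the trace identity to $x := a e^{-\beta h/2} \in A$ yields $\tau(x^*x) = \tau(xx^*) = \tau(a e^{-\beta h} a^*)$. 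Since $A = \mathcal A_\sigma$, condition (2) of Theorem \ref{24-11-21d} is met and $\psi_\tau$ is a $\beta$-KMS weight for $\sigma$.

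Finally, bijectivity is immediate from $e^{\beta h/2} e^{-\beta h/2} = 1$: for any lower semi-continuous trace $\tau$ and any $\beta$-KMS weight $\psi$ one computes $\tau_{\psi_\tau}(a) = \psi_\tau(e^{\beta h/2} a e^{\beta h/2}) = \tau(a)$ and $\psi_{\tau_\psi}(a) = \tau_\psi(e^{-\beta h/2} a e^{-\beta h/2}) = \psi(a)$. There is no serious obstacle: everything hinges on the closed-form expression for $\sigma_z$ from Lemma \ref{27-09-23a} and on the trace identity applied to the single element $a e^{-\beta h/2}$.
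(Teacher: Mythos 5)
Your proof is correct and follows essentially the same route as the paper, which introduces $\tau_\psi$ and $\psi_\tau$ via the invertible multiplier $e^{-\beta h/2}$ and leaves most of the verification to the reader; your write-up simply fills in those checks. The one point to make explicit is that surjectivity is not quite ``immediate from $e^{\beta h/2}e^{-\beta h/2}=1$'': you must also know that $\tau_\psi$ is a lower semi-continuous \emph{trace}, and this requires the KMS condition of $\psi$ applied to $x = ae^{\beta h/2}$ (so that $\tau_\psi(a^*a)=\psi\bigl(\sigma_{-i\beta/2}(x)\sigma_{-i\beta/2}(x)^*\bigr)=\tau_\psi(aa^*)$) --- exactly the computation the paper displays in the lines preceding the proposition.
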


\begin{prop}\label{04-09-22e}  For each $\beta \in \mathbb R$ the map $\tau \mapsto \psi_\tau$ defined by \eqref{05-09-22b} is a bijection from the set of bounded traces on $A$ onto the set of bounded $\beta$-KMS weights for the flow \eqref{04-09-22b}.
\end{prop}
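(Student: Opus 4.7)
The plan is to leverage Proposition \ref{05-09-22a}, which already provides the bijection between lower semi-continuous traces and $\beta$-KMS weights, and simply check that under this bijection the boundedness of $\tau$ corresponds to the boundedness of $\psi_\tau$. All of the real content therefore lies in exploiting that $e^{\pm \beta h/2}$ is a bounded element of $M(A)$.

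First, I would record the elementary observation that the map
\[
\Phi : A^+ \to A^+, \qquad \Phi(a) := e^{-\frac{\beta h}{2}} a \, e^{-\frac{\beta h}{2}},
\]
is well-defined and bijective. It is well-defined because $e^{-\beta h/2} \in M(A)$ by Lemma \ref{27-09-23a}, so $\Phi(a) \in A$, and it is positive since $e^{-\beta h/2}$ is self-adjoint. It is bijective with inverse $\Phi^{-1}(b) = e^{\frac{\beta h}{2}} b \, e^{\frac{\beta h}{2}}$, which again lies in $A^+$ by the same reasoning applied to the bounded self-adjoint multiplier $e^{\beta h/2}$.

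Next, I would translate this into the required equivalence. By definition $\psi_\tau = \tau \circ \Phi$, so if $\tau$ is bounded then $\psi_\tau(a) = \tau(\Phi(a)) < \infty$ for every $a \in A^+$, hence $\psi_\tau$ is bounded. Conversely, if $\psi_\tau$ is bounded, then for every $b \in A^+$ we can write $b = \Phi(\Phi^{-1}(b))$ with $\Phi^{-1}(b) \in A^+$, and therefore
\[
\tau(b) = \tau(\Phi(\Phi^{-1}(b))) = \psi_\tau(\Phi^{-1}(b)) < \infty,
\]
showing that $\tau$ is bounded.

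Combining this with Proposition \ref{05-09-22a} finishes the argument: the bijection $\tau \mapsto \psi_\tau$ between lower semi-continuous traces and $\beta$-KMS weights restricts to a bijection between the bounded traces and the bounded $\beta$-KMS weights. There is no real obstacle here; the only point that deserves attention is verifying that $\Phi$ actually maps $A^+$ into $A^+$ (not merely into $M(A)^+$), and this is handled directly by Lemma \ref{27-09-23a}.
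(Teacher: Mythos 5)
Your proposal is correct and follows essentially the same route as the paper: the paper's proof simply observes that $a \mapsto e^{-\beta h/2}ae^{-\beta h/2}$ and $a \mapsto e^{\beta h/2}ae^{\beta h/2}$ are bounded maps of $A$ into itself, so that $\tau \mapsto \psi_\tau$ and its inverse both carry bounded functionals to bounded functionals. Your version just spells out the same observation in slightly more detail via the bijection $\Phi$ of $A^+$ onto itself.
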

\begin{proof} The maps $A \to A$, given by $a \mapsto e^{- \frac{\beta h}{2}}ae^{- \frac{\beta h}{2}}$ and $a \mapsto e^{ \frac{\beta h}{2}}ae^{\frac{\beta h}{2}}$, are both bounded and hence the map $\tau \mapsto \psi_\tau$ and its inverse both take bounded functionals to bounded functionals. 
\end{proof}


\begin{cor}\label{05-09-22c} Assume that $A$ is unital. For each $\beta \in \mathbb R$ the map $\omega \mapsto \psi_\omega$ given by
$$
\psi_\omega(a) := \frac{\omega(e^{-\beta h}a)}{\omega(e^{-\beta h})}, \ \ \forall a \in A,
$$
is a bijection from the set of tracial states $\omega$ of $A$ onto the set of $\beta$-KMS states for the flow \eqref{04-09-22b}. 
\end{cor}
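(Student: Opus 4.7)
My plan is to reduce to Propositions \ref{05-09-22a} and \ref{04-09-22e} and normalize. Since $A$ is unital we have $M(A) = A$, so $e^{-\beta h} \in A$. Because $h = h^* \in A$ is bounded, spectral theory gives $e^{-\beta h} \geq e^{-|\beta|\|h\|} \cdot 1$, hence $\omega(e^{-\beta h}) \geq e^{-|\beta|\|h\|} > 0$ for every state $\omega$; in particular the denominator in the formula is always nonzero and bounded below uniformly in $\omega$.

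First I will check well-definedness and the forward direction. Given a tracial state $\omega$, the tracial property and $e^{-\beta h/2} \in A$ yield
\begin{equation*}
\omega\!\left(e^{-\frac{\beta h}{2}} a\, e^{-\frac{\beta h}{2}}\right) \;=\; \omega\!\left(e^{-\beta h} a\right)
\end{equation*}
for all $a \in A$. Writing $c := \omega(e^{-\beta h}) > 0$, the formula in the corollary reads $\psi_\omega = c^{-1} \cdot \psi_\omega^{\mathrm{Prop}}$ where $\psi_\omega^{\mathrm{Prop}}(a) := \omega(e^{-\beta h/2} a e^{-\beta h/2})$ is the $\beta$-KMS weight supplied by Proposition \ref{05-09-22a} (bounded by Proposition \ref{04-09-22e}, since $\omega$ is a bounded trace). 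As a positive scalar multiple of a $\beta$-KMS weight is again a $\beta$-KMS weight (immediate from Kustermans' theorem), $\psi_\omega$ is a $\beta$-KMS weight, and $\psi_\omega(1) = c/c = 1$ makes it a KMS state.

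Next I will establish surjectivity. Let $\psi$ be a $\beta$-KMS state for the flow \eqref{04-09-22b}. By Lemma \ref{21-10-23a}, $\psi$ is a bounded $\beta$-KMS weight, so by Proposition \ref{04-09-22e} there is a unique bounded trace $\tau$ on $A$ with $\psi = \psi_\tau$, i.e.\ $\psi(a) = \tau(e^{-\beta h/2} a e^{-\beta h/2}) = \tau(e^{-\beta h} a)$ (trace property, valid since $e^{-\beta h/2} \in A$). From $\psi \neq 0$ and the estimate $\tau(a) \leq \|a\|\tau(1)$ for $a \in A^+$ I get $\tau(1) > 0$, so $\omega := \tau/\tau(1)$ is a tracial state. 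Evaluating the KMS relation at $a = 1$: $1 = \psi(1) = \tau(e^{-\beta h})$, so $\omega(e^{-\beta h}) = 1/\tau(1)$ and
\begin{equation*}
\psi_\omega(a) \;=\; \frac{\omega(e^{-\beta h} a)}{\omega(e^{-\beta h})} \;=\; \frac{\tau(e^{-\beta h} a)/\tau(1)}{1/\tau(1)} \;=\; \tau(e^{-\beta h} a) \;=\; \psi(a).
\end{equation*}

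Finally, for injectivity, suppose $\psi_\omega = \psi_{\omega'}$ for tracial states $\omega, \omega'$. Then $\omega(e^{-\beta h} a)/\omega(e^{-\beta h}) = \omega'(e^{-\beta h} a)/\omega'(e^{-\beta h})$ for all $a \in A$. Specializing to $a = e^{\beta h} \in A$ gives $1/\omega(e^{-\beta h}) = 1/\omega'(e^{-\beta h})$, and then substituting arbitrary $a$ in the form $a = e^{-\beta h}(e^{\beta h} b)$ and using invertibility of $e^{-\beta h}$ one recovers $\omega(b) = \omega'(b)$ for all $b \in A$. There is no genuine obstacle here; the only mildly delicate point is checking that the tracial property is used exactly where needed to identify the formula in the corollary with the unnormalized version from Proposition \ref{05-09-22a}.
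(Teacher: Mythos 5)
Your proof is correct and follows exactly the route the paper intends: the corollary is stated as an immediate consequence of Propositions \ref{05-09-22a} and \ref{04-09-22e}, obtained by using the trace property to rewrite $\omega(e^{-\beta h/2}ae^{-\beta h/2})$ as $\omega(e^{-\beta h}a)$ and then normalizing. Your checks of the positivity of the denominator, surjectivity via the unique bounded trace from Proposition \ref{04-09-22e}, and injectivity by specializing $a$ are all sound.
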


\begin{example}\label{31-07-23} \rm{(Flows on a matrix algebra.) Let $M_n(\mathbb C)$ be the $C^*$-algebra of complex $n$ by $n$ matrices. For every self-adjoint element $h= h^*$ in $M_n(\mathbb C)$ there is a flow $\sigma$ on $M_n(\mathbb C)$ given by
\begin{equation}\label{31-07-23a}
\sigma_t(m) = e^{ith}me^{-ith} .
\end{equation}
It follows from Corollary \ref{05-09-22c} that there is a unique $\beta$-KMS state $\omega_\beta$ for $\sigma$ for every $\beta \in \mathbb R$ and that 
$$
\omega_\beta(m) = \frac{\Tr_n(e^{-\beta h}m)}{\Tr_n(e^{-\beta h})} \ \ \ \ \forall m \in M_n(\mathbb C),
$$
where $\Tr_n$ is the usual trace on $M_n(\mathbb C)$.}

\rm{It is straightforward to adopt the arguments from the proof of Theorem \ref{12-04-22} in Appendix \ref{compact operators} to show that every flow on $M_n(\mathbb C)$ is of the form \eqref{31-07-23a} for some $h=h^* \in M_n(\mathbb C)$.}
\end{example}

\begin{remark}\label{31-07-23b} \rm{The map $\omega \mapsto \psi_\omega$ in Corollary \ref{05-09-22c} is continuous for the weak* topology and hence a homeomorphism since a continuous bijection between compact Hausdorff spaces is a homeomorphism. However, the map is not affine. It fact, in general there may not be an affine homeorphism between the tracial state space of $A$ and the compact convex set of $\beta$-KMS states in the setting of Corollary \ref{05-09-22c}. The reader can see what can go wrong in Remarks 2.3 and 2.4 of \cite{Ki1}.}
\end{remark}

\subsection{General inner flows}\label{geninner}

By using Theorem \ref{29-06-22d} we can partly extend the results of the last subsection to more general inner flows, as follows. Let $u = (u_t)_{t \in \mathbb R}$ be a strictly continuous unitary representation of $\mathbb R$ by unitaries in the multiplier algebra $M(A)$ of $A$ and consider the corresponding inner flow
$$
\sigma'_t(a) = u_tau_{-t} .
$$ 
Then $\sigma'$ is related to the trivial flow $\sigma_t = \id_A$ in the same way as $\sigma'$ was related to the $\sigma$ of Section \ref{05-09-22g}. Since the $\beta$-KMS weights for the trivial flow are the lower semi-continuous traces on $A$ we get the following directly from Theorem \ref{29-06-22d} and its proof:

\begin{thm}\label{17-08-22c} Let $\alpha$ be an inner flow on $A$ such that $\alpha_t = \Ad e^{itH}$, where $u_t = e^{itH}$ is a strictly continuous unitary representation of $\mathbb R$ by multipliers in $M(A)$. For any $\beta \in \mathbb R$ the map $\tau \mapsto \tau_\beta$, where
$$
\tau_\beta(a) := \sup \left\{\tau\left(e^{\frac{-\beta H}{2}} f(H) a f(H) e^{-\frac{\beta H}{2}}\right) : \ f \in C_c(\mathbb R), \ 0 \leq f \leq 1 \right\} ,
$$
is a bijection from the set of lower semi-continuous traces $\tau$ on $A$ onto the set of $\beta$-KMS weights for $\alpha$. The inverse of the map $\tau \mapsto \tau_\beta$ is the map $\psi \mapsto \tau_\psi$ which takes a $\beta$-KMS weight $\psi$ for $\alpha$ to the lower semi-continuous trace $\tau_\psi : A^+ \to [0,\infty]$ given by
$$
\tau_\psi(a) := \sup \left\{\psi\left(e^{\frac{\beta H}{2}} f(H) a f(H) e^{\frac{\beta H}{2}}\right) : \ f \in C_c(\mathbb R), \ 0 \leq f \leq 1 \right\}.
$$
\end{thm}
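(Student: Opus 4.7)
The plan is to derive this as a direct specialization of Theorem \ref{29-06-22d}. First I would take $\sigma_t = \id_A$ to be the trivial flow and let $u_t = e^{itH}$ be the strictly continuous unitary representation in $M(A)$ defining $\alpha$. Since $\sigma$ is the identity, the hypothesis $\sigma_t(u_s) = u_s$ holds trivially, and we have $\Ad u_t \circ \sigma_t = \Ad e^{itH} = \alpha_t$, so $\alpha$ plays the role of $\sigma'$ in the framework of Section \ref{05-09-22g}. Theorem \ref{29-06-22d} then supplies the bijection $\psi \mapsto \psi_\beta$ from $\KMS(\sigma,\beta)$ onto $\KMS(\alpha,\beta)$, and formula \eqref{29-06-22ax} is exactly the formula stated in the theorem for $\tau \mapsto \tau_\beta$.

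Next I would identify $\KMS(\sigma,\beta)$ with the set of lower semi-continuous traces on $A$. For the trivial flow every element of $A$ is entire analytic and $\sigma_z = \id_A$ for all $z \in \mathbb C$; hence condition (2) of Kustermans' theorem (Theorem \ref{24-11-21d}) becomes $\psi(a^*a) = \psi(aa^*)$ for all $a \in A$. Combined with the fact that a KMS weight is by definition non-zero and densely defined, this shows that $\KMS(\sigma,\beta)$ coincides with the set of lower semi-continuous traces on $A$ in the sense of Definition \ref{03-02-22f}, independently of $\beta$ (consistent with such weights being passive).

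For the inverse map I would appeal to the reverse construction in the proof of Theorem \ref{29-06-22d}: one writes $\sigma_t = \Ad u_t^* \circ \alpha_t$ with $u_t^* = e^{it(-H)}$, so that the same theorem, applied now with generator $-H$, yields the inverse bijection $\phi \mapsto \phi^\beta$ given by
$$
\phi^\beta(a) = \sup\bigl\{\phi\bigl(e^{\beta H/2} f(-H) a f(-H) e^{\beta H/2}\bigr) : f \in C_c(\mathbb R),\ 0 \leq f \leq 1\bigr\}.
$$
Substituting $g(x) := f(-x)$, which sweeps out the same family of nonnegative functions in $C_c(\mathbb R)$ bounded by $1$, converts this into the formula stated for $\tau_\psi$. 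The only real obstacle is this piece of bookkeeping around the sign of the argument of $f$, which is resolved by the substitution $g(x) = f(-x)$; everything else is a direct invocation of Theorem \ref{29-06-22d} and Kustermans' characterization of KMS weights.
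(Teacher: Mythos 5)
Your proposal is correct and is essentially the paper's own argument: the paper obtains Theorem \ref{17-08-22c} precisely by applying Theorem \ref{29-06-22d} with $\sigma$ the trivial flow, identifying the $\beta$-KMS weights of the trivial flow with the lower semi-continuous traces via Kustermans' theorem, and reading off the inverse from the map $\phi \mapsto \phi^\beta$ in the proof of Theorem \ref{29-06-22d}. Your substitution $g(x)=f(-x)$ to reconcile $f(-H)$ with the stated formula for $\tau_\psi$ is exactly the bookkeeping the paper leaves implicit.
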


\begin{notes}\label{27-09-23d} Corollary \ref{05-09-22c} is a folklore fact, but Theorem \ref{17-08-22c} seems to be new. It generalizes Theorem \ref{02-01-22a}.
\end{notes}

\subsection{KMS-states for approximately inner flows}

A flow $\sigma$ on a $C^*$-algebra $A$ is \emph{approximately inner} when there is a sequence $\{h_n\}$ of self-adjoint elements in $A$ such that for all $a \in A$,
$$
\lim_{n \to \infty} e^{ith_n} a e^{-ith_n} = \sigma_t(a)
$$ 
uniformly for $t$ in any compact subset of $\mathbb R$.

\begin{thm}\label{01-08-23d} Let $A$ be a unital $C^*$-algebra and $\sigma$ an approximately inner flow on $A$. If $A$ has a trace state there is a $\beta$-KMS state for $\sigma$ for all $\beta \in \mathbb R$.
\end{thm}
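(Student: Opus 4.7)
The plan is to produce a $\beta$-KMS state for $\sigma$ as a weak*-limit of $\beta$-KMS states for the approximating inner flows. First, I would fix a tracial state $\tau$ on $A$ (which exists by hypothesis) and write $\sigma^{(n)}_t := \Ad e^{ith_n}$ for the inner flows approximating $\sigma$. Corollary \ref{05-09-22c}, applied to the bounded self-adjoint generator $h_n \in A$, produces for each $n$ the $\beta$-KMS state
$$
\omega_n(a) := \frac{\tau(e^{-\beta h_n}a)}{\tau(e^{-\beta h_n})}
$$
for the flow $\sigma^{(n)}$. Since $A$ is unital its state space is weak*-compact, so I can pass to a weak*-convergent subnet $\omega_{n_\alpha} \to \omega$, and $\omega(1)=1$ ensures $\omega$ is a state. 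The task is then to verify that $\omega$ satisfies condition (4) of Theorem \ref{21-11-23b} for the flow $\sigma$.

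Fix $a,b \in A$. By Lemma \ref{27-09-23a} the function
$$
f_n(z) := \omega_n\bigl(b\,\sigma^{(n)}_z(a)\bigr) = \omega_n\bigl(b\,e^{izh_n}a\,e^{-izh_n}\bigr)
$$
is entire; it is bounded on the strip $\mathcal D_\beta$ by the estimate $\|e^{izh_n}\|\le e^{|\Imag z|\,\|h_n\|}$, and the KMS property of $\omega_n$ for $\sigma^{(n)}$ gives $f_n(t+i\beta)=\omega_n(\sigma^{(n)}_t(a)b)$ for $t\in\mathbb R$. Thus $|f_n(t)|\le\|a\|\|b\|$ and $|f_n(t+i\beta)|\le\|a\|\|b\|$, and the Phragmen-Lindel\"of theorem (Proposition 5.3.5 in \cite{BR}) yields the uniform bound $|f_n(z)|\le\|a\|\|b\|$ for all $z\in\mathcal D_\beta$ and all $n$.

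Because $\omega_{n_\alpha}\to\omega$ weak*, and the map $t\mapsto b\sigma_t(a)$ sends compact subsets of $\mathbb R$ to norm-compact subsets of $A$, a standard equicontinuity argument shows $\omega_{n_\alpha}(b\sigma_t(a))\to\omega(b\sigma_t(a))$ uniformly on compact subsets of $\mathbb R$. Combined with the approximate-innerness estimate $\|\sigma^{(n_\alpha)}_t(a)-\sigma_t(a)\|\to 0$ uniformly on compact $t$-intervals, this yields $f_{n_\alpha}(t)\to\omega(b\sigma_t(a))$ and, by the same reasoning, $f_{n_\alpha}(t+i\beta)\to\omega(\sigma_t(a)b)$ locally uniformly in $t$. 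Both limit functions are bounded and continuous since $\sigma$ is a flow.

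The main obstacle is to upgrade this to the existence of a continuous function $f$ on $\mathcal D_\beta$, holomorphic on $\mathcal D_\beta^0$, with exactly these boundary values. I would handle this via the Poisson representation of bounded holomorphic functions on a strip: each $f_n$ is given by an explicit integral of its two boundary values against Poisson-type kernels $P^{(0)}_y(\cdot),P^{(\beta)}_y(\cdot)$ (for $y\in[0,\beta]$), and the uniform bound $|f_n|\le\|a\|\|b\|$ on $\mathcal D_\beta$ together with the pointwise boundary convergence legitimates Lebesgue dominated convergence to produce a continuous function $f$ on $\mathcal D_\beta$, holomorphic in $\mathcal D_\beta^0$, satisfying $f(t)=\omega(b\sigma_t(a))$ and $f(t+i\beta)=\omega(\sigma_t(a)b)$. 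An alternative route is to extract a Montel-sublimit of $\{f_{n_\alpha}\}$ uniformly on compacta of $\mathcal D_\beta^0$ and then identify its boundary behavior by applying Phragmen-Lindel\"of to differences $f_n-f_m$ after truncating $t$ to compact intervals; either way, the delicate point is controlling the limit up to and including the boundary of $\mathcal D_\beta$. Once $f$ is produced, condition (4) of Theorem \ref{21-11-23b} is verified and $\omega$ is a $\beta$-KMS state for $\sigma$.
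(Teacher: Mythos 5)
Your proposal is correct in outline but follows a genuinely different route from the paper. You verify condition (4) of Theorem \ref{21-11-23b} by complex analysis on the strip: the functions $f_n(z)=\omega_n(b\sigma^{(n)}_z(a))$ are entire and uniformly bounded by $\|a\|\|b\|$ on $\mathcal D_\beta$ via Phragmen--Lindel\"of, their boundary values converge locally uniformly to $\omega(b\sigma_t(a))$ and $\omega(\sigma_t(a)b)$, and the Poisson representation of bounded holomorphic functions on the strip (together with Vitali/Montel) produces the limit function $f$ continuous up to the boundary. The paper instead verifies the algebraic condition (3), $\omega(ab)=\omega(b\sigma_{i\beta}(a))$ for $a\in\mathcal A_\sigma$: it replaces $a$ by the smoothed elements $R_n(a)$, chooses $k_n$ so that $\sigma^{(k_n)}_t(a)$ approximates $\sigma_t(a)$ on a window $[-L_n,L_n]$ outside of which both Gaussian weights $e^{-ns^2}$ and $|e^{-n(s-i\beta)^2}|$ have negligible mass, and compares $R_n(a)$ with $R^{(k_n)}_n(a)$ and $\sigma_{i\beta}(R_n(a))$ with $\sigma^{(k_n)}_{i\beta}(R^{(k_n)}_n(a))$ by direct norm estimates on the integral formulas of Lemma \ref{17-11-21i}; the exact KMS identity for $\omega_{k_n}$ at the analytic element $R^{(k_n)}_n(a)$ then passes to the limit. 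The paper's route stays entirely within the smoothing-operator toolbox already developed and needs no boundary-value theory; it also handles non-separable $A$ by restricting to the separable $C^*$-subalgebra generated by the orbits of $a$, $b$ and the $h_n$ so that a genuine subsequence of $\{\omega_n\}$ converges on all elements that occur. Your route is more conceptual ("weak* limits of approximate KMS states are KMS") but imports the Poisson kernel for the strip as an external ingredient. One caveat: of your two suggested endgames, only the Poisson-integral/dominated-convergence one closes; the alternative of applying Phragmen--Lindel\"of to $f_n-f_m$ over truncated rectangles does not work as stated, because the boundary convergence is only locally uniform in $t$ and you have no control on the vertical sides of a truncated rectangle. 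You should commit to the Poisson route.
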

\begin{proof} Let $\tau$ be a trace state on $A$ and let $\beta \in \mathbb R$. For each $n \in \mathbb N$ set
$$
\omega_n(a) := \frac{\tau(e^{-\beta h_n}a)}{\tau(e^{-\beta h_n})} .
$$
Each $\omega_n$ is a state on $A$ and since the state space of $A$ is compact in the weak* topology, the set $\{\omega_n : n \in \mathbb N\}$ has a weak* condensation point $\omega$. We aim to show that $\omega$ is a $\beta$-KMS state for $\sigma$. To show this let $a \in \mathcal A_\sigma$ and $b \in A$. By Theorem \ref{21-11-23b} it suffices to show that
\begin{equation}\label{01-08-23e}
\omega(ab) = \omega(b\sigma_{i\beta}(a)) .
\end{equation}
The $C^*$-algebra $C$ generated by 
$$
\left\{ \sigma_t(x): \ x \in \cup_{n=1}^\infty \{a,b,h_n\}\right\}
$$ 
is $\sigma$-invariant and separable, and there is therefore a sequence $m_1 < m_2 < m_3 < \cdots$ in $\mathbb N$ such that $\lim_{n \to \infty} \omega_{m_n}(c) =c$ for all $c \in C$. To simplify notation, set $\omega'_n := \omega_{m_n}$.

Let $\sigma^k$ denote the flow on $C$ given by $\sigma^k_t(c) = e^{ith_k} ce^{-i th_k}$. Choose $L_n > 0$ such that
\begin{equation}\label{02-10-23d} 
  \sqrt{\frac{n}{\pi}}\int_{|s| \geq L_n} \left| e^{-n(s-i\beta)^2}\right|  \ \mathrm d s \leq \frac{1}{n},
\end{equation}
and
\begin{equation}\label{02-10-23e} 
  \sqrt{\frac{n}{\pi}}\int_{|s| \geq L_n}  e^{-ns^2}  \ \mathrm d s \leq \frac{1}{n},
\end{equation}
and then $k_n \in \mathbb N$ such that
\begin{equation}\label{02-10-23f} 
 \sup_{|t| \leq L_n} \left\| \sigma^{k_n}_t(a) - \sigma_t(a)\right\| \leq \frac{M}{n},
\end{equation}
where
$$
M := \min \left\{  \left(\sqrt{\frac{n}{\pi}}\int_{|s| \leq L_n} \left|e^{-n(s-i\beta)^2}\right|  \ \mathrm d s\right)^{-1}, \  \left(\sqrt{\frac{n}{\pi}}\int_{|s| \leq L_n} e^{-n s^2}  \ \mathrm d s\right)^{-1}\right\}.
$$
By increasing $k_n$ we can arrange that
\begin{equation}\label{01-08-23}
\left|\omega'_{k_n}(R_n(a)b) - \omega(R_n(a)b)\right| \leq \frac{1}{n}
\end{equation}
and
\begin{equation}\label{01-08-23a}
\left|\omega'_{k_n}(b\sigma_{i\beta}(R_n(a))) - \omega(b\sigma_{i\beta}(R_n(a)))\right| \leq \frac{1}{n} .
\end{equation}
Let $R^{(k_n)}_n$ be the smoothing operators for flow $\sigma^{k_n}$. Then \eqref{02-10-23e} and \eqref{02-10-23f} give the estimate
\begin{equation}\label{02-10-23}
\begin{split}
& \left\|R^{(k_n)}_n(a) - R_n(a)\right\| \leq \sqrt{\frac{n}{\pi}} \int_\mathbb R e^{-nt^2} \left\| \sigma^{k_n}_t(a) - \sigma_t(a)\right\|  \ \mathrm d t \\
& \\
& \leq 2\|a\|\sqrt{\frac{n}{\pi}}\int_{|s| \geq L_n}  e^{-ns^2}  \ \mathrm d s   + \sqrt{\frac{n}{\pi}} \int_{|s| \leq L_n} e^{-nt^2} \left\| \sigma^{k_n}_t(a) - \sigma_t(a)\right\|  \ \mathrm d t\\
& \\
& \leq \frac{2\|a\|}{n} + \frac{M}{n} \sqrt{\frac{n}{\pi}} \int_{|s| \leq L_n} e^{-nt^2} \ \mathrm d t  \leq \frac{2\|a\| +1}{n} .
\end{split}
\end{equation} 
It follows from Lemma \ref{17-11-21i} that
\begin{align*}
& \sigma_{i \beta}(R_n(a)) = \sqrt{\frac{n}{\pi}}\int_\mathbb R e^{-n(s-i\beta)^2} \sigma_s(a) \ \mathrm d s 
\end{align*}
and
\begin{align*}
& \sigma^{k_n}_{i \beta}(R^{(k_n)}_n(a)) = \sqrt{\frac{n}{\pi}}\int_\mathbb R e^{-n(s-i\beta)^2} \sigma^{k_n}_s(a) \ \mathrm d s .
\end{align*}
Hence estimates similar to the preceding, using \eqref{02-10-23d} and \eqref{02-10-23f}, show that
\begin{equation}\label{02-10-23a}
\begin{split}
& \left\| \sigma_{i \beta}(R_n(a)) - \sigma^{k_n}_{i\beta}(R^{(k_n)}_n(a)) \right\| \\
&\leq \sqrt{\frac{n}{\pi}}\int_\mathbb R \left|e^{-n(s-i\beta)^2}\right| \left\|\sigma_s(a) - \sigma^{k_n}_s(a)\right\| \ \mathrm d s  \leq   \frac{2\|a\| +1}{n}.
\end{split}
\end{equation}
Then 
\begin{align*}
& \left| \omega'_{k_n}(b\sigma_{i\beta}(R^{(k_n)}_n(a))) - \omega(b\sigma_{i \beta}(R_n(a)))\right| \\
&\leq \|b\|\frac{2\|a\| +1}{n} + \left|\omega'_{k_n}(b\sigma_{i \beta} (R_n(a))) - \omega(b\sigma_{i \beta}(R_n(a)))\right| \ \ \ \ \ \ \text{(by \eqref{02-10-23a})} \\
& \leq  \|b\|\frac{2\|a\| +1}{n} + \frac{1}{n} \ \ \ \ \ \ \text{(by \eqref{01-08-23a})},
\end{align*}
and by using \eqref{02-10-23} and \eqref{01-08-23} in the same way we find also that
\begin{align*}
&\left|\omega'_{k_n}(R^{(k_n)}_n(a)b) - \omega(R_n(a)b) \right| \leq  \|b\|\frac{2\|a\| +1}{n} + \frac{1}{n} .
\end{align*}
Since $\omega'_{k_n}(b\sigma_{i\beta}(R^{(k_n)}_n(a))) = \omega'_{k_n}(R^{(k_n)}_n(a)b))$ this implies that
\begin{equation}\label{02-10-23c}
\left|\omega(b\sigma_{i \beta}(R_n(a)) -\omega(R_n(a)b)\right| \leq  2\|b\|\frac{2\|a\| +1}{n} + \frac{2}{n} .
\end{equation}
It follows from Lemma \ref{17-11-21i} and Lemma \ref{24-11-21} that
$$
\lim_{n \to \infty} b\sigma_{i\beta}(R_n(a)) - R_n(a)b = b\sigma_{i\beta}(a) -  ab,
$$
which combined with \eqref{02-10-23c} shows that \eqref{01-08-23e} holds. Hence $\omega$ is a $\beta$-KMS state.
\end{proof}

\begin{notes} Theorem \ref{01-08-23d} was obtained by Powers and Sakai in 1974 and published in the very influential paper \cite{PS} in which they conjectured that all flows on a UHF algebra are approximately inner. The Powers-Sakai conjecture was not refuted before the work of Matui and Sato in \cite{MS}, following a paper by Kishimoto, \cite{Ki2}, in which he gave an example of a flow on a simple unital AF-algebra which is not approximately inner.

We note that Theorem \ref{01-08-23d} also follows from Proposition 5.3.25 in \cite{BR}.

\end{notes}

\section{Tensor products}\label{tensor}

In this section we consider tensor products of $C^*$-algebras with the aim of defining the tensor product of KMS weights. The construction we present will work regardless of which $C^*$-norm we consider, as long as it is 'reasonable' as specified on page 850 of \cite{KR}. In particular, it works both for the maximal and the minimal tensor product. In the construction we shall also consider tensor products of Hilbert spaces and of operators, in addition to certain algebraic tensor product. We use the same notation $\otimes$ for all tensor products in the hope that the meaning will be clear from the context.

Let $\sigma^1$ be a flow on the $C^*$-algebra $A_1$ and $\sigma^2$ a flow on the $C^*$-algebra $A_2$, and let $\psi_1$ be a $\beta$-KMS wight for $\sigma^1$ and $\psi_2$ a $\beta$-KMS weight for $\sigma^2$. We can then consider the tensor product flow $\sigma$ on $A_1 \otimes A_2$, defined by
$$
\sigma_t := \sigma^1_t \otimes \sigma^2_t.
$$
Thus, on simple tensors $\sigma_t(a_1 \otimes a_2) = \sigma^1_t(a_1) \otimes \sigma^2_t(a_2)$.

\begin{lemma}\label{08-06-22c} $\Lambda_{\psi_1} \otimes \Lambda_{\psi_2} : \ \mathcal N_{\psi_1} \otimes \mathcal N_{\psi_2} \to H_{\psi_1} \otimes H_{\psi_2}$ is closable.
\end{lemma}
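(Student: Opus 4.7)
Assume a sequence $\{x_n\}$ (or net) in $\mathcal N_{\psi_1}\otimes\mathcal N_{\psi_2}$ with $x_n\to 0$ in $A_1\otimes A_2$ and $(\Lambda_{\psi_1}\otimes\Lambda_{\psi_2})(x_n)\to\eta$ in $H_{\psi_1}\otimes H_{\psi_2}$; I aim to show $\eta=0$. The strategy is to test $\eta$ against a suitable dense family of vectors built from the operators $T_\omega$ of Lemma \ref{08-11-21bx} and then exploit Combes' theorem to pass from this family to the identity.

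\textbf{Step 1 (testing functionals).} For each $\omega_i\in\mathcal F_{\psi_i}$, Lemma \ref{08-11-21bx} provides an operator $T_{\omega_i}\in B(H_{\psi_i})$ with $0\le T_{\omega_i}\le 1$ and
$$\omega_i(b^*a)=\left\langle T_{\omega_i}\Lambda_{\psi_i}(a),\Lambda_{\psi_i}(b)\right\rangle,\quad a,b\in\mathcal N_{\psi_i}.$$
Since $\omega_1,\omega_2$ are bounded positive functionals, the algebraic product $\omega_1\otimes\omega_2$ extends to a bounded linear functional on $A_1\otimes A_2$ (for any reasonable $C^*$-norm, this is standard). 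For $y\in\mathcal N_{\psi_1}\otimes\mathcal N_{\psi_2}$, the pairing
$$(\omega_1\otimes\omega_2)(y^*x_n)=\left\langle(T_{\omega_1}\otimes T_{\omega_2})(\Lambda_{\psi_1}\otimes\Lambda_{\psi_2})(x_n),(\Lambda_{\psi_1}\otimes\Lambda_{\psi_2})(y)\right\rangle$$
holds by checking it on simple tensors. The left side tends to $0$ since $y^*x_n\to 0$ in $A_1\otimes A_2$; the right side tends to $\langle(T_{\omega_1}\otimes T_{\omega_2})\eta,(\Lambda_{\psi_1}\otimes\Lambda_{\psi_2})(y)\rangle$. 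Hence
$$\left\langle(T_{\omega_1}\otimes T_{\omega_2})\eta,(\Lambda_{\psi_1}\otimes\Lambda_{\psi_2})(y)\right\rangle=0$$
for every $\omega_i\in\mathcal F_{\psi_i}$ and every $y\in\mathcal N_{\psi_1}\otimes\mathcal N_{\psi_2}$.

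\textbf{Step 2 (density of test vectors).} Because $\Lambda_{\psi_i}(\mathcal N_{\psi_i})$ is dense in $H_{\psi_i}$ (by Lemma \ref{08-02-22} and the construction of the GNS Hilbert space), the elements $(\Lambda_{\psi_1}\otimes\Lambda_{\psi_2})(y)$ span a dense subspace of $H_{\psi_1}\otimes H_{\psi_2}$. Step 1 therefore gives
$$(T_{\omega_1}\otimes T_{\omega_2})\eta=0\quad\forall\,\omega_1\in\mathcal F_{\psi_1},\ \omega_2\in\mathcal F_{\psi_2}.$$

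\textbf{Step 3 (approximating the identity).} I now use Combes' theorem, Theorem \ref{04-11-21k}, together with Lemma \ref{09-11-21ax}, to produce for each $i$ a net $\{\omega_i^{(\alpha)}\}$ in $\mathcal F_{\psi_i}$ such that the corresponding operators $T_{\omega_i^{(\alpha)}}$ converge to $1$ strongly on $H_{\psi_i}$. Indeed, Lemma \ref{09-11-21ax} allows one to arrange the net so that $\lambda_\alpha\omega_i^{(\alpha)}$ is increasing in the order of $A_i^*_+$; Combes' theorem then gives $\langle T_{\omega_i^{(\alpha)}}\Lambda_{\psi_i}(a),\Lambda_{\psi_i}(a)\rangle\to\psi_i(a^*a)=\|\Lambda_{\psi_i}(a)\|^2$ for every $a\in\mathcal N_{\psi_i}$. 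Since $0\le T_{\omega_i^{(\alpha)}}\le 1$ and $\Lambda_{\psi_i}(\mathcal N_{\psi_i})$ is dense in $H_{\psi_i}$, a standard estimate $\|T_{\omega_i^{(\alpha)}}v-v\|^2\le 2(\|v\|^2-\langle T_{\omega_i^{(\alpha)}}v,v\rangle)$ shows that $T_{\omega_i^{(\alpha)}}\to 1$ in the strong operator topology. Because the tensor factors are uniformly bounded, $T_{\omega_1^{(\alpha_1)}}\otimes T_{\omega_2^{(\alpha_2)}}\to 1$ strongly on $H_{\psi_1}\otimes H_{\psi_2}$ along the product directed set. Applying this to $\eta$ and using Step 2, we get $\eta=\lim(T_{\omega_1^{(\alpha_1)}}\otimes T_{\omega_2^{(\alpha_2)}})\eta=0$, proving closability.

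\textbf{Main obstacle.} The delicate point is Step 3, namely turning the suprema produced by Combes' theorem into a genuine strong-operator approximation of the identity by the operators $T_\omega$. The directedness of $\mathcal F_{\psi_i}$ up to multiplication by scalars in $]0,1[$ (Lemma \ref{09-11-21ax}) combined with the bound $0\le T_\omega\le 1$ is exactly what makes the standard passage from pointwise norm convergence on a dense set to strong convergence on the whole Hilbert space go through. The rest of the proof is essentially bookkeeping, using the fact that positive functionals tensor to continuous functionals on any reasonable tensor product of $C^*$-algebras.
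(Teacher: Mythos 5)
Your argument is correct, but it follows a genuinely different route from the one in the paper. The paper's proof applies Lemma \ref{09-02-22x} to each $\psi_k$ to obtain nets $\{u^k_j\}$ in $\mathcal M^{\sigma^k}_{\psi_k}$ and contractions $\rho^k_j$ satisfying $\rho^k_j\Lambda_{\psi_k}(a)=\Lambda_{\psi_k}(au^k_j)=\pi_{\psi_k}(a)\Lambda_{\psi_k}(u^k_j)$ and $\rho^k_j\to 1$ strongly; applying $\rho^1_j\otimes\rho^2_{j'}$ to $(\Lambda_{\psi_1}\otimes\Lambda_{\psi_2})(x_i)$ rewrites it as $(\pi_{\psi_1}\otimes\pi_{\psi_2})(x_i)$ acting on the fixed vector $\Lambda_{\psi_1}(u^1_j)\otimes\Lambda_{\psi_2}(u^2_{j'})$, which tends to $0$ in norm because $x_i\to 0$, and one concludes by letting $(j,j')\to\infty$. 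That construction leans on the KMS property of $\psi_1,\psi_2$, since the nets $\rho^k_j$ of Lemma \ref{09-02-22x} are built using condition (E). Your proof instead tests the limit vector against the commutant operators $T_{\omega_1}\otimes T_{\omega_2}$ of Lemma \ref{08-11-21bx} and then recovers the identity from Combes' theorem, Theorem \ref{04-11-21k}, together with the directedness supplied by Lemma \ref{09-11-21ax} (the directed set $\{t\omega:\omega\in\mathcal F_{\psi},\,t\in\,]0,1[\,\}$, exactly as in the proof of Lemma \ref{06-02-23}, makes the passage from $\sup_\omega\omega(a^*a)=\psi(a^*a)$ to strong convergence $T_\omega\to 1$ legitimate via the estimate $\|T_\omega v-v\|^2\le \|v\|^2-\langle T_\omega v,v\rangle$ for $0\le T_\omega\le 1$). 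What your approach buys is generality: it never uses the flows or the KMS condition, so it shows that $\Lambda_{\psi_1}\otimes\Lambda_{\psi_2}$ is closable for arbitrary densely defined weights. What the paper's approach buys is that the auxiliary objects it uses ($u^k_j$, $\rho^k_j$, and the subspace $\mathcal M^{\sigma_k}_{\psi_k}\otimes\mathcal M^{\sigma_k}_{\psi_k}$) are needed again immediately afterwards to verify conditions (A)--(F) for the closed extension $\Lambda$, so no extra machinery is introduced.
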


\begin{proof} Let $(x_i)_{i \in I}$ be a net in $\mathcal N_{\psi_1} \otimes \mathcal N_{\psi_2}$ such that $\lim_{i \to \infty} x_i = 0$ in $A_1\otimes A_2$ and $\lim_{i \to \infty} \Lambda_{\psi_1} \otimes \Lambda_{\psi_2}(x_i ) = w$ in $H_{\psi_1} \otimes H_{\psi_2}$. Write $x_i$ as a finite sum
$$
x_i = \sum_n x_n^i \otimes y^i_n ,
$$
where $x_n^i \in \mathcal N_{\psi_1}$ and $y_n^i \in \mathcal N_{\psi_2}$ for all $n,i$. For each of $k \in \{1,2\}$ we let $(u^k_j)_{j \in I_k}$ and $(\rho^k_j)_{j \in I_k}$ be the net in $\mathcal M^{\sigma^k}_{\psi_k}$ and the corresponding net of operators in $B(H_{\psi_k})$ obtained by applying Lemma \ref{09-02-22x} to $\psi_k$. Then
 \begin{align*}
&\rho^1_j \otimes \rho^2_{j'}(w)  = \lim_{i \to \infty} \rho^1_j \otimes \rho^2_{j'}(\Lambda_{\psi_1} \otimes \Lambda_{\psi_2}(x_i )) \\
& = \lim_{i \to \infty}\sum_n  \rho^1_j \otimes \rho^2_{j'}( \Lambda_{\psi_1}(x_n^i) \otimes \Lambda_{\psi_2}(y_n^i )) \\
& =  \lim_{i \to \infty}\sum_n  \Lambda_{\psi_1}(x_n^iu^1_j) \otimes \Lambda_{\psi_2}(y_n^iu^2_{j'} ) \\
& = \lim_{i \to \infty} (\pi_{\psi_1} \otimes \pi_{\psi_2})(x_i)(\Lambda_{\psi_1}(u^1_j) \otimes \Lambda_{\psi_2}(u^2_{j'}))  = 0 
 \end{align*}
 for all $(j,j') \in I_1\times I_2$.
 Since $\lim_{(j,j') \to \infty} \rho^1_j \otimes \rho^2_{j'}(w) = w$
thanks to property (g) of Lemma \ref{09-02-22x}, it follows that $w =0$.
\end{proof}

Let $\Lambda : D(\Lambda) \to H_{\psi_1} \otimes H_{\psi_2}$ be the closure of $\Lambda_{\psi_1} \otimes \Lambda_{\psi_2}$. The triple
\begin{equation}\label{08-06-22e}
( H_{\psi_1} \otimes H_{\psi_2}, \Lambda,  \pi_{\psi_1} \otimes \pi_{\psi_2})
\end{equation}
is then a GNS representation of $A_1 \otimes A_2$ as defined in Definition \ref{06-02-22a}. 

\begin{lemma}\label{08-06-22d} 
 With respect to the flow $\sigma$ the GNS-triple \eqref{08-06-22e} has the properties (A), (B), (C), (D) and (F) from Section \ref{GNS-KMS}.
\end{lemma}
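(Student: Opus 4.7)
The plan is to take $S := \mathcal{M}^{\sigma^1}_{\psi_1} \otimes \mathcal{M}^{\sigma^2}_{\psi_2}$ (algebraic tensor product) as the witness for property (F), and to verify (A)--(D) quickly before addressing (F). For (A), each $\pi_{\psi_k}$ is non-degenerate by Lemma \ref{08-02-22}, and the tensor product of non-degenerate representations is non-degenerate. For (B), $\Lambda$ is closed by definition as the closure of $\Lambda_{\psi_1}\otimes \Lambda_{\psi_2}$. For (C) and (D), I would set $U_t := U^{\psi_1}_t \otimes U^{\psi_2}_t$, which is a strongly continuous unitary group by Lemma \ref{17-11-21e} applied to each factor; on simple tensors $x_1 \otimes x_2 \in \mathcal{N}_{\psi_1}\otimes \mathcal{N}_{\psi_2}$ we have $U_t(\Lambda_{\psi_1}\otimes\Lambda_{\psi_2})(x_1 \otimes x_2) = (\Lambda_{\psi_1}\otimes\Lambda_{\psi_2})(\sigma_t(x_1\otimes x_2))$, extending by closure to $U_t\Lambda(a) = \Lambda(\sigma_t(a))$ for $a \in D(\Lambda)$. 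Invariance of $D(\Lambda)$ under $\sigma_t$ then follows.

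For (F), I would first verify $S \subseteq \mathcal{M}^\sigma_\Lambda$. If $a_k \in \mathcal{M}^{\sigma^k}_{\psi_k}$, then $a_1 \otimes a_2$ is entire analytic for $\sigma$ with $\sigma_z(a_1 \otimes a_2) = \sigma^1_z(a_1) \otimes \sigma^2_z(a_2)$, and each factor lies in $\mathcal{N}_{\psi_k}\cap \mathcal{N}^*_{\psi_k}$ by the definition of $\mathcal{M}^{\sigma^k}_{\psi_k}$, so $\sigma_z(a_1\otimes a_2) \in D(\Lambda)\cap D(\Lambda)^*$ for all $z \in \mathbb{C}$; by linearity $S \subseteq \mathcal{M}^\sigma_\Lambda$. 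The modular identity on $S$ is the main computational check: expanding $s = \sum_n a_1^n \otimes a_2^n$ one finds
\begin{align*}
\|\Lambda(\sigma_{-i\beta/2}(s)^*)\|^2 &= \sum_{n,m} \prod_{k=1,2}\bigl\langle \Lambda_{\psi_k}(\sigma^k_{-i\beta/2}(a^n_k)^*), \Lambda_{\psi_k}(\sigma^k_{-i\beta/2}(a^m_k)^*)\bigr\rangle.
\end{align*}
Using the conjugate linear isometry $J_{\psi_k}\Lambda_{\psi_k}(b) = \Lambda_{\psi_k}(\sigma^k_{-i\beta/2}(b)^*)$ on $\mathcal{M}^{\sigma^k}_{\psi_k}$, provided by condition (1) of Theorem \ref{24-11-21d}, each factor equals $\langle \Lambda_{\psi_k}(a^m_k),\Lambda_{\psi_k}(a^n_k)\rangle$; interchanging the summation indices $n\leftrightarrow m$ gives $\|\Lambda(s)\|^2$.

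The step I expect to be the main obstacle is showing that $S$ satisfies the approximation property required in (F): for every $a \in \mathcal{M}^\sigma_\Lambda$ there should exist a sequence in $S$ converging to $a$ in the graph norm of $\Lambda$. By definition of $\Lambda$ as a closure, any $a \in D(\Lambda)$ is the graph-norm limit of a sequence $\{x_n\}$ in the algebraic tensor product $\mathcal{N}_{\psi_1}\otimes \mathcal{N}_{\psi_2}$. Writing each $x_n$ as a finite sum $\sum_j b^n_j \otimes c^n_j$ with $b^n_j \in \mathcal{N}_{\psi_1}$, $c^n_j \in \mathcal{N}_{\psi_2}$, I would invoke Corollary \ref{15-02-22} to approximate each $b^n_j$ in the $\Lambda_{\psi_1}$-graph norm by elements of $\mathcal{M}^{\sigma^1}_{\psi_1}$, and similarly for $c^n_j$, then use the elementary estimate
\begin{align*}
\|\Lambda_{\psi_1}(b)\otimes \Lambda_{\psi_2}(c) - \Lambda_{\psi_1}(b')\otimes \Lambda_{\psi_2}(c')\| &\leq \|\Lambda_{\psi_1}(b-b')\|\,\|\Lambda_{\psi_2}(c)\| \\ &\quad + \|\Lambda_{\psi_1}(b')\|\,\|\Lambda_{\psi_2}(c-c')\|
\end{align*}
(and an analogous one for norms in $A_1 \otimes A_2$) to replace each simple tensor $b^n_j \otimes c^n_j$ by an element of $S$ at graph-norm distance at most $2^{-n}/(\#\text{summands})$. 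A diagonal extraction then produces a sequence in $S$ converging to $a$ in the graph norm, completing the verification of (F).
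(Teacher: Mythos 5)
Your proposal is correct and follows essentially the same route as the paper's proof: the same choice $S = \mathcal M^{\sigma^1}_{\psi_1}\otimes\mathcal M^{\sigma^2}_{\psi_2}$, the same verification of (A)--(D) via $U^{\psi_1}_t\otimes U^{\psi_2}_t$ and closedness by construction, and the same two ingredients for (F), namely graph-norm approximation of simple tensors factorwise (Corollary \ref{15-02-22} is just Lemma \ref{07-12-21c} with $F=\{0\}$, which is what the paper invokes) and the modular isometry on $S$. Your explicit double-sum computation with the index swap is exactly the unpacked form of the paper's one-line observation that $J_{\psi_1}\otimes J_{\psi_2}$, being a tensor product of conjugate linear isometries, is an isometry.
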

\begin{proof} The triple has property (B) by definition. It is straightforward to see that the triple has property (A) because $\pi_{\psi_k}, k =1,2$, are both non-degenerate representations. Let $x \in D(\Lambda)$. By definition there is a sequence $\{x_n\}$ in $ \mathcal N_{\psi_1} \otimes \mathcal N_{\psi_2}$ such that $\lim_{n \to \infty} x_n = x$ and $\lim_{n \to \infty} \Lambda(x) = \lim_{n \to \infty} (\Lambda_{\psi_1} \otimes \Lambda_{\psi_2})(x_n)$. Then $\sigma_t(x_n) \in  \mathcal N_{\psi_1} \otimes \mathcal N_{\psi_2}$ and $\lim_{n \to \infty} \sigma_t(x_n) = \sigma_t(x)$. Furthermore, 
\begin{align*}
&\lim_{n \to \infty} \Lambda( \sigma_t(x_n)) = \lim_{n \to \infty} (U^{\psi_1}_t \otimes U^{\psi_2}_t)(\Lambda(x_n))  = (U^{\psi_1}_t \otimes U^{\psi_2}_t)(\Lambda(x)),  
\end{align*}
showing that $\sigma_t(x) \in D(\Lambda)$ and $\Lambda(\sigma_t(x)) = (U^{\psi_1}_t \otimes U^{\psi_2}_t)(\Lambda(x))$. The properties (C) and (D) follow from this. To establish (F) set
$$
S := \mathcal M^{\sigma^1}_{\psi_1} \otimes \mathcal M^{\sigma^1}_{\psi_1} .
$$
Let
\begin{equation}\label{01-12-23}
s = \sum_n x^1_n \otimes x^2_n ,
\end{equation}
be a finite sum with $x^i_n \in \mathcal M^{\sigma^i}_{\psi_i}$. For each $n$ the map $\mathbb C \ni z \mapsto \sigma^1_z(x_n^1)\otimes \sigma^2_z(x_n^2)$ is entire holomorphic, implying that
$$
\sigma_z(x_n^1 \otimes x_n^2) = \sigma^1_z(x^1_n) \otimes \sigma^2_z(x^2_n)
$$
for all $z \in \mathbb C$. It follows that $S \subseteq \mathcal M^\sigma_\Lambda$. If $a_i \in \mathcal N_{\psi^i}$, there is a sequence $\{s^i_n\}$ in $\mathcal M^{\sigma^i}_{\psi_i}$ such that $\lim_{n \to \infty} s_n^i = a_i$ and $\lim_{n \to \infty} \Lambda_{\psi_i}(s^i_n) = \Lambda_{\psi_i}(a_i)$ by Lemma \ref{07-12-21c}. Then $\{s^1_n \otimes s^2_n\}$ is a sequence in $S$ such that $\lim_{n \to \infty} s^1_n \otimes s^2_n = a_1 \otimes a_2$ and $\lim_{n \to \infty} \Lambda(s^1_n \otimes s^2_n) = \Lambda(a_1 \otimes a_2)$. By linearity, and since $\Lambda$ is defined as the closure of $\Lambda_{\psi_1} \otimes \Lambda_{\psi_2}$, it follows that for every $x \in D(\Lambda)$ there is a sequence $\{s_n\}$ in $S$ such that $\Lambda(x) = \lim_{n \to \infty} \Lambda(s_n)$. In particular, the triple has the property from the first item in condition (F) of Lemma \ref{09-02-22d}. In addition, when $s \in S$ is given as in \eqref{01-12-23}, 
$$
\sigma_{-i \frac{\beta}{2}}(s)^* = \sum_n \sigma^1_{-i \frac{\beta}{2}}(x^1_n)^* \otimes \sigma^2_{-i \frac{\beta}{2}}(x^2_n)^* ,
$$
and hence
\begin{align*}
&\left\|\Lambda\left(\sigma_{-i \frac{\beta}{2}}(s)^*\right)\right\| = \left\| \sum_n \Lambda_{\psi_1}\left(\sigma^1_{-i \frac{\beta}{2}}(x^1_n)^*\right) \otimes \Lambda_{\psi_2}\left(\sigma^2_{-i \frac{\beta}{2}}(x^2_n)^*\right)\right\|\\
& = \left\| J_{\psi_1} \otimes J_{\psi_2}\left(  \sum_n \Lambda_{\psi_1}(x^1_n)\otimes \Lambda_{\psi_2}(x^2_n)\right)\right\| =  \left\| J_{\psi_1} \otimes J_{\psi_2}(\Lambda(s))\right\| = \left\|\Lambda(s)\right\|,
\end{align*}  
since $J_{\psi_1} \otimes J_{\psi_2}$, as the tensor product of two conjugate linear isometries is itself an isometry. This shows that the triple \eqref{08-06-22e} also has the property from the second item in condition (F) of Lemma \ref{09-02-22d}, and completes the proof.
\end{proof}

\begin{thm}\label{08-06-22f} Let $\sigma^i$ be a flow on the $C^*$-algebra $A_i$ and $\psi_i$ a $\beta$-KMS weight for $\sigma^i, \ i = 1,2$. There is a unique $\beta$-KMS weight $\psi_1 \otimes \psi_2$ for $\sigma^1\otimes \sigma^2$ on $A_1 \otimes A_2$ such that $\mathcal N_{\psi_1} \otimes \mathcal N_{\psi_2}$ is a core for $\Lambda_{\psi_1 \otimes \psi_2}$ and
$$
\psi_1 \otimes \psi_2(a_1^*b_1\otimes a_2^*b_2) = \psi_1(a_1^*b_1)\psi_2(a_2^*b_2)
$$
when $a_i,b_i \in \mathcal N_{\psi_i}, \ i = 1,2$. 
\end{thm}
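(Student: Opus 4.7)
The plan is to extract the theorem from the GNS-representation framework already in place. By Lemma \ref{08-06-22d} the triple $(H_{\psi_1}\otimes H_{\psi_2},\Lambda,\pi_{\psi_1}\otimes\pi_{\psi_2})$ satisfies conditions (A), (B), (C), (D) and (F) with respect to $\sigma = \sigma^1 \otimes \sigma^2$. Invoking Lemma \ref{09-02-22d} it also satisfies (E), and hence by Proposition \ref{08-02-22a} the triple is isomorphic to the GNS-triple of a $\beta$-KMS weight for $\sigma$ on $A_1\otimes A_2$. I propose to define $\psi_1 \otimes \psi_2$ to be this weight. Composing the unitary identification with the construction gives $\mathcal N_{\psi_1\otimes\psi_2} = D(\Lambda)$, so in particular $\mathcal N_{\psi_1}\otimes \mathcal N_{\psi_2} \subseteq \mathcal N_{\psi_1\otimes\psi_2}$, and since $\Lambda$ is by definition the closure of $\Lambda_{\psi_1}\otimes\Lambda_{\psi_2}$, the subspace $\mathcal N_{\psi_1}\otimes\mathcal N_{\psi_2}$ is a core for $\Lambda_{\psi_1\otimes\psi_2}$.

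Next I would verify the tensor product formula on simple tensors. For $a_i,b_i \in \mathcal N_{\psi_i}$,
\begin{align*}
\psi_1\otimes\psi_2\bigl((a_1\otimes a_2)^*(b_1\otimes b_2)\bigr)
&= \bigl\langle \Lambda(b_1\otimes b_2), \Lambda(a_1\otimes a_2)\bigr\rangle \\
&= \bigl\langle \Lambda_{\psi_1}(b_1)\otimes\Lambda_{\psi_2}(b_2),\Lambda_{\psi_1}(a_1)\otimes\Lambda_{\psi_2}(a_2)\bigr\rangle \\
&= \psi_1(a_1^*b_1)\,\psi_2(a_2^*b_2),
\end{align*}
which gives the required identity.

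For uniqueness, suppose $\phi$ is another $\beta$-KMS weight for $\sigma^1\otimes\sigma^2$ such that $\mathcal N_{\psi_1}\otimes\mathcal N_{\psi_2}\subseteq \mathcal N_\phi$ is a core for $\Lambda_\phi$ and $\phi$ satisfies the product formula on simple tensors. Then for $x = \sum_n a_1^n\otimes a_2^n \in \mathcal N_{\psi_1}\otimes \mathcal N_{\psi_2}$ we get by bilinearity and the product formula that
\[
\phi(x^*x) \;=\; \sum_{n,m}\psi_1\bigl((a_1^n)^*a_1^m\bigr)\psi_2\bigl((a_2^n)^*a_2^m\bigr) \;=\; (\psi_1\otimes\psi_2)(x^*x).
\]
Since $\mathcal N_{\psi_1}\otimes\mathcal N_{\psi_2}$ is a core for $\Lambda_\phi$ by assumption, Corollary \ref{21-06-22e} then forces $\phi = \psi_1\otimes\psi_2$.

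The main technical work was already done in Lemmas \ref{08-06-22c} and \ref{08-06-22d}; the remaining obstacle is purely bookkeeping, namely unfolding the identification coming from Proposition \ref{08-02-22a} to see that $D(\Lambda)$ really is $\mathcal N_{\psi_1\otimes\psi_2}$ and that the inner product on the GNS space computes $\psi_1\otimes\psi_2$ on products of the form $a^*b$. The latter is immediate from the normality of the isomorphism $W$ in Proposition \ref{08-02-22a} combined with equation \eqref{29-03-23}.
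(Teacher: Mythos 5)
Your proposal is correct and follows essentially the same route as the paper's proof: Lemma \ref{08-06-22d} and Lemma \ref{09-02-22d} to verify conditions (A)--(E), Proposition \ref{08-02-22a} to produce the weight, and Corollary \ref{21-06-22e} for uniqueness. The paper simply states that the weight "has the stated properties by construction," whereas you have written out the bookkeeping (the identification $\mathcal N_{\psi_1\otimes\psi_2}=D(\Lambda)$, the core property, and the product formula via \eqref{29-03-23}) explicitly and correctly.
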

\begin{proof} By Lemma \ref{08-06-22d}, Lemma \ref{09-02-22d} and Proposition \ref{08-02-22a} the triple \eqref{08-06-22e} is the GNS-triple for a $\beta$-KMS weight for $\sigma^1 \otimes \sigma^2$. The resulting $\beta$-KMS weight $\psi_1 \otimes \psi_2$ has the stated properties by construction. The uniqueness is an immediate consequence of Corollary \ref{21-06-22e}.
\end{proof}

\begin{notes}\label{26-09-22x} Theorem \ref{08-06-22f} was obtained by Kustermans in Proposition 7.6 of \cite{Ku1}.
\end{notes}

\subsection{ITPFI flows}\label{ITPFI}

Infinite tensor products of facors of type I (abbreviated to ITPFI) were introduced in the theory of von Neumann algebras in work by R. Powers, H. Araki and E.J. Woods from the sixties. Here we consider the corresponding infinite tensor product flows.

Let $n_1,n_2, n_3, \cdots $ be a sequence of natural numbers. Set
$$
A_k := M_{n_1}(\mathbb C) \otimes  M_{n_2}(\mathbb C) \otimes  M_{n_3}(\mathbb C) \otimes \cdots \otimes  M_{n_k}(\mathbb C).
$$
A unital embedding $\phi_k : A_k \to A_{k+1}$ is given on simple tensors by
$$
\phi_k(a_1\otimes \cdots \otimes a_k) =a_1\otimes \cdots \otimes a_k \otimes 1 .
$$
The inductive limit $C^*$-algebra of the sequence 
\begin{equation*}
\begin{xymatrix}{
A_1 \ar[r]^-{\phi_1} & A_2 \ar[r]^-{\phi_2} & A_3 \ar[r]^-{\phi_3} & \cdots  
}
\end{xymatrix}
\end{equation*}
is a UHF algebra unless $n_j = 1$ for all but finitely many $j$, in which case it is a matrix algebra. We denote it by
$$
\otimes_{k=1}^\infty M_{n_k}(\mathbb C) .
$$
Pick for each $k \in \mathbb N$ a selfadjoint matrix
$$
h_k \in M_{n_k}(\mathbb C)
$$
and define a flow $\sigma^k$ on $\otimes_{k=1}^\infty M_{n_k}(\mathbb C)$ such that
$$
\sigma^k_t(a_1\otimes \cdots \otimes a_k) = e^{ith_1}a_1e^{-ith_1} \otimes  e^{ith_2}a_2e^{-ith_2}\cdots \otimes  e^{ith_k}a_ke^{-ith_k}
$$
on simple tensors $a_1 \otimes a_2 \otimes \cdots \otimes a_k$. Define self-adjoint elements $\hat{h}_i \in A_k$ as simple tensors where $\hat{h}_1 = h_1 \otimes 1 \otimes 1 \otimes \cdots \otimes 1, \ \hat{h}_2 = 1 \otimes h_2 \otimes 1  \otimes \cdots \otimes 1$ and so on. Then $\sigma^k = \Ad e^{it H_k}$, where
$$
H_k := \sum_{i=1}^k \hat{h}_i.
$$
Since $\sigma^{k+1}_t \circ \phi_k = \phi_k \circ \sigma^k_t$ there is a flow $\sigma$ on $\otimes_{k=1}^\infty M_{n_k}(\mathbb C)$ defined such that
\begin{equation}\label{28-09-23x}
\sigma_t \circ \phi_{k,\infty} = \phi_{k,\infty} \circ \sigma^k_t,
\end{equation}
where $\phi_{k,\infty} : A_k \to \otimes_{k=1}^\infty M_{n_k}(\mathbb C)$ is the canonical embedding. Flows of this kind were considered in \cite{KiR} and \cite{Ki1} (with $n_j =2$).

Let $\beta \in \mathbb R$. It follows from Example \ref{31-07-23} that there is a unique $\beta$-KMS state $\omega_{h_j,\beta}$ for the flow $\Ad e^{it h_j}$ on $M_{n_j}(\mathbb C)$ given by
$$
\omega_{h_j,\beta}(m) := \frac{\Tr_{n_j}(e^{-\beta h_j} m)}{\Tr_{n_j}(e^{-\beta h_j})} 
$$ 
when $\Tr_{n_j}$ denote the usual trace on $M_{n_j}(\mathbb C)$.  We can then consider the tensor product state
$$
\otimes_{j=1}^k \omega_{h_j,\beta} 
$$
on $A_k$ defined such that
$$
\left(\otimes_{j=1}^k \omega_{h_j,\beta}\right) (a_1 \otimes a_2 \otimes \cdots \otimes a_k) := \prod_{j=1}^k \omega_{h_j,\beta}(a_j)
$$
on simple tensors. Using Lemma \ref{27-09-23a} we find that
\begin{align*}
&\left(\otimes_{j=1}^k \omega_{h_j,\beta}\right)((a_1 \otimes \cdots \otimes a_k)(b_1 \otimes \cdots \otimes b_k))  = \prod_{j=1}^k \omega_{h_j,\beta}(a_jb_j) \\
& =  \prod_{j=1}^k \omega_{h_j,\beta}(b_je^{-\beta h_j}a_je^{\beta h_j}) \\
&= \left(\otimes_{j=1}^k \omega_{h_j,\beta}\right)(b_1e^{-\beta h_1}a_1e^{\beta h_1} \otimes \cdots \otimes b_k e^{-\beta h_k}a_ke^{\beta h_k})\\
& = \left(\otimes_{j=1}^k \omega_{h_j,\beta}\right) ((b_1\otimes \cdots \otimes b_k)\sigma^k_{i\beta}(a_1\otimes \cdots \otimes a_k)) \\
& =\left(\otimes_{j=1}^k \omega_{h_j,\beta}\right)((b_1\otimes \cdots \otimes b_k)e^{-\beta H_k}(a_1\otimes \cdots \otimes a_k)e^{\beta H_k}),
\end{align*}
and we see therefore that $\otimes_{j=1}^k \omega_{h_j,\beta}$ is a $\beta$-KMS state for $\sigma^k$. As such it is unique by Example \ref{31-07-23} since $A_k \simeq M_m(\mathbb C)$ with $m = \prod_{j=1}^k n_j$. Note that 
$$
\left(\otimes_{j=1}^{k+1} \omega_{h_j,\beta}\right) \circ \phi_k =\otimes_{j=1}^{k} \omega_{h_j,\beta} .
$$
It follows that there is a state $\omega_\beta$ on  $\otimes_{k=1}^\infty M_{n_k}(\mathbb C)$ such that
\begin{equation}\label{28-09-23a}
\omega_\beta \circ \phi_k = \otimes_{j=1}^{k} \omega_{h_j,\beta}
\end{equation}
for all $k$. By Lemma \ref{27-09-23a} every element of $A_k$ is entire analytic for $\sigma^k$ and it follows therefore from \eqref{28-09-23x} that 
$$
\bigcup_{k=1}^\infty \phi_{k,\infty}(A_k) \subseteq \mathcal A_\sigma 
$$
and that $\sigma_z \circ \phi_{k,\infty} = \phi_{k,\infty} \circ \sigma^k_z$ for all $k,z$. In combination with \eqref{28-09-23a} this implies that
$$
\omega_\beta(ab) = \omega_\beta(b\sigma_{i \beta}(a))
$$
for all $a,b \in \bigcup_{k=1}^\infty \phi_{k,\infty}(A_k)$. Since $\Lambda_{\omega_\beta} : A \to H_{\omega_\beta}$ is bounded and $\bigcup_{k=1}^\infty \phi_{k,\infty}(A_k) $ is dense in  $\otimes_{k=1}^\infty M_{n_k}(\mathbb C)$ it follows from Theorem \ref{12-12-13} with $\bigcup_{k=1}^\infty \phi_{k,\infty}(A_k) $ in the role as $S$, that $\omega_\beta$ is a $\beta$-KMS state for $\sigma$. We denote it by
$$
\omega_\beta = \otimes_{k=1}^\infty \omega_{h_k,\beta} .
$$
Conversely, if $\omega$ is a $\beta$-KMS state for $\sigma$ it follows from \eqref{28-09-23x} that $\omega \circ \phi_{k,\infty}$ is a $\beta$-KMS state for $\sigma^k$ and and hence $\omega\circ \phi_{k,\infty} = \otimes_{k=1}^\infty \omega_{h_k,\beta}$ by Example \ref{31-07-23}. It follows that $\omega = \omega_\beta$. In conclusion: The flow $\sigma$ has exactly one $\beta$-KMS state for all $\beta \in \mathbb R$; the infinite tensor product state $\otimes_{k=1}^\infty \omega_{h_k,\beta}$.

\subsection{Matroid flows}

In the same setting as in the last section choose non-zero projections $p_j \in M_{n_j}(\mathbb C), \ j \geq 2$, and define $\kappa_k : A_k \to A_{k+1}$ by
$$
\kappa_k(a_1 \otimes \cdots \otimes a_k) = a_1 \otimes \cdots \otimes a_k \otimes p_{k+1}
$$
for $k=1,2,3, \cdots$. Let $D$ be the $C^*$-algebra inductive limit of the sequence
\begin{equation*}
\begin{xymatrix}{
A_1 \ar[r]^-{\kappa_1} & A_2 \ar[r]^-{\kappa_2} & A_3 \ar[r]^-{\kappa_3} & \cdots  
}
\end{xymatrix}
\end{equation*}
This kind of $C^*$-algebras were introduced by J. Dixmier who called them matroid algebras. We note that $D$ is unital if and only $p_j$ is the unit in $M_{n_j}(\mathbb C)$ for all sufficiently large $j$, in which case $D$ is a UHF algebra.

Choose selfadjoint matrices $h_k \in M_{n_k}(\mathbb C)$, but this time such that $h_k$ commutes with $p_k, \ k \geq 2$; i.e.
$$
h_k=h_k^* \in \{p_k\}' \cap M_{n_k}(\mathbb C) , \ k \geq 2 .
$$
Define the flows $\sigma^k$ on $A_k$ as above using the new $h_k$'s and note that $\sigma^{k+1}_t \circ \kappa_k = \kappa_k \circ \sigma^k_t$ so that we have a flow $\alpha$ on $D$ defined by
$$
\alpha_t \circ \kappa_{k,\infty} = \kappa_{k,\infty} \circ \sigma^k_t
$$
for all $k,t$. Let $1_{n_1}$ be the unit in $A_1 = M_{n_1}(\mathbb C)$. The corner
$$
\kappa_{1,\infty}(1_{n_1})D\kappa_{1,\infty}(1_{n_1})
$$
of $D$ is then $\alpha$-invariant and a full corner of $D$. Since the corner is $*$-isomorphic to
$$
\otimes_{j=1}^\infty M_{m_j}(\mathbb C) ,
$$
where $m_j := \Tr_{n_j}(p_j)$, it follows from Section \ref{ITPFI} and Theorem \ref{07-06-22e} that there is a $\beta$-KMS weight for $\alpha$ for every $\beta \in \mathbb R$ and that it is unique up to multiplication by scalars. 

The images $\kappa_{k,\infty}(1_{A_k})$ of the units $1_{A_k}$ the $A_k$'s in $D$ constitute an approximate unit for $D$ and it follows therefore from the description of the $\beta$-KMS states on $\otimes_{j=1}^\infty M_{m_j}(\mathbb C)$ and Lemma \ref{03-02-22h} that the $\beta$-KMS weights for $\alpha$ will be bounded if and only if 
\begin{equation}\label{02-10-23g}
\prod_{j=2}^\infty \frac{\Tr_{n_j}(e^{-\beta h_j})}{\Tr_{n_j}(e^{-\beta h_j}p_j)} < \infty .
\end{equation}
To see this let $\psi$ be a $\beta$-KMS weight for $\alpha$. Then $\psi \circ \kappa_{k,\infty}$ is a $\beta$-KMS functional for the flow $\sigma^k$ on $A_k$ and hence a scalar multiple of $\otimes_{j=1}^k \omega_{h_j,\beta}$ by Example \ref{31-07-23}. That is, $\psi \circ \kappa_{k,\infty} = \lambda_k \otimes_{j=1}^k \omega_{h_j,\beta}$ for some $\lambda_k > 0$. Note that
$$
\psi \circ \kappa_{k,\infty}(1_{A_k}) = \lambda_k \otimes_{j=1}^k \omega_{h_j,\beta}(1_{A_k}) = \lambda_k
$$
while
\begin{align*}
& \psi\circ \kappa_{1,\infty}(1_{n_1}) =
\psi \circ \kappa_{k,\infty}(1_{n_1} \otimes p_2 \otimes p_3 \otimes \cdots \otimes p_{k-1}) \\
&= \lambda_k \otimes_{j=1}^k \omega_{h_j,\beta} (1_{n_1} \otimes p_2 \otimes p_3 \otimes \cdots \otimes p_{k-1}) = \lambda_k \prod_{k=2}^{k-1} \frac{\Tr_{n_j}(e^{-\beta h_j}p_j)}{\Tr_{n_j}(e^{-\beta h_j})} .
\end{align*}
Combined these equations imply that
$$
\psi \circ \kappa_{k,\infty}(1_{A_k}) = \psi\circ \kappa_{1,\infty}(1_{n_1})\prod_{k=2}^{k-1} \frac{\Tr_{n_j}(e^{-\beta h_j})}{\Tr_{n_j}(e^{-\beta h_j}p_j)} .
$$
Hence $\psi$ is bounded if and only if \eqref{02-10-23g} holds.

We consider two special cases; the first to show that there can be bounded KMS weights in cases where there are no bounded traces on $D$ and the second to show that there can be unbounded KMS weights in cases where the trace(s) on $D$ are bounded.

Consider the case where $m_j = n_j -1$ and $h_j = \mu_j e_j$ for some $\mu_j \in \mathbb R$ and a minimal projection $e_j$ in $M_{n_j}(\mathbb C)$ with $e_j \leq p_j$. Then
$$
\frac{\Tr_{n_j}(e^{-\beta h_j})}{\Tr_{n_j}(e^{-\beta h_j}p_j)} = \frac{n_j -1 +e^{-\beta  \mu_j}}{n_j-2 + e^{-\beta \mu_j}} = 1  +x_j
$$
where $x_j = (n_j-2 + e^{-\beta \mu_j})^{-1}$. Hence, in this case the $\beta$-KMS weights for $\alpha$ are bounded if and only if\footnote{We are here using that when $\{a_n\}_{n=1}^\infty$ is a sequence of positive real numbers, $\sum_{n=1}^N a_n \leq \prod_{n=1}^N (1+a_n) \leq \exp(\sum_{n=1}^N a_n)$ which leads to the well-known conclusion that $\prod_{n=1}^\infty (1+a_n) < \infty$ if and only if $\sum_{n=1}^\infty a_n < \infty$.}
$$
\sum_{j=1}^\infty  (n_j-2 + e^{-\beta \mu_j})^{-1} < \infty.
$$
Thus, if we choose $n_j =3$ for all $j$, the condition becomes
$$
\sum_{j=1}^\infty  (1+ e^{-\beta \mu_j})^{-1} < \infty.
$$
If we then set
$$
\mu_j = - (l-1)
$$
for $7^{l-1} \leq j < 7^l$, $l=1,2,3, \cdots$, we see that the $\beta$-KMS weights for $\alpha$ are bounded if and only if
$$
\sum_{l=1}^{\infty} \frac{7^{l-1}}{1+ e^{\beta(l-1)}} < \infty ;
Ã¸$$
i.e. if and only if $\beta > \log 7$. In particular, the lower semi-continuous traces on $D$ are unbounded.

Consider then the case where $n_j = j!$ and $m_j = \Tr_{n_j}(p_j) = j! -1$. In this case 
\begin{align*}
& \frac{\Tr_{n_j}(e^{-\beta h_j})}{\Tr_{n_j}(e^{-\beta h_j}p_j)} =  \frac{\Tr_{n_j}(e^{-\beta h_j}(1-p_j)) +\Tr_{n_j}(e^{-\beta h_j}p_j)}{\Tr_{n_j}(e^{-\beta h_j}p_j)}\\
& = \frac{e^{-\beta \mu_j} +\Tr_{n_j}(e^{-\beta h_j}p_j)}{\Tr_{n_j}(e^{-\beta h_j}p_j)} = 1 +  \frac{e^{-\beta \mu_j} }{\Tr_{n_j}(e^{-\beta h_j}p_j)}
\end{align*}
where $\mu_j \in \mathbb R$ is the real number such that $h_j(1-p_j) = \mu_j(1-p_j)$. Hence the $\beta$-KMS weight for $\alpha$ are bounded if and only if
$$
\sum_{j=1}^\infty \frac{e^{-\beta \mu_j} }{\Tr_{n_j}(e^{-\beta h_j}p_j)} < \infty .
$$
Since $\Tr_{n_j}(p_j) = j!-1$ this condition is certainly satisfied when $\beta = 0$, regardless of the choice of $h_j$, implying that $D$ has a bounded trace. On the other hand, if we take take $h_j = - \log(j!)(1-p_j)$ we get the value $\mu_j =-\log (j!)$ while $e^{-\beta h_j}p_j = p_j$
 for all $\beta$. Then
 $$
 \sum_{j=1}^\infty \frac{e^{- \mu_j} }{\Tr_{n_j}(e^{- h_j}p_j)} = \sum_{j=1}^{\infty} \frac{j!}{j!-1} = \infty,
 $$
implying that the $1$-KMS weights are unbounded. In fact, in this case the $\beta$-KMS weights are unbounded if and only of $\beta \geq 1$.

\subsection{Stabilization} 
Let $\sigma$ be a flow on $A$ and let $\mathbb H$ be a Hilbert space. Given a flow $W = \{W_t\}_{t \in \mathbb R}$ of unitaries on $\mathbb H$ we define a flow $\sigma \otimes \Ad W$ on $A \otimes \mathbb K(\mathbb H)$ such that
$$
(\sigma \otimes \Ad W)_t(a \otimes k) := \sigma_t(a) \otimes W_t k W_t^*
$$
when $a \in A, \ k \in \mathbb K(\mathbb H)$. By Stone's theorem there is a self-adjoint operator $H$ on $\mathbb H$ such that
$$
W_t = e^{it H}
$$
for all $t \in \mathbb R$. For any $\beta \in \mathbb R$ we denote by $\kappa_\beta$ the $\beta$-KMS weight for $\Ad W$ on $\mathbb K(\mathbb H)$ given by the formula
\begin{equation*}\label{30-06-22}
\kappa_\beta(k) = \sup \left\{\Tr\left(e^{\frac{-\beta H}{2}} f(H) k f(H) e^{-\frac{\beta H}{2}}\right) : \ f \in C_c(\mathbb R), \ 0 \leq f \leq 1 \right\} ,
\end{equation*}
cf. Theorem \ref{17-08-22c}.

\begin{lemma}\label{09-06-22x}  For each $\beta \in \mathbb R$ there is a bijection 
$$
\rho : \KMS(\sigma, \beta) \to \KMS(\sigma \otimes \Ad W, \beta)
$$
 defined by
$$
\rho(\psi) = \psi \otimes \kappa_\beta .
$$
\end{lemma}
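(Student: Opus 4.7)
The plan is to exhibit $\rho$ as the composition of two explicit bijections, one coming from an exterior equivalence and one from Laca--Neshveyev's extension theorem. First of all, Theorem~\ref{08-06-22f} guarantees that $\psi \otimes \kappa_\beta$ is a $\beta$-KMS weight for $\sigma \otimes \Ad W$, so $\rho$ is well-defined.

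Next, the flows $\sigma \otimes \id_{\mathbb K(\mathbb H)}$ and $\sigma \otimes \Ad W$ on $A \otimes \mathbb K(\mathbb H)$ are exterior equivalent via the unitary cocycle $u_t := 1 \otimes W_t \in M(A \otimes \mathbb K(\mathbb H))$: strict continuity of $t \mapsto 1 \otimes W_t$ reduces on simple tensors to Lemma~\ref{03-02-22} and extends by the uniform bound $\|u_t\|=1$, while $(\sigma_t \otimes \id)(u_s) = u_s$ and $(\sigma \otimes \Ad W)_t = \Ad u_t \circ (\sigma_t \otimes \id)$ are direct verifications. Theorem~\ref{29-06-22d} then produces an explicit bijection $\Phi : \KMS(\sigma \otimes \id_{\mathbb K(\mathbb H)}, \beta) \longrightarrow \KMS(\sigma \otimes \Ad W, \beta)$. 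Second, fix a rank-one projection $p \in \mathbb K(\mathbb H)$. Then $A \otimes p\mathbb K(\mathbb H)p$ is a $(\sigma \otimes \id)$-invariant hereditary $C^*$-subalgebra of $A \otimes \mathbb K(\mathbb H)$, and it is full since $\mathbb K(\mathbb H)$ is simple. As $p\mathbb K(\mathbb H)p = \mathbb C p$, the subalgebra is naturally identified with $A$ and the restricted flow with $\sigma$, so Theorem~\ref{07-06-22e} yields a bijection $\Psi : \KMS(\sigma, \beta) \to \KMS(\sigma \otimes \id_{\mathbb K(\mathbb H)}, \beta)$. Since $\psi \otimes \Tr$ (with $\Tr$ the canonical trace on $\mathbb K(\mathbb H)$, so $\Tr(p)=1$) is a $\beta$-KMS weight for $\sigma \otimes \id$ restricting to $\psi$ on $A \otimes p$, the uniqueness in Theorem~\ref{07-06-22e} forces $\Psi(\psi) = \psi \otimes \Tr$.

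Finally, I would unfold $\Phi \circ \Psi(\psi) = \Phi(\psi \otimes \Tr)$ using formula~\eqref{29-06-22ax} applied to $u_t = 1 \otimes W_t$, whose generator is $1 \otimes H$: on a simple tensor $a \otimes k$ with $a \in A^+$ and $k \in \mathbb K(\mathbb H)^+$,
$$
\Phi(\psi \otimes \Tr)(a \otimes k) = \sup_{f \in C_c(\mathbb R),\ 0 \leq f \leq 1}(\psi \otimes \Tr)\bigl(a \otimes e^{-\beta H/2} f(H) k f(H) e^{-\beta H/2}\bigr) = \psi(a)\kappa_\beta(k),
$$
which matches $\psi \otimes \kappa_\beta$ on simple tensors. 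Since both $\Phi \circ \Psi(\psi)$ and $\psi \otimes \kappa_\beta$ are $\beta$-KMS weights for $\sigma \otimes \Ad W$ agreeing on the span of $\mathcal N_\psi \otimes \mathcal N_{\kappa_\beta}$, which by Theorem~\ref{08-06-22f} is a core for $\Lambda_{\psi \otimes \kappa_\beta}$, Corollary~\ref{21-06-22e} (or~\ref{31-03-22d}) forces $\Phi \circ \Psi(\psi) = \psi \otimes \kappa_\beta = \rho(\psi)$, so $\rho$ is a bijection. The main obstacle will be this last identification: the functional calculus for $1 \otimes H$ must be interpreted in $M(A \otimes \mathbb K(\mathbb H))$, and one has to verify carefully that the supremum formula from Theorem~\ref{29-06-22d} unfolds cleanly on simple tensors into the tensor-product formula of Theorem~\ref{08-06-22f}.
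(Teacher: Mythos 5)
Your proof is correct and follows essentially the same route as the paper: the paper also factors $\rho$ as the composition of the map $\psi \mapsto \psi\otimes\Tr$ (shown to be a bijection via Theorem \ref{07-06-22e} applied to the full, invariant hereditary subalgebra $A\otimes e$ for a rank-one projection $e$, together with Corollary \ref{31-03-22d}) with the exterior-equivalence bijection of Theorem \ref{29-06-22d} taken with $u_t = 1_A\otimes W_t$. The only difference is that you spell out the verification that this composition actually sends $\psi$ to $\psi\otimes\kappa_\beta$ (via the supremum formula \eqref{29-06-22ax} on simple tensors and the core argument with Corollary \ref{21-06-22e}), a point the paper merely asserts.
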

\begin{proof} The map $\rho$ is defined thanks to Theorem \ref{08-06-22f}. We note that $\rho$ is the composition $\rho = \rho_2 \circ \rho_1$ of the map $\rho_1 : \KMS(\sigma,\beta) \to \KMS(\sigma \otimes \id_{\mathbb K(\mathbb H)},\beta)$ given by $\rho_1(\psi) := \psi \otimes \Tr$ and the bijection $\rho_2 : \KMS(\sigma \otimes \id_{\mathbb K(\mathbb H)},\beta) \to \KMS(\sigma \otimes \Ad W, \beta)$ coming from Theorem \ref{29-06-22d} by taking $u_t = 1_A \otimes W_t$. It suffices therefore to show that $\rho_1$ is a bijection. For this we pick a 1-dimensional projection $e \in \mathbb K(\mathbb H)$. Then $A \otimes e$ is a hereditary $C^*$-subalgebra of $A \otimes \mathbb K(\mathbb H)$ which is full and $\sigma \otimes \id_{\mathbb K(\mathbb H)}$-invariant.  Let $\phi \in \KMS(\sigma \otimes \id_{\mathbb K(\mathbb H)}, \beta)$. It follows from Theorem \ref{07-06-22e} that there is $\beta$-KMS weight $\psi$ for $\sigma$ such that 
$$
\mathcal N_\psi = \left\{ a \in A: \ a \otimes e \in \mathcal N_\phi\right\}
$$
and $\phi(a^*a \otimes e) = \psi(a^*a)$ for all $a \in \mathcal N_\psi$. Then Corollary \ref{31-03-22d} implies that  $\phi|_{A \otimes e} = \psi \otimes \Tr|_{A \otimes e}$ and hence that $\phi = \psi \otimes \Tr$ by Laca-Neshveyevs theorem or Theorem \ref{07-06-22e} again. This shows that $\rho_1$ is surjective. If $\rho_1(\psi) = \rho_1(\psi')$ it follows that 
$$
\psi(a^*a) = \rho_1(\psi)(a^*a \otimes e) = \rho_1(\psi')(a^*a \otimes e ) = \psi'(a^*a)
$$ 
for all $a \in \mathcal N_\psi \cap \mathcal N_{\psi'}$. Hence $\psi = \psi'$ by Corollary \ref{31-03-22d}. 
\end{proof}

\begin{example}\label{29-11-22} \textnormal{The intention with this example is to illustrate Theorem \ref{17-08-22c} and in particular the significance of the lower semi-continuity assumption in the definition of a weight and a KMS weight. Let $A = C_0(\mathbb N,M_2(\mathbb C))$ be the $C^*$-algebra of sequences $\{a_n\}_{n=1}^\infty$ in $M_2(\mathbb C)$ with the property that $\lim_{n \to \infty} \|a_n\| = 0$. Let $\omega$ be the trace state of $M_n(\mathbb C)$. For any non-zero sequence $t=\{t_n\}_{n=1}^\infty$ of non-negative real numbers we can define a trace $\tau_t$ on $A$ by
$$
\tau_t(\{a_n\}_{n=1}^\infty) = \sum_{n=1}^\infty t_n \omega(a_n)
$$
when $\{a_n\}_{n=1}^\infty \in A^+$. This is a lower semi-continuous trace on $A$, and every lower semi-continuous trace of $A$ arises in this way. For each $h = h^* \in M_2(\mathbb C)$ we can define an inner flow $\sigma$ on $A$ such that
$$
\sigma_t(\{a_n\}_{n=1}^\infty) := \{e^{ith}a_ne^{-ith}\}_{n=1}^\infty .
$$
It follows from Theorem \ref{17-08-22c} that for any $\beta \in \mathbb R$ the set of $\beta$-KMS weights for $\sigma$ is in bijection with the set of lower semi-continuous traces on $A$ and hence with the set of non-zero sequences of non-negative real numbers.}

\textnormal{The point we want to make here is that there are many traces on $A$ that are not lower semi-continuous. Let $\xi \in \beta \mathbb N \backslash \mathbb N$ be a free ultrafilter in $\mathbb N$ identified with an element of the Stone-\v{C}ech compactification of $\mathbb N$. Define $\mu : A^+ \to [0,\infty]$ such that
\begin{align*}
&\mu((a_n)_{n=1}^\infty) \\
&= \begin{cases} \lim_{N \to \xi} \frac{1}{N} \sum_{n=1}^N n \omega(a_n) , & \ \text{when $\{\frac{1}{N} \sum_{n=1}^N n \omega(a_n)\}_{N=1}^\infty$ is bounded} \\ \infty , & \  \text{when $\{\frac{1}{N} \sum_{n=1}^N n \omega(a_n)\}_{N=1}^\infty$ is unbounded.}  \end{cases}
\end{align*}
It is straightforward to see that this is a trace on $A$. To see that it is not lower semi-continuous define $b(k) \in A^+$ such that
$$
b(k)_n = \begin{cases} \frac{1}{n}, & \ n \leq k \\ 0, & \ n > k . \end{cases}
$$
Then $b(k) \leq b(k+1)$ and $\lim_{k \to \infty} b(k) = b$, where $b \in A^+$ is the element $b =\{\frac{1}{n}\}_{n=1}^\infty$. Since $\mu(b(k)) = 0$ for all $k$ while $\mu(b) =1$ we conclude that $\omega$ is not lower semi-continuous.}

\textnormal{For each $\beta\in \mathbb N$ this trace $\mu$ gives rise to a map $\mu_\beta : A^+ \to [0,\infty]$ defined such that
$$
\mu_\beta(\{a_n\}_{n=1}^\infty) = \mu(\{e^{-\beta h}a_n\}_{n=1}^\infty) .
$$
This map $\mu_\beta$ has the first two of the three properties required of a weight in Section \ref{defweight} and it has the properties relative to $\sigma$ specified by (1), (2) and (3) in Kustermans' theorem, Theorem \ref{24-11-21d}. But it is not a $\beta$-KMS weight since it is not lower semi-continuous.}

\textnormal{This observation, that there are weights with all properties required by a KMS weight except for the lower semi-continuity assumption, may seem out of place because all the major results on KMS weights we have presented depend on this continuity assumption. But we note that in the first definition of a KMS weight for flows given by F. Combes in Definition 4.1 of \cite{C1} lower semi-continuity is not mentioned.}
\end{example}



\chapter{Dual KMS weights}\label{20-01-22g}

In this chapter we describe a method to construct KMS weights for flows on a crossed product $C^*$-algebra starting from KMS weight for a flow on the core algebra.

\section{Extending KMS weights to a crossed product}\label{07-08-22e}

 Let $A$ be a $C^*$-algebra, $G$ a locally compact group and $\alpha$ a representation of $G$ by automorphisms $\alpha_x, x \in G$, of $A$ such that $x \mapsto \alpha_x(a)$ is continuous for all $a \in A$. Let $\sigma$ be a flow on $A$ which commutes with $\alpha$ in the sense that
$$
\sigma_t \circ \alpha_x = \alpha_x \circ \sigma_t
$$ 
for all $x \in G$ and $t \in \mathbb R$. Let
$$
B := A \rtimes_{r,\alpha} G
$$
be the reduced crossed product constructed from $(A,G,\alpha)$ as in Section 7.7 of \cite{Pe} or in \cite{Wi}. Let $C_c(G,A)$ denote the set of continuous compactly supported functions $f : G \to A$. Then $C_c(G,A)$ is a $*$-algebra with the convolution-like product
$$
(f \star g)(x) := \int_G   f(y) \alpha_y(g(y^{-1}x)) \ \mathrm d y
$$
and involution
$$
f^\sharp(x) := \Delta(x)^{-1}\alpha_x(f(x^{-1})^*) ;
$$
see \cite{Pe}. Here $\Delta$ is the modular function of $G$ and $dy$ denotes integration with respect to a left-invariant Haar measure which we denote by $\mu$ in the following. We assume that $A$ is realized as a $C^*$-subalgebra of the bounded operators on a Hilbert space $\mathbb H$ in a $\sigma$-covariant way; i.e. we assume that there is a strongly continuous unitary representation $U_t, t \in \mathbb R$, of $\mathbb R$ on $\mathbb H$ such that $\Ad U_t(a) = \sigma_t(a)$ for all $t \in \mathbb R$ and all $a \in A$. Such a covariant representation always exists, cf. Theorem 7.6.6 of \cite{Pe}. We consider the corresponding injective representation
$$
\pi : C_c(G,A) \to B(L^2(G,\mathbb H))
$$  
defined such that
$$
(\pi(f)\xi)(x) = \int_G \alpha_{x^{-1}}(f(y))\xi(y^{-1}x) \ \mathrm d y  
$$
for $f \in C_c(G,A)$ and $\xi \in L^2(G,\mathbb H)$, cf. 7.7.1 and 7.7.5 in \cite{Pe}. The $C^*$ algebra $B= A \rtimes_{r,\alpha} G$ is the closure in $B(L^2(G,\mathbb H))$ of $\pi\left(C_c(G,A)\right)$. We shall work almost exclusively with $C_c(G,A)$ and will therefore mostly suppress the representation $\pi$ in the notation.

 Let $\theta : G \to \mathbb R$ be a continuous group homomorphism. Define unitaries $S_t$ on $L^2(G,\mathbb H)$ by
$$
S_t\xi(x) := e^{i\theta(x)t}U_t\xi(x) .
$$
Then
$$
\Ad S_t (\pi(f)) = \pi(\sigma^\theta_t(f))
$$
for all $t \in \mathbb R$ and $f \in C_c(G,A)$. It follows that there is a flow $\sigma^\theta$ on $B$ defined such that $\sigma^\theta_t(f) \in C_c(G,A)$ and
\begin{equation}\label{30-01-22e}
\sigma^\theta_t(f)(x) = e^{i\theta(x) t}\sigma_{t}(f(x)) 
\end{equation}
 when $f \in C_c(G,A)$.

Let $\beta \in \mathbb R$ be a real number and let $\psi$ be a $\beta$-KMS weight for $\sigma$ which is scaled by $\alpha$ such that
\begin{equation}\label{04-02-22a}
\psi\circ \alpha_x = \Delta(x)^{-1}e^{-\theta(x)\beta}\psi \ \ \ \ \ \forall x \in G.
\end{equation}
We fix the above setting in the rest of this section. The intention is to construct from $\psi$ a $\beta$-KMS weight $\widehat{\psi}$ for $\sigma^\theta$.

The first step is to relate $\alpha$ and $\Lambda_\psi$. Let $a \in \mathcal N_\psi$. It follows from \eqref{04-02-22a} that $\Delta(x)^{\frac{1}{2}}e^{\frac{\theta(x)\beta}{2}}\alpha_x(a) \in \mathcal N_\psi$ and 
\begin{align*}
&\left\| \Lambda_\psi\left( \Delta(x)^{\frac{1}{2}}e^{\frac{\theta(x)\beta}{2}}\alpha_x(a)\right)\right\|^2 = \Delta(x) e^{\theta(x)\beta}  \psi(\alpha_x(a^*a)) \\
& = \psi(a^*a) = \left\|\Lambda_\psi(a)\right\|^2 . 
\end{align*}
We can therefore define $W_x \in B(H_\psi)$ such that
$$
W_x\Lambda_\psi(a) :=  \Delta(x)^{\frac{1}{2}}e^{\frac{\theta(x)\beta}{2}}\Lambda_\psi(\alpha_x(a)) 
$$
for all $a \in \mathcal N_\psi$.

\begin{lemma}\label{16-01-22} $(W_x)_{x \in G}$ is a continuous unitary representation of $G$ on $H_\psi$.
\end{lemma}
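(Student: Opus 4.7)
The plan is to verify in turn that each $W_x$ is unitary, that the assignment $x\mapsto W_x$ is multiplicative, and that it is strongly continuous; the first two points are short algebraic checks and the last will be the main obstacle. For $a\in\mathcal N_\psi$, the scaling condition \eqref{04-02-22a} applied to $a^*a\in\mathcal M_\psi^+$ gives
\[
\|W_x\Lambda_\psi(a)\|^2 = \Delta(x)e^{\theta(x)\beta}\psi(\alpha_x(a^*a)) = \psi(a^*a) = \|\Lambda_\psi(a)\|^2,
\]
so $W_x$ is a well-defined isometry on the dense subspace $\Lambda_\psi(\mathcal N_\psi)$ and extends by continuity to all of $H_\psi$. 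Using $\alpha_{xy}=\alpha_x\circ\alpha_y$, $\Delta(xy)=\Delta(x)\Delta(y)$ and $\theta(xy)=\theta(x)+\theta(y)$, a direct computation on $\Lambda_\psi(a)$ shows $W_{xy}\Lambda_\psi(a)=W_xW_y\Lambda_\psi(a)$; combined with $W_e=\id$ (since $\Delta(e)=1$, $\theta(e)=0$, $\alpha_e=\id$) this forces $W_{x^{-1}}=W_x^{-1}$, so each $W_x$ is unitary.

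Continuity is the hard part because $\Lambda_\psi$ is only closed, so norm-continuity of $x\mapsto\alpha_x(a)$ does not by itself yield continuity of $x\mapsto\Lambda_\psi(\alpha_x(a))$. I would first establish weak continuity on the dense set $\Lambda_\psi(\mathcal N_\psi)$ by an adaptation of the argument of Lemma \ref{17-11-21e}. For $a,b\in\mathcal N_\psi$ and $c(x):=\Delta(x)^{1/2}e^{\theta(x)\beta/2}$, the matrix coefficient rewrites as
\[
\langle W_x\Lambda_\psi(a),\Lambda_\psi(b)\rangle = c(x)\,\psi(b^*\alpha_x(a)).
\]
Given $\epsilon>0$, Combes' theorem (Theorem \ref{04-11-21k}) supplies $\omega\in\mathcal F_\psi$ with $\psi(b^*b)-\omega(b^*b)\le\epsilon$, and Lemma \ref{08-11-21bx} produces a contraction $0\le T_\omega\le1$ in $\pi_\psi(A)'$ such that $\omega(b^*c)=\langle T_\omega\Lambda_\psi(c),\Lambda_\psi(b)\rangle$ for $c\in\mathcal N_\psi$. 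From $(1-T_\omega)^2\le 1-T_\omega$ one gets $\|(1-T_\omega)\Lambda_\psi(b)\|^2\le\psi(b^*b)-\omega(b^*b)\le\epsilon$; substituting $c=\alpha_x(a)$ and using $\|\Lambda_\psi(\alpha_x(a))\|=c(x)^{-1}\|\Lambda_\psi(a)\|$ together with Cauchy--Schwarz delivers the $x$-uniform estimate
\[
\bigl|c(x)\psi(b^*\alpha_x(a))-c(x)\omega(b^*\alpha_x(a))\bigr|\le\sqrt{\epsilon}\,\|\Lambda_\psi(a)\|.
\]
Since $\omega$ is a bounded functional on $A$, $c$ is continuous, and $x\mapsto b^*\alpha_x(a)$ is norm-continuous, the approximating function $x\mapsto c(x)\omega(b^*\alpha_x(a))$ is continuous, and letting $\epsilon\to 0$ shows $x\mapsto\langle W_x\Lambda_\psi(a),\Lambda_\psi(b)\rangle$ is continuous.

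Finally, the uniform bound $\|W_x\|=1$ extends weak continuity from the dense set $\Lambda_\psi(\mathcal N_\psi)$ to all of $H_\psi\times H_\psi$ by a standard $\epsilon/3$ approximation. Strong continuity at $e$ follows from $\|W_xv-v\|^2=2\|v\|^2-2\Real\langle W_xv,v\rangle$, which tends to $0$ as $x\to e$, and strong continuity at an arbitrary $x_0\in G$ then follows from the identity $\|W_xv-W_{x_0}v\|=\|W_{x_0^{-1}x}v-v\|$, valid because $W_{x_0}$ is unitary, combined with $x_0^{-1}x\to e$ as $x\to x_0$.
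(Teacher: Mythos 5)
Your proof is correct and follows essentially the same route as the paper's: both reduce the problem to continuity of the matrix coefficients on the dense set $\Lambda_\psi(\mathcal N_\psi)$ and obtain an $x$-uniform error bound by approximating $\psi$ with a functional $\omega \in \mathcal F_\psi$ via Combes' theorem and the operator $T_\omega$ of Lemma \ref{08-11-21bx}. The only cosmetic difference is that the paper moves $W_x$ across the inner product as $W_{x^{-1}}$ and attaches the $T_\omega$-error to $\Lambda_\psi(a)$ rather than to $\Lambda_\psi(b)$.
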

\begin{proof} It is straightforward to check that $W$ is a unitary representation. To check that it is continuous it is then sufficient to show that 
\begin{equation}\label{16-01-22a}
\lim_{x \to e} \left< W_x\Lambda_\psi(a),\Lambda_\psi(b)\right> = \left<\Lambda_\psi(a),\Lambda_\psi(b)\right>
\end{equation} 
for $a,b \in \mathcal N_\psi$. Let $\epsilon > 0$. It follows from Combes' theorem, Theorem \ref{04-11-21k}, that there is $\omega \in \mathcal F_\psi$ such that 
$$
\psi(a^*a) - \epsilon \leq \omega(a^*a) \leq \psi(a^*a).
$$
By Lemma \ref{08-11-21bx} there is an operator $T_\omega$ on $H_\psi$ such that $0 \leq T_\omega \leq 1$ and
$$
\omega(c^*d) = \left<T_\omega \Lambda_\psi(d),\Lambda_\psi(c)\right> \  \ \ \ \ \forall c,d \in \mathcal N_\psi .
$$
The same calculation as in the proof of Lemma \ref{17-11-21a} shows that 
$$
\left\| T_\omega \Lambda_\psi(a) - \Lambda_\psi(a)\right\|^2 \leq 2 \epsilon
$$ 
and we find therefore that
\begin{align*}
& \left|\left<W_x\Lambda_\psi(a), \Lambda_\psi(b)\right> - \left<\Lambda_\psi(a),\Lambda_\psi(b)\right>\right| \\
&= \left|\left<\Lambda_\psi(a), W_{x^{-1}}\Lambda_\psi(b)\right> - \left<\Lambda_\psi(a),\Lambda_\psi(b)\right>\right| \\
& \leq 2\left\|\Lambda_\psi(b)\right\| \sqrt{2\epsilon} + \left|\left<T_\omega\Lambda_\psi(a),  W_{x^{-1}}\Lambda_\psi(b)\right> - \left<T_\omega \Lambda_\psi(a),\Lambda_\psi(b)\right>\right| \\
& = 2\left\|\Lambda_\psi(b)\right\| \sqrt{2\epsilon} + \left|\Delta(x)^{-\frac{1}{2}}e^{-\frac{\theta(x)\beta}{2}} \omega(\alpha_{x^{-1}}(b)^*a) - \omega(b^*a)\right|
\end{align*}
for all $x \in G$. Since 
$$\lim_{x \to e} \left|\Delta(x)^{-\frac{1}{2}}e^{-\frac{\theta(x)\beta}{2}}\omega(\alpha_{x^{-1}}(b)^*a) - \omega(b^*a)\right| =0
$$ 
we get \eqref{16-01-22a}.
\end{proof}

\begin{lemma}\label{20-01-22e} Let $f \in C_c(G,A)$ and assume that $f(x) \in \mathcal N_\psi^*$ for all $x \in G$. Then $G \ni x \to \Lambda_\psi(f^\sharp(x))$ is continuous if and only if $G \ni x \to \Lambda_\psi(f(x)^*)$ is continuous.
\end{lemma}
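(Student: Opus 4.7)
The plan is to express $\Lambda_\psi(f^\sharp(x))$ as a continuous scalar multiple of the operator $W_x$ applied to $\Lambda_\psi(f(x^{-1})^*)$, and then exploit that $W$ is strongly continuous by Lemma \ref{16-01-22} together with the fact that inversion $x\mapsto x^{-1}$ is a homeomorphism of $G$.

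First I would observe that for each $x\in G$, the element $f(x^{-1})^* \in \mathcal N_\psi$ by hypothesis, and hence $\alpha_x(f(x^{-1})^*)\in \mathcal N_\psi$ by \eqref{04-02-22a}. The definition of $W_x$ then gives
$$
\Lambda_\psi\!\left(\alpha_x(f(x^{-1})^*)\right) = \Delta(x)^{-\tfrac{1}{2}}e^{-\tfrac{\theta(x)\beta}{2}}\, W_x\,\Lambda_\psi(f(x^{-1})^*).
$$
Combined with the formula $f^\sharp(x) = \Delta(x)^{-1}\alpha_x(f(x^{-1})^*)$ this yields the identity
\begin{equation*}
\Lambda_\psi(f^\sharp(x)) \;=\; \Delta(x)^{-\tfrac{3}{2}}e^{-\tfrac{\theta(x)\beta}{2}}\, W_x\,\Lambda_\psi(f(x^{-1})^*),
\end{equation*}
which I expect to be the technical heart of the argument.

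Next I would note that the scalar factor $x\mapsto \Delta(x)^{-3/2}e^{-\theta(x)\beta/2}$ is continuous and nowhere zero, so it can be absorbed and inverted without affecting continuity. The map $x\mapsto x^{-1}$ is a homeomorphism of $G$, so continuity of $x\mapsto \Lambda_\psi(f(x)^*)$ is equivalent to continuity of $x\mapsto \Lambda_\psi(f(x^{-1})^*)$. Finally, since $W$ is a strongly continuous unitary representation by Lemma \ref{16-01-22}, the standard argument
$$
W_x\eta(x) - W_{x_0}\eta(x_0) = W_x\bigl(\eta(x)-\eta(x_0)\bigr) + \bigl(W_x - W_{x_0}\bigr)\eta(x_0)
$$
shows that if $x\mapsto \eta(x)\in H_\psi$ is continuous, then so is $x\mapsto W_x\eta(x)$, and conversely $x\mapsto \eta(x)=W_x^*(W_x\eta(x))$ is continuous if $x\mapsto W_x\eta(x)$ is. Applying this with $\eta(x):=\Lambda_\psi(f(x^{-1})^*)$ gives the desired equivalence in both directions.

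I do not anticipate a significant obstacle: once the displayed identity is established, everything reduces to strong continuity of $W$ and continuity of scalar functions. The only minor point to be careful about is that the equivalence uses invertibility of the scalar factor and of each $W_x$, which is automatic.
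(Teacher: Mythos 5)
Your proof is correct and follows essentially the same route as the paper: the displayed identity $\Lambda_\psi(f^\sharp(x)) = \Delta(x)^{-3/2}e^{-\theta(x)\beta/2}\,W_x\Lambda_\psi(f(x^{-1})^*)$ is exactly the one the paper derives, and the equivalence then follows from the strong continuity of $W$ (Lemma \ref{16-01-22}) just as you argue. The paper merely leaves the routine continuity bookkeeping (nonvanishing scalar factor, inversion being a homeomorphism, the $W_x^*$ converse) implicit, which you have spelled out correctly.
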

\begin{proof} This follows from Lemma \ref{16-01-22} since
$$
\Lambda_\psi(f^\sharp(x)) = \Delta(x)^{-1}\Lambda_\psi\left(\alpha_x(f(x^{-1})^*)\right) = \Delta(x)^{-\frac{3}{2}}e^{-\frac{\theta(x)\beta}{2}}W_x\Lambda_\psi(f(x^{-1})^*) .
$$
\end{proof}

Recall from Corollary \ref{24-06-22a} that $\mathcal N_\psi$ has a structure as a Banach algebra in the norm
$$
|||a||| := \|a\| + \left\|\Lambda_\psi(a)\right\| .
$$
Denote by $C_0(G,\mathcal N_\psi)$ the space of continuous functions from $G$ to 
$$
(\mathcal N_\psi, ||| \ \cdot \ |||)
$$ 
that vanish at infinity. This is also a Banach algebra; the norm is 
$$
|||f||| := \sup_{x \in G} |||f(x)||| .
$$  
There is a flow $\overline{\sigma}$ of isometries on $C_0(G,\mathcal N_\psi)$ defined by
$$
\overline{\sigma}_t(f)(x) := \sigma_t(f(x)) .
$$
Note that $C_0(G,\mathcal N_\psi) \subseteq C_0(G,A)$.
We denote by $\mathcal A_{\overline{\sigma}}$ the algebra of elements of $C_0(G,\mathcal N_\psi)$ that are entire analytic for $\overline{\sigma}$. Set
$$
B_0 := C_c(G,A) \cap \mathcal A_{\overline{\sigma}}  .
$$

\begin{lemma}\label{16-01-22e} $B_0$ is subalgebra of $B$.
\end{lemma}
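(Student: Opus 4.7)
The plan is to prove that $B_0$ is closed under the convolution product (closure under addition is immediate). Fix $f, g \in B_0$. The standard fact that $\operatorname{supp}(f \star g) \subseteq \operatorname{supp}(f) \cdot \operatorname{supp}(g)$ gives $f \star g \in C_c(G,A)$. The two remaining tasks are to show that (i) $f \star g$ takes values in $\mathcal{N}_\psi$ and is continuous into $(\mathcal{N}_\psi, |||\cdot|||)$, i.e. lies in $C_0(G,\mathcal{N}_\psi)$, and (ii) $f \star g$ is entire analytic for $\overline{\sigma}$. I expect (ii), in particular establishing entire holomorphy with respect to the norm on $C_0(G,\mathcal{N}_\psi)$, to be the main obstacle.

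For (i), I would use that $\mathcal{N}_\psi$ is a Banach algebra in $|||\cdot|||$ by Corollary \ref{24-06-22a}. Writing $(f\star g)(x) = \int_G f(y) \alpha_y(g(y^{-1}x)) \, dy$, the factor $f(y)$ is in $\mathcal{N}_\psi$ and compactly supported in $y$. Since $\mathcal{N}_\psi$ is a left ideal in $A$, the product $f(y)\alpha_y(g(y^{-1}x))$ lies in $\mathcal{N}_\psi$, and by the Banach algebra property it suffices to verify that $y \mapsto \alpha_y(g(y^{-1}x))$ is continuous in $|||\cdot|||$ uniformly in $x$ on the compact support. For the norm on $A$ this is standard; for the $\Lambda_\psi$-part, Lemma \ref{16-01-22} yields
\[
\Lambda_\psi\!\left(\alpha_y(g(y^{-1}x))\right) = \Delta(y)^{-\frac{1}{2}} e^{-\frac{\theta(y)\beta}{2}} W_y \Lambda_\psi(g(y^{-1}x)),
\]
which is jointly continuous in $(x,y)$ by strong continuity of $W$ together with continuity of $\Lambda_\psi \circ g$ (since $g \in C_0(G,\mathcal{N}_\psi)$) and continuity of the scalar factors. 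A Bochner integration argument in $(\mathcal{N}_\psi,|||\cdot|||)$, using compact support, then produces $f \star g \in C_c(G,\mathcal{N}_\psi) \subseteq C_0(G,\mathcal{N}_\psi)$.

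For (ii), the key point is that if $h \in C_0(G,\mathcal{N}_\psi)$ is entire analytic for $\overline{\sigma}$, then evaluation at $x \in G$ gives $h(x) \in \mathcal{A}_\sigma$ with $\overline{\sigma}_z(h)(x) = \sigma_z(h(x))$; this follows by applying Remark \ref{31-08-22h} (or directly Theorem \ref{17-02-23e}) to the closed subspaces given by point-evaluation functionals. Because $\alpha_y$ commutes with $\sigma$, the element $\alpha_y(g(y^{-1}x)) \in \mathcal{A}_\sigma$ as well, and using Lemma \ref{18-11-21g} together with Lemma \ref{25-11-21} the candidate holomorphic extension is
\[
H(z)(x) := \int_G \sigma_z(f(y)) \, \alpha_y(\sigma_z(g(y^{-1}x))) \, dy, \qquad z \in \mathbb{C}.
\]
On $\mathbb{R}$ this coincides with $\overline{\sigma}_t(f\star g)$ by pointwise application of the homomorphism $\sigma_t$. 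Repeating the argument from (i) with $\overline{\sigma}_z(f)$ and $\overline{\sigma}_z(g)$ in place of $f$ and $g$ (and noting that both lie in $C_0(G,\mathcal{N}_\psi)$ by Remark \ref{31-08-22h}) shows $H(z) \in C_0(G,\mathcal{N}_\psi)$. To check entire holomorphy of $H$ in $|||\cdot|||$, I would apply Theorem \ref{17-02-23e}: for each continuous linear functional $\varphi$ on $C_0(G,\mathcal{N}_\psi)$, the map $z \mapsto \varphi(H(z))$ is holomorphic on $\mathbb{C}$, because it can be rewritten via Fubini as an integral of the manifestly entire scalar functions $z \mapsto \varphi_{x,y}(\sigma_z(f(y))\alpha_y(\sigma_z(g(y^{-1}x))))$, whose growth is controlled on strips by Lemma \ref{24-09-23x} uniformly over the compact support of $f$ and $g$. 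The main care point is to ensure the interchange of $\varphi$, integration over $G$, and differentiation in $z$; this is where the compact supports of $f$ and $g$ combined with the uniform bounds from Lemma \ref{24-09-23x} make the dominated convergence step succeed. With $H$ entire and $H|_\mathbb{R} = \overline{\sigma}_\cdot(f\star g)$, we conclude $f \star g \in \mathcal{A}_{\overline{\sigma}}$, hence $f\star g \in B_0$.
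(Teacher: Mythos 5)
Your part (i) tracks the paper's first step closely: the paper also obtains $(f\star g)(x)\in\mathcal N_\psi$ together with the formula $\Lambda_\psi((f\star g)(x))=\int_G\pi_\psi(f(y))\Lambda_\psi(\alpha_y(g(y^{-1}x)))\,\mathrm dy$ by a Bochner-integration argument (Lemma \ref{12-02-22b}), and reads off membership in $C_0(G,\mathcal N_\psi)$ from that expression. Where you genuinely diverge is part (ii). The paper never exhibits a holomorphic extension; it extracts the coefficient sequences $\{a_n\}$, $\{b_n\}$ of $f$ and $g$ (with $\sum_n\sup_x|||a_n(x)|||\,|z|^n<\infty$ and supports in fixed compact sets $K$, $L$), forms the Cauchy product $c_n(x)=\sum_{k=0}^n\int_Ga_k(y)\alpha_y(b_{n-k}(y^{-1}x))\,\mathrm dy$, and verifies by explicit estimates (a factor $\mu(KL)$ for the $A$-norm and $M=\int_{KL}\Delta(y)^{-\frac{1}{2}}e^{-\frac{\theta(y)\beta}{2}}\,\mathrm dy$ for the $\Lambda_\psi$-part) that $\sum_nc_nt^n$ converges to $\overline{\sigma}_t(f\star g)$ in $C_0(G,\mathcal N_\psi)$. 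Your route --- write down the candidate $H(z)=\overline{\sigma}_z(f)\star\overline{\sigma}_z(g)$ and invoke the weak-holomorphy criterion (3) of Theorem \ref{17-02-23e} --- is a legitimate alternative and buys a less computational proof, at the price of working in the dual of $C_0(G,\mathcal N_\psi)$.

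That price is the one step you should not wave at. A general $\varphi\in C_0(G,\mathcal N_\psi)^*$ does not come with a decomposition into point functionals $\varphi_{x,y}$; making "Fubini" precise in that form needs a vector-valued Riesz representation theorem the paper does not supply, and if you instead try to differentiate under the integral sign you end up needing exactly the uniform-in-$x$ coefficient bounds that constitute the paper's proof. The clean repair stays inside the paper's toolkit and avoids the dual space altogether: for functions supported in fixed compact sets, convolution is a \emph{bounded bilinear} map on $C_0(G,\mathcal N_\psi)$, since $|||h(y)\alpha_y(k(y^{-1}x))|||\le|||h(y)|||\cdot C_y\cdot|||k(y^{-1}x)|||$ with $C_y=\max\bigl\{1,\Delta(y)^{-\frac{1}{2}}e^{-\frac{\theta(y)\beta}{2}}\bigr\}$ bounded on compacta (here one uses Corollary \ref{24-06-22a} and the scaling relation \eqref{04-02-22a}); the maps $z\mapsto\overline{\sigma}_z(f)$ and $z\mapsto\overline{\sigma}_z(g)$ are entire with values supported in fixed compact sets, and a bounded bilinear map composed with entire maps is entire. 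With that substitution your argument closes.
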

\begin{proof} It is obvious that $B_0$ is a subspace of $B$. Let $f,g \in B_0$. To show that $f \star g \in B_0$, note that it follows from from Lemma \ref{12-02-22b} that 
$$
f \star g (x) = \int_G f(y)\alpha_y(g(y^{-1}x)) \ \mathrm dy \in \mathcal N_\psi
$$ 
for all $x \in G$ and
\begin{align*}
& \Lambda_\psi(f  \star g(x)) =  \int_G \Lambda_\psi\left(f(y)\alpha_y(g(y^{-1}x))\right) \ \mathrm dy \\
& =  \int_G \pi_\psi(f(y))\Lambda_\psi\left(\alpha_y(g(y^{-1}x))\right) \ \mathrm dy .
\end{align*}
From this expression it follows that $f \star g \in C_0(C,\mathcal N_\psi)$. To show that $f \star g\in B_0$ we must show that $f\star g \in \mathcal A_{\overline{\sigma}}$. For this note that there are sequences of functions $\{a_n\}$ and $\{b_n\}$ in $C_0(G,\mathcal N_\psi)$ such that
$$
\sigma_t(f(x)) = \sum_{n=0}^\infty a_n(x)t^n
$$
and
$$
\sigma_t(g(x)) = \sum_{n=0}^\infty b_n(x)t^n
$$
both converge in $(\mathcal N_\psi, ||| \ \cdot \ |||)$ uniformly in $x$ for all $t \in \mathbb R$. In fact, 
$$
\sum_{n=0}^\infty \sup_{x \in G} |||a_n(x)||| \ |z|^n < \infty ,
$$
$$
\sum_{n=0}^\infty \sup_{x \in G} ||| b_n(x)||| \  |z|^n < \infty ,
$$
for all $z \in \mathbb C$, and there are compact sets $K$ and $L$ in $G$ such that all $a_n$'s vanish outside of $K$ and all $b_n$'s vanish outside of $L$. Indeed, if $h : G \to [0,1]$ is a compactly supported continuous function such that $h(x) =1$ for all $x \in \supp f$, the sequence $\{ha_n\}$ will have the same properties as $\{a_n\}$ and hence $ha_n = a_n$ for all $n$ by the uniqueness part of Lemma \ref{13-11-21a}, and we can therefore take $K = \supp h$. The set $L$ is constructed in the same way.

Set
$$
c_n(x) := \sum_{k=0}^n \int_G a_k(y)\alpha_y(b_{n-k}(y^{-1}x)) \ \mathrm d y .
$$
An application of Lemma \ref{12-02-22} and \eqref{28-02-22a} shows that $c_n \in C_0(G,\mathcal N_\psi)$. Note that $c_n$ is supported in $KL$ and
\begin{align*}
&\overline{\sigma}_t(f \star g)(x) = \sigma_t((f \star g)(x)) = \int_G \sigma_t(f(y))\alpha_y(\sigma_t(g(y^{-1}x))) \ \mathrm dy \\
&  = \int_G (\sum_{n=0}^\infty a_n(y)t^n) \alpha_y( \sum_{n=0}^\infty b_n(y^{-1}x)t^n) \ \mathrm d y  \\
& = \int_G \sum_{n=0}^\infty \left(\sum_{k=0}^n a_k(y) \alpha_y( b_{n-k}(y^{-1}x))\right)t^n \ \mathrm d y \\
&= \sum_{n=0}^\infty c_n(x) t^n .
\end{align*}
Note also that
$$
\sup_{x \in G} \left\|c_n(x)\right\| \leq \sum_{k=0}^n (\sup_{x \in G} \|a_k(x)\|)(\sup_{x \in G} \left\|b_{n-k}(x)\right\| )\mu(KL) .
$$
Using Lemma \ref{12-02-22b} we find that
\begin{align*}
&\Lambda_\psi(c_n(x)) = \sum_{k=0}^n \int_G \pi_\psi(a_k(y))\Lambda_\psi(\alpha_y(b_{n-k}(y^{-1}x))) \ \mathrm d y \\
& =\sum_{k=0}^n\int_G \Delta(y)^{-\frac{1}{2}}e^{-\frac{\theta(y)\beta}{2}}\pi_\psi(a_k(y)) W_y \Lambda_\psi(b_{n-k}(y^{-1}x)) \ \mathrm d y 
\end{align*}
and hence
\begin{align*}
& \sup_{x \in G}\|\Lambda_\psi(c_n(x))\| \leq M \sum_{k=0}^n (\sup_{x \in G}\left\|a_k(x)\right\|)(\sup_{x \in G} \left\|\Lambda_\psi(b_{n-k}(x))\right\|)
\end{align*}
where $M = \int_{KL} \Delta(y)^{-\frac{1}{2}}e^{-\frac{\theta(y)\beta}{2}} \ \mathrm dy$. It follows that
$$
\sup_{x \in G} |||c_n(x)||| \leq \max\{\mu(KL), M\} \sum_{k=0}^n (\sup_{x \in G} |||a_k(x)|||) (\sup_{x \in G} |||b_{n-k}(x)|||) ,
$$
and we conclude that
\begin{align*}
&\sum_{n=0}^\infty \sup_{x \in G} |||c_n(x)||| \ |z|^n \\
&\leq \max\{\mu(KL), M\}\left(\sup_{x \in G} |||a_n(x)||| \ |z|^n \right) \left(\sup_{x \in G} |||b_n(x)||| \ |z|^n \right) < \infty 
\end{align*}
for all $z \in \mathbb C$. 
It follows that 
$$
\overline{\sigma}_t(f \star g) = \sum_{n=0}^\infty c_n t^n
$$
converges in $C_0(G,\mathcal N_\psi)$ for all $t \in \mathbb R$, proving that $f \star g \in \mathcal A_{\overline{\sigma}}$. Hence $f \star g \in B_0$.
\end{proof}

\begin{lemma}\label{23-06-22}  Let $f \in B_0$. Then
\begin{itemize} 
\item[(a)] $f$ is entire analytic for $\sigma^\theta$ and $f(x) \in \mathcal A_\sigma$ for all $x \in G$,
\item[(b)] $\sigma_z(f(x)) \in \mathcal N_\psi$ for all $z \in \mathbb C$ and all $x \in G$,
\item[(c)] $G \ni x \mapsto \sigma_z(f(x))$ is continuous for all $z \in \mathbb C$,
\item[(d)] $G \ni x \mapsto \Lambda_\psi(\sigma_z(f(x)))$ is continuous for all $z \in \mathbb C$, 
\item[(e)] $\sigma_z^\theta(f) \in B_0$ for all $z \in \mathbb C$,
\item[(f)] $\sigma^\theta_z(f)(x) = e^{i\theta(x)z} \sigma_{z}(f(x))$ for all $x \in G$ and all $z \in \mathbb C$.
\end{itemize}
\end{lemma}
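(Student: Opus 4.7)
By hypothesis $f \in \mathcal{A}_{\overline{\sigma}}$, so Lemma \ref{13-11-21a} provides a unique sequence $\{a_n\}_{n=0}^\infty$ in $C_0(G,\mathcal{N}_\psi)$ with $\overline{\sigma}_t(f)=\sum_n a_n t^n$ convergent in $|||\cdot|||$ for all $t\in\mathbb{R}$ and $\sum_n \sup_{x\in G}|||a_n(x)|||\,|z|^n<\infty$ for all $z\in\mathbb{C}$. Evaluating at $x$, the series $\sum_n a_n(x)t^n$ converges in $(\mathcal{N}_\psi,|||\cdot|||)$ to $\sigma_t(f(x))$; since $\mathcal{N}_\psi$ is complete in this norm by Corollary \ref{24-06-22a}, and both $\|\cdot\|$ and $\|\Lambda_\psi(\cdot)\|$ are dominated by $|||\cdot|||$, Lemma \ref{13-11-21a} shows $f(x)\in\mathcal{A}_\sigma$ with
\[
\sigma_z(f(x))=\sum_n a_n(x)z^n\in\mathcal{N}_\psi,\qquad \Lambda_\psi(\sigma_z(f(x)))=\sum_n \Lambda_\psi(a_n(x))z^n,
\]
proving (b), and the estimates $\sup_x\|a_n(x)\|\le\sup_x|||a_n(x)|||$ and $\sup_x\|\Lambda_\psi(a_n(x))\|\le\sup_x|||a_n(x)|||$ give uniform convergence on $G$, yielding the continuity statements (c) and (d). Moreover, since $\sigma_t(f(x))=0$ when $x\notin\supp f$, the uniqueness part of Lemma \ref{13-11-21a} applied pointwise forces $\supp a_n\subseteq\supp f=:K$ for every $n$.

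For the remaining claims I would combine the above power series with the Taylor expansion of $e^{i\theta(x)z}$. Set
\[
c_k(x):=\sum_{j=0}^{k}\frac{(i\theta(x))^j}{j!}\,a_{k-j}(x),
\]
which is a continuous $\mathcal{N}_\psi$-valued function on $G$ with $\supp c_k\subseteq K$. Writing $M:=\sup_{x\in K}|\theta(x)|<\infty$, the triangle inequality gives
\[
\sum_{k=0}^\infty \sup_{x\in G}|||c_k(x)|||\,|z|^k \;\le\; e^{M|z|}\sum_{n=0}^\infty \sup_{x\in G}|||a_n(x)|||\,|z|^n\;<\;\infty
\]
for all $z\in\mathbb{C}$. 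Lemma \ref{13-11-21d} applied in $(A,\|\cdot\|)$ then yields the pointwise identity $\sum_k c_k(x)z^k=e^{i\theta(x)z}\sigma_z(f(x))$, with uniform convergence in $x$ (since all $c_k$ are supported in the fixed compact $K$).

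Call this pointwise limit $g_z\in C_c(G,A)$. Uniform convergence with common compact support gives convergence in the $L^1$-norm on $C_c(G,K,A)$, hence in the $B$-norm via the bound $\|\pi(h)\|\le\|h\|_{L^1}\le\mu(K)\sup_x\|h(x)\|$. Thus $\sum_k \pi(c_k)z^k$ converges in $B$ to $\pi(g_z)$, and when $z=t\in\mathbb{R}$ the pointwise limit coincides with $e^{i\theta(x)t}\sigma_t(f(x))=\sigma^\theta_t(f)(x)$ by \eqref{30-01-22e}. Hence $f$ is entire analytic for $\sigma^\theta$ with $\sigma^\theta_z(f)=g_z$, establishing the first half of (a) and formula (f).

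It remains to verify $\sigma^\theta_z(f)=g_z\in B_0$, i.e.\ (e). Continuity and compact support of $g_z$ are clear from (c), so the only issue is to exhibit $g_z\in\mathcal{A}_{\overline{\sigma}}$. Using $\sigma_t(g_z(x))=e^{i\theta(x)z}\sigma_{t+z}(f(x))$ and expanding $\sigma_{t+z}(f(x))=\sum_n a_n(x)(t+z)^n$ via the binomial theorem, rearranging by Lemma \ref{13-11-21d} yields
\[
\overline{\sigma}_t(g_z)(x)=\sum_{k=0}^\infty e_k(x,z)\,t^k,\qquad e_k(x,z):=e^{i\theta(x)z}\sum_{n\ge k}\binom{n}{k}a_n(x)z^{n-k},
\]
with the norm bound $\sum_k\sup_x|||e_k(x,z)|||\,|t|^k\le e^{M|z|}\sum_n\sup_x|||a_n(x)|||\,(|z|+|t|)^n<\infty$. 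Since each $e_k(\cdot,z)$ lies in $C_0(G,\mathcal{N}_\psi)$, this gives convergence in the Banach algebra $(C_0(G,\mathcal{N}_\psi),|||\cdot|||)$, so $g_z\in\mathcal{A}_{\overline{\sigma}}$ and (e) follows. The main care point throughout is keeping track of which norm the various series converge in, particularly in the passage from the pointwise-in-$x$ expansion to convergence in the $B$-norm — this is handled by the common compact support $K$, which lets the elementary estimate $\|\pi(h)\|\le\mu(K)\sup_x\|h(x)\|$ bridge the two.
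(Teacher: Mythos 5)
Your proof is correct and follows essentially the same route as the paper's: extract the coefficient sequence of $f$ in $C_0(G,\mathcal N_\psi)$, note by uniqueness that the coefficients are supported in $\supp f$, form the Cauchy product with the exponential series for $e^{i\theta(x)t}$ to get (a), (e), (f), and read off (b)–(d) from the fact that evaluation at $x$ is $|||\cdot|||$-continuous. The only cosmetic difference is that the paper tames the unboundedness of $\theta$ by replacing it with $\theta h$ for a compactly supported cutoff $h$ equal to $1$ on $\supp f$, whereas you work directly with $\sup_{x\in K}|\theta(x)|$ on the common compact support $K$ of the coefficients — these are equivalent.
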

\begin{proof} (a)+(e)+(f): Choose a continuous function $h : G \to [0,1]$ of compact support such that $h(x) =1$ for $x$ in the support of $f$. Then
$$
\sigma^\theta_t(f)(x) =   e^{i \theta(x) h(x) t} \sigma_{t}(f(x)) 
$$
for all $x \in G$. This has the effect that we may substitute $\theta h$ for $\theta$ in the following calculations and hence work as if $\theta$ is bounded. Since $f$ is entire analytic for $\overline{\sigma}$ there is a sequence $\{g_n\}$ in $C_0(G,\mathcal N_\psi)$ such that
$$
\sigma_{t}(f(x)) = \sum_{n=0}^\infty g_n(x) t^n
$$
and 
$$
\sum_{n=0}^\infty \sup_{x \in G} |||g_n(x)||| |t|^n < \infty
$$
for all $t \in \mathbb R$. This implies, in particular, that $f(x) \in \mathcal A_\sigma$ for all $x \in G$. Note that $hg_n = g_n$ by uniqueness of $\{g_n\}$, cf. Lemma \ref{13-11-21a}. Set
$$
h_n := \sum_{k=0}^n \frac{(i h \theta)^k}{k!}g_{n-k} .
$$
Then 
$$
\sigma^\theta_t(f)(x) = \sum_{n=0}^\infty h_n(x)t^n
$$
converges uniformly on $G$ with all terms supported in the compact support of $h$. Therefore the sequence also converges in $L^1(G,A)$ and hence in $B$. 
This shows that $f$ is entire analytic for $\sigma^\theta$, and
\begin{align*}
&\sigma_z^\theta(f)(x) = \sum_{n=0}^\infty h_n(x)z^n \\
& = \sum_{n=0}^\infty \sum_{k=0}^n \frac{(ih(x)\theta(x))^k}{k!}z^k g_{n-k}(x)z^{n-k}= e^{i\theta(x)z} \sigma_{z}(f(x) )
\end{align*}
for all $z,x$. Note that
$$
x \mapsto e^{i\theta(x)z}\sigma_z(f(x)) = e^{i \theta(x)z} \sum_{n=0}^\infty g_n(x)z^n
$$
is continuous, showing that $\sigma^\theta_z(f) \in C_c(G,A)$. Note also that
\begin{align*}
&\sigma_t( \sigma^\theta_z(f)(x)) = e^{i \theta(x)z} \sum_{n=0}^\infty g_n(x) (t + z)^n \\
&=  e^{i \theta(x)z} \sum_{n=0}^\infty g_n(x) \sum_{k=0}^n {n \choose k}  t^kz^{n-k} = \sum_{n=0}^\infty \kappa_k(x)t^k
\end{align*}  
where 
$$
\kappa_k(x) = e^{ih(x)\theta(x)z} \sum_{n \geq k}  {n \choose k}  g_n(x) z^{n-k} ,
$$
and that 
$$
\sum_{k=0}^\infty |||\kappa_k||| \leq K \sum_{k=0}^\infty \sum_{n\geq k} {n \choose k} |||g_n||| \ |z|^{n-k} ,
$$
where $K = \sup_{x \in G} \left|e^{ih(x)\theta(x)z}\right| < \infty$. Since
$$
\sum_{k=0}^\infty \sum_{n\geq k} {n \choose k} |||g_n||| \ |z|^{n-k} \leq \sum_{n=0}^\infty |||g_n||| (1+|z|)^n < \infty,
$$
this shows that $\sigma^\theta_z(f) \in B_0$.

(b) + (c) + (d):  Since $f$ is analytic for $\overline{\sigma}$ it follows that $\overline{\sigma}_z(f)(x) = \sigma_z(f(x)) \in \mathcal N_\psi$ for all $z,x$. Since $\overline{\sigma}_z(f)\in C_0(G,\mathcal N_\psi)$ it follows that $G \ni x \mapsto \sigma_z(f(x))$ and $G \ni x \mapsto \Lambda_\psi(\sigma_z(f(x))$ are continuous. 
\end{proof}

Let $C_c(G,H_\psi)$ denote the vector space of continuous compactly supported functions from $G$ to $H_\psi$, and $L^2(G,H_\psi)$ the Hilbert space obtained by completing $C_c(G,H_\psi)$ with respect to the inner product
$$
\left< f,g\right> = \int_G \left<f(x),g(x)\right> \ \mathrm d x .
$$
Let $f \in  B_0$. Then
\begin{equation}\label{24-06-22}
\Lambda_\psi(\alpha_{x^{-1}}(f(x))) = \Delta(x)^{\frac{1}{2}}e^{\frac{\theta(x)\beta}{2}}W_{x^{-1}}\Lambda_\psi(f(x)) ,
\end{equation}
and it follows from Lemma \ref{16-01-22} and (d) of Lemma \ref{23-06-22} that \eqref{24-06-22} depends continuously on $x$. We can therefore define 
$$
\Lambda_0 : B_0 \to L^2(G,H_\psi)
$$ 
such that
$$
\Lambda_0(f)(x) := \Lambda_\psi(\alpha_{x^{-1}}(f(x))).
$$

Let $C_c(G,A)^\psi$ be the subspace of $C_c(G,A)$ consisting of the functions $h \in C_c(G,A)$ with the properties that
\begin{itemize}
\item $h(s) \in \mathcal N_\psi \cap \mathcal N_\psi^*$ for all $s \in G$, and
\item  the functions $G \ni s \mapsto \Lambda_\psi(h(s))$ and $G \ni s \mapsto \Lambda_\psi(h(s)^*)$ are both continuous. 
\end{itemize}
Note that $C_c(G,A)^\psi \subseteq C_0(G,\mathcal N_\psi)$.

\begin{lemma}\label{24-06-22d} Let $f \in C_c(G,A)^\psi$ and define $\overline{R}_n(f) \in C_c(G,A)$ such that
$$
\overline{R}_n(f)(x) = \sqrt{\frac{n}{\pi}} \int_\mathbb R e^{-n s^2} \sigma_s(f(x)) \ \mathrm d s .
$$
Then $\overline{R}_n(f) \in B_0 \cap B_0^\sharp$, $\lim_{n \to \infty} \overline{R}_n(f) = f$ in $B$ and
$$
\lim_{n \to \infty} \Lambda_0(\overline{R}_n(f)) = \Lambda_0(f)
$$
in $L^2(G,H_\psi)$.
\end{lemma}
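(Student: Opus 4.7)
The plan is to verify the membership claim fibrewise from Lemma \ref{18-11-21} and then upgrade pointwise convergence to uniform convergence on $\supp f$, which is compact.

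Since $\overline{R}_n(f)(x) = R_n(f(x))$ has support contained in $\supp f$, applying Lemma \ref{18-11-21} in the variable $f(x)$ gives coefficients
$$b_k(x) := \tfrac{(2n)^k}{k!}\int_\mathbb R s^k e^{-ns^2}\sigma_s(f(x))\,\mathrm d s$$
with $\sigma_z(R_n(f(x))) = \sum_{k=0}^\infty b_k(x)z^k$ and norm estimates $\|b_k(x)\| \leq \lambda_k\|f(x)\|$ where $\sum_k\lambda_k|z|^k < \infty$ for every $z$. Lemma \ref{17-11-21m} applied to $f(x)\in\mathcal N_\psi$ shows that $b_k(x)\in\mathcal N_\psi$ and yields the parallel estimate $\|\Lambda_\psi(b_k(x))\| \leq \lambda_k\|\Lambda_\psi(f(x))\|$, so $\sup_{x\in G}|||b_k(x)||| \leq \lambda_k|||f|||$. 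Continuity of $x\mapsto b_k(x)$ from $G$ into $(\mathcal N_\psi,|||\cdot|||)$ follows from the integral formula together with the hypothesis that $x\mapsto f(x)$ and $x\mapsto\Lambda_\psi(f(x))$ are continuous. This shows $\overline R_n(f)\in\mathcal A_{\overline\sigma}\cap C_c(G,A) = B_0$.

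For the $\sharp$-condition, the key identity is $\overline R_n(f)^\sharp = \overline R_n(f^\sharp)$, which follows from a direct computation using that $\sigma_s$ commutes with both $\alpha_x$ and the involution. It then suffices to show $f^\sharp\in C_c(G,A)^\psi$ and reapply the argument just given. Continuity of $x\mapsto\Lambda_\psi(f^\sharp(x))$ is Lemma \ref{20-01-22e}, while
$$\Lambda_\psi(f^\sharp(x)^*) = \Delta(x)^{-\frac{3}{2}}e^{-\frac{\theta(x)\beta}{2}}W_x\Lambda_\psi(f(x^{-1}))$$
is continuous by Lemma \ref{16-01-22} and the hypothesis on $f$.

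The two convergence statements both reduce to upgrading Lemma \ref{24-11-21} and Lemma \ref{24-11-21g} to uniform statements on the compact set $K := \supp f$. For convergence in $B$ I would use that $\{f(x):x\in K\}$ is norm-compact in $A$, so the proof of Lemma \ref{24-11-21} actually produces $\sup_{x\in K}\|R_n(f(x))-f(x)\|\to 0$; since supports stay in $K$, this gives $\overline R_n(f)\to f$ in $L^1(G,A)$ and hence in $B$. For convergence in $L^2(G,H_\psi)$, I would use the identity
$$\Lambda_0(\overline R_n(f))(x) - \Lambda_0(f)(x) = \Delta(x^{-1})^{-\frac{1}{2}}e^{-\frac{\theta(x^{-1})\beta}{2}}W_{x^{-1}}\bigl(\Lambda_\psi(R_n(f(x))) - \Lambda_\psi(f(x))\bigr),$$
where the scalar prefactor is bounded on $K$ and $W_{x^{-1}}$ is an isometry. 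This reduces everything to showing $\sup_{x\in K}\|\Lambda_\psi(R_n(f(x))) - \Lambda_\psi(f(x))\|\to 0$; via the integral formula of Lemma \ref{17-11-21m} this follows from the fact that a strongly continuous unitary group is uniformly continuous on norm-compact subsets of the Hilbert space, applied to the compact set $\{\Lambda_\psi(f(x)):x\in K\}\subseteq H_\psi$.

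The main obstacle is precisely this last uniformity: extracting uniform convergence from the pointwise Lemma \ref{24-11-21g} requires combining compactness of $\Lambda_\psi\circ f(K)$ in $H_\psi$ with equicontinuity of the flow $U^\psi$ on compact sets. Once this is in place, all the other steps are routine bookkeeping with the Banach-algebra structure of $C_0(G,\mathcal N_\psi)$ and the commutation between $R_n$, $\alpha_x$, and the involution on $C_c(G,A)$.
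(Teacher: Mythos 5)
Your proof is correct, and its skeleton --- analyticity of $\overline{R}_n(f)$, the identity $\overline{R}_n(f)^\sharp = \overline{R}_n(f^\sharp)$ combined with Lemma \ref{20-01-22e}, and an upgrade of pointwise to uniform convergence on the compact support --- matches the paper's. The one place where the routes genuinely diverge is in how the work is packaged. The paper observes that $\overline{R}_n(f) = \sqrt{n/\pi}\int_{\mathbb R} e^{-ns^2}\,\overline{\sigma}_s(f)\ \mathrm d s$ is exactly the smoothing operator of Section \ref{Section-entire} for the flow $\overline{\sigma}$ on the Banach space $C_0(G,\mathcal N_\psi)$, so Lemma \ref{24-11-21} applied to that Banach space gives, in one stroke, both $\overline{R}_n(f)\in\mathcal A_{\overline{\sigma}}$ and the convergence $\overline{R}_n(f)\to f$ in the $|||\cdot|||$-sup norm --- which already contains the uniform convergence of $\left\|\Lambda_\psi(R_n(f(x)))-\Lambda_\psi(f(x))\right\|$ over $x$ that you establish separately. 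You instead re-derive the coefficient estimates of Lemma \ref{18-11-21} fibrewise and obtain the uniformity from norm-compactness of $\{f(x):x\in K\}$ and $\{\Lambda_\psi(f(x)):x\in K\}$ together with equicontinuity of the flows on compact sets, whereas the paper gets the uniformity for the $\Lambda_0$-convergence by showing the family $x\mapsto\Lambda_0(\overline{R}_n(f))(x)$, $n\in\mathbb N$, is equicontinuous uniformly in $n$ and combining this with pointwise convergence. Both mechanisms are sound; the paper's Banach-space packaging is shorter, your compactness argument is more self-contained. A small notational point: the coefficients of $z\mapsto\sigma_z(R_n(f(x)))$ are the Cauchy product of your $b_k(x)$ with the Taylor coefficients of $e^{-nz^2}$, not the $b_k(x)$ themselves, but this does not affect any of your estimates.
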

\begin{proof} Since
$$
\overline{R}_n(f) =  \sqrt{\frac{n}{\pi}}\int_\mathbb R e^{-n s^2} \overline{\sigma}_s(f) \ \mathrm d s
$$
it follows from Lemma \ref{24-11-21} that $\overline{R}_n(f) \in B_0$ and that $\lim_{n \to \infty} \overline{R}_n(f) = f$ in $C_0(G,\mathcal N_\psi)$. Thus $\lim_{n \to \infty} \overline{R}_n(f)(x) = f(x)$ uniformly and compactly supported in $x$. Hence $\lim_{n \to \infty} \overline{R}_n(f) = f$ in $L^1(G,A)$ and therefore also in $B$. Since $\sigma$ and $\alpha$ commute we find that
$$
\overline{R}_n(f)^\sharp = \overline{R}_n(f^\sharp) .
$$
It follows from Lemma \ref{20-01-22e} that $f^\sharp \in C_c(G,A)^\psi$, and we conclude therefore that $\overline{R}_n(f)\in B_0^\sharp$. By using Lemma \ref{12-02-22b} we find that 
\begin{equation}\label{24-06-22f}
\begin{split}
& \Lambda_0(\overline{R}_n(f))(x) = \Lambda_\psi\left( \alpha_{x^{-1}}(\overline{R}_n(f)(x))\right) \\
& = \Lambda_\psi\left( \sqrt{\frac{n}{\pi}} \int_\mathbb R e^{-n s^2} \sigma_s(\alpha_{x^{-1}}(f(x))) \ \mathrm d s \right)\\
& = \sqrt{\frac{n}{\pi}}  \int_\mathbb Re^{-n s^2} \Lambda_{\psi}(\sigma_s(\alpha_{x^{-1}}(f(x)))) \ \mathrm d s\\
& = \sqrt{\frac{n}{\pi}}  \int_\mathbb Re^{-n s^2} U^\psi_s\Lambda_{\psi}(\alpha_{x^{-1}}(f(x))) \ \mathrm d s   . \\
\end{split}
\end{equation}
By Lemma \ref{24-11-21}  
\begin{equation}\label{24-06-22e}
\lim_{n \to \infty} \sqrt{\frac{n}{\pi}}  \int_\mathbb Re^{-n s^2} U^\psi_s\Lambda_{\psi}(\alpha_{x^{-1}}(f(x))) \ \mathrm d s = \Lambda_{\psi}(\alpha_{x^{-1}}(f(x)) )
\end{equation}
for each $x$. Note that
\begin{align*}
&\left\|  \sqrt{\frac{n}{\pi}}  \int_\mathbb Re^{-n s^2} U^\psi_s\Lambda_{\psi}(\alpha_{x^{-1}}(f(x))) \ \mathrm d s     -    \sqrt{\frac{n}{\pi}}  \int_\mathbb Re^{-n s^2} U^\psi_s\Lambda_{\psi}(\alpha_{x_0^{-1}}(f(x_0))) \ \mathrm d s    \right\| \\
&\leq \sqrt{\frac{n}{\pi}}  \int_\mathbb Re^{-n s^2} \left\| \Lambda_{\psi}(\alpha_{x^{-1}}(f(x))) -   \alpha_{x_0^{-1}}(f(x_0))))\right\| \ \mathrm d s \\
& =  \left\| \Lambda_{\psi}(\alpha_{x^{-1}}(f(x)) -   \alpha_{x_0^{-1}}(f(x_0)))\right\|  \\
& = \left\|\Delta(x)^{\frac{1}{2}}e^{\frac{\theta(x)\beta}{2}} W_{x^{-1}}\Lambda_\psi(f(x)) - \Delta(x_0)^{\frac{1}{2}}e^{\frac{\theta(x_0)\beta}{2}} W_{x_0^{-1}}\Lambda_\psi(f(x_0))\right\| .
\end{align*} 
This estimate and an application of Lemma \ref{16-01-22} shows that the sequence of functions
$$
G \ni x \to   \sqrt{\frac{n}{\pi}}  \int_\mathbb Re^{-n s^2} U^\psi_s\Lambda_{\psi}(\alpha_{x^{-1}}(f(x))) \ \mathrm d s, \ n \in \mathbb N ,
$$
is equicontinuous in the sense that for each $x_0 \in G$ and $\epsilon >0$ there is an open neighbourhood $\Omega_{x_0}$ of $x_0$ with the property that
\begin{align*}
&\left\|  \sqrt{\frac{n}{\pi}}  \int_\mathbb Re^{-n s^2} U^\psi_s\Lambda_{\psi}(\alpha_{x^{-1}}(f(x))) \ \mathrm d s     -    \sqrt{\frac{n}{\pi}}  \int_\mathbb Re^{-n s^2} U^\psi_s\Lambda_{\psi}(\alpha_{x_0^{-1}}(f(x_0))) \ \mathrm d s    \right\| \leq \epsilon 
\end{align*} 
for all $n$ and all $x \in \Omega_{x_0}$. Since all the functions vanish outside the compact support of $f$, it follows that the convergence in \eqref{24-06-22e} is in fact uniform in $x$. Using \eqref{24-06-22f} and \eqref{24-06-22e} we find that 
$$
\lim_{n \to \infty} \Lambda_0(\overline{R}_n(f))(x) = \Lambda_\psi( \alpha_{x^{-1}}(f(x))
$$
uniformly and compactly supported in $x$. Hence $\lim_{n \to \infty} \Lambda_0(\overline{R}_n(f)) =\Lambda_0(f)$ in $L^2(G,H_\psi)$.
\end{proof}

From the GNS-triple $(H_\psi,\Lambda_\psi,\pi_\psi)$ of $\psi$, cf. Section \ref{06-02-22}, we get the representation $\widehat{\pi}_\psi$ of $B$ on $L^2(G,H_\psi)$ defined such that
$$
(\widehat{\pi}_\psi(f)\xi)(x) = \int_G \pi_\psi(\alpha_{x^{-1}}(f(y)))\xi(y^{-1}x) \ \mathrm d y  
$$
for $f \in C_c(G,A)$ and $\xi \in L^2(G,H_\psi)$, cf. \cite{Pe} or \cite{Wi}.

\begin{lemma}\label{16-01-22dx} $B_0 \cap B_0^\sharp$ is dense in $B$ and $\Lambda_0(B_0 \cap B_0^\sharp)$ is dense in $L^2(G,H_\psi)$.
\end{lemma}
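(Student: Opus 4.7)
The plan is to reduce the two density claims to Lemma \ref{24-06-22d}. Indeed, for every $f \in C_c(G,A)^\psi$ the smoothed sequence $\overline{R}_n(f)$ lies in $B_0 \cap B_0^\sharp$, converges to $f$ in $B$, and its image under $\Lambda_0$ converges to $\Lambda_0(f)$ in $L^2(G,H_\psi)$. Hence it is enough to show that $C_c(G,A)^\psi$ is norm-dense in $B$ and that $\Lambda_0\bigl(C_c(G,A)^\psi\bigr)$ is dense in $L^2(G,H_\psi)$.

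Both density statements will follow from a supply of elementary tensors built from $\mathcal N_\psi \cap \mathcal N_\psi^*$, so the first step is to record two auxiliary density facts. On the one hand, $\mathcal N_\psi \cap \mathcal N_\psi^*$ is norm-dense in $A$: by parts (c) and (e) of Lemma \ref{04-11-21n} it contains $\mathcal M_\psi$, which is norm-dense in $A$ because $\psi$ is densely defined. On the other hand, $\Lambda_\psi(\mathcal N_\psi \cap \mathcal N_\psi^*)$ is dense in $H_\psi$: with $\{e_m\}$ the self-adjoint approximate unit in $\mathcal N_\psi$ supplied by Lemma \ref{16-12-21a}, the estimate $\psi(e_m a a^* e_m) \leq \|a\|^2 \psi(e_m^2) < \infty$ places $e_m a$ inside $\mathcal N_\psi \cap \mathcal N_\psi^*$ for every $a \in \mathcal N_\psi$, and $\Lambda_\psi(e_m a) = \pi_\psi(e_m)\Lambda_\psi(a) \to \Lambda_\psi(a)$ as in the proof of Lemma \ref{08-02-22}.

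For density of $C_c(G,A)^\psi$ in $B$, I would observe that the simple tensors $(\phi \otimes a)(x) := \phi(x)a$ with $\phi \in C_c(G)$ and $a \in \mathcal N_\psi \cap \mathcal N_\psi^*$ visibly belong to $C_c(G,A)^\psi$, since $\Lambda_\psi(\phi(x)a) = \phi(x)\Lambda_\psi(a)$ and $\Lambda_\psi(\phi(x)a)^* = \overline{\phi(x)}\Lambda_\psi(a^*)$ are continuous in $x$. A standard partition-of-unity argument, using the norm-density of $\mathcal N_\psi \cap \mathcal N_\psi^*$ in $A$, then shows that their linear span approximates an arbitrary element of $C_c(G,A)$ uniformly with supports trapped in a fixed compact subset of $G$; this gives $L^1(G,A)$-convergence and hence convergence in the $B$-norm, and since $C_c(G,A)$ is dense in $B$ the first density statement follows.

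For density of $\Lambda_0\bigl(C_c(G,A)^\psi\bigr)$ in $L^2(G,H_\psi)$, it suffices to hit a dense subset, and by the second auxiliary fact above the linear span of elementary tensors $\phi \otimes \Lambda_\psi(a)$ with $\phi \in C_c(G)$ and $a \in \mathcal N_\psi \cap \mathcal N_\psi^*$ is such a subset. Given such data, I would set $g(x) := \phi(x)\alpha_x(a)$. The scaling identity \eqref{04-02-22a} ensures $\alpha_x(a) \in \mathcal N_\psi \cap \mathcal N_\psi^*$ for every $x$, while the identity
\begin{equation*}
\Lambda_\psi\bigl(\alpha_x(a)\bigr) = \Delta(x)^{1/2} e^{\theta(x)\beta/2} W_x \Lambda_\psi(a)
\end{equation*}
combined with the continuity of $W$ from Lemma \ref{16-01-22} shows that $x \mapsto \Lambda_\psi(g(x))$ and the analogous expression for $g(x)^*$ are continuous, so $g \in C_c(G,A)^\psi$. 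A one-line computation then yields
\begin{equation*}
\Lambda_0(g)(x) = \Lambda_\psi\bigl(\alpha_{x^{-1}}(\phi(x)\alpha_x(a))\bigr) = \phi(x)\Lambda_\psi(a),
\end{equation*}
so $\Lambda_0(g) = \phi \otimes \Lambda_\psi(a)$ exactly. I do not anticipate any genuine obstacle beyond the bookkeeping of the scaling factor $\Delta(x)^{1/2}e^{\theta(x)\beta/2}$: the choice $g(x) = \phi(x)\alpha_x(a)$ is tailored precisely so that this factor cancels once $\alpha_{x^{-1}}$ is applied inside $\Lambda_0$.
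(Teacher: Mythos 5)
Your proposal is correct and follows essentially the same route as the paper: both reduce to Lemma \ref{24-06-22d} via the subspace spanned by elementary tensors $\phi\otimes a$ with $a\in\mathcal N_\psi\cap\mathcal N_\psi^*$, and both hit a dense subset of $L^2(G,H_\psi)$ by twisting with $\alpha$, i.e.\ taking $g(x)=\phi(x)\alpha_x(a)$ so that $\Lambda_0(g)=\phi\otimes\Lambda_\psi(a)$. The only (immaterial) difference is that you source the two auxiliary density facts from Lemma \ref{04-11-21n} and Lemma \ref{16-12-21a} directly, where the paper cites Lemma \ref{07-12-21c}.
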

\begin{proof} For $f \in C_c(G), \ a \in A$, set
$$
(f \otimes a)(x) := f(x)a .
$$
Since $\mathcal N_\psi \cap \mathcal N_\psi^*$ is dense in $A$ by Lemma \ref{07-12-21c} it follows that
$$
\left\{ f \otimes a : \   a \in \mathcal N_\psi \cap \mathcal N_\psi^*, \ f\in C_c(G)\right\} 
$$
spans a subspace $S$ of $C_c(G,A)$ such that every every element of $C_c(G,A)$ can be approximated in $L^1(G,A)$, and hence also in $B$, by elements of $S$. Since $S \subseteq C_c(G,A)^\psi$ it follows from Lemma \ref{24-06-22d} that $\overline{R}_n(S) \subseteq B_0 \cap B_0^\sharp$ and that $\lim_{n \to \infty} \overline{R}_n(f) = f$ in $B$ for all $f \in S$. This shows that $B_0 \cap B_0^\sharp$ is dense in $B$. Let $\xi \in L^2(G,H_\psi)$ and $\epsilon >0$. Since $\Lambda_\psi(\mathcal N_\psi \cap \mathcal N_\psi^*)$ is dense in $H_\psi$ by Lemma \ref{07-12-21c} it follows that there is an element $g \in S$  such that
$$
\int_G \left\| \xi(x) - \Lambda_\psi(g(x))\right\|^2 \ \mathrm d x \leq \epsilon .
$$
Set $f(x) := \alpha_{x}(g(x))$. Then $f \in C_c(G,A)^\psi$ and
$$
\Lambda_0(f)(x) = \Lambda_\psi(g(x))
$$
for all $x$. Hence $\left\|\xi - \Lambda_0(f)\right\| \leq \sqrt{\epsilon}$. Since $\overline{R}_n(f) \in B_0\cap B_0^\sharp$ and
$$
\lim_{n \to \infty} \Lambda_0(\overline{R}_n(f)) = \Lambda_0(f)
$$
by Lemma \ref{24-06-22d}, we conclude that  $\Lambda_0(B_0 \cap B_0^\sharp)$ is dense in $L^2(G,H_\psi)$.
\end{proof}

\begin{lemma}\label{16-01-22f} $\widehat{\pi}_\psi(a)\Lambda_0(b) = \Lambda_0(a \star b)$ for all $a,b \in B_0$.
\end{lemma}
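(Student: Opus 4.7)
The plan is to expand both sides using the definitions and reduce the identity to an application of Lemma \ref{12-02-22b} (interchanging $\Lambda_\psi$ with a Banach-space valued integral, which is legitimate because $\Lambda_\psi$ is closed).

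First I would compute $\Lambda_0(a \star b)(x)$ by definition. Since $\alpha_{x^{-1}}$ is a bounded automorphism of $A$, it commutes with the integral defining the convolution, so
$$
\alpha_{x^{-1}}((a \star b)(x)) = \int_G \alpha_{x^{-1}}(a(y)) \, \alpha_{x^{-1}y}(b(y^{-1}x)) \ \mathrm d y .
$$
I need to know that the integrand takes values in $\mathcal N_\psi$ and depends continuously on $y$ with compact support, so that Lemma \ref{12-02-22b} applies. Compact support in $y$ follows because $a$ has compact support. For the $\mathcal N_\psi$ condition, observe that by Lemma \ref{23-06-22}(b) we have $b(y^{-1}x) \in \mathcal N_\psi$ (in particular for $z = 0$); then $\alpha_{x^{-1}y}(b(y^{-1}x)) \in \mathcal N_\psi$ in view of \eqref{04-02-22a}, and since $\mathcal N_\psi$ is a left ideal in $A$, the whole integrand lies in $\mathcal N_\psi$. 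Continuity in $y$ of the integrand as an $A$-valued function is immediate, and continuity as an $\mathcal N_\psi$-valued function (with the graph norm) follows from the continuity of $y \mapsto \Lambda_\psi(\alpha_{x^{-1}y}(b(y^{-1}x)))$, which holds by the identity
$$
\Lambda_\psi(\alpha_{x^{-1}y}(b(y^{-1}x))) = \Delta(x^{-1}y)^{\frac{1}{2}} e^{\frac{\theta(x^{-1}y)\beta}{2}} W_{x^{-1}y} \Lambda_\psi(b(y^{-1}x)) ,
$$
combined with Lemma \ref{16-01-22} and part (d) of Lemma \ref{23-06-22}.

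With these hypotheses confirmed, Lemma \ref{12-02-22b} allows $\Lambda_\psi$ to be pulled inside the integral, and then the defining property of $\pi_\psi$ gives
$$
\Lambda_\psi(\alpha_{x^{-1}}(a(y)) \, \alpha_{x^{-1}y}(b(y^{-1}x))) = \pi_\psi(\alpha_{x^{-1}}(a(y))) \, \Lambda_\psi(\alpha_{x^{-1}y}(b(y^{-1}x))) .
$$
Since $\alpha_{x^{-1}y} = \alpha_{(y^{-1}x)^{-1}}$, the second factor equals $\Lambda_0(b)(y^{-1}x)$ by the definition of $\Lambda_0$. Hence
$$
\Lambda_0(a \star b)(x) = \int_G \pi_\psi(\alpha_{x^{-1}}(a(y))) \, \Lambda_0(b)(y^{-1}x) \ \mathrm d y = \bigl(\widehat{\pi}_\psi(a) \Lambda_0(b)\bigr)(x),
$$
by the definition of $\widehat{\pi}_\psi$.

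The only real obstacle is the bookkeeping needed to verify the hypotheses of Lemma \ref{12-02-22b}; there is no analytic difficulty since everything reduces to continuity statements already established in Lemmas \ref{16-01-22} and \ref{23-06-22}, and to the fact that $\mathcal N_\psi$ is a left ideal.
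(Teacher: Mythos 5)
Your proposal is correct and follows essentially the same route as the paper: expand $\Lambda_0(a\star b)(x)$, move $\Lambda_\psi$ inside the integral via Lemma \ref{12-02-22b}, apply $\Lambda_\psi(cd)=\pi_\psi(c)\Lambda_\psi(d)$, and recognize $\Lambda_0(b)(y^{-1}x)$; you are in fact more explicit than the paper in checking the hypotheses of Lemma \ref{12-02-22b}. The only slip is cosmetic: inverting the definition of $W_g$ gives $\Lambda_\psi(\alpha_g(c)) = \Delta(g)^{-\frac{1}{2}}e^{-\frac{\theta(g)\beta}{2}}W_g\Lambda_\psi(c)$, so the exponents in your displayed formula should be negative, which does not affect the continuity argument it is used for.
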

\begin{proof} By using Lemma \ref{12-02-22b} it follows that
\begin{align*}
&\Lambda_\psi\left( \int_G \alpha_{x^{-1}}(a(y)) \alpha_{x^{-1}y}(b(y^{-1}x))\ \mathrm d y \right) 
=  \int_G \Lambda_\psi(\alpha_{x^{-1}}(a(y)) \alpha_{x^{-1}y}(b(y^{-1}x)))  \ \mathrm d y .
\end{align*}
Hence
\begin{align*}
&\widehat{\pi}_\psi(a)\Lambda_0(b)(x) \\
&=  \int_G \pi_\psi(\alpha_{x^{-1}}(a(y))) \Lambda_0(b)(y^{-1}x)  \ \mathrm d y  \\
& =  \int_G \Lambda_\psi(\alpha_{x^{-1}}(a(y)) \alpha_{x^{-1}y}(b(y^{-1}x)))\ \mathrm d y  \\
& = \Lambda_\psi\left( \int_G \alpha_{x^{-1}}(a(y)) \alpha_{x^{-1}y}(b(y^{-1}x)) \ \mathrm d y \right) \\
& =\Lambda_\psi\left(  \alpha_{x^{-1}}\left(\int_G a(y) \alpha_{y}(b(y^{-1}x)) \ \mathrm d y\right) \right) \\
& = \Lambda_0(a \star b)(x). \ \ 
\end{align*} 
\end{proof}

Define unitaries $T_g$ on $L^2(G,H_\psi)$ such that
$$
(T_g\xi)(x) := \Delta(g)^{\frac{1}{2}} W_g\xi(xg) .
$$
Let $\{q_j\}_{j \in J}$ be a net in $C_c(G)$ such that 
\begin{itemize}
\item $\supp q_{j'} \subseteq \supp q_j$ when $j \leq j'$, 
\item $  \bigcap_j \supp q_j = \{e\}$, 
\item $0 \leq q_j(x) = q_j(x^{-1})$ for all $j$ and $x$, and 
\item $\int_G q_j(x) \ \mathrm dx = 1$ for all $j$.
\end{itemize} 
Thanks to Proposition \ref{08-02-22a} we get from the GNS representation $(H_\psi,\Lambda_\psi,\pi_\psi)$ of $\psi$ the nets $\{u_j\}_{j \in I}$ in $A$ and $\{\rho_j\}_{j \in I}$ in $B(H_\psi)$ with  the properties specified in Lemma \ref{09-02-22x}. For $(j,i) \in I \times I$ define $\xi_{j,i} \in C_c(G,H_\psi)$ such that
$$
\xi_{j,i}(x) :=  q_j(x) \Delta(x)e^{\frac{\theta(x)\beta}{2}}\Lambda_\psi(u_i) .
$$
Define $\tilde{\rho}_i \in B(L^2(G,H_\psi))$ by
$$
(\tilde{\rho}_i\xi)(x) := \rho_i\xi(x) .
$$
Since $G \ni g \mapsto q_j(g)\tilde{\rho}_iT_g\eta$ is continuous from $G$ into $L^2(G,H_\psi)$ and 
$$
\int_G \left\|q_j(s) \tilde{\rho}_i T_s \eta \right\| \ \mathrm d s \leq \|\eta\|
$$
for all $\eta \in L^2(G,H_\psi)$, we can define
$$
\rho'_{j,i} := \int_G  q_j(s) \tilde{\rho}_i T_s \ \mathrm d s  \ \in \ B(L^2(G,H_\psi)) ,
$$
cf. Lemma \ref{12-02-22} and \eqref{28-02-22a}.
We consider $I\times I$ as a directed set such that $(j,i)\leq (j',i')$ when $j \leq j'$ and $i \leq i'$ in $I$. Then $\{\xi_{j,i}\}$ and $\left\{\rho'_{j,i}\right\}$ are both nets indexed by $I\times I$.

\begin{lemma}\label{17-01-22} The following hold.
\begin{itemize}
\item[(a)] $\left\|\rho'_{j,i}\right\| \leq 1$ for all $(j,i)$.
\item[(b)] $\rho'_{j,i}\Lambda_0(b) = \widehat{\pi}_\psi(b)\xi'_{j,i}$ for all $b \in B_0$ and all $(j,i)$, where $\xi'_{j,i}(t) = \xi_{j,i}(t^{-1})$.
\item[(c)] $\lim_{(j,i) \to \infty} \rho'_{j,i} = 1$ in the strong operator topology.
\end{itemize}
\end{lemma}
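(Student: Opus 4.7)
\medskip

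\textbf{Proof plan for Lemma \ref{17-01-22}.}

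For (a), the idea is that $\rho'_{j,i}$ is an average over $s$ of the operators $\tilde{\rho}_i T_s$, each of which has norm at most $1$. Indeed, $T_s$ is unitary on $L^2(G,H_\psi)$ because it decomposes as the product of the strongly continuous unitary representation $s \mapsto W_s$ (acting pointwise) with the $\Delta(s)^{1/2}$-weighted right translation, which is itself unitary on $L^2(G,H_\psi)$ thanks to the transformation formula for left Haar measure. Since $\|\tilde{\rho}_i\| \leq 1$ by property (e) of Lemma \ref{09-02-22x} and $\int_G q_j(s) \, \mathrm ds = 1$, the estimate $\|\rho'_{j,i}\eta\| \leq \int_G q_j(s)\|\tilde{\rho}_i T_s \eta\| \, \mathrm ds \leq \|\eta\|$ gives (a).

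For (b), I would compute both sides pointwise in $x$. Unfolding $\Lambda_0(b)$, applying $T_s$ and using the definition of $W_s$ on $\Lambda_\psi(\alpha_{(xs)^{-1}}(b(xs)))$, the $\Delta(s)^{1/2}$-factor combines with the $\Delta(s)^{1/2}e^{\theta(s)\beta/2}$ from $W_s$, and the identity $s(xs)^{-1} = x^{-1}$ gives
\[
(T_s \Lambda_0(b))(x) = \Delta(s)\, e^{\frac{\theta(s)\beta}{2}} \Lambda_\psi(\alpha_{x^{-1}}(b(xs))).
\]
Applying $\tilde{\rho}_i$ and the defining property $\rho_i \Lambda_\psi(a) = \pi_\psi(a)\Lambda_\psi(u_i)$ from Lemma \ref{09-02-22x} (f), this becomes $\Delta(s)e^{\theta(s)\beta/2}\pi_\psi(\alpha_{x^{-1}}(b(xs)))\Lambda_\psi(u_i)$. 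Substituting $y = xs$ and using the left invariance of Haar measure converts the integral over $s$ into an integral over $y$ of
\[
\pi_\psi(\alpha_{x^{-1}}(b(y)))\, q_j(x^{-1}y)\Delta(x^{-1}y)e^{\frac{\theta(x^{-1}y)\beta}{2}}\Lambda_\psi(u_i),
\]
which is precisely $(\widehat{\pi}_\psi(b)\xi'_{j,i})(x)$ after recognising $\xi'_{j,i}(y^{-1}x) = \xi_{j,i}(x^{-1}y)$. Verifying these cancellations carefully is the main technical step, but it is routine bookkeeping.

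For (c), the plan is a standard $\epsilon/2$ splitting: for $\eta \in L^2(G,H_\psi)$,
\[
\rho'_{j,i}\eta - \eta = \int_G q_j(s)\tilde{\rho}_i(T_s\eta - \eta)\, \mathrm ds + (\tilde{\rho}_i\eta - \eta),
\]
since $\int_G q_j(s)\, \mathrm ds = 1$. The first summand is bounded by $\int_G q_j(s)\|T_s\eta - \eta\|\, \mathrm ds$ since $\|\tilde{\rho}_i\| \leq 1$; the map $s \mapsto T_s\eta$ is norm continuous (strong continuity of $W$ plus continuity of the weighted right translation), so $T_e = 1$ and the shrinking supports of the $q_j$'s force this summand to go to zero as $j \to \infty$, uniformly in $i$. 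The second summand tends to zero as $i \to \infty$ independently of $j$: pointwise $\rho_i\eta(x) \to \eta(x)$ by (g) of Lemma \ref{09-02-22x}, and $\|\rho_i\eta(x) - \eta(x)\|^2 \leq 4\|\eta(x)\|^2$ is integrable, so dominated convergence gives $\tilde{\rho}_i\eta \to \eta$ in $L^2$. Combining these estimates yields strong convergence along the product directed set $I \times I$.

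The only subtle point will be verifying in (b) that all the modular-function and $\theta$-factors line up exactly; the rest of the argument is a guided computation.
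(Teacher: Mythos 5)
Parts (a) and (b) follow the paper's own proof essentially verbatim: the norm estimate from $\int_G q_j(s)\,\mathrm ds = 1$, and the pointwise computation using $W_s\Lambda_\psi(a)=\Delta(s)^{1/2}e^{\theta(s)\beta/2}\Lambda_\psi(\alpha_s(a))$, the cancellation $s(xs)^{-1}=x^{-1}$, the identity $\rho_i\Lambda_\psi(a)=\pi_\psi(a)\Lambda_\psi(u_i)$ and the substitution $y=xs$, are exactly the steps in the text. For part (c) your decomposition
\[
\rho'_{j,i}\eta - \eta = \int_G q_j(s)\tilde{\rho}_i(T_s\eta - \eta)\,\mathrm ds + (\tilde{\rho}_i\eta - \eta)
\]
is a genuinely different, and in fact tidier, organisation than the paper's three-step triangle inequality, and your treatment of the first summand is sound, modulo the routine verification that $s\mapsto T_s\eta$ is norm continuous, which itself requires checking it first on $C_c(G,H_\psi)$ and then extending by density and the uniform bound $\|T_s\|\leq 1$.

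The treatment of the second summand, however, has a genuine gap: you invoke the dominated convergence theorem for the net $(\rho_i)_{i\in I}$, and dominated convergence fails for nets indexed by general directed sets. (The net of functions $1-\chi_F$ on $[0,1]$, with $F$ ranging over the finite subsets ordered by inclusion, converges pointwise to $0$ and is dominated by the constant $1$, yet every member has integral $1$.) Pointwise convergence $\rho_i\eta(x)\to\eta(x)$ together with an integrable dominating function therefore does not yield $\tilde{\rho}_i\eta\to\eta$ in $L^2(G,H_\psi)$. The repair is the device the paper uses: since $\sup_{j,i}\|\rho'_{j,i}\|\leq 1$ it suffices to prove the convergence for $\eta\in C_c(G,H_\psi)$, and for such $\eta$ the set $\{\eta(x):x\in G\}$ is compact in $H_\psi$, so the strong convergence $\rho_i\to 1$ of the norm-bounded net is uniform on it; this gives $\sup_{x}\|\rho_i\eta(x)-\eta(x)\|\to 0$ and hence $\|\tilde{\rho}_i\eta-\eta\|^2\leq \mu(\supp \eta)\,\sup_{x}\|\rho_i\eta(x)-\eta(x)\|^2\to 0$. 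With that substitution your argument for (c) is complete.
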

\begin{proof} (a) Since $\left\|\tilde{\rho}_i\right\| \leq 1$ and $\left\|T_s\right\| \leq 1$ we find that
\begin{align*}
& \left\|\rho'_{j,i}\right\| \leq \int_G \left\| q_j(s) \tilde{\rho}_i T_s\right\| \ \mathrm d s  \leq \int_G q_j(s) \ \mathrm d s = 1.
\end{align*}

(b) This is a direct calculation:
\begin{align*}
&(\rho'_{j,i}\Lambda_0(b))(x) = \int_G  q_j(s) (\tilde{\rho}_i T_s\Lambda_0(b))(x) \ \mathrm d s\\
& =  \int_G \Delta(s)^{\frac{1}{2}} q_j(s) {\rho}_i W_s \Lambda_0(b)(xs) \ \mathrm d s\\
& = \int_G \Delta(s)^{\frac{1}{2}} q_j(s) {\rho}_i W_s \Lambda_\psi( \alpha_{s^{-1}x^{-1}}(b(xs))) \ \mathrm d s\\
& = \int_G   q_j(s) \Delta(s) e^{\frac{\theta(s)\beta}{2}} {\rho}_i \Lambda_\psi( \alpha_{x^{-1}}(b(xs))) \ \mathrm d s \\
& = \int_G   q_j(s)  \Delta(s) e^{\frac{\theta(s)\beta}{2}} \pi_\psi( \alpha_{x^{-1}}(b(xs)))\Lambda_\psi(u_i) \ \mathrm d s\\
& = \int_G   \pi_\psi( \alpha_{x^{-1}}(b(xs)))\xi_{j,i}(s) \ \mathrm d s \\
& = \int_G   \pi_\psi( \alpha_{x^{-1}}(b(w)))\xi_{j,i}(x^{-1}w) \ \mathrm d w  \\
& = \int_G   \pi_\psi( \alpha_{x^{-1}}(b(w)))\xi'_{j,i}(w^{-1}x) \ \mathrm d w \\
& = (\widehat{\pi}_\psi(b)\xi'_{j,i})(x).
\end{align*}

(c) Let $\xi \in C_c(G,H_\psi)$. Since $C_c(G,H_\psi)$ is dense in $L^2(G,H_\psi)$ and $\left\|\rho'_{j,i}\right\| \leq 1$ for all $j,i$, it suffices to show that $\lim_{(j,i) \to \infty} \rho'_{j,i}\xi = \xi$ in $L^2(G,H_\psi)$. Fix $j_0 \in I$. Then
\begin{align*}
& \left\|\rho'_{j,i}\xi - \xi\right\| = \left\| \int_G q_j(s) \tilde{\rho}_i T_s \xi - q_j(s) \xi \ \mathrm d s\right\| \\
& \leq \sup_{s \in \supp q_j} \left\| \tilde{\rho}_i T_s\xi - \xi\right\| \mu(\supp q_{j_0})
\end{align*}
when $j \geq j_0$.
It is therefore enough to show that
\begin{equation*}
\lim_{(j,i) \to \infty} \sup_{s \in \supp q_j} \left\| \tilde{\rho}_i T_s\xi - \xi\right\| = 0.
\end{equation*}
Set $M = (\supp \xi) (\supp q_{j_0})$. For $s \in \supp q_j, \ j \geq j_0$,
\begin{align*}
&\left\|\tilde{\rho}_iT_s\xi -\xi\right\|^2 = \int_G \left\| \Delta(s)^{\frac{1}{2}} \rho_i W_s \xi(xs) - \xi(x)\right\|^2 \ \mathrm d x \\
& \leq \mu(M) \sup_{s \in \supp q_j}\sup_{x \in M} \left\| \Delta(s)^{\frac{1}{2}} \rho_i W_s \xi(xs) - \xi(x)\right\|^2
\end{align*}
and it is therefore enough to show that
$$
 \lim_{(j,i) \to \infty} \sup_{s \in \supp q_j}\sup_{x \in M} \left\| \Delta(s)^{\frac{1}{2}} \rho_i W_s \xi(xs) - \xi(x)\right\| = 0.
 $$
Let $\epsilon > 0$. Note that
$$
\left\{ \Delta(s)^{\frac{1}{2}}W_s \xi(xs) : \ s \in \supp q_{j_0}, x \in (\supp \xi)(\supp q_{j_0}) \right\}
$$
is a compact subset of $H_\psi$. Since $\lim_{i \to \infty} \rho_i = 1$ in the strong operator topology there is a $i_0$ such that
$$
\sup_{s \in \supp q_j}\sup_{x \in M} \left\| \Delta(s)^{\frac{1}{2}} \rho_i W_s \xi(xs) -  \Delta(s)^{\frac{1}{2}} W_s \xi(xs)  \right\| \leq \epsilon
$$
when $i\geq i_0$ and $j \geq j_0$. Similarly, since $\lim_{s \to e} \Delta(s)^{\frac{1}{2}}W_s = 1$ in the strong operator topology and $\bigcap_j \supp q_j = \{e\}$ there is a $j_1 \geq j_0$ such that
$$
\sup_{s \in \supp q_j}\sup_{x \in M} \left\|\Delta(s)^{\frac{1}{2}} W_s \xi(xs)  - \xi(xs) \right\| \leq \epsilon
$$
when $i\geq i_0$ and $j \geq j_1$. It follows that
$$
 \sup_{s \in \supp q_j}\sup_{x \in M} \left\| \Delta(s)^{\frac{1}{2}} \rho_i W_s \xi(xs) - \xi(x)\right\| \leq 2 \epsilon +  \sup_{s \in \supp q_j}\sup_{x \in M} \left\| \xi(xs) - \xi(x)\right\|
 $$
 when $i \geq i_0$ and $j \geq j_1$. The uniform continuity of $\xi$ gives then a $j_2 \geq j_1$ such that
 $$
\sup_{s \in \supp q_j}\sup_{x \in M} \left\| \Delta(s)^{\frac{1}{2}} \rho_i W_s \xi(xs) - \xi(x)\right\| \leq 3 \epsilon
$$
when $i \geq i_0$ and $j \geq j_2$.

\end{proof}

\begin{lemma}\label{17-01-22a} $\Lambda_0 : B_0\cap B_0^\sharp \to L^2(G,H_\psi)$ is closable.
\end{lemma}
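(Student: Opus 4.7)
The plan is to exploit Lemma \ref{17-01-22} directly, since it has been engineered essentially for this conclusion. Concretely, let $\{f_n\}$ be a sequence in $B_0 \cap B_0^\sharp$ with $f_n \to 0$ in $B$ and $\Lambda_0(f_n) \to \eta$ in $L^2(G, H_\psi)$; I want to show $\eta = 0$.

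Fix $(j,i) \in I \times I$. Applying Lemma \ref{17-01-22}(b) to each $f_n \in B_0$ gives
$$\rho'_{j,i}\Lambda_0(f_n) = \widehat{\pi}_\psi(f_n)\xi'_{j,i}.$$
Letting $n \to \infty$, the left-hand side converges in norm to $\rho'_{j,i}\eta$ since $\rho'_{j,i}$ is bounded. For the right-hand side, $\widehat{\pi}_\psi$ is by construction a $*$-representation of $B$ on $L^2(G,H_\psi)$, so $\|\widehat{\pi}_\psi(f_n)\| \leq \|f_n\|_B \to 0$, and consequently $\widehat{\pi}_\psi(f_n)\xi'_{j,i} \to 0$. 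Hence $\rho'_{j,i}\eta = 0$ for every $(j,i)$. Finally, Lemma \ref{17-01-22}(c) asserts that $\rho'_{j,i} \to 1$ strongly along $I \times I$, so
$$\eta = \lim_{(j,i) \to \infty}\rho'_{j,i}\eta = 0,$$
establishing closability.

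There is no genuine obstacle, since all the delicate work has already been absorbed into the construction of the nets $\{\xi'_{j,i}\}$ and $\{\rho'_{j,i}\}$ and into Lemma \ref{17-01-22}; the present lemma is the payoff of that preparation. I note that the argument only uses that $f_n \in B_0$, not that $f_n \in B_0^\sharp$. The apparent restriction to $B_0 \cap B_0^\sharp$ in the statement is therefore harmless (smaller domain $\Rightarrow$ still closable) and presumably reflects the self-adjoint symmetry under which the closure will be studied below, where one will want both $\Lambda_0$ and its analogue on adjoints to be densely defined on the same core.
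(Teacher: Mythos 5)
Your proof is correct and is essentially identical to the paper's: both apply parts (a) and (b) of Lemma \ref{17-01-22} to get $\rho'_{j,i}\eta = \lim_n \widehat{\pi}_\psi(f_n)\xi'_{j,i} = 0$ and then invoke (c) to conclude $\eta = 0$. Your side remark that only membership in $B_0$ is used is also accurate and harmless.
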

\begin{proof} Let $\{b_n\}$ be a sequence in $B_0\cap B_0^\sharp $ such that $\lim_{n \to \infty} b_n = 0$ in $B$ and $\lim_{n \to \infty} \Lambda_0(b_n) = \eta$ in $L^2(G,H_\psi)$. It follows from (a) and (b) in Lemma \ref{17-01-22} that
$$
\rho'_{j,i}\eta = \lim_{n \to \infty} \rho'_{j,i} \Lambda_0(b_n) = \lim_{n \to \infty} \widehat{\pi}_\psi(b_n) \xi'_{j,i} = 0 
$$
for all $(j,i)$. Thanks to (c) in Lemma \ref{17-01-22} this implies that $\eta = 0$.
\end{proof}

Let $\Lambda : D(\Lambda) \to L^2(G,H_\psi)$ be the closure of $\Lambda_0 : B_0\cap B_0^\sharp \to  L^2(G,H_\psi)$.


\begin{lemma}\label{09-02-22i} $(L^2(G,H_\psi),\Lambda, \widehat{\pi}_\psi)$ is a GNS representation of $A \rtimes_{\alpha ,r} G$ with the properties (A), (B), (C), (D) from Section \ref{GNS-KMS} relative to the flow $\sigma^\theta$.
\end{lemma}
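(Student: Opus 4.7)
The argument divides into four parts: verifying that $(L^2(G,H_\psi),\Lambda,\widehat{\pi}_\psi)$ is a GNS representation of $B := A\rtimes_{r,\alpha} G$ in the sense of Definition \ref{06-02-22a}, then checking each of (A)--(D). Property (B) holds by construction, since $\Lambda$ is defined as the closure of $\Lambda_0$. The density of $\Lambda(D(\Lambda))$ in $L^2(G,H_\psi)$ and of $D(\Lambda)$ in $B$ both follow from Lemma \ref{16-01-22dx}, because $B_0\cap B_0^\sharp\subseteq D(\Lambda)$ and $\Lambda$ extends $\Lambda_0$. Non-degeneracy of $\widehat{\pi}_\psi$ (which, combined with the density of $\Lambda(D(\Lambda))$, gives (A)) is a standard consequence of the non-degeneracy of $\pi_\psi$ from Lemma \ref{08-02-22} together with the strong continuity of the left regular representation of $G$ hidden in the definition of $\widehat{\pi}_\psi$.

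For (D) I would define $\widehat{U}_t \in B(L^2(G,H_\psi))$ by
\[
(\widehat{U}_t\xi)(x) := e^{i\theta(x)t}\,U^\psi_t\xi(x).
\]
This is unitary because $U^\psi_t$ is and because pointwise multiplication by the scalar $e^{i\theta(x)t}$ is isometric on $L^2(G,H_\psi)$; strong continuity of $t\mapsto\widehat{U}_t$ reduces to the strong continuity of $U^\psi$ and the continuity of $\theta$ via dominated convergence. A direct calculation using the intertwining $U^\psi_t\Lambda_\psi=\Lambda_\psi\circ\sigma_t$, the commutativity of $\sigma$ with $\alpha$, and the formula \eqref{30-01-22e} for $\sigma^\theta_t$ yields
\[
\widehat{U}_t\Lambda_0(b)(x)=e^{i\theta(x)t}\Lambda_\psi(\alpha_{x^{-1}}(\sigma_t(b(x))))=\Lambda_0(\sigma^\theta_t(b))(x)
\]
for $b\in B_0\cap B_0^\sharp$. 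Since $\sigma^\theta_t$ preserves $B_0$ by Lemma \ref{23-06-22}(e) and commutes with $\sharp$ (which follows because $\sigma$ and $\alpha$ commute and $\theta$ is a homomorphism), it preserves $B_0\cap B_0^\sharp$, so the identity extends to $D(\Lambda)$ by closedness of $\Lambda$. This gives (D) and, as a byproduct, property (C), since $\sigma^\theta_t=\widehat{U}_t(\,\cdot\,)\widehat{U}_{-t}$ on the range of $\Lambda$.

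The main obstacle is establishing the GNS intertwining property $\widehat{\pi}_\psi(a)\Lambda(b)=\Lambda(a\cdot b)$ for all $a\in B$ and $b\in D(\Lambda)$, which in particular forces $D(\Lambda)$ to be a left ideal in $B$. Given such $a$ and $b$, I would use Lemma \ref{16-01-22dx} to pick $a_n\in B_0\cap B_0^\sharp$ with $a_n\to a$ in $B$, and the definition of $\Lambda$ to pick $b_n\in B_0\cap B_0^\sharp$ with $b_n\to b$ in $B$ and $\Lambda_0(b_n)\to\Lambda(b)$ in $L^2(G,H_\psi)$. Since $B_0$ is an algebra by Lemma \ref{16-01-22e} and $(a_n\star b_n)^\sharp=b_n^\sharp\star a_n^\sharp\in B_0$, the convolution $a_n\star b_n$ lies in $B_0\cap B_0^\sharp$ and converges to $a\cdot b$ in $B$. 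The crucial input is Lemma \ref{16-01-22f}, which gives
\[
\Lambda_0(a_n\star b_n)=\widehat{\pi}_\psi(a_n)\Lambda_0(b_n);
\]
the right-hand side converges to $\widehat{\pi}_\psi(a)\Lambda(b)$ by norm-continuity of $\widehat{\pi}_\psi$ together with the uniform bound $\sup_n\|\widehat{\pi}_\psi(a_n)\|<\infty$. Closedness of $\Lambda$ then forces $a\cdot b\in D(\Lambda)$ with $\Lambda(a\cdot b)=\widehat{\pi}_\psi(a)\Lambda(b)$, completing the verification that the triple is a GNS representation.
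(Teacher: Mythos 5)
Your proposal is correct and follows essentially the same route as the paper: property (B) by construction, the unitary $(\widehat{U}_t\xi)(x)=e^{i\theta(x)t}U^\psi_t\xi(x)$ (the paper's $V^\psi_t$) intertwined with $\Lambda_0\circ\sigma^\theta_t$ for (C) and (D), and Lemmas \ref{16-01-22f} and \ref{16-01-22dx} combined with closedness of $\Lambda$ for the GNS structure — your explicit approximation argument for the intertwining relation $\widehat{\pi}_\psi(a)\Lambda(b)=\Lambda(ab)$ on all of $D(\Lambda)$ is exactly the detail the paper leaves implicit. The one place you are lighter than the paper is non-degeneracy, where the paper does not invoke a standard fact but constructs the concrete net $h_{j,i}(x)=q_j(x)\alpha_x(a_i)$ and verifies $\widehat{\pi}_\psi(h_{j,i})\Lambda_0(b)\to\Lambda_0(b)$ directly; your appeal to the standard non-degeneracy of the regular representation is nevertheless legitimate.
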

\begin{proof} It follows from Lemma \ref{16-01-22f} and Lemma \ref{16-01-22dx} that $(L^2(G,H_\psi),\Lambda, \widehat{\pi}_\psi)$ is a GNS representation of $A \rtimes_{\alpha ,r} G$.  To see that $\widehat{\pi}_\psi$ is non-degenerate, let $b \in B_0\cap B_0^\sharp$, let $\{a_i\}_{i \in I}$ be an approximate unit in $A$ and let $\{q_j\}_{j \in J}$ the net in $C_c(G)$ used above to define $\xi_{j,i}$. Define $h_{j,i} \in C_c(G,A)$ by
$$
h_{j,i}(x) := q_j(x)\alpha_x(a_i) .
$$
Then
\begin{align*}
& \widehat{\pi}_\psi(h_{j,i})\Lambda_0(b)(x) = 
\int_G q_j(y)\pi_\psi(a_i)\Lambda_\psi(\alpha_{x^{-1}y}(b(y^{-1}x))) \ \mathrm d y .
\end{align*}
Let $\epsilon >0$. Since $\pi_{\psi}$ is non-degenerate and $b$ is compactly supported, it follows that 
$$
\lim_{i \to \infty} \pi_\psi(a_i)\Lambda_\psi(\alpha_{x^{-1}y}(b(y^{-1}x))) =\Lambda_\psi(\alpha_{x^{-1}y}(b(y^{-1}x)))
$$ 
uniformly for $y \in \supp q_1$ and $x\in (\supp q_1)(\supp b)$. There is therefore an $i_0$ such that
$$ 
\left\|\widehat{\pi}_\psi(h_{j,i})\Lambda_0(b)(x)  - \int_G q_j(y)\Lambda_\psi(\alpha_{x^{-1}y}(b(y^{-1}x))) \ \mathrm d y\right\| \leq \epsilon
$$
for all $x,j$ when $i \geq i_0$. It follows from the properties of $\{q_j\}$, and because $b \in B_0$, that
$$
\left\| \int_G q_j(y)\Lambda_\psi(\alpha_{x^{-1}y}(b(y^{-1}x)))  \ \mathrm d y - \Lambda_\psi(\alpha_{x^{-1}}(b(x)))\right\| \leq \epsilon
$$
for all $x\in G$ when $j$ is large enough. We conclude therefore that 
$$
\widehat{\pi}_\psi(h_{j,i})\Lambda_0(b)(x)
$$ 
converges, for $(j,i) \to \infty$, to $\Lambda_\psi\left( \alpha_{x^{-1}}(b(x))\right) = \Lambda_0(b)(x)$ in $H_\psi$, uniformly and compactly supported in $x$. Hence 
$$
\lim_{(j,i) \to \infty} \widehat{\pi}_\psi(h_{j,i})\Lambda_0(b) = \Lambda_0(b)
$$
in $L^2(G,H_\psi)$. This shows that $\widehat{\pi}_\psi$ is non-degenerate because $\Lambda_0(B_0\cap B_0^\sharp)$ is dense in $L^2(G,H_\psi)$ by Lemma \ref{16-01-22dx}. $\Lambda$ is closed by construction, and hence $(L^2(G,H_\psi),\Lambda, \widehat{\pi}_\psi)$ has properties (A) and (B) from Section \ref{GNS-KMS}. We define unitaries $V^\psi_t \in B(L^2(G,H_\psi))$ such that
$$
(V^\psi_t\xi)(x) := e^{i\theta(x)t} U^\psi_{t}\xi(x) ,
$$
where $U^\psi_t$ is the unitary from Lemma \ref{17-11-21e}. Since $V^\psi_t\circ\Lambda_0 = \Lambda_0 \circ \sigma^\theta_t$ it follows by continuity that $\sigma^\theta_t(D(\Lambda)) = D(\Lambda)$ and $V^\psi_t\Lambda(b) = \Lambda(\sigma^\theta_t(b))$ for all $t \in \mathbb R$ and all $b \in D(\Lambda)$. This shows that $(L^2(G,H_\psi),\Lambda, \widehat{\pi}_\psi)$ also has properties (C) and (D) from Section \ref{GNS-KMS}.
\end{proof}

\begin{lemma}\label{11-02-22d}  For $f,g \in B_0 \cap B_0^\sharp$,
$$
\left<\Lambda_0(f),\Lambda_0(g)\right> =  \left< \Lambda_0\left( \sigma^\theta_{-i \frac{\beta}{2}}(g)^\sharp  \right), \Lambda_0\left( \sigma^\theta_{-i \frac{\beta}{2}}(f)^\sharp \right)    \right> .
$$
\end{lemma}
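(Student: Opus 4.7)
The plan is to reduce both sides of the identity to integrals over $G$ of $\psi$ applied to an expression involving $f$ and $g$, and then match the two by a change of variables on $G$.

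First I would unpack the left hand side. By definition of $\Lambda_0$ and the fact that $\psi(\alpha_{x^{-1}}(\ \cdot\ )) = \Delta(x)e^{\theta(x)\beta}\psi(\ \cdot\ )$ from \eqref{04-02-22a}, one gets
\[
\langle\Lambda_0(f),\Lambda_0(g)\rangle
=\int_G \psi\bigl(\alpha_{x^{-1}}(g(x)^*f(x))\bigr)\,dx
=\int_G \Delta(x)e^{\theta(x)\beta}\psi\bigl(g(x)^*f(x)\bigr)\,dx.
\]
Next I would unpack the right hand side. Using (f) of Lemma \ref{23-06-22} with $z=-i\beta/2$ we have $\sigma^\theta_{-i\beta/2}(g)(x)=e^{\theta(x)\beta/2}\sigma_{-i\beta/2}(g(x))$; taking $\sharp$ and using $\theta(x^{-1})=-\theta(x)$ together with Lemma \ref{24-11-21k} yields
\[
\sigma^\theta_{-i\beta/2}(g)^\sharp(x)=\Delta(x)^{-1}e^{-\theta(x)\beta/2}\alpha_x\bigl(\sigma_{i\beta/2}(g(x^{-1})^*)\bigr),
\]
and the same formula with $f$ in place of $g$. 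Applying $\Lambda_0$ and using \eqref{04-02-22a} again to absorb $\alpha_x$ collapses this to $\Delta(x)^{-1}e^{-\theta(x)\beta/2}\Lambda_\psi(\sigma_{i\beta/2}(g(x^{-1})^*))$, and analogously for $f$.

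Taking the inner product gives
\[
\int_G \Delta(x)^{-2}e^{-\theta(x)\beta}\psi\!\left(\sigma_{i\beta/2}(f(x^{-1})^*)^*\sigma_{i\beta/2}(g(x^{-1})^*)\right)dx.
\]
Here I would apply Lemma \ref{24-11-21k} once more to rewrite $\sigma_{i\beta/2}(f(x^{-1})^*)^*=\sigma_{-i\beta/2}(f(x^{-1}))$ and $\sigma_{i\beta/2}(g(x^{-1})^*)=\sigma_{-i\beta/2}(g(x^{-1}))^*$, and then invoke Lemma \ref{24-06-22b} (which applies since $f(y),g(y)\in\mathcal N_\psi\cap\mathcal N_\psi^*\cap\mathcal A_\sigma$ for $f,g\in B_0\cap B_0^\sharp$, hence lie in $\mathcal N_\psi\cap D(\sigma_{-i\beta/2})$) to obtain
\[
\psi\bigl(\sigma_{-i\beta/2}(f(x^{-1}))\sigma_{-i\beta/2}(g(x^{-1}))^*\bigr)=\psi\bigl(g(x^{-1})^*f(x^{-1})\bigr).
\]

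The final step is the change of variables $x\mapsto x^{-1}$. Using the standard inversion formula $\int_G \phi(x)\,dx=\int_G\phi(y^{-1})\Delta(y)^{-1}\,dy$ with $\phi(x)=\Delta(x)^{-2}e^{-\theta(x)\beta}\psi(g(x^{-1})^*f(x^{-1}))$, and observing that $\phi(y^{-1})=\Delta(y)^{2}e^{\theta(y)\beta}\psi(g(y)^*f(y))$, the right hand side becomes $\int_G\Delta(y)e^{\theta(y)\beta}\psi(g(y)^*f(y))\,dy$, which matches the left hand side. There are no real obstacles here; the main thing to watch is the careful bookkeeping of the factors $\Delta(x)$, $e^{\theta(x)\beta/2}$, and the interplay between the $\sigma_{\pm i\beta/2}$ operators, the involution $\sharp$, and the adjoint via Lemma \ref{24-11-21k}.
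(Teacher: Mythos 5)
Your proposal is correct and follows essentially the same route as the paper's proof: both sides are reduced to integrals of $\psi(g(\cdot)^*f(\cdot))$ via the scaling relation \eqref{04-02-22a}, the formula (f) of Lemma \ref{23-06-22}, the KMS identity of Lemma \ref{24-06-22b}, and the substitution $x \mapsto x^{-1}$. The only cosmetic difference is that you make the intermediate applications of Lemma \ref{24-11-21k} explicit, which the paper leaves implicit.
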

\begin{proof} This is a direct calculation. 
\begin{equation}\label{26-06-22}
\begin{split}
& \left<\Lambda_0(f),\Lambda_0(g)\right>  = \int_G \left<\Lambda_\psi(\alpha_{x^{-1}}(f(x))), \Lambda_\psi(\alpha_{x^{-1}}(g(x)))\right> \ \mathrm d x \\
& = \int_G \psi( \alpha_{x^{-1}}(g(x)^*) \alpha_{x^{-1}}(f(x)) \ \mathrm d x \\
&= \int_G  \Delta(x) e^{\theta(x)\beta}\psi(g(x)^*f(x)) \ \mathrm d x 
\end{split}
\end{equation}
where the last identity follows from \eqref{04-02-22a}. Set $h := \sigma^\theta_{-i \frac{\beta}{2}}(f)$ and $ k := \sigma^\theta_{-i \frac{\beta}{2}}(g)$, and note that $h,k \in B_0$ by (e) of Lemma \ref{23-06-22}. Since $f,g \in B_0^\sharp$ and $h^\sharp =  \sigma^\theta_{i \frac{\beta}{2}}(f^\sharp)$, it follows also that $h \in B_0^\sharp$. Similarly, $k \in B_0^\sharp$. Note that by (f) of Lemma \ref{23-06-22},
\begin{align*}
&h(x) =\sigma^\theta_{-i \frac{\beta}{2}}(f)(x) = e^{i \theta(x) (-i\frac{\beta}{2})} \sigma_{-i\frac{\beta}{2}}(f(x)) = e^{\frac{\theta(x)\beta}{2}} \sigma_{-i\frac{\beta}{2}}(f(x))  ,
\end{align*}
and similarly $k(x) = e^{\frac{\theta(x)\beta}{2}} \sigma_{-i\frac{\beta}{2}}(g(x))$. Hence
\begin{align*}
&h^\sharp (x) = \Delta(x)^{-1}\alpha_x(h(x^{-1})^*) = \Delta(x)^{-1} \alpha_x\left( e^{-\frac{\theta(x)\beta}{2}} \sigma_{-i \frac{\beta}{2}}(f(x^{-1}))^*\right) \\
&=\Delta(x)^{-1} e^{-\frac{\theta(x)\beta}{2}} \alpha_x\left( \sigma_{i \frac{\beta}{2}}(f(x^{-1})^*)\right) .
\end{align*}
Similarly,
$$
k^\sharp (x) =\Delta(x)^{-1} e^{-\frac{\theta(x)\beta}{2}} \alpha_x\left( \sigma_{i \frac{\beta}{2}}(g(x^{-1})^*)\right).
$$
Thus
\begin{align*}
& \left< \Lambda_0\left( \sigma^\theta_{-i \frac{\beta}{2}}(g)^\sharp \right), \Lambda_0\left( \sigma^\theta_{-i \frac{\beta}{2}}(f)^\sharp \right)    \right>  = \left< \Lambda_0(k^\sharp),\Lambda_0(h^\sharp)\right> \\
& = \int_G \left<\Lambda_{\psi}(\alpha_{x^{-1}}(k^\sharp(x))) , \Lambda_{\psi}(\alpha_{x^{-1}}(h^\sharp(x)))\right> \ \mathrm d x\\
 & = \int_G \psi(\alpha_{x^{-1}}(h^\sharp(x))^*\alpha_{x^{-1}}(k^\sharp(x))) \ \mathrm d x  \\
 & = \int_G \Delta(x)e^{\theta(x)\beta} \psi\left( h^\sharp(x)^*k^\sharp(x)\right) \ \mathrm dx \\
& = \int_G \Delta(x)^{-2} e^{-\theta(x)\beta}  \psi\left(  \sigma_{i \frac{\beta}{2}}(f(x^{-1})^*)^* \sigma_{i \frac{\beta}{2}}(g(x^{-1})^*) \right) \ \mathrm d x .
\end{align*}
Now we use that $\psi$ by assumption is a $\beta$-KMS weight for $\sigma$ to conclude from Corollary \ref{24-06-22b} that
$$
 \psi\left(  \sigma_{i \frac{\beta}{2}}(f(x^{-1})^*)^* \sigma_{i \frac{\beta}{2}}(g(x^{-1})^*)\right) = \psi(g(x^{-1})^*f(x^{-1})).
 $$
 Hence 
\begin{align*}
& \left< \Lambda_0\left( \sigma^\theta_{-i \frac{\beta}{2}}(g)^\sharp \right), \Lambda_0\left( \sigma^\theta_{-i \frac{\beta}{2}}(f)^\sharp \right)    \right>  \\
& =  \int_G \Delta(x)^{-2} e^{-\theta(x)\beta} \psi(g(x^{-1})^*f(x^{-1}))  \ \mathrm d x \\
& = \int_G \Delta(x^{-1}) e^{\theta(x^{-1})\beta}\psi(g(x^{-1})^*f(x^{-1}))  \ \Delta(x^{-1}) \mathrm d x  \\
&= \int_G \Delta(y) e^{\theta(y)\beta} \psi(g(y)^*f(y))  \ \mathrm d y\\
& = \left<\Lambda_0(f),\Lambda_0(g)\right> ,
\end{align*}
where the last equality sign comes from \eqref{26-06-22}.
\end{proof}

\begin{lemma}\label{19-08-22} The GNS representation $(L^2(G,H_\psi),\Lambda, \widehat{\pi}_\psi)$ of $A \rtimes_{r,\alpha} G$ has property (F) of Lemma \ref{09-02-22d} with respect to $\beta$ and the flow $\sigma^\theta$.
\end{lemma}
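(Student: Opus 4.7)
The plan is to take $S := B_0 \cap B_0^\sharp$ and verify the two requirements in condition (F) directly from what has already been established, with Lemma \ref{11-02-22d} doing essentially all of the work for the isometry condition.

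First I would check that $S \subseteq \mathcal M_\Lambda^{\sigma^\theta}$. By the very construction of $\Lambda$ as the closure of $\Lambda_0 : B_0 \cap B_0^\sharp \to L^2(G,H_\psi)$, $S$ lies in $D(\Lambda)$, and since the involution in $B = A \rtimes_{r,\alpha} G$ restricts to $\sharp$ on $C_c(G,A)$, for $s \in S$ we have $s^* = s^\sharp \in B_0 \subseteq D(\Lambda)$, i.e.\ $s \in D(\Lambda)^*$. Analyticity of $s$ for $\sigma^\theta$ is (a) of Lemma \ref{23-06-22}, and by (e) of the same lemma $\sigma_z^\theta(s) \in B_0 \subseteq D(\Lambda)$ for every $z \in \mathbb C$. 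For membership in $D(\Lambda)^*$ of $\sigma_z^\theta(s)$, use Lemma \ref{24-11-21k} (or the fact that $\sigma^\theta$ acts by $*$-automorphisms on $B$) to write $\sigma_z^\theta(s)^\sharp = \sigma_{\overline z}^\theta(s^\sharp)$, and apply (e) of Lemma \ref{23-06-22} to $s^\sharp \in B_0$ to conclude $\sigma_{\overline z}^\theta(s^\sharp) \in B_0 \subseteq D(\Lambda)$. Hence $S \subseteq \mathcal M_\Lambda^{\sigma^\theta}$.

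Next, the approximation requirement in (F) is automatic: $\Lambda$ is by definition the closure of $\Lambda_0 = \Lambda|_S$, so $S$ is a core for $\Lambda$. In particular, for every $a \in D(\Lambda)$, and a fortiori for every $a \in \mathcal M_\Lambda^{\sigma^\theta}$, there is a sequence $\{s_n\} \subseteq S$ with $s_n \to a$ in $B$ and $\Lambda(s_n) \to \Lambda(a)$ in $L^2(G,H_\psi)$.

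Finally, the isometry condition is just Lemma \ref{11-02-22d} specialized to $f = g = s$: one obtains
\[
\|\Lambda_0(s)\|^2 \;=\; \bigl\|\Lambda_0\!\bigl(\sigma^\theta_{-i\tfrac{\beta}{2}}(s)^\sharp\bigr)\bigr\|^2 .
\]
Before invoking this I would record the small check that $\sigma^\theta_{-i\beta/2}(s)^\sharp$ actually lies in the domain of $\Lambda_0$, namely in $B_0 \cap B_0^\sharp$: writing $\sigma^\theta_{-i\beta/2}(s)^\sharp = \sigma^\theta_{i\beta/2}(s^\sharp)$ shows it is in $B_0$ by (e) of Lemma \ref{23-06-22} applied to $s^\sharp \in B_0$, while its $\sharp$-adjoint $\sigma^\theta_{-i\beta/2}(s)$ is in $B_0$ by the same lemma applied to $s$. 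Using $s^* = s^\sharp$ and that $\Lambda$ extends $\Lambda_0$, the identity above rewrites as $\|\Lambda(s)\| = \|\Lambda(\sigma^\theta_{-i\beta/2}(s)^*)\|$, which is the second bullet of (F).

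The only step that required any genuine work is the KMS-type identity of Lemma \ref{11-02-22d}, and that work is already done; everything else here is an assembly of the construction of $\Lambda$ and of Lemma \ref{23-06-22}. I expect no substantial obstacle in executing this plan—the main care is only in keeping track of the relation $\sigma^\theta_z(\cdot)^\sharp = \sigma^\theta_{\overline z}(\cdot^\sharp)$ when verifying $S \subseteq \mathcal M_\Lambda^{\sigma^\theta}$ and when applying Lemma \ref{11-02-22d}.
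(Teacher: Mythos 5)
Your proposal is correct and follows essentially the same route as the paper's proof: the paper also takes $S = B_0 \cap B_0^\sharp$, checks $S \subseteq \mathcal M^{\sigma^\theta}_\Lambda$ via (a) and (e) of Lemma \ref{23-06-22} together with the relation $\sigma^\theta_z(f^\sharp) = \sigma^\theta_{\overline{z}}(f)^\sharp$, gets the core property for free from the definition of $\Lambda$ as the closure of $\Lambda_0$, and invokes Lemma \ref{11-02-22d} for the isometry condition. The extra domain checks you record are harmless elaborations of the same argument.
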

\begin{proof} Set $S = B_0 \cap B_0^\sharp$. Then $S \subseteq D(\Lambda) \cap D(\Lambda)^*$ by definition of $\Lambda$ and $S \subseteq \mathcal A_{\sigma^\theta}$ by (a) of Lemma \ref{23-06-22}. Since $\sigma^\theta_z(f^\sharp) = \sigma^\theta_{\overline{z}}(f)^\sharp$ when $f \in B_0 \cap B_0^\sharp$ it follows from (e) of Lemma \ref{23-06-22} that $\sigma^\theta_z(S) \subseteq S$ for all $z \in \mathbb C$. Thus $S \subseteq \mathcal M^{\sigma^\theta}_\Lambda$. Since $\Lambda$ is the closure of $\Lambda_0 : B_0 \cap B_0^\sharp \to L^2(G,H_\psi)$ it follows that the first item of property (F) holds. The second holds by Lemma \ref{11-02-22d}.
\end{proof}

It follows from Lemma \ref{19-08-22}, Lemma \ref{09-02-22i} and Lemma \ref{09-02-22d} that the GNS representation $(L^2(G,H_\psi),\Lambda, \widehat{\pi}_\psi)$ has all the properties (A) - (E) required in Proposition \ref{08-02-22a}. It follows therefore from Proposition \ref{08-02-22a} that $(L^2(G,H_\psi),\Lambda, \widehat{\pi}_\psi)$ is isomorphic to the GNS-triple of a $\beta$-KMS weight for $\sigma^\theta$. We denote this weight by $\widehat{\psi}$ and call it \emph{the dual weight} of $\psi$. Thus, by definition,
$$
\widehat{\psi}(x) = \sup_{\omega \in \mathcal F_\Lambda} \omega(x)
$$
for all $x \in (A\rtimes_{r,\alpha} G)^+$,
where
$$
\mathcal F_\Lambda = \left\{ \omega \in (A\rtimes_{r,\alpha} G)^*_+ : \ \omega(a^*a) \leq \left< \Lambda(a),\Lambda(a)\right> \ \forall a \in D(\Lambda) \right\} .
$$
We note that $\mathcal N_{\widehat{\psi}} = D(\Lambda)$ and $\widehat{\psi}(f^\sharp \star f) = \left<\Lambda_0(f),\Lambda_0(f)\right>$ for $f \in B_0 \cap B_0^\sharp$ by \eqref{29-03-23}. By polarization the calculation from  the proof of Lemma \ref{11-02-22d} shows
\begin{equation}\label{11-02-22e}
\widehat{\psi}(g^\sharp \star f) = \int_G \Delta(x) e^{\theta(x)\beta} \psi(g(x)^*f(x)) \ \mathrm d x
\end{equation}
when $f,g \in B_0 \cap B_0^\sharp$. 

To summarize the fundamental properties of the dual weight we let $C_c(G,A)_\psi$ denote the set of elements $f \in C_c(G,A)$ such that $f(s) \in \mathcal N_\psi$ for all $s \in G$ and the function $G \ni s \mapsto \Lambda_\psi(f(s))$ is continuous. Thus
$$
C_c(G,A)^\psi \subseteq C_c(G,A)_\psi.
$$

\begin{thm}\label{19-08-22a} $\widehat{\psi}$ is a $\beta$-KMS weight for $\sigma^\theta$ with the following properties:
\begin{itemize}
\item[(a)] $\mathcal N_{\widehat{\psi}} = D(\Lambda)$, 
\item[(b)]  $C_c(G,A)_\psi \subseteq \mathcal N_{\widehat{\psi}}$, 
\item[(c)] $C_c(G,A)^\psi$ is a core for $\Lambda_{\widehat{\psi}}$,
\item[(d)] $\widehat{\psi}(f^\sharp \star g) = \int_G \Delta(s)e^{\theta(s)\beta} \psi(f(s)^*g(s)) \ \mathrm ds$ for all $f,g \in C_c(G,A)_\psi$, and
\item[(e)] $\widehat{\psi}(f^\sharp \star f) = \psi\left( (f^\sharp \star f)(0)\right)$ for all $f \in G_c(G,A)_\psi$.
\end{itemize}
\end{thm}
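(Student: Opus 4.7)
My plan is to let the apparatus developed in Section 9 do the heavy lifting: items (a), (c), (d), and (e) will drop out of Proposition~\ref{08-02-22a} and the preparatory lemmas, while (b) requires a separate approximation argument.

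First, for (a): by construction, Proposition~\ref{08-02-22a} furnishes a unitary identifying the GNS-triple of $\widehat{\psi}$ with $(L^2(G,H_\psi),\Lambda,\widehat{\pi}_\psi)$, and under this identification $\Lambda_{\widehat{\psi}}$ corresponds to $\Lambda$, so $\mathcal N_{\widehat{\psi}} = D(\Lambda)$. For (c), the space $B_0 \cap B_0^\sharp$ is by definition a core for $\Lambda$, and Lemma~\ref{24-06-22d} shows that every $f \in C_c(G,A)^\psi$ satisfies $\overline{R}_n(f) \in B_0 \cap B_0^\sharp$ with $\overline{R}_n(f) \to f$ in $B$ and $\Lambda_0(\overline{R}_n(f)) \to \Lambda_0(f)$ in $L^2(G,H_\psi)$; since $\Lambda$ is closed this places $C_c(G,A)^\psi$ inside $D(\Lambda)$, and because it contains the core $B_0 \cap B_0^\sharp$ it is itself a core.

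The heart of the proof is (b). Given $f \in C_c(G,A)_\psi$, I would use Lemma~\ref{16-12-21a} applied to the non-degenerate representation $\pi_\psi$ to produce an approximate unit $\{e_j\}$ for $A$ lying in $\mathcal N_\psi \cap \mathcal N_\psi^*$ with $0 \leq e_j \leq 1$, and set $f_j(x) := e_j f(x)$. Then $f_j(x) \in \mathcal N_\psi \cap \mathcal N_\psi^*$, and both $\Lambda_\psi(f_j(x)) = \pi_\psi(e_j)\Lambda_\psi(f(x))$ and $\Lambda_\psi(f_j(x)^*) = \pi_\psi(f(x)^*)\Lambda_\psi(e_j)$ are continuous in $x$, so $f_j \in C_c(G,A)^\psi \subseteq D(\Lambda)$ by (c), with $\Lambda(f_j)(x) = \pi_\psi(\alpha_{x^{-1}}(e_j))\,\Lambda_\psi(\alpha_{x^{-1}}(f(x)))$. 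Convergence $f_j \to f$ in $B$ is immediate from uniform convergence on $\supp f$. For the $L^2$-convergence of $\Lambda(f_j)$ to the function $x \mapsto \Lambda_\psi(\alpha_{x^{-1}}(f(x)))$, I would exploit the intertwining $W_x \pi_\psi(a) W_x^* = \pi_\psi(\alpha_x(a))$ coming out of the definition of $W$ in Lemma~\ref{16-01-22}, which yields the pointwise-in-$x$ strong convergence $\pi_\psi(\alpha_{x^{-1}}(e_j)) \to 1$; coupled with the uniform bound $\|\pi_\psi(\alpha_{x^{-1}}(e_j))\| \leq 1$ and the compactly supported $L^2$-dominating function $x \mapsto \|\Lambda_\psi(\alpha_{x^{-1}}(f(x)))\|$, a dominated-convergence argument gives the convergence, and closedness of $\Lambda$ then yields $f \in D(\Lambda) = \mathcal N_{\widehat{\psi}}$.

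Finally, for (d) polarization of the sesquilinear form $(f,g) \mapsto \widehat{\psi}(g^\sharp \star f)$ reduces everything to the case $f = g \in C_c(G,A)_\psi$, where $\widehat{\psi}(f^\sharp \star f) = \|\Lambda(f)\|^2 = \int_G \|\Lambda_\psi(\alpha_{x^{-1}}(f(x)))\|^2\,\mathrm dx$, and the scaling \eqref{04-02-22a} converts this to $\int_G \Delta(x)e^{\theta(x)\beta}\psi(f(x)^*f(x))\,\mathrm dx$. For (e), I would expand $(f^\sharp \star f)(e) = \int_G \Delta(y)^{-1}\alpha_y(f(y^{-1}))^*\alpha_y(f(y^{-1}))\,\mathrm dy$, apply $\psi$, invoke \eqref{04-02-22a}, and then use the Haar inversion $\int_G g(y^{-1})\Delta(y)^{-1}\,\mathrm dy = \int_G g(y)\,\mathrm dy$ to match the right-hand side of (d). The main obstacle I anticipate is precisely the $L^2$-convergence step in (b): because $\{e_j\}$ is a net rather than a sequence, the dominated-convergence step must be executed with care and genuinely depends on the locally uniform pointwise convergence $\pi_\psi(\alpha_{x^{-1}}(e_j)) \to 1$ afforded by the intertwining with $W$.
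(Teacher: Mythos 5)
Your overall architecture is sound and items (a) and (c) are argued exactly as in the paper. For (b) and (d), however, you take a genuinely different route. The paper first proves the formula in (d) for $f,g \in C_c(G,A)^\psi$ (via $\overline{R}_n$, Lemma~\ref{24-06-22d} and \eqref{11-02-22e}), and then extends to $C_c(G,A)_\psi$ by showing that $\{\Lambda_{\widehat{\psi}}(e_{m_k}f)\}$ is a Cauchy sequence, using the already-established integral formula together with a Cauchy--Schwarz estimate and monotone convergence; closedness of $\Lambda_{\widehat{\psi}}$ then gives (b) and a further limit gives (d). You instead prove directly that $\Lambda_0(e_jf) \to \Lambda_0(f)$ in $L^2(G,H_\psi)$, so that $f \in D(\Lambda)$ with $\Lambda(f) = \Lambda_0(f)$, after which (d) is a one-line computation from $\widehat{\psi}(g^\sharp \star f) = \left<\Lambda(f),\Lambda(g)\right>$ and the scaling relation \eqref{04-02-22a}. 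Your route is cleaner, and the worry you raise about dominated convergence for a net is resolvable exactly by the intertwining you mention: writing $\pi_\psi(\alpha_{x^{-1}}(e_j)) = W_x^*\pi_\psi(e_j)W_x$ gives $\left\|(\pi_\psi(\alpha_{x^{-1}}(e_j))-1)\Lambda_\psi(\alpha_{x^{-1}}(f(x)))\right\| = \left\|(\pi_\psi(e_j)-1)\Delta(x)^{\frac{1}{2}}e^{\frac{\theta(x)\beta}{2}}\Lambda_\psi(f(x))\right\|$, and since $\{\Delta(x)^{\frac{1}{2}}e^{\frac{\theta(x)\beta}{2}}\Lambda_\psi(f(x)) : x \in \supp f\}$ is compact in $H_\psi$ and $\pi_\psi(e_j) \to 1$ strongly with $\|\pi_\psi(e_j)\|\leq 1$, the convergence is uniform on $\supp f$; no measure-theoretic limit theorem for nets is needed. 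You should spell this out, since as written the "dominated-convergence argument" is the one step of (b) that is not yet a proof.

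There is one genuine gap, in (e). You propose to expand $(f^\sharp\star f)(0)$ as a Banach-space-valued integral, "apply $\psi$" and then invoke \eqref{04-02-22a} under the integral sign. But $\psi$ is only lower semi-continuous, so the interchange $\psi\left(\int_G \cdots \ \mathrm dy\right) = \int_G \psi(\cdots) \ \mathrm dy$ is not automatic and is precisely the point of the argument. The paper handles it by restricting to a separable $C^*$-subalgebra containing $\{\alpha_y(f(y^{-1})^*)\alpha_y(f(y^{-1})) : y \in G\}$, writing $\psi = \sum_n \omega_n$ there by Theorem~\ref{09-11-21h}, and using Lebesgue's monotone convergence theorem to exchange the sum with the integral. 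Without this device (or an equivalent one) your step from the scalar integral in (d) to $\psi\left((f^\sharp\star f)(0)\right)$ does not go through.
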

\begin{proof} (a) holds by construction. To prove (b) and (d) we consider first the case where $f,g \in C_c(G,A)^\psi$. It follows then from Lemma \ref{24-06-22d} that $\overline{R}_n(f), \overline{R}_n(g) \in B_0 \cap B_0^\sharp$ and hence from \eqref{11-02-22e} that
\begin{equation}\label{25-06-22x}
\begin{split}
& \left< \Lambda_{\widehat{\psi}}(\overline{R}_n(f)), \Lambda_{\widehat{\psi}}(\overline{R}_n(g))\right> =  \left< \Lambda_{0}(\overline{R}_n(f)), \Lambda_{0}(\overline{R}_n(g))\right> \\
& = \int_G \Delta(s)e^{\theta(s) \beta} \psi(\overline{R}_n(g)(s)^*\overline{R}_n(f)(s)) \mathrm d s \\
& = \int_G \Delta(s)e^{\theta(s) \beta} \left< \Lambda_\psi(R_n(f(s))),\Lambda_\psi(R_n(g(s)))\right> \ \mathrm d s .
\end{split}
\end{equation}
By Lemma \ref{24-06-22d} $\lim_{n \to \infty} \Lambda_{0}(\overline{R}_n(f)) = \Lambda_0(f)$ and $\lim_{n \to \infty} \overline{R}_n(f) = f$ in $B$. Since $\Lambda_{\widehat{\psi}}$ is closed it follows that $f \in \mathcal N_{\widehat{\psi}}$ and $\lim_{n \to \infty} \Lambda_{0}(\overline{R}_n(f)) = \Lambda_{\widehat{\psi}}(f)$. Similarly, $g \in \mathcal N_{\widehat{\psi}}$ and 
$$
\lim_{n \to \infty} \left< \Lambda_{\widehat{\psi}}(\overline{R}_n(f)), \Lambda_{\widehat{\psi}}(\overline{R}_n(g))\right> = \left< \Lambda_{\widehat{\psi}}(f), \Lambda_{\widehat{\psi}}(g)\right> .
$$ 
 On the other hand it follows from Lemma \ref{24-11-21g} via an application of Lebesgue's theorem on dominated convergence that 
\begin{align*}
&\lim_{n \to \infty} \int_G \Delta(s)e^{\theta(s) \beta} \left< \Lambda_\psi(R_n(f(s))),\Lambda_\psi(R_n(g(s)))\right> \ \mathrm d s \\
&= \int_G \Delta(s)e^{\theta(s) \beta} \left< \Lambda_\psi(f(s)),\Lambda_\psi(g(s))\right> \ \mathrm d s \\
& =  \int_G \Delta(s)e^{\theta(s)\beta} \psi(g(s)^*f(s)) \ \mathrm ds .
\end{align*}
This proves that (d) holds when $f,g \in C_c(G,A)^\psi$. 

Assume that $f,g \in C_c(G,A)_\psi$. Let $\{e_m\}_{m \in \mathcal I}$ be the net in $\mathcal N_\psi \cap \mathcal N_\psi^*$ from Lemma \ref{16-12-21a}. We can then construct a sequence $\{m_k\}_{k=1}^\infty$ in $\mathcal I$ such that
$$
\lim_{k \to \infty} e_{m_k}d(s) = d(s)
$$
uniformly in $s$ for $d \in \left\{f,g\right\}$. Set $d_k(s) := e_{m_k}d(s)$ when $d \in \{f,g\}$. Since $d_k(s)^* = d(s)^*e_{m_k}\in \mathcal N_\psi$ and
$$
\Lambda_\psi(d_k(s)^*) = \pi_\psi(d(s)^*) \Lambda_\psi(e_{m_k}),
$$
it follows that $d_k \in C_c(G,A)_\psi$. In particular, $f_k -f_l \in C_c(G,A)_\psi$ and from what we have just shown it follows that

\begin{equation}\label{31-03-23}
\begin{split}
&\widehat{\psi}\left( (f_k-f_l)^\sharp \star (f_k-f_l)\right) = \widehat{\psi}\left(f_k^\sharp \star (f_k -f_l)\right) + \widehat{\psi}\left( f_l^\sharp \star (f_l -f_k)\right) \\
& = \int_G \Delta(s)e^{\theta(s)\beta} \psi(f_k(s)^*(f_k(s)-f_l(s))) \ \mathrm ds \\
& \ \ \ \ \ \ \ \ \ \ \ +  \int_G \Delta(s)e^{\theta(s)\beta} \psi(f_l(s)^*(f_l(s)-f_k(s))) \ \mathrm ds .
\end{split}
\end{equation}
When $k \geq l$ an application of the Cauchy-Schwarz inequality gives 
\begin{align*}
& \left|\int_G \Delta(s)e^{\theta(s)\beta} \psi(f_k(s)^*(f_k(s)-f_l(s))) \ \mathrm ds\right|^2 \\
&\leq \int_G \Delta(s)e^{\theta(s)\beta} \psi(f(s)^*e_{m_k}^2f(s))\ \mathrm ds   \\
& \ \ \ \ \ \ \ \ \ \ \ \ \ \times  \int_G \Delta(s)e^{\theta(s)\beta}  \psi(f(s)^*(e_{m_k} -e_{m_l})^2 f(s)) \ \mathrm ds \\
&\leq \int_G \Delta(s)e^{\theta(s)\beta} \psi(f(s)^*f(s))\ \mathrm ds   \\
& \ \ \ \ \ \ \ \ \ \ \ \ \ \times  \int_G \Delta(s)e^{\theta(s)\beta}  \psi(f(s)^*(e_{m_k} -e_{m_l}) f(s)) \ \mathrm ds  . 
\end{align*}
An application of Lebesques theorem on dominated (or monotone) convergence implies that 
\begin{align*}
&\lim_{k \to \infty}  \int_G \Delta(s)e^{\theta(s)\beta} \psi(f(s)^*e_{m_k} f(s)) \ \mathrm ds  = \int_G \Delta(s)e^{\theta(s)\beta} \psi(f(s)^*f(s)) \ \mathrm ds .
\end{align*}
It follows that 
$$
\int_G \Delta(s)e^{\theta(s)\beta} \psi(f_k(s)^*(f_k(s)-f_l(s))) \ \mathrm ds
$$ 
can be made arbitrarily small by choosing $k,l$ large enough. The same argument shows that this is also true for $\int_G \Delta(s)e^{\theta(s)\beta} \psi(f_l(s)^*(f_l(s)-f_k(s)) ) \ \mathrm ds$, and it follows then from \eqref{31-03-23} that $\{\Lambda_{\widehat{\psi}}(f_k)\}$ is a Cauchy sequence in $\mathcal N_{\widehat{\psi}}$. Since $\lim_{ k \to \infty} f_k = f$ in $B$ and $\Lambda_{\widehat{\psi}}$ is closed, it follows that $f \in \mathcal N_{\widehat{\psi}}$ and $\lim_{k \to \infty} \Lambda_{\widehat{\psi}}(f_k) = \Lambda_{\widehat{\psi}}(f)$. In particular, this proves (b), and (d) follows because
\begin{align*}
& \widehat{\psi}(f^\sharp \star g) = \left< \Lambda_{\widehat{\psi}}(g), \Lambda_{\widehat{\psi}}(f)\right> = \lim_{k \to \infty} \left< \Lambda_{\widehat{\psi}}(g_k), \Lambda_{\widehat{\psi}}(f_k)\right> \\
& = \lim_{k \to \infty}  \widehat{\psi}(f_k^\sharp \star g_k) \\
& = \lim_{k \to \infty}\int_G \Delta(s)e^{\theta(s)\beta} \psi(f_k(s)^*g_k(s)) \ \mathrm ds \\
& = \lim_{k \to \infty}\int_G \Delta(s)e^{\theta(s)\beta} \psi(f(s)^*(e_{m_k})^2g(s)) \ \mathrm ds .
\end{align*}
Then (d) follows from Lebesgues theorem on dominated convergence.

 To see that (c) holds note that it follows (b), (c) and (d) of Lemma \ref{23-06-22}, in combination with Lemma \ref{20-01-22e}, that $B_0 \cap B_0^\sharp \subseteq C_c(G,A)^\psi$. Combined with (b) it follows that $B_0 \cap B_0^\sharp \subseteq C_c(G,A)^\psi \subseteq \mathcal N_{\widehat{\psi}}$. Since $B_0 \cap B_0^\sharp$ by definition is a core for $\Lambda_{\widehat{\psi}}$ it follows that so is $C_c(G,A)^\psi$.

Finally, to establish (e), let $f \in C_c(G,A)_\psi$. Then
\begin{align*}
& \widehat{\psi}(f^\sharp \star f) = \int_G \Delta(s)e^{\theta(s)\beta} \psi(f(s)^*f(s)) \ \mathrm ds\\
& = \int_G \Delta(s)^2e^{\theta(s)\beta} \psi(f(s)^*f(s)) \ \mathrm ds^{-1} \\
& =  \int_G \Delta(y)^{-2}e^{-\theta(y)\beta} \psi(f(y^{-1})^*f(y^{-1})) \ \mathrm dy \\
& = \int_G \Delta(y)^{-1}\psi(\alpha_y(f(y^{-1}))^*) \alpha_y(f(y^{-1}))) \ \mathrm dy .
\end{align*}
Let $E$ be a separable $C^*$-subalgebra of $A$ containing the compact set
$$
\left\{ \alpha_y(f(y^{-1})^*) \alpha_y(f(y^{-1})): \ y \in G\right\} .
$$
The restriction of $\psi$ to $E^+$ is also a weight and it follows therefore from Theorem \ref{09-11-21h} that there is a sequence $\{\omega_m\}_{n=1}^\infty$ in $E^*_+$ such that 
$$
\psi(d) = \sum_{n=1}^\infty \omega_n(d) \ \ \forall d \in E^+ .
$$
We can then apply Lebesgues theorem on monotone convergence to conclude that
\begin{align*}
&  \int_G \Delta(y)^{-1}\psi(\alpha_y(f(y^{-1})^*) \alpha_y(f(y^{-1})) \ \mathrm dy \\
& = \int_G \Delta(y)^{-1} \sum_{n=1}^\infty \omega_n(\alpha_y(f(y^{-1})^*) \alpha_y(f(y^{-1})) \ \mathrm dy \\
& =  \sum_{n=1}^\infty \omega_n\left( \int_G \Delta(y)^{-1}\alpha_y(f(y^{-1})^*) \alpha_y(f(y^{-1})) \ \mathrm dy \right) \\
& = \psi\left( (f^\sharp \star f)(0)\right) .
\end{align*}

\end{proof}

\begin{notes} This section is based on the work of Quaegebeur and Verding in \cite{QV} where a somewhat simpler version of the construction is applied to a more general class of weights, called regular weights. As shown in \cite{QV} KMS weights are regular, and one can therefore start with a KMS weight in the construction described in \cite{QV}. It follows from (e) of Theorem \ref{19-08-22a} that the weight $\widehat{\psi}$ constructed here is the dual weight constructed in \cite{QV}. However, in \cite{QV} no flows are considered in the construction of the dual weight and some modifications are necessary to arrange that by starting with a KMS weight, the dual weight is also KMS. In this section we have presented one possible elaboration of the method of Quaegebeur and Verding to this effect. 
\end{notes}

\section{The map defined by the dual KMS weight}\label{13-04-23}

In this section we retain the setting from Section \ref{07-08-22e} and investigate the map of KMS weights resulting from the construction $\psi \mapsto \hat{\psi}$ obtained in that section.

\bigskip

Let $\KMS(\sigma,\beta)_{\theta}$ denote the set of $\beta$-KMS weights for $\sigma$ with the scaling property \eqref{04-02-22a} with respect to $\alpha$. Theorem \ref{19-08-22a} gives us a map
\begin{equation}\label{08-06-22}
D : \KMS(\sigma,\beta)_{\theta} \to \KMS(\sigma^\theta,\beta),
\end{equation}
defined such that $D(\psi) = \widehat{\psi}$. For the study of $D$ we collect here first a few general observations before we move on to special cases.

\begin{lemma}\label{07-06-22a} The map $D$ in \eqref{08-06-22} is injective.
\end{lemma}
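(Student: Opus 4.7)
The plan is to exploit property (d) of Theorem \ref{19-08-22a}, which gives an explicit formula for $\widehat{\psi}$ on elements of the form $f^\sharp \star g$ with $f, g \in C_c(G,A)_\psi$, and then to recover the value of $\psi$ on $a^*a$ by localizing on the group component. Suppose $\psi_1, \psi_2 \in \KMS(\sigma,\beta)_\theta$ with $\widehat{\psi_1} = \widehat{\psi_2}$. Pick an arbitrary element $a \in \mathcal N_{\psi_1} \cap \mathcal N_{\psi_2}$ and a function $h \in C_c(G)$. Consider the elementary tensor $g \in C_c(G,A)$ defined by $g(x) := h(x)a$. For $i=1,2$, the map $x \mapsto \Lambda_{\psi_i}(g(x)) = h(x)\Lambda_{\psi_i}(a)$ is continuous, so $g \in C_c(G,A)_{\psi_1} \cap C_c(G,A)_{\psi_2}$.

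Applying Theorem \ref{19-08-22a}(d) to $f = g$ for each $\psi_i$ yields
\begin{equation*}
\widehat{\psi_i}(g^\sharp \star g) = \int_G \Delta(s) e^{\theta(s)\beta} |h(s)|^2 \ \mathrm ds \cdot \psi_i(a^*a).
\end{equation*}
Choosing $h$ such that $C := \int_G \Delta(s)e^{\theta(s)\beta} |h(s)|^2 \ \mathrm ds$ is a finite positive number (for instance any nonzero $h \in C_c(G)$ with $h \geq 0$), the assumption $\widehat{\psi_1}(g^\sharp \star g) = \widehat{\psi_2}(g^\sharp \star g)$ then forces $\psi_1(a^*a) = \psi_2(a^*a)$. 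Since $a \in \mathcal N_{\psi_1} \cap \mathcal N_{\psi_2}$ was arbitrary, Corollary \ref{31-03-22d} gives $\psi_1 = \psi_2$, proving injectivity of $D$.

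The main subtleties are bookkeeping rather than conceptual: one must verify that the elementary tensors $h \otimes a$ really lie in $C_c(G,A)_{\psi_i}$ so that (d) of Theorem \ref{19-08-22a} applies, and one must check that the integrand is integrable (it is, since $h$ has compact support and $\Delta \cdot e^{\theta \cdot \beta}$ is continuous). The only genuine hypothesis to watch is that the intersection $\mathcal N_{\psi_1} \cap \mathcal N_{\psi_2}$ is large enough for Corollary \ref{31-03-22d} to apply—but this is precisely the form in which that corollary is stated, so no extra work is needed.
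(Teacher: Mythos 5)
Your proof is correct and follows essentially the same route as the paper: form the elementary tensor $h\otimes a$, apply (d) of Theorem \ref{19-08-22a} to express $\widehat{\psi_i}((h\otimes a)^\sharp \star (h\otimes a))$ as a positive constant times $\psi_i(a^*a)$, and conclude with Corollary \ref{31-03-22d}. The only cosmetic difference is that the paper normalizes $h$ so that $\int_G \Delta(s)e^{\theta(s)\beta}|h(s)|^2\,\mathrm ds = 1$ rather than dividing by the constant afterwards.
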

\begin{proof} Assume that $\psi, \phi \in  \KMS(\sigma,\beta)_{\theta}$ and that $\widehat{\psi} = \widehat{\phi}$. Let $a \in \mathcal N_\psi \cap \mathcal N_\phi$, and let $h \in C_c(G)$ be a function such that
$$
\int_G \Delta(s) e^{\theta(s)\beta} \left|h(s)\right|^2 \ \mathrm ds = 1 .
$$
In the notation from Lemma \ref{16-01-22dx}, set $f := h \otimes a$.
It follows then from (d) of Theorem \ref{19-08-22a} that
\begin{align*}
& \psi(a^*a) = \widehat{\psi}(f^\sharp \star f) = \widehat{\phi}(f^\sharp \star f) = \phi(a^*a) .
\end{align*} 
Hence $\psi = \phi$ by Corollary \ref{31-03-22d}.
\end{proof}

To see one reason why $D$ is generally not surjective, let $l : G \to \mathbb T$ be a continuous homomorphism. There is an automorphism $\gamma_l \in \Aut (A\rtimes_{r,\alpha} G)$ such that $\gamma_l(f) \in C_c(G,A)$ and
$$
\gamma_l(f)(x) := l(x)f(x)
$$
when $f \in C_c(G,A)$. 

\begin{lemma}\label{20-03-22} Let $\psi \in \KMS(\sigma,\beta)_{\theta}$. Then $\widehat{\psi} \circ \gamma_l = \widehat{\psi}$.
\end{lemma}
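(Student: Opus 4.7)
The plan is to verify that $\widehat{\psi}\circ\gamma_l$ is a $\beta$-KMS weight for $\sigma^\theta$, and then to show it agrees with $\widehat{\psi}$ on a core for $\Lambda_{\widehat{\psi}}$, whence Corollary~\ref{21-06-22e} gives the equality.

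First I would establish two routine algebraic identities for $\gamma_l$. Since $l$ is a continuous homomorphism $G\to\mathbb T$, a direct computation on $f\in C_c(G,A)$ gives
$$
\sigma^\theta_t(\gamma_l(f))(x)=e^{i\theta(x)t}\sigma_t(l(x)f(x))=l(x)\sigma^\theta_t(f)(x)=\gamma_l(\sigma^\theta_t(f))(x),
$$
so $\gamma_l$ commutes with $\sigma^\theta$. Moreover, since $\gamma_l$ is a $*$-automorphism and $\pi(f)^*=\pi(f^\sharp)$, one has $\gamma_l(f^\sharp)=\gamma_l(f)^\sharp$; this can also be verified directly using $\overline{l(x^{-1})}=l(x)$. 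Lemma~\ref{01-10-23a} (or Corollary~\ref{01-10-23}) then immediately yields that $\widehat{\psi}\circ\gamma_l$ is a $\beta$-KMS weight for $\sigma^\theta$.

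Next I would check the agreement on the core $C_c(G,A)^\psi$. If $f\in C_c(G,A)^\psi$, then since $|l(s)|=1$ and $l$ is continuous, $\gamma_l(f)\in C_c(G,A)^\psi\subseteq C_c(G,A)_\psi$. Using that $\gamma_l$ is an automorphism together with $\gamma_l(f^\sharp)=\gamma_l(f)^\sharp$ and part (d) of Theorem~\ref{19-08-22a}, one computes
\begin{align*}
(\widehat{\psi}\circ\gamma_l)(f^\sharp\star f)
&=\widehat{\psi}(\gamma_l(f)^\sharp\star\gamma_l(f))\\
&=\int_G\Delta(s)e^{\theta(s)\beta}\psi(\overline{l(s)}l(s)\,f(s)^*f(s))\ \mathrm ds\\
&=\int_G\Delta(s)e^{\theta(s)\beta}\psi(f(s)^*f(s))\ \mathrm ds\\
&=\widehat{\psi}(f^\sharp\star f).
\end{align*}

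Finally, part (c) of Theorem~\ref{19-08-22a} says that $C_c(G,A)^\psi$ is a core for $\Lambda_{\widehat{\psi}}$. Since both $\widehat{\psi}$ and $\widehat{\psi}\circ\gamma_l$ are $\beta$-KMS weights for $\sigma^\theta$ agreeing on $a^\sharp\star a$ for every $a$ in this core, Corollary~\ref{21-06-22e} gives $\widehat{\psi}\circ\gamma_l=\widehat{\psi}$.

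The argument is essentially a direct verification; the only mildly delicate point is the identity $\gamma_l(f^\sharp)=\gamma_l(f)^\sharp$, which follows most cleanly by noting that $\gamma_l$ is a $*$-automorphism of $A\rtimes_{r,\alpha}G$ and that the $*$-operation on $C_c(G,A)$ is $\sharp$.
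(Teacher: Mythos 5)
Your proposal is correct and follows essentially the same route as the paper: commutation of $\gamma_l$ with $\sigma^\theta$ plus Corollary~\ref{01-10-23} to see that $\widehat{\psi}\circ\gamma_l$ is a $\beta$-KMS weight, then agreement on $C_c(G,A)^\psi$ via (d) of Theorem~\ref{19-08-22a}. The only cosmetic difference is at the end, where you invoke Corollary~\ref{21-06-22e} directly on the core, while the paper first extends the agreement to all of $\mathcal N_{\widehat{\psi}}$ using (c) of Theorem~\ref{19-08-22a} and then applies Corollary~\ref{31-03-22d}; both are valid.
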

\begin{proof} Note that $\gamma_l$ commutes with $\sigma^{\theta}$. It follows therefore from Corollary \ref{01-10-23} that $\widehat{\psi} \circ \gamma_l$ is a $\beta$-KMS weight for $\sigma^\theta$ since $\widehat{\psi}$ is. Note that $\gamma_l(C_c(G,A)^\psi) = C_c(G,A)^\psi$ and that (d) of Theorem \ref{19-08-22a} shows that $\widehat{\psi}\circ \gamma_l(a^*a) = \widehat{\psi}(a^*a)$ when $a \in C_c(G,A)_\psi$. It follows therefore from (c) of Theorem \ref{19-08-22a} that $\widehat{\psi}\circ \gamma_l(a^*a) = \widehat{\psi}(a^*a)$ for all $a \in  \mathcal N_{\widehat{\psi}}$, and then from Corollary \ref{31-03-22d} that $\widehat{\psi} \circ \gamma_l = \widehat{\psi}$. 
 \end{proof}

In general, not all KMS weights for $\sigma^\theta$ are invariant under $\gamma_l$ for all $l : G \to \mathbb T$. The most elementary example of this is sketched in Example \ref{09-10-23c}. Hence Lemma \ref{20-03-22} shows that $D$ does not in general map onto $\KMS(\sigma^\theta,\beta)$. 

\begin{example}\label{09-10-23c} \rm Let $G = \mathbb Z_2$, $A = \mathbb C$ and let $\alpha$ be the trivial action of $\mathbb Z_2$ on $\mathbb C$. With these choices the crossed product $A \rtimes_\alpha G$ is a copy of $\mathbb C^2$. All flows on $\mathbb C^2$ are trivial and all states on $\mathbb C^2$ are KMS states for the trivial action on $\mathbb C^2$. Let $l : \mathbb Z_2 \to \mathbb T$ be the only nontrivial character on $\mathbb Z_2$. The corresponding automorphism $\gamma_l$ of $\mathbb C \rtimes \mathbb Z_2$ is non-trivial and must therefore be the automorphism of $\mathbb C^2$ which interchanges the two copies of $\mathbb C$. Not all states of $\mathbb C^2$ are invariant under this automorphism. 
\end{example}

When $G$ is abelian there is a continuous action 
$$
\hat{\alpha} : \widehat{G} \to \Aut (A \rtimes_\alpha G) 
$$
of the dual group $\widehat{G}$ on $A \rtimes_\alpha G$, known as \emph{the dual action}. It is defined such that
$$
\hat{\alpha}_\gamma(f)(x) := \overline{\gamma(x)}f(x)
$$
when $\gamma \in \widehat{G}$ and $f \in C_c(G,A)$, cf. \cite{Wi}. It follows from Lemma \ref{20-03-22} that $\widehat{\psi}$ is $\hat{\alpha}$-invariant:

\begin{lemma}\label{08-06-22b} Assume that $G$ is abelian and let $\psi \in \KMS(\sigma,\beta)_{\theta}$. Then $\widehat{\psi} \circ \hat{\alpha}_\gamma = \widehat{\psi}$ for all $\gamma \in \widehat{G}$.
\end{lemma}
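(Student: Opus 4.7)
The plan is to observe that $\hat{\alpha}_\gamma$ is a special case of the automorphism $\gamma_l$ appearing in Lemma \ref{20-03-22}, so that the desired invariance is an immediate application of that lemma.

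More precisely, since $G$ is abelian, every element $\gamma \in \widehat{G}$ is by definition a continuous homomorphism $\gamma : G \to \mathbb T$, and therefore so is its complex conjugate $\overline{\gamma} : G \to \mathbb T$, given by $\overline{\gamma}(x) = \overline{\gamma(x)}$. Setting $l := \overline{\gamma}$, the defining formula $\hat{\alpha}_\gamma(f)(x) = \overline{\gamma(x)} f(x) = l(x) f(x)$ for $f \in C_c(G,A)$ coincides with the defining formula for $\gamma_l$. By density of $C_c(G,A)$ in $A \rtimes_{r,\alpha} G$ and continuity of both automorphisms, this identifies $\hat{\alpha}_\gamma = \gamma_{\overline{\gamma}}$ as automorphisms of $A \rtimes_{r,\alpha} G$.

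Now Lemma \ref{20-03-22} gives $\widehat{\psi} \circ \gamma_l = \widehat{\psi}$ for every continuous homomorphism $l : G \to \mathbb T$, and applying this with $l = \overline{\gamma}$ yields $\widehat{\psi} \circ \hat{\alpha}_\gamma = \widehat{\psi}$, as claimed. There is no real obstacle here — the only thing to check is the compatibility of the two definitions, which is immediate from the abelianness of $G$ ensuring that $\widehat{G} \subseteq \Hom(G,\mathbb T)$.
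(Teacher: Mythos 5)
Your proposal is correct and is exactly the argument the paper intends: the text introduces Lemma \ref{08-06-22b} with the words ``It follows from Lemma \ref{20-03-22}'', and your identification $\hat{\alpha}_\gamma = \gamma_{\overline{\gamma}}$ (noting that $\overline{\gamma}$ is again a continuous homomorphism $G \to \mathbb T$) is precisely the intended reduction. Nothing is missing.
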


Later we shall need the following slight strengthening of (c) in Theorem \ref{19-08-22a}.

\begin{lemma}\label{21-06-22f} Let $S$ be a subspace of $\mathcal N_\psi$ which is a core for $\Lambda_\psi$. Then $C_c(G)\otimes S$ is a core for $ \Lambda_{\widehat{\psi}}$.
\end{lemma}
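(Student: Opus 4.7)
\emph{Proof plan.}

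The strategy is to reduce to the approximation problem on $C_c(G,A)^\psi$, which is already a core for $\Lambda_{\widehat{\psi}}$ by (c) of Theorem \ref{19-08-22a}. So it suffices to show: every $f \in C_c(G,A)^\psi$ is the limit, in the graph norm $\|\cdot\|_B + \|\Lambda_{\widehat{\psi}}(\cdot)\|$, of a net of elements of $C_c(G)\otimes S$. All approximants will be arranged to have support in a single compact neighborhood of $\supp f$ so that convergence in $L^1(G,A)$ implies convergence in $B$, and the integral formula (d) of Theorem \ref{19-08-22a} can be applied to the difference.

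Fix $f \in C_c(G,A)^\psi$ and $\epsilon > 0$. Both maps $f \colon G \to A$ and $\Lambda_\psi \circ f \colon G \to H_\psi$ are continuous with support in the compact set $\supp f$, and are therefore uniformly continuous. Choose a relatively compact open neighborhood $V$ of $\supp f$ and a finite open cover $U_1,\dots,U_n$ of $\supp f$, with each $\overline{U_j}\subseteq V$, so small that for chosen points $x_j \in U_j$,
$$
\|f(x) - f(x_j)\| < \epsilon/2, \qquad \|\Lambda_\psi(f(x) - f(x_j))\| < \epsilon/2 \quad \text{whenever } x \in U_j.
$$
Since $S$ is a core for $\Lambda_\psi$, pick $s_j \in S$ with $\|s_j - f(x_j)\| < \epsilon/2$ and $\|\Lambda_\psi(s_j - f(x_j))\| < \epsilon/2$. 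Let $\{h_j\}$ be a continuous partition of unity with $h_j \colon G \to [0,1]$, $\supp h_j \subseteq U_j$, $\sum_j h_j \leq 1$ on $G$ and $\sum_j h_j = 1$ on $\supp f$ (this last conjunction is standard via the construction $h_j = \phi_j\prod_{i<j}(1-\phi_i)$ from a Urysohn family $\phi_j$). Set $g := \sum_j h_j \otimes s_j \in C_c(G)\otimes S$, which has support in $\overline V$.

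The key pointwise estimate is $\|f(x) - g(x)\| \leq \epsilon$ and $\|\Lambda_\psi(f(x) - g(x))\| \leq \epsilon$ for every $x \in G$. For $x \in \supp f$ this follows directly since $\sum_j h_j(x) = 1$ and each summand with $h_j(x)\neq 0$ has $x \in U_j$, hence $\|f(x)-s_j\| < \epsilon$ and $\|\Lambda_\psi(f(x)-s_j)\|<\epsilon$. For $x \notin \supp f$ we have $f(x) = 0$; then for any $j$ with $h_j(x)\neq 0$ the same choice $x \in U_j$ forces $\|f(x_j)\| \leq \|f(x) - f(x_j)\| < \epsilon/2$ and $\|\Lambda_\psi(f(x_j))\| < \epsilon/2$, so $\|s_j\|<\epsilon$ and $\|\Lambda_\psi(s_j)\|<\epsilon$, giving $\|g(x)\|\leq \epsilon$ and $\|\Lambda_\psi(g(x))\|\leq \epsilon$. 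Because $g \in C_c(G,A)_\psi$, so is $f - g$, and (d) of Theorem \ref{19-08-22a} applies to yield
$$
\left\|\Lambda_{\widehat{\psi}}(f-g)\right\|^2 \;=\; \int_G \Delta(s)\,e^{\theta(s)\beta}\,\left\|\Lambda_\psi(f(s)-g(s))\right\|^2\,\mathrm d s \;\leq\; \epsilon^2 \int_{\overline V} \Delta(s)\,e^{\theta(s)\beta}\,\mathrm d s,
$$
while $\|f - g\|_B \leq \|f-g\|_{L^1(G,A)} \leq \epsilon \mu(\overline V)$. Both bounds tend to $0$ with $\epsilon$, completing the approximation.

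The one genuine subtlety, which is the main obstacle, is the behavior on the collar $\overline V \setminus \supp f$: the approximant $g$ need not vanish where $f$ does, so a straightforward estimate $\|f(x)-g(x)\| = \|g(x)\|$ could in principle be large. The resolution is the observation above that uniform continuity of $f$ combined with $f(x)=0$ on the collar automatically forces the centre values $f(x_j)$, and hence the chosen approximants $s_j$, to be small — ensuring the same pointwise bound $\epsilon$ holds there as well.
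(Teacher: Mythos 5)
Your proof is correct and follows essentially the same route as the paper's: a partition of unity subordinate to a fine cover of $\supp f$, approximation of the sample values $f(x_j)$ by elements of $S$ using that $S$ is a core for $\Lambda_\psi$, and integration of the resulting pointwise bounds via (d) of Theorem \ref{19-08-22a}. Your explicit treatment of the collar $\overline V\setminus\supp f$ fills in a detail the paper passes over with the phrase ``a partition of unity argument gives us,'' but it is the same argument.
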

\begin{proof} Let $f \in C_c(G,A)_\psi$. Let $n \in \mathbb N$ and choose an open subset $U$ with compact closure in $G$ containing the support of $f$. For each $s \in G$ there is $a_s \in S$ such that $\left\|a_s -f(s)\right\| < \frac{1}{n}$ and $\left\|\Lambda_\psi(a_s) - \Lambda_\psi(f(s))\right\| < \frac{1}{n}$. A partition of unity argument gives us a finite collection $\{\varphi_i\}_{i\in I}$ of functions in $C_c(G)$, all with supports in $U$, and elements $a_i \in S$, $i \in I$, such that $0 \leq \varphi_i \leq 1$ for all $i$, $f(s) = \sum_{i\in I} \varphi_i(s)f(s)$ and $\sum_{i \in I} \varphi_i(s) \leq 1$ for all $s \in G$, and
$$
\left\|a_i - f(x)\right\| \leq \frac{1}{n}
$$
and
$$
\left\|\Lambda_\psi(a_i) - \Lambda_\psi(f(x))\right\| \leq \frac{1}{n}
$$
for all $i$ and all $x \in \supp \varphi_i$. Set $X_n := \sum_{i \in I} \varphi_i \otimes a_i \in C_c(G) \otimes S$. Then $X_n \in C_c(G,A)_\psi$ and $$
\left\|X_n - f\right\| \leq \left\|X_n -f\right\|_{L^1(G,A)} \leq  \sum_{i \in I} \int_G \varphi_i(s) \left\|a_i - f(s)\right\| \ \mathrm d s \leq \frac{\mu(U)}{n} .
$$
Furthermore, (d) of Theorem \ref{19-08-22a} gives the estimate
\begin{align*}
&\left\|\Lambda_{\widehat{\psi}}(X_n - f)\right\|^2 \\
& = \int_G \Delta(s)e^{\theta(s)\beta} \left\|\Lambda_\psi(X_n(s)) - \Lambda_\psi(f(s))\right\|^2 \ \mathrm d s\\
&  \leq \int_U \Delta(s)e^{\theta(s)\beta} \left(\sum_{i \in I} \varphi_i(s) \left\|\Lambda_\psi(a_i) - \Lambda_\psi(f(s))\right\|\right)^2 \ \mathrm d s\\ 
& \leq K \frac{1}{n^2} ,
\end{align*}
where $K = \int_U \Delta(s)e^{\theta(s)\beta} \ \mathrm d s$. It follows that $\lim_{n \to \infty} X_n = f$ in $A \rtimes_{\alpha,r} G$ and $\lim_{n \to \infty} \Lambda_{\widehat{\psi}}(X_n) = \Lambda_{\widehat{\psi}}(f)$. This completes the proof because $C_c(G,A)_\psi$ is a core for $\Lambda_{\widehat{\psi}}$ by (b) and (c) of Theorem \ref{19-08-22a}.
\end{proof}

\subsection{$G$ abelian. An application of Takai duality}\label{section8.2.1}

We consider in this section the map \eqref{08-06-22} under the additional assumption that $G$ is abelian. While $D$ is injective by Lemma \ref{07-06-22a} it is generally not surjective since it follows from Lemma \ref{08-06-22b} that it maps into the subset 
$$
\KMS(\sigma^\theta,\beta)^{\hat{\alpha}}
$$
of $\KMS(\sigma^\theta,\beta)$ consisting of the $\beta$-KMS weights for $\sigma^\theta$ that are invariant under the dual action $\hat{\alpha}$. We denote the resulting map by $D_1$. In this section we prove

\begin{thm}\label{08-06-22aA} Assume that $G$ is abelian. Let $\beta \in \mathbb R$. The map
$$
D_1:  \KMS(\sigma,\beta)_{\theta} \to  \KMS(\sigma^\theta,\beta)^{\hat{\alpha}}
$$ 
is a bijection.
\end{thm}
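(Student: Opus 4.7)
The injectivity of $D_1$ is already Lemma \ref{07-06-22a}, so the heart of the matter is surjectivity: given $\phi \in \KMS(\sigma^\theta,\beta)^{\hat{\alpha}}$, construct $\psi \in \KMS(\sigma,\beta)_\theta$ with $\widehat{\psi} = \phi$. My plan is to recover $\psi$ by applying the dual weight construction a second time (now for the action of $\widehat{G}$), invoking Takai duality, and then using the stabilization bijection of Lemma \ref{09-06-22x}.

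First, note that the flow $\sigma^\theta$ on $B = A \rtimes_{r,\alpha} G$ commutes with the dual action $\hat{\alpha}$, and since $\phi$ is $\hat{\alpha}$-invariant it satisfies the scaling hypothesis \eqref{04-02-22a} relative to $\widehat{G}$ with the zero homomorphism $\widehat{G}\to\mathbb R$ (and trivially $\Delta_{\widehat{G}}=1$, as $\widehat{G}$ is abelian). The construction of Section \ref{07-08-22e}, applied to the system $(B,\widehat{G},\hat{\alpha})$, the flow $\sigma^\theta$ and the zero homomorphism, therefore produces a $\beta$-KMS weight $\widehat{\phi}$ on $B\rtimes_{r,\hat{\alpha}}\widehat{G}$ for the canonical lifted flow $\tau_t$ (which acts as $\sigma^\theta_t$ on functions $\widehat{G}\to B$).

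Next I invoke Takai duality to identify $B\rtimes_{r,\hat\alpha}\widehat{G}\cong A\otimes\mathbb K(L^2(G))$ via an isomorphism $\Phi$. Tracing $\Phi$ through on generators (elements of $A$, the unitaries implementing $\alpha$, and the multipliers implementing $\hat\alpha$), the flow $\tau$ corresponds to $\sigma\otimes\Ad V$ on $A\otimes\mathbb K(L^2(G))$, where $V=(V_t)$ is the strongly continuous unitary flow $(V_t\xi)(x)=e^{i\theta(x)t}\xi(x)$ on $L^2(G)$. Consequently $\widehat{\phi}\circ\Phi^{-1}$ is a $\beta$-KMS weight for $\sigma\otimes\Ad V$ on $A\otimes\mathbb K(L^2(G))$, and Lemma \ref{09-06-22x} now yields a unique $\psi\in\KMS(\sigma,\beta)$ with $\widehat{\phi}\circ\Phi^{-1}=\psi\otimes\kappa_\beta$, where $\kappa_\beta$ is the $\beta$-KMS weight on $\mathbb K(L^2(G))$ for $\Ad V$ furnished by Theorem \ref{17-08-22c}. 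The scaling property \eqref{04-02-22a} for $\psi$ must then be read off from the double dual action: Takai duality intertwines the canonical $G$-action on $B\rtimes_{r,\hat\alpha}\widehat{G}$ (which combines the original $\alpha$ with translation by characters) with $\alpha\otimes\Ad\rho$ on $A\otimes\mathbb K(L^2(G))$, where $\rho$ is the right regular representation of $G$. Matching the scaling of $\widehat\phi$ under this $G$-action with the explicit scaling of $\kappa_\beta$ under $\Ad\rho$ (which involves exactly the factor $\Delta(x)^{-1}e^{-\theta(x)\beta}$ because $V_t$ encodes $\theta$) forces $\psi\circ\alpha_x=\Delta(x)^{-1}e^{-\theta(x)\beta}\psi$, so $\psi\in\KMS(\sigma,\beta)_\theta$.

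It remains to show $D_1(\psi)=\widehat{\psi}=\phi$. By Lemma \ref{21-06-22f}, $C_c(G)\otimes S$ is a core for $\Lambda_{\widehat{\psi}}$ whenever $S$ is a core for $\Lambda_\psi$; choosing $S=\mathcal N_\psi$ and applying Corollary \ref{21-06-22e}, it suffices to prove $\widehat{\psi}(f^\sharp\star g)=\phi(f^\sharp\star g)$ for $f,g\in C_c(G)\otimes\mathcal N_\psi$. On one side (d) of Theorem \ref{19-08-22a} gives $\widehat{\psi}(f^\sharp\star g)=\int_G\Delta(s)e^{\theta(s)\beta}\psi(f(s)^*g(s))\,\mathrm ds$; on the other side the same expression falls out of the Takai-duality construction of $\psi$ from $\phi$ (essentially unwinding the dual weight formula together with the identification $\widehat\phi\circ\Phi^{-1}=\psi\otimes\kappa_\beta$), whence the two agree and $\widehat{\psi}=\phi$. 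The main obstacle in writing out the details will be the bookkeeping in step two and three: precisely identifying the flow on $A\otimes\mathbb K(L^2(G))$ corresponding to $\tau$ under $\Phi$, and tracking the scaling constants coming from the double dual action carefully enough to recover exactly \eqref{04-02-22a}. Once those identifications are pinned down, injectivity of the two dual weight constructions and the stabilization bijection make the rest of the argument essentially formal.
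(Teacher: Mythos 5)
Your proposal follows essentially the same route as the paper's proof: double application of the dual-weight construction, Takai duality identifying $(A\rtimes_\alpha G)\rtimes_{\hat\alpha}\widehat G$ with $A\otimes\mathbb K(L^2(G))$ carrying the flow $\sigma\otimes\Ad W$, the stabilization bijection of Lemma \ref{09-06-22x}, and the computation of $\kappa_\beta\circ\Ad\rho_x$ to recover the scaling property \eqref{04-02-22a}. The only organizational difference is your final step — directly matching $\widehat\psi$ and $\phi$ on the core $C_c(G)\otimes\mathcal N_\psi$ rather than concluding via injectivity of the second dual-weight map — but this rests on the same bookkeeping that the paper isolates as the identity $D_2\circ D_1(\psi)\circ\Psi^{-1}=\psi\otimes\kappa_\beta$, so the arguments are essentially identical.
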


For the proof of Theorem \ref{08-06-22aA} we use the map $D$ from \eqref{08-06-22} with a different choice of the ingredients. Namely, we take the  $A$ from Section \ref{07-08-22e} to be $ A \rtimes_{\alpha} G$, the $\alpha$ from Section \ref{07-08-22e} to be $\hat{\alpha}$, the $\sigma$ of Section \ref{07-08-22e} to be $\sigma^\theta$ and the $\theta$ of Section \ref{07-08-22e} to be zero.\footnote{We have dropped the $r$ from $A \rtimes_{r,\alpha} G$ in the notation because $G$ is abelian and hence amenable so that the reduced and the full crossed products are isomorphic, cf. Theorem 7.7.7 in \cite{Pe}.} This is possible because ${\sigma}^\theta$ and $\hat{\alpha}$ commute: ${\sigma}^\theta_t \circ \hat{\alpha}_\gamma = \hat{\alpha}_\gamma\circ {\sigma}^\theta_t$ for all $t \in \mathbb R, \ \gamma \in \hat{G}$. With these choices and because $G$ is abelian and hence unimodular, the relation \eqref{04-02-22a} holds when $\psi$ is $\hat{\alpha}$-invariant. The construction in Section \ref{07-08-22e} gives therefore rise to a map
$$
D_2 : \KMS({\sigma}^\theta,\beta)^{\hat{\alpha}} \to \KMS(\overline{\sigma^\theta},\beta)^{\hat{\hat{\alpha}}} ,
$$
where $\overline{\sigma^\theta}$ is the flow on $(A \rtimes_\alpha G) \rtimes_{\hat{\alpha}} \widehat{G}$ defined such that
$$
\overline{\sigma^\theta}_t(f)(\gamma) = \sigma^\theta_t(f(\gamma)) 
$$
when $f \in C_c(\widehat{G}, A \rtimes_\alpha G)$ and $\hat{\hat{\alpha}}$ denotes the double dual action; the dual of the dual action $\hat{\alpha}$. Let $\mathbb K(L^2(G))$ be the $C^*$-algebra of compact operators on the Hilbert space $L^2(G)$. By Takai duality there is an isomorphism
$$
\Psi : (A\rtimes_\alpha G) \rtimes_{\hat{\alpha}} \widehat{G} \to A \otimes \mathbb K(L^2(G)) ,
$$
such that
\begin{equation}\label{22-06-22h}
\Psi \circ \hat{\hat{\alpha}}_g =  (\alpha_g \otimes \Ad  \rho_g) \circ \Psi
\end{equation}
 for all $g$, where $\rho$ denotes the right-regular representation of $G$ on $L^2(G)$, cf. Theorem 7.1 on page 190 of \cite{Wi}. We are going to use the explicit description of $\Psi$ as it is presented by Williams in the proof of Theorem 7.1 of \cite{Wi}, and while we will explicitly spell out the selected facts from Williams' proof we shall depend on, we will leave the verification of these items to the reader. The first fact we shall need is the observation that
\begin{equation}\label{22-06-22i}
\Psi \circ \overline{\sigma^\theta}_t = (\sigma_{t} \otimes \Ad W_t) \circ \Psi 
\end{equation}
for all $t \in \mathbb R$, where $W$ is the unitary representation of $\mathbb R$ on $L^2(G)$ such that
$$
(W_t\eta)(x) := e^{i \theta(x)t}\eta(x)
$$ 
for $\eta \in L^2(G)$. This implies that the map $\phi \mapsto \phi\circ \Psi^{-1}$ is a bijection 
$$
\KMS(\overline{\sigma^\theta},\beta)^{\hat{\hat{\alpha}}}  \to \KMS(\sigma \otimes \Ad W, \beta)^{\alpha \otimes \Ad \rho} .
$$
By Stone's theorem $W_t = e^{it H_\theta}$ where $H_\theta$ is the self-adjoint operator on $L^2(G)$ given by multiplication by $\theta$. For each $\beta \in \mathbb R$ we denote by $\kappa_\beta$ the $\beta$-KMS weight for $\Ad W$ on $\mathbb K(L^2(G))$ given by the formula
\begin{equation*}\label{30-06-22(2)}
\kappa_\beta(a) = \sup \left\{\Tr\left(e^{\frac{-\beta H_\theta}{2}} f(H_\theta) a f(H_\theta) e^{-\frac{\beta H_\theta}{2}}\right) : \ f \in C_c(\mathbb R), \ 0 \leq f \leq 1 \right\} ,
\end{equation*}
cf. Theorem \ref{17-08-22c}. The crucial point of the proof is the observation that
\begin{equation}\label{28-08-22}
D_2 \circ D_1(\psi) \circ \Psi^{-1} = \psi \otimes \kappa_\beta
\end{equation}
for all $\psi \in \KMS(\sigma,\beta)_{\theta}$. It is for the verification of \eqref{28-08-22} we really have to go close to the arguments for Theorem 7.1 of \cite{Wi}, but before doing so let's see how to use \eqref{28-08-22} to complete the proof of Theorem \ref{08-06-22aA}. Let $\phi \in  \KMS(\sigma \otimes \Ad W, \beta)^{\alpha \otimes \Ad \rho}$. By Lemma \ref{09-06-22x} there is a $\beta$-KMS weight $\psi$ for $\sigma$ such that $\phi = \psi \otimes \kappa_\beta$. We claim that $\psi$ has the scaling property \eqref{04-02-22a} with respect to $\alpha$. Since the modular function is trivial when $G$ is abelian this means that
\begin{equation}\label{12-04-23}
\psi \circ \alpha_x = e^{-\theta(x)\beta} \psi  \ \ \forall x \in G .
\end{equation}
To establish \eqref{12-04-23} note first of all that $\psi \circ \alpha_x$ is a $\beta$-KMS weight for $\sigma$ because $\psi$ is and $\sigma$ commutes with $\alpha$, cf. Corollary \ref{01-10-23}. Similarly, since $\Ad W$ commutes with $\Ad \rho$, because $W_t$ and $\rho_x$ commute up to multiplication by a scalar, the weight $\kappa_\beta \circ \Ad \rho_x$ is a $\beta$-KMS weight for $\Ad W$.
Let $h \in C_c(G)$. Then the compact operator $\theta_{h,h} \in \mathbb K(L^2(G))$ given by
$$
L^2(G) \ni \psi \mapsto \left< \psi,h\right>h
$$
is in $\mathcal M_{\kappa_\beta \circ \Ad_{\rho_x}}^+ \cap \mathcal M_{\kappa_\beta}^+$ and a direct calculation shows that
\begin{equation}\label{09-10-23f}
\kappa_\beta \circ \Ad \rho_x\left(\theta_{h,h}\right) = e^{\beta \theta(x)} \kappa_\beta\left(\theta_{h,h}\right) .
\end{equation}
Indeed,
\begin{align*}
& \kappa_\beta \circ \Ad \rho_x(\theta_{h,h}) = \kappa_\beta (\theta_{\rho_x h,\rho_x h})  = \sup \left\{ \Tr (\theta_{k,k}) : \ f \in C_c(\mathbb R), \ 0 \leq f \leq 1 \right\}
\end{align*}
where $k \in C_c(G)$ is the function 
$$
k(g) : =  e^{-\frac{\beta}{2} \theta(g)} f(\theta(g)) h(g+x) .
$$
Since $k(g) = (\rho_x k')(g)$, where 
$$
k'(g) := e^{-\frac{\beta}{2} \theta(g-x)} f(\theta(g-x)) h(g),
$$
we find that
\begin{align*}
&  \sup \left\{ \Tr (\theta_{k,k}) : \ f \in C_c(\mathbb R), \ 0 \leq f \leq 1 \right\} \\
&= \sup \left\{ \Tr (\rho_x\theta_{k',k'}\rho_x^*) : \ f \in C_c(\mathbb R), \ 0 \leq f \leq 1 \right\} \\
&  =\sup \left\{ \Tr (\theta_{k',k'}) : \ f \in C_c(\mathbb R), \ 0 \leq f \leq 1 \right\} \\
& =  e^{\beta \theta(x)} \sup \left\{ \Tr (\theta_{k'',k''}) : \ f \in C_c(\mathbb R), \ 0 \leq f \leq 1 \right\} ,
\end{align*}
where $k''(g) =  e^{-\frac{\beta}{2} \theta(g)} f'(\theta(g)) h(g)$ with $f'(t) := f(t - \theta(x))$. Since
\begin{align*}
&  \sup \left\{ \Tr (\theta_{k'',k''}) : \ f \in C_c(\mathbb R), \ 0 \leq f \leq 1 \right\} \\
&=  \sup \left\{\Tr\left(e^{\frac{-\beta H_\theta}{2}} f'(H_\theta) \theta_{h,h} f'(H_\theta) e^{-\frac{\beta H_\theta}{2}}\right) : \ f \in C_c(\mathbb R), \ 0 \leq f \leq 1 \right\} \\
& =  \sup \left\{\Tr\left(e^{\frac{-\beta H_\theta}{2}} f(H_\theta) \theta_{h,h} f(H_\theta) e^{-\frac{\beta H_\theta}{2}}\right) : \ f \in C_c(\mathbb R), \ 0 \leq f \leq 1 \right\} \\
&= \kappa_\beta(\theta_{h,h})
\end{align*}
we get \eqref{09-10-23f}.
 
Let $a \in \mathcal N_{\psi \circ \alpha_x} \cap \mathcal N_\psi = \alpha_x^{-1}(\mathcal N_\psi) \cap \mathcal N_\psi$. It follows from Theorem \ref{08-06-22f} that $a^*a \otimes \theta_{h,h} \in \mathcal M_{(\psi \otimes \kappa_\beta) \circ (\alpha_x \otimes \Ad \rho_x)}$ and
\begin{align*}
& (\psi \otimes \kappa_\beta) \circ (\alpha_x \otimes \Ad \rho_x)(a^*a \otimes \theta_{h,h}) = \psi \circ \alpha_x(a^*a)\kappa_\beta \circ \Ad \rho_x(\theta_{h,h})\\
&= e^{\beta \theta(x)}\psi \circ \alpha_x(a^*a) \kappa_\beta(\theta_{h,h}) .
\end{align*}
Since $\psi \otimes \kappa_\beta = \phi$ is $\alpha \otimes \Ad \rho$-invariant we have also that
\begin{align*}
&(\psi \otimes \kappa_\beta) \circ (\alpha_x \otimes \Ad \rho_x)(a^*a \otimes \theta_{h,h}) \\
&= \psi \otimes \kappa_\beta (a^*a \otimes \theta_{h,h}) = \psi(a^*a)\kappa_\beta(\theta_{h,h})
\end{align*}
and by comparing the two expressions we find that $\psi \circ \alpha_x(a^*a) = e^{-\beta \theta(x)}\psi(a^*a)$. By Corollary \ref{31-03-22d} this means that \eqref{12-04-23} holds, i.e. $\psi \in \KMS(\sigma,\beta)_{\theta}$. It follows then from \eqref{28-08-22} that
$$
D_2 \circ D_1(\psi) \circ \Psi^{-1} = \psi \otimes \kappa_\beta = \phi .
$$
Since $\phi \in \KMS(\sigma \otimes \Ad W,\beta)^{\alpha \circ \Ad \rho}$ was arbitrary, this shows that the map 
$$
\KMS(\sigma, \beta)_{\theta} \ni \psi \mapsto D_2 \circ D_1(\psi) \circ \Psi^{-1} 
$$
is surjective onto $\KMS(\sigma \otimes \Ad W, \beta)^{\alpha \otimes \Ad \rho}$. Hence $D_2 \circ D_1$ is also surjective from $\KMS(\sigma, \beta)_{\theta}$ onto $\KMS(\overline{\sigma^\theta},\beta)^{\hat{\hat{\alpha}}}$, and we can therefore complete the proof of Theorem \ref{08-06-22aA} as follows: Since $D_1$ is injective by Lemma \ref{07-06-22a} it remains only to show that $D_1$ is surjective. Let $\phi \in \KMS(\sigma^\theta,\beta)^{\hat{\alpha}}$. Then $D_2(\phi) \in \KMS(\overline{\sigma^\theta},\beta)^{\hat{\hat{\alpha}}}$ and since $D_2 \circ D_1$ is surjective there is $\psi \in \KMS(\sigma, \beta)_{\theta}$ such that $D_2 \circ D_1(\psi) = D_2(\phi)$. Since $D_2$ is injective by Lemma \ref{07-06-22a} this means that $\phi = D_1(\psi)$, completing the proof of Theorem \ref{08-06-22aA} contingent to

\begin{lemma}\label{29-08-22} \eqref{28-08-22} holds for all $\psi \in \KMS(\sigma,\beta)_\theta$.
\end{lemma}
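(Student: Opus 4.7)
\emph{Proof proposal.} Both sides of \eqref{28-08-22} are $\beta$-KMS weights for the flow $\sigma \otimes \Ad W$ on $A \otimes \mathbb K(L^2(G))$: the right-hand side by Theorem \ref{08-06-22f}, and the left-hand side because $D_2\circ D_1(\psi) \in \KMS(\overline{\sigma^\theta},\beta)$ by construction, whereupon \eqref{22-06-22i} together with Lemma \ref{01-10-23a} transports it to a $\beta$-KMS weight for $\sigma \otimes \Ad W$. By Corollary \ref{31-03-22d}, it is enough to check the equality on $X^*X$ for $X$ ranging over a common dense $*$-subspace on which both weights are finite.

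For test elements I would use $F \in C_c(\widehat G, C_c(G, A))$ of the factorized form $F(\gamma,x) = \gamma_1(\gamma)\,g(x)\,a$, with $\gamma_1 \in C_c(\widehat G)$, $g \in C_c(G)$, and $a \in \mathcal N_\psi \cap \mathcal N_\psi^*$. Using that $\mathcal N_\psi \cap \mathcal N_\psi^*$ is a core for $\Lambda_\psi$ by Corollary \ref{15-02-22}, two iterations of Lemma \ref{21-06-22f} show that the span of such $F$ is a core for $\Lambda_{\widehat{\widehat{\psi}}}$. For the left-hand side of \eqref{28-08-22}, two successive applications of Theorem \ref{19-08-22a}(d) --- first to $\widehat{\widehat{\psi}}$ (with the trivial character and modular function on the abelian group $\widehat G$) and then to $\widehat{\psi}$ (with the given $\theta$ and $\Delta_G = 1$) --- yield
\[
D_2\circ D_1(\psi)\bigl(F^\sharp\star F\bigr) \;=\; \psi(a^*a)\int_{\widehat G}|\gamma_1(\gamma)|^2\,d\gamma\int_G e^{\theta(x)\beta}|g(x)|^2\,dx .
\]
For the right-hand side one unwinds Williams' explicit formula for $\Psi$ from the proof of Theorem 7.1 in \cite{Wi}, expressing $\Psi(F)$ concretely as an element of $A\otimes\mathbb K(L^2(G))$; the evaluation of $\psi\otimes\kappa_\beta$ on $\Psi(F)^*\Psi(F)$ via the defining formula of $\kappa_\beta$ from Theorem \ref{17-08-22c}, combined with the Plancherel identity on $\widehat G$, produces the same integral. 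Polarization and Corollary \ref{31-03-22d} then give \eqref{28-08-22}.

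The main obstacle is the careful bookkeeping in the second computation: tracking how $\Psi$ interacts with the $A$-factor (which is not simply $\pi_\psi(a)\otimes 1$ inside a faithful representation of the double crossed product) and with the $H_\theta$-twist built into $\kappa_\beta$, so that the Plancherel identity on $\widehat G$ correctly transports the exponential factor $e^{\theta(x)\beta}$ between the two sides. As an a priori sanity check, by Lemma \ref{09-06-22x} both sides of \eqref{28-08-22} are of the form $\phi\otimes\kappa_\beta$ for unique $\phi\in\KMS(\sigma,\beta)$, and they depend on the triple $(\gamma_1,g,a)$ through the same functional dependence, so equality on any single $F$ for which both sides are simultaneously computable to a finite nonzero value is enough to pin down the two scalars and force them to agree.
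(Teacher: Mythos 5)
Your overall skeleton matches the paper's: both sides of \eqref{28-08-22} are $\beta$-KMS weights for $\sigma\otimes\Ad W$, so by Corollary \ref{31-03-22d} it suffices to compare them on $X^*X$ for $X$ in a common core, and your evaluation of the left-hand side on factorized test elements by two applications of (d) of Theorem \ref{19-08-22a} plus Plancherel is exactly what the paper does. The problem is that the step you yourself flag as ``the main obstacle'' --- evaluating $\psi\otimes\kappa_\beta$ on $\Psi(F)^*\Psi(F)$ --- is precisely where the proof lives, and you do not carry it out. Worse, for your factorized $F(\gamma,x)=\gamma_1(\gamma)g(x)a$ the image $\Psi(F)$ is \emph{not} a simple tensor $a'\otimes\theta_{f',g'}$: from \eqref{22-06-22a} one gets $\Psi_1(\gamma_1\otimes g\otimes a)(s,r)=g(s)\widehat{\gamma_1}(r-s)a$, which does not match the special form \eqref{22-06-22} (note the twist $\alpha_r(a)$ there) on which $\Psi_2$ acts as $F\mapsto a\otimes\theta_{f,g}$. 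So ``unwinding Williams' formula'' on these test elements forces you to evaluate $\kappa_\beta$, with its $e^{-\beta H_\theta/2}f(H_\theta)$ insertions, on genuinely non-rank-one operators, and the asserted Plancherel cancellation is not justified. Your closing ``sanity check'' cannot rescue this: agreement at a single $F$ pins down nothing, since $\chi(a^*a)=\psi(a^*a)$ is needed for $a$ ranging over a core, and establishing the claimed ``same functional dependence'' of the left side on $(\gamma_1,g,a)$ is equivalent to the computation you are trying to avoid.

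The paper's way around this is to factor $\Psi=\Psi_2\circ\Psi_1$ and insert the intermediate dual weight $\phi_2=\widehat{\psi_1}$ on $C_0(G,A)\rtimes_{lt\otimes\alpha}G$, where $\psi_1(f)=\int_G\psi(f(x))\,\mathrm dx$. This splits the identity into $\phi_2\circ\Psi_1=D_2\circ D_1(\psi)$ and $\phi_2\circ\Psi_2^{-1}=\psi\otimes\kappa_\beta$. The first is a comparison of two dual weights on the factorized elements $f_1\otimes g_1\otimes a_1$, handled exactly as in your left-hand-side computation via Theorem \ref{19-08-22a}(d) and Plancherel, together with Lemma \ref{21-06-22f} and Corollary \ref{21-06-22e}. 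For the second, Lemma \ref{09-06-22x} says a priori that $\phi_2\circ\Psi_2^{-1}=\chi\otimes\kappa_\beta$ for some $\chi\in\KMS(\sigma,\beta)$, so one only has to identify $\chi$ by testing on $(a\otimes\theta_{f,g})^*(a\otimes\theta_{f,g})=\|f\|^2\,a^*a\otimes\theta_{g,g}$, whose $\Psi_2$-preimage is the explicit function \eqref{22-06-22}; the only value of $\kappa_\beta$ ever needed is $\kappa_\beta(\theta_{g,g})=\|e^{-\theta\beta/2}g\|_{L^2}^2$. You should adopt this factorization: it converts the bookkeeping you identify into two routine applications of results already in the text.
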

\begin{proof} As in the proof of Theorem 7.1 in \cite{Wi} we write $\Psi$ as the composition of maps, but here only of two. As in \cite{Wi} we denote by $lt \otimes \alpha$ the action of $G$ on $C_0(G,A)$ such that
$$
(lt \otimes \alpha)_y(f)(x) = \alpha_y(f(x-y)) 
$$
for $f \in C_0(G,A)$. Then $\Psi = \Psi_2 \circ \Psi_1$ where
$$
\Psi_1 : (A\rtimes_\alpha G) \rtimes_{\hat{\alpha}} \widehat{G} \to C_0(G,A) \rtimes_{lt \otimes \alpha} G
$$
and 
$$
\Psi_2 :
 C_0(G,A) \rtimes_{lt \otimes \alpha} G \to A \otimes \mathbb K(L^2(G)) .
 $$
are isomorphisms described explicitly in \cite{Wi}. In the notation of \cite{Wi} $\Psi_1 = \Phi_2 \circ \Phi_1$ and $\Psi_2 = \Phi_4 \circ \Phi_3$. 

 We note that when $f \in C_c(\widehat{G}), \ g \in C_c(G)$ and $a \in A$,
\begin{equation}\label{22-06-22a}
\Psi_1( f \otimes g \otimes a)(s,r) = g(s) \hat{f}(r-s) a 
\end{equation}
where 
$$
\hat{f}(x) = \int_{\widehat{G}} \overline{\gamma(x)} f(\gamma) \ \mathrm d \gamma .
$$
 To describe the property of $\Psi_2$ we shall need, consider $f,g \in L^2(G)$ and let $\theta_{f,g} \in \mathbb K(L^2(G))$ be the rank 1 operator on $L^2(G)$ defined such that
$$
\theta_{f,g}(\eta) = \left(\int_{G} \overline{g (s)} \eta(s)  \ \mathrm ds \right) \ f
$$
for $\eta \in L^2(G)$. Given $f \in C_c(G), \ g\in C_0(G) \cap L^2(G), \ a \in A$, define $F \in C_c(G,C_0(G,A))$ such that
\begin{equation}\label{22-06-22}
F(s)(r) = f(r)\overline{g(r -s)}\alpha_{r}(a) .
\end{equation}
Then 
$$
\Psi_2(F) = a \otimes \theta_{f,g} ,
$$
cf. \cite{Wi}. We need to see how $\psi$ transforms under $\Psi_1$ and $\Psi_2$. For this define a weight $\psi_1$ on $C_0(G,A)$ such that
$$
\psi_1(f) = \int_G  \psi(f(x)) \ \mathrm d x
$$
when $f \in C_0(G,A)^+$. Then $\psi_1$ is a $\beta$-KMS weight for $\id_{C_0(G)}\otimes \sigma$. Indeed, if $f \in C_0(G,A)$ is analytic for the flow $\id_{C_0(G)}\otimes \sigma$ it follows that $f(x)$ is analytic for $\sigma$ and
$$
(\id_{C_0(G)}\otimes \sigma)_{-i \frac{\beta}{2}}(f)(x) = \sigma_{-i \frac{\beta}{2}}(f(x))
$$
for all $x \in G$. Therefore $\psi_1$ satisfies condition (2) in Kustermans' theorem, Theorem \ref{24-11-21d}, relative to $\id_{C_0(G)}\otimes \sigma$. Furthermore, $\psi_1$ is scaled by $lt \otimes \alpha$ in the same way as $\psi$ is scaled by $\alpha$:
$$
\psi_1 \circ  (lt \otimes \alpha)_x  = e^{-\theta(x)\beta} \psi_1 .
$$
By Theorem \ref{19-08-22a} the dual weight of $\psi_1$ is a $\beta$-KMS weight $\phi_2$ on 
$$
C_0(G,A) \rtimes_{lt \otimes \alpha} G
$$
for the flow $\sigma'$ which is defined such that 
$$
\sigma'_t(F)(x,x') := e^{i\theta(x)t} \sigma_{t}(F(x,x'))
$$ when 
$$
F \in C_c( G, C_0(G,A)) \subseteq C_0(G,A) \rtimes_{lt \otimes \alpha} G.
$$ 
Note that $\Psi_2 \circ \sigma'_t =  (\sigma_{t} \otimes \Ad W_t) \circ \Psi_2$, which implies that $\phi_2 \circ \Psi_2^{-1}$ is a $\beta$-KMS weight for the flow $\sigma \otimes \Ad W$. By Lemma \ref{09-06-22x} there is therefore $\chi \in \KMS(\sigma,\beta)$ such that
$$
\phi_2 \circ \Psi_2^{-1} = \chi \otimes \kappa_\beta .
$$
Let $a \in \mathcal N_\psi \cap \mathcal N_\chi$ and $f,g \in C_c(G)$. By using that
$$
\Tr(\theta_{h,h}) = \left\|h\right\|^2_{L^2(G)}
$$
when $h \in L^2(G)$, we find that 
\begin{equation}\label{29-06-22x}
\begin{split}
&\chi(a^*a) \left\|f\right\|_{L^2(G)}^2\left\|e^{\frac{-\theta \beta}{2}} g\right\|_{L^2(G)}^2 = \chi\otimes \kappa_\beta \left((a \otimes \theta_{f,g})^*(a \otimes \theta_{f,g})\right) \\
& =\phi_2 \circ \Psi_2^{-1}\left((a \otimes \theta_{f,g})^*(a \otimes \theta_{f,g})\right) = \phi_2(F^*F) ,
\end{split}
\end{equation}
where $F$ is the function \eqref{22-06-22}. To continue the calculation we need the observation that $F$ is in $C_c(G,C_0(G,A))_{\psi_1}$; a fact which boils down to the observation that $F \in C_c(G,C_0(G,A))$ since $f,g \in C_c(G)$, that the function $r \mapsto F(s,r)$ is in $\mathcal N_{\psi_1}$ for each $s \in G$ because $f,g \in C_c(G)$ and $\mathcal N_\psi$ is $\alpha$-invariant, and finally that
$$
\lim_{ s \to s_0} \int_G \psi\left( (F(s,r)- F(s_0,r))^*(F(s,r)-F(s_0,r))\right) \ \mathrm d r = 0
$$
for all $s_0 \in G$. We leave the reader to fill in the details, but by using the fact we conclude from (d) of Theorem \ref{19-08-22a} that
\begin{equation}\label{29-06-22a}
\begin{split}
&\phi_2(F^*F) = \int_G e^{\theta(s) \beta}\psi_1(F(s, \cdot )^*F(s,\cdot)) \ \mathrm ds \\
& = \int_G e^{\theta(s)\beta} \int_G \left|f(r)\right|^2 \left|g(r-s)\right|^2 \psi\left(\alpha_r(a^*a)\right) \ \mathrm dr \ \mathrm d s \\
&= \psi(a^*a) \int_G \int_G e^{\theta(s) \beta}  e^{-\theta(r)\beta} |f(r)|^2|g(r-s)|^2 \ \mathrm d r \ \mathrm d s \\
&= \psi(a^*a)  \left\|f\right\|_{L^2(G)}^2\left\|e^{-\frac{\theta \beta}{2}}g\right\|_{L^2(G)}^2 .
\end{split}
\end{equation}
By comparing \eqref{29-06-22a} and \eqref{29-06-22x} we find that $\chi(a^*a) = \psi(a^*a)$. Since $a \in \mathcal N_\psi \cap \mathcal N_\chi$ was arbitrary it follows from Corollary \ref{31-03-22d} that $\chi = \psi$, and hence 
\begin{equation}\label{22-06-22g}
\phi_2 \circ \Psi_2^{-1} = \psi \otimes \kappa_\beta .
\end{equation}

We claim that $\phi_2 \circ \Psi_1 = D_2\circ D_1(\psi)$. To see this, let $f_1,f_2 \in C_c(\widehat{G}), \ g_1,g_2 \in C_c(G)$ and $a_1,a_2 \in \mathcal N_\psi$. Two application of (d) of Theorem \ref{19-08-22a} show that
\begin{align*}
& D_2 \circ D_1(\psi)((f_1 \otimes g_1 \otimes a_1)^* (f_2 \otimes g_2 \otimes a_2)) \\
&= \psi(a_1^*a_2) \int_{\widehat{G}} \int_G e^{\theta (s)\beta} \overline{f_1(\gamma)}f_2(\gamma) \overline{g_1(s)}g_2(s) \ \mathrm d s \ \mathrm d \gamma  .
\end{align*} 
To compare this with $\phi_2\circ \Psi_1((f_1 \otimes g_1 \otimes a_1)^* (f_2 \otimes g_2 \otimes a_2))$, note that $\hat{f_i} \in C_0(G) \cap L^2(G)$ which implies that 
$$
\Psi_1(f_i \otimes g_i \otimes a_i) \in C_c(G,C_0(G,A))_{\psi_1} ,
$$
$i =1,2$. Using \eqref{22-06-22a} it follows therefore from (d) of Theorem \ref{19-08-22a} that
\begin{align*}
& \phi_2\circ \Psi_1((f_1 \otimes g_1 \otimes a_1)^* (f_2\otimes g_2 \otimes a_2)) \\
&= \psi(a_1^*a_2) \int_{{G}} \int_G e^{\theta(s) \beta} \overline{\hat{f_1}(r-s)} \hat{f_2}(r-s)  \overline{g_1(s)}g_2(s)  \ \mathrm d r \ \mathrm d s\\
&=   \psi(a_1^*a_2) \int_{{G}} \int_G e^{\theta(s) \beta}  \overline{\hat{f_1}(r)} \hat{f_2}(r) \overline{g_1(s)}g_2(s)  \ \mathrm d s \ \mathrm d r.
\end{align*}
Since\footnote{For this formula to hold we assume, as we can, that the Haar measures on $G$ and $\hat{G}$ are properly normalized. See Theorem 1.6.1 in \cite{Ru3}.}
$$
\int_{\widehat{G}} \overline{f_1(\gamma)}f_2(\gamma) \ \mathrm d \gamma =  \int_G \overline{\hat{f_1}(r)} \hat{f_2}(r)  \ \mathrm d r
$$
we conclude that
\begin{align*}
& D_2 \circ D_1(\psi)((f_1 \otimes g_1 \otimes a_1)^* (f_2 \otimes g_2 \otimes a_2)) \\
&=\phi_2\circ \Psi_1((f_1 \otimes g_1 \otimes a_1)^* (f_2\otimes g_2 \otimes a_2)) ,
\end{align*}
 and hence by linearity that $D_2 \circ D_1(\psi)$ and $\phi_2 \circ \Psi_1$ agree on $X^*X$ for all $X \in C_c(\widehat{G}) \otimes C_c(G) \otimes \mathcal N_\psi$. Since $C_c(\widehat{G}) \otimes C_c(G) \otimes \mathcal N_\psi$ is a core for $\Lambda_{D_2 \circ D_1(\psi)}$ by Lemma \ref{21-06-22f}, it  follows from Corollary \ref{21-06-22e} that $D_2 \circ D_1(\psi) =\phi_2 \circ \Psi_1$. In combination with \eqref{22-06-22g} this implies that
\begin{align*}
& \psi \otimes \kappa_\beta = \phi_2 \circ \Psi_2^{-1} = D_2\circ D_1(\psi) \circ \Psi_1^{-1} \circ \Psi_2^{-1} = D_2\circ D_1(\psi) \circ \Psi^{-1} ,
\end{align*}
which is the desired equality \eqref{28-08-22}.
\end{proof}

The proof of Theorem \ref{08-06-22aA} is complete.

\subsection{$G =\mathbb R$, $\alpha = \sigma$ and $\theta = 0$. Proof of an assertion by Kishimoto and Kumjian}\label{8.2.2}

In this section we focus on the version of Theorem \ref{08-06-22aA} where $G = \mathbb R$, $\alpha = \sigma$ and $\theta = 0$. It then takes the following form.

\begin{cor}\label{09-06-22d} Let $\sigma$ be a flow on the $C^*$-algebra $A$ and let $\beta \in \mathbb R$. The map $D$ from \eqref{08-06-22} specialized to the case where $G =\mathbb R$, $\alpha = \sigma$ and $\theta = 0$, is a bijection
$$
D_3 : \KMS(\sigma,\beta) \to \KMS(\underline{\sigma}, \beta)^{\hat{\sigma}} ,
$$
where $\underline{\sigma}$ is the flow on $A \rtimes_\sigma \mathbb R$ defined such that
$$
\underline{\sigma}_t(f)(x) = \sigma_t(f(x))
$$
when $f \in C_c(\mathbb R,A) \subseteq A \rtimes_\sigma \mathbb R$, and $\hat{\sigma}$ is the dual action of $\sigma$.
\end{cor}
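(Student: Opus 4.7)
The plan is that this corollary is essentially an immediate specialization of Theorem \ref{08-06-22aA}, so the proof amounts to carefully checking that the hypotheses match up and identifying the various pieces. I would proceed by verifying four things in sequence.

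First, $\mathbb R$ is abelian, so Theorem \ref{08-06-22aA} applies with $G = \mathbb R$, provided we take $\alpha = \sigma$ as the continuous action of $\mathbb R$ on $A$ (which indeed commutes with the flow $\sigma$ trivially, since $\sigma_t \circ \sigma_s = \sigma_s \circ \sigma_t$) and $\theta = 0$. Second, with these choices the flow $\sigma^\theta$ defined by \eqref{30-01-22e} reduces to $\sigma^\theta_t(f)(x) = e^{i \cdot 0 \cdot t}\sigma_t(f(x)) = \sigma_t(f(x))$ for $f \in C_c(\mathbb R, A)$, which is exactly the definition of $\underline{\sigma}$ in the statement. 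Likewise the dual action $\hat{\alpha}$ of $\widehat{\mathbb R} \simeq \mathbb R$ is precisely $\hat{\sigma}$.

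The third and crucial step is to identify $\KMS(\sigma,\beta)_{\theta}$ with all of $\KMS(\sigma,\beta)$. The scaling condition \eqref{04-02-22a} reads
\[
\psi \circ \sigma_t = \Delta(t)^{-1} e^{-\theta(t)\beta}\psi \qquad \forall t \in \mathbb R.
\]
Since $\mathbb R$ is abelian, hence unimodular, the modular function satisfies $\Delta \equiv 1$; and $\theta = 0$ makes the exponential factor equal to $1$. So the condition becomes $\psi \circ \sigma_t = \psi$ for all $t \in \mathbb R$, which is automatic for every $\beta$-KMS weight $\psi$ for $\sigma$ by definition (see Definition \ref{24-11-21c} and Theorem \ref{24-11-21d}). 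Consequently $\KMS(\sigma,\beta)_{\theta} = \KMS(\sigma,\beta)$.

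Having made these identifications, Theorem \ref{08-06-22aA} directly produces a bijection $D_1 : \KMS(\sigma,\beta) \to \KMS(\underline{\sigma},\beta)^{\hat{\sigma}}$, and this map is, by construction, the restriction of the map $D$ from \eqref{08-06-22} to the present specialized setting; that is, it is exactly $D_3$. I do not anticipate any real obstacle here, as everything reduces to a bookkeeping check of the specialization; the substantive content has already been absorbed into Theorem \ref{08-06-22aA} (whose proof via Takai duality and Lemma \ref{29-08-22} is the heavy lifting).
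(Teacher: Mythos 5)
Your proof is correct and follows exactly the route the paper takes: the corollary is presented there as an immediate specialization of Theorem \ref{08-06-22aA}, and your verification that $\Delta \equiv 1$, $\theta = 0$ trivializes the scaling condition \eqref{04-02-22a} so that $\KMS(\sigma,\beta)_{\theta} = \KMS(\sigma,\beta)$, together with the identification of $\sigma^\theta$ with $\underline{\sigma}$ and $\hat{\alpha}$ with $\hat{\sigma}$, is precisely the bookkeeping the paper leaves implicit.
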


We want to compose the map $D_3$ with one of the maps constructed in Section \ref{geninner}. As in Section \ref{05-09-22g} we consider here $A$ as a non-degenerate $C^*$-subalgebra of $B(\mathbb H)$ for some Hilbert space $\mathbb H$. Then $A \rtimes_\sigma \mathbb R$ is represented on the Hilbert space $L^2(\mathbb R,\mathbb H)$ in the following way. For $f \in L^1(\mathbb R,A)$ define $\pi(f) \in B(L^2(\mathbb R,\mathbb H))$ such that
$$
(\pi(f)\xi)(x) = \int_\mathbb R \sigma_{ -x}(f(y))\xi(x-y) \ \mathrm d y  
$$
for $\xi \in L^2(\mathbb R,\mathbb H)$. This is a representation of $L^1(\mathbb R, A)$ and it extends to an injective non-degenerate representation $\pi : A \rtimes_\sigma \mathbb R \to   B(L^2(\mathbb R,\mathbb H))$, cf. \cite{Pe}. As above we will often suppress $\pi$ in the notation and instead identify $ A \rtimes_\sigma \mathbb R$ with its image under $\pi$. In particular, this means that the multiplier algebra $M( A \rtimes_\sigma \mathbb R)$ can and will be identified with
$$
\left\{ m \in B(L^2(\mathbb R,\mathbb H)) : \ m  (A \rtimes_\sigma \mathbb R) \subseteq  A \rtimes_\sigma \mathbb R, \  m^*(A \rtimes_\sigma \mathbb R) \subseteq  A \rtimes_\sigma \mathbb R \right\} ,
$$
cf. Lemma \ref{26-09-22} in Appendix \ref{multipliers}. There is an injective $*$-homomorphism $\iota : A \to M(A\rtimes_\sigma \mathbb R)$ defined such that
\begin{equation}\label{06-08-22a}
(\iota(a)\xi)(t) = \sigma_{-t}(a)\xi(t)
\end{equation}
for all $\xi \in L^2(\mathbb R,\mathbb H)$. Let $f \in C_c(\mathbb R,A)$. Then
$$
\iota(a)\pi(f) = \pi(af),
$$
where
$$
(af)(t) := af(t),
$$
and
$$
\pi(f)\iota(a) = \pi(f \otimes \sigma(a)),
$$
where
$$
(f \otimes \sigma(a))(t) := f(t)\sigma_t(a).
$$
Define unitaries $\lambda_s, s \in \mathbb R$, on $L^2(\mathbb R,\mathbb H)$ such that
$$
\lambda_s \xi(t) := \xi(t-s).
$$

\begin{lemma}\label{20-05-22x} $\lambda_s \in M(A \rtimes_\sigma \mathbb R)$ for all $s \in \mathbb R$ and $\mathbb R \ni s \mapsto \lambda_s$ is continuous for the strict topology.
\end{lemma}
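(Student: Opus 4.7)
The plan is to verify both assertions by reducing everything to the dense subset $\pi(C_c(\mathbb R,A))$ of $A \rtimes_\sigma \mathbb R$. For the multiplier property, since $\lambda_s$ is unitary on $L^2(\mathbb R,\mathbb H)$ and $\pi(C_c(\mathbb R,A))$ is norm-dense in $A \rtimes_\sigma \mathbb R$, it is enough to exhibit, for every $f \in C_c(\mathbb R,A)$, elements of $\pi(C_c(\mathbb R,A))$ equal to $\lambda_s\pi(f)$ and $\pi(f)\lambda_s$; the adjoint condition is then automatic from $\lambda_s^* = \lambda_{-s}$.

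First I would compute directly: for $f \in C_c(\mathbb R,A)$ and $\xi \in L^2(\mathbb R,\mathbb H)$,
$$
(\lambda_s\pi(f)\xi)(t) = \int_\mathbb R \sigma_{-(t-s)}(f(y))\,\xi(t-s-y) \ \mathrm d y,
$$
and the substitution $y \mapsto y-s$ identifies this with $\pi(g_s)\xi$ where $g_s(y) := \sigma_s(f(y-s))$. A completely analogous computation gives $\pi(f)\lambda_s = \pi(g'_s)$ with $g'_s(y) := f(y-s)$. Both $g_s$ and $g'_s$ clearly lie in $C_c(\mathbb R,A)$ (continuity uses norm-continuity of $\sigma$), which settles $\lambda_s \in M(A\rtimes_\sigma \mathbb R)$.

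For the strict continuity, the same density argument together with the uniform bound $\|\lambda_s\|=1$ reduces the claim to showing that $s \mapsto \lambda_s\pi(f)$ and $s \mapsto \pi(f)\lambda_s$ are norm-continuous for each $f \in C_c(\mathbb R,A)$. Using $\|\pi(h)\| \leq \|h\|_{L^1(\mathbb R,A)}$ it suffices to prove that $s \mapsto g_s$ and $s \mapsto g'_s$ are continuous from $\mathbb R$ into $L^1(\mathbb R,A)$. For $g'_s$ this is standard continuity of translation in $L^1$ applied to a compactly supported continuous Banach-space-valued function. For $g_s$ one writes
$$
g_s(y) - g_{s_0}(y) = \sigma_s\bigl(f(y-s)-f(y-s_0)\bigr) + \bigl(\sigma_s - \sigma_{s_0}\bigr)(f(y-s_0)),
$$
and handles the two terms separately: the first by $L^1$-translation continuity (noting $\sigma_s$ is isometric), the second by the fact that $\sigma_{(\cdot)}$ is norm-continuous uniformly on the compact set $f(\mathbb R) \subseteq A$.

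The only point requiring care is the uniformity in $y$ in the last step, but this is routine: $f : \mathbb R \to A$ has compact image, and on a compact subset of $A$ the map $\mathbb R \ni s \mapsto \sigma_s$ is equicontinuous by an $\varepsilon/3$ argument, so no serious obstacle arises.
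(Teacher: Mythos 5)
Your proposal is correct and follows essentially the same route as the paper: the identities $\lambda_s\pi(f)=\pi(g_s)$ with $g_s(y)=\sigma_s(f(y-s))$ and $\pi(f)\lambda_s=\pi(g'_s)$ with $g'_s(y)=f(y-s)$ are exactly the ones used there, and strict continuity is likewise reduced to $L^1$-continuity of $s\mapsto g_s$ via $\|\pi(h)\|\leq\|h\|_{L^1(\mathbb R,A)}$. The only cosmetic difference is that the paper checks norm-continuity of $s\mapsto\lambda_s\pi(f)$ only at $s=0$ by dominated convergence, while you argue at a general $s_0$ with an explicit two-term splitting; both are fine.
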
 
\begin{proof}

Let $f \in C_c(\mathbb R,A)$. Then
\begin{align*}
& (\lambda_s\pi(f)\xi)(t) = (\pi(f)\xi)(t-s) \\
& = \int_\mathbb R \sigma_{-t +s}(f(y))\xi(t-s-y) \ \mathrm d y\\
& =\int_\mathbb R \sigma_{-t+s}(f(y-s))\xi(t-y) \ \mathrm dy \\
& = (\pi(f_s)\xi)(t),
\end{align*}
where $f_s \in C_c(\mathbb R,A) \subseteq A \rtimes_\sigma \mathbb R$ is the function
$$
f_s(y) := \sigma_s(f(y-s)).
$$ 
Furthermore,
\begin{align*}
& (\pi(f)\lambda_s\xi)(t) = \int_\mathbb R \sigma_{-t}(f(y))(\lambda_s\xi)(t-y) \ \mathrm d y \\
& =  \int_\mathbb R \sigma_{-t}(f(y))\xi(t-y-s)) \ \mathrm d y \\
& =  \int_\mathbb R \sigma_{-t}(f(y-s))\xi(t-y) \ \mathrm d y \\
& = (\pi(f^s)\xi)(t)
\end{align*}
where $f^s \in C_c(\mathbb R,A)$ is the function
$$
f^s(y) := f(y-s).
$$
It follows that $\lambda_s \pi(C_c(\mathbb R,A)) \subseteq \pi(C_c(\mathbb R,A))$ and $ \pi(C_c(\mathbb R,A)) \lambda_s \subseteq \pi(C_c(\mathbb R,A))$, and hence by continuity that
$$
\lambda_s (A \rtimes_\sigma \mathbb R) \subseteq A \rtimes_\sigma \mathbb R
$$
and
$$
(A \rtimes_\sigma \mathbb R)\lambda_s \subseteq A \rtimes_\sigma \mathbb R ;
$$
that is, $\lambda_s \in M(A \rtimes_\sigma \mathbb R)$. To prove that $\lambda_s$ depends continuously on $s$ in the strict topology it suffices to check that $\mathbb R \ni s \mapsto \lambda_s\pi(f)$ is norm-continuous when $f \in C_c(\mathbb R,A)$. For this, note that
\begin{align*}
\lambda_s \pi(f) - \pi(f) = \pi(f_s -f) ,
\end{align*}
and 
\begin{align*}
& \left\|\pi(f_s -f)\right\| \leq \left\| f_s -f\right\|_{L^1(\mathbb R,A)} \leq \int_{\mathbb R}  \left\| {\sigma}_s(f(y-s)) -f(y)\right\| \ \mathrm d y .
\end{align*}
An application of Lebesgues theorem on dominated convergence shows that 
$$
\lim_{s \to 0}\int_{\mathbb R}  \left\| {\sigma}_s(f(y-s))  -f(y)\right\| \ \mathrm d y = 0 .
$$
Hence $\lim_{ s \to 0} \left\|\lambda_s \pi(f) - \pi(f)\right\| = 0$, which is all we need to prove.
\end{proof}

Each of the automorphisms $\underline{\sigma}_t$ extend uniquely to an automorphism of $M(A \rtimes_\sigma \mathbb R)$, cf. Appendix \ref{multipliers}, which we also denote by $\underline{\sigma}_t$.

\begin{lemma}\label{20-05-22ax} $\lambda_s b \lambda_{-s} = \underline{\sigma}_s(b)$ for all $s \in \mathbb R$ and all $b \in M(A \rtimes_\sigma \mathbb R)$, and $\underline{\sigma}_s(\iota(a)) = \iota(\sigma_s(a))$ for all $s \in \mathbb R$ and all $a \in A$.
\end{lemma}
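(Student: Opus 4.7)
The plan is to verify the first identity first on the dense subalgebra $\pi(C_c(\mathbb R,A))$, then extend to all of $A\rtimes_\sigma \mathbb R$ by continuity, and finally to $M(A\rtimes_\sigma \mathbb R)$ by uniqueness of multiplier extensions; the second identity will drop out as a direct calculation once the first is in hand.

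For the first step I would reuse the two computations from the proof of Lemma \ref{20-05-22x}, which give $\lambda_s \pi(f) = \pi(f_s)$ with $f_s(y)=\sigma_s(f(y-s))$ and $\pi(g)\lambda_{-s} = \pi(g^{-s})$ with $g^{-s}(y)=g(y+s)$. Composing, $\lambda_s \pi(f)\lambda_{-s} = \pi(f_s^{-s})$, and the point function $f_s^{-s}(y) = \sigma_s(f(y+s-s)) = \sigma_s(f(y)) = \underline{\sigma}_s(f)(y)$. Hence $\lambda_s \pi(f)\lambda_{-s} = \pi(\underline{\sigma}_s(f))$ for every $f \in C_c(\mathbb R,A)$.

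Next, both $b \mapsto \lambda_s b \lambda_{-s}$ and $\underline{\sigma}_s$ are automorphisms of $A\rtimes_\sigma \mathbb R$ (and of $M(A\rtimes_\sigma\mathbb R)$), so from norm density of $\pi(C_c(\mathbb R,A))$ in $A\rtimes_\sigma \mathbb R$ and norm continuity of both maps I would conclude $\lambda_s c \lambda_{-s} = \underline{\sigma}_s(c)$ for all $c \in A \rtimes_\sigma \mathbb R$. By Lemma \ref{20-05-22x} conjugation by $\lambda_s$ also defines an automorphism of $M(A\rtimes_\sigma\mathbb R)$, and by the uniqueness of the extension of $\underline{\sigma}_s$ from $A\rtimes_\sigma \mathbb R$ to $M(A\rtimes_\sigma\mathbb R)$ (cf.\ Appendix \ref{multipliers}) the two automorphisms of $M(A\rtimes_\sigma\mathbb R)$ must coincide, giving the first identity in full generality.

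For the second identity I would simply apply the first to $\iota(a)$ and compute directly on $\xi \in L^2(\mathbb R,\mathbb H)$: using \eqref{06-08-22a},
\begin{align*}
(\lambda_s \iota(a)\lambda_{-s}\xi)(t) &= (\iota(a)\lambda_{-s}\xi)(t-s) = \sigma_{-(t-s)}(a)(\lambda_{-s}\xi)(t-s) \\
&= \sigma_{s-t}(a)\xi(t) = \sigma_{-t}(\sigma_s(a))\xi(t) = (\iota(\sigma_s(a))\xi)(t),
\end{align*}
so $\underline{\sigma}_s(\iota(a)) = \lambda_s \iota(a) \lambda_{-s} = \iota(\sigma_s(a))$. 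There is no real obstacle here; the only subtle point is the extension from $A\rtimes_\sigma\mathbb R$ to $M(A\rtimes_\sigma\mathbb R)$, which is handled by the uniqueness statement for the extension of automorphisms to the multiplier algebra.
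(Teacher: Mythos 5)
Your proposal is correct and follows essentially the same route as the paper: the same computation $\lambda_s\pi(f)\lambda_{-s}=\pi((f_s)^{-s})=\pi(\underline{\sigma}_s(f))$ on $C_c(\mathbb R,A)$, followed by the same direct pointwise verification of $\lambda_s\iota(a)\lambda_{-s}=\iota(\sigma_s(a))$ on $L^2(\mathbb R,\mathbb H)$. The only cosmetic difference is in the extension step: you pass through norm density in $A\rtimes_\sigma\mathbb R$ and then invoke uniqueness of strictly continuous extensions to the multiplier algebra, whereas the paper appeals directly to the strict density of $\pi(C_c(\mathbb R,A))$ in $M(A\rtimes_\sigma\mathbb R)$ (Lemma \ref{08-02-23}); both are valid and rest on the same facts.
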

\begin{proof} Let $f \in C_c(\mathbb R,A)$. In the notation from the proof of Lemma \ref{20-05-22x} we have 
\begin{align*}
& \lambda_s\pi(f)\lambda_{-s} = \pi(f_s)\lambda_{-s} = \pi((f_s)^{-s}) .
\end{align*}
The first part of the statement of the lemma follows from this because $(f_s)^{-s}(t) = f_s(t +s) = \sigma_s(f(t)) = \underline{\sigma}_s(f)(t)$, and $\pi(C_c(\mathbb R,A))$ is dense in $M(A \rtimes_\sigma \mathbb R)$ with respect to the strict topology by Lemma \ref{08-02-23} in Appendix \ref{multipliers}. To establish the second part, simply observe that
\begin{align*}
&(\lambda_s\iota(a)\lambda_{-s}\xi)(t) = (\iota(a)\lambda_{-s}\xi)(t-s) = \sigma_{s-t}(a)\lambda_{-s}\xi(t-s) \\
& = \sigma_{-t}(\sigma_s(a))\xi(t) = (\iota(\sigma_s(a))\xi)(t)
\end{align*}
for all $\xi \in L^2(\mathbb R,\mathbb H)$.
\end{proof} 

\begin{cor}\label{30-05-22x} $\underline{\sigma}$ is an inner flow on $A \rtimes_\sigma \mathbb R$.
\end{cor}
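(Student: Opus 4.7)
The plan is to combine the two preceding lemmas with the definition of an inner flow, which requires producing a strictly continuous unitary representation of $\mathbb R$ by multipliers of $A \rtimes_\sigma \mathbb R$ whose adjoint action implements $\underline{\sigma}$. The candidate is the family $\{\lambda_s\}_{s \in \mathbb R}$ defined on $L^2(\mathbb R, \mathbb H)$ by $(\lambda_s \xi)(t) = \xi(t-s)$.

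First I would verify directly from the definition that $\lambda_s \lambda_t = \lambda_{s+t}$ and $\lambda_s^* = \lambda_{-s}$, so that $s \mapsto \lambda_s$ is a group homomorphism from $\mathbb R$ into the unitary group of $B(L^2(\mathbb R, \mathbb H))$. Next, Lemma \ref{20-05-22x} has already shown two things: each $\lambda_s$ lies in $M(A \rtimes_\sigma \mathbb R)$ (identified with the multipliers sitting inside $B(L^2(\mathbb R, \mathbb H))$ in the usual way), and the assignment $s \mapsto \lambda_s$ is continuous for the strict topology on $M(A \rtimes_\sigma \mathbb R)$. Together with the algebraic properties, this says precisely that $\{\lambda_s\}_{s \in \mathbb R}$ is a strictly continuous unitary representation of $\mathbb R$ by multipliers of $A \rtimes_\sigma \mathbb R$.

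Finally, the first part of Lemma \ref{20-05-22ax} gives $\lambda_s b \lambda_{-s} = \underline{\sigma}_s(b)$ for all $b \in M(A \rtimes_\sigma \mathbb R)$, and in particular for all $b \in A \rtimes_\sigma \mathbb R$. This is exactly the relation $\underline{\sigma}_s = \Ad \lambda_s$, so $\underline{\sigma}$ is inner by definition.

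There is no real obstacle here: the corollary is a bookkeeping step assembling the conclusions of Lemmas \ref{20-05-22x} and \ref{20-05-22ax}, with all nontrivial content (identification of the multiplier algebra with operators on $L^2(\mathbb R,\mathbb H)$, strict continuity via the $L^1$-estimate, and the intertwining identity) already carried out in those lemmas.
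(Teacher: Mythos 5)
Your proof is correct and follows exactly the paper's route: the paper's proof of this corollary is the one-line observation that it follows from Lemmas \ref{20-05-22x} and \ref{20-05-22ax}, which is precisely the assembly you carry out (with the group-law check for $\lambda_s$ made explicit).
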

\begin{proof} This follows from Lemma \ref{20-05-22ax} and Lemma \ref{20-05-22x}.
\end{proof}

For any $C^*$-algebra $B$ we denote by $\tr(B)$ the set of lower semi-continuous traces on $B$, cf. Definition \ref{03-02-22f}. Since $\underline{\sigma}$ is inner there is by Stone's theorem a self-adjoint operator $H$ on $L^2(\mathbb R,\mathbb H)$ such that 
$$
\underline{\sigma}_t = \Ad e^{it H} 
$$
for all $t \in \mathbb R$. By Lemma \ref{23-01-22} $H$ is an unbounded multiplier of $A \rtimes_\sigma \mathbb R$ in the sense that $f(H) \in M(A\rtimes_\sigma \mathbb R)$ for all $f \in C_0(\mathbb R)$. It follows from Theorem \ref{17-08-22c} that there is a bijective map
$$
\mathcal T : \KMS(\underline{\sigma},\beta) \to \tr(A \rtimes_\sigma \mathbb R) 
$$
defined such that $\mathcal T(\psi) = \tau_\psi$, where 
\begin{equation}\label{10-10-23}
\tau_\psi(a) = \sup \left\{\psi\left(e^{\frac{\beta H}{2}} f(H) a f(H) e^{\frac{\beta H}{2}}\right) : \ f \in C_c(\mathbb R), \ 0 \leq f \leq 1 \right\}.
\end{equation}
Let $\tr(A \rtimes_\sigma \mathbb R)_{\beta}$ denote the set of lower semi-continuous traces $\tau \in \tr(A \rtimes_\sigma\mathbb R)$ that are scaled as follows by the dual action $\hat{\sigma}$:
$$
\tau \circ \hat{\sigma}_t = e^{-\beta t} \tau 
$$
for all $t \in \mathbb R$. Recall that $\hat{\sigma}$ is defined such that $\hat{\sigma}_t(C_c(\mathbb R,A)) \subseteq C_c(\mathbb R,A)$ and
$$
\hat{\sigma}_t(f)(x) = e^{ixt}f(x)
$$
for all $f \in C_c(\mathbb R,A)$. 

 \begin{lemma}\label{06-07-23} Let $g \in C_0(\mathbb R)$. Then $\hat{\sigma}_t(g(H)) = g^{-t}(H)$.
 \end{lemma}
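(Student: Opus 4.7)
The plan is to verify the identity on a dense subset of $C_0(\mathbb R)$ using the Fourier-type presentation of $g(H)$ by the unitary group $\{e^{isH}\}_{s\in\mathbb R}$, and then extend by continuity. Concretely, by Stone's theorem together with Lemma \ref{20-05-22ax} and Lemma \ref{20-05-22x}, we may (and do) choose the self-adjoint generator $H$ so that $e^{isH} = \lambda_s$ for all $s\in\mathbb R$, where $\lambda_s\in M(A\rtimes_\sigma\mathbb R)$ is the translation unitary. For $h\in C_c(\mathbb R)$ we then have, as in the proof of Lemma \ref{23-01-22},
\begin{equation*}
\widehat h(H) \;=\; \int_\mathbb R h(s)\,\lambda_s\ \mathrm ds,
\end{equation*}
where the integral converges in the strict topology of $M(A\rtimes_\sigma\mathbb R)$, and by Lemma \ref{11-10-23} the functions of the form $\widehat h$ with $h\in C_c(\mathbb R)$ form a norm-dense $*$-subalgebra of $C_0(\mathbb R)$.

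Next I would compute $\widehat\sigma_t(\lambda_s)$ directly. For every $f\in C_c(\mathbb R,A)$, Lemma \ref{20-05-22x} gives $\lambda_s\pi(f) = \pi(f_s)$ with $f_s(y)=\sigma_s(f(y-s))$; applying $\widehat\sigma_t$ to both sides yields $\widehat\sigma_t(\lambda_s)\,\pi(e^{i(\cdot)t}f) = \pi(e^{i(\cdot)t}f_s)$, and a short direct computation (using $e^{iyt}\sigma_s(f(y-s))=e^{ist}\,e^{i(y-s)t}\sigma_s(f(y-s))$) shows that the unique element of $M(A\rtimes_\sigma\mathbb R)$ satisfying this relation for all $f$ is
\begin{equation*}
\widehat\sigma_t(\lambda_s) \;=\; e^{ist}\,\lambda_s.
\end{equation*}

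With this in hand, I would push $\widehat\sigma_t$ through the strict-topology integral. Because $\widehat\sigma_t$ is a strictly continuous automorphism of $M(A\rtimes_\sigma\mathbb R)$ on bounded sets and the integral $\int h(s)\lambda_s\,\mathrm ds$ is characterised by its action on elements of $A\rtimes_\sigma\mathbb R$ (where it reduces to a norm-convergent Bochner integral), one obtains
\begin{equation*}
\widehat\sigma_t(\widehat h(H)) \;=\; \int_\mathbb R h(s)\,\widehat\sigma_t(\lambda_s)\ \mathrm ds \;=\; \int_\mathbb R h(s) e^{ist}\,\lambda_s\ \mathrm ds \;=\; \widehat{(h\cdot e_t)}(H),
\end{equation*}
where $e_t(s):=e^{ist}$. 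But $\widehat{(h\cdot e_t)}(x) = \int_\mathbb R h(s)e^{is(x+t)}\,\mathrm ds = \widehat h(x+t) = (\widehat h)^{-t}(x)$ in the notation $g^{-t}(x):=g(x+t)$ from Lemma \ref{20-05-22x}. Thus $\widehat\sigma_t(\widehat h(H)) = (\widehat h)^{-t}(H)$ for every $h\in C_c(\mathbb R)$.

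Finally, both sides of the identity $\widehat\sigma_t(g(H)) = g^{-t}(H)$ are bounded linear in $g\in C_0(\mathbb R)$ with norm at most $1$ (since $\widehat\sigma_t$ is an isometry and the map $g\mapsto g(H)$ is a contractive $*$-homomorphism), and the translation $g\mapsto g^{-t}$ is also an isometry of $C_0(\mathbb R)$. Since the identity holds on the norm-dense subalgebra $\{\widehat h : h\in C_c(\mathbb R)\}$, it extends by continuity to all of $C_0(\mathbb R)$. The main obstacle is the middle step: justifying the interchange $\widehat\sigma_t\circ\int = \int\circ\,\widehat\sigma_t$ for the strict-topology integral, which is handled cleanly by testing against an arbitrary $a\in A\rtimes_\sigma\mathbb R$ and appealing to the strict/norm continuity of $\widehat\sigma_t$ on bounded sets.
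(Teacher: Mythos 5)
Your proposal is correct and follows essentially the same route as the paper: identify $e^{isH}=\lambda_s$ via Stone's theorem, compute $\hat{\sigma}_t(\lambda_s)=e^{ist}\lambda_s$ from the action of $\lambda_s$ on $C_c(\mathbb R,A)$, pass $\hat{\sigma}_t$ through the integral representation $\widehat h(H)=\int_{\mathbb R} h(s)\lambda_s\,\mathrm ds$ to get $\widehat h\mapsto(\widehat h)^{-t}$, and extend by density using Lemma \ref{11-10-23}. The only cosmetic difference is that you derive $\hat{\sigma}_t(\lambda_s)=e^{ist}\lambda_s$ by applying $\hat{\sigma}_t$ to the relation $\lambda_s\pi(f)=\pi(f_s)$ while the paper evaluates $(\hat{\sigma}_t(\lambda_s)f)(x)$ directly; these are the same computation.
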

\begin{proof}  As observed in the proof of Lemma \ref{20-05-22x},
$$
\lambda_s f(x) =  \sigma_s(f(x-s))
$$
when $f \in C_c(\mathbb R,A)$ and hence 
\begin{align*}
&(\hat{\sigma}_t(\lambda_s) f)(x) = \hat{\sigma}_t\left( \lambda_s \hat{\sigma}_{-t}(f)\right)(x) \\
& = e^{i xt}(\lambda_s \hat{\sigma}_{-t}(f))(x)   = e^{i xt}\sigma_s(\hat{\sigma}_{-t}(f)(x-s)) \\
& =e^{i xt}e^{ i t(s-x)} \sigma_s(f(x -s)) = e^{i t s}(\lambda_sf)(x) 
\end{align*}
for all $f \in C_c(\mathbb R,A)$. It follows that $\hat{\sigma}_t(\lambda_s) = e^{its}\lambda_s$. Let $h \in L^1(\mathbb R)$. Using Theorem 5.6.36 in \cite{KR}, 
\begin{align*}
&\hat{\sigma}_t(\hat{h}(H)) = \hat{\sigma}_t\left(\int_\mathbb R h(s)\lambda_s \ \mathrm d s\right) \\
&= \int_\mathbb R h(s)\hat{\sigma}_t(\lambda_s) \ \mathrm d s = \int_\mathbb R e^{its} h(s) \lambda_s \ \mathrm d s = \widehat{k}(H) 
\end{align*}
where $k(s):= e^{its} h(s)$. Since $\hat{k} = (\hat{h})^{-t}$ this establishes the desired identity when $g \in \left\{\hat{h} : \ h \in L^1(\mathbb R)\right\}$. The latter space is dense in $C_0(\mathbb R)$ by Lemma \ref{11-10-23} and hence the identity holds for all $g \in C_0(\mathbb R)$ by continuity.
\end{proof}

\begin{lemma}\label{12-06-22x} For all $\beta \in \mathbb R$, 
$$
\mathcal T\left(\KMS(\underline{\sigma},\beta)^{\hat{\sigma}}\right) = \tr(A \rtimes_\sigma \mathbb R)_{\beta} .
$$
\end{lemma}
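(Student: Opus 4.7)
The map $\mathcal T$ is a bijection $\KMS(\underline{\sigma},\beta)\to\tr(A\rtimes_\sigma\mathbb R)$ by Theorem \ref{17-08-22c}, so I only need to show that under $\mathcal T$, the $\hat{\sigma}$-invariant KMS weights correspond exactly to the $\beta$-scaled traces. The whole argument is a computation with the formula \eqref{10-10-23} and Lemma \ref{06-07-23}, together with the observation that $f\mapsto f^t$ ($f^t(x)=f(x-t)$) is a bijection of $\{f\in C_c(\mathbb R):0\leq f\leq 1\}$ onto itself.

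\textbf{Forward direction.} Let $\psi\in\KMS(\underline{\sigma},\beta)^{\hat{\sigma}}$. For $a\in(A\rtimes_\sigma\mathbb R)^+$, $f\in C_c(\mathbb R)$ with $0\leq f\leq 1$, set $k(x):=e^{\beta x/2}f(x)\in C_c(\mathbb R)$. By Lemma \ref{06-07-23}, $\hat{\sigma}_{-t}(k(H))=k^t(H)$ with $k^t(x)=k(x-t)=e^{-\beta t/2}e^{\beta x/2}f^t(x)$, so
\[
\hat{\sigma}_{-t}(k(H))=e^{-\beta t/2}e^{\beta H/2}f^t(H).
\]
Using that $\hat{\sigma}_t$ is an automorphism and that $\psi\circ\hat{\sigma}_t=\psi$,
\[
\psi\bigl(k(H)\hat{\sigma}_t(a)k(H)\bigr)=\psi\bigl(\hat{\sigma}_{-t}(k(H))\,a\,\hat{\sigma}_{-t}(k(H))\bigr)=e^{-\beta t}\psi\bigl(e^{\beta H/2}f^t(H)\,a\,f^t(H)e^{\beta H/2}\bigr).
\]
Taking $\sup$ over $f$ (equivalently over $f^t$) in \eqref{10-10-23} yields $\tau_\psi\circ\hat{\sigma}_t(a)=e^{-\beta t}\tau_\psi(a)$, hence $\tau_\psi\in\tr(A\rtimes_\sigma\mathbb R)_\beta$.

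\textbf{Reverse direction.} Let $\tau\in\tr(A\rtimes_\sigma\mathbb R)_\beta$, and let $\psi=\mathcal T^{-1}(\tau)$ be the $\beta$-KMS weight for $\underline{\sigma}$ given by Theorem \ref{17-08-22c}:
\[
\psi(b)=\sup\bigl\{\tau(e^{-\beta H/2}f(H)bf(H)e^{-\beta H/2}): f\in C_c(\mathbb R),\ 0\leq f\leq 1\bigr\}.
\]
For $a\in(A\rtimes_\sigma\mathbb R)^+$ and $f$ as above, set $g(x):=e^{-\beta x/2}f(x)$; again by Lemma \ref{06-07-23}, $\hat{\sigma}_{-t}(g(H))=g^t(H)$ with $g^t(x)=e^{\beta t/2}e^{-\beta x/2}f^t(x)$, so $\hat{\sigma}_{-t}(g(H))=e^{\beta t/2}e^{-\beta H/2}f^t(H)$. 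Using the scaling of $\tau$,
\[
\tau\bigl(g(H)\hat{\sigma}_t(a)g(H)\bigr)=\tau\bigl(\hat{\sigma}_t(\hat{\sigma}_{-t}(g(H))\,a\,\hat{\sigma}_{-t}(g(H)))\bigr)=e^{-\beta t}\cdot e^{\beta t}\tau\bigl(e^{-\beta H/2}f^t(H)\,a\,f^t(H)e^{-\beta H/2}\bigr),
\]
and taking the supremum over $f$ (equivalently over $f^t$) gives $\psi(\hat{\sigma}_t(a))=\psi(a)$. Thus $\psi\in\KMS(\underline{\sigma},\beta)^{\hat{\sigma}}$.

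\textbf{Remark on obstacles.} No real obstacle arises; the only care required is to rewrite $g(H)\hat{\sigma}_t(a)g(H)$ as $\hat{\sigma}_t$ applied to a well-defined element of $A\rtimes_\sigma\mathbb R$ so that the scaling hypothesis on $\tau$ (resp.\ the $\hat{\sigma}$-invariance of $\psi$) can be invoked inside $\tau$ (resp.\ $\psi$), and to check that the factors $e^{\pm\beta t/2}$ arising from the shift $g\mapsto g^t$ combine with the factor $e^{\mp\beta t}$ coming from the scaling to cancel exactly. Everything else is bookkeeping with $C_c(\mathbb R)$-functional calculus of the (possibly unbounded) affiliated multiplier $H$.
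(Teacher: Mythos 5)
Your proof is correct and follows essentially the same route as the paper's: both directions conjugate the functional-calculus cutoffs through $\hat{\sigma}_{\pm t}$ via Lemma \ref{06-07-23}, let the exponential factors $e^{\pm\beta t/2}$ cancel against the scaling factor $e^{\mp\beta t}$, and conclude by taking the supremum, using that $f\mapsto f^t$ permutes $\{f\in C_c(\mathbb R):0\leq f\leq 1\}$. Your bookkeeping of the cancelling factors is in fact slightly more explicit than the paper's.
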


\begin{proof} Let $\psi \in \KMS (\underline{\sigma},\beta)^{\hat{\sigma}}$ and let $f \in C_c(\mathbb R)$, $0 \leq f \leq 1$. For $a \in (A\rtimes_\sigma \mathbb R)^+$,
\begin{align*}
& \psi \left(e^{\frac{\beta H}{2}}f(H) \hat{\sigma}_t(a)f(H) e^{\frac{\beta H}{2}}\right) \\
& = \psi\left( \hat{\sigma}_t\left( \hat{\sigma}_{-t} (e^{\frac{\beta H}{2}}f(H)) a \hat{\sigma}_{-t} (e^{\frac{\beta H}{2}}f(H))\right)\right)\\
& = \psi\left( \hat{\sigma}_{-t} (e^{\frac{\beta H}{2}}f(H)) a \hat{\sigma}_{-t} (e^{\frac{\beta H}{2}}f(H))\right)\\
& = e^{-t \beta} \psi \left(e^{\frac{\beta H}{2}}f^{t}(H) af^{t}(H) e^{\frac{\beta H}{2}}\right) 
\end{align*}
where the last identity follows from Lemma \ref{06-07-23}.
It follows therefore from the way $\tau_\psi$ is defined, see \eqref{10-10-23}, that $\mathcal T(\psi) \circ \hat{\sigma}_{t} = \tau_\psi\circ \hat{\sigma}_{t} = e^{-\beta t} \mathcal T(\psi)$, i.e. $ \mathcal T(\psi) \in  T(A \rtimes_\sigma \mathbb R)_{\beta}$. 

Let next $\tau \in  T(A \rtimes_\sigma \mathbb R)_{\beta}$. Then
\begin{equation}\label{12-06-22a}
\begin{split}
& \tau \left(e^{\frac{-\beta H}{2}}f(H) \hat{\sigma}_t(a)f(H) e^{-\frac{\beta H}{2}}\right) \\
& = \tau\left( \hat{\sigma}_t \left(\hat{\sigma}_{-t}(e^{-\frac{\beta H}{2}}f(H)) a  \hat{\sigma}_{-t}(f(H)e^{\frac{-\beta H}{2}}) \right)\right)\\
& = e^{-\beta t}\tau\left( \hat{\sigma}_{-t}(e^{-\frac{\beta H}{2}}f(H)) a  \hat{\sigma}_{-t}(f(H)e^{\frac{-\beta H}{2}})\right)\\
& = \tau\left(e^{-\frac{\beta H}{2}}f^{-t}(H)) a  f^{-t}(H)e^{\frac{-\beta H}{2}}\right)\\
\end{split}
\end{equation}
where the last identity follows from Lemma \ref{12-06-22a}.
 It follows from \eqref{12-06-22a} and the way $\mathcal T^{-1}$ is defined, cf. Theorem \ref{17-08-22c}, that $\mathcal T^{-1}(\tau) = \tau_\beta$ is $\hat{\sigma}$-invariant. 
\end{proof}

Combining Corollary \ref{09-06-22d}, Lemma \ref{12-06-22x} and Theorem \ref{17-08-22c} we obtain the following

\begin{thm}\label{12-06-22bx} Let $\sigma$ be a flow on the $C^*$-algebra $A$. The composition $\mathcal T \circ D_3$ is a bijection
$$
\mathcal T \circ D_3 : \KMS(\sigma,\beta) \to \tr(A \rtimes_\sigma \mathbb R)_\beta .
$$
\end{thm}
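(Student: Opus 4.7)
The statement is assembled from three pieces already in hand, so the plan is essentially to verify that they compose correctly to give the asserted bijection; no new analytic content is required.

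First, I would recall that by Corollary \ref{09-06-22d} the map $D_3$ is a bijection from $\KMS(\sigma, \beta)$ onto $\KMS(\underline{\sigma}, \beta)^{\hat{\sigma}}$, the set of $\beta$-KMS weights for the inner flow $\underline{\sigma}$ on $A \rtimes_\sigma \mathbb{R}$ that are invariant under the dual action $\hat{\sigma}$. Next, since $\underline{\sigma}$ is inner with generator $H$ (Corollary \ref{30-05-22x} and Stone's theorem), Theorem \ref{17-08-22c} provides the bijection $\mathcal{T} : \KMS(\underline{\sigma}, \beta) \to \tr(A \rtimes_\sigma \mathbb{R})$ defined by the formula \eqref{10-10-23}. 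Composing these yields a bijection from $\KMS(\sigma, \beta)$ onto $\mathcal{T}\bigl(\KMS(\underline{\sigma}, \beta)^{\hat{\sigma}}\bigr)$.

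The remaining point is to identify this image with $\tr(A \rtimes_\sigma \mathbb{R})_\beta$, which is precisely the content of Lemma \ref{12-06-22x}: a $\beta$-KMS weight $\psi$ for $\underline{\sigma}$ is $\hat{\sigma}$-invariant if and only if the associated trace $\tau_\psi$ transforms under $\hat{\sigma}_t$ by the scalar $e^{-\beta t}$. Since $\mathcal{T}$ is a bijection on all of $\KMS(\underline{\sigma},\beta)$, its restriction to $\KMS(\underline{\sigma}, \beta)^{\hat{\sigma}}$ is also a bijection onto its image, namely $\tr(A \rtimes_\sigma \mathbb{R})_\beta$.

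Putting these three observations together, $\mathcal{T} \circ D_3$ is the composition of two bijections:
\[
\KMS(\sigma,\beta) \xrightarrow{\ D_3\ } \KMS(\underline{\sigma}, \beta)^{\hat{\sigma}} \xrightarrow{\ \mathcal{T}\ } \tr(A \rtimes_\sigma \mathbb{R})_\beta,
\]
and is therefore itself a bijection. There is no real obstacle here — the theorem is a packaging statement, with the substantive work already done in establishing the dual-weight construction (and its injectivity in Lemma \ref{07-06-22a}), the surjectivity of $D_3$ onto the $\hat{\sigma}$-fixed subset (via Takai duality, treated in the abelian case Theorem \ref{08-06-22aA} and specialized in Corollary \ref{09-06-22d}), the trace-vs-KMS-weight correspondence for inner flows in Theorem \ref{17-08-22c}, and the scaling identification in Lemma \ref{12-06-22x}.
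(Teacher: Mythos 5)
Your proposal is correct and follows exactly the paper's route: the theorem is stated as an immediate consequence of Corollary \ref{09-06-22d}, Theorem \ref{17-08-22c} and Lemma \ref{12-06-22x}, which are precisely the three ingredients you compose. Nothing is missing.
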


\begin{notes}\label{12-06-22b} The existence of a bijection between $\KMS(\sigma,\beta)$ and $\tr(A \rtimes_\sigma \mathbb R)_\beta$ was announced by Kishimoto and Kumjian in Remark 3.3 of \cite{KK}, and Theorem 3.2 of \cite{KK} contains a proof for the case where $A$ is unital. 

When $A$ is separable the cone $T(A \rtimes_\sigma \mathbb R)$ can be identified with the cone of unitarily invariant, positive linear functionals on the Pedersen ideal of $A \rtimes_\sigma \mathbb R$ by Proposition 5.6.7 of \cite{Pe}. The space of linear functionals on the Pedersen ideal is a locally convex vector space in a natural way and and $T(A\rtimes_\sigma \mathbb R)$ is a closed cone in this space, and the same is $\tr(A \rtimes_\sigma \mathbb R)_\beta$ for each $\beta \in \mathbb R$. In this way Theorem \ref{12-06-22bx} gives a way to realize $\KMS(\sigma,\beta) \cup \{0\}$ as a closed cone in a locally convex vector space. 

\end{notes}

\subsection{A theorem of Vigand Pedersen}\label{vigandx}

In this section we first use the constructions from Section \ref{20-01-22g} with the following choices:
$$
G \ \text{abelian and}\ \sigma = \id_A.
$$
For $\beta \in \mathbb R$, let $T(A)_{\theta \beta}$ denote the set of lower semi-continuous traces $\tau$ on $A$ with the scaling property
$$
\tau \circ \alpha_g = e^{-\theta(g)\beta } \tau
$$
for all $g \in G$. With these choices the map $D$ from \eqref{08-06-22} becomes a map
$$
D_4 : T(A)_{\theta\beta} \to \KMS(\mu^\theta, \beta)^{\hat{\alpha}} ,
$$
where $\mu^{\theta}$ is the flow on $A \rtimes_\alpha G$ defined such that $\mu^\theta_t(C_c(G,A)) \subseteq C_c(G,A)$ and
$$
\mu^\theta_t(f)(x) := e^{i \theta(x)t} f(x)
$$
for all $f \in C_c(G,A)$. Such a map, $T(A)_{\theta\beta} \to \KMS(\mu^\theta, \beta)^{\hat{\alpha}}$, was constructed by Vigand Pedersen in \cite{VP} and it is not difficult to see that $D_4$ is the same map as the one constructed in \cite{VP}. By Theorem 5.1 in \cite{VP} the map is a bijection. This follows now also from Theorem \ref{08-06-22aA}.

\begin{thm}\label{13-06-22} (Vigand Pedersen, \cite{VP}) For all $\beta \in \mathbb R$ the map
$$
D_4:  T(A)_{\theta\beta} \to \KMS(\mu^\theta, \beta)^{\hat{\alpha}} 
$$
is a bijection.
\end{thm}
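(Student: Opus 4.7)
The plan is to derive this theorem as a direct consequence of Theorem \ref{08-06-22aA}, the general bijection already established when $G$ is abelian. First I would identify all the ingredients under the specialization $\sigma = \id_A$. Because $\sigma_t = \id_A$ for every $t$, every element of $A$ is entire analytic with $\sigma_z = \id_A$ for all $z \in \mathbb C$, so condition (1) of Kustermans' theorem \ref{24-11-21d} reduces to the trace identity $\psi(a^*a) = \psi(aa^*)$ for all $a \in A$. Consequently $\KMS(\id_A,\beta)$ consists precisely of the non-zero densely defined $\sigma$-invariant lower semi-continuous traces on $A$, which by Definition \ref{03-02-22f} is exactly $T(A)$, and this coincidence is independent of $\beta$. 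Since $G$ is abelian, the modular function $\Delta$ is trivial, and the scaling condition \eqref{04-02-22a} collapses to $\tau \circ \alpha_g = e^{-\theta(g)\beta}\tau$; hence $\KMS(\id_A,\beta)_\theta = T(A)_{\theta\beta}$.

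Next I would verify that the target flow on $A \rtimes_\alpha G$ specializes correctly. From the defining formula \eqref{30-01-22e}, with $\sigma = \id_A$ one computes $\sigma^\theta_t(f)(x) = e^{i\theta(x)t}\sigma_t(f(x)) = e^{i\theta(x)t}f(x) = \mu^\theta_t(f)(x)$ for every $f \in C_c(G,A)$, and by continuity $\sigma^\theta = \mu^\theta$ on all of $A \rtimes_\alpha G$. Consequently the map $D$ of \eqref{08-06-22}, specialized to the present setting, takes $T(A)_{\theta\beta}$ into $\KMS(\mu^\theta,\beta)$. Lemma \ref{08-06-22b} ensures that its image lies in the $\hat\alpha$-invariant subset $\KMS(\mu^\theta,\beta)^{\hat\alpha}$, so we obtain a map $T(A)_{\theta\beta} \to \KMS(\mu^\theta,\beta)^{\hat\alpha}$ which is precisely $D_4$.

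Having matched up the two constructions, I would invoke Theorem \ref{08-06-22aA} directly: it asserts that for any abelian $G$ and any flow $\sigma$ on $A$ commuting with $\alpha$, the map $D_1 : \KMS(\sigma,\beta)_\theta \to \KMS(\sigma^\theta,\beta)^{\hat\alpha}$ is a bijection. Applying this in the case $\sigma = \id_A$ yields at once that $D_4$ is a bijection, which is the assertion of the theorem.

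I do not expect any genuine obstacle in this proof; the only point requiring care is bookkeeping, namely confirming that the map $D_4$ built in this section is literally the restriction of the map $D_1$ of Theorem \ref{08-06-22aA} to the subclass of trivially $\sigma$-invariant weights. This identification is an immediate unwinding of the characteristic formula $\widehat\tau(f^\sharp \star g) = \int_G \Delta(x)e^{\theta(x)\beta}\tau(f(x)^*g(x))\,\mathrm dx$ of Theorem \ref{19-08-22a}(d), which determines $\widehat\tau$ on the core $C_c(G,A)^\tau$ by Corollary \ref{21-06-22e} and matches the formula used by Vigand Pedersen in \cite{VP}.
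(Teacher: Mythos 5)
Your proposal is correct and is essentially the paper's own argument: the paper obtains Theorem \ref{13-06-22} precisely by specializing Theorem \ref{08-06-22aA} to the choices $G$ abelian and $\sigma = \id_A$, after noting (as you do) that the $\beta$-KMS weights for the trivial flow are the lower semi-continuous traces, that the scaling condition \eqref{04-02-22a} reduces to $\tau\circ\alpha_g = e^{-\theta(g)\beta}\tau$ since abelian groups are unimodular, and that $\sigma^\theta = \mu^\theta$ in this case. Your final bookkeeping remark, identifying $D_4$ with the restriction of $D_1$ via the formula in Theorem \ref{19-08-22a}(d), matches the paper's observation that $D_4$ coincides with Vigand Pedersen's original map.
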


If we specialize further to the case $G = \mathbb R$ and $\theta(x) = x$, and change the notation such that $\alpha$ becomes $\sigma$, we get the following

\begin{cor}\label{13-06-22a} Let $\sigma$ be a flow on $A$ and let $T(A)_{\beta}$ denote the set of lower semi-continuous traces $\tau$ on $A$ such that $\tau \circ \sigma_t = e^{-\beta t}\tau$ for all $t \in \mathbb R$. For all $\beta \in \mathbb R$ the map
$$
D_4 :  T(A)_{\beta} \to \KMS(\hat{\sigma}, \beta)
$$
is a bijection.
\end{cor}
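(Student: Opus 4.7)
The plan is to obtain the corollary as a direct specialization of Theorem \ref{13-06-22}, with no essential additional work beyond matching up the conventions. Specifically, I would set $G = \mathbb R$, rename the $\mathbb R$-action on $A$ from $\alpha$ to $\sigma$, and take $\theta : \mathbb R \to \mathbb R$ to be the identity homomorphism $\theta(x) = x$. All three ingredients appearing in Theorem \ref{13-06-22} then translate to objects in the statement of Corollary \ref{13-06-22a} as follows.

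First, $T(A)_{\theta\beta}$ consists of lower semi-continuous traces $\tau$ on $A$ satisfying $\tau \circ \sigma_x = e^{-\theta(x)\beta}\tau = e^{-x\beta}\tau$ for all $x \in \mathbb R$, which is exactly the defining condition of $T(A)_\beta$. Second, under the identification $\widehat{\mathbb R} \simeq \mathbb R$ via the pairing $(s,x) \mapsto e^{-ixs}$, the dual action becomes $\hat{\alpha}_s(f)(x) = \overline{e^{-ixs}}f(x) = e^{ixs}f(x)$ on $C_c(\mathbb R,A)$, which coincides with the flow $\hat{\sigma}$ defined in Section \ref{8.2.2}. Third, the flow $\mu^\theta$ on $A \rtimes_\sigma \mathbb R$ from Section \ref{vigandx} is given on $C_c(\mathbb R,A)$ by $\mu^\theta_t(f)(x) = e^{i\theta(x)t}f(x) = e^{ixt}f(x)$, so $\mu^\theta_t = \hat{\sigma}_t$ as well.

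The one point that still needs to be checked is that the $\hat{\alpha}$-invariance condition appearing in Theorem \ref{13-06-22} is automatic in this specialization. But in view of the previous paragraph, $\hat{\alpha}$ and $\mu^\theta$ are both equal to $\hat{\sigma}$, and any $\beta$-KMS weight for a flow is by the definition in Definition \ref{24-11-21c} automatically invariant under that same flow. Hence
\[
\KMS(\mu^\theta,\beta)^{\hat{\alpha}} = \KMS(\hat{\sigma},\beta)^{\hat{\sigma}} = \KMS(\hat{\sigma},\beta),
\]
and the bijection supplied by Theorem \ref{13-06-22} becomes precisely $D_4 : T(A)_\beta \to \KMS(\hat{\sigma},\beta)$.

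There is no real obstacle here: the corollary is essentially a matter of recognizing that the extra invariance built into Theorem \ref{13-06-22} disappears in the special case where $G = \mathbb R$ and $\theta$ is the identity, because the dual group action then coincides with the flow for which one is considering KMS weights. The only item worth being careful about is the normalization of the identification $\widehat{\mathbb R} \simeq \mathbb R$, so that the signs in $\hat{\alpha}$ and in $\hat{\sigma}$ agree.
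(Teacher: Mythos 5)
Your proposal is correct and is exactly the route the paper takes: Corollary \ref{13-06-22a} is obtained by specializing Theorem \ref{13-06-22} to $G=\mathbb R$, $\theta = \operatorname{id}$, with the observation that $\mu^\theta$ then coincides with $\hat\sigma$ and with the dual action, so the $\hat\alpha$-invariance in $\KMS(\mu^\theta,\beta)^{\hat\alpha}$ is automatic because every $\beta$-KMS weight is invariant under its own flow. Your care with the normalization of $\widehat{\mathbb R}\simeq\mathbb R$ is the only point that needed checking, and you handled it correctly.
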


\begin{cor}\label{10-10-23b} Let $\sigma$ be a flow on the unital $C^*$-algebra $A$. The flow $\hat{\sigma}$ on $A \rtimes_\sigma \mathbb R$ has no $\beta$-weights for $\beta \neq 0$.
\end{cor}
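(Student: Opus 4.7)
The plan is to deduce the statement immediately from the Vigand Pedersen type bijection established in Corollary \ref{13-06-22a}. Specifically, that corollary provides a bijection
\[
D_4 : T(A)_{\beta} \longrightarrow \KMS(\hat{\sigma},\beta),
\]
where $T(A)_\beta$ is the set of lower semi-continuous traces $\tau$ on $A$ satisfying $\tau \circ \sigma_t = e^{-\beta t}\tau$ for all $t \in \mathbb R$. So it suffices to show that $T(A)_\beta = \emptyset$ whenever $\beta \neq 0$ and $A$ is unital.

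Next, I would combine the scaling relation with unitality. Since $\sigma_t$ is an automorphism of $A$, $\sigma_t(1) = 1$, so applying the scaling identity to the unit gives
\[
\tau(1) \;=\; \tau(\sigma_t(1)) \;=\; e^{-\beta t}\,\tau(1) \qquad \forall t \in \mathbb R.
\]
For any fixed $t$ with $e^{-\beta t} \neq 1$ (which exists since $\beta \neq 0$), this forces $\tau(1) \in \{0,\infty\}$.

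The key ingredient that rules both possibilities out is Lemma \ref{24-09-23b}: a non-zero densely defined weight on a unital $C^*$-algebra $A$ satisfies $0 < \psi(1) < \infty$. Since, by Definition \ref{03-02-22f}, every trace in $T(A)_\beta$ is by convention non-zero and densely defined, Lemma \ref{24-09-23b} applies and contradicts $\tau(1) \in \{0,\infty\}$. Hence $T(A)_\beta$ is empty, and by the bijection $D_4$ so is $\KMS(\hat{\sigma},\beta)$.

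There is no real obstacle here; the whole work has been done in Corollary \ref{13-06-22a} and Lemma \ref{24-09-23b}. The only thing worth verifying carefully is that the scaling condition in the definition of $T(A)_\beta$ is taken with respect to $\sigma$ acting on $A$ (so that $\sigma_t(1)=1$ really applies) rather than some other action, which is clear from the statement of Corollary \ref{13-06-22a}.
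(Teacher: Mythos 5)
Your proposal is correct and follows exactly the paper's own argument: reduce to showing $T(A)_\beta = \emptyset$ via the bijection of Corollary \ref{13-06-22a}, then apply the scaling relation to the unit and invoke Lemma \ref{24-09-23b} to get $0 < \tau(1) < \infty$, which is incompatible with $\tau(1) = e^{-\beta t}\tau(1)$ for $\beta t \neq 0$. Nothing to add.
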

\begin{proof} This follows from Corollary \ref{13-06-22a} because $T(A)_\beta = \emptyset$ when $\beta \neq 0$. Indeed, for any trace $\tau$ on $A$ we have that $0 < \tau(1) < \infty$ by Lemma \ref{24-09-23b} and hence $\tau \circ \sigma_t(1) = \tau(1)\neq e^{-\beta t} \tau(1)$ when $\beta t \neq 0$.  
\end{proof}

\begin{example}\label{10-10-23d} \rm To show that the conclusion in Corollary \ref{10-10-23b} fails in general when $A$ is not unital let $A= C_0(\mathbb R)$ and let $\sigma$ be the flow on $C_0(\mathbb R)$ given by translation, i.e.
$$
\sigma_t(f)(x) = f(x-t) .
$$
By Corollary \ref{13-06-22a} the set of $\beta$-KMS weights for the flow $\hat{\sigma}$ on $C_0(\mathbb R) \rtimes_\sigma \mathbb R$ is in bijective correspondence with $T(C_0(\mathbb R))_\beta$. To determine the latter set there are at least two different ways to proceed. In the first we notice that the map
$$
C_0(\mathbb R)^+ \ni f \mapsto \int_\mathbb R f(x) e^{-\beta x} \ \mathrm d x
$$
is an element of $T(C_0(\mathbb R))_\beta$. Since $C_0(\mathbb R) \rtimes_\sigma \mathbb R \simeq \mathbb K$, cf. e.g. \cite{Pe} or \cite{Wi}, it follows from Theorem \ref{02-01-22a} that there is exactly one $\beta$-KMS weight for $\hat{\sigma}$ for each $\beta$ and it follows therefore from Corollary \ref{10-10-23b} that the trace above is the unique element of $T(C_0(\mathbb R))_\beta$ up multiplication by scalars. 

The second method uses only measure theory and goes as follows. By Lemma \ref{03-01-22a} the set $T(A)$ can be identified with the set of regular Borel measures on $\mathbb R$; the trace $\tau_\mu$ on $C_0(\mathbb R)$ given by such a measure $\mu$ is given by
$$
\tau_\mu(f) = \int_\mathbb R f \ \mathrm d\mu .
$$
Then $\tau_\mu \in T(C_0(\mathbb R))_\beta$ if and only if $\mu(B + t) = e^{-\beta t}\mu(B)$ for all Borel sets $B \subseteq \mathbb R$ and all $t \in \mathbb R$. The regular measures satisfying this condition are the scalar multiples of the measure $e^{-\beta x} \mathrm d x$. Hence by Corollary \ref{10-10-23b} there is exactly one ray of $\beta$-KMS weights for the flow $\hat{\sigma}$ for all $\beta \in \mathbb R$. 
\end{example}

\begin{example}\label{02-08-22} \textnormal{Given a real number $\lambda > 1$ we define a homeomorphism $\varphi_\lambda$ of $\mathbb R$ by
$$
\varphi_\lambda(x) = \lambda x ,
$$
and we consider the corresponding representation $\alpha = (\alpha_z)_{z \in \mathbb Z}$ of $\mathbb Z$ by automorphisms of $C_0(\mathbb R)$:
$$
\alpha_z(f)(t) = f \circ \varphi_\lambda^{-z}(t) = f(\lambda^{-z}t) .
$$
Let $\theta : \mathbb Z \to \mathbb R$ be the canonical inclusion $\mathbb Z \subseteq \mathbb R$. The flow $\mu^\theta$ on $C_0(\mathbb R) \rtimes_\alpha \mathbb Z$ is then related to the dual action $\hat{\alpha}$ of $\mathbb T = \widehat{\mathbb Z}$ by the formula
$$
\mu^\theta_t = \hat{\alpha}_{e^{-it}} ,
$$ 
and hence KMS weights for $\mu^\theta$ are automatically invariant under $\hat{\alpha}$ so that the map $D_4$ of Theorem \ref{13-06-22} gives a bijection
$$
D_4 : T(C_0(\mathbb R))_{\beta} \to KMS(\mu^\theta,\beta)
$$
for all $\beta \in \mathbb R$. We seek therefore here to determine the set $ T(C_0(\mathbb R))_{\beta}$. By Lemma \ref{03-01-22a} a densely defined weight $\psi$ on $C_0(\mathbb R)$ is given by a regular measure on $\mathbb R$ by the formula \eqref{03-01-22c}. It follows therefore that an element $\psi \in  T(C_0(\mathbb R))_{\beta}$ is given by integration with respect to a non-zero regular Borel measure $\mu$ which satisfies the equality
\begin{equation*}\label{02-08-22a}
\int_\mathbb R f(\lambda^{-1}x) \ \mathrm d \mu(x) = e^{-\beta} \int_\mathbb R f(x) \ \mathrm d \mu(x)
\end{equation*}
for all $f \in C_0(\mathbb R)$. This holds if and only if
\begin{equation}\label{02-08-22b}
\mu(\lambda B) = e^{-\beta} \mu(B)
\end{equation}
for all Borel sets $B \subseteq \mathbb R$. Let $\mu$ be such a measure. Since
$$
\bigcup_{k \geq 1}[-\lambda^k,\lambda^k] = \mathbb R 
$$
and
$\mu([-\lambda^k,\lambda^k]) = e^{-k\beta}\mu([-1,1])$ for all $k \geq 1$, we conclude that $\mu([-1,1]) > 0$ since $\mu \neq 0$. By using that
$[-\lambda,\lambda] \supseteq [-1,1]$
and $\mu([-\lambda,\lambda]) = e^{-\beta}\mu([-1,1])$ we find that $e^{-\beta} \geq 1$, i.e. $\beta \leq 0$. Consider first the case $\beta =0$. For all $\epsilon > 0$ and $k \geq 1$ we then have that $\mu([-\lambda^k\epsilon,\lambda^k\epsilon]) = \mu([-\epsilon,\epsilon])$ and $[-\epsilon,\epsilon] \subseteq [-\lambda^k\epsilon,\lambda^k\epsilon]$, implying that $\mu$ is concentrated on $[-\epsilon,\epsilon]$. Since $\epsilon > 0$ is arbitrary, this means that $\mu$ is a scalar multiple of the Dirac measure $\delta_0$ at $0$. For $\beta < 0$, on the other hand, there are both atomic and non-atomic measures satisfying \eqref{02-08-22b}. Among the regular purely atomic measures satisfying \eqref{02-08-22b} are the measures 
$$
\sum_{k \in \mathbb Z} e^{k \beta} \delta_{\lambda^{-k}x} ,
$$
where $x \in \mathbb R\backslash \{0\}$ and $ \delta_{\lambda^{-k}x}$ denotes the Dirac measure concentrated at $\lambda^{-k}x$, and among the non-atomic measures is the measure
$$
B \mapsto \int_{B\cap ]0,\infty[} \ t^\alpha \ \mathrm d t ,
$$
where $\alpha = -\frac{\beta}{\log \lambda} - 1$. }

\textnormal{We refrain from a further study of the regular Borel measures satisfying \eqref{02-08-22b} and summarize the qualitative conclusions we have already obtained: The flow $\mu^\theta$ on $C_0(\mathbb R) \rtimes_\alpha \mathbb Z$ has no $\beta$-KMS weights for $\beta > 0$, a unique ray of $0$-KMS weights and infinitely many different rays of $\beta$-KMS weights when $\beta < 0$.}

\textnormal{Note that the preceding considerations, based on Theorem \ref{13-06-22}, only give a bijection between certain measures on $\mathbb R$ and KMS weights for $\mu^\theta$; it does not provide a formula relating the measures to the KMS weights. Such a formula will come out of results from the next section. Specifically, it will follow that if $\mu$ is a non-zero regular Borel measure on $\mathbb R$ such that \eqref{02-08-22b} holds, the corresponding KMS weight $\psi_\mu$ on $C_0(\mathbb R) \rtimes_\alpha \mathbb Z$ is given by the formula
$$
\psi_\mu(a) = \int_{\mathbb R} P(a) \ \mathrm d \mu,
$$
where $P : C_0(\mathbb R) \rtimes_\alpha \mathbb Z \to C_0(\mathbb R)$ is the canonical conditional expectation, cf. Theorem \ref{12-03-22}. Once we have established this explicit relation between the measure on $\mathbb R$ and the KMS weights for $\mu^\theta$ we can see that there are no bounded $\beta$-KMS weights for $\mu^\theta$ when $\beta \neq 0$.}
\end{example}

\begin{notes} Theorem \ref{13-06-22} was obtained by Vigand Pedersen in 1979, cf. Theorem 5.1 in \cite{VP}. It generalizes Lemma 3.1 in \cite{Th3}; a fact I regrettably missed when writing the latter.
\end{notes}

\section{Crossed products by discrete groups}\label{crosseddiscrete}
We consider now a setting similar to the cases from the previous sections of this chapter, but where the construction of the dual weight does not enter. Specifically, we consider the same situation as in Section \ref{vigandx}, with the difference that we assume the group is discrete rather than abelian. Thus we are dealing with a discrete group $G$ and a representation $\alpha : G \to \Aut A$ of $G$ by automorphisms of $A$. Let $\mathbb H$ be a Hilbert space such that $A \subseteq B(\mathbb H)$. By definition $A \rtimes_{r,\alpha}G$ is the $C^*$-algebra generated by $\pi(C_c(G,A))$, where $\pi$ is the representation $\pi : C_c(G,A) \to B(l^2(G,\mathbb H))$ defined such that
$$
\pi(f)\xi(x) = \sum_{y \in G} \alpha_{x^{-1}}(f(y))\xi(y^{-1}x), 
$$
when $f : G \to A$ is a finitely supported and $\xi \in l^2(G,\mathbb H)$. (Compare with Section \ref{07-08-22e}.)\footnote{$l^2(G,\mathbb H)$ is the same as $L^2(G,\mathbb H)$, but since the Haahr measure is the counting measure it is also the same as the Hilbert space of functions $\psi: G \to H$ for which $\sum_{g \in G} \|\psi(g)\|^2 < \infty$. Therefore the change in notation.} Consider the representation $\pi_0 : A \to B(l^2(G,\mathbb H))$ of $A$ given by
$$
(\pi_0(a)\xi)(x) = \alpha_{x^{-1}}(a)\xi(x). 
$$
Then $\pi_0(a) = \pi(f)$, where $f\in C_c(G,A)$ is given by
$$
f(g) = \begin{cases} a, & \ g = e \\ 0, & \ g \neq 0. \end{cases}
$$
Hence $\pi_0(A) \subseteq A \rtimes_{r,\alpha}G$. Consider also the unitary representation $\lambda_g,  \ g \in G$, of $G$ on $l^2(G,\mathbb H)$ given by
$$
(\lambda_g\xi)(x) := \xi(g^{-1}x) .
$$
Then $\Ad \lambda_g \circ \pi_0 = \pi_0 \circ \alpha_g$ and
$$
\pi(f) = \sum_{g \in G} \pi_0(f(g))\lambda_g 
$$
for all $f \in C_c(G,A)$. Thus $ A \rtimes_{r,\alpha}G$ is generated as a $C^*$-algebra by the set
$$
\left\{\pi_0(a)\lambda_g : \ a \in A, \ g \in G\right\} .
$$

\subsection{The canonical conditional expectation}\label{canonical}

\begin{defn}\label{20-12-22a} Let $E$ be a $C^*$-algebra and $D \subseteq E$ a $C^*$-subalgebra of $E$. A linear map $P : E \to D$ is a \emph{conditional expectation} when
\begin{enumerate}
\item[(a)] $P$ is positive, i.e $a \geq 0 \Rightarrow P(a) \geq 0$,
\item[(b)] $\|P\| = 1$, and
\item[(c)]$P(ab)= P(a)b$ for all $a \in  E$ and all $b \in D$,
\item[(d)] $P(b) = b$ for all $b \in D$.
\end{enumerate}
$P$ is \emph{faithful} when $a \in E^+, \ P(a) = 0 \Rightarrow a = 0$.
\end{defn}

\begin{lemma}\label{31-08-23} Let $G$ be a discrete group and $\alpha : G \to \Aut A$ a representation of $G$ by automorphisms of $A$. There is a faithful conditional expectation $P : A \rtimes_{r,\alpha} G \to \pi_0(A)$ such that 
$$
P(\sum_{g \in G} \pi_0(f(g))\lambda_g) = \pi_0(f(e))
$$ 
for every finitely supported function $f : G \to A$. 
\end{lemma}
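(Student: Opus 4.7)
My plan is to realise $P$ as a compression to the subspace $\delta_e \otimes \mathbb H \subseteq l^2(G,\mathbb H)$, followed by the (faithful) isomorphism $\pi_0 : A \to \pi_0(A)$, and then verify the conditional expectation axioms on the dense $*$-subalgebra $\pi(C_c(G,A))$ before extending by continuity. Concretely, let $V_e : \mathbb H \to l^2(G,\mathbb H)$ be the isometry $V_e\xi := \delta_e \otimes \xi$ (the function supported at $e$ with value $\xi$). A direct calculation from the defining formula of $\pi$ gives
\[
 V_e^* \pi(f) V_e \, \xi = (\pi(f)(\delta_e \otimes \xi))(e) = f(e)\xi
\]
for every $f \in C_c(G,A)$ and $\xi \in \mathbb H$. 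Hence $E(a) := V_e^* a V_e$ defines a positive linear contraction $B(l^2(G,\mathbb H)) \to B(\mathbb H)$ whose restriction to $\pi(C_c(G,A))$ takes values in $A$; since $A$ is norm-closed in $B(\mathbb H)$ and $\pi(C_c(G,A))$ is dense in $A \rtimes_{r,\alpha} G$, the map $E$ restricts to a positive contraction $E : A \rtimes_{r,\alpha} G \to A$. Set $P := \pi_0 \circ E$.

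Next I will check the four conditions of Definition \ref{20-12-22a}. Positivity of $P$ follows from positivity of $E$ and of $\pi_0$. For $b \in A$ I compute $\pi(f)\pi_0(b) = \pi(f')$ with $f'(y) = f(y)\alpha_y(b)$, so $f'(e) = f(e)b$, giving $P(\pi(f)\pi_0(b)) = \pi_0(f(e)b) = P(\pi(f))\pi_0(b)$; by continuity this yields (c). Property (d), $P(\pi_0(a)) = \pi_0(a)$, is immediate by applying the formula to the function $f_a$ supported only at $e$ with value $a$, which also gives $\|P\| \geq 1$ and so (b).

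For faithfulness the key ingredient is an equivariance identity. Using that $\lambda_g$ is a unitary multiplier of $A \rtimes_{r,\alpha} G$ (which I will verify on $\pi(C_c(G,A))$), I compute for $f \in C_c(G,A)$ that $\lambda_g^* \pi(f) \lambda_g = \pi(f'')$ with $f''(z) = \alpha_{g^{-1}}(f(gzg^{-1}))$, so that $f''(e) = \alpha_{g^{-1}}(f(e))$. Passing to the norm closure yields
\[
 E(\lambda_g^* a \lambda_g) = \alpha_{g^{-1}}(E(a)) \qquad \forall a \in A \rtimes_{r,\alpha} G, \ g \in G.
\]
Since $V_g := \lambda_g V_e$ satisfies $V_g\xi = \delta_g \otimes \xi$, this gives $V_g^* a V_g = E(\lambda_g^* a \lambda_g) = \alpha_{g^{-1}}(E(a))$. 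Consequently, if $a \geq 0$ and $P(a) = 0$, then $E(a) = 0$ (because $\pi_0$ is faithful), hence $V_g^* a V_g = 0$ for every $g$, which rewrites as $\|a^{1/2}(\delta_g \otimes \xi)\|^2 = 0$ for all $g \in G$ and $\xi \in \mathbb H$. The finitely supported functions span a dense subspace of $l^2(G,\mathbb H)$, so $a^{1/2}$ vanishes on a dense subspace and therefore $a = 0$.

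The main obstacle is the faithfulness step: the obvious compression argument only gives $a^{1/2} V_e = 0$, which is not enough. The equivariance identity $E \circ \Ad \lambda_g^{-1} = \alpha_{g^{-1}} \circ E$ is the decisive tool that propagates the vanishing from the subspace $V_e \mathbb H$ to all of $V_g \mathbb H$ and hence to a dense subspace. Everything else is a routine verification on $\pi(C_c(G,A))$ followed by a density argument.
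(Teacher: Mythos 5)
Your proposal is correct and follows essentially the same route as the paper: compression by the isometry onto the copy of $\mathbb H$ sitting at $e$, composed with $\pi_0$, with faithfulness obtained by conjugating with the unitaries $\lambda_g$ to propagate the vanishing of $a^{1/2}$ from $V_e\mathbb H$ to the dense span of the subspaces $\delta_g\otimes\mathbb H$. The only cosmetic difference is that you make the equivariance identity $E(\lambda_g^* a\lambda_g)=\alpha_{g^{-1}}(E(a))$ explicit where the paper states it in the equivalent form $P(\lambda_g m\lambda_g^*)=\lambda_gP(m)\lambda_g^*$.
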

\begin{proof} Define an isometry $V : \mathbb H \to l^2(G,\mathbb H)$ such that
$$
(V\eta)(g) = \begin{cases} \eta, \ g = e \\ 0, \ g \neq e .\end{cases}
$$
The adjoint $V^*: l^2(G,\mathbb H) \to \mathbb H$ is given by
$$
V^*\xi = \xi(e) .
$$
If $a : G \to A$ is a finitely supported function,
$$
V^*(\sum_{g \in G} \pi_0(a(g))\lambda_g) V = a(e),
$$
and it follows therefore that $V^*(A \rtimes_{r,\alpha}G)V \subseteq A$. Set $P(m) := \pi_0(V^*mV)$. It is straightforward to check that $P$ is a conditional expectation. To see that $P$ is faithful, note that
$$
P(\lambda_g m\lambda_g^*) = \lambda_gP(m)\lambda_g^*
$$
for all $m \in A\rtimes_{r,\alpha} G$. Thus, if $m \in (A\rtimes_{r,\alpha} G)^+$ and $P(m)=0$, we find first that $P(\lambda_g m\lambda_g^*) = 0$ for all $g \in G$, and then because $\pi_0$ is faithful, that $V^*\lambda_gm\lambda_g^*V = 0$ for all $g \in G$. Since $m$ is positive this implies that $m\lambda_g^*V \eta = 0$ for all $\eta \in \mathbb H$ and all $g \in \mathbb H$. Since $\left\{\lambda_g^*V\eta: \ g \in G, \ \eta\in \mathbb H\right\}$ spans a dense subspace in $l^2(G,\mathbb H)$ it follows that $m = 0$.
\end{proof}

The conditional expectation in Lemma \ref{31-08-23} will be referred to as \emph{the canonical conditional expectation}. 
When we suppress the representation $\pi_0$ in the notation and identify $A$ with its image $\pi_0(A)$ in $A \rtimes_{r,\alpha} G$ we see that $A \rtimes_{r,\alpha} G$ is generated by a copy of $A$ and the elements of the form $au_g, \ g \in G$, where $u$ is a unitary representation $G$ such 
\begin{equation}\label{02-09-23}
u_gau_g^* = \alpha_g(a).
\end{equation} 
Of course, the representation $u$ is the unitary representation $\lambda$, but we have changed notation because the actual formula for $u$ will rarely be important; only the relation \eqref{02-09-23}. Then 
$$
\Span \left\{au_g: \ a \in A, \ g \in G\right\}
$$
is a dense $*$-algebra in $A \rtimes_{r,\alpha} G$ and the canonical conditional expectation is the unique continuous linear map $A \rtimes_{r,\alpha} G \to A$ with the property that
$$
P(au_g) = \begin{cases} a, & \ g = e, \\ 0, & \ g \neq e.\end{cases}
$$

\begin{lemma}\label{08-09-23} Let $\{u_i\}_{i \in I}$ be a net in $A$ such that
\begin{itemize}
\item[$\cdot$] $u_i^* = u_i$ for all $i \in I$,
\item[$\cdot$] $\sup_{i \in I} \left\|u_i\right\| < \infty$, and
\item[$\cdot$] $\lim_{i \to \infty} u_i a = a$ for all $a \in A$.
\end{itemize}
It follows that $\lim_{i\to \infty} u_ix = \lim_{i\to \infty} xu_i =x$ for all $x \in A\rtimes_{r,\alpha} G$.
\end{lemma}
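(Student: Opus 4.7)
The plan is to reduce the statement to elements of the form $au_g$ (where $u_g$ is the canonical unitary implementing $\alpha_g$ inside $A \rtimes_{r,\alpha} G$), and then extend by boundedness and density. Set $M := \sup_{i \in I}\|u_i\|$, which is finite by hypothesis.

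First I would prove only the left convergence $\lim_{i \to \infty} u_i x = x$ for every $x \in A \rtimes_{r,\alpha} G$: the right convergence follows immediately by taking adjoints, since $u_i^* = u_i$ gives $(xu_i)^* = u_i x^*$, and $x^*$ is another element of $A \rtimes_{r,\alpha} G$. So attention focuses on left multiplication.

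For a single generator $x = a u_g$ with $a \in A$ and $g \in G$, we simply compute $u_i (a u_g) - a u_g = (u_i a - a) u_g$, so $\|u_i x - x\| \leq \|u_i a - a\| \to 0$ by assumption. By linearity this extends to the dense $*$-subalgebra $\Span\{au_g : a \in A, g \in G\}$. The extension to all of $A \rtimes_{r,\alpha} G$ is a standard $\epsilon/3$ argument: given $x$ and $\epsilon > 0$, pick a finite sum $y = \sum_{g \in F} a_g u_g$ with $\|x - y\| < \epsilon/(M+1)$; then
\[
\|u_i x - x\| \;\leq\; \|u_i\|\,\|x - y\| + \|u_i y - y\| + \|y - x\| \;\leq\; \epsilon + \|u_i y - y\|,
\]
and the remaining term tends to $0$.

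There is no real obstacle here; the only point that could conceivably look subtle is whether the boundedness hypothesis $\sup_i \|u_i\| < \infty$ is really needed, but the above $\epsilon/3$ argument makes clear that it is exactly what allows the passage from the dense $*$-subalgebra to the full crossed product. (Note that in the alternative direct approach to right multiplication one would write $a u_g u_i = a\,\alpha_g(u_i)\,u_g$ and observe that $\alpha_g(u_i)$ is again a bounded self-adjoint net with $\alpha_g(u_i) b \to b$ for every $b \in A$, since applying $\alpha_g^{-1}$ gives $u_i \alpha_g^{-1}(b) \to \alpha_g^{-1}(b)$; but this is rendered unnecessary by the adjoint trick above.)
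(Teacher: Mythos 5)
Your proof is correct and follows essentially the same route as the paper's: approximate $x$ by a finite sum $\sum_g a_g u_g$, use the boundedness of the net to control the approximation error, verify convergence on the generators, and obtain the right-multiplication statement by applying the left-multiplication statement to $x^*$ and taking adjoints. Nothing to add.
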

\begin{proof} Let $\epsilon > 0$. There is a finitely supported function $a : G \to A$ such that 
$$
\left( \sup_{i \in I} \left\|u_i\right\| +1\right) \left\|x - \sum_{g \in G} a(g)u_g\right\| \leq \epsilon .
$$
Since $\lim_{i \to \infty} u_ia(g) = a(g)$ for all $g \in G$ there is an $i_0 \in I$ such that 
$$
\left\|u_i \sum_{g \in G} a(g)u_g - \sum_{g \in G} a(g)u_g\right\| \leq \epsilon
$$
when $i_0 \leq i$. It follows that $\left\|u_ix -x\right\| \leq 2\epsilon$ when $i_0 \leq i$, showing that $\lim_{i\to \infty} u_ix  =x$. Applying this conclusion to $x^*$ we find that $\lim_{i\to \infty} xu_i = \left(\lim_{i\to \infty} u_ix^*\right)^* = x$.
\end{proof}

We consider now a group homomorphism $\theta : G \to \mathbb R$. This gives rise to the flow $\gamma^\theta$ on $A \rtimes_{r,\alpha}G$ defined such that
$$
\gamma^\theta_t \left( \sum_{g \in G} a(g) u_g\right) = \sum_{g \in G} e^{i \theta(g) t} a(g) u_g  
$$
for every finitely supported function $a : G \to A$. This is a special case of the flows defined in \eqref{30-01-22e}. We aim now to determine the KMS weights of $\gamma^\theta$. Some information can be obtained from Section \ref{07-08-22e} and Section \ref{13-04-23}, but by using that the group $G$ is discrete we can get a more complete picture.

Set
$$
\ker \theta := \left\{g \in G: \ \theta(g) = 0 \right\}
$$
which is a normal subgroup of $G$. The elements $u_g , g \in \ker \theta$, and $A$ generate together a $C^*$-subalgebra of $A\rtimes_{r,\alpha} G$ which we denote by
$$
A \rtimes_{r,\alpha} \ker \theta
$$
since it is $*$-isomorphic to the reduced crossed product of $A$ by the action of $\ker \theta$ obtained from $\alpha$ by restriction.

\begin{lemma}\label{31-08-22} $A \rtimes_{r,\alpha} \ker \theta$ is equal to the fixed point algebra $(A \rtimes_{r,\alpha} G)^{\gamma^\theta}$ of $\gamma^\theta$, and there is a conditional expectation 
$$
Q : A \rtimes_{r,\alpha} G \to A \rtimes_{r,\alpha} \ker \theta
$$
such that
$$
Q(x) = \lim_{R \to \infty} \frac{1}{R} \int_0^R \gamma^\theta_t(x) \ \mathrm dt .
$$
for all $x \in  A \rtimes_{r,\alpha} G$.
\end{lemma}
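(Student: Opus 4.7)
The plan is to construct $Q$ as the pointwise limit of the bounded operators
$$Q_R(x) := \frac{1}{R}\int_0^R \gamma^\theta_t(x)\ \mathrm dt,$$
and then use $Q$ to identify the fixed-point algebra. Each $Q_R$ is a well-defined positive linear contraction on $A \rtimes_{r,\alpha} G$ because $t \mapsto \gamma^\theta_t(x)$ is norm-continuous and $\gamma^\theta_t$ is an isometric $*$-homomorphism; existence of the integral follows from the discussion in Appendix \ref{integration}. Moreover, $Q_R$ commutes with right multiplication by fixed elements, so that $Q_R(xy) = Q_R(x)y$ for $y \in (A\rtimes_{r,\alpha} G)^{\gamma^\theta}$.

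First I would establish convergence of $Q_R(x)$ on the dense $*$-subalgebra $S \subseteq A\rtimes_{r,\alpha} G$ of finite sums $x = \sum_{g \in F} a(g)u_g$. A direct computation using \eqref{28-09-23x}-style reasoning gives
$$Q_R\!\left(\sum_{g \in F} a(g)u_g\right) = \sum_{g \in F} \left(\frac{1}{R}\int_0^R e^{i\theta(g)t}\ \mathrm dt\right) a(g)u_g,$$
and for $g \notin \ker\theta$ the scalar coefficient is $(e^{i\theta(g)R}-1)/(iR\theta(g))$, which is bounded by $2/(R|\theta(g)|)$ and tends to $0$. Hence
$$\lim_{R \to \infty} Q_R(x) = \sum_{g \in F \cap \ker \theta} a(g)u_g \ \in \ A \rtimes_{r,\alpha} \ker\theta$$
for every $x \in S$. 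A standard three-epsilon argument, using that $\sup_R \|Q_R\| \leq 1$ and that $S$ is norm-dense, now shows that $Q_R(x)$ converges in norm for every $x \in A\rtimes_{r,\alpha} G$, and the limit $Q(x)$ lies in the closed subalgebra $A \rtimes_{r,\alpha} \ker\theta$.

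Next I would verify that $Q$ is a conditional expectation in the sense of Definition \ref{20-12-22a}. Linearity, positivity, and $\|Q\| \leq 1$ pass to the limit from the corresponding properties of the $Q_R$. If $y \in (A\rtimes_{r,\alpha} G)^{\gamma^\theta}$ then $Q_R(y) = y$ for every $R$, so $Q(y) = y$; in particular the restriction of $Q$ to the fixed-point algebra is the identity, forcing $\|Q\| = 1$, and at the same time giving the module property $Q(xy) = Q(x)y$ for $y$ fixed, by passage to the limit from $Q_R(xy) = Q_R(x)y$. To see that the range of $Q$ equals the fixed-point algebra (not merely lies in it), note that for each $s \in \mathbb R$,
$$\gamma^\theta_s\bigl(Q_R(x)\bigr) - Q_R(x) = \frac{1}{R}\int_R^{R+s}\gamma^\theta_t(x)\ \mathrm dt - \frac{1}{R}\int_0^{s}\gamma^\theta_t(x)\ \mathrm dt,$$
whose norm is at most $2|s|\|x\|/R$ and hence vanishes as $R \to \infty$; therefore $\gamma^\theta_s(Q(x)) = Q(x)$ for all $s$, i.e. $Q(x) \in (A\rtimes_{r,\alpha} G)^{\gamma^\theta}$.

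Finally, the identification of the fixed-point algebra with $A \rtimes_{r,\alpha} \ker \theta$ follows by combining what we have. The inclusion $A\rtimes_{r,\alpha}\ker\theta \subseteq (A\rtimes_{r,\alpha} G)^{\gamma^\theta}$ is immediate: elements $au_g$ with $g \in \ker\theta$ are pointwise fixed by $\gamma^\theta$ because $\theta(g) = 0$, and by linearity, continuity, and density this extends to all of $A\rtimes_{r,\alpha}\ker\theta$. Conversely, if $x \in (A\rtimes_{r,\alpha} G)^{\gamma^\theta}$ then $x = Q(x) \in A \rtimes_{r,\alpha}\ker\theta$ by what we just proved. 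This also shows $Q$ is the claimed conditional expectation $A\rtimes_{r,\alpha} G \to A\rtimes_{r,\alpha}\ker\theta$. The only mildly delicate step is the norm convergence of $Q_R(x)$ for general $x$, which is where the uniform bound $\|Q_R\| \leq 1$ together with the density of $S$ does the work.
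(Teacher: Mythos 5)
Your proposal is correct and follows essentially the same route as the paper: compute $\frac{1}{R}\int_0^R\gamma^\theta_t(au_g)\,\mathrm dt$ explicitly on finite sums $\sum_g a(g)u_g$, observe the coefficient $(e^{i\theta(g)R}-1)/(iR\theta(g))$ vanishes for $g\notin\ker\theta$, and extend to all of $A\rtimes_{r,\alpha}G$ by a density argument using the uniform bound $\|Q_R\|\leq 1$, after which the identification of the fixed-point algebra follows from $Q(y)=y$ for fixed $y$. The only cosmetic difference is that you verify $Q(x)\in(A\rtimes_{r,\alpha}G)^{\gamma^\theta}$ via the shift estimate $\|\gamma^\theta_s(Q_R(x))-Q_R(x)\|\leq 2|s|\|x\|/R$, whereas the paper simply notes the limit already lands in the closed subalgebra $A\rtimes_{r,\alpha}\ker\theta$; both are fine.
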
 
\begin{proof} Let $a \in A$. When $g \notin \ker \theta$, 
$$
 \frac{1}{R} \int_0^R \gamma^\theta_t(au_g) \ \mathrm dt = \frac{1}{iR\theta(g)} (e^{i\theta(g)R} - 1)au_g
 $$
while $\frac{1}{R} \int_0^R \gamma^\theta_t(au_g) \ \mathrm d t = au_g$ when $g \in \ker \theta$. It follows therefore that
$$
\lim_{R \to \infty} \frac{1}{R} \int_0^R \gamma^\theta_t( \sum_{g \in G} a(g) u_g) \ \mathrm dt = \sum_{g \in \ker \theta} a(g)u_g
$$
when $a : G \to A$ is finitely supported. Let $x \in A \rtimes_{r,\alpha} G$ and $\epsilon > 0$ be given. Choose a finitely supported function $a: G \to A$ such that
$$
\left\| x - \sum_{g \in G}a(g)u_g \right\| \leq \epsilon .
$$
There is an $N \in \mathbb N$ such that
$$
 \left\|\frac{1}{n} \int_0^n \gamma^\theta_t( \sum_{g \in G} a(g) u_g) \ \mathrm dt -  \frac{1}{m} \int_0^m \gamma^\theta_t( \sum_{g \in G} a(g) u_g) \ \mathrm dt\right\| \leq \epsilon
 $$
 when $n,m \geq N$. Then
 \begin{align*}
 & \left\|\frac{1}{n} \int_0^n \gamma^\theta_t( x) \ \mathrm dt -  \frac{1}{m} \int_0^m \gamma^\theta_t(x) \ \mathrm dt\right\| \\
 & \leq  \left\|\frac{1}{n} \int_0^n \gamma^\theta_t( \sum_{g \in G} a(g) u_g) \ \mathrm dt -  \frac{1}{m} \int_0^m \gamma^\theta_t( \sum_{g \in G} a(g) u_g) \ \mathrm dt\right\| + 2\epsilon  \leq 3 \epsilon
 \end{align*}
when $n,m \geq N$. This shows that $\left\{\frac{1}{n} \int_0^n \gamma^\theta_t( x) \ \mathrm dt\right\}_{n \in \mathbb N}$ is Cauchy and hence convergent in $A \rtimes_{r,\alpha} G$. Note that the limit lies in $A \rtimes_{r,\alpha} \ker \theta$ since this is true when $x$ is replaced by $\sum_{g \in G} a(g)u_g$. Because
$$
\left\| \frac{1}{R} \int_0^R \gamma^\theta_t(x) \ \mathrm d t - \frac{1}{n}  \int_0^R \gamma^\theta_t(x) \ \mathrm d t \right\| \leq \frac{2\|x\|}{n}
$$
when $n \leq R \leq n+1$, it follows that
$$
\lim_{R \to \infty} \frac{1}{R} \int_0^R \gamma^\theta_t(x) \ \mathrm dt  = \lim_{n \to \infty} \frac{1}{n} \int_0^n \gamma^\theta_t(x) \ \mathrm dt  .
$$
The resulting map
$$
Q(x) := \lim_{R \to \infty} \frac{1}{R} \int_0^R \gamma^\theta_t(x) \ \mathrm dt 
$$
is easily seen to be a conditional expectation onto $A \rtimes_{r,\alpha} \ker \theta$. Since $Q(x) = x$ when $x \in (A \rtimes_{r,\alpha} G)^{\gamma^\theta}$ it follows that $(A \rtimes_{r,\alpha} G)^{\gamma^\theta} \subseteq A \rtimes_{r,\alpha} \ker \theta$. This completes the proof because the reverse inclusion is trivial. 
\end{proof}

When $\theta$ is injective and hence $\ker \theta =0$, the conditional expectation of Lemma \ref{31-08-22} is the canonical conditional expectation we introduced above.

We define an action $\alpha'$ of $G$ on $A\rtimes_{r,\alpha} \ker \theta$ such that
$$
\alpha'_g(x) := u_g xu_g^* .
$$

\begin{lemma}\label{31-08-22c} Let $\tau$ be a lower semi-continuous trace on $A\rtimes_{r,\alpha} \ker \theta$ such that $\tau \circ \alpha'_g = e^{-\beta \theta(g)}\tau$ for all $g \in G$. Then $\tau \circ Q$ is a $\beta$-KMS weight for $\gamma^\theta$.
\end{lemma}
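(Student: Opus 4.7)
The plan is to verify in turn that $\tau\circ Q$ is a nonzero, densely defined, $\gamma^\theta$-invariant lower semi-continuous weight on $A\rtimes_{r,\alpha}G$, and then to check one of the equivalent KMS conditions from Theorem \ref{24-11-21d}/Theorem \ref{12-12-13} on a suitable dense $*$-subalgebra. Lower semi-continuity of $\tau\circ Q$ will follow from norm-continuity of the conditional expectation $Q$ together with lower semi-continuity of $\tau$; non-triviality is immediate because $Q$ restricts to the identity on $A\rtimes_{r,\alpha}\ker\theta$, where $\tau\neq 0$; and $\gamma^\theta$-invariance reduces to the identity $Q\circ\gamma^\theta_t=Q$, which follows from the averaging formula of Lemma \ref{31-08-22} by a standard shift-of-integration estimate applied to $\frac{1}{R}\int_0^R\gamma^\theta_{s+t}(x)\,\mathrm ds$.

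For density and for checking the KMS identity I will work with the subspace
$$
S:=\Span\bigl\{xu_g : x\in\mathcal N_\tau\cap\mathcal N_\tau^*,\; g\in G\bigr\},
$$
where $\mathcal N_\tau\subseteq A\rtimes_{r,\alpha}\ker\theta$. Since $\tau$ is a trace, $\mathcal N_\tau$ is a two-sided ideal, so $S$ is a $*$-subalgebra, and it is norm-dense in $A\rtimes_{r,\alpha}G$ because $\mathcal N_\tau$ is dense in $A\rtimes_{r,\alpha}\ker\theta$ and the latter together with $\{u_g\}_{g\in G}$ generates $A\rtimes_{r,\alpha}G$. For a generator $xu_g\in S$, $(xu_g)^*(xu_g)=\alpha'_{g^{-1}}(x^*x)\in A\rtimes_{r,\alpha}\ker\theta$, and the scaling hypothesis gives $\tau(Q((xu_g)^*(xu_g)))=e^{\beta\theta(g)}\tau(x^*x)<\infty$. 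Hence $S\subseteq\mathcal N_{\tau\circ Q}$ and $\tau\circ Q$ is densely defined. Each generator is also entire analytic with $\gamma^\theta_z(xu_g)=e^{i\theta(g)z}xu_g\in S$, so $S\subseteq\mathcal M^{\gamma^\theta}_{\tau\circ Q}$. On a pair $a=xu_g$, $b=yu_h$ in $S$, expanding $b^*=\alpha'_{h^{-1}}(y^*)u_{h^{-1}}$ and using $\gamma^\theta_{i\beta}(b^*)=e^{\beta\theta(h)}b^*$, I will show that both $(\tau\circ Q)(b^*a)$ and $(\tau\circ Q)(a\gamma^\theta_{i\beta}(b^*))$ vanish unless $\theta(g)=\theta(h)$, and that when $\theta(g)=\theta(h)$ the required identity reduces, via the scaling property $\tau\circ\alpha'_{h^{-1}}=e^{\beta\theta(h)}\tau$, to the cyclic identity $\tau(y^*xu_{gh^{-1}})=\tau(xu_{gh^{-1}}y^*)$ inside $A\rtimes_{r,\alpha}\ker\theta$, which holds because $\tau$ is a trace. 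Extending by linearity, condition (3) of Theorem \ref{24-11-21d} then holds on $S\times S$.

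The main obstacle is to deduce the KMS condition on all of $\mathcal M^{\gamma^\theta}_{\tau\circ Q}$, for which I will invoke Theorem \ref{12-12-13}. This requires showing that $S$ is a core for $\Lambda_{\tau\circ Q}$ in the sense that every element of $\mathcal M^{\gamma^\theta}_{\tau\circ Q}$ can be approximated by a sequence in $S$ simultaneously in norm and in the GNS seminorm $\|\Lambda_{\tau\circ Q}(\cdot)\|$. My strategy will be to combine the smoothing operators $R_k$ for $\gamma^\theta$ (which stay inside $\mathcal M^{\gamma^\theta}_{\tau\circ Q}$ by Lemma \ref{07-12-21} and converge in both seminorms by Lemmas \ref{24-11-21} and \ref{24-11-21g}) with the approximate unit produced by Lemma \ref{09-02-22x} applied to $\tau\circ Q$, using multiplication by the multipliers $u_g$ to isolate the contribution of each coset in $G/\ker\theta$ and then approximating the resulting $A\rtimes_{r,\alpha}\ker\theta$-valued coefficients by elements of $\mathcal N_\tau\cap\mathcal N_\tau^*$ via the approximate unit for $\tau$ given by Lemma \ref{16-12-21a}. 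Once this approximation is in hand, Theorem \ref{12-12-13} immediately yields that $\tau\circ Q$ is a $\beta$-KMS weight for $\gamma^\theta$.
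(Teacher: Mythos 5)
Your overall architecture coincides with the paper's: take $S$ to be the span of elements $xu_g$, show $S\subseteq\mathcal M^{\gamma^\theta}_{\tau\circ Q}$, verify condition (3) on $S$ by reducing, via the scaling hypothesis and $Q$ killing the terms with $gh^{-1}\notin\ker\theta$, to the cyclic identity for the trace $\tau$ inside $A\rtimes_{r,\alpha}\ker\theta$, and then invoke Theorem \ref{12-12-13}. That algebraic reduction is correct. There are, however, two problems. The first is a fixable slip: with coefficients $x\in\mathcal N_\tau\cap\mathcal N_\tau^*$ you only get $xu_g\in\mathcal N_{\tau\circ Q}\cap\mathcal N_{\tau\circ Q}^*$, not $xu_g\in\mathcal M_{\tau\circ Q}$, and membership in $\mathcal M_{\tau\circ Q}=\Span\,\mathcal N_{\tau\circ Q}^*\mathcal N_{\tau\circ Q}$ is part of the definition of $\mathcal M^{\gamma^\theta}_{\tau\circ Q}$; entire analyticity does not supply it (think Hilbert--Schmidt versus trace class). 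You must take $x\in\mathcal M_\tau$, as the paper does, writing $bu_g=\sqrt{b}\,(\sqrt{b}\,u_g)$ with $\sqrt{b}\in\mathcal N_{\tau\circ Q}^*$ and $\sqrt{b}\,u_g\in\mathcal N_{\tau\circ Q}$.

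The second problem is the genuine gap: the core property of $S$ for $\Lambda_{\tau\circ Q}$, which you correctly identify as the main obstacle, is not established by the mechanism you propose. Smoothing by $R_k$ does not ``isolate the contribution of each coset in $G/\ker\theta$'' --- for a general element of $\mathcal N_{\tau\circ Q}$ there is no bounded projection onto a fixed coset, and even for finite sums the smoothing operators merely multiply $a_gu_g$ by the scalar $e^{-\theta(g)^2/(4k)}$. More importantly, approximating the coefficients in operator norm gives no control of the GNS seminorm, since $\tau$ is unbounded. The step that actually works, and which is absent from your plan, is to cut on the right by an approximate unit $v_m=e_m^2$ with $e_m\in\mathcal N_\tau$ from Lemma \ref{16-12-21a}: one first uses the trace property of $\tau$ and lower semi-continuity to show $\lim_m\Lambda_{\tau\circ Q}(xv_m)=\Lambda_{\tau\circ Q}(x)$, and then exploits the inequality $\|\Lambda_{\tau\circ Q}(yv_m)-\Lambda_{\tau\circ Q}(xv_m)\|^2=\tau\bigl(v_mQ((y-x)^*(y-x))v_m\bigr)\le\|y-x\|^2\,\tau(v_m^2)$, which converts a norm approximation of $x$ by a finite sum $y=\sum_{g}u_ga_g$ into a GNS approximation, and finally checks that $yv_m=\sum_g u_g(a_gv_m)u_g^*u_g$ lies in $S$ because $a_gv_m\in\mathcal N_\tau^*\mathcal N_\tau\subseteq\mathcal M_\tau$ and $u_g\mathcal M_\tau u_g^*=\mathcal M_\tau$ by the scaling hypothesis. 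Without this (or an equivalent) quantitative device your argument does not close.
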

\begin{proof} For the proof we will use Theorem \ref{12-12-13} with
$$
S:= \left\{\sum_{g \in F} b_gu_g: \ F \subseteq G \ \text{finite}, \ b_g \in \mathcal M_\tau \right\} ,
$$ 
and the proof consists of a check that $\tau \circ Q$ and $S$ have the properties required in that theorem. To see that $S \subseteq \mathcal M_{\tau\circ Q}^{\gamma^\theta}$ it suffices to show that $bu_g \in\mathcal M_{\tau\circ Q}^{\gamma^\theta}$ when $b \in \mathcal M_\tau^+$ and $g \in G$. For this note that $\sqrt{b} \in  \mathcal N_{\tau \circ Q}^*$ since $\tau\circ Q(\sqrt{b}\sqrt{b}^*) = \tau(b) < \infty$ and that $\sqrt{b}u_g \in \mathcal N_{\tau \circ Q}$ since $\tau \circ Q((\sqrt{b}u_g)^*\sqrt{b}u_g) = \tau(u_g^* bu_g) = e^{\beta \theta(g)}\tau(b) < \infty$. Hence $bu_g = \sqrt{b}\sqrt{b}u_g \in \mathcal N_{\tau \circ Q}^*\mathcal N_{\tau \circ Q} \subseteq \mathcal M_{\tau \circ Q}$. Note then that
$$
\sqrt{\frac{k}{\pi}} \int_\mathbb Re^{-k t^2}\gamma^\theta_t(bu_g) \ \mathrm d g = \left( \sqrt{\frac{k}{\pi}}\int_\mathbb R e^{-k t^2}  e^{i\theta(g) t} \ \mathrm d t\right) bu_g .
$$
By applying Lemma \ref{24-11-21} with $X = \mathbb C$ and $\sigma_t(c) = e^{i\theta(g)t}c$ it follows that 
$$
\lim_{k \to \infty}  \sqrt{\frac{k}{\pi}} \int_\mathbb R e^{-k t^2}  e^{i\theta(g) t} \ \mathrm d t = 1 .
$$ 
Therefore, for some $k$ large,
$$
bu_g = \lambda \sqrt{\frac{k}{\pi}}\int_\mathbb R  e^{-k t^2}\gamma^\theta_t(bu_g) \ \mathrm d t .
$$
for some $\lambda \in \mathbb C \backslash \{0\}$.
Since $bu_g \in \mathcal M_{\tau \circ Q}$ it follows then from Lemma \ref{07-12-21} that $bu_g \in \mathcal M_{\tau \circ Q}^\sigma$, and hence that $S \subseteq \mathcal M_{\tau\circ Q}^{\gamma^\theta}$.

Let $\{e_m\}_{m \in I}$ be the net in $A$ from Lemma \ref{16-12-21a} applied with $D(\Lambda) = \mathcal N_\tau$. Set $v_m := e_m^2$. Then $0 \leq v_m \leq 1$, $\tau(v_m) < \infty$ and $\{v_m\}_{m \in  I}$ is an approximate unit for $A\rtimes_{r,\alpha} G$ by Lemma \ref{08-09-23}. Let $b \in (A\rtimes_{r,\alpha} G)^+$. Then $\tau \circ Q(v_mbv_m) = \tau (v_mQ(b)v_m) \leq \|Q(b)\|\|v_m\| \tau(v_m) < \infty$ and we conclude therefore that $\tau \circ Q$ is densely defined. It is non-zero since $\tau$ is. Since
$$
Q\circ \gamma^\theta_s (x) = \lim_{R \to \infty} \frac{1}{R} \int_0^R \gamma^\theta_{t+s}(x) \ \mathrm dt = \lim_{R \to \infty} \frac{1}{R} \int_s^{R+s} \gamma^\theta_{t}(x) \ \mathrm dt = Q(x) ,
$$
we see that $Q$ is $\gamma^\theta$-invariant and hence so is $\tau \circ Q$. 

Let $x \in \mathcal N_{\tau \circ Q}$ and $\epsilon > 0$ be given. Using that $\tau$ is trace, that $v_m \in \mathcal M_\tau$ and  $Q(x^*x) \in \mathcal M_\tau$, we find that
\begin{align*}
& \tau \circ Q((xv_m-x)^*(xv_m-x)) \\
&= \tau(Q(x^*x)) + \tau(v_mQ(x^*x)v_m) - \tau(v_mQ(x^*x)) - \tau(Q(x^*x)v_m) \\
& = \tau(Q(x^*x)) + \tau(\sqrt{Q(x^*x)}v_m^2\sqrt{Q(x^*x)}) - \tau(v_mQ(x^*x)) - \tau(Q(x^*x)v_m).
\end{align*}
Since $\sqrt{Q(x^*x)}$ and $v_m \sqrt{Q(x^*x)}$ are both elements of $\mathcal N_\tau = \mathcal N_\tau^*$ it follows from Lemma \ref{24-06-22b} (applied with $\beta = 0$) that 
$$
\tau(v_mQ(x^*x)) = \tau(Q(x^*x)v_m) = \tau(\sqrt{v_m}Q(x^*x)\sqrt{v_m}) .
$$
Hence
\begin{align*}
& \tau \circ Q((xv_m-x)^*(xv_m-x)) \\
& =\tau(Q(x^*x)) + \tau\left(\sqrt{Q(x^*x)}v_m^2\sqrt{Q(x^*x)}\right) - 2\tau\left(\sqrt{Q(x^*x)}v_m\sqrt{Q(x^*x)}\right) .
\end{align*}
The lower semi-continuity of $\tau$ implies therefore that 
$$
\lim_{m \to \infty} \tau \circ Q((xv_m-x)^*(xv_m-x)) = 0;
$$ 
i.e. $\lim_{m \to \infty} \Lambda_{\tau \circ Q}(xv_m) = \Lambda_{\tau \circ Q}(x)$. We can therefore find $m \in I$ such that $\|\Lambda_{\tau \circ Q}(xv_m) - \Lambda_{\tau \circ Q}(x)
\| \leq \epsilon$ and $\left\|xv_m -x\right\| \leq \epsilon$. Choose $F \subseteq G$ finite and elements $a_g \in \mathcal N_\tau^* = \mathcal N_\tau, \ g \in F$, such that $\left\|x-y\right\| \leq \epsilon$ and 
$$
\left\|x-y\right\|^2 \tau(v_m^2) \leq \epsilon^2 
$$
when we set
$$
y:= \sum_{g \in F} u_ga_g .
$$
Then
\begin{align*}
&\left\| \Lambda_{\tau \circ Q}(yv_m) -  \Lambda_{\tau \circ Q}(xv_m)\right\|^2 = \tau(v_mQ((y-x)^*(y-x))v_m) \\
& \leq \| y-x\|^2 \tau (v_m^2) \leq \epsilon^2 . 
\end{align*}
and $\|yv_m - xv_m\|\leq \epsilon$. Hence
$$
\|\Lambda_{\tau \circ Q}(yv_m) - \Lambda_{\tau \circ Q}(x)
\| \leq 2 \epsilon
$$
and
$$
\|yv_m - x\| \leq 2 \epsilon .
$$
Note that 
$$
yv_m = \sum_{g \in F} u_g a_g v_m = \sum_{g \in F} u_g a_g v_m u_g^* u_g 
$$ 
and that $a_gv_m \in \mathcal N_\tau^* \mathcal N_\tau \subseteq \mathcal M_\tau$. The scaling property of $\tau$ ensures that $u_g \mathcal M_\tau u_g^* \in \mathcal M_\tau$ for all $g$, and we conclude therefore that $u_ga_gv_mu_g^* \in \mathcal M_\tau$ for all $g$ and hence also that $yv_m \in S$.

 Thanks to Theorem \ref{12-12-13} we can now complete the proof by showing that
$$
\tau \circ  Q(bu_h \gamma^\theta_{i\beta}(au_g)) =\tau \circ Q(au_gbu_h)
$$
when $a,b \in \mathcal M_\tau$ and $g,h \in G$. To this end note that $Q(bu_h \gamma^\theta_{i\beta}(au_g)) = Q(au_gbu_h) = 0$ unless $gh \in \ker \theta$, and when $gh \in \ker \theta$ we find that 
$$
Q(bu_h \gamma^\theta_{i\beta}(au_g)) = e^{-\beta \theta(g)}bu_hau_g .
$$ 
Since $\tau\circ \alpha'_g = e^{-\beta \theta(g)}\tau$ by assumption we find that
$$\tau \circ  Q(bu_h \gamma^\theta_{i\beta}(au_g)) = e^{-\beta \theta(g)}\tau(bu_hau_g)= \tau (u_g bu_ha).
$$
To continue the calculation note that $u_gbu_g^* \in \mathcal M_\tau$ thanks to the assumed scaling property of $\tau$ and that therefore $u_gbu_h = u_gbu_g^*u_{gh} \in S \subseteq \mathcal N_{\tau \circ Q}$. Since $u_gbu_h = u_gbu_g^*u_{gh} \in  A\rtimes_{r,\alpha} \ker \theta$ it follows that $u_gbu_h \in \mathcal N_\tau$. Since also $a \in \mathcal N_\tau = \mathcal N_\tau^*$ it follows from Lemma \ref{24-06-22b} (again applied with $\beta =0$) that $ \tau (u_g bu_ha) = \tau(au_gbu_h)= \tau \circ Q(au_gbu_h)$.
\end{proof}

The \emph{fixed point algebra} $D^\sigma$ of the flow $\sigma$ on the $C^*$-algebra $D$ is the set
$$
D^{\sigma} := \left\{ a \in D: \ \sigma_t(a) = a \ \ \forall t \in \mathbb R \right\} .
$$
Note that $D^\sigma$ is a $C^*$-subalgebra of $D$. While $D^\sigma$ always contains
 the unit when $D$ is unital, in the non-unital case the fixed point algebra may consist only of $0$.

\begin{lemma}\label{13-12-21axx} Let $\sigma$ be a flow on the $C^*$-algebra $D$. Assume that
\begin{enumerate}
\item[(1)] the fixed point algebra $D^\sigma$ of $\sigma$ contains an approximate unit for $D$, and
\item[(2)] the limit $Q'(a) := \lim_{R \to \infty} \frac{1}{R} \int_0^R \sigma_t(a) \ \mathrm dt$
exists in norm for all $a \in D$. 
\end{enumerate}
Let $\phi$ be a $\beta$-KMS weight for $\sigma$. Then
$\phi(a) = \phi \circ Q'(a)$ for all $a \in D^+$.
\end{lemma}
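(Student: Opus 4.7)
The conditional expectation structure of $Q'$ is straightforward: positivity, contractivity, idempotence on $D^\sigma$, and the module property $Q'(ax) = Q'(a)x$, $Q'(xa) = xQ'(a)$ for $x \in D^\sigma$ and $a \in D$, all follow directly from the defining limit and the fact that $\sigma_t(x) = x$ for $x \in D^\sigma$.

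The heart of the argument is to identify $Q'$ on $D$ with a $\phi''$-preserving conditional expectation on $N := \pi_\phi(D)''$. Using the normal extension $\phi''$ of $\phi$ to $N$ from Theorem \ref{05-03-22x}—satisfying $\phi'' \circ \pi_\phi = \phi$ by Lemma \ref{03-03-22fx}—together with the $\sigma$-weakly continuous extension $\sigma''_t = \Ad U^\phi_t$ leaving $\phi''$ invariant, modular theory supplies a normal $\phi''$-preserving conditional expectation $E : N \to N^{\sigma''}$ onto the $\sigma''$-fixed point algebra. When $\beta \neq 0$, $\sigma''_{-\beta t}$ is the modular automorphism group of $\phi''$ (Theorem \ref{05-03-22x}), its fixed point algebra coincides with $N^{\sigma''}$, and Takesaki's theorem on $\phi''$-preserving conditional expectations onto the centralizer supplies $E$. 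When $\beta = 0$, $\phi''$ is a normal faithful semi-finite trace (Lemma \ref{06-10-23c}) and the tracial analog provides $E$, with the requisite semi-finiteness of $\phi''|_{N^{\sigma''}}$ supplied by the approximate unit in $D^\sigma$ from hypothesis (1).

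Next I would identify $E$ with the strong-operator ergodic mean $M(x) := \lim_R \frac{1}{R}\int_0^R \sigma''_t(x)\,\mathrm{d}t$ wherever $M$ is defined. Indeed, $M(x) \in N^{\sigma''}$, and for any $y \in \mathcal M_{\phi''} \cap N^{\sigma''}$ one has
\[ \phi''(M(x)y) \;=\; \lim_R \tfrac{1}{R}\!\int_0^R \phi''(\sigma''_t(x)y)\,\mathrm{d}t \;=\; \lim_R \tfrac{1}{R}\!\int_0^R \phi''(\sigma''_t(xy))\,\mathrm{d}t \;=\; \phi''(xy) \;=\; \phi''(E(x)y), \]
using $y = \sigma''_t(y)$ and the $\sigma''$-invariance of $\phi''$; faithfulness of $\phi''|_{N^{\sigma''}}$ then forces $M(x) = E(x)$. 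Applying this with $x = \pi_\phi(a)$ and observing that $\pi_\phi(a_R) = \frac{1}{R}\int_0^R \sigma''_t(\pi_\phi(a))\,\mathrm{d}t$ converges in norm—hence strongly—to $\pi_\phi(Q'(a))$ by hypothesis (2) (where $a_R := \frac{1}{R}\int_0^R \sigma_t(a)\,\mathrm{d}t$), uniqueness of strong limits yields the pivotal identity $E(\pi_\phi(a)) = \pi_\phi(Q'(a))$.

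The conclusion follows at once:
\[ \phi(Q'(a)) \;=\; \phi''(\pi_\phi(Q'(a))) \;=\; \phi''(E(\pi_\phi(a))) \;=\; \phi''(\pi_\phi(a)) \;=\; \phi(a). \]
The main technical obstacle is the existence and ergodic realization of $E$, especially in the tracial case $\beta = 0$ where Umegaki-type arguments demand semi-finiteness of $\phi''|_{N^{\sigma''}}$; this is precisely where hypothesis (1) that $D^\sigma$ contains an approximate unit for $D$ is indispensable, as it pushes down to an approximate unit in $N^{\sigma''}$ consisting of elements on which $\phi''$ is controlled.
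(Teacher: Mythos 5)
Your route is genuinely different from the paper's. The paper stays entirely at the $C^*$-level: from the approximate unit in $D^\sigma$ it manufactures, via Lemma \ref{19-11-22fx}, a sequence $h_n\in D^\sigma$ with $h_{n+1}h_n=h_n$, invokes Proposition \ref{02-12-21x} to get $\phi(h_n)<\infty$, writes $\phi=\sum_n\phi(\delta_n\,\cdot\,\delta_n)$ with $\delta_n=\sqrt{h_n-h_{n-1}}$ on the hereditary subalgebra on which these $h_n$ act as an approximate unit, and then commutes each of the resulting bounded $\sigma$-invariant functionals with the norm-convergent ergodic average. You instead pass to $N=\pi_\phi(D)''$ and rest everything on Takesaki's theorem producing a $\phi''$-preserving normal conditional expectation $E:N\to N^{\sigma''}$, which you then identify with the ergodic mean on $\pi_\phi(D)$. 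The identification $E(\pi_\phi(a))=\pi_\phi(Q'(a))$ and the closing chain of equalities are sound, and for $\beta\neq 0$ the hypothesis of Takesaki's theorem is indeed available, since $N^{\sigma''}$ is the centralizer of $\phi''$ by Theorem \ref{05-03-22x} and the restriction of a normal faithful semi-finite weight to its centralizer is automatically semi-finite.

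The gap is the semi-finiteness of $\phi''|_{N^{\sigma''}}$ in the case $\beta=0$, which is exactly the hypothesis of Takesaki's theorem that does not come for free there. You assert that hypothesis (1) ``pushes down to an approximate unit in $N^{\sigma''}$ consisting of elements on which $\phi''$ is controlled'', but an approximate unit $\{u_i\}\subseteq D^\sigma$ gives no control whatsoever on $\phi(u_i)$: nothing in the hypotheses prevents $\phi(u_i)=\infty$ for every $i$, so the $\pi_\phi(u_i)$ need not lie in $\mathcal M_{\phi''}$. To extract $\sigma$-fixed elements of finite weight one must do the work the paper does: pass to a sequence $h_n\in D^\sigma$ with $h_{n+1}h_n=h_n$ (Lemma \ref{19-11-22fx}) and apply Proposition \ref{02-12-21x} to conclude $\phi(h_n)<\infty$. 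Even then these $h_n$ form an approximate unit only for a hereditary subalgebra containing the given element $a$, not for all of $D$, so upgrading them to semi-finiteness of $\phi''|_{N^{\sigma''}}$ on all of $N$ needs a further localization argument. As written, the existence of $E$ --- the pivot of your proof --- is not established.
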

\begin{proof} Let $a \in D^+$. We want to show that $\phi(a) = \phi \circ Q'(a)$. Consider first the case $\phi(a) =0$. Since $\phi$ is lower semi-continuous,
$$
\phi \circ Q'(a) \leq \liminf_n \phi\left( \frac{1}{n} \int_0^n \sigma_t(a) \ \mathrm d t \right) .
$$
Since $\phi(\sigma_t(a)) = 0$ for all $t \in \mathbb R$ it follows that $\phi(I) =0$ for every Riemann sum approximating the integral $ \int_0^n \sigma_t(a) \ \mathrm d t $ and then the lower semi-continuity of $\phi$ shows that $\phi\left( \frac{1}{n} \int_0^n \sigma_t(a) \ \mathrm d t \right) = 0$ for each $n$. Hence $\phi \circ Q'(a) =0$, establishing the desired conclusion when $\phi(a) =0$. Assume therefore now that $\phi(a) > 0$. It follows from condition (1) that $D^\sigma$ contains a sequence $\{u_n\}$ such that $\lim_{n \to \infty} u_n a = a$. Let $E$ be the $C^*$-algebra generated by $\{u_n : \ n \in \mathbb N\}$. This is a separable $C^*$-subalgebra of $D^\sigma$ and it follows therefore from Lemma \ref{02-01-23x} and Lemma \ref{19-11-22fx} in Appendix \ref{AppD} that there is a sequence $\{h_n\}_{n=0}^\infty$ in $E$ such that $0 \leq h_n  \leq 1$ and $h_{n+1}h_n = h_n$ for all $n$ and $\lim_{n \to \infty} h_n d = d$ for all $d \in E$. Since $u_n \in E$ for all $n$, it follows that $\lim_{n \to \infty} h_na = a$. Set 
$$
D_0 := \left\{ b \in D: \ \lim_{n \to \infty} h_nb = \lim_{n \to \infty} bh_n = b\right\} .
$$
$D_0$ is a $\sigma$-invariant $C^*$-subalgebra of $D$ and $\{h_n\}$ is an approximate unit for $D_0$. Since $h_{n+1}h_n = h_n$ it follows from Proposition \ref{02-12-21x} that $\phi(h_n) < \infty$, implying that $\phi$ is densely defined on $D_0$. Since it is also non-zero as $\phi(a) > 0$, it follows from Kustermans' theorem, Theorem \ref{24-11-21d}, that $\phi$ restricts to be a $\beta$-KMS on $D_0$. Set $\delta_0 = \sqrt{h_0}$ and $\delta_n = \sqrt{h_n - h_{n-1}}, \ n \geq 1$. We claim that
\begin{equation}\label{31-08-22d}
\sum_{n=0}^\infty \phi(\delta_n d \delta_n) = \phi(d)  \ \ \ \ \forall d \in D_0^+.
\end{equation}
To see this, let $b \in \mathcal A_\sigma \cap D_0$. Then
\begin{align*}
&\sum_{n=0}^N \phi(\delta_n b^*b \delta_n) = \sum_{n=0}^N \phi\left( \sigma_{-i \frac{\beta}{2}}(b)\delta_n^2  \sigma_{-i \frac{\beta}{2}}(b)^*\right)  = \phi\left( \sigma_{-i \frac{\beta}{2}}(b) h_N  \sigma_{-i \frac{\beta}{2}}(b)^*\right).
\end{align*}
Note that $\sigma_{-i \frac{\beta}{2}}(b) \in D_0$ since $D_0$ is $\sigma$-invariant, cf. Remark \ref{31-08-22h}. It follows therefore from the lower semi-continuity of $\phi$ that
\begin{equation}\label{13-12-21gx}
\begin{split}
&\sum_{n=0}^\infty \phi(\delta_n b^*b \delta_n) = \lim_{N \to \infty} \phi\left( \sigma_{-i \frac{\beta}{2}}(b) h_N  \sigma_{-i \frac{\beta}{2}}(b)^*\right)\\
&
=
\phi\left( \sigma_{-i \frac{\beta}{2}}(b)  \sigma_{-i \frac{\beta}{2}}(b)^*\right) = \phi(b^*b).
\end{split}
\end{equation}
We define a weight $\phi'$ on $D_0$ by
$$
\phi'(d) := \sum_{n=0}^\infty \phi(\delta_n d \delta_n) \ \ \ \ \forall d \in D_0^+.
$$
It follows from \eqref{13-12-21gx} that $\phi'(a^*a) < \infty$ when $a \in \mathcal N_\phi \cap \mathcal A_\sigma \cap D_0$ and we conclude therefore that $\phi'$ is densely defined since $\phi|_{D_0}$ is. It is straightforward to see that $\phi'$ is $\sigma$-invariant since $\phi$ is. Hence \eqref{13-12-21gx} and Lemma \ref{01-03-22b} imply that $\phi' = \phi$; i.e. \eqref{31-08-22d} holds as claimed.  Since $h_{n+1} \delta_n = \delta_n$ it follows from  Propostion \ref{02-12-21x} that $\delta_n \in \mathcal N_\phi$. Hence the functional 
$$
D \ni d \mapsto \phi(\delta_n d \delta_n)
$$ 
is defined and bounded for each $n$. It follows therefore that
\begin{align*}
&\phi(\delta_n Q'(d) \delta_n) = \lim_{R \to \infty} \frac{1}{R}\phi\left( \delta_n \left(\int_0^R \sigma_t(a) \ \mathrm d t\right)  \delta_n\right) \\
& = \lim_{R \to \infty} \frac{1}{R}\int_0^R \phi\left( \delta_n\sigma_t(a) \delta_n\right) \ \mathrm d t\\
& = \lim_{R \to \infty} \frac{1}{R}\int_0^R \phi \left(\sigma_t(\delta_n a\delta_n)\right) \ \mathrm d t  = \lim_{R \to \infty} \frac{1}{R}\int_0^R \phi \left(\delta_n a\delta_n\right) \ \mathrm d t\\
& = \phi \left(\delta_n a\delta_n\right) 
\end{align*}
for all $d \in D_0^+$ since $\delta_n$ and $\phi$ are $\sigma$-invariant. It follows therefore from \eqref{31-08-22d} that $\phi \circ Q'(a) = \phi(a)$.
\end{proof}

\begin{lemma}\label{31-08-22e} Let $\psi$ be a $\beta$-KMS weight for $\gamma^\theta$. It follows that $\psi|_{A \rtimes_{r,\alpha} \ker \theta}$ is a lower semi-continuous trace on $A \rtimes_{r,\alpha} \ker \theta$ such that
$$
\psi \circ \alpha'_g = e^{-\beta \theta(g)} \psi
$$
on $(A \rtimes_{r,\alpha} \ker \theta)^+$.
\end{lemma}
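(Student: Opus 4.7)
The plan rests on the observation from Lemma~\ref{31-08-22} that $A\rtimes_{r,\alpha}\ker\theta$ is the fixed point algebra of $\gamma^\theta$, so every $x\in A\rtimes_{r,\alpha}\ker\theta$ is entire analytic for $\gamma^\theta$ with $\gamma^\theta_z(x)=x$ for all $z\in\mathbb C$. The first step would be to apply Lemma~\ref{13-12-21axx} with $D=A\rtimes_{r,\alpha}G$ and $\sigma=\gamma^\theta$: hypothesis~(1) holds because any approximate unit of $A$ is contained in $A\rtimes_{r,\alpha}\ker\theta$ and is an approximate unit for $A\rtimes_{r,\alpha}G$ by Lemma~\ref{08-09-23}, while (2) is precisely the conclusion of Lemma~\ref{31-08-22}. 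This yields $\psi(a)=\psi\circ Q(a)$ for all $a\in (A\rtimes_{r,\alpha}G)^+$.

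From this identity the restriction $\psi|_{A\rtimes_{r,\alpha}\ker\theta}$ is non-zero (if $\psi(a)>0$ then $\psi(Q(a))=\psi(a)>0$ with $Q(a)$ in the fixed point algebra) and densely defined: given $b\in(A\rtimes_{r,\alpha}\ker\theta)^+$, choose $\{a_n\}\subseteq\mathcal M_\psi^+$ with $a_n\to b$ in $A\rtimes_{r,\alpha}G$; then $Q(a_n)\in(A\rtimes_{r,\alpha}\ker\theta)^+$, $\psi(Q(a_n))=\psi(a_n)<\infty$, and $Q(a_n)\to Q(b)=b$ by norm-continuity of $Q$. Lower semi-continuity is inherited from $\psi$. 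For the trace property, when $x\in A\rtimes_{r,\alpha}\ker\theta$ the identity $\gamma^\theta_{-i\beta/2}(x)=x$ combined with condition~(1) of Theorem~\ref{24-11-21d} gives $\psi(x^*x)=\psi(xx^*)$ directly.

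It remains to establish the scaling relation. Given $g\in G$ and $a\in(A\rtimes_{r,\alpha}\ker\theta)^+$, set $b:=a^{1/2}\in A\rtimes_{r,\alpha}\ker\theta$ and $z:=u_g^*b$. The element $u_g$ is entire analytic for $\gamma^\theta$ with $\gamma^\theta_w(u_g)=e^{i\theta(g)w}u_g$, so by Lemma~\ref{24-11-21k} $\gamma^\theta_w(u_g^*)=e^{-i\theta(g)w}u_g^*$; since $b$ is $\gamma^\theta$-fixed, Lemma~\ref{18-11-21g} shows $z$ is entire analytic with $\gamma^\theta_w(z)=e^{-i\theta(g)w}z$, and in particular $\gamma^\theta_{-i\beta/2}(z)=e^{-\theta(g)\beta/2}z$. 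Applying condition~(1) of Theorem~\ref{24-11-21d} to $z$, and using that $u_gu_g^*=1$ in $M(A\rtimes_{r,\alpha}G)$, gives
$$
\psi(b^*b)=\psi(z^*z)=\psi\bigl(\gamma^\theta_{-i\beta/2}(z)\gamma^\theta_{-i\beta/2}(z)^*\bigr)=e^{-\theta(g)\beta}\psi(u_g^*bb^*u_g).
$$
Since $b=b^*=a^{1/2}$ gives $b^*b=a=bb^*$, this rewrites as $\psi(u_g^*au_g)=e^{\theta(g)\beta}\psi(a)$, and replacing $g$ by $g^{-1}$ delivers $\psi(\alpha'_g(a))=e^{-\theta(g)\beta}\psi(a)$. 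The only genuinely non-trivial point is securing $\psi=\psi\circ Q$ via Lemma~\ref{13-12-21axx}; everything else is a short manipulation of the KMS identity organised around elements of the form $u_g^*b$ that couple fixed-point elements to the scaling factor.
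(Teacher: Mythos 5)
Your proof is correct, and the core of it --- deducing the trace property from $\gamma^\theta_{-i\beta/2}(x)=x$ on the fixed point algebra, and the scaling relation by applying condition (1) of Theorem \ref{24-11-21d} to an eigenvector of $\gamma^\theta$ of the form $u_g^*b$ --- is essentially the paper's argument (the paper works with $au_g$, with eigenvalue $e^{i\theta(g)t}$ instead of $e^{-i\theta(g)t}$; the two are interchangeable). Where you genuinely diverge is in establishing that the restriction is non-zero and densely defined. You first prove $\psi=\psi\circ Q$ via Lemma \ref{13-12-21axx} and then push finite-valued positive elements through $Q$; this is legitimate (no circularity, since Lemma \ref{13-12-21axx} does not depend on the present lemma) and gives a cleaner density argument. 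The paper instead argues directly: it takes an approximate unit $\{u_i\}$ of $A$, compresses a given positive element by $h_{n_0}u_{i_0}$ using Lemma \ref{19-11-22fx}, and invokes Proposition \ref{02-12-21x} to get $h_{n_0}\in\mathcal N_\psi$, reserving Lemma \ref{13-12-21axx} for the proof of Theorem \ref{12-03-22}. The trade-off is small --- Lemma \ref{13-12-21axx} itself rests on Proposition \ref{02-12-21x}, so both routes use the same underlying input; yours front-loads the identity $\psi=\psi\circ Q$ that is needed anyway for the surjectivity statement in Theorem \ref{12-03-22}, while the paper keeps the lemma self-contained and lighter. One small presentational point: $u_g$ is only a multiplier of $A\rtimes_{r,\alpha}G$ when $A$ is non-unital, so Lemmas \ref{24-11-21k} and \ref{18-11-21g} do not literally apply to it; it is cleaner to note that $u_g^*b=\alpha'_{g^{-1}}(b)u_{g^{-1}}$ lies in the crossed product and satisfies $\gamma^\theta_t(u_g^*b)=e^{-i\theta(g)t}u_g^*b$ by direct computation, from which the entire analytic extension $w\mapsto e^{-i\theta(g)w}u_g^*b$ is immediate. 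This does not affect the validity of your argument.
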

\begin{proof}  To show that $\psi|_{A \rtimes_{r,\alpha} \ker \theta}$ is densely defined, let $x \in \left(A \rtimes_{r,\alpha} \ker \theta\right)^+$. Let $\{u_i\}_{i \in I}$ be a positive approximate unit for $A$ such that $0 \leq u_i \leq 1$ for all $i$. By Lemma \ref{08-09-23} there is an $i_0 \in I$ such that $\left\|u_{i_0}xu_{i_0} - x\right\|\leq \epsilon$. Using Lemma \ref{19-11-22fx} in Appendix \ref{AppD} we get a sequence $\{h_n\}$ in $A$ such that $0 \leq h_n \leq 1$ and $h_{n+1}h_n = h_n$ for all $n$, and $\lim_{n \to \infty} h_{n}u_{i_0} = u_{i_0}$. There is then an $n_0 \in \mathbb N$ such that $\left\|h_{n_0}u_{i_0} x u_{i_0}h_{n_0} - x\right\| \leq 2 \epsilon$. Since $h_{n_0+1}h_{n_0 } = h_{n_0}$ it follows from Proposition \ref{02-12-21x} that $h_{n_0} \in \mathcal N_\psi$ and hence that
$$
\psi(h_{n_0}u_{i_0} x u_{i_0}h_{n_0}) \leq \|x\| \left\|u_{i_0}\right\|^2 \psi((h_{n_0})^2) < \infty .
$$
This shows that $\psi|_{A \rtimes_{r,\alpha} \ker \theta}$ is densely defined. To see that it is non-zero, let $x \in \left(A \rtimes_{r,\alpha} G\right)^+$ such that $\psi(x) > 0$. By Lemma \ref{08-09-23} there is a sequence $\{i_n\}$ in $I$ such that $\lim_{n \to \infty} u_{i_n}xu_{i_n} = x$ and hence 
$$
0 < \psi(x) \leq \liminf_n \psi(u_{i_n}xu_{i_n}) \leq \|x\|\liminf_n \psi((u_{i_n})^2) .
$$
It follows that $ \psi((u_{i_n})^2) > 0$ for most $n$. Since $u_{i_n} \in A \subseteq A \rtimes_{r,\alpha} \ker \theta$ it follows that $\psi|_{A \rtimes_{r,\alpha} \ker \theta}$ is not zero.

 Since $\gamma^\theta$ is the trivial flow om $A \rtimes_{r,\alpha} \ker \theta$, it follows now from condition (1) in Kustermans' theorem, Theorem \ref{24-11-21d}, that 
$$
\tau : =\psi|_{A\rtimes_{r,\alpha} \ker \theta}
$$
 is a lower semi-continuous trace on on $A\rtimes_{r,\alpha} \ker \theta$. Let $a \in A\rtimes_{r,\alpha} \ker \theta$ and $g \in G$. Then $au_g \in \mathcal A_{\gamma^\theta}$ and it follows therefore from condition (1) in Kustermans' theorem that 
$$
\psi(u_g^*a^*au_g) = \psi\left( \gamma^\theta_{-i \frac{\beta}{2}}(au_g) \gamma^\theta_{-i \frac{\beta}{2}}(au_g)^*\right) .
$$
Since $ \gamma^\theta_{-i \frac{\beta}{2}}(au_g) =e^{\frac{\theta(g)\beta}{2}} au_g$ it follows that $\tau \circ \alpha'_g(a^*a) = e^{-\theta(g)\beta}\tau(aa^*) =  e^{-\theta(g)\beta}\tau(a^*a)$ for all $g \in G$. 
\end{proof}

\begin{thm}\label{12-03-22} Define $\alpha' : G \to \Aut (A\rtimes_{r,\alpha} \ker \theta)$ by
$$
\alpha'_g(x) := u_g xu_g^* .
$$
The map $\tau \mapsto \tau \circ Q$ a bijection from the set of lower semi-continuous traces $\tau$ on $A\rtimes_{r,\alpha} \ker \theta$ with the property that $\tau \circ \alpha'_g = e^{-\beta \theta(g)}\tau$ for all $g \in G$ onto the set of $\beta$-KMS weights for $\gamma^\theta$. The inverse is the map $\psi \mapsto \psi|_{A\rtimes_{r,\alpha} \ker \theta}$ given by restriction.
\end{thm}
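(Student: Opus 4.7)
The plan is to combine the two halves of the correspondence already established in Lemma \ref{31-08-22c} and Lemma \ref{31-08-22e} and show that they are inverses of each other. Lemma \ref{31-08-22c} already tells us that $\tau \mapsto \tau \circ Q$ sends a scaled trace on $A \rtimes_{r,\alpha} \ker \theta$ to a $\beta$-KMS weight for $\gamma^\theta$, while Lemma \ref{31-08-22e} tells us that $\psi \mapsto \psi|_{A \rtimes_{r,\alpha} \ker \theta}$ sends a $\beta$-KMS weight for $\gamma^\theta$ to a scaled lower semi-continuous trace on $A \rtimes_{r,\alpha} \ker \theta$. So the content reduces to verifying the two composition identities.

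For the easy direction, I would observe that $Q$ restricts to the identity on $A \rtimes_{r,\alpha} \ker \theta$ (since this algebra coincides with $(A \rtimes_{r,\alpha} G)^{\gamma^\theta}$ by Lemma \ref{31-08-22}), so $(\tau \circ Q)|_{A\rtimes_{r,\alpha}\ker\theta} = \tau$ holds immediately for every scaled lower semi-continuous trace $\tau$ on $A \rtimes_{r,\alpha}\ker\theta$. This is just bookkeeping.

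For the harder direction, I need to prove that every $\beta$-KMS weight $\psi$ for $\gamma^\theta$ satisfies
\begin{equation*}
\psi = (\psi|_{A\rtimes_{r,\alpha}\ker\theta}) \circ Q
\end{equation*}
on $(A \rtimes_{r,\alpha} G)^+$. The natural tool is Lemma \ref{13-12-21axx}, applied with $D = A \rtimes_{r,\alpha} G$, $\sigma = \gamma^\theta$, and $Q' = Q$. To invoke it I must verify the two hypotheses of that lemma for $\gamma^\theta$. Condition (2) — that $Q(a) = \lim_{R\to\infty}\frac{1}{R}\int_0^R \gamma^\theta_t(a)\,\mathrm dt$ exists in norm for every $a \in A\rtimes_{r,\alpha}G$ — is exactly the content of Lemma \ref{31-08-22}. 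Condition (1) — that the fixed point algebra of $\gamma^\theta$ contains an approximate unit for $A\rtimes_{r,\alpha}G$ — follows because $A$ sits inside the fixed point algebra $A\rtimes_{r,\alpha}\ker\theta$ (since the elements of $A$ are fixed by $\gamma^\theta$), and any positive contractive approximate unit $\{u_i\}_{i \in I}$ of $A$ serves as an approximate unit for $A \rtimes_{r,\alpha} G$ by Lemma \ref{08-09-23}. With both conditions verified, Lemma \ref{13-12-21axx} yields $\psi(a) = \psi(Q(a))$ for all $a \in (A \rtimes_{r,\alpha} G)^+$, which is the desired identity because $Q$ maps into $A \rtimes_{r,\alpha}\ker\theta$.

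The main obstacle is really just assembling these pieces correctly; the substantial work has already been done in Lemma \ref{31-08-22c}, Lemma \ref{31-08-22e}, Lemma \ref{31-08-22}, and Lemma \ref{13-12-21axx}. The only subtle point is making sure that the hypothesis about an approximate unit in the fixed point algebra is genuinely available here — but this is immediate since $A$ itself is contained in $A \rtimes_{r,\alpha} \ker \theta$ and carries an approximate unit for the full crossed product by Lemma \ref{08-09-23}. Once this observation is recorded, the theorem follows by combining the two lemmas with the two composition identities above.
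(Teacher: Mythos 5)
Your proposal is correct and follows essentially the same route as the paper: well-definedness from Lemma \ref{31-08-22c} and Lemma \ref{31-08-22e}, injectivity from $\tau \circ Q|_{A\rtimes_{r,\alpha}\ker\theta} = \tau$, and the identity $\psi = \psi|_{A\rtimes_{r,\alpha}\ker\theta}\circ Q$ via Lemma \ref{13-12-21axx}, whose hypotheses are checked exactly as you do using Lemma \ref{08-09-23} and Lemma \ref{31-08-22}. No gaps.
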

\begin{proof} The map is well-defined by Lemma \ref{31-08-22c} and it is injective because $\tau \circ Q|_{A \rtimes_{r,\alpha} \ker \theta} = \tau$. Let $\psi$ be a $\beta$-KMS weight for $\gamma^\theta$. It follows from Lemma \ref{08-09-23} and Lemma \ref{31-08-22} that Lemma \ref{13-12-21axx} applies to show that 
$$
\psi = \psi\circ Q = \psi|_{A\rtimes_{r,\alpha} \ker \theta} \circ Q.
$$ 
Thanks to Lemma \ref{31-08-22e} this shows that the map under consideration is also surjective and that the inverse is the map $\psi \mapsto \psi|_{A\rtimes_{r,\alpha} \ker \theta}$.
\end{proof}

\begin{notes} The main result of this section, Theorem \ref{12-03-22}, is new. As pointed out in the introduction to this section, the flow $\gamma^\theta$ is a special case of the flows $\sigma^\theta$ considered in Section \ref{07-08-22e} and hence the map $D$ from \eqref{08-06-22} gives a map from traces on $A$ with the scaling property of Lemma \ref{31-08-22e} (on $A$) to the $\beta$-KMS weights for $\gamma^\theta$. While $D$ is injective by Lemma \ref{07-06-22a}, it is not always surjective. What Theorem \ref{12-03-22} achieves in the present setting, is an extension of $D$ which is both injective and surjective.
\end{notes}



\chapter{On tracial representations
of KMS weights}

The KMS weights of the trivial action on a $C^*$-algebra are the lower semi-continuous traces on the algebra, and traces appear in this way as trivial examples of KMS weights. While general KMS weights are not traces, in all the examples we have presented so far the KMS weights have appeared to be related to traces in various ways. Theorem \ref{12-06-22bx} gives a partial explanation for this by showing that the set of KMS weights for a given flow is in bijection with a subset of the traces on the crossed product by the flow. In this chapter we use the dual KMS weight from the setting of Section \ref{8.2.2} to obtain a formula for general KMS weights which resembles the formulae obtained for inner flows in Section \ref{veryinner}, the most famous of which is the formula
\begin{equation*}\label{20-08-22}
\frac{\Tr (e^{-\beta H} a)}{\Tr(e^{-\beta H})}
\end{equation*}
for the $\beta$-KMS state of the flow $\Ad e^{itH}$ on the matrix algebra $M_n(\mathbb C)$, cf. Example \ref{31-07-23}. The new formula, valid for arbitrary KMS weights, is displayed in the box of Theorem \ref{07-08-22bx}.

\section{A general trace formula for KMS weights}\label{04-09-22c}

Let $\sigma$ be a flow on $A$. A \emph{covariant representation} of $\sigma$ is a pair $(\rho,H)$ where $\rho$ is a non-degenerate representation of $A$ on a Hilbert space $\mathbb H'$ and $H$ is a (possibly unbounded) self-adjoint operator on $\mathbb H'$ such that
\begin{equation}\label{23-05-22}
e^{it H}\rho(a)e^{-itH} = \rho(\sigma_t(a)) 
\end{equation}
for all $t \in \mathbb R$ and all $a \in A$.

For any covariant representation $(\rho,H)$ of $\sigma$ there is a non-degenerate representation $\rho \times H : A \rtimes_\sigma \mathbb R \to B(\mathbb H')$ of $A \rtimes_\sigma \mathbb R$ such that
$$
\rho \times H(f) = \int_{\mathbb R} \rho(f(t))e^{i t H} \ \mathrm d t 
$$
for all $f \in L^1(\mathbb R,A)$, cf. Proposition 7.6.4 of \cite{Pe}. We will refer to $\rho \times H$ as the integrated representation of $(\rho,H)$. Since $\rho$ and $\rho \times H$ are non-degenerate they extend to representations of the multiplier algebras
$$
\overline{\rho} : M(A) \to B(\mathbb H')
$$
and
$$
\overline{\rho \times H} : M(A \rtimes_\sigma \mathbb R) \to B(\mathbb H'),
$$
cf. Lemma \ref{26-09-22} in Appendix \ref{multipliers}.

\begin{lemma}\label{06-08-22d} $\overline{\rho \times H}(M(A \rtimes_\sigma \mathbb R)) \subseteq (\rho \times H)(A \rtimes_\sigma \mathbb R)''$.
\end{lemma}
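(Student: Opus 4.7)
The plan is to exploit the universal characterisation of the canonical extension of a non-degenerate representation to the multiplier algebra, together with the von Neumann double commutant theorem. Concretely, since $\rho\times H$ is non-degenerate (Proposition 7.6.4 of \cite{Pe}, plus standard approximate-unit arguments, give that $(\rho\times H)(A\rtimes_\sigma \mathbb R)\mathbb H'$ spans a dense subspace of $\mathbb H'$), the extension $\overline{\rho\times H}$ is the unique $*$-homomorphism $M(A\rtimes_\sigma \mathbb R)\to B(\mathbb H')$ satisfying
\[
\overline{\rho\times H}(m)\,(\rho\times H)(b)\xi \;=\; (\rho\times H)(mb)\,\xi
\]
for all $m\in M(A\rtimes_\sigma\mathbb R)$, $b\in A\rtimes_\sigma\mathbb R$, $\xi\in\mathbb H'$, as recorded in Lemma \ref{26-09-22} of Appendix \ref{multipliers}. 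So the plan is simply to verify that $\overline{\rho\times H}(m)$ commutes with every element of the commutant $(\rho\times H)(A\rtimes_\sigma \mathbb R)'$ and then invoke the double commutant theorem.

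First I would fix $m\in M(A\rtimes_\sigma\mathbb R)$ and an operator $T\in (\rho\times H)(A\rtimes_\sigma\mathbb R)'$. For arbitrary $b\in A\rtimes_\sigma\mathbb R$ and $\xi\in\mathbb H'$ I would compute, using that $mb\in A\rtimes_\sigma\mathbb R$ (because $m\in M(A\rtimes_\sigma\mathbb R)$) and that $T$ commutes with every $(\rho\times H)(c)$ for $c\in A\rtimes_\sigma\mathbb R$, the chain of equalities
\[
T\,\overline{\rho\times H}(m)\,(\rho\times H)(b)\xi \;=\; T\,(\rho\times H)(mb)\,\xi \;=\; (\rho\times H)(mb)\,T\xi
\]
\[
\;=\; \overline{\rho\times H}(m)\,(\rho\times H)(b)\,T\xi \;=\; \overline{\rho\times H}(m)\,T\,(\rho\times H)(b)\,\xi.
\]
Thus $T\,\overline{\rho\times H}(m)$ and $\overline{\rho\times H}(m)\,T$ agree on the subspace $(\rho\times H)(A\rtimes_\sigma\mathbb R)\mathbb H'$.

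Next I would use the non-degeneracy of $\rho\times H$: since this subspace spans a dense subset of $\mathbb H'$ and both operators are bounded, they agree on all of $\mathbb H'$, giving $T\,\overline{\rho\times H}(m)=\overline{\rho\times H}(m)\,T$. As $T\in(\rho\times H)(A\rtimes_\sigma\mathbb R)'$ was arbitrary, this shows $\overline{\rho\times H}(m)\in (\rho\times H)(A\rtimes_\sigma\mathbb R)''$ by the double commutant theorem (or just the definition of the bicommutant as the commutant of the commutant).

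I expect no real obstacle here; this is a standard structural fact about multiplier algebras and non-degenerate representations. The only point that deserves attention is the explicit invocation of the defining identity for $\overline{\rho\times H}$ from Appendix \ref{multipliers}, and the verification that $\rho\times H$ is indeed non-degenerate — which, if not already recorded in the cited material, can be established quickly from the fact that the images of $C_c(\mathbb R,A)$ of the form $\int_\mathbb R f(t)\rho(a)e^{itH}\,\mathrm d t$ act non-degenerately thanks to the non-degeneracy of $\rho$ and an approximate identity argument in $C_c(\mathbb R)$.
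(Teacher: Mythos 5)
Your argument is correct, but it takes a different route from the paper's. You verify directly that $\overline{\rho\times H}(m)$ commutes with every $T$ in the commutant $(\rho\times H)(A\rtimes_\sigma\mathbb R)'$, using the defining identity $\overline{\rho\times H}(m)(\rho\times H)(b)=(\rho\times H)(mb)$ from Lemma \ref{26-09-22} together with non-degeneracy, and then conclude membership in the bicommutant essentially by definition. The paper instead picks a bounded approximate unit $\{u_i\}$ in $A\rtimes_\sigma\mathbb R$, notes that $a_i:=u_im$ is a bounded net converging strictly to $m$, shows that $(\rho\times H)(a_i)\to\overline{\rho\times H}(m)$ in the strong operator topology (using the boundedness of the net to pass from convergence on the dense subspace $(\rho\times H)(A\rtimes_\sigma\mathbb R)\mathbb H'$ to all of $\mathbb H'$), and then invokes strong closedness of the von Neumann algebra. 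Your approach is arguably the more economical of the two: it is purely algebraic plus a density argument, needs no approximate unit and no uniform boundedness estimate, and does not even require the double commutant theorem, only the tautology that an operator commuting with $(\rho\times H)(A\rtimes_\sigma\mathbb R)'$ lies in $(\rho\times H)(A\rtimes_\sigma\mathbb R)''$. The paper's approach has the mild advantage of exhibiting $\overline{\rho\times H}(m)$ explicitly as a strong limit of elements of $(\rho\times H)(A\rtimes_\sigma\mathbb R)$, which is occasionally useful information in its own right; but for the statement as formulated, both arguments are complete.
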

\begin{proof} Let $m \in M(A \rtimes_\sigma \mathbb R)$. Let $\{u_i\}$ be a bounded approximate unit in $A \rtimes_\sigma \mathbb R$. Then $a_i :=u_im$ is a bounded net in $A \rtimes_\sigma \mathbb R$ such that $\lim_{i \to \infty} a_i = m$ in the strict topology. Then
$$
\lim_{i \to \infty} (\rho \times H)(a_i)(\rho \times H)(x) = (\rho \times H)(mx) = \overline{\rho \times H}(m)(\rho \times H)(x) 
$$
in norm for all $x \in A \rtimes_\sigma \mathbb R$. Since $\rho \times H$ is non-degenerate this implies that $\lim_{i \to \infty} (\rho \times H)(a_i) = \overline{\rho \times H}(m)$ in the strong operator topology. Since $(\rho \times H)(A \rtimes_\sigma \mathbb R)''$ is closed in the strong operator topology this shows that $\overline{\rho \times H}(m) \in (\rho \times H)(A \rtimes_\sigma \mathbb R)''$.
\end{proof}

\begin{lemma}\label{24-05-22} Let $\iota : A \to M(A \rtimes_\sigma \mathbb R)$ be the canonical embedding defined by \eqref{06-08-22a}. Then $(\overline{\rho \times H})\circ \iota = {\rho}$.
\end{lemma}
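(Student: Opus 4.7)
The plan is to check the identity on the dense subspace of $\mathbb H'$ spanned by vectors of the form $(\rho \times H)(f)\xi$ with $f \in C_c(\mathbb R,A)$ and $\xi \in \mathbb H'$; density follows because $\rho \times H$ is non-degenerate. The computation is a simple unravelling of definitions, so the ``main obstacle'' is really only keeping track of the various extensions.

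First, I would recall from the paragraph preceding Lemma \ref{20-05-22x} the formula
\[
\iota(a)\pi(f) \;=\; \pi(af), \qquad (af)(t):=a f(t),
\]
valid for $a\in A$ and $f\in C_c(\mathbb R,A)$. Next, by the general property of the extension of a non-degenerate representation to the multiplier algebra (Lemma \ref{26-09-22} in Appendix \ref{multipliers}), applied to $\rho \times H$,
\[
\overline{\rho\times H}(m)\,(\rho\times H)(x) \;=\; (\rho\times H)(mx)
\]
for every $m\in M(A\rtimes_\sigma\mathbb R)$ and $x \in A\rtimes_\sigma\mathbb R$. Specializing to $m=\iota(a)$ and $x=\pi(f)$ and combining with the first identity gives
\[
\overline{\rho\times H}(\iota(a))\,(\rho\times H)(f) \;=\; (\rho\times H)(af).
\]

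Then a direct calculation from the definition of the integrated representation yields
\[
(\rho\times H)(af) \;=\; \int_{\mathbb R} \rho(a\,f(t))\,e^{itH}\, \mathrm dt \;=\; \rho(a)\int_{\mathbb R} \rho(f(t))\,e^{itH}\, \mathrm dt \;=\; \rho(a)\,(\rho\times H)(f),
\]
where $\rho(a)$ may be pulled outside the integral because it is bounded and does not depend on $t$. Hence
\[
\overline{\rho\times H}(\iota(a))\,(\rho\times H)(f)\xi \;=\; \rho(a)\,(\rho\times H)(f)\xi
\]
for every $f \in C_c(\mathbb R,A)$ and $\xi\in\mathbb H'$. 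Since such vectors span a dense subspace of $\mathbb H'$ by non-degeneracy of $\rho\times H$, and both $\overline{\rho\times H}(\iota(a))$ and $\rho(a)$ are bounded, I conclude that $\overline{\rho\times H}(\iota(a))=\rho(a)$, which is the desired identity.
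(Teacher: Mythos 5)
Your proof is correct and follows essentially the same route as the paper: reduce to $\overline{\rho\times H}(\iota(a))(\rho\times H)(f)=(\rho\times H)(\iota(a)f)=(\rho\times H)(af)=\rho(a)(\rho\times H)(f)$ and conclude by non-degeneracy of $\rho\times H$. You are merely more explicit than the paper about the two ingredients being used, namely the identity $\iota(a)\pi(f)=\pi(af)$ and the defining property of the multiplier extension from Lemma \ref{26-09-22}.
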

\begin{proof} Let $a \in A, \ f \in L^1(\mathbb R,A)$. Then
\begin{align*}
&\overline{\rho \times H}(\iota(a)) \rho \times H(f) = \rho\times H(\iota(a) f) = \int_\mathbb R \rho(af(t))e^{i t H} \ \mathrm dt \\
& = {\rho}(a)\int_\mathbb R \rho(f(t))e^{i tH} \ \mathrm dt ={\rho}(a) \rho \times H(f).
\end{align*}
The lemma follows from this since $\rho \times H$ is a non-degenerate representation.
\end{proof}

\begin{lemma}\label{24-05-22a} Let $a \in \mathcal A_\sigma$. Then
\begin{equation}\label{24-05-22b}
e^{i zH}{\rho}(a)e^{-izH} = {\rho}(\sigma_z(a))
\end{equation}
for all $z \in \mathbb C$.
\end{lemma}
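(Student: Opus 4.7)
The plan is to read \eqref{24-05-22b} as an equality of operators that makes sense on a common core of analytic vectors for $H$: the right hand side $\rho(\sigma_z(a))$ is a bounded operator since $\sigma_z(a) \in A$ by Lemma~\ref{25-11-21}, whereas the left hand side involves the generally unbounded operators $e^{\pm izH}$, so we must specify where we evaluate. Let $\{E_\lambda\}_{\lambda \in \mathbb R}$ be the spectral resolution of $H$ and set $E_n := E[-n,n]$. As recorded in Example~\ref{15-06-22}, the subspace $\bigcup_n E_n\mathbb H'$ is a common core for each $e^{izH}$, $z \in \mathbb C$, and for every $\varphi \in E_n\mathbb H'$ the map $z \mapsto e^{izH}\varphi$ is entire holomorphic in norm. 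Using Lemma~\ref{13-11-21a} write $\sigma_z(a) = \sum_{k\geq 0} c_k(a) z^k$ with $\sum_k \|c_k(a)\||z|^k < \infty$ for all $z \in \mathbb C$.

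The key step is then a Liouville/identity theorem argument on matrix coefficients. For fixed $\xi \in E_n\mathbb H'$ and $\eta \in E_m\mathbb H'$ consider
\begin{equation*}
F(z) := \langle \rho(a)\, e^{-izH}\xi,\, e^{-i\bar zH}\eta\rangle, \qquad G(z) := \langle \rho(\sigma_z(a))\xi,\eta\rangle.
\end{equation*}
Since $\rho(a)$ is bounded and both $z \mapsto e^{-izH}\xi$ and $z \mapsto e^{-i\bar zH}\eta$ are entire holomorphic $\mathbb H'$-valued, $F$ is entire holomorphic; $G$ is entire holomorphic because $G(z) = \sum_k \langle \rho(c_k(a))\xi,\eta\rangle z^k$, which converges since $\sum_k \|c_k(a)\||z|^k < \infty$. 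For real $t$, $e^{-itH}$ is unitary with $(e^{itH})^* = e^{-itH}$, so the covariance assumption \eqref{23-05-22} gives
\begin{equation*}
F(t) = \langle e^{itH}\rho(a) e^{-itH}\xi,\eta\rangle = \langle \rho(\sigma_t(a))\xi,\eta\rangle = G(t),
\end{equation*}
and hence $F \equiv G$ on $\mathbb C$ by the identity theorem for holomorphic functions.

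To conclude, for each fixed $\xi \in E_n\mathbb H'$ and $z \in \mathbb C$ the resulting identity
\begin{equation*}
\langle \rho(a) e^{-izH}\xi,\, e^{-i\bar zH}\eta\rangle = \langle \rho(\sigma_z(a))\xi,\eta\rangle
\end{equation*}
holds for all $\eta \in \bigcup_m E_m\mathbb H'$. Since this subspace is a core for $e^{-i\bar zH} = (e^{izH})^*$, I would extend the identity by continuity to all $\eta \in D((e^{izH})^*)$; because $e^{izH}$ is closed, this is exactly the criterion that $\rho(a) e^{-izH}\xi \in D(e^{izH})$ with $e^{izH}\rho(a) e^{-izH}\xi = \rho(\sigma_z(a))\xi$. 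As $\bigcup_n E_n\mathbb H'$ is itself a core for $e^{-izH}$, this establishes \eqref{24-05-22b} in the natural sense of operator equality on that common core. The main obstacle is precisely the bookkeeping for the unbounded operators $e^{\pm izH}$ at non-real $z$: one has to parse both sides of \eqref{24-05-22b} on the core of analytic vectors and invoke the core property of $\bigcup_m E_m\mathbb H'$ for $(e^{izH})^*$ to pass from pairings to the operator identity.
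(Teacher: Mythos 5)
Your proof is correct and follows essentially the same route as the paper's: both form the two entire holomorphic matrix coefficients $z\mapsto\langle \rho(a)e^{-izH}\xi, e^{-i\bar zH}\eta\rangle$ and $z\mapsto\langle\rho(\sigma_z(a))\xi,\eta\rangle$ on a dense set of analytic vectors, note they agree on $\mathbb R$ by covariance, and then use that the analytic vectors form a core for $e^{-i\bar zH}=(e^{izH})^*$ to upgrade the pairing identity to the operator statement. The only difference is that you work with the spectral subspaces $\bigcup_n E_n\mathbb H'$ where the paper uses the full set $\mathcal A_u$ of entire analytic vectors for $(e^{itH})_t$; by Example \ref{15-06-22} both are cores for the relevant operators, so this is immaterial.
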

\begin{proof} Let $\mathcal A_u$ denote that set of entire analytic elements for the flow $u_t := e^{i t H}$ on $\mathbb H'$. Then $\mathcal A_u  \subseteq D(e^{zH})$ for all $z \in \mathbb C$ by \eqref{15-06-22a} in Example \ref{15-06-22}. For $\eta, \eta' \in \mathcal A_u$ the function
$$
\mathbb C \ni z \mapsto \left<{\rho}(a)e^{-i zH}\eta, e^{-i \overline{z} H}\eta'\right>
$$
is entire analytic. The same is true for the function
$$
\mathbb C \ni z \mapsto \left< {\rho}(\sigma_z(a))\eta,\eta'\right> .
$$
The two functions agree on $\mathbb R$ and hence on all of $\mathbb C$. That is,
\begin{equation}\label{19-08-22i}
 \left<{\rho}(a)e^{-i zH}\eta, e^{-i \overline{z} H}\eta'\right>  = \left< {\rho}(\sigma_z(a))\eta,\eta'\right>
\end{equation}
 for all $z \in \mathbb C$ and all $\eta,  \eta' \in \mathcal A_u$.  It follows from Example \ref{15-06-22} that $\mathcal A_u$ is a core for $e^{-i \overline{z}H}$ and hence \eqref{19-08-22i} implies that ${\rho}(a)e^{-i zH}\eta \in D((e^{-i\overline{z}H})^*) = D(e^{i zH})$ and
 $$
 e^{i zH}{\rho}(a)e^{-izH} \eta = {\rho}(\sigma_z(a))\eta
 $$
 for all $z \in \mathbb C$ and all $\eta \in \mathcal A_u$. Since $\mathcal A_u$ is dense in $\mathbb H'$ this implies \eqref{24-05-22b}.
 \end{proof}

\begin{lemma}\label{06-08-22b} $e^{i t H} \in  (\rho \times H)(A \rtimes_\sigma \mathbb R)''$ for all $t \in \mathbb R$ and  $h(H) \in (\rho \times H)(A \rtimes_\sigma \mathbb R)''$ for all $h \in C_0(\mathbb R)$.
\end{lemma}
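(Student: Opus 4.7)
My plan is to identify $e^{isH}$ as $\overline{\rho \times H}(\lambda_s)$, where $\lambda_s \in M(A \rtimes_\sigma \mathbb{R})$ is the multiplier from Lemma \ref{20-05-22x}, and then bootstrap this to all of $C_0(\mathbb{R})$ via Fourier approximation using Lemma \ref{11-10-23}. The key tool is already in place: Lemma \ref{06-08-22d} tells us that $\overline{\rho \times H}$ sends the full multiplier algebra into $(\rho \times H)(A \rtimes_\sigma \mathbb{R})''$, so it suffices to realize $e^{isH}$ as the image of a multiplier we already understand.

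First I would verify the identity $\overline{\rho \times H}(\lambda_s) = e^{isH}$. From the proof of Lemma \ref{20-05-22x}, left multiplication by $\lambda_s$ on $A \rtimes_\sigma \mathbb{R}$ satisfies $\lambda_s \pi(f) = \pi(f_s)$ for $f \in C_c(\mathbb{R}, A)$, where $f_s(y) := \sigma_s(f(y-s))$. Applying $\rho \times H$, using the covariance \eqref{23-05-22} to rewrite $\rho(\sigma_s(f(y-s))) = e^{isH}\rho(f(y-s))e^{-isH}$, and substituting $z = y - s$, one obtains
$$
(\rho \times H)(f_s) = \int_\mathbb{R} e^{isH}\rho(f(y-s))e^{-isH}e^{iyH} \, dy = e^{isH}\int_\mathbb{R} \rho(f(z)) e^{izH} \, dz = e^{isH}(\rho \times H)(f).
$$
Since $\overline{\rho \times H}(m)$ is characterized by $\overline{\rho \times H}(m)(\rho \times H)(x) = (\rho \times H)(mx)$ and $(\rho \times H)(L^1(\mathbb{R}, A))$ acts non-degenerately on $\mathbb{H}'$, this forces $\overline{\rho \times H}(\lambda_s) = e^{isH}$. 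Lemma \ref{06-08-22d} then delivers $e^{isH} \in (\rho \times H)(A \rtimes_\sigma \mathbb{R})''$, which is the first assertion.

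For the second assertion, set $\mathcal{M} := (\rho \times H)(A \rtimes_\sigma \mathbb{R})''$. For $g \in C_c(\mathbb{R})$, the operator $\widehat{g}(H) = \int_\mathbb{R} g(t) e^{itH} \, dt$, defined strongly by $\widehat{g}(H)\xi = \int g(t) e^{itH} \xi \, dt$, is a strong-operator limit of Riemann sums of elements of $\mathcal{M}$; since $\mathcal{M}$ is closed in the strong operator topology, $\widehat{g}(H) \in \mathcal{M}$. By Lemma \ref{11-10-23}, $\{\widehat{g} : g \in C_c(\mathbb{R})\}$ is norm-dense in $C_0(\mathbb{R})$, and because the continuous functional calculus $C_0(\mathbb{R}) \ni h \mapsto h(H) \in B(\mathbb{H}')$ is norm-continuous and $\mathcal{M}$ is norm-closed, I conclude that $h(H) \in \mathcal{M}$ for every $h \in C_0(\mathbb{R})$.

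The only real subtlety is the identification $\overline{\rho \times H}(\lambda_s) = e^{isH}$, which hinges on the covariance relation being compatible with the change of variables in the defining integral of $\rho \times H$; once this is in place, everything else is either a direct appeal to Lemma \ref{06-08-22d} or the standard Fourier-transform density argument via Lemma \ref{11-10-23}.
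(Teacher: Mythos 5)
Your proposal is correct and follows essentially the same route as the paper's proof: identify $e^{isH}$ as $\overline{\rho \times H}(\lambda_s)$ via the computation $(\rho\times H)(\lambda_s f) = e^{isH}(\rho\times H)(f)$, invoke Lemma \ref{06-08-22d}, realize $\widehat{g}(H)$ as a strong-operator limit of Riemann sums in the von Neumann algebra, and finish with the density statement of Lemma \ref{11-10-23} and norm-closedness.
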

\begin{proof} Let $\lambda_s  \in M(A \rtimes_\sigma \mathbb R)$ be the unitaries from Lemma \ref{20-05-22x}. Then
\begin{equation}\label{06-08-22e}
\overline{\rho \times H}(\lambda_s) = e^{is H}.
\end{equation}
To see this, let $f \in C_c(\mathbb R,A)$. It follows from a calculation in the proof of Lemma \ref{20-05-22x} that $\lambda_sf = f_s$, where $f_s(y) = \sigma_s(f(y-s))$. Hence
\begin{align*}
&\overline{\rho \times H}(\lambda_s)(\rho \times H)(f) = (\rho \times H)(\lambda_sf) = (\rho \times H)(f_s) \\
&= \int_\mathbb R \rho(\sigma_s(f(y-s)))e^{i y H} \ \mathrm d y = \int_\mathbb R  e^{is H}\rho(f(y-s))e^{i(y-s)H} \ \mathrm d y \\
&= \int_\mathbb R  e^{is H}\rho(f(x))e^{i x H} \ \mathrm d x = e^{is H}(\rho \times H)(f). 
\end{align*}
Since $\rho \times H$ is non-degenerate and $C_c(\mathbb R,A)$ is dense in $A \rtimes_\sigma \mathbb R$ we obtain \eqref{06-08-22e}. In combination with Lemma \ref{06-08-22d} this implies that $e^{i sH} \in (\rho \times H)(A\rtimes_\sigma \mathbb R)''$. Let $h \in C_0(\mathbb R)$ such that $h = \hat{g}$ for some $g \in L^1(\mathbb R) \cap C_0(\mathbb R)$. Then 
\begin{equation}\label{15-08-22}
h(H) = \int_\mathbb R g(t) e^{ i t H} \ \mathrm d t ,
\end{equation}
cf. e.g. Theorem 5.6.36 of \cite{KR}.
As we have shown, $e^{i t H} \in (\rho \times H)(A\rtimes_\sigma \mathbb R)''$ for all $t$ and it follows therefore from \eqref{15-08-22} that
$$
 \int_\mathbb R g(t) e^{i t H} \ \mathrm d t \in (\rho \times H)(A\rtimes_\sigma \mathbb R)''.
 $$
Indeed, the integral is a limit of Riemann sums in the strong operator topology, cf. Appendix \ref{integration}, and hence the limit is in $(\rho \times H)(A\rtimes_\sigma \mathbb R)''$ since the Riemann sums are. Consider then a general element $h \in C_0(\mathbb R)$. It follows from Lemma \ref{11-10-23} that we can pick a sequence $\{h_n\}$ in $C_0(\mathbb R)$ such that $h_n(H)\in (\rho \times H)(A\rtimes_\sigma \mathbb R)''$ for all $n$, and 
 $$
\lim_{n \to \infty} \sup_{t \in \mathbb R} |h_n(t)-h(t)| = 0. 
$$
Then $\lim_{n \to \infty} h_n(H) = h(H)$ in norm and hence $h(H) \in (\rho \times H)(A\rtimes_\sigma \mathbb R)''$.
\end{proof}

\emph{A semi-finite covariant representation} $(\rho,H,\tau)$ of $\sigma$ is a covariant representation $(\rho,H)$ of $\sigma$ together with a normal faithful semi-finite trace $\tau$ on $\rho \times H(A \rtimes_\sigma \mathbb R)''$. We fix now such a semi-finite covariant representation $(\rho,H,\tau)$.\footnote{There is no saying that such triple $(\rho,H,\tau)$ necessarily exists; we just assume that it does.} To simplify notation we set
$$
M := (\rho \times H)(A\rtimes_\sigma \mathbb R)'' 
$$
and we let $\alpha = (\alpha_t)_{t \in \mathbb R}$ be the normal flow on $M$ induced by $H$; viz.
$$
\alpha_t(a) := \Ad e^{i t H}(a) = e^{i t H}ae^{-i t H} .
$$
As in Section \ref{modular2} we denote by $M_a$ the elements of $M$ that are entire analytic for $\alpha$.

\begin{lemma}\label{15-08-22a} 

\begin{itemize}
\item[(1)] Let $x \in M$ and  and $f,g \in C_c(\mathbb R)$. Then $x f(H)\in M$, $f(H)xg(H) \in M_a$ and
$$
\alpha_z(f(H)xg(H)) = e^{izH}f(H)xg(H)e^{-izH} 
$$
for all $z \in \mathbb C$.
\item[(2)] Let $x \in M_a$ and $f,g \in C_c(\mathbb R)$. Then
$$
\alpha_z(f(H)xg(H)) = f(H)\alpha_z(x)g(H)
$$
for all $z \in \mathbb C$.
\item[(3)] Let $x \in M_a$ and $f \in C_c(\mathbb R)$. Then $xf(H) \in M_a$ and
$$
\alpha_z(xf(H)) = \alpha_z(x)f(H) = e^{izH}xe^{-izH}f(H) 
$$
for all $z \in \mathbb C$
\end{itemize}
\end{lemma}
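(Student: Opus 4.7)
The plan is to treat (1) first; parts (2) and (3) will then follow by an algebraic argument using that $M_a$ is a $*$-subalgebra, together with the analogue of Lemma \ref{24-05-22a} for the normal flow $\alpha$.

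For (1), the key observation is that for $f \in C_c(\mathbb{R})$ with $\supp f \subseteq [-N,N]$, the scalar function $t \mapsto e^{izt}f(t)$ lies in $C_c(\mathbb{R})$ for every $z \in \mathbb{C}$, and the series
$$
e^{izt}f(t) = \sum_{n=0}^\infty \frac{(iz)^n}{n!}\, t^n f(t)
$$
converges in $C_0(\mathbb{R})$ with $\sum_n \frac{|z|^n}{n!}\|t^n f\|_\infty \leq \sum_n \frac{(N|z|)^n}{n!}\|f\|_\infty < \infty$ thanks to the compact support of $f$. Applying functional calculus and invoking Lemma \ref{06-08-22b} shows that $z \mapsto e^{izH}f(H)$ is an entire holomorphic function from $\mathbb{C}$ into $M$, and similarly for $z \mapsto g(H)e^{-izH}$. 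For $x \in M$ the product
$$
F(z) := e^{izH}f(H)\, x\, g(H)e^{-izH}
$$
is then an entire holomorphic $M$-valued function with $F(t) = \alpha_t(f(H)xg(H))$ for $t \in \mathbb{R}$, so by uniqueness of analytic continuation $f(H)xg(H) \in M_a$ and $\alpha_z(f(H)xg(H)) = F(z)$. The assertion $xf(H) \in M$ is immediate since $f(H) \in M$ by Lemma \ref{06-08-22b} and $M$ is an algebra.

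For (2), I would first observe that $\alpha_t(f(H)) = e^{itH}f(H)e^{-itH} = f(H)$ by spectral theory, so $t \mapsto \alpha_t(f(H))$ is constant and hence $f(H) \in M_a$ with $\alpha_z(f(H)) = f(H)$ for all $z \in \mathbb{C}$; likewise for $g(H)$. The restriction of $\alpha$ to the $C^*$-subalgebra $M_c$ of norm-continuous elements is a flow in the sense of Section \ref{holextensions}, so the analogue of Lemma \ref{11-11-21} applies and $M_a$ is a $*$-subalgebra closed under each $\alpha_z$. For $x \in M_a$ we then obtain $f(H)xg(H) \in M_a$ and
$$
\alpha_z(f(H)xg(H)) = \alpha_z(f(H))\alpha_z(x)\alpha_z(g(H)) = f(H)\alpha_z(x)g(H).
$$

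For (3) the same algebraic argument yields $xf(H) \in M_a$ with $\alpha_z(xf(H)) = \alpha_z(x)f(H)$; the remaining identity $\alpha_z(x) = e^{izH}xe^{-izH}$ is proved by repeating the analytic continuation argument used in the proof of Lemma \ref{24-05-22a}: for $\eta, \eta'$ in the dense set of entire analytic vectors of $\{e^{itH}\}_{t \in \mathbb R}$ (cf.\ Example \ref{15-06-22}) the two functions $z \mapsto \langle \alpha_z(x)\eta, \eta'\rangle$ and $z \mapsto \langle x e^{-izH}\eta, e^{-i\overline{z}H}\eta'\rangle$ are entire holomorphic and coincide for $z \in \mathbb R$, hence everywhere, and density then promotes this to the operator identity on $\mathbb{H}'$. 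The main technical subtlety throughout is that $e^{izH}$ is unbounded for $z \notin \mathbb R$; compact support of $f$ and $g$ is precisely what makes $e^{izH}f(H)$ bounded and entire analytic in $z$, so that the product $e^{izH}f(H)xg(H)e^{-izH}$ defines a well-behaved entire holomorphic function into $M$. A secondary delicate point is that $\alpha$ is only a normal flow, not norm-continuous on all of $M$; since $M_a \subseteq M_c$ the arguments take place in the $C^*$-algebra $M_c$ where the machinery of Chapter 2 applies.
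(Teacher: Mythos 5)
Your proof is correct and follows essentially the same route as the paper's: the power-series expansion of $e^{izH}f(H)$ exploiting the compact support of $f$ for (1), uniqueness of analytic continuation together with multiplicativity of $\alpha_z$ on $M_a$ for (2), and the adjoint/analytic-continuation argument of Lemma \ref{24-05-22a} for (3). The only step worth tightening is the end of (3): the identity $\alpha_z(x)=e^{izH}xe^{-izH}$ is meaningful only on $D(e^{-izH})$, and promoting it from the core of entire analytic vectors to that whole domain requires the closedness of $e^{izH}$; appending $f(H)$, whose range lies in $D(e^{-izH})$, then yields the everywhere-defined bounded-operator identity that the lemma actually asserts.
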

\begin{proof}  (1): Write $x =x_1x_2$ where $x_j \in M, j =1,2$. Note that
$$
e^{iz H}f(H)x_1  = \sum_{n=0}^\infty \frac{(iH)^nf(H)}{n!}x_1 z^n ,
$$
for all $z \in \mathbb C$ where $\left\|(iH)^nf(H)\right\| = \sup_{t \in \supp f}\left|t^nf(t)\right|$. It follows from this expression and Lemma \ref{06-08-22b} that
$$
\mathbb C \ni z \mapsto e^{iz H}f(H)x_1 \in M
$$
is entire analytic. Similarly, 
$$
\mathbb C \ni z \mapsto x_2g(H)e^{-iz H} \in M
$$
is entire analytic, and consequently
$$
\mathbb C \ni z \mapsto e^{iz H}f(H)xg(H)e^{-izH} = e^{iz H}f(H)x_1 x_2g(H)e^{-iz H}
$$
is entire holomorphic.

(2): This follows from (1) and the observation that $\mathbb C \ni z \mapsto f(H)\alpha_z(x)g(H)$ is entire analytic and agrees with $\alpha_t(f(H)xg(H))$ when $z =t \in \mathbb R$.

(3): Let $\{g_n\}$ be a sequence in $C_c(\mathbb R)$ such that $0 \leq g_n \leq g_{n+1} \leq 1$ and $g_n(t) = 1$ for all $t \in [-n,n]$ and all $n$. Let $\eta \in \mathbb H', \ \eta' \in D( e^{-i\overline{z}H})$. Then
\begin{align*}
&\left< xe^{-izH}f(H)\eta, e^{-i\overline{z}H}\eta'\right> \\
& \overset{(i)}{=} \lim_{k \to \infty}\left< xe^{-izH}f(H)\eta, e^{-i\overline{z}H}g_k(H)\eta'\right> \\
&= \lim_{k \to \infty} \left< e^{izH}g_k(H)xf(H)e^{-izH}\eta, \eta'\right> \\
&  \overset{(ii)}{=}\lim_{k \to \infty} \left< g_k(H)\alpha_z(x)f(H)\eta, \eta'\right> \\
& \overset{(iii)}{=}\left< \alpha_z(x)f(H)\eta, \eta'\right> 
\end{align*}
where (i) follows from spectral theory, (ii) from (1) and (2) and (iii) follows because $\lim_{k \to \infty} g_k(H) =1$ in the weak operator topology. As a consequence, 
$$
 xe^{-izH}f(H)\eta \in D(( e^{-i\overline{z}H})^*) = D(e^{izH}) 
 $$
 and 
 $$
 e^{izH}xe^{-izH}f(H)\eta = \alpha_z(x)f(H)\eta .
 $$
 We conclude therefore that 
$$
e^{izH}xe^{-izH}f(H) = \alpha_z(x)f(H) .
$$
Since $\mathbb C \ni z \mapsto \alpha_z(x)f(H)$ is entire analytic and agrees with $\alpha_t(xf(H))$ when $z = t \in \mathbb R$, this completes the proof.

\end{proof}

It follows from Lemma \ref{06-08-22b} that $f(H)e^{z H} \in  M$ for any $z \in \mathbb C$ when $f \in C_c(\mathbb R)$, and we can therefore define $\tau_\beta : M^+ \to [0,\infty]$ by
$$
\tau_\beta(a) := \sup \left\{ \tau(f(H)e^{-\frac{\beta H}{2}} a e^{-\frac{\beta H}{2}}f(H)) : \ f \in C_c(\mathbb R), \ 0 \leq f \leq 1 \right\} 
$$
for any $\beta \in \mathbb R$.

\begin{lemma}\label{06-08-22f} $\tau_\beta$ is a faithful normal $\alpha$-invariant weight and 
$$
\tau_\beta(a^*a) = \tau_\beta\left(\alpha_{-i \frac{\beta}{2}}(a)\alpha_{-i \frac{\beta}{2}}(a)^*\right)
$$
when $a \in M_a$ is analytic for $\alpha$.
\end{lemma}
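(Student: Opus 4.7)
The plan is to verify the five claims in order, with the monotonicity of the defining net providing the common thread. For each $f \in C_c(\mathbb R)$ with $0 \leq f \leq 1$, the operator $F_f := f(H) e^{-\beta H/2}$ lies in $M$ by Lemma \ref{06-08-22b}, so $\omega_f(a) := \tau(F_f\, a\, F_f)$ is a $\sigma$-weakly lower semi-continuous additive and positively homogeneous map $M^+ \to [0,\infty]$. The first observation is that $\omega_f(a) \leq \omega_g(a)$ for $a \in M^+$ and $0 \leq f \leq g \leq 1$, which follows from the trace identity $\tau(X^*X) = \tau(XX^*)$ applied with $X = \sqrt{a}\, F_f$ together with $f(H)^2 \leq g(H)^2$. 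Thus $\tau_\beta$ is a supremum over a directed family of such maps; additivity of $\tau_\beta$ is then immediate, and normality follows from Haagerup's theorem (Theorem \ref{06-04-22a}).

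For $\alpha$-invariance I would use Lemma \ref{06-08-22b} again: $e^{itH} \in M$, so $\alpha_t = \Ad e^{itH}$ is implemented by a unitary in $M$, whence $\tau \circ \alpha_t = \tau$ by the trace property; since $\alpha_t$ fixes every function of $H$ pointwise, each $\omega_f$ is $\alpha_t$-invariant, and hence so is $\tau_\beta$. For faithfulness, $\tau_\beta(a^*a) = 0$ forces $a e^{-\beta H/2} f(H) = 0$ for every $f \in C_c(\mathbb R)^+$ with $f \leq 1$ by faithfulness of $\tau$; choosing $f_n$ equal to $1$ on $[-n,n]$ and noting that $e^{-\beta t/2} f_n(t)$ is bounded below on that interval, this gives $a \chi_{[-n,n]}(H) = 0$, and letting $n \to \infty$ yields $a = 0$.

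The core of the argument is the KMS identity on $M_a$. The decisive input is Lemma \ref{15-08-22a}(3) applied to $a^* \in M_a$ with $z = i\beta/2$ and the function $g(t) := e^{-\beta t/2} f(t) \in C_c(\mathbb R)$; combined with the identity $e^{\beta H/2} g(H) = f(H)$ (valid as bounded operators, since $g$ has compact support so $g(H)$ maps into $D(e^{\beta H/2})$), this yields the key rewriting
\[
\alpha_{-i\beta/2}(a)^*\, e^{-\beta H/2} f(H) \;=\; e^{-\beta H/2}\, a^*\, f(H).
\]
Taking adjoints, multiplying the two expressions, and collapsing $e^{-\beta H/2} \cdot e^{-\beta H/2} = e^{-\beta H}$ gives
\[
f(H) e^{-\beta H/2}\, \alpha_{-i\beta/2}(a)\alpha_{-i\beta/2}(a)^*\, e^{-\beta H/2} f(H) \;=\; f(H)\, a\, e^{-\beta H}\, a^*\, f(H).
\]
Two applications of the cyclicity of $\tau$ then reduce both $\omega_f(a^*a)$ and $\omega_f(\alpha_{-i\beta/2}(a)\alpha_{-i\beta/2}(a)^*)$ to the common expression $\tau(a^*a\, G) = \tau(a\, G\, a^*)$, where $G := e^{-\beta H} f(H)^2$ and the equality is the basic trace identity $\tau(XY) = \tau(YX)$ with $X = a^*$, $Y = aG$. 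Taking the supremum over $f$ finishes the proof. The only delicate point I foresee is the bookkeeping with the unbounded operator $e^{\beta H/2}$ in the key identity, but it is routine once one restricts to the spectral subspace for the compact support of $g$.
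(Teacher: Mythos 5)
Your treatment of faithfulness, of additivity via the monotonicity of $f \mapsto \omega_f$, of normality and of $\alpha$-invariance is sound and essentially coincides with the paper's argument. The gap lies in the KMS identity. Your key rewriting $\alpha_{-i\frac{\beta}{2}}(a)^*e^{-\frac{\beta H}{2}}f(H) = e^{-\frac{\beta H}{2}}a^*f(H)$ is correct, and so is its displayed consequence
\[
f(H)e^{-\frac{\beta H}{2}}\alpha_{-i\frac{\beta}{2}}(a)\alpha_{-i\frac{\beta}{2}}(a)^*e^{-\frac{\beta H}{2}}f(H) = f(H)ae^{-\beta H}a^*f(H).
\]
But the final step --- that ``two applications of cyclicity'' reduce both $\omega_f(a^*a)$ and $\omega_f(\alpha_{-i\frac{\beta}{2}}(a)\alpha_{-i\frac{\beta}{2}}(a)^*)$ to a common expression \emph{for each fixed $f$} --- fails. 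What the trace identity $\tau(X^*X)=\tau(XX^*)$ actually yields is $\omega_f(a^*a) = \tau\bigl(ae^{-\beta H}f(H)^2a^*\bigr)$ (take $X = ae^{-\frac{\beta H}{2}}f(H)$), whereas your display yields $\omega_f(\alpha_{-i\frac{\beta}{2}}(a)\alpha_{-i\frac{\beta}{2}}(a)^*) = \tau\bigl(f(H)ae^{-\beta H}a^*f(H)\bigr)$. Equating these requires moving $f(H)^2$ past $a$, which is not an instance of $\tau(XY)=\tau(YX)$: the relevant products need not lie in the definition ideal of the semi-finite trace $\tau$, and $f(H)$ does not commute with $a$. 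Concretely, with $\tau=\Tr$ on $B(\ell^2)$, $H$ bounded and diagonal with an isolated eigenvalue $\lambda_1$, $a$ the unilateral shift and $f(H)$ the rank-one spectral projection onto the first basis vector, one gets $\tau(f(H)ae^{-\beta H}a^*f(H)) = 0$ while $\tau(ae^{-\beta H}f(H)^2a^*) = e^{-\beta\lambda_1} > 0$. So the fixed-$f$ equality is false; only the suprema over $f$ agree, and that is precisely what has to be proved.

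The paper avoids this by working with two \emph{independent} cut-offs: using normality of $\tau$ it first inserts $g_k(H)^2$ between $a^*$ and $a$, then applies $\tau(S^*S)=\tau(SS^*)$ with $S = g_k(H)ae^{-\frac{\beta H}{2}}g_n(H)$, which interchanges the roles of the inner and outer cut-offs; the $k$-limit then reconstitutes $\tau_\beta\bigl(\alpha_{-i\frac{\beta}{2}}(ag_n(H))\alpha_{-i\frac{\beta}{2}}(ag_n(H))^*\bigr)$ via (3) of Lemma \ref{15-08-22a}, and the $n$-limit is handled by the lower semi-continuity of $\tau_\beta$. Some such interleaved double limit is unavoidable; a single cut-off together with formal cyclicity cannot close the argument.
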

\begin{proof} Note that since $\tau$ is lower semi-continuous for the $\sigma$-weak topology the same is true for $\tau_\beta$. If $a \in M^+$ and $\tau_\beta(a) =0$ it follows that 
\begin{equation}\label{20-08-22a}
f(H)e^{\frac{-\beta H}{2}}af(H)e^{-\frac{\beta H}{2}} = 0
\end{equation} 
for all $f \in C_c(\mathbb R), \ f \geq 0$, since $\tau$ is faithful. Let $\{g_n\}$ be a sequence in $C_c(\mathbb R)$ such that $0 \leq g_n \leq g_{n+1} \leq 1$ and $g_n(t) = 1$ for all $t \in [-n,n]$ and all $n$. Taking $f(t) = e^{\frac{\beta t}{2}}g_n(t)$ in \eqref{20-08-22a} we see that $g_n(H)ag_n(H) = 0$ which implies that $a =0$ since $\lim_{n \to \infty} g_n(H) = 1$ strongly. Hence $\tau_\beta$ is faithful. Since $e^{i tH} \in M$ by Lemma \ref{06-08-22b} and $\tau$ is a trace on $M$, we find that
\begin{align*}
& \tau_\beta(\alpha_t(a)) =  \sup \left\{ \tau(e^{i tH}f(H)e^{-\frac{\beta H}{2}} a e^{-\frac{\beta H}{2}}f(H)e^{-i tH}) : \ f \in C_c(\mathbb R), \ 0 \leq f \leq 1 \right\} \\
& =  \sup \left\{ \tau(f(H)e^{-\frac{\beta H}{2}} a e^{-\frac{\beta H}{2}}f(H)) : \ f \in C_c(\mathbb R), \ 0 \leq f \leq 1 \right\} = \tau_\beta(a) ,
\end{align*}
showing that $\tau_\beta$ is $\alpha$-invariant. To see that $\tau_\beta$ is additive, note that if $0 \leq f \leq g$ in $C_c(\mathbb R)$ and $a = a^* \in M$, then
\begin{align*} 
&\tau(f(H)e^{-\frac{\beta H}{2}}a^2e^{-\frac{\beta H}{2}}f(H)) = \tau(a f^2(H)e^{-{\beta H}}a) \\
& \leq \tau(a g^2(H)e^{-{\beta H}}a) = \tau(g(H)e^{-\frac{\beta H}{2}}a^2e^{-\frac{\beta H}{2}}g(H)) .
\end{align*} 
It follows that
\begin{equation}\label{15-08-22fx}
\tau_\beta(b) = \lim_{n \to \infty} \tau(g_n(H)e^{-\frac{\beta H}{2}}be^{-\frac{\beta H}{2}}g_n(H))
\end{equation}
for all $b \in M^+$. This implies that $\tau_\beta$ is a additive since $\tau$ is. When $a \in M_a$ we find that
\begin{align*}
& \tau_\beta(a^*a)  =  \lim_{n \to \infty}  \tau(g_n(H)e^{-\frac{\beta H}{2}}a^*ae^{-\frac{\beta H}{2}}g_n(H)) \\
& \overset{(i)}{=}  \lim_{n \to \infty} \lim_{k \to \infty}  \tau(g_n(H)e^{-\frac{\beta H}{2}}a^*g_k^2(H)ae^{-\frac{\beta H}{2}}g_n(H)) \\
 &=    \lim_{n \to \infty} \lim_{k \to \infty}   \tau(  g_k(H)a g_n^2(H)e^{-{\beta H}}a^*g_k(H))\\
&=    \lim_{n \to \infty} \lim_{k \to \infty}   \tau(  g_k(H)e^{-\frac{\beta H}{2}} e^{\frac{\beta H}{2}}a g_n(H)e^{-\frac{\beta H}{2}}e^{-\frac{\beta H}{2}}g_n(H)a^*e^{\frac{\beta H}{2}}e^{-\frac{\beta H}{2}}g_k(H))\\ 
 &\overset{(ii)}{=} \lim_{n \to \infty} \lim_{k \to \infty} \tau\left( g_k(H)e^{-\frac{\beta H}{2}} \alpha_{-i \frac{\beta}{2}}(ag_n(H))\alpha_{-i \frac{\beta}{2}}(ag_n(H))^* e^{-\frac{\beta H}{2}} g_k(H)\right)\\
 &=  \lim_{n \to \infty}  \tau_\beta\left(\alpha_{-i \frac{\beta}{2}}(ag_n(H))\alpha_{-i \frac{\beta}{2}}(a g_n(H))^*\right)\\
& \overset{(ii)}{=} \lim_{n \to \infty}\tau_\beta\left(\alpha_{-i \frac{\beta}{2}}(a)g_n^2(H)\alpha_{-i \frac{\beta}{2}}(a)^*\right) \overset{(iii)}{=}  \tau_\beta\left(\alpha_{-i \frac{\beta}{2}}(a)\alpha_{-i \frac{\beta}{2}}(a)^*\right) ,
 \end{align*}
where (i) and (iii) follow from the lower semi-continuity of $\tau_\beta$, using that $\lim_{n \to \infty} g_n^2(H) = 1$ strongly, and (ii) follows from (3) of Lemma \ref{15-08-22a}.
\end{proof}

It follows from Lemma \ref{06-08-22f} that the map $\tau_\beta \circ \rho: A^+ \to [0,\infty]$ is a weight on $A$ with the property specified in (2) of Kustermans' theorem, Theorem \ref{24-11-21d}. It is easy to see that $\tau_\beta \circ \rho$ is also $\sigma$-invariant, and the reason that this weight is not, in general, a $\beta$-KMS weight for $\sigma$ is that it may not be densely defined.

\begin{defn}\label{07-08-22} An element $m  \in M$ is a \emph{$\beta$-mediator} for $(\rho,H,\tau)$ when
\begin{itemize}
\item[(a)] $e^{itH} m = m e^{itH}$ for all $t \in \mathbb R$,
\item[(b)] $\left\{a \in A:  \tau_\beta(m^* \rho(a^*a)m) < \infty\right\}$ is dense in $A$, and
\item[(c)] $\tau_\beta(\rho(a)mm^*\rho(a)^*) = \tau_{\beta}(m^*\rho(aa^*)m)$ for all $a \in \mathcal A_\sigma$.
\end{itemize}
 \end{defn}

\begin{lemma}\label{07-08-22a} Let $m$ be a non-zero $\beta$-mediator for the semi-finite covariant representation $(\rho,H,\tau)$ of $\sigma$. The map
$$
A^+ \ni a \mapsto \tau_\beta(m^*\rho(a)m)
$$
is a $\beta$-KMS weight for $\sigma$.
\end{lemma}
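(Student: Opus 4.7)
Set $\psi(a) := \tau_\beta(m^*\rho(a)m)$ for $a \in A^+$. My plan is to verify in turn that $\psi$ is a weight, is non-zero, is densely defined, is $\sigma$-invariant, and satisfies condition (2) of Kustermans' theorem (Theorem \ref{24-11-21d}); the last of these is the only step that uses the full force of Definition \ref{07-08-22}, and it is where the real work lies.

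That $\psi$ is a weight is immediate from Lemma \ref{06-08-22f}: additivity and positive homogeneity are inherited from $\tau_\beta$ on $M^+$, while lower semi-continuity follows because $\rho$ is norm-continuous, so $a \mapsto m^*\rho(a)m$ is a norm-continuous map $A^+ \to M^+$, composed with the lower semi-continuous $\tau_\beta$. Non-vanishing: since $\rho$ is non-degenerate and $m \neq 0$, there exists $a \in A$ with $\rho(a)m \neq 0$; then $\rho(a)m \in M$ (because $\rho(A) \subseteq M$ by Lemma \ref{24-05-22} and Lemma \ref{06-08-22d}, and $m \in M$ by assumption), so faithfulness of $\tau_\beta$ gives $\psi(a^*a) = \tau_\beta((\rho(a)m)^*\rho(a)m) > 0$. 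Dense definition is condition (b) of Definition \ref{07-08-22}: the set $\{a \in A : \psi(a^*a) < \infty\}$ is dense, so $\mathcal N_\psi$ is dense in $A$, and then $\mathcal M_\psi^+$ is dense in $A^+$ by Lemma \ref{04-11-21n}. For $\sigma$-invariance, I use \eqref{23-05-22} to write $\rho(\sigma_t(a)) = e^{itH}\rho(a)e^{-itH}$; condition (a) of Definition \ref{07-08-22} lets me move $e^{\pm itH}$ past $m$, giving $m^*\rho(\sigma_t(a))m = \alpha_t(m^*\rho(a)m)$, and then $\alpha$-invariance of $\tau_\beta$ (Lemma \ref{06-08-22f}) finishes the job.

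The heart of the proof is the KMS identity $\psi(a^*a) = \psi(\sigma_{-i\beta/2}(a)\sigma_{-i\beta/2}(a)^*)$ for $a \in \mathcal A_\sigma$, which by Theorem \ref{24-11-21d} is enough to conclude $\psi$ is $\beta$-KMS. The first key observation is that $\rho(a) \in M_a$ for every $a \in \mathcal A_\sigma$: by Lemma \ref{24-05-22a}, $z \mapsto \rho(\sigma_z(a))$ is an entire extension of $t \mapsto \alpha_t(\rho(a))$, and it is norm-analytic because $\rho$ is bounded linear and $z \mapsto \sigma_z(a)$ is norm-analytic in $A$. The second key observation is that condition (a) of Definition \ref{07-08-22} forces $m \in M_a$ with $\alpha_z(m) = m$ for every $z \in \mathbb C$; indeed $e^{itH}me^{-itH} = m$ for all real $t$, and the constant extension $z \mapsto m$ is entire. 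Hence $\rho(a)m \in M_a$ with $\alpha_z(\rho(a)m) = \rho(\sigma_z(a))m$, and Lemma \ref{06-08-22f} applies to give
\[
\psi(a^*a) = \tau_\beta\bigl((\rho(a)m)^*\rho(a)m\bigr) = \tau_\beta\bigl(\rho(\sigma_{-i\beta/2}(a))\,mm^*\,\rho(\sigma_{-i\beta/2}(a))^*\bigr).
\]
Since $\sigma_{-i\beta/2}(a) \in \mathcal A_\sigma$, condition (c) of Definition \ref{07-08-22} applied to $b := \sigma_{-i\beta/2}(a)$ converts the right-hand side into $\tau_\beta(m^*\rho(bb^*)m) = \psi(\sigma_{-i\beta/2}(a)\sigma_{-i\beta/2}(a)^*)$, which is the desired identity.

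The main obstacle is conceptual rather than computational: it is recognising that condition (a) is exactly what is needed to certify $m \in M_a$ (so that products $\rho(a)m$ inherit analyticity), that condition (c) is precisely the KMS-type identity transported from $M$ back to $A$ via the compression $x \mapsto \tau_\beta(m^*xm)$, and that condition (b) supplies dense definition — so Definition \ref{07-08-22} is tailor-made to make this argument run. Everything else is bookkeeping with the results already established earlier in the excerpt.
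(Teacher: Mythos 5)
Your proof is correct and follows essentially the same route as the paper's: weight and lower semi-continuity from Lemma \ref{06-08-22f}, non-vanishing from faithfulness, dense definition from (b), $\sigma$-invariance from (a) plus $\alpha$-invariance of $\tau_\beta$, and the KMS identity by noting that $\rho(a)m$ is entire analytic with $m$ fixed by $\alpha$, applying Lemma \ref{06-08-22f} and then condition (c) to $\sigma_{-i\beta/2}(a)$. The only cosmetic difference is that you invoke faithfulness of $\tau_\beta$ directly for non-vanishing where the paper argues via faithfulness of $\tau$, and you spell out $\rho(\mathcal A_\sigma)\subseteq M_a$ explicitly; both are fine.
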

\begin{proof} Set $\psi(a) :=  \tau_\beta(m^*\rho(a)m)$.
It follows from Lemma \ref{06-08-22f} that $\psi$ is a weight on $A$. Since $m \neq 0$ and $\rho$ is a non-degenerate representation there is an $a \in A^+$ and an $f \in C_c(\mathbb R)$, $0 \leq f \leq 1$, such that
$$
e^{-\frac{\beta H}{2}}f(H)m^*\rho(a) \neq 0 .
$$
Since $\tau$ is faithful this implies that $\psi(a) > 0$. Hence $\psi \neq 0$. It follows from condition (b) of Definition \ref{07-08-22} that $\psi$ is densely defined. Using condition (a) of Definition \ref{07-08-22} and that $\tau_\beta$ is $\alpha$-invariant by Lemma \ref{06-08-22f} we find 
\begin{align*}
& \psi(\sigma_t(a)) =  \tau_\beta(m^*e^{itH}\rho(a)e^{-itH}m) = \tau_\beta(e^{itH}m^*\rho(a)me^{-itH}) \\
& = \tau_\beta(\alpha_t(m^*\rho(a)m))) =  \tau_\beta(m^*\rho(a)m) = \psi(a),
\end{align*}
proving that $\psi$ is $\sigma$-invariant. When $a \in \mathcal A_\sigma$ the element $\rho(a)m$ is entire analytic for $\alpha$ since $m$ is fixed by $\alpha$ by condition (a) in Definition \ref{07-08-22}. It follows therefore from Lemma \ref{06-08-22f} that
\begin{align*}
& \psi(a^*a) = \tau_\beta(m^*\rho(a^*a)m) = \tau_\beta(\alpha_{-i \frac{\beta}{2}}(\rho(a)m) \alpha_{-i \frac{\beta}{2}}(\rho(a)m)^*) .
\end{align*}
Since $m$ is fixed by $\alpha$,
\begin{align*}
&\tau_\beta(\alpha_{-i \frac{\beta}{2}}(\rho(a)m) \alpha_{-i \frac{\beta}{2}}(\rho(a)m)^*) = \tau_\beta(\alpha_{-i \frac{\beta}{2}}(\rho(a))mm^* \alpha_{-i \frac{\beta}{2}}(\rho(a))^*).
\end{align*}
Since $\alpha_{-i \frac{\beta}{2}}(\rho(a)) = \rho(\sigma_{-i\frac{\beta}{2}}(a))$ and $\sigma_{-i\frac{\beta}{2}}(a) \in \mathcal A_\sigma$, we can use condition (c) of Definition \ref{07-08-22} to conclude that
\begin{align*}
& \tau_\beta(\alpha_{-i \frac{\beta}{2}}(\rho(a))mm^* \alpha_{-i \frac{\beta}{2}}(\rho(a))^*) =  \tau_\beta(\rho(\sigma_{-i\frac{\beta}{2}}(a))mm^* \rho(\sigma_{-i\frac{\beta}{2}}(a)^*))\\
& = \tau_\beta(m^*\rho(\sigma_{-i\frac{\beta}{2}}(a)\sigma_{-i\frac{\beta}{2}}(a)^*)m) = \psi(\sigma_{-i\frac{\beta}{2}}(a)\sigma_{-i\frac{\beta}{2}}(a)^*))  .
\end{align*}
Hence $\psi$ is a $\beta$-KMS weight for $\sigma$ by Kustermans' theorem, Theorem \ref{24-11-21d}.
\end{proof}

We aim now for a proof of the following theorem which can be seen as a converse to Lemma \ref{07-08-22a}.

\begin{thm}\label{07-08-22bx} Let $\beta \in \mathbb R$ and let $\psi$ be a $\beta$-KMS weight for $\sigma$. There is a semi-finite covariant representation $(\rho,H,\tau)$ of $\sigma$ such that $g(H)$ is a $\beta$-mediator for $(\rho,H,\tau)$ when $g \in C_c(\mathbb R)$ is infinitely differentiable, in which case
$$
\boxed{\psi(a) = \tau\left(e^{-\frac{\beta H}{2}}g(H)^* \rho(a)g(H)e^{-\frac{\beta H}{2}}\right) }
$$
for all $a \in A^+$ when, in addition, $\int_\mathbb R |g(t)|^2 \ \mathrm dt = 2\pi$. More generally, $\widehat{f}(H)$ is a $\beta$-mediator for $(\rho,H,\tau)$ when $f \in L^1(\mathbb R) \cap L^2(\mathbb R)$, in which case
\begin{equation}\label{09-09-23holdnuop}
\psi(a) = \tau_\beta(\widehat{f}(H)^*\rho(a)\widehat{f}(H))
\end{equation}
for all $a \in A^+$ when, in addition, $\int_\mathbb R |f(t)|^2 \ \mathrm dt = 1$.
\end{thm}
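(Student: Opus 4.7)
My plan is to obtain $(\rho,H,\tau)$ by pulling back the semi-finite trace on the crossed product supplied by the correspondence in Section~\ref{8.2.2}. Starting from $\psi$, I would form the dual $\beta$-KMS weight $\widehat\psi$ on $A\rtimes_\sigma\mathbb R$ from Section~\ref{07-08-22e} (specialised to $G=\mathbb R$, $\alpha=\sigma$, $\theta=0$); it is $\hat\sigma$-invariant by Lemma~\ref{08-06-22b}, so Theorem~\ref{12-06-22bx} furnishes a lower semi-continuous trace $\tau_0$ on $A\rtimes_\sigma\mathbb R$. Its GNS representation $\pi:=\pi_{\tau_0}$ is non-degenerate (Lemma~\ref{08-02-22}), hence integrates a covariant pair: set $\rho:=\overline\pi\circ\iota$ and define $H$ by $e^{itH}:=\overline\pi(\lambda_t)$. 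Lemma~\ref{06-10-23c} extends $\tau_0$ to a normal faithful semi-finite trace $\tau$ on $M:=(\rho\times H)(A\rtimes_\sigma\mathbb R)''$, giving the triple. Since $\overline\pi(g(H_0))=g(H)$ for $g\in C_0(\mathbb R)$, the definition of $\mathcal T^{-1}$ in Theorem~\ref{17-08-22c} translates directly to the identity $\tau_\beta\circ\pi=\widehat\psi$ on $(A\rtimes_\sigma\mathbb R)^+$, which will be the bridge linking the two weights.

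The formula \eqref{09-09-23holdnuop} would then be extracted from part~(d) of Theorem~\ref{19-08-22a}. For $f\in C_c(\mathbb R)$ and $b\in\mathcal N_\psi$ the function $F(t):=f(t)b$ belongs to $C_c(\mathbb R,A)_\psi$, and a direct computation identifies $\pi(F)$ with $\rho(b)\widehat f(H)$. Theorem~\ref{19-08-22a}(d) yields $\widehat\psi(\pi(F)^*\pi(F))=\|f\|_2^2\,\psi(b^*b)$, which combined with $\tau_\beta\circ\pi=\widehat\psi$ gives $\tau_\beta(\widehat f(H)^*\rho(b^*b)\widehat f(H))=\|f\|_2^2\,\psi(b^*b)$. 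To extend to $f\in L^1(\mathbb R)\cap L^2(\mathbb R)$ I would approximate $f$ in both norms by $g_n\in C_c(\mathbb R)$: the corresponding $F_n(t)=g_n(t)b$ satisfy $\pi(F_n)\to\pi(F)$ in norm (from $L^1$-convergence), while $\{\Lambda_{\widehat\psi}(F_n)\}$ is Cauchy in $H_{\widehat\psi}$ by the $L^2$ identity just derived; closedness of $\Lambda_{\widehat\psi}$ (Lemma~\ref{17-11-21a}) then places $\pi(F)\in\mathcal N_{\widehat\psi}$ with $\widehat\psi(\pi(F)^*\pi(F))=\|f\|_2^2\,\psi(b^*b)$.

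To upgrade from $\{b^*b:b\in\mathcal N_\psi\}$ to all of $A^+$, I would set $\psi'(a):=\tau_\beta(\widehat f(H)^*\rho(a)\widehat f(H))$ with $\|f\|_2=1$ and show that $\psi'$ is itself a $\beta$-KMS weight via Lemma~\ref{07-08-22a}: conditions (a) and (b) of Definition~\ref{07-08-22} are immediate from the construction and the partial formula, while (c) would be obtained by applying Theorem~\ref{19-08-22a}(d) to $\pi(F)\pi(F)^*=\pi(F\star F^\sharp)=\rho(b)|\widehat f|^2(H)\rho(b^*)$ for $b\in\mathcal N_\psi\cap\mathcal N_\psi^*$, yielding $\tau_\beta(\rho(b)|\widehat f|^2(H)\rho(b^*))=\|f\|_2^2\,\psi(bb^*)$, then bootstrapping to all $a\in\mathcal A_\sigma$ by combining the analytic identity of Lemma~\ref{06-08-22f} for $\tau_\beta$ with the KMS symmetry of $\psi$ from Theorem~\ref{24-11-21d}(1). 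Since $\psi$ and $\psi'$ are then both $\beta$-KMS weights agreeing on $\{c^*c:c\in\mathcal N_\psi\}$ with $\mathcal N_\psi\subseteq\mathcal N_{\psi'}$, Corollary~\ref{31-03-22d} forces $\psi'=\psi$. For the boxed formula, given $g\in C_c^\infty(\mathbb R)$ with $\|g\|_2^2=2\pi$, Plancherel produces a Schwartz $f$ with $g=\widehat f$ and $\|f\|_2=1$; since $e^{-\beta t/2}g(t)\in C_c(\mathbb R)$, the operator $k(H):=e^{-\beta H/2}g(H)$ is bounded and $k(H)^*\rho(a)k(H)\in M^+$. Using the trace property of $\tau$ to rewrite $\tau(e^{-\beta H/2}h(H)g(H)^*\rho(a)g(H)h(H)e^{-\beta H/2})$ as $\tau(h(H)^2k(H)^*\rho(a)k(H))$ and letting $h^2\uparrow 1$, normality of $\tau$ identifies the supremum defining $\tau_\beta(g(H)^*\rho(a)g(H))$ with $\tau(k(H)^*\rho(a)k(H))$, yielding the boxed identity. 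The hardest step will be verifying (c) for arbitrary $a\in\mathcal A_\sigma$: it requires a careful orchestration of the KMS symmetries at three levels — that of $\widehat\psi$ on the crossed product, that of $\tau_\beta$ on $M$, and that of $\psi$ on $A$ — with appropriate attention to the domains of the unbounded analytic continuations $\sigma_{\pm i\beta/2}$.
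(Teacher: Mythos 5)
Your overall route is the paper's: form the dual weight $\widehat\psi$ on $A\rtimes_\sigma\mathbb R$, read off the scalar identity $\widehat\psi(\pi(F)^*\pi(F))=\|f\|_2^2\,\psi(b^*b)$ from Theorem \ref{19-08-22a}(d), extend from $C_c$ to $L^1\cap L^2$ by closedness of $\Lambda_{\widehat\psi}$, upgrade to all of $A^+$ by uniqueness of KMS weights, and finish with Plancherel. The one genuine variation is the construction of $(\rho,H,\tau)$: you take the GNS representation of the trace $\tau_0=\mathcal T(\widehat\psi)$ and invoke Lemma \ref{06-10-23c}, whereas the paper works in the GNS representation of $\widehat\psi$ itself (Lemma \ref{16-06-22x}), builds $\tau$ by hand from the normal extension $\phi=\widehat\psi''$, and must then prove separately that $\tau$ is a normal faithful semi-finite trace and that $\tau_\beta=\phi$. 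Your shortcut is legitimate and buys you those two lemmas for free, at the cost of verifying the bridge $\tau_\beta\circ\pi=(\tau_0)_\beta=\widehat\psi$, which does go through via $\overline{\pi}(g(H_0))=g(H)$ and Theorem \ref{17-08-22c}; the paper's choice instead keeps everything on one Hilbert space, where the KMS symmetry of $\widehat\psi$ for $\underline\sigma$ is directly available.

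There is, however, a genuine circularity in your treatment of condition (c) of Definition \ref{07-08-22}. You propose to show that $\psi'(a):=\tau_\beta(\widehat f(H)^*\rho(a)\widehat f(H))$ is a $\beta$-KMS weight \emph{via Lemma \ref{07-08-22a}}, which presupposes that $\widehat f(H)$ is a $\beta$-mediator, hence that (c) holds for \emph{all} $a\in\mathcal A_\sigma$. But your bootstrap of (c) from $\mathcal N_\psi\cap\mathcal N_\psi^*$ to all of $\mathcal A_\sigma$ — Lemma \ref{06-08-22f} turns the left side of (c) into $\tau_\beta\bigl(\widehat f(H)^*\rho(\sigma_{i\frac{\beta}{2}}(a)^*\sigma_{i\frac{\beta}{2}}(a))\widehat f(H)\bigr)$, and the KMS symmetry of $\psi$ gives $\psi(\sigma_{i\frac{\beta}{2}}(a)^*\sigma_{i\frac{\beta}{2}}(a))=\psi(aa^*)$ — only closes if you already know that $\tau_\beta(\widehat f(H)^*\rho(\cdot)\widehat f(H))=\|f\|_2^2\,\psi(\cdot)$ on \emph{all} of $A^+$, since for $a\in\mathcal A_\sigma\setminus\mathcal N_\psi^*$ the two elements $\sigma_{i\frac{\beta}{2}}(a)^*\sigma_{i\frac{\beta}{2}}(a)$ and $aa^*$ need not be of the form $c^*c$ with $c\in\mathcal N_\psi$. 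That global identity is exactly your step 6, which you only reach after invoking Lemma \ref{07-08-22a}. The fix is to reorder, as in the paper's Lemma \ref{21-08-22}: from the partial identity on $\{c^*c:c\in\mathcal N_\psi\}$, verify Kustermans' condition (2) for $\psi'$ directly on $\mathcal M_\psi^\sigma$ (which is $\sigma_{-i\frac{\beta}{2}}$-invariant and contained in $\mathcal N_\psi$), conclude that $\psi'$ is a $\beta$-KMS weight from Theorem \ref{24-11-21d}, and apply Lemma \ref{31-03-22c} to get $\psi'=\psi$ everywhere — all without touching the mediator machinery. Only then derive (c) for arbitrary $a\in\mathcal A_\sigma$ from the now-global identity together with Lemma \ref{06-08-22f} and Theorem \ref{24-11-21d}(1) (equivalently, as the paper does, from the KMS property of $\widehat\psi$ for $\underline\sigma$ applied to $\iota(a)m_f$). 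With this reordering your argument is complete.
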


The crucial property of $g$ is not really that it is smooth; what matters is that it has compact support and is the Fourier transform of a function from $L^1(\mathbb R) \cap L^2(\mathbb R)$. This property is guaranteed when $g$ is infinitely differentiable and has compact support since it implies that $g$ is a Schwartz function.

The proof of Theorem \ref{07-08-22bx} requires some preparation. Fix the notation and setting of Section \ref{8.2.2}. Define a representation $\rho_\psi$ of $A$ on $L^2(\mathbb R,H_\psi)$ such that
$$
(\rho_\psi(a)\xi)(x) := \pi_{\psi}(\sigma_{-x}(a))\xi(x) 
$$
when $\xi \in L^2(\mathbb R,H_\psi)$. Define a unitary representation $l$ of $\mathbb R$ on $L^2(\mathbb R,H_\psi)$ such that
$$
(l_t\xi)(x) := \xi(x-t) 
$$
when $t,x \in \mathbb R$ and $\xi \in L^2(\mathbb R,H_\psi)$. Note that
\begin{equation}\label{kerteminde}
\Ad l_t \circ \rho_\psi(a) = \rho_\psi(\sigma_t(a)) 
\end{equation}
for all $t,a$. By Stone's theorem there is a self-adjoint operator $D$ such that $l_t = e^{it D}$ for all $t \in \mathbb R$. This $D$ is the self-adjoint operator 
$$
D := i \frac{d}{dx} \otimes \id_{H_\psi},
$$ 
but we shall not need this formula. The pair $(\rho_\psi,D)$ is a covariant representation of $(A,\sigma)$ and we can therefore consider the integrated representation
$$
\rho_\psi \times D:  A \rtimes_\sigma \mathbb R \to B(L^2(\mathbb R,H_\psi)).
$$
We consider now the dual KMS weight 
$$
\widehat{\psi} = D_3(\psi) \in \KMS(\underline{\sigma},\beta)^{\hat{\sigma}}
$$ 
on $A \rtimes_\sigma \mathbb R$ arising from Corollary \ref{09-06-22d}. By construction the GNS-triple 
$$
(H_{\widehat{\psi}},\Lambda_{\widehat{\psi}},\pi_{\widehat{\psi}})
$$ 
of $\widehat{\psi}$ is isomorphic to 
$$
(L^2(\mathbb R,H_\psi),\Lambda, \widehat{\pi}_\psi),
$$
and the unitary $W$ implementing the isomorphism has the property that $W \Lambda_{\widehat{\psi}}(f) = \Lambda(f)$ when $f \in D(\Lambda)$, and 
$\Ad W \circ \pi_{\widehat{\psi}} = \widehat{\pi}_\psi$, cf.  Section \ref{07-08-22e} and Section \ref{GNS-KMS}.

\begin{lemma}\label{16-06-22x}
$$
\Ad W \circ \pi_{\widehat{\psi}} = \widehat{\pi}_\psi = \rho_\psi \times D .
$$
\end{lemma}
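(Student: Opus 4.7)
The first equality $\Ad W \circ \pi_{\widehat{\psi}} = \widehat{\pi}_\psi$ is essentially by construction: as recorded in the text immediately preceding the lemma, the GNS-triple $(H_{\widehat{\psi}},\Lambda_{\widehat{\psi}},\pi_{\widehat{\psi}})$ is, by the way $\widehat{\psi}$ was built from $\psi$ via Proposition \ref{08-02-22a}, isomorphic to the triple $(L^2(\mathbb R,H_\psi),\Lambda,\widehat{\pi}_\psi)$ through a unitary $W$ satisfying $W\Lambda_{\widehat{\psi}}(f) = \Lambda(f)$ and $W\pi_{\widehat{\psi}}(x) = \widehat{\pi}_\psi(x)W$. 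So nothing more needs to be said for this half of the equality; the only content of the lemma is the identity $\widehat{\pi}_\psi = \rho_\psi \times D$.

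My plan is to verify this identity by a direct calculation on a norm-dense subalgebra and then extend by continuity. Specifically, both $\widehat{\pi}_\psi$ and $\rho_\psi \times D$ are continuous $*$-representations of $A \rtimes_\sigma \mathbb R$, so it is enough to check that they agree on $L^1(\mathbb R,A)$, and in fact on $C_c(\mathbb R,A)$ since this is norm-dense in $L^1(\mathbb R,A)$ and both representations are continuous with respect to the $L^1$-norm.

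The computation is routine: for $f \in C_c(\mathbb R,A)$ and $\xi \in L^2(\mathbb R,H_\psi)$,
\begin{align*}
\bigl((\rho_\psi \times D)(f)\xi\bigr)(x)
&= \left(\int_\mathbb R \rho_\psi(f(t))\, l_t\,\xi \ \mathrm dt\right)\!(x) \\
&= \int_\mathbb R \bigl(\rho_\psi(f(t))\, l_t\,\xi\bigr)(x)\ \mathrm dt \\
&= \int_\mathbb R \pi_\psi(\sigma_{-x}(f(t)))\, \xi(x-t)\ \mathrm dt,
\end{align*}
where the interchange of integration with point-evaluation at $x$ is justified by the fact that $C_c(\mathbb R,H_\psi)$-valued integrands of this form are continuous and compactly supported, so the Riemann-sum construction of the integral (cf. Appendix \ref{integration}) gives the same answer pointwise in $x$ as when the integral is computed in the Hilbert space norm. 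Comparing with the defining formula
$$
(\widehat{\pi}_\psi(f)\xi)(x) = \int_\mathbb R \pi_\psi(\sigma_{-x}(f(y)))\,\xi(x-y)\ \mathrm d y
$$
(which is the version of the displayed formula from Section \ref{07-08-22e} specialized to $G=\mathbb R$, $\alpha = \sigma$, using additive notation for $\mathbb R$), the two expressions are identical. By density and continuity the equality extends to all of $A \rtimes_\sigma \mathbb R$, completing the proof.

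I do not anticipate any real obstacle here; the only subtlety is making sure the interchange of integration with point-evaluation is legitimate, which is straightforward for the compactly supported continuous $L^2(\mathbb R,H_\psi)$-valued integrand at hand. The lemma is essentially the observation that the ad hoc definition of $\widehat{\pi}_\psi$ used in constructing the dual weight coincides with the standard integrated form $\rho_\psi \times D$ of the canonical covariant representation $(\rho_\psi, D)$ built from $\pi_\psi$ and the translation group on $L^2(\mathbb R,H_\psi)$.
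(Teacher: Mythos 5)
Your proof is correct and takes essentially the same route as the paper's: the first equality is treated as holding by construction of $W$, and the second is verified by the same direct computation on a norm-dense subspace, the only (immaterial) difference being that the paper checks the identity on elementary tensors $f\otimes a$ and invokes density of their span, whereas you work with general elements of $C_c(\mathbb R,A)$.
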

\begin{proof}  Let $f \in C_c(\mathbb R), \ a \in A$ and consider the element $f \otimes a \in C_c(\mathbb R,A)$ defined such that $f \otimes a(t) = f(t)a$. Then
\begin{align*}
&((\rho_\psi \times D)(f \otimes a)\xi)(x) = \int_{\mathbb R} f(t) (\rho_{\psi}(a) l_t\xi)(x) \ \mathrm d t \\
&= \int_{\mathbb R} f(t) \pi_\psi(\sigma_{-x}(a))\xi(x-t) \ \mathrm d t = (\widehat{\pi}_\psi(f \otimes a)\xi)(x)
\end{align*} 
for all $\xi \in L^2(\mathbb R,H_\psi)$. Since $\{f \otimes a: \ f \in C_c(\mathbb R), \ a \in A \}$ spans a dense subspace in $A \rtimes_\sigma \mathbb R$, it follows that $\rho_\psi \times D = \widehat{\pi}_\psi$, completing the proof.
\end{proof}

Since $\lambda_t \in M(A \rtimes_\sigma \mathbb R)$ by Lemma \ref{20-05-22x} there is a $*$-homomorphism $\kappa: C^*(\mathbb R) \to M(A \rtimes_\sigma \mathbb R)$ such that $\kappa(f) = m_f$, where
$$
m_f : = \int_\mathbb R f(t) \lambda_t \ \mathrm d t 
$$ 
when $f \in L^1(\mathbb R)$. \footnote{The $C^*$-algebra $C^*(\mathbb R)$ is the group $C^*$-algebra of $\mathbb R$ and equal to the crossed product $C^*$-algebra of the trivial action of $\mathbb R$ on $\mathbb C$.}
We note that
$$
(m_f\xi)(x) = \int_\mathbb R f(t)\xi(x-t) \ \mathrm d t
$$
for all $\xi \in L^2(\mathbb R,\mathbb H)$, and that
\begin{equation}\label{04-09-22}
\kappa(C^*(\mathbb R)) \subseteq M(A \rtimes_\sigma \mathbb R)^{\underline{\sigma}},
\end{equation}
the fixed point algebra of the action $\underline{\sigma}$ on $M(A \rtimes_\sigma \mathbb R)$, since $\underline{\sigma}_t = \Ad \lambda_t$ by Lemma \ref{20-05-22ax}.

Note that when $f \in L^1(\mathbb R)$ and $a \in A$ we can consider the element $f \otimes a \in L^1(\mathbb R,A) \subseteq A \rtimes_\sigma \mathbb R$ defined such that $f \otimes a(t) = f(t)a$ for all $t \in \mathbb R$.

\begin{lemma}\label{20-05-22g} Let $f \in L^1(\mathbb R), \ a \in A$. Then $f \otimes a = \iota(a)m_f$ and
$$
( f\otimes a)^*(f\otimes a) = m_f^*\iota(a^*a) m_f 
$$
in $M(A \rtimes_\sigma \mathbb R)$.
\end{lemma}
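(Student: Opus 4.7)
The plan is to verify both identities directly on the faithful representation $\pi : A \rtimes_\sigma \mathbb R \to B(L^2(\mathbb R,\mathbb H))$ through which $A \rtimes_\sigma \mathbb R$, and hence also $M(A \rtimes_\sigma \mathbb R)$, is realised concretely. Since the identification of $M(A\rtimes_\sigma \mathbb R)$ with the idealiser of $\pi(A \rtimes_\sigma \mathbb R)$ inside $B(L^2(\mathbb R,\mathbb H))$ has already been set up, it suffices to check that the two sides of each asserted equality act identically on an arbitrary test vector $\xi \in L^2(\mathbb R,\mathbb H)$.

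For the first identity, I would apply both sides to $\xi$ and compare. On the left, the defining formula for $\pi$ on $L^1(\mathbb R,A)$ gives
$$
\bigl(\pi(f\otimes a)\xi\bigr)(x) \;=\; \int_{\mathbb R} \sigma_{-x}\bigl((f\otimes a)(y)\bigr)\,\xi(x-y)\,\mathrm{d}y \;=\; \int_{\mathbb R} f(y)\,\sigma_{-x}(a)\,\xi(x-y)\,\mathrm{d}y.
$$
On the right, using \eqref{06-08-22a} and the definition of $m_f = \int_\mathbb R f(t)\lambda_t\,\mathrm{d}t$,
$$
\bigl(\iota(a) m_f \xi\bigr)(x) \;=\; \sigma_{-x}(a)\,(m_f\xi)(x) \;=\; \sigma_{-x}(a)\!\int_{\mathbb R} f(y)\,\xi(x-y)\,\mathrm{d}y.
$$
These agree after pulling the bounded operator $\sigma_{-x}(a)$ through the integral, which is routine because the integral defining $m_f\xi$ converges as a Bochner integral in $\mathbb H$. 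Hence $f\otimes a = \iota(a) m_f$ as elements of $B(L^2(\mathbb R,\mathbb H))$, and therefore also as elements of $M(A\rtimes_\sigma \mathbb R)$.

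For the second identity, once the first is established, it is a purely algebraic consequence of the fact that $\iota : A \to M(A\rtimes_\sigma \mathbb R)$ is a $*$-homomorphism. Taking adjoints in $M(A\rtimes_\sigma \mathbb R)$ and multiplying,
$$
(f\otimes a)^*(f\otimes a) \;=\; (\iota(a)m_f)^*(\iota(a)m_f) \;=\; m_f^*\,\iota(a)^*\iota(a)\,m_f \;=\; m_f^*\,\iota(a^*a)\,m_f,
$$
which is exactly what is claimed. There is no substantive obstacle here; the only thing to be attentive to is the passage from the representation $\pi$ to the abstract multiplier algebra, but this is already part of the standing set-up of Section \ref{8.2.2}. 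The lemma is therefore essentially a bookkeeping computation, and the main payoff lies not in its proof but in how it will feed into the subsequent identification of the dual weight $\widehat{\psi}$ with the trace-type formula of Theorem \ref{07-08-22bx}.
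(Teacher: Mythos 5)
Your proposal is correct and follows the paper's own proof essentially verbatim: the paper likewise verifies $f\otimes a=\iota(a)m_f$ by evaluating both sides on a test vector $\xi\in L^2(\mathbb R,\mathbb H)$ and then notes that the second identity is an algebraic consequence of the first. No further comment is needed.
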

\begin{proof} Since the second identity follows from the first it suffices to observe that
\begin{align*}
& (\iota(a)m_f\xi)(x) = \sigma_{-x}(a)\left( m_f\xi(x)\right) = \sigma_{-x}(a) \int_{\mathbb R} f(y) \xi(x-y) \ \mathrm d y \\
& = (\pi(f \otimes a)\xi)(x)
 \end{align*}
 for all $\xi \in L^2(\mathbb R,\mathbb H)$.
\end{proof}

\begin{lemma}\label{21-08-22} Let $f \in L^1(\mathbb R) \cap L^2(\mathbb R)$. Then
\begin{equation}\label{20-05-22ix} \widehat{\psi}(m_f^*\iota(a^*a)m_f) = \psi(a^*a) \int_\mathbb R |f(t)|^2 \ \mathrm d t 
\end{equation}
for all $a \in A$.
\end{lemma}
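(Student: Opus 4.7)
The plan is to prove the identity in three stages: first for $a \in \mathcal N_\psi$ and $f \in C_c(\mathbb R)$ via the explicit formula for the dual weight in Theorem~\ref{19-08-22a}(d); then extend in $f$ by approximation and closedness; then extend in $a$ by a $\beta$-KMS weight comparison. By Lemma~\ref{20-05-22g}, $m_f^*\iota(a^*a)m_f = (f \otimes a)^\sharp \star (f \otimes a)$ in $A \rtimes_\sigma \mathbb R$, so the identity rephrases as a formula for $\widehat\psi((f \otimes a)^\sharp \star (f \otimes a))$. For $a \in \mathcal N_\psi$ and $f \in C_c(\mathbb R)$ we have $f \otimes a \in C_c(\mathbb R, A)_\psi$ since $\Lambda_\psi(f(s)a) = f(s)\Lambda_\psi(a)$ is continuous in $s$, and specializing Theorem~\ref{19-08-22a}(d) to $G = \mathbb R$ (so $\Delta \equiv 1$) with $\theta = 0$ directly yields $\widehat\psi(m_f^*\iota(a^*a)m_f) = \psi(a^*a)\int_\mathbb R |f(t)|^2 \ \mathrm dt$ in this case.

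To extend to $f \in L^1(\mathbb R) \cap L^2(\mathbb R)$ while keeping $a \in \mathcal N_\psi$, take $f_n \in C_c(\mathbb R)$ with $f_n \to f$ in both $L^1$ and $L^2$. Since the map $L^1(\mathbb R) \to M(A \rtimes_\sigma \mathbb R)$, $h \mapsto m_h$, is contractive, $m_{f_n} \to m_f$ in operator norm, so $f_n \otimes a = \iota(a)m_{f_n} \to \iota(a)m_f = f \otimes a$ in $A \rtimes_\sigma \mathbb R$. Applying the first step's identity to $f_n - f_m$ gives $\|\Lambda_{\widehat\psi}(f_n \otimes a) - \Lambda_{\widehat\psi}(f_m \otimes a)\|^2 = \psi(a^*a)\|f_n - f_m\|_2^2 \to 0$, and closedness of $\Lambda_{\widehat\psi}$ (Lemma~\ref{17-11-21a}) then yields $f \otimes a \in \mathcal N_{\widehat\psi}$ with the identity passing to the limit.

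The main obstacle is the extension to arbitrary $a \in A$: when $\psi(a^*a) = \infty$ one must also have $\widehat\psi(m_f^*\iota(a^*a)m_f) = \infty$ (for $f \neq 0$). My approach is to define $\phi : A^+ \to [0,\infty]$ by $\phi(b) := \widehat\psi(m_f^*\iota(b)m_f)$ and prove $\phi = \|f\|_2^2 \psi$ by showing $\phi$ is itself a $\beta$-KMS weight for $\sigma$. Additivity and lower semi-continuity of $\phi$ follow from those of $\widehat\psi$ together with the estimate $\|m_f^*\iota(b_n - b)m_f\| \leq \|f\|_1^2\|b_n - b\|$; $\sigma$-invariance follows from $\underline\sigma_t \circ \iota = \iota \circ \sigma_t$ (Lemma~\ref{20-05-22ax}), the $\underline\sigma$-invariance of $m_f$ (via \eqref{04-09-22}), and $\underline\sigma$-invariance of $\widehat\psi$. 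The first two steps show $\phi = \|f\|_2^2 \psi$ on $\mathcal M^+_\psi$, so $\phi$ is densely defined, non-zero, and $\mathcal M^\sigma_\psi \subseteq \mathcal M^\sigma_\phi$. For $a \in \mathcal M^\sigma_\psi$, the agreement of $\phi$ with $\|f\|_2^2\psi$ combined with the $\beta$-KMS identity for $\psi$ gives $\phi(a^*a) = \phi(\sigma_{-i\beta/2}(a)\sigma_{-i\beta/2}(a)^*)$; applying Theorem~\ref{12-12-13} to $\phi$ with $S = \mathcal M^\sigma_\psi$ then identifies $\phi$ as a $\beta$-KMS weight, and Corollary~\ref{31-03-22d} forces $\phi = \|f\|_2^2\psi$ on all of $A^+$. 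The most delicate step will be verifying the approximation hypothesis of Theorem~\ref{12-12-13}, namely that every element of $\mathcal M^\sigma_\phi$ is approachable in the graph norm by elements of $\mathcal M^\sigma_\psi$; I expect this to follow from Corollary~\ref{15-02-22} applied to $\phi$, combined with the isometric identification (up to the scalar $\|f\|_2$) of $\Lambda_\phi|_{\mathcal N_\psi}$ with $\Lambda_\psi$, which forces $\mathcal N_\psi$ to be a core for $\Lambda_\phi$.
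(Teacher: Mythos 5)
Your three-stage argument is exactly the paper's proof: first the case $f \in C_c(\mathbb R)$, $a \in \mathcal N_\psi$ via (d) of Theorem \ref{19-08-22a} and Lemma \ref{20-05-22g}, then the extension in $f$ by $L^1\cap L^2$-approximation and closedness of $\Lambda_{\widehat{\psi}}$, and finally the extension in $a$ by showing that $b \mapsto \widehat{\psi}(m_f^*\iota(b)m_f)$ is a $\beta$-KMS weight agreeing with $\left(\int_\mathbb R |f(t)|^2 \, \mathrm dt\right)\psi$ on $\mathcal M_\psi^+$ and invoking the uniqueness result. The only divergence is at the very end, where the paper verifies condition (2) on $\mathcal M_\psi^\sigma$ and cites Theorem \ref{24-11-21d} together with Lemma \ref{31-03-22c} directly rather than routing through Theorem \ref{12-12-13}, so the core-condition verification you flag as the delicate step is not carried out in the paper's argument either.
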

\begin{proof} Assume first that $f \in C_c(\mathbb R)$ and $a \in \mathcal N_\psi$. Then $f \otimes a \in C_c(\mathbb R,A)_\psi$ and it follows therefore from (d) of Theorem \ref{19-08-22a} that
$$
\widehat{\psi}((f \otimes a)^\sharp \star (f\otimes a)) = \psi(a^*a)\int_\mathbb R |f(t)|^2 \ \mathrm d t .
$$
By Lemma \ref{20-05-22g} this means that we have established \eqref{20-05-22ix} when $f \in C_c(\mathbb R)$ and $a \in \mathcal N_\psi$. Consider then a function $f \in L^1(\mathbb R) \cap L^2(\mathbb R)$. Let $\{g_n\}$ be a sequence in $C_c(\mathbb R)$ such that $\lim_{n \to \infty} g_n = f$ both in $L^1(\mathbb R)$ and in $L^2(\mathbb R)$. \footnote{For the construction of such a sequence see Theorems 3.13 and 3.14 in \cite{Ru0}, for example.} Then $g_n \otimes a \in C_c(\mathbb R,A)_\psi$ and from (d) of Theorem \ref{19-08-22a} we find that
$$
\left\|\Lambda_{\widehat{\psi}}(g_n \otimes a) - \Lambda_{\widehat{\psi}}(g_m \otimes a)\right\|^2 = \psi(a^*a)\int_\mathbb R \left|g_n(t) - g_m(t)\right|^2  \ \mathrm d t 
$$
for all $n,m$. It follows that $\left\{\Lambda_{\widehat{\psi}}(g_n \otimes a)\right\}$ is Cauchy and hence convergent in $H_{\widehat{\psi}}$. Since $\lim_{n \to \infty} g_n \otimes a = f\otimes a$ in $L^1(\mathbb R,A)$ and hence in $A \rtimes_\sigma \mathbb R$, it follows that $f \otimes a \in \mathcal N_{\widehat{\psi}}$ and $\lim_{n \to \infty} \Lambda_{\widehat{\psi}}(g_n \otimes a) = \Lambda_{\widehat{\psi}}(f \otimes a) $ in $H_{\widehat{\psi}}$ because $\Lambda_{\widehat{\psi}}$ is closed. Therefore
\begin{align*}
&   \widehat{\psi}((f \otimes a)^*(f \otimes a)) = \lim_{n \to \infty} \widehat{\psi}((g_n \otimes a)^\sharp \star (g_n \otimes a))\\
& =  \psi(a^*a)\lim_{n \to \infty}\int_\mathbb R |g_n(t)|^2  \ \mathrm d t  = \psi(a^*a)\int_\mathbb R |f(t)|^2  \ \mathrm d t .
\end{align*}
We have established \eqref{20-05-22ix} when $a \in \mathcal N_\psi$. But then it holds for all $a \in A$ by the following reasoning. Fix $f \in L^1(\mathbb R) \cap L^2(\mathbb R)$, $f \neq 0$. Since $m_f^*\iota(b)m_f \in A\rtimes_\sigma \mathbb R$ by Lemma \ref{20-05-22g}, we can define $\psi' : A^+ \to [0,\infty]$ such that
$$
\psi'(b) :=  \widehat{\psi}(m_f^*\iota(b)m_f) \ \ \ \forall b \in A^+.
$$
Then $\psi'$ is a weight since $\widehat{\psi}$ is, and $\psi'$ agrees with $(\int_\mathbb R |f(t)|^2  \ \mathrm d t)\psi$ on $\mathcal M_\psi^+$. In particular, $\psi'$ is non-zero and densely defined.
Let $a \in \mathcal M_\psi^\sigma$. Then $\sigma_{-i\frac{\beta}{2}}(a) \in \mathcal M_\psi^\sigma$ and hence
\begin{align*}
&\psi'(a^*a) = \psi(a^*a)\int_\mathbb R |f(t)|^2  \ \mathrm d t = \psi(\sigma_{-i\frac{\beta}{2}}(a)\sigma_{-i\frac{\beta}{2}}(a)^* )\int_\mathbb R |f(t)|^2  \ \mathrm d t \\
&= \psi'(\sigma_{-i\frac{\beta}{2}}(a)\sigma_{-i\frac{\beta}{2}}(a)^* ) .
\end{align*}
It follows therefore from Kustermans' theorem, Theorem \ref{24-11-21d}, that $\psi'$ is a $\beta$-KMS weight for $\sigma$, and then Lemma \ref{31-03-22c} shows that $\psi' = \psi\int_\mathbb R |f(t)|^2  \ \mathrm d t$.
\end{proof}

\begin{lemma}\label{10-08-22a}$W \overline{\pi_{\widehat{\psi}}}(\lambda_t)W^*  = l_t$ for all $t \in \mathbb R$.
\end{lemma}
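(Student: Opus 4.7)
The plan is to exploit Lemma \ref{16-06-22x}, which identifies $\widehat{\pi}_\psi$ with $\rho_\psi \times D$, together with the general formula $\overline{\rho \times H}(\lambda_s) = e^{isH}$ that was established as equation \eqref{06-08-22e} in the course of proving Lemma \ref{06-08-22b}. Applying that formula to the covariant representation $(\rho_\psi, D)$ of $\sigma$, and using that $l_t = e^{itD}$ by construction, one obtains directly
$$
\overline{\rho_\psi \times D}(\lambda_t) = e^{itD} = l_t
$$
for every $t \in \mathbb R$. So the problem reduces to showing that the two extensions $\Ad W \circ \overline{\pi_{\widehat{\psi}}}$ and $\overline{\rho_\psi \times D} = \overline{\widehat{\pi}_\psi}$ of representations from $A \rtimes_\sigma \mathbb R$ to $M(A \rtimes_\sigma \mathbb R)$ agree on $\lambda_t$.

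The key observation is that extensions of non-degenerate representations to the multiplier algebra are unique (Appendix \ref{multipliers}). By Lemma \ref{16-06-22x}, $\Ad W \circ \pi_{\widehat{\psi}} = \widehat{\pi}_\psi$ on $A \rtimes_\sigma \mathbb R$, and both $\Ad W \circ \overline{\pi_{\widehat{\psi}}}$ and $\overline{\widehat{\pi}_\psi}$ are non-degenerate extensions of $\widehat{\pi}_\psi$ to $M(A \rtimes_\sigma \mathbb R)$. Hence they coincide, and in particular
$$
W \overline{\pi_{\widehat{\psi}}}(\lambda_t) W^* \;=\; \overline{\widehat{\pi}_\psi}(\lambda_t) \;=\; l_t.
$$

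There is no real obstacle here: the content of the lemma is essentially that the unitary $W$ intertwines $\pi_{\widehat{\psi}}$ with $\widehat{\pi}_\psi$ in a way that respects multiplier elements, and the identification of $\overline{\widehat{\pi}_\psi}(\lambda_t)$ with $l_t$ is already available from the proof of Lemma \ref{06-08-22b}. If one prefers to avoid invoking the uniqueness of the extension as a black box, one can instead argue directly: for any $x \in A \rtimes_\sigma \mathbb R$ and $\xi \in H_{\widehat{\psi}}$ of the form $\xi = \pi_{\widehat{\psi}}(x)\eta$, compute $W \overline{\pi_{\widehat{\psi}}}(\lambda_t) \pi_{\widehat{\psi}}(x) W^* = W \pi_{\widehat{\psi}}(\lambda_t x) W^* = \widehat{\pi}_\psi(\lambda_t x) = \overline{\widehat{\pi}_\psi}(\lambda_t) \widehat{\pi}_\psi(x) = l_t \widehat{\pi}_\psi(x) = l_t W \pi_{\widehat{\psi}}(x) W^*$, and then use non-degeneracy of $\pi_{\widehat{\psi}}$ to extend to all of $H_{\widehat{\psi}}$.
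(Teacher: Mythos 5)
Your proof is correct, and it reaches the conclusion by a slightly different route than the paper. The paper proves the identity by hand: it takes $f \in C_c(\mathbb R,A)$, writes $\lambda_t f = f_t$ with $f_t(y) = \sigma_t(f(y-t))$, and then verifies by an explicit change of variables in the integral formula for $\widehat{\pi}_\psi$ that $\widehat{\pi}_\psi(f_t) = l_t\,\widehat{\pi}_\psi(f)$, before concluding by non-degeneracy. You instead observe that this computation has already been carried out in the abstract setting of a covariant representation $(\rho,H)$, namely as equation \eqref{06-08-22e} in the proof of Lemma \ref{06-08-22b}, and that specializing to the covariant pair $(\rho_\psi,D)$ gives $\overline{\rho_\psi\times D}(\lambda_t) = e^{itD} = l_t$ at once; combined with the identification $\Ad W \circ \pi_{\widehat{\psi}} = \widehat{\pi}_\psi = \rho_\psi\times D$ of Lemma \ref{16-06-22x} and the uniqueness of the non-degenerate extension to the multiplier algebra (which, as you note, can be replaced by the one-line computation $W\overline{\pi_{\widehat{\psi}}}(\lambda_t)\pi_{\widehat{\psi}}(x)W^* = \widehat{\pi}_\psi(\lambda_t x) = l_t\widehat{\pi}_\psi(x)$ followed by non-degeneracy), the lemma follows. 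The two arguments rest on the same underlying change-of-variables identity; what your version buys is economy — it avoids duplicating a calculation the paper has already done once in Lemma \ref{06-08-22b} — at the mild cost of having to invoke that the canonical extensions of equal non-degenerate representations agree on multipliers, a fact the paper's direct computation never needs to make explicit.
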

\begin{proof} Let $f \in C_c(\mathbb R, A)$. From the first calculation in the proof of Lemma \ref{20-05-22x} we deduce that $\lambda_tf = f_t$ in $C_c(G,A) \subseteq A \rtimes_\sigma \mathbb R$, where
$$
 f_t(y) = \sigma_t(f(y-t)) .
 $$ 
It follows therefore from Lemma \ref{16-06-22x} that
\begin{align*} & W \overline{\pi_{\widehat{\psi}}}(\lambda_t) \pi_{\widehat{\psi}}(f)
 =W\pi_{\widehat{\psi}}(\lambda_t f) =W\pi_{\widehat{\psi}}(f_t) = \widehat{\pi}_\psi(f_t)W .
 \end{align*}
Let $\xi \in L^2(\mathbb R,H_\psi)$. Then
 \begin{align*}
 & (l_t\widehat{\pi}_\psi(f) \xi)(x) = (\widehat{\pi}_\psi(f) \xi)(x-t) \\
 & = \int_{\mathbb R} \pi_\psi(\sigma_{t-x}(f(y))) \xi(x-t-y) \ \mathrm d y
 \end{align*}
and hence
 \begin{align*}
 &(\widehat{\pi}_\psi(f_t)\xi)(x) =\int_{\mathbb R} \pi_\psi(\sigma_{-x}(f_t(y)))\xi(x-y) \ \mathrm d y\\
 & =\int_{\mathbb R} \pi_\psi( \sigma_{t-x}(f(y-t)))\xi(x-y) \ \mathrm d y \\
 & =  \int_{\mathbb R} \pi_\psi(\sigma_{t-x}(f(y'))) \xi(x-t-y') \ \mathrm d y' \\
 & = (l_t \widehat{\pi}_\psi(f)\xi)(x).
 \end{align*}
It follows that $W \overline{\pi_{\widehat{\psi}}}(\lambda_t) \pi_{\widehat{\psi}}(f) =\pi_{\widehat{\psi}}(f_t)W = l_t \widehat{\pi}_\psi(f) W = l_tW  \pi_{\widehat{\psi}}(f)$. The desired equality follows from this since $\pi_{\widehat{\psi}}$ is non-degenerate and $C_c(\mathbb R,A)$ is dense in $A \rtimes_\sigma \mathbb R$.
\end{proof}

Set
$$
N := (\rho_\psi \times D(A \rtimes_\sigma \mathbb R))'' .
$$ 
It follows from Lemma \ref{16-06-22x} that
$$
W\pi_{\widehat{\psi}}(A \rtimes_\sigma \mathbb R)''W^* = N.
$$
We let $\mu$ denote the normal flow on $N$ defined as conjugation by $l$, i.e.
$$
\mu_t := \Ad l_t = \Ad e^{i t D} ,
$$
cf. Lemma \ref{06-08-22b}.
\begin{lemma}\label{21-08-22a} Let $\underline{\sigma}''$ be the normal flow on $\pi_{\widehat{\psi}}(A \rtimes_\sigma \mathbb R)''$ such that $\underline{\sigma}''_t \circ \pi_{\widehat{\psi}} = \pi_{\widehat{\psi}} \circ \underline{\sigma}_t$ for all $t \in \mathbb R$, cf. \eqref{08-03-22b}. Then
$$
\Ad W \circ \underline{\sigma}''_t = \mu_t \circ \Ad W
$$
for all $t \in \mathbb R$.
\end{lemma}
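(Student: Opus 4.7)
The plan is to reduce the identity to a statement about how the automorphisms $\underline{\sigma}_t$ are implemented already at the $C^*$-algebra level, and then transport everything through $W$ using Lemma \ref{10-08-22a}. The content is essentially bookkeeping, but the bookkeeping is what the rest of the chapter needs, so I would lay it out carefully.

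First I would observe, using Lemma \ref{20-05-22ax}, that $\underline{\sigma}_t(a) = \lambda_t a \lambda_{-t}$ for every $a \in A\rtimes_\sigma \mathbb R$. Applying the representation $\pi_{\widehat{\psi}}$ and its canonical extension $\overline{\pi_{\widehat{\psi}}}$ to $M(A \rtimes_\sigma \mathbb R)$, this gives
$$
\pi_{\widehat{\psi}}(\underline{\sigma}_t(a)) = \overline{\pi_{\widehat{\psi}}}(\lambda_t)\,\pi_{\widehat{\psi}}(a)\,\overline{\pi_{\widehat{\psi}}}(\lambda_{-t}),
$$
so that by the defining relation $\underline{\sigma}''_t \circ \pi_{\widehat{\psi}} = \pi_{\widehat{\psi}}\circ\underline{\sigma}_t$ we have $\underline{\sigma}''_t = \Ad \overline{\pi_{\widehat{\psi}}}(\lambda_t)$ on $\pi_{\widehat{\psi}}(A\rtimes_\sigma \mathbb R)$.

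Next I would upgrade this equality from $\pi_{\widehat{\psi}}(A\rtimes_\sigma \mathbb R)$ to the weak closure $\pi_{\widehat{\psi}}(A\rtimes_\sigma \mathbb R)''$. Both $\underline{\sigma}''_t$ and $\Ad \overline{\pi_{\widehat{\psi}}}(\lambda_t)$ are normal automorphisms of $\pi_{\widehat{\psi}}(A\rtimes_\sigma \mathbb R)''$ (the latter because it is implemented by a unitary), and they coincide on the $\sigma$-weakly dense subalgebra $\pi_{\widehat{\psi}}(A\rtimes_\sigma \mathbb R)$, so by normality they coincide everywhere. Hence $\underline{\sigma}''_t = \Ad \overline{\pi_{\widehat{\psi}}}(\lambda_t)$ on all of $\pi_{\widehat{\psi}}(A\rtimes_\sigma \mathbb R)''$.

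Finally I would conjugate by $W$. For $m \in \pi_{\widehat{\psi}}(A\rtimes_\sigma \mathbb R)''$,
$$
W\underline{\sigma}''_t(m)W^* = W\overline{\pi_{\widehat{\psi}}}(\lambda_t)W^*\cdot WmW^*\cdot W\overline{\pi_{\widehat{\psi}}}(\lambda_{-t})W^*,
$$
and by Lemma \ref{10-08-22a} the outer factors are $l_t$ and $l_{-t}$ respectively. Therefore $W\underline{\sigma}''_t(m)W^* = l_t (WmW^*) l_{-t} = \mu_t(WmW^*)$, which is exactly $\Ad W \circ \underline{\sigma}''_t = \mu_t \circ \Ad W$. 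The only genuine point requiring care is the normality argument in the passage to the von Neumann algebra closure; everything else is direct substitution from Lemma \ref{20-05-22ax} and Lemma \ref{10-08-22a}.
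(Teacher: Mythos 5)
Your proof is correct and follows essentially the same route as the paper: both arguments reduce the identity to the generating subalgebra $\pi_{\widehat{\psi}}(A\rtimes_\sigma\mathbb R)$, invoke Lemma \ref{20-05-22ax} to write $\underline{\sigma}_t = \Ad\lambda_t$, and then use Lemma \ref{10-08-22a} to transport $\overline{\pi_{\widehat{\psi}}}(\lambda_t)$ to $l_t$ via $W$. The paper leaves the passage from the dense subalgebra to the weak closure implicit, whereas you spell out the normality argument; that is the only difference.
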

\begin{proof}
 It is enough to show that $\Ad W \circ \underline{\sigma}''_t \circ \pi_{\widehat{\psi}} = \mu_t \circ \Ad W \circ \pi_{\widehat{\psi}}$. For this note that,
\begin{align*}
& \Ad W \circ \underline{\sigma}''_t \circ \pi_{\widehat{\psi}} = \Ad W\circ \pi_{\widehat{\psi}}\circ \underline{\sigma}_t \\
& \overset{(i)}{=}  \Ad W\circ \pi_{\widehat{\psi}}\circ \Ad \lambda_t \\
& = \Ad W \circ \Ad \overline{\pi_{\widehat{\psi}}}(\lambda_t)\circ \pi_{\widehat{\psi}}\\
& \overset{(ii)}{=} \Ad l_t \circ \Ad W\circ \pi_{\widehat{\psi}} \\
& = \mu_t \circ \Ad W\circ \pi_{\widehat{\psi}},
\end{align*}
where (i) follows from Lemma \ref{20-05-22ax} and (ii) from Lemma \ref{10-08-22a}.
\end{proof}

Consider now the faithful normal semi-finite weight $\widehat{\psi}''$ on $\pi_{\widehat{\psi}}(A \rtimes_\sigma \mathbb R)''$ obtained by applying Theorem \ref{21-02-22dx} to $\widehat{\psi}$. Then
$$
\phi : = \widehat{\psi}'' \circ \Ad W^*
$$
is a  faithful normal semi-finite weight $\phi$ on $N$.  It follows from Lemma \ref{02-03-22g} and Lemma \ref{21-08-22a} that
\begin{equation}\label{13-08-22ax}
\phi(m^*m) = \phi(\mu_{-i \frac{\beta}{2}}(m) \mu_{-i \frac{\beta}{2}}(m)^*)
\end{equation}
when $m \in N_a$ is analytic for $\mu$. It follows from Lemma \ref{06-08-22b} that $f(D) \in N$ for all $f \in C_0(\mathbb R)$ and we can therefore define $\tau : N^+ \to [0,\infty]$ by
$$
\tau(a) := \sup \left\{ \phi(f(D)e^{\frac{\beta D}{2}} a e^{\frac{\beta D}{2}}f(D)) : \ f \in C_c (\mathbb R), \ 0 \leq f \leq 1 \right\} .
$$

\begin{lemma}\label{21-08-22c}
 $\tau$ is a normal faithful semi-finite trace.
\end{lemma}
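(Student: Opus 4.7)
The overall strategy is to recognize that the definition of $\tau$ is the inverse construction to the one used in Lemma \ref{06-08-22f}: the weight $\phi$ satisfies the KMS-type identity \eqref{13-08-22ax} with respect to the inner flow $\mu_t = \Ad e^{itD}$, and we expect $\tau$ to be the trace recovered from $\phi$ in analogy with Proposition \ref{05-09-22a} and the $C^*$-level Theorem \ref{17-08-22c}. The proof will adapt the arguments of Lemma \ref{06-08-22f} to the normal setting on the von Neumann algebra $N$, reversing the roles of KMS weight and trace.

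First I would verify the weight properties. The key monotonicity observation is that for $0 \leq f \leq g \leq 1$ in $C_c(\mathbb R)$ and $a \in N^+$, one has
$$\phi\bigl(f(D)e^{\beta D/2} a\, e^{\beta D/2} f(D)\bigr) \leq \phi\bigl(g(D)e^{\beta D/2} a\, e^{\beta D/2} g(D)\bigr),$$
which can be extracted by writing $a=(a^{1/2})^2$, using that $f(D)$ commutes with $e^{\beta D/2}$, and the positivity of $\phi$. Consequently, for any sequence $\{g_n\} \subset C_c(\mathbb R)$ with $0 \leq g_n \leq g_{n+1} \leq 1$ and $g_n \equiv 1$ on $[-n,n]$,
$$\tau(a) = \lim_n \phi\bigl(g_n(D)e^{\beta D/2} a\, e^{\beta D/2} g_n(D)\bigr),$$
from which additivity, positive homogeneity, and normality with respect to the $\sigma$-weak topology all follow from the corresponding properties of $\phi$. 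Faithfulness: if $\tau(a)=0$ for $a \in N^+$, then $f(D)e^{\beta D/2} a^{1/2}=0$ for every admissible $f$ by the faithfulness of $\phi$; choosing $f(t) = e^{-\beta t/2} h_n(t)$ with a suitable cut-off $h_n$ yields $h_n(D) a^{1/2}=0$, and letting $n \to \infty$ gives $a=0$. For semi-finiteness, I would verify that elements of the form $e^{-\beta D/2} f(D) m^*m f(D) e^{-\beta D/2}$ with $m \in \mathcal N_\phi \cap N_a$ and $f \in C_c(\mathbb R)$ lie in $\mathcal M_\tau^+$, and use that such elements span a $\sigma$-weakly dense subset of $N^+$ thanks to the semi-finiteness of $\phi$ and the density of $N_a$ in $N$ (Lemma \ref{02-03-22bx}).

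The heart of the proof is the trace identity $\tau(m^*m)=\tau(mm^*)$. For $m \in N_a$, set $h(t)=f(t)e^{\beta t/2} \in C_c(\mathbb R)$; by Lemma \ref{15-08-22a}(3) we have $mh(D) \in N_a$ with $\mu_{-i\beta/2}(mh(D)) = \mu_{-i\beta/2}(m)h(D)$, since $h(D)$ is fixed by $\mu$. Applying \eqref{13-08-22ax} gives
$$\phi\bigl(f(D)e^{\beta D/2} m^*m\, e^{\beta D/2} f(D)\bigr) = \phi\bigl(\mu_{-i\beta/2}(m)\, f(D)^2 e^{\beta D}\, \mu_{-i\beta/2}(m)^*\bigr).$$
The task is to rewrite the right-hand side symmetrically in $mm^*$ with the factors $e^{\beta D/2}f(D)$ on the outside. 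This is exactly where the main obstacle lies: the operator $e^{\beta D/2}$ is unbounded, so the formal identity $\mu_{-i\beta/2}(m)=e^{\beta D/2}m e^{-\beta D/2}$ cannot be used naively. The remedy, already employed within Lemma \ref{06-08-22f}, is to truncate by the spectral projections $E_n = E_D([-n,n])$, within whose range everything is bounded, perform the algebraic rearrangement to arrive at $\phi\bigl(e^{\beta D/2}f(D) mm^* f(D)e^{\beta D/2}\bigr)$ on the truncated pieces, and pass to the limit using normality of $\phi$. Taking the supremum over $f$ then yields $\tau(m^*m)=\tau(mm^*)$ for $m \in N_a$, and the general case follows by approximation since $N_a$ is $\sigma$-weakly dense in $N$ (Lemma \ref{02-03-22bx}) and $\tau$ is normal. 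An alternative, more abstract route is to note that $\phi$ is KMS for the inner normal flow $\mu$ at inverse temperature $\beta$, and invoke a von Neumann algebra analog of Proposition \ref{05-09-22a} directly; this packages the trace property into the uniqueness statement for modular automorphism groups, at the cost of appealing to more of the modular theory from Chapter 4.
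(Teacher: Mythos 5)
Your overall strategy is the right one and matches the paper's: the trace property of $\tau$ is extracted from the KMS-type identity \eqref{13-08-22ax} for $\phi$, with the unboundedness of $e^{\beta D/2}$ handled by compactly supported cut-offs in the functional calculus of $D$. But there are two places where your argument, as written, does not go through. The first is the monotonicity of $f \mapsto \phi\bigl(f(D)e^{\beta D/2}af(D)e^{\beta D/2}\bigr)$, which you claim follows from writing $a=(a^{1/2})^2$ and the positivity of $\phi$. It does not: for $0\leq X\leq Y$ commuting and $b\geq 0$ one does \emph{not} have $XbX\leq YbY$ in general (already false for $2\times 2$ matrices), so $f(D)e^{\beta D/2}ae^{\beta D/2}f(D)\leq g(D)e^{\beta D/2}ae^{\beta D/2}g(D)$ fails and positivity of $\phi$ is of no help. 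The inequality that \emph{is} true is $\sqrt{a}\,e^{\beta D}f(D)^2\sqrt{a}\leq \sqrt{a}\,e^{\beta D}g(D)^2\sqrt{a}$, i.e.\ the comparison works for $c^*c$ rather than $cc^*$; to pass between the two you must use \eqref{13-08-22ax} (together with part (1) of Lemma \ref{15-08-22a} and a $g_k(D)$-truncation to stay inside $N_a$), which is exactly what the paper does. Since additivity of $\tau$ rests on this monotonicity, the gap is not cosmetic.

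The second gap is the extension of $\tau(m^*m)=\tau(mm^*)$ from $m\in N_a$ to all of $N$ ``by approximation since $N_a$ is $\sigma$-weakly dense and $\tau$ is normal.'' Normality is lower semi-continuity for the $\sigma$-weak topology, and lower semi-continuity only yields inequalities along limits; an equality valid on a dense subalgebra does not automatically pass to the closure. To repair this along your lines you would need something like Kadison's inequality for the smoothing operators $\overline{R}_k$ combined with the $\mu$-invariance of $\tau$ (the device used in the proof of \eqref{15-08-22dx}), applied symmetrically in $m$ and $m^*$. The paper avoids the issue altogether: by part (1) of Lemma \ref{15-08-22a} the two-sided cut-off $g_k(D)\,a^*\,g_l(D)^2\,a\,g_k(D)$ is built from elements of $N_a$ for an \emph{arbitrary} $a\in N$, so the computation
$$
\phi\bigl(e^{\beta D/2}g_k(D)a^*g_l(D)^2ag_k(D)e^{\beta D/2}\bigr)=\phi\bigl(e^{\beta D/2}g_l(D)ag_k(D)^2a^*g_l(D)e^{\beta D/2}\bigr)
$$
holds for every $a\in N$, and two applications of lower semi-continuity (letting $l\to\infty$, then $k\to\infty$) give $\tau(a^*a)=\tau(aa^*)$ directly, with no density argument. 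I recommend restructuring your proof of the trace identity around this observation; it also removes the need for the spectral projections $E_n$, since the continuous cut-offs $g_n(D)$ already do the job.
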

\begin{proof} It is clear that that $\tau$ is homogeneous and lower semi-continuous for the $\sigma$-weak topology since $\phi$ is. Assume that $x \in N^+$ and that $\tau(x) = 0$. Since $\phi$ is faithful it follows that 
\begin{equation}\label{21-08-22d}
f(D)e^{\frac{\beta D}{2}}xe^{\frac{\beta D}{2}}f(D) =0
\end{equation} 
for all $f \in C_c(\mathbb R)$ between $0$ and $1$, and hence in fact for all elements $f \in C_c(\mathbb R)$. Let $\{g_n\}$ be a sequence in $C_c(\mathbb R)$ such that $0 \leq g_n \leq g_{n+1} \leq 1$, $g_n(t) = 1$ for all $t \in [-n,n]$ and $g_n(t) = 0$ for all $t \notin [-n-1,n+1]$. Substituting $g_n(t)e^{-\frac{\beta t}{2}}$ for all $f$ in \eqref{21-08-22d} it follows that $g_n(D)xg_n(D) =0$ for all $n$, and hence that $x =0$ since $\lim_{n \to \infty} g_n(D) =1$ strongly. This shows that $\tau$ is faithful.

Let $0 \leq f \leq g$ in $C_c(\mathbb R)$ and let $a \in N^+$. Since $\phi$ is lower semi-continuous for the $\sigma$-weak topology and $\lim_{k \to \infty} g_k(D) =1$ strongly,
\begin{align*}
&\phi\left(e^{\frac{\beta D}{2}} f(D) a f(D) e^{\frac{\beta D}{2}}\right)  = \lim_{k \to \infty} \phi\left(e^{\frac{\beta D}{2}} f(D) \sqrt{a}g_k(D)^2\sqrt{a} f(D) e^{\frac{\beta D}{2}}\right) . 
\end{align*}
We can then combine (1) of Lemma \ref{15-08-22a} with \eqref{13-08-22ax} to conclude that
\begin{align*}
& \phi\left(e^{\frac{\beta D}{2}} f(D) \sqrt{a}g_k(D)^2\sqrt{a} f(D) e^{\frac{\beta D}{2}}\right) \\
&= \phi\left( \mu_{-i\frac{\beta}{2}}\left(g_k(D)\sqrt{a} f(D) e^{\frac{\beta D}{2}}\right) \mu_{-i\frac{\beta}{2}}\left(g_k(D)\sqrt{a} f(D)e^{\frac{\beta D}{2}} \right)^*\right) \\
& = \phi\left(e^{\frac{\beta D}{2}}g_k(D)\sqrt{a}f^2(D)\sqrt{a}g_k(D) e^{\frac{\beta D}{2}}\right).
\end{align*}
Similarly,
\begin{align*}
& \phi\left(e^{\frac{\beta D}{2}} g(D) \sqrt{a}g_k(D)^2\sqrt{a} g(D) e^{\frac{\beta D}{2}}\right) = \phi\left(e^{\frac{\beta D}{2}}g_k(D)\sqrt{a}g^2(D)\sqrt{a}g_k(D) e^{\frac{\beta D}{2}}\right),
\end{align*}
and hence
\begin{align*}
& \phi\left(e^{\frac{\beta D}{2}} f(D) \sqrt{a}g_k(D)^2\sqrt{a} f(D) e^{\frac{\beta D}{2}}\right) \leq \phi\left(e^{\frac{\beta D}{2}} g(D) \sqrt{a}g_k(D)^2\sqrt{a} g(D) e^{\frac{\beta D}{2}}\right) 
\end{align*}
since $f^2(D) \leq g^2(D)$. By letting $k$ tend to infinity we conclude that
$$
\phi\left(e^{\frac{\beta K}{2}} f(D) af(D) e^{\frac{\beta D}{2}}\right) \leq \phi\left(e^{\frac{\beta D}{2}} g(D) a g(D) e^{\frac{\beta D}{2}}\right)  ,
$$
implying that the expression $\phi\left(e^{\frac{\beta D}{2}} f(D) a f(D) e^{\frac{\beta D}{2}}\right)$ increases with $f$. It follows that
\begin{equation}\label{15-08-22ex}
\tau(a) = \lim_{k \to \infty} \phi\left(e^{\frac{\beta D}{2}} g_k(D) a g_k(D) e^{\frac{\beta D}{2}}\right) ,
\end{equation}
and therefore also that $\tau$ is additive and hence a weight.

Let $a \in N$ and $k,l \in \mathbb N$. It follows from (1) of Lemma \ref{15-08-22a} that
\begin{align*}
& \phi\left(e^{\frac{\beta D}{2}} g_k(D)a^* g_l(D)^2a  g_k(D) e^{\frac{\beta D}{2}}\right)\\
& =  \phi\left(\mu_{-i\frac{\beta}{2}} \left(g_k(D)a^*  g_l(D) e^{\frac{\beta D}{2}}\right)\mu_{-i\frac{\beta}{2}} \left(g_k(D)a^*  g_l(D) e^{\frac{\beta D}{2}}\right)^*\right).
\end{align*}
Thanks to \eqref{13-08-22ax} this implies that
\begin{align*}
& \phi\left(e^{\frac{\beta D}{2}} g_k(D)a^* g_l(D)^2a  g_k(D) e^{\frac{\beta D}{2}}\right)=  \phi\left( e^{\frac{\beta D}{2}}g_l(D)a  g_k^2(D) a^*g_l(D) e^{\frac{\beta D}{2}}\right) .
\end{align*}
Using the lower semi-continuity of $\phi$ it follows that
\begin{align*}
&  \phi\left(e^{\frac{\beta D}{2}} g_k(D)a^*a  g_k(D) e^{\frac{\beta D}{2}}\right)= \lim_{l \to \infty} \phi\left(e^{\frac{\beta D}{2}} g_k(D)a^* g_l(D)^2a  g_k(D) e^{\frac{\beta D}{2}}\right)\\
& = \lim_{l \to \infty} \phi\left( e^{\frac{\beta D}{2}}g_l(D)a  g_k^2(D) a^*g_l(D) e^{\frac{\beta D}{2}}\right)  = \tau\left(a g_k(D)^2a^*\right) .
\end{align*}
Letting $k$ tend to infinity and using the lower semi-continuity of $\tau$ we conclude that $\tau(a^*a) = \tau(aa^*)$.

It remains only to show that $\tau$ is semi-finite. Let $x \in N^+$. Since $\phi$ is semi-finite, $\mathcal M_\phi$ is $\sigma$-weakly dense in $N$ and hence so is $\mathcal N_\phi \cap \mathcal N_\phi^*$. It follows then from Lemma \ref{02-03-22bx} and Lemma \ref{02-03-22d} that the $*$-algebra $N_a \cap \mathcal N_\phi \cap \mathcal N_\phi^*$ is $\sigma$-weakly dense in $N$. By Kaplansky's density theorem this implies that $x$ can be approximated in the strong operator topology by elements of the form $b^*b$ for some $b \in N_a \cap \mathcal N_\phi$. It suffices therefore to approximate such an element $b^*b$ in the strong operator topology by elements from $\mathcal M_\tau^+$. For this, set $b_n := {b}g_n(D)$. When $k > n$, $g_k(D)g_n(D) = g_n(D)$ and hence
 \begin{align*}
& \phi\left(e^{\frac{\beta D}{2}}g_k(D) b_n^*b_ng_k(D) e^{\frac{\beta D}{2}}\right)   = \phi\left(e^{\frac{\beta D}{2}} b_n^*b_n e^{\frac{\beta D}{2}}\right) .
\end{align*}
Thanks to (3) of Lemma \ref{15-08-22a} we can use \eqref{13-08-22ax} to conclude that
 \begin{align*}
 & \phi\left(e^{\frac{\beta D}{2}} b_n^*b_n e^{\frac{\beta D}{2}}\right) = \phi\left( \mu_{-i\frac{\beta}{2}} (b_n e^{\frac{\beta D}{2}}) \mu_{-i\frac{\beta}{2}} (b_n e^{\frac{\beta D}{2}})^*\right) .
\end{align*}
Since
\begin{align*}
& \phi\left( \mu_{-i\frac{\beta}{2}} (b_n e^{\frac{\beta D}{2}}) \mu_{-i\frac{\beta}{2}} (b_n e^{\frac{\beta D}{2}})^*\right)  = \phi(\mu_{-i\frac{\beta}{2}}(b) g_n(D)^2e^{\beta D} \mu_{-i\frac{\beta}{2}}(b)^* ) \\
& \leq \left\| g_n(D)^2e^{\beta D}\right\|\phi(\mu_{-i\frac{\beta}{2}}(b) \sigma_{-i\frac{\beta}{2}}(b)^* ) = \left\| g_n(D)^2e^{\beta D}\right\|\phi(b^*b) ,
\end{align*}
it follows that 
$$
\tau(b_n^*b_n) = \lim_{k \to \infty}\phi\left(e^{\frac{\beta D}{2}}g_k(D) b_n^*b_ng_k(D) e^{\frac{\beta D}{2}}\right) \leq \left\| g_n(D)^2e^{\beta D}\right\|\phi(b^*b)  < \infty
$$ 
for all $n$. This completes the proof because $\lim_{n \to \infty} b_n^*b_n = b^*b$ strongly.
\end{proof}

We are now ready for the \emph{proof of Theorem \ref{07-08-22bx}:} 
It follows from \eqref{kerteminde} that $({\rho_\psi}, D)$ is a covariant representation of $\sigma$ and hence by Lemma \ref{21-08-22c} that $({\rho_\psi}, D,\tau)$ is a semi-finite covariant representation of $\sigma$. The proof will be completed by showing that $(\rho_\psi, D, \tau)$ has the properties required in Theorem \ref{07-08-22bx}.

We claim that
\begin{equation}\label{15-08-22dx}
\phi = \tau_\beta ;
\end{equation}
the weight from Lemma \ref{06-08-22f} defined using the trace $\tau$ of Lemma \ref{21-08-22c}. To show this let $a \in N$. By using \eqref{15-08-22fx} and \eqref{15-08-22ex} we find that
\begin{align*}
& \tau_\beta(a^*a) = \lim_{n \to \infty} \tau\left(g_n(D)e^{-\frac{\beta D}{2}}a^*a e^{-\frac{\beta D}{2}}g_n(D)\right) \\
& =  \lim_{n \to \infty} \lim_{k \to \infty} \phi\left(g_k(D)e^{\frac{\beta D}{2}}g_n(D)e^{-\frac{\beta D}{2}}a^*a e^{-\frac{\beta D}{2}}g_n(D)g_k(D)e^{\frac{\beta D}{2}}\right) \\
& = \lim_{n \to \infty}  \phi\left(g_n(D)a^*a g_n(D)\right) 
\end{align*}
since $g_kg_n = g_n$ when $k \geq n+1$.
 If we assume that $a \in N_a$ is entire analytic for $\mu$ we can combine (3) of Lemma \ref{15-08-22a} and \eqref{13-08-22ax} to get
\begin{align*}
&\lim_{n \to \infty} \phi\left(g_n(D)a^*a g_n(D)\right) = \lim_{n \to \infty} \phi(\mu_{-i\frac{\beta}{2}}(a)g_n^2(D)\mu_{-i\frac{\beta}{2}}(a)^*) \\
& = \phi(\mu_{-i\frac{\beta}{2}}(a)\mu_{-i\frac{\beta}{2}}(a)^*) = \phi(a^*a) .
\end{align*}
It follows that $\tau_\beta(a^*a) = \phi(a^*a)$ when $a$ is entire analytic for $\mu$. Let then $a\in N$ be a general element, $a \geq 0$. Using the smoothing operators $\overline{R}_k$ defined from the normal flow $\mu$, cf. Section \ref{smoothing}, we have
$$
\tau_\beta(\overline{R}_k(\sqrt{a})\overline{R}_k(\sqrt{a})) =\phi(\overline{R}_k(\sqrt{a})\overline{R}_k(\sqrt{a})) 
$$
for all $k$ because $\overline{R}_k(\sqrt{a})$ is entire analytic for $\mu$ by Lemma \ref{02-03-22bx}. Since 
$$
\overline{R}_k(\sqrt{a})\overline{R}_k(\sqrt{a})) \leq \overline{R}_k(a)
$$ 
by Kadisons inequality, Proposition 3.2.4 in \cite{BR}, and $\lim_{k \to \infty} \overline{R}_k(\sqrt{a}) = \sqrt{a}$ in the $\sigma$-weak topology by Lemma \ref{02-03-22bx}, it follows from the lower semi-continuity of $\tau_\beta$ that
\begin{equation}\label{16-08-22x}
\tau_\beta(a) \leq \liminf_{k \to \infty} \tau_\beta(\overline{R}_k(\sqrt{a})\overline{R}_k(\sqrt{a})) \leq \liminf_{k \to \infty} \phi(\overline{R}_k(a)) .
\end{equation}
Let $\omega \in N_*$ be a positive normal functional such that $\omega \leq \phi$. Then
\begin{align*}
& \omega\left( \overline{R}_k(a)\right) = \sqrt{\frac{k}{\pi}}\int_{\mathbb R} e^{-kt^2} \omega(\mu_t(a)) \ \mathrm d t  \leq \sqrt{\frac{k}{\pi}}\int_{\mathbb R} e^{-kt^2} \phi(\mu_t(a)) \ \mathrm d t = \phi(a)
\end{align*}
since $\phi$ is $\mu$-invariant. By Haagerups theorem, Theorem \ref{06-04-22a}, this shows that $\phi\left( \overline{R}_k(a)\right) \leq \phi(a)$ for all $k$. Inserted into \eqref{16-08-22x} this implies that $\tau_\beta(a) \leq \phi(a)$. The roles of $\tau_\beta$ and $\phi$ can be interchanged in this argument and we obtain in this way \eqref{15-08-22dx}.

Let $f \in L^1(\mathbb R) \cap L^2(\mathbb R)$ and $a \in A$. Since $m_f^*\iota(a^*a)m_f \in A\rtimes_\sigma \mathbb R$ by Lemma \ref{20-05-22g} we have
\begin{equation}\label{08-08-22a}
\begin{split}
& \widehat{\psi}(m_f^*\iota(a^*a)m_f)= \widehat{\psi}'' \circ \pi_{\widehat{\psi}}(m_f^*\iota(a^*a)m_f) \\
&= \widehat{\psi}''( \overline{\pi_{\widehat{\psi}}}(m_f^*)\overline{\pi_{\widehat{\psi}}} \circ \iota(a^*a) \overline{\pi_{\widehat{\psi}}}(m_f)   ).
\end{split}
\end{equation} 
It follows from Lemma \ref{16-06-22x} that $\Ad W \circ \overline{\pi_{\widehat{\psi}}}(m_f) = \overline{\rho_\psi \times D}(m_f)$ and $\Ad W \circ \overline{\pi_{\widehat{\psi}}} \circ \iota(a^*a)= \overline{\rho_\psi \times D} \circ \iota(a^*a)$. Since $\overline{\rho_\psi \times D} \circ \iota(a^*a) = {\rho_\psi}(a^*a)$ by Lemma \ref{24-05-22} we conclude from \eqref{08-08-22a} that
\begin{equation}\label{21-08-22f}
 \widehat{\psi}(m_f^*\iota(a^*a)m_f) =  \phi( n_f^*{\rho_\psi}(a^*a) n_f)
\end{equation}  
when we set
$$
n_f := \overline{\rho_\psi \times D}(m_f) .
$$
Using Lemma \ref{16-06-22x} and Lemma \ref{10-08-22a} we find that
\begin{equation}\label{21-08-22i}
\begin{split}
&n_f = W\overline{\pi_{\widehat{\psi}}}\left( \int_\mathbb R f(t) \lambda_t \ \mathrm dt\right)W^* \\
&=  \int_\mathbb R f(t) W\overline{\pi_{\widehat{\psi}}}(\lambda_t)W^* \ \mathrm d t = \int_\mathbb R f(t) l_t \ \mathrm dt = \widehat{f}(D).
\end{split}
\end{equation} 
We can therefore combine \eqref{21-08-22f} with \eqref{20-05-22ix} to conclude that
\begin{equation}\label{21-08-22j}
\psi(a^*a) \int_\mathbb R |f(t)|^2 \ \mathrm dt = \phi( \widehat{f}(D)^* {\rho_\psi}(a^*a)  \widehat{f}(D)) .
\end{equation}
Since $\phi = \tau_\beta$ this gives the identity \eqref{09-09-23holdnuop} in the statement of Theorem \ref{07-08-22bx}. Let $a \in \mathcal A_\sigma$. It follows from Lemma \ref{20-05-22g} that $ \iota(a)m_f = f \otimes a$ and we see therefore that $\iota(a)m_f \in \mathcal A_{\underline{\sigma}}$ and, thanks to \eqref{04-09-22} and Lemma \ref{20-05-22ax}, 
$$
\underline{\sigma}_{-i \frac{\beta}{2}}(\iota(a)m_f) = \iota(\sigma_{-i\frac{\beta}{2}}(a))m_f .
$$
Since $\widehat{\psi}$ is a $\beta$-KMS weight for $\underline{\sigma}$ by Corollary \ref{09-06-22d} we have therefore
\begin{align*}
&\widehat{\psi}(m_f^* \iota(a^*a) m_f) = \widehat{\psi}( (\underline{\sigma}_{-i \frac{\beta}{2}}(\iota(a)m_f)(\underline{\sigma}_{-i \frac{\beta}{2}}(\iota(a)m_f)^*)) \\
& = \widehat{\psi}(\iota(\sigma_{-i\frac{\beta}{2}}(a))m_fm_f^*\iota(\sigma_{-i\frac{\beta}{2}}(a)^*) ).
\end{align*}
On the other hand, $\psi$ is a $\beta$-KMS weight for $\sigma$ and it follows therefore by use of \eqref{20-05-22ix} that
\begin{align*}
&\widehat{\psi}(m_f^* \iota(a^*a) m_f) 
 = \psi(a^*a) \int_{\mathbb R} |f(t)|^2 \ \mathrm d t \\
&= \psi\left(\sigma_{-i\frac{\beta}{2}}(a)\sigma_{-i\frac{\beta}{2}}(a)^* \right)\int_\mathbb R|f(t)|^2 \ \mathrm d t \\
 & = \widehat{\psi}\left( m_f^* \iota(\sigma_{-i\frac{\beta}{2}}(a))\iota(\sigma_{-i\frac{\beta}{2}}(a)^*) m_f\right)  .
\end{align*}
By comparing the two expressions for $\widehat{\psi}(m_f^* \iota(a^*a) m_f)$ we find that
$$
\widehat{\psi}(\iota(\sigma_{-i\frac{\beta}{2}}(a))m_fm_f^*\iota(\sigma_{-i\frac{\beta}{2}}(a)^*) ) = \widehat{\psi}\left( m_f^* \iota(\sigma_{-i\frac{\beta}{2}}(a))\iota(\sigma_{-i\frac{\beta}{2}}(a)) m_f\right) .
$$
Inserting $\sigma_{i\frac{\beta}{2}}(a)$ for $a$ it follows that
\begin{equation}\label{09-09-23a}
\widehat{\psi}(\iota(a)m_fm_f^*\iota(a^*) ) = \widehat{\psi}\left( m_f^* \iota(a)\iota(a)^*m_f\right)  \ \ \ \forall a \in \mathcal A_\sigma .
\end{equation}
Note that
\begin{align*}
& \widehat{\psi}(\iota(a)m_fm_f^*\iota(a^*) ) = \hat{\psi}'' \circ \pi_{\widehat{\psi}}(\iota(a)m_fm_f^*\iota(a^*) )  \ \ \ \ \text{(by Lemma \ref{03-03-22fx})} \\
& =  \hat{\psi}''\left( \overline{ \pi_{\widehat{\psi}}}(\iota(a)) \overline{ \pi_{\widehat{\psi}}}(m_f)\overline{ \pi_{\widehat{\psi}}}(m_f)^*\overline{ \pi_{\widehat{\psi}}}(\iota(a^*)\right)\\
& =\phi \circ \Ad W\left( \overline{ \pi_{\widehat{\psi}}}(\iota(a)) \overline{ \pi_{\widehat{\psi}}}(m_f)\overline{ \pi_{\widehat{\psi}}}(m_f)^*\overline{ \pi_{\widehat{\psi}}}(\iota(a^*)\right) \\
& = \phi\left(\overline{\rho_\psi \times D}(\iota(a)) n_fn_f^* \overline{\rho_\psi \times D}(\iota(a^*))\right) \ \ \ \ \text{(by Lemma \ref{16-06-22x})} \\
& = \phi\left(\rho_\psi(a) n_fn_f^* \rho_\psi(a^*)\right) \ \ \ \ \text{(by Lemma \ref{24-05-22})}. \\
\end{align*}
Combine this with \eqref{09-09-23a} and \eqref{21-08-22f} to find that
\begin{equation}\label{21-08-22g}
\phi\left({\rho_\psi}(a)n_fn_f^* {\rho_\psi}(a^*)\right) = \phi\left(n_f^* {\rho_\psi}(a){\rho_\psi}(a)^*n_f\right) \ \ \ \forall a \in \mathcal A_\sigma .
\end{equation}
Since $\phi = \tau_\beta$ by \eqref{15-08-22dx} and $n_f = \widehat{f}(D)$ by \eqref{21-08-22i} this shows that $\widehat{f}(D)$ satisfies condition (c) in Definition \ref{07-08-22}. It follows from \eqref{21-08-22j} that 
$$
\tau_\beta(\widehat{f}(D){\rho_\psi} (a^*a) \widehat{f}(D)) < \infty
$$
when $a \in \mathcal N_\psi$ and hence $\widehat{f}(D)$ satisfies condition (b) in Definition \ref{07-08-22}. Since $\widehat{f}(D)$ clearly also satisfies condition (a) we conclude that $\widehat{f}(D)$ is a $\beta$-mediator for $({\rho_\psi}, D,\tau)$.

Assume then that $g$ is an infinitely differentiable function of compact support with $\int_\mathbb R |g(t)|^2 \ \mathrm d t = 2\pi$. Then $g = \widehat{f}$ for some $f \in L^1(\mathbb R) \cap L^2(\mathbb R)$. In fact, since $g$ is a Schwartz function $f$ is a Scwartz function too, cf. Theorem 7.7 (b) of \cite{Ru2}. By Plancherels formula $\int_\mathbb R|f(t)|^2 \ \mathrm d t =  (2\pi)^{-1}\int_\mathbb R|g(t)|^2 \ \mathrm d t = 1$. It follows that
\begin{align*}
&\psi(a) = \tau_\beta(g(D)^* {\rho_\psi}(a)g(D))   \\
& = \lim_{n \to \infty} \tau(g_n(D) e^{-\frac{\beta D}{2}}g(D)^* {\rho_\psi}(a)g(D)g_n(D)  e^{-\frac{\beta D}{2}} ) \ \ \ \ \ \ \ \ \text{(by \eqref{15-08-22fx})}\\
&= \tau(e^{-\frac{\beta D}{2}}g(D)^* {\rho_\psi}(a)g(D)  e^{-\frac{\beta D}{2}}) ,
\end{align*}
since $g_n(D)g(D)^* = g(D)^*$ and $g(D)g_n(D) = g(D)$ for all large $n$.
\qed

\begin{notes} The main result of this section, Theorem \ref{07-08-22bx}, is new.
\end{notes}

\section{KMS weights for periodic flows}\label{periodicflows}
The trace formula in Theorem \ref{07-08-22bx}
for a general KMS weight was obtained via the crossed product $C^*$-algebra of the given flow. In this section we obtain a different, and in many ways simpler trace formula for the KMS weights of periodic flows by using the fixed point algebra of the flow. 

\begin{lemma}\label{22-02-23} Let $\sigma$ be a flow on the $C^*$-algebra $A$ and $\psi$ a KMS weight for $\sigma$. The restriction $\psi|_{A^\sigma}$ of $\psi$ to the fixed point algebra $A^\sigma$ of $\sigma$ is either the zero map or it is a lower semi-continuous trace on $A^\sigma$.
\end{lemma}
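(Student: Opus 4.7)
The plan is to verify, under the assumption that $\psi|_{A^\sigma}$ is not identically zero, each of the four conditions of Definition \ref{03-02-22f}: positive homogeneity, additivity, the trace identity $\psi(a^*a) = \psi(aa^*)$, and density of the finiteness ideal. Homogeneity and additivity transfer verbatim by restriction since $A^\sigma$ is a $C^*$-subalgebra of $A$, and lower semi-continuity on $A^\sigma$ is inherited from lower semi-continuity on $A$ (a relatively open set in $A^+$ intersects $(A^\sigma)^+$ in a relatively open set).

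The algebraic core is the trace identity. Every $a \in A^\sigma$ is trivially entire analytic for $\sigma$: the sequence $c_0(a) = a$, $c_n(a) = 0$ for $n \geq 1$, satisfies $\sigma_t(a) = \sum_{n=0}^\infty c_n(a)t^n = a$ for every $t \in \mathbb R$, so $a \in \mathcal A_\sigma$ and $\sigma_z(a) = a$ for all $z \in \mathbb C$ by Lemma \ref{25-11-21}. In particular $a \in D(\sigma_{-i\beta/2})$ with $\sigma_{-i\beta/2}(a) = a$, and condition (1) of Kustermans' theorem (Theorem \ref{24-11-21d}) yields
\[
\psi(a^*a) \;=\; \psi\!\bigl(\sigma_{-i\beta/2}(a)\,\sigma_{-i\beta/2}(a)^*\bigr) \;=\; \psi(aa^*),
\]
which is the trace identity on $A^\sigma$.

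The main obstacle will be to establish density of $\mathcal M_\psi^+ \cap A^\sigma$ in $(A^\sigma)^+$ when $\psi|_{A^\sigma}$ is non-zero. Since the present lemma opens the section devoted to periodic flows, I expect to use the conditional expectation obtained by averaging over a period: if $\sigma$ has period $T$, the map
\[
E(a) \;:=\; \frac{1}{T}\int_0^T \sigma_t(a)\, dt
\]
is a norm-continuous positive linear projection of $A$ onto $A^\sigma$ (the integrand is continuous and compactly supported, so the integral exists in $A$, cf. Appendix \ref{integration}). The $\sigma$-invariance of $\psi$, combined with approximation of the integral by Riemann sums (Lemma \ref{01-03-22} in Appendix \ref{integration}) and the lower semi-continuity of $\psi$, gives $\psi(E(a)) \leq \psi(a)$ for all $a \in A^+$. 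Hence $E(\mathcal M_\psi^+) \subseteq \mathcal M_\psi^+ \cap A^\sigma$, and continuity of $E$ together with density of $\mathcal M_\psi^+$ in $A^+$ (Lemma \ref{04-11-21n}) yields density of $\mathcal M_\psi^+ \cap A^\sigma$ in $(A^\sigma)^+ = E(A^+)$. Combining this density with the three items verified above shows that $\psi|_{A^\sigma}$ is a lower semi-continuous trace on $A^\sigma$ in the sense of Definition \ref{03-02-22f}, which completes the plan.
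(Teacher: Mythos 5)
Your verification of the weight axioms and of the trace identity is exactly the paper's argument: since $A^\sigma \subseteq \mathcal A_\sigma$ with $\sigma_z(a)=a$, condition (1) of Theorem \ref{24-11-21d} gives $\psi(a^*a)=\psi(aa^*)$ immediately. The gap is in the density step. The lemma is stated for an \emph{arbitrary} flow $\sigma$ — the hypothesis is just ``Let $\sigma$ be a flow'' — and the paper's subsequent Speculation \ref{13-10-23} even discusses a non-periodic example, so you cannot assume a period $T$ exists. Your entire density argument rests on the averaging projection $E(a)=\frac{1}{T}\int_0^T\sigma_t(a)\,dt$, which is only available for periodic flows; for a general flow there is no norm-convergent ergodic average onto $A^\sigma$, and the argument collapses. (For periodic flows your estimate $\psi(E(a))\leq\psi(a)$ via Riemann sums and lower semi-continuity is fine, and indeed the paper uses exactly that device later, in Lemma \ref{24-02-23k} via Lemma \ref{13-12-21axx}; but that is a different lemma with periodicity in its hypotheses.)

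The paper's proof of the density avoids averaging altogether and works for any flow: given $a\in(A^\sigma)^+$, apply continuous functional calculus inside $A^\sigma$ with the cut-off functions $f_n(t)=\max\{t-\tfrac1n,0\}$ and $g_n(t)=\min\{nt,1\}$. Then $g_n(a)\sqrt{f_n(a)}=\sqrt{f_n(a)}$ and $\sqrt{f_n(a)}\in A^\sigma\subseteq\mathcal A_\sigma\subseteq D(\sigma_{-i\frac{\beta}{2}})$, so Christensen's result, Proposition \ref{02-12-21x}, yields $\sqrt{f_n(a)}\in\mathcal N_\psi$, i.e.\ $f_n(a)\in\mathcal M_\psi^+\cap A^\sigma$; since $f_n(a)\to a$ in norm, $\psi|_{A^\sigma}$ is densely defined. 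If you want to salvage your write-up, replace the averaging paragraph with this functional-calculus argument; the rest of your proposal stands.
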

\begin{proof} $\psi|_{A^\sigma}$ is a lower semi-continuous weight since $\psi$ is, and because $A^\sigma \subseteq \mathcal A_\sigma$ it follows from (1) in Theorem \ref{24-11-21d} that $\psi(a^*a) = \psi(aa^*)$ for all $a \in A^\sigma$. It remains therefore only to show that $\psi|_{A^\sigma}$ is densely defined. Let $a \in \left(A^\sigma\right)^+$. Define functions $f_n : [0,\infty) \to [0,\infty)$ and $g_n : [0,\infty) \to [0,1]$  such that
 $$
f_n(t) = \begin{cases} 0, & \ t \in [0,\frac{1}{n}]\\  t - \frac{1}{n}, & \ t \geq \frac{1}{n} ,  \end{cases}
$$
and 
$$
g_n(t) = \begin{cases} nt, & \ t \in [0,\frac{1}{n}]\\  1, & \ t \geq \frac{1}{n} .  \end{cases}
$$
Then $g_{n}(a)\sqrt{f_n(a)} = \sqrt{f_n(a)}$ and $\sqrt{f_n(a)} \in A^\sigma \subseteq \mathcal A_\sigma$. It follows therefore from Proposition \ref{02-12-21x} that $f_n(a) \in \mathcal M^+_\psi$. Since $\lim_{n \to \infty} f_n(a) = a$, it follows that $\psi|_{A^\sigma}$ is densely defined. 
\end{proof}

\begin{spec}\label{13-10-23} Let $\sigma$ be a flow on a unital $C^*$-algebra $A$ and $\sigma^1$ the flow on $C_0(\mathbb R) \oplus A$ given by 
$$
\sigma^1_t(f,a) = (f(\cdot - t),\sigma_t(a)) .
$$
The map $(f,a) \mapsto \int_\mathbb R f(x) \ \mathrm d x$ gives rise to a $0$-KMS weight for $\sigma^1$ whose restriction to the fixed point algebra of $\sigma^1$ is the zero map despite that the fixed point algebra itself is not zero. Hence the the first alternative in Lemma \ref{22-02-23} can occur also when the fixed point algebra is non-zero. I do not know, however, if this can occur when $\psi$ is a $\beta$-KMS weight for $\beta \neq 0$.
\end{spec}

\subsection{Periodic flows and Fej\'ers theorem}

A flow $\sigma$ on a Banach space $X$ is \emph{periodic} when there is a $p > 0$ such that $\sigma_p = \id_X$, or equivalently when $\sigma_{t + p} = \sigma_t$ for all $t \in \mathbb R$. When we need to emphasize the period we say  that $\sigma$ is \emph{$p$-periodic} when this holds.

Given a $p$-periodic flow $\sigma$ and $k \in \mathbb Z$, set
$$
X(k) := \left\{ a \in X: \ \sigma_t(a) = e^{i t k  \frac{2\pi}{p}}a \ \forall t \in \mathbb R \right\} .
$$

\begin{lemma}\label{23-02-23} Let $\sigma$ be a $p$-periodic flow on the Banach space $X$. Then
$$
\lim_{R \to \infty} \frac{1}{R}\int_0^R e^{-itk\frac{2\pi}{p}} \sigma_t(a) \ \mathrm d t = \frac{1}{p}\int_0^p e^{-itk\frac{2\pi}{p}} \sigma_t(a) \ \mathrm d t 
$$
for all $a \in X$ and $k \in \mathbb Z$. 
\end{lemma}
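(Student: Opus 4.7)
The plan is as follows. Set $\omega := e^{-ik\frac{2\pi}{p}}$, regarded as acting on $t$ by multiplication, and let $F(t) := e^{-itk\frac{2\pi}{p}}\sigma_t(a)$. The key observation is that $F$ is itself $p$-periodic as a function $\mathbb R \to X$: indeed, using $\sigma_{t+p}=\sigma_t$ and $e^{-i(t+p)k\frac{2\pi}{p}} = e^{-itk\frac{2\pi}{p}}e^{-i2\pi k} = e^{-itk\frac{2\pi}{p}}$, we get $F(t+p)=F(t)$. First I would verify this periodicity and note that $F$ is continuous, hence bounded on $[0,p]$ with $M := \sup_{t\in[0,p]}\|F(t)\| < \infty$ (by compactness and the strong continuity of $\sigma$).

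Next I would perform the standard division-with-remainder argument. Given $R > 0$, write $R = np + r$ with $n = \lfloor R/p\rfloor \in \mathbb Z_{\geq 0}$ and $r \in [0,p)$. Splitting the integral at the points $jp$ for $j=0,1,\dots,n$ and using $p$-periodicity of $F$ to translate each piece back to $[0,p]$ yields
\begin{equation*}
\int_0^R F(t)\ \mathrm dt = n\int_0^p F(t)\ \mathrm dt + \int_0^r F(t)\ \mathrm dt .
\end{equation*}
Dividing by $R$,
\begin{equation*}
\frac{1}{R}\int_0^R F(t)\ \mathrm dt \;=\; \frac{n}{R}\int_0^p F(t)\ \mathrm dt \;+\; \frac{1}{R}\int_0^r F(t)\ \mathrm dt .
\end{equation*}

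As $R \to \infty$ we have $n/R = n/(np+r) \to 1/p$, so the first summand converges in norm to $\frac{1}{p}\int_0^p F(t)\ \mathrm dt$. For the second summand, the estimate $\left\|\int_0^r F(t)\ \mathrm dt\right\| \leq Mr \leq Mp$ (together with $R \to \infty$) gives convergence to $0$ in norm. Combining the two limits yields the claimed identity. No step is genuinely an obstacle; the only mild technicality is invoking the basic facts that Bochner-type integrals commute with norm estimates and that $\int_{jp}^{(j+1)p} F(t)\ \mathrm dt = \int_0^p F(t)\ \mathrm dt$ under a $p$-periodic integrand, both of which are standard and recorded in Appendix~\ref{integration}.
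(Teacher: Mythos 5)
Your proof is correct and follows essentially the same route as the paper: both exploit the $p$-periodicity of $t \mapsto e^{-itk\frac{2\pi}{p}}\sigma_t(a)$ to reduce $\int_0^R$ to $n$ copies of $\int_0^p$ plus a remainder over an interval of length less than $p$, and then let $R \to \infty$. The only cosmetic difference is that the paper bounds the integrand by $\|a\|$ directly (since each $\sigma_t$ is an isometry) and packages the error as a single estimate $\leq 2p\|a\|/R$, whereas you invoke compactness to get a bound $M$; both are fine.
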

\begin{proof} Let $N \in \mathbb N$. When $ Np \leq R \leq( N+1)p$ we have
\begin{align*}
&\int_0^R e^{-itk\frac{2\pi}{p} } \sigma_t(a) \ \mathrm d t = \int_0^{Np} e^{-itk\frac{2\pi}{p}} \sigma_t(a) \ \mathrm d t + \int_{Np}^R e^{-itk\frac{2\pi}{p}} \sigma_t(a) \ \mathrm d t \\
& =  \int_{Np}^R e^{-itk\frac{2\pi}{p}} \sigma_t(a) \ \mathrm d t + N \int_{0}^{p} e^{-isk\frac{2\pi}{p}} \sigma_s(a) \ \mathrm d s \\
\end{align*}
and hence
\begin{align*}
&  \left\| \frac{1}{R}\int_0^R e^{-itk\frac{2\pi}{p}} \sigma_t(a) \ \mathrm d t - \frac{1}{p}\int_0^p e^{-itk\frac{2\pi}{p}} \sigma_t(a) \ \mathrm d t\right\| \\
& \\ 
&\leq \left\| \frac{1}{R}\int_0^R e^{-itk\frac{2\pi}{p}} \sigma_t(a) \ \mathrm d t - \frac{N}{R}\int_0^p e^{-itk\frac{2\pi}{p}} \sigma_t(a) \ \mathrm d t\right\| \\
& \ \ \ \ \ \ \ \ \ \ \ \ \ \ \ \ \ \ \ \ \ +
\left|\frac{N}{R} - \frac{1}{p}\right|\left\| \int_0^p e^{-itk\frac{2\pi}{p}} \sigma_t(a) \ \mathrm d t\right\|  
\end{align*}
\begin{align*}
& \leq \frac{1}{R} \left\|  \int_{Np}^R e^{-itk\frac{2\pi}{p}} \sigma_t(a) \ \mathrm d t\right\|  +  \|a\| \left|\frac{N}{R} - \frac{1}{p}\right| p\\
& \\
& \leq \|a\| \left( \frac{R-Np}{R} +  \left|\frac{Np}{R} - 1 \right|\right) \leq \frac{2 p \|a\|}{R} .
\end{align*}
The conclusion follows.
\end{proof}

Since
\begin{align*}
& \sigma_s\left( \frac{1}{p}\int_0^p e^{-itk\frac{2\pi}{p}} \sigma_t(a) \ \mathrm d t\right) =  \frac{1}{p}\int_0^p e^{-itk\frac{2\pi}{p}} \sigma_{s+t}(a) \ \mathrm d t \\
& = \frac{1}{p}\int_s^{s+p} e^{-i(t-s)k\frac{2\pi}{p}} \sigma_t(a) \ \mathrm d t = e^{is k \frac{2\pi}{p}}  \frac{1}{p}\int_0^p e^{-itk\frac{2\pi}{p}} \sigma_t(a) \ \mathrm d t,
\end{align*}
we see that
$$
 \frac{1}{p}\int_0^p e^{-itk\frac{2\pi}{p}} \sigma_t(a) \ \mathrm d t \in X(k) .
 $$
It follows that we can define a linear map $Q_k : X \to X(k)$ such that
$$
Q_k(a) :=  \frac{1}{p}\int_0^p e^{-itk\frac{2\pi}{p}} \sigma_t(a) \ \mathrm d t ,
$$
and we note that $\left\|Q_k(a)\right\| \leq \|a\|$ for all $k,a$, and that $Q_k(a) = a$ for all $a \in X(k)$ and all $k \in \mathbb Z$. For $j \in \mathbb N$, set
$$
P_j :=  \sum_{k= -j}^j Q_k . 
$$
The following theorem is a variation of
Fej\'ers theorem on
Fourier series.

\begin{thm}\label{23-02-23a} $\left\| \frac{1}{N+1} \sum_{j=0}^N
P_j(a)\right\| \leq \|a\|$ for all $N \in \mathbb N$ and all $a \in X$, and
$$
\lim_{N \to \infty}  \frac{1}{N+1} \sum_{j=0}^N
P_j(a) = a
$$
for all $a \in X$.

\end{thm}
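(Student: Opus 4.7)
The plan is to reduce both assertions to the classical Fej\'er-kernel argument, adapted to the Banach-space-valued integral defining $Q_k$. I would begin by interchanging summations in
\[
\frac{1}{N+1}\sum_{j=0}^N P_j(a) = \frac{1}{N+1}\sum_{j=0}^N \sum_{k=-j}^{j} Q_k(a).
\]
For each $k$ with $|k| \leq N$, the indices $j$ with $|k| \leq j \leq N$ number exactly $N+1-|k|$, so this rewrites as $\sum_{k=-N}^N \bigl(1 - \tfrac{|k|}{N+1}\bigr) Q_k(a)$. Inserting the integral definition of $Q_k$ and using linearity of the Banach-valued integral from Appendix \ref{integration}, I obtain
\[
\frac{1}{N+1}\sum_{j=0}^N P_j(a) = \frac{1}{p}\int_0^p F_N(t)\,\sigma_t(a)\,\mathrm{d}t,
\]
where $F_N(t) = \sum_{k=-N}^N \bigl(1 - \tfrac{|k|}{N+1}\bigr) e^{-itk\frac{2\pi}{p}}$ is the Fej\'er kernel of parameter $N$ (in the variable $2\pi t/p$).

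Next I would record the three standard properties of $F_N$: (i) $F_N(t) \geq 0$, from the identity $F_N(t) = \tfrac{1}{N+1}\bigl|\sin((N+1)\pi t/p)/\sin(\pi t/p)\bigr|^2$ for $t \notin p\mathbb{Z}$, which is a direct geometric-series computation; (ii) $\tfrac{1}{p}\int_0^p F_N(t)\,\mathrm{d}t = 1$, since for $k \neq 0$ the integral of $e^{-itk\frac{2\pi}{p}}$ over $[0,p]$ vanishes; and (iii) for any $\delta \in (0, p/2)$, $\tfrac{1}{p}\int_\delta^{p-\delta} F_N(t)\,\mathrm{d}t \to 0$ as $N \to \infty$, since the explicit formula gives the pointwise bound $F_N(t) \leq \tfrac{1}{(N+1)\sin^2(\pi \delta/p)}$ on this interval.

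The norm bound is then immediate from (i) and (ii):
\[
\Bigl\|\tfrac{1}{p}\int_0^p F_N(t)\,\sigma_t(a)\,\mathrm{d}t\Bigr\| \leq \tfrac{1}{p}\int_0^p F_N(t)\,\|a\|\,\mathrm{d}t = \|a\|.
\]
For the convergence, I use (ii) to rewrite $a = \tfrac{1}{p}\int_0^p F_N(t)\,a\,\mathrm{d}t$, so
\[
\tfrac{1}{N+1}\sum_{j=0}^N P_j(a) - a = \tfrac{1}{p}\int_0^p F_N(t)\bigl(\sigma_t(a) - a\bigr)\,\mathrm{d}t.
\]
Given $\epsilon > 0$, continuity of $t \mapsto \sigma_t(a)$ together with $p$-periodicity (so that $\sigma_p(a) = a$) yields $\delta \in (0, p/2)$ with $\|\sigma_t(a) - a\| < \epsilon$ for $t \in [0,\delta) \cup (p-\delta, p]$. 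Splitting the integral at these intervals versus $[\delta, p-\delta]$, the first piece is bounded by $\epsilon$ by (i) and (ii), and the second by $2\|a\| \cdot \tfrac{1}{p}\int_\delta^{p-\delta} F_N(t)\,\mathrm{d}t$, which tends to $0$ by (iii). Hence the difference converges to $0$ in norm.

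The main obstacle, if there is one, lies in item (i) above, which requires the elementary but slightly intricate telescoping/geometric-sum identity expressing $F_N$ as the square of a Dirichlet-like kernel divided by $N+1$. Once this classical fact is in place, the argument is entirely routine: the Fej\'er kernel is a bounded positive approximate identity on $\mathbb{R}/p\mathbb{Z}$, and the standard convolution-convergence proof works verbatim for the Banach-valued integral $\tfrac{1}{p}\int_0^p F_N(t)\,\sigma_t(a)\,\mathrm{d}t$ because the only properties used are linearity, the triangle inequality $\|\int f\| \leq \int \|f\|$, and norm-continuity of $t \mapsto \sigma_t(a)$, all of which are available from Appendix \ref{integration}.
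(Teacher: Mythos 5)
Your argument is correct, and the first half coincides with the paper's: both identify $\frac{1}{N+1}\sum_{j=0}^N P_j(a)$ with $\frac{1}{p}\int_0^p K_N(t)\sigma_t(a)\,\mathrm dt$ for the Fej\'er kernel $K_N$, and both get the norm bound from positivity of $K_N$ (the paper's Lemma \ref{23-02-23b}, which is essentially your closed-form identity) together with $\frac{1}{p}\int_0^p K_N = 1$. Where you genuinely diverge is in the proof of convergence. The paper does not use the concentration property of the kernel at all: it instead verifies the limit exactly on the ``trigonometric monomials'' $q_l\otimes a$ (where $\frac{1}{N+1}\sum_{j=0}^N\overline{P_j}(q_l\otimes a)=\frac{N+1-|l|}{N+1}a$), approximates the periodic function $t\mapsto\sigma_t(a)$ uniformly by finite linear combinations of such monomials, and transfers the limit via the uniform bound already established. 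You use the classical good-kernel argument: split the integral of $K_N(t)(\sigma_t(a)-a)$ into a neighbourhood of $0$ modulo $p$, controlled by uniform continuity of $t\mapsto\sigma_t(a)$, and the complementary arc, controlled by the bound $K_N(t)\le \frac{1}{(N+1)\sin^2(\pi\delta/p)}$. Both proofs are complete and of comparable length; yours needs the quantitative decay of $K_N$ away from $0$ (hence the full closed form, not just its nonnegativity), while the paper's needs only positivity but pays for it with the density-plus-equicontinuity step. Your route is arguably the more self-contained once the closed form is in hand, and the closed form is in any case what the paper's Lemma \ref{23-02-23b} computes (note, incidentally, that in that lemma the identity $e^{-ix/2}-e^{ix/2}=-i\sin(x/2)$ is off by a factor of $2$; the correct constant is harmless for positivity and agrees with your formula $K_N(t)=\frac{1}{N+1}\bigl(\sin((N+1)\pi t/p)/\sin(\pi t/p)\bigr)^2$).
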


The proof of Theorem \ref{23-02-23a} uses the following lemma from calculus.

\begin{lemma}\label{23-02-23b}  $\sum_{j=0}^N \sum_{k=-j}^j e^{i kx} \geq 0$ for all $x \in \mathbb R$.
\end{lemma}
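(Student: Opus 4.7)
The plan is to recognize the double sum as the squared modulus of a single geometric sum, which renders nonnegativity obvious. Specifically, I would prove the identity
$$\sum_{j=0}^N \sum_{k=-j}^j e^{ikx} = \Bigl|\sum_{k=0}^N e^{ikx}\Bigr|^2,$$
from which the claim follows at once.

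To establish the identity, I would compute each side by counting how often each exponential $e^{imx}$ appears. On the left, for a given integer $m$ with $|m| \leq N$, the term $e^{imx}$ is contributed once by each index $j \in \{0,1,\ldots,N\}$ with $j \geq |m|$, giving a multiplicity of $N+1-|m|$ (and zero for $|m|>N$). On the right, expanding the square gives
$$\Bigl|\sum_{k=0}^N e^{ikx}\Bigr|^2 = \sum_{k=0}^N \sum_{l=0}^N e^{i(k-l)x},$$
and for fixed $m = k-l$ with $|m|\leq N$, the number of pairs $(k,l) \in \{0,\ldots,N\}^2$ with $k-l=m$ is exactly $N+1-|m|$. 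Thus both sides equal $\sum_{m=-N}^N (N+1-|m|)\, e^{imx}$, and the identity holds.

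There is no real obstacle here; this is a direct combinatorial counting argument. The only thing one has to get right is the bookkeeping of multiplicities, and the key observation is simply that the innermost sum in $\sum_{j=0}^N \sum_{k=-j}^j$ is symmetric around $0$ with half-width $j$, so that each $k$ is visited by precisely those $j$ with $j \geq |k|$.
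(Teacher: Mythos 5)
Your proof is correct, and it takes a genuinely different route from the paper. You identify the double sum with $\bigl|\sum_{k=0}^N e^{ikx}\bigr|^2$ by counting, for each frequency $m$, how many indices $j$ (respectively how many pairs $(k,l)$) contribute $e^{imx}$; both counts give $N+1-|m|$, so the two expressions coincide and nonnegativity is immediate because the right-hand side is a squared modulus. The paper instead multiplies the inner sum by $\sin(x/2)$ to get the Dirichlet-kernel identity $\sin(x/2)\sum_{k=-j}^{j}e^{ikx}=\sin((j+\tfrac12)x)$, sums the resulting telescoping-type expression in closed form, and reads off nonnegativity from the quotient $\bigl(1-\cos((N+1)x)\bigr)/\sin^2(x/2)$; this requires excluding the points where $e^{ix}=1$ and finishing by continuity. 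Your argument avoids that case distinction entirely and is purely algebraic, which is arguably cleaner; what the paper's computation buys is an explicit closed form for the Fej\'er-type kernel, but since the surrounding proof of the theorem only ever uses nonnegativity together with the normalization $\frac{1}{2\pi}\int_0^{2\pi}K_N(t)\,\mathrm dt=1$ (which also falls out of your identity by orthogonality of the exponentials), nothing essential is lost by your route.
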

\begin{proof} Using that $2 \cos (u)\sin (v) = \sin (u+v) - \sin (u-v)$ we find 
\begin{align*}
&\sin (\frac{x}{2}) \sum_{k=-j}^j e^{i kx} = \sin (\frac{x}{2}) \left(1 + 2\sum_{k=1}^j\cos(kx)\right)  \\
&= \sin (\frac{x}{2}) + \sum_{k=1}^j\sin ((k+\frac{1}{2})x) - \sin ((k - \frac{1}{2})x) = \sin((j+ \frac{1}{2})x) . 
\end{align*}
When $e^{ix} \neq 1$ we find that
\begin{align*}
&\sum_{j=0}^N \sin((j+ \frac{1}{2})x) = \Imag \sum_{j=0}^N e^{i (j+ \frac{1}{2})x} = \Imag \left(e^{i \frac{x}{2}} \sum_{j=0}^N (e^{ix})^j\right) \\
& = \Imag \left(e^{i \frac{x}{2}} \frac{1 -e^{i(N+1)x}}{1 -e^{ix}}\right) =   \Imag \frac{1-e^{i(N+1)x}}{e^{-i\frac{x}{2}} -e^{i\frac{x}{2}} } \\
& =  \Imag \frac{1-e^{i(N+1)x}}{-i \sin \frac{x}{2} } = \frac{1 - \cos((N+1)x)}{\sin (\frac{x}{2})} .
\end{align*}
 It follows that 
 $$
\sum_{j=0}^N \sum_{k=-j}^j e^{i kx} = \frac{1-\cos ((N+1)x)}{(\sin (\frac{x}{2}))^2} \geq 0 .
 $$
for all $x$ with $e^{ix} \neq 1$. Hence the desired inequality holds for all $x$ by continuity. 
\end{proof}

\emph{Proof of Theorem \ref{23-02-23a}:} Let $f : [0,p] \to X$ be a continuous function. Define $\overline{Q_k}(f)\in X$ by
$$
\overline{Q_k}(f) :=  \frac{1}{p}\int_0^p e^{-itk\frac{2\pi}{p}} f(t) \ \mathrm d t ,
$$
and set
$$
\overline{P_j}(f) :=  \sum_{k= -j}^j \overline{Q_k}(f) . 
$$

Then
\begin{align*}
& \frac{1}{N+1} \sum_{j=0}^N \overline{P_j}(f) = \frac{1}{N+1} \sum_{j=0}^N \sum_{k=-j}^j \overline{Q_k}(f) \\
& = \frac{1}{N+1} \sum_{j=0}^N \sum_{k=-j}^j \frac{1}{p} \int_0^p e^{-itk \frac{2\pi}{p}} f(t) \ \mathrm d t \\
& = \frac{1}{p} \int_0^p K_N( - t\frac{2\pi}{p})f(t) \ \mathrm dt
\end{align*}
where $K_N(x) = \frac{1}{N+1} \sum_{j=0}^N \sum_{k=-j}^j e^{i kx}$. It follows from Lemma \ref{23-02-23b} that $K_N$ is non-negative and we conclude therefore that
$$
\left\| \frac{1}{N+1} \sum_{j=0}^N \overline{P_j}(f)\right\| \leq \frac{1}{p} \int_0^p
 K_N( -t \frac{2\pi}{p}) \ \mathrm d t  \ \cdot \  \sup_{t \in [0,p]} \left\|f(t)\right\|.
$$
Since
\begin{align*}
&  \frac{1}{p}
\int_0^p
 K_N( -t \frac{2\pi}{p}) \ \mathrm d t = \frac{1}{2\pi} \int_0^{2\pi} K_N(-t) \ \mathrm d t \\
 & = \frac{1}{N+1} \sum_{j=0}^N \sum_{k=-j}^j \frac{1}{2\pi} \int_0^{2\pi} e^{-i kt} \ \mathrm d t =  \frac{1}{N+1} \sum_{j=0}^N 1 = 1
\end{align*}
we find that
\begin{equation}\label{23-03-23d}
 \left\|\frac{1}{N+1} \sum_{j=0}^N \overline{P_j}(f) \right\| \leq \sup_{t \in [0,p]} \left\|f(t)\right\| .
\end{equation}
Note that for $a \in X$, $P_j(a) = \overline{P_j}(h)$ when $h(t) := \sigma_t(a)$ and hence \eqref{23-03-23d} gives the estimate
$$
\left\| \frac{1}{N+1} \sum_{j=0}^N {P_j}(a) \right\| \leq \|a\|
$$
occurring in Theorem \ref{23-02-23a}.

For $k \in \mathbb Z$ set $q_k(t) := e^{ikt \frac{2\pi}{p}}$ and for $a \in X$, let $q_k \otimes a: [0,p] \to X$ be the function
$$
q_k \otimes a(t) := q_k(t)a .
$$ 
Let $\epsilon > 0$. Since $t \mapsto \sigma_t(a)$ is $p$-periodic there is an element
$$
g \in \Span \{q_k \otimes a: \ k \in \mathbb Z, \ a \in X\}
$$
such that $\sup_{t \in [0,p]} \left\| \sigma_t(a) - g(t)\right\| \leq \epsilon$. Since
\begin{equation}\label{24-02-23c}
\frac{1}{p} \int_0^p e^{i kt \frac{2\pi}{p}}  \ \mathrm d t= \begin{cases} 1, & \ k =0 \\ 0, & \ k \neq 0, \end{cases}
\end{equation}
we find that
$$
\frac{1}{N+1} \sum_{j=0}^N \overline{P_j}(q_l \otimes a) = \frac{a (N +1 -|l|)}{N+1}
$$
when $N \geq |l|$
and conclude that 
$$
\lim_{N \to \infty} \frac{1}{N+1} \sum_{j=0}^N \overline{P_j}(q_l \otimes a) =a = q_l \otimes a(0)
$$ 
for all $l \in \mathbb Z$ and all $a \in X$.
By linearity
\begin{align*}
& \lim_{N\to \infty} \frac{1}{N+1} \sum_{j=0}^N \overline{P_j}(g)  = g(0) ,
\end{align*}
and hence  
\begin{align*}
&\left\|\frac{1}{N+1} \sum_{j=0}^N \overline{P_j}(g) - a\right\| = \left\|\frac{1}{N+1} \sum_{j=0}^N \overline{P_j}(g) - h(0) \right\| \\
& \leq \left\|\frac{1}{N+1} \sum_{j=0}^N \overline{P_j}(g) - g(0) \right\| + \left\|h(0) -g(0)\right\| \leq 2 \epsilon
\end{align*} 
for all large $N$. Using \eqref{23-03-23d} we find now that
\begin{align*}
& \left\| \frac{1}{N+1} \sum_{j=0}^N {P_j}(a) -a \right\| =\left\| \frac{1}{N+1} \sum_{j=0}^N \overline{P_j}(h) -a \right\| \\
&  \leq \left\|\frac{1}{N+1} \sum_{j=0}^N \overline{P_j}(h-g)\right\| + \left\|\frac{1}{N+1} \sum_{j=0}^N \overline{P_j}(g) -a\right\| \\
&\leq \sup_{t \in [0,p]} \left\|\sigma_t(a) - g(t)\right\| + \left\|\frac{1}{N+1} \sum_{j=0}^N \overline{P_j}(g) -a\right\| \leq 3 \epsilon 
\end{align*} 
for all large $N$. \qed

\subsection{KMS weights for periodic flows (continued)} We return now to the setting of a periodic flow $\sigma$ on a $C^*$-algebra $A$. Assuming that $ \sigma$ is $p$-periodic, set
$$
A(k) := \left\{ a \in A: \ \sigma_t(a) = e^{i t k \frac{2\pi}{p}}a \ \forall t \in \mathbb R \right\} 
$$ 
for $k \in \mathbb Z$.

\begin{lemma}\label{23-02-23e} The following hold.
\begin{itemize}
\item[(1)] $
 A(0) = A^\sigma$.
\item[(2)] $ A(k)^* = A(-k)$.
\item[(3)] $A(k)A(n) \subseteq A(k+n)$.
\item[(4)] $A(k) \subseteq  \mathcal A_\sigma$.
\item[(5)] $\sigma_z(a) = e^{izk\frac{2\pi}{p}} a$ for all $a \in A(k)$.
\item[(6)] The map $Q_0 : A \to A(0) = A^\sigma$ is a conditional expectation.
\end{itemize}
\end{lemma}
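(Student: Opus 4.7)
The plan is to dispatch items (1) through (5) by direct verification from the definitions in Section \ref{Section-entire}, and then handle (6) by checking, in turn, the four axioms of a conditional expectation given in Definition \ref{20-12-22a}.

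First I would note that (1) is immediate: the defining condition $\sigma_t(a) = e^{it \cdot 0 \cdot 2\pi/p} a = a$ for all $t$ is exactly the definition of $A^\sigma$. For (2), since each $\sigma_t$ is a $*$-automorphism, $a \in A(k)$ gives $\sigma_t(a^*) = \sigma_t(a)^* = \overline{e^{itk 2\pi/p}}\, a^* = e^{-itk 2\pi/p}a^*$, i.e.\ $a^* \in A(-k)$, and the reverse inclusion follows by symmetry. For (3), multiplicativity of $\sigma_t$ yields $\sigma_t(ab) = e^{itk 2\pi/p}e^{itn 2\pi/p}ab = e^{it(k+n)2\pi/p}ab$ when $a \in A(k)$, $b \in A(n)$.

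For (4) and (5) simultaneously, I would observe that for $a \in A(k)$ the series
\[
\sum_{n=0}^\infty \frac{(ik 2\pi/p)^n a}{n!}\, t^n
\]
converges in norm to $\sigma_t(a) = e^{itk 2\pi/p} a$ for every $t \in \mathbb R$, and
\[
\sum_{n=0}^\infty \Bigl\|\frac{(ik 2\pi/p)^n a}{n!}\Bigr\| |z|^n = \|a\| e^{2\pi |k||z|/p} < \infty
\]
for every $z \in \mathbb C$. By the defining formula \eqref{13-11-21} this shows $a \in \mathcal A_\sigma$, establishing (4). For (5), the function $z \mapsto e^{izk 2\pi/p} a$ is entire holomorphic and agrees with $\sigma_t(a)$ on $\mathbb R$, so by the uniqueness of the entire extension (cf.\ the discussion preceding Lemma \ref{25-11-21}) we get $\sigma_z(a) = e^{izk 2\pi/p} a$ for all $z \in \mathbb C$.

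For (6), positivity is clear: if $a \geq 0$ then $\sigma_t(a) \geq 0$ for every $t$, so $Q_0(a)$ is a norm-limit of positive Riemann sums and therefore positive (cf.\ Appendix \ref{integration}). The module property follows from the computation
\[
Q_0(ab) = \frac{1}{p}\int_0^p \sigma_t(a)\sigma_t(b)\, \mathrm d t = \frac{1}{p}\int_0^p \sigma_t(a)b\, \mathrm d t = Q_0(a)b
\]
when $b \in A^\sigma$, and $Q_0(b) = b$ for $b \in A^\sigma$ is immediate since $\sigma_t(b) = b$. The estimate $\|Q_0(a)\| \leq \frac{1}{p}\int_0^p \|\sigma_t(a)\|\, \mathrm d t = \|a\|$ gives $\|Q_0\| \leq 1$, and evaluating on any element of $A^\sigma$ of norm one (obtained, if necessary, by averaging a positive approximate unit of $A$ through $Q_0$, which produces contractive elements in $A^\sigma$ whose norms tend to one by Lemma \ref{23-02-23} and continuity) shows $\|Q_0\| = 1$. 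The only mildly nontrivial point is this last norm-one clause in the non-unital case; everything else is bookkeeping from the definitions in Section \ref{Section-entire} and the properties of the integral established earlier.
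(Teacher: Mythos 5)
Your proof is correct and is exactly the routine verification that the paper leaves to the reader (its proof of Lemma \ref{23-02-23e} is literally ``Left to the reader''). The only cosmetic remark is that the norm-one clause in (6) is even easier than you make it: since $Q_0(b)=b$ for $b\in A^\sigma$ and $A^\sigma\neq\{0\}$ (because $a^*a\in A(0)$ for any nonzero $a\in A(k)$, and $\Span\bigcup_k A(k)$ is dense by Theorem \ref{23-02-23a}), one gets $\|Q_0\|\geq 1$ directly without invoking an approximate unit.
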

\begin{proof} Left to the reader.
\end{proof}

\begin{lemma}\label{23-02-23g} Let $\phi$ be a $\sigma$-invariant weight on $A$. Then
$Q_k(\mathcal M_\phi) \subseteq \mathcal M^\sigma_\phi$ for all $k \in \mathbb Z$.
\end{lemma}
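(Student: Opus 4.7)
The plan is to reduce the statement to the assertion $Q_k(\mathcal M_\phi) \subseteq \mathcal M_\phi$ and then mimic the proof of Lemma \ref{07-12-21}.

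First I would observe that by (4) and (5) of Lemma \ref{23-02-23e}, every element $b \in A(k)$ lies in $\mathcal A_\sigma$ and satisfies $\sigma_z(b) = e^{izk\frac{2\pi}{p}} b$ for all $z \in \mathbb C$. Since $Q_k$ takes values in $A(k)$, this has two consequences: $Q_k(\mathcal M_\phi) \subseteq \mathcal A_\sigma$ automatically, and for any $a \in \mathcal M_\phi$ and any $z \in \mathbb C$ the element $\sigma_z(Q_k(a))$ is a scalar multiple of $Q_k(a)$. Therefore, once one knows $Q_k(\mathcal M_\phi) \subseteq \mathcal M_\phi$, the condition $\sigma_z(Q_k(a)) \in \mathcal M_\phi$ for all $z$ is immediate, and the inclusion $Q_k(\mathcal M_\phi) \subseteq \mathcal M_\phi^\sigma$ follows from the definition \eqref{31-01-22e}.

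Next, by linearity (and since $\mathcal M_\phi = \operatorname{Span} \mathcal M_\phi^+$) it suffices to show that $Q_k(a) \in \mathcal M_\phi$ for $a \in \mathcal M_\phi^+$. Decompose
$$
e^{-itk\frac{2\pi}{p}} = \sum_{j=1}^4 i^j g_j(t),
$$
where each $g_j : [0,p] \to [0,\infty)$ is a non-negative continuous function (the positive and negative parts of the real and imaginary parts). Then
$$
Q_k(a) = \frac{1}{p}\sum_{j=1}^4 i^j \int_0^p g_j(t)\, \sigma_t(a)\, \mathrm d t,
$$
so it is enough to prove that each $b_j := \int_0^p g_j(t)\sigma_t(a)\, \mathrm dt \in \mathcal M_\phi^+$.

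For this, let $S$ be the separable $\sigma$-invariant $C^*$-subalgebra of $A$ generated by $\{\sigma_t(a): t\in \mathbb R\}$. The restriction $\phi|_S$ is a weight on the separable $C^*$-algebra $S$, so by Theorem \ref{09-11-21h} there is a sequence $\{\omega_n\}$ in $S_+^*$ with $\phi(x) = \sum_n \omega_n(x)$ for all $x \in S^+$. Using $\sigma$-invariance of $\phi$ and Lebesgue's monotone convergence theorem exactly as in the proof of Lemma \ref{02-12-21ax}, I would compute
$$
\phi(b_j) = \sum_{n=1}^\infty \int_0^p g_j(t)\, \omega_n(\sigma_t(a))\, \mathrm d t = \int_0^p g_j(t)\, \phi(\sigma_t(a))\, \mathrm dt = \phi(a)\int_0^p g_j(t)\, \mathrm dt < \infty,
$$
which gives $b_j \in \mathcal M_\phi^+$ and hence $Q_k(a) \in \mathcal M_\phi$.

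The only potentially delicate point is justifying the interchange of $\phi$ with the integral, but this is handled exactly as in Lemma \ref{02-12-21ax}: one passes to a separable subalgebra, invokes the separable version of Combes' theorem to realise $\phi$ as a countable sum of bounded positive functionals, and then uses Tonelli/monotone convergence — nothing more is needed because the integrand is non-negative and continuous on a compact interval.
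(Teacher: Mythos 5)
Your proof is correct and follows essentially the same route as the paper's: the same decomposition of $e^{-itk\frac{2\pi}{p}}$ into four non-negative continuous pieces, the same reduction to $a \in \mathcal M_\phi^+$, the same passage to a separable $\sigma$-invariant subalgebra so that Theorem \ref{09-11-21h} and monotone convergence apply, and the same use of (4) and (5) of Lemma \ref{23-02-23e} to upgrade $Q_k(\mathcal M_\phi) \subseteq \mathcal M_\phi$ to $Q_k(\mathcal M_\phi) \subseteq \mathcal M_\phi^\sigma$. No gaps.
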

\begin{proof} The proof is very similar to the proof of Lemma \ref{07-12-21}. There are four continuous functions $g_j : [0,p] \to \mathbb R^+$ such that
$$
e^{-ikt \frac{2\pi}{p}} = g_1(t) - g_2(t) + i g_3(t) - i g_4(t) , \ \ \forall t \in \mathbb R.
$$ 
To show that $Q_k(a) \in \mathcal M_\phi$ it suffices to show that 
$$
\int_0^p g_j(t) \sigma_t(a) \ \mathrm d t \in \mathcal M_\phi , \ j =1,2,3,4.
$$
For this we may assume that $a \in \mathcal M_\phi^+$. Let $D$ be a separable $\sigma$-invariant
$C^*$-subalgebra of $A$ containing $\sigma_t(a)$ for all $t \in \mathbb R$. By Theorem \ref{09-11-21h} there is a sequence $\{\omega_n\}$ of positive linear functionals on $D$ such that $\phi(d) = \sum_{n=1}^\infty \omega_n(d)$ for all $d \in D^+$. We can then use Lebesgues theorem on monotone convergence to conclude that
\begin{align*}
&\phi\left( \int_0^p g_j(t) \sigma_t(a) \ \mathrm d t\right) = \sum_{n=1}^\infty \omega_n\left(  \int_0^p g_j(t) \sigma_t(a)\ \mathrm d t\right) \\
&=. \sum_{n=1}^\infty  \int_0^p g_j(t) \omega_n(\sigma_t(a))\ \mathrm d t\\
& =  \int_0^p g_j(t) \sum_{n=1}^\infty \omega_n(\sigma_t(a))\ \mathrm d t  =  \int_0^p g_j(t)  \phi(\sigma_t(a))\ \mathrm d t\\
& =  \int_0^p g_j(t)  \phi(a) \  \mathrm d t < \infty .
 \end{align*}
Hence  $Q_k(\mathcal M_\phi) \subseteq \mathcal M_\phi$ and in combination with (4) and (5) in Lemma \ref{23-02-23g} this implies that $Q_k(\mathcal M_\phi) \subseteq \mathcal M^\sigma_\phi$. 
 
\end{proof}

\begin{lemma}\label{23-02-23f}  Let $\phi$ be a $\sigma$-invariant weight.
Let $a \in \mathcal N_\phi$. Then $P_j(a) \in \mathcal N_\phi$ for all $j \in \mathbb N$ and $$
\lim_{N\to \infty}
\frac{1}{N+1} \sum_{j=0}^N \Lambda_\phi(P_j(a)) = \Lambda_\phi(a)
$$
in $H_\psi$.
\end{lemma}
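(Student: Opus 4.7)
The idea is to transport the problem to the Hilbert space $H_\phi$ via $\Lambda_\phi$, where Theorem \ref{23-02-23a} will apply directly to a $p$-periodic unitary flow.

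First I would recall the unitary implementer $U^\phi_t$ of the flow on $H_\phi$: since $\phi$ is $\sigma$-invariant and densely defined as a weight on the separable $C^*$-subalgebra generated by the orbit of $a$, one defines (as in Section \ref{invariantweights}) a strongly continuous unitary representation $U^\phi$ of $\mathbb R$ on $H_\phi$ by $U^\phi_t\Lambda_\phi(b) = \Lambda_\phi(\sigma_t(b))$ for $b \in \mathcal N_\phi$. Crucially, $U^\phi$ is $p$-periodic: for $b \in \mathcal N_\phi$ we have $U^\phi_p\Lambda_\phi(b) = \Lambda_\phi(\sigma_p(b)) = \Lambda_\phi(b)$, and $\Lambda_\phi(\mathcal N_\phi)$ is dense in $H_\phi$.

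Next I would show that $Q_k(a) \in \mathcal N_\phi$ with
\[
\Lambda_\phi(Q_k(a)) = \frac{1}{p}\int_0^p e^{-itk\frac{2\pi}{p}} U^\phi_t \Lambda_\phi(a) \, \mathrm d t.
\]
For this, observe that the function $t \mapsto e^{-itk\frac{2\pi}{p}}\sigma_t(a)$ is continuous from $[0,p]$ into $A$ with values in $\mathcal N_\phi$, and the corresponding function $t \mapsto e^{-itk\frac{2\pi}{p}} U^\phi_t\Lambda_\phi(a)$ is continuous into $H_\phi$. Since $\Lambda_\phi$ is closed by Lemma \ref{17-11-21a}, an application of Lemma \ref{12-02-22b} yields the desired identity and membership in $\mathcal N_\phi$. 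By linearity, $P_j(a) \in \mathcal N_\phi$ and
\[
\Lambda_\phi(P_j(a)) = \sum_{k=-j}^{j} \frac{1}{p}\int_0^p e^{-itk\frac{2\pi}{p}} U^\phi_t \Lambda_\phi(a) \, \mathrm d t = P_j^U(\Lambda_\phi(a)),
\]
where $P_j^U$ denotes the operator on $H_\phi$ built from $U^\phi$ in the same way $P_j$ is built from $\sigma$ on $A$.

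Finally I would apply Theorem \ref{23-02-23a} to the $p$-periodic flow $U^\phi$ on the Banach space $H_\phi$, with vector $\Lambda_\phi(a) \in H_\phi$, to conclude
\[
\lim_{N \to \infty} \frac{1}{N+1}\sum_{j=0}^{N} \Lambda_\phi(P_j(a)) = \lim_{N \to \infty} \frac{1}{N+1}\sum_{j=0}^{N} P_j^U(\Lambda_\phi(a)) = \Lambda_\phi(a).
\]

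The only non-cosmetic step is the interchange of $\Lambda_\phi$ with the Banach-space integral defining $Q_k$; all else is a direct transcription of Theorem \ref{23-02-23a}. That step is handled by the closedness of $\Lambda_\phi$ together with Lemma \ref{12-02-22b}, so no real obstacle arises.
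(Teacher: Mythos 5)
Your proof is correct and follows essentially the same route as the paper: establish $Q_k(a)\in\mathcal N_\phi$ and the integral formula for $\Lambda_\phi(Q_k(a))$ via the closedness of $\Lambda_\phi$ (Lemma \ref{17-11-21a}) together with Lemma \ref{12-02-22b}, and then apply Theorem \ref{23-02-23a} to the periodic unitary flow $U^\phi$ on the Hilbert space. The only difference is cosmetic extra care in justifying the existence of $U^\phi$, which the paper takes for granted.
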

\begin{proof} Since $\Lambda_\phi$ is closed by Lemma \ref{17-11-21a} an application of Lemma \ref{12-02-22b} in Appendix \ref{integration} shows that $Q_k(a) \in \mathcal N_\phi$
 and 
$$
\Lambda_\phi(Q_k(a)) = \frac{1}{p} \int_0^p e^{ikt\frac{2\pi}{p}} \Lambda_\phi(\sigma_t(a)) \ \mathrm d t .
$$
Hence  $P_j(a) = \sum_{k=-j}^j Q_k(a) \in \mathcal N_\phi$ and 
$$
 \Lambda_\phi(P_j(a)) = \sum_{k=-j}^j \Lambda_\phi(Q_k(a)) .
$$
Since $\Lambda_\phi(\sigma_t(a)) = U^\phi_t \Lambda_\psi(a)$ we can apply Theorem \ref{23-02-23a} with $H_\psi$ in the role of $X$ to conclude that $\lim_{N\to \infty}\frac{1}{N+1} \sum_{j=0}^N \Lambda_\phi(P_j(a)) = \Lambda_\phi(a)$.
\end{proof}

\begin{lemma}\label{24-02-23} Let $\psi$ be a $\beta$-KMS weight for $\sigma$. Then $\psi|_{A^\sigma}$ is a lower semi-continuous trace on $A^\sigma$ with the property that 
\begin{equation}\label{24-02-23a}
\psi|_{A^\sigma}(a^*a) = e^{\frac{2k\pi \beta}{p}} \psi|_{A^\sigma}(aa^*) 
\end{equation}
for all $a \in A(k)$ and all $k \in \mathbb Z$.
\end{lemma}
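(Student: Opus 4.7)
The identity is essentially a one-line consequence of Kustermans' theorem. For $a \in A(k)$, Lemma \ref{23-02-23e} (items (4) and (5)) gives $a \in \mathcal A_\sigma$ with $\sigma_z(a) = e^{izk\frac{2\pi}{p}}a$ for all $z \in \mathbb C$; in particular $\sigma_{-i\beta/2}(a) = e^{k\beta\pi/p}\,a$. Since $a^*a, aa^* \in A(0) = A^\sigma$, condition (1) of Theorem \ref{24-11-21d} yields
\[
\psi|_{A^\sigma}(a^*a) = \psi(a^*a) = \psi\!\left(\sigma_{-i\beta/2}(a)\sigma_{-i\beta/2}(a)^*\right) = e^{2k\pi\beta/p}\,\psi(aa^*) = e^{2k\pi\beta/p}\,\psi|_{A^\sigma}(aa^*),
\]
which is the desired scaling.

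The substantive point is that $\psi|_{A^\sigma}$ is actually a trace in the sense of Definition \ref{03-02-22f}, and in view of Lemma \ref{22-02-23} this reduces to ruling out $\psi|_{A^\sigma} \equiv 0$. The plan is to argue by contradiction using the periodicity through Fej\'er's theorem. Assume $\psi|_{A^\sigma} = 0$. For every $k \in \mathbb Z$ and every $b \in A(k)$, we have $b^*b \in A(-k)A(k) \subseteq A(0) = A^\sigma$ by Lemma \ref{23-02-23e}(2)-(3), hence $\psi(b^*b) = \psi|_{A^\sigma}(b^*b) = 0$; that is, $b \in \ker_\psi$. Therefore $A(k) \subseteq \ker_\psi$ for all $k \in \mathbb Z$, so the algebraic sum $\sum_{k \in \mathbb Z} A(k)$ is contained in $\ker_\psi$.

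By Lemma \ref{05-12-21a}, $\ker_\psi$ is a \emph{closed} two-sided ideal of $A$. On the other hand, Theorem \ref{23-02-23a} (Fej\'er) gives
\[
a = \lim_{N \to \infty} \frac{1}{N+1}\sum_{j=0}^N P_j(a)
\]
for every $a \in A$, and $P_j(a) \in \sum_{k=-j}^{j} A(k)$ by construction. Consequently $\sum_{k \in \mathbb Z} A(k)$ is norm-dense in $A$, so $\ker_\psi = A$, contradicting the fact that $\psi$ is non-zero. Hence $\psi|_{A^\sigma}$ is non-zero, and Lemma \ref{22-02-23} then gives that $\psi|_{A^\sigma}$ is a lower semi-continuous trace.

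The only step that requires any care is the non-vanishing of $\psi|_{A^\sigma}$; this is where the periodicity hypothesis is essential, since it is precisely Fej\'er summability that provides a $\sigma$-spectral decomposition dense in $A$ and prevents the pathology alluded to in Speculations \ref{13-10-23} in the non-periodic setting. Everything else is a direct unpacking of earlier results.
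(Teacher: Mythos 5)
Your proof is correct, and the scaling identity \eqref{24-02-23a} is established exactly as in the paper: items (4) and (5) of Lemma \ref{23-02-23e} give $\sigma_{-i\beta/2}(a) = e^{k\beta\pi/p}a$ for $a \in A(k)$, and condition (1) of Theorem \ref{24-11-21d} then yields the factor $e^{2k\pi\beta/p}$. Where you genuinely diverge from the paper is in the non-vanishing of $\psi|_{A^\sigma}$. The paper argues directly: it picks $a \in A^+$ with $\psi(a) > 0$ and shows $\psi(Q_0(a)) = \frac{1}{p}\int_0^p \psi(\sigma_t(a))\,\mathrm dt = \psi(a) > 0$, the interchange of $\psi$ with the integral being justified by passing to a separable $\sigma$-invariant subalgebra, invoking Theorem \ref{09-11-21h} and monotone convergence (as in the proof of Lemma \ref{23-02-23g}). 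You instead argue by contradiction: if $\psi|_{A^\sigma} = 0$ then every $A(k)$ lies in $\ker_\psi$, which is a closed ideal by Lemma \ref{05-12-21a}, and Fej\'er summability (Theorem \ref{23-02-23a}) makes $\Span\bigcup_k A(k)$ dense, forcing $\psi = 0$. Your route avoids any weight-integral interchange and uses only results already in hand, at the cost of being purely qualitative; the paper's computation $\psi(Q_0(a)) = \psi(a)$ is in any case re-derived later (Lemma \ref{24-02-23k}, via Lemma \ref{13-12-21axx}), so nothing is lost by your shortcut. Your closing remark that Fej\'er summability is what rules out the pathology of Speculations \ref{13-10-23} is a fair observation.
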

\begin{proof} To see that $\psi|_{A^\sigma}$ is not zero, let $a \in A^+$ such that $\psi(a) > 0$. By using Theorem \ref{09-11-21h} in the same way as in the proof of Lemma \ref{23-02-23g} we find that 
$$
\psi(Q_0(a)) = \frac{1}{p} \int_0^p \psi(\sigma_t(a)) \ \mathrm d t = \psi(a) > 0.
$$
Since $Q_0(a) \in (A^\sigma)^+$ it follows that $\psi|_{A^\sigma}$ is not zero. In view of Lemma \ref{22-02-23} it suffices therefore to establish \eqref{24-02-23a}. Since $A(k) \subseteq \mathcal A_\sigma$ by (4) of Lemma \ref{23-02-23e} this follows from (1) in Kustermans' theorem, Theorem \ref{24-11-21d}, and (5) of Lemma \ref{23-02-23e}:
\begin{align*} 
& \psi(a^*a) = \psi\left( \sigma_{-i\frac{\beta}{2}}(a)  \sigma_{-i\frac{\beta}{2}}(a)^*\right) =
\psi \left(e^{\frac{2\pi k \beta}{2p}} a e^{\frac{2\pi k \beta}{2p}} a^*\right) =  e^{\frac{2k\pi \beta}{p}} \psi(aa^*) .
\end{align*}
\end{proof}

\begin{lemma}\label{24-02-23d} Let $a \in A(k)$. Then $Q_0(a) = 0$ when $k \neq 0$ and $Q_0(a) = a$ when $k=0$.
\end{lemma}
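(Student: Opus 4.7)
The plan is to prove this directly from the definition of $Q_0$ together with the defining property of $A(k)$, using the elementary integral formula \eqref{24-02-23c} that has already been established.

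First I would unfold the definition: by construction,
$$
Q_0(a) = \frac{1}{p}\int_0^p \sigma_t(a)\ \mathrm d t .
$$
Since $a \in A(k)$, the defining relation $\sigma_t(a) = e^{itk\frac{2\pi}{p}}a$ lets me pull $a$ outside the integral (the integrand is the product of a scalar-valued continuous function on $[0,p]$ and the fixed vector $a \in A$, so this is a routine manipulation of the Banach-space integral, justified for example by Lemma \ref{12-02-22b} applied to the bounded linear map $\mathbb C \to A$, $\lambda \mapsto \lambda a$). This gives
$$
Q_0(a) = \left(\frac{1}{p}\int_0^p e^{itk\frac{2\pi}{p}}\ \mathrm d t\right) a .
$$

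Then I would invoke the elementary identity \eqref{24-02-23c}, which says that $\frac{1}{p}\int_0^p e^{itk\frac{2\pi}{p}}\ \mathrm d t$ equals $1$ if $k=0$ and $0$ otherwise. Substituting this into the displayed formula yields $Q_0(a) = a$ when $k = 0$ and $Q_0(a) = 0$ when $k \neq 0$, as required.

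There is no real obstacle here; the statement is a one-line consequence of the definition of $Q_0$, the eigenvector property defining $A(k)$, and the orthogonality relation \eqref{24-02-23c}. The only point worth a moment's care is the interchange of the scalar factor with the integral, which is immediate from the linearity and continuity properties of the Banach-space integral recalled in Appendix \ref{integration}.
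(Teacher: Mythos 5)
Your proof is correct and is essentially identical to the paper's: both unfold the definition of $Q_0$, use the eigenvector relation $\sigma_t(a) = e^{itk\frac{2\pi}{p}}a$ to factor $a$ out of the integral, and conclude via the orthogonality identity \eqref{24-02-23c}. Your extra remark justifying the interchange of the scalar factor with the Banach-space integral is a fine (if unneeded in the paper's terse version) touch.
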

\begin{proof} This is a straightforward calculation:
\begin{align*}
& Q_0(a) = \frac{1}{p} \int_0^p \sigma_t(a) \ \mathrm d t = a  \frac{1}{p} \int_0^p e^{ikt \frac{2\pi}{p}} \ \mathrm d t = \begin{cases} a, \ & k =0 \\ 0, \ & k\neq 0 . \end{cases}
\end{align*}
\end{proof}

\begin{lemma}\label{24-02-23b} Let $\tau$ be a lower semi-continuous trace on $A^\sigma$ such that\begin{equation}\label{24-02-23hx}
\tau(a^*a) = e^{\frac{2k\pi \beta}{p}} \tau(aa^*) 
\end{equation}
for all $a \in A(k)$ and all $k \in \mathbb Z$. Then $\tau \circ Q_0$ is a $\beta$-KMS weight for $\sigma$.
\end{lemma}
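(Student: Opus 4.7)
The plan is to set $\psi := \tau \circ Q_0$ and verify that $\psi$ is a $\beta$-KMS weight for $\sigma$ by checking the hypotheses of Kustermans' theorem \ref{12-12-13} on the explicit subspace
$$S := \Span\{ab : a \in A(k)\cap\mathcal{N}_\psi,\ b\in A(n)\cap\mathcal{N}_\psi,\ k,n\in\mathbb{Z}\}.$$
The structural properties come first: since $Q_0$ is a positive norm-contractive linear map and $\tau$ is lower semi-continuous, $\psi$ is a lower semi-continuous weight; a change of variables in the defining integral of $Q_0$ gives $Q_0\circ\sigma_t=Q_0$, so $\psi$ is $\sigma$-invariant; and Lemma \ref{24-02-23d} shows $\psi|_{A^\sigma}=\tau\neq 0$.

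Next I would exploit the spectral decomposition. Parts (4) and (5) of Lemma \ref{23-02-23e} show that every $a\in A(k)$ is entire analytic for $\sigma$ with $\sigma_z$ acting by the scalar $e^{ikz 2\pi/p}$. Since $a^*a\in A^\sigma$ for $a\in A(k)$, membership of $a$ in $\mathcal{N}_\psi$ is just $\tau(a^*a)<\infty$, and the scaling identity \eqref{24-02-23hx} forces $a^*\in\mathcal{N}_\psi$ as well. Therefore each product $ab$ with $a\in A(k)\cap\mathcal{N}_\psi$ and $b\in A(n)\cap\mathcal{N}_\psi$ lies in $\mathcal{N}_\psi^*\mathcal{N}_\psi\subseteq\mathcal{M}_\psi$, and because $\sigma_z(ab)$ is a scalar multiple of $ab$, in fact $ab\in\mathcal{M}_\psi^\sigma$; so $S\subseteq\mathcal{M}_\psi^\sigma$. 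The KMS identity $\psi(b^*a)=\psi(a\sigma_{i\beta}(b^*))$ for $a,b\in S$ reduces by linearity to the case $a\in A(k)$, $b\in A(n)$, and by Lemma \ref{24-02-23d} both sides vanish unless $k=n$; in that case it becomes $\tau(b^*a)=e^{2\pi n\beta/p}\tau(ab^*)$, which I would obtain by polarizing
$$4b^*a=\sum_{j=1}^4 i^j(a+i^jb)^*(a+i^jb),\qquad 4ab^*=\sum_{j=1}^4 i^j(a+i^jb)(a+i^jb)^*,$$
and applying \eqref{24-02-23hx} to each $a+i^jb\in A(n)$.

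The main obstacle is density: I need $\psi$ to be densely defined and $S$ to be a core for $\Lambda_\psi$. The plan is a two-stage approximation. Using Lemma \ref{16-12-21a} applied to $\tau$ on $A^\sigma$, I pick an increasing net $\{e_\lambda\}\subseteq A^\sigma\cap\mathcal{N}_\tau$ with $0\le e_\lambda\le 1$; for any $c\in A$ the sandwich $e_\lambda c e_\lambda$ satisfies $\tau(e_\lambda c^*c e_\lambda)\le\|c\|^2\tau(e_\lambda^2)<\infty$, so $e_\lambda c e_\lambda\in\mathcal{N}_\psi\subseteq\mathcal{M}_\psi$. Fej\'er averaging (Theorem \ref{23-02-23a} and Lemma \ref{23-02-23f}) then writes $e_\lambda c e_\lambda$ as a norm- and $\Lambda_\psi$-limit of Ces\`aro sums $\sum_k\mu_k^{(N)}e_\lambda Q_k(c)e_\lambda$ whose summands $e_\lambda Q_k(c)e_\lambda = e_\lambda\cdot(Q_k(c)e_\lambda)$ each factor as a product of one element of $A(0)\cap\mathcal{N}_\psi$ and one element of $A(k)\cap\mathcal{N}_\psi$, placing them inside $S$. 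The delicate point is the outer approximation $e_\lambda c e_\lambda\to c$: because $\{e_\lambda\}$ lives in $A^\sigma$ whereas $c$ need not, this requires the standard fact that the fixed-point algebra of a compact group action on a $C^*$-algebra contains an approximate unit for the ambient algebra. Invoking this fact — either as a black box or by averaging an approximate unit of $A$ over the $\mathbb{T}$-action coming from the periodic flow — is what closes both the density and the core claims, after which Theorem \ref{12-12-13} concludes that $\psi$ is a $\beta$-KMS weight for $\sigma$.
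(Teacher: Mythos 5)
Your proposal is correct in outline but follows a genuinely different route from the paper's proof, and one step needs more justification than you give it. The paper takes $S:=\Span\{Q_k(a): k\in\mathbb Z,\ a\in\mathcal M_{\tau\circ Q_0}\}$ and verifies condition (2) of Theorem \ref{12-12-13} directly from the computation $Q_0(a^*a)=\sum_k a_k^*a_k$ versus $Q_0(\sigma_{-i\beta/2}(a)\sigma_{-i\beta/2}(a)^*)=\sum_k e^{2k\pi\beta/p}a_ka_k^*$; the core property for this $S$ is then a one-step Fej\'er approximation via Theorem \ref{23-02-23a} and Lemma \ref{23-02-23f}, with no approximate unit needed at that stage. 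You instead take a ``factored'' $S$ and verify condition (3) by polarization, which is fine (conditions (2) and (3) are equivalent in Theorem \ref{12-12-13}, and your reduction to $k=n$ via Lemma \ref{24-02-23d} is the same computation the paper does for (2)). For density the paper is slicker than your sandwich: it multiplies $b=\sum_k a_k$ on one side only by an approximate unit $\{u_j\}$ of $A^\sigma$ and uses that $a_k^*a_k\in A^\sigma$ to get $\|a_ku_j-a_k\|^2=\|(u_j-1)a_k^*a_k(u_j-1)\|\to 0$, thereby avoiding any appeal to the fact that $A^\sigma$ contains an approximate unit for $A$. That fact is true for this periodic flow (the paper proves it in Lemma \ref{24-02-23k}, essentially by the same $a_k^*a_k\in A^\sigma$ trick applied to a dense spanning set), so your invocation of it is legitimate, but it is doing more work in your argument than in the paper's.

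The step you under-justify is the outer approximation in the \emph{graph norm}: for the core property of Theorem \ref{12-12-13} you need not only $e_\lambda ce_\lambda\to c$ in norm but also $\Lambda_\psi(e_\lambda ce_\lambda)\to\Lambda_\psi(c)$, and the approximate-unit property alone does not give this. The left factor is handled by $\pi_\psi(e_\lambda)\to 1$ strongly, but for the right factor you must show $\psi\bigl((e_\lambda-1)c^*c(e_\lambda-1)\bigr)\to 0$, which after applying $Q_0$ becomes $\tau\bigl((e_\lambda-1)d(e_\lambda-1)\bigr)\to 0$ with $d=Q_0(c^*c)\in\mathcal M_\tau^+$; this requires the trace property of $\tau$ (to rewrite $\tau(e_\lambda d)=\tau(\sqrt{d}e_\lambda\sqrt{d})$) together with lower semi-continuity. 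This is exactly the argument the paper spells out in the proof of Lemma \ref{31-08-22c}, and your proof is complete once you import it.
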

\begin{proof} Since $\tau$ is a non-zero lower semi-continuous weight it follows that so is $\tau \circ Q_0$. Since $Q_0 \circ \sigma_s = \sigma_s \circ Q_0 = Q_0$ we see that $\tau \circ Q_0$ is $\sigma$-invariant.
To see that $\tau \circ Q_0$ is densely defined, let $a \in A$ and $\epsilon > 0$ be given. It
 follows from Theorem \ref{23-02-23a} that there is a natural number $N$ and elements $a_k \in A(k), \ -N \leq k \leq N$, such that $\|a-b\| \leq \epsilon $, where
 $$
 b = \sum_{k=-N}^N a_k .
 $$
By Lemma \ref{16-12-21a}
there is an
approximate unit for $A^\sigma$ such that $0\leq u_j \leq 1$ and
$u_j \in \mathcal N_\tau$ for all $j$. Since $a_k^*a_k \in A^\sigma$ we find that
\begin{equation}\label{24-02-23l}
\lim_{j \to \infty} \left\|a_ku_j - a_k\right\|^2 = \lim_{j \to \infty} \left\|(u_j-1)a_k^*a_k(u_j-1)\right\| = 0.
\end{equation}
Set $c_j : = bu_j = \sum_{k=-N}^N a_ku_j$. Using Lemma \ref{24-02-23d}, and (6), (2) and (3) of Lemma \ref{23-02-23e} we find that
\begin{align*}
& Q_0(c_j^*c_j) = \sum_{k,l} Q_0(u_ja_k^* a_lu_j) =  \sum_{k,l} 
u_jQ_0(a_k^* a_l)u_j = u_j \left(\sum_{k=-N}^N a_k^*a_k\right)u_j .
\end{align*}
It follows that
$$
\tau \circ Q_0(c_j^*c_j) \leq \sum_{k=-N}^N \|a_k\|^2 \tau(u_j^2) < \infty .
$$
By \eqref{24-02-23l} $\left\|b-c_j\right\| \leq \epsilon$ and hence $\left\| a -c_j\right\| \leq 2\epsilon$ for $j$ large enough. It follows that $\tau \circ Q_0$ is densely defined.

 To see that $\tau \circ Q_0$ is a $\beta$-KMS weight we apply Theorem \ref{12-12-13} with
 $$
 S:= \Span \left\{ Q_k(a): \ k \in \mathbb Z, \ a \in \mathcal M_{\tau \circ Q_0} \right\}.
 $$
It follows from Lemma \ref{23-02-23g} that $S \subseteq \mathcal M^\sigma_{\tau \circ Q_0}$. For $a \in \mathcal M_{\tau \circ Q_0}$, set
$$
a_n := \frac{1}{n+1} \sum_{j=0}^n \sum_{k=-j}^j Q_k(a) \in S.
$$
It follows from Theorem \ref{23-02-23a} that $\lim_{n \to \infty} a_n = a$ and from Lemma \ref{23-02-23f} that $\lim_{n \to \infty} \Lambda_{\tau \circ Q_0}(a_n) = \Lambda_{\tau \circ Q_0}(a)$, showing that $S$ has the properties required of $S$ in Theorem \ref{12-12-13}. It suffices therefore to check that
\begin{equation}\label{24-02-23i}
\tau \circ Q_0(a^*a) = \tau \circ Q_0 \left(\sigma_{-i\frac{\beta}{2}}(a) \sigma_{-i\frac{\beta}{2}}(a)^*\right)
\end{equation}
when $a \in S$. By definition of $S$ an element $a \in S$ has the form
$$
a = \sum_{k=-N}^N a_k
$$
for some $N \in \mathbb N$ and some elements $a_k \in A(k)$. Then
$$
Q_0(a^*a) = \sum_{k=-N}^N a_k^*a_k
$$
and
$$
 Q_0 \left(\sigma_{-i\frac{\beta}{2}}(a) \sigma_{-i\frac{\beta}{2}}(a)^*\right) = \sum_{k=-N}^N e^{\frac{2k\pi \beta}{p}} a_ka_k^*  .
 $$
Hence \eqref{24-02-23i} follows from \eqref{24-02-23hx}.

\end{proof}

\begin{lemma}\label{24-02-23k} Let $\psi$ be a $\beta$-KMS weight for $\sigma$. Then $\psi = \psi \circ Q_0$.
\end{lemma}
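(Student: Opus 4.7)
The plan is to adapt the argument that already appears, for the special case $a \in A^+$ with $\psi(a) > 0$, in the proof of Lemma \ref{24-02-23}, and to verify that it in fact gives the desired equality $\psi(a) = \psi(Q_0(a))$ for every $a \in A^+$, finite or not.

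Fix $a \in A^+$ and let $D \subseteq A$ be the $C^*$-subalgebra generated by $\{\sigma_t(a) : t \in \mathbb R\}$. Then $D$ is separable and $\sigma$-invariant, $\sigma_t(a) \in D$ for every $t \in \mathbb R$, and hence $Q_0(a) = \frac{1}{p}\int_0^p \sigma_t(a)\,\mathrm dt \in D^+$. The restriction $\psi|_D$ is a weight on the separable $C^*$-algebra $D$, so Theorem \ref{09-11-21h} provides a sequence $\{\omega_n\}_{n=1}^\infty$ in $D^*_+$ with
\[
\psi(d) \;=\; \sum_{n=1}^\infty \omega_n(d) \qquad \forall\, d \in D^+.
\]

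The key step is then to interchange sum and integral. For each individual $\omega_n$ the map $\mathbb R \ni t \mapsto \omega_n(\sigma_t(a))$ is continuous, and bounded linear functionals commute with the Riemann/Bochner integral (as recorded in Appendix \ref{integration}, e.g.\ Lemma \ref{12-02-22b}), so
\[
\omega_n(Q_0(a)) \;=\; \frac{1}{p}\int_0^p \omega_n(\sigma_t(a))\,\mathrm dt .
\]
Since every integrand is non-negative, Tonelli's theorem (equivalently, monotone convergence on partial sums) justifies swapping the sum over $n$ with the integral over $t$:
\[
\psi(Q_0(a)) \;=\; \sum_{n=1}^\infty \frac{1}{p}\int_0^p \omega_n(\sigma_t(a))\,\mathrm dt \;=\; \frac{1}{p}\int_0^p \sum_{n=1}^\infty \omega_n(\sigma_t(a))\,\mathrm dt \;=\; \frac{1}{p}\int_0^p \psi(\sigma_t(a))\,\mathrm dt .
\]
Finally, using that $\psi$ is $\sigma$-invariant (being a KMS weight), $\psi(\sigma_t(a)) = \psi(a)$ for every $t$, and the last integral equals $\psi(a)$, finite or infinite. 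This gives $\psi \circ Q_0 = \psi$ on $A^+$.

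There is no real obstacle; the only point needing care is that the argument must be phrased so as to cover the case $\psi(a) = \infty$ as well. This is automatic because all quantities appearing are non-negative, so both sides are legitimately $\infty$ in that case by monotone convergence, and no integrability hypothesis is needed to justify the interchange of sum and integral.
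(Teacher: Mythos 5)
Your proof is correct, but it takes a genuinely different route from the paper. The paper deduces the lemma from the general Lemma \ref{13-12-21axx}, which requires verifying two hypotheses (that $A^\sigma$ contains an approximate unit for $A$, and that $Q_0$ is the norm-limit of the Ces\`aro means $\frac{1}{R}\int_0^R \sigma_t(\cdot)\,\mathrm dt$, the latter via Lemma \ref{23-02-23}); the proof of Lemma \ref{13-12-21axx} itself is long, treats the cases $\phi(a)=0$ and $\phi(a)>0$ separately, and genuinely uses the KMS property through Proposition \ref{02-12-21x} and Kustermans' theorem. You instead compute directly: restrict $\psi$ to the separable $\sigma$-invariant subalgebra generated by the orbit of $a$, apply Theorem \ref{09-11-21h} to write $\psi$ there as $\sum_n \omega_n$, pull each $\omega_n$ through the integral over the compact interval $[0,p]$, and interchange sum and integral by Tonelli. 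All steps are valid, including at $\psi(a)=\infty$, and the technique is exactly the one the paper itself uses in Lemma \ref{02-12-21ax} and again inside the proofs of Lemma \ref{23-02-23g} and Lemma \ref{24-02-23} (where the identity $\psi(Q_0(a))=\psi(a)$ is derived for elements with $\psi(a)>0$ merely to show $\psi|_{A^\sigma}\neq 0$); you have simply observed that this computation already yields the full lemma. What your route buys is brevity and generality: it uses only $\sigma$-invariance and lower semi-continuity of $\psi$, not the KMS condition, and it exploits the compactness of the averaging interval, which is precisely what is unavailable in the non-periodic setting that Lemma \ref{13-12-21axx} is designed to handle. What the paper's route buys is uniformity: the same machinery covers the Ces\`aro average over $[0,R]$ with $R\to\infty$, where your interchange of $\sum_n$ with $\lim_R$ would need additional justification.
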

\begin{proof} To conclude this from Lemma \ref{13-12-21axx} we must show that $A^\sigma$ contains an approximate unit for $A$ and that $Q_0(a) = \lim_{R \to \infty} \frac{1}{R}\int_0^R  \sigma_t(a) \ \mathrm d t$ for all $a \in A$. The last equality follows from Lemma \ref{23-02-23}, and since
$$
\Span \cup_{k \in \mathbb Z} A(k)
$$ is dense in $A$ by Theorem \ref{23-02-23a}
the first fact follows from \eqref{24-02-23l} and (2) of Lemma \ref{23-02-23e}.
\end{proof}

\begin{thm}\label{24-02-23j} Let $\sigma$ be a $p$-periodic flow on the $C^*$-algebra $A$. Let $\beta \in \mathbb R$. The map $\psi \mapsto \psi|_{A^\sigma}$ is a bijection from the set of $\beta$-KMS weights for $\sigma$ onto the set of lower semi-continuous traces $\tau$
on $A^\sigma$ with the property that
\begin{equation*}
\tau(a^*a) = e^{\frac{2k\pi \beta}{p}} \tau(aa^*) 
\end{equation*}
for all $a \in A(k)$ and all $k \in \mathbb Z$. The inverse map is given by $\tau \mapsto \tau \circ Q_0$ where $Q_0 : A \to A^\sigma$ is the canonical conditional expectation.
\end{thm}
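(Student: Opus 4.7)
The plan is to assemble this theorem from the three preceding lemmas, which collectively do all the serious work. First, I would observe that the map $\psi \mapsto \psi|_{A^\sigma}$ is well-defined into the stated target set: this is precisely the content of Lemma \ref{24-02-23}, which produces the lower semi-continuity, the trace property, and the scaling relation $\tau(a^*a) = e^{2k\pi\beta/p}\tau(aa^*)$ for $a \in A(k)$. So no new work is needed for well-definedness on this side.

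Next, I would check that the proposed inverse $\tau \mapsto \tau \circ Q_0$ is well-defined into $\mathrm{KMS}(\sigma,\beta)$: this is exactly Lemma \ref{24-02-23b}, granting that $Q_0$ is the conditional expectation of (6) in Lemma \ref{23-02-23e} and that the scaling relation on $\tau$ is precisely the hypothesis needed in that lemma.

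The two composition identities are then short. In one direction, if $\tau$ is a trace in the target set, then for $a \in A^\sigma$ we have $Q_0(a) = a$ by Lemma \ref{24-02-23d} (case $k=0$), so $(\tau \circ Q_0)|_{A^\sigma} = \tau$. In the other direction, if $\psi$ is a $\beta$-KMS weight for $\sigma$, then $\psi = \psi \circ Q_0 = (\psi|_{A^\sigma}) \circ Q_0$, where the first equality is exactly Lemma \ref{24-02-23k}. Combining these four steps gives that the two maps are mutually inverse bijections, which is the statement.

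Since every ingredient has been proved in the preceding lemmas, there is no genuine obstacle; the only thing to be careful about is verifying that each composition makes sense at the set-theoretic level (i.e.\ that $\psi \circ Q_0$ is interpreted via $Q_0 : A \to A^\sigma$ followed by the trace on $A^\sigma$, and that this agrees with the $\psi$ from Lemma \ref{24-02-23k} which is phrased as a weight on $A$). This is immediate from how $Q_0$ is defined, so the proof reduces to a short assembly rather than any new calculation.
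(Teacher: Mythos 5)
Your proposal is correct and matches the paper's own proof, which likewise assembles Lemma \ref{24-02-23} (well-definedness), Lemma \ref{24-02-23k} (injectivity, via $\psi = \psi\circ Q_0$) and Lemma \ref{24-02-23b} (surjectivity and the formula for the inverse). Phrasing it as "both compositions are the identity" rather than "injective and surjective" is only a cosmetic difference.
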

\begin{proof} The map is defined by Lemma \ref{24-02-23} and it is injective by Lemma \ref{24-02-23k}. The map
is surjective by Lemma \ref{24-02-23b}; a lemma which also gives the formula for its inverse.
\end{proof}

Specializing to the unital case we obtain the following

\begin{cor}\label{24-02-23m} Let $\sigma$ be a $p$-periodic flow on the unital
$C^*$-algebra $A$. Let $\beta \in \mathbb R$. The map $\psi \mapsto \psi|_{A^\sigma}$ is a bijection from the set of $\beta$-KMS states $\psi$ for $\sigma$ onto the set of trace states
$\tau$
on $A^\sigma$ with the property that
\begin{equation*}\label{24-02-23h}
\tau(a^*a) = e^{\frac{2k\pi \beta}{p}} \tau(aa^*) 
\end{equation*}
for all $a \in A(k)$ and all $k \in \mathbb Z$. The inverse map is given by $\tau \mapsto \tau \circ Q_0$ where $Q_0 : A \to A^\sigma$ is the canonical conditional expectation.
\end{cor}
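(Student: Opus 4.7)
The approach is simply to deduce this from Theorem \ref{24-02-23j}, whose bijection I will restrict to states on both sides. Since $A$ is unital and $\sigma_t(1) = 1$ for all $t$, the unit $1$ belongs to $A^\sigma$, so $A^\sigma$ is a unital $C^*$-subalgebra of $A$ with the same unit. Consequently, a lower semi-continuous trace $\tau$ on $A^\sigma$ is a trace state precisely when $\tau(1) = 1$, and a $\beta$-KMS weight $\psi$ for $\sigma$ is a $\beta$-KMS state precisely when $\psi(1) = 1$.

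First, I would observe that the forward map is well-defined on states: if $\psi$ is a $\beta$-KMS state, then by Theorem \ref{24-02-23j} the restriction $\psi|_{A^\sigma}$ is a lower semi-continuous trace satisfying the scaling condition, and moreover $\psi|_{A^\sigma}(1) = \psi(1) = 1$, so $\psi|_{A^\sigma}$ is a trace state. Second, for the reverse direction, if $\tau$ is a trace state on $A^\sigma$ with the scaling property, then Theorem \ref{24-02-23j} gives that $\tau \circ Q_0$ is a $\beta$-KMS weight for $\sigma$; since $Q_0 : A \to A^\sigma$ is a conditional expectation with $Q_0(1) = 1$ (as $1 \in A^\sigma$), we have $(\tau \circ Q_0)(1) = \tau(1) = 1$, so $\tau \circ Q_0$ is a $\beta$-KMS state.

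Finally, the bijectivity and the identification of the inverse follow immediately from the corresponding assertions in Theorem \ref{24-02-23j}, since we have merely restricted that bijection to the subsets cut out by the condition "value at $1$ equals $1$" on each side, and the two steps above show this condition is preserved in both directions. There is no genuine obstacle here — the only point to be careful about is that in the non-unital setting of Theorem \ref{24-02-23j} the phrase "trace state" is not available, which is precisely why the unital assumption is needed to pass to states via normalization at the unit.
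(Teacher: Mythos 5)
Your proof is correct and takes essentially the same route as the paper: the paper states Corollary \ref{24-02-23m} as an immediate specialization of Theorem \ref{24-02-23j} to the unital case without further argument, and your verification that the normalization $\psi(1)=1$, respectively $\tau(1)=1$, is preserved in both directions (using $1\in A^\sigma$ and $Q_0(1)=1$) is exactly the content being left implicit there.
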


\begin{notes}\label{24-02-23n} Theorem \ref{24-02-23j} is new, but Corollary \ref{24-02-23m} can be deduced from
Proposition 8 in \cite{CP2}, and follows also from the paper \cite{PWY} by Pinzari, Watatani and Yonetani under an additional assumption (fullness). In addition, \cite{PWY} contains a very interesting existence result in this setting, cf. Theorem 2.5 in \cite{PWY}. As Lemma \ref{22-02-23} indicates there is very often a map from the set of $\beta$-KMS weights to the densely defined lower semi-continuous traces on the fixed point algebra of the flow. In general, for flows that are not periodic this map is neither injective nor surjective.

\end{notes}

\begin{example}\label{27-02-23} \rm Let $l^2$ denote the Hilbert space of square summable sequences $\alpha =\left\{\alpha_i\right\}_{k=0}^\infty$ of complex numbers. Define isometries $V_1$ and $V_2$ on $l^2$ by
$$
(V_1\alpha)_{2k+1} = \alpha_k , \ \ (V_1\alpha)_{2k} =0 , 
$$
and
$$
(V_2\alpha)_{2k+1} = 0, \ \ (V_2\alpha)_{2k} = \alpha_k,
$$
for $k = 0,1,2,3,\cdots$. Then 
\begin{equation}\label{27-02-23b}
V_k^*V_k= 1, \ k = 1,2, \ \ V_k^*V_j = 0, \ k \neq j,
\end{equation}
and 
$$
V_1V_1^* + V_2V_2^* = 1.
$$ 
Let $A$ be the $C^*$-subalgebra of $B(l^2)$ generated by $V_1$ and $V_2$. When $a = (a_1,a_2,\cdots, a_n) \in \{1,2\}^n$, set
$$
V_a := V_{a_1}V_{a_2} \cdots V_{a_n} \in A .
$$
Set $\{0,1\}^0 := \emptyset$ and $V_{\emptyset} = 1$.
\begin{obs}\label{27-02-23a} The elements $V_aV_b^*, \ a,b \in \bigcup_{n=0}^\infty \{1,2\}^n$ span a dense $*$-subalgebra $B$ of $A$.
\end{obs}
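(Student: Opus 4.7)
The plan is to verify that $B := \Span\{V_aV_b^* : a,b \in \bigcup_{n=0}^\infty \{1,2\}^n\}$ is a $*$-subalgebra of $A$ containing the generators $V_1$ and $V_2$; density will then be automatic because $A$ is by definition the $C^*$-algebra generated by $V_1,V_2$. Closure under the adjoint operation is immediate since $(V_aV_b^*)^* = V_bV_a^*$ and the class of admissible index pairs is symmetric in $a$ and $b$. The only nontrivial point is closure under multiplication.

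For multiplication, I would reduce everything to a single key computation: for any two words $b = (b_1,\dots,b_m)$ and $c = (c_1,\dots,c_n)$, the product $V_b^*V_c$ equals one of $0$, $1$, $V_e$, or $V_e^*$ for some word $e$. I would prove this by induction on $\min(m,n)$, using the telescoping identity $V_{b_1}^*V_{c_1} = \delta_{b_1,c_1}$ from \eqref{27-02-23b} to peel off one matching (or non-matching) pair at a time. Concretely: if $m = n$, iterated application yields $V_b^*V_c = \delta_{b,c}\cdot 1$; if $m < n$ and $c = b'c''$ with $|b'| = m$, one obtains $V_b^*V_c = \delta_{b,b'}V_{c''}$; and symmetrically if $m > n$.

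Given this lemma, the product of two basis elements is
\[
V_aV_b^* \cdot V_cV_d^* = V_a\,(V_b^*V_c)\,V_d^*,
\]
which by the four cases above is either $0$, $V_aV_d^*$, $V_{ae}V_d^*$, or $V_a(V_dV_e)^* = V_aV_{de}^*$; in every case it lies again in $B$. Thus $B$ is closed under multiplication and is a $*$-subalgebra.

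Finally, taking the word of length one, one sees that $V_1 = V_{(1)}V_\emptyset^*$ and $V_2 = V_{(2)}V_\emptyset^*$ lie in $B$ (with the convention $V_\emptyset = 1$), so $B$ contains a set that generates $A$ as a $C^*$-algebra. Hence $\overline{B} = A$, which is the assertion. I do not expect a serious obstacle: the entire proof rests on the algebraic identities \eqref{27-02-23b}, and the induction on word length is routine bookkeeping.
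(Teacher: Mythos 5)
Your proposal is correct and follows essentially the same route as the paper, which simply asserts that the relations \eqref{27-02-23b} make the span a $*$-algebra containing $V_1$ and $V_2$ and leaves the verification to the reader; your telescoping computation of $V_b^*V_c$ is exactly the routine bookkeeping the paper omits.
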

\begin{proof} It follows easily from \eqref{27-02-23b} that the elements $V_aV_b^*$ span a $*$-algebra $B$ and since $B$ contains $V_1$ and $V_2$ the closure of $B$ must be $A$. 
\end{proof}

To define a flow on $A$, let $x_j, j = 0,1,2,3,\cdots$, be the sequence of natural numbers defined recursively such that $x_0=0$,
$$
x_{2k+1} = x_k +1, \ k \geq 0,
$$
and
$$
x_{2k} = x_k+1, \ k \geq 1.
$$
Let $\rho$ be a real number and define unitaries $U_t, \ t \in \mathbb R$, on $l^2$ such that
$$
(U_t\alpha)_k = e^{i\rho x_k t}\alpha_k, \ k \in \mathbb N \cup \{0\}.
$$ 
Then 
\begin{equation}\label{naestved}
U_tV_k U_t^* = e^{i\rho t}V_k, \ \ k =1,2,
\end{equation}
and therefore $U_tAU_t^* = A$. Let $\sigma^\rho_t := \Ad U_t$. It follows from Observation \ref{27-02-23a} and \eqref{naestved} that $\sigma^\rho$ is a flow on $A$. When $\rho =0$ the flow is trivial and hence the KMS weights for $\sigma^0$ are traces; necessarily finite traces since $A$ is unital, cf. Lemma \ref{24-09-23b}. If $\tau$ is a finite trace on $A$ we have that $\tau(V_iV_i^*) = \tau(V_i^*V_i) = \tau(1), \ i =1,2$, and $\tau(1) = \tau(V_1V_1^*) + \tau(V_2V_2^*) = 2 \tau(1)$, implying that $\tau(1) = 0$ and hence that $\tau = 0$. This shows that there are no KMS weights for $\sigma^\rho$ when $\rho = 0$. When $\rho \neq 0$ we find that 
$$
\sigma^{2\pi}_z = \sigma^\rho_{\frac{2\pi}{\rho}z} \ \ \ \ \forall z \in \mathbb C,
$$
and hence a weight on $A$ is at $\beta$-KMS weight for $\sigma^\rho$ if and only if it is a $\frac{\rho \beta}{2\pi}$-KMS weight for $\sigma^{2\pi}$. To find the $\beta$-KMS weights for $\sigma^\rho$ we may therefore as well seek the $\frac{\rho \beta}{2\pi}$-KMS weight for $\sigma^{2\pi}$. To simplify the notation, set $\sigma:= \sigma^{2\pi}$ and note that $\sigma$ is $1$-periodic. Since $A$ is unital all KMS weights are bounded, and each of them is a scalar multiple of a KMS state, cf. Section \ref{KMSstatesx}. We are therefore seeking the KMS states for $\sigma$.

When $a \in \{1,2\}^n, \ b \in \{0,1\}^m$, we find that
\begin{align*}
&Q_k(V_aV_b^*) = \int_0^1 e^{-i 2\pi t k}\sigma_t(V_aV_b^*) \ \mathrm d t \\
&= \left(\int_0^1 e^{-i2\pi t (k +m -n)}   \mathrm d t \right) V_aV_b^* = \begin{cases} V_aV_b^* , \ & k = n-m \\ 0, \ & k \neq m-n .\end{cases} 
\end{align*}
The fixed point algebra $A^\sigma = A(0)$ is equal to $Q_0(A)$ and it follows therefore that it is the closure of 
$$
\Span \left\{V_aV_b^* : \ a,b \in \{1,2\}^n, \ n =0,1,2, \cdots \right\} .
$$
Similarly, $A(k)$ is the closure of
$$
\Span \left\{ V_aV_b^* : a \in \{0,1\}^{n+k}, \ b \in \{0,1\}^n, \ n =0,1,2, \cdots \right\}
$$
when $k \geq 1$ and the closure of
$$
\Span \left\{ V_aV_b^* : a \in \{0,1\}^{n}, \ b \in \{0,1\}^{n+|k|}, \ n =0,1,2, \cdots \right\}
$$
when $k \leq -1$. When $a,b,a',b' \in \{1,2\}^n$ it follows from \eqref{27-02-23b} that
$$
V_aV_b^*V_{a'}V_{b'}^* = \begin{cases} 0, \ & b \neq a' \\ V_aV_{b'}^* , \ &  b = a' , \end{cases}
$$
showing that $V_aV_b^*, \ a,b \in \{1,2\}^n$, constitute a set of matrix units spanning a copy $A_n^\sigma$ of $M_{2^n}(\mathbb C)$. Note that there is a unit preserving inclusion $A^\sigma_n \subseteq A^\sigma_{n+1}$ since $V_aV_b^* = V_aV_1(V_bV_1)^* +  V_aV_2(V_bV_2)^*$. It follows that 
$$
A^\sigma = \overline{\bigcup_n A_n^\sigma} ,
$$
where $A_1^\sigma \subseteq A_2^\sigma \subseteq A_3^\sigma \subseteq \cdots$ is a sequence of unital subalgebras with $A_n^\sigma \simeq M_{2^n}(\mathbb C)$ and we conclude in this way that $A^\sigma$ is a UHF algebra of type $2^\infty$, see Section 2.2, and in particular pages 78-83 in \cite{Th3}. Thus $A^\sigma$ has a unique trace state $\tau$, cf. Example 3.3.26 in \cite{Th3}. It follows then from Corollary \ref{24-02-23m} that there is at most one KMS state for $\sigma$, namely the state $\tau \circ Q_0$. Note that when $k \geq 1$ and $ a,a' \in \{0,1\}^{n+k}, \ b,b' \in \{0,1\}^n$, we have that
$$
\tau(V_aV_b^*(V_{a'}V_{b'}^*)^*)  = \tau(V_aV_a^*) = \begin{cases}  2^{-n-k} , & \ b=b', \ a= a' \\ 0, \ & \text{otherwise} ,\end{cases}
$$
while
$$
\tau((V_{a'}V_{b'}^*)^*V_aV_b^*) = \tau(V_bV_b^*) =  \begin{cases}  2^{-n} , & \ b=b', \ a= a' \\ 0, \ & \text{otherwise} .\end{cases}
$$
It follows that $\tau(x^*x) = 2^k \tau(xx^*)$ when $x \in A(k), \ k \geq 1$. Similar considerations show that this is also true when $k \leq -1$, and we conclude therefore from Corollary \ref{24-02-23m} that $\tau \circ Q_0$ is a $\beta$-KMS state for $\sigma$ if and only if
$$
\beta = \frac{\log 2}{2 \pi} .
$$
We learn in this way that for a general $\rho \in \mathbb R\backslash \{0\}$ there is a $\beta$-KMS state for $\sigma^\rho$ if and only $\beta = \frac{\log 2}{\rho}$, and it is then unique and equal to $\tau \circ Q_0$. 
\end{example}


\begin{notes}\label{28-02-23} The $C^*$-algebra $A$ in Example \ref{27-02-23} is known as the Cuntz algebra $O_2$ and the identification of the KMS states for the flows in the example was obtained by Olesen and Pedersen in \cite{OP}.
\end{notes}



\chapter{The bundle of KMS states}

In this chapter we describe the general structure of the collection of KMS states for flows on a unital separable $C^*$-algebra.

\section{The Choquet simplex of a $\beta$-KMS state}\label{KMSstates}

This section draws heavily on convexity theory and K-theory. Not all the results we need can be found in standard textbooks on $C^*$-algebras, functional analysis or $K$-theory and we shall therefore often refer directly to the original sources. However, a self-contained presentation of the needed material can be found in \cite{Th5} and in cases where the required facts are hard to dig out from other sources, we refer only to \cite{Th5}.

Let $A$ be a unital $C^*$-algebra and $\sigma$ a flow on $A$. For each $\beta \in \mathbb R$ we denote by $S^\sigma_\beta$ the set of $\beta$-KMS states for $\sigma$.

\begin{thm}\label{08-11-22} $S^\sigma_\beta$ is a Choquet simplex for each $\beta \in \mathbb R$.
\end{thm}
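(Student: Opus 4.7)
The plan is to verify that $S^\sigma_\beta$ is a weak$^*$-compact convex subset of the state space of $A$ and that the positive cone it generates inside $A^*$ is a lattice in its own ordering; this last condition is the classical characterization of a Choquet simplex.

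First I would check that $S^\sigma_\beta$ is convex and weak$^*$-closed. Both properties follow readily from condition (3) of Theorem \ref{21-11-23b}: for each pair $a,b \in \mathcal A_\sigma$ the identity $\omega(ab) = \omega(b\sigma_{i\beta}(a))$ is a linear equation in $\omega$ that is preserved by convex combinations and by weak$^*$-limits, and the limit functional $\omega$ still satisfies $\omega(1) = 1$ and is therefore a bona fide state (and hence a $\beta$-KMS state, since $\mathcal A_\sigma$ is norm-dense in $A$ by Lemma \ref{11-11-21}). Since the state space of $A$ is weak$^*$-compact, so is $S^\sigma_\beta$.

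Next I would identify the positive cone generated by $S^\sigma_\beta$ inside $A^*$. By Lemma \ref{24-09-23b} every nonzero bounded $\beta$-KMS weight $\phi$ for $\sigma$ satisfies $0 < \phi(1) < \infty$ and $\phi(1)^{-1}\phi \in S^\sigma_\beta$; conversely, every positive scalar multiple of an element of $S^\sigma_\beta$ is again a bounded $\beta$-KMS weight. Hence the cone generated by $S^\sigma_\beta$ coincides with the collection $C_\beta$ of bounded $\beta$-KMS weights for $\sigma$, together with $\{0\}$. By Proposition \ref{14-02-22b} this collection is closed under addition and scalar multiplication by $\mathbb R^+$, and it inherits from $A^*$ the natural partial ordering used throughout Chapter 5.

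The key remaining step is to show that $C_\beta$ is a lattice. For this I would apply Theorem \ref{14-02-22d} to the larger cone $\KMS(\sigma,\beta)\cup\{0\}$ and then check that boundedness is preserved by the lattice operations. Given $\psi_1,\psi_2 \in C_\beta$, one has $\psi_1 \wedge \psi_2 \leq \psi_1$, so $\psi_1\wedge\psi_2$ is bounded; and an inspection of the construction in the proof of Theorem \ref{14-02-22d} shows $\psi_1\vee\psi_2 \leq \psi_1+\psi_2$, so $\psi_1\vee\psi_2$ is also bounded. Thus $C_\beta$ is a sublattice of $\KMS(\sigma,\beta)\cup\{0\}$, and in particular a lattice in its own ordering. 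Invoking the classical theorem (see e.g.\ Alfsen's \emph{Compact convex sets and boundary integrals}) which characterizes a compact convex set as a Choquet simplex precisely when the cone it spans is a lattice, we conclude that $S^\sigma_\beta$ is a Choquet simplex.

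The hard part is really only the lattice verification, but since Theorem \ref{14-02-22d} has already delivered the lattice structure of $\KMS(\sigma,\beta)\cup\{0\}$, the principal obstacle is merely the bookkeeping required to confirm that the cone generated by $S^\sigma_\beta$ inside $A^*$ coincides with the bounded sub-cone of $\KMS(\sigma,\beta)\cup\{0\}$, and that the lattice operations from Theorem \ref{14-02-22d} respect boundedness and agree with the natural ordering inherited from $A^*_+$.
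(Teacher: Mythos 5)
Your proposal is correct and follows essentially the same route as the paper: compactness and convexity of $S^\sigma_\beta$ from Theorem \ref{21-11-23b}, followed by the lattice property of $\KMS(\sigma,\beta)\cup\{0\}$ from Theorem \ref{14-02-22d} together with the cone--lattice characterization of Choquet simplices, which the paper establishes as Lemma \ref{11-11-22c} (including the point you flag about matching the intrinsic order of the cone with the order inherited from $A^*_+$). The only superfluous step is your bookkeeping about boundedness: since $A$ is unital, Lemma \ref{24-09-23b} shows every $\beta$-KMS weight is automatically bounded, so the cone generated by $S^\sigma_\beta$ is all of $\KMS(\sigma,\beta)\cup\{0\}$ and no separate verification that the lattice operations preserve boundedness is needed.
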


We refer to Section 4.1 in \cite{BR} or Chapter 3 in \cite{Th5} for the basics of the theory of compact convex sets and Choquet simplexes.

 Let $K$ be a weak* closed convex set of states of $A$ and let $A_{sa}$ denote the real vector space consisting of the self-adjoint elements of $A$. As in \cite{BR} we denote by $\Aff K$ the real vector space of continuous real-valued affine functions on $K$. We define $R : A_{sa} \to \Aff K$ such that 
 $$
 R(a)(\omega) := \omega(a), \ \ \omega \in K.
 $$

\begin{lemma}\label{11-11-22} $R(A_{sa})$ is dense in $\Aff K$.
\end{lemma}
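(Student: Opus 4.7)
The plan is to argue by Hahn--Banach: it suffices to prove that every bounded linear functional $L$ on $\Aff K$ which annihilates $R(A_{sa})$ is identically zero. First I would extend $L$ from the closed subspace $\Aff K \subseteq C_{\mathbb R}(K)$ to a bounded linear functional on $C_{\mathbb R}(K)$ via Hahn--Banach, and then invoke the Riesz representation theorem to represent it by a signed regular Borel measure $\mu$ on $K$, so that $L(f) = \int_K f \, d\mu$ for all $f \in \Aff K$.

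The hypothesis $L|_{R(A_{sa})} = 0$ translates to $\int_K \omega(a)\, d\mu(\omega) = 0$ for every $a \in A_{sa}$. Taking $a = 1$ gives $\mu(K) = 0$, so in the Jordan decomposition $\mu = \mu_+ - \mu_-$ the two positive parts have a common total mass $c \geq 0$. If $c = 0$ then $\mu = 0$ and $L = 0$, so I may assume $c > 0$ and set $\nu_\pm := c^{-1}\mu_\pm$, which are Borel probability measures on the weak$^*$ compact convex set $K \subseteq A^*_{sa}$. Each $\nu_\pm$ then has a barycenter $\omega_\pm \in K$, characterized by $\omega_\pm(a) = \int_K \omega(a) \, d\nu_\pm(\omega)$ for all $a \in A_{sa}$; the vanishing condition forces $\omega_+ = \omega_-$ as states on $A$.

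The crux is then to promote this identity from the linear functionals $\omega \mapsto \omega(a)$ to an arbitrary $f \in \Aff K$. To do this, I would use the graph $G_f := \{(\omega, f(\omega)) : \omega \in K\}$, a weak$^*$ compact convex subset of $A^*_{sa} \oplus \mathbb R$, obtained as the image of $K$ under the continuous affine map $\omega \mapsto (\omega, f(\omega))$. The pushforward of $\nu_\pm$ along this map is a Borel probability measure on $G_f$ whose barycenter in $A^*_{sa} \oplus \mathbb R$ is $\bigl(\omega_\pm, \int_K f\, d\nu_\pm\bigr)$, as one sees by pairing with the continuous linear functionals $(\mu, t) \mapsto \mu(a) + \lambda t$. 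Since the barycenter of a probability measure supported on a compact convex set lies in that set, this point must belong to $G_f$, forcing $\int_K f\, d\nu_\pm = f(\omega_\pm) = f(\omega_+)$. Therefore $L(f) = c f(\omega_+) - c f(\omega_-) = 0$, contradicting the supposed nontriviality of $L$.

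The only nonroutine point is the last one: the barycentric identity $\int f\, d\nu_\pm = f(b(\nu_\pm))$ is immediate for $f$ of the form $\omega \mapsto \omega(a)$, since such functions are exactly the weak$^*$-continuous linear functionals on $A^*_{sa}$ used to define the barycenter, but the extension to arbitrary $f \in \Aff K$ requires the graph argument above. Everything else is bookkeeping around Hahn--Banach, Riesz, and Jordan decomposition.
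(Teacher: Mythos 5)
Your proof is correct and follows essentially the same route as the paper's: dualize via Hahn--Banach, extend to $C_{\mathbb R}(K)$, represent by a signed measure via Riesz, Jordan-decompose, and reduce everything to the barycenters of the two positive parts, which must coincide. The only difference is that the paper simply cites the barycentric identity $\int_K f\,\mathrm d\nu = f(b(\nu))$ for $f \in \Aff K$ (Proposition 4.1.1 in \cite{BR}), whereas you supply a self-contained proof of it via the graph argument; that argument is sound, since the graph of $f$ is the image of $K$ under a continuous affine map and the barycenter of a probability measure on a compact convex set lies in that set.
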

\begin{proof} Let $l \in (\Aff K)^*$ such that $l \circ R = 0$. We must show that $l =0$. Let $C_\mathbb R(K)$ denote the real Banach space of continuous real-valued functions on $K$. By the Hahn-Banach extension theorem there is a functional $\overline{l} \in C_{\mathbb R}(K)^*$ extending $l$. By considering $\overline{l}$ as a self-adjoint linear functional on the abelian $C^*$-algebra $C(K)$ we can apply the Hahn-Jordan decomposition theorem, Theorem 4.3.6 of \cite{KR} or Theorem 3.3.6 in \cite{Th5},
 to $\overline{l}$ and combine with the Riesz representation theorem, Theorem 2.14 in \cite{Ru0}, to get non-negative real numbers $s_\pm$ and Borel probability measues $\mu_\pm$ such that
$$
\overline{l}(f) = s_+ \int_K f \ \mathrm d \mu_+ - s_-\int_K f \ \mathrm d \mu_- , \ \ \forall f \in C_\mathbb R(K) .
$$
Then
$$
l(g) = s_+ g(b(\mu_+)) - s_-g(b(\mu_-)), \ \ \forall g \in \Aff K,
$$
where $b(\mu_\pm) \in K$ are the barycenters of $\mu_\pm$, cf. Proposition 4.1.1 on page 317 of \cite{BR} or  Lemma 3.1.11 in \cite{Th5}. Since $l \circ R =0$ by assumption it follows that 
\begin{align*}
&s_+ b(\mu_+)(a) -s_-b(\mu_-)(a) = s_+ R(a)(b(\mu_+)) -s_-R(a)(b(\mu_-)) \\
& =l(R(a)) = 0
\end{align*}
for all $a \in A_{sa}$, implying that
 $s_+ b(\mu_+) - s_-b(\mu_-) = 0$ in $(A_{sa})^*$. Evaluation at $1 \in A$ shows that $s_+ = s_-$. It follows that $b(\mu_+) = b(\mu_-)$ if $s_+ = s_- \neq 0$ and hence that $l =0$, as required.
\end{proof}

\begin{lemma}\label{11-11-22a} There is a linear map $L : \Span_{\mathbb R} K  \to  (\Aff K)^*$ such that
$$
L(sx-ty)(f) = sf(x) -t f(y)
$$
for all $s,t \in \mathbb R, \ x,y \in K, \ f \in \Aff K$. 
\end{lemma}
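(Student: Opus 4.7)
The plan is to define $L(\mu)$ first on the dense subspace $R(A_{sa}) \subseteq \Aff K$ by the forced formula $L(\mu)(R(a)) := \mu(a)$, and then to extend to all of $\Aff K$ by continuity using Lemma \ref{11-11-22}.

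First I would check that the recipe $R(a) \mapsto \mu(a)$ is well defined on $R(A_{sa})$ for each fixed $\mu \in \Span_\mathbb R K$. Any such $\mu$ has the form $\mu = \sum_{i=1}^n \lambda_i x_i$ with $\lambda_i \in \mathbb R$ and $x_i \in K$, so if $a,b \in A_{sa}$ satisfy $R(a) = R(b)$, meaning $x(a) = x(b)$ for all $x \in K$, then $\mu(a) = \sum_i \lambda_i x_i(a) = \sum_i \lambda_i x_i(b) = \mu(b)$. Thus $R(a) \mapsto \mu(a)$ descends to a well defined $\mathbb R$-linear map $\tilde L(\mu) : R(A_{sa}) \to \mathbb R$.

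Next I would show that $\tilde L(\mu)$ is bounded with respect to the supremum norm on $\Aff K$. Since $K$ is convex, any $\mu \in \Span_\mathbb R K$ can be written as $\mu = sx - ty$ with $s,t \geq 0$ and $x,y \in K$ (group together the positive and negative coefficients in $\mu = \sum_i \lambda_i x_i$ and use convexity to reduce each group to a single multiple of an element of $K$). Then
\[
\bigl|\tilde L(\mu)(R(a))\bigr| \;=\; |sx(a) - ty(a)| \;\leq\; (s+t)\,\sup_{z \in K}|z(a)| \;=\; (s+t)\,\|R(a)\|_{\Aff K}.
\]
So $\tilde L(\mu)$ is a bounded $\mathbb R$-linear functional on $R(A_{sa})$. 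Since $R(A_{sa})$ is dense in $\Aff K$ by Lemma \ref{11-11-22}, $\tilde L(\mu)$ extends uniquely to a continuous $\mathbb R$-linear functional $L(\mu) \in (\Aff K)^*$.

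Finally I would verify the two remaining assertions. Linearity of the map $\mu \mapsto L(\mu)$ follows from linearity on the dense subspace $R(A_{sa})$, which is immediate from the formula $L(\mu)(R(a)) = \mu(a)$ and the linearity of $\mu \mapsto \mu(a)$ for fixed $a$. For the explicit formula, fix $s,t \in \mathbb R$ and $x,y \in K$ and consider the two continuous functions on $\Aff K$ given by $f \mapsto L(sx-ty)(f)$ and $f \mapsto sf(x) - tf(y)$; they agree on the dense set $R(A_{sa})$ because $L(sx-ty)(R(a)) = (sx-ty)(a) = sx(a) - ty(a) = sR(a)(x) - tR(a)(y)$, hence they agree everywhere.

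No single step looks genuinely obstructive here: the only point requiring a small argument is the well-definedness on $R(A_{sa})$, which is where it matters that $\mu$ is a finite real combination of elements of $K$ rather than an arbitrary functional on $A$.
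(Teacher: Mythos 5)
Your proof is correct and rests on the same key ingredient as the paper's, namely the density of $R(A_{sa})$ in $\Aff K$ from Lemma \ref{11-11-22}; the paper simply runs the argument in the opposite direction, taking the formula $L(sx-ty)(f) = sf(x)-tf(y)$ as the definition and using density to check it is independent of the representation of $sx-ty$, whereas you define $L(\mu)$ on the dense subspace by $L(\mu)(R(a))=\mu(a)$ and extend by continuity. The two routes are logically equivalent, and your boundedness estimate via the decomposition $\mu = sx-ty$ with $s,t\geq 0$ is the detail the paper leaves implicit.
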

\begin{proof} It suffices to show that when $s,t, s',t' \in \mathbb R, \ x,y,x',y' \in K$ and $sx-ty = s'x' - t'y'$ in $A^*$, then $sf(x) -t f(y) =  s'f(x') -t' f(y')$ for all $f \in \Aff K$, and this follows from Lemma \ref{11-11-22}.
\end{proof}

\begin{lemma}\label{11-11-22b} The map $L : \Span_{\mathbb R} K  \to  (\Aff K)^*$ of Lemma \ref{11-11-22a} is a linear bijection such that $L(\mathbb R^+ K\cup \{0\}) = (\Aff K)^*_+$ where $\mathbb R^+$ denotes the non-negative real numbers and $(\Aff K)^*_+$ the positive linear functionals on $\Aff K$.
\end{lemma}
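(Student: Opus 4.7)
The plan is to decompose the statement into three claims: injectivity of $L$, surjectivity of $L$, and the identification of the positive cone $L(\mathbb R^+K\cup\{0\})=(\Aff K)^*_+$. Linearity itself is already built into the construction in Lemma \ref{11-11-22a}.

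For injectivity, my approach would be to evaluate $L(sx-ty)=0$ at well-chosen elements of $\Aff K$. Testing first against the constant function $1\in\Aff K$ (which takes the value $1$ on each state in $K$) yields $s-t=0$, i.e.\ $s=t$. If $s=0$ then $sx-ty=0$ already; otherwise I would divide out $s$ to obtain $f(x)=f(y)$ for every $f\in\Aff K$. By Lemma \ref{11-11-22} the subspace $R(A_{sa})$ is dense in $\Aff K$, so this forces $x(a)=y(a)$ for all $a\in A_{sa}$, and hence $x=y$ as linear functionals on $A$, since every element of $A$ decomposes as a complex combination of its self-adjoint parts.

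For surjectivity I would essentially rerun the Hahn-Banach/Jordan/Riesz argument already deployed in the proof of Lemma \ref{11-11-22}, but this time retaining (rather than discarding) the representing measures. Given $\phi\in(\Aff K)^*$, extend by Hahn-Banach to $\overline\phi\in C_\mathbb R(K)^*$, apply Theorem 4.3.6 of \cite{KR} together with the Riesz representation theorem to write $\overline\phi=s_+\mu_+-s_-\mu_-$ with $s_\pm\ge 0$ and Borel probability measures $\mu_\pm$ on the weak* compact convex set $K$, and let $b(\mu_\pm)\in K$ denote the barycenters supplied by Proposition 4.1.1 of \cite{BR}. For every $a\in A_{sa}$ the defining property of the barycenter gives
\[
R(a)\bigl(s_+b(\mu_+)-s_-b(\mu_-)\bigr)=s_+\!\int_K R(a)\,d\mu_+ - s_-\!\int_K R(a)\,d\mu_- = \overline\phi(R(a))=\phi(R(a)).
\]
Since $R(A_{sa})$ is dense in $\Aff K$ by Lemma \ref{11-11-22}, this identity promotes to $L(s_+b(\mu_+)-s_-b(\mu_-))=\phi$ on all of $\Aff K$.

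The step I expect to require a little extra care, and which is in a sense the crux of the lemma, is upgrading this decomposition to the positivity statement. My plan is to exploit that $\Aff K$ contains the constants: if $\phi\in(\Aff K)^*_+$, then $\|\phi\|=\phi(1)$, and the norm-preserving Hahn-Banach extension produces $\overline\phi\in C_\mathbb R(K)^*$ satisfying $\overline\phi(1)=\|\overline\phi\|$. A short test against functions of the form $1-2f$ with $0\le f\le 1$ shows that such an $\overline\phi$ is automatically a positive functional on $C_\mathbb R(K)$, so one may take $s_-=0$ in the decomposition above and conclude $\phi=L(s_+b(\mu_+))\in L(\mathbb R^+K\cup\{0\})$. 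The reverse inclusion $L(\mathbb R^+K\cup\{0\})\subseteq(\Aff K)^*_+$ is immediate from the formula $L(sx)(f)=sf(x)\ge 0$ for $s\ge 0$, $x\in K$ and $f\in(\Aff K)_+$.
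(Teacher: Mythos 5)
Your proof is correct, and for the first two claims it follows the paper's route: injectivity is immediate from $L(\omega)(R(a))=\omega(a)$ (your version via the constant function and density of $R(A_{sa})$ is a slightly longer but equivalent path), and surjectivity is obtained, exactly as in the paper, by rerunning the Hahn--Banach/Jordan/Riesz/barycenter argument from the proof of Lemma \ref{11-11-22} while keeping the representing measures. Where you genuinely diverge is the identification of the positive cone. The paper handles the inclusion $(\Aff K)^*_+\subseteq L(\mathbb R^+K)$ by citing the fact (Theorem 3.1.2 in \cite{Th5}) that every state of the order unit space $\Aff K$ is evaluation at a point of $K$, so a positive functional is a non-negative multiple of such an evaluation. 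You instead observe that a positive $\phi$ satisfies $\|\phi\|=\phi(1)$, that a norm-preserving Hahn--Banach extension $\overline\phi$ to $C_{\mathbb R}(K)$ then attains its norm at $1$ and is therefore automatically positive (your test against $1-2f$, $0\le f\le 1$, does verify both $\overline\phi(f)\ge 0$ and $\overline\phi(f)\le\overline\phi(1)$), so the Jordan decomposition collapses to a single positive measure and the barycenter lands in $\mathbb R^+K$. This is a self-contained argument that avoids the external reference, at the cost of a few extra lines; the paper's citation is shorter but leans on the order-unit-space machinery. Both are valid, and in fact your positivity argument is essentially a proof of the cited fact in this special case.
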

\begin{proof} $L$ is injective because $L(\omega)(R(a)) = \omega(a)$ for all $a \in A_{sa}$. To see that $L$ is surjective, let $l \in (\Aff K)^*$. By repeating arguments from the proof of Lemma \ref{11-11-22} we obtain $s_\pm \in \mathbb R_+$ and $\omega_\pm \in K$ such that $l(g) = s_+g(\omega_+) - s_-g(\omega_-)$ for all $g \in \Aff K$. Hence $l =L(s_+\omega_+ - s_-\omega_-)$, showing that $L$ is surjective. 

It follows from Theorem 3.1.2 in \cite{Th5} that every state of the order unit space $\Aff K$ is given by evaluation at a point in $K$; a fact which implies that $(\Aff K)^*_+ \subseteq L(\mathbb R^+ K)$. The reverse inclusion is trivial.

\end{proof}

\begin{lemma}\label{11-11-22c} Let $A$ be a unital $C^*$-algebra and $K$ a weak* closed convex set of states of $A$. Then $K$ is a Choquet simplex if and only if $\mathbb R^+K$ is a lattice in $A^*$, in the sense that when $\omega_1,\omega_2 \in \mathbb R^+K $ there is an element $\omega_1 \vee \omega_2 \in \mathbb R^+K$ such that $\omega_i \leq \omega_1 \vee \omega_2, \ i =1,2$, with respect to the order from $A^*$ and $\omega_1 \vee \omega_2$ is the least element of $\mathbb R^+K$, also with respect to the order from $A^*$, with these properties. \footnote{It \emph{does not} mean that there can not be other positive linear functionals between $\omega_1, \omega_2$ and $\omega_1 \vee \omega_2$; they just can't be scalar multiples of $K$. }
\end{lemma}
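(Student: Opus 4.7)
The plan is to transfer the lattice question from the cone $\mathbb R^+K \subseteq A^*$ to the equivalent question on the positive cone $(\Aff K)^*_+$ via the linear bijection $L : \Span_{\mathbb R} K \to (\Aff K)^*$ built in Lemmas~\ref{11-11-22a} and \ref{11-11-22b}, and then to invoke the standard characterisation of Choquet simplexes in terms of their order-dual.

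First I would note that, since $L$ is a linear bijection sending $\mathbb R^+K$ bijectively onto $(\Aff K)^*_+$, it maps differences to differences; consequently, for $\omega_1,\omega_2 \in \mathbb R^+K$, the element $\omega_2 - \omega_1$ lies in $\mathbb R^+K$ if and only if $L(\omega_2) - L(\omega_1)$ lies in $(\Aff K)^*_+$. Thus $L$ is an order isomorphism between $\mathbb R^+K$, equipped with the order induced by its own cone structure, and $(\Aff K)^*_+$, equipped with the natural cone order. In particular, upper bounds (respectively, least upper bounds) in $\mathbb R^+K$ correspond under $L$ to upper bounds (respectively, least upper bounds) in $(\Aff K)^*_+$, and $\mathbb R^+K$ is a lattice in the sense of the lemma if and only if $(\Aff K)^*_+$ is a lattice in its cone order.

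Next I would invoke the standard characterisation of Choquet simplexes: a compact convex set $K$ is a Choquet simplex precisely when $(\Aff K)^*_+$ is a lattice in its natural cone order. This may be taken as the working definition, or deduced from more geometric formulations; see Proposition~3.1.3 of \cite{Th5} or the material of Section~4.1 of \cite{BR}. Combining this with the order-isomorphism property of $L$ yields the equivalence asserted in the lemma.

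The main obstacle is the precise reconciliation of the order used in the statement of the lemma, phrased as ``the order from $A^*$,'' with the intrinsic cone order on $\mathbb R^+K$ that $L$ respects. Since $\mathbb R^+K \subseteq A^*_+$, any intrinsic order-relation on $\mathbb R^+K$ is a relation in the $A^*$-order, but two elements of $\mathbb R^+K$ may be comparable in $A^*$ without their difference lying in $\mathbb R^+K$—the subtlety flagged by the footnote. The technical heart of the argument is therefore to show that when the intrinsic sup $\omega_1 \vee \omega_2$ exists in $\mathbb R^+K$ it is also the least element of $\mathbb R^+K$, in the $A^*$-order, among $A^*$-upper bounds of $\omega_1,\omega_2$ inside $\mathbb R^+K$, and conversely; once this identification is established, the transfer via $L$ closes the proof.
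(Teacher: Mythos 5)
Your overall route coincides with the paper's: transport the problem through the bijection $L$ of Lemma \ref{11-11-22b} and invoke the characterisation of Choquet simplexes through the lattice property of the dual of $\Aff K$. The problem is that the step you postpone to your final paragraph is not a loose end to be tidied up afterwards -- it is where the entire content of the lemma sits, and your proposal neither supplies it nor could be completed along the lines you sketch. The map $L$ intertwines the \emph{intrinsic} order of the cone $\mathbb R^+K$ (where $\omega \leq \omega'$ means $\omega' - \omega \in \mathbb R^+K$) with the cone order of $(\Aff K)^*_+$; the lemma, however, is stated for the order inherited from $A^*$ (where $\omega \leq \omega'$ only requires $\omega' - \omega$ to be a positive functional on $A$). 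These two orders on $\mathbb R^+K$ genuinely differ unless $\mathbb R^+K$ is hereditary in $A^*_+$, and the identification you propose to establish -- that the intrinsic supremum, when it exists, is also the least $A^*$-upper bound inside $\mathbb R^+K$ -- is false in general.

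Concretely, take $A = \mathbb C^2$, write functionals as pairs, and let $K = \left\{(p,1-p) : 1/4 \leq p \leq 3/4\right\}$, a weak* closed convex set of states which is a Choquet simplex (a segment). Then $\mathbb R^+K = \left\{(u,v) : u,v \geq 0, \ v/3 \leq u \leq 3v\right\}$. For the extreme points $\omega_1 = (1/4,3/4)$ and $\omega_2 = (3/4,1/4)$ the intrinsic supremum -- the element produced by $L^{-1}$ from the lattice structure of $(\Aff K)^*_+$ -- is $\omega_1 + \omega_2 = (1,1)$, because $(3/4,3/4) - \omega_1 = (1/2,0) \notin \mathbb R^+K$. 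Yet $(3/4,3/4)$ lies in $\mathbb R^+K$, dominates both $\omega_i$ in the order of $A^*$, and satisfies $(3/4,3/4) < (1,1)$ there; so the intrinsic supremum is not the least $A^*$-upper bound, and your planned final step cannot be carried out as stated. The same example shows that the implication ``$\omega_i \leq \omega$ in $A^*$ implies $L(\omega_i) \leq L(\omega)$ in $\Aff K^*$'', on which the corresponding step of the paper's own proof also rests, fails for a general weak* closed convex $K$: what is really needed is that a positive functional of the form $\omega' - \omega$ with $\omega, \omega' \in \mathbb R^+K$ again lies in $\mathbb R^+K$, i.e. that $\mathbb R^+K$ is hereditary in $A^*_+$ (equivalently, that $K$ is a face of the state space). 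Any correct completion of your argument must either supply this hereditarity for the $K$ at hand or work with the $A^*$-order directly throughout, rather than deduce it from the intrinsic lattice structure via $L$.
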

\begin{proof} This follows from Lemma \ref{11-11-22b} and Theorem 3.2.7 in \cite{Th5} when we show that $\Aff K^*$ is a lattice if and only if $\mathbb R^+K$ is a lattice in the specified sense. To see this assume first that $\Aff K^*$ is a lattice as defined in Definition 3.2.1 of \cite{Th5} and consider two elements $\omega_1,\omega_2 \in \mathbb R^+K$. There is then an element $\mu \in \Aff K^*$ which is the least upper bound of $L(\omega_1)$ and $L(\omega_2)$ in $\Aff K^*$, where $L$ is the bijection from Lemma \ref{11-11-22b}. Then $\mu \in (\Aff K)^*_+$ since $L(\omega_1) \in (\Aff K)^*_+$ and $L(\omega_1) \leq \mu$ in $\Aff K^*$. It follows from Lemma \ref{11-11-22b} that $\mu = L(\omega_1\vee \omega_2)$ for some $\omega_1\vee \omega_2 \in \mathbb R^+K$. Assume $\omega \in  \mathbb R^+K$ and that $\omega_i \leq \omega,\ i = 1,2$ with respect to the order from $A^*$. It follows then from Lemma \ref{11-11-22} that $L(\omega_i) \leq L(\omega), \ i =1,2$, in $\Aff K^*$ and hence that $L(\omega) \leq \mu$. By Lemma \ref{11-11-22b} this implies that $\omega \leq \omega_1 \vee \omega_2$. For the converse, assume that $\mathbb R^+K$ is a lattice as specified in the statement of the lemma. Let $\mu_1,\mu_2\in \Aff K^*$. Since $\Aff K^* = (\Aff K)_+^* - (\Aff K)_+^*$ by Lemma 3.1.8 in \cite{Th5} there is an element $\mu \in (\Aff K)^*_+$ such that $\mu_i + \mu \in (\Aff K)_+^*, \ i =1,2$. By Lemma \ref{11-11-22c} there are elements $\omega_i \in \mathbb R^+ K$, such that $L(\omega_i) = \mu_i + \mu, \ i =1,2$. Let $\omega_1 \vee \omega_2 \in \mathbb R^+K$ least upper bound in $\mathbb R^+K$ with respect to the order from $A^*$ and set $\mu_1 \vee \mu_2 := L(\omega_1 \vee \omega_2) - \mu$. It is straightforward to check that $\mu_1 \vee \mu_2$ is the least upper bound for $\mu_1$ and $\mu_2$ in $\Aff K^*$.
\end{proof}

\emph{Proof of Theorem \ref{08-11-22}:} Observe first that a convex combination of states that satisfy the conditions in Theorem \ref{21-11-23b} will also satisfy them. Thus $S^\sigma_\beta$ is a convex set of states of $A$. Observe next
that $S_\beta^\sigma$ is closed in the weak* topology of $A^*$. Indeed, if $\{\omega_\alpha\}$ is a net in $S^\sigma_\beta$ which converges in the weak* topology of $A^*$ then $\omega := \lim_\alpha \omega_\alpha$ is a state because the state space is closed in the weak* topology. When $a \in \mathcal A_\sigma$ Theorem \ref{21-11-23b} implies that
$$
\omega(a^*a) = \lim_\alpha \omega_\alpha(a^*a) = \lim_\alpha \omega_\alpha(\sigma_{-i\frac{\beta}{2}}(a) \sigma_{-i\frac{\beta}{2}}(a)^*) = \omega(\sigma_{-i\frac{\beta}{2}}(a) \sigma_{-i\frac{\beta}{2}}(a)^*) ,
$$
and then also that $\omega$ is a $\beta$-KMS state for $\sigma$. Thus $S^\sigma_\beta$ is a compact convex set of states. It follows therefore from Theorem \ref{14-02-22d} and Lemma \ref{11-11-22c} that $S^\sigma_\beta$ is a Choquet simplex. \qed

\section{The KMS bundle of a flow}

  Let $S$ be a second countable locally compact Hausdorff space and $\pi : S \to \mathbb R$ a continuous map. If the inverse image $\pi^{-1}(t)$, equipped with the relative topology inherited from $S$, is homeomorphic to a compact metrizable Choquet simplex for all $t \in \mathbb R$ we say that $(S,\pi)$ is a \emph{simplex bundle}. We emphasize that $\pi$ need not be surjective, and we consider therefore also the empty set as a Choquet simplex. When $(S,\pi)$ is a simplex bundle we denote by $\mathcal A(S,\pi)$ the set of continuous functions $f : S \to \mathbb R$ with the property that the restriction $f|_{\pi^{-1}(t)}$ of $f$ to $\pi^{-1}(t)$ is affine for all $t \in \mathbb R$.

\begin{defn}\label{25-08-21} A simplex bundle $(S,\pi)$ is a \emph{proper simplex bundle} when
\begin{itemize}
\item[(1)] $\pi$ is proper, i.e. $\pi^{-1}(K)$ is compact in $S$ when $K \subseteq \mathbb R$ is compact, and
\item[(2)] $\mathcal A(S,\pi)$ separates points on $S$; i.e. for all $x\neq y$ in $S$ there is an $f \in\mathcal A(S,\pi)$ such that $f(x) \neq f(y)$.
\end{itemize}
\end{defn}

Two proper simplex bundles $(S,\pi)$ and $(S',\pi')$ are \emph{isomorphic} when there is a homeomorphism $\phi : S \to S'$ such that $\pi' \circ \phi = \pi$ and $\phi: \pi^{-1}(\beta) \to {\pi'}^{-1}(\beta)$ is affine for all $\beta \in \mathbb R$.

Consider now a flow $\sigma$ on a unital $C^*$-algebra $A$. Let $E(A)$ denote the state space of $A$ which we consider as a topologocal space in the weak* topology. Set 
$$
S^\sigma := \left\{(\omega,\beta) \in E(A)\times \mathbb R: \ \omega \in S^\sigma_\beta \right\} .
$$
We consider $S^\sigma$ as a topological space in the topology inherited from the product topology of $E(A) \times \mathbb R$. The projection $\pi^\sigma : S^\sigma \to \mathbb R$ to the second coordinate is then continuous and by Theorem \ref{08-11-22} the fibre $(\pi^\sigma)^{-1}(\beta)$ over every $ \beta \in \mathbb R$ is a Choquet simplex. We call $(S^\sigma,\pi^\sigma)$ the \emph{KMS bundle of $\sigma$}.

\begin{thm}\label{12-11-22x} Let $\sigma$ be a flow on a unital separable $C^*$-algebra. Then $(S^\sigma,\pi^\sigma)$ is a proper simplex bundle.
\end{thm}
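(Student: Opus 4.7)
\bigskip

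\noindent\textbf{Proof sketch.} The plan is to realise $S^\sigma$ as a closed subset of the compact metrizable space $E(A)\times\mathbb R$ (actually of a half--plane; more precisely $E(A)\times\mathbb R$ is only $\sigma$-compact, but that suffices), and then read off all four requirements of Definition \ref{25-08-21} from this embedding, using Theorem \ref{08-11-22} for the simplex structure of the fibres and the Gelfand representation $a\mapsto \widehat a$ for the separation property.

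First I would verify that $S^\sigma$ is closed in $E(A)\times\mathbb R$. Suppose $(\omega_n,\beta_n)\in S^\sigma$ and $(\omega_n,\beta_n)\to(\omega,\beta)$ in $E(A)\times\mathbb R$. For every $a\in\mathcal A_\sigma$ the map $z\mapsto\sigma_z(a)$ is norm--continuous (in fact entire holomorphic), so $\beta\mapsto\sigma_{-i\beta/2}(a)\sigma_{-i\beta/2}(a)^*$ is norm--continuous in $A$, whence
\[
(\omega',\beta')\ \longmapsto\ \omega'\!\left(\sigma_{-i\beta'/2}(a)\sigma_{-i\beta'/2}(a)^*\right)
\]
is jointly continuous on $E(A)\times\mathbb R$. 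Using Theorem \ref{21-11-23b}~(2) for each $\omega_n$ and passing to the limit yields
\[
\omega(a^*a)\ =\ \omega\!\left(\sigma_{-i\beta/2}(a)\sigma_{-i\beta/2}(a)^*\right)\qquad\forall a\in\mathcal A_\sigma,
\]
so $\omega\in S^\sigma_\beta$ by Theorem \ref{21-11-23b}, and hence $(\omega,\beta)\in S^\sigma$.

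From this closedness essentially everything follows. Since $A$ is separable, $E(A)$ is compact and metrizable in the weak* topology, so $E(A)\times\mathbb R$ is a second countable, locally compact Hausdorff space; $S^\sigma$ inherits all three properties as a closed subspace. For any compact $K\subseteq\mathbb R$, the preimage $(\pi^\sigma)^{-1}(K)$ is a closed subset of the compact set $E(A)\times K$, hence compact; thus $\pi^\sigma$ is proper. The fibre $(\pi^\sigma)^{-1}(\beta)=S^\sigma_\beta$ is closed in the compact metrizable space $E(A)$ and is a Choquet simplex by Theorem \ref{08-11-22}, so it is a compact metrizable Choquet simplex.

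Finally, to verify that $\mathcal A(S^\sigma,\pi^\sigma)$ separates points: for each $a\in A_{sa}$ define $\widehat a:S^\sigma\to\mathbb R$ by $\widehat a(\omega,\beta):=\omega(a)$. This is continuous (by definition of the weak* topology) and affine on every fibre, hence lies in $\mathcal A(S^\sigma,\pi^\sigma)$; and $\pi^\sigma$ itself is continuous and constant, hence affine, on every fibre, so $\pi^\sigma\in\mathcal A(S^\sigma,\pi^\sigma)$ as well. Given $(\omega_1,\beta_1)\neq(\omega_2,\beta_2)$ in $S^\sigma$, either $\beta_1\neq\beta_2$, in which case $\pi^\sigma$ separates them, or $\beta_1=\beta_2$ and $\omega_1\neq\omega_2$, in which case some $a\in A_{sa}$ satisfies $\omega_1(a)\neq\omega_2(a)$ and $\widehat a$ separates them.

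The only genuinely non--routine step is the first: one must be careful that joint continuity in $(\omega,\beta)$ of the expression $\omega(\sigma_{-i\beta/2}(a)\sigma_{-i\beta/2}(a)^*)$ really does hold, which relies on $a$ being entire analytic so that $\beta\mapsto\sigma_{-i\beta/2}(a)$ is norm--continuous. Everything else is bookkeeping about closed subspaces of compact metrizable spaces.
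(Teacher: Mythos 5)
Your proof is correct and follows essentially the same route as the paper: the key step in both is that the KMS condition (2) of Theorem \ref{21-11-23b} passes to limits because $\beta\mapsto\sigma_{-i\beta/2}(a)$ is norm-continuous for $a\in\mathcal A_\sigma$, and the separation argument via $\widehat a$ and functions of $\pi^\sigma$ is identical. Packaging the compactness statement as "closedness in $E(A)\times\mathbb R$" rather than directly checking compactness of $(\pi^\sigma)^{-1}(K)$ is only a cosmetic difference.
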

\begin{proof} Since we assume that $A$ is separable $E(A)$ is metrizable and the topology of $S^\sigma$ is therefore second countable. Let $K \subseteq \mathbb R$ be compact. We will show that $(\pi^\sigma)^{-1}(K)$ is compact in $S^\sigma$ which will be enough to conclude both that $\pi^\sigma$ is proper and that $S^\sigma$ is locally compact. Let $s_n := (\omega_n, t_n)$, be a sequence in ${\pi^\sigma}^{-1}(K)$. We show that $\{s_n\}_{n \in \mathbb N}$ has a convergent subsequence. Since $E(A)$ and $K$ are compact we may assume that $\{\omega_n\}_{n \in \mathbb N}$ and $\{t_n\}_{n \in \mathbb N}$ converge in $E(A)$ and $K$, respectively. Set $t := \lim_{n\to \infty} t_n$ and $\omega := \lim_{n \to \infty} \omega_n$. Let $a \in \mathcal A_\sigma$ be an analytic element for $\sigma$. Then $\lim_n \sigma_{-i\frac{ t_n}{2}}(a) = \sigma_{-i\frac{t}{2}}(a)$ and by using Theorem \ref{21-11-23b}, we find that
$$
\omega(a^*a) = \lim_n \omega_n(a^*a) = \lim_n \omega_n(\sigma_{-i\frac{t_n}{2}}(a) \sigma_{-i\frac{t_n}{2}}(a)^*) = \omega(\sigma_{-i\frac{t}{2}}(a) \sigma_{-i\frac{t}{2}}(a)^*) .
$$
It follows, again by Theorem\ref{21-11-23b}, that $\omega$ is a $t$-KMS state for $\sigma$, showing that $(\omega,t) \in S^\sigma$. Since $t \in K$ we have shown that $(\pi^\sigma)^{-1}(K)$ is compact in $S^\sigma$, and we conclude therefore that $S^\sigma$ is locally compact and $\pi^\sigma$ proper. What remains is now only to show that $\mathcal A(S^\sigma,\pi^\sigma)$ separates the points of $S^\sigma$. Let therefore $(\omega_1,t_1)$ and $(\omega_2,t_2)$ be distinct elements of $S^\sigma$. If $t_1 \neq t_2$, choose a continuous function $f$ on $\mathbb R$ such that $f(t_1) \neq f(t_2)$ and define $F : S^\sigma \to \mathbb R$ by $F(\omega,t) := f(t)$. Then $F \in \mathcal A(S^\sigma,\pi^\sigma)$ and $F(\omega_1,t_1) \neq F(\omega_2,t_2)$. If $t_1 = t_2$ the states $\omega_1$ and $\omega_2$ are different and there is therefore an element $a \in A_{sa}$ such that $\omega_1(a) \neq \omega_2(a)$. Define $\hat{a} \in \mathcal A(S^\sigma,\pi^\sigma)$ such that $\hat{a}(\omega,t) := \omega(a)$. Then $\hat{a}(\omega_1,t_1) \neq \hat{a}(\omega_2,t_2)$.
\end{proof}

By Theorem \ref{12-11-22x} every flow on a unital separable $C^*$-algebra gives rise, in a canonical way, to a proper simplex bundle. We will show below that any proper simplex bundle is isomorphic to the KMS bundle of a flow on a simple unital separable $C^*$-algebra, but first we give an alternative description of the KMS bundle for a large class of flows.

\subsection{The KMS bundle for a non-trivial flow on a simple $C^*$-algebra}
Let $\sigma$ be a flow on the $C^*$-algebra $A$. We say that $A$ is \emph{$\sigma$-simple} when the only $\sigma$-invariant closed two-sided ideals in $A$ are $\{0\}$ and $A$, and that $\sigma$ is \emph{non-trivial} when $\sigma$ is not the trivial flow; the one for which $\sigma_t = \id_A$ for all $t \in \mathbb R$.

\begin{lemma}\label{05-01-23}  Assume that $A$ is unital and $\sigma$-simple. Assume also that $\sigma$ is not the trivial flow. Then $S^\sigma_\beta \cap S^\sigma_{\beta'} =\emptyset$ when $\beta \neq \beta'$.
\end{lemma}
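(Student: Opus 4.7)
The statement follows almost immediately from \emph{Corollary \ref{31-12-22}}, which says that under the $\sigma$-simplicity hypothesis on $A$, any weight that is simultaneously $\beta$-KMS and $\beta'$-KMS for two distinct parameters forces $\sigma$ to be the trivial flow. My plan is therefore to reduce the claim to this corollary.

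First I would argue by contradiction: suppose $\omega \in S^\sigma_\beta \cap S^\sigma_{\beta'}$ with $\beta \neq \beta'$. Since $\omega$ is a state, it is in particular a non-zero weight on $A$ which is both a $\beta$-KMS weight and a $\beta'$-KMS weight for $\sigma$. The hypothesis that $A$ is $\sigma$-simple means exactly that the only $\sigma$-invariant closed two-sided ideals in $A$ are $\{0\}$ and $A$, which is precisely the hypothesis of Corollary \ref{31-12-22}. Applying that corollary to $\omega$ yields $\sigma_t = \id_A$ for all $t \in \mathbb R$, contradicting the assumption that $\sigma$ is non-trivial. Hence $S^\sigma_\beta \cap S^\sigma_{\beta'} = \emptyset$.

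There is no real obstacle here; all the work has been done in the earlier chapters. The two facts being invoked, namely Corollary \ref{31-12-22} itself and the observation that a state is automatically a non-zero weight, are both immediate. It is worth noting that $A$ being unital plays no role beyond ensuring that $\omega$ (being a state) is normalized and in particular non-zero; the essential input is the $\sigma$-simplicity together with non-triviality of $\sigma$, which together rule out the passive alternative from Lemma \ref{02-05-22b} that underlies Corollary \ref{31-12-22}.
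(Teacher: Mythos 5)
Your proof is correct and coincides with the paper's own argument, which is simply the one-line deduction from Corollary \ref{31-12-22}. You have only spelled out the (routine) contrapositive step that the paper leaves implicit.
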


\begin{proof} This follows from Corollary \ref{31-12-22}. 
\end{proof}

 Lemma \ref{05-01-23} implies that under the given assumptions a KMS weight for $\sigma$ remembers the value $\beta\in \mathbb R$ for which it is a $\beta$-KMS weight and this can be used to give an alternative description of the KMS bundle of $\sigma$. 

We assume now that $A$ is unital, separable and $\sigma$-simple, and that $\sigma$ is non-trivial. The map 
\begin{equation}\label{31-12-22a}
S^\sigma \ni (\omega,\beta) \mapsto \omega \in \bigcup_{\beta \in \mathbb R} S^\sigma_\beta
\end{equation}
is then injective by Lemma \ref{05-01-23} and hence a bijection. Thus $\bigcup_{\beta \in \mathbb R} S^\sigma_\beta$ has a topology in which it is a second countable locally compact Hausdorff space since $S^\sigma$ does. To identify this topology, let 
\begin{equation}\label{01-01-23a}
\Phi: \
\bigcup_{\beta \in \mathbb R} S^\sigma_\beta \to \mathbb R
\end{equation}
be the function defined such that $\Phi(\omega) = \beta$ when $\omega \in S^\sigma_\beta$. This is well-defined by Lemma \ref{05-01-23}. 

\begin{lemma}\label{01-01-23} Assume that $A$ is unital, separable and $\sigma$-simple, and that $\sigma$ is non-trivial. Then the map $\Phi$ of \eqref{01-01-23a} is continuous with respect to the weak* topology.
\end{lemma}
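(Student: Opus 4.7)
Since $A$ is separable, $E(A)$ is weak$^*$-metrizable, so continuity of $\Phi$ reduces to sequential continuity. Given $\omega_n \to \omega$ in the weak$^*$ topology with $\omega_n \in S^\sigma_{\beta_n}$ and $\omega \in S^\sigma_\beta$, it suffices to show that every subsequence of $\{\beta_n\}$ admits a further subsequence converging to $\beta$. If the subsequence stays in a bounded interval I extract a convergent sub-subsequence $\beta_{n_k} \to \beta_\infty \in \mathbb R$; the argument in the proof of Theorem~\ref{12-11-22x} then gives $(\omega,\beta_\infty) \in S^\sigma$, and Lemma~\ref{05-01-23}, where exactly the hypotheses of non-triviality of $\sigma$ and $\sigma$-simplicity of $A$ enter, forces $\beta_\infty = \beta$.

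The main obstacle is to rule out subsequences with $|\beta_{n_k}| \to \infty$. After passing to a subsequence, WLOG $\beta_n \to +\infty$ (the case $\beta_n \to -\infty$ is analogous). Fix $a \in \mathcal A_\sigma$ and $b \in A$. Condition~(4) of Theorem~\ref{21-11-23b} applied to $\omega_n$ produces a continuous function $f_n$ on $\mathcal D_{\beta_n}$, holomorphic in the interior, with $f_n(t) = \omega_n(b\sigma_t(a))$ and $f_n(t+i\beta_n) = \omega_n(\sigma_t(a)b)$, and hence $|f_n(z)| \leq \|a\|\|b\|$ on $\mathcal D_{\beta_n}$ by Phragm\'en--Lindel\"of. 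Since the strips $\mathcal D_{\beta_n}$ exhaust the upper half-plane $\mathbb H$, Montel's theorem and a diagonal extraction yield a subsequential locally uniform limit $F\colon \mathbb H \to \mathbb C$, bounded by $\|a\|\|b\|$, with $F(t) = \omega(b\sigma_t(a))$ on $\mathbb R$. As $a$ is entire analytic, $z \mapsto \omega(b\sigma_z(a))$ is entire and agrees with $F$ on $\mathbb R$, so $F(z) = \omega(b\sigma_z(a))$ throughout $\mathbb H$; hence $|\omega(b\sigma_z(a))| \leq \|a\|\|b\|$ for all $z \in \mathbb H$, $a \in \mathcal A_\sigma$, and $b \in A$.

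This bound leads to a contradiction via modular theory. By Lemma~\ref{05-12-21a} applied to the $\sigma$-invariant ideal $\ker\omega$, the $\sigma$-simplicity of $A$ combined with $\omega \neq 0$ forces $\omega$ to be faithful, hence $\pi_\omega$ is faithful. Let $U_s = e^{isK}$ be the one-parameter unitary group on $H_\omega$ implementing $\sigma$ with $U_s\xi_\omega = \xi_\omega$. Specialising $b = a^*$ and $z = iu$ with $u > 0$, the bound becomes $\langle e^{-uK}\eta,\eta\rangle \leq \|a\|^2$ uniformly in $u > 0$ for $\eta = \pi_\omega(a)\xi_\omega$; since such $\eta$ are dense in $H_\omega$ by Lemma~\ref{11-11-21} and the cyclicity of $\xi_\omega$, the spectral theorem forces $K \geq 0$. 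When $\beta \neq 0$, Theorem~\ref{05-03-22x} identifies $U_s = \Delta^{-is/\beta}$, so $K \geq 0$ translates to $\sigma(\Delta) \subseteq (0,1]$ for $\beta > 0$ and $\sigma(\Delta) \subseteq [1,\infty)$ for $\beta < 0$; in both cases the Tomita--Takesaki relation $J\Delta J = \Delta^{-1}$ (a direct consequence of $S = J\Delta^{1/2}$ with $S^2 = 1$, cf.\ Section~\ref{modular1} and Theorem~\ref{04-03-22a}) forces $\sigma(\Delta) = \{1\}$, so $\Delta = 1$ and $U_s = 1$. When $\beta = 0$, $\omega$ is a trace and the antilinear isometry $J\Lambda_\omega(a) := \Lambda_\omega(a^*)$ satisfies $JU_sJ = U_s$ by a direct check on $\Lambda_\omega(A)$, giving $JKJ = -K$ by antilinearity; combined with $K \geq 0$ this yields $\sigma(K) \subseteq [0,\infty) \cap (-\infty,0] = \{0\}$, so $K = 0$ and $U_s = 1$. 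In either case $\sigma$ acts trivially on $\pi_\omega(A)$, and faithfulness of $\pi_\omega$ makes $\sigma$ trivial on $A$, contradicting the hypothesis that $\sigma$ is non-trivial.
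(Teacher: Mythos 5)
Your proof is correct, and up to the derivation of the bound $\left|\omega(b\sigma_z(a))\right|\le\|a\|\|b\|$ for $\Imag z\ge 0$ it follows the same path as the paper: the same reduction to sequences, the same treatment of the bounded case via Lemma \ref{05-01-23}, and the same Phragm\'en--Lindel\"of estimate on the strips $\mathcal D_{\beta_n}$. (The paper reaches the bound more directly: for fixed $z$ with $\Imag z \geq 0$ and $a \in \mathcal A_\sigma$, $b\sigma_z(a)$ is a fixed element of $A$, so $\omega_n(b\sigma_z(a))\to\omega(b\sigma_z(a))$ by weak* convergence alone; your Montel/diagonal extraction is a detour that then obliges you to re-identify the limit with $\omega(b\sigma_z(a))$, though that identification does go through.) Where you genuinely diverge is in converting the bound into a contradiction. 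The paper stays inside complex analysis: for $b=b^*$ the function $t\mapsto\omega(\sigma_{-i\beta}(a)a^*\sigma_t(b))$ is real, extends by Schwarz reflection to a bounded entire function, is constant by Liouville, and after polarization this shows $\omega$ is a $t$-KMS state for every $t$, contradicting Lemma \ref{05-01-23}. You instead feed the bound into modular theory: positivity of the generator $K$ of the implementing unitary group, the identification $U_s=\Delta^{-is/\beta}$, and the relation $J\Delta J=\Delta^{-1}$ force $\Delta=1$, hence triviality of the flow in the GNS representation, which faithfulness of $\pi_\omega$ (from $\sigma$-simplicity via Lemma \ref{05-12-21a}) upgrades to triviality of $\sigma$. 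Both routes terminate in essentially the same fact --- Corollary \ref{31-12-22}, on which Lemma \ref{05-01-23} rests, is precisely the statement that being KMS at two temperatures trivializes the flow --- but the paper's route is more elementary and uses only tools already recorded in the text, whereas yours imports the full Tomita--Takesaki apparatus, needs $J\Delta J=\Delta^{-1}$ (standard, but not stated in the paper), requires the small additional argument that $\Delta^{it}$ and $U_{-\beta t}$ coincide as unitaries and not merely as implementers of the modular group (use that both fix $\xi_\omega$ and that $\xi_\omega$ is separating for $\pi_\omega(A)'$), and forces a separate treatment of $\beta=0$. What your approach buys is a cleaner intermediate statement: the half-plane bound by itself is incompatible with a faithful KMS state of a non-trivial flow, without passing through the ``KMS at all temperatures'' formulation.
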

\begin{proof} Let $\{\omega_n\}_{n =1}^\infty$ and $\omega$ be elements of $\bigcup_{\beta \in \mathbb R} S^\sigma_\beta $ and assume that 
$$
\lim_{n\to \infty} \omega_n = \omega
$$ 
in the weak* topology. Set $\beta := \Phi(\omega)$ and $\beta_n := \Phi(\omega_n)$. Assume $\sup_n|\beta_n| < \infty$. If $\{\beta_n\}$ does not convergence to $\beta$ there is a $\beta' \neq \beta$ and a subsequence $\{\beta_{n_k}\}$ such that $\lim_{k \to \infty} \beta_{n_k} = \beta'$. Since $\lim_{k \to \infty} \omega_{n_k} = \omega$ it follows that
\begin{align*}
&\omega(a\sigma_{i \beta'}(b)) = \lim_{k \to \infty} \omega_{n_k}(a\sigma_{i \beta_{n_k}}(b))  = \lim_{k \to \infty} \omega_{n_k}(ba) = \omega(ba)\\
\end{align*}
for all $a,b \in \mathcal A_\sigma$, showing that $\omega$ is also a $\beta'$-KMS state. This is not possible under the present assumptions by Lemma \ref{05-01-23}. Thus $\lim_{n \to \infty} \beta_n = \beta$. It suffices therefore now to show that $\{\beta_n\}$ can not be unbounded.

Assume for a contradiction that $\beta_n$ can be arbitrarily large; the case when $-\beta_n$ can be arbitrarily large can be handled by the same argument. 
Let $a,b \in \mathcal A_\sigma$. We claim that 
\begin{equation}\label{01-01-23c}
\left|\omega(a\sigma_z(b))\right| \leq \|a\|\|b\| 
\end{equation}
when $\Imag z \geq 0$. For this note that $\lim_{n \to \infty} \omega_n(a\sigma_z(b)) = \omega(a\sigma_z(b))$. Therefore, to establish \eqref{01-01-23c} under the present assumptions it suffices to show that
\begin{equation}\label{01-01-23d}
\left| \omega_n(a\sigma_z(b))\right| \leq \|a\|\|b\| 
\end{equation}
when $0 \leq \Imag z \leq \beta_n$. Note that $z \mapsto \omega_n(a\sigma_z(b))$ is entire holomorphic and bounded on the strip $0 \leq \Imag z \leq \beta_n$ by Lemma \ref{24-09-23x}. Since $\omega_n$ is a $\beta_n$-KMS state for $\sigma$ we  have that $\left|\omega_n(a\sigma_{t+i\beta_n}(b))\right| = \left|\omega_n(\sigma_t(b)a)\right| \leq \|a\|\|b\|$ for all $t \in \mathbb R$. Since clearly also $\left|\omega_n(a\sigma_t(b))\right| \leq \|a\|\|b\|$ we obtain \eqref{01-01-23d} from Proposition 5.3.5 in \cite{BR} (Phragmen-Lindel\"of). This establishes \eqref{01-01-23c}. Thus the function $z \mapsto \omega(a\sigma_z(b))$ is bounded for $\Imag z \geq 0$ when $a,b \in \mathcal A_\sigma$. Since $\omega$ is a $\beta$-KMS state we have that 
$$
\omega(\sigma_{-i\beta}(a)a^* \sigma_z(b)) = \omega(a^*\sigma_z(b)a)
$$
for all $z \in \mathbb C$ by Theorem \ref{21-11-23b}. In particular,
$$
\omega(\sigma_{-i\beta}(a)a^* \sigma_t(b)) = \omega(a^*\sigma_t(b)a) \in \mathbb R
$$
for $t \in \mathbb R$ when $b=b^* \in \mathcal A_\sigma$. It follows therefore from the Schwarz reflection principle, cf. Theorem 11.14 in \cite{Ru0}, that there is an entire function $F$ such that $F(z) = \omega(\sigma_{-i\beta}(a)a^* \sigma_z(b))$ when $\Imag z \geq 0$ and $F(z) = \overline{F(\overline{z})}$ when $\Imag z \leq 0$. This function is bounded on $\mathbb C$ since $\omega(\sigma_{-i\beta}(a)a^* \sigma_z(b))$ is bounded for $\Imag z \geq 0$, and hence constant by Liouville's theorem. Since $z \mapsto \omega(\sigma_{-i\beta}(a)a^* \sigma_z(b))$ is also entire and agrees with $F$ when $\Imag z \geq 0$, it must be equal to $F$ and hence constant. Thus
$$
\omega(\sigma_{-i\beta}(a)a^* \sigma_z(b)) =\omega(\sigma_{-i\beta}(a)a^* b)
$$
for all $z \in \mathbb C$ and $a,b \in \mathcal A_\sigma$, first when $b=b^*$ and then for all $b \in \mathcal A_\sigma$ since $\mathcal A_\sigma$ is a $*$-algebra. Using the polarisation identity
$$
\sigma_{-i\beta}(x)y^* = \frac{1}{4} \sum_{k=1}^4 i^k \sigma_{-i \beta}(x+i^ky) (x+i^ky)^*
$$
we find that
$$
\omega(\sigma_{-i\beta}(a)b^* \sigma_z(c)) =\omega(\sigma_{-i\beta}(a)b^* c)
$$
when $a,b,c \in \mathcal A_\sigma$ and $z \in \mathbb C$. Taking $a =1$ we find that $\omega(b\sigma_z(c)) = \omega(bc)$ for all $b,c \in \mathcal A_\sigma$ and all $z \in \mathbb C$. Since $\omega$ is a $\beta$-KMS state we have that $\omega(cb) = \omega( b \sigma_{i\beta}(c)) = \sigma(bc)$ for all $b,c \in \mathcal A_\sigma$, and it follows that $\omega$ is a $t$-KMS state for all $t \in \mathbb R$. Under the present assumptions this is impossible by Lemma \ref{05-01-23}.
\end{proof}

\begin{prop}\label{01-01-23e} Assume that $A$ is unital, separable and $\sigma$-simple, and that $\sigma$ is non-trivial. Then
$$
\bigcup_{\beta \in \mathbb R} S^\sigma_\beta 
$$
is a second countable locally compact Hausdorff space in the weak* topology and $$
\left(\bigcup_{\beta \in \mathbb R} S^\sigma_\beta ,\Phi\right)
$$ 
is a proper simplex bundle isomorphic to the KMS bundle of $\sigma$.
\end{prop}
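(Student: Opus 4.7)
\emph{Plan.} The proposition says that under the given hypotheses the set $\bigcup_{\beta \in \mathbb R} S^\sigma_\beta$, equipped with the weak* topology, \emph{already is} the KMS bundle of $\sigma$; the point is that the projection $(\omega,\beta) \mapsto \omega$ from $S^\sigma$ is a homeomorphism onto its image when the image carries the weak* topology. The strategy is therefore to build this homeomorphism directly and then transport the proper simplex bundle structure of $(S^\sigma,\pi^\sigma)$, which is already available from Theorem \ref{12-11-22x}.

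First I would define $\Psi : S^\sigma \to \bigcup_{\beta \in \mathbb R} S^\sigma_\beta$ by $\Psi(\omega,\beta) := \omega$, and note that $\Psi$ is a bijection: it is surjective by definition of the target, and it is injective exactly by Lemma \ref{05-01-23}, since $S^\sigma_\beta \cap S^\sigma_{\beta'} = \emptyset$ when $\beta \neq \beta'$ under our standing hypotheses (unitality, separability, $\sigma$-simplicity, and non-triviality of $\sigma$). Continuity of $\Psi$ is immediate because $S^\sigma$ carries the subspace topology from $E(A) \times \mathbb R$ and $\Psi$ is the restriction of the first-coordinate projection. The inverse is given by $\Psi^{-1}(\omega) = (\omega,\Phi(\omega))$, and its continuity follows from the (trivial) continuity of the inclusion $\bigcup_\beta S^\sigma_\beta \hookrightarrow E(A)$ together with the continuity of $\Phi$ established in Lemma \ref{01-01-23}. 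Hence $\Psi$ is a homeomorphism.

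Since $(S^\sigma,\pi^\sigma)$ is second countable and locally compact Hausdorff by Theorem \ref{12-11-22x}, the same is true of $\bigcup_\beta S^\sigma_\beta$ in the weak* topology. Moreover, the relation $\pi^\sigma = \Phi \circ \Psi$ (built into the definitions) implies that $\Phi$ is proper, since $\pi^\sigma$ is proper and $\Psi$ is a homeomorphism; and for each $\beta \in \mathbb R$ the fibre $\Phi^{-1}(\beta) = S^\sigma_\beta$ is carried by $\Psi^{-1}$ onto the Choquet simplex $(\pi^\sigma)^{-1}(\beta) = S^\sigma_\beta \times \{\beta\}$ by an affine homeomorphism (the affine structures on both fibres are inherited from $A^*$). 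Hence each fibre of $\Phi$ is a compact metrizable Choquet simplex.

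Finally, to verify that $\mathcal A(\bigcup_\beta S^\sigma_\beta,\Phi)$ separates points, observe that pulling back along $\Psi^{-1}$ gives a bijection $\mathcal A(S^\sigma,\pi^\sigma) \to \mathcal A(\bigcup_\beta S^\sigma_\beta,\Phi)$, $f \mapsto f \circ \Psi$, because the identifications of the fibres are affine. Since $\mathcal A(S^\sigma,\pi^\sigma)$ separates points of $S^\sigma$ by Theorem \ref{12-11-22x} and $\Psi$ is a bijection, $\mathcal A(\bigcup_\beta S^\sigma_\beta,\Phi)$ separates points as required. Thus $(\bigcup_\beta S^\sigma_\beta,\Phi)$ is a proper simplex bundle and $\Psi$ is the desired isomorphism with $(S^\sigma,\pi^\sigma)$. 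The only non-formal ingredient is Lemma \ref{01-01-23}, which has already been established; once it is in hand, the entire proof is bookkeeping, and there is no real obstacle.
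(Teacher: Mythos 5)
Your proposal is correct and follows essentially the same route as the paper: the paper's proof simply observes that, thanks to Lemma \ref{01-01-23}, the map $\omega \mapsto (\omega,\Phi(\omega))$ is a continuous inverse of the projection \eqref{31-12-22a}, and then transports the proper simplex bundle structure from Theorem \ref{12-11-22x}. Your write-up merely spells out the bookkeeping that the paper leaves implicit.
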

\begin{proof} Thanks to Lemma \ref{01-01-23} we can define a continuous map 
$$
\Psi : \bigcup_{\beta \in \mathbb R} S^\sigma_\beta  \to S^\sigma
$$
by $\Psi(\omega) = (\omega, \Phi(\omega))$. This is clearly the inverse of the map \eqref{31-12-22a}.
\end{proof}

\section{All proper simplex bundles occur}\label{alloccur}

Let $A$ be a $C^*$-algebra and $\alpha$ an automorphism of $A$. On the crossed product $B := A \rtimes_\alpha \mathbb Z$ there is a flow $\hat{\alpha}$ defined such that
\begin{equation}\label{07-10-23}
\hat{\alpha}_t( \sum_{k \in \mathbb Z} a(k)u^k = \sum_{k \in \mathbb Z} e^{ikt} a(k) u^k ,
\end{equation}
when $a : \mathbb Z \to A$ is finitely supported and $u$ is the canonical unitary $u \in M(B)$ such that $uau^* = \alpha(a)$ for all $a \in A$. We shall refer to $\hat{\alpha}$ as the \emph{dual flow of $\alpha$}. Note that the dual flow is a special case of the flows considered in Section \ref{crosseddiscrete}. Although the dual flow may seem quite special (it is periodic, for example) we will show in this section that the KMS bundles they give rise to comprise all non-empty proper simplex bundles. To be slightly more precise, consider an arbitrary non-empty proper simplex bundle $(S,\pi)$. We will show that there is an AF-algebra $A$ and an automorphism $\alpha$ of $A$ such that the dual flow of $\alpha$ restricted to a corner of $ A \rtimes_\alpha \mathbb Z$ given by a projection in $A$ has a KMS bundle isomorphic to $(S,\pi)$. In view of Theorem \ref{12-11-22x} this means that the KMS bundle for any flow on a unital separable $C^*$-algebra can be realized by the dual flow on a crossed product by $\mathbb Z$ in this way. The basis for this is the relation between dimension groups and AF-algebras developed by Elliott, \cite{E1}, and Effros, Handelman and Shen, \cite{EHS}. In addition the proofs also use some of the material we have developed in this text, and it is worth noting that although we are only interested in KMS states on unital $C^*$-algebras the proofs require that we consider KMS weights on non-unital $C^*$-algebras.

 \begin{lemma}\label{26-10-20} Let $A$ be a $C^*$-algebra and $\alpha \in \Aut A$ an automorphism of $A$. Let $\hat{\alpha}$ be the dual flow of $\alpha$.  For $\beta \in \mathbb R$ the restriction map $\omega \mapsto \omega|_A$ is a bijection from the set of $\beta$-KMS weights for $\hat{\alpha}$ onto the lower semi-continuous traces $\tau$ on $A$ with the property that $\tau \circ \alpha = e^{-\beta} \tau$. The inverse of the map $\omega \mapsto \omega|_A$ is the map $\tau \mapsto \tau \circ P$, where $P:  A \times_\alpha \mathbb Z  \to A$ is the canonical conditional expectation.
 \end{lemma}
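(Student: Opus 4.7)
The plan is to identify the lemma as a direct specialization of Theorem \ref{12-03-22} from Section \ref{crosseddiscrete} with the choices $G = \mathbb Z$ (discrete, and amenable so reduced and full crossed products coincide) and $\theta : \mathbb Z \to \mathbb R$ equal to the inclusion $\theta(n) = n$. With these choices, the flow $\gamma^\theta$ on $A \rtimes_\alpha \mathbb Z$ is precisely the dual flow $\hat{\alpha}$ given by \eqref{07-10-23}, since $e^{i\theta(n)t} = e^{int}$.

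First I would compute $\ker \theta$: because $\theta$ is the inclusion, $\ker \theta = \{0\}$, hence $A \rtimes_{r,\alpha} \ker \theta$ is just the copy of $A$ sitting inside $A \rtimes_\alpha \mathbb Z$. The conditional expectation $Q$ of Lemma \ref{31-08-22}, which averages over the fixed point algebra of $\hat{\alpha}$, therefore lands in $A$ and must coincide with the canonical conditional expectation $P$ from Lemma \ref{31-08-23}; the easiest way to see this is to note that both continuous linear maps agree on the dense $*$-subalgebra $\Span\{au^k : a \in A,\, k \in \mathbb Z\}$, sending $au^k$ to $a$ if $k = 0$ and to $0$ otherwise.

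Next I would translate the scaling condition appearing in Theorem \ref{12-03-22}. The action $\alpha'$ of $G = \mathbb Z$ on $A \rtimes_{r,\alpha} \ker \theta = A$ is $\alpha'_n(a) = u^n a u^{-n} = \alpha^n(a)$, and the scaling requirement $\tau \circ \alpha'_n = e^{-\beta \theta(n)}\tau = e^{-\beta n}\tau$ for all $n \in \mathbb Z$ is equivalent to the single equation $\tau \circ \alpha = e^{-\beta}\tau$, since iterating this relation gives all $n \in \mathbb Z$.

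Putting these two observations into Theorem \ref{12-03-22} yields that $\tau \mapsto \tau \circ P$ is a bijection from the lower semi-continuous traces $\tau$ on $A$ satisfying $\tau \circ \alpha = e^{-\beta}\tau$ onto the $\beta$-KMS weights for $\hat{\alpha}$, with inverse given by restriction $\omega \mapsto \omega|_A$. No serious obstacle is expected; the only subtlety is the bookkeeping identification of $Q$ with $P$ and of the two scaling conditions, both of which are routine.
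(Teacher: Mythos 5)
Your proposal is correct and is exactly the paper's own argument: the paper also observes that $\hat{\alpha}$ is the flow $\gamma^\theta$ of Section \ref{crosseddiscrete} with $\theta:\mathbb Z\to\mathbb R$ the canonical inclusion, that $\ker\theta=\{0\}$, and then cites Theorem \ref{12-03-22}. Your extra bookkeeping (identifying $Q$ with $P$ on the dense span and reducing the scaling condition to the single equation $\tau\circ\alpha=e^{-\beta}\tau$) is a harmless elaboration of details the paper leaves implicit.
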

 \begin{proof} The dual flow is a special case of the flows considered in Section \ref{crosseddiscrete}. It fact, it is the flow $\gamma^\theta$ from that section when the map $\theta : \mathbb Z \to \mathbb R$ there is the canonical inclusion. Since $\ker \theta = \{0\}$ in this case, Lemma \ref{26-10-20} is a special case of Theorem \ref{12-03-22}.
 \end{proof}

By combining Lemma \ref{26-10-20} with Theorem \ref{01-03-22d} we obtain the following.

\begin{lemma}\label{06-08-21} Let $A$ be a $C^*$-algebra, $\alpha  \in \Aut A$ an automorphism of $A$ and $q \in A$ a projection in $A$ which is full in $A\rtimes_\alpha \mathbb Z$. Let $\hat{\alpha}^q$ be the restriction to $q(A\rtimes_\alpha \mathbb Z)q$ of the dual flow of $\alpha$. Let $P : A \rtimes_\alpha \mathbb Z \to A$ be the canonical conditional expectation. For each $\beta \in \mathbb R$ the map 
$$
\tau \mapsto \tau \circ P|_{q(A \rtimes_\alpha \mathbb Z)q}
$$ 
is a bijection from the set of lower semi-continuous traces $\tau$ on $A$ that satisfy 
\begin{itemize}
\item $\tau \circ \alpha = e^{-\beta} \tau$, and
\item $\tau(q) =1$,
\end{itemize}
onto the simplex of $\beta$-KMS states for $\hat{\alpha}^q$.\end{lemma}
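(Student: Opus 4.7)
The plan is to chain together the two results cited in the hint: first use Theorem \ref{01-03-22d} to parametrize the $\beta$-KMS states of the corner $q(A\rtimes_\alpha\mathbb Z)q$ by rays of $\beta$-KMS weights for the full dual flow $\hat{\alpha}$, and then use Lemma \ref{26-10-20} to parametrize those rays by lower semi-continuous traces on $A$ with the prescribed scaling, with the normalization $\tau(q)=1$ serving to fix the scalar in each ray.

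First I would verify that $q$ lies in the fixed point algebra of $\hat{\alpha}$. This is immediate from the definition \eqref{07-10-23} of the dual flow, since for $a\in A$ we have $\hat{\alpha}_t(a)=a$ for all $t$, and $q\in A$. Moreover $q$ is full in $A\rtimes_\alpha\mathbb Z$ by hypothesis, so all the hypotheses of the ``in addition'' part of Theorem \ref{01-03-22d} are met with $\sigma=\hat{\alpha}$ and $p=q$. That theorem therefore yields: for every $\beta$-KMS weight $\psi$ for $\hat{\alpha}$ one has $0<\psi(q)<\infty$, and the assignment $\psi\mapsto \psi(q)^{-1}\psi|_{q(A\rtimes_\alpha\mathbb Z)q}$ is a bijection from the set of rays of $\beta$-KMS weights for $\hat{\alpha}$ onto the set of $\beta$-KMS states for $\hat{\alpha}^q$.

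Next I would single out a canonical representative in each ray by the normalization $\psi(q)=1$: each ray meets the affine slice $\{\psi:\psi(q)=1\}$ in exactly one point, so the above bijection can be rewritten as $\psi\mapsto \psi|_{q(A\rtimes_\alpha\mathbb Z)q}$ from the set of $\beta$-KMS weights $\psi$ for $\hat{\alpha}$ with $\psi(q)=1$ onto the $\beta$-KMS states for $\hat{\alpha}^q$. Now Lemma \ref{26-10-20} identifies the $\beta$-KMS weights for $\hat{\alpha}$ with the lower semi-continuous traces $\tau$ on $A$ satisfying $\tau\circ\alpha=e^{-\beta}\tau$, the bijection being $\tau\mapsto\tau\circ P$ in one direction and $\psi\mapsto\psi|_A$ in the other. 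Under this correspondence the normalization $\psi(q)=1$ translates directly into $\tau(q)=\psi|_A(q)=\psi(q)=1$, since $q\in A$.

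Composing the two bijections gives precisely the map $\tau\mapsto\tau\circ P|_{q(A\rtimes_\alpha\mathbb Z)q}$ from traces $\tau$ on $A$ satisfying $\tau\circ\alpha=e^{-\beta}\tau$ and $\tau(q)=1$ onto the $\beta$-KMS states for $\hat{\alpha}^q$, which is the assertion. There is no real obstacle here beyond the bookkeeping of the normalizations; the only point that merits an explicit check is that $q\in A\subseteq(A\rtimes_\alpha\mathbb Z)^{\hat{\alpha}}$ so Theorem \ref{01-03-22d} is applicable, and that the two normalizations $\psi(q)=1$ and $\tau(q)=1$ correspond under the bijection of Lemma \ref{26-10-20}, both of which are immediate.
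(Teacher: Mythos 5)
Your proof is correct and follows exactly the route the paper takes: the paper's entire justification is the single sentence "By combining Lemma \ref{26-10-20} with Theorem \ref{01-03-22d} we obtain the following," and your argument supplies precisely the bookkeeping (that $q\in A$ lies in the fixed point algebra of $\hat{\alpha}$, that fullness of $q$ gives fullness of the corner, and that the normalizations $\psi(q)=1$ and $\tau(q)=1$ match under the restriction/extension correspondence) needed to make that combination explicit.
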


By Lemma \ref{26-10-20} the inverse of the bijection $\tau \mapsto \tau \circ P|_{q(A \rtimes_\alpha \mathbb Z)q}$ in Lemma \ref{06-08-21} is the map
\begin{equation}\label{28-11-22a}
\omega \mapsto \tilde{\omega}|_A
\end{equation}
where $\tilde{\omega}$ denotes the unique $\beta$-KMS weight for the dual flow $\hat{\alpha}$ of $\alpha$ on $A \rtimes_\alpha \mathbb Z$ which extends $\omega$. Since the set $S^{\hat{\alpha}^q}_\beta$ of $\beta$-KMS states for $\hat{\alpha}^q$ on $q(A \rtimes_\alpha\mathbb Z)q$ is compact in the weak* topology it follows that the set of lower semi-continuous traces on $A$ satisfying the two conditions in Lemma \ref{06-08-21} is compact in the weakest topology making evaluation at elements of $(qAq)^+$ continuous, and in that topology it is affinely homeomorphic to $S^{\hat{\alpha}^q}_\beta$.

When $A$ is an AF-algebra it follows from Theorem \ref{22-11-22} that the map $\tau \mapsto \tau_*$ is a bijection from the set of lower semi-continuous traces $\tau$ on $A$ onto the set $\Hom^+(K_0(A),\mathbb R)$ of non-zero positive homomorphisms $\phi :  K_0(A) \to \mathbb R$. In this case the map $\tau \mapsto \tau_*$ restricts to an affine homeomorphism from the set of lower semi-continuous traces $\tau$ on $A$ which satisfies the two conditions of Lemma \ref{06-08-21} onto the set of elements $\phi \in \Hom^+(K_0(A),\mathbb R)$ such that
\begin{itemize}
\item $\phi \circ \alpha_* = e^{-\beta} \phi$, and
\item $\phi([q]) =1$,
\end{itemize}
when the latter set is given the topology of pointwise convergence on 
$$
\left\{ x \in K_0(A): \ 0 \leq x \leq [q]\right\}.
$$ 
In this way Lemma \ref{06-08-21} has the following

\begin{cor}\label{06-08-21a} In the setting of Lemma \ref{06-08-21} assume that $A$ is an AF-algebra. For each $\beta \in \mathbb R$ the map 
$$
\omega \mapsto (\tilde{\omega}|_A)_*
$$
is an affine homeomorphism from the set of $\beta$-KMS states for $\hat{\alpha}^q$ on $q(A \rtimes_\alpha\mathbb Z)q$ onto the set of positive homomorphisms $\phi \in \Hom^+(K_0(A),\mathbb R)$ that satisfy 
\begin{itemize} 
\item $\phi \circ \alpha_* = e^{-\beta} \phi$, and
\item $\phi([q]) =1$.
\end{itemize}
\end{cor}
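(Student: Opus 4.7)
The plan is to obtain the affine homeomorphism by composing two bijections: the one supplied by Lemma~\ref{06-08-21} and the Elliott--Effros--Handelman--Shen correspondence between lower semi-continuous traces on an AF-algebra and positive homomorphisms on its $K_0$-group (the result referenced earlier in the text as Theorem~\ref{22-11-22}).

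First I would fix $\beta\in\mathbb R$ and let $T_\beta(A)$ denote the set of lower semi-continuous traces $\tau$ on $A$ satisfying $\tau\circ\alpha=e^{-\beta}\tau$ and $\tau(q)=1$, equipped with the topology of pointwise convergence on $(qAq)^+$. By Lemma~\ref{06-08-21} (and the identification \eqref{28-11-22a} of its inverse), the assignment $\omega\mapsto\tilde{\omega}|_A$ is a bijection $S^{\hat{\alpha}^q}_\beta\to T_\beta(A)$, and I would first check that it is an affine homeomorphism: affinity is immediate because both the extension $\omega\mapsto\tilde{\omega}$ and the restriction to $A$ are linear operations, while continuity in both directions follows because the weak*-topology on $S^{\hat{\alpha}^q}_\beta$ is, by its very definition, the topology of pointwise convergence on $qAq\supseteq(qAq)^+$, and conversely any element $a\in qAq$ sits inside $q(A\rtimes_\alpha\mathbb Z)q$ with $\tilde{\omega}(a)=\omega(a)$. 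Since $S^{\hat{\alpha}^q}_\beta$ is weak*-compact, the continuous bijection is automatically a homeomorphism onto its image.

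Next I would invoke the AF correspondence $\tau\mapsto\tau_*$: this is an affine bijection from the set of all lower semi-continuous traces on $A$ onto $\mathrm{Hom}^+(K_0(A),\mathbb R)$, and it intertwines the topology of pointwise convergence on the positive cone of $A$ with the topology of pointwise convergence on $K_0(A)^+$. I would then translate the two defining conditions of $T_\beta(A)$ through this correspondence. The condition $\tau\circ\alpha=e^{-\beta}\tau$ transforms to $\phi\circ\alpha_*=e^{-\beta}\phi$ by naturality of $K_0$ (applied to the projections generating $K_0(A)$, and then extended by positivity and linearity), while $\tau(q)=1$ becomes $\phi([q])=1$ because $[q]\in K_0(A)$ is represented by $q\in A^+$. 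In addition, pointwise convergence of $\tau$ on $(qAq)^+$ translates to pointwise convergence of $\phi$ on the interval $\{x\in K_0(A):0\le x\le[q]\}$, since every such $x$ is represented by a projection $p\le q$ in some matrix algebra over $A$ (using fullness and the AF structure to stay inside $qAq$ up to Murray--von Neumann equivalence is the point where one needs some care).

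The composition of the two affine homeomorphisms then gives the map $\omega\mapsto(\tilde{\omega}|_A)_*$ of the statement. The main obstacle I anticipate is the matching of topologies in the second step: one must be confident that the topology on the target set of homomorphisms $\phi$, namely pointwise convergence on $\{x\in K_0(A):0\le x\le[q]\}$, really does correspond to the topology on traces given by pointwise convergence on $(qAq)^+$, and not on a strictly smaller or larger set. The cleanest way to handle this is to observe that $[q]$ is an order unit for the hereditary subgroup it generates and to use that $qAq$ is itself an AF-algebra whose $K_0$-group is naturally identified with this hereditary subgroup, so that the Elliott--Effros--Handelman--Shen theorem applied to $qAq$ directly yields the desired homeomorphism of the two ``slice'' spaces.
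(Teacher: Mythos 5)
Your proposal is correct and follows essentially the same route as the paper: compose the bijection of Lemma \ref{06-08-21} (whose inverse is $\omega \mapsto \tilde{\omega}|_A$) with the trace--$K_0$ correspondence of Theorem \ref{22-11-22} for AF-algebras, and use weak* compactness of $S^{\hat{\alpha}^q}_\beta$ to upgrade the continuous affine bijection to a homeomorphism. The topology-matching obstacle you flag at the end is handled in the paper simply by declaring the topology on the target set to be pointwise convergence on $\left\{x \in K_0(A) : 0 \leq x \leq [q]\right\}$, which corresponds to evaluation on $(qAq)^+$ because, by Lemma \ref{21-12-22}, every such $x$ is the class of a projection dominated by $q$.
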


Thanks to this corollary it is possible to determine the KMS simplices and in fact the entire KMS bundle for the flow $\hat{\alpha}^q$ from $K$-theory data alone. In the following example we illustrate this point.

\begin{example}\label{08-12-22x} \textnormal{ Let $\mathbb Q$ denote the additive group of rational numbers and consider the group $\mathbb Q^6$ with the strict ordering; i.e.
$$
\left(\mathbb Q^6\right)^+ : = \left\{ (q_1,q_2,q_3,q_4,q_5,q_6) \in \mathbb Q: \ q_i > 0, \ i = 1,2,3,4,5,6\right\} \cup \{0\}.
$$
Define the automorphism $\rho$ of $\mathbb Q^6$ by
$$
\rho(q_1,q_2,q_3,q_4,q_5,q_6) = (q_1,\frac{q_2}{7},\frac{q_3}{7},\frac{q_4}{3},\frac{q_5}{3},\frac{q_6}{3}) .
$$
This is an automorphism of the ordered group 
$$
\left(\mathbb Q^6, \left(\mathbb Q^6\right)^+\right) .
$$
Since this is a dimension group it follows from the theorem of Effros, Handelman and Shen which is reproduced in  Theorem 2.3.49 in \cite{Th5}
that there is an AF-algebra $A$ such that $\Sigma(A) = K_0(A)^+$ and $(K_0(A),K_0(A)^+)$ is isomorphic to $\left(\mathbb Q^6, \left(\mathbb Q^6\right)^+\right)$. By the work of Elliott, \cite{E1}, reproduced in (3) of Theorem 2.3.37 of \cite{Th5},
there is an  automorphism $\alpha$ of $A$ such that $\alpha_* = \rho$ when we make the identification $(K_0(A),K_0(A)^+) =\left(\mathbb Q^6, \left(\mathbb Q^6\right)^+\right)$. Since every non-zero element of $(\mathbb Q^6)^+$ is an order unit, it
follows from \cite{Br} and \cite{E1}, see Theorem 4.2.10 of \cite{Th5}, that $A$ is simple. An application of Theorem \ref{02-12-22} in Appendix \ref{elliott} shows that the same is true for the crossed product $A \times_\alpha \mathbb Z$. Since $\Sigma(A) = (\mathbb Q^6)^+$ there is a projection $e \in A$ such that $[e] = (1,1,1,1,1,1) \in K_0(A)^+$. By using Corollary \ref{06-08-21a} it is not difficult to show that the restriction $\hat{\alpha}^e$ of the dual flow of $\alpha$ to the corner $e(A \times_\alpha \mathbb Z)e$ has a $\beta$-KMS state if and only if $\beta \in \{0,\log 3,\log 7\}$. The $0$-KMS state is unique while the simplices of $\log 3$-KMS states and $\log 7$-KMS states are affinely homeomorphic to a triangle and an interval, respectively.}
\end{example}

The main result of this section is the following

\begin{thm}\label{12-11-22} Let $(S,\pi)$ be a non-empty proper simplex bundle. There is an AF-algebra $A$, an automorphism $\alpha$ of $A$ and a projection $e \in A$ such that $e(A \rtimes_\alpha \mathbb Z)e$ is simple and the KMS bundle of the dual flow of $\alpha$ restricted to $e(A \rtimes_\alpha \mathbb Z)e$ is isomorphic to $(S,\pi)$.
\end{thm}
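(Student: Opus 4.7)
The plan is to reduce the problem to a $K$-theoretic construction and invoke Corollary~\ref{06-08-21a}. Specifically, it suffices to construct a countable simple dimension group $(G,G^+)$ with distinguished order unit $u$ and an order automorphism $\rho$ of $G$ such that, for each $\beta\in\mathbb R$, the set
\begin{equation*}
E_\beta := \bigl\{\phi\in \Hom(G,\mathbb R)_+ : \phi\circ\rho = e^{-\beta}\phi,\ \phi(u)=1\bigr\}
\end{equation*}
is affinely homeomorphic to $\pi^{-1}(\beta)$, and such that the bundle $\bigl(\bigcup_\beta E_\beta, \beta\bigr)$ (with the topology of pointwise convergence on $[0,u]$) is isomorphic to $(S,\pi)$. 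Given such a triple $(G,\rho,u)$, the Effros--Handelman--Shen theorem (Theorem~2.3.49 in \cite{Th5}) yields an AF-algebra $A$ with $(K_0(A),K_0(A)^+)\cong(G,G^+)$ and a projection $e\in A$ with $[e]=u$; Elliott's theorem on AF-automorphisms (Theorem~2.3.37(3) in \cite{Th5}) lifts $\rho$ to some $\alpha\in\Aut A$. Simplicity of $G$ gives simplicity of $A$, and Theorem~\ref{02-12-22} in Appendix~\ref{elliott} then produces simplicity of $A\rtimes_\alpha\mathbb Z$ and consequently of the corner $e(A\rtimes_\alpha\mathbb Z)e$ (which is full since $A$ is simple). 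Corollary~\ref{06-08-21a} identifies the KMS bundle of $\hat{\alpha}^e$ with $\bigl(\bigcup_\beta E_\beta,\beta\bigr)$, hence with $(S,\pi)$.

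First, I would set up the ambient space for the construction. Let $V$ be the $\mathbb Q$-vector space of compactly supported continuous real-valued functions on $S$ whose restriction to every fibre $\pi^{-1}(\beta)$ is affine. Since $\pi$ is proper and $\mathcal A(S,\pi)$ separates points of $S$ by Definition~\ref{25-08-21}, the space $V$ also separates points of $S$ (by multiplying elements of $\mathcal A(S,\pi)$ by cutoff functions in $\pi(S)$ pulled back through $\pi$). The map
\begin{equation*}
\tilde\rho(f)(x) := e^{-\pi(x)} f(x)
\end{equation*}
is a well-defined $\mathbb Q$-linear automorphism of $V$: the factor $e^{-\pi(x)}$ depends only on $\pi(x)$, so the result is still affine on each fibre, and the support is preserved. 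Evaluation $\phi_x(f) := f(x)$ gives positive $\mathbb Q$-linear functionals $\phi_x:V\to\mathbb R$ satisfying the eigen-equation $\phi_x\circ\tilde\rho = e^{-\pi(x)}\phi_x$.

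Next, I would build $G$ as a countable $\tilde\rho^{\pm1}$-invariant subgroup of $V$ by a Bratteli-style inductive procedure. Fix a dense sequence $\{f_n\}\subset V$ and an element $u\in V$ that is strictly positive on $S$. At stage $n$, adjoin all $\mathbb Z$-linear combinations of $\tilde\rho^k(f_j)$ for $|k|,j\le n$ to the subgroup $G_n$ built so far, then enlarge $G_n$ to ensure the Riesz interpolation property on elements available at stage $n$, using the Riesz interpolation available in the real vector space $V$ (which holds because each fibre is a Choquet simplex, hence $\mathcal A(\pi^{-1}(\beta))$ has Riesz interpolation). The positive cone $G^+$ is declared to be the set of $f\in G$ that are strictly positive on $S$, together with $0$. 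By construction, every nonzero element of $G^+$ dominates some rational multiple of $u$ on any compact slice $\pi^{-1}([-N,N])$, so $u$ is an order unit and every nonzero positive element is an order unit; this is simplicity. Setting $\rho := \tilde\rho|_G$ (invariance is ensured by stage-wise closure), one checks that $G$ is unperforated (unperforation is inherited from the ambient real vector space $V$) and Riesz, hence a dimension group.

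The main obstacle is ensuring that the map
\begin{equation*}
\Phi : S \longrightarrow \textstyle\bigcup_\beta E_\beta,\qquad \Phi(x) := (\phi_x|_G, \pi(x))
\end{equation*}
is a bundle isomorphism. Injectivity requires that $G$ separate points of each fibre $\pi^{-1}(\beta)$, which is arranged during the inductive construction by adjoining sufficiently many affine functions at each stage. Surjectivity requires that every normalized $e^{-\beta}$-eigen-functional on $G$ is implemented by a point of $\pi^{-1}(\beta)$; this follows because $\{f|_{\pi^{-1}(\beta)} : f\in G\}$ can be made dense (after scaling by a $\beta$-dependent factor) in $\mathcal A(\pi^{-1}(\beta))$, so any such $\phi$ defines a state on the order-unit-space completion which corresponds by the Krein--Milman-type theorem (Theorem~3.1.2 in \cite{Th5}) to a point of the Choquet simplex $\pi^{-1}(\beta)$. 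The homeomorphism property is then a consequence of properness of $\pi$ and Lemma~\ref{01-01-23} applied to $\hat{\alpha}^e$: $\Phi$ is continuous, the KMS bundle of $\hat{\alpha}^e$ is proper by Theorem~\ref{12-11-22x}, and a continuous fibrewise-affine bijection between proper simplex bundles is automatically a bundle isomorphism. The technical heart of the proof thus lies in executing the Bratteli-style induction so that separation, density, Riesz interpolation, unperforation, simplicity, and $\rho$-invariance all hold simultaneously.
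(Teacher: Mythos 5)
Your high-level architecture is the same as the paper's: build a countable group of fibrewise-affine functions on $S$ with an order automorphism $\rho(f)=e^{-\pi}f$, realize it via Effros--Handelman--Shen and Elliott, and read off the KMS bundle from Corollary \ref{06-08-21a}. But there is a genuine gap in your construction of $(G,G^+,u)$: you take $V$ to consist of \emph{compactly supported} functions and declare $G^+$ to be the elements strictly positive on all of $S$. When $\pi(S)$ is unbounded (which the theorem must allow), no nonzero compactly supported function is strictly positive on $S$, so your cone is $G^+=\{0\}$ and there is no admissible order unit $u$; the assertion that every nonzero positive element dominates a multiple of $u$ ``on any compact slice'' does not repair this, since order ideals and the relation $G=G^+-G^+$ are global conditions. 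Even if you weaken the cone, the normalization $\phi(u)=1$ only reproduces the fibre $\pi^{-1}(\beta)$ affinely if $u$ is constant on each fibre and nonvanishing; for a genuinely compactly supported $u$ the map $x\mapsto \phi_x/u(x)$ is a fractional-linear, not affine, reparametrization of the fibre, so $E_\beta$ need not be affinely homeomorphic to $\pi^{-1}(\beta)$. The paper avoids all of this precisely by \emph{not} working with compactly supported functions: it adjoins the group $\mathcal Z$ of functions $q e^{z\pi}$ and the cutoffs $\psi^i_k\circ\pi$, so that every element of $G$ agrees with an exponential near infinity (property (e) of Lemma \ref{X07-12-22}), the constant function $1$ lies in $G$ and serves as the order unit, positivity and domination at infinity can be controlled by the functions $e^{\pm k\pi}$ (used to prove $G=G^+-G^+$ and the interpolation property), and every normalized positive $e^{-\beta}$-eigenfunctional is shown to be evaluation at a point (Lemma \ref{27-08-21x}), where the behaviour at infinity is again handled by the exponentials.

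A second, smaller point: the paper does not (and cannot) arrange that $(G,G^+)$ be simple; it only proves that there are no nontrivial $\rho$-invariant order ideals (Lemma \ref{04-08-21x}) and, separately, that $\rho^k$ for $k\neq 0$ fixes no nonzero positive element (needed for the K-outerness hypothesis of Theorem \ref{02-12-22}). Your proposal asserts simplicity of $G$ and passes directly to simplicity of $A\rtimes_\alpha\mathbb Z$ without verifying the K-outerness of $\alpha^k$ for $k\neq 0$, which is an independent hypothesis of Theorem \ref{02-12-22} and must be checked against the definition of $\rho$.
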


The proof of this theorem is quite long and will occupy the rest of this section.

\subsection{The proof of Theorem \ref{12-11-22}}\label{29-12-22a}

\begin{lemma}\label{22-11-22bus1} Let $(S,\pi)$ be a proper simplex bundle. For each $\beta \in \mathbb R$ the set
$$
\left\{ f|_{\pi^{-1}(\beta)} : \ f \in \mathcal A(S,\pi) \right\}
$$
is dense in $\Aff \pi^{-1}(\beta)$.
\end{lemma}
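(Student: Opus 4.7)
The plan is to use the Hahn-Banach theorem together with an identification of $(\Aff K)^*$ with a span of evaluation functionals, where $K := \pi^{-1}(\beta)$. By properness of $\pi$ the fiber $K$ is compact, and by hypothesis it is a metrizable Choquet simplex. The natural restriction map $R : \mathcal A(S,\pi) \to \Aff K$ is linear; to show its image is dense I would argue by contradiction. If the image is not dense, then by Hahn-Banach there is a nonzero continuous linear functional $\phi \in (\Aff K)^*$ annihilating $R(\mathcal A(S,\pi))$. The proof of Lemma \ref{11-11-22b} carries over verbatim from the state-space setting to show that any such $\phi$ has the form $\phi(g) = s_+ g(x_+) - s_- g(x_-)$ for some $s_\pm \geq 0$ and $x_\pm \in K$; one extends $\phi$ by Hahn-Banach to $C_\mathbb R(K)$, decomposes it via Hahn-Jordan and Riesz representation, and then identifies the two pieces with scalar multiples of evaluation at the barycenters of the resulting probability measures.

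Now I exploit the two defining properties of a proper simplex bundle. The constant function $1$ on $S$ belongs to $\mathcal A(S,\pi)$ and restricts to the constant $1$ on $K$, so $\phi(R(1)) = 0$ forces $s_+ = s_- =: s$. If $s = 0$ then $\phi = 0$, contradicting its choice; otherwise, for every $f \in \mathcal A(S,\pi)$ we obtain $f(x_+) = f(x_-)$, and since $\mathcal A(S,\pi)$ separates the points of $S$ by hypothesis we conclude $x_+ = x_-$, so $\phi = 0$, again a contradiction. The only substantive step is transferring the duality $(\Aff K)^* \simeq \Span_\mathbb R K$ from the state-space setting of Lemma \ref{11-11-22b} to an arbitrary compact convex subset of a locally convex Hausdorff space; this is standard and requires no ingredient beyond Hahn-Banach, Riesz representation, and the existence of barycenters for Borel probability measures on $K$ (the latter being available because $K$ is a metrizable compact convex set, cf.\ Proposition 4.1.1 in \cite{BR}).
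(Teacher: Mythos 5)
Your proposal is correct and follows essentially the same route as the paper: annihilating functionals on $\Aff\pi^{-1}(\beta)$ are written as $g \mapsto s\,g(x) - t\,g(y)$ via the Hahn--Banach/Hahn--Jordan/Riesz/barycenter argument from the proof of Lemma \ref{11-11-22}, then the constant function $1$ forces $s=t$ and the point-separation property of $\mathcal A(S,\pi)$ forces $x=y$. The paper makes exactly the same appeal to the earlier state-space argument, so there is nothing to add.
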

\begin{proof} Let $X \subseteq \Aff \pi^{-1}(\beta)$ be the set in question. Note that $X$ is a subspace of $\Aff \pi^{-1}(\beta)$. Let $l \in \Aff \pi^{-1}(\beta)^*$ such that $l(X) = \{0\}$. We must show that $l =0$. As shown in the proof of Lemma \ref{11-11-22} there are non-negative real numbers $s,t$ and points $x,y \in \pi^{-1}(\beta)$ such that $l(g) = sg(x) -tg(y)$ for all $g\in \Aff \pi^{-1}(\beta)$. (For this one can also use Lemma 3.1.8 and Theorem 3.1.2 in \cite{Th5}.) Since the constant function $1$ is in $X$ it follows that $s=t$, and unless $s=t = 0$ also that $f(x) = f(y)$ for all $f \in \mathcal A(S,\pi)$. Since $\mathcal A(S,\pi)$ separates the points of $S$ the last conclusion implies that $x =y$. Hence $l =0$.
\end{proof}

Given a proper simplex bundle $(S,\pi)$ and a closed subset $F \subseteq \mathbb R$ we denote by $(S_F,\pi_F)$ the proper simplex bundle where $S_F := \pi^{-1}(F)$ and $\pi_F$ is the restriction of $\pi$ to $S_F$. The following lemma relates $\mathcal A(S_F,\pi_F)$ to $\mathcal A(S,\pi)$ and it will be a crucial tool in the following.

\begin{lemma}\label{03-09-21a} Let $(S,\pi)$ be a proper simplex bundle and $F \subseteq \mathbb R$ a closed set. 
\begin{itemize}
\item[(1)] The map $\mathcal A(S,\pi) \to \mathcal A(S_F,\pi_F)$ given by restriction is surjective.
\item[(2)] Let $f_1,f_2,g_1,g_2 \in \mathcal A(S,\pi)$ such that $f_i(x) < g_j(x)$ for all $x \in S$ and all $i,j \in \{1,2\}$. Assume that there is an element $h^F \in \mathcal A(S_F,\pi_F)$ such that 
$$
f_i(x) < h^F(x) <  g_j(x) \ \ \forall x \in S_F, \ \forall i,j \in \{1,2\} \ .
$$ 
There is an element $h \in \mathcal A(S,\pi)$ such that $h(y) = h^F(y)$ for all $y \in S_F$ and 
\begin{equation}\label{23-11-22}
f_i(x) < h(x) < g_j(x) \ \ \forall x \in S, \ \forall i,j \in \{1,2\} \ .
\end{equation}
\end{itemize}
\end{lemma}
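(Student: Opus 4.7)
My plan is to prove both parts of the lemma simultaneously, reducing the global extension problem to a sequence of local extension problems over the connected components of $\mathbb{R}\setminus F$. First I would write $\mathbb{R}\setminus F$ as a countable disjoint union of open intervals, thereby splitting $S\setminus S_F$ into disjoint ``slabs'' $\pi^{-1}(\overline{I})$ on each of which an extension of $h^F$ must be defined. Extensions on different slabs only meet at shared boundary fibres lying in $S_F$, where the values are forced to equal $h^F$, so local extensions automatically glue into a continuous fibrewise-affine function on all of $S$, provided the extension on each slab matches $h^F$ exactly on its boundary fibres and that behaviour near accumulation points of gaps in $F$ is controlled.

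For a bounded slab $\pi^{-1}([a,b])$ (compact by properness of $\pi$), I would build the extension by a Cauchy-type iteration. Given $\epsilon>0$, Lemma \ref{22-11-22bus1} yields $\tilde{h}_a,\tilde{h}_b\in\mathcal{A}(S,\pi)$ approximating $h^F|_{\pi^{-1}(a)}$ and $h^F|_{\pi^{-1}(b)}$ within $\epsilon$. The linear interpolation
$$
h^{(0)}(x):=\frac{b-\pi(x)}{b-a}\tilde{h}_a(x)+\frac{\pi(x)-a}{b-a}\tilde{h}_b(x)
$$
is continuous and fibrewise affine on the slab, and approximates the prescribed boundary data within $\epsilon$. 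Iteratively correcting the boundary residuals with approximations of error $\epsilon/2,\epsilon/4,\ldots$ and summing the resulting interpolations, the telescoping series converges uniformly on the compact slab to a continuous, fibrewise-affine extension $h$ that exactly realizes $h^F$ on $\pi^{-1}(\{a,b\})$. For an unbounded slab (at most two occur), I would instead choose once and for all a single global $H\in\mathcal{A}(S,\pi)$ matching $h^F$ on the one boundary fibre, obtained by the same iterative scheme applied to a single fibre, and take $H$ as the extension over that slab.

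For part (2), the strict constraints $f_i<h^F<g_j$ on the compact boundary piece $S_F\cap\pi^{-1}([a,b])$ extend by continuity to a slab neighbourhood, producing an explicit gap to exploit. By choosing the initial approximation error $\epsilon$ smaller than a fixed fraction of that gap and decreasing geometrically, the Cauchy sum of corrections stays uniformly inside $(f_i,g_j)$ across the entire slab; doing this slab by slab (and globally for unbounded slabs by the same device) yields the $h$ required in~(2).

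The main obstacle is Step~2, namely upgrading the density statement of Lemma \ref{22-11-22bus1} to an \emph{exact} extension of prescribed boundary data. One must track sup norms on the full slab—not merely on the boundary fibres—so that successive corrections decay fast enough to sum to a continuous function; here properness of $\pi$ is essential, because compactness of each slab translates boundary control into slab-wide control through the interpolation formula. A secondary technicality is ensuring global continuity at points of $S_F$ that are accumulation points of gaps, which I would handle by arranging that the diameter of the iterative correction on a slab can be bounded by a multiple of the oscillation of $h^F$ on its boundary fibres, so that as slabs shrink near such an accumulation point the extensions converge uniformly back to $h^F$.
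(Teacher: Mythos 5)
Your gap-by-gap decomposition of $\mathbb R\setminus F$ is a genuinely different strategy from the paper's, which never glues over individual gaps: it first treats the case where $S$ is compact by running a $2^{-n}$-correction scheme in which each correction is a finite partition-of-unity combination, over a cover of all of $F$ at once, of fibre-wise approximants supplied by Lemma \ref{22-11-22bus1}; the extension is then a uniformly convergent series of globally continuous functions, so continuity is automatic, and the non-compact case is handled afterwards by splicing the extensions over $S_{[-n,n]}$ with cutoffs $\chi_n\circ|\pi|$. The two obstacles you flag in your own scheme are real, and your proposed fixes are not yet arguments. Lemma \ref{22-11-22bus1} controls an approximant only on a single fibre; to promote that to a bound on a whole slab you must shrink the relevant neighbourhood in $\mathbb R$ using continuity of the approximant and compactness of $\pi^{-1}$ of a compact set, and without this your telescoping series has no uniform bound off the boundary fibres. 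Likewise, continuity of the glued function at an accumulation point of gap endpoints is not controlled by ``the oscillation of $h^F$ on the boundary fibres'': at an interior point of a tiny gap your interpolant is a convex combination of two \emph{global} approximants about which you know nothing away from their respective boundary fibres.

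The decisive gap is in part (2). On an interior fibre $\pi^{-1}(\beta)$ of a gap your interpolant is built entirely from boundary data, and the strict inequalities $f_i<h^F<g_j$ extend by continuity only to a neighbourhood of $S_F$, not across a whole slab; so choosing the initial error smaller than a fraction of the gap between the $f_i$ and $g_j$ does nothing in the interior, and there is no reason the interpolant lies between $\max(f_1,f_2)$ and $\min(g_1,g_2)$ there. Producing an element of $\Aff\pi^{-1}(\beta)$ strictly between two affine functions below and two above is exactly the strong Riesz interpolation property, which holds precisely because each fibre is a Choquet simplex; this is the ingredient the paper invokes (Lemma 3.1 of \cite{EHS}) fibre by fibre for $\beta\notin F$ before patching with a partition of unity whose $F$-term is identically $1$ on $F$. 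Your proposal never uses the simplex structure of the fibres, and without it the interpolation step — hence part (2) — cannot be carried out; for general compact convex fibres that step is false.
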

\begin{proof} (1).
We prove first (1) under the assumption that $S$ and $F$ are both compact. Under this assumption consider a function $h \in \mathcal A(S_F,\pi_F)$.
 By induction we will construct a sequence $\{g_n\}_{n=1}^\infty$ in $\mathcal A(S,\pi)$ such that
\begin{itemize}
\item[(i)] $\sup_{x \in \pi^{-1}(F)} \left| h(x) - \sum_{j=1}^n g_j(x)\right| < 2^{-n}$ for all $n \geq 1$ and
\item[(ii)] $\sup_{x\in S} \left|g_n(x)\right| \leq 2^{-n} + 2^{-n+1}$ for all $n \geq 2$.
\end{itemize}
Let $\beta \in F$ and $\epsilon >0$. It follows from Lemma \ref{22-11-22bus1} that there is an element $h^\beta \in \mathcal A(S,\pi)$ such that $|h^\beta(x) -h(x)| < \epsilon$ for all $\xi \in \pi^{-1}(\beta)$. By continuity of $h^\beta$ and $h$, as well as the assumed compactness of $S$, there is an open neighbourhood $U(\beta)$ of $\beta$ in $\mathbb R$ such that $|h^\beta(x) -h(x)| < \epsilon$ for all $x \in \pi^{-1}(U(\beta) \cap F)$. By compactness of $F$ we can therefore choose a finite set of real-valued functions $\varphi_i \in C_c(\mathbb R)$ and $g_i \in \mathcal A(S,\pi), \ i = 1,2,\cdots,M$, such that 
$$
\left|\sum_{j=1}^M \varphi_j(\pi(x))g_j(x) - h(x) \right| <  \epsilon
$$
for all $x \in \pi^{-1}(F)$. In particular, $\left|\sum_{j=1}^M \varphi_j(\pi(x))g_j(x)\right| < |h(x)| + \epsilon$ for all $x \in \pi^{-1}(F)$.
We start the induction by taking $\epsilon = 2^{-1}$ and 
$$
g_1(x)  := \sum_{j=1}^M \varphi_j(\pi(x))g_j(x).
$$ 
Then (i) holds for $n=1$. To construct $g_2$ we repeat the above construction with $h$ replaced by $h - g_1$ and $\epsilon$ by $2^{-2}$ to get $g_2 \in \mathcal A(S,\pi)$ with
$$
\sup_{x \in \pi^{-1}(F)} |h(x) - g_1(x) - g_2(x)| < 2^{-2}
$$
and
$$
\sup_{x\in S}\left|g_2(x)\right| \leq 2^{-1} + 2^{-2}.
$$
Then (i) and (ii) hold for $n=2$. When $g_1,g_2, \cdots , g_n$, $n \geq 2$, have been defined we obtain $g_{n+1}$ by using the construction with $h$ replaced by $h-\sum_{j=1}^ng_n$ and $\epsilon$ by $2^{-n-1}$. This completes the construction of the sequence $\{g_n\}_{n=1}^\infty$.  It follows from (ii) that the sum $f(x) := \sum_{j=1}^\infty g_j(x)$ is uniformly convergent on $S$. The sum function $f$ is an element of $\mathcal A(S,\pi)$ since the $g_j$'s are and then (i) implies that $f|_{\pi^{-1}(F)} = h$. 

To prove (1) in the general case, consider again 
 $h \in \mathcal A(S_F,\pi_F)$. For each $n \in \mathbb N$ the pair $(S_{[-n,n]},\pi_{[-n,n]})$ is a compact proper simplex bundle and it follows from the preceding that there are elements $f_n \in \mathcal A(S_{[-n,n]},\pi_{[-n,n]})$ such that the restriction $f_n|_{S_{F \cap [-n,n]}}$ of $f_n$ to $S_{F \cap [-n,n]}$ agrees with $h|_{S_{F \cap [-n,n]}}$. For $n \in \mathbb N \backslash \{0\}$, let $\chi_n : \mathbb R \to [0,1]$ be a continuous function such that $\chi_n (t) = 1$ for $t \leq n-\frac{1}{2}$ and $\chi_n(t) = 0$ for $t \geq n$. Define $f'_n : S_{[-n,n]} \to \mathbb R$ recursively by
$$
f'_1(x) = (1-\chi_1(|\pi(x)|))f_2(x) + \chi_1(|\pi(x)|)f_1(x) \ ,
$$
and then $f'_n$ for $n \geq 2$ such that $f'_{n}(x) = f'_{n-1}(x)$ when $x \in \pi^{-1}([-n+1,n-1])$ and $f'_n(x) =  (1-\chi_n(|\pi(x)|))f_{n+1}(x) + \chi_n(|\pi(x)|)f_n(x)$ when $x \in \pi^{-1}([n-1,n] \cup [-n,-n+1])$. Then $f'_n|_{S_{F \cap [-n,n]}} = h|_{S_{F \cap [-n,n]}}$ and since $f'_{n+1}$ extends $f'_n$, there is an element $f \in \mathcal A(S,\pi)$ such that $f|_{S_{[-n,n]}} = f'_n$ for all $n$. This element $f$ extends $h$.

To establish (2) we assume again first that $S$ and $F$ are compact. By (1) we may assume that $h^F$ is the restriction to $\pi^{-1}(F)$ of an element of $\mathcal A(S,\pi)$ which we again denote by $h^F$. By compactness of $S$ there is an open neighborhood $U(F)$ of $F$ in $\mathbb R$ such that $f_i(x) < h^F(x) < g_j(x)$ for all $i,j \in \{1,2\}$ and $x \in \pi^{-1}(U(F))$. Let $\beta \in \mathbb R \backslash F$. Since $\pi^{-1}(\beta)$ is a Choquet simplex it follows from Lemma 3.1 in \cite{EHS} that $\Aff \pi^{-1}(\beta)$ has the Riesz interpolation property with respect to the strict ordering, see also (2) in Theorem 3.2.7 of \cite{Th5}, and it follows therefore in combination with (1) that there is a $h_\beta \in \mathcal A(S,\pi)$ such that $f_i(x) < h_\beta(x) < g_j(x)$ for all $i,j \in \{1,2\}$ and $x \in \pi^{-1}(\beta)$. By compactness of $S$ there is then an open neighborhood $U(\beta)$ of $\beta$ in $\mathbb R$ such that $f_i(x) < h_\beta(x) < g_j(x)$ for all $i,j \in \{1,2\}$ and $x \in \pi^{-1}(U(\beta))$. By compactness of $\pi(S)$ there is a finite set $V \subseteq \pi(S) \backslash F$ such that $\pi^{-1}(U(\beta)), \ \beta \in V$, and $\pi^{-1}(U(F))$ cover $S$. Then $U(\beta), \ \beta \in V$, together with $U(F)$ cover $\pi(S)$ and we can choose functions $\varphi_i \in C_c(\mathbb R), i \in V$, and $\varphi_F\in C_c(\mathbb R)$, such that $0 \leq \varphi_i(t) \leq 1$ and $0 \leq \varphi_F(t) \leq 1$ for all $i \in V$ and $t \in \mathbb R$, $\sum_{i\in V} \varphi_i(t) + \varphi_F(t) = 1$ for all $t \in \pi(S)$ and $\varphi_F(s) = 1$ for all $s \in F$. Then $h(x) = \varphi_F(\pi(x))h^F(x) + \sum_{i\in V} \varphi_i(\pi(x))h_i(x)$ is a function $h \in \mathcal A(S,\pi)$ with the desired property.

To prove (2) in the general case, consider again 
 $h \in \mathcal A(S_F,\pi_F)$. For each $n \in \mathbb N$ the pair $(S_{[-n,n]},\pi_{[-n,n]})$ is a compact proper simplex bundle and it follows from the preceding that there are elements $h_n \in \mathcal A(S_{[-n,n]},\pi_{[-n,n]})$ such that $h_n(y) = h^F(y)$ for all $y \in \pi^{-1}(F \cap [-n,n])$ and 
$$
f_i(x) < h_n(x) < g_j(x) \ \ \forall x \in  \pi^{-1}([-n,n]), \ \forall i,j \in \{1,2\} \ .
$$
For $n \in \mathbb N$ let $\chi_n : \mathbb R \to [0,1]$ be a continuous function such that $\chi_n (t) = 1$ for $t \leq n-\frac{1}{2}$ and $\chi_n(t) = 0$ for $t \geq n$. Define $h'_n : S_{[-n,n]} \to \mathbb R$ recursively by
$$
h'_1(x) = (1-\chi_1(|\pi(x)|))h_2(x) + \chi_1(|\pi(x)|)h_1(x) \ ,
$$
and then $h'_n$ for $n \geq 2$ such that $h'_{n}(x) = h'_{n-1}(x)$ when $x \in \pi^{-1}([-n+1,n-1])$ and $h'_n(x) =  (1-\chi_n(|\pi(x)|))h_{n+1}(x) + \chi_n(|\pi(x)|)h_n(x)$ when $x \in \pi^{-1}([n-1,n] \cup [-n,-n+1])$. Then $h'_n|_{S_{F \cap [-n,n]}} = h^F|_{S_{F \cap [-n,n]}}$ and since $h'_{n+1}$ extends $h'_n$, there is an element $h \in \mathcal A(S,\pi)$ such that $h|_{S_{[-n,n]}} = h'_n$ for all $n$. This element $h$ extends $h^F$ and satisfies \eqref{23-11-22}.

\end{proof}


 Fix a non-empty proper simplex bundle $(S,\pi)$.  In the following we consider $\mathcal A(S,\pi)$ as an abelian group with addition as composition. Let $\mathcal A_c(S,\pi)$ be the set of elements $f$ from $\mathcal A(S,\pi)$ whose support $\supp f$ is compact.
 
\begin{lemma}\label{07-09-21} There is a countable subgroup $G_0$ of $\mathcal A_c(S,\pi)$ with the following density property: For all $f \in \mathcal A_{c}(S,\pi)$ there is a $g \in G_{0}$ such that 
$$
\sup_{x \in S} |f(x) -g(x)| < \epsilon .
$$ 
\end{lemma}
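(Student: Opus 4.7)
The strategy is to exploit three ingredients: (i) properness of $\pi$ forces every $f \in \mathcal{A}_c(S,\pi)$ to vanish outside $\pi^{-1}([-n,n])$ for some $n$; (ii) each compact slice $S_{[-n,n]}$ is a compact metrizable space, so $C(S_{[-n,n]})$ is separable in the sup norm, and hence so is its closed subspace $\mathcal{A}(S_{[-n,n]},\pi_{[-n,n]})$; (iii) part (1) of Lemma \ref{03-09-21a} lets us lift countable dense subsets from the slice to genuine elements of $\mathcal{A}(S,\pi)$, and a fixed cutoff on $\mathbb{R}$ lets us force the lift to have compact support while not disturbing the approximation.

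\textbf{Construction.} First I would note that $S_{[-n,n]} = \pi^{-1}([-n,n])$ is compact by properness of $\pi$, and is metrizable because $S$ is a second countable locally compact Hausdorff space. Hence $C(S_{[-n,n]})$ is separable in the uniform norm, and the closed linear subspace $\mathcal{A}(S_{[-n,n]},\pi_{[-n,n]})$ is separable too. For each $n \in \mathbb{N}$, fix a countable dense subgroup $D_n$ of $\mathcal{A}(S_{[-n-1,n+1]},\pi_{[-n-1,n+1]})$ with respect to the sup norm. By part (1) of Lemma \ref{03-09-21a}, applied with $F = [-n-1,n+1]$, every $d \in D_n$ extends to some $\tilde d \in \mathcal{A}(S,\pi)$; make one such choice for each $d$. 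Next, fix once and for all a continuous function $\eta_n : \mathbb{R} \to [0,1]$ with $\eta_n = 1$ on $[-n,n]$ and $\supp \eta_n \subseteq [-n-1,n+1]$, and set $\phi_n := \eta_n \circ \pi$. Because $\phi_n$ is constant on each fibre $\pi^{-1}(\beta)$, the product $\phi_n \cdot \tilde d$ lies in $\mathcal{A}(S,\pi)$, and because $\pi$ is proper and $\supp \eta_n$ is compact, its support is compact; so $\phi_n \cdot \tilde d \in \mathcal{A}_c(S,\pi)$. Let $G_0$ be the subgroup of $\mathcal{A}_c(S,\pi)$ generated by the countable set $\{\phi_n \cdot \tilde d : n \in \mathbb{N},\ d \in D_n\}$; then $G_0$ is countable.

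\textbf{Verification of density.} Let $f \in \mathcal{A}_c(S,\pi)$ and $\epsilon > 0$. Since $\supp f$ is compact, $\pi(\supp f)$ is compact in $\mathbb{R}$, so $\supp f \subseteq \pi^{-1}([-n,n])$ for some $n \in \mathbb{N}$, and in particular $f = \phi_n \cdot f$. By density of $D_n$ in $\mathcal{A}(S_{[-n-1,n+1]},\pi_{[-n-1,n+1]})$ there is $d \in D_n$ with
\[
\sup_{x \in S_{[-n-1,n+1]}} \bigl| f(x) - d(x) \bigr| < \epsilon.
\]
Since $\tilde d$ extends $d$, the same bound holds with $d$ replaced by $\tilde d$ on $S_{[-n-1,n+1]}$. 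Now observe that both $f$ and $\phi_n \cdot \tilde d$ vanish outside $\pi^{-1}([-n-1,n+1])$: $f$ does by the choice of $n$, and $\phi_n \cdot \tilde d$ does because $\supp \phi_n \subseteq \pi^{-1}([-n-1,n+1])$. Using $|\phi_n| \leq 1$ and $\phi_n = 1$ on $\pi^{-1}([-n,n])$, I estimate
\[
\sup_{x \in S} \bigl| f(x) - \phi_n(x) \tilde d(x) \bigr| = \sup_{x \in S_{[-n-1,n+1]}} \bigl| \phi_n(x) \bigl( f(x) - \tilde d(x) \bigr) \bigr| < \epsilon,
\]
which is the desired approximation by an element of the generating set, and hence of $G_0$.

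\textbf{Where the work lies.} There is no real obstacle: the separability comes essentially for free, and the potential difficulty — that a naive extension $\tilde d$ of $d$ need not have compact support or need not agree with $f$ (which is zero) far from $\pi^{-1}([-n,n])$ — is handled by the fixed cutoff $\phi_n$, which is legitimate precisely because multiplication by a function constant on fibres preserves $\mathcal{A}(S,\pi)$. Part (2) of Lemma \ref{03-09-21a} is not needed here; it will presumably be invoked in subsequent lemmas where one must approximate while respecting strict inequalities.
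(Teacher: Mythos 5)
Your proof is correct, but it follows a genuinely different route from the paper's. The paper exhausts $S$ by an increasing sequence of open sets $U_n$ with compact closures, takes a countable dense subset $L_n$ of $C_0(U_n)$ (available by second countability), and for each $g \in L_n$ chooses $a_g \in \mathcal A(S,\pi) \cap C_0(U_n)$ with $\|g-a_g\| \leq 2\inf\{\|g-a\| : a \in \mathcal A(S,\pi)\cap C_0(U_n)\}$; this "twice the distance to the subspace" device transfers density of $L_n$ in $C_0(U_n)$ to density of $\{a_g\}$ in $\mathcal A(S,\pi)\cap C_0(U_n)$, and the lemma follows since $\mathcal A_c(S,\pi) = \bigcup_n \mathcal A(S,\pi)\cap C_0(U_n)$. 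You instead slice along $\pi$, use that each $\pi^{-1}([-n-1,n+1])$ is compact metrizable so that $\mathcal A(S_{[-n-1,n+1]},\pi_{[-n-1,n+1]})$ is separable, lift a countable dense subgroup via part (1) of Lemma \ref{03-09-21a} (which is legitimately available, being proved just before), and restore compact support with the fibrewise-constant cutoff $\eta_n\circ\pi$. Your identity $f = \phi_n f$ and the resulting estimate are correct, and the observation that multiplication by a function constant on fibres preserves $\mathcal A(S,\pi)$ is exactly what makes the cutoff admissible. What each approach buys: the paper's argument is self-contained and never invokes the extension lemma, at the price of the slightly opaque near-best-approximant trick; yours is more structural and makes the role of properness and of Lemma \ref{03-09-21a}(1) explicit, at the price of depending on that lemma and on metrizability of the compact slices.
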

\begin{proof} Since $S$ is second countable there is a sequence $\{U_n\}$ of open sets in $S$ such that $\overline{U_n}$ is compact, $\overline{U_n} \subseteq U_{n+1}$ for all $n$ and $S = \bigcup_n U_n$. Since $U_n$ is second countable in the relative topology there is a countable dense set $L_n$ in $C_0(U_n)$. For each $g \in L_n$ choose an element $a_g \in \mathcal A(S,\pi) \cap C_0(U_n)$ such that 
$$
\left\| g - a_g\right\| \leq 2 \inf \left\{\| g - a\|: \ a \in  \mathcal A(S,\pi) \cap C_0(U_n) \right\} .
$$
Then $\left\{ a_g : \ g \in L_n\right\}$ is a countable set which is dense in $\mathcal A(S,\pi) \cap C_0(U_n)$. Since
$$
\mathcal A_c(S,\pi) = \bigcup_n \mathcal A(S,\pi) \cap C_0(U_n) 
$$
the additive group generated by $\bigcup_n \left\{ a_g : \ g \in L_n\right\}$ is countable and has the stated density property. 
\end{proof}

 Let $\mathcal Z$ be the additive subgroup of $\mathcal A(S,\pi)$ generated by the functions  
 $$ x \mapsto 
qe^{z\pi(x)}
$$ 
where $q \in \mathbb Q$ and $z \in \mathbb Z$. For $k \in \mathbb N \backslash \{0\}$, choose continuous functions $\psi^{i}_k : \mathbb R \to [0,1], \ i \in \{0,\pm\}$, such that $\psi^{-}_k(t) = 1, \ t \leq -k-1$, $\psi^0_k(t) = 1, \ t \in [-k,k]$, $\psi^+_k(t) = 1, \ t \geq k+1$, and $\psi_k^{-}(t) + \psi^0_k(t) + \psi^+_k(t) = 1$ for all $t \in \mathbb R$. Define countable subgroups $G_1\subseteq G_2 \subseteq G_3 \subseteq \cdots$ of $\mathcal A(S,\pi)$ recursively such that $G_1 := G_0+\mathcal Z$ where $G_0$ is the countable group from Lemma \ref{07-09-21}, and for $n \geq 2$, $G_n$ is the additive subgroup of $\mathcal A(S,\pi)$ generated by the functions
$$
S  \ni x \mapsto \psi^i_k(\pi(x))e^{z\pi(x)}g(x) ,
$$
where $i \in \{0,\pm\}, \ k \in \mathbb N \backslash \{0\}, \ z \in \mathbb Z$ and $g \in G_{n-1}$. Then $G_n \subseteq G_{n+1}$ and 
$$
G := \bigcup_{k=1}^\infty G_k
$$
is a countable subgroup of $\mathcal A(S,\pi)$.
\begin{lemma}\label{X07-12-22} The group $G$ has the following properties:
\begin{enumerate}
\item[(a)] $G_{0} \subseteq G$. \\
\item[(b)] $\mathcal Z \subseteq G$. \\
\item[(c)] $(\psi^i_k\circ \pi) G \subseteq G$ for all $i\in \{0,\pm\}$ and all $k \in \mathbb N \backslash \{0\}$.\\
\item[(d)] $e^{z\pi}G = G$ for all $z \in \mathbb Z$.\\
\item[(e)] For each $g \in G$ there are elements $a_\pm \in \mathcal Z$ and $m_\pm \in \mathbb N$ such that $g(x) = a_+(x)$ when $\pi(x) \geq m_+$ and $g(x) = a_-(x)$ when $\pi(x) \leq -m_-$ .
\item[(f)] For each $g \in G$ there are integers $k_\pm \in \mathbb Z$ such that for every $\epsilon >0$ there is an $m  \in \mathbb N$ with the property that $\left|e^{k_+\pi(x)}g(x)\right| \leq \epsilon$ when $x \in S$ and $\pi(x) \geq m$, and $\left|e^{k_-\pi(x)}g(x)\right| \leq \epsilon$ when $x \in S$ and $\pi(x) \leq -m$.
\end{enumerate}
\end{lemma}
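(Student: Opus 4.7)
The plan is to dispose of the statements (a)--(d) quickly, as direct consequences of how the sequence $G_n$ is built, and then to prove (e) by induction on $n$ with $g \in G_n$, from which (f) will drop out.

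For (a) and (b) I would simply note that $G_0 \subseteq G_1$ and $\mathcal Z \subseteq G_1$ by construction. For (c), given $g \in G_n$, the function $(\psi^i_k \circ \pi)\, g$ is one of the generators of $G_{n+1}$ (take $z = 0$), so $(\psi^i_k\circ\pi)G_n \subseteq G_{n+1}$ and hence $(\psi^i_k\circ\pi)G \subseteq G$. For (d), observe that for each fixed $k$ the identity $\psi^-_k + \psi^0_k + \psi^+_k \equiv 1$ gives the decomposition
\[
e^{z\pi}g \;=\; (\psi^-_k\circ\pi) e^{z\pi} g + (\psi^0_k\circ\pi) e^{z\pi} g + (\psi^+_k\circ\pi) e^{z\pi} g,
\]
each summand of which is a generator of $G_{n+1}$ when $g \in G_n$. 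Hence $e^{z\pi} G \subseteq G$, and applying the same reasoning with $-z$ in place of $z$ gives $e^{z\pi} G = G$.

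The heart of the argument is (e), which I would prove by induction on the least $n$ with $g \in G_n$. Since $G$ is an additive group, it suffices to handle generators. For $n=1$ write $g = g_0 + a$ with $g_0 \in G_0$ and $a \in \mathcal Z$. Because $g_0 \in \mathcal A_c(S,\pi)$ and $\pi$ is proper, the set $\pi(\operatorname{supp} g_0) \subseteq \mathbb R$ is compact, so there exist $m_\pm$ such that $g_0$ vanishes on $\{\pi(x) \geq m_+\}$ and on $\{\pi(x) \leq -m_-\}$; thus $g = a$ outside those sets, giving (e) with $a_\pm = a$. For the induction step, a typical generator of $G_{n+1}$ has the form $(\psi^i_k\circ\pi) e^{z\pi} h$ with $h \in G_n$. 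By induction choose $m_+(h)$ and $a_+(h) \in \mathcal Z$ so that $h(x) = a_+(h)(x)$ for $\pi(x) \geq m_+(h)$. For $\pi(x)$ large enough we have $\psi^i_k(\pi(x))$ equal to $0$ (when $i \in \{0,-\}$) or to $1$ (when $i = +$), so the generator agrees eventually either with $0$ or with $e^{z\pi}a_+(h)$, both of which lie in $\mathcal Z$ since $\mathcal Z$ is closed under multiplication by $e^{z\pi}$ with $z \in \mathbb Z$. The analogous argument settles the negative end. I expect the main obstacle here to be purely bookkeeping: keeping track of $m_\pm$ under addition and under the three cases for $i$.

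Finally, (f) is a corollary of (e). Pick $a_+ = \sum_j q_j e^{z_j \pi} \in \mathcal Z$ and $m_+$ from (e); choose $k_+ \in \mathbb Z$ with $k_+ < -\max_j z_j$. Then for $\pi(x) \geq m_+$ one has
\[
e^{k_+ \pi(x)} g(x) \;=\; \sum_j q_j \, e^{(k_+ + z_j)\pi(x)},
\]
a finite sum of exponentials with strictly negative exponents, which tends to $0$ as $\pi(x) \to +\infty$; given $\epsilon > 0$ we may enlarge $m_+$ so that the whole expression has absolute value $\leq \epsilon$. A symmetric choice $k_- > -\min_j w_j$ using the expression $a_- = \sum_j q_j e^{w_j \pi}$ on $\{\pi(x) \leq -m_-\}$ handles the other direction (if $\pi(S)$ is bounded below the condition is vacuous for large $m$). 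Taking the maximum of the thresholds yields a single $m$ that works.
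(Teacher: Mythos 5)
Your proof is correct; the paper itself leaves this lemma to the reader, and your argument — (a)--(d) directly from the recursive construction of the $G_n$, (e) by induction on the level $n$ using that the $\psi^i_k$ are eventually $0$ or $1$ and that $\mathcal Z$ is stable under multiplication by $e^{z\pi}$, and (f) as a corollary of (e) by choosing $k_\pm$ to make all exponents strictly negative (resp.\ positive) — is exactly the intended one. The only cosmetic remark is that in the base case of (e) compactness of $\pi(\operatorname{supp} g_0)$ needs only continuity of $\pi$, not properness.
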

\begin{proof} Left to the reader.
\end{proof}

%

Set
$$
\mathcal A(S,\pi)^+ := \left\{ f \in \mathcal A(S,\pi): \ f(x) > 0 \ \forall x \in S \right\} \cup \{0\} \ 
$$
and
$$
G^+ := G \cap \mathcal A(S,\pi)^+ \ .
$$ 

\begin{lemma}\label{01-09-21} The pair $(G,G^+)$ has the following properties.
\begin{itemize}
\item[(1)] $G^+ \cap (-G^+) = \{0\}$.
\item[(2)] $G = G^+ - G^+$.
\item[(3)] $(G,G^+)$ is unperforated, i.e. $n \in \mathbb N \backslash \{0\}, \ g \in G, \ ng \in G^+ \Rightarrow g \in G^+$. 
\item[(4)] $(G,G^+)$ has the strong Riesz interpolation property, i.e. if $f_1,f_2,g_1,g_2 \in G$ and $f_i(x) < g_j(x)$ for all $i,j \in \{1,2\}$ and all $x \in S$, then there is an element $h \in G$ such that
\begin{equation}\label{07-12-23}
f_i(x) < h(x) < g_j(x)
\end{equation}
for all $i,j \in \{1,2\}$ and all $x \in S$. 
\end{itemize}
\end{lemma}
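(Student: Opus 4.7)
Plan. Properties (1) and (3) follow immediately from the definition of $G^+$: a nonzero element of $G^+$ is everywhere strictly positive on $S$, so it cannot simultaneously lie in $-G^+$; and for $n \geq 1$, the condition $ng \in G^+$ forces either $g = 0$ or $g(x) > 0$ for every $x$, placing $g \in G^+$. Neither uses any structure of $G$ beyond the definition of its positive cone.

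For (2), given $g \in G$, property (f) of Lemma \ref{X07-12-22} furnishes $k_\pm \in \mathbb Z$ and (taking $\epsilon = 1$ there) an $m \in \mathbb N$ such that $|g(x)| \leq e^{-k_+\pi(x)}$ on $\{\pi(x) \geq m\}$ and $|g(x)| \leq e^{-k_-\pi(x)}$ on $\{\pi(x) \leq -m\}$. Properness of $\pi$ makes $\pi^{-1}([-m,m])$ compact, so $|g|$ is bounded there by some $M > 0$. Choosing a rational $M' > M$, the element
\[
h_2 := e^{-k_+\pi} + e^{-k_-\pi} + M'
\]
lies in $\mathcal Z \subseteq G$ by property (b), is strictly positive on $S$ (hence in $G^+$), and majorises $|g|$; therefore $g + h_2 \in G^+$ and $g = (g + h_2) - h_2$ is the desired decomposition.

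Property (4) is the substantive one. Given $f_1, f_2, g_1, g_2 \in G$ with $f_i < g_j$ strictly on $S$, property (e) of Lemma \ref{X07-12-22} supplies an $m \in \mathbb N$ and elements $a_\pm^{f_i}, a_\pm^{g_j} \in \mathcal Z$ such that each $f_i$ and $g_j$ coincides with the corresponding $a_\pm$ on $\{|\pi| \geq m\}$. I will construct the interpolant in the form
\[
h \;=\; (\psi^-_m \circ \pi)\, b^- + (\psi^0_m \circ \pi)\, b^0 + (\psi^+_m \circ \pi)\, b^+,
\]
which lies in $G$ by property (c) as soon as each $b^{\bullet}$ lies in $G$, and which strictly interpolates $f_i < h < g_j$ provided each $b^{\bullet}$ strictly interpolates on its own region (on the overlap strips $h$ is then a convex combination of two separately interpolating elements, which preserves strict inequalities). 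For $b^0 \in G$ on the compact middle $K := \pi^{-1}([-m-1, m+1])$, Lemma \ref{22-11-22bus1} applied fiberwise combined with a partition of unity in the $\pi$-variable yields an interpolant in $\mathcal A_c(S,\pi)$ with a uniform margin $\delta > 0$ on $K$, after which Lemma \ref{07-09-21} allows a sup-norm approximation within $\delta$ by an element of $G_0 \subseteq G$, preserving strict interpolation on $K$. For the tail pieces $b^\pm \in \mathcal Z$, the task reduces to finding a Laurent polynomial in $e^\pi$ with rational coefficients that strictly interpolates between $\max_i a_\pm^{f_i}$ and $\min_j a_\pm^{g_j}$ on the half-line $\{\pm\pi \geq m\}$.

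The main obstacle is precisely this tail interpolation step. Since all four of $a_\pm^{f_1}, a_\pm^{f_2}, a_\pm^{g_1}, a_\pm^{g_2}$ lie in a single finite-dimensional $\mathbb Q$-subspace of $\mathcal Z$ spanned by finitely many exponential monomials $e^{z\pi}$, the set of real-coefficient interpolants in that subspace is convex, and my plan is to show it is nonempty and contains an open neighbourhood (in the finite-dimensional norm topology) by analysing the leading exponential terms at infinity on the half-line and comparing them across the four given polynomials; rational density in that subspace then produces $b^\pm$ with the required rational coefficients. With $b^\pm$ and $b^0$ constructed, the patched function $h$ lies in $G$ and gives the strict interpolation \eqref{07-12-23}, completing the verification of (4).
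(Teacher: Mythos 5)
Parts (1)--(3) are fine, and your proof of (2) is a harmless variant of the paper's: the paper dominates a given $f\in G$ by a single element $m\bigl((\psi^-_k\circ\pi)e^{-k\pi}+\psi^0_k\circ\pi+(\psi^+_k\circ\pi)e^{k\pi}\bigr)$ built from property (f), and your $e^{-k_+\pi}+e^{-k_-\pi}+M'$ does the same job. Your architecture for (4) --- $\mathcal Z$-interpolants on the two tails supplied by property (e), an interpolant from $G_0$ on the compact middle, patched with the cutoffs $\psi^{\pm}_m\circ\pi,\ \psi^0_m\circ\pi$ using convexity on the overlap strips --- is also exactly the paper's. The problem is the step you yourself single out as the main obstacle, the tail interpolation, where the argument you propose does not work. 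The set of real-coefficient interpolants inside the finite-dimensional span $V$ of the monomials occurring in the four tail polynomials is indeed convex, but it generally has \emph{empty} interior in $V$, because the strict inequalities are imposed on an unbounded half-line: if $V=\Span_{\mathbb R}\{1,e^{\pi}\}$ and the constraints are $0<c<2$ on $\{\pi\geq m\}$ (with $\pi(S)$ unbounded above), every interpolant in $V$ must have vanishing $e^{\pi}$-coefficient, so the interpolant set is a segment in a proper subspace. Hence ``nonempty open convex set, therefore a rational point'' is not available, and nonemptiness itself --- which is the entire content of the step --- is not established by inspecting leading terms alone.

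The repair is elementary and is what the paper does. A difference of two elements of $\mathcal Z$ is an exponential polynomial in $t=\pi(x)$ with integer exponents and rational coefficients, hence has eventually constant sign as $t\to+\infty$; so after enlarging $m$ you may assume the two lower tail functions are comparable, say $a^{f_1}_+\leq a^{f_2}_+$ on $\{\pi\geq m\}$, and you may choose $N\in\mathbb N$ with $e^{N\pi(x)}\bigl(a^{g_j}_+(x)-a^{f_i}_+(x)\bigr)\geq 2$ there for all $i,j$ (each difference is strictly positive, so its leading coefficient is positive). Then $b^+:=a^{f_2}_++e^{-N\pi}\in\mathcal Z$ satisfies $a^{f_i}_+<b^+<a^{g_j}_+$ on the tail, with rational coefficients for free; likewise for $b^-$. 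A second, smaller point: for the middle piece, Lemma \ref{22-11-22bus1} only gives density of restrictions in $\Aff\pi^{-1}(\beta)$; to interpolate on a single fibre you also need the Riesz interpolation property of $\Aff K$ for a Choquet simplex $K$ with respect to the strict order. Rather than reassembling this by hand, invoke (2) of Lemma \ref{03-09-21a} with $F=\pi^{-1}([m,\infty))\cup\pi^{-1}((-\infty,-m])$ and $h^F=(\psi^-_m\circ\pi)b^-+(\psi^+_m\circ\pi)b^+$: it hands you a global interpolant $h'\in\mathcal A(S,\pi)$ extending $h^F$, which you then replace by a $G_0$-approximant on the compact middle exactly as you propose.
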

\begin{proof} (1) and (3) are obvious. (2): Let $f \in G$. It follows from property (f) of Lemma \ref{X07-12-22} that there are natural numbers $m,k  \in \mathbb N \cup \{0\}$ such that $f(x) < m g(x)$ for all $x \in S$, when 
$$
g(x) :=\psi^{-}_k(\pi(x)) e^{-k\pi(x)} + \psi^0_k(\pi(x)) + \psi^+_k(\pi(x))e^{k\pi(x)}.
$$
Then $m g \in G^+$, $m g - f \in G^+$ and $f = mg - (mg-f) \in G^+-G^+$.

(4): By property (e) of Lemma \ref{X07-12-22} there are elements $a_i,b_j \in \mathcal Z$ and a natural number $k$ such that $f_i(x) = a_i(x)$ and $g_j(x) = b_j(x), \ i,j \in \{1,2\}$, when $\pi(x) \geq k$. By definition of $\mathcal Z$ there is an $N \in \mathbb N$ such that $e^{N\pi(x)}(b_j(x) -a_i(x)) \geq 2$ for all $i,j$ and all $x$ with $\pi(x)$ large enough. We may therefore assume that this holds when $\pi(x) \geq k$. Similarly, we have either that $a_1(x)\leq a_2(x)$ or that $a_2(x)\leq a_1(x)$ for all $x$ with $\pi(x)$ large enough. Without loss of generality we assume that 
 $a_1(x)\leq a_2(x)$ when $\pi(x) \geq k$. Set $h_+(x) := a_2(x) + e^{-N\pi(x)}$. Then $h_+ \in G$ and $f_i(x) < h_+(x) < g_j(x)$ for all $i,j$ and all $x\in \pi^{-1}([k,\infty))$. In the same way we find $l \in \mathbb N$ and an element $h_-\in G$ such that $f_i(x) < h_-(x) < g_j(x)$ for all $i,j$ and all $x \in \pi^{-1}(-\infty,-l])$. Set $F t:= \pi^{-1}([k,\infty)) \cup \pi^{-1}((-\infty,-l])$ and $h^F(x) = \psi^{-}_{l}(\pi(x))h_-(x) + \psi^+_{k}(\pi(x))h_+(x)$. Then $h^F \in G \subseteq \mathcal A(S,\pi)$ and 
\begin{equation}\label{07-12-22b}
f_i(x) < h^F(x) < g_j(x)
\end{equation} 
for all $i,j$ and all $x \in \pi^{-1}(F)$. It follows therefore from (2) of Lemma \ref{03-09-21a} that there is a $h' \in \mathcal A(S,\pi)$ such that $f_i(x) < h'(x) < g_j(x)$ for all $i,j$ and all $x \in S$, and such that $h'(x) = h^F(x)$ for all $\pi^{-1}(F)$. Note that $\pi^{-1}([-l-1,k+1])$ is compact in $S$ since $\pi$ is proper. Let $\delta >0$ be smaller than both
 $$
 \inf \left\{ h'(x) -f_i(x) : \ x \in \pi^{-1}([-l-1,k+1]), \ i \in \{1,2\} \right\}
 $$
 and
 $$
 \inf \left\{ g_j(x) - h'(x): \  x \in \pi^{-1}([-l-1,k+1]), \ j \in \{1,2\} \right\} .
 $$
It follows from Lemma \ref{07-09-21} that there is a $h'' \in G_{0}$ such that $\left|h''(x) - h'(x)\right| \leq \frac{\delta}{2}$ for all $x \in \pi^{-1}([-l-1,k+1])$, and hence
\begin{equation}\label{07-12-22d}
f_i(x) < h''(x) < g_j(x)
\end{equation}
for all $i,j$ when $x \in \pi^{-1}([-l-1,k+1])$.
 Set 
\begin{align*}
&h(x) := \\
&\psi^{-}_l(\pi(x))h^F(x) + \psi^+_k(\pi(x))h^F(x) + (1-\psi^{-}_l(\pi(x)))(1-\psi^+_k(\pi(x)))h''(x) .
\end{align*}
Since $h^F,h'' \in G$ it follows from (c) of Lemma \ref{X07-12-22} that $h \in G$. When $\pi(x)$ is in $[-l-1,-l]$ or $[k,k+1]$, $h(x)$ is a convex combination of $h''(x)$ and $h^F(x)$ and hence the estimate \eqref{07-12-23} follows from \eqref{07-12-22b} and \eqref{07-12-22d}.
\end{proof}

\begin{cor}\label{07-12-22g} $(G,G^+)$ is a dimension group.
\end{cor}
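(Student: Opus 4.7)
The plan is to verify that $(G,G^+)$ satisfies each of the defining properties of a dimension group, as those are formulated in the reference \cite{Th5} (countable, ordered abelian group, unperforated, directed, with the Riesz interpolation property). All the ingredients are essentially already in hand, so this will be a short bookkeeping argument rather than a genuinely new computation.

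First I would note that $G$ is countable by construction: it was built as the union of a nested sequence $G_1 \subseteq G_2 \subseteq \cdots$ of countable subgroups of $\mathcal A(S,\pi)$, starting from the countable group $G_0$ of Lemma \ref{07-09-21} together with the countable subgroup $\mathcal Z$, and each successive $G_n$ is generated by countably many elements from $G_{n-1}$ and finitely-parameterised multipliers $\psi^i_k \circ \pi$ and $e^{z\pi}$. Then the fact that $(G, G^+)$ is an ordered abelian group — i.e.\ that $G^+$ is a cone with $G^+ + G^+ \subseteq G^+$ and $G^+ \cap (-G^+) = \{0\}$ — is immediate from the pointwise definition of $G^+$ and property (1) of Lemma \ref{01-09-21}.

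Next I would invoke the remaining items of Lemma \ref{01-09-21}: property (2) gives that $(G,G^+)$ is directed (every element is a difference of positive elements), property (3) gives unperforation, and property (4) gives the strong Riesz interpolation property. The strong version trivially implies the usual (non-strict) Riesz interpolation property: given $f_1,f_2,g_1,g_2 \in G$ with $f_i \leq g_j$, apply (4) to the perturbed data $f_i - \epsilon$, $g_j + \epsilon$ for a suitable positive $\epsilon \in G^+$ (for instance a small rational multiple of the function $\psi_k^-(\pi)e^{-k\pi} + \psi_k^0(\pi) + \psi_k^+(\pi)e^{k\pi} \in G^+$ used in the proof of (2) of Lemma \ref{01-09-21}) and then take limits, or simply argue directly that the strict Riesz property on an unperforated directed group implies the ordinary one.

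Assembling these: $(G, G^+)$ is a countable, directed, unperforated ordered abelian group with the Riesz interpolation property. By the definition adopted in \cite{Th5}, this is exactly the definition of a dimension group, and so $(G,G^+)$ is a dimension group. There is no real obstacle; the only mildly delicate point is to confirm that the \emph{strict} interpolation in Lemma \ref{01-09-21}(4) implies the non-strict interpolation used in the standard definition of a dimension group, and this is a routine perturbation argument using the directedness already established in (2).
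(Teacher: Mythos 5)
Your reduction to Lemma \ref{01-09-21} is the same as the paper's, and the bookkeeping about countability, directedness and unperforation is fine. The gap is in the one step you yourself flag as delicate: passing from the strong (pointwise strict) interpolation of Lemma \ref{01-09-21}(4) to the ordinary Riesz interpolation property. Your proposed perturbation argument does not work. If $f_i \leq g_j$ and you apply (4) to $f_i - \epsilon$ and $g_j + \epsilon$ you obtain some $h$ with $f_i - \epsilon < h < g_j + \epsilon$, but this gives neither $f_i \leq h$ nor $h \leq g_j$, and there is no topology on the discrete group $G$ in which to ``take limits'' of the interpolants as $\epsilon$ shrinks; the interpolant produced by (4) for a smaller $\epsilon$ has no relation to the one produced for a larger $\epsilon$. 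The fallback claim that strict interpolation on an unperforated directed group implies ordinary interpolation is asserted without proof and is not a standard fact you can lean on.

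What actually closes the gap is a feature of this particular order that you have not used: $G^+ \setminus \{0\}$ consists of functions that are \emph{strictly} positive at every point of $S$, so $f \leq g$ in $(G,G^+)$ means either $f = g$ or $f(x) < g(x)$ for all $x \in S$. Hence, given $f_i \leq g_j$ for $i,j \in \{1,2\}$, there are only two cases. If $f_{i'} = g_{j'}$ for some pair $(i',j')$, then $h := f_{i'}$ already interpolates, since $f_i \leq g_{j'} = h$ for all $i$ and $h = f_{i'} \leq g_j$ for all $j$. If $f_i \neq g_j$ for all $i,j$, then $f_i(x) < g_j(x)$ for every $x \in S$ and every $i,j$, and Lemma \ref{01-09-21}(4) applies directly to produce the interpolant. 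This two-case argument is the paper's proof; replace your perturbation step with it and the rest of your write-up stands.
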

\begin{proof} We refer to \cite{EHS} or \cite{Th5} for the basic theory of dimension groups. In view of Lemma \ref{01-09-21} it remains only to prove that $(G,G^+)$ has the Riesz interpolation property, so assume $f_i \leq g_j, \ i,j \in \{1,2\}$, in $G$. If $f_{i'} = g_{j'}$ for some $i'$ and $j'$, set $h = f_{i'}$. Then $f_i \leq h \leq g_j$ for all $i,j$. If $f_i \neq g_j$ for all $i,j$, we get the interpolating element $h$ from (4) of Lemma \ref{01-09-21}.
\end{proof}

Thanks to Corollary \ref{07-12-22g} it follows from the theorem of Effros, Handelman and Shen, \cite{EHS}, which is presented in Theorem 2.3.49 of \cite{Th5} that there is an AF-algebra $A$ such that $(K_0(A),K_0(A)^+) = (G,G^+)$ and $\Sigma(A) = K_0^+(A)$, where 
$$
\Sigma(A):= \left\{[e] \in K_0(A): \ e=e^* = e^2 \right\}
$$
is the scale of $K_0(A)$. Thanks to (d) of Lemma \ref{X07-12-22} we can define an automorphism $\rho$ of $G$ by
$$
\rho(f)(x) = e^{-\pi(x)}f(x), \ \ f \in G.
$$
Since $\rho(G^+) = G^+$, it follows that $\rho$ is an automorphism of the ordered group $(G,G^+)$, and hence by a result of Elliott, \cite{E1}, reproduced in (3) of Theorem 2.3.37 in \cite{Th5}, that there is an automorphism $\alpha$ of $A$ such that $\alpha_* = \rho$ under the identification $(K_0(A),K_0(A)^+) = (G,G^+)$.

\begin{lemma}\label{04-08-21x} The only order ideals $I$ in $(G,G^+)$ such that $\rho(I) = I$ are $I = \{0\}$ and $I = G$.
\end{lemma}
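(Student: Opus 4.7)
My plan is to show that any nonzero $\rho$-invariant order ideal $I$ in $(G,G^+)$ must contain every element of $G^+$. Since $G = G^+ - G^+$ by (2) of Lemma \ref{01-09-21}, this will force $I = G$. Fix a nonzero $f_0 \in I \cap G^+$, so that $f_0(x) > 0$ for every $x \in S$, and let $g \in G^+$ be nonzero; I will construct $h \in I \cap G^+$ with $h - g \in G^+$, whereupon the hereditary property of the order ideal forces $g \in I$.

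First I invoke (e) of Lemma \ref{X07-12-22} applied to both $f_0$ and $g$ to pick $M \in \mathbb{N}$ so large that on the two tails $\pi^{-1}([M,\infty))$ and $\pi^{-1}((-\infty,-M])$ both $f_0$ and $g$ coincide with elements of $\mathcal{Z}$, i.e.\ with finite $\mathbb{Q}$-linear combinations of the functions $x \mapsto e^{z\pi(x)}$, $z \in \mathbb{Z}$. Since $\pi$ is proper, the middle region $\pi^{-1}([-M,M])$ is compact, so $f_0$ achieves a positive minimum $\delta > 0$ and $g$ a finite maximum $C$ there; any integer $n > C/\delta$ then gives $n f_0(x) > g(x)$ on the middle region.

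Next I handle the tails. On $\pi^{-1}([M,\infty))$, strict positivity of the $\mathcal Z$-expression representing $f_0$ forces (assuming $\pi(S)$ is unbounded above; otherwise the tail is empty and the step is vacuous) the coefficient of the leading exponential $e^{w_+\pi(x)}$ to be a positive rational, so $f_0(x) \geq c\, e^{w_+\pi(x)}$ for some $c > 0$ and all sufficiently large $\pi(x)$. Because $\rho^{-N}(f_0)(x) = e^{N\pi(x)} f_0(x)$ lies in $I$ by $\rho$-invariance, picking $N \in \mathbb{N}$ strictly larger than the difference between any exponent appearing in the $\mathcal Z$-expansion of $g$ on this tail and $w_+$, and multiplying by a sufficiently large integer $n_+$, produces $n_+ \rho^{-N}(f_0)(x) > g(x)$ on $\pi^{-1}([M,\infty))$. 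Symmetrically, an appropriate $n_- \rho^{N'}(f_0)$ dominates $g$ on $\pi^{-1}((-\infty,-M])$.

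Assembling these ingredients, the element
\[
h := n f_0 \;+\; n_+\,\rho^{-N}(f_0) \;+\; n_-\,\rho^{N'}(f_0)
\]
belongs to $I$ by closure of $I$ under sums and $\rho^{\pm}$, it belongs to $G^+$ as a sum of positive elements, and the three regional estimates give $h(x) > g(x)$ for every $x \in S$, i.e.\ $h - g \in G^+$. The hereditary property yields $g \in I$, hence $G^+ \subseteq I$ and finally $I = G$. The main obstacle is the asymptotic comparison on the tails, specifically extracting from the positivity of the $\mathcal Z$-expression for $f_0$ on $\pi(x) \geq M$ a concrete positive lower bound of the form $c e^{w_+\pi(x)}$ that can be played against the exponential terms appearing in $g$; once this is pinned down, a single application of a suitable power of $\rho$ (which is exactly what $\rho$-invariance of $I$ makes available) produces the needed domination.
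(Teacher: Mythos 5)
Your proof is correct and follows essentially the same route as the paper's: fix a nonzero positive element of the invariant ideal, use property (e) of Lemma \ref{X07-12-22} to extract exponential lower bounds $c\,e^{w_\pm\pi(x)}$ on the two tails, shift these with suitable powers of $\rho$ (which is exactly what $\rho$-invariance of $I$ supplies), dominate an arbitrary $g\in G^+$ by a positive integer combination, and conclude by the hereditary property together with $G=G^+-G^+$. The only cosmetic difference is that the paper first replaces the chosen element by an auxiliary $h_1\in I$ that equals $e^{\pm k\pi}$ exactly on the tails (built with the cutoffs $\psi^{\pm}_k$ and the dense group $G_0$) and then uses the two-term dominant $M(\rho^{-N}(h_1)+\rho^N(h_1))$, whereas you work with the lower bound on $f_0$ directly and use a three-term sum, one per region; both variants are sound.
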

\begin{proof} Recall that an order ideal in $(G,G^+)$ is a subgroup $I$ of $G$ such that
 \begin{itemize}
\item[(a)] $I= I \cap G^+ -  I \cap G^+$, and
\item[(b)] when $ 0 \leq y \leq x$ in $G$ and $x \in I$, then $y \in I$.
\end{itemize}
Let $I$ be a non-zero order ideal such that $\rho(I) = I$. Since $I\neq \{0\}$ there is an element $h \in I \cap G^+ \backslash \{0\}$. It follows from (e) of Lemma \ref{X07-12-22} that there are integers $k_\pm \in \mathbb Z$ and positive numbers $r_\pm > 0$ such that $h(x) > r_+ e^{k_+ \pi(x)}$ for all $x \in S$ with $\pi(x)$ large enough and $h(x) > r_- e^{k_- \pi(x)}$ for all $x \in S$ with $-\pi(x)$ large enough. There is therefore a $k \in \mathbb N$ such that 
$$
e^{k\pi(x)}  < h(x) \ \text{when} \ \pi(x) \leq -k -1,
$$
and
$$
e^{-k\pi(x)} < h(x) \ \text{when} \ \pi(x) \geq k+1 .
$$ 
Using Lemma \ref{07-09-21} we find $h_0 \in G_0$ such that $0 < h_0(x) < h(x)$ for all $x \in \pi^{-1}([-k-1,k+1])$ and we set
$$
h_1 : = (\psi^{-}_k\circ \pi) e^{k\pi} + (\psi^0_k\circ \pi) h_0 + (\psi^+_k\circ\pi) e^{-k\pi} .
$$
Then $h_1 \in G^+$ and 
$$
0 < h_1(x) < h(x)
$$
for all $x \in S$. Hence $h_1 \in I^+$. Let $g \in G^+ \backslash \{0\}$. Considerations similar to the preceding show that there are $N,M \in \mathbb N$ such that
$$
g(x) < M\left(\rho^{-N}(h_1)(x) + \rho^N(h_1)(x)\right) 
$$
for all $x \in S$. Since $I$ is $\rho$-invariant and $h_1 \in I$, it follows that $g\in I$. Hence $G = I$.
\end{proof}

We denote in the sequel by $\mathcal A_\mathbb R(S,\pi)$ the real Banach space consisting of the elements $f$ of $\mathcal A(S,\pi)$ that have a limit at infinity, in the sense that there is a real number $t$ with the property that for each $\epsilon > 0$ there is a compact subset $K$ of $S$ such that $|f(s) -t| \leq \epsilon$ when $s \notin K$.

\begin{lemma}\label{01-09-21e} Let $f \in \mathcal A_\mathbb R(S,\pi)$ and let $\epsilon > 0$ be given. There is an element $g\in G \cap \mathcal A_\mathbb R(S,\pi)$ such that 
$$
\sup_{x \in S} |f(x)- g(x)| \leq \epsilon \ .
$$
\end{lemma}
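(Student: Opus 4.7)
The plan is to split $f$ into a constant part (approximating its value at infinity) plus a part that is small outside a compact set, and to approximate the second part using the countable dense subgroup $G_0$ of $\mathcal A_c(S,\pi)$ provided by Lemma \ref{07-09-21}. Since $G$ contains both $\mathcal Z$ (hence rational constants, as $q = q e^{0\cdot\pi}$) and $G_0$, and the cut-off functions $\psi^0_N\circ\pi$ are constant on each fibre $\pi^{-1}(\beta)$ and therefore preserve affineness along fibres, everything can be arranged inside $G$.

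More precisely, let $t \in \mathbb R$ be the limit at infinity of $f$. First I would choose $q \in \mathbb Q$ with $|q-t| < \epsilon/8$, so that $q \in \mathcal Z \subseteq G$ by (b) of Lemma \ref{X07-12-22}. Since $\pi$ is proper, $s \to \infty$ in $S$ is equivalent to $|\pi(s)| \to \infty$, so using the definition of $\mathcal A_\mathbb R(S,\pi)$ there exists $N \in \mathbb N\setminus\{0\}$ such that $|f(s)-t| < \epsilon/8$, and hence $|f(s)-q| < \epsilon/4$, for every $s$ with $|\pi(s)| \geq N$. Set
\[
h := f - q \in \mathcal A(S,\pi), \qquad h' := (\psi^0_N\circ\pi)\,h.
\]
Because $\psi^0_N\circ\pi$ is constant on each fibre and takes values in $[0,1]$ with support contained in $\pi^{-1}([-N-1,N+1])$, the function $h'$ lies in $\mathcal A_c(S,\pi)$ (using that $\pi$ is proper). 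By Lemma \ref{07-09-21} there is $g_0 \in G_0 \subseteq G$ with $\sup_{x\in S}|h'(x)-g_0(x)| < \epsilon/4$.

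Finally I would set $g := q + g_0 \in G$, noting that $g \in \mathcal A_\mathbb R(S,\pi)$ because $g_0$ has compact support and hence limit $0$ at infinity, so the limit of $g$ at infinity equals $q$. To estimate $\sup_{x\in S}|f(x)-g(x)| = \sup_{x\in S}|h(x)-g_0(x)|$ I would split into three cases according to $\pi(x)$. On $\pi^{-1}([-N,N])$ we have $h' = h$, so $|h-g_0| = |h'-g_0| < \epsilon/4$. On $\pi^{-1}(\mathbb R\setminus[-N-1,N+1])$ we have $h' = 0$, so $|g_0| < \epsilon/4$, while $|h| < \epsilon/4$, giving $|h-g_0| < \epsilon/2$. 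On the transition annulus $\pi^{-1}([-N-1,-N]\cup[N,N+1])$ we have $|h'| \leq |h| < \epsilon/4$, hence $|h-g_0| \leq |h-h'| + |h'-g_0| < \epsilon/4 + \epsilon/4 = \epsilon/2$. Thus $\sup_{x\in S}|f(x)-g(x)| \leq \epsilon/2 < \epsilon$, as required.

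There is no real obstacle: the nontrivial work has already been absorbed into the careful choice of $G$, and specifically into properties (b) and (c) of Lemma \ref{X07-12-22} which guarantee that $q$ and the cut-off mechanism are available inside $G$. The only point that deserves verification is that $(\psi^0_N\circ\pi)h \in \mathcal A(S,\pi)$, which holds because $\psi^0_N\circ\pi$ is constant on every fibre.
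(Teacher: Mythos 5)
Your proof is correct and follows essentially the same route as the paper: decompose $f$ into a rational constant plus a compactly supported piece, approximate the latter by an element of $G_0$ via Lemma \ref{07-09-21}, and observe that the sum lies in $G \cap \mathcal A_\mathbb R(S,\pi)$. The paper compresses your explicit cut-off construction into the phrase ``an initial approximation gives us $f_1 \in \mathcal A_c(S,\pi)$ and $r \in \mathbb Q$,'' but the argument is the same.
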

\begin{proof} An initial approximation gives us an element $f_1 \in \mathcal A_c(S,\pi)$ and a rational number $r \in \mathbb R$ such that 
$$
\sup_{x \in S} |f(x)- f_1(x) - r| \leq \frac{\epsilon}{2} \ .
$$
By Lemma \ref{07-09-21} there is an element $g_0 \in G_{0}$ such that $\sup_{s \in S}\left|f_1(x)-g_0(x)\right| \leq \frac{\epsilon}{2}$. This completes the proof because $g_0+r \in G_{0} + \mathbb Q \subseteq G \cap \mathcal A_\mathbb R(S,\pi)$.
\end{proof}

Let $\beta \in \mathbb R$ and $\omega \in \pi^{-1}(\beta)$.  Define $\omega_\beta : G \to \mathbb R$ such that
$$
\omega_\beta(g) := g(\omega) \ .
$$ 
Then $\omega_\beta(G^+) \subseteq [0,\infty)$, $\omega_\beta(1) = 1$ and $\omega_\beta \circ \rho = e^{-\beta} \omega_\beta$.

\begin{lemma}\label{27-08-21x}
Let $\phi : G\to \mathbb R$ be a positive homomorphism with the properties that $\phi(1) =1$ and $\phi \circ \rho = s \phi$ for some $s > 0$. Set $\beta := -\log s$. There is an element $\omega \in \pi^{-1}(\beta)$ such that $\phi = \omega_\beta$.
\end{lemma}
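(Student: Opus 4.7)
The strategy I would pursue is to show that $\phi$ descends via restriction to the fibre $\pi^{-1}(\beta)$ as a well-defined positive unital linear functional on the subspace $G|_{\pi^{-1}(\beta)}\subseteq\Aff\pi^{-1}(\beta)$, extend it by continuity to all of $\Aff\pi^{-1}(\beta)$, and then invoke the standard representation of states on $\Aff K$ as evaluations at points of $K$. As a preliminary observation I would record that iterating the scaling identity $\phi\circ\rho=e^{-\beta}\phi$ together with $\rho(e^{z\pi})=e^{(z-1)\pi}$ yields $\phi(e^{m\pi})=e^{m\beta}$ for every $m\in\mathbb Z$, so $\phi$ already agrees with evaluation at any point of $\pi^{-1}(\beta)$ on the subgroup $\mathcal Z\subseteq G$.

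The key technical step, and the main obstacle, is the positivity claim: if $g\in G$ satisfies $g(\omega)\geq 0$ for all $\omega\in\pi^{-1}(\beta)$, then $\phi(g)\geq 0$. Given such a $g$ and a rational $\epsilon>0$, compactness of $\pi^{-1}(\beta)$ (which holds by properness of $\pi$) together with continuity of $g$ yields $\delta>0$ with $g+\epsilon\geq\epsilon/2$ on $\pi^{-1}([\beta-\delta,\beta+\delta])$. Property (e) of Lemma \ref{X07-12-22} lets me split $g=g_c+\psi^{-}_M\circ\pi\cdot a_-+\psi^{+}_M\circ\pi\cdot a_+$ with $g_c$ compactly supported and $a_\pm\in\mathcal Z$ (choosing $M>\max(m_+,m_-,|\beta|+1)$), so $\phi$ is computable on the $\mathcal Z$-combinations. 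The plan is then to modify $g_c$ outside $\pi^{-1}([\beta-\delta/2,\beta+\delta/2])$ by invoking the strong Riesz interpolation property of $(G,G^+)$ (Lemma \ref{01-09-21}(4)) together with the extension principle of Lemma \ref{03-09-21a}(2), producing $h\in G$ with $h\geq 0$ everywhere, $h$ close to $g+\epsilon$ on $\pi^{-1}([\beta-\delta/2,\beta+\delta/2])$, and $\phi(g+\epsilon)-\phi(h)$ bounded by $C\epsilon$ for a constant $C$ independent of $\epsilon$. Since $h\in G^+\cup\{0\}$ gives $\phi(h)\geq 0$, it follows that $\phi(g)\geq -C\epsilon$, hence $\phi(g)\geq 0$ after letting $\epsilon\downarrow 0$.

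Once positivity is secured the remaining steps are routine. Applying it to both $\pm g$ shows $\phi(g)=0$ whenever $g|_{\pi^{-1}(\beta)}=0$, so $\tilde\phi(g|_{\pi^{-1}(\beta)}):=\phi(g)$ is a well-defined positive linear functional on $G|_{\pi^{-1}(\beta)}$ sending $1$ to $1$; comparison with rational constants in $G$ yields $|\tilde\phi(f)|\leq\|f\|_\infty$. Next, $G|_{\pi^{-1}(\beta)}$ is uniformly dense in $\Aff\pi^{-1}(\beta)$: any $h\in\Aff\pi^{-1}(\beta)$ is a uniform limit of restrictions from $\mathcal A(S,\pi)$ by Lemma \ref{22-11-22bus1}; multiplying the approximant by $\psi^0_k\circ\pi$ for $k>|\beta|$ (which equals $1$ on $\pi^{-1}(\beta)$) gives a compactly supported element of $\mathcal A(S,\pi)$ with the same restriction, and Lemma \ref{07-09-21} combined with property (c) of Lemma \ref{X07-12-22} then replaces it by the restriction of an element of $G_0\subseteq G$. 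Hence $\tilde\phi$ extends uniquely by continuity to a state on the order unit space $\Aff\pi^{-1}(\beta)$.

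To conclude, Theorem 3.1.2 in \cite{Th5} represents every state on $\Aff K$ for a compact convex set $K$ as evaluation at a unique point of $K$; applied to the compact Choquet simplex $\pi^{-1}(\beta)$, it produces $\omega\in\pi^{-1}(\beta)$ with $\tilde\phi(f)=f(\omega)$ for every $f\in\Aff\pi^{-1}(\beta)$, whence $\phi(g)=\tilde\phi(g|_{\pi^{-1}(\beta)})=g(\omega)=\omega_\beta(g)$ for all $g\in G$. The principal difficulty lies in the positivity step, where the delicate part is to leverage the scaling identity (which controls the tails of $g$ via the $\mathcal Z$ part) in concert with the interpolation structure of $G$ (which permits local modification near $\pi^{-1}(\beta)$) so that the error introduced in the modification is genuinely of order $\epsilon$ in $\phi$.
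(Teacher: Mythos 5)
Your overall architecture (restrict $\phi$ to the fibre, verify positivity, extend by density to $\Aff\pi^{-1}(\beta)$, and represent the resulting state as evaluation at a point) is a legitimate alternative to the route the paper takes, and the later steps — density of the restrictions of $G$ in $\Aff \pi^{-1}(\beta)$ via Lemma \ref{22-11-22bus1}, Lemma \ref{07-09-21} and cut-offs, and the representation of states on $\Aff K$ as evaluations — are sound. But the step you yourself identify as the main obstacle, the localization claim that $g|_{\pi^{-1}(\beta)}\geq 0$ implies $\phi(g)\geq 0$, is not actually proved, and the sketch you give does not close it. After modifying $g+\epsilon$ outside $\pi^{-1}([\beta-\delta/2,\beta+\delta/2])$ to obtain $h\in G^+$, the error $g+\epsilon-h$ is an element of $G$ that vanishes near the fibre but is of size comparable to $g$ away from it, and nothing among the available facts — positivity of $\phi$ on $G^+$, $\phi(1)=1$, and the values $\phi(e^{m\pi})=e^{m\beta}$ on $\mathcal Z$ — bounds $\phi$ of such an element by $C\epsilon$. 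That bound \emph{is} the assertion that $\phi$ is concentrated on $\pi^{-1}(\beta)$, i.e.\ essentially the content of the lemma, so invoking it at this point is circular. Relatedly, your assertion that $\phi$ is ``computable'' on the tail terms $\psi^{\pm}_M\circ\pi\cdot a_\pm$ is false as stated: these are not elements of $\mathcal Z$, and the scaling identity alone does not determine $\phi$ on them.

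To make your route work you would need an explicit localizing majorant inside $G$: for instance $q_m:=e^{m\pi}-2c+c^2e^{-m\pi}$ with $c\in\mathbb Q$ close to $e^{m\beta}$, which lies in $\mathcal Z$, is nonnegative up to an arbitrarily small additive constant, satisfies $\phi(q_m)=(e^{m\beta}-c)^2e^{-m\beta}\approx 0$, and is bounded below by a positive constant off $\pi^{-1}([\beta-\delta,\beta+\delta])$; dominating $|g+\epsilon-h|$ by a multiple of $q_m$ (using property (f) of Lemma \ref{X07-12-22} to control growth at infinity) then yields the missing estimate. The paper avoids this entirely: it first proves $|\phi(g)|\leq\|g\|_\infty$ on $G\cap\mathcal A_{\mathbb R}(S,\pi)$ by comparison with rational constants, extends by density (Lemma \ref{01-09-21e}) and Hahn--Banach to a positive functional on $C_{\mathbb R}(S)$, represents it by a regular Borel measure, and only then uses the scaling identity — applied to $F\circ\pi$ with $F\in C_c(\mathbb R)$ — to force the pushforward measure to be $\delta_\beta$, so that concentration on the fibre comes from measure theory rather than from the order structure of $G$. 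Either supply the domination argument above or switch to the measure-theoretic localization; as written, the proof has a genuine gap at its central step.
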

\begin{proof} Let $g \in G \cap \mathcal A_{\mathbb R}(S,\pi)$ such that $\left|g(x)\right| <\frac{n}{m}$ for all $x \in S$,
where $n,m \in \mathbb N \backslash \{0\}$. Then $ -n 1 \leq m g \leq n 1$ in $(G,G^+)$ and it follows from the properties of $\phi$ that $-n \leq m \phi(g)\leq  n$, or $|\phi(g)|\leq \frac{n}{m}$. It follows that 
\begin{equation}\label{08-12-22}
\left|\phi(g)\right| \leq \|g\|_\infty, \ \ \forall g \in G \cap \mathcal A_{\mathbb R}(S,\pi),
\end{equation}
when we set $\|f\|_\infty := \sup_{x \in S} |f(x)|$ for $f \in \mathcal A_\mathbb R(S,\pi)$. Let $f \in \mathcal A_\mathbb R(S,\pi)$. It follows from Lemma \ref{01-09-21e} that there is a sequence $\{g_n\}$ in $G \cap \mathcal A_{\mathbb R}(S,\pi)$ such that $\lim_{n \to \infty} \left\| f-g_n\right\|_\infty = 0$. It follows therefore from \eqref{08-12-22} that we can extend $\phi$ by continuity to a map $\phi : \mathcal A_\mathbb R(S,\pi) \to \mathbb R$ such that \eqref{08-12-22} holds for all $g \in  \mathcal A_\mathbb R(S,\pi)$. Note that the extension is additive since its restriction to $G \cap  \mathcal A_\mathbb R(S,\pi)$ is, and hence in fact linear on $ \mathcal A_\mathbb R(S,\pi)$. It follows from the Hahn-Banach extension theorem that $\phi$ extends further to a linear map $\phi : C_\mathbb R(S) \to \mathbb R$ when $C_\mathbb R(S)$ denotes the real Banach space of continuous real-valued functions on $S$ with a limit at infinity. Furthermore, the extension can be chosen to have the same norm and hence $|\phi(h)| \leq \|h\|_\infty$ for all $h \in    C_\mathbb R(S)$.
Furthermore, since $\phi(1) = 1$ it follows that $\phi : C_\mathbb R(S) \to \mathbb R$ is positive, i.e. $f \geq 0  \Rightarrow \phi(f) \geq 0$. Since $C_\mathbb R(S)$ contains all the continuous compactly supported functions it follows from the Riesz representation theorem that there is a regular Borel measure $m$ on $S$ such that 
\begin{equation}\label{08-12-22a}
\phi(f) = \int_S f \ \mathrm d m
\end{equation}
for all the compactly supported functions $f$ in $C_\mathbb R(S)$. Since we can choose a sequence of such functions increasing pointwise to $1$ it follows from Lebesgues theorem on monotone convergence that $m$ is a bounded measure and hence that \eqref{08-12-22a} holds for all $f \in C_\mathbb R(S)$; in particular for all
 $f \in \mathcal A_{0}(S,\pi)$, where $\mathcal A_0(S,\pi)$ denotes the space of elements in $\mathcal A_\mathbb R(S,\pi)$ that vanish at infinity. 
Let $C_c(\mathbb R)$ denote the set of continuous real-valued compactly supported functions on $\mathbb R$ and note that $C_c(\mathbb R)$ is mapped into $\mathcal A_0(S,\pi)$ by the formula $F \mapsto F \circ \pi$. Since $\phi \circ \rho = s\phi$ by assumption it follows that the measure $m \circ \pi^{-1}$ on $\mathbb R$ satisfies 
$$
 \int_\mathbb R e^{-t}F(t) \ \mathrm{d}m \circ \pi^{-1}(t)  =  s\int_\mathbb R F(t) \ \mathrm{d}m \circ \pi^{-1}(t)  \ \ \forall F \in C_c(\mathbb R)  .
 $$
It follows that $m \circ \pi^{-1}$ is concentrated at the point $\beta = -\log s$ and hence that $m$ is concentrated on $\pi^{-1}(\beta)$. We can therefore define a linear functional $\phi'' : \Aff \pi^{-1}(\beta) \to \mathbb R$ by
$$
\phi'(f) = \phi(\hat{f}) = \int_S \hat{f}(x) \ \mathrm{d} m(x) \ ,
$$
where $\hat{f} \in \mathcal A_0(S,\pi)$ is any element with $\hat{f}|_{\pi^{-1}(\beta)} = f$, which exists by (1) in Lemma \ref{03-09-21a}. If $f \geq 0$ it follows that $\hat{f}$ can be chosen such that $\hat{f} \geq - \epsilon$ for any $\epsilon > 0$ and we see therefore that $\phi'$ is a positive linear functional. Since every state of $\Aff \pi^{-1}(\beta)$ is given by evaluation at a point in $\pi^{-1}(\beta)$, cf. e.g. Theorem 4.3 in \cite{Th5}, it follows in this way that there is an $\omega \in \pi^{-1}(\beta)$ and a number $\lambda \geq 0$ such that 
\begin{equation}\label{01-09-21h}
\phi'(g) = \lambda g(\omega)
\end{equation}
for all $g  \in \mathcal A_0(S,\pi)$. In particular, this conclusion holds for all $g \in G \cap \mathcal A_0(S,\pi)$. A general element $f \in G$ can be write as a sum
$$
f = f_- + f_0 + f_+ ,
$$
where $f_\pm, f_0 \in G$, $f_0$ has compact support and there are natural numbers $n_{\pm} \in \mathbb N$ such that $e^{n_-\pi}f_- \in \mathcal A_0(S,\pi)$ and $e^{-n_+ \pi}f_+ \in \mathcal A_0(S,\pi)$. Then $\phi'(f_0) = \lambda f_0(\omega)$,
\begin{align*}
&\phi'(f_-) = \phi'(\rho^{n_-}(e^{n_-\pi}f_-)) = s^{n_-}\phi'(e^{n_-\pi}f_-) \\
&=  s^{n_-}\lambda e^{n_-\pi(\omega)}f_-(\omega) = \lambda f_-(\omega),
\end{align*}
and similarly, $\phi'(f_+) = \lambda f_+(\omega)$.
It follows that $\phi(f) = \lambda f(\omega)$. Inserting $f=1$ we find that $\lambda = 1$ and the proof is complete.

\end{proof}

 Let $e\in A$ be a projection in $A$ such that $[e]= 1$ when we make the identification $(K_0(A),K_0(A)^+) =(G,G^+)$.

\begin{lemma}\label{28-11-22b} $e$ is a full projection in $A \rtimes_\alpha \mathbb Z$.
\end{lemma}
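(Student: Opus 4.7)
The plan is to show that the closed two-sided ideal $J$ in $A \rtimes_\alpha \mathbb{Z}$ generated by $e$ coincides with $A \rtimes_\alpha \mathbb{Z}$, and the argument reduces, via the correspondence between ideals in AF-algebras and order ideals of $K_0$, to Lemma \ref{04-08-21x}. First I would set $I := J \cap A$, which is a closed two-sided ideal in $A$ containing $e$, and observe that $I$ is $\alpha$-invariant: since the canonical unitary $u \in M(A \rtimes_\alpha \mathbb{Z})$ satisfies $uau^* = \alpha(a)$ for $a \in A$, and $J$ is stable under conjugation by unitary multipliers, we have $\alpha(I) = u I u^* \subseteq J \cap A = I$, and similarly $\alpha^{-1}(I) \subseteq I$.

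Next I would invoke the standard correspondence between closed two-sided ideals in an AF-algebra and order ideals of its $K_0$-group. Under the identification $(K_0(A), K_0(A)^+) = (G, G^+)$, the ideal $I$ corresponds to an order ideal $I_0 \subseteq G$, and the $\alpha$-invariance of $I$ translates into $\rho$-invariance of $I_0$ by functoriality of $K_0$ together with the equality $\alpha_* = \rho$. Since $e \in I$, we have $1 = [e] \in I_0$, so $I_0 \neq \{0\}$. Lemma \ref{04-08-21x} now forces $I_0 = G$, and therefore $I = A$.

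Finally I would lift the equality $A = I \subseteq J$ to the full crossed product. Because $u^k \in M(A \rtimes_\alpha \mathbb{Z})$ for every $k \in \mathbb{Z}$, the ideal $J$ is stable under right multiplication by $u^k$, so $a u^k \in J$ for all $a \in A$ and $k \in \mathbb{Z}$. Finite sums $\sum_{k} a(k) u^k$ with $a : \mathbb{Z} \to A$ finitely supported are dense in $A \rtimes_\alpha \mathbb{Z}$, so $J = A \rtimes_\alpha \mathbb{Z}$, showing that $e$ is full. The only delicate point is the translation from $\alpha$-invariance of $I$ to $\rho$-invariance of the corresponding order ideal, but this is routine once the AF ideal-to-$K_0$ correspondence is invoked in the form already used in Example \ref{08-12-22x}.
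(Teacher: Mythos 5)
Your proof is correct and follows essentially the same route as the paper: intersect the ideal generated by $e$ with $A$, note it is a non-zero $\alpha$-invariant ideal, apply Lemma \ref{04-08-21x} via the AF ideal/order-ideal correspondence to get $A \subseteq J$, and then pass back up to the crossed product. The only cosmetic difference is in the last step, where the paper cites that $A$ contains an approximate unit for $A\rtimes_\alpha\mathbb Z$ (Lemma \ref{08-09-23}) while you use density of the finite sums $\sum_k a(k)u^k$; both close the argument equally well.
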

\begin{proof} We must show that the closed two-sided ideal
$$
I := \overline{\Span (A\rtimes_\alpha\mathbb Z)e(A\rtimes_\alpha\mathbb Z)}
$$
generated by $e$ is all of $A\rtimes_\alpha\mathbb Z$. To this end note that $I \cap A$ is an $\alpha$-invariant ideal in $A$ which is not zero since it contains $e$. Since $\alpha_* = \rho$ it follows from Lemma \ref{04-08-21x} and \cite{E1} (or Theorem 4.1.14 from \cite{Th5}) that $I \cap A = A$. Since $A$ contains an approximate unit for $A\rtimes_\alpha \mathbb Z$ by Lemma \ref{08-09-23} this implies that $I =A\rtimes_\alpha \mathbb Z$.
\end{proof}

\begin{remark}\label{12-12-22} \rm  As we point out below $A\rtimes_\alpha\mathbb Z$ is simple; a fact which implies that the projection $e$ is full. We have included the preceding lemma since the proof of simplicity of $A\rtimes_\alpha\mathbb Z$ is rather long as it uses Theorem \ref{02-12-22}. Lemma \ref{28-11-22b} offers therefore a shortcut in the proof of Theorem \ref{12-11-22} if one is not interested in the simplicity of $e(A \rtimes_\alpha \mathbb Z)e$ and $A\rtimes_\alpha \mathbb Z$.
\end{remark}

We consider now the dual flow $\hat{\alpha}$ of $\alpha$ on $B := A  \rtimes_\alpha \mathbb Z$ and denote by $\hat{\alpha}^e$ the restriction of this flow to $eBe$.

\begin{remark}\label{31-10-23a} \rm 
A lower semi-continuous trace $\tau$ on a $C^*$-algebra is \emph{extremal} when the lower semi-continuous traces it dominates are scalar multiples of $\tau$, i.e. when it is extremal as a $1$-KMS weight for the trivial action. Later in Chapter \ref{factortypes} we shall need the following observation: 

\begin{obs}\label{31-10-23}
Let $\psi$ be a $\beta$-KMS weight for the dual flow $\hat{\alpha}$ on $A\rtimes_\alpha \mathbb Z$. If $\psi$ is extremal the restriction $\tau_\psi := \psi|_A$ is an extremal lower semi-continuous trace on $A$. 
\end{obs}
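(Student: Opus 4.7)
The plan is to proceed by contrapositive: if $\tau_\psi$ fails to be extremal among lower semi-continuous traces on $A$, I will produce a $\beta$-KMS weight $\phi\le\psi$ for $\hat\alpha$ that is not a scalar multiple of $\psi$, contradicting extremality of $\psi$.

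By Theorem~\ref{05-03-22} applied with the trivial flow on $A$ (so that $\mathcal Z_{\tau_\psi}^{\sigma''} = \mathcal Z_{\tau_\psi}$), the assumed non-extremality of $\tau_\psi$ furnishes an operator $T\in \pi_{\tau_\psi}(A)' \cap \pi_{\tau_\psi}(A)''$ with $0\le T\le 1$ that is not a scalar multiple of the identity. The task then becomes to lift $T$ to a non-scalar element $\tilde T$ of $\mathcal Z_\psi^{\hat\alpha''}$: one more appeal to Theorem~\ref{05-03-22}, this time to $\psi$ and $\hat\alpha$, will then produce $\phi := \psi_{\tilde T}$ as required.

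To build $\tilde T$ I would exploit the period $2\pi$ of $\hat\alpha$: the strongly continuous unitary group $U^\psi$ on $H_\psi$ implementing $\hat\alpha$ has integer spectrum, so $H_\psi = \bigoplus_{k\in\mathbb Z} H_\psi(k)$ splits into spectral subspaces. The inclusion $A\subseteq B^{\hat\alpha}$ provides an isometry $V:H_{\tau_\psi}\to H_\psi$ determined by $V\Lambda_{\tau_\psi}(a):=\Lambda_\psi(a)$, whose image is $H_\psi(0)$ and which intertwines $\pi_{\tau_\psi}$ with $\pi_\psi|_A$. Lemma~\ref{26-10-20} yields the scaling $\tau_\psi\circ\alpha = e^{-\beta}\tau_\psi$, so $W_\alpha\Lambda_{\tau_\psi}(a):=e^{\beta/2}\Lambda_{\tau_\psi}(\alpha(a))$ defines a unitary on $H_{\tau_\psi}$ and $\alpha^\# := \Ad W_\alpha$ is an automorphism of $\pi_{\tau_\psi}(A)''$ lifting $\alpha$. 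The unitaries $\pi_\psi(u^k)$, appropriately rescaled, identify each $H_\psi(k)$ with $H_\psi(0)\simeq H_{\tau_\psi}$; under these identifications $\pi_\psi|_A$ acts on $H_\psi(k)$ as $\pi_{\tau_\psi}\circ\alpha^{-k}$. I define $\tilde T$ block-diagonally by letting it act on the copy of $H_{\tau_\psi}$ corresponding to $H_\psi(k)$ as $\alpha^\#_{-k}(T)$.

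Being block-diagonal with respect to the spectral decomposition of $U^\psi$, $\tilde T$ commutes with every $U^\psi_t$ and hence is $\hat\alpha''$-fixed. Centrality of $T$ in $\pi_{\tau_\psi}(A)''$ combined with the twist identifications gives $\tilde T\in \pi_\psi(A)'$, while the consistency of the shift $\pi_\psi(u)$ with $\alpha^\#$ in passing from $H_\psi(k)$ to $H_\psi(k+1)$ yields $\tilde T\in \{\pi_\psi(u)\}'$, so altogether $\tilde T\in \pi_\psi(B)'$. The hard part of the argument is showing that $\tilde T$ also lies in $\pi_\psi(B)''$, so that it ends up in the centre: this requires the concrete $\ell^2(\mathbb Z,H_{\tau_\psi})$-model of $H_\psi$ implicit in Sections~\ref{07-08-22e} and~\ref{crosseddiscrete}, together with a Kaplansky-density argument that approximates $T$ by elements of $\pi_{\tau_\psi}(A)$ and pulls these approximations back through the block model to approximate $\tilde T$ by elements of $\pi_\psi(B)$ in the strong operator topology. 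Once this verification is in place, $\tilde T$ is a non-scalar element of $\mathcal Z_\psi^{\hat\alpha''}$ (its $k=0$ block is precisely $T$), and Theorem~\ref{05-03-22} furnishes $\phi:=\psi_{\tilde T}\le \psi$ which is not a scalar multiple of $\psi$, contradicting extremality.
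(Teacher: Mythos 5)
Your strategy is genuinely different from the paper's, and it contains a gap that is not merely technical but fatal: the step you defer as ``the hard part'', namely $\tilde T\in\pi_\psi(B)''$, is in general false, and with it the whole contrapositive collapses. The reason is that the Observation is not a general fact about dual flows on crossed products $A\rtimes_\alpha\mathbb Z$; it depends on the particular AF-algebra $A$ and automorphism $\alpha$ constructed in Section \ref{29-12-22a}. A counterexample to the general statement your argument would prove: let $A=C(\mathbb T)$ and let $\alpha$ be an irrational rotation, so that $A\rtimes_\alpha\mathbb Z$ is the irrational rotation algebra. Its unique trace is the unique $0$-KMS weight for the dual flow (by Lemma \ref{26-10-20}, since Lebesgue measure is the unique $\alpha$-invariant measure), it is extremal because $\pi_\psi(A\rtimes_\alpha\mathbb Z)''$ is the hyperfinite $II_1$ factor, yet its restriction to $C(\mathbb T)$ is Lebesgue measure, which is very far from being an extremal trace on $C(\mathbb T)$. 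In that example your block-diagonal operator $\tilde T$ (with blocks $T\circ R_\theta^{k}$ for a non-constant $T\in L^\infty(\mathbb T)$) does lie in $\pi_\psi(B)'$ and commutes with $U^\psi_t$, but it cannot lie in $\pi_\psi(B)''$ because the center of a $II_1$ factor is trivial; it is a non-scalar element of the commutant only. So no amount of Kaplansky-density work will push $\tilde T$ into $\pi_\psi(B)''$, and ergodicity of $\alpha$ on the tracial level is exactly the obstruction your construction ignores.

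Any correct proof must therefore exploit the specific structure of $A$, and this is what the paper does. There $K_0(A)=G$ is a group of affine functions on the proper simplex bundle $(S,\pi)$, and traces on the AF-algebra $A$ are determined by their induced maps on $K_0$ (Theorem \ref{22-11-22}). Given a trace $\mu\leq\tau_\psi$, the domination $\mu_*\leq{\tau_\psi}_*$ together with Lemma \ref{27-08-21x} forces $\mu_*$ to be (a scalar multiple of) evaluation at a point of the fibre $\pi^{-1}(\beta)$; every such evaluation automatically satisfies the scaling relation $\mu_*\circ\rho=e^{-\beta}\mu_*$, so by Corollary \ref{06-08-21a} both $\mu$ and $\tau_\psi-\mu$ extend to $\beta$-KMS weights for $\hat\alpha$ whose sum is $\psi$ by the bijection of Lemma \ref{26-10-20}; extremality of $\psi$ then forces $\mu$ to be proportional to $\tau_\psi$. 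The essential input is that in this model \emph{every} trace dominated by $\tau_\psi$ already has the correct scaling behaviour under $\alpha$ --- precisely the property that fails for the irrational rotation and that your lifting construction cannot manufacture.
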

To see this note that ${\tau_\psi}_*\circ \rho = e^{-\beta}{\tau_\psi}_*$ by Lemma \ref{26-10-20}. Thanks to Lemma \ref{27-08-21x} there is therefore an $s \in \pi^{-1}(\beta)$ such that ${\tau_\psi}_*$ is a scalar multiple of the map $G \ni g \mapsto g(s)$. To show that $\tau_\psi$ is extremal consider a trace $\mu$ on $A$ such that $\mu \leq \tau_\psi$. We must show that $\mu$ is a scalar multiple of $\tau_\psi$. Since $\mu_* \leq {\tau_\psi}_*$ it follows that $\mu_* : G \to [0,\infty)$ must factor through the restriction map $G \to \Aff \pi^{-1}(\beta)$ and hence be a scalar multiple of the map $ G \ni g \mapsto g(s')$ for some $s' \in \pi^{-1}(\beta)$. In particular, $\mu_* \circ \rho = e^{-\beta}\mu_*$ and it follows therefore from Corollary \ref{06-08-21a} and Theorem \ref{22-11-22} in Appendix \ref{AppD} that there is $\beta$-KMS weight $\phi$ for $\hat{\alpha}$ such that ${\phi|_A} = \mu$. Similarly, there is also $\beta$-KMS weight $\phi'$ for $\hat{\alpha}$ such that ${\phi'|_A} = \tau_\psi -\mu$. It follows then from Lemma \ref{26-10-20} that $\psi = \phi + \phi'$. Since $\psi$ is extremal it follows that $\phi$ is a scalar multiple of $\psi$. Hence $\mu$ is a scalar multiple of $\tau_\psi$. \qed
\end{remark}

\begin{lemma}\label{02-09-21d} The KMS bundle $(S^{\hat{\alpha}^e},\pi^{\hat{\alpha}^e})$ of $\hat{\alpha}^e$ is isomorphic to $(S,\pi)$.
\end{lemma}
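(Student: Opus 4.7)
\textbf{Proof plan for Lemma \ref{02-09-21d}.}

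The strategy is to assemble the pieces already in hand: Corollary \ref{06-08-21a} parametrises $\beta$-KMS states for $\hat{\alpha}^e$ by positive homomorphisms $\phi : G \to \mathbb R$ with $\phi\circ\rho = e^{-\beta}\phi$ and $\phi(1) = 1$, and Lemma \ref{27-08-21x} says such $\phi$ are exactly the evaluation maps $\omega_\beta(g) = g(\omega)$ for $\omega \in \pi^{-1}(\beta)$. Composing these bijections fibrewise yields a bijection $\Phi : S \to S^{\hat{\alpha}^e}$ over $\mathbb R$: send $\omega \in \pi^{-1}(\beta)$ to the pair $(\psi_\omega, \beta)$, where $\psi_\omega$ is the unique $\beta$-KMS state on $e(A\rtimes_\alpha\mathbb Z)e$ whose associated trace $\tau_\omega$ on $A$ satisfies $(\tau_\omega)_*(g) = g(\omega)$ for all $g \in G = K_0(A)$.

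Affinity of $\Phi$ on each fibre is immediate: if $\omega = t\omega_1 + (1-t)\omega_2$ in $\pi^{-1}(\beta)$, then $g(\omega) = tg(\omega_1) + (1-t)g(\omega_2)$ for every $g \in G \subseteq \mathcal A(S,\pi)$, so $(\tau_\omega)_* = t(\tau_{\omega_1})_* + (1-t)(\tau_{\omega_2})_*$; by the injectivity clause of Theorem \ref{22-11-22} this gives $\tau_\omega = t\tau_{\omega_1} + (1-t)\tau_{\omega_2}$, and hence $\psi_\omega = t\psi_{\omega_1} + (1-t)\psi_{\omega_2}$ via composition with the canonical conditional expectation $P$.

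The main work is continuity of $\Phi$. I would show continuity of $\omega \mapsto \psi_\omega$ into the state space of $eBe$ (with the weak* topology); combined with continuity of $\pi$ this yields continuity into $S^{\hat\alpha^e} \subseteq E(eBe) \times \mathbb R$. Since $\psi_\omega(x) = \tau_\omega(P(x))$ for $x \in eBe$, it suffices to check that $\omega \mapsto \tau_\omega(b)$ is continuous for each $b \in eAe$. For a projection $p \in eAe$ one has $\tau_\omega(p) = [p](\omega)$, which is continuous in $\omega$ because $[p] \in G \subseteq \mathcal A(S,\pi)$. As $A$ is AF, every self-adjoint $b \in eAe$ is a norm-limit of self-adjoint elements $b_n$ lying in finite-dimensional $C^*$-subalgebras, hence of finite real-linear combinations of projections in $A$; cutting down by $e$ keeps these in $eAe$. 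The estimate $|\tau_\omega(b) - \tau_\omega(b_n)| \leq \|b - b_n\|\,\tau_\omega(e) = \|b - b_n\|$ (using $\tau_\omega(e) = 1(\omega) = 1$) gives uniform convergence in $\omega$, so $\omega \mapsto \tau_\omega(b)$ is continuous.

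Finally, to upgrade $\Phi$ to a homeomorphism, I would use properness of both projections. For any compact $K \subseteq \mathbb R$, the restriction $\Phi : \pi^{-1}(K) \to (\pi^{\hat\alpha^e})^{-1}(K)$ is a continuous bijection between compact Hausdorff spaces (compactness of the target comes from Theorem \ref{12-11-22x}), hence a homeomorphism; since $S$ and $S^{\hat\alpha^e}$ are exhausted by such compact slabs and both topologies are determined by their restrictions to these slabs, $\Phi$ and $\Phi^{-1}$ are continuous on all of $S$. The main obstacle is the continuity verification, and specifically making sure the AF approximation argument is genuinely uniform in $\omega$ on each compact slab — which is what the normalisation $\tau_\omega(e) = 1$ guarantees.
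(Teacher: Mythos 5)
Your proposal is correct and rests on the same two pillars as the paper's proof -- the parametrisation of the $\beta$-KMS states of $\hat{\alpha}^e$ by positive homomorphisms on $K_0(A)$ from Corollary \ref{06-08-21a}, and their identification with evaluation at points of $\pi^{-1}(\beta)$ from Lemma \ref{27-08-21x} -- followed by the same properness trick to upgrade a continuous bijection over compact slabs of $\mathbb R$ to a homeomorphism. The one substantive difference is the direction in which you verify continuity. The paper checks continuity of the map $S^{\hat{\alpha}^e} \to S$, which is almost free: weak* convergence of the states gives pointwise convergence of the induced homomorphisms on $G=K_0(A)$, hence convergence of $g(s_n)$ for every $g\in G$, and since the $s_n$ stay in a compact slab the point-separation of $G$ forces $s_n \to s$. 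You instead check continuity of the inverse direction $S \to S^{\hat{\alpha}^e}$, which costs you the AF approximation argument with projections and the uniform bound $|\tau_\omega(b-b_n)|\le \|b-b_n\|\,\tau_\omega(e)$. That argument does work, but one detail needs care: compressing a real-linear combination of projections of $A$ by $e$ does not generally yield a linear combination of projections, and the bound via $\tau_\omega(e)=1$ genuinely requires $b_n \in eAe$ because $\tau_\omega$ may be unbounded on $A$. The clean fix is to invoke that the corner $eAe$ is itself AF and approximate by self-adjoint elements of finite-dimensional subalgebras of $eAe$ directly. With that repair your route is complete; the paper's choice of direction simply avoids the approximation step altogether.
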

\begin{proof} Let $(\omega,\beta) \in S^{\hat{\alpha}^e}$. By Corollary \ref{06-08-21a} $(\tilde{\omega}|_A)_*$ is an element of $\Hom^+(G,\mathbb R)$ such that $(\tilde{\omega}|_A)_*(1) =1$ and $(\tilde{\omega}|_A)_* \circ \rho = e^{-\beta}(\tilde{\omega}|_A)_*$. By Lemma \ref{27-08-21x} there is a $s \in \pi^{-1}(\beta)$ such that $(\tilde{\omega}|_A)_*(g) = g(s)$ for all $g \in G$. $s$ is unique since $G$ separates the points of $S$ by Lemma \ref{07-09-21}. We define $\Phi : S^{\hat{\alpha}^e} \to S$ such that $\Phi(\omega,\beta) = s$. Then $\pi \circ \Phi = \pi^{\hat{\alpha}^e}$. Let $ x\in S$. Set $\beta:= \pi(x)$ and consider the positive homomorphism $x_\beta : G \to \mathbb R$. Since $x_\beta(1) =1$ and $x_\beta \circ \rho = e^{-\beta} x_\beta$ it follows from Corollary \ref{06-08-21a} that there is a $\beta$-KMS state $\omega$ for $\hat{\alpha}^e$ such that $(\tilde{\omega}|_A)_* = x_\beta$. Then $(\omega,\beta) \in S^{\hat{\alpha}^e}$ and $\Phi(\omega,\beta) =x$. This shows that $\Phi$ is surjective. 
If $(\omega_i,\beta_i) \in S^{\hat{\alpha}^e}, \ i =1,2$, are such that $\Phi((\omega_1,\beta_1)) = \Phi((\omega_2,\beta_2))$, it follows that $\beta_1 = \pi\left(\Phi((\omega_1,\beta_1))\right) = \pi\left(\Phi((\omega_2,\beta_2))\right) = \beta_2$. The injectivity of the map in Corollary \ref{06-08-21a} implies now that $\omega_1 = \omega_2$, and we conclude that $\Phi : S^{\hat{\alpha}^e} \to S$ is a bijection. In order to show that $\Phi$ is a homeomorphism, note that since $\pi \circ \Phi = \pi^{\hat{\alpha}^e}$, and $\pi$ and $\pi^{\hat{\alpha}^e}$ are both proper maps, it suffices now to show that $\Phi$ is continuous. Assume therefore that $\{(\omega_n,\beta_n)\}$ is a sequence in $S^{\hat{\alpha}^e}$ and that $\lim_{n \to \infty}(\omega_n,\beta_n) =(\omega,\beta)$ in $S^{\hat{\alpha}^e}$. Then $\lim_{n \to \infty} {(\tilde{\omega_n}|_A)}_*(g) =   {\tilde{\omega}|_A}_*(g)$ for all $g \in G$, implying that 
$\lim_{n\to \infty} g (\Phi((\omega_n,\beta_n))) = g (\Phi((\omega,\beta)))$ for all $g \in G$. Since $G$ separates the points of $S$, it follows that
$\lim_{n\to \infty} \Phi((\omega_n,\beta_n)) =\Phi((\omega,\beta))$ 
in $S$.
\end{proof}

 To complete the proof of Theorem \ref{12-11-22} we need to show
 
 \begin{lemma}\label{06-12-22} $eBe$ is simple.
 \end{lemma}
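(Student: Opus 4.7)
The plan is to reduce simplicity of the corner $eBe$ to simplicity of the ambient algebra $B = A \rtimes_\alpha \mathbb{Z}$, and then to deduce simplicity of $B$ from the dynamical information encoded in Lemma \ref{04-08-21x}.

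First I would prove that $B$ itself is simple. The hypotheses available are that $A$ is an AF-algebra, $\alpha \in \Aut A$ is the automorphism produced by Elliott's theorem with $\alpha_* = \rho$ on $(K_0(A),K_0(A)^+) = (G,G^+)$, and, crucially, that by Lemma \ref{04-08-21x} the only $\rho$-invariant order ideals in $(G,G^+)$ are $\{0\}$ and $G$ itself. Since order ideals in $K_0(A)$ correspond bijectively to closed two-sided ideals of the AF-algebra $A$ (with the correspondence preserving invariance under automorphisms), this translates to: the only $\alpha$-invariant closed two-sided ideals in $A$ are $\{0\}$ and $A$. Theorem \ref{02-12-22} in Appendix \ref{elliott}, which was invoked for exactly this purpose in Example \ref{08-12-22x}, then yields that $B = A \rtimes_\alpha \mathbb{Z}$ is simple.

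Next I would pass from simplicity of $B$ to simplicity of the corner $eBe$. By Lemma \ref{28-11-22b}, the projection $e$ is full in $B$, i.e.\ the closed two-sided ideal of $B$ generated by $e$ is all of $B$. Let $J$ be a non-zero closed two-sided ideal of $eBe$; I want to show $J = eBe$. Form $I := \overline{\Span B J B}$, which is a closed two-sided ideal of $B$. Since $J \neq 0$, $I \neq 0$, and simplicity of $B$ forces $I = B$. In particular $e \in B = \overline{\Span B J B}$, so $e$ is a norm-limit of finite sums $\sum_i b_i x_i b_i'$ with $b_i,b_i' \in B$ and $x_i \in J$. Multiplying on both sides by $e$ and using $e x_i e = x_i$ (since $J \subseteq eBe$) shows that $e$ is a norm-limit of elements of the form $\sum_i (eb_ie) x_i (eb_i'e) \in J$. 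Hence $e \in J$, and therefore $J \supseteq eBe\, e\, eBe = eBe$, giving $J = eBe$.

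The main (and only non-trivial) obstacle is the first step, namely the appeal to the simplicity criterion for AF-crossed products by $\mathbb{Z}$ in Theorem \ref{02-12-22}; this is where the content of Lemma \ref{04-08-21x} is genuinely used. The passage from simplicity of $B$ to simplicity of $eBe$ is a standard consequence of fullness of $e$ (it is simply the ideal-lattice half of Morita equivalence between $B$ and $eBe$), and requires no further ingredients.
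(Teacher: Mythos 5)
Your overall route---first establish simplicity of $B=A\rtimes_\alpha\mathbb Z$ via Theorem \ref{02-12-22}, then descend to the corner---is the same as the paper's, and the second step is fine as you give it (once $B$ is simple your appeal to Lemma \ref{28-11-22b} is in fact redundant, since every non-zero ideal of a simple algebra is full). The problem is in the first step: Theorem \ref{02-12-22} has \emph{two} hypotheses, and you verify only one of them. Lemma \ref{04-08-21x} gives you the absence of non-trivial order ideals of $K_0(A)$ that are invariant under all ${\alpha_g}_*$, but the theorem additionally requires that for every $g\neq e$ there is no non-zero order ideal $I$ on which ${\alpha_g}_*$ acts as the identity---in the terminology of Appendix \ref{elliott}, each $\alpha^k$, $k\neq 0$, must be $K$-outer. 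This hypothesis cannot be dropped: the trivial action of $\mathbb Z$ on a simple AF-algebra has no non-trivial invariant ideals at all, yet the crossed product is $A\otimes C(\mathbb T)$, which is not simple.

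In the situation at hand the missing condition does hold and is what the paper checks: for $k\neq 0$ the automorphism $\rho^k$ multiplies a function $f\in G$ by $e^{-k\pi(\cdot)}$, and since the non-zero elements of $G^+$ are strictly positive everywhere on $S$, $\rho^k$ has no fixed points in $K_0(A)^+\setminus\{0\}$; in particular it cannot restrict to the identity on any non-zero order ideal. You need to insert this verification before invoking Theorem \ref{02-12-22}. Note also that being pointwise fixed by $\rho^k$ is not the same as being globally $\rho$-invariant, so this condition is genuinely not a consequence of Lemma \ref{04-08-21x} and must be argued separately.
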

 \begin{proof} For $k \neq 0$ the automorphism ${\alpha^k}_* = \rho^k$ acts without fixed points on $K_0(A)^+ \backslash \{0\}$, as can be seen from the definition of $\rho$. In combination with Lemma \ref{04-08-21x} this shows that the action $\alpha$ has the properties required of $\alpha$ in Theorem \ref{02-12-22}. It follows therefore from that theorem that $B= A \rtimes_\alpha \mathbb Z$ is simple. Assume that $J$ is a non-zero proper ideal in $eBe$. Then $\overline{\Span BJB} = B$ since $B$ is simple. In particular, $e \in \overline{\Span BJB}$ implying that there is a finite sum $\sum_i a_ij_ib_i$, where $a_i,b_i \in B$ and $j_i \in J$ for all $i$, and $\left\| \sum_i a_ij_ib_i -e\right\| < 1$. This implies that $\left\| \sum_i ea_iej_ieb_ie -e\right\| < 1$. Hence $ \sum_i ea_iej_ieb_ie$ is an element of $ J$ which is invertible in $eBe$ implying that $J =eBe$.
 \end{proof}

 The proof of Theorem \ref{12-11-22} is now complete.

 \begin{remark}\label{26-12-22}
  \textnormal{The flows without KMS states or weights are not represented in the examples of Theorem \ref{12-11-22} because we needed $S$ to be non-empty in order to construct the dimension group $(G,G^+)$. It is not clear to the author if it is impossible to construct an automorphism of an AF-algebra such that the dual flow does not have any KMS weights, but is it nonetheless easy to construct examples of flows without KMS weights based on Theorem \ref{12-11-22}. Indeed, the theorem provides a wealth of simple $C^*$-algebras of the form $e(A \rtimes_\alpha \mathbb Z)e$ for which the dual flow does not have any $0$-KMS states. Since the dual flow is periodic it is easy to see that $e(A \rtimes_\alpha \mathbb Z)e$ does not have any trace states at all. In that case the trivial flow on $e(A \rtimes_\alpha \mathbb Z)e$ will have no KMS states and likewise the trivial flow on $A \rtimes_\alpha \mathbb Z$ will have no KMS weights.}
\end{remark}

\begin{notes}\label{29-12-22} The material of this section is an elaboration of material from \cite{ET} and \cite{EST}.  It should be noted that the results in these papers are much stronger than Theorem \ref{12-11-22}, and that the basic ideas go back to \cite{BEH}, \cite{BEK1} and \cite{BEK2}.

\end{notes}

\section{Examples of KMS bundles}

Instead of presenting a small sample of proper simplex bundles we provide a method by which the reader can produce a wealth of examples himself.

Let $X$ be a locally compact second countable Hausdorff space and $f: X \to \mathbb R$ a continuous and proper map. Let $\mathcal M$ denote set of regular Borel probability measures\footnote{A bounded Borel measure on a locally compact second countable Hausdorff space is automatically regular, but we don't want to go into this technicality.} $m$ on $X$ with the property that $m \circ f^{-1}$ is concentrated on a single point $F(m)$ of $\mathbb R$. In symbols,
$$
m \circ f^{-1} = \delta_{F(m)} 
.
$$
Note that
\begin{equation}\label{01-11-23a}
\delta_x \circ f^{-1} = \delta_{f(x)}, 
\end{equation}
showing that $\delta_x \in \mathcal M$ and $F(\delta_x) = f(x)$ for all $x \in X$.
For each $g \in C_c(X)$ we define a function on $\mathcal M$ by the formula
\begin{equation}\label{28-12-22a}
\mathcal M \ni m \mapsto \int_X g \ \mathrm d m ,
\end{equation}
and we equip $\mathcal M$ with the weakest topology making these maps continuous.

\begin{prop}\label{28-12-22} Then
\begin{itemize}
\item[(a)] $(\mathcal M,F)$ is a proper simplex bundle,
\item[(b)] $F^{-1}(t) \neq \emptyset$ if and only if $t \in f(X)$, and
\item[(c)] $F^{-1}(t)$ consists of the Borel probability measures on $X$ that are concentrated on $f^{-1}(t)$ for $t \in f(X)$.
\end{itemize}
\end{prop}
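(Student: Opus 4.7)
The plan is to begin with (c) and (b), which are essentially tautological, and then use them to identify each fibre $F^{-1}(t)$ with the standard Choquet simplex of regular Borel probability measures on the compact set $f^{-1}(t)$. For (c), if $m \in F^{-1}(t)$ then
$m(X \setminus f^{-1}(t)) = (m \circ f^{-1})(\mathbb R \setminus \{t\}) = 0$,
so $m$ is concentrated on $f^{-1}(t)$; conversely, if $m$ is concentrated on $f^{-1}(t)$ then trivially $m \circ f^{-1} = \delta_t$. For (b), one direction follows from \eqref{01-11-23a}, which shows $\delta_x \in F^{-1}(f(x))$, and the other from the fact that $f^{-1}(t) = \emptyset$ when $t \notin f(X)$, so no probability measure can be concentrated there.

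For (a), the key observation is that for every compact $K \subseteq \mathbb R$ the set $f^{-1}(K)$ is compact and metrizable (compact by properness of $f$, metrizable as a subset of the second countable locally compact Hausdorff space $X$). Hence by Riesz' representation theorem the space $\mathrm{Prob}(f^{-1}(K))$ of regular Borel probability measures on $f^{-1}(K)$ is a compact metrizable Choquet simplex in the weak* topology generated by $C(f^{-1}(K))$. By (c) this space is in bijection with $F^{-1}(K)$, and a Tietze-extension argument shows that the relative topology inherited from $\mathcal M$ on $F^{-1}(K)$ agrees with this weak* topology: any $g \in C(f^{-1}(K))$ extends to some $\tilde g \in C_c(X)$ with $\int g\, dm = \int \tilde g\, dm$ for all $m \in F^{-1}(K)$, while conversely the restriction of any $g \in C_c(X)$ to $f^{-1}(K)$ lies in $C(f^{-1}(K))$. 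Taking $K = \{t\}$ gives the Choquet simplex structure on $F^{-1}(t)$.

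To see that $F$ is continuous, given $h \in C_c(\mathbb R)$ the composition $h \circ f$ lies in $C_c(X)$ because $\mathrm{supp}(h \circ f) \subseteq f^{-1}(\mathrm{supp}\, h)$ and $f$ is proper, and the change of variables formula gives $\int_X (h \circ f)\, dm = h(F(m))$ for all $m \in \mathcal M$. Thus $m \mapsto h(F(m))$ is continuous for every $h \in C_c(\mathbb R)$, which forces $F$ itself to be continuous. Combined with the previous paragraph this shows that $F^{-1}(K)$ is compact for every compact $K \subseteq \mathbb R$, so $F$ is proper; and writing $\mathcal M = \bigcup_{n=1}^\infty F^{-1}([-n,n])$ exhibits $\mathcal M$ as a $\sigma$-compact Hausdorff space whose compact pieces are metrizable, hence as a second countable locally compact Hausdorff space. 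The Hausdorff property itself follows from the fact that the maps \eqref{28-12-22a} separate points of $\mathcal M$ by Riesz' representation theorem.

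Finally, for each $g \in C_c(X)$ the function $\hat g(m) := \int_X g\, dm$ is continuous on $\mathcal M$ by definition of the topology, and affine on the whole of $\mathcal M$ (hence on every fibre), so $\hat g \in \mathcal A(\mathcal M, F)$; since these functions separate the points of $\mathcal M$, the space $\mathcal A(\mathcal M, F)$ separates points, completing the proof of (a). The main obstacle is the careful identification in the second paragraph of the relative topology on $F^{-1}(K)$ with the weak* topology on $\mathrm{Prob}(f^{-1}(K))$, as this identification underpins both the simplex structure on the fibres and all of the topological properties of $\mathcal M$.
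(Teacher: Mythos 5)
Your treatment of (b), (c), the continuity of $F$, the Hausdorff property, and the separation of points by $\mathcal A(\mathcal M,F)$ matches the paper's, and your Tietze-extension identification of the relative topology on $F^{-1}(K)$ with the weak* topology coming from $C(f^{-1}(K))$ makes explicit a point the paper leaves implicit. However, there is a genuine error at the step that carries the main analytic weight: you assert that "by (c) this space [$\mathrm{Prob}(f^{-1}(K))$] is in bijection with $F^{-1}(K)$", and you then derive compactness of $F^{-1}(K)$ --- hence properness of $F$ and local compactness of $\mathcal M$ --- from compactness of $\mathrm{Prob}(f^{-1}(K))$. This bijection is false whenever $K$ is not a singleton: an element of $F^{-1}(K)$ is a probability measure concentrated on a \emph{single} fibre $f^{-1}(t)$ with $t\in K$, whereas a generic $m\in \mathrm{Prob}(f^{-1}(K))$ has $f_*m$ spread over $K$ (take $X=\mathbb R$, $f=\mathrm{id}$, $K=[0,1]$ and $m$ Lebesgue measure). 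What (c) gives is only the inclusion $F^{-1}(K)\subseteq \mathrm{Prob}(f^{-1}(K))$, and compactness of the ambient space does not pass to an arbitrary subset.

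To repair this you must show that $F^{-1}(K)$ is weak*-closed in $\mathrm{Prob}(f^{-1}(K))$, i.e. that the condition ``$f_*m$ is a Dirac mass at a point of $K$'' survives weak* limits. This is precisely the content of the Observation in the paper's proof: given a sequence $\{m_i\}$ in $F^{-1}(K)$ one may assume $t:=\lim_i F(m_i)$ exists, pass to a weak* limit $m'$ of the $m_i$ viewed as measures on the compact set $f^{-1}(K)$, and then verify that $m'$ is concentrated on $f^{-1}(t)$ by testing against functions $\psi\circ f$ with $\psi\in C_c(\mathbb R\setminus\{t\})$; equivalently, one can observe that $f_*$ is weak*-continuous and that the set of Dirac measures on the compact metric space $K$ is weak*-closed, so that $F^{-1}(K)$ is the preimage of a closed set. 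Either way, this closedness argument is the missing idea; without it the properness of $F$ and the local compactness of $\mathcal M$ are not established. Your use of the identification for $K=\{t\}$, where the bijection with $\mathrm{Prob}(f^{-1}(t))$ is correct, is fine and is all that is needed for the Choquet simplex structure of the fibres.
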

\begin{proof} (c) is obvious. (b): Let $t \in \mathbb R$. Then $F^{-1}(t) \neq \emptyset $ implies that there is an $m \in \mathcal M$ such that $m \circ f^{-1} = \delta_t$. Then $m(f^{-1}(t)) = 1$ and hence $f^{-1}(t) \neq \emptyset$. Thus $t \in f(X)$. Conversely, when $t = f(x)$ it follows from \eqref{01-11-23a} that $\delta_x \in F^{-1}(t)$. (a): The topology is Hausdorff because $C_c(X)$ separates the regular Borel measures of $X$. It is second countable because the topology of $X$ is second countable so that $C_c(X)$ is separable in the topology of uniform convergence on compact subsets. Let's show that $F : \mathcal M \to \mathbb R$ is continuous at $m \in \mathcal M$. Let $\epsilon > 0$ and let $h : \mathbb R \to [0,1]$ be a continuous function such that $h(t) > 0$ iff $t \in \left]F(m)-\epsilon, F(m) + \epsilon\right[$. For $m' \in \mathcal M$, $|F(m')-F(m)| < \epsilon$ iff $\int_X h \circ f \ \mathrm d m' > 0$. This is an open subset of $\mathcal M$ because $h \circ f \in C_c(X)$, and we have therefore shown that $F : \mathcal M \to \mathbb R$ is continuous. Let $m \in \mathcal M$ and let $\epsilon > 0$. To show that $\mathcal M$ is locally compact and $F$ proper we show
\begin{obs}\label{01-11-23c} Let $K \subseteq \mathbb R$ be a compact subset of the real numbers. Then $F^{-1}(K)$ is compact in $\mathcal M$.
\end{obs}
 Since the topology of $\mathcal M$ is second countable, to establish Observation \ref{01-11-23c} it suffices to show that any sequence $\{m_i\}_{i \in \mathbb N}$ in $F^{-1}(K)$ has a convergent subsequence. For this we can assume that $\lim_{i \to \infty}  F(m_i)$ exists; say, $t:= \lim_{i \to \infty}  F(m_i)$. Set $A:=  f^{-1}(K)$. Note that $t \in K$ and 
$m_i\left(A\right) =1$
for all $i$. Since $f$ is proper $A$ is a compact subset of $X$ and there is therefore a Borel probability measure $\nu$ on $A$ such that, after replacing $\{m_i\}_{i \in \mathbb N}$ by a subsequence, 
$$
\lim_{i \to \infty} \int_{A} \psi \ \mathrm d m_i = \int_{A} \psi \ \mathrm d \nu
$$
for all $\psi \in C\left(A\right)$. Define a Borel probability measure $m'$ on $X$ such that
$$
m'(B) := \nu(B \cap A) .
$$
Since each $m_i$ as well as $m'$ are concentrated on $A$ it follows that 
\begin{equation}\label{01-11-23}
\lim_{i \to \infty} \int_{X} h \ \mathrm d m_i =\lim_{i \to \infty} \int_{A} h|_A \ \mathrm d m_i = \int_{A} h|_A \ \mathrm d m' = \int_X h \ \mathrm d m'
\end{equation}
for all $h \in C_c(X)$. We claim that $m'$ is concentrated on $f^{-1}(t)$. To see this let $\psi$ be any non-negative element of $C_c(\mathbb R \backslash \{t\})$. Since $\lim_{i \to \infty} F(m_i) = t$ there is an $n_0$ such that $F(m_i) \notin \supp \psi$ when $i \geq n_0$ and hence 
$$
\int_\mathbb R \psi \ \mathrm dm_i \circ f^{-1} = 0
$$
for all $i \geq n_0$. It follows that $\int_\mathbb R \psi \ \mathrm dm' \circ f^{-1} = 0$. Since $\psi$ was arbitrary it follows that $m'$ is concentrated on $f^{-1}(t)$. Hence $m' \in \mathcal M$ and $F(m') = t$. It follows from \eqref{01-11-23} that $\lim_i m_i = m'$ in $\mathcal M$.

It follows immediately from Observation \ref{01-11-23c} that $F$ is proper, but also that $\mathcal M$ is locally compact. Indeed, for any $m \in \mathcal M$ the set 
$$
F^{-1}\left(]F(m)-1,F(m)+1\right[)
$$ 
is an open neighborhood of $m$ whose closure is contained in 
$$
F^{-1}\left([F(m)-1,F(m)+1\right])
$$ 
and therefore compact by Observation \ref{01-11-23c}. We conclude therefore that $\mathcal M$ is a second countable locally compact Hausdorff space and that $F : \mathcal M \to \mathbb R$ is both continuous and proper. Since the Borel probablity measures on $X$ that are concentrated on $f^{-1}(t)$ is a Choquet simplex affinely homeomorphic to the Bauer simplex of Borel probability measures on $f^{-1}(t)$ it follows from (b) and (c) that $(\mathcal M,F)$ is a simplex bundle. To complete the proof of (a) and hence of the proposition, it remains only to prove that $\mathcal A(\mathcal M,F)$ separates the points of $\mathcal M$, and this follows from the observation that the functions on $\mathcal M$ given by \eqref{28-12-22a} (with $g$ real-valued) are all elements of $\mathcal A(\mathcal M,F)$.

\end{proof}

With the aid of Proposition \ref{28-12-22} it is easy to construct a wealth of proper simplex bundles and hence examples of KMS bundles, for example as follows. Let $X$ be closed subset of $\mathbb C$ such that
$$
\sup \left\{ \left|\Imag z\right| : \ z \in X\right\} < \infty ,
$$
and let $f : X \to \mathbb R$ be the function $f(z) = \Real z$. By combining Proposition \ref{28-12-22} with Theorem \ref{12-11-22} it follows that there is a periodic flow on a simple separable unital $C^*$-algebra for which there is a $\beta$-KMS state if and only if $\beta \in \Real (X)$, and such that for each $\beta \in \Real (X)$ the simplex of $\beta$-KMS states is affinely homeomorphic to the simplex of Borel probability measures on 
$$
\left\{ z \in X: \ \Real z = \beta \right\} .
$$
In particular, for any closed subset $F$ of $\mathbb R$ we can set $X =F$ in this construction to obtain a simple separable unital $C^*$-algebra for which there is a $\beta$-KMS state if and only if $\beta \in F$, and for each $\beta \in F$ the $\beta$-KMS state is unique. This was the main result in \cite{BEH}.

It is also possible, but a little more cumbersome, to construct a second countable locally compact Hausdorff space $X$ and a continuous proper surjection $f: X \to \mathbb R$ such that $f^{-1}(t)$ is not homeomorphic to $f^{-1}(s)$ when $t \neq s$. One such construction is described in Example \ref{31-10-23x} below. By combining Proposition \ref{28-12-22} with Theorem \ref{12-11-22} it follows that there is a periodic flow $\sigma$ on a simple separable unital $C^*$-algebra for which there is a $\beta$-KMS state for all $\beta \in \mathbb R$, and such that $S^\sigma_\beta $ is not affinely homeomorphic to $S^\sigma_{\beta'}$ when $\beta \neq \beta'$. A more direct construction with a more explicit description of such a flow on UHF algebras was given in \cite{Th2}.

\begin{example}\label{31-10-23x} \rm For each $n \in \mathbb N \backslash \{0\}$ let $S^n$ denote the $n$-sphere; i.e.
$$
S^n = \left\{ (x_i)_{i=1}^{n+1} \in \mathbb R^{n+1}: \ \sum_{i=1}^{n+1} x_i^2 = 1 \right\} .
$$
For each infinite subset $A \subseteq \mathbb N \backslash \{0\}$, let 
$$
X_A : = \bigsqcup_{n \in A} S^n
$$
be the topological disjoint union, which is a locally compact Hausdorff space, and let $X_A^\dagger$ be the one point compactification of $X_A$. Then $X_A^\dagger$ is a compact metrizable space whose connected components are the point at infinity and the sets $S^n, n \in A$. In particular,
$$
Y:= X_{\mathbb N \backslash \{0\}}^\dagger \times \mathbb R
$$
is a second countable locally compact Hausdorff in the product topology. Let $Q$ denote set of pairs $(p,q)$ of rational numbers with $p < q$. Since $Q$ is countable there is an injection
$n : Q \to \mathbb N \backslash \{0\}$. Then
$$
X := Y \backslash \left(\bigcup_{(p,q) \in Q} S^{n(p,q)} \times ]p,q[\right) 
$$
is a closed subset of $Y$. Let $f : X \to \mathbb R$ be the projection to the second coordinate. Then $f$ is a continuous proper map with the property that $f^{-1}(t)$ is a non-empty compact metric space, and $f^{-1}(t)$ is a not homeomorphic to $f^{-1}(s)$ when $t \neq s$ since $f^{-1}(t)$ contains connected components that are not homeomorphic to any of the connected components in $f^{-1}(s)$. 

\end{example}

We invite the reader to use Proposition \ref{28-12-22} and Theorem \ref{12-11-22} to construct flows with other KMS-bundles.

\begin{notes} As alluded to in Notes and remarks \ref{29-12-22} it is possible to elaborate the construction in Section \ref{29-12-22},
and by relying on results from the classification of simple $C^*$-algebras,
 arrange that the examples of proper simplex bundles considered in this section are realized as the KMS bundle of a periodic flow on any given $C^*$-algebra from a very large class of $C^*$-algebras. See \cite{Th3}, \cite{ET} and \cite{EST}.
\end{notes}



\chapter{Factor types of extremal KMS weights}\label{factortypes}

Let $\sigma$ be a flow on the $C^*$-algebra $A$ and let $\psi$ be $\beta$-KMS weight for $\sigma$. If $\beta \neq 0$ and $\psi$ is an extremal $\beta$-KMS weight, as defined in Definition \ref{01-08-23f}, then the von Neumann algebra $\pi_{\psi}(A)''$ is a factor by Corollary \ref{07-03-22}. By work of Murray, von Neumann and Connes the von Neumann algebra factors are divided into the following types:
\begin{align*}
&\text{type} \ I:  \ \ \ \ I_n, \ n \in \mathbb N \cup \{\infty\}. \\
&\text{type} \ II:  \ \ \ \ II_1, \ II_\infty. \\
&\text{type} \ III: \ \ \ \ III_\lambda, \ \lambda \in [0,1].
\end{align*}
See \cite{Co2}, V.1. and V.5. We say that an extremal $\beta$-KMS weight $\psi$ is of \emph{factor type X} when $\pi_\psi(A)''$ is of type X. In this chapter we develop tools for the calculation of the factor types of extremal KMS weights and we end by illustrating an application of the theory to the classification of flows up to cocycle-conjugacy.

\section{The first tools}

\begin{lemma}\label{12-08-23} Let $\psi$ be a $\beta$-KMS weight for $\sigma$. Let $N$ be a von Neumann algebra and $\phi$ a normal faithful semi-finite weight on $N$. Assume that there is a $*$-homomorphism $\pi : A \to N$ such that
\begin{itemize}
\item[(a)] $\pi(A)$ is strongly dense in $N$, and
\item[(b)] $\psi = \phi \circ \pi$.
\end{itemize}
There is central projection $e \in N \cap N'$ such that $\pi_\psi(A)''$ is isomorphic to $Ne$.
\end{lemma}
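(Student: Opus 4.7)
The plan is to transfer the structure of $\pi_\psi(A)''$ inside $N$ by means of an isometry between the two GNS spaces, and then to use a reduction argument in the von Neumann algebra $N$ to produce the central projection $e$.

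First I would build the intertwiner. Let $(H_\phi,\Lambda_\phi,\pi_\phi)$ be the GNS representation of $\phi$; since $\phi$ is normal faithful semi-finite, $\pi_\phi$ is a normal $*$-isomorphism of $N$ onto the von Neumann algebra $\pi_\phi(N)\subseteq B(H_\phi)$. The identity $\psi=\phi\circ\pi$ shows that $\pi$ maps $\mathcal N_\psi$ into $\mathcal N_\phi$ isometrically at the level of the GNS norms, so one can define $V:H_\psi\to H_\phi$ by
\[
V\Lambda_\psi(a):=\Lambda_\phi(\pi(a)),\qquad a\in\mathcal N_\psi,
\]
and extend it by continuity. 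Then $V$ is an isometry, and a direct calculation on vectors of the form $\Lambda_\psi(a)$ yields the intertwining relation $V\pi_\psi(b)=\pi_\phi(\pi(b))V$ for every $b\in A$.

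Next I would exploit the strong density hypothesis. Writing $p:=VV^*\in B(H_\phi)$, the intertwining relation gives $\pi_\phi(\pi(b))p=p\pi_\phi(\pi(b))p=p\pi_\phi(\pi(b))$, so $p$ commutes with $\pi_\phi(\pi(A))$. Because $\pi_\phi$ is normal and $\pi(A)$ is strongly (hence ultraweakly) dense in $N$, the image $\pi_\phi(\pi(A))$ is ultraweakly dense in $\pi_\phi(N)$, whence $p\in\pi_\phi(N)'$. Conjugation by the isometry $V$ is a normal $*$-isomorphism $B(H_\psi)\to pB(H_\phi)p$, and the intertwining relation gives $V\pi_\psi(A)V^*=\pi_\phi(\pi(A))p$. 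Taking double commutants inside $pB(H_\phi)p$ and using the standard reduction formula for a von Neumann algebra by a projection in its commutant, I would obtain
\[
V\pi_\psi(A)''V^*=\bigl(\pi_\phi(\pi(A))p\bigr)''=\pi_\phi(N)p.
\]

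Finally I would convert this to the statement of the lemma. Let $q\in\pi_\phi(N)\cap\pi_\phi(N)'$ be the central support of $p$ in $\pi_\phi(N)$. The reduction map $\pi_\phi(N)\ni x\mapsto xp$ has kernel $\pi_\phi(N)(1-q)$ and restricts to a normal $*$-isomorphism $\pi_\phi(N)q\to\pi_\phi(N)p$. Hence $\pi_\psi(A)''$ is isomorphic to $\pi_\phi(N)q$. Since $\pi_\phi:N\to\pi_\phi(N)$ is a normal $*$-isomorphism it preserves centers, so there is a unique central projection $e\in N\cap N'$ with $\pi_\phi(e)=q$, giving $Ne\simeq \pi_\phi(N)q\simeq\pi_\psi(A)''$, as required.

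The main obstacle is the identification $V\pi_\psi(A)''V^*=\pi_\phi(N)p$: it hinges on knowing that $\pi_\phi(\pi(A))''=\pi_\phi(N)$, which in turn uses the strong density of $\pi(A)$ in $N$ together with the normality of $\pi_\phi$ (normal $*$-homomorphisms of von Neumann algebras are ultraweakly continuous and so carry ultraweakly dense sets to ultraweakly dense sets). Once this is secured, the rest is a routine application of the theory of reductions of a von Neumann algebra by a projection in its commutant.
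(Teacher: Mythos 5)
Your proposal is correct and follows essentially the same route as the paper: build the isometry $V$ from the GNS spaces using $\psi=\phi\circ\pi$, show $p=VV^*$ lies in $\pi_\phi(N)'$ via normality of $\pi_\phi$ and strong density of $\pi(A)$, identify $\pi_\psi(A)''$ with the reduced algebra $\pi_\phi(N)p$, and pass to a central projection. The only cosmetic difference is that you name the central projection explicitly as the central support of $p$, where the paper instead invokes the fact that the kernel of the normal map $x\mapsto xp$ is generated by a central projection.
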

\begin{proof} Thanks to (b) we can define a linear isometry $W : H_\psi \to H_\phi$ such that $W\Lambda_\psi(a) = \Lambda_\phi(\pi(a))$ for $a \in \mathcal N_\psi$. Let $q = WW^* \in B(H_\phi)$ be the orthogonal projection onto the closure of $\{\Lambda_\phi(\pi(a)): \ a \in A \}$ in $H_\phi$. Note that $\pi_\phi(\pi(a))W = W \pi_\psi(a)$,  and hence $\pi_\phi(\pi(a))q = q\pi_\phi(\pi(a))$ for all $a \in A$. Since $\pi_\phi$ is normal by Theorem 7.5.3 in \cite{KR} and $\pi(A)$ is strongly dense in $N$ by assumption (a), it follows that $q \in  \pi_\phi(N)'$. Since $W\pi_\psi(a)W^* = \pi_\phi(\pi(a))$ on $qH_\phi$ it follows that $W\pi_\psi(A)''W^*|_{qH_\phi}$ is the closure of $\pi_\phi(\pi(A))|_{qH_\phi}$ in $B(qH_\phi)$ in the strong operator topology. It follows from (a) that $\pi_\phi(\pi(A))$ is strongly dense in $\pi_\phi(N)$ which is a von Neumann algebra since $\phi$ is normal. Thus $ x\mapsto WxW^*q$ is an isomorphism of $\pi_\psi(A)''$ onto $\pi_\phi(N)q$. Applying Corollary 2.5.5 of \cite{Pe} to the normal homomorphism $\pi_\phi(N) \to \pi_\phi(N)q$ given by $n \mapsto nq$ we find that $\pi_\psi(A)'' \simeq \pi_\phi(N)q \simeq \pi_\phi(N)e_0$, where $e_0$ is a projection in $\pi_\phi(N) \cap \pi_\phi(N)'$. Since $\pi_\phi : N \to \pi_\phi(N)$ is an isomorphism by Theorem 7.5.3 in \cite{KR} it follows that $\pi_\psi(A)'' \simeq Ne$ where $e := \pi_\phi^{-1}(e_0) \in N \cap N'$.
\end{proof}

\begin{cor}\label{12-08-23a} Let $\psi$ be a $\beta$-KMS weight for $\sigma$. Let $N$ be a von Neumann algebra and $\phi$ a normal faithful semi-finite weight on $N$. Assume that there is a $*$-homomorphism $\pi : A \to N$ such that
\begin{itemize}
\item[(a)] $\pi(A)$ is strongly dense in $N$, 
\item[(b)] $\psi = \phi \circ \pi$, and
\item[(c)] $N$ is a factor.
\end{itemize}
Then $\psi$ is an extremal $\beta$-KMS weight and $\pi_\psi(A)''$ is isomorphic to $N$.
\end{cor}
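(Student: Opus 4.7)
The proof should be a short application of the preceding Lemma \ref{12-08-23}. My plan is as follows.

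First, I would invoke Lemma \ref{12-08-23} directly to obtain a central projection $e \in N \cap N'$ with $\pi_\psi(A)'' \simeq Ne$. Since assumption (c) says $N$ is a factor, the center $N \cap N'$ reduces to $\mathbb{C}\cdot 1$, so $e \in \{0,1\}$. To exclude $e = 0$, I would use that a $\beta$-KMS weight is non-zero by definition (see Definition \ref{24-11-21c}), hence $\pi_\psi$ is non-zero and $\pi_\psi(A)'' \neq \{0\}$. This forces $e = 1$ and gives the claimed isomorphism $\pi_\psi(A)'' \simeq N$.

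For extremality I would split according to the sign of $\beta$. When $\beta \neq 0$, the factor property of $\pi_\psi(A)''$ combined with Corollary \ref{07-03-22} immediately yields that $\psi$ is extremal. When $\beta = 0$ (so $\psi$ is a trace), Corollary \ref{09-03-22b} characterizes extremality in terms of the ergodicity of $\sigma''$ on the center $\mathcal Z_\psi$. Here the center $\mathcal Z_\psi = \pi_\psi(A)'' \cap \pi_\psi(A)'$ is, via the isomorphism of Lemma \ref{12-08-23}, identified with $N \cap N' = \mathbb{C}\cdot 1$, and the action of any flow on $\mathbb{C}\cdot 1$ is trivially ergodic. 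Hence $\psi$ is extremal in this case as well.

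I do not anticipate any real obstacle: the content of the corollary is essentially packaged inside Lemma \ref{12-08-23}, and the only two observations beyond that lemma are the standard one that a factor has trivial center (eliminating $e=0$ once non-triviality of $\psi$ is noted) and the invocation of the appropriate extremality criterion (Corollary \ref{07-03-22} or Corollary \ref{09-03-22b}) depending on whether $\beta \neq 0$ or $\beta = 0$. The slight care needed is just making the $\beta = 0$ case explicit, since Corollary \ref{07-03-22} is formulated for $\beta \neq 0$ only.
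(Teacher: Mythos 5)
Your proof is correct and follows essentially the same route as the paper: apply Lemma \ref{12-08-23}, use that the factor $N$ has trivial center to force the central projection to be $1$ (the paper leaves the exclusion of $e=0$ implicit, which you justify via $\psi\neq 0$), and then invoke Corollary \ref{07-03-22}, or Corollary \ref{09-03-22b} when $\beta=0$, for extremality. No issues.
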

\begin{proof} It follows from Lemma \ref{12-08-23} that $\pi_\psi(A)''$ is isomorphic to $N$ and then from Corollary \ref{07-03-22}, or Corollary \ref{09-03-22b} when $\beta =0$, that $\psi$ is extremal.
\end{proof}

\begin{example}\label{30-08-23} \textnormal{Let $h = h^* \in M_n(\mathbb C)$ and consider the flow $\sigma_t = \Ad e^{ith}$ on $M_n(\mathbb C)$. By Example \ref{31-07-23} there is a unique $\beta$-KMS state $\omega_\beta$ for $\sigma$ for every $\beta \in \mathbb R$, given by
$$
\omega_\beta(a) := \frac{\Tr_n(e^{-\beta h}a)}{\Tr_n(e^{-\beta h})} .
$$
Taking $N = M_n(\mathbb C)$ and $\pi$ to be the identity map in Corollary \ref{12-08-23a} it follows that $\pi_{\omega_\beta}(M_n(\mathbb C))'' \simeq M_n(\mathbb C)$. Hence $\omega_\beta$ is of factor type $I_n$. In a similar way it follows from a combination of Theorem \ref{12-04-22} in Appendix \ref{compact operators}, Theorem \ref{02-01-22a} and Lemma \ref{12-08-23} above, that all KMS weights of a flow on the $C^*$-algebra of compact operators on an infinite dimensional separable Hilbert space are of factor type $I_\infty$.}
\end{example}

\begin{lemma}\label{13-08-23}
 Let $\psi$ be an extremal $\beta$-KMS weight for $\sigma$, $\beta \neq 0$, and $e \in A$ a projection such that
\begin{itemize}
\item[(a)] $e$ is $\sigma$-invariant and
\item[(b)] $\psi(e) = 1$.
\end{itemize}
Then $\pi_{\psi|_{eAe}}(eAe)''$ is isomorphic $\pi_\psi(e)\pi_\psi(A)''\pi_{\psi}(e)$.
\end{lemma}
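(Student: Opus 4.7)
The plan is to apply Lemma \ref{12-08-23} to the weight $\psi|_{eAe}$, viewing $eAe$ as a $C^*$-algebra carrying the restricted flow $\sigma|_{eAe}$, with target von Neumann algebra $N := \pi_\psi(e)\pi_\psi(A)''\pi_\psi(e)$ and $*$-homomorphism $\pi : eAe \to N$ given by the restriction of $\pi_\psi$. First I would check that $\psi|_{eAe}$ is itself a $\beta$-KMS weight for $\sigma|_{eAe}$: it is nonzero since $\psi(e)=1$; bounded (hence densely defined) because $a \leq \|a\|e$ in $eAe^+$ gives $\psi(a) \leq \|a\|$; lower semi-continuous because $\psi$ is; $\sigma$-invariant because $e$ is fixed; and it satisfies condition (2) of Kustermans' theorem because $\mathcal A_{\sigma|_{eAe}} \subseteq \mathcal A_\sigma$ with $\sigma_z(a) \in eAe$ for $a \in eAe \cap \mathcal A_\sigma$.

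Next I would verify the hypotheses of Lemma \ref{12-08-23}. Define $\phi := \psi''|_{N^+}$, where $\psi''$ is the normal extension from Theorem \ref{05-03-22x}. Normality and faithfulness of $\phi$ follow immediately from those of $\psi''$. By Lemma \ref{03-03-22fx}, $\psi''(\pi_\psi(e))=\psi(e)=1$, and since $x \leq \|x\|\pi_\psi(e)$ for $x \in N^+$, the weight $\phi$ is in fact bounded, hence semi-finite. The density assumption $\pi(eAe) = \pi_\psi(e)\pi_\psi(A)\pi_\psi(e)$ strongly dense in $N$ follows from Kaplansky's density theorem together with strong continuity of the compression $x \mapsto \pi_\psi(e)x\pi_\psi(e)$ on bounded sets. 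The compatibility $\psi|_{eAe} = \phi \circ \pi$ is just $\phi(\pi_\psi(a)) = \psi''(\pi_\psi(a)) = \psi(a)$ for $a \in (eAe)^+$, again by Lemma \ref{03-03-22fx}.

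Lemma \ref{12-08-23} then produces a central projection $e' \in N \cap N'$ with $\pi_{\psi|_{eAe}}(eAe)'' \simeq Ne'$. It remains to show $e' = 1_N = \pi_\psi(e)$. This is where the extremality of $\psi$ enters: by Corollary \ref{07-03-22}, $\pi_\psi(A)''$ is a factor, so its corner $N$ by the nonzero projection $\pi_\psi(e)$ (nonzero because $\psi''$ is faithful and $\psi''(\pi_\psi(e)) = 1$) is itself a factor, with $N \cap N' = \mathbb C \pi_\psi(e)$. Hence $e' \in \{0,\pi_\psi(e)\}$, and $e' \neq 0$ because $\pi_{\psi|_{eAe}}$ is a nonzero representation. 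Therefore $e' = \pi_\psi(e)$ and $Ne' = N$, giving the desired isomorphism.

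I do not expect any genuine obstacle; the argument is essentially a bookkeeping exercise combining the normal extension $\psi''$, the Laca--Neshveyev style restriction, and the observation that corners of factors are factors. The one point requiring a little care is the verification that $\pi_\psi(e)\pi_\psi(A)\pi_\psi(e)$ is strongly dense in $\pi_\psi(e)\pi_\psi(A)''\pi_\psi(e)$, but this is a standard consequence of Kaplansky's theorem.
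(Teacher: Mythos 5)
Your proposal is correct and follows essentially the same route as the paper: the paper applies Corollary \ref{12-08-23a} (which is exactly Lemma \ref{12-08-23} combined with the observation that the central projection is trivial when the target is a factor) with $\psi|_{eAe}$, $\pi_\psi|_{eAe}$ and the restriction of $\psi''$ to the corner in the respective roles, using that a corner of a factor is a factor. Your additional verifications — that $\psi|_{eAe}$ is a bounded $\beta$-KMS weight and that $\pi_\psi(e)\pi_\psi(A)\pi_\psi(e)$ is strongly dense in the corner — are correct and are points the paper leaves implicit.
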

 \begin{proof} By Theorem \ref{21-02-22dx} and Lemma \ref{03-03-22fx} there is a faithful normal semi-finite weight $\psi''$ on $\pi_\psi(A)''$ such that $\psi'' \circ \pi_\psi = \psi$ on $A^+$. Then $\psi''(\pi_\psi(e)) = \psi(e)= 1$ and hence $\psi''$ restricts to a faithful state $\omega$ on $ \pi_\psi(e)\pi_\psi(A)''\pi_{\psi}(e)$ such that $\omega \circ \pi_\psi|_{eAe} = \psi|_{eAe}$. Note that $\omega$ is normal since $\psi''$ is. Since $\psi$ is extremal, $\pi_\psi(A)''$ is a factor by Corollary \ref{07-03-22} and hence so is $\pi_\psi(e)\pi_\psi(A)''\pi_{\psi}(e)$, cf. Corollary 9 on page 40 in \cite{To}. Therefore Corollary \ref{12-08-23a} applied with $\psi|_{eAe}$ in the role of $\psi$, $\pi_\psi|_{eAe}$ in the role of $\pi$ and $\omega$ in the role of $\phi$, gives the stated conclusion.
  \end{proof}

We are going to use Corollary \ref{12-08-23a} and Lemma \ref{13-08-23} to find the factor types of the extremal KMS states for the flows constructed for the proof of Theorem \ref{12-11-22}. For this we need to introduce the von Neumann algebra crossed product for a discrete group acting on a von Neumann algebra.

\subsection{On von Neumann algebra crossed products by discrete groups}\label{vNcrossed}

Let $N$ be von Neumann algebra acting on the Hilbert space $\mathbb H$, $G$ a discrete group and $\gamma : G \to \Aut N$ a representation of $G$ by automorphisms of $N$. Define a representation $\pi$ of $N$ on $l^2(G,\mathbb H)$ by 
$$
(\pi(a)\xi)(g) = \gamma_g^{-1}(a)\xi(g), \ \ \xi \in l^2(G,\mathbb H)
$$
and a unitary representation $\lambda$ of $G$ on $l^2(G,\mathbb H)$ by
$$
(\lambda_g\xi)(h) = \xi(g^{-1}h), \ \ \xi \in l^2(G,\mathbb H) .
$$
Then 
and the von Neumann algebra of operators on $L^2(G,\mathbb H)$ generated by $\{\pi(a): \ a \in N\}$ and $\lambda_g, \ g \in G$, is the von Neumann algebra crossed product $N\rtimes_\gamma G$. Note that the elements of the form $\sum_{g \in G} \pi(F(g))\lambda_g$ for some finitely supported function $F : G \to N$ form a $*$-algebra, which is dense in $N \rtimes_\gamma G$ in the $\sigma$-strong* topology, cf. Corollary 2.4.15 in \cite{BR}.

Define an isometry $V : \mathbb H \to l^2(G,\mathbb H)$ such that
$$
(V\eta)(g) = \begin{cases} \eta, \ g = e \\ 0, \ g \neq e .\end{cases}
$$
The dual operator $V^*: l^2(G,\mathbb H) \to \mathbb H$ is given by
$$
V^*\xi = \xi(e) .
$$
If $F : G \to N$ is a finitely supported function,
$$
V^*(\sum_{g \in G} \pi(F(g))\lambda_g)V = F(e),
$$
leading to the conclusion that there is a $\sigma$-weakly continuous map $E_N : N\rtimes_\gamma G \to N$ given by $E_n(x) =V^*xV$ and with the property that 
$$
E_N\left(\sum_{g \in G} \pi(F(g))\lambda_g)\right) = F(e) 
$$
for all such functions $F$. $E_N$ is a conditional expectation, but the only property we need is the following.

\begin{lemma}\label{26-08-23} Let $\phi$ be a faithful normal semi-finite weight on $N$. Then $\phi \circ E_N$ is a faithful normal semi-finite weight on $N \rtimes_\gamma G$.
\end{lemma}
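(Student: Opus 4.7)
The plan is to verify the four defining properties of a faithful normal semi-finite weight (additivity/homogeneity, normality, faithfulness, semi-finiteness) in order. Additivity and homogeneity on $(N\rtimes_\gamma G)^+$ are automatic: $E_N(x) = V^*xV$ is a positive linear map, so $\phi \circ E_N$ inherits these properties from $\phi$. Normality is equally short; since $E_N$ is $\sigma$-weakly continuous (as a compression by the fixed isometry $V$) and $\phi$ is lower semi-continuous with respect to the $\sigma$-weak topology on $N^+$, the set $\{x \in (N\rtimes_\gamma G)^+ : (\phi \circ E_N)(x) > t\} = E_N^{-1}(\{y \in N^+ : \phi(y) > t\})$ is $\sigma$-weakly open in $(N \rtimes_\gamma G)^+$.

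For semi-finiteness I would use the module property of $E_N$ over $\pi(N)$: since $\pi(b) \cdot \sum_g \pi(F(g))\lambda_g \cdot \pi(c) = \sum_g \pi(b F(g) \gamma_g(c)) \lambda_g$, extracting the coefficient at $g=e$ gives $E_N(\pi(b) y \pi(c)) = b E_N(y) c$ first on the $*$-algebra of finite sums, and then on all of $N \rtimes_\gamma G$ by $\sigma$-weak continuity. Picking an approximate unit $\{u_i\} \subseteq \mathcal M_\phi^+$, which exists because $\phi$ is semi-finite, we obtain for any $a \in (N\rtimes_\gamma G)^+$ that $\pi(u_i)^{1/2} a \pi(u_i)^{1/2} \leq \|a\|\pi(u_i)$, so $E_N(\pi(u_i)^{1/2} a \pi(u_i)^{1/2}) = u_i^{1/2} E_N(a) u_i^{1/2} \leq \|a\| u_i$ has finite $\phi$-weight, while the net converges $\sigma$-strongly to $a$.

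The main obstacle is faithfulness, and this is where the genuine work lies. I would first establish the covariance identity $E_N(\lambda_h x \lambda_h^*) = \gamma_h(E_N(x))$: for a single term $x = \pi(b)\lambda_g$, direct calculation using $\lambda_h \pi(b) = \pi(\gamma_h(b))\lambda_h$ gives $\lambda_h \pi(b)\lambda_g \lambda_h^* = \pi(\gamma_h(b))\lambda_{hgh^{-1}}$, whose image under $E_N$ is $\gamma_h(b)$ when $g=e$ and $0$ otherwise, matching $\gamma_h(E_N(x))$; the identity extends to $N \rtimes_\gamma G$ by linearity and $\sigma$-weak continuity. Now assume $a \in (N\rtimes_\gamma G)^+$ satisfies $(\phi \circ E_N)(a) = 0$. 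Faithfulness of $\phi$ gives $E_N(a) = V^*aV = 0$, and since $a \geq 0$ this forces $\|a^{1/2}V\eta\|^2 = \langle aV\eta,V\eta\rangle = 0$, so $aV\eta = 0$ for every $\eta \in \mathbb H$. Applying the same reasoning to $\lambda_h a \lambda_h^* \geq 0$, whose image under $E_N$ is $\gamma_h(E_N(a)) = 0$, yields $\lambda_h a \lambda_h^* V = 0$, hence $a\lambda_h^* V\eta = 0$ for every $h \in G$ and $\eta \in \mathbb H$. Since $\{\lambda_h^* V\eta : h \in G, \eta \in \mathbb H\}$ spans a dense subspace of $l^2(G,\mathbb H)$, this forces $a = 0$, completing the proof.
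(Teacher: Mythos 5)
Your proof is correct and follows essentially the same route as the paper's: normality via the $\sigma$-weak continuity of $E_N$, and faithfulness via the covariance identity $E_N(\lambda_h m\lambda_h^*)=\gamma_h(E_N(m))$ (which you verify explicitly while the paper merely asserts it) combined with the density of $\left\{\lambda_h^*V\eta : h \in G, \ \eta \in \mathbb H\right\}$. The only divergence is in the semi-finiteness step, where the paper exhibits the $\sigma$-weakly dense family of elements $x^*x$ with $x=\sum_{g}\pi(F(g))\lambda_g$ and $F(g)\in\mathcal N_\phi$, whereas you compress a general positive element by $\pi(u_i)^{1/2}$ for an approximate unit $\{u_i\}\subseteq\mathcal M_\phi^+$ using the $\pi(N)$-bimodule property of $E_N$ — both arguments are valid.
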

\begin{proof} Since $E_N$ is $\sigma$-weakly continuous it follows that $\phi \circ E_N$ is normal because $\phi$ is. Consider a finitely supported function $F : G \to N$ such that $F(g) \in \mathcal N_\phi$ for all $g \in G$. Since $\phi$ is semi-finite the elements of the form $x =\sum_{g \in G} \pi(F(g))\lambda_g$ for such a function $F$ is $\sigma$-weakly dense in $N \rtimes_\gamma G$, and 
$$
\phi \circ E_N(x^*x) = \sum_{g \in G}\psi(F(g)^*F(g))  < \infty .
$$
It follows that $\phi \circ E_N$ is semi-finite. Let $m \in (N\rtimes_\gamma G)^+$ and assume that $\phi \circ E_N(m) =0$. Since $\phi$ is faithful and $E_N(m) \in N^+$ it follows that $E_N(m) = 0$. This implies that $mV\eta=0$ for all $\eta \in \mathbb H$. Note that $E_N(\lambda_g m \lambda_g^*) = \alpha_g(E_N(m)) = 0$ for all $g \in G$. Hence $m\lambda_g^*V \eta = 0$ for all $\eta \in  \mathbb H$ and all $g \in G$. Since $\left\{\lambda_g^*V\eta: \ g \in G, \ \eta\in \mathbb H\right\}$ spans a dense subspace in $L^2(G,\mathbb H)$ it follows that $m = 0$.
\end{proof}

\section{Factor types of KMS weights for the dual flow}

Let $\alpha \in \Aut A$ be an automorphism of $A$. As in Section \ref{alloccur} we consider here the dual flow $\hat{\alpha} $ on $A \rtimes_\alpha \mathbb Z$; cf. \eqref{07-10-23}. 
Let $\psi$ be a $\beta$-KMS weight for $\hat{\alpha}$ and $\tau_\psi$ the trace on $A$ obtained by restricting $\psi$ to $A$, cf. Lemma \ref{26-10-20}. Since
$$
\left\|\Lambda_{\tau_\psi}( \alpha(a))\right\|^2 = \tau_\psi(\alpha(a^*a)) = e^{-\beta}\tau_\psi(a^*a) =  e^{-\beta}\left\|\Lambda_{\tau_\psi}(a)\right\|^2
$$
we can define a unitary $v$ on $H_{\tau_\psi}$ such that
$$
v\Lambda_{\tau_\psi}(a) := e^{\frac{\beta}{2}} \Lambda_{\tau_\psi}(\alpha(a))
$$
for all $a \in \mathcal N_{\tau_\psi}$. Then 
\begin{equation}\label{06-10-23e}
v \pi_{\tau_\psi}(a)v^* = \pi_{\tau_\psi}(\alpha(a))
\end{equation}
for all $a\in A$ and $\Ad v$ gives us an automorphism $\alpha^{\tau_\psi}$ on $\pi_{\tau_\psi}(A)''$. We form the von Neumann algebra crossed product
$$
\pi_{\tau_\psi}(A)''\rtimes_{\alpha^{\tau_\psi}} \mathbb Z ,
$$
cf. Section \ref{vNcrossed}.

\begin{lemma}\label{28-08-23a} There is a central projection $e$ of $\pi_{\tau_\psi}(A)''\rtimes_{\alpha^{\tau_\psi}} \mathbb Z$ such that
$$
\pi_\psi(A  \rtimes_\alpha \mathbb Z)'' \simeq (\pi_{\tau_\psi}(A)''\rtimes_{\alpha^{\tau_\psi}} \mathbb Z)e .
$$
\end{lemma}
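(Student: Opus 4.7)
The strategy is a direct application of Lemma \ref{12-08-23} with $N$ there replaced by the von Neumann algebra crossed product $M := \pi_{\tau_\psi}(A)''\rtimes_{\alpha^{\tau_\psi}}\mathbb Z$, together with a natural weight $\phi$ on $M$ coming from the canonical conditional expectation and a natural $*$-homomorphism $\rho: A\rtimes_\alpha\mathbb Z\to M$ arising from the covariance relation $\lambda_1\pi(\pi_{\tau_\psi}(a))\lambda_1^* = \pi(\pi_{\tau_\psi}(\alpha(a)))$, which follows from \eqref{06-10-23e} and the construction of the vN crossed product in Section \ref{vNcrossed}. Identifying the right weight and homomorphism and then checking the two hypotheses of Lemma \ref{12-08-23} will yield a central projection $e\in M\cap M'$ such that $\pi_\psi(A\rtimes_\alpha\mathbb Z)''\simeq Me$, which is exactly the statement.

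For the construction, I would first extend $\tau_\psi$: by Lemma \ref{06-10-23c} applied to the trivial flow, $\tau_\psi$ extends uniquely to a normal faithful semi-finite trace $\tau_\psi''$ on $\pi_{\tau_\psi}(A)''$, and then Lemma \ref{26-08-23} supplies a normal faithful semi-finite weight $\phi := \tau_\psi''\circ E_{\pi_{\tau_\psi}(A)''}$ on $M$. For $\rho$, the pair $(\pi\circ\pi_{\tau_\psi},\lambda_1)$ is a covariant representation of $(A,\mathbb Z,\alpha)$ on $\ell^2(\mathbb Z,H_{\tau_\psi})$, and so integrates to a $*$-homomorphism $\rho:A\rtimes_\alpha\mathbb Z\to M$ with $\rho(a)=\pi(\pi_{\tau_\psi}(a))$ and $\rho(u)=\lambda_1$. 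Strong density of $\rho(A\rtimes_\alpha\mathbb Z)$ in $M$ follows because $\pi_{\tau_\psi}(A)$ is strongly dense in $\pi_{\tau_\psi}(A)''$ by double commutant, because $\pi$ restricted to $\pi_{\tau_\psi}(A)''$ is normal, and because $\lambda_k\in\rho(A\rtimes_\alpha\mathbb Z)$ for every $k\in\mathbb Z$, so together the generators of $M$ lie in the strong closure of $\rho(A\rtimes_\alpha\mathbb Z)$.

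The remaining hypothesis of Lemma \ref{12-08-23} is the identity $\phi\circ\rho=\psi$ on $(A\rtimes_\alpha\mathbb Z)^+$; this is the main obstacle. On $A^+$ one computes directly $\phi\circ\rho(a)=\tau_\psi''(E_{\pi_{\tau_\psi}(A)''}(\pi(\pi_{\tau_\psi}(a))))=\tau_\psi''(\pi_{\tau_\psi}(a))=\tau_\psi(a)=\psi|_A(a)$, so the two weights agree on $A^+$. In view of Lemma \ref{26-10-20}, it then suffices to show that $\phi\circ\rho$ is itself a $\beta$-KMS weight for $\hat\alpha$, for then the bijection of that lemma forces $\phi\circ\rho=\psi$.

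To verify that $\phi\circ\rho$ is a $\beta$-KMS weight, I would proceed exactly as in the proof of Lemma \ref{31-08-22c}: it is lower semi-continuous (pullback of a normal weight), nonzero and densely defined (elements $\sum a_k u^k$ with $a_k\in\mathcal N_{\tau_\psi}$ form a dense subset lying in $\mathcal N_{\phi\circ\rho}$), and $\hat\alpha$-invariant because $E_{\pi_{\tau_\psi}(A)''}$ projects onto the fixed-point algebra while $\phi$ is tracial on the diagonal. For the KMS condition I would take as core the subspace $S$ of finitely supported $\sum a_k u^k$ with $a_k\in\mathcal M_{\tau_\psi}\cap\mathcal N_{\tau_\psi}$, check that $S\subseteq\mathcal M^{\hat\alpha}_{\phi\circ\rho}$, and verify condition (2) of Kustermans' theorem via the identity $\tau_\psi''\circ\alpha^{\tau_\psi}=e^{-\beta}\tau_\psi''$ (inherited from $\tau_\psi\circ\alpha=e^{-\beta}\tau_\psi$) combined with the trace property of $\tau_\psi''$ on $\pi_{\tau_\psi}(A)''$, mirroring the calculation that proved Lemma \ref{31-08-22c}. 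With this in place, Lemma \ref{12-08-23} delivers the desired central projection $e\in M\cap M'$ and the required isomorphism.
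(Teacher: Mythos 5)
Your proposal is correct and arrives at the statement by the same overall mechanism as the paper --- both apply Lemma \ref{12-08-23} to $M:=\pi_{\tau_\psi}(A)''\rtimes_{\alpha^{\tau_\psi}}\mathbb Z$ with the weight $\phi=\tau_\psi''\circ E_{\pi_{\tau_\psi}(A)''}$ (normal, faithful and semi-finite by Lemma \ref{26-08-23}) and the integrated representation $\rho$ of $A\rtimes_\alpha\mathbb Z$ --- but you handle the crucial identity $\phi\circ\rho=\psi$ by a genuinely different route. The paper verifies directly, on the dense $*$-subalgebra of finitely supported sums $\sum_z F(z)u^z$, that $E_{\pi_{\tau_\psi}(A)''}\circ\rho=\pi_{\tau_\psi}\circ P$ where $P$ is the canonical conditional expectation; since both sides are norm-continuous linear maps agreeing on a dense subspace they coincide everywhere, and composing with $\tau_\psi''$ gives $\phi\circ\rho=\tau_\psi''\circ\pi_{\tau_\psi}\circ P=\tau_\psi\circ P=\psi$ by Lemma \ref{03-03-22fx} and Lemma \ref{26-10-20}. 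You instead check agreement only on $A^+$ and then invoke the injectivity of the restriction map in Lemma \ref{26-10-20}, which obliges you to prove that $\phi\circ\rho$ is itself a $\beta$-KMS weight for $\hat\alpha$ by re-running the argument of Lemma \ref{31-08-22c}; this in turn requires the scaling relation $\tau_\psi''\circ\alpha^{\tau_\psi}=e^{-\beta}\tau_\psi''$, which is not automatic from $\tau_\psi\circ\alpha=e^{-\beta}\tau_\psi$ but follows from the uniqueness of normal extensions, Theorem \ref{05-04-22} (this is exactly how the paper obtains it later, in the proof of Proposition \ref{30-08-23x}). Your route is sound, but it is noticeably heavier: the paper's two-line computation on the dense subalgebra makes the KMS detour, and the appeal to Theorem \ref{05-04-22}, unnecessary at this stage. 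One small point worth tightening: when $A$ is non-unital, $u\notin A\rtimes_\alpha\mathbb Z$, so ``$\rho(u)=\lambda_1$'' should be read as $\rho(au^k)=\pi(\pi_{\tau_\psi}(a))\lambda_k$, with $\lambda_k$ obtained in the strong closure via an approximate unit of $A$; with that reading your density argument goes through.
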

\begin{proof}
 Set $N:= \pi_{\tau_\psi}(A)''$. Note that there is a representation $\widetilde{\pi_{\tau_\psi}} \times \lambda :  A \rtimes_\alpha \mathbb Z \to B(l^2(\mathbb Z,H_{\tau_\psi}))$ defined such that
$$
\widetilde{\pi_{\tau_\psi}} \times \lambda(  \sum_{z \in \mathbb Z} a_zu^z) = \sum_{z \in \mathbb Z} \widetilde{\pi_{\tau_\psi}}(a_z) \lambda_z ,
$$
cf. 7.7.1 in \cite{Pe}, and that $\widetilde{\pi_{\tau_\psi}} \times \lambda (  A \rtimes_\alpha \mathbb Z) $ is strongly dense in $ N\rtimes_{\alpha^{\tau_\psi}} \mathbb Z$. Let $\pi : N \to N \rtimes_{\alpha^{\tau_\psi}} \mathbb Z$ be the canonical embedding. We claim that
\begin{equation}\label{28-08-23bx}
\psi (a) = {\tau_\psi}''\circ E_{N}\circ \left(\widetilde{\pi_\tau} \times \lambda\right) (a) 
\end{equation}
for all $a \in A \rtimes_\alpha \mathbb Z$, where ${\tau_\psi}''$ is the normal faithful semi-finite weight from Section \ref{modular2} obtained by considering $\tau_\psi$ as a KMS weight for the trivial action on $A$. To see this, let $F : \mathbb Z \to A$ be finitely supported. Since $ \widetilde{\pi_{\tau_\psi}}  = \pi \circ \pi_{\tau_\psi}$ we find that
\begin{align*}
& E_{N}\circ(\widetilde{\pi_{\tau_\psi}} \times \lambda) ( \sum_{z \in \mathbb Z} F(z)u^z) = E_{N}(\sum_{z \in \mathbb Z} \widetilde{\pi_{\tau_\psi}}(F(z))\lambda_z)\\
&  = E_{N}(\sum_{z \in \mathbb Z} \pi\circ \pi_{\tau_\psi}(F(z))\lambda_z) = \pi_{\tau_\psi}(F(e)) \\
& = \pi_{\tau_\psi}\circ P( \sum_{z \in \mathbb Z} F(z)u^z) 
\end{align*}
where $P : A \rtimes_\alpha \mathbb Z \to A$ is the canonical conditional expectation. Hence
$$
{\tau_\psi}''\circ E_{N}\circ \left(\widetilde{\pi_\tau} \times \lambda\right)(a) = {\tau_\psi}''  \circ \pi_{\tau_\psi}\circ P (a) = \tau_{\psi}\circ P(a)
$$
for all $a \in A \rtimes_\alpha \mathbb Z$, where we have used Lemma \ref{03-03-22fx} for the last identity. Since $\tau_\psi \circ P =\psi$ by Lemma \ref{26-10-20}, we obtain \eqref{28-08-23bx}. Since ${\tau_\psi}''$ is a normal faithful semi-finite weight it follows from Lemma \ref{26-08-23} that ${\tau_\psi}'' \circ E_{N}$ is a normal faithful semi-finite weight on $N\rtimes_{\alpha^{\tau_\psi}} \mathbb Z$, and therefore also from Lemma \ref{12-08-23} that there is a central projection $e$ in $N\rtimes_{\alpha^{\tau_\psi}} \mathbb Z$ such that
$\pi_\psi(A  \rtimes_\alpha \mathbb Z)'' \simeq (N\rtimes_{\alpha^{\tau_\psi}} \mathbb Z)e$.
\end{proof}

\begin{cor}\label{28-08-23b} Assume $\psi$ is a $\beta$-KMS weight for $\hat{\alpha}$ such that $\pi_{\tau_\psi}(A)''\rtimes_{\alpha^{\tau_\psi}} \mathbb Z$ is a factor. Then $\psi$ is extremal and
$$
\pi_\psi(A  \rtimes_\alpha \mathbb Z)'' \simeq \pi_{\tau_\psi}(A)''\rtimes_{\alpha^{\tau_\psi}} \mathbb Z .
$$
\end{cor}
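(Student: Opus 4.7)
The plan is to deduce the corollary directly from Lemma \ref{28-08-23a} together with the factor criterion for extremality from Corollary \ref{07-03-22} (and Corollary \ref{09-03-22b} in the case $\beta = 0$). First I would invoke Lemma \ref{28-08-23a} to obtain a central projection $e$ in $N := \pi_{\tau_\psi}(A)''\rtimes_{\alpha^{\tau_\psi}} \mathbb Z$ with $\pi_\psi(A\rtimes_\alpha \mathbb Z)'' \simeq Ne$. By hypothesis $N$ is a factor, so its center $N \cap N'$ consists only of scalar multiples of the identity, and therefore $e \in \{0,1\}$. The possibility $e = 0$ is excluded because $\pi_\psi(A\rtimes_\alpha \mathbb Z)'' \neq 0$ (since $\psi$ is a non-zero KMS weight, so $\pi_\psi$ is non-degenerate on a non-trivial algebra). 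Hence $e = 1$ and the stated isomorphism $\pi_\psi(A\rtimes_\alpha \mathbb Z)'' \simeq N$ follows.

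For the extremality, observe that under this isomorphism $\pi_\psi(A\rtimes_\alpha \mathbb Z)''$ is itself a factor. When $\beta \neq 0$, Corollary \ref{07-03-22} immediately gives that $\psi$ is extremal. When $\beta = 0$ the weight $\psi$ is a lower semi-continuous trace, and since $\mathcal Z_\psi = \mathbb C 1$ the induced flow $\sigma''$ acts trivially and hence ergodically on the center, so Corollary \ref{09-03-22b} applies to give extremality in this case as well.

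Both steps are essentially immediate consequences of results already established; there is no real obstacle, since Lemma \ref{28-08-23a} has already done the substantive work of constructing the isomorphism onto a corner of the von Neumann crossed product. The only point worth being careful about is ruling out the degenerate case $e = 0$, and handling the $\beta = 0$ case separately in the extremality argument; both are routine given the tools developed earlier in the chapter.
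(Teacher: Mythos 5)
Your proof is correct and follows essentially the same route as the paper: apply Lemma \ref{28-08-23a} to get the isomorphism onto a corner cut by a central projection, use the factor hypothesis to force that projection to be $1$, and then deduce extremality from Corollary \ref{07-03-22} (resp. Corollary \ref{09-03-22b} when $\beta=0$). The only difference is that you spell out the exclusion of the case $e=0$, which the paper's proof leaves implicit.
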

\begin{proof} It follows from Lemma \ref{28-08-23a} that $ \pi_\psi(A  \rtimes_\alpha \mathbb Z)'' \simeq \pi_{\tau_\psi}(A)''\rtimes_{\alpha^{\tau_\psi}} \mathbb Z$. In particular, $\pi_\psi(A  \rtimes_\alpha \mathbb Z)''$ is a factor and it follows therefore from Corollary \ref{07-03-22}, or Corollary \ref{09-03-22b} if $\beta =0$, that $\psi$ is extremal.
\end{proof}

\begin{prop}\label{30-08-23x} Let $\alpha\in \Aut A$ and $\hat{\alpha}$ the dual flow on $A \rtimes_\alpha \mathbb Z$. Let $\psi$ be a $\beta$-KMS weight for $\hat{\alpha}, \ \beta \neq 0$, and assume that $\psi|_A$ is an extremal lower semi-continuous trace on $A$. Then $\psi$ is extremal of factor type $III_{e^{-|\beta|}}$.
\end{prop}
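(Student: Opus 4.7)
The plan is to reduce the statement to a classical fact about trace-scaling automorphisms of semi-finite factors, via Corollary \ref{28-08-23b}. All the work goes into identifying the von Neumann algebra $\pi_{\tau_\psi}(A)'' \rtimes_{\alpha^{\tau_\psi}} \mathbb{Z}$ and checking that it is a factor of type $III_{e^{-|\beta|}}$.

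First, I would set $\tau := \tau_\psi = \psi|_A$ and $N := \pi_\tau(A)''$. Since $\tau$ is an extremal lower semi-continuous trace on $A$ (i.e. extremal as a $0$-KMS weight for the trivial flow), Corollary \ref{09-03-22b} forces the centre $\mathcal Z_\tau = N \cap N'$ to be $\mathbb{C}1$, so $N$ is a factor. Moreover, by Lemma \ref{06-10-23c} the trace $\tau$ extends to a normal faithful semi-finite trace $\tau''$ on $N$, so $N$ is a semi-finite factor (type $I_\infty$ or $II_\infty$, or possibly $I_n$/$II_1$ if $\tau$ happens to be bounded).

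Next, I would verify that the automorphism $\alpha^\tau = \Ad v$ of $N$ scales $\tau''$ by $e^{-\beta}$. Since $\tau \circ \alpha = e^{-\beta}\tau$ on $A^+$ by Lemma \ref{26-10-20}, the weight $e^\beta \tau'' \circ \alpha^\tau$ is a normal extension of $\tau$ to $N$; by the uniqueness of such a normal extension (Theorem \ref{05-04-22} applied to $\tau$ as a $0$-KMS weight for the trivial flow) this weight equals $\tau''$, giving
\begin{equation*}
\tau'' \circ \alpha^\tau \;=\; e^{-\beta}\,\tau''.
\end{equation*}
Because $\beta \neq 0$ this scaling constant is not $1$, and in particular no non-zero power of $\alpha^\tau$ can preserve $\tau''$; thus $\alpha^\tau$ has infinite outer period and acts freely on the centre of $N$ (which is trivial anyway).

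The main step is then to invoke the classical Connes/Takesaki theorem for trace-scaling automorphisms of semi-finite factors: if $N$ is a semi-finite factor with normal faithful semi-finite trace $\tau''$ and $\theta \in \Aut N$ satisfies $\tau'' \circ \theta = \lambda\, \tau''$ with $\lambda \in (0,1)$, then the von Neumann algebra crossed product $N \rtimes_\theta \mathbb{Z}$ is a factor of type $III_\lambda$. In our setting, if $\beta > 0$ we apply this with $\theta = \alpha^\tau$ and $\lambda = e^{-\beta}$; if $\beta < 0$ we first note that $N \rtimes_{\alpha^\tau} \mathbb{Z} \cong N \rtimes_{(\alpha^\tau)^{-1}} \mathbb{Z}$ via $u \mapsto u^{-1}$, and apply the theorem to $(\alpha^\tau)^{-1}$, which scales $\tau''$ by $e^\beta \in (0,1)$. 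In both cases the result is a factor of type $III_{e^{-|\beta|}}$.

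Having established that $N \rtimes_{\alpha^\tau} \mathbb{Z}$ is a factor, Corollary \ref{28-08-23b} applies and yields both that $\psi$ is extremal (this also follows directly from Observation \ref{31-10-23}, since an extremal $\tau$ forces $\psi$ to be extremal as a $\beta$-KMS weight via the bijection in Lemma \ref{26-10-20}) and that
\begin{equation*}
\pi_\psi(A \rtimes_\alpha \mathbb{Z})'' \;\simeq\; N \rtimes_{\alpha^\tau} \mathbb{Z},
\end{equation*}
which is of type $III_{e^{-|\beta|}}$. The main obstacle is the appeal to the classical structural result identifying the \emph{exact} Connes invariant $\lambda = e^{-|\beta|}$ (as opposed to merely proving that the crossed product is of type $III$); this is not proved from scratch in the excerpt and has to be cited from the modular theory literature (for instance via Takesaki duality and the flow of weights, or via a direct computation of the modular spectrum using that the dual $\mathbb{T}$-action scales the dual trace on $(N \rtimes_{\alpha^\tau} \mathbb{Z}) \rtimes \widehat{\mathbb{Z}}$ by powers of $e^{-\beta}$).
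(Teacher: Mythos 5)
Your proposal is correct and follows essentially the same route as the paper: establish the scaling relation $\tau''\circ\alpha^{\tau} = e^{-\beta}\tau''$ via the uniqueness of the normal extension (Theorem \ref{05-04-22}), invoke Connes' Th\'eor\`eme 4.4.1 (a) of \cite{Co1} to get a factor of type $III_{e^{-\beta}}$, handle $\beta<0$ by passing to the inverse automorphism and the flip unitary on $l^2(\mathbb Z, H_{\tau_\psi})$, and conclude with Corollary \ref{28-08-23b}. The only cosmetic difference is that the paper explicitly pins down $N=\pi_{\tau_\psi}(A)''$ as a $II_\infty$ factor (the hypothesis of Connes' theorem) by noting that a type $I$ factor would have minimal projections of equal finite trace, contradicting the scaling; your phrasing leaves the semi-finite type open, but the same scaling relation with $e^{-\beta}\neq 1$ forces type $II_\infty$ automatically.
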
 
\begin{proof} We consider first the case $\beta > 0$. Since $\tau_\psi: =\psi|_A$ is extremal, $\pi_{\tau_\psi}(A)''$ is a factor by Corollary \ref{07-03-22} applied with $\sigma$ the trivial flow; necessarily a semi-finite factor since $\tau_\psi''$ is a faithful normal semi-finite trace on $\pi_{\tau_\psi}(A)''$ by Lemma \ref{06-10-23c}.  It follows from \eqref{06-10-23e} that $\alpha^{\tau_\psi} \circ \pi_{\tau_\psi} = \pi_{\tau_\psi} \circ \alpha$ and from Lemma \ref{06-10-23c} that $\tau''_\psi \circ \pi_{\tau_\psi} = \tau_\psi$. Combined with Lemma \ref{26-10-20} this give we $\tau_\psi'' \circ \alpha^{\tau_\psi} \circ \pi_{\tau_\psi} = \tau_\psi \circ \alpha = e^{-\beta} \tau_\psi =  e^{-\beta} \tau_\psi'' \circ \pi_{\tau_\psi}$. It follows therefore from Theorem \ref{05-04-22}, applied with the trivial flow as $\sigma$, that 
\begin{equation}\label{11-09-23c}
\tau_\psi'' \circ \alpha^{\tau_\psi} = e^{-\beta} \tau_\psi''.
\end{equation} 
This implies that $\pi_{\tau_\psi}(A)''$ is a $II_\infty$ factor. Indeed, if $\pi_{\tau_\psi}(A)''$ was of type $I$ it would contain minimal projections, all of positive finite trace. These projections would have the same trace, implying that if $e$ is one of them, $ \tau_\psi'' \circ \alpha^{\tau_\psi}(e) =\tau_\psi''(e)$, contradicting \eqref{11-09-23c}. Hence $\pi_{\tau_\psi}(A)''$ is a $II_\infty$ factor as claimed. Assuming that $\beta > 0$ it follows then from Th\'eor\`eme 4.4.1 (a) of \cite{Co1} that $\pi_{\tau_\psi}(A)''\rtimes_{\alpha^{\tau_\psi}} \mathbb Z$ is a factor of type $III_{e^{-\beta}}$. By Corollary \ref{28-08-23b} so is $\pi_\psi(A  \rtimes_\alpha \mathbb Z)''$. For $\beta < 0$ we use that 
$$
\tau_\psi'' \circ (\alpha^{\tau_\psi})^{-1} = e^{\beta} \tau_\psi''
$$
to conclude from \cite{Co1} that
$$
\pi_{\tau_\psi}(A)''\rtimes_{(\alpha^{\tau_\psi})^{-1}} \mathbb Z
$$
is a factor of type $III_{e^\beta}$. Let $S$ be the unitary on $l^2(\mathbb Z,H_{\tau_\psi})$ given by $S\xi(z) = \xi(-z)$, and note that
$$
S \left( \pi_{\tau_\psi}(A)''\rtimes_{(\alpha^{\tau_\psi})^{-1}} \mathbb Z\right)S^* = \pi_{\tau_\psi}(A)''\rtimes_{\alpha^{\tau_\psi}} \mathbb Z .
$$
This shows that 
$$
\pi_{\tau_\psi}(A)''\rtimes_{(\alpha^{\tau_\psi})^{-1}}  \mathbb Z \ \simeq \ \pi_{\tau_\psi}(A)''\rtimes_{\alpha^{\tau_\psi}} \mathbb Z  ,
$$
and we conclude from Corollary \ref{28-08-23b} that $\pi_\psi(A  \rtimes_\alpha \mathbb Z)''$ is a factor of type $III_{e^\beta} = III_{e^{-|\beta|}}$.

\end{proof}

\begin{cor}\label{30-08-23a} In the setting of Proposition \ref{30-08-23x} let $e \in A$ be a projection such that $\psi(e) =1$. Then the $\beta$-KMS state $\psi|_{e(A \rtimes_\alpha \mathbb Z)e}$ for the flow $\hat{\alpha}^e$ obtained by restriction $\hat{\alpha}$ to $e(A \rtimes_\alpha \mathbb Z)e$ is of factor type $III_{e^{-|\beta|}}$.
\end{cor}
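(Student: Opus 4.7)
The plan is to reduce the claim directly to the results already proved for $\psi$ on the full algebra $A \rtimes_\alpha \mathbb Z$, combined with Lemma \ref{13-08-23}. First I would observe that $e \in A$ is automatically fixed by the dual flow, since \eqref{07-10-23} gives $\hat\alpha_t(a) = a$ for every $a \in A$, so that $e(A \rtimes_\alpha \mathbb Z)e$ is an $\hat\alpha$-invariant hereditary $C^*$-subalgebra. Since $\psi(e) = 1$, the restriction $\psi|_{e(A \rtimes_\alpha \mathbb Z)e}$ is a bounded weight with value $1$ on the unit of the corner; moreover the $\beta$-KMS identity from (1) of Theorem \ref{24-11-21d} for $\hat\alpha$ restricts verbatim along $e(A \rtimes_\alpha \mathbb Z)e \subseteq A \rtimes_\alpha \mathbb Z$ (because $\hat\alpha^e$ is just $\hat\alpha$ restricted to this corner), so this restriction is a $\beta$-KMS state for $\hat\alpha^e$.

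Next, I would invoke Proposition \ref{30-08-23x} applied to $\psi$ itself to conclude that $\psi$ is an extremal $\beta$-KMS weight for $\hat\alpha$ and that $\pi_\psi(A \rtimes_\alpha \mathbb Z)''$ is a factor of type $III_{e^{-|\beta|}}$. Because $e \in A$ is $\hat\alpha$-invariant with $\psi(e) = 1$, Lemma \ref{13-08-23} applies with $A \rtimes_\alpha \mathbb Z$ and $\hat\alpha$ in the role of $A$ and $\sigma$, and yields
\[
\pi_{\psi|_{e(A \rtimes_\alpha \mathbb Z)e}}\bigl(e(A \rtimes_\alpha \mathbb Z)e\bigr)'' \ \simeq \ \pi_\psi(e)\, \pi_\psi(A \rtimes_\alpha \mathbb Z)''\, \pi_\psi(e).
\]
Note that $\pi_\psi(e) \neq 0$, since otherwise $\psi(e) = \psi''(\pi_\psi(e)) = 0$, contradicting $\psi(e) = 1$.

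Finally, I would appeal to the standard fact from von Neumann algebra theory that a non-zero corner $pMp$ of a factor $M$ of type $III_\lambda$ is itself a factor of type $III_\lambda$, to conclude that the right hand side above is of type $III_{e^{-|\beta|}}$. This is precisely the assertion that $\psi|_{e(A \rtimes_\alpha \mathbb Z)e}$ has factor type $III_{e^{-|\beta|}}$. I do not expect any genuine obstacle: all the substantive work has already been done in the proof of Proposition \ref{30-08-23x} (which identified the factor type on the full crossed product) and in Lemma \ref{13-08-23} (which handles the transition to the corner). The only step that warrants explicit mention is the invariance of the subtype $\lambda$ under passage to a non-zero corner.
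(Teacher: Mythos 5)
Your proposal is correct and follows the paper's own proof exactly: Lemma \ref{13-08-23} reduces the problem to the corner $\pi_\psi(e)\pi_\psi(A \rtimes_\alpha \mathbb Z)''\pi_\psi(e)$, Proposition \ref{30-08-23x} identifies the full algebra as a type $III_{e^{-|\beta|}}$ factor, and the invariance of the type under passage to a non-zero corner (Corollaire 3.2.8 (b) of \cite{Co1}) finishes the argument. The extra remarks you include (that $e$ is $\hat\alpha$-invariant, that the restriction is a $\beta$-KMS state, and that $\pi_\psi(e) \neq 0$) are correct and harmless, but the substance of the argument coincides with the paper's.
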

\begin{proof} By Lemma \ref{13-08-23} $\pi_{\psi|_{e(A \rtimes_\alpha \mathbb Z)e}}(e(A\rtimes_\alpha \mathbb Z)e)''$ is isomorphic to 
$$
\pi_\psi(e)\pi_\psi(A \rtimes_\alpha \mathbb Z)''\pi_\psi(e). 
$$
Hence the conclusion follows by combining Proposition \ref{30-08-23x} with Corollaire 3.2.8 (b) of \cite{Co1}: A corner in a factor of type $III_{e^{-|\beta|}}$ is itself a factor of type $III_{e^{-|\beta|}}$.
\end{proof}

\begin{cor}\label{11-09-23} For the flow on $e(A \rtimes_\alpha \mathbb Z)e$ in Theorem \ref{12-11-22} all extremal $\beta$-KMS states are of factor type $III_{e^{-|\beta|}}$ when $\beta \neq 0$.
\end{cor}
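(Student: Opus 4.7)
\textbf{Proof plan for Corollary \ref{11-09-23}.} The strategy is to lift an extremal $\beta$-KMS state $\omega$ on the corner $e(A \rtimes_\alpha \mathbb Z)e$ to an extremal $\beta$-KMS weight $\widetilde{\omega}$ on the full crossed product $A \rtimes_\alpha \mathbb Z$, restrict to obtain an extremal trace on the AF-algebra $A$, and then quote Corollary \ref{30-08-23a} to read off the factor type $III_{e^{-|\beta|}}$.

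First I would set up the extension. In the construction of Theorem \ref{12-11-22}, $e$ was shown to be a full projection in $A \rtimes_\alpha \mathbb Z$ (see Lemma \ref{28-11-22b}, or the argument in Lemma \ref{06-12-22}), and since $e \in A$ is fixed by $\hat{\alpha}$ (because every element of $A$ lies in the fixed point algebra of $\hat{\alpha}$, as is clear from \eqref{07-10-23}), the corner $e(A \rtimes_\alpha \mathbb Z)e$ is a full hereditary $\hat{\alpha}$-invariant $C^*$-subalgebra of $A \rtimes_\alpha \mathbb Z$. Theorem \ref{07-06-22e} therefore supplies an order-preserving bijection between $\beta$-KMS weights for $\hat{\alpha}$ on $A \rtimes_\alpha \mathbb Z$ and $\beta$-KMS weights for $\hat{\alpha}^e$ on $e(A \rtimes_\alpha \mathbb Z)e$, given by restriction. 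Let $\widetilde{\omega}$ denote the unique $\beta$-KMS weight on $A \rtimes_\alpha \mathbb Z$ whose restriction to $e(A \rtimes_\alpha \mathbb Z)e$ equals $\omega$; note that $\widetilde{\omega}(e) = \omega(e) = 1$ since $\omega$ is a state.

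Next I would verify that $\widetilde{\omega}$ is itself extremal among $\beta$-KMS weights for $\hat{\alpha}$. Suppose $\phi$ is a $\beta$-KMS weight for $\hat{\alpha}$ with $\phi \leq \widetilde{\omega}$. Restricting, $\phi|_{e(A \rtimes_\alpha \mathbb Z)e} \leq \omega$, and since $e(A \rtimes_\alpha \mathbb Z)e$ is unital, Lemma \ref{24-09-23b} identifies any $\beta$-KMS weight on it with a positive scalar multiple of a $\beta$-KMS state, so extremality of $\omega$ as a $\beta$-KMS state coincides with extremality as a $\beta$-KMS weight in the sense of Definition \ref{01-08-23f}. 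Hence $\phi|_{e(A \rtimes_\alpha \mathbb Z)e} = c\,\omega$ for some $c \geq 0$, and by the uniqueness part of Theorem \ref{07-06-22e} the extension of $c\,\omega$ is $c\,\widetilde{\omega}$, so $\phi = c\,\widetilde{\omega}$. Thus $\widetilde{\omega}$ is extremal.

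Finally I would invoke Observation \ref{31-10-23} to conclude that $\tau_{\widetilde{\omega}} := \widetilde{\omega}|_A$ is an extremal lower semi-continuous trace on $A$, which is exactly the hypothesis needed in Corollary \ref{30-08-23a}. That corollary, applied to $\widetilde{\omega}$ and the projection $e$ with $\widetilde{\omega}(e) = 1$, delivers that $\omega = \widetilde{\omega}|_{e(A \rtimes_\alpha \mathbb Z)e}$ is of factor type $III_{e^{-|\beta|}}$, completing the proof. The only step requiring any real care is the extremality-preservation argument in the second paragraph, but it is a short consequence of the uniqueness of the Laca--Neshveyev extension; the factor-type computation itself has already been done in Proposition \ref{30-08-23x} via Connes' classification of crossed products of semi-finite factors by trace-scaling automorphisms.
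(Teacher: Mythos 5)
Your proof is correct and follows essentially the same route as the paper, whose own proof is the one-liner ``By Observation \ref{31-10-23} we can apply Corollary \ref{30-08-23a}.'' The only detail the paper leaves implicit is precisely the one you supply in your second paragraph, namely that the Laca--Neshveyev extension $\widetilde{\omega}$ of an extremal state $\omega$ on the corner is again extremal, which you argue correctly from the uniqueness in Theorem \ref{07-06-22e} together with Lemma \ref{24-09-23b}.
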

\begin{proof} By Observation \ref{31-10-23} we can apply Corollary \ref{30-08-23a}.
\end{proof}

\section{Factor types determined by Connes' $\Gamma$-invariant} 

One of the important tools for the calculation of factor types is the $\Gamma$-invariant of Connes, \cite{Co1}, which we now describe.
 
Let $M$ be a von Neumann algebra and $\alpha = (\alpha_t)_{t \in \mathbb R}$ a normal flow on $M$.  For $f \in L^1(\mathbb R)$ define a linear map
$\alpha_f : M \to M$ such that
$$
\alpha_f(a) := \int_{\mathbb R} f(t) \alpha_t(a) \ dt.
$$  
Let $M^\alpha$ be the fixed algebra of $\alpha$. For every central projection $e$ of $M^\alpha$, set
\begin{equation}\label{03-10-23d}
\Sp_\alpha (eMe) := \bigcap_{ f \in L^1(\mathbb R), \ \alpha_f(eMe) = \{0\}} \ Z(f)
\end{equation}
where
$$
Z(f) := \left\{ r \in \mathbb R : \ \int_{\mathbb R} e^{itr} f(t) \ dt
  \ = \ 0 \right\} .
$$
Then the invariant $\Gamma(\alpha)$ introduced by Connes in the Sections 2.1 and 2.2 of \cite{Co1} can be
expressed as the intersection
$$
\Gamma(\alpha) := \bigcap_{e} \Sp_\alpha (eMe),
$$
where we take the intersection over
all non-zero central projections $e$ in $M^\alpha$. $\Gamma(\alpha)$ is a closed subgroup of $\mathbb R$ and it can be turned into an invariant of $M$ by applying it to the modular automorphism group $\sigma^\phi$ of any normal faithful semi-finite weight on $M$, since by Lemme 1.2.2 and Th\'eor\`eme 2.2.4 of \cite{Co1} the group $\Gamma(\sigma^\phi)$ is independent of which such weight $\phi$ we choose. Hence
$$
\Gamma(M):= \Gamma(\sigma^\phi) 
$$
is an isomorphism invariant of $M$. This invariant is most informative for factors of type $III_\lambda, \ \lambda > 0$, since $\Gamma(M) = \{0\}$ when $M$ is semi-finite or of type $III_0$, at least when $M$ acts on a separable Hilbert space. Since $\Gamma(M)$ is a closed subgroup of $\mathbb R$ it must be one of the following
\begin{itemize}
\item[(i)] $\Gamma (M) = \{0\}$,
\item[(ii)] $\Gamma(M) = \mathbb Z t$ for some $t > 0$, or
\item[(iii)] $\Gamma(M) = \mathbb R$.
\end{itemize}  
When $\Gamma(M) = \{0\}$, $M$ is either semi-finite or of type $III_0$ and when $\Gamma(M) = \mathbb R$, $M$ is of type $III_1$. If $\Gamma(M) = \mathbb Z t$ for some $t > 0$, $M$ is of type $III_{\lambda}$, where $\lambda = e^{-t}$. See \cite{Co1}.


\subsection{Factor types of extremal KMS states for certain periodic flows}

The following lemma simplifies the calculation of $\Gamma(\pi_\psi(A)'')$ in many cases.

\begin{lemma}\label{13-03-23} Let $\sigma$ be a periodic flow on the $C^*$-algebra $A$ and $\psi$ as $\beta$-KMS weight for $\sigma$, $\beta \neq 0$. Assume that $\psi|_{A^\sigma}$ is an extremal lower semi-continuous trace on the fixed point algebra $A^\sigma$. Then $\Gamma(\pi_\psi(A)'') = \Sp_\alpha\left( \pi_\psi(A)''\right)$ where $\alpha_t := \sigma''_{-\beta t}$.
\end{lemma}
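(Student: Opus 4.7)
By Theorem~\ref{05-03-22x} the flow $\alpha$ is precisely the modular automorphism group of the faithful normal semi-finite weight $\psi''$ on $M:=\pi_\psi(A)''$, so $\Gamma(\pi_\psi(A)'')=\Gamma(\alpha)=\bigcap_e\Sp_\alpha(eMe)$, the intersection running over non-zero projections $e\in Z(M^\alpha)$. Taking $e=1$ already gives $\Gamma(\alpha)\subseteq\Sp_\alpha(M)$, so the claim reduces to showing that this intersection collapses, and the natural way to force it to collapse is to prove that $M^\alpha$ is a \emph{factor}, i.e.\ $Z(M^\alpha)=\mathbb C 1$.

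The first step is to identify $M^\alpha=\pi_\psi(A^\sigma)''$. Since $\alpha_t=\sigma''_{-\beta t}$ and $\sigma''_t\circ\pi_\psi=\pi_\psi\circ\sigma_t$, periodicity of $\sigma$ lets the averaging conditional expectation $Q_0:A\to A^\sigma$ extend to a $\sigma$-weakly continuous conditional expectation $Q_0''$ from $M$ onto $M^{\sigma''}=M^\alpha$ satisfying $Q_0''\circ\pi_\psi=\pi_\psi\circ Q_0$, and Kaplansky density yields the identification. Writing $\tau:=\psi|_{A^\sigma}$ and $\phi:=\psi''|_{M^\alpha}$ (a faithful normal semi-finite trace, since $M^\alpha$ is the centralizer of $\psi''$), I would then apply Lemma~\ref{12-08-23} with $\tau$ as a $0$-KMS weight for the trivial flow on $A^\sigma$, $N=M^\alpha$ and $\pi=\pi_\psi|_{A^\sigma}$; the hypothesis $\phi\circ\pi=\tau$ is just Lemma~\ref{03-03-22fx}. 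The lemma produces a central projection $e\in Z(M^\alpha)$ with $\pi_\tau(A^\sigma)''\simeq M^\alpha e$, and the extremality assumption combined with Corollary~\ref{09-03-22b} (for the trivial flow) forces $\pi_\tau(A^\sigma)''$ to be a factor; hence $M^\alpha e$ is a factor.

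The heart of the argument is the upgrade $e=1$, and this is where I expect to meet the only real resistance. From the proof of Lemma~\ref{12-08-23}, $e$ is (the inverse image under the normal isomorphism $\pi_\phi$ of) the central support of the range projection $q=WW^*$ of the isometry $W:H_\tau\to H_\phi$, $W\Lambda_\tau(a)=\Lambda_\phi(\pi_\psi(a))$. Hence $\pi_\phi(1-e)\cdot q=0$, so $\pi_\phi(1-e)W\Lambda_\tau(a)=0$ for every $a\in A^\sigma\cap\mathcal N_\psi$, and the trace property of $\phi$ together with centrality of $1-e$ in $M^\alpha$ gives
\[
\phi\bigl((1-e)\pi_\psi(a^*a)\bigr)=\phi\bigl(\pi_\psi(a)^*(1-e)\pi_\psi(a)\bigr)=\bigl\|\pi_\phi(1-e)W\Lambda_\tau(a)\bigr\|^2=0.
\]
Since $(1-e)\pi_\psi(a^*a)\ge 0$ in $M^\alpha$ and $\phi$ is faithful, this forces $(1-e)\pi_\psi(a^*a)=0$ for every $a\in\mathcal N_\tau$. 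Norm-density of $\mathcal M_\tau^+$ in $A^{\sigma,+}$ and continuity of $\pi_\psi$ then propagate the vanishing to $(1-e)\pi_\psi(x)=0$ for all $x\in A^\sigma$; finally, because periodicity of $\sigma$ lets $A^\sigma$ contain an approximate unit $\{u_\lambda\}$ for $A$ (via the argument of Lemma~\ref{24-02-23k} using Lemma~\ref{13-12-21axx}) and $\pi_\psi$ is non-degenerate, $\pi_\psi(u_\lambda)\to 1_M$ strongly, whence $1-e=0$.

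With $e=1$, $M^\alpha\simeq\pi_\tau(A^\sigma)''$ is a factor, so $Z(M^\alpha)=\mathbb C 1$, only $e=1$ contributes to the intersection, and $\Gamma(\pi_\psi(A)'')=\Sp_\alpha(M)=\Sp_\alpha(\pi_\psi(A)'')$ as asserted. The main obstacle, as indicated, is the step $e=1$: it genuinely requires both the periodicity of $\sigma$ (to produce the approximate unit of $A$ inside $A^\sigma$) and the \emph{unrestricted} trace-extremality of $\psi|_{A^\sigma}$ rather than mere extremality of $\psi$ as a $\beta$-KMS weight, the latter being insufficient to collapse a generic decomposition $\tau=c\tau+(1-c)\tau$ induced by a central projection in $M^\alpha$.
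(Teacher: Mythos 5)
Your proof is correct, and it shares the paper's top-level reduction --- everything comes down to showing that the fixed point algebra $M^\alpha=(\pi_\psi(A)'')^{\sigma''}$ is a factor, for which both arguments need the $\sigma$-weak density of $\pi_\psi(A^\sigma)$ in $M^\alpha$ (obtained by averaging over the period) and the identity $\psi''\circ\pi_\psi=\psi$ --- but the mechanism by which extremality is exploited is genuinely different. The paper works directly with a central projection $e$ of $M^\alpha$: it checks that $a\mapsto\psi''(e\pi_\psi(a))$ and $a\mapsto\psi''((1-e)\pi_\psi(a))$ are lower semi-continuous traces on $A^\sigma$ dominated by $\psi|_{A^\sigma}$, invokes extremality to write both as scalar multiples $c_e\tau$ and $c_{1-e}\tau$, and then a short computation with a net $\pi_\psi(a_i)\to 1-e$ shows that $c_e$ and $c_{1-e}$ cannot both be non-zero, after which faithfulness of $\psi''$ forces $e\in\{0,1\}$. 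You instead pass through the GNS representation of $\tau=\psi|_{A^\sigma}$: Lemma~\ref{12-08-23} gives $\pi_\tau(A^\sigma)''\simeq M^\alpha e$, your central-support computation upgrades $e$ to $1$ \emph{without} using extremality (only faithfulness of $\psi''|_{M^\alpha}$ and the approximate unit of $A$ contained in $A^\sigma$), and extremality is then spent exactly once, via Corollary~\ref{09-03-22b}, to make $\pi_\tau(A^\sigma)''$ a factor. Your route buys the extra identification $M^\alpha\simeq\pi_{\psi|_{A^\sigma}}(A^\sigma)''$ and isolates the role of the hypothesis cleanly; the paper's is more self-contained, avoiding the central-support analysis and the appeal to Lemma~\ref{12-08-23}. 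Two small points to tighten: Lemma~\ref{12-08-23} requires $\psi''|_{M^\alpha}$ to be \emph{semi-finite}, which is true but deserves a word (average $\mathcal M_{\psi''}^+$ over the period of $\alpha$, or note that $\pi_\psi(A^\sigma\cap\mathcal M_\psi)$ is $\sigma$-weakly dense in $M^\alpha$ and consists of elements of finite $\psi''$-value); and your closing sentence misplaces the hypothesis --- as you actually execute it, the step $e=1$ uses no extremality at all, so the trace-extremality of $\psi|_{A^\sigma}$ is consumed entirely in Corollary~\ref{09-03-22b} rather than in collapsing $e$.
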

\begin{proof} This follows from the definitions and Theorem \ref{05-03-22x} if we show that the fixed point algebra $N :=(\pi_\psi(A)'')^\alpha$ of $\alpha$ on $\pi_\psi(A)''$ is a factor. Set $\tau_\psi = \psi|_{A^\sigma}$. Consider a projection $e$ in the centre of $N$.  Define $\tau_e : (A^\sigma)^+ \to [0,\infty]$ and $\tau_{1-e} : (A^\sigma)^+ \to [0,\infty]$ by
$$
\tau_e(a) := \psi''(e\pi_\psi(a)) 
$$
and
$$
\tau_{1-e}(a) := \psi''((1-e)\pi_\psi(a)) ,
$$
respectively. Then $\tau_e(a^*a) \leq \psi'' \circ \pi_\psi(a^*a) = \psi(a^*a)$ for all $a \in A^\sigma$ by Lemma \ref{03-03-22fx}. Hence $\tau_e$ is densely defined since $\psi$ is. Since $\psi''$ is lower semi-continuous so is $\tau_e$. Since $e$ is central in $N$ and is fixed by $\sigma''$ it follows from Lemma \ref{02-03-22g} that
\begin{align*}
&\tau_e(a^*a) = \psi''(\pi_\psi(a)^*e\pi_\psi(a)) = \psi''(e\pi_\psi(a)\pi_\psi(a)^*e) = \tau_e(aa^*) 
\end{align*}
for all $a \in A^\sigma$. Thus $\tau_e$ is a trace on $A^\sigma$ if it is not zero. Since it is dominated by $\psi$ on $A^\sigma$ it follows that there is a scalar $c_e\geq 0$ such that $\tau_e = c_e\tau_\psi$. Similarly, there is a scalar $c_{1-e} \geq 0$ such that $\tau_{1-e} = c_{1-e} \tau_\psi$. 

\begin{obs}\label{07-10-23b} $\pi_\psi(A^\sigma \cap \mathcal M_\psi)$ is $\sigma$-weakly dense in $N$.
\end{obs}

Before we prove Observation \ref{07-10-23b}, let us see how it leads to the desired conclusion: Assume $c_e \neq 0$ and $c_{1-e} \neq 0$.
Let $d \in A^\sigma \cap \mathcal M_\psi$. Then $N \ni x \mapsto \psi''(e\pi_\psi(d)x\pi_\psi(d^*))$ is a normal linear functional on $N$ (of norm $\leq c_e \psi(dd^*)$). By Observation \ref{07-10-23b} there is a net $\{a_i\}$ in $A^\sigma \cap \mathcal M_\psi$ such that $\lim_{i \to \infty} \pi_\psi(a_i) = 1-e$ in the $\sigma$-weak topology. It follows that,
\begin{align*}
&0 = \psi''(e\pi_\psi(d)(1-e)\pi_\psi(d^*)) = \lim_{i \to \infty}  \psi''(e\pi_\psi(d)\pi_{\psi}(a_i)\pi_\psi(d^*))\\
& = \lim_{i \to \infty}\tau_e(da_id^*)= \frac{c_{e}}{c_{1-e}}  \lim_{i \to \infty}\tau_{1-e}(da_id^*) \\
& =  \frac{c_{e}}{c_{1-e}}  \psi''((1-e) \pi_\psi(d)(1-e)\pi_{\psi}(d^*)) =\frac{c_{e}}{c_{1-e}}  \psi''((1-e) \pi_\psi(dd^*)).
\end{align*}
This equation and the faithfulness of $\psi''$ implies that $(1-e)\pi_\psi(dd^*) = 0$. Since $d \in A^\sigma \cap \mathcal M_\psi$ was arbitrary it follows from Observation \ref{07-10-23b} that $1-e =0$. By symmetry we find also that $e = 0$, a contradiction. Hence either $c_e = 0$ or $c_{1-e} = 0$. Note that $c_e = c_{1-e} =0$ is impossible since $\tau_e(a) + \tau_{1-e}(a) = \psi''(\pi_\psi(a)) = \psi(a)$ for $a \in (A^{\sigma})^+$ and $\psi|_{A^\sigma}$ is not zero. If $c_e \neq 0$ we have that $c_{1-e} = 0$ and hence $\tau_{1-e} =0$. The faithfulness of $\psi''$ implies then that $(1-e)\pi_\psi(a) = 0$ for all $a \in A^\sigma$ and Observation \ref{07-10-23b} gives the conclusion that $e=1$. Likewise the assumption $c_{1-e} \neq 0$ gives that $e =0$. It follows that $0$ and $1$ are the only central projections in $N$; i.e. $N$ is a factor.

 Now let's establish the observation: Let $x \in N$. Since $\pi_\psi(A)$ is $\sigma$-weakly dense in $\pi_\psi(A)''$ and $\mathcal M_\psi$ is norm-dense in $A$ there is a net $\{b_i\}$ in $\mathcal M_\psi$ such that $\lim_{i \to \infty} \pi_\psi(b_i) = x$ in the $\sigma$-weak topology. Let $Q_0 : A \to A^\sigma$ be the conditional expectation given by
 $$
 Q_0(a) = \frac{1}{p}\int_0^p \sigma_t(a) \ \mathrm d t ,
 $$
 where $p$ is the period of $\sigma$, cf. Section \ref{periodicflows}. Let $a \in \mathcal M_\psi^+$. An application of Theorem \ref{09-11-21h} and Lebesques theorem on monotone convergence shows that
 $$
 \psi\left(Q_0(a)\right) = \frac{1}{p}\int_0^p \psi(\sigma_t(a)) \ \mathrm d t = \psi(a) ,
 $$
 implying that $Q_0(b_i) \in \mathcal M_\psi$ for all $i$. Since $Q_0(b_i) \in A^\sigma \cap \mathcal M_\psi$ it suffices to show that $\lim_{i \to \infty} \pi_\psi(Q_0(b_i)) = x$ in the $\sigma$-topology. For this let $\omega$ be a normal functional. Then
 $$
 \omega(\pi_\psi(Q_0(b_i))) = \frac{1}{p}\int_0^p \omega\left(\sigma''_t(\pi_\psi(b_i))\right) \ \mathrm d t 
 $$
 which converges to $
  \frac{1}{p}\int_0^p \omega\left(\sigma''_t(x)\right) \ \mathrm d t$. Since $\sigma''_t = \alpha_{- \frac{t}{\beta}}$, we have that
  $$\frac{1}{p}\int_0^p \omega\left(\sigma''_t(x)\right) \ \mathrm d t = \omega(x),
  $$ 
  and we conclude that $\lim_{i \to \infty} \omega(\pi_\psi(Q_0(b_i))) = \omega(x)$. Since $\omega$ was arbitrary this means that $\lim_{i \to \infty} \pi_\psi(Q_0(b_i)) = x$ in the $\sigma$-topology.
\end{proof}

\begin{example}\label{03-10-23} \rm (Example \ref{27-02-23} continued.) The flow $\sigma^\rho$ from Example \ref{27-02-23} is periodic and has a fixed point algebra which is a UHF algebra when $\rho \neq 0$. Since the trace state of a UHF algebra is unique, Lemma \ref{13-03-23} applies to show that for the $\frac{\log 2}{\rho}$-KMS state $\psi$ for $\sigma^\rho$, the $\Gamma$-invariant of $\pi_\psi(A)''$ is 
$$
\Gamma(\pi_\psi(A)'') = \Sp_{\alpha}(\pi_\psi(A)''),
$$
where $\alpha_t = {\sigma^\rho}''_{-\beta t}$ and $\beta =\frac{\log 2 }{\rho}$. We seek therefore to calculate $\Sp_{\alpha}(\pi_\psi(A)'')$.

It follows from Observation \ref{27-02-23a} and the description of the eigenspaces $V(k)$, given in Example \ref{27-02-23} that $\bigcup_{k \in \mathbb Z} V(k)$ spans a norm dense subspace of $A$ and it follows therefore that
$\bigcup_{k \in \mathbb Z} \pi_\psi(V(k))$
spans a $\sigma$-weakly dense subspace of $\pi_\psi(A)''$. It follows that when $f \in L^1(\mathbb R)$ the condition
$$
\alpha_f(\pi_\psi(A)'') = \{0\}
$$
will hold if and only if 
$$
\int_\mathbb R f(t){\sigma^\rho}''_{-\beta t}(x) \ \mathrm d t = 0
$$
for all $x \in \pi_\psi(V(k))$ and all $k \in \mathbb Z$. If $y := V_aV_b^*$ where $a \in \{0,1\}^{n+k}$ and $b \in \{0,1\}^n$ where $n,k \geq 0$, we find that
$$
{\sigma^\rho}''_{- \beta t} \circ \pi_\psi (y) = \pi_\psi \circ \sigma^\rho_{-\beta t}(y) = e^{-ik\beta \rho t}\pi_{\psi}(y) .
$$
Note that $\pi_\psi(y) \neq 0$ since $\pi_\psi(V_a^*)\pi_\psi(y)\pi_\psi(V_b) = \pi_\psi(1) = 1$. Hence 
$$
\alpha_f(\pi_\psi(V(k))) = 0
$$
if and only if $\int_\mathbb R f(t) e^{-ik\beta \rho t}  \ \mathrm d t = 0$. A similar argument applies when $k < 0$ and we find therefore that $\alpha_f(\pi_\psi(A)'') = \{0\}$ if and only if $\hat{f}(-k\beta \rho) = 0$ for all $k \in \mathbb Z$, where $\hat{f}$ is the Fourier transform of $f$.
Thus
$$
\mathbb Z \beta \rho \subseteq Z(f)
$$
for all $f \in L^1(\mathbb R)$ such that $\alpha_f(\pi_\psi(A)'') = \{0\}$. If $t \in \mathbb R \backslash \mathbb Z \beta \rho$ there is a $g \in L^1(\mathbb R)$ such that $\hat{g}(\mathbb Z \beta \rho ) = \{0\}$ and $\hat{g}(t) =1$, cf. Theorem 2.6.2 in \cite{Ru3}. Then $\alpha_g(\pi_\psi(A)'') = \{0\}$, but $t \notin Z(g)$. It follows therefore from \eqref{03-10-23d} that $\Sp_{\alpha}(\pi_\psi(A)'') =\mathbb Z \beta \rho = \mathbb Z \log 2$.
It follows that $\psi$ is of type $III_{\frac{1}{2}}$, regardless of which $\rho \neq 0$ we consider; a fact which is not surprising because the KMS state is the same for all $\rho \neq 0$.

\end{example}

\subsection{On the factor type of the KMS states of an ITPFI flow}

Let $m \in \mathbb N, \ m \geq 2$, and let $h =h^*\in M_m(\mathbb C)$ be a selfadjoint $m \times m$ matrix. We denote in the following the tensor product $M_m(\mathbb C) \otimes M_m(\mathbb C) \otimes \cdots \otimes M_m(\mathbb C)$ with $k$ tensor factors by
$$
M_m(\mathbb C)^{\otimes k} .
$$
As in Section \ref{ITPFI} we can consider the UHF algebra $\otimes_{k=1}^\infty M_m(\mathbb C)$ and the flow $\sigma$ defined on $\otimes_{k=1}^\infty M_m(\mathbb C)$ defined such that
\begin{equation}\label{28-09-23}
\sigma_t \circ \phi_{k,\infty} = \phi_{k,\infty} \circ \sigma^k_t,
\end{equation}
where $\phi_{k,\infty}: M_m(\mathbb C)^{\otimes k} \to \otimes_{k=1}^\infty M_m(\mathbb C)$ is  the canonical map and $\sigma^k$ is the flow on $M_m(\mathbb C)^{\otimes k}$ defined such that
$$
\sigma^k_t(a_1\otimes \cdots \otimes a_k) = e^{ith}a_1e^{-ith} \otimes  e^{ith}a_2e^{-ith}\otimes \cdots \otimes  e^{ith}a_ke^{-ith}
$$
on simple tensors.
The flow $\sigma$ is trivial when $h$ is a scalar multiple of $1$. When this is not the case, i.e. $h \notin \mathbb R 1$, we say that $\sigma$ is a \emph{stationary ITPFI flow} generated by $h$.

As shown in Section \ref{ITPFI} there is a unique $\beta$-KMS state $\omega_\beta$ for $\sigma$ for each $\beta \in \mathbb R$. We seek here to determine the factor type of these KMS states by calculating the $\Gamma$-invariant of Connes for the factors 
$$
R_\beta := \pi_{\omega_\beta}(\otimes_{k=1}^\infty M_m(\mathbb C))''.
$$

\begin{lemma}\label{05-10-23} The fixed point algebra $R_\beta^{\sigma''}$ for the flow $\sigma''$ on $R_\beta$ is a factor.
\end{lemma}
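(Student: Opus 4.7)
When $\beta \neq 0$, Theorem \ref{05-03-22x} identifies $\sigma''|_{R_\beta}$ with the modular automorphism group of $\omega_\beta''$ up to rescaling by $-1/\beta$, so $R_\beta^{\sigma''}$ is precisely the centralizer of $\omega_\beta''$ and the restriction $\omega_\beta''|_{R_\beta^{\sigma''}}$ is a faithful normal tracial state. Showing $R_\beta^{\sigma''}$ is a factor therefore amounts to showing this trace is extremal. For $\beta = 0$ the state $\omega_0$ is already tracial and $R_0$ is the hyperfinite $II_1$ factor, so the same reformulation applies.

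The plan is to imitate the argument of Lemma \ref{13-03-23}. Given a central projection $e \in R_\beta^{\sigma''}$, I will define
$$
\tau_e(a) := \omega_\beta''(e\pi_{\omega_\beta}(a)), \qquad a \in A^\sigma,
$$
and verify---using that $\pi_{\omega_\beta}(A^\sigma) \subseteq R_\beta^{\sigma''}$, the centrality of $e$, and Lemma \ref{02-03-22g} applied with $m = \sqrt{e}\,\pi_{\omega_\beta}(a)$ (which is entire analytic for $\sigma''$ since $\sqrt{e}$ is $\sigma''$-fixed)---that $\tau_e$ is a trace on $A^\sigma$ dominated by $\omega_\beta|_{A^\sigma}$; likewise $\tau_{1-e}(a) := \omega_\beta''((1-e)\pi_{\omega_\beta}(a))$ is a trace on $A^\sigma$, with $\tau_e + \tau_{1-e} = \omega_\beta|_{A^\sigma}$.

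The essential new input---replacing the assumption in Lemma \ref{13-03-23} that $\psi|_{A^\sigma}$ is extremal---will be the uniqueness of $\omega_\beta$ as a $\beta$-KMS state for $\sigma$ established in Section \ref{ITPFI}. The plan is to extend $\tau_e$ to a $\sigma$-invariant positive functional $\widetilde{\tau_e}$ on all of $A$ by the same formula $\widetilde{\tau_e}(a) := \omega_\beta''(e\pi_{\omega_\beta}(a))$, to show that (once normalized, if nonzero) $\widetilde{\tau_e}$ is a $\beta$-KMS state for $\sigma$, and then to appeal to uniqueness to conclude that $\widetilde{\tau_e}$ is a scalar multiple of $\omega_\beta$. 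This identity $\omega_\beta''(e\pi_{\omega_\beta}(a)) = \omega_\beta''(e)\omega_\beta(a)$ for all $a \in A$ extends to $\omega_\beta''(ex) = \omega_\beta''(e)\omega_\beta''(x)$ for all $x \in R_\beta$ by normality and $\sigma$-weak density of $\pi_{\omega_\beta}(A)$ in $R_\beta$; evaluating at $x = e$ and invoking faithfulness of $\omega_\beta''$ will then force $e \in \{0,1\}$.

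The main obstacle will be verifying that $\widetilde{\tau_e}$ satisfies the KMS condition for $\sigma$. Since $e$ is only central in $R_\beta^{\sigma''}$ and not a priori in $R_\beta$, the naive attempt to derive the KMS identity directly from that of $\omega_\beta''$ fails---one cannot commute $e$ past general elements of $R_\beta$. The plan is to exploit the stationary product structure $\omega_\beta = \otimes_{k=1}^\infty \omega_{h,\beta}$: approximate $e$ in the strong operator topology by spectral projections of elements lying in the fixed point subalgebras of the finite tensor subfactors $M_m(\mathbb C)^{\otimes k}$, and verify the identity $\widetilde{\tau_e}(ab) = \widetilde{\tau_e}(b\sigma_{i\beta}(a))$ for $a,b$ in the norm-dense $*$-algebra $\bigcup_k \phi_{k,\infty}(M_m(\mathbb C)^{\otimes k}) \subseteq \mathcal A_\sigma$, which will suffice by Theorem \ref{12-12-13}. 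Executing this approximation and passing to the limit in a manner compatible with the KMS identity is the central technical difficulty of the proof.
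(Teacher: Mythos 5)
There is a genuine gap, and it sits exactly where you placed the ``central technical difficulty'': the functional $\widetilde{\tau_e}(a) := \omega_\beta''(e\pi_{\omega_\beta}(a))$ is \emph{not} a $\beta$-KMS functional for $\sigma$ unless $e$ is central in all of $R_\beta$, not merely in $R_\beta^{\sigma''}$. Indeed, for $a$ entire analytic and $b$ arbitrary, using that $e$ is fixed by $\sigma''$ one gets
$$
\omega_\beta''\bigl(e\pi_{\omega_\beta}(a)\pi_{\omega_\beta}(b)\bigr) = \omega_\beta''\bigl(\pi_{\omega_\beta}(b)\, e\, \sigma''_{i\beta}(\pi_{\omega_\beta}(a))\bigr),
$$
whereas the KMS identity demands $\omega_\beta''\bigl(e\pi_{\omega_\beta}(b)\sigma''_{i\beta}(\pi_{\omega_\beta}(a))\bigr)$; since $\omega_\beta''(ez)=\omega_\beta''(ze)$ for $e$ in the centralizer, the difference of the two is $\omega_\beta''\bigl(w(ez-ze)\bigr)$ with $w,z$ ranging over weakly dense sets, and by normality and faithfulness of $\omega_\beta''$ it vanishes identically only if $ez=ze$ for all $z\in R_\beta$. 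So verifying the KMS condition for $\widetilde{\tau_e}$ is equivalent to showing $e\in Z(R_\beta)$, which is essentially the statement you are trying to prove; no approximation scheme can ``verify'' an identity that is false for a general central projection of the centralizer. (What the uniqueness-of-$\omega_\beta$ argument does deliver, via Theorem \ref{05-03-22}, is that projections central in $R_\beta$ are trivial, i.e.\ that $R_\beta$ is a factor --- a strictly weaker conclusion than factoriality of $R_\beta^{\sigma''}$.) Your proposed approximation is also aimed at the wrong end of the tensor product: spectral projections of elements of $\phi_{k,\infty}(M_m(\mathbb C)^{\otimes k})$ do not commute with the local elements $a,b$ for which you want to check the identity, so they give no leverage; what one would need is approximation of $e$ by elements of the \emph{tail}, and producing that is the real content of the lemma. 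A secondary problem is the appeal to Lemma \ref{13-03-23} as a template: its proof rests on the averaging projection $Q_0$ available only for periodic flows, and the ITPFI flow generated by $h$ is periodic only when the eigenvalue differences of $h$ are rationally related.

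The paper circumvents all of this with a mixing argument rather than a KMS-plus-uniqueness argument. It introduces the rotation unitaries $V_n\in M_m(\mathbb C)^{\otimes 2n}$ interchanging the first and second blocks of $n$ tensor factors and sets $U_n:=\phi_{2n,\infty}(V_n)$. These are $\sigma$-invariant (hence $\pi_{\omega_\beta}(U_n)$ lies in $R_\beta^{\sigma''}$ and commutes with $e$), satisfy $\omega_\beta\circ\Ad U_n=\omega_\beta$, and conjugation by them moves any local element into the tail, so that $\lim_{n\to\infty}\pi_{\omega_\beta}(U_nxU_n^*)=\omega_\beta(x)1$ in the weak operator topology. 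Combining these three facts gives directly
$$
\omega_\beta''(e\pi_{\omega_\beta}(x)) = \omega_\beta''\bigl(e\,\pi_{\omega_\beta}(U_n)\pi_{\omega_\beta}(x)\pi_{\omega_\beta}(U_n)^*\bigr) \longrightarrow \omega_\beta(x)\,\omega_\beta''(e),
$$
whence $\omega_\beta''(ey)=\omega_\beta''(y)\omega_\beta''(e)$ for all $y\in R_\beta$ and $e\in\{0,1\}$ by faithfulness --- without ever needing $\widetilde{\tau_e}$ to be KMS. If you want to salvage your outline, the step to supply is precisely this asymptotic commutativity; once you have it, the detour through the KMS condition and uniqueness becomes unnecessary.
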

\begin{proof} 


Choose an orthonormal basis $\{\xi_i\}_{i=1}^m$ for $\mathbb C^m$ consisting of eigenvectors for $h$. For each $n \in \mathbb N$ define a unitary $V_n \in M_m(\mathbb C)^{2n}$ such that
$$
V_n(\xi_{i_1} \otimes \xi_{i_2} \otimes \cdots \otimes \xi_{i_{2n}}) = \xi_{i_{n+1}} \otimes \xi_{i_{n+2}} \otimes \cdots \ \otimes \xi_{i_{2n}} \otimes \xi_{i_1} \otimes \cdots \otimes \xi_{i_n} 
$$
for $(i_1,i_2, \cdots , i_{2n}) \in \{1,2,\cdots, m\}^{2n}$.
Note that 
\begin{equation}\label{08-12-23}
\Ad V_n(a_1\otimes a_2 \otimes \cdots \otimes a_{2n}) = a_{n+1} \otimes a_{n+2}\otimes  \cdots \otimes a_{2n} \otimes a_1 \otimes \cdots \otimes a_n
\end{equation}
for $a_1,a_2, \cdots, a_n \in M_m(\mathbb C)$ and that $\Ad V_n \circ \sigma^{2n}_t = \sigma^{2n}_t \circ \Ad V_n$.
 Set $U_n := \phi_{2n,\infty}(V_n)$. The decisive properties of $U_n$ are that $U_n$ is $\sigma$-invariant, and
\begin{equation}\label{05-10-23c}
U_nxU_n^*y = y U_nxU_n^*
\end{equation}
and
\begin{equation}\label{05-10-23d}
\omega_\beta(U_n x U_n^* y) = \omega_\beta(x)\omega_\beta(y)
\end{equation}
for all $x,y \in \phi_{n,\infty}(M_m(\mathbb C)^{\otimes n})$. The first property, \eqref{05-10-23d}, follows from \eqref{08-12-23}, and the second, \eqref{05-10-23d}, follows from the description of $\omega_\beta$ given in Section \ref{ITPFI}. 

Let $x \in \otimes_{k=1}^\infty M_m(\mathbb C)$. We shall need the following  
\begin{obs}\label{05-10-23e} $\lim_{n \to \infty} \pi_{\omega_\beta}(U_nxU_n^*) = \omega_\beta(x)1$ in the weak operator topology.
\end{obs}

To establish this note that since $\bigcup_k \Lambda_{\omega_\beta}(\phi_{k,\infty}((M_m(\mathbb C)^{\otimes k})))$ is dense in $H_{\omega_\beta}$ it suffices to show that
$$
\lim_{n \to \infty} \left<\pi_{\omega_\beta}(U_nxU_n^*)\Lambda_{\omega_\beta}(a), \Lambda_{\omega_\beta}(b)\right> = \omega_\beta(x) \omega_\beta(b^*a)
$$
for $a,b \in \phi_{k,\infty}((M_m(\mathbb C)^{\otimes k}))$.  Similarly, $\bigcup_k \phi_{k,\infty}((M_m(\mathbb C)^k)$ is norm-dense in $\otimes_{k=1}^\infty M_m(\mathbb C)$ so it suffices to do this when $x\in \phi_{l,\infty}(M_m(\mathbb C)^{\otimes l}), \ l \geq k$. Using \eqref{05-10-23c} and \eqref{05-10-23d} we find for $n \geq l$,
\begin{align*}
&\left<\pi_{\omega_\beta}(U_nxU_n^*)\Lambda_{\omega_\beta}(a), \Lambda_{\omega_\beta}(b)\right> = \omega_\beta\left(b^*U_nxU_n^*a\right)\\
& =\omega_\beta\left(U_nxU_n^*b^*a\right) = \omega_\beta(x)\omega_\beta(b^*a),
\end{align*}
proving Observation \ref{05-10-23e}. 

Let $\omega''_\beta$ be the normal extension of $\omega_\beta$ to $R_\beta$. Since $\omega_\beta$ is a $\beta$-KMS state and $U_n$ is $\sigma$-invariant we have that $\omega_\beta(U_nxU_n^*) = \omega_\beta(x)$ for all $x \in\otimes_{k=1}^\infty M_m(\mathbb C)$, implying that $\omega''_\beta \circ \Ad \pi_{\omega_\beta}(U_n) = \omega''_{\omega_\beta}$. Hence when $p$ is a central projection in $R_\beta^{\sigma''}$ and $x \in \otimes_{k=1}^\infty M_m(\mathbb C)$ we find that
\begin{align*}
& \omega''_\beta(p\pi_{\omega_\beta}(x)) = \omega''_\beta\left(\pi_{\omega_\beta}(U_n) p \pi_{\omega_\beta}(x)\pi_{\omega_\beta}(U_n)^*\right)\\
& =  \omega''_\beta\left(p\pi_{\omega_\beta}(U_n)  \pi_{\omega_\beta}(x)\pi_{\omega_\beta}(U_n)^*\right)\\ 
\end{align*} 
for all $n$ since $\pi_{\omega_\beta}(U_n)\in R_\beta^{\sigma''}$. By using that $\omega''_\beta$ is continuous for the weak operator topology on norm-bounded sets, it follows from Observation \ref{05-10-23e} that $\lim_{n \to \infty} \omega''_\beta\left(p\pi_{\omega_\beta}(U_n)  \pi_{\omega_\beta}(x)\pi_{\omega_\beta}(U_n)^*\right) = \omega_\beta(x) \omega''_\beta(p)$. Thus
$$
\omega''_\beta(p\pi_{\omega_\beta}(x)) = \omega_\beta(x) \omega''_\beta(p) .
$$
Since $\pi_{\omega_\beta}(\otimes_{k=1}^\infty M_m(\mathbb C))$ is $\sigma$-weakly dense in $R_\beta$ and $\omega'' \circ \pi_{\omega_\beta} = \omega_\beta$ we conclude that $\omega''_\beta(py) = \omega''_\beta(y) \omega''_\beta(p)$ for all $y \in R_\beta$. Inserting $1-p$ for $y$ we see that $\omega''_\beta(1-p) \omega''_\beta(p) =0$ which implies that $p =0$ or $p=1$ since $\omega''_\beta$ is faithful by Lemma \ref{01-03-22f}. It follows that $R_\beta^{\sigma''}$ is a factor. 
\end{proof}

\begin{cor}\label{05-10-23f} For $\beta\neq 0$, $\Gamma(R_\beta) = \Sp_\alpha(R_\beta)$ where $\alpha_t = \sigma''_{-\beta t}$ for all $t \in \mathbb R$.
\end{cor}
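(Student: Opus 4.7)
The plan is to simply unwind the definition of $\Gamma$ and combine it with the preceding Lemma \ref{05-10-23}. By Theorem \ref{05-03-22x} the modular automorphism group of the normal extension $\omega''_\beta$ to $R_\beta$ is precisely the flow $\alpha_t = \sigma''_{-\beta t}$, so by the definition of $\Gamma(R_\beta)$ we have
$$
\Gamma(R_\beta) \;=\; \Gamma(\alpha) \;=\; \bigcap_{e} \Sp_\alpha(e R_\beta e),
$$
where $e$ ranges over the non-zero central projections of the fixed point algebra $R_\beta^\alpha$.

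Next, I would observe that because $\beta \neq 0$ the one-parameter groups $\{\alpha_t\}_{t\in\mathbb R}$ and $\{\sigma''_t\}_{t \in \mathbb R}$ are just reparametrizations of each other, so an element of $R_\beta$ is fixed by every $\alpha_t$ if and only if it is fixed by every $\sigma''_t$; hence $R_\beta^\alpha = R_\beta^{\sigma''}$. By Lemma \ref{05-10-23} this common fixed point algebra is a factor, and so its centre consists only of the scalar multiples of $1$. Consequently the only non-zero central projection of $R_\beta^\alpha$ is $1$ itself, and the intersection above reduces to the single term
$$
\Gamma(R_\beta) \;=\; \Sp_\alpha(1\cdot R_\beta \cdot 1) \;=\; \Sp_\alpha(R_\beta),
$$
which is the desired equality. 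The only non-trivial input is Lemma \ref{05-10-23}, which has already been established, so there is essentially no obstacle; the corollary is a formal consequence of the definition of $\Gamma$ and the factoriality of $R_\beta^{\sigma''}$.
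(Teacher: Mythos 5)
Your proposal is correct and follows exactly the paper's route: identify $\alpha$ as the modular automorphism group via Theorem \ref{05-03-22x}, then use the factoriality of the fixed point algebra from Lemma \ref{05-10-23} to collapse the intersection in the definition of $\Gamma$ to the single term $\Sp_\alpha(R_\beta)$. The observation that $R_\beta^\alpha = R_\beta^{\sigma''}$ because $\beta \neq 0$ is a worthwhile detail that the paper leaves implicit.
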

\begin{proof} Thanks to Lemma \ref{05-10-23} this follows from the definition of $\Gamma(R_\beta)$ and Theorem \ref{05-03-22x}.
\end{proof}

\begin{lemma}\label{05-10-23g} $\Gamma(R_\beta) = \beta \overline{G_h}$ where ${G_h}$ is the additive group generated by the numbers 
$$
\{\lambda - \mu :  \ \lambda, \mu \ \text{are eigenvalues of $h$} \}
$$ 
and $\overline{G_h}$ denotes it closure in $\mathbb R$.
\end{lemma}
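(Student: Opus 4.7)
\emph{Plan.} By Corollary \ref{05-10-23f} it suffices to compute $\Sp_\alpha(R_\beta)$ for $\alpha_t=\sigma''_{-\beta t}$, so the plan is to identify the eigenvalues of the generator of $\alpha$ on a sufficiently rich $\sigma$-weakly dense subset of $R_\beta$ and then apply the definition \eqref{03-10-23d} of $\Sp_\alpha$.

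First, I would diagonalize: pick an orthonormal basis $\{\xi_1,\dots,\xi_m\}$ of $\mathbb C^m$ with $h\xi_i=\lambda_i\xi_i$ and let $\{e_{ij}\}$ be the associated matrix units. A direct computation shows that for a simple tensor $x=e_{i_1j_1}\otimes\cdots\otimes e_{i_kj_k}\in M_m(\mathbb C)^{\otimes k}$,
$$\sigma^k_t(x)=e^{it\sum_{l=1}^k(\lambda_{i_l}-\lambda_{j_l})}x,$$
so $\phi_{k,\infty}(x)$ is an eigenvector for $\sigma$ with eigenvalue $\sum_{l=1}^k(\lambda_{i_l}-\lambda_{j_l})\in G_h$, and by varying $k$ and the indices every element of $G_h$ arises this way. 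Consequently, for any $r\in G_h$ the corresponding spectral subspace of $\sigma$ in $\bigcup_k\phi_{k,\infty}(M_m(\mathbb C)^{\otimes k})$ contains a non-zero element, and its image under $\pi_{\omega_\beta}$ is non-zero because $\omega_\beta$ is a faithful state on $\otimes_{k=1}^\infty M_m(\mathbb C)$ (each local factor state has strictly positive weights) and hence $\pi_{\omega_\beta}$ is faithful. Thus the eigenvalues of the generator of $\alpha$ on the norm-dense $*$-subalgebra $\mathcal B:=\pi_{\omega_\beta}\bigl(\bigcup_k\phi_{k,\infty}(M_m(\mathbb C)^{\otimes k})\bigr)$ are exactly the numbers $-\beta r$ with $r\in G_h$, i.e. $-\beta G_h=\beta G_h$ since $G_h$ is a group.

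Next, I would check that for every $f\in L^1(\mathbb R)$ the operator $\alpha_f$ is $\sigma$-weakly continuous on $R_\beta$ (by Lebesgue dominated convergence applied to $\omega\mapsto\int f(t)\omega(\alpha_t(x))\,\mathrm dt$ for $\omega\in R_{\beta*}$), so that $\alpha_f(R_\beta)=\{0\}$ if and only if $\alpha_f$ vanishes on $\mathcal B$, which is $\sigma$-weakly dense in $R_\beta$. On an eigenvector $y$ with $\alpha_t(y)=e^{-it\beta r}y$ one has $\alpha_f(y)=\hat f(-\beta r)y$, so combined with the eigenvalue analysis this gives
$$\alpha_f(R_\beta)=\{0\}\iff -\beta G_h\subseteq Z(f)\iff \beta G_h\subseteq Z(f).$$
Hence
$$\Sp_\alpha(R_\beta)=\bigcap_{f\in L^1(\mathbb R),\ \beta G_h\subseteq Z(f)}Z(f).$$

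Finally, I would prove the equality of this intersection with $\overline{\beta G_h}$. The inclusion $\overline{\beta G_h}\subseteq \Sp_\alpha(R_\beta)$ is immediate because each $Z(f)$ is closed. For the reverse inclusion, given $s\notin\overline{\beta G_h}$ a standard application of the regularity of the Fourier algebra on $\mathbb R$ (e.g. Theorem 2.6.2 in \cite{Ru3}, already invoked in Example \ref{03-10-23}) produces $g\in L^1(\mathbb R)$ with $\hat g$ vanishing on $\overline{\beta G_h}$ and $\hat g(s)=1$; then $g$ belongs to the family indexing the intersection but $s\notin Z(g)$, so $s\notin\Sp_\alpha(R_\beta)$. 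Combining with Corollary \ref{05-10-23f} yields $\Gamma(R_\beta)=\overline{\beta G_h}=\beta\overline{G_h}$. The main obstacle I anticipate is the eigenvalue computation on $\mathcal B$ together with the faithfulness argument ensuring that every $r\in G_h$ is actually achieved as an eigenvalue in the image; the $L^1$-interpolation step and the continuity of $\alpha_f$ are then routine.
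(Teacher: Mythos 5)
Your proposal is correct and follows essentially the same route as the paper: diagonalize $h$, compute the eigenvalues of $\sigma$ on the tensor products of matrix units, use faithfulness of $\pi_{\omega_\beta}\circ\phi_{k,\infty}$ and $\sigma$-weak density to characterize when $\alpha_f$ vanishes, and finish with the regularity of the Fourier algebra (Theorem 2.6.2 of \cite{Ru3}) to identify the intersection of the zero sets with $\beta\overline{G_h}$. No gaps.
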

\begin{proof} $\Gamma(R_\beta) = \Sp_\alpha(R_\beta)$ by Corollary \ref{05-10-23f}. To calculate the spectrum $\Sp_\alpha(R_\beta)$ let $f \in L^1(\mathbb R)$. Let $\{e_{i,j}\}_{i,j =1}^m$ be a set of matrix units in $M_m(\mathbb C)$ such that $e_{ii}$ is the orthogonal projection onto $\xi_i$. Since $\xi_i$ is an eigenvector for $h$ there are real numbers
$$
\lambda_i \in \mathbb R, \ i \in \{1,2,\cdots, m\},
$$
such that $
h\xi_i = \lambda_i \xi_i$. Let $k \in \mathbb N$ and $i_l,j_l \in \{1,2,\cdots , m\}$, $l = 1,2,\cdots, k$. Since
$$
\Ad e^{it h}(e_{i,j}) = e^{it (\lambda_i - \lambda_j)} e_{i,j}
$$
we find that
$$
\sigma^k_t( e_{i_1,j_1}\otimes e_{i_2,j_2} \otimes \cdots \otimes e_{i_k,l_k}) = e^{it \chi} e_{i_1,j_1}\otimes e_{i_2,j_2} \otimes \cdots \otimes e_{i_k,l_k},
$$
where $\chi := \sum_{r=1}^k (\lambda_{i_r} - \lambda_{j_r})$. For any normal functional $\mu$ of $R_\beta$,
\begin{align*}
&\mu\left( \alpha_f (\pi_{\omega_\beta}(\phi_{k,\infty}(  e_{i_1,j_1}\otimes e_{i_2,j_2} \otimes \cdots \otimes e_{i_k,l_k})))\right) \\
&= \int_\mathbb R f(t) \mu \circ \pi_{\omega_\beta}\left( \sigma_{-\beta t} (\phi_{k,\infty}(  e_{i_1,j_1}\otimes e_{i_2,j_2} \otimes \cdots \otimes e_{i_k,l_k})\right) \ \mathrm d t\\
& = \left(\int_\mathbb R f(t) e^{-i \beta \chi t} \ \mathrm d t  \right) \mu \circ \pi_{\omega_\beta}(\phi_{k,\infty}(  e_{i_1,j_1}\otimes e_{i_2,j_2} \otimes \cdots \otimes e_{i_k,l_k})) .
\end{align*}
Note that $\pi_{\omega_\beta}(\phi_{k,\infty}(  e_{i_1,j_1}\otimes e_{i_2,j_2} \otimes \cdots \otimes e_{i_k,l_k})) \neq 0$ since $\pi_{\omega_\beta} \circ \phi_{k,\infty}$ is faithful. Hence $\mu\left( \alpha_f (\pi_{\omega_\beta}(\phi_{k,\infty}(  e_{i_1,j_1}\otimes e_{i_2,j_2} \otimes \cdots \otimes e_{i_k,l_k})))\right) = 0$ if and only if $\hat{f}(-\beta \chi) = \int_\mathbb R f(t) e^{-i \beta \chi t} \ \mathrm d t =0$. Since $\bigcup_k \pi_{\omega_\beta}\circ \phi_{k,\infty}(M_m(\mathbb C)^{\otimes k})$ is $\sigma$-weakly dense in $R_\beta$ it follows that $\alpha_f(R_\beta) = \{0\}$ if and only if
$$
\hat{f}(-\beta \chi) = \int_\mathbb R f(t) e^{-i \beta \chi t} \ \mathrm d t  = 0
$$
for all $\chi \in G_h$. Hence $\beta G_h \subseteq  Z(f)$ when $\alpha_f(R_\beta) = \{0\}$, implying that the additive subgroup $\beta G_h$ of $\mathbb R$ is in $\Sp_\alpha(R_\beta)$. Since $\Sp_\alpha(R_\beta) = \Gamma(R_\beta)$ is a closed subset of $\mathbb R$ it follows that the closure $\beta \overline{G_g}$ of $\beta G_h$ is contained in $\Gamma(R_\beta)$. If $r \in \mathbb R \backslash \beta\overline{G_h}$ there is a function $f \in L^1(\mathbb R)$ such that $\hat{f}(r) \neq 0$ and $\hat{f}(\beta G_h) = \{0\}$; see Theorem 2.6.2 in \cite{Ru3}. By the preceding calculation the last property implies $\alpha_f(R_\beta)= \{0\}$ and then the first implies that $r \notin \Sp_\alpha(R_\beta)$. 
\end{proof}

Note the $G_h$ is dense in $\mathbb R$ if and only if there are eigenvalues $\lambda,\mu,\lambda',\mu'$ for $h$ such that $\lambda' \neq \mu'$ and
$$
\frac{\lambda - \mu}{\lambda'-\mu'}
$$ 
is irrational. When this is not the case there is a $\kappa_h > 0$ such that
$$
G_h = \overline{G_h} = \mathbb Z \kappa_h .
$$

\begin{cor}\label{06-10-23} Let $\sigma$ be a stationary $ITPFI$ flow generated by the selfadjoint matrix $h$, cf. \eqref{28-09-23}.  
\begin{itemize}
\item[$\bullet$] The $\beta$-KMS state for $\sigma$ is of type $III$ for all $\beta \neq 0$.
\item[$\bullet$] When $G_h$ is dense in $\mathbb R$ the $\beta$-KMS state for $\sigma$ is of type $III_1$ for all $\beta \neq 0$.
\item[$\bullet$] When $G_h$ is not dense in $\mathbb R$ and $\beta \neq 0$, the $\beta$-KMS state for $\sigma$ is of type $III_{\lambda_\beta}$ where $\lambda_\beta= e^{-|\beta|\kappa_h}$.
\end{itemize}
\end{cor}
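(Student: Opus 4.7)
The plan is to deduce all three bullet points directly from Lemma \ref{05-10-23g} together with the general dichotomy for the $\Gamma$-invariant recalled just before the statement of Lemma \ref{13-03-23}: for a factor $M$ acting on a separable Hilbert space one has $\Gamma(M)=\{0\}$ if $M$ is semi-finite or of type $III_0$, $\Gamma(M)=\mathbb R$ iff $M$ is of type $III_1$, and $\Gamma(M)=\mathbb Z t$ with $t>0$ iff $M$ is of type $III_\lambda$ with $\lambda=e^{-t}$.

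First I would observe that, since $h\notin\mathbb R 1$ by the definition of a stationary ITPFI flow, the matrix $h$ has at least two distinct eigenvalues. Hence the additive group $G_h$ contains a non-zero real number, and consequently $\beta\overline{G_h}\neq\{0\}$ for every $\beta\neq 0$. By Lemma \ref{05-10-23g} this gives $\Gamma(R_\beta)\neq\{0\}$, which by the recalled dichotomy rules out both semi-finiteness and type $III_0$. Since $\omega_\beta$ is extremal (it is the unique $\beta$-KMS state for $\sigma$ and $R_\beta$ is a factor by Corollary \ref{07-03-22}), this already establishes the first bullet: $\omega_\beta$ is of factor type $III_\lambda$ for some $\lambda\in\,]0,1]$.

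Next, if $G_h$ is dense in $\mathbb R$, then $\overline{G_h}=\mathbb R$, so $\Gamma(R_\beta)=\beta\mathbb R=\mathbb R$ for $\beta\neq 0$, and the dichotomy forces $R_\beta$ to be of type $III_1$, proving the second bullet. If instead $G_h$ is not dense in $\mathbb R$, then (as noted just before the statement of the corollary) there exists $\kappa_h>0$ with $G_h=\overline{G_h}=\mathbb Z\kappa_h$. Applying Lemma \ref{05-10-23g} once more we obtain
\[
\Gamma(R_\beta)=\beta\,\mathbb Z\kappa_h=\mathbb Z(\beta\kappa_h)=\mathbb Z(|\beta|\kappa_h),
\]
where the last equality uses $\mathbb Z t=\mathbb Z(-t)$. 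Since $|\beta|\kappa_h>0$, the dichotomy yields that $R_\beta$ is of type $III_{\lambda_\beta}$ with $\lambda_\beta=e^{-|\beta|\kappa_h}$, which is the third bullet.

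There is essentially no obstacle here beyond invoking the standard classification statement correctly: the nontrivial input (computing $\Gamma(R_\beta)$) has already been done in Lemma \ref{05-10-23g}, and the only small point to check is the non-vanishing of $G_h$ under the assumption $h\notin\mathbb R 1$, which is immediate from the existence of two distinct eigenvalues of $h$.
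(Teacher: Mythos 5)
Your proposal is correct and follows exactly the route the paper intends: the corollary is stated as an immediate consequence of Lemma \ref{05-10-23g} together with the trichotomy for closed subgroups $\Gamma(M)\subseteq\mathbb R$ recalled at the start of the section, and your only additional observations (that $h\notin\mathbb R 1$ forces $G_h\neq\{0\}$, and that $\mathbb Z\beta\kappa_h=\mathbb Z|\beta|\kappa_h$) are precisely the small points needed to make this explicit. No gaps.
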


\begin{example}\label{06-10-23a} \rm Consider a stationary $ITPFI$ flow generated by the selfadjoint matrix $h \in M_2(\mathbb C)$. $h$ has two eigenvalue $\lambda_1$ and $\lambda_2$ is this case. Set $\kappa_h = |\lambda_1 - \lambda_2|$. By Corollary \ref{06-10-23} the $\beta$-KMS state for $\sigma$ is of type $III_{\lambda_\beta}$ where $\lambda_\beta = e^{-|\beta|\kappa_h}$ when $\beta \neq 0$.
\end{example}

\section{KMS-invariants for cocycle-conjugacy of flows}

Let $A$ and $B$ be unital $C^*$-algebras, $\sigma$ a flow on $A$ and $\sigma'$ a flow on $B$. $\sigma$ is \emph{conjugate} to $\sigma'$ when there is a $*$-isomorphim $\alpha : B \to A$ such that $\sigma_t \circ \alpha = \alpha \circ \sigma'_t$ for all $t \in \mathbb R$.

As in Section \ref{KMSstates} we denote by $S^\sigma_\beta$ the set of $\beta$-KMS states for $\sigma$.

\begin{lemma}\label{08-10-23e} Let $\sigma$ be a flow on the unital $C^*$-algebra $A$ and $\sigma'$ a flow on the unital $C^*$-algebra $B$. Assume $\alpha : B \to A$ is a $*$-isomorphism such that $\sigma_t \circ \alpha = \alpha \circ \sigma'_t$ for all $t \in \mathbb R$. For $\beta \in \mathbb R$ the map $\psi \mapsto \psi \circ \alpha$ is an affine homeomorphism from $S^\sigma_\beta$ onto $S^{\sigma'}_\beta$, and $\pi_{\psi}(A)''$ is isomorphic to $\pi_{\psi \circ \alpha}(B)''$ for all $\psi \in S^{\sigma}_\beta$.
\end{lemma}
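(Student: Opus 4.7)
The plan is to split the statement into three assertions and dispatch them in order: (i) the map is a well-defined affine bijection $S^\sigma_\beta \to S^{\sigma'}_\beta$; (ii) it is a weak$^*$ homeomorphism; (iii) the von Neumann algebras generated by the two GNS representations are isomorphic.

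For (i), I would first note that a $*$-isomorphism between unital $C^*$-algebras is automatically unital, so $\psi \circ \alpha$ is a state on $B$ whenever $\psi$ is a state on $A$. The KMS property transfers by Lemma \ref{01-10-23a} applied to $\alpha$, so $\psi \mapsto \psi \circ \alpha$ maps $S^\sigma_\beta$ into $S^{\sigma'}_\beta$. The inverse is furnished by $\phi \mapsto \phi \circ \alpha^{-1}$, using Lemma \ref{01-10-23a} applied to $\alpha^{-1}$ (which intertwines $\sigma'$ and $\sigma$). Affineness is obvious from the definition.

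For (ii), the map $\psi \mapsto \psi \circ \alpha$ is weak$^*$ continuous because for each fixed $b \in B$, $\psi \mapsto \psi(\alpha(b))$ is weak$^*$ continuous on $A^*$. By Theorem \ref{08-11-22} both $S^\sigma_\beta$ and $S^{\sigma'}_\beta$ are compact Hausdorff (being Choquet simplices in the weak$^*$ topology), so the continuous bijection is automatically a homeomorphism.

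For (iii), I would construct a unitary intertwiner between the GNS triples directly. Given $\psi \in S^\sigma_\beta$, define
\[
W_0 : \Lambda_{\psi \circ \alpha}(B) \to \Lambda_\psi(A), \qquad W_0 \Lambda_{\psi \circ \alpha}(b) := \Lambda_\psi(\alpha(b)).
\]
This is well-defined and isometric because
\[
\bigl\langle \Lambda_{\psi \circ \alpha}(b), \Lambda_{\psi \circ \alpha}(b')\bigr\rangle = \psi\circ\alpha(b'^*b) = \psi(\alpha(b')^*\alpha(b)) = \bigl\langle \Lambda_\psi(\alpha(b)), \Lambda_\psi(\alpha(b'))\bigr\rangle,
\]
and surjective onto $\Lambda_\psi(A)$ since $\alpha$ is. Extending by continuity gives a unitary $W : H_{\psi \circ \alpha} \to H_\psi$, and a direct check on the dense subspace $\Lambda_{\psi\circ\alpha}(B)$ yields $W \pi_{\psi \circ \alpha}(b) W^* = \pi_\psi(\alpha(b))$ for all $b \in B$. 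Therefore $W \pi_{\psi \circ \alpha}(B) W^* = \pi_\psi(\alpha(B)) = \pi_\psi(A)$, and taking double commutants gives $W \pi_{\psi \circ \alpha}(B)'' W^* = \pi_\psi(A)''$.

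None of the steps is really an obstacle; the only point requiring slight care is confirming that $\alpha$ preserves the unit and that the GNS Hilbert spaces in question are spanned by $\Lambda_{\psi\circ\alpha}(B)$ and $\Lambda_\psi(A)$ respectively (which for states holds by the standard construction via the cyclic vector).
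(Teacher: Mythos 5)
Your proposal is correct and follows essentially the same route as the paper: Lemma \ref{01-10-23a} for the bijection on KMS states, weak* continuity (the paper simply calls the homeomorphism claim clear), and the same unitary $W\Lambda_{\psi\circ\alpha}(b) = \Lambda_\psi(\alpha(b))$ intertwining the GNS representations. The extra details you supply (unitality of $\alpha$, compactness of the simplices from Theorem \ref{08-11-22}) are harmless elaborations of what the paper leaves implicit.
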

\begin{proof}
It follows from Lemma \ref{01-10-23a} the map $\psi \mapsto \psi \circ \alpha$ is a bijection from $S^{\sigma}_\beta$ onto $S^{\sigma'}_\beta$ and it is clearly a homeomorphism with respect to the weak* topologies. Furthermore, for every $\psi \in S^{\sigma}_\beta$ there is a unitary $W : H_{\psi \circ \alpha} \to H_{\psi}$ defined such that $W\Lambda_{\psi \circ \alpha}(b) = \Lambda_{\psi}(\alpha(b))$. It is easy to check that $W \pi_{\psi \circ \alpha}(b) = \pi_{\psi}(\alpha(b))W$ for all $b \in B$ which implies the claim, $\pi_{\psi}(A)'' \ \simeq \ \pi_{\psi \circ \alpha}(B)''$.
\end{proof} 

Let $A$ be a unital $C^*$-algebra. Let $\sigma$ and $\sigma'$ be flows on $A$. Following \cite{Ki1} we say that $\sigma'$ is a \emph{cocycle perturbation} of $\sigma$ when there is a continuous unitary representation $u = (u_t)_{t \in \mathbb R}$ of $\mathbb R$ in $A$ such that $u_s\sigma_s(u_t) = u_{s+t}$ and $\sigma'_t = \Ad u_t \circ \sigma_t$ for all $s,t \in \mathbb R$. Let $B$ be another unital $C^*$-algebra and let $\sigma''$ be a flow on $B$. Still following \cite{Ki1} we say that $\sigma''$ is \emph{cocycle conjugate} to $\sigma$ when $\sigma''$ is conjugate to a cocycle perturbation of $\sigma$, i.e. if when there is an isomorphism $\phi : B \to A$ and a continuous unitary representation $u = (u_t)_{t \in \mathbb R}$ of $\mathbb R$ in $A$ such that $u_s\sigma_s(u_t) = u_{s+t}$ and $\phi \circ \sigma''_t \circ \phi^{-1} = \Ad u_t \circ \sigma_t$ for all $s,t \in \mathbb R$.

\begin{thm}\label{08-10-23} Let $A$ and $B$ be unital $C^*$-algebras, $\sigma$ a flow on $A$ and $\sigma''$ a flow on $B$. Assume that $\sigma''$ is cocycle conjugate to $\sigma$ and let $\beta \in \mathbb R$. It follows that there is a homeomorphism $\eta : S^{\sigma}_\beta \to S^{\sigma''}_\beta$ such that $\eta(\omega)$ is extremal if and only if $\omega $ is extremal. Furthermore, when $\omega \in S^{\sigma}_\beta$ is extremal there is a von Neumann algebra factor $N$ and projections $e,f \in N$ such that $\pi_\omega(A)'' \simeq eNe$ and $\pi_{\eta(\omega)}(B)'' \simeq fNf$.
\end{thm}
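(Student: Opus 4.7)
The plan is to reduce to a cocycle perturbation via Lemma \ref{08-10-23e}: since cocycle conjugacy means $\sigma''$ is conjugate to a cocycle perturbation $\sigma' = \Ad u_t \circ \sigma_t$ of $\sigma$ on $A$, and conjugacy trivially preserves everything in the statement (Lemma \ref{08-10-23e} gives the homeomorphism and the isomorphism of GNS von Neumann algebras), it suffices to prove the theorem when $B=A$ and $\sigma'' = \sigma'$ is a cocycle perturbation of $\sigma$. Now invoke the construction from the proof of Theorem \ref{06-06-22b}: the flow $\gamma$ on $M_2(A)$, together with the two $*$-isomorphisms $\iota_1,\iota_2: A \to M_2(A)$ onto the diagonal corners, is $\sigma$-equivariant and $\sigma'$-equivariant respectively, and the projections $p_i := \iota_i(1)$ lie in the fixed-point algebra $M_2(A)^\gamma$ and have $p_i M_2(A) p_i$ full in $M_2(A)$.

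For the homeomorphism, I would apply Theorem \ref{01-03-22d} twice. It gives bijections
\[
r_i : \{\text{rays in }\KMS(\gamma,\beta)\} \xrightarrow{\ \sim\ } S^{\sigma^{(i)}}_\beta, \qquad r_i(\psi) = \psi(p_i)^{-1}\psi|_{p_iM_2(A)p_i}\circ \iota_i,
\]
where $\sigma^{(1)}=\sigma$ and $\sigma^{(2)}=\sigma'$. Defining $\eta := r_2 \circ r_1^{-1}$ gives the required bijection $S^\sigma_\beta \to S^{\sigma'}_\beta$; concretely, $\eta(\omega) = \tilde\omega(p_2)^{-1}\,\tilde\omega|_{p_2M_2(A)p_2}\circ \iota_2$, where $\tilde\omega$ is the unique $\beta$-KMS weight for $\gamma$ extending $\omega\circ\iota_1^{-1}$ on $p_1M_2(A)p_1$ (Theorem \ref{07-06-22e}). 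Weak$^*$ continuity of $\eta$ and its inverse is verified on the dense subspaces spanned by $\iota_i\mathcal A_\sigma\mathcal A_\sigma\iota_i$ or on elementary tensors, using the explicit Laca--Neshveyev formula for $\tilde\omega$; equivalently, one checks that $\omega\mapsto \tilde\omega(a)$ is weak$^*$-continuous for each fixed $a\in M_2(A)$ and that the normalization $\tilde\omega(p_2)$ is bounded away from zero and infinity on compact subsets of $S^\sigma_\beta$.

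Extremality is preserved by the two chains of bijections. If $\omega \in S^\sigma_\beta$ is extremal and $\phi$ is a $\beta$-KMS weight for $\gamma$ on $M_2(A)$ with $\phi \leq \tilde\omega$, then $\phi\circ\iota_1 \leq \omega\circ\iota_1^{-1}\circ\iota_1 = \omega$ on $p_1M_2(A)p_1$; by extremality $\phi\circ\iota_1 = \lambda\,\omega\circ\iota_1^{-1}$ for some $\lambda \geq 0$, and by the uniqueness part of Theorem \ref{07-06-22e} this forces $\phi = \lambda\tilde\omega$. Hence $\tilde\omega$ generates an extremal ray of $\beta$-KMS weights for $\gamma$, and by symmetry $\eta(\omega)$ is extremal in $S^{\sigma'}_\beta$. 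The converse is identical.

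For the factor statement (the case $\beta \neq 0$, which is the essential content), I would set $N := \pi_{\tilde\omega}(M_2(A))''$. Since $\tilde\omega$ is extremal and $\beta \neq 0$, Corollary \ref{07-03-22} ensures $N$ is a factor. The projections $e := \pi_{\tilde\omega}(p_1)$ and $f := \pi_{\tilde\omega}(p_2)$ lie in $N$ and are non-zero because $\tilde\omega(p_i) > 0$. Because $p_i$ is $\gamma$-invariant with $\tilde\omega(p_i) < \infty$, Lemma \ref{13-08-23} (applied to the weight $\tilde\omega(p_i)^{-1}\tilde\omega$ on $M_2(A)$ and the projection $p_i$) yields
\[
\pi_{\tilde\omega|_{p_iM_2(A)p_i}}(p_iM_2(A)p_i)'' \ \simeq\ eNe \text{ for }i=1, \qquad fNf \text{ for }i=2,
\]
and via $\iota_i$ these left-hand sides are canonically isomorphic to $\pi_\omega(A)''$ and $\pi_{\eta(\omega)}(A)''$ respectively. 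The main obstacle is the case $\beta = 0$: here an extremal KMS weight need not give a factor GNS closure (Corollary \ref{09-03-22b}), so Lemma \ref{13-08-23} does not apply and one either restricts the theorem to $\beta\neq 0$ or replaces the factor conclusion by the statement that $\pi_\omega(A)''$ and $\pi_{\eta(\omega)}(A)''$ are full corners of a common von Neumann algebra, with isomorphism obtained directly from Lemma \ref{12-08-23} applied to $\pi_{\tilde\omega}\circ\iota_i$.
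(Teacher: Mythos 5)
Your proposal follows essentially the same route as the paper: reduce to a cocycle perturbation via Lemma \ref{08-10-23e}, pass to the flow $\gamma$ on $M_2(A)$ with the corners $p_iM_2(A)p_i$, compose the two Laca--Neshveyev bijections (your $r_2\circ r_1^{-1}$ is the paper's $\mu_2\circ\mu_1^{-1}$), and identify $\pi_\omega(A)''$ and $\pi_{\eta(\omega)}(B)''$ with the corners $\pi_{\tilde\omega}(p_i)\pi_{\tilde\omega}(M_2(A))''\pi_{\tilde\omega}(p_i)$ --- the paper does this last step by an explicit unitary $W_i$ rather than by quoting Lemma \ref{13-08-23}, but the content is identical. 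Your caveat about $\beta=0$ is well taken: the paper's proof likewise only establishes the corner isomorphisms and never verifies that $N=\pi_{\tilde\omega}(M_2(A))''$ is a factor, which for $\beta\neq 0$ follows from extremality via Corollary \ref{07-03-22} but is not available from Corollary \ref{09-03-22b} when $\beta=0$.
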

\begin{proof} By assumption there is an isomorphism $\phi : B \to A$ such that $\sigma'_t :=\phi \circ \sigma''_t \circ \phi^{-1}$ is a flow $\sigma'$ on $A$ which is a cocycle perturbation of $\sigma$. By Lemma \ref{08-10-23e} there is an affine homeomorphism $\eta':S^{\sigma'}_\beta \to S^{\sigma''}_\beta$ such that $\pi_{\eta'(\omega)}(B)'' \simeq \pi_{\omega}(A)''$ when $\omega \in S^{\sigma'}_\beta$ is extremal. Since $\sigma'$ is a cocycle perturbation of $\sigma$ there is a continuous unitary representation $u = (u_t)_{t \in \mathbb R}$ of $\mathbb R$ in $A$ such that $u_s\sigma_s(u_t) = u_{s+t}$ and $\sigma'_t = \Ad u_t \circ \sigma_t$ for all $s,t \in \mathbb R$. As in the proof of Theorem \ref{06-06-22b} we consider now the flow $\gamma$ on $M_2(A)$ given by
$$
\gamma_t\left( \begin{smallmatrix} a & b \\ c & d \end{smallmatrix} \right) := \left(\begin{matrix} \sigma_t(a) & \sigma_t(b)u_{-t} \\ u_t \sigma_t(c) & \sigma'_t(d) \end{matrix} \right) 
$$
and the embeddings $\iota_i : A \to M_2(A), \ i =1,2$, of $A$ into the canonical orthogonal corners of $M_2(A)$. Set
$$
p_1 = \left( \begin{smallmatrix} 1 & 0 \\ 0 & 0 \end{smallmatrix} \right) \ \text{and} \ p_2 =  \left( \begin{smallmatrix} 0 & 0 \\ 0 & 1 \end{smallmatrix} \right) .
$$
It follows from Theorem \ref{07-06-22e} that $\psi \mapsto \psi(p_1)^{-1}\psi \circ \iota_1$ is a bijection $\mu_1$ from $S^{\gamma}_\beta$ onto $S^{\sigma}_\beta$ and  $\psi \mapsto \psi(p_2)^{-1}\psi \circ \iota_2$ is a bijection $\mu_2$ from $S^{\sigma}_\beta$ onto $S^{\sigma'}_\beta$. Note also that $\mu_1$ and $\mu_2$ both are continuous for the weak* topologies and preserve extremality. Hence 
$$
\eta'' := \mu_2 \circ \mu_1^{-1} : S^{\sigma}_\beta \to S^{\sigma'}_\beta
$$ 
is a continuous bijection which takes extremal $\beta$-KMS states for $\sigma$ to extremal $\beta$-KMS states for $\sigma'$. Define $W_i : H_{\mu_i(\psi)} \to H_\psi$ such that
$$
W_i \eta_{\mu_i(\psi)}(a) = \psi(p_i)^{-\frac{1}{2}}\Lambda_\psi(\iota_i(a))
$$
for all $a \in A$, and note that $W_i$ is a unitary from $H_{\mu_i(\psi)}$ onto $H^i_\psi$. Then
$$
W_i \pi_{\mu_i(\psi)}(x)\Lambda_{\mu_i(\psi)}(a) = \pi_\psi(\iota_i(x))W_i\Lambda_{\mu_i(\psi)}(a)
$$
for all $x,a \in A$ and hence
$$
W_i \pi_{\mu_i(\psi)}(x) W_i^*|_{H_\psi^i} = \pi_{\psi}(\iota_i(x))|_{H_\psi^i} = \pi_{\psi}(p_i)\pi_{\psi}(\iota_i(x))\pi_{\psi}(p_i)|_{H_\psi^i}
$$
for all $x \in A$, where $H_\psi^i := \overline{\Lambda_{\psi}(\iota_i(A))}$. Since 
$$
p_i\iota_i(A)p_i = p_iM_2(A)p_i,
$$
it follows that $\pi_{\mu_i(\psi)}(A)'' \simeq   \pi_{\psi}(p_i)\pi_{\psi}(A)''\pi_{\psi}(p_i)$. Hence $\eta : = \eta' \circ \eta'': S^{\sigma}_\beta \to S^{\sigma''}_\beta$ is a continuous bijection with the stated properties.

\end{proof}

\begin{cor}\label{09-10-23}  Let $A$ and $B$ be unital $C^*$-algebras, $\sigma$ a flow on $A$ and $\sigma'$ a flow on $B$. Assume that $\sigma$ and $\sigma'$ are cocycle conjugate. There is a homeomorphism $\eta : S^{\sigma}_\beta \to S^{\sigma'}_\beta$ such that $\eta(\omega)$ is extremal if and only if $\omega $ is extremal. 

Let $\omega \in S^{\sigma}_\beta$ be extremal. Then $\omega$ is of factor type $I$ if and only if $\eta(\omega)$ is of factor type $I$, $\omega$ is of factor type $II$ if and only if $\eta(\omega)$ is of factor type $II$ and for $\lambda \in [0,1]$,  $\omega$ is of factor type $III_\lambda$ if and only if $\eta(\omega)$ is of factor type $III_\lambda$.
\end{cor}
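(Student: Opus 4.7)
The plan is to reduce everything to Theorem \ref{08-10-23}, which already produces the desired homeomorphism $\eta$ preserving extremality, and then to invoke the fact that the factor type classification $I$, $II$, $III_\lambda$ is invariant under passing to a corner of a factor.

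First I would observe that the homeomorphism statement and the extremality statement are exactly the content of Theorem \ref{08-10-23} once we symmetrise: the hypothesis of cocycle conjugacy is symmetric in $(A,\sigma)$ and $(B,\sigma')$ (composing the isomorphism with the inverse of the cocycle), so $\eta$ and its inverse both enjoy the properties asserted in Theorem \ref{08-10-23}. In particular, for every extremal $\omega \in S^\sigma_\beta$ there is a von Neumann algebra factor $N_\omega$ together with projections $e_\omega, f_\omega \in N_\omega$ such that
\begin{equation*}
\pi_\omega(A)'' \simeq e_\omega N_\omega e_\omega \quad\text{and}\quad \pi_{\eta(\omega)}(B)'' \simeq f_\omega N_\omega f_\omega .
\end{equation*}
By Corollary \ref{07-03-22} (or Corollary \ref{09-03-22b} if $\beta=0$) both $\pi_\omega(A)''$ and $\pi_{\eta(\omega)}(B)''$ are factors, so the projections $e_\omega$ and $f_\omega$ must have central support equal to $1$ in $N_\omega$.

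Next I would invoke the standard stability of factor type under passage to a full corner. Concretely, if $N$ is a factor and $p \in N$ is a non-zero projection, then $pNp$ is a factor of the same type as $N$: for semifinite factors this follows from the fact that $pNp$ is Morita equivalent to $N$ via the $pN$-$pNp$ imprimitivity bimodule (preserving type $I_n$, $I_\infty$, $II_1$, $II_\infty$ according to the relative dimension of $p$ combined with the type of $N$), and for type $III_\lambda$ it is exactly the statement of Corollaire 3.2.8 (b) of \cite{Co1} already used in the proof of Corollary \ref{30-08-23a}. Applying this twice, $\pi_\omega(A)'' \simeq e_\omega N_\omega e_\omega$ has the same type as $N_\omega$, and $\pi_{\eta(\omega)}(B)'' \simeq f_\omega N_\omega f_\omega$ also has the same type as $N_\omega$; hence $\pi_\omega(A)''$ and $\pi_{\eta(\omega)}(B)''$ have identical type. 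The converse implication follows from the same argument applied to $\eta^{-1}$, which exists and has the same properties because cocycle conjugacy is an equivalence relation.

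The only non-routine point is that the type classification is genuinely preserved by the corner operation for all types appearing in the Murray--von Neumann--Connes list; this is well known but should be cited explicitly, for the $III_\lambda$ case by reference to Corollaire 3.2.8 (b) of \cite{Co1} (as was done in Corollary \ref{30-08-23a}) and for the semifinite cases by the standard Morita-type argument or by direct comparison of the centre-valued trace on $N_\omega$ restricted to $e_\omega N_\omega e_\omega$. No additional calculation beyond Theorem \ref{08-10-23} and this citation is required.
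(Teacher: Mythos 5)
Your proposal is correct and follows essentially the same route as the paper: the paper's proof likewise derives the corollary from Theorem \ref{08-10-23} together with the invariance of the types $I$, $II$ and $III_\lambda$ under passage to a corner of a factor, citing Corollaire 3.2.8 (b) of \cite{Co1} for the type $III_\lambda$ case. Your extra remarks on central supports and relative dimension are harmless but unnecessary, since the statement only distinguishes the coarse types $I$, $II$ and $III_\lambda$.
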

\begin{proof} This follows from Theorem \ref{08-10-23} plus the following facts. Let $e$ be a non-zero projection in the von Neumann algebra factor. Then
\begin{itemize}
\item[(i)] $N$ is of type $I$ if and only if $eNe$ is of type $I$,
\item[(ii)] $N$ is of type $II$ if and only if $eNe$ is of type $II$, 
\item[(iii)] $N$ is of type $III_\lambda$ if and only if $eNe$ is of type $III_\lambda$ for any $\lambda \in [0,1]$.
\end{itemize}
The two first facts, (i) and (ii), are well-known, although their proofs are hard to find. In textbooks they are mostly relegated to the exercises. See e.g. E.4.18 in \cite{SZ}. The last fact (iii) follows from Corollaire 3.2.8 (b) of \cite{Co1}.
\end{proof}

\begin{example}\label{09-10-23a} \rm Consider the stationary ITPFI flows generated by two selfadjoint matrices $h,h' \in M_2(\mathbb C)$ with eigenvalues $\lambda_1,\lambda_2$ and $\lambda'_1,\lambda'_2$, respectively. It follows by combining Example \ref{06-10-23a} with Corollary \ref{09-10-23} that these two flows are not cocycle conjugate unless $|\lambda_1 - \lambda_2| = |\lambda'_1 - \lambda'_2|$. Note this holds if and only if there is a real number $r \in \mathbb R$ and a unitary $W \in M_2(\mathbb C)$ such that $W(h - r1)W^* \in \{h',-h'\}$. 
\end{example}

\begin{notes} The material in this chapter is new in the sense that the author has not seen it in any book or paper. But most if not all arguments are straightforward applications of well-known facts for normal flows on von Neumann algebras, and the content can safely be considered to be folklore. 
\end{notes}



\begin{appendices}
\chapter{Integration in Banach spaces}\label{integration}

Let $X$ be a Banach space, $M$ a locally compact Hausdorff space and $\mu$ a regular measure on $M$. Let $f : M \to X$ be a continuous function such that
\begin{equation}\label{12-02-22a}
\int_M \left\|f(x)\right\| \ \mathrm d \mu(x) < \infty .
\end{equation}
In this appendix we describe a way to define the integral
$$
\int_M f(x) \ \mathrm d\mu(x) ,
$$
and establish some of its properties that are used in the main text.

\section{The definition}
We consider pairs $(\mathcal U, \epsilon)$ where $\mathcal U$ is a finite collection of mutually disjoint Borel sets $U$ of $M$ and $\epsilon > 0$ is a positive number such that
\begin{itemize}
\item $\bigcup_{U \in \mathcal U}$ is pre-compact; that is, its closure is compact,
\item $\int_{M \backslash (\bigcup_{U \in \mathcal U} U)} \  \left\|f(x)\right\| \ \mathrm d \mu(x) \leq \epsilon$, and
\item $\sup_{x,y \in U} \left\|f(x) -f(y)\right\|\left( \mu\left(\bigcup_{U \in \mathcal U} U\right) + 1\right)  \leq \epsilon$ for all $U \in \mathcal U$.
\end{itemize}
The collection of such pairs will be denoted by $\mathcal I$. We consider $\mathcal I$ as a pre-ordered set where $(\mathcal U,\epsilon) \leq (\mathcal V,\delta)$ means that
\begin{itemize}
\item $\delta \leq \epsilon$, 
\item $\bigcup_{U \in \mathcal U} \subseteq \bigcup_{V \in \mathcal V}V$, and 
\item for every $V \in \mathcal V$ there is an element $U' \in \mathcal U$ such that $V \cap (\bigcup_{U \in \mathcal U}U)  \subseteq U'$.
\end{itemize}

\begin{lemma}\label{22-03-22b} $\mathcal I$ is a directed set. 
\end{lemma}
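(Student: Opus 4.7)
The plan is to show that any two pairs $(\mathcal U,\epsilon)$ and $(\mathcal V,\delta)$ in $\mathcal I$ admit a common upper bound $(\mathcal W,\gamma)$. Set $\gamma := \tfrac{1}{2}\min\{\epsilon,\delta\}$. The construction of $\mathcal W$ proceeds in three steps.

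First I would fix a large pre-compact region. Using the regularity of $\mu$ and the integrability condition \eqref{12-02-22a}, choose a pre-compact open set $O\subseteq M$ containing $\bigcup_{U\in\mathcal U}U \cup \bigcup_{V\in\mathcal V}V$ and satisfying
\[
\int_{M\setminus O} \|f(x)\| \ \mathrm d\mu(x) \leq \gamma .
\]
This takes care of the tail-integral requirement for the new pair, no matter how we later partition $O$.

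Next I would form the coarse common refinement. The finite family of Borel sets
\[
\bigl\{\,U\cap V \;:\; U\in\mathcal U,\ V\in\mathcal V\,\bigr\} \ \cup\ \bigl\{\,U\setminus\bigcup\mathcal V : U\in\mathcal U\bigr\} \ \cup\ \bigl\{\,V\setminus\bigcup\mathcal U : V\in\mathcal V\bigr\} \ \cup\ \bigl\{\,O\setminus(\bigcup\mathcal U\cup\bigcup\mathcal V)\bigr\}
\]
is a finite disjoint partition of $O$ into Borel sets, each of which is either contained in some element of $\mathcal U$ or disjoint from $\bigcup\mathcal U$, and analogously for $\mathcal V$. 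Call this family $\mathcal W_0$. By construction $\mathcal W_0$ already satisfies the two partial-order requirements that involve being a refinement of $\mathcal U$ and of $\mathcal V$ separately, and its union $O$ has finite measure $\mu(O) < \infty$ because $\overline O$ is compact and $\mu$ is regular.

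Finally I would refine $\mathcal W_0$ further to achieve the oscillation condition with constant $\gamma$. Since $f$ is continuous and $\overline O$ is compact, for each $P\in\mathcal W_0$ there is an open cover of $\overline P$ by finitely many open subsets of $M$ on which the oscillation of $f$ is at most $\gamma/(\mu(O)+1)$; intersecting $P$ with such a cover (and disjointifying, which preserves Borel measurability) produces a finite disjoint Borel partition $\mathcal W_P$ of $P$ with
\[
\sup_{x,y\in W} \|f(x)-f(y)\|\,(\mu(O)+1) \leq \gamma \qquad \forall W\in\mathcal W_P.
\]
Set $\mathcal W := \bigcup_{P\in\mathcal W_0}\mathcal W_P$. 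Then $\bigcup\mathcal W = O$ is pre-compact, the tail integral over $M\setminus O$ is at most $\gamma$, and the oscillation bound holds with the required denominator $\mu(\bigcup\mathcal W)+1 = \mu(O)+1$; hence $(\mathcal W,\gamma)\in\mathcal I$. Because each $W\in\mathcal W$ lies inside some $P\in\mathcal W_0$, and each $P$ is either contained in one element of $\mathcal U$ or disjoint from $\bigcup\mathcal U$ (and similarly for $\mathcal V$), the refinement relations $(\mathcal U,\epsilon)\leq(\mathcal W,\gamma)$ and $(\mathcal V,\delta)\leq(\mathcal W,\gamma)$ hold.

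The main technical obstacle I anticipate is the third step: producing a Borel partition of each $P\in\mathcal W_0$ with prescribed control on the oscillation of $f$, in a general locally compact Hausdorff space that need not be metrizable. The rescue is that only $\overline O$ matters and it is compact, so continuity of $f$ forces uniform continuity in the sense that any $\gamma'>0$ determines a finite open cover of $\overline O$ with oscillation below $\gamma'$ on each member; disjointifying this cover keeps us inside the Borel $\sigma$-algebra. All other verifications are bookkeeping with the disjointification and with the elementary estimates $\mu(\bigcup\mathcal W)=\mu(O)<\infty$ and $\gamma\leq\min\{\epsilon,\delta\}$.
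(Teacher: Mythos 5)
Your proof is correct and follows essentially the same route as the paper's: choose a large compact (or pre-compact) region absorbing both unions and the tail of the integral, take the common refinement by intersections and differences, and then subdivide further using compactness and continuity of $f$ to control the oscillation. The only differences (an open pre-compact $O$ in place of a compact $K$, and the harmless factor $\tfrac12$ in $\gamma$) are cosmetic.
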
 
\begin{proof} Let $(\mathcal U,\epsilon), (\mathcal V,\delta) \in \mathcal I$. Set $\epsilon' = \min \{\epsilon,\delta\}$. We choose a compact set $K \subseteq M$ such that
$$
\left(\overline{\bigcup_{U \in \mathcal U} U}\right) \cup \left( \overline{\bigcup_{V \in \mathcal V} V}\right)  \subseteq K
$$
and
$$
\int_{M\backslash K} \|f(x)\| \ \mathrm d\mu(x) \leq \epsilon' .
$$ 
The sets
$$
K \backslash \left( {\bigcup_{U \in \mathcal U} U} \cup {\bigcup_{V \in \mathcal V} V}\right),
$$
$$
U \cap V , \ \ \ \ U \in \mathcal U, \ V \in \mathcal V,
$$
$$
U  \backslash \bigcup_{V \in \mathcal V} V , \ \ \ \ U \in \mathcal U
$$
and
$$ 
 V\backslash \bigcup_{U \in \mathcal U} U, \ \ \ \  V \in \mathcal V ,
 $$
constitute a partition $\mathcal P$ of $K$ into Borel sets. Since $f$ is continuous and $K$ is compact there is a finite Borel partition 
$$
K = \bigcup_{W \in \mathcal W} W
$$
of $K$ subordinate to $\mathcal P$ such that
$$
\sup_{x,y \in W} \left\|f(x) -f(y)\right\|(\mu(K)+1) \leq \epsilon' 
$$
for all $W \in \mathcal W$. Then $(\mathcal U,\epsilon) \leq (\mathcal W,\epsilon')$ and $(\mathcal V,\delta) \leq (\mathcal W,\epsilon')$.
\end{proof}

For $(\mathcal U,\epsilon) \in \mathcal I$, we denote by 
$\mathcal S(\mathcal U, \epsilon)$
the set of elements $S$ in $X$ of the form
$$
S = \sum_{U \in \mathcal U} f(i(U))\mu(U)
$$
for some function $i : \mathcal U \to M$ with the property that $i(U) \in U$ for all $U \in \mathcal U$.

\begin{lemma}\label{12-02-22} There is an element $I \in X$ with the property that for every $\delta > 0$ there is a $(\mathcal U,\epsilon) \in \mathcal I$ such that 
$$
\left\| I-S\right\| \leq \delta
$$
for all $S \in S(\mathcal V,\epsilon')$ when $(\mathcal U,\epsilon) \leq (\mathcal V,\epsilon')$.
\end{lemma}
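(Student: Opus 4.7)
The plan is to exhibit the family $\{\mathcal S(\mathcal U, \epsilon)\}_{(\mathcal U,\epsilon)\in \mathcal I}$ as a Cauchy net of approximate integrals and define $I$ as its limit. Since $\mathcal I$ is a directed set by Lemma \ref{22-03-22b} and $X$ is complete, it suffices to establish the following uniform Cauchy estimate: there is an absolute constant $C$ such that whenever $(\mathcal U,\epsilon) \leq (\mathcal V,\epsilon')$ in $\mathcal I$, every pair $S \in \mathcal S(\mathcal U,\epsilon)$ and $S' \in \mathcal S(\mathcal V,\epsilon')$ satisfies $\|S - S'\| \leq C(\epsilon + \epsilon')$.

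To prove this estimate I exploit the refinement partial order. Partition $\mathcal V$ by setting $\mathcal V_0 = \{V \in \mathcal V : V \cap \bigcup_{U \in \mathcal U} U = \emptyset\}$ and, for each $U \in \mathcal U$, $\mathcal V_U = \{V \in \mathcal V \setminus \mathcal V_0 : V \cap \bigcup_{U' \in \mathcal U} U' \subseteq U\}$. The refinement condition, together with disjointness of $\mathcal U$, makes this into a bona fide partition $\mathcal V = \mathcal V_0 \sqcup \bigsqcup_{U \in \mathcal U} \mathcal V_U$. Combining this with the inclusion $\bigcup_U U \subseteq \bigcup_V V$ I next deduce the crucial measure identity $\mu(U) = \sum_{V \in \mathcal V_U} \mu(V \cap U)$ for each $U \in \mathcal U$. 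Writing $S = \sum_{U \in \mathcal U} f(i(U))\sum_{V \in \mathcal V_U}\mu(V \cap U)$ and decomposing $\mu(V) = \mu(V \cap U) + \mu(V \setminus \bigcup_{U'} U')$ for $V \in \mathcal V_U$, the difference $S - S'$ then splits cleanly into three sums: a diagonal part $\sum_U \sum_{V \in \mathcal V_U}[f(i(U)) - f(j(V))]\mu(V \cap U)$, a boundary part over $V \setminus \bigcup U$ for $V \in \bigcup_U \mathcal V_U$, and a tail part over $V \in \mathcal V_0$.

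Each piece is now bounded routinely using the three defining conditions of $\mathcal I$. For the diagonal part, pick an auxiliary point $z_V \in V \cap U$ for each $V \in \mathcal V_U$, use the oscillation bounds of $f$ on $U$ and on $V$ (with the factors $(\mu(\bigcup \mathcal U)+1)^{-1}$ and $(\mu(\bigcup \mathcal V)+1)^{-1}$), and telescope; the total is at most $\epsilon + \epsilon'$. For the boundary and tail parts, use $\|f(j(V))\|\mu(E) \leq \int_E \|f\|\,\mathrm d\mu + (\text{oscillation correction}) \cdot \mu(E)$ applied to $E = V \setminus \bigcup U$ and $E = V$ respectively; integrability gives the dominant bound $\int_{M \setminus \bigcup U} \|f\|\,\mathrm d\mu \leq \epsilon$, and the oscillation correction contributes at most $\epsilon'$. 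Summing, $\|S - S'\| \leq 3(\epsilon+\epsilon')$, which is the desired Cauchy estimate with $C = 3$.

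The proof then concludes by choosing any selector $S_{(\mathcal U,\epsilon)} \in \mathcal S(\mathcal U,\epsilon)$ for each index; this produces a Cauchy net in the complete space $X$, and I let $I$ be its limit. Given $\delta > 0$, pick $(\mathcal U,\epsilon) \in \mathcal I$ with $6\epsilon < \delta/2$ and $\|I - S_{(\mathcal U,\epsilon)}\| < \delta/2$. Then for any $(\mathcal V,\epsilon') \geq (\mathcal U,\epsilon)$ and any $S \in \mathcal S(\mathcal V,\epsilon')$, the key estimate applied to $S_{(\mathcal U,\epsilon)}$ and $S$ gives $\|S - S_{(\mathcal U,\epsilon)}\| \leq 3(\epsilon+\epsilon') \leq 6\epsilon < \delta/2$, hence $\|I - S\| < \delta$. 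The main obstacle is the bookkeeping in the second paragraph, particularly verifying the measure identity $\mu(U) = \sum_{V \in \mathcal V_U}\mu(V \cap U)$; this is the point at which the refinement order interlocks with the measure-theoretic setup, and it explains why the three conditions defining $\mathcal I$ must be chosen exactly as they are, with the factor $(\mu(\bigcup \mathcal U)+1)^{-1}$ absorbing the total measure in the oscillation bound.
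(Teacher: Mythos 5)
Your proof is correct and follows essentially the same route as the paper's: the same decomposition of the finer Riemann sum into the part over $V\cap U$ (compared to $S$ via an auxiliary point of $V\cap U$ and the two oscillation bounds) and the part over $M\setminus\bigcup_{U\in\mathcal U}U$ (controlled by the integral tail condition). The only difference is presentational: you prove a symmetric estimate $\|S-S'\|\leq C(\epsilon+\epsilon')$ for any two comparable indices and invoke net-completeness, whereas the paper fixes a cofinal sequence $(\mathcal U_n,\tfrac{1}{n})$ and obtains a Cauchy sequence directly.
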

\begin{proof} It follows from \eqref{12-02-22a} that for each $n \in \mathbb N$ there is a compact subset $K_n \subseteq M$ such that 
$$
\int_{M \backslash K_n} \|f(x)\| \ \mathrm d\mu(x) \leq \frac{1}{n} .
$$
Since $f$ is continuous and $K_n$ compact there is a finite partition $\mathcal U_n$ of $K_n$ into mutually disjoint Borel subsets $U$ of $K_n$ such that $\|f(x) -f(y)\|(\mu(K_n) +1) \leq \frac{1}{n}$ for all $x,y \in U$ and all $U \in \mathcal U_n$. Then $(\mathcal U_n,\frac{1}{n}) \in \mathcal I$. We proceed inductively, as we can, to arrange that $(\mathcal U_n,\frac{1}{n}) \leq (\mathcal U_{n+1},\frac{1}{n+1})$ in $\mathcal I$ for all $n$. For each $n$ choose $S_n \in \mathcal S(\mathcal U_n,\frac{1}{n})$. Let $(\mathcal V,\delta) \geq (\mathcal U_n,\frac{1}{n})$ and consider an element 
$$
S = \sum_{V \in \mathcal V} f(j(V))\mu(V) \in \ \mathcal S(\mathcal V,\delta) .
$$
Then
\begin{align*}
& S = \sum_{V \in \mathcal V} f(j(V))\mu\left( V \backslash \bigcup_{U \in \mathcal U_n} U\right) +\sum_{V \in \mathcal V, U \in \mathcal U_n} f(j(V))\mu\left( V \cap U\right) .
\end{align*}
Note that
\begin{align*}
&\left\|\sum_{V \in \mathcal V} f(j(V))\mu\left( V \backslash \bigcup_{U \in \mathcal U_n} U \right)\right\| \leq \sum_{V \in \mathcal V} \left\|f(j(V))\right\|\mu\left( V \backslash \bigcup_{U \in \mathcal U_n} U\right) \\
& = \sum_{V \in \mathcal V} \int_{ V \backslash \bigcup_{U \in \mathcal U_n}U} \left\|f(j(V))\right\| \ \mathrm d \mu(x)\\ 
& \leq  \sum_{V \in \mathcal V} \int_{ V \backslash \bigcup_{U \in \mathcal U_n} U} \left(\left\|f(x)\right\| + \frac{\delta}{\mu\left(\bigcup_{W \in \mathcal V}W\right) +1} \right) \ \mathrm d \mu(x)\\ 
& \leq \delta + \int_{M \backslash \bigcup_{U \in \mathcal U_n} U} \left\|f(x)\right\| \ \mathrm d \mu(x) \leq \delta + \frac{1}{n} \leq \frac{2}{n} . 
\end{align*}
Let $i_n : \mathcal U_n \to X$ be the function defining $S_n$; that is,
$$
S_n = \sum_{U \in \mathcal U_n} f(i_n(U)) \mu(U) .
$$
Then
\begin{align*}
& \left\|S_n - \sum_{V \in \mathcal V, U \in \mathcal U_n} f(j(V))\mu\left( V \cap U\right)\right\| \\
& = \left\|\sum_{V \in \mathcal V, U \in \mathcal U_n} f(i_n(U))\mu\left( V \cap U\right)- \sum_{V \in \mathcal V, U \in \mathcal U_n} f(j(V))\mu\left( V \cap U\right)\right\|\\
& \leq \sum_{V \in \mathcal V, U \in \mathcal U_n} \left\|f(i_n(U))   - f(j(V))\right\|\mu\left( V \cap U\right)\\
& \leq  \sum_{V \in \mathcal V, U \in \mathcal U_n} \left( \frac{1}{n \left(\mu(K_n) +1\right)} + \frac{\delta}{\mu\left(\bigcup_{V \in \mathcal V}V\right) +1} \right) \mu\left( V \cap U\right)\\
& \leq \frac{1}{n} + \delta \leq \frac{2}{n} .
\end{align*}
It follows that $\left\|S - S_n\right\| \leq \frac{4}{n}$. In particular, the sequence $\{S_n\}$ is Cauchy in $X$ and we set $I = \lim_{n \to \infty} S_n$. Then $I$ has stated property: Let $\delta > 0$ be given. There is an $N \in \mathbb N$ such that $\frac{4}{n} \leq \frac{\delta}{2}$ for all $n \geq N$ and $\left\|S_n -I\right\| \leq \frac{\delta}{2}$ for all $n \geq N$. If $(\mathcal V, \epsilon') \geq (\mathcal U_N,\frac{1}{N})$ the calculations above imply that $\left\|S - I\right\| \leq \left\|S-S_N\right\| + \frac{\delta}{2} \leq \frac{4}{N} + \frac{\delta}{2} \leq \delta$ for all $S \in \mathcal S(\mathcal V,\epsilon')$.
\end{proof}

The element $I$ of Lemma \ref{12-02-22} will be denoted by
$$
\int_M f(x) \ \mathrm d \mu(x) .
$$
By construction there is a sequence $\{S_n\}$ in $X$ such that
$$
\lim_{n \to \infty} S_n = \int_M f(x) \ \mathrm d \mu(x)
$$
and for each $n$ there is a finite collection of mutually disjoint Borel sets $\{U_i : \ i=1,2, \cdots, N_n\}$ in $M$ and elements $x_i \in U_i, \ i =1,2,\cdots, N_n$, such that the sum
$$
I_n := \sum_{i=1}^{N_n} f(x_i)\mu(U_i)
$$
has the properties that
$$
\left\| I_n - \int_M f(x) \ \mathrm d \mu(x)\right\| \leq \frac{1}{n}
$$
and
$$
\left\|I_n\right\| \leq \int_M \left\|f(x)\right\| \ \mathrm d\mu(x) + \frac{1}{n} .
$$
In particular, it follows that
\begin{equation}\label{28-02-22a}
\left\|\int_M f(x) \ \mathrm d\mu(x)\right\| \leq \int_M \left\|f(x)\right\| \ \mathrm d\mu(x) .
\end{equation}

\section{Miscellaneous}

\begin{lemma}\label{12-02-22b} Let $X$ and $Y$ be Banach spaces, $D \subseteq X$ a subspace and $L : D \to Y$ a linear map. Let $M$ be a locally compact Hausdorff space and $\mu$ a regular Borel measure on $M$. Let $f: M \to X$ be a continuous function. Assume that
\begin{itemize}
\item $L : D \to Y$ is closed,
\item $f(x) \in D$ for all $x \in M$,
\item $M \ni x \mapsto L(f(x))$ is continuous,
\item $\int_M \left\|f(x)\right\| \ \mathrm dx < \infty$, and
\item $\int_M \left\|L(f(x)\right\| \ \mathrm dx < \infty$. 
\end{itemize}
It follows that $\int_M f(x) \ \mathrm d \mu(x) \in D$ and $L\left(\int_M f(x) \ \mathrm d \mu(x)\right) = \int_M L(f(x))  \ \mathrm d \mu(x)$.
\end{lemma}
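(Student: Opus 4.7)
\textbf{Proof plan for Lemma \ref{12-02-22b}.}

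The plan is to show that a single sequence of Riemann-type sums simultaneously approximates both $\int_M f(x)\,\mathrm d\mu(x)$ in $X$ and $\int_M L(f(x))\,\mathrm d\mu(x)$ in $Y$, and then to invoke that $L$ is closed. Let $h : M \to Y$ be defined by $h(x) := L(f(x))$. By hypothesis $h$ is continuous and $\int_M \|h(x)\|\,\mathrm d\mu(x) < \infty$, so by Lemma \ref{12-02-22} the integral $\int_M h(x)\,\mathrm d\mu(x)$ exists in $Y$, and of course $\int_M f(x)\,\mathrm d\mu(x)$ exists in $X$ for the same reason.

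The first step is to construct a common approximating sequence. I would argue that there is a sequence $\{(\mathcal U_n,\tfrac{1}{n})\}_{n=1}^\infty$ in $\mathcal I$ that is simultaneously cofinal for the directed set $\mathcal I_f$ governing $f$ and the analogous directed set $\mathcal I_h$ governing $h$. Concretely, for each $n$ choose a compact $K_n \subseteq M$ with
\[
\int_{M\setminus K_n}\|f(x)\|\,\mathrm d\mu(x)\ \leq\ \tfrac{1}{n},\qquad \int_{M\setminus K_n}\|h(x)\|\,\mathrm d\mu(x)\ \leq\ \tfrac{1}{n},
\]
and then, using the continuity of $f$ and $h$ on the compact set $K_n$, a finite Borel partition $\mathcal U_n$ of $K_n$ such that for every $U\in\mathcal U_n$
\[
\sup_{x,y\in U}\|f(x)-f(y)\|\,(\mu(K_n)+1)\ \leq\ \tfrac{1}{n},\qquad \sup_{x,y\in U}\|h(x)-h(y)\|\,(\mu(K_n)+1)\ \leq\ \tfrac{1}{n}.
\]
After passing to successive refinements, as in the proof of Lemma \ref{12-02-22}, I may assume $(\mathcal U_n,\tfrac{1}{n}) \leq (\mathcal U_{n+1},\tfrac{1}{n+1})$ both as elements of $\mathcal I_f$ and of $\mathcal I_h$. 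For each $n$ pick a choice function $i_n : \mathcal U_n \to M$ with $i_n(U) \in U$, and set
\[
S_n\ :=\ \sum_{U\in\mathcal U_n} f(i_n(U))\,\mu(U)\ \in\ X.
\]

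The second step uses the linearity of $L$ and the fact that $D$ is a subspace: since $f(i_n(U)) \in D$ for all $U$, we have $S_n\in D$ and
\[
L(S_n)\ =\ \sum_{U\in\mathcal U_n} L(f(i_n(U)))\,\mu(U)\ =\ \sum_{U\in\mathcal U_n} h(i_n(U))\,\mu(U).
\]
By the defining property of the integral (Lemma \ref{12-02-22}) applied to $f$ and to $h$ with the common sequence $\{(\mathcal U_n,\tfrac{1}{n})\}$, we obtain
\[
\lim_{n\to\infty} S_n\ =\ \int_M f(x)\,\mathrm d\mu(x)\quad\text{in }X,\qquad \lim_{n\to\infty} L(S_n)\ =\ \int_M h(x)\,\mathrm d\mu(x)\quad\text{in }Y.
\]

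The third and final step is a direct appeal to the closedness of $L$: $S_n \in D$ and both limits exist, so $\int_M f(x)\,\mathrm d\mu(x) \in D$ and $L\bigl(\int_M f(x)\,\mathrm d\mu(x)\bigr) = \int_M L(f(x))\,\mathrm d\mu(x)$. The only mildly delicate point is the first step, namely the simultaneous cofinality of the approximating pairs in the two directed sets, but this is handled by the same refinement argument used in the proof of Lemma \ref{12-02-22} applied to the pair of functions $f,h$ rather than to $f$ alone; everything else is bookkeeping.
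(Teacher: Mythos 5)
Your proposal is correct and is essentially the paper's argument: both rest on producing a single sequence of Riemann-type sums $S_n \in D$ whose images $L(S_n)$ are simultaneously the Riemann sums for $L\circ f$, and then invoking closedness of $L$. The paper merely packages the "simultaneous cofinality" step you carry out by hand: it applies Lemma \ref{12-02-22} once to the continuous function $M \ni x \mapsto (f(x),L(f(x))) \in X \oplus Y$, so that one application of the existing refinement argument yields the common approximating sums without repeating the construction for the pair $f$, $L\circ f$.
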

\begin{proof} Consider the function $M \to X \oplus Y$ defined by 
$$
M \ni x \mapsto (f(x),L(f(x)) .
$$
By applying Lemma \ref{12-02-22} to this function we obtain sequences $I_n$ in $D$ and $J_n$ in $Y$ such that $L(I_n) = J_n$ and 
$$
\lim_{n \to \infty} (I_n,J_n) = \left( \int_M f(x) \ \mathrm d \mu(x), \int_M L(f(x)) \ \mathrm d \mu(x)\right)
$$
in $X \oplus Y$. This gives the conclusion because $L$ is closed. 
\end{proof}

\begin{lemma}\label{28-02-22} Let $X$ be a Banach space, $M$ a locally compact Hausdorff space and $\mu$ a regular measure on $M$. Let $\{f_n\}$ be a sequence of continuous functions $f_n : M \to X$ and $f:M \to X$ a continuous function such that $\lim_{n \to \infty} f_n(x) = f(x)$ for all $x \in M$. Assume that there is function $h \in L^1(M,\mu)$ such that 
$$
\left\|f_n(x)\right\| \leq h(x) \ \ \forall x \in M.
$$
Then $\lim_{n \to \infty} \int_M f_n(x) \ \mathrm d\mu(x) =  \int_M f(x) \ \mathrm d\mu(x)$ in $X$.
\end{lemma}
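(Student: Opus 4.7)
\emph{Proof plan for Lemma \ref{28-02-22}.} The strategy is to reduce the Banach-valued statement to the classical scalar dominated convergence theorem via the norm estimate \eqref{28-02-22a}, after first verifying that the Banach-valued integral behaves linearly well enough to form the difference $\int_M f_n \, d\mu - \int_M f \, d\mu$ and identify it with $\int_M (f_n - f) \, d\mu$.

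First I would check that all three integrals in sight are well-defined. Since $f$ is the pointwise limit of the $f_n$, the bound $\|f(x)\| \leq h(x)$ holds for every $x \in M$, so both $\int_M \|f(x)\| \, d\mu(x)$ and $\int_M \|f_n(x)\| \, d\mu(x)$ are finite and hence $\int_M f \, d\mu$ and $\int_M f_n \, d\mu$ exist in $X$ by Lemma \ref{12-02-22}. For each fixed $n$ the continuous function $f_n - f : M \to X$ satisfies $\|f_n(x) - f(x)\| \leq 2h(x)$, so $\int_M (f_n - f) \, d\mu$ is likewise defined.

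The key algebraic step is the identity
\begin{equation*}
\int_M f_n \, d\mu - \int_M f \, d\mu = \int_M (f_n - f) \, d\mu .
\end{equation*}
To see this, I would invoke Lemma \ref{22-03-22b} to choose a common element $(\mathcal V, \epsilon') \in \mathcal I$ lying above refinements suitable for each of the three functions $f_n$, $f$ and $f_n - f$ separately; then a Riemann-type sum $\sum_{V \in \mathcal V} (f_n - f)(j(V)) \mu(V)$ splits as the difference of the corresponding sums for $f_n$ and $f$, and Lemma \ref{12-02-22} lets me pass to the limit in $(\mathcal V, \epsilon')$ to get the displayed identity. Applying \eqref{28-02-22a} to $f_n - f$ then yields
\begin{equation*}
\left\| \int_M f_n \, d\mu - \int_M f \, d\mu \right\| \leq \int_M \|f_n(x) - f(x)\| \, d\mu(x) .
\end{equation*}

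It remains to show that the right-hand side tends to $0$. The integrand $x \mapsto \|f_n(x) - f(x)\|$ is a nonnegative continuous, hence Borel measurable, scalar function on $M$ that converges pointwise to $0$ and is dominated by the integrable function $2h$. The classical Lebesgue dominated convergence theorem therefore gives $\lim_{n \to \infty} \int_M \|f_n(x) - f(x)\| \, d\mu(x) = 0$, completing the proof. The only place requiring genuine attention is the linearity/refinement argument in the middle paragraph; once that is in hand, the rest is the standard reduction of Banach-valued dominated convergence to its scalar counterpart.
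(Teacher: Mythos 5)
Your proposal is correct and follows essentially the same route as the paper: apply the norm estimate \eqref{28-02-22a} to $f_n - f$ and then invoke the scalar dominated convergence theorem. The only difference is that you explicitly justify the linearity identity $\int_M f_n \, \mathrm d\mu - \int_M f \, \mathrm d\mu = \int_M (f_n - f)\, \mathrm d\mu$ via common refinements, a step the paper's proof uses implicitly; making it explicit is harmless and arguably an improvement.
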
 
\begin{proof} Note that all the integrals $\int_Mf_n(x) \ \mathrm d\mu(x)$ and $\int_Mf(x) \ \mathrm d\mu(x)$ are defined since $\int_M h(x) \ \mathrm d \mu(x) < \infty$. It follows from \eqref{28-02-22a} that
$$
\left\| \int_Mf_n(x) \ \mathrm d\mu(x) -  \int_M f(x) \ \mathrm d\mu(x)\right\| \leq \int_M \left\| f_n(x) - f(x)\right\| \ \mathrm d \mu(x) .
$$
An application of Lebesgue's theorem on dominated convergence shows that $\lim_{n \to \infty}  \int_M \left\| f_n(x) - f(x)\right\| \ \mathrm d \mu(x) = 0$.
\end{proof}

The setting of the next lemma is that from Section \ref{Section-entire}. In particular, it deals with the smoothing operators defined in \eqref{17-11-21f}.

\begin{lemma}\label{01-03-22} Let $k \in \mathbb N$, $a \in A$. For every $\epsilon > 0$ there are numbers $\lambda_i \in [0,1]$ and $t_i \in \mathbb R$, $i =1,2,\cdots , n$, such that $\sum_{i=1}^n \lambda_i = 1$ and
$$
\left\| R_k(a) - \sum_{i=1}^n \lambda_i \sigma_{t_i}(a)\right\| \leq \epsilon .
$$
\end{lemma}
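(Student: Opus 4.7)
The goal is to approximate $R_k(a)=\sqrt{k/\pi}\int_\mathbb R e^{-ks^2}\sigma_s(a)\,\mathrm ds$ by a \emph{convex} combination of elements of the form $\sigma_{t_i}(a)$. Since $\sigma$ acts by isometries we have $\|\sigma_s(a)\|=\|a\|$ for every $s\in\mathbb R$, and the positive weight $\sqrt{k/\pi}\,e^{-ks^2}$ integrates to $1$ over $\mathbb R$, so $R_k(a)$ is morally an average of the $\sigma_s(a)$'s against a probability density. The plan is to (i) truncate the integral to a compact interval, (ii) apply the Riemann-sum construction of the integral provided by Lemma \ref{12-02-22} to the truncated integral, and (iii) renormalize the non-negative coefficients so that they sum to exactly one.

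\textbf{Step 1: Truncation.} Given $\epsilon>0$ we may assume $a\neq 0$ (otherwise the statement is trivial). Choose $R>0$ so large that
$$
\sqrt{\tfrac{k}{\pi}}\int_{|s|>R} e^{-ks^2}\,\mathrm ds \;\leq\; \frac{\epsilon}{4(\|a\|+1)}.
$$
Then, using $\|\sigma_s(a)\|=\|a\|$ and the estimate \eqref{28-02-22a}, the truncated integral $I_R:=\sqrt{k/\pi}\int_{-R}^{R}e^{-ks^2}\sigma_s(a)\,\mathrm ds$ satisfies $\|R_k(a)-I_R\|\leq \epsilon/4$.

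\textbf{Step 2: Riemann-sum approximation on $[-R,R]$.} Apply Lemma \ref{12-02-22} (or simply use uniform continuity of $s\mapsto e^{-ks^2}\sigma_s(a)$ on the compact interval $[-R,R]$) to produce a finite Borel partition $U_1,\dots,U_n$ of $[-R,R]$ and sample points $t_i\in U_i$ such that, setting $c_i:=\sqrt{k/\pi}\,e^{-kt_i^2}\mu(U_i)\geq 0$,
$$
\Bigl\|I_R-\sum_{i=1}^n c_i\,\sigma_{t_i}(a)\Bigr\|\;\leq\;\frac{\epsilon}{4}.
$$
By refining the partition further if necessary, we may also arrange that the sum $S:=\sum_{i=1}^n c_i$, which is a Riemann sum for $\sqrt{k/\pi}\int_{-R}^{R}e^{-ks^2}\,\mathrm ds$, satisfies
$$
\Bigl|S-1\Bigr|\;\leq\;\frac{\epsilon}{4(\|a\|+1)}.
$$
In particular $S>0$, so we may renormalize.

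\textbf{Step 3: Normalize to obtain a convex combination.} Set $\lambda_i:=c_i/S\in[0,1]$; then $\sum_i\lambda_i=1$. Since
$$
\Bigl\|\sum_{i=1}^n c_i\,\sigma_{t_i}(a)-\sum_{i=1}^n\lambda_i\,\sigma_{t_i}(a)\Bigr\|
=\Bigl|1-\tfrac{1}{S}\Bigr|\sum_{i=1}^n c_i\|a\|=|S-1|\,\|a\|\;\leq\;\frac{\epsilon}{4},
$$
a triangle inequality combining Steps 1, 2 and 3 yields
$$
\Bigl\|R_k(a)-\sum_{i=1}^n\lambda_i\,\sigma_{t_i}(a)\Bigr\|\;\leq\;\frac{3\epsilon}{4}\;<\;\epsilon.
$$

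The only mildly delicate point in this argument is ensuring that all three errors (the tail, the Riemann-sum error, and the renormalization error coming from $|S-1|$) can be controlled simultaneously; but this is handled by a single final refinement of the partition in Step 2, using that both the integral of $\sqrt{k/\pi}e^{-ks^2}$ over $[-R,R]$ and the Riemann sums of $\sigma_s(a)$ converge to the respective targets as the mesh tends to $0$.
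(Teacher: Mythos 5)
Your proof is correct and follows essentially the same route as the paper: approximate the integral by a Riemann-type sum with non-negative coefficients whose total mass is close to $1$, then renormalize to get a convex combination, absorbing the error $|S-1|\,\|a\|$ into $\epsilon$. The only cosmetic difference is that the paper handles the two approximations (of $R_k(a)$ and of the total mass) in one stroke by applying Lemma \ref{12-02-22} to the $A\oplus\mathbb C$-valued function $t\mapsto(\sqrt{k/\pi}\,e^{-kt^2}\sigma_t(a),\sqrt{k/\pi}\,e^{-kt^2})$, whereas you truncate explicitly and treat the scalar Riemann sum separately.
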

\begin{proof} Define the continuous function $f : \mathbb R \to A \oplus \mathbb C$ such that
$$
f(t) = \left(\sqrt{\frac{k}{\pi}} e^{-k t^2} \sigma_t(a),  \  \sqrt{\frac{k}{\pi}} e^{-k t^2}\right) .
$$
Let $\delta  \in ]0,1[$ be so small that
$$
 \frac{\delta}{1-\delta} (\|a\| + 1) \leq \epsilon .
 $$
By applying Lemma \ref{12-02-22} to $f$ we get numbers $s_i \geq 0$ and $t_i \in \mathbb R$, $i =1,2,\cdots, n$, such that
$$
\left\|\sum_{i=1}^n \sqrt{\frac{k}{\pi}} e^{-k t_i^2} \sigma_{t_i}(a) s_i - R_k(a)\right\| \leq \delta 
$$
and
$$
\left| \sum_{i=1}^n \sqrt{\frac{k}{\pi}} e^{-k t_i^2} s_i - 1\right| \leq \delta .
$$
Set $x ;
:=  \sum_{i=1}^n \sqrt{\frac{k}{\pi}} e^{-k t_i^2} s_i$ and
$$
\lambda_i := x^{-1}\sqrt{\frac{k}{\pi}} e^{-k t_i^2} s_i .
$$
Then $\sum_{i=1}^n \lambda_i = 1$ and
\begin{align*}
& \left\| R_k(a) - \sum_{i=1}^n \lambda_i \sigma_{t_i}(a)\right\| \\
&\leq \left\| x^{-1}R_k(a) - R_k(a)\right\| + \left\| x^{-1}R_k(a) - x^{-1} \sum_{i=1}^n \sqrt{\frac{k}{\pi}} e^{-k t_i^2} \sigma_{t_i}(a) s_i\right\| \\
& \leq \left|x^{-1} -1 \right|\left\|R_k(a)\right\| + x^{-1} \delta \leq \frac{\delta}{1-\delta}\|a\| + \frac{\delta}{1-\delta} \leq \epsilon.
\end{align*}
\end{proof}

\chapter{Flows on compact operators}\label{compact operators}
Let $\mathbb K$ denote the $C^*$-algebra of compact operators on the infinite dimensional separable Hilbert space $\mathbb H$. In this appendix we prove the following folklore theorem.

\begin{thm}\label{12-04-22}  Let $\sigma$ be a flow on $\mathbb K$. There is a strongly continuous unitary representation $U$ of $\mathbb R$ on $\mathbb H$ such that
$$
\sigma_t(a) = U_taU_t^*
$$
for all $a \in \mathbb K$ and all $t \in \mathbb R$.
\end{thm}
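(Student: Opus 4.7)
The plan is to show each $\sigma_t$ is inner and then select the implementing unitaries so they form a strongly continuous one-parameter group. First, since $\mathbb K$ is a non-degenerate ideal in $B(\mathbb H) = M(\mathbb K)$, each automorphism $\sigma_t \in \Aut \mathbb K$ extends uniquely to an automorphism $\tilde\sigma_t$ of $B(\mathbb H)$ (by the method of Remark \ref{27-09-23}). Every automorphism of $B(\mathbb H)$ is inner—this is the classical theorem that can be proved by looking at what an automorphism does to a rank-one projection $\xi\otimes\overline\xi$ and reconstructing an implementing unitary. So for each $t$ I obtain a unitary $V_t \in B(\mathbb H)$ with $\sigma_t(a) = V_t a V_t^*$ for all $a \in \mathbb K$, uniquely determined up to a phase in $\mathbb T$.

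Next I would choose the $V_t$ measurably. Fix a unit vector $\xi_0 \in \mathbb H$ and the rank-one projection $p_0 = \xi_0 \otimes \overline{\xi_0}$. Then $\sigma_t(p_0)$ is a rank-one projection depending norm-continuously on $t$, and the set of unit vectors inside its range is a principal $\mathbb T$-torsor varying continuously in $t$. Locally around each $t_0 \in \mathbb R$ one can pick a norm-continuous unit vector $\xi_t$ in the range of $\sigma_t(p_0)$ (for instance by normalizing $\sigma_t(p_0)\xi_{t_0}$ near $t_0$), and then patch these choices into a globally Borel-measurable selection. Iterating the construction with a fixed orthonormal basis $\{\xi_0,\xi_1,\xi_2,\dots\}$ of $\mathbb H$—picking the image of each $\xi_n$ Borel-measurably and consistently with previous choices—yields a Borel-measurable map $t \mapsto V_t \in U(\mathbb H)$ (with the strong operator topology) such that $\sigma_t = \Ad V_t$.

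The map $V$ is usually not a homomorphism: since $V_{s+t}$ and $V_sV_t$ both implement $\tilde\sigma_{s+t}$, there is a Borel $\mathbb T$-valued $2$-cocycle $c$ on $\mathbb R$ with $V_{s+t} = c(s,t) V_s V_t$. I would appeal to the classical vanishing $H^2_{\mathrm{Borel}}(\mathbb R,\mathbb T) = 0$ to produce a Borel $\lambda : \mathbb R \to \mathbb T$ with $c(s,t) = \lambda(s+t)\overline{\lambda(s)\lambda(t)}$. Setting $U_t := \overline{\lambda(t)} V_t$ then gives a Borel-measurable group homomorphism $\mathbb R \to U(\mathbb H)$ still implementing $\sigma$. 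Finally, a classical theorem of von Neumann says that every Borel-measurable one-parameter unitary group on a separable Hilbert space is automatically strongly continuous, completing the proof.

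The main obstacle is the trivialization of the cocycle $c$, which depends on the non-trivial cohomological fact $H^2_{\mathrm{Borel}}(\mathbb R,\mathbb T) = 0$. If one wishes to avoid invoking that theorem, an alternative is to work locally: use Step 2 to pick $V_t$ \emph{continuously} on a small neighborhood of $0$, so that $c$ is continuous on a neighborhood of $(0,0)$; solve $c = \delta\lambda$ directly there by averaging or by an explicit integral, and extend the resulting one-parameter group from a neighborhood of $0$ to all of $\mathbb R$ using the additive structure. Either way, the delicate point is trading the a priori ambiguity of phases for a genuine group law while preserving continuity.
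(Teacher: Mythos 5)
Your proposal is correct in outline and follows the same skeleton as the paper's proof: implement each $\sigma_t$ by a unitary (unique up to a phase), make a selection $t\mapsto V_t$, observe that $V_sV_tV_{s+t}^*$ is a $\mathbb T$-valued $2$-cocycle, and kill the cocycle by a coboundary. Where you diverge is in how the two delicate steps are handled. You settle for a \emph{Borel} selection and then import two heavy classical results: the vanishing of Moore's measurable cohomology $H^2_{\mathrm{Borel}}(\mathbb R,\mathbb T)=0$ and von Neumann's theorem that a measurable one-parameter unitary group on a separable Hilbert space is automatically strongly continuous. The paper instead arranges for the selection to be \emph{strongly continuous} from the start: it first produces a norm-continuous path $V(t)$ of unitaries in $\mathbb C 1+\mathbb K$ conjugating $e_{11}$ to $\sigma_t(e_{11})$ (a local polar-decomposition argument patched along $\mathbb R$ by the group law), and then writes the implementing unitary explicitly as $W(t)=\sum_j \sigma_t(e_{j1})V(t)e_{1j}$, which is strongly continuous in $t$; the resulting cocycle is then continuous, and the paper proves the elementary fact $H^2_{\mathrm{cont}}(\mathbb R,\mathbb T)=0$ from scratch in Theorem \ref{19-04-22b}. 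Your approach buys brevity at the cost of citing deeper machinery; the paper's buys self-containedness. One caveat on your version: the sentence ``patch these choices into a globally Borel-measurable selection'' is the least justified step, and if you carry out the iteration over a whole orthonormal basis you must check that the infinitely many consistency conditions can be met simultaneously and measurably. In fact you can do better with no extra work: since a principal $\mathbb T$-bundle over the contractible base $\mathbb R$ is trivial, a single \emph{continuous} unit-vector section $\eta_t$ in the range of $\sigma_t(p_0)$ exists globally, and it already determines $V_t$ uniquely (two implementing unitaries agreeing on one nonzero vector coincide), with $V_t\xi_j=\sigma_t(e_{j0})\eta_t$ then continuous for every basis vector. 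That observation collapses your Steps 2--4 into the paper's continuous setting and lets you avoid both Moore cohomology and von Neumann's automatic continuity theorem.
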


By Stone's theorem, cf. e.g. Theorem 5.6.36 of \cite{KR}, this theorem has the following corollary.

\begin{cor}\label{19-04-22} Let $\sigma$ be a flow on $\mathbb K$. There is a (possible unbounded) self-adjoint operator $H$ on $\mathbb H$ such that
$$
\sigma_t(a) = e^{itH}ae^{-itH}
$$
for all $t \in \mathbb R$ and all $a \in \mathbb K$.
\end{cor}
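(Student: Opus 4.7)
The plan is to explicitly construct a strongly continuous family of unitaries implementing $\sigma$, and then adjust its phase to obtain a genuine one-parameter group. The basic mechanism is that every $*$-automorphism of $\mathbb K$ is inner: since the identity is the unique irreducible representation of $\mathbb K$ up to unitary equivalence, $\operatorname{id}$ and $\operatorname{id}\circ\sigma_t$ are unitarily equivalent, so for each $t \in \mathbb R$ there is a unitary $V_t$ on $\mathbb H$ with $V_t a V_t^* = \sigma_t(a)$ for every $a \in \mathbb K$, uniquely determined up to a scalar in $\mathbb T$.

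To pick the $V_t$ coherently, I would fix a unit vector $\xi_0 \in \mathbb H$ and write $p = \xi_0\xi_0^* \in \mathbb K$ for the associated rank-one projection. The map $t \mapsto \sigma_t(p)$ is norm-continuous and consists of rank-one projections, so it descends to a continuous path in projective space $P(\mathbb H)$ starting at $[\xi_0]$. Since the unit sphere maps onto $P(\mathbb H)$ as a principal $\mathbb T$-bundle and $\mathbb R$ is simply connected, this path lifts to a norm-continuous family of unit vectors $\xi_t$ in the range of $\sigma_t(p)$, normalized by $\xi_0$ at $t=0$. Define $V_t$ on the dense subspace $\mathbb K\xi_0 = \mathbb H$ by $V_t(a\xi_0) := \sigma_t(a)\xi_t$; this is well-defined because $a\xi_0 = 0$ implies $ap = 0$, hence $\sigma_t(a)\sigma_t(p) = 0$, and $\xi_t$ lies in the range of $\sigma_t(p)$. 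Using $p a^*a p = \|a\xi_0\|^2 p$ together with $\sigma_t(p)\xi_t = \xi_t$ one gets $\|V_t(a\xi_0)\|^2 = \langle \sigma_t(p a^*a p)\xi_t,\xi_t\rangle = \|a\xi_0\|^2$, so $V_t$ extends to an isometry; since $\sigma_t$ is surjective on $\mathbb K$ a symmetric computation gives surjectivity, so each $V_t$ is unitary. The intertwining $V_t a V_t^* = \sigma_t(a)$ follows from $V_t(ab\xi_0) = \sigma_t(ab)\xi_t = \sigma_t(a)V_t(b\xi_0)$, and strong continuity follows from the estimate
\[
\|V_t(a\xi_0) - V_s(a\xi_0)\| \le \|a\|\,\|\xi_t - \xi_s\| + \|\sigma_t(a) - \sigma_s(a)\|
\]
together with uniform boundedness of the $V_t$.

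Both $V_s V_t$ and $V_{s+t}$ implement $\sigma_{s+t}$, so there is $\lambda(s,t) \in \mathbb T$ with $V_s V_t = \lambda(s,t)V_{s+t}$; associativity makes $\lambda$ a $2$-cocycle, and strong continuity of $V$ makes $\lambda$ jointly continuous. It remains to produce a continuous $\mu : \mathbb R \to \mathbb T$ with $\lambda(s,t) = \mu(s+t)\mu(s)^{-1}\mu(t)^{-1}$, for then $U_t := \mu(t)^{-1} V_t$ is the desired strongly continuous unitary one-parameter group implementing $\sigma$. Equivalently, the central extension $\widetilde G = \mathbb R \times \mathbb T$ with multiplication $(s,z)(t,w) = (s+t,\lambda(s,t)zw)$ must split topologically. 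But $\widetilde G$ is a connected two-dimensional locally compact topological group containing $\mathbb T$ as a closed central subgroup with quotient $\mathbb R$; the only non-abelian connected two-dimensional Lie group is the $ax{+}b$ group, whose center is trivial, so $\widetilde G$ must be abelian, hence isomorphic to $\mathbb T \times \mathbb R$, which furnishes the splitting.

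The main obstacle is this final cocycle-splitting step: the first two steps are concrete constructions anchored at the base point $\xi_0$, whereas the triviality of continuous $H^2(\mathbb R,\mathbb T)$ is the essential topological input needed to convert the projective representation $t \mapsto V_t$ into an honest unitary one-parameter group.
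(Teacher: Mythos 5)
Your argument is correct, and it shares the paper's two-step skeleton -- build a strongly continuous projective unitary representation implementing $\sigma$, then trivialize the resulting $\mathbb T$-valued $2$-cocycle -- but both steps are executed quite differently. For the first step the paper (Theorem \ref{12-04-22}) fixes a system of matrix units, constructs by a local perturbation-and-patching argument a norm-continuous path $V(t)$ of unitaries in $\mathbb C 1+\mathbb K$ with $V(t)e_{11}V(t)^*=\sigma_t(e_{11})$, and then assembles $W(t)=\sum_j\sigma_t(e_{j1})V(t)e_{1j}$; you instead lift the norm-continuous path of rank-one projections $t\mapsto\sigma_t(p)$ to a continuous path of unit vectors $\xi_t$ via the principal $\mathbb T$-bundle $S(\mathbb H)\to P(\mathbb H)$ and define $V_t(a\xi_0)=\sigma_t(a)\xi_t$. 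Your construction is more conceptual and shorter; the paper's is more elementary and self-contained. For the second step the paper proves $H^2(\mathbb R,\mathbb T)=0$ by a direct computation (Theorem \ref{19-04-22b}: normalize the cocycle, kill it on $\mathbb Z\times[0,1]$, then average over a period), whereas you appeal to the structure of the central extension $\widetilde G=\mathbb R\times_\lambda\mathbb T$. This is the one place where you should be more careful: your cocycle $\lambda$ is only \emph{continuous}, so to classify $\widetilde G$ among two-dimensional Lie groups you must first know that a connected, locally Euclidean topological group is a Lie group -- the Montgomery--Zippin solution of Hilbert's fifth problem -- and then note that the splitting of the abelian extension can be obtained by exponentiating a Lie-algebra splitting. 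That is a far heavier input than the paper's elementary cocycle computation, though it does yield a valid proof. Finally, to land on the statement as posed you still need the one-line application of Stone's theorem converting your strongly continuous group $U_t$ into $e^{itH}$ for a self-adjoint $H$; this is exactly how the paper deduces Corollary \ref{19-04-22} from Theorem \ref{12-04-22}.
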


An important step of the proof of Theorem \ref{12-04-22} depends on a cohomological fact which is isolated in the following subsection.

\subsection{Cohomology}

Let $\mathbb T$ be the circle group; $\mathbb T = \left\{ z \in \mathbb C: \ |z| = 1\right\}$. A continuous function $\lambda : \mathbb R \times \mathbb R \to \mathbb T$ satisfying
\begin{itemize}
\item[(i)] $\lambda(s,t)\lambda(s+t,u) = \lambda(t,u)\lambda(s,t+u)$ for all $s,t,u \in \mathbb R$,
\end{itemize}
is called a $2$-cocycle, and it is said to be normalized when
\begin{itemize}
\item[(ii)] $\lambda(0,t) = \lambda(t,0) = 1$ for all $t \in \mathbb R$.
\end{itemize}
The set of normalized $2$-cocycles is an abelian group under pointwise multiplication. Any continuous function $\mu : \mathbb R \to \mathbb T$ with $\mu(0) = 1$ gives rise to a normalized $2$-cocycle $\lambda_\mu$ such that
$$
\lambda_\mu(s,t) = \mu(s)\mu(t) \overline{\mu(s+t)} ,
$$
and such a $2$-cocycle is called a coboundary. The coboundaries constitute a subgroup of the normalized $2$-cocycles and the quotient group is denoted by $H^2(\mathbb R,\mathbb T)$. For the proof of Theorem \ref{12-04-22} it is crucial that this group is trivial.

\begin{thm}\label{19-04-22b} $H^2(\mathbb R,\mathbb T) = 0$, i.e. every normalized $2$-cocycle is a coboundary.
\end{thm}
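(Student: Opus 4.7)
The plan is to use the universal cover $\mathbb{R} \to \mathbb{T}$ to lift the $\mathbb{T}$-valued cocycle to an $\mathbb{R}$-valued cocycle, and then to show that every continuous normalized $2$-cocycle with values in $\mathbb{R}$ is a coboundary. So the proof splits into a topological lifting step and a cohomology-vanishing step for the real-valued case.

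First I would lift $\lambda$ to an $\mathbb{R}$-valued map. Since $\mathbb{R}^2$ is simply connected and the exponential $e: \mathbb{R} \to \mathbb{T}$, $x \mapsto e^{2\pi i x}$, is a covering map, the continuous map $\lambda: \mathbb{R}^2 \to \mathbb{T}$ with $\lambda(0,0) = 1$ lifts uniquely to a continuous $\tilde{\lambda}: \mathbb{R}^2 \to \mathbb{R}$ with $\tilde{\lambda}(0,0) = 0$ and $e^{2\pi i \tilde{\lambda}} = \lambda$. The cocycle identity (i) for $\lambda$ implies that the expression
\[
\tilde{\lambda}(s,t) + \tilde{\lambda}(s+t,u) - \tilde{\lambda}(t,u) - \tilde{\lambda}(s,t+u)
\]
is continuous on $\mathbb{R}^3$, takes values in $\mathbb{Z}$, and vanishes at the origin; by connectedness of $\mathbb{R}^3$ it is identically zero. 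In the same way $\tilde{\lambda}(0,t) = \tilde{\lambda}(s,0) = 0$ for all $s,t$. So $\tilde{\lambda}$ is a normalized continuous $\mathbb{R}$-valued $2$-cocycle, and it suffices to produce a continuous $\tilde{\mu}: \mathbb{R} \to \mathbb{R}$ with $\tilde{\mu}(0) = 0$ and $\tilde{\lambda}(s,t) = \tilde{\mu}(s) + \tilde{\mu}(t) - \tilde{\mu}(s+t)$, for then $\mu(s) := e^{2\pi i \tilde{\mu}(s)}$ satisfies $\mu(0) = 1$ and $\lambda = \lambda_{\mu}$.

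Next I would treat the smooth case by an elementary differentiation argument. Assume first that $\tilde{\lambda}$ is $C^1$. Differentiating the cocycle identity with respect to $u$ at $u = 0$ yields
\[
\partial_2 \tilde{\lambda}(s+t, 0) = \partial_2 \tilde{\lambda}(t,0) + \partial_2 \tilde{\lambda}(s,t),
\]
so $\partial_2 \tilde{\lambda}(s,t) = g(s+t) - g(t)$ with $g(x) := \partial_2 \tilde{\lambda}(x,0)$. Setting $G(x) := \int_0^x g(u)\, du$ and integrating in $t$, using $\tilde{\lambda}(s,0) = 0$, gives
\[
\tilde{\lambda}(s,t) = G(s+t) - G(s) - G(t),
\]
i.e. $\tilde{\lambda}$ is the coboundary of $\tilde{\mu} = -G$.

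The remaining issue is to remove the smoothness hypothesis, and this is the hard part. The plan is to replace $\tilde{\lambda}$ by a cohomologous smooth cocycle. Fix a smooth nonnegative bump $\phi$ on $\mathbb{R}$ of small compact support with $\int \phi = 1$, and consider averages of $\tilde{\lambda}$ against $\phi$ in one variable at a time. The idea is that averaging in the third slot of the cocycle identity and rearranging expresses the difference between $\tilde{\lambda}$ and a smoother cocycle $\tilde{\lambda}_{\phi}$ as $\tilde{\mu}_{\phi}(s) + \tilde{\mu}_{\phi}(t) - \tilde{\mu}_{\phi}(s+t)$ for some continuous $\tilde{\mu}_{\phi}$; iterating this procedure (and similarly smoothing in the first variable using the cocycle identity with a different grouping) produces a smooth cocycle in the same cohomology class. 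Once this is done, the smooth case provides $\tilde{\mu}$, and combining it with the correcting cochains yields the desired $\tilde{\mu}$ for $\tilde{\lambda}$. Passing through $e^{2\pi i (\cdot)}$ gives the coboundary decomposition $\lambda = \lambda_{\mu}$, completing the proof.
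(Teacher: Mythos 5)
Your proof is correct, but it is organized quite differently from the one in the text. You lift $\lambda$ through the covering $e^{2\pi i(\cdot)}:\mathbb R\to\mathbb T$, using that $\mathbb R^2$ is simply connected, reduce to the vanishing of the continuous real-valued cohomology $H^2(\mathbb R,\mathbb R)$, and prove that by mollification plus differentiation of the cocycle identity. The proof in the text never leaves $\mathbb T$: it rescales so that $\lambda$ stays uniformly close to $1$ on $[0,1]^2$, uses a cochain built from $\lambda(n,\cdot)$ to make the cocycle $1$-periodic in the first variable, applies a local branch of the logarithm only where the cocycle is close to $1$, and averages the resulting additive identity over one period; a second pass of the same construction then kills the cocycle entirely. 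Your route buys conceptual clarity (the whole difficulty is isolated in the regularity of a real-valued cocycle, and the smooth case is a two-line computation), at the cost of invoking covering-space theory; the text's argument is more hands-on and self-contained but the bookkeeping with $\lambda^1,\dots,\lambda^4$ obscures what is going on. Two remarks on the step you call "the hard part": first, your averaging of the cocycle identity against $\phi(u)\,du$ in the third slot does exactly what you claim — writing $h(x)=\int\tilde\lambda(x,u)\phi(u)\,du$ one gets $\tilde\lambda(s,t)=\bigl(\int\tilde\lambda(s,t+u)\phi(u)\,du-h(s)\bigr)+\bigl(h(s)+h(t)-h(s+t)\bigr)$, so $\tilde\lambda$ is cohomologous to a cocycle that is $C^\infty$ in the second variable with jointly continuous derivatives. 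Second, no iteration and no smoothing in the first variable is needed: your differentiation argument only uses $\partial_2$ and an integration in the second variable, so regularity in the second slot alone already suffices. With that observation your plan closes completely.
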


\begin{proof}  Let $\lambda$ be a normalized $2$-cocycle. By continuity of $\lambda$ there is $\epsilon >0$ such that $|\lambda(s,t) -1| \leq 2^{-\frac{1}{2}}$ when $s,t \in [0,\epsilon]$. Set
$$
\lambda'(s,t) := \lambda(\epsilon s,\epsilon t) .
$$
Then $\lambda'$ is a normalized $2$-cocycle and $\lambda$ is a coboundary if and only if $\lambda'$ is. We may therefore assume that 
\begin{equation}\label{19-04-22d}
|\lambda(s,t) -1| \leq 2^{-\frac{1}{2}}
\end{equation} 
when $s,t \in [0,1]$. There is a unique function $\mu : \mathbb R \to \mathbb T$ such that $\mu(0) = 1$ and
\begin{equation}\label{18-04-22a}
\mu(t+n) = \mu(n)\lambda(n,t), \ \ \forall n \in \mathbb Z, \ t \in [0,1] .
\end{equation}
Note that $\mu$ is continuous and that 
\begin{equation}\label{18-04-22b}
\mu(t) = \lambda(0,t) = 1, \ \ \forall t \in [0,1] .
\end{equation}
Define $\lambda^1 : \mathbb R \times \mathbb R \to \mathbb T$ such that
$$
\lambda^1(s,t) :=  \overline{\mu(s+t)} \mu(s)\mu(t) \lambda(s,t)  \ .
$$
Then $\lambda^1$ is a normalized $2$-cocycle and it follows from \eqref{18-04-22a} and \eqref{18-04-22b} that
\begin{equation}\label{18-04-22}
\lambda^1(n,t) = 1 , \ \ \forall t \in [0,1], \ \forall n \in \mathbb Z .
\end{equation}
Since $\lambda^1$ is a $2$-cocycle,
$$
\lambda^1(n,1)\lambda^1(n+1,t) = \lambda^1(1,t)\lambda^1(n,1+t)
$$
and it follows therefore by induction starting with \eqref{18-04-22}, that $\lambda^1(n,t) = 1$ for all $n \in \mathbb Z$ and all $t \geq 0$. Similarly, by using that 
$$
\lambda^1(n,-1)\lambda^1(n-1,t)  = \lambda^1(-1,t) \lambda^1(n,t-1) ,
$$
it follows that $\lambda^1(n,t) = 1$ for all $n \in \mathbb Z$ and all $t \leq 0$. It follows therefore that
$$
\lambda^1(n+s,t) = \lambda^1(n,s)\lambda^1(n+s,t) = \lambda^1(s,t)\lambda^1(n,s+t) = \lambda^1(s,t)
$$
for all $s,t \in \mathbb R$, showing that
$$
s \mapsto \lambda^1(s,t)
$$
is $1$-periodic for all $t \in \mathbb R$. When $t \in [1,2]$, it follows from the equations \eqref{19-04-22d}, \eqref{18-04-22a} and \eqref{18-04-22b} that
$$
|\mu(t) - 1| = |\mu(1+(t-1)) - 1| = |\lambda(1,t-1) - 1| \leq  2^{-\frac{1}{2}} ,
$$ 
which combined with \eqref{18-04-22b} shows that
$$
|\mu(t) - 1| \leq  2^{-\frac{1}{2}} , \ \ \forall t \in [0,2].
$$ 
Combined with \eqref{18-04-22b} this shows that
$$
\left|\lambda^1(s,t) - \lambda(s,t)\right| = \left|{\mu(s+t)} - 1\right| \leq  2^{-\frac{1}{2}}
$$
when $s,t \in [0,1]$. By using \eqref{19-04-22d} this implies that
\begin{equation}\label{19-04-22e}
\left| \lambda^1(s,t) - 1\right| \leq 2 \cdot 2^{-\frac{1}{2}} = \sqrt{2} 
\end{equation}
when $s,t \in [0,1]$. Since $\lambda^1$ is $1$-periodic in the first variable it follows that \eqref{19-04-22e} holds for all $s \in \mathbb R$ and all $t \in [0,1]$. Let $\chi : \mathbb T \backslash \{-1\} \to ]-\pi,\pi[$ be the inverse of $]-\pi,\pi[ \ni x \mapsto e^{ix}$ and set
$$
y(s,t) := \chi(\lambda^1(s,t))
$$
when $t \in [0,1]$. It follows from the cocycle identity (i) that
\begin{equation}\label{19-04-22f}
y(s,t) + y(s+t,u) = y(t,u) + y(s,t+u)
\end{equation}
for all $s \in \mathbb R$ and all $t,u \in [0,1]$ such that $t+u \leq 1$. Define $\alpha : [0,1] \to \mathbb R$ such that
$$
\alpha(t) := \int_0^1 y(x,t) \ \mathrm d x .
$$
Then $\alpha$ is continuous. Since $y$ is $1$-periodic in the first variable it follows that
$$
\int_0^1 y(s+t,u) \ \mathrm d s  = \int_0^1 y(s,u) \ \mathrm d s
$$
for all $t \in \mathbb R$ and all $u \in [0,1]$. By integrating the equation \eqref{19-04-22f} with respect to $s$ it follows therefore that
$$
y(t,u) = \alpha(t)+\alpha(u) - \alpha(t+u)
$$
for all $t,u \in [0,1]$ with $t+u \leq 1$. Let $\mu^1 : \mathbb R \to \mathbb T$ be a continuous function such that $\mu^1(x) = e^{i \alpha(x)}$ when $x \in [0,1]$. Then 
$$
\mu^1(t)\mu^1(u)\overline{\mu^1(t+u)} = \lambda^1(t,u)
$$
when $t,u \in [0,1]$ and $t+u \leq 1$. Set
$$
\lambda^2(s,t) := \overline{\mu^1(s)}\overline{\mu^1(t)}\mu^1(s+t)\lambda^1(s,t) 
$$
for all $s,t \in \mathbb R$. Then $\lambda^2$ differ from $\lambda^1$ by a coboundary and it suffices therefore now to show that $\lambda^2$ is a coboundary. For this observe that $\lambda^2(s,t) = 1$ for all $s,t \in [0,\frac{1}{2}]$. Set
$$
\lambda^3(s,t) := \lambda^2\left(\frac{s}{2},\frac{t}{2}\right) .
$$
Then $\lambda^3$ is a normalized $2$-cocycle and it suffices to show that $\lambda^3$ is a coboundary. Note that $\lambda^3(s,t) = 1$ for $s,t \in [0,1]$. By repeating the part of the argument above which constructed $\lambda^1$ from $\lambda$ we get a continuous function $\mu^2 : \mathbb R \to \mathbb T$ such that $\mu^2(0) = 1$ and such that the $2$-cocycle $\lambda^4$ given by
$$
\lambda^4(s,t) := \overline{\mu^2(s+t)} \mu^2(s)\mu^2(t) \lambda^3(s,t)  \ 
$$
is $1$-periodic in the first variable and
\begin{equation*}
\lambda^4(n,t) = 1 , \ \ \forall t \in [0,1], \ \forall n \in \mathbb Z .
\end{equation*}
But this time, because $\lambda^3(1,t) = 1$ for all $t \in [0,1]$ we get in addition that $\mu^2(t) = 1$ for all $t \in [0,2]$, and hence
$$
\lambda^4(s,t) = \lambda^3(s,t) = 1
$$
for all $s,t \in [0,1]$. By periodicity in the first variable this implies that $\lambda^4(s,t) = 1$ for all $s \in \mathbb R$ when $t \in [0,1]$. The cocycle equation (i) implies then, by induction, that $\lambda^4(s,t) = 1$ for all $s \in \mathbb R$ and all $t \geq 0$. Let $t \geq 0$. Inserting $-t$ for $u$ in (i) yields now that  
$$
\lambda^4(s+t,-t) = \lambda^4(t,-t) 
$$
for all $s \in \mathbb R$. Hence $\lambda^4(x,-t)$ does not depend on $x$ and we conclude that $\lambda^4(x,-t) = \lambda^4(0,-t) = 1$ for all $x \in \mathbb R$. Thus $\lambda^4$ is constant $1$ and $\lambda^3$ is therefore a coboundary. 

\end{proof}

\emph{Proof of Theorem \ref{12-04-22}:}

 Let $e_{ij}, \ i,j = 1,2,3, \cdots $, be a generating set of matrix units in $\mathbb K$. That is,
\begin{itemize}
\item $e_{jj}, j = 1,2,3, \cdots$, are one-dimensional projections,
\item $\lim_{N \to \infty} \sum_{j=1}^N e_{jj} =1$ in the strong operator topology,
\item $e_{ij}e_{kl} = \begin{cases} e_{il}, \ & \ k =j \\ 0, \ & \ k \neq j,  \end{cases}$ for all $i,j,k,l$, and
\item $e_{ij}^* = e_{ji}$ for all $i,j$.
\end{itemize} 
Then $\mathbb K$ is the norm-closure of the linear span of $\{e_{ij}\}_{i,j \in \mathbb N}$.

\begin{obs}\label{12-04-22a} There is a norm continuous path $V(t), t \in \mathbb R$, of unitaries in $\mathbb C 1 +\mathbb K$ such that $V(t)e_{11}V(t)^* = \sigma_t(e_{11})$ for all $t \in \mathbb R$.
\end{obs}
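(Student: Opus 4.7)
The plan is to construct $V(t)$ by patching together local "transport" unitaries defined via a standard projection-straightening formula, using that the rank-one projections $\sigma_t(e_{11})$ move continuously in norm. First, because $\sigma$ is a flow on $\mathbb K$, the map $t \mapsto \sigma_t(e_{11})$ is norm continuous from $\mathbb R$ into the compact operators, and each $\sigma_t(e_{11})$ is a rank-one projection. Pick $\epsilon > 0$ small enough that
$$
\|\sigma_s(e_{11}) - \sigma_t(e_{11})\| < 1 \quad \text{whenever } |s-t| \leq \epsilon,
$$
which exists by a uniform continuity argument on $[-n,n]$ combined with picking $\epsilon_n$ decreasing; alternatively one can allow $\epsilon$ to depend on a bounded interval and use a locally finite cover, but the crucial point is just that on short intervals consecutive projections are close in norm.

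The second step is the classical transport formula. For two projections $p,q$ in $\mathbb K$ with $\|p-q\|<1$, set $a(p,q) := qp + (1-q)(1-p) = 1 + (2qp - p - q)$. Since $p,q \in \mathbb K$, we have $a(p,q) - 1 \in \mathbb K$. A direct computation using $p^2 = p$, $q^2 = q$ yields $a(p,q)^* a(p,q) = 1 - (p-q)^2$, which is invertible and close to $1$, with $1 - a(p,q)^*a(p,q) \in \mathbb K$. Consequently
$$
u(p,q) := a(p,q)\bigl(a(p,q)^*a(p,q)\bigr)^{-1/2}
$$
is a unitary in $1 + \mathbb K \subseteq \mathbb C 1 + \mathbb K$, and the identity $q\, a(p,q) = a(p,q)\, p$ (verified directly) gives $u(p,q)\, p\, u(p,q)^* = q$. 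Moreover $(p,q) \mapsto u(p,q)$ is norm continuous on the open set $\{\|p-q\|<1\}$, and $u(p,p) = 1$.

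Third, concatenate these local transports. For $t \in [0,\epsilon]$ set $V(t) := u(e_{11}, \sigma_t(e_{11}))$; this is a continuous path in $\mathbb C 1 + \mathbb K$ starting at $V(0)=1$ with $V(t)e_{11}V(t)^* = \sigma_t(e_{11})$. Assuming inductively that $V$ has been defined and is continuous on $[0, n\epsilon]$ with $V(n\epsilon)e_{11}V(n\epsilon)^* = \sigma_{n\epsilon}(e_{11})$, define on $[n\epsilon,(n+1)\epsilon]$
$$
V(t) := u\bigl(\sigma_{n\epsilon}(e_{11}),\, \sigma_t(e_{11})\bigr)\, V(n\epsilon).
$$
This is continuous on $[n\epsilon,(n+1)\epsilon]$, matches the previous definition at $t = n\epsilon$ (since $u(\sigma_{n\epsilon}(e_{11}),\sigma_{n\epsilon}(e_{11})) = 1$), lies in $\mathbb C 1 + \mathbb K$ because both factors do, and a short calculation gives $V(t)e_{11}V(t)^* = u(\sigma_{n\epsilon}(e_{11}),\sigma_t(e_{11}))\,\sigma_{n\epsilon}(e_{11})\,u(\cdots)^* = \sigma_t(e_{11})$. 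The same recipe extends $V$ to $(-\infty,0]$. The resulting $V: \mathbb R \to \mathbb C 1 + \mathbb K$ is the desired norm continuous path of unitaries.

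The only real obstacle is the local step: producing a unitary $u \in \mathbb C 1 + \mathbb K$ that intertwines two given close projections and depends continuously on them. Since projections in $\mathbb K$ are themselves compact, the standard functional-calculus formula above automatically lives in $1 + \mathbb K$, so the compactness side of things is free; once one has the local formula and the uniform continuity estimate on $t \mapsto \sigma_t(e_{11})$, the gluing is entirely formal.
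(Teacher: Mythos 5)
Your proof is correct and uses essentially the same construction as the paper: the local straightening unitary is the polar part of $qp+(1-q)(1-p)$ (the paper's $x_t^*$ with $p=e_{11}$, $q=\sigma_t(e_{11})$), and the only difference is in the gluing, where the paper extends by pushing the first piece forward with the flow, $V(t)=\sigma_{t-\delta}(V(\delta))V(t-\delta)$, while you simply reapply the local formula on each subinterval. One small simplification for your version: since each $\sigma_t$ is isometric, $\|\sigma_s(e_{11})-\sigma_t(e_{11})\|=\|\sigma_{s-t}(e_{11})-e_{11}\|$, so a single global $\epsilon$ exists by continuity at $0$ alone and no locally finite cover is needed.
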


\emph{Proof of Observation \ref{12-04-22a}.} Choose $\delta > 0$ such that $\left\|\sigma_t(e_{11}) - e_{11}\right\| < 1$ when $|t| \leq \delta$. Set
$$
x_t = e_{11}\sigma_t(e_{11}) + (1-e_{11})(1-\sigma_t(e_{11})) .
$$
Then
$$
\left\|1-x_t\right\| = \left\| (2e_{11} -1)(e_{11} - \sigma_t(e_{11})\right\| \leq  \left\|e_{11} - \sigma_t(e_{11}\right\| < 1
$$
and hence $x_t$ is invertible when $|t| \leq \delta$. Then 
$$
V(t) := (x_t^*x_t)^{-1/2}x_t^*
$$
is a unitary when $|t| \leq \delta$. Note that $[-\delta,\delta]\ni t \mapsto V(t)$ is norm-continuous and that $V(0) = 1$.
Since $e_{11}x_t = e_{11}\sigma_t(e_{11}) = x_t\sigma_t(e_{11})$ it follows that $x_t^*e_{11} = \sigma_t(e_{11})x_t^*$, and hence $x_t^*x_t$ commutes with $\sigma_t(e_{11})$. So does $(x_t^*x_t)^{-1/2}$ and thus $V(t)e_{11} = \sigma_t(e_{11})V(t)$ for $t \in [-\delta,\delta]$. For $t \in [\delta,2 \delta]$, set 
$$
V(t) = \sigma_{t-\delta}(V(\delta))V(t-\delta).
$$
Then $[0,2 \delta] \ni t \mapsto V(t)$ is norm-continuous, $V(t)$ is a unitary in $\mathbb C 1 + \mathbb K$ for all $t \in [0,2\delta]$ and $V(t)e_{11}V(t)^* = \sigma_t(e_{11})$ for all $t \in [0,2 \delta]$. This construction can be continued. If $[0,n \delta] \ni t \mapsto V(t)$ is norm-continuous, $V(t)$ is a unitary in $\mathbb C 1 + \mathbb K$ for all $t \in [0,n\delta]$ and $V(t)e_{11}V(t)^* = \sigma_t(e_{11})$ for all $t \in [0,n\delta]$, set
$$
V(t) = \sigma_{t-n\delta}(V(n\delta))V(t-n\delta) 
$$
for $t \in [n\delta, (n+1)\delta]$. In this way we get a norm-continuous path $V(t), t \in [0,\infty)$, of unitaries in $\mathbb C 1 + \mathbb K$ such that $V(0) = 1$ and $V(t)e_{11}V(t)^* = \sigma_t(e_{11})$ for all $t \in [0,\infty)$. To define $V(t)$ for $t \leq 0$, proceed in the same way starting with $V(t), \ t \in [-\delta,0]$, or set $V(t) = \tilde{\sigma}_t(V(-t)^*)$, where $\tilde{\sigma}_t \in \Aut(\mathbb C 1 + \mathbb K)$ is the automorphism extending $\tilde{\sigma}_t$.
\qed

Let $\psi \in H$. For $n \in \mathbb N$, set
$$
W_n(t) :=  \sum_{j=1}^n \sigma_t(e_{j1})V(t)e_{1j}.
$$
Since
$$
\left\|\sum_{j=n}^m  \sigma_t(e_{j1})V(t)e_{1j} \psi\right\|^2 = \sum_{j=n}^m \left\| \sigma_t(e_{j1})V(t)e_{1j} \psi\right\|^2 \leq \sum_{j=n}^m \left\|e_{jj} \psi\right\|^2 
$$
when $n \leq m$, we see that $\left\|W_n(t)\right\| \leq 1$, and that the sum
$$
W(t) := \sum_{j=1}^\infty \sigma_t(e_{j1})V(t)e_{1j} = \lim_{n \to \infty} W_n(t)
$$
converges in the strong operator topology, uniformly in $t$, to an operator $W(t)$ of norm $\leq 1$. In fact,
\begin{align*}
&W(t)W(t)^* = \lim_{n \to \infty} \lim_{m \to \infty} W_n(t)W_m(t)^* \\
& = \lim_{n \to \infty}\lim_{m \to \infty}\sum_{j=1}^n\sum_{k=1}^m \sigma_t(e_{j1})V(t)e_{1j}e_{k1}V(t)^*\sigma_t(e_{1k})\\
& =\lim_{n \to \infty}\lim_{m \to \infty} \sum_{j=1}^{\min\{n,m\}} \ \sigma_t(e_{j1})V(t)e_{11}V(t)^*\sigma_t(e_{1j}) \\
& =\lim_{n \to \infty}\lim_{m \to \infty} \sum_{j=1}^{\min\{n,m\}} \ \sigma_t(e_{j1})\sigma_t(e_{11})\sigma_t(e_{1j}) \\
& =  \lim_{n \to \infty}\lim_{m \to \infty} \sum_{j=1}^{\min\{n,m\}}\sigma_t\left( e_{jj}\right) = 1,
\end{align*}
where the last equality follows from the fact that $\sum_{j=1}^\infty \sigma_t(e_{jj}) =1$ with convergence in the strong operator topology.
Similarly, $W(t)^*W(t) = 1$, showing that $W(t)$ is a unitary. Since the sum defining $W(t)$ converges uniformly in $t$ with respect to the strong operator topology it follows from the continuity of $V$ that $\mathbb R \ni t \mapsto W(t)$ is continuous for the strong operator topology. For $k,l \in \mathbb N$,
\begin{align*}
&W(t)e_{kl}W(t)^* =\sum_{j=1}^\infty \sum_{i=1}^\infty \sigma_t(e_{j1})V(t)e_{1j}e_{kl} e_{i1}V(t)^* \sigma_t(e_{1i}) \\
& =\sigma_t(e_{k1})V(t)e_{11}V(t)^*e_{1l} = \sigma_t(e_{kl}) ,
\end{align*}
showing that 
\begin{equation}\label{12-04-22c}
W(t)aW(t)^* = \sigma_t(a) \ \forall a \in \mathbb K, \ \forall t \in \mathbb R.
\end{equation}
In particular, since $\sigma_{t + s} = \sigma_t \circ \sigma_s$, it follows that
$$
W(t+s)aW(t+s)^* = W(t)W(s)aW(s)^*W(t)^*
$$
for all $s,t,a$. Thus $W(s)W(t)W(t+s)^*$ is a unitary which commutes with all elements of $\mathbb K$, and it must therefore be a scalar multiple of $1$. It follows that there is a continuous function $\lambda : \mathbb R \times \mathbb R \to \mathbb T$ such that
\begin{equation}\label{12-04-22e}
W(s)W(t)W(t+s)^* = \lambda(s,t) 1  \ \ \forall t,s \in \mathbb R.
\end{equation}
Note that
\begin{align*}
&\lambda(s,t)\lambda(s+t,u)1 = W(s)W(t)W(u)W(s+t+u)^* \\
& =  W(s)W(t)W(u)W(t+u)^*W(t+u)W(s+t+u)^* \\
&= \lambda(t,u)  W(s)W(t+u)W(s+t+u)^* = \lambda(t,u)\lambda(s,t+u) 1 .
\end{align*}
Furthermore, $\lambda(0,t) = \lambda(t,0) = 1$ for all $t \in \mathbb R$. Thus $\lambda$ is a normalized $2$-cocycle and by Theorem \ref{19-04-22b} there is a continuous function $\mu: \mathbb R \to \mathbb T$ such that $\mu(0) = 1$ and $\lambda(s,t) = \overline{\mu(s)}\overline{\mu(t)}\mu(s+t)$ for all $s,t \in \mathbb R$. Set $U_t = \mu(t)W(t)$. \qed

\begin{notes}\label{19-04-22i} The proof of Theorem \ref{19-04-22b} is taken from \cite{BuR}. According to that source  the result itself, that $H^2(\mathbb R,\mathbb T) = 0$, is due Iwasawa.
\end{notes}

\chapter{Multipliers}\label{multipliers}

 Let $A$ be a $C^*$-algebra. We denote by $M(A)$ the set of maps $m : A
\to A$ (\emph{a priori not even linear}) for which there is another map $m^{\ast} : A \to
A$ such that
\begin{equation}\label{(1.2.5)} m(a)^{\ast}b = a^{\ast}m^{\ast}(b) , \ a,b
\in A.
\end{equation}

\begin{lemma}\label{Lemma 1.2.7} Every element $m$ of $M(A)$ is a
bounded linear operator on $A$ and $m^{\ast} \in M(A)$ with $(m^{\ast})^{\ast} = m$.
\end{lemma}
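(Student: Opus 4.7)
The plan is to extract everything from the defining identity \eqref{(1.2.5)} together with the fact that for any $c \in A$ the equation $c^* b = 0$ for all $b \in A$ forces $c = 0$ (apply with $b = c$).

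First I would establish linearity of $m$. Given $a_1, a_2 \in A$ and $\lambda \in \mathbb{C}$, for every $b \in A$ the identity \eqref{(1.2.5)} yields
\[
m(\lambda a_1 + a_2)^* b = (\lambda a_1 + a_2)^* m^*(b) = \overline{\lambda}\, a_1^* m^*(b) + a_2^* m^*(b) = \bigl(\lambda m(a_1) + m(a_2)\bigr)^* b.
\]
Thus $(m(\lambda a_1 + a_2) - \lambda m(a_1) - m(a_2))^* b = 0$ for all $b \in A$, and setting $b$ equal to $m(\lambda a_1 + a_2) - \lambda m(a_1) - m(a_2)$ itself shows that this element vanishes. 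Hence $m$ is linear, and by the same reasoning applied to the relation read with the roles of $m$ and $m^*$ interchanged, $m^*$ is linear too.

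Next I would show $m$ is closed, whence bounded by the closed graph theorem. Suppose $a_n \to a$ and $m(a_n) \to c$ in $A$. For every $b \in A$ the identity \eqref{(1.2.5)} gives $m(a_n)^* b = a_n^* m^*(b)$; letting $n \to \infty$ and using that involution is isometric yields $c^* b = a^* m^*(b) = m(a)^* b$. Thus $(c - m(a))^* b = 0$ for all $b \in A$, so $c = m(a)$ as before. The closed graph theorem then gives $m \in B(A)$. I expect this to be the only subtle step, but it is straightforward once one has linearity in hand.

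Finally, to see that $m^* \in M(A)$ and $(m^*)^* = m$, I would simply take the adjoint of \eqref{(1.2.5)}: for all $a, b \in A$,
\[
m^*(b)^* a = \bigl(a^* m^*(b)\bigr)^* = \bigl(m(a)^* b\bigr)^* = b^* m(a).
\]
Interchanging the names of the variables, this says $m^*(a)^* b = a^* m(b)$ for all $a, b \in A$, which is exactly the statement that $m \colon A \to A$ serves as an adjoint for $m^*$ in the sense of \eqref{(1.2.5)}. Hence $m^* \in M(A)$ and $(m^*)^* = m$.
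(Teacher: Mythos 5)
Your proof is correct. The linearity argument and the final adjoint computation are essentially identical to the paper's; the only genuine divergence is in how boundedness is obtained. The paper applies the principle of uniform boundedness to the family of operators $S_a(b) = m(a)^*b$ indexed by $a$ in the unit ball of $A$ (each $S_a$ is bounded, and for fixed $b$ one has $\|S_a(b)\| \le \|m^*(b)\|$, so the family is uniformly bounded; evaluating at $b = m(a)/\|m(a)\|$ then bounds $\|m(a)\|$). You instead verify that the graph of $m$ is closed -- passing to the limit in $m(a_n)^*b = a_n^* m^*(b)$ -- and invoke the closed graph theorem. Both arguments rest on Baire category and both are standard; yours is arguably a little cleaner since it avoids the evaluation trick, while the paper's yields the uniform bound $M$ somewhat more explicitly. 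One small remark: your parenthetical claim that $m^*$ is linear ``by the same reasoning with the roles interchanged'' is made before you have verified that $m$ serves as an adjoint for $m^*$; it is easily justified directly (compute $a^* m^*(\mu b_1 + b_2)$ via the defining identity and cancel $a^*$), but as written it leans on the final paragraph. In any case that claim is not needed for the closed graph step, so this is cosmetic rather than a gap.
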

\begin{proof} We first observe that the following implication holds for
any element $a \in A$ :
\begin{equation}\label{(1.2.6)} ax = 0 \ \forall x \in A \Rightarrow a = 0.
\end{equation}
Indeed, if $ax = 0$ for all $x \in A$, then in particular 
$\|a\|^2 = \|aa^{\ast}\| = 0$ which implies that $a = 0$.

To show that $m$ is linear, let $a,b \in A$ and $\lambda \in
\Bbb C$. Then
\begin{align*} 
&(m(\lambda a + b) - \lambda m(a) - m(b))^{\ast}x =
(m(\lambda a + b)^{\ast} - \overline{\lambda} m(a)^{\ast} -
m(b)^{\ast})x =\\
&(\lambda a + b)^{\ast}m^{\ast}(x) - \overline{\lambda} a^{\ast}
m^{\ast}(x) - b^{\ast}m^{\ast}(x) = 0 
\end{align*}
for all $x \in A$, so by (\ref{(1.2.6)}) we have that $m(\lambda a + b) =
\lambda m(a) + m(b)$; i.e. $m$ is linear.

That $m$ is bounded follows from the principle of uniform boundedness
in the following way. For each $a \in A$ with $\|a\| \leq 1$, 
define $S_a : A \to A$ by $S_a(b) = m(a)^{\ast}b$. Each
$S_a$ is a bounded linear operator ($\|S_a\| \leq \|m(a)\|$) and for every fixed $b \in A$ we have that $\|S_a(b)\| = \|m(a)^{\ast}b\| = \|a^{\ast}
m^{\ast}(b)\| \leq \|m^{\ast}(b)\|$. Therefore the principle of uniform boundedness implies
that there is an $M < \infty$ such that $\|S_a\| \leq M$
for all $a$ with $\|a\| \leq 1$. Hence $\|m(a)^{\ast}b\| = \|S_a(b)\|\leq M$ for all $a,b \in A$ with $\|a\| \leq 1 , \|b\| \leq 1$. In particular, whenever
$\|a\| \leq 1$ and $m(a) \neq 0$ we have that
$$  \|m(a)\| = \|m(a)^{\ast} \frac{m(a)}{\|m(a)\|}\| \leq M.
$$
It follows that $\|m\| \leq M$. Finally, by applying the involution $*$ to the identity $m(a)^{\ast}b =
a^{\ast}m^{\ast}(b)$ we get immediately that $m^{\ast} \in M(A)$ and that in fact $(m^{\ast})^{\ast} = m$.
\end{proof}

\begin{prop}\label{Proposition 1.2.8} $M(A)$ is a $C^{\ast}$-algebra with
the involution $m \mapsto m^{\ast}$ and the norm $\|m\| = \sup \ 
\{\|m(a)\| : \|a\| \leq 1 \}$. The product in the algebra is the
composition of operators on $A$ and the identity operator
is in $M(A)$ (and is therefore a unit of $M(A)$).
\end{prop}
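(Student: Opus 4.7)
The plan is to verify in order: (i) closure of $M(A)$ under the algebraic operations, (ii) the fact that $m \mapsto m^*$ is an isometric involution, (iii) submultiplicativity and the $C^*$-identity, and (iv) completeness of $M(A)$ in the operator norm.

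First I would verify that $M(A)$ is closed under the natural operations. Given $m,n \in M(A)$ and $\lambda \in \mathbb C$, the computation
\begin{equation*}
(\lambda m + n)(a)^* b = \overline{\lambda} m(a)^*b + n(a)^*b = a^*\bigl(\overline{\lambda}m^*(b) + n^*(b)\bigr)
\end{equation*}
shows $\lambda m + n \in M(A)$ with $(\lambda m + n)^* = \overline{\lambda}m^* + n^*$, so $M(A)$ is a complex vector space and $m \mapsto m^*$ is conjugate linear. Similarly $(mn(a))^*b = n(a)^*m^*(b) = a^*(n^*m^*)(b)$, so $mn \in M(A)$ with $(mn)^* = n^*m^*$; and $\mathrm{id}_A \in M(A)$ with $(\mathrm{id}_A)^* = \mathrm{id}_A$ serves as unit. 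Together with Lemma \ref{Lemma 1.2.7} (which gives $(m^*)^* = m$) this makes $M(A)$ a unital $*$-algebra under composition.

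Next I would check the norm-theoretic properties. Since elements of $M(A)$ are bounded linear operators on the Banach space $A$, the operator norm is a submultiplicative norm on $M(A)$. The identity $m(a)^*b = a^*m^*(b)$ gives $\|m(a)\|\|b\| \geq \|m(a)^*b\| = \|a^*m^*(b)\|$; taking $b = m(a)/\|m(a)\|$ (when nonzero) in the first and then using the sup defining $\|m\|$, one readily obtains $\|m\| \leq \|m^*\|$, and symmetry (via $(m^*)^* = m$) forces $\|m^*\| = \|m\|$. For the $C^*$-identity, setting $b = m(a)$ in the defining identity yields $m(a)^*m(a) = a^*m^*(m(a))$, so
\begin{equation*}
\|m(a)\|^2 = \|m(a)^*m(a)\| = \|a^*(m^*m)(a)\| \leq \|(m^*m)(a)\|\cdot\|a\| \leq \|m^*m\|\cdot\|a\|^2,
\end{equation*}
giving $\|m\|^2 \leq \|m^*m\|$; the reverse $\|m^*m\| \leq \|m^*\|\|m\| = \|m\|^2$ is submultiplicativity.

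The remaining and main point is completeness, since this is the one place where we must produce a new element of $M(A)$ rather than merely verify an algebraic identity. Let $\{m_n\} \subseteq M(A)$ be Cauchy in operator norm. Because $M(A) \subseteq B(A)$ and $B(A)$ is complete in operator norm, $m_n \to m$ in operator norm for some bounded linear $m: A \to A$; since $\|m_n^* - m_k^*\| = \|m_n - m_k\|$, the sequence $\{m_n^*\}$ is also Cauchy and converges in operator norm to some bounded linear $m': A \to A$. Passing to the limit in the defining relation $m_n(a)^*b = a^*m_n^*(b)$ gives $m(a)^*b = a^*m'(b)$ for all $a,b \in A$, so $m \in M(A)$ with $m^* = m'$, and $\|m_n - m\| \to 0$ in $M(A)$. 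Putting everything together, $M(A)$ is a unital $C^*$-algebra. The subtlest step is really recognising that the defining relation \eqref{(1.2.5)} passes to norm limits without any further work, which is what makes completeness easy once one has the identity $\|m^*\| = \|m\|$ in hand.
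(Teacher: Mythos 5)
Your proof is correct and follows essentially the same route as the paper's: the same estimates give $\|m^*\|=\|m\|$ and the $C^*$-identity, and completeness is obtained by passing the defining relation $m(a)^*b=a^*m^*(b)$ to the limit of a Cauchy sequence. The only cosmetic difference is that you invoke completeness of $B(A)$ to get the limit operator directly, where the paper constructs it pointwise and then checks norm convergence; both are fine.
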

\begin{proof} We leave it to the reader to check that $M(A)$ is
a subalgebra of the algebra of bounded linear operators on $A$.
That is, you should check that $\lambda m + n, mn \in M(A)$ when $m,n \in M(A)$ and $\lambda \in \Bbb C$. It is also 
trivial that the identity operator is in $M(A)$.

Let us next check the $C^{\ast}$-identity. Since $\|m n\| 
\leq \|m\|\|n\|$ because we are dealing with the operator norm,
we see immeditaly that $\|m^{\ast}m\| \leq \|m^{\ast}\|\|m\|$. 
Let $a,b \in A, \|a\| \leq 1, \|b\| \leq 1$. Then
$$
\|m(a)^{\ast}b\| = \|a^{\ast}m^{\ast}(b)\| \leq \|m^{\ast}(b)\|
\leq \|m^{\ast}\|.
$$
When we set $b = \frac{m(a)}{\|m(a)\|}$ in this inequality we
get that $\|m(a)\| \leq \|m^{\ast}\|$ (when $m(a) \neq 0$). The
conclusion is that $\|m\| \leq \|m^{\ast}\| , \ m \in
M(A)$. When we substitute $m^{\ast}$ into this inequality we
get the converse version, $\|m^{\ast}\| \leq \|m\|$, implying that
$\|m^{\ast}\| = \|m\|$. Now, for any $a \in A$ with $\|a\| \leq
1$ we have the estimate
$$
\|m(a)\|^2 = \|m(a)^{\ast}m(a)\| = \|a^{\ast}m^{\ast}m(a)\| \leq
\|m^{\ast}m\|,
$$
implying that
$$
\|m\|^2 \leq \|m^{\ast}m\|.
$$
All in all we have the following inequalities
$$
\|m\|^2 \leq \|m^{\ast}m\| \leq \|m^{\ast}\|\|m\| = \|m\|^2
$$
from which the $C^{\ast}$-identity $\left\|m\right\|^2 = \left\|m^*m\right\|$ follows. 

To show that $M(A)$ is complete with respect to the operator norm,
 let $\{m_k\}$ be a Cauchy sequence in $M(A)$. For each $a \in A, \ k,l \in \Bbb N$, we have that
$$
\|m_ka - m_la\| \leq \|m_k - m_l\|\|a\| \ .
$$
It follows that $\{m_ka\}$ is a Cauchy sequence in $A$ for all $a \in A$. Since $\|m_k^* - m_l^*\| = \|(m_k - m_l)^*\| = \|m_k - m_l\|$ for all $k,l$, we see that also $\{m_k^*\}$ is a Cauchy sequence in $M(A)$. Consequently, $\{m_k^*a\}$ is a Cauchy sequence in $A$ for all $a \in A$. We can therefore define maps $m,n : A \to A$ by $ma = \lim_{k \to \infty} m_ka, \ na = \lim_{k \to \infty}m_k^*a, \ a \in A$. Note that
$$
(ma)^*b \ = \ \lim_{k \to \infty} (m_ka)^*b \ = \ \lim_{k\to \infty} a^*m_k^*b \ = \ a^*nb
$$
for all $a,b \in A$, proving that $m \in M(A)$ (and that $m^* = n$). To show that $\lim_{k \to \infty} m_k = m$ in $M(A)$, let $\epsilon > 0$. There is a $N \in \Bbb N$ such that
\begin{equation}\label{(1.2.7)} \|m_ka - m_la\| \leq \|m_k - m_l\| \leq \epsilon 
\end{equation}
for all $k, l \geq N$ and all $a \in A$ with $\|a\| \leq 1$. If we let $k$ tend to infinity we get from (\ref{(1.2.7)}) that
$$
 \|m_ka - ma\| \leq \epsilon
$$
for all $k \geq N$ and all $a \in A$ with $\|a\| \leq 1$. Hence $\|m_k - m\| \leq \epsilon$ for all $k \geq N$.                                     
\end{proof}

The $C^*$-algebra $M(A)$ is called \it the multiplier algebra \rm \index{multiplier algebra} of $A$. Every element $b \in A$ defines a multiplier $m_b \in M(A)$ such that 
$$
m_b(a) := b a. 
$$

\begin{lemma}\label{10-11-23} The map $b \mapsto m_b$ is an isometric $*$-homomorphism mapping $A$ onto a closed two-sided ideal in $M(A)$.
\end{lemma}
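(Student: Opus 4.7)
The plan is to verify the four claims in turn: that $b \mapsto m_b$ lands in $M(A)$ and is a $*$-homomorphism, that it is isometric, that its image is a two-sided ideal, and finally that the image is closed. Each step uses only the definition $m_b(a) = ba$ together with the characterizing property \eqref{(1.2.5)} of elements of $M(A)$ and the cancellation rule \eqref{(1.2.6)}.

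First I would show $m_b \in M(A)$ and that $(m_b)^* = m_{b^*}$ by computing $m_b(a)^*c = (ba)^*c = a^*b^*c = a^*m_{b^*}(c)$, which simultaneously exhibits $m_{b^*}$ as an adjoint of $m_b$ in the sense of \eqref{(1.2.5)}. Linearity of $b \mapsto m_b$, multiplicativity $m_{bc} = m_b \circ m_c$, and the just-verified $*$-compatibility then make the map a $*$-homomorphism. For isometry, the inequality $\|m_b\| \leq \|b\|$ is immediate from $\|ba\| \leq \|b\|\|a\|$; for the reverse I would fix an approximate unit $\{u_\lambda\}$ in $A$ with $\|u_\lambda\|\leq 1$ and observe that $\|b\| = \lim_\lambda \|bu_\lambda\| = \lim_\lambda \|m_b(u_\lambda)\| \leq \|m_b\|$.

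The main step --- and the only one requiring thought --- is that the image is a two-sided ideal. The key identity is
\begin{equation*}
m(ba) = m(b)a \qquad \forall\, m \in M(A),\ b,a \in A.
\end{equation*}
To establish this, I would apply \eqref{(1.2.5)} on both sides: for every $c \in A$,
\begin{equation*}
m(ba)^*c = (ba)^*m^*(c) = a^*b^*m^*(c) = a^*m(b)^*c = (m(b)a)^*c,
\end{equation*}
so $(m(ba) - m(b)a)^*c = 0$ for all $c$, whence $m(ba) = m(b)a$ by \eqref{(1.2.6)}. Consequently $m \circ m_b = m_{m(b)}$, so right multiplication by $m$ preserves the image. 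For left multiplication, I would apply adjoints: since $(m_b \circ m)^* = m^* \circ m_{b^*} = m_{m^*(b^*)}$ by the right-ideal case already handled, it follows that $m_b \circ m = m_{(m^*(b^*))^*}$, again inside the image. This is the step I expect to be the main obstacle, simply because one has to remember to exploit \eqref{(1.2.6)} rather than try to manipulate $m$ directly as a multiplication.

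Finally, closedness of the image is automatic: if $\{m_{b_n}\}$ is a Cauchy sequence in $M(A)$, then by the isometry property $\{b_n\}$ is Cauchy in $A$, converges to some $b \in A$, and $m_{b_n} \to m_b$ in norm. This gives all four conclusions of the lemma.
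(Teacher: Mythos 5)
Your proof is correct and complete; the paper itself leaves this verification to the reader, and your argument is exactly the standard one that is expected, with the key point (that every $m \in M(A)$ satisfies $m(ba)=m(b)a$, proved via \eqref{(1.2.5)} and the cancellation rule \eqref{(1.2.6)}) handled properly. The only blemish is terminological: $m\circ m_b$ is the product $m\cdot m_b$, i.e.\ \emph{left} multiplication of $m_b$ by $m$, not right multiplication as you label it, but the mathematics is unaffected.
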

\begin{proof} Left to the reader.
\end{proof}

 As is customary we identify $A$ with its image in $M(A)$ under this embedding.

Besides the norm topology the multiplier algebra carries the \emph{strict topology} which is the topology defined by the semi-norms $\| \ \cdot \ \|_a,  a \in A$, where
$$
\left\|m\right\|_a := \left\|ma\right\| + \left\|m^*a\right\| .
$$

\begin{lemma}\label{08-02-23} $A$ is dense in $M(A)$ with respect to the strict topology. In fact, every element $m \in M(A)$ is the limit in the strict topology of a net $\{a_i\}$ in $A$ with $\|a_i\| \leq \|m\|$ for all $i$.
\end{lemma}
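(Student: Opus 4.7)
The plan is to use a bounded approximate unit $\{u_i\}_{i \in I}$ for $A$, i.e.\ a net with $0 \leq u_i \leq 1$ and $\lim_i u_i a = a$ for all $a \in A$, whose existence is the standard Proposition~2.2.18 in \cite{BR} already cited in the text. Given $m \in M(A)$, I would set
\[
a_i := m u_i \in A,
\]
which lies in $A$ because $A$ is a (left) ideal in $M(A)$ by Lemma~\ref{10-11-23}. Then $\|a_i\| = \|m u_i\| \leq \|m\|\|u_i\| \leq \|m\|$, giving the norm bound for free.

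Next I would check that $a_i \to m$ strictly. For $a \in A$,
\[
\|a_i a - m a\| = \|m(u_i a - a)\| \leq \|m\|\,\|u_i a - a\| \longrightarrow 0,
\]
so $a_i a \to m a$ in norm. For the adjoint side, note that $a_i^* = u_i m^*$, so
\[
\|a_i^* a - m^* a\| = \|u_i (m^* a) - m^* a\| \longrightarrow 0,
\]
since $m^* a \in A$ and $\{u_i\}$ is an approximate unit. Therefore $\|a_i - m\|_a \to 0$ for every $a \in A$, proving convergence in the strict topology.

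There is no serious obstacle here; everything reduces to the two basic ingredients, namely the existence of a bounded approximate unit in $A$ and the fact that $A$ is a two-sided ideal in $M(A)$ with $m(a) = ma$ and $m^*(a) = m^*a$, both established earlier in this appendix. The only small care needed is to recognize that the adjoint relation $(mu_i)^* = u_i m^*$ holds inside $M(A)$, which follows from the defining identity \eqref{(1.2.5)} after identifying $A$ with its image in $M(A)$. Once these are in place, the approximate unit net $\{mu_i\}$ gives the desired strict approximation with the required norm control.
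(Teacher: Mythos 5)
Your proof is correct and follows essentially the same route as the paper: take a contractive approximate unit $\{u_i\}$, set $a_i = mu_i$, and verify strict convergence using that $A$ is an ideal in $M(A)$. The only cosmetic difference is that you check the second seminorm condition via $a_i^*a = u_i m^* a \to m^*a$ while the paper checks $\|am - amu_i\| \to 0$, which is the same statement after taking adjoints; you also make the norm bound $\|a_i\| \leq \|m\|$ explicit, which the paper leaves implicit.
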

\begin{proof} Let $m \in  M(A)$ and let $\{u_i\}$ be an approximate unit in $A$. Then $mu_i \in A$ and for any $a \in A$ we find that
$$
\lim_{i \to \infty} \left\|ma - mu_ia\right\| \leq \lim_{i \to \infty} \left\|m\right\| \left\|a - u_ia\right\| = 0
$$
and
$$
\lim_{i \to \infty} \left\|am - amu_i\right\| = 0
$$
since $am \in A$.
Hence $\lim_{i \to \infty} mu_i = m$ in the strict topology. Set $a_i :=mu_i$.
\end{proof}

\begin{lemma}\label{26-09-22} Let $A$ be a $C^*$-algebra, $\mathbb H$ a Hilbert space and $\pi : A \to B(\mathbb H)$ a $*$-representation of $A$ on $\mathbb H$. Assume that $\pi$ is non-degenerate; i.e. $\pi(A)\mathbb H$ spans a dense subspace of $\mathbb H$. There is a $*$-homomorphism $\overline{\pi}: M(A) \to B(\mathbb H)$ such that
\begin{itemize}
\item $\overline{\pi}$ extends $\pi$, and
\item when $\{m_i\}_{i \in  I}$ is a norm-bounded net in $M(A)$ converging to $m \in M(A)$ in the strict topology, then $\lim_{i \to \infty} \overline{\pi}(m_i) = \overline{\pi }(m)$ in the strong operator topology.
\item When $\pi$ is also injective, $\overline{\pi}$ is a $*$-isomorphism of $M(A)$ onto
$$
\left\{ m \in B(\mathbb H): \ m \pi(A) \subseteq \pi(A), \ \pi(A)m \subseteq \pi(A) \right\} .
$$
\end{itemize}
\end{lemma}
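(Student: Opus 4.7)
The plan is to build $\overline{\pi}(m)$ by declaring
\[
\overline{\pi}(m)\pi(a)\xi := \pi(ma)\xi \qquad (a \in A,\ \xi \in \mathbb H),
\]
extending by linearity to the dense subspace $\pi(A)\mathbb H$ and then by continuity to $\mathbb H$. The first step is to verify this is unambiguous and bounded, i.e. that for every finite sum $\sum_j \pi(a_j)\xi_j$ one has $\|\sum_j \pi(ma_j)\xi_j\| \le \|m\|\,\|\sum_j \pi(a_j)\xi_j\|$. I would get this from a $2\times 2$-style matrix-positivity argument: in $M(A)$ the element $\|m\|^2 1 - m^*m$ is positive, say equal to $b^*b$ with $b \in M(A)$, so for any $a_1,\dots,a_n \in A$ the matrix $(a_k^*(\|m\|^2 1 - m^*m)a_j)_{k,j} = ((ba_k)^*(ba_j))_{k,j}$ lies in $M_n(A)^+$; applying the amplification $\pi_n : M_n(A)\to M_n(B(\mathbb H))$ to the vector $(\xi_1,\ldots,\xi_n)$ yields exactly the claimed bound. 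The same bound forces well-definedness, since two representatives $\sum_j \pi(a_j)\xi_j = \sum_k \pi(a'_k)\xi'_k$ give the same value of $\sum \pi(ma_j)\xi_j$.

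Once $\overline{\pi}$ is defined, the $*$-homomorphism properties are short: linearity is trivial, multiplicativity follows from $\overline{\pi}(m_1m_2)\pi(a)\xi = \pi(m_1m_2a)\xi = \overline{\pi}(m_1)\overline{\pi}(m_2)\pi(a)\xi$, and the $*$-identity from the defining relation \eqref{(1.2.5)}: $\langle \overline{\pi}(m)\pi(a)\xi,\pi(b)\eta\rangle = \langle \pi(m(a))\xi,\pi(b)\eta\rangle = \langle \xi,\pi(a^*m^*(b))\eta\rangle = \langle \pi(a)\xi,\overline{\pi}(m^*)\pi(b)\eta\rangle$. That $\overline{\pi}$ extends $\pi$ on $A \subseteq M(A)$ is immediate from $\pi(m)\pi(a) = \pi(ma)$.

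For the strict-to-strong continuity on norm-bounded sets: if $\sup_i\|m_i\| < \infty$ and $m_i \to m$ strictly, then for each $a \in A$ and $\xi \in \mathbb H$, $\overline{\pi}(m_i)\pi(a)\xi = \pi(m_ia)\xi \to \pi(ma)\xi = \overline{\pi}(m)\pi(a)\xi$ because $m_ia\to ma$ in norm; since the net $\{\overline{\pi}(m_i)\}$ is norm-bounded (bounded by $\sup_i\|m_i\|$) and converges on the dense subspace $\pi(A)\mathbb H$, it converges strongly on all of $\mathbb H$.

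For the third bullet, assume $\pi$ is injective and set $C := \{T\in B(\mathbb H): T\pi(A)\subseteq \pi(A),\ \pi(A)T\subseteq \pi(A)\}$. The inclusion $\overline{\pi}(M(A))\subseteq C$ reads $\overline{\pi}(m)\pi(a) = \pi(ma)$ and, via the adjoint, $\pi(a)\overline{\pi}(m) = (\overline{\pi}(m^*)\pi(a^*))^* = \pi(am)$. For surjectivity onto $C$, given $T\in C$ define $m,m': A\to A$ by $\pi(m(a)) := T\pi(a)$ and $\pi(m'(a)) := T^*\pi(a)$ (well-defined by injectivity); then $\pi(m(a)^*b) = (T\pi(a))^*\pi(b) = \pi(a)^*\pi(m'(b)) = \pi(a^*m'(b))$ forces $m(a)^*b = a^*m'(b)$, so $m \in M(A)$ with $m^* = m'$, and $\overline{\pi}(m) = T$ by construction. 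Injectivity of $\overline{\pi}$ follows because $\overline{\pi}(m) = 0$ forces $\pi(ma) = 0$, hence $ma = 0$ for all $a$, hence $m = 0$ by the implication \eqref{(1.2.6)} applied pointwise. The main technical obstacle is really only the opening norm estimate; everything else is bookkeeping.
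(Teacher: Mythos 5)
Your proof is correct, but it takes a different route from the paper's. The paper defines $\overline{\pi}(m)$ as a strong-operator limit: by Lemma \ref{08-02-23} one can write $m$ as the strict limit of a net $\{a_i\}$ in $A$ with $\|a_i\|\leq\|m\|$ (concretely $a_i=mu_i$ for an approximate unit), non-degeneracy makes $\{\pi(a_i)\}$ strongly convergent, and $\overline{\pi}(m)$ is declared to be the limit; the algebraic properties and independence of the approximating net are then left as a routine check, and the strict-to-strong continuity statement is essentially built into the definition. You instead define $\overline{\pi}(m)$ directly on the dense subspace $\pi(A)\mathbb H$ by $\overline{\pi}(m)\pi(a)\xi=\pi(ma)\xi$ and carry the whole weight of the construction in the opening norm estimate, which you obtain from positivity of $\|m\|^2 1-m^*m$ in the unital $C^*$-algebra $M(A)$ (Proposition \ref{Proposition 1.2.8}) together with positivity of the matrix $\bigl((ba_k)^*(ba_j)\bigr)_{k,j}$ under the amplification $\pi_n$; this estimate simultaneously gives well-definedness and the bound $\|\overline{\pi}(m)\|\leq\|m\|$. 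What your approach buys is a canonical, net-free definition and an explicit norm bound; what it costs is the matrix-positivity computation, whereas the paper's route needs only the strict density of $A$ in $M(A)$. The verification of the $*$-identity via the relation \eqref{(1.2.5)}, the bounded strict-to-strong continuity argument, and the treatment of the third bullet (surjectivity onto the idealizer of $\pi(A)$ and injectivity) coincide in substance with the paper's. One cosmetic remark: in the injectivity step, $ma=0$ for all $a\in A$ already says that the map $m$ is identically zero, so the appeal to \eqref{(1.2.6)} is not needed there; it is needed only in the surjectivity step to see that the $m$ you construct from $T$ satisfies the multiplier identity, which you do correctly.
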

\begin{proof} Let $m\in M(A)$. It follows from Lemma \ref{08-02-23} that there is a net $\{a_i\}$ in $A$ such that $\lim_{i \to \infty} a_i =m$ in the strict topology and $\|a_i\| \leq \|m\|$ for all $i$. Since $\pi$ is non-degenerate it follows that $\{\pi(a_i)\}$ converges in the strong operator topology and we set $\overline{\pi}(m) := \lim_{i \to \infty} \pi(a_i)$. It is straightforward to check that this gives a well-defined $*$-homomorphism $\overline{\pi}: M(A) \to B(\mathbb H)$ with the properties specified in first two items. Assume then that $\pi$ is injective. If $m \in M(A)$ and $\overline{\pi}(m) = 0$ we have for each $a \in A$ that $\pi(ma) = \overline{\pi}(m)\pi(a) = 0$ and hence that $ma = 0$. It follows that $m =0$, proving that $\overline{\pi}$ is injective. Let $m' \in B(\mathbb H)$ and assume that $  m' \pi(A) \subseteq \pi(A), \ \pi(A)m' \subseteq \pi(A)$. We can then define $m : A \to A$ such that
$$
m(a) := \pi^{-1}\left(m'\pi(a)\right) 
$$ 
and $m^{\#} : A \to A$ such that 
$$
m^{\#}(a) = \pi^{-1}\left( {m'}^* \pi(a)\right) .
$$
Since $m(a)^*b = a^*m^{\#}(b)$ we see that $m \in M(A)$ and $m^* = m^{\#}$. Since $\overline{\pi}(m)\pi(a) = \pi(ma) = m'\pi(a)$ for all $a \in A$, it follows that $\overline{\pi}(m) = m'$.
\end{proof}

\begin{remark}\label{27-09-23}\rm{
Note that any automorphism $\alpha \in \Aut A$ of $A$ extends to an automorphism of $M(A)$ defined such that
$$
\alpha(m)a := \alpha(m \alpha^{-1}(a)) \ \ \forall a \in A.
$$
We denote the extension by $\alpha$ again. By Lemma \ref{08-02-23} and Lemma \ref{26-09-22} the extension is the only extension of $\alpha$ to $M(A)$ which is continuous for the strict topology on norm-bounded sets. In particular, when $\sigma = (\sigma_t)_{t \in \mathbb R}$ is a flow on $A$ we have an associated one-parameter group $(\sigma_t)_{t \in \mathbb R}$ of automorphisms of $M(A)$, but this extension is not in general a flow since the map $t \mapsto \sigma_t(m)$ need not be continuous in norm when $m \in M(A)$. It is however continuous with respect to the strict topology.}
\end{remark}

\chapter{Traces and K-theory}\label{AppD}

\section{Pairing traces with $K_0$}

In the following we fix a trace $\tau: A^+ \to [0,\infty]$ on $A$, as defined in Definition \ref{03-02-22f}. Set
$$
\mathcal M^+_\tau :=  \{ a \in A^+ : \ \tau(a ) < \infty \} 
$$
and
$$
\mathcal M_\tau := \Span \{ a \in A^+ : \ \tau(a ) < \infty \} .
$$

\begin{lemma}\label{16-11-22} $\mathcal M_\tau, \ \mathcal M^+_\tau$ and $\tau$ have the following properties.
\begin{itemize}
\item $\mathcal M_\tau$ is a dense two-sided $*$-invariant ideal in $A$.
\item $\mathcal M_\tau \cap A^+ = \mathcal M_\tau^+$.
\item $\tau : \mathcal M^+_\tau \to [0,\infty)$ has a unique extension to a linear map $\tau|_{\mathcal M_\tau} : \mathcal M_\tau \to \mathbb C$.
\end{itemize}
\end{lemma}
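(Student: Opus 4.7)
My plan is to imitate the strategy of Lemma \ref{04-11-21n}, but being careful that a trace in the sense of Definition \ref{03-02-22f} is not assumed to be lower semi-continuous; this rules out limit arguments but the finite algebraic ones still go through. The key auxiliary object will be
$$
\mathcal N_\tau := \left\{ a \in A : \ \tau(a^*a) < \infty\right\},
$$
which I claim is a $*$-invariant two-sided ideal. First I would check that $\mathcal N_\tau$ is a subspace via $(\lambda a + \mu b)^*(\lambda a + \mu b) \leq 2|\lambda|^2 a^*a + 2|\mu|^2 b^*b$ together with additivity and monotonicity of $\tau$ on $A^+$ (monotonicity itself being a direct consequence of additivity: if $0 \leq x \leq y$ then $y = x + (y-x)$ with $y-x \in A^+$, so $\tau(x) \leq \tau(y)$). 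That $\mathcal N_\tau$ is a left ideal follows from $b^*a^*ab \leq \|a\|^2 b^*b$; that $\mathcal N_\tau^* = \mathcal N_\tau$ follows directly from the trace identity $\tau(a^*a) = \tau(aa^*)$; and hence $\mathcal N_\tau$ is also a right ideal.

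Next I would show $\mathcal M_\tau = \Span \mathcal N_\tau^* \mathcal N_\tau$. The inclusion $\mathcal M_\tau^+ \subseteq \mathcal N_\tau^* \mathcal N_\tau$ comes from $a = \sqrt{a}\cdot \sqrt{a}$ whenever $a \in \mathcal M_\tau^+$, since then $\sqrt{a} \in \mathcal N_\tau$. The reverse inclusion follows from the polarisation identity
$$
b^*a \ = \ \frac{1}{4} \sum_{k=1}^4 i^k (a + i^k b)^*(a + i^k b),
$$
because $a+i^kb \in \mathcal N_\tau$ when $a,b \in \mathcal N_\tau$. From this, to see that $\mathcal M_\tau$ is a two-sided ideal it suffices to take $a \in \mathcal M_\tau^+$ and $x \in A$ and write $xa = (\sqrt{a}x^*)^* \sqrt{a}$; both factors lie in $\mathcal N_\tau$ (using the bound $(x\sqrt{a})^*(x\sqrt{a}) \leq \|x\|^2 a$ and $\mathcal N_\tau^* = \mathcal N_\tau$), so $xa \in \mathcal N_\tau^* \mathcal N_\tau \subseteq \mathcal M_\tau$; similarly $ax \in \mathcal M_\tau$. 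The $*$-invariance of $\mathcal M_\tau$ is immediate since $\mathcal M_\tau^+$ consists of self-adjoint elements, and density of $\mathcal M_\tau$ in $A$ follows from density of $\mathcal M_\tau^+$ in $A^+$ together with the fact that every element of $A$ is a complex linear combination of four positive elements.

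For the second assertion, let $a \in \mathcal M_\tau \cap A^+$. Write $a = \sum_{j=1}^n \lambda_j a_j$ with $a_j \in \mathcal M_\tau^+$ and $\lambda_j \in \mathbb C$. Taking the self-adjoint part (using $a = a^*$) reduces to $a = \sum_j \Real(\lambda_j) a_j$. Split $\Real(\lambda_j) = r_j^+ - r_j^-$ into positive and negative parts and set $p := \sum_j r_j^+ a_j$, $q := \sum_j r_j^- a_j$; both lie in $\mathcal M_\tau^+$ by additivity and homogeneity of $\tau$ on $A^+$. Then $0 \leq a = p - q \leq p$, so by monotonicity $\tau(a) \leq \tau(p) < \infty$ and hence $a \in \mathcal M_\tau^+$. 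The reverse inclusion is trivial.

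For the third claim, every $a \in \mathcal M_\tau$ decomposes as $a = a_1 - a_2 + i(a_3 - a_4)$ with $a_j \in \mathcal M_\tau^+$, and one defines $\tau(a) := \tau(a_1) - \tau(a_2) + i(\tau(a_3) - \tau(a_4))$. Well-definedness follows from the usual trick: if $a_1 - a_2 = b_1 - b_2$ with all four entries in $\mathcal M_\tau^+$, then $a_1 + b_2 = b_1 + a_2$ in $\mathcal M_\tau^+$ and additivity of $\tau$ on $A^+$ gives $\tau(a_1) - \tau(a_2) = \tau(b_1) - \tau(b_2)$, and similarly for the imaginary parts. Linearity over $\mathbb R$ is then immediate from additivity and homogeneity on $\mathcal M_\tau^+$, and the identity $\tau(ia) = i\tau(a)$ is built into the formula. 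Uniqueness of the extension is forced since any linear map on $\mathcal M_\tau$ extending $\tau$ must coincide with it on $\mathcal M_\tau^+$, whose span is $\mathcal M_\tau$. The main subtle point, and the only one that requires genuine care rather than bookkeeping, is the argument for (b): one must not accidentally invoke lower semi-continuity, and it is precisely the order-theoretic monotonicity (which is purely algebraic here) that carries it through.
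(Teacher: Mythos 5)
Your proof is correct and follows essentially the same route as the paper's: introduce $\mathcal N_\tau$, show it is a two-sided $*$-invariant ideal using the trace identity $\tau(a^*a)=\tau(aa^*)$, identify $\mathcal M_\tau$ with $\Span \mathcal N_\tau^*\mathcal N_\tau$ via polarisation, and then run the decomposition and extension arguments of Lemmas \ref{04-11-21n} and \ref{10-11-21}. Your explicit remark that monotonicity of $\tau$ on $A^+$ is a consequence of additivity alone (so that no lower semi-continuity is invoked) is precisely the point the paper leaves implicit when it cites those lemmas.
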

\begin{proof} Set $\mathcal N_\tau = \left\{ a \in A: \ \tau(a^*a) < \infty \right\}$. The arguments from the proof of Lemma \ref{04-11-21n} show that $\mathcal N_\tau$ is a left ideal and $\mathcal M_\tau = \Span \mathcal N_\tau^*\mathcal N_\tau$. The trace property of $\tau$ implies that $\mathcal N_\tau^* = \mathcal N_\tau$ and hence $\mathcal N_\tau$ is also a right ideal. It follows therefore that $\mathcal M_\tau$ is a two-sided ideal in $A$. The second item follows as in the proof of Lemma \ref{04-11-21n} and implies that $\mathcal M_\tau$ is dense in $A$. The third is a straightforward consequence of the additivity of $\tau$, as in Lemma \ref{10-11-21}.
\end{proof}

Let $A^\dagger$ be the $C^*$-algebra obtained from $A$ be adjoining a unit to $A$. Thus as a vector space $A^\dagger$ is just $A \oplus \mathbb C$, and the product and involution are given by
$$
(a,\lambda)(b,\mu) = (ab + \lambda b + \mu a, \lambda \mu) 
$$
and
$$
(a,\lambda)^* = (a^*, \overline{\lambda}) .
$$
For every subset $X \subseteq A$, set
$$
X^\dagger := \left\{(a,\lambda) \in A^\dagger: \ a \in X, \ \lambda \in \mathbb C \right\}.
$$
In particular,
$$
\mathcal M_\tau^\dagger := \left\{(a,\lambda) \in A^\dagger: \ a \in \mathcal M_\tau, \ \lambda \in \mathbb C \right\} .
$$
Define $\tau^\dagger : \mathcal M_\tau^\dagger \to \mathbb C$ such that
$$
\tau^\dagger(a,\lambda) = \tau|_{\mathcal M_\tau}(a) .
$$
Since $\tau^\dagger$ is linear on $\mathcal M_\tau^\dagger$ we can consider the tensor product map
$$
\tau^\dagger \otimes \Tr_n : \  M_n(\mathcal M_\tau^\dagger)\to \mathbb C,
$$
where $\Tr_n$ denotes the standard trace on $M_n(\mathbb C)$; the sum of the diagonal entries. Let $P_n(\mathcal M_\tau^\dagger)$ be the set of projections in the $*$-algebra $ M_n(\mathcal M_\tau^\dagger)$ and set
$$
 P_\infty(\mathcal M_\tau^\dagger) :=  \bigcup_n P_n(\mathcal M_\tau^\dagger) .
 $$
We aim to establish the following

\begin{thm}\label{19-11-22d} 
$$K_0(A^\dagger) = \left\{ [e] - [f] : \ e,f \in P_\infty(\mathcal M_\tau^\dagger) \right\}
$$
and there is a homomorphism $\tau^\dagger_* : \ K_0(A^\dagger) \to \mathbb R$ such that
$$
\tau^\dagger_*([e]-[f]) = \tau^\dagger \otimes \Tr_n(e)  - \tau^\dagger \otimes \Tr_n(f)
$$
when $e,f \in M_n(\mathcal M_\tau^\dagger)$.
\end{thm}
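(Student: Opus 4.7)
The first identity is a standard consequence of showing that $\mathcal M_\tau^\dagger$ is closed under holomorphic functional calculus in $A^\dagger$. Since $\mathcal M_\tau$ is a two-sided $*$-ideal in $A$ by Lemma \ref{16-11-22}, a direct computation of the inverse of an element $(a,\lambda) \in \mathcal M_\tau^\dagger$ (using that $\lambda \neq 0$ when the inverse exists, together with the ideal property) shows $\mathcal M_\tau^\dagger$ is spectrally invariant in $A^\dagger$; this property passes to matrices. Given a projection $p \in M_n(A^\dagger)$, density provides a self-adjoint $b \in M_n(\mathcal M_\tau^\dagger)$ with $\|b-p\| < 1/4$, and the spectrum of $b$ is then separated from $1/2$. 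Applying holomorphic functional calculus with a function equal to $1$ near $1$ and $0$ near $0$ produces a projection $q \in M_n(\mathcal M_\tau^\dagger)$ at norm-distance less than $1$ from $p$, hence homotopic to $p$. Thus $[p] = [q]$ in $K_0(A^\dagger)$, establishing the first identity.

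For the second assertion, define $\tau^\dagger_*$ by the stated formula on representatives from $M_n(\mathcal M_\tau^\dagger)$. The trace property of $\tau$ on $\mathcal M_\tau$ ensures that $\tau^\dagger$ is a tracial linear functional on $\mathcal M_\tau^\dagger$, so $\tau^\dagger \otimes \Tr_n$ is tracial on $M_n(\mathcal M_\tau^\dagger)$. Well-definedness reduces, via standard $K$-theoretic bookkeeping (stabilization, block orthogonality, and direct-sum additivity), to the following single claim: if $e, f \in M_n(\mathcal M_\tau^\dagger)$ with $v^*v = e$ and $vv^* = f$ for some $v \in M_n(A^\dagger)$, then $\tau^\dagger\otimes\Tr_n(e) = \tau^\dagger\otimes\Tr_n(f)$. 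The main obstacle is that $v$ need not lie in $M_n(\mathcal M_\tau^\dagger)$, so the tracial property of $\tau^\dagger \otimes \Tr_n$ on that $*$-algebra cannot be invoked directly on the partial isometry.

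To resolve this, decompose $e = e_s + e_0$, $f = f_s + f_0$, $v = v_s + v_0$ according to the splitting $M_n(A^\dagger) = M_n(A) \oplus M_n(\mathbb C)$, where the subscript $s$ denotes the scalar component and $e_0, f_0 \in M_n(\mathcal M_\tau)$, $v_0 \in M_n(A)$. Applying the scalar quotient (which is a $*$-homomorphism) gives $v_s^* v_s = e_s$ and $v_s v_s^* = f_s$, so the scalar projections $e_s, f_s$ are Murray--von Neumann equivalent in $M_n(\mathbb C)$ and therefore have equal rank. Since $\tau^\dagger\otimes\Tr_n$ annihilates the scalar component, the claim reduces to showing $\tau \otimes \Tr_n(e_0) = \tau \otimes \Tr_n(f_0)$. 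For this, invoke the GNS representation and Lemma \ref{06-10-23c}: $\tau$ extends to a normal faithful semi-finite trace $\tau''$ on $N := \pi_\tau(A)''$ with $\tau'' \circ \pi_\tau = \tau$, and since $\pi_\tau$ is non-degenerate it extends to a unital representation $\widetilde{\pi}_\tau \colon A^\dagger \to N$. Setting $V := (\widetilde{\pi}_\tau \otimes \mathrm{id})(v) \in M_n(N)$, we have $V^*V = e_s \cdot 1_N + \pi_\tau(e_0)$ and $VV^* = f_s \cdot 1_N + \pi_\tau(f_0)$. The polar-decomposition identity $(\tau'' \otimes \Tr_n)(V^*V) = (\tau'' \otimes \Tr_n)(VV^*)$, valid in $[0,\infty]$ for any bounded $V \in M_n(N)$, combined with the equality of the scalar contributions from $\Tr_n(e_s) = \Tr_n(f_s)$, yields the desired identity $\tau \otimes \Tr_n(e_0) = \tau \otimes \Tr_n(f_0)$. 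When $\tau''(1_N) = \infty$ (the unbounded case), a cutoff by an increasing net of projections $p_\alpha \in N$ with $\tau''(p_\alpha) < \infty$ and $p_\alpha \uparrow 1_N$ is needed: apply the trace identity to $(VP_\alpha)^*(VP_\alpha)$ with $P_\alpha := I_n \cdot p_\alpha$, and pass to the limit, using normality of $\tau''$ together with cancellation of the scalar contributions $\Tr_n(e_s)\tau''(p_\alpha^2) = \Tr_n(f_s)\tau''(p_\alpha^2)$ that otherwise diverge.

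The homomorphism property of $\tau^\dagger_*$ then follows immediately from additivity of $\tau^\dagger \otimes \Tr_n$ on orthogonal direct sums. The hardest step is the cutoff argument handling the scalar component when $\tau''(1_N) = \infty$, since one must extract a finite equality $\tau(e_0) = \tau(f_0)$ from an identity of two divergent traces, by carefully balancing the matching infinite contributions from both sides.
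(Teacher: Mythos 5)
There are two genuine gaps. First, your argument for the identity $K_0(A^\dagger) = \{[e]-[f]: e,f \in P_\infty(\mathcal M_\tau^\dagger)\}$ rests on the claim that $\mathcal M_\tau^\dagger$ is closed under holomorphic functional calculus because it is inverse-closed. Inverse-closedness does hold (it is Lemma \ref{04-11-22ax}), but it does not imply stability under holomorphic functional calculus here: the contour integral $\frac{1}{2\pi i}\oint (z-b)^{-1}\,\mathrm{d}z$ is a \emph{norm} limit of Riemann sums, and $\mathcal M_\tau^\dagger$ is a dense, non-closed subalgebra with no complete topology of its own in which the resolvent is continuous. (The familiar "spectral invariance implies isomorphism on $K_0$" principle requires the dense subalgebra to be a Fr\'echet or Banach algebra continuously embedded in $A$; $\mathcal M_\tau$ carries no such structure in general, since one cannot even guarantee $|a|\in\mathcal M_\tau$ for $a\in\mathcal M_\tau$.) So the membership $q \in M_n(\mathcal M_\tau^\dagger)$ of your functional-calculus projection is unjustified. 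Note also that the theorem only needs a Murray--von Neumann equivalent representative in $M_n(\mathcal M_\tau^\dagger)$, which is how the paper proceeds: it picks a separable $C^*$-subalgebra $D$ containing the data, uses the sequence $\{d_n\}$ of Lemma \ref{19-11-22fx} to produce norm-closed subalgebras $M_n(D_{d_k}^\dagger)$ that are contained in $M_n(\mathcal M_\tau^\dagger)$ by Lemma \ref{18-11-22gx}, and perturbs the projection inside one of these $C^*$-algebras (Lemma \ref{20-11-22a}).

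Second, and more seriously, your proof of well-definedness does not close. After splitting off the scalar parts you invoke $(\tau''\otimes\Tr_n)(V^*V)=(\tau''\otimes\Tr_n)(VV^*)$; when $e_s\neq 0$ and $\tau$ is unbounded this reads $\infty=\infty$ and carries no information, and your cutoff does not repair it: $(VP_\alpha)(VP_\alpha)^* = VP_\alpha^2V^*$ does not decompose into a scalar part plus an element of $M_n(\pi_\tau(\mathcal M_\tau))$ because $P_\alpha$ does not commute with $V$, so the claimed cancellation $\Tr_n(e_s)\tau''(p_\alpha^2)=\Tr_n(f_s)\tau''(p_\alpha^2)$ against the right-hand side never materializes. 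The danger is real: for a general bounded $V$ in a semifinite von Neumann algebra with $V^*V-VV^*$ in the trace ideal, the trace of the difference need \emph{not} vanish (the unilateral shift gives $V^*V-VV^*$ of trace $1$); equality here must come from the fact that $v$ lies in $M_n(A^\dagger)$, and your argument never uses that beyond matching the scalar ranks. The paper's Lemma \ref{19-11-22c} circumvents this by conjugating $e$ and $f$ by invertibles in $M_n(\mathcal M_\tau^\dagger)$ into projections $e',f'$ lying in a norm-closed subalgebra $M_n(D_{d_k}^\dagger)\subseteq M_n(\mathcal M_\tau^\dagger)$ and then constructing a partial isometry $w'$ implementing $e'\sim f'$ \emph{inside} that subalgebra, where $\tau^\dagger\otimes\Tr_n$ is an everywhere-defined tracial functional and the identity $\tau^\dagger\otimes\Tr_n(w'{w'}^*)=\tau^\dagger\otimes\Tr_n({w'}^*w')$ is a legitimate finite equation. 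Some step of this kind --- moving the implementing partial isometry itself into the domain of the trace --- is indispensable and is missing from your proposal.
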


\subsection{Proof of Theorem \ref{19-11-22d}} The proof uses the following series of lemmas. Recall that a \emph{strictly positive} element of a $C^*$-algebra $A$ is a positive element $a \in A^+$ such $\omega(a) > 0$ for all non-zero positive functionals $\omega$ on $A$. A $C^*$-algebra is \emph{$\sigma$-unital} when it contains a strictly positive element.

\begin{lemma}\label{02-01-23x} A separable $C^*$-algebra is $\sigma$-unital.
\end{lemma}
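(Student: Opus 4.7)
The plan is to construct a strictly positive element directly out of a countable dense subset of the positive unit ball. Since $A$ is separable, so is the subspace $A^+_1 := \{a \in A^+ : \|a\| \leq 1\}$ in the norm topology, and I can therefore pick a sequence $\{a_n\}_{n=1}^\infty \subseteq A^+_1$ which is norm-dense in $A^+_1$.

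Next I would form the element
$$
h := \sum_{n=1}^\infty 2^{-n} a_n ,
$$
and note that the series converges in norm since $\|2^{-n} a_n\| \leq 2^{-n}$ and $A$ is complete. Each partial sum is positive because $A^+$ is closed under addition and multiplication by non-negative scalars, so $h \in A^+$ by closedness of $A^+$ in the norm topology. This handles the easy part.

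The main step is to show $h$ is strictly positive. Let $\omega$ be a non-zero positive linear functional on $A$; I need $\omega(h) > 0$. Suppose for contradiction that $\omega(h) = 0$. By continuity and positivity of $\omega$ and because all the $2^{-n} a_n$ are positive with partial sums dominated by $h$, the bound $0 \leq 2^{-n} \omega(a_n) \leq \omega(h) = 0$ forces $\omega(a_n) = 0$ for every $n$. Now take an arbitrary $b \in A^+$. If $b = 0$ clearly $\omega(b) = 0$; otherwise $\|b\|^{-1} b \in A^+_1$, so by density there is a subsequence $\{a_{n_k}\}$ with $\lim_k a_{n_k} = \|b\|^{-1}b$ in norm, and continuity of $\omega$ yields $\omega(\|b\|^{-1}b) = \lim_k \omega(a_{n_k}) = 0$. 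Hence $\omega$ vanishes on $A^+$, and since $A = A^+ - A^+ + i(A^+ - A^+)$ this gives $\omega = 0$, contradicting our assumption. Therefore $\omega(h) > 0$ for every non-zero positive functional $\omega$, so $h$ is strictly positive and $A$ is $\sigma$-unital. No real obstacle is expected here; the only thing to be mindful of is that the dense sequence must lie in the positive cone (not just in $A$), which is why starting from a dense subset of $A^+_1$ is the right move.
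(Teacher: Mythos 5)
Your proof is correct and follows essentially the same route as the paper: take a norm-dense sequence $\{a_n\}$ in the positive unit ball and form $\sum_{n=1}^\infty 2^{-n}a_n$. The paper leaves the verification of strict positivity to the reader, and the details you supply (forcing $\omega(a_n)=0$ for all $n$ and then using density to conclude $\omega=0$) are exactly the intended argument.
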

\begin{proof} When $A$ is a separable $C^*$-algebra there is a dense sequence $\{a_n\}_{n=1}^\infty$ in $\{a \in D^+: \ 0 \leq a \leq 1 \}$. Then
$$
a := \sum_{n=1}^\infty 2^{-n}a_n
$$
is strictly positive.
\end{proof}

\begin{lemma}\label{19-11-22fx} Let $D$ be a $\sigma$-unital $C^*$-algebra. There is a sequence $\{d_n\}_{n=1}^\infty$ in $D$ such that
\begin{itemize}
\item $0 \leq d_n \leq 1, \ \ \forall n$,
\item $d_nd_{n+1} = d_n, \ \ \forall n$, and
\item $\lim_{n \to \infty} d_na =a, \ \ \forall a \in D$.
\end{itemize}
\end{lemma}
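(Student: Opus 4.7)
My plan is to obtain the sequence $\{d_n\}$ from the functional calculus applied to a strictly positive element of $D$. First I would fix a strictly positive $h \in D$ with $0 \leq h \leq 1$, which exists by $\sigma$-unitality (and without loss of generality has norm at most $1$). Recall that strict positivity of $h$ is equivalent to $\overline{hDh} = D$, and this is the only fact about $h$ that I intend to use.

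Next I would construct a sequence of continuous functions $f_n : [0,1] \to [0,1]$ such that $f_n(0) = 0$, $f_n(t) = 1$ for $t \geq 1/n$, and $f_n$ is piecewise linear on $[0,1/n]$. These are chosen precisely so that $f_n(t) > 0$ forces $t \geq 1/(n+1)$, hence $f_{n+1}(t) = 1$; therefore $f_n f_{n+1} = f_n$ as functions. Setting $d_n := f_n(h)$, the continuous functional calculus immediately yields $0 \leq d_n \leq 1$ and $d_n d_{n+1} = (f_n f_{n+1})(h) = f_n(h) = d_n$.

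The only remaining point is the approximate unit property $\lim_n d_n a = a$ for all $a \in D$. Since each $d_n$ is a contraction, the set $E := \{a \in D : \lim_n d_n a = a\}$ is norm-closed, so it suffices to check that $E \supseteq hDh$; then $E$ will contain $\overline{hDh} = D$. For this I would use the elementary estimate $\|f_n(t) t - t\|_\infty \leq 1/n$ on $[0,1]$, which gives $\lim_n f_n(h) h = h$ in norm, and hence for any $b \in D$,
\[
\lim_{n\to\infty} d_n (hbh) = \lim_{n\to\infty} (f_n(h) h) b h = h b h .
\]
Thus $hDh \subseteq E$ and the proof is complete. The construction is entirely elementary once strict positivity is invoked, so I do not anticipate any real obstacle; the only point worth care is choosing the cut-off functions $f_n$ so that both the compatibility relation $f_n f_{n+1} = f_n$ and the uniform estimate $f_n(t) t \to t$ hold, and the piecewise-linear recipe above accomplishes both.
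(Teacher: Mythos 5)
Your proof is correct, and for the third bullet it takes a genuinely different route from the paper's. You rely on the standard equivalence between strict positivity of $h$ (which the paper defines by $\omega(h)>0$ for every non-zero positive functional $\omega$) and the density statement $\overline{hDh}=D$, and then conclude from the elementary uniform estimate $\sup_{t}\left|f_n(t)t-t\right|\leq \frac{1}{n}$ that $d_nh\to h$ in norm, hence $d_n$ converges to the identity on $hDh$ and therefore on its closure. The paper instead works directly from the functional-theoretic definition: it assumes $(1-d_n)x\not\to 0$ for some $x$, extracts states $\omega_k$ witnessing this, takes a weak* condensation point $\mu$ of the compressed functionals, and derives the contradiction $\mu\neq 0$ but $\mu(a_0)=0$. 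Your argument is shorter and avoids the compactness machinery, at the price of invoking the equivalence with $\overline{hDh}=D$, which is standard (it is essentially Proposition 3.10.5 in the Pedersen reference the paper uses) but is neither stated nor proved in the paper; if you wanted to be fully self-contained you would need to supply the Cauchy--Schwarz argument in one direction and the existence of a state annihilating a proper hereditary subalgebra in the other. One small presentational point: as literally written, ``$f_n(0)=0$, $f_n=1$ on $[1/n,\infty)$, piecewise linear on $[0,1/n]$'' does not by itself force $f_n$ to vanish on $[0,\frac{1}{n+1}]$ (a single linear ramp $f_n(t)=nt$ would satisfy it and break $f_nf_{n+1}=f_n$); you should state explicitly, as the paper does, that $f_n\equiv 0$ on $[0,\frac{1}{n+1}]$ and is linear on $[\frac{1}{n+1},\frac{1}{n}]$, which is clearly what you intend given the property you then verify.
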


\begin{proof} Let $a_0$ be a strictly positive element of $A$. Let $f_n$ be the continuous function $f_n : [0,\infty) \to [0,1]$ such that
$$
f_n(t) = \begin{cases} 0, & \ t \in [0,\frac{1}{n+1}]\\ \text{linear}, & \ t \in \left[\frac{1}{n+1}, \frac{1}{n}\right], \\ 1, & \ t \geq \frac{1}{n} . \end{cases}
$$
Set $d_n := f_n(a_0)$. The first item holds since $0 \leq f_n \leq 1$ and the second because $f_nf_{n+1} = f_n$. To establish the third, assume for a contradiction that there is an element $x \in D$ for which $(1-f_n(a_0))x$ does not converge to $0$. Then 
$$
\sup_{\omega } \omega((1-f_n(a_0))xx^*(1-f_n(a_0))
$$
does not converge to $0$ when we take the supremum over all states $\omega$ of $A$. It follows that there is an $\epsilon > 0$ and a sequence $n_1 < n_2 < n_3 < \cdots$ in $\mathbb N$ such that for each $k$ there is a state $\omega_k$ with
$$
\omega_k((1-f_{n_k}(a_0))xx^*(1-f_{n_k}(a_0))) \geq \epsilon .
$$
Since the unit ball of $A^*$ is weak* compact there is a weak* condensation point $\mu$ of the set of functionals defined by
$$
A\ni a \mapsto  \omega_k((1-f_{n_k}(a_0))a (1-f_{n_k}(a_0))), \  \  k \in \mathbb N.
$$
Then $\mu(xx^*) \geq \epsilon$, and hence $\mu$ is a non-zero positive functional on $A$. However, $\lim_{n \to \infty} (1-f_n(t))t =0$ uniformly for $t$ in the spectrum of $a_0$, and hence $\lim_{n \to \infty} (1-f_n(a_0))a_0 = 0$. It follows that $\mu(a_0) = 0$, contradicting the strict positivity of $a_0$.  
\end{proof}

Consider now a separable $C^*$-subalgebra $D$ of $A$, and let
$$
{\bf d} := \{d_n\}_{n=1}^\infty
$$
be a sequence in $D$ with the properties specified in Lemma \ref{19-11-22fx}. Set
$$
D_{d_n} := \left\{ a \in D : \ ad_n = d_na = a \right\} .
$$
Then $D_{d_n}$ is a $C^*$-subalgebra of $D$ and 
$$
D_{d_n} \subseteq d_nDd_n \subseteq D_{d_{n+1}} \subseteq d_{n+1}Dd_{n+1}.
$$ 
In particular,
$$
\bigcup_{n=1}^\infty d_nDd_n = \bigcup_{n=1}^\infty D_{d_n} .
$$
Set
$$
D({\bf d}) := \bigcup_{n=1}^\infty D_{d_n} .
$$
Then $D({\bf d})$ is a $*$-subalgebra of $D$ and it is dense in $D$ since 
$\lim_{n \to \infty} d_nad_n =a$ 
for all $a\in D$.

\begin{lemma}\label{18-11-22gx} $D({\bf d})\subseteq \mathcal M_\tau$.
\end{lemma}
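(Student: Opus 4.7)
My plan is to reduce the problem to showing that each $d_n$ itself lies in $\mathcal M_\tau$. For any $a \in D_{d_n}^+$ we have $a = d_n a d_n \leq \|a\| d_n^2 \leq \|a\| d_n$, so, invoking the monotonicity of $\tau$ that follows from additivity on $A^+$, the finiteness of $\tau(d_n)$ will give $\tau(a) \leq \|a\|\tau(d_n) < \infty$, hence $a \in \mathcal M_\tau^+$. Since $D_{d_n}$ is a $C^*$-subalgebra, every element decomposes as a complex linear combination of positive elements of $D_{d_n}$, so this will yield $D_{d_n} \subseteq \mathcal M_\tau$ and therefore $D(\mathbf{d}) = \bigcup_n D_{d_n} \subseteq \mathcal M_\tau$.

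To show $\tau(d_n) < \infty$, I want to view $\tau$ as a $0$-KMS weight for the trivial flow on $A$ and apply Proposition \ref{02-12-21x}. For the trivial flow, every element of $A$ is entire analytic and $D(\sigma_{-i\beta/2}) = A$, so the proposition simply says: if $a, b \in A^+$ and $ab = a$, then $a \in \mathcal N_\tau$. I will apply this with $a = d_n^{1/2}$ and $b = d_{n+1}$; then $d_n = (d_n^{1/2})^* d_n^{1/2} \in \mathcal N_\tau^* \mathcal N_\tau \subseteq \mathcal M_\tau$ (the last inclusion being Lemma \ref{04-11-21n}(b) applied to $\tau$ as a weight), yielding $\tau(d_n) < \infty$ as desired.

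The one step that needs verification is that $d_n^{1/2} d_{n+1} = d_n^{1/2}$. Starting from the hypothesis $d_n d_{n+1} = d_n$, i.e., $d_n(1-d_{n+1}) = 0$, I compute
\[
\bigl\| d_n^{1/2}(1-d_{n+1})^{1/2}\bigr\|^2 = \bigl\|(1-d_{n+1})^{1/2} d_n (1-d_{n+1})^{1/2}\bigr\| = 0,
\]
using that $0 \leq 1 - d_{n+1} \leq 1$, so $(1-d_{n+1})^{1/2} d_n (1-d_{n+1})^{1/2} \leq d_n (1-d_{n+1}) + (1-d_{n+1})d_n = 0$ as an element of $A^+$. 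Hence $d_n^{1/2}(1-d_{n+1})^{1/2} = 0$, and multiplying on the right by $(1-d_{n+1})^{1/2}$ gives $d_n^{1/2}(1-d_{n+1}) = 0$, i.e., $d_n^{1/2} = d_n^{1/2} d_{n+1}$.

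There is no real obstacle here; the argument is a direct application of Proposition \ref{02-12-21x} once the small functional-calculus identity above is in place. The only point to be slightly careful about is to observe that all relevant facts previously proved for KMS weights specialize correctly to traces via the trivial flow.
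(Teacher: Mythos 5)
Your reduction is sensible: if $\tau(d_n)<\infty$ then for $a\in D_{d_n}^+$ one has $a=d_nad_n\leq \|a\|d_n^2\leq \|a\|d_n$, monotonicity of $\tau$ follows from additivity, and the passage from $D_{d_n}^+$ to $D_{d_n}$ by linear combinations is fine. The genuine gap is in how you get $\tau(d_n)<\infty$. The trace $\tau$ fixed in this appendix is a trace only in the sense of Definition \ref{03-02-22f}; it is \emph{not} assumed to be lower semi-continuous (Example \ref{29-11-22} exhibits densely defined traces that are not), and a trace is a $0$-KMS weight for the trivial flow precisely when it \emph{is} lower semi-continuous. Hence Proposition \ref{02-12-21x} --- whose proof runs through Combes' theorem, the GNS representation and the lower semi-continuity of the weight --- is not available here. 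This is exactly the point you flagged as "the only point to be slightly careful about", and it is where the argument fails. The paper's own proof avoids all of this: since $\mathcal M_\tau$ is a dense two-sided ideal (Lemma \ref{16-11-22}), choose $x\in A^+\cap\mathcal M_\tau$ with $\|x-d_{n+1}\|\leq\frac{1}{2}$; then $d_{n+1}\sqrt{d}=\sqrt{d}$ for $d=d_nad_n$ gives $\frac{1}{2}d\leq \sqrt{d}\left(1+(x-d_{n+1})\right)\sqrt{d}=\sqrt{d}x\sqrt{d}\in\mathcal M_\tau$, so $\tau(d)<\infty$ with no continuity assumption on $\tau$. Your skeleton is repaired by running this estimate with $d=d_n$ in place of the appeal to Proposition \ref{02-12-21x}.

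A secondary point: the inequality $(1-d_{n+1})^{1/2}d_n(1-d_{n+1})^{1/2}\leq d_n(1-d_{n+1})+(1-d_{n+1})d_n$ is not a valid general inequality for $0\leq c:=1-d_{n+1}\leq 1$; e.g.\ in $M_2(\mathbb C)$ with $c=\left(\begin{smallmatrix}1&0\\0&0\end{smallmatrix}\right)$ and $d=\left(\begin{smallmatrix}1&1\\1&1\end{smallmatrix}\right)$ one has $c^{1/2}dc^{1/2}\not\leq dc+cd$. The identity $d_n^{1/2}d_{n+1}=d_n^{1/2}$ you are after is nevertheless true: since $d_nd_{n+1}=d_n$ is self-adjoint, also $d_{n+1}d_n=d_n$, so $d_n$ and $d_{n+1}$ commute; then $1-d_{n+1}$ commutes with $d_n^{1/2}$ and $\left(d_n^{1/2}(1-d_{n+1})\right)^*\left(d_n^{1/2}(1-d_{n+1})\right)=d_n(1-d_{n+1})^2=0$.
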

\begin{proof} Let $n \in \mathbb N$ and consider an element $a \in D^+$. Set $d := d_nad_n$. It suffices to show that $d \in \mathcal M_\tau$. Since $\tau$ is densely defined there is an $x \in A^+ \cap \mathcal M_\tau$ such that $\left\|x -d_{n+1}\right\| \leq \frac{1}{2}$.  
Using $d_{n+1}\sqrt{d} =\sqrt{d}$ we find that
$$
\frac{1}{2} d \leq \sqrt{d}(1+(x-d_{n+1}))\sqrt{d} = \sqrt{d}(d_{n+1}+(x-d_{n+1}))\sqrt{d} = \sqrt{d}x\sqrt{d}.
$$
Note that $\sqrt{d}x\sqrt{d} \in \mathcal M_\tau$ since $\mathcal M_\tau$ is a two-sided ideal by 
Lemma \ref{16-11-22}. It follows therefore from the estimate above that $\tau(d) \leq 2 \tau(\sqrt{d}x\sqrt{d}) < \infty$. Hence $d \in \mathcal M_\tau$.
\end{proof}

\begin{lemma}\label{17-11-22bx} $\tau^\dagger(xy) = \tau^\dagger(yx)$ for all $x,y \in \mathcal M_\tau^\dagger$.
\end{lemma}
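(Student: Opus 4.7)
The plan is to reduce the statement on $\mathcal M_\tau^\dagger$ to the corresponding identity on $\mathcal M_\tau$, and then to obtain the trace identity on $\mathcal M_\tau$ by polarization from the defining property $\tau(a^*a)=\tau(aa^*)$ on $A$. First, writing $x=(x_0,\lambda)$ and $y=(y_0,\mu)$ with $x_0,y_0\in\mathcal M_\tau$, a direct computation gives $xy-yx=(x_0y_0-y_0x_0,0)$, so $\tau^\dagger(xy)-\tau^\dagger(yx)=\tau(x_0y_0)-\tau(y_0x_0)$. It therefore suffices to prove $\tau(uv)=\tau(vu)$ for all $u,v\in\mathcal M_\tau$; in fact, using Lemma~\ref{16-11-22} (which says $\mathcal M_\tau$ is a two-sided ideal), it is enough to establish
\begin{equation*}
\tau(xa)=\tau(ax)\qquad\forall\,x\in\mathcal M_\tau,\ \forall\,a\in A.
\end{equation*}

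The key lemma I would prove first is that $\tau(pq)=\tau(qp)$ whenever $p,q\in\mathcal N_\tau$, where $\mathcal N_\tau=\{c\in A:\tau(c^*c)<\infty\}$. The trace property $\tau(c^*c)=\tau(cc^*)$ on $A$ together with the fact that $\mathcal N_\tau$ is a subspace (same argument as in the proof of Lemma \ref{04-11-21n}(a)) gives, via the polarisation identity \eqref{polar}, that
\[
\tau(b^*a)=\tfrac14\sum_{k=1}^4 i^k\,\tau((a+i^kb)^*(a+i^kb))=\tfrac14\sum_{k=1}^4 i^k\,\tau((a+i^kb)(a+i^kb)^*)=\tau(ab^*)
\]
for all $a,b\in\mathcal N_\tau$; note all four intermediate traces are finite because $a+i^kb\in\mathcal N_\tau$. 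Since $\tau(c^*c)=\tau(cc^*)$ forces $\mathcal N_\tau^*=\mathcal N_\tau$, setting $p=b^*\in\mathcal N_\tau$ and $q=a\in\mathcal N_\tau$ yields the claim.

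Finally, given $x\in\mathcal M_\tau$ and $a\in A$, use Lemma~\ref{04-11-21n}(b) to write $x=\sum_{i}b_i^*a_i$ with $a_i,b_i\in\mathcal N_\tau$ (finitely many terms, after extending $\tau$ linearly via the third item of Lemma~\ref{16-11-22}). Since $\mathcal N_\tau$ is a two-sided ideal in $A$, both $a_ia\in\mathcal N_\tau$ and $ab_i^*\in\mathcal N_\tau$. Apply the cyclicity identity twice:
\[
\tau(b_i^*a_ia)=\tau((a_ia)b_i^*)=\tau(a_i(ab_i^*))=\tau((ab_i^*)a_i)=\tau(ab_i^*a_i),
\]
where the first and third equalities use the lemma of the previous paragraph. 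Summing over $i$ gives $\tau(xa)=\tau(ax)$, and together with the reduction in the first paragraph this proves the statement.

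The main obstacle I expect is keeping track of which elements belong to $\mathcal N_\tau$ at each step, so that the polarisation identity can actually be applied (all four auxiliary traces must be finite). Once one observes that for a trace $\mathcal N_\tau=\mathcal N_\tau^*$ and hence $\mathcal N_\tau$ is a \emph{two-sided} ideal, the cyclic manipulations go through without difficulty.
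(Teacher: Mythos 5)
Your proof is correct. The paper makes the same initial reduction---the scalar parts cancel, so it suffices to show $\tau(ab)=\tau(ba)$ for $a,b\in\mathcal M_\tau$---and then finishes in a single step by polarizing $ab=\frac14\sum_{k=1}^4 i^k(b+i^ka^*)^*(b+i^ka^*)$ and $ba=\frac14\sum_{k=1}^4 i^k(b+i^ka^*)(b+i^ka^*)^*$ and applying $\tau(c^*c)=\tau(cc^*)$ termwise, finiteness of each term being guaranteed because $\mathcal M_\tau=\mathcal M_\tau^*\subseteq\mathcal N_\tau$ forces $b+i^ka^*\in\mathcal N_\tau$. You use the same polarization mechanism but organize it in two stages: first cyclicity on $\mathcal N_\tau\times\mathcal N_\tau$, then a bootstrap via $\mathcal N_\tau=\mathcal N_\tau^*$ (so that $\mathcal N_\tau$ is a two-sided ideal) and the decomposition $\mathcal M_\tau=\Span\mathcal N_\tau^*\mathcal N_\tau$ to the stronger identity $\tau(xa)=\tau(ax)$ for $x\in\mathcal M_\tau$ and arbitrary $a\in A$. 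This costs an extra layer of bookkeeping---which you track correctly---but yields strictly more than the lemma asks for; the paper's one-step version suffices because the statement only requires both factors to lie in $\mathcal M_\tau^\dagger$.
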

\begin{proof} Write $x = (a,\lambda), \ y = (b,\mu)$ where $a,b \in \mathcal M_\tau, \ \lambda,\mu \in \mathbb C$. Then $\tau^\dagger(xy) = \tau|_{\mathcal M_\tau}(ab + \lambda b + \mu a)$ while $\tau^\dagger(yx) = \tau|_{\mathcal M_\tau}(ba + \lambda b + \mu a)$. It suffices therefore to show that $\tau|_{\mathcal M_\tau}(ab) = \tau|_{\mathcal M_\tau}(ba)$, which follows from the polarization identities
$$
ab = \frac{1}{4}\sum_{k=1}^4 i^k(b + i^ka^*)^*(b + i^ka^*)
$$
and
$$
ba = \frac{1}{4}\sum_{k=1}^4 i^k(b + i^ka^*)(b + i^ka^*)^* .
$$
\end{proof}

\begin{cor}\label{20-11-22x}$\tau^\dagger \otimes \Tr_n(xy) = \tau^\dagger \otimes \Tr_n(yx)$ for all $x,y \in M_n(\mathcal M_\tau^\dagger)$.
\end{cor}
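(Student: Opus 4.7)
The plan is to reduce the matrix-level trace identity to the scalar identity already proved in Lemma \ref{17-11-22bx} by writing out the diagonal entries of $xy$ and $yx$ explicitly.

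Concretely, I would write $x = (x_{ij})_{1 \leq i,j \leq n}$ and $y = (y_{ij})_{1 \leq i,j \leq n}$ with $x_{ij}, y_{ij} \in \mathcal M_\tau^\dagger$. Unfolding the definitions,
\[
\tau^\dagger \otimes \Tr_n(xy) \;=\; \sum_{i=1}^n \tau^\dagger\bigl((xy)_{ii}\bigr) \;=\; \sum_{i=1}^n \sum_{j=1}^n \tau^\dagger(x_{ij}y_{ji}),
\]
where I have used linearity of $\tau^\dagger$ on $\mathcal M_\tau^\dagger$ (which is legitimate because $\mathcal M_\tau^\dagger$ is a $*$-subalgebra of $A^\dagger$ and each $x_{ij}y_{ji} \in \mathcal M_\tau^\dagger$ since $\mathcal M_\tau$ is a two-sided ideal by Lemma \ref{16-11-22}). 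Similarly,
\[
\tau^\dagger \otimes \Tr_n(yx) \;=\; \sum_{i=1}^n \sum_{j=1}^n \tau^\dagger(y_{ij}x_{ji}).
\]

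Now I would apply Lemma \ref{17-11-22bx} termwise: for each pair $(i,j)$ we have $\tau^\dagger(x_{ij}y_{ji}) = \tau^\dagger(y_{ji}x_{ij})$. Summing over $i,j$ and relabeling the summation indices $(i,j) \leftrightarrow (j,i)$ on the right-hand side gives
\[
\sum_{i,j} \tau^\dagger(x_{ij}y_{ji}) \;=\; \sum_{i,j} \tau^\dagger(y_{ji}x_{ij}) \;=\; \sum_{i,j} \tau^\dagger(y_{ij}x_{ji}),
\]
which is precisely $\tau^\dagger \otimes \Tr_n(yx)$.

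There is no real obstacle here; the only thing to be careful about is ensuring that every entry $x_{ij}y_{ji}$ lies in $\mathcal M_\tau^\dagger$ so that $\tau^\dagger$ is defined on it, but this is immediate from the ideal property of $\mathcal M_\tau$ in $A$ (and hence of $\mathcal M_\tau^\dagger$ in $A^\dagger$) established in Lemma \ref{16-11-22}. The argument is really just the standard observation that a tracial linear functional on an algebra $B$ induces a tracial linear functional on $M_n(B)$.
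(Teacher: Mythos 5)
Your proof is correct and is exactly the standard entrywise argument the paper intends: the corollary is stated without proof as an immediate consequence of Lemma \ref{17-11-22bx}, and your expansion of the diagonal entries of $xy$ and $yx$, termwise application of the scalar trace identity, and relabeling of indices is the argument being left implicit. The observation that each product $x_{ij}y_{ji}$ lies in $\mathcal M_\tau^\dagger$ (so that $\tau^\dagger$ applies) is the only point needing care, and you handle it correctly via the ideal property from Lemma \ref{16-11-22}.
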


\begin{lemma}\label{04-11-22ax} Let $I$ be a right ideal (not necessarily closed) in the $C^*$-algebra $A$. If $x \in M_n(I^\dagger)$ is invertible in $M_n(A^\dagger)$ then $x^{-1} \in M_n(I^\dagger)$.
\end{lemma}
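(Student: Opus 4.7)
My plan is to reduce the problem to showing that an element of the form $1 + c$ with $c \in M_n(I)$ has its inverse (when it exists in $M_n(A^\dagger)$) in $1 + M_n(I)$, and then factor the general $x$ into such a piece times a matrix over $\mathbb C$.

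The starting point is the decomposition $A^\dagger = A \oplus \mathbb C$, which induces $M_n(A^\dagger) = M_n(A) \oplus M_n(\mathbb C)$ and, inside this, $M_n(I^\dagger) = M_n(I) \oplus M_n(\mathbb C)$. So I would write $x = a + \mu$ with $a \in M_n(I)$ and $\mu \in M_n(\mathbb C)$. The canonical projection $\pi : M_n(A^\dagger) \to M_n(\mathbb C)$ sending $(b,\lambda) \mapsto \lambda$ on each entry is a unital algebra homomorphism, so $\pi(x^{-1}) = \pi(x)^{-1} = \mu^{-1}$; in particular $\mu$ is invertible in $M_n(\mathbb C)$. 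This already forces $x = \mu(1 + \mu^{-1}a)$, with $c := \mu^{-1} a \in M_n(I)$ since $I$ is a linear subspace and the entries of $\mu^{-1}$ are scalars. Consequently $1 + c$ is invertible in $M_n(A^\dagger)$, with inverse $x^{-1}\mu$.

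The heart of the argument is to show that $(1+c)^{-1} \in 1 + M_n(I)$. Write $(1+c)^{-1} = 1 + f$ for some $f \in M_n(A^\dagger)$; applying $\pi$ gives $\pi(f) = 0$, i.e.\ $f \in M_n(A)$. From $(1+c)(1+f) = 1$ I get $f = -c - cf = -c(1+f)$. Here I use that $I$ is a right ideal in $A$, which immediately upgrades to the statement that $I$ is a right ideal in $A^\dagger$ (since $z \in I$, $w = w' + \lambda \in A^\dagger$ gives $zw = zw' + \lambda z \in I$), and hence that $M_n(I)$ is a right ideal in $M_n(A^\dagger)$. Thus $c(1+f) \in M_n(I)$, so $f \in M_n(I)$, as needed. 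This is the one spot where the right-ideal hypothesis enters, and it is the only step I would call an obstacle — although really a very mild one, since the calculation $f = -c(1+f)$ makes the role of the hypothesis transparent.

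Finally I assemble the pieces: $x^{-1} = (1+c)^{-1} \mu^{-1} = (1+f)\mu^{-1} = \mu^{-1} + f\mu^{-1}$. Now $\mu^{-1} \in M_n(\mathbb C) \subseteq M_n(I^\dagger)$, and $f\mu^{-1} \in M_n(I)$ because $f \in M_n(I)$ and the entries of $\mu^{-1}$ are scalars (again using only that $I$ is a linear subspace, not a two-sided ideal). Hence $x^{-1} \in M_n(I) + M_n(\mathbb C) = M_n(I^\dagger)$, completing the proof.
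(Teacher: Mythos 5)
Your proof is correct and follows essentially the same route as the paper: both arguments exploit that the scalar part of $x$ must be invertible and then use the equation $xx^{-1}=1$ together with the right-ideal property of $I$ to force the $M_n(A)$-component of $x^{-1}$ into $M_n(I)$. Your normalization to the case $x = \mu(1+c)$ is only a cosmetic repackaging of the paper's direct computation $b = k^{-1}(-am-ab)$.
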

\begin{proof} Write $x = (a,k)$ and $x^{-1} = (b,m)$ where $a \in M_n(I)$, $b \in M_n(A)$ and $k,m \in M_n(\mathbb C)$. Since $xx^{-1} = 1$ we get the equations $ab + kb+am =0 $ and $km =1$, implying that $b = k^{-1}(-am-ab) \in M_n(I)$.
\end{proof}

\begin{lemma}\label{19-11-22c} Let $e,f$ be projections in $M_n(\mathcal M_\tau^\dagger)$, and assume that there is a partial isometry $v \in M_n(A^\dagger)$ such that $e = vv^*$ and $f= v^*v$. Then $\tau^\dagger \otimes \Tr_n(e) = \tau^\dagger \otimes \Tr_n(f)$.
\end{lemma}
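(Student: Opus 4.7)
The approach is to use the standard $K$-theoretic construction of a $2n \times 2n$ unitary in $M_{2n}(A^\dagger)$ which conjugates $\widetilde{f} := f \oplus 0$ into $\widetilde{e} := e \oplus 0$, and then analyze the resulting equality of $\tau^\dagger$-traces by separately handling the $A$-part and the scalar part of $\widetilde{f}$.

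First I would define
$$
V := \begin{pmatrix} v & 1-e \\ 1-f & v^* \end{pmatrix} \in M_{2n}(A^\dagger).
$$
The partial isometry identities $vv^* = e$, $v^*v = f$, $ev = v = vf$, and $v^*e = v^* = fv^*$ imply, after a short block calculation, that $VV^* = V^*V = 1$ and
$$
V\,\widetilde{f}\,V^* \;=\; \widetilde{e}, \qquad \text{where } \widetilde{e} := \begin{pmatrix} e & 0 \\ 0 & 0 \end{pmatrix}, \;\; \widetilde{f} := \begin{pmatrix} f & 0 \\ 0 & 0 \end{pmatrix}.
$$
Since $\tau^\dagger \otimes \Tr_{2n}(\widetilde{e}) = \tau^\dagger \otimes \Tr_n(e)$ and likewise for $\widetilde{f}$, the lemma reduces to establishing that the $\tau^\dagger$-trace is invariant under this particular unitary conjugation, i.e.\ $\tau^\dagger \otimes \Tr_{2n}(V\widetilde{f}V^*) = \tau^\dagger \otimes \Tr_{2n}(\widetilde{f})$.

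Next I would write $\widetilde{f} = \widetilde{f}_0 + \widetilde{L}$, where $\widetilde{f}_0 \in M_{2n}(\mathcal M_\tau)$ is the $A$-part and $\widetilde{L} = L \oplus 0 \in M_{2n}(\mathbb C)$ is the scalar part (so that $L = W^*W$ for $W$ the scalar part of $v$, and similarly $K = WW^*$ is the scalar part of $e$). An elementary matrix-expansion of Lemma \ref{17-11-22bx} yields the extended cyclicity
$$
\tau \otimes \Tr_{2n}(XM) = \tau \otimes \Tr_{2n}(MX) \quad \text{for } X \in M_{2n}(A^\dagger),\; M \in M_{2n}(\mathcal M_\tau),
$$
and applying this with $X = V$ and $M = \widetilde{f}_0 V^*$ gives $\tau \otimes \Tr_{2n}(V \widetilde{f}_0 V^*) = \tau \otimes \Tr_{2n}(\widetilde{f}_0)$ using $V^*V = 1$. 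Thus the lemma is reduced to proving the compatibility statement
$$
\tau^\dagger \otimes \Tr_{2n}(V \widetilde{L} V^*) \;=\; 0,
$$
which is well-posed because $V\widetilde{L}V^* = \widetilde{e} - V\widetilde{f}_0V^*$ lies in $M_{2n}(\mathcal M_\tau^\dagger)$.

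The main obstacle is precisely this scalar-part calculation. Decomposing $V = V_0 + W'$ with $V_0 \in M_{2n}(A)$ and $W' \in M_{2n}(\mathbb C)$ the scalar part of $V$, one checks directly that $W'$ is itself a unitary in $M_{2n}(\mathbb C)$ satisfying $W' \widetilde{L} {W'}^* = \widetilde{K}$, which forces $\Tr_n(L) = \Tr_n(K)$ and identifies the scalar part of $V\widetilde{L}V^*$. The $A$-part of $V\widetilde{L}V^*$ is the three-term expression $V_0 \widetilde{L} V_0^* + V_0 \widetilde{L} {W'}^* + W' \widetilde{L} V_0^*$, and the task is to show that its $\tau \otimes \Tr_{2n}$-trace vanishes. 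I plan to handle this by exploiting $\widetilde{L}^2 = \widetilde{L}$ to factor the first term as $(V_0\widetilde{L})(V_0\widetilde{L})^*$, then passing to an approximate unit $\{d_n\}$ for a separable subalgebra containing the entries of $V_0$ (Lemmas \ref{02-01-23x} and \ref{19-11-22fx}) so that all relevant elements sit in $\mathcal M_\tau$; the trace identity $\tau(bb^*) = \tau(b^*b)$ on $A$ then gives a cancellation of the three terms, and a limit argument (using lower semi-continuity of $\tau$, implicit throughout this section since Theorem \ref{22-11-22} concerns lower semi-continuous traces) delivers the conclusion.
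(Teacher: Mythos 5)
Your reduction is clean as far as it goes: the $2n\times 2n$ unitary $V$ does conjugate $\widetilde f$ to $\widetilde e$, and (granting the cyclicity $\tau\otimes\Tr_{2n}(XM)=\tau\otimes\Tr_{2n}(MX)$ for $X\in M_{2n}(A^\dagger)$, $M\in M_{2n}(\mathcal M_\tau)$) the lemma is equivalent to $\tau^\dagger\otimes\Tr_{2n}(V\widetilde L V^*)=0$. Two caveats, one minor and one fatal. The minor one: that cyclicity is \emph{not} an "elementary matrix-expansion" of Lemma \ref{17-11-22bx}, which only treats two factors in $\mathcal M_\tau^\dagger$; with one factor merely in $A^\dagger$ the polarization identity produces terms $(b+i^ka^*)^*(b+i^ka^*)$ of infinite trace, and you must instead argue through the factorization $\mathcal M_\tau=\Span\mathcal N_\tau^*\mathcal N_\tau$ and the two-sided ideal property of $\mathcal N_\tau$. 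This is true and provable, but it needs its own argument.

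The fatal gap is the last step. Writing the $A$-part of $V\widetilde LV^*$ as $u_0u_0^*+u_0\ell^*+\ell u_0^*$ (with $u_0=V_0\widetilde L$, $\ell=W'\widetilde L$), only the \emph{sum} lies in $M_{2n}(\mathcal M_\tau)$; the three terms individually need not have finite, or even well-defined, traces, so "cancellation of the three terms" via $\tau(bb^*)=\tau(b^*b)$ cannot be performed term by term. Concretely, take $A=\mathbb K$, $\tau=\Tr$, $n=1$, $e=f=1$ and $v=1+k$ a unitary in $\mathbb K^\dagger$ with $k$ compact but not Hilbert--Schmidt: then $u_0^*u_0=k^*k\oplus 0$ has $\tau\otimes\Tr_{2}(u_0^*u_0)=\infty$ and $\Tr(k^*)$ is undefined, while the sum is $0$. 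The approximate-unit repair does not rescue this: cutting by $D_k=d_k\otimes 1_{2n}$ makes each term finite, but the sum of the cut traces collapses to $\tau\otimes\Tr_{2n}(u^*D_k^2u)-\tau\otimes\Tr_{2n}(\ell^*D_k^2\ell)=\tau\otimes\Tr_{2n}\bigl(D_k(V\widetilde LV^*-\widetilde K)D_k\bigr)$, i.e.\ exactly the quantity you are trying to compute — the argument is circular, and the two cut traces separately diverge when $\tau$ is unbounded. What is actually needed is to perturb $e$, $f$ and the partial isometry into a corner $d_kDd_k$ of a separable subalgebra, where Lemma \ref{18-11-22gx} puts \emph{everything} (including the local unit $d_k$) inside $\mathcal M_\tau$ so that Corollary \ref{20-11-22x} applies directly; this perturbation of the Murray--von Neumann equivalence, via the invertibles $u=(2e'-1)(2e-1)+1$ and a polar decomposition, is the content of the paper's proof and is absent from your plan.
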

\begin{proof} Choose a separable $C^*$-algebra $D \subseteq A$ such that $e,f,v \in M_n(D^\dagger)$ and choose in $D$ a sequence $\{d_n\}_{n=1}^\infty$ with the properties specified in Lemma \ref{19-11-22fx}. For each $\epsilon >0$ we can then find $N \in \mathbb N$ and projections $e',f' \in M_n(D_{d_N}^\dagger)$ such that $\left\| e-e'\right\| \leq \epsilon$ and $\|f-f'\| \leq \epsilon$, cf. Lemma 2.2.7 and Lemma 2.2.8 in \cite{Th5}. Set $u := (2e'-1)(2e-1) + 1$. Note that
\begin{align*}
&\left\|1- \frac{1}{2}u\right\| = \left\|(2e'-1)(e'-e)\right\| \leq \|e'-e\|,  
\end{align*} 
implying that $u$ is invertible if $\epsilon < 1$. Since $ue = 2e'e = e'u$, we see that $ueu^{-1} = e'$. Note that $u \in M_n(\mathcal M_\tau^\dagger)$. It follows therefore from Lemma \ref{04-11-22ax} that $u^{-1} \in M_n(\mathcal M_\tau^\dagger)$. Since $e' \in M_n(\mathcal M_\tau^\dagger)$ by Lemma \ref{18-11-22gx} it follows now from Corollary \ref{20-11-22x} that $\tau^\dagger \otimes \Tr_n(e') = \tau^\dagger \otimes \Tr_n(e)$. We observe that $u\in M_n(D^\dagger)$, which implies that $u \in M_n((\mathcal M_\tau \cap D)^\dagger)$ and hence that $u^{-1} \in M_n((\mathcal M_\tau \cap D)^\dagger)$ by Lemma \ref{04-11-22ax}. Similarly, $\tau^\dagger \otimes \Tr_n(f') = \tau^\dagger \otimes \Tr_n(f)$ and $sfs^{-1} = f'$ for some invertible element $s \in M_n((\mathcal M_\tau \cap D)^\dagger)$ with $s^{-1} \in M_n((\mathcal M_\tau \cap D)^\dagger)$. We aim to show that $\tau^\dagger \otimes \Tr_n(e') = \tau^\dagger \otimes \Tr_n(f')$. Set $x:= f'sv^*u^{-1}e', \ y := e'uvs^{-1}f'$. Then $x,y \in M_n(D^\dagger)$ and
\begin{align*}
&xy = f'sv^*u^{-1}e'uvs^{-1}f' = f'sv^*evs^{-1}f' \\
&=  f'sv^*vv^*vs^{-1}f'=f'sfs^{-1}f' =f'.
\end{align*}
Similarly, $yx =e'$. Since $f' = {f'}^*f' = y^*x^*xy \leq \|x\|^2 y^*y$ it follows that $y^*y$ is positive and invertible in $f'M_n(D^\dagger)f'$. Taking the inverse in that algebra, set 
$$
w:= y(y^*y)^{-1/2} .
$$ 
Then $w^*w = (y^*y)^{-1/2}y^*y(y^*y)^{-1/2} = f'$ and hence, in particular, $ww^*$ is a projection. Note that
\begin{align*}
&e' = yxx^*y^* \leq \|x\|^2 yy^* = \|x\|
^2 y(y^*y)^{-1/2}yy^*(y^*y)^{-1/2}y^* \\
&\leq \|x\|^2\|y\|^2  y(y^*y)^{-1/2}(y^*y)^{-1/2}y^* = \|x\|^2\|y\|^2 ww^* .
\end{align*}
Then 
$$
0 \leq (1-ww^*)e'(1-ww^*) \leq \|x\|^2\|y\|^2(1-ww^*)ww^*(1-ww^*) = 0,
$$ 
implying that $e' = ww^*e'$. On the other hand,
\begin{align*}
& e'y = e'uvs^{-1}f' = ueu^{-1}uvs^{-1}f' = uevs^{-1}f'\\
& = uvv^*vs^{-1}f' = uvs^{-1}f' =   e'uvs^{-1}f' = y,
\end{align*} 
 implying that $e'ww^* = ww^*$, and hence that $e' = ww^*$. Since $w \in M_n(D^\dagger)$ and $D({\bf d})$ is dense in $D$ there is a $k > N$ and an element $z \in M_n(D_{d_k}^\dagger)$ such that $\left\|z{z}^* - e'\right\| < \frac{1}{2}$ and $\left\|{z}^*z - f'\right\| < \frac{1}{2}$. Since $e',f',z \in M_n(D_{d_k}^\dagger)$ it follows then from Lemma 2.2.7 and Lemma 2.2.8 in \cite{Th5} that there is a partial isometry $w' \in M_n(D_{d_k}^\dagger)$ such that $w'{w'}^* = e'$ and ${w'}^*w' = f'$. Since $w' \in M_n(\mathcal M_\tau^\dagger)$ by Lemma \ref{18-11-22gx} it follows from Corollary \ref{20-11-22x} that $ \tau^\dagger \otimes \Tr_n(e') = \tau^\dagger \otimes \Tr_n(f')$. 
\end{proof}

\begin{lemma}\label{20-11-22a} Let $e \in M_n(A^\dagger)$ be a projection. There is a projection $e' \in M_n(\mathcal M_\tau^\dagger)$ and a partial isometry $v \in M_n(A^\dagger)$ such that $vv^*= e$ and $v^*v= e'$.
\end{lemma}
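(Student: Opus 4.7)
The plan is to combine the approximate-unit machinery of Lemma \ref{19-11-22fx}, which gives us unital $C^*$-subalgebras contained in $\mathcal M_\tau$, with the continuous functional calculus to replace $e$ by a nearby projection of the required form, and then to use standard Murray--von Neumann equivalence of close projections to produce $v$.

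First I would reduce to the separable setting: choose a separable $C^*$-subalgebra $D \subseteq A$ with $e \in M_n(D^\dagger)$ and pick a sequence ${\bf d} = \{d_k\}$ in $D$ as in Lemma \ref{19-11-22fx}. Then $D({\bf d}) = \bigcup_k D_{d_k}$ is a dense $*$-subalgebra of $D$, so $\bigcup_k M_n(D_{d_k}^\dagger)$ is norm-dense in $M_n(D^\dagger)$. Each $D_{d_k}$ is a $C^*$-algebra having $d_k$ as a unit, so $D_{d_k}^\dagger$ is a unital $C^*$-subalgebra of $A^\dagger$; and by Lemma \ref{18-11-22gx} we have $D_{d_k}^\dagger \subseteq \mathcal M_\tau^\dagger$. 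This last inclusion is the whole point of the reduction: it lets us apply continuous functional calculus inside $\mathcal M_\tau^\dagger$.

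Next, given a small $\epsilon \in (0,\tfrac14)$, pick $N$ and a self-adjoint $x \in M_n(D_{d_N}^\dagger)$ with $\|x-e\| < \epsilon$. Since $\operatorname{Sp}(e) \subseteq \{0,1\}$, the spectrum of $x$ in the unital $C^*$-algebra $M_n(D_{d_N}^\dagger)$ lies within $\epsilon$ of $\{0,1\}$. Choose a continuous $f : \mathbb R \to \mathbb R$ with $f \equiv 0$ on a neighbourhood of $0$ and $f \equiv 1$ on a neighbourhood of $1$; then $e' := f(x)$ is a projection that lies in $M_n(D_{d_N}^\dagger) \subseteq M_n(\mathcal M_\tau^\dagger)$. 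Because $f(e) = e$ and $f$ is continuous on a fixed compact set, $\|e' - e\| = \|f(x)-f(e)\|$ can be made arbitrarily small by shrinking $\epsilon$; in particular we can arrange $\|e'-e\| < 1$.

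Finally, to manufacture $v$, form the element $z := e'e + (1-e')(1-e) \in M_n(A^\dagger)$, which satisfies $\|z-1\| \leq \|e'-e\| < 1$ and is therefore invertible. From $ze = e'z$ one computes $z^*z \cdot e = e \cdot z^*z$, so $|z|$ commutes with $e$, and writing the polar decomposition $z = u|z|$ of the invertible element $z$ in $M_n(A^\dagger)$ yields a unitary $u \in M_n(A^\dagger)$ with $ueu^{*} = e'$. Setting $v := u^{*} e'$ then gives $vv^{*} = u^{*}e'u = e$ and $v^{*}v = e'uu^{*}e' = e'$, as required.

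The only step that needs real care is the transition from "close self-adjoint $x$" to "projection $e'$ in the right algebra'': we must apply functional calculus \emph{inside} a $C^*$-algebra that is already contained in $\mathcal M_\tau^\dagger$, which is why the reduction through the unital $C^*$-subalgebras $D_{d_N}^\dagger$ (rather than working directly in the ambient dense $*$-algebra $\mathcal M_\tau^\dagger$, which is not closed under continuous functional calculus) is essential. Everything else is routine Murray--von Neumann manipulation.
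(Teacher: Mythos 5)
Your proof is correct and follows essentially the same route as the paper's: reduce to a separable subalgebra $D$, use the sequence from Lemma \ref{19-11-22fx} together with Lemma \ref{18-11-22gx} to locate a unital $C^*$-subalgebra $M_n(D_{d_N}^\dagger)\subseteq M_n(\mathcal M_\tau^\dagger)$ in which a projection $e'$ close to $e$ can be found, and then produce the partial isometry by the standard close-projections argument (the paper simply cites these last two steps as Lemmas 2.2.7 and 2.2.8 of \cite{Th5}, which you have spelled out via functional calculus and the element $z=e'e+(1-e')(1-e)$). One cosmetic slip: $d_k$ need not lie in $D_{d_k}$ (it would have to be a projection), so $D_{d_k}$ need not be unital with unit $d_k$ --- but all you actually use is that $D_{d_k}^\dagger$ is a unital $C^*$-subalgebra of $A^\dagger$ sharing its unit, which holds automatically.
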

\begin{proof}  Let $D \subseteq A$ be a separable $C^*$-subalgebra such that $e \in M_n(D^\dagger)$ and let $\{d_n\}_{n=1}^\infty$ be a sequence in $D$ with the properties specified in Lemma \ref{19-11-22fx}. Since $D({\bf d})$ is a $*$-algebra which is dense in $D$ and since $M_n(D_{d_k}^\dagger)$ is a $C^*$-algebra it follows from Lemma 2.2.7 and Lemma 2.2.8 in \cite{Th5} that there is a $k$, a projection $e' \in M_n(D_{d_k}^\dagger)$ and a partial isometry $v \in M_n(A^\dagger)$ such that $vv^*= e$ and $v^*v= e'$. This completes the proof because $M_n(D_{d_k}^\dagger) \subseteq M_n(\mathcal M_\tau^\dagger)$ by Lemma \ref{18-11-22g}.

\end{proof}

\emph{Proof of Theorem \ref{19-11-22d}:} The identity 
$$
K_0(A^\dagger) = \left\{ [e] - [f] : \ e,f \in P_\infty(\mathcal M_\tau^\dagger) \right\}
$$ follows from the definition of $K_0(A^\dagger)$ and Lemma \ref{20-11-22a}. To prove that $\tau^\dagger_*$ is well-defined assume that $e,f,e',f'$ are projections in $M_n(\mathcal M_\tau^\dagger)$ such that $[e] - [f] = [e']-[f']$ in $K_0(A^\dagger)$. There is then a projection $r \in P_\infty(A^\dagger)$ such that $e\oplus f' \oplus r$ and $e'\oplus f \oplus r$ are Murray-von Neumann equivalent in $M_N(A^\dagger)$ for some $N \in \mathbb N$. By Lemma \ref{20-11-22a} we may assume that $r \in P_\infty(\mathcal M_\tau^\dagger)$. It follows then from Lemma \ref{19-11-22c} that 
$$
\tau^\dagger \otimes \Tr_N(e\oplus f' \oplus r) = \tau^\dagger \otimes \Tr_N(e'\oplus f \oplus r)
$$ 
and hence by linearity that
$$
\tau^\dagger\otimes \Tr_n(e) -\tau^\dagger\otimes \Tr_n(f) = \tau^\dagger\otimes \Tr_n(e') -\tau^\dagger\otimes \Tr_n(f').
$$

\qed

By definition $K_0(A)$ is a subgroup of $K_0(A^\dagger)$ so the map $\tau^\dagger_*$ of Theorem \ref{19-11-22d} gives rise, by restriction, to a homomorphism 
$$
\tau^\dagger_* : K_0(A) \to \mathbb R
$$
such that
$$
\tau^\dagger_*([e]-[f]) = \tau^\dagger \otimes \Tr_n(e) - \tau^\dagger \otimes \Tr_n(f)
$$
when $e,f \in M_n(\mathcal M_\tau^\dagger)$ and $[e]-[f] \in K_0(A)$. Let $\iota : K_{00}(A) \to K_0(A)$ be the canonical homomorphism. Set
$$
\tau_* := \tau^\dagger_* \circ \iota : \ K_{00}(A) \to \mathbb R .
$$
To obtain more information about $\tau_*$ we need the following lemmas.

For each $k \in \mathbb N$ define $\tau_k : M_k(A)^+ \to [0,\infty]$ such that
$$
\tau_k(a) = \sum_{i=1}^k \tau(a_{kk}) 
$$
when $a = (a_{ij})_{i,j=1}^k \in M_k(A)^+$.

\begin{lemma}\label{20-11-22g} $\tau_k$ is a trace on $M_k(A)$.
\end{lemma}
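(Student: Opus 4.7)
The plan is to verify the four axioms in Definition \ref{03-02-22f} directly for the map $\tau_k$, using the corresponding properties of $\tau$ together with elementary manipulations of matrix entries.

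First I would handle additivity and homogeneity. For $a,b\in M_k(A)^+$ the diagonal entries $(a+b)_{ii}=a_{ii}+b_{ii}$ are positive, so additivity and positive homogeneity of $\tau_k$ reduce at once to the corresponding properties of $\tau$ applied entrywise on the diagonal. Non-vanishing is equally easy: if $\tau(a)>0$ for some $a\in A^+$, then the matrix in $M_k(A)^+$ having $a$ in the $(1,1)$-slot and $0$ elsewhere has $\tau_k$-value $\tau(a)>0$.

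The main step is the trace identity $\tau_k(x^*x)=\tau_k(xx^*)$ for $x\in M_k(A)$. I would compute both sides entrywise:
\[
(x^*x)_{ii}=\sum_{j}x_{ji}^*x_{ji},\qquad (xx^*)_{ii}=\sum_{j}x_{ij}x_{ij}^*,
\]
and use the finite additivity of $\tau$ (extended to finite sums in $A^+$) together with $\tau(y^*y)=\tau(yy^*)$ applied to each $y=x_{ji}$, to obtain
\[
\tau_k(x^*x)=\sum_{i,j}\tau(x_{ji}^*x_{ji})=\sum_{i,j}\tau(x_{ji}x_{ji}^*)=\sum_{i,j}\tau(x_{ij}x_{ij}^*)=\tau_k(xx^*),
\]
where the third equality is just a relabelling of summation indices. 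This is essentially routine; the only thing to keep in mind is that the sums are genuinely finite so there is no issue with the $0\cdot\infty$ convention.

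The one point that needs a short independent argument is that $\mathcal M_{\tau_k}^+$ is dense in $M_k(A)^+$. The natural route is to observe that $M_k(\mathcal M_\tau)$ is a two-sided ideal in $M_k(A)$ which is norm-dense (since $\mathcal M_\tau$ is dense in $A$ by Lemma \ref{16-11-22}). For any dense two-sided ideal $I$ in a $C^*$-algebra $B$, the set $I\cap B^+$ is dense in $B^+$: given $b\in B^+$, approximate $\sqrt{b}\in B$ by some $c\in I$ and note that $c^*c\in I\cap B^+$ approximates $b$. Applied to $I=M_k(\mathcal M_\tau)\subseteq M_k(A)$, this yields a dense supply of positive matrices whose entries lie in $\mathcal M_\tau$; since $\mathcal M_\tau\cap A^+=\mathcal M_\tau^+$ (Lemma \ref{16-11-22}), every diagonal entry of such a matrix lies in $\mathcal M_\tau^+$, so the matrix itself lies in $\mathcal M_{\tau_k}^+$. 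Thus $\mathcal M_{\tau_k}^+$ is dense in $M_k(A)^+$, completing the verification of all four trace axioms.
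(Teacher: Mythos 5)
Your proposal is correct and follows essentially the same route as the paper: additivity and homogeneity are immediate entrywise, the trace identity is the same double-sum computation with an index relabelling, and the density of $\mathcal M_{\tau_k}^+$ is obtained exactly as in the paper by approximating within the dense ideal $M_k(\mathcal M_\tau)$ and squaring. The only (harmless) addition is that you explicitly check non-vanishing, which the paper leaves implicit.
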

\begin{proof}  The two first items of Definition \ref{03-02-22f} clearly hold. To check the third note that
\begin{align*}
&\tau_k(a^*a) = \sum_{i=1}^k \tau(\sum_{j=1}^k a_{ji}^*a_{ji}) =  \sum_{i=1}^k \sum_{j=1}^k\tau( a_{ji}^*a_{ji})  \\
&=  \sum_{i=1}^k \sum_{j=1}^k\tau( a_{ji}a_{ji}^*) = \sum_{j=1}^k \tau(\sum_{i=1}^k a_{ji}a_{ji}^*) = \tau_k(aa^*).
\end{align*}
To check the last item of Definition \ref{03-02-22f} let $a \in M_k(A)$. Since $\mathcal M_\tau$ is dense in $A$ we can approximate $a$ with $b \in M_k(\mathcal M_\tau)$. Then $b^*b$ approximates $a^*a$ and
$$ 
\tau_k(b^*b) = \sum_{i=1}^k \tau(\sum_{j=1}^k b_{ji}^*b_{ji}) < \infty .
$$
\end{proof}

\begin{lemma}\label{20-11-22hx} Let $e$ be a projection in $M_k(A)$. Then $e \in M_k(\mathcal M_\tau)$.
\end{lemma}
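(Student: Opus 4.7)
The plan is to regard the trace $\tau_k$ from Lemma \ref{20-11-22g} as a $0$-KMS weight for the trivial flow on $M_k(A)$ and then apply Proposition \ref{02-12-21x} to the projection $e$. For this I first verify that $\tau_k$ is lower semi-continuous: if a net $\{a_\alpha\}$ in $M_k(A)^+$ converges in norm to $a$, then each diagonal entry $a_\alpha(i,i)$ converges to $a(i,i)$ in $A^+$, and lower semi-continuity of $\tau$ gives $\tau(a(i,i)) \leq \liminf_\alpha \tau(a_\alpha(i,i))$ for each $i$; summing the finite collection $i = 1,\dots,k$ yields $\tau_k(a) \leq \liminf_\alpha \tau_k(a_\alpha)$. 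Together with Lemma \ref{20-11-22g} this makes $\tau_k$ a non-zero, densely defined, lower semi-continuous $\sigma$-invariant trace for the trivial flow, i.e., a $0$-KMS weight.

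Next I apply Proposition \ref{02-12-21x} to $\tau_k$ with $\beta = 0$ and $a = b = e$. Since $\sigma_{-i\beta/2}$ is the identity when $\beta = 0$, the hypotheses reduce to $e \in M_k(A)^+$ and $e\cdot e = e$, both of which hold. The conclusion is $e \in \mathcal N_{\tau_k}$, so $\tau_k(e) = \sum_{i=1}^k \tau(e_{ii}) < \infty$, and hence $e_{ii} \in \mathcal M_\tau^+$ for every $i$.

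From $e^*e = e$ I read off $\sum_{l} e_{li}^* e_{li} = e_{ii}$, whence $e_{li}^*e_{li} \leq e_{ii} \in \mathcal M_\tau^+$ for each pair $l,i$. The set $\mathcal M_\tau^+$ is hereditary (it is an order-interval of the monotone map $\tau$), so $e_{li}^* e_{li} \in \mathcal M_\tau^+$ and therefore $e_{li} \in \mathcal N_\tau$ for all $l,i$. Using $e^*e = e$ once more gives $e_{ij} = \sum_l e_{li}^* e_{lj}$, which displays $e_{ij}$ as a finite sum of elements of $\mathcal N_\tau^* \mathcal N_\tau$. By part (b) of Lemma \ref{04-11-21n}, $\Span \mathcal N_\tau^* \mathcal N_\tau = \mathcal M_\tau$, so $e_{ij} \in \mathcal M_\tau$ for all $i,j$ and $e \in M_k(\mathcal M_\tau)$.

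The main obstacle is not conceptual but a matter of bookkeeping: one has to check that $\tau_k$ truly satisfies all the hypotheses needed to invoke Proposition \ref{02-12-21x}. Density and the trace property are supplied by Lemma \ref{20-11-22g}; the only item that requires a short argument is the lower semi-continuity of $\tau_k$, which reduces to the finite-sum Fatou-type inequality handled in the first paragraph.
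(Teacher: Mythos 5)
There is a genuine gap: your argument hinges on the lower semi-continuity of $\tau$, which is not among the hypotheses. In Appendix \ref{AppD} the fixed $\tau$ is only assumed to be a trace in the sense of Definition \ref{03-02-22f}, i.e.\ additive, homogeneous, tracial and densely defined; lower semi-continuity is an extra condition that a trace need not satisfy (see Example \ref{29-11-22} for explicit traces that are not lower semi-continuous). Consequently your Fatou-type verification that $\tau_k$ is lower semi-continuous has nothing to start from, and without it $\tau_k$ is not a $0$-KMS weight for the trivial flow, so Proposition \ref{02-12-21x} — whose proof runs through Combes' theorem and the GNS machinery, all of which presuppose lower semi-continuity — cannot be invoked. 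The lemma as stated must hold for an arbitrary trace, so this is not a removable technicality.

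The finiteness of $\tau_k(e)$ can be obtained without any continuity assumption, and this is what the paper does: since $\tau_k$ is densely defined one can pick $x \in \mathcal M_{\tau_k}^+$ with $\|e-x\| < \frac{1}{2}$, and then $\frac{1}{2}e \leq e(1+(x-e))e = exe$, where $exe \in \mathcal M_{\tau_k}$ because $\mathcal M_{\tau_k}$ is a two-sided ideal (Lemma \ref{16-11-22}); monotonicity of $\tau_k$, which follows from additivity alone, gives $\tau_k(e) \leq 2\tau_k(exe) < \infty$. From that point on your argument is essentially the paper's: $\tau_k(e^*e) = \tau_k(e) < \infty$ forces every entry $e_{ji}$ into $\mathcal N_\tau$ (your detour through heredity of $\mathcal M_\tau^+$ is a harmless variant), and $e_{ij} = \sum_l e_{il}e_{lj} \in \Span \mathcal N_\tau^*\mathcal N_\tau = \mathcal M_\tau$. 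If you replace your first two paragraphs by the approximation argument above, the proof goes through, and as a bonus it no longer relies on the heavy Proposition \ref{02-12-21x}.
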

\begin{proof} We use the trace $\tau_k$ from Lemma \ref{20-11-22g}. Since $\tau_k$ is densely defined there is an $x \in \mathcal M_{\tau_k}$ such that $\left\|e -x \right\| < \frac{1}{2}$. Then
$$
\frac{1}{2} e \leq e\left(1 + (x-e)\right)e = exe \in \mathcal M_{\tau_k} .
$$
Hence $\tau_k(e) \leq 2 \tau_k(exe) < \infty$. Let $e = (e_{ij})_{i,j=1}^k$. Since $\tau_k(e^*e) = \tau_k(e) < \infty$ we find that 
$$
\sum_{i=1}^k \sum_{j=1}^k\tau(e_{ji}^*e_{ji}) = \tau(e^*e) < \infty,
$$
showing that $e_{ji} \in \mathcal N_\tau$ for all $i,j$, in the notation from the proof of Lemma \ref{16-11-22}. Then $e_{ij} = \sum_{l=1}^ke_{il}e_{lj} \in \Span\mathcal N_\tau\mathcal N_\tau = \Span \mathcal N_\tau^*\mathcal N_\tau =\mathcal M_\tau$ for all $i,j$. 

\end{proof}

It follows then from Lemma \ref{20-11-22hx} and Theorem \ref{19-11-22d} that
\begin{equation}\label{20-11-22e}
\tau_* ([e]-[f]) = \tau|_{\mathcal M_\tau} \otimes \Tr_n(e) - \tau|_{\mathcal M_\tau} \otimes \Tr_n(f)
\end{equation}
when $e$ and $f$ are projections in $M_n(A)$. In particular, it follows that $\tau_*$ is a positive homomorphism in the sense that
$$
\tau_*([e]) \geq 0
$$
when $e \in P_\infty(A)$.

\begin{notes} This section is based on ideas of Connes and Elliott in the guise given to them in \cite{Th6}.
\end{notes}

\subsection{Pairing traces with $K_0$ for AF-algebras}

\begin{lemma}\label{18-11-22} Let $A$ be an AF-algebra and $\varphi : K_0(A) \to \mathbb R$ a positive homomorphism. There is a densely defined lower semi-continuous trace $\tau$ on $A$ such that $\varphi = \tau_*$.
\end{lemma}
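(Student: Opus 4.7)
My plan is to build $\tau$ layer by layer using the AF filtration and then extend by lower semi-continuity. Write $A = \overline{\bigcup_{n} A_n}$ as an inductive limit of finite-dimensional $C^*$-algebras and decompose $A_n = \bigoplus_{i=1}^{r_n} M_{k_{n,i}}(\mathbb C)$. For minimal projections $e_{n,i}$ in the $i$-th summand of $A_n$ set $\lambda_{n,i} := \varphi([e_{n,i}])$, which is non-negative because $\varphi$ is positive and $[e_{n,i}] \in K_0(A)^+$. This yields a continuous positive trace
\[
\tau_n(a_1 \oplus \cdots \oplus a_{r_n}) := \sum_{i} \lambda_{n,i}\operatorname{Tr}_{k_{n,i}}(a_i)
\]
on $A_n$, with $\tau_n(e) = \varphi([e])$ for every projection $e \in A_n$. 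Since $\varphi$ is already defined on $K_0(A)$, the restrictions $\tau_{n+1}|_{A_n}$ and $\tau_n$ agree on all projections of $A_n$, and as traces on a finite-dimensional $C^*$-algebra are determined by their values on minimal projections, they coincide. The $\tau_n$'s therefore assemble into a single positive trace $\tau_0$ on the dense $*$-subalgebra $A_0 := \bigcup_n A_n$ which still satisfies $\tau_0(e) = \varphi([e])$ on every projection of $A_0$.

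Next I would extend $\tau_0$ to $A^+$ by the supremum formula
\[
\tau(a) := \sup\{\tau_0(b) : b \in A_0^+,\ b \leq a\},\qquad a \in A^+,
\]
which is visibly monotone, positively homogeneous, and super-additive, and which agrees with $\tau_0$ on $A_0^+$ by monotonicity of $\tau_0$. Lower semi-continuity is built into the definition as a supremum of maps of the form $a \mapsto \tau_0(b)$ taken over the order-ideal $\{b \leq a\}$, and $\tau$ is densely defined because $A_0^+ \subseteq \mathcal M_\tau^+$ and $A_0^+$ is norm-dense in $A^+$. It is non-zero because $\varphi$ is non-zero.

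The main obstacle is to verify the two remaining axioms: sub-additivity $\tau(a_1+a_2)\leq\tau(a_1)+\tau(a_2)$ and the trace identity $\tau(x^*x)=\tau(xx^*)$. For sub-additivity, given $c \in A_0^+$ with $c \leq a_1+a_2$, the idea is to exploit the Riesz interpolation in the AF structure: choose $n$ large enough that $c \in A_n$ and approximate each $a_i$ from below by $a'_i \in A_n^+$ with $a'_i \leq a_i$ and $c \leq a'_1+a'_2+\varepsilon$; the Riesz decomposition for the finite-dimensional $A_n$ then produces $c_1,c_2 \in A_n^+$ with $c = c_1+c_2$ and $c_i \leq a'_i + \varepsilon'$, giving $\tau_0(c) \leq \tau(a_1)+\tau(a_2) + O(\varepsilon)$ after estimating the error terms via the norm bound on $\tau_n|_{A_n}$. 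The trace identity follows by polarisation and the observation that $\tau_0$ is already a trace on each $A_n$: any $b \in A_0^+$ with $b \leq x^*x$ can be approximated by compressions of the form $y^*y$ with $y$ close to $x$ in some $A_n$, so the equality $\tau_n(y^*y)=\tau_n(yy^*)$ propagates to the supremum. Finally, with $\tau$ in hand, the identity $\tau_* = \varphi$ follows from \eqref{20-11-22e}: every projection $e \in M_k(A)$ is Murray--von Neumann equivalent in $M_k(A)$ to a projection $e' \in M_k(A_n) \subseteq M_k(\mathcal M_\tau)$ for some $n$, and then $\tau_*([e]) = (\tau \otimes \operatorname{Tr}_k)(e') = (\tau_n \otimes \operatorname{Tr}_k)(e') = \varphi([e']) = \varphi([e])$.
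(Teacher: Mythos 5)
Your first step (consistent positive traces $\tau_n$ on the finite-dimensional layers, determined by $\varphi$ on minimal projections) is fine and matches the spirit of the paper. The gap is in the extension step. The formula $\tau(a)=\sup\{\tau_0(b): b\in A_0^+,\ b\le a\}$ does \emph{not} have lower semi-continuity ``built in'': for fixed $b$ the set $\{a\in A^+: b\le a\}$ is \emph{closed}, so the partial functions you are taking a supremum of are upper, not lower, semi-continuous, and a supremum of such functions need not be lower semi-continuous. To rescue this you would need to show that whenever $b\in A_0^+$ with $b\le a$ and $a'$ is close to $a$, there is $b'\in A_0^+$ with $b'\le a'$ and $\tau_0(b')$ close to $\tau_0(b)$ --- and this ``approximation from below inside $A_0$'' is exactly the unproved crux. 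The same unproved step underlies your sub-additivity argument (you need $a_i'\in A_n^+$ with $a_i'\le a_i$ and $a_1'+a_2'$ dominating $c-\varepsilon$; note that $\|a-b\|\le\varepsilon$ with $b\in A_0^+$ only gives $b\le a+\varepsilon 1$, and $(b-\varepsilon)_+$ is Cuntz-dominated by $a$ but in general \emph{not} $\le a$) and your trace identity (approximating $b\le x^*x$ by elements $y^*y$ with $y\in A_n$ close to $x$ does not produce elements \emph{below} $x^*x$). So as written the three key axioms are all resting on the same unestablished approximation claim.

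The paper's proof sidesteps all of this. It takes an approximate unit of projections $p_1\le p_2\le\cdots$, uses the unital case to get bounded positive traces $\tau_n$ on the corners $p_nAp_n$ with ${\tau_n}_*=\varphi\circ{j_n}_*$, checks $\tau_{n+1}|_{p_nAp_n}=\tau_n$, and then shows the sequence $n\mapsto\tau_n(p_nap_n)$ is non-decreasing via
$$\tau_n(p_nap_n)=\tau_{n+1}\bigl(\sqrt{p_{n+1}ap_{n+1}}\,p_n\sqrt{p_{n+1}ap_{n+1}}\bigr)\le\tau_{n+1}(p_{n+1}ap_{n+1}).$$
Setting $\tau(a):=\sup_n\tau_n(p_nap_n)$ exhibits $\tau$ as a supremum of \emph{continuous positive linear} functionals, so lower semi-continuity, additivity and homogeneity are genuinely automatic, density is clear on $\bigcup_n p_nAp_n$, and the trace identity follows from a short double-limit computation with $\tau_l(p_lp_na^*p_lap_np_l)=\tau_l(p_lap_na^*p_l)$. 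If you want to keep your layer-by-layer picture, replace your supremum over dominated elements by a supremum over compressions along an increasing sequence of projections; that is the device that makes the verification go through.
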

\begin{proof} Let $p_1 \leq p_2 \leq p_3 \leq \cdots$ be a sequence of projections in $A$ such that $\lim_{n \to \infty} p_n a = a$ for all $a \in A$. Then $p_nAp_n$ is a unital AF-algebra and by Corollary 3.3.24 of \cite{Th5} there is a unique bounded positive trace $\tau_n$ on $p_n Ap_n$ such that ${\tau_n}_* = \varphi \circ {j_n}_*$ on $K_0(p_nAp_n)$, where $j_n : p_nAp_n \to A$ is the inclusion. Let $e = e^* = e^2\in p_nAp_n$. Then
$$
\tau_{n+1}(e) = {\tau_{n+1}}_*([e]) = \varphi\circ {j_{n+1}}_*([e]) = \varphi\circ {j_n}_*([e]) = {\tau_n}_*([e]) = \tau_n(e),
$$
implying that $\tau_{n+1}|_{p_nAp_n} = \tau_n$.  Let $a \in A^+$. Then
\begin{align*}
&\tau_n(p_nap_n) =  \tau_{n+1}(p_nap_n)=  \tau_{n+1}(p_np_{n+1}ap_{n+1}p_n) =  \tau_{n+1}(p_{n+1}ap_{n+1}p_n) \\
&=  \tau_{n+1}(\sqrt{p_{n+1}ap_{n+1}}p_n\sqrt{p_{n+1}ap_{n+1}}) \leq  \tau_{n+1}(p_{n+1}ap_{n+1}).
\end{align*}
Define $\tau : A^+ \to [0,\infty]$ such that 
$$
\tau(a) = \lim_{n\to \infty} \tau_n(p_nap_n) = \sup_n\tau_n(p_nap_n) .
$$ 
Then $\tau$ is lower semi-continuous and 
\begin{align*}
&\tau(a^*a) = \lim_{n \to \infty} \tau_n(p_na^*ap_n) = \lim_{n \to \infty} \lim_{l \to \infty}  \tau_n(p_na^*p_lap_n) \\
& = \lim_{n \to \infty} \lim_{l \to \infty}  \tau_l(p_lp_na^*p_lap_np_l) =   \lim_{n \to \infty} \lim_{l \to \infty}  \tau_l(p_lap_na^*p_l)\\
& = \lim_{n \to \infty}  \tau(ap_na^*) = \tau(aa^*),
\end{align*}
showing that $\tau$ is a trace.
When $e =e^*=e^2 \in p_nAp_n$, $\tau(e) = \tau_n(e) = \varphi([e])$. It follows that $\tau_* = \varphi$.

\end{proof}

\begin{lemma}\label{20-11-22h} Let $\tau_i, \ i =1,2$, be lower semi-continuous traces on the AF-algebra $A$. Assume that ${\tau_1}_* = {\tau_2}_*$ on $K_0(A)$. It follows that $\tau_1 = \tau_2$.
\end{lemma}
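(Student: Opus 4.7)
}

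The plan is to reduce the equality $\tau_1 = \tau_2$ to the already-established uniqueness of bounded traces on unital AF-algebras (Corollary 3.3.24 of \cite{Th5}), which was the key input to Lemma \ref{18-11-22}. Write $A = \overline{\bigcup_n A_n}$ with $A_n$ an increasing sequence of finite-dimensional $C^*$-subalgebras, and set $p_n := 1_{A_n}$. Then $\{p_n\}$ is an increasing sequence of projections in $A$ satisfying $p_n p_{n+1} = p_n$, and $\{p_n\}$ is an approximate unit for $A$ because $p_n a = a$ for $a \in A_n$ and $\bigcup_n A_n$ is dense in $A$. Since $p_n p_{n+1} = p_n$, Proposition \ref{02-12-21x} (applied with the trivial flow, treating $\tau_i$ as a $0$-KMS weight) gives $p_n \in \mathcal N_{\tau_i}$, so $\tau_i(p_n) < \infty$ for $i=1,2$.

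Next I would argue that $\tau_1|_{p_n A p_n} = \tau_2|_{p_n A p_n}$ for every $n$. The hereditary subalgebra $p_n A p_n$ is a unital AF-algebra, and the restriction $\tau_i|_{p_n A p_n}$ is a bounded positive trace on it because $\tau_i(p_n) < \infty$. For a projection $e \in M_k(p_n A p_n) \subseteq M_k(A)$ one has $(\tau_i|_{p_n A p_n})_*([e]) = \tau_i \otimes \mathrm{Tr}_k(e) = (\tau_i)_*(\iota_*[e])$, where $\iota : p_n A p_n \to A$ is the inclusion, using \eqref{20-11-22e} and Lemma \ref{20-11-22hx}. By hypothesis $(\tau_1)_* = (\tau_2)_*$ on $K_0(A)$, so $(\tau_1|_{p_n A p_n})_* = (\tau_2|_{p_n A p_n})_*$ on $K_0(p_n A p_n)$. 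Corollary 3.3.24 of \cite{Th5} then gives $\tau_1|_{p_n A p_n} = \tau_2|_{p_n A p_n}$.

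Finally, I would show that for every $a \in A^+$,
\[
\tau_i(a) \ = \ \lim_{n\to\infty} \tau_i(p_n a p_n) \ = \ \sup_n \tau_i(p_n a p_n).
\]
The sequence $\tau_i(p_n a p_n)$ is non-decreasing by exactly the computation used in the proof of Lemma \ref{18-11-22} (namely, $\tau_i(p_n a p_n) = \tau_i(\sqrt{p_{n+1}ap_{n+1}}\,p_n\,\sqrt{p_{n+1}ap_{n+1}}) \leq \tau_i(p_{n+1}ap_{n+1})$), and is bounded above by $\tau_i(a)$ because $\sqrt{a}\,p_n\,\sqrt{a} \leq a$ combined with the trace identity $\tau_i(p_n a p_n) = \tau_i(\sqrt{a}\,p_n\,\sqrt{a})$. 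Since $p_n a p_n \to a$ in norm, lower semi-continuity of $\tau_i$ gives the reverse inequality $\tau_i(a) \leq \liminf_n \tau_i(p_n a p_n)$, proving the displayed equality. Combining with the previous paragraph, $\tau_1(a) = \lim_n \tau_1(p_n a p_n) = \lim_n \tau_2(p_n a p_n) = \tau_2(a)$, and the lemma follows.

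The main obstacle is essentially notational rather than conceptual: one must make sure that the invocation of \eqref{20-11-22e} is valid for projections sitting in the corner $p_n A p_n$ rather than just in $A$ itself, and that the pushforward of $[e] \in K_0(p_n A p_n)$ to $K_0(A)$ really does record the number $\tau_i(e)$ (and its matricial analogues). Once that bookkeeping is done, the argument is just a monotone-limit reduction to the classical fact about uniqueness of traces on unital AF-algebras.
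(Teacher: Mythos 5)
Your proposal is correct and follows essentially the same route as the paper: restrict both traces to the unital corners $p_nAp_n$, use the hypothesis ${\tau_1}_*={\tau_2}_*$ together with Corollary 3.3.24 of \cite{Th5} to get $\tau_1|_{p_nAp_n}=\tau_2|_{p_nAp_n}$, and then recover $\tau_i(a)$ as a limit over $n$ using lower semi-continuity. The only difference is cosmetic: the paper passes to the limit via $\tau_i(a^*a)=\lim_n\tau_i(a^*p_na)=\lim_n\tau_i(p_naa^*p_n)$ using the trace identity, whereas you establish monotone convergence of $\tau_i(p_nap_n)$ directly; both computations are equivalent.
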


\begin{proof} Let $\{p_n\}$ be an approximate unit in $A$ consisting of projections. When $e$ is a projection in $p_nAp_n$, $\tau_1(e) = {\tau_1}_*([e]) = {\tau_2}_*([e]) = \tau_2(e)$ and it follows therefore from Corollary 3.3.24 in \cite{Th5} that $\tau_1|_{p_nAp_n} = \tau_2|_{p_nAp_n}$. Let $a \in A$. Since $\tau_1$ and $\tau_2$ are both lower semi-continuous, $\lim_{n \to \infty} \tau_i(a^*p_na) = \tau_i(a^*a), \ i =1,2$. Hence 
\begin{align*}
&\tau_1(a^*a) = \lim_{n \to \infty} \tau_1(a^*p_na) = \lim_{n \to \infty} \tau_1(p_naa^*p_n) \\
&=  \lim_{n \to \infty} \tau_2(p_naa^*p_n)  = \lim_{n \to \infty} \tau_2(a^*p_na) = \tau_2(a^*a).
\end{align*}
\end{proof}

The two last lemmas combine to give the following
\begin{thm}\label{22-11-22} Let $A$ be an AF-algebra. The map $\tau \mapsto \tau_*$ is a bijection from the set of lower semi-continuous traces $\tau$ on $A$ onto the set $\Hom^+(K_0(A),\mathbb R)$ of positive homomorphisms $K_0(A) \to \mathbb R$.
\end{thm}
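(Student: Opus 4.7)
The plan is to assemble the theorem directly from the preceding lemmas and from the construction of $\tau_*$ given after Theorem \ref{19-11-22d}. There are three things to check: that the map $\tau \mapsto \tau_*$ is well-defined into $\Hom^+(K_0(A),\mathbb R)$, that it is injective, and that it is surjective.

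First I would note that well-definedness requires no extra work at this point. For any lower semi-continuous trace $\tau$ on $A$, Theorem \ref{19-11-22d} produces a group homomorphism $\tau^\dagger_* : K_0(A^\dagger)\to\mathbb R$, whose restriction $\tau_* = \tau^\dagger_*\circ\iota : K_{00}(A)\to\mathbb R$ descends to $K_0(A)$ (here this identification is implicit since $K_{00}(A)=K_0(A)$ for AF-algebras; alternatively, use formula \eqref{20-11-22e} together with Lemma \ref{20-11-22hx} which shows every projection in $M_n(A)$ already lies in $M_n(\mathcal M_\tau)$). Positivity of $\tau_*$ on classes $[e]$ of projections is the observation recorded just after \eqref{20-11-22e}. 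Hence the assignment $\tau\mapsto\tau_*$ indeed lands in $\Hom^+(K_0(A),\mathbb R)$.

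Next, injectivity is precisely the content of Lemma \ref{20-11-22h}: if two lower semi-continuous traces $\tau_1,\tau_2$ on $A$ satisfy ${\tau_1}_* = {\tau_2}_*$ on $K_0(A)$, then they agree on projections in each corner $p_nAp_n$ (by the uniqueness statement for bounded traces on unital AF-algebras), and then lower semi-continuity together with the approximate unit of projections yields $\tau_1=\tau_2$. Surjectivity is the content of Lemma \ref{18-11-22}: given $\varphi\in\Hom^+(K_0(A),\mathbb R)$, one builds compatible bounded traces $\tau_n$ on the corners $p_nAp_n$ from the unital AF case, verifies $\tau_{n+1}|_{p_nAp_n}=\tau_n$, and defines $\tau(a):=\sup_n\tau_n(p_nap_n)$ on $A^+$; this is lower semi-continuous, is a trace by a direct computation, and satisfies $\tau_*=\varphi$ by construction.

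The proof therefore reduces to a one-line synthesis: combine Lemma \ref{18-11-22} (surjectivity) and Lemma \ref{20-11-22h} (injectivity), using \eqref{20-11-22e} and the remark following it for well-definedness and positivity. There is no real obstacle here since all the substantive work has already been carried out in establishing those lemmas; the only thing worth emphasizing is that \eqref{20-11-22e} legitimately evaluates $\tau_*$ on classes of honest projections in $M_n(A)$ (not merely projections in $M_n(\mathcal M_\tau^\dagger)$), which is exactly why Lemma \ref{20-11-22hx} was needed.
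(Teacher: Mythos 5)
Your proposal is correct and follows exactly the paper's route: the paper states Theorem \ref{22-11-22} as an immediate combination of Lemma \ref{18-11-22} (surjectivity) and Lemma \ref{20-11-22h} (injectivity), with well-definedness and positivity already supplied by the construction of $\tau_*$ and the remark after \eqref{20-11-22e}. Nothing further is needed.
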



\chapter{Simple crossed products of AF-algebras}\label{elliott}

The following is a result of Elliott, \cite{E2}.

\begin{thm}\label{02-12-22} Let $G$ be a discrete group and $\alpha: G \to \Aut A$ an action of $G$ by automorphisms of the AF-algebra $A$. Assume that for all $g \in G \backslash \{e\}$ there are no non-zero order ideals $I$ of $K_0(A)$ such that ${\alpha_g}_*(x) =x$ for all $x \in I$, and that there are no non-trivial order ideals of $K_0(A)$ that are invariant under all ${\alpha_g}_* , g \in G$. Then $A \rtimes_{\alpha,r} G$ is simple.
\end{thm}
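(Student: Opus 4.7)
The plan is to let $J$ be a non-zero closed two-sided ideal of $B := A \rtimes_{r,\alpha} G$ and to show $J = B$. The whole proof reduces to producing a non-zero element in $J \cap A$. Indeed, $J \cap A$ is a closed two-sided ideal of $A$, and since $u_g A u_g^* = \alpha_g(A)$ and $u_g J u_g^* = J$, it is $\alpha$-invariant. For an AF-algebra $A$ the correspondence $I \mapsto K_0(I) \subseteq K_0(A)$ is a bijection between closed two-sided ideals of $A$ and order ideals of $K_0(A)$, and this bijection is equivariant for any automorphism of $A$. The second hypothesis therefore forces $J \cap A \in \{0, A\}$; if $J \cap A = A$, then since $A$ contains an approximate unit for $B$ (Lemma \ref{08-09-23}), $J = B$.

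So the whole content lies in showing $J \cap A \neq \{0\}$. Start with a positive element $x \in J$ with $\|x\| = 1$, and pick a finite set $e \in F \subseteq G$ and elements $a_g \in A$ ($g \in F$) with $y := \sum_{g \in F} a_g u_g$ satisfying $\|x - y\| < \epsilon$. Applying the canonical conditional expectation $P: B \to A$ of Lemma \ref{31-08-23} gives $\|P(x) - a_e\| < \epsilon$, so $\|a_e\| \geq 1 - \epsilon$. The strategy, in the spirit of the Rohlin-type arguments of Kishimoto, is to find a projection $p \in A$ that (i) almost absorbs $a_e$ in the sense that $pa_ep$ is bounded below by a positive scalar multiple of $p$, and (ii) approximately annihilates the off-diagonal terms in the sense that $\|p \alpha_g(p)\|$ is very small for every $g \in F \setminus \{e\}$. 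Since $p a_g u_g p = p a_g \alpha_g(p) u_g$, condition (ii) forces $\|pyp - p a_e p\|$ to be small; as $pyp \in J$ and $p a_e p \in A$, one obtains an element of $A$ within $2\epsilon + \delta$ of $J$. Choosing $\epsilon, \delta$ small enough and applying a functional calculus cut-off ($f_\eta(p a_e p)$ for a suitable $f_\eta$) then produces a non-zero element of $J \cap A$.

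The main obstacle, and the heart of the argument, is the construction of the projection $p$. This is where both hypotheses are combined. Since $A = \overline{\bigcup_n A_n}$ for an increasing sequence of finite-dimensional subalgebras, $a_e$ can be approximated by an element of some $A_n$, and a spectral projection $q \in A_n$ gives $q a_e q \geq \lambda q$ for some $\lambda > 0$ (picking $q$ to be the spectral projection of a self-adjoint approximant to $a_e$ corresponding to an interval bounded away from $0$). The task is then to refine $q$ to a subprojection $p$, lying in some $A_m$ with $m \geq n$, such that $\|p \alpha_g(p)\|$ is small for $g \in F \setminus \{e\}$. The first hypothesis is used precisely to rule out the obstruction to such refinement: if it were impossible to shrink $\|q' \alpha_g(q')\|$ for subprojections $q' \leq q$ in the AF-tower, one could extract a non-zero order ideal $I$ of $K_0(A)$ on which ${\alpha_g}_*$ acts as the identity, contradicting the hypothesis. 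The quantitative translation uses the dimension-group structure of $K_0(A)$: one tracks the $K_0$-classes $[q]$ and $[q \wedge \alpha_g(q)]$, or equivalently the Murray–von Neumann comparison of $q$ with $\alpha_g(q)$ inside $A_m$, and exploits the hypothesis to diminish the overlap by passing to successively finer finite-dimensional approximations.

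Once $p$ is in hand, the proof is assembled as described: $p y p$ is a small perturbation of $pa_e p$ inside $J$, and a positive cut-off of $p a_e p$ is a non-zero element of $J \cap A$. The difficult step is thus the Rohlin-type construction, which requires careful bookkeeping inside the inductive limit and a precise use of the hypothesis that ${\alpha_g}_*$ fixes no non-zero order ideal; the rest of the argument — the reduction via the canonical conditional expectation, the $K_0$-to-ideals correspondence, and the final passage from $J \cap A = A$ to $J = B$ — is essentially formal given the tools developed earlier in the text.
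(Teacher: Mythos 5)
Your overall architecture matches the paper's: the reduction of simplicity to the dichotomy for $J\cap A$ via the equivariant ideal/order-ideal correspondence, the use of the canonical conditional expectation, and a Rohlin-type projection $p$ with $\|p\alpha_g(p)\|$ small that kills the off-diagonal terms. (Your endgame is the contrapositive of the paper's: you produce a non-zero element of $J\cap A$ via a functional-calculus cut-off, where the paper assumes $J\cap A=\{0\}$ and shows $P(J)=\{0\}$; either works, though your version silently uses the fact that if $b\in A^+$ is within $\eta$ of $J$ then $(b-\eta)_+\in J$, which needs a separate argument.) However, there are two genuine gaps in the hard step.

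First, the estimate you rely on, namely that condition (ii) alone forces $\|pyp-pa_ep\|$ to be small, is false as stated. One has $p\,a_gu_g\,p=p\,a_g\,\alpha_g(p)\,u_g$, and $\|p\,a_g\,\alpha_g(p)\|$ is \emph{not} controlled by $\|p\,\alpha_g(p)\|$ unless $p$ (approximately) commutes with $a_g$, so that $p\,a_g\,\alpha_g(p)\approx a_g\,p\,\alpha_g(p)$. A subprojection of a spectral projection of $a_e$ has no reason to commute with the other coefficients $a_g$. This is exactly why the paper requires the additional property $\|fb-bf\|\leq\epsilon$ for $b$ in a prescribed finite set (Lemma \ref{05-12-22a}), and why it must first prove Lemma \ref{05-12-22x}, producing the almost-orthogonal projection inside the relative commutant of a finite-dimensional subalgebra by averaging over the matrix units of a block; that averaging step is not routine and is missing from your sketch.

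Second, your description of how K-outerness is exploited --- tracking $[q]$ and $[q\wedge\alpha_g(q)]$ and ``diminishing the overlap by passing to successively finer finite-dimensional approximations'' --- is not a workable argument: $q\wedge\alpha_g(q)$ is not a well-defined projection in a $C^*$-algebra, and the norm overlap $\|q\alpha_g(q)\|$ is not a $K_0$-quantity, so there is nothing iterative to diminish. The mechanism in the paper (Lemma \ref{03-12-22x}) is different and essentially one-shot: K-outerness together with the Riesz decomposition property of $K_0(A)$ yields a non-zero subprojection $p_0\leq q$ with $\gamma_*([p_0])\neq[p_0]$, which forces $\|p_0-\gamma(p_0)\|=1$ exactly; approximating both projections in a finite-dimensional subalgebra and doing linear algebra in a matrix block then extracts a unit vector almost in the range of one and almost orthogonal to the other, hence a non-zero subprojection $q'\leq p_0$ with $\|q'\gamma(q')\|$ small, after which one iterates over the finitely many group elements. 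Without this (or an equivalent) argument the key projection is not constructed, so the proposal as written does not close.
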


For the proof we fix the AF-algebra $A$. We'll say that an automorphism $\gamma \in \Aut A$ is \emph{$K$-outer} when $\gamma_*$ is not the identity map on any non-zero order ideal of $K_0(A)$. Thus one of the assumptions in Theorem \ref{02-12-22} is that $\alpha_g$ is $K$-outer when $g \neq e$.

\begin{lemma}\label{21-12-22} Let $p$ be a projection in $A$ and $x\in K_0(A)^+$ such that $0 \leq x \leq [p]$ in $K_0(A)$. It follows that there is a projection $f \in A$ such that $0 \leq f \leq p$ and $[f] = x$.
\end{lemma}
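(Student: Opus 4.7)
The plan is to exploit that $A$ is an AF-algebra and reduce the statement to the elementary case of a finite-dimensional subalgebra, where it is immediate from rank considerations.

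First I would write $A = \overline{\bigcup_{n=1}^\infty A_n}$, where $A_1 \subseteq A_2 \subseteq \cdots$ is an increasing sequence of finite-dimensional $C^*$-subalgebras of $A$. Since projections in $A$ can be approximated by projections in $\bigcup_n A_n$, and small perturbations of projections are unitarily equivalent (and unitaries from $A^\dagger$ connect Murray--von Neumann equivalent subprojections), I may conjugate $p$ by a suitable unitary in $A^\dagger$ and, without loss, assume $p \in A_{n_0}$ for some $n_0$. The essential point is that the conclusion is invariant under such conjugation, because a subprojection of the conjugate can be conjugated back to give a subprojection of $p$ with the same $K_0$-class.

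Next, I would use that $K_0(A) = \varinjlim K_0(A_n)$ as ordered groups, with $K_0(A)^+ = \bigcup_n \iota_n(K_0(A_n)^+)$ where $\iota_n : K_0(A_n) \to K_0(A)$ are the canonical maps. Since $x \in K_0(A)^+$, there is some $m_1 \geq n_0$ and $y \in K_0(A_{m_1})^+$ with $\iota_{m_1}(y) = x$. Since $[p] - x \in K_0(A)^+$ as well, there is $m_2 \geq m_1$ and $z \in K_0(A_{m_2})^+$ with $\iota_{m_2}(z) = [p] - x$. Passing to $m = m_2$ and writing $y_m$ for the image of $y$ in $K_0(A_m)$, the relation $y_m + z = [p]$ in $K_0(A_m)$ shows that $0 \leq y_m \leq [p]$ inside $K_0(A_m)$, while $p \in A_{n_0} \subseteq A_m$ and $\iota_m(y_m) = x$.

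The finite-dimensional case is now routine: $A_m \cong \bigoplus_{j=1}^k M_{d_j}(\mathbb{C})$, so $K_0(A_m) \cong \mathbb{Z}^k$ with the coordinatewise order, $[p]$ is represented by the tuple of ranks $(r_1,\dots,r_k)$ of the components of $p$, and $y_m$ corresponds to a tuple $(s_1,\dots,s_k)$ with $0 \leq s_j \leq r_j$ for each $j$. In each summand $M_{d_j}(\mathbb{C})$ I simply pick a subprojection of the $j$-th component of $p$ of rank $s_j$; together these form a projection $f \in A_m$ with $f \leq p$ and $[f] = y_m$ in $K_0(A_m)$, hence $[f] = x$ in $K_0(A)$.

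The only delicate point is the initial reduction step where $p$ is perturbed into $A_{n_0}$; the main obstacle is being careful that the conjugating unitary lies in $A^\dagger$ (so that conjugation preserves the ideal $A$ and the order $f \leq p$), and that transporting subprojections back and forth does not change the $K_0$-class. Everything else is a standard direct-limit/rank argument.
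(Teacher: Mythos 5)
Your argument is correct in outline, but it takes a genuinely different route from the paper. The paper's proof is a three-line cancellation argument carried out directly in matrices over $A$: write $x=[q]$ and $[p]-x=[r]$ for projections $q,r$ in matrix algebras over $A$, invoke cancellation for AF-algebras (cited as Lemma 2.3.34 of \cite{Th5}) to produce a partial isometry $v$ with $v^*v=q\oplus r$ and $vv^*=p\oplus 0$, and set $f:=v(q\oplus 0)v^*$, which is automatically a subprojection of $p$ with $[f]=x$. Your proof instead exploits the local structure: perturb $p$ into a finite-dimensional subalgebra, use continuity of $K_0$ under inductive limits to pull $x$ and $[p]-x$ back to some $K_0(A_m)$, and finish by a rank count in $\bigoplus_j M_{d_j}(\mathbb C)$. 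What the paper's approach buys is brevity and the fact that it isolates exactly the property needed (cancellation); what yours buys is that it is self-contained modulo only the standard facts about AF-algebras and $\varinjlim K_0(A_n)$, with the existence of the subprojection made completely explicit in the finite-dimensional building block.

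One point in your write-up needs repair. After pulling $x$ back to $y\in K_0(A_{m_1})^+$ and $[p]-x$ back to $z\in K_0(A_{m_2})^+$, you assert that $y_m+z=[p]$ holds in $K_0(A_{m})$ for $m=m_2$. This does not follow: the canonical map $\iota_m:K_0(A_m)\to K_0(A)$ need not be injective, so all you know at stage $m_2$ is that $y_m+z$ and $[p]$ have the same image in $K_0(A)$. You must pass to a further stage $m'\geq m_2$ at which the two elements become equal in $K_0(A_{m'})$; this is guaranteed by the description of $K_0(A)$ as the inductive limit $\varinjlim K_0(A_n)$, and since the connecting maps are positive the inequality $0\leq y_{m'}\leq [p]$ survives. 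With that one-line correction the rank argument in $A_{m'}$ goes through and the proof is complete.
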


\begin{proof} By definitions there are projections $q,r$ in matrices over $A$, say $q \in M_n(A), \ r \in M_m(A)$, such that $x = [q]$ and $[q \oplus r] = [p]$ in $K_0(A)$. By (1) of Lemma 2.3.34 of \cite{Th5} there is a partial partial isometry $v \in M_{n+m}(A)$ such that $v^*v = q\oplus r$ and $vv^* = p \oplus 0_{n+m-1}$. Then $f := v(q \oplus 0_m)v^*$ is a projection in $A$ such that $[f] =x$ and $f \leq p$.  
\end{proof}

\begin{lemma}\label{03-12-22x} Let $F$ be a finite set of $K$-outer automorphisms of $A$. Let $p$ be a non-zero projection in $A$ and let $\epsilon >0$. There is a non-zero projection $f \in A$ such that $f \leq p$ and $\left\|f\gamma(f)\right\| \leq \epsilon$ for all $\gamma\in F$.
\end{lemma}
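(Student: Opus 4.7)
Plan: The proof proceeds in two stages: first reduce to the case $|F|=1$, then exploit the AF structure together with $K$-outerness.

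The reduction is straightforward and monotone. Suppose the single-automorphism case is known. Given $F=\{\gamma_1,\dots,\gamma_k\}$ and $p$, apply the single case to $(\gamma_1,p)$ to obtain a non-zero projection $f_1\leq p$ with $\|f_1\gamma_1(f_1)\|\leq\epsilon$. Apply it again to $(\gamma_2,f_1)$ to obtain $f_2\leq f_1$ with $\|f_2\gamma_2(f_2)\|\leq\epsilon$. Since $f_2\leq f_1$ gives $f_2=f_2f_1$ and $\gamma_1(f_2)=\gamma_1(f_1)\gamma_1(f_2)$, one has
$$f_2\gamma_1(f_2)=f_2f_1\gamma_1(f_1)\gamma_1(f_2),$$
so $\|f_2\gamma_1(f_2)\|\leq\|f_1\gamma_1(f_1)\|\leq\epsilon$. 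Iterating through $F$ yields the general case, so from now I would fix a single $K$-outer $\gamma$ and a non-zero projection $p$.

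For the single-automorphism step, the plan is to use the AF structure to move everything into a finite-dimensional subalgebra. Given $\epsilon'>0$ to be chosen, I would first find a finite-dimensional subalgebra $B\subseteq A$ and projections $p',q'\in B$ with $\|p-p'\|<\epsilon'$ and $\|\gamma(p)-q'\|<\epsilon'$; a standard small-unitary perturbation (cf.\ Lemma 2.2.7--2.2.8 in \cite{Th5}) lets me replace $p$ by $p'$ without loss of generality, so assume $p\in B$. By enlarging $B$ inside the AF tower I may further assume both $p$ and a close approximation $q$ of $\gamma(p)$ sit in the same finite-dimensional subalgebra $B''=\bigoplus_j M_{n_j}(\mathbb C)$, and that $p$ decomposes into a prescribed large number $N$ of mutually Murray--von Neumann equivalent minimal sub-projections $p=\sum_{i=1}^N e_i$ of $B''$.

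The decisive point is then to use $K$-outerness of $\gamma$ to locate some $e_i$ with $\|e_i\gamma(e_i)\|\leq\epsilon$. If every minimal sub-projection $e_i\leq p$ in the refined $B''$ satisfied $\|e_i\gamma(e_i)\|>\delta$ for all sufficiently fine refinements, then the ``matrix block pattern'' of $q\approx\gamma(p)$ would have to match that of $p$ block-by-block for every such $B''$; passing to the limit over the AF tower and using Lemma \ref{21-12-22} to convert equality of $K_0$ classes of sub-projections into actual Murray--von Neumann equivalences in $A$, this would force $\gamma_*$ to be the identity on the whole order ideal generated by $[p]$, contradicting $K$-outerness. Choosing $\epsilon'$ and $N$ small/large enough relative to $\epsilon$ turns this into the required honest estimate $\|f\gamma(f)\|\leq\epsilon$ after taking $f$ to be the selected $e_i$ (or a small corner thereof, after absorbing the $\epsilon'$-perturbations through $p'$).

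The main obstacle is precisely the last paragraph: translating the purely $K$-theoretic hypothesis (``$\gamma_*$ fixes no non-zero order ideal'') into a \emph{norm-level} near-orthogonality of an actual sub-projection and its $\gamma$-image. Quantifying how fine the refinement $B''$ must be to turn a $K_0$-mismatch into a $\|f\gamma(f)\|\leq\epsilon$ bound is the technically delicate step; everything else is either the reduction trick at the start or a standard AF approximation.
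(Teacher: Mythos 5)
Your reduction to a single automorphism is fine and is essentially the same iteration the paper uses (monotonicity of $f\mapsto\|f\gamma(f)\|$ under passing to subprojections). The problem is the core of the single-automorphism case. The step you flag as "technically delicate" is in fact a genuine gap: you claim that if every minimal subprojection $e_i\leq p$ in every sufficiently fine finite-dimensional approximant satisfies $\|e_i\gamma(e_i)\|>\delta$, then the "block pattern" of $\gamma(p)$ must match that of $p$ and hence $\gamma_*$ fixes the order ideal generated by $[p]$. No argument is given for this implication, and it is not clear it can be made to work: the overlap $\|e_i\gamma(e_i)\|$ carries no $K_0$ information (two projections can overlap substantially in norm while having different classes, and can be orthogonal while being equivalent), and $\gamma(p)$ has no reason to sit compatibly in the refining tower, so the proposed "passage to the limit over the AF tower" is not a well-defined procedure. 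As written, the contrapositive you want — failure of near-orthogonality at every scale forces $\gamma_*=\mathrm{id}$ on an order ideal — is precisely the content of the lemma and is not established.

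The missing idea, which the paper supplies, is a direct bridge from $K$-theory to a norm estimate that runs in the opposite direction. First, using the Riesz decomposition property of $K_0(A)$ together with Lemma \ref{21-12-22}, one finds a subprojection $p_0\leq p$ with $\gamma_*([p_0])\neq[p_0]$. The key observation is then that this forces $\|\gamma(p_0)-p_0\|=1$ \emph{exactly}, because two projections at distance strictly less than $1$ are Murray--von Neumann equivalent and hence have the same $K_0$ class. With this exact norm equality in hand one approximates $p_0$ and $\gamma(p_0)$ by projections $f,h$ in a finite-dimensional subalgebra, so that $\|f-h\|\geq 1-2\delta$ in some matrix block; a unit vector nearly realizing this norm is then nearly in the range of one projection and nearly orthogonal to the range of the other, and the rank-one projection onto (a correction of) that vector gives the desired non-zero $q\leq p_0$ with $\|q\gamma(q)\|$ small. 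Without some substitute for the step $\gamma_*([p_0])\neq[p_0]\Rightarrow\|\gamma(p_0)-p_0\|=1$, your outline does not close.
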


\begin{proof} Let $\gamma \in F$. We first prove that ${\gamma}_*([p_0]) \neq [p_0]$ for some projection $p_0 \in pAp$. To this end set $I^+ = \left\{ x\in K_0(A)^+ : \ x \leq n[p] \ \text{for some} \ n \in \mathbb N\right\}$. Then $I := I^+-I^+$ is a non-zero order ideal in $K_0(A)$ and thanks to the assumptions it follows that there is an element $x \in I^+$ such that ${\gamma}_*(x) \neq x$. Since $(K_0(A),K_0(A)^+)$ has the Riesz decomposition property by Proposition 2.3.41 of \cite{Th5} and since $x \leq n [p]$ for some $n \in \mathbb N$ it follows that $x = x_1 + x_2 + \cdots + x_n$ where $0 \leq x_i \leq [p]$ for each $i$. By Lemma \ref{21-12-22} there are then projections $p_1,p_2,\cdots, p_n$ in $A$ such that $p_i \leq p$ and $x_i = [p_i]$ for all $i$. For at least for one $i$ we have that ${\gamma}_*([p_i]) \neq [p_i]$. By taking $p_0:= p_i$ we have established the claim .

Since $-p_0 \leq \gamma(p_0)-p_0 \leq \gamma(p_0)$ we have that $\left\|\gamma(p_0) - p_0\right\| \leq 1$. Since ${\gamma}_*([p_0]) \neq [p_0]$ this implies that $\left\|\gamma(p_0) - p_0\right\| = 1$. Indeed, if not it follows from Exercise 2.2.19 in \cite{Th5} (or Proposition 4.6.6 in \cite{B}) that $\left[\gamma(p_0)\right] = [p_0]$ in $K_0(A)$; a contradiction. Thus  $\left\|\gamma(p_0) - p_0\right\| = 1$ as asserted. Let $\delta > 0$. Since $A$ is AF there is a finite dimensional $C^*$-subalgebra $B \subseteq A$ and projections $f,h  \in B$ such that $\| f - p_0 \| \leq \delta$ and $\|h- \gamma(p_0)\| \leq \delta$. This follows for example by use of Lemma 2.2.7 of \cite{Th5}. Then $\|f-h\| \geq 1 -  2\delta$. Since $B$ is finite dimensional it is a direct sum of matrix algebras and it follows that there is a central projection $z \in B$ such that $zB \simeq M_k(\mathbb C)$ for some $k \in \mathbb N$ and $\left\|zf-zh\right\| = \|f-h\| \geq 1 -  2\delta$. We identify $zB = M_k(\mathbb C)$ and let $zB$ act on the Hilbert space $\mathbb C^k$ in the standard way. Then
$$
-1 \leq -\left<  zh\psi,\psi\right> \leq \left<(zf - zh)\psi,\psi\right> \leq \left< zf\psi,\psi\right> \leq 1
$$
when $\psi \in \mathbb C^k$ and $\|\psi\| = 1$. Since $\left\|zf-zh\right\| \geq 1 -  2\delta$ there is a $\psi \in \mathbb C^k$ such that $\|\psi\| = 1$ and either $1-3 \delta \leq  \left<(zf - zh)\psi,\psi\right> \leq 1$ or $-1 \leq \left<(zf - zh)\psi,\psi\right>  \leq -1 + 3 \delta$.  In the first case
\begin{itemize}
\item[$\bullet$] $1-3\delta \leq \left<  zf\psi,\psi\right> \leq 1 $, and
\item[$\bullet$] $\left<zh\psi,\psi\right> \leq 3 \delta$
\end{itemize}
and in the second case
\begin{itemize}
\item[$\bullet$] $-1 \leq -\left<  zh\psi,\psi\right> \leq -1 + 3 \delta$, and
\item[$\bullet$] $\left<zf\psi,\psi\right> \leq 3 \delta$.
\end{itemize}
In the first case we consider the one dimensional projections $f_1$ and $f_1'$ onto $\mathbb C zf\psi$ and $\mathbb C \psi$, respectively. Note that $f_1 \leq zf$. Furthermore, since
$$
\left\|zf\psi - \psi\right\|^2 \leq 2 - 2\left<zf\psi,\psi\right> \leq 6 \delta
$$
we find that
\begin{align*}
&\left\| f_1 - f_1'\right\| \leq 2 \left\| \frac{zf\psi}{\|zf\psi\|} - \psi \right\| \\
& \leq 2 \left(  \left\| \frac{zf\psi}{\|zf\psi\|} - zf \psi \right\| + \left\|zf\psi - \psi\right\|\right) \\
& \leq 2\left( \left| \frac{1}{\|zf\psi\|} -1 \right| + \sqrt{6\delta}\right) \leq 2 \left( \frac{3\delta}{1-3\delta} + \sqrt{6\delta}\right) := \delta' .
\end{align*}
 For any $\phi \in \mathbb C^k$, $\|\phi\| \leq 1$, we have then that
$$
\left< f_1zhf_1\phi,\phi\right> \leq  \left<f'_1zhf'_1\phi,\phi\right> + 2 \delta' \leq \left<zh\psi,\psi\right> + 2 \delta ' \leq 3 \delta + 2 \delta' ,
$$
proving that $\left\|f_1h\right\| = \left\|f_1zh\right\| \leq 3 \delta + 2 \delta'$. And hence 
$$
\left\|f_1 \gamma(p_0) \right\| \leq  4 \delta + 2 \delta'.
$$ 
Since $f_1 \leq f$ and $\left\|f -p_0\right\| \leq \delta$ it follows that 
$$
\left\|(p_0f_1p_0)^2 - p_0f_1p_0\right\| \leq \|f_1p_0f_1 - f_1\| \leq \delta.
$$ 
Provided $\delta < \frac{1}{4}$ it follows from Lemma 2.2.7 in \cite{Th5} that there is a projection $q \leq p_0$ such that $\|q-p_0f_1p_0\| \leq  2\delta$ and hence $\|q-f_1\| \leq 4\delta$ . In particular, if we choose $\delta$ smaller than $1/4$, $q$ will be non-zero since $f_1 \neq 0$. Furthermore, $\left\|q\gamma(p_0)\right\| \leq 8\delta + 2 \delta'$ and hence
 $$
\left\|q\gamma(q)\right\| = \sqrt{\left\| q \gamma(q)q\right\|} \leq  \sqrt{\left\| q \gamma(p_0)q\right\|} = \left\|q\gamma(p_0)\right\| \leq  8\delta + 2 \delta' .
$$

In the second case we consider the one dimensional projections $h_1$ and $h_1'$ onto $\mathbb C zh\psi$ and $\mathbb C \psi$, respectively. Arguments identical to the preceding show that $h_1 \leq zh$ and $\left\|h_1-h_1'\right\| \leq \delta'$ while $\left\|h_1f\right\| \leq 3 \delta + 2 \delta'$. Using that $\left\|h -\gamma(p_0)\right\|\leq \delta$ and $h_1 \leq h$ we get also a projection $r \leq \gamma(p_0)$ such that $\left\| r -h_1\right\| \leq 4\delta$. In this case we set $q:= \gamma^{-1}(r)$. 
Then $q \leq p_0$ and
\begin{align*}
&\|rp_0\| \leq \|rf\| + \delta \leq \|h_1 f\| + 5\delta \leq 8\delta + 2\delta'.
\end{align*}
Hence $\left\|q\gamma(q)\right\| = \sqrt{\|\gamma(q)q\gamma(q)\|} \leq \sqrt{\|\gamma(q)p_0\gamma(q)\|} = \left\|rp_0\right\|\leq 8\delta + 2\delta'$, as in the first case.

Now take another element $\gamma' \in F$. From what we have just shown, with $p$ replaced by $q$, we get a non-zero projection $q' \leq q$ such that $\left\|q'\gamma'(q')\right\| \leq  8\delta + 2\delta '$. Since
\begin{align*}
&\left\|q'\gamma(q')\right\| = \sqrt{\left\|q'\gamma(q')q'\right\|} \leq \sqrt{\left\|q'\gamma(q)q'\right\|} = \left\|q'\gamma(q)\right\| \\
&= \sqrt{\left\|\gamma(q)q'\gamma(q)\right\|} \leq \sqrt{\left\|\gamma(q)q\gamma(q)\right\|} = \left\|q\gamma(q)\right\|  \leq 8\delta + 2\delta '  ,
\end{align*}
we can continue this reasoning until the finite set $F$ is exhausted. This completes the proof provided only that $\delta >0$ is chosen such that $8\delta + 2\delta ' \leq \epsilon$.

\end{proof}

\begin{lemma}\label{05-12-22x} Let $F$ be a finite set of $K$-outer automorphisms of $A$. Let $B$ be a finite dimensional $C^*$-subalgebra of $A$ and $e \in B$ a non-zero central projection in $B$. Set $B' := \left\{a \in A : \ ab =b a \ \forall b \in B\right\}$ and let $\epsilon >0$. There is a non-zero projection $f \in eB'$
such that $\left\|f \gamma(f)\right\| \leq \epsilon$ for all $\gamma$.
\end{lemma}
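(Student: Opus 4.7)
The plan is to reduce the statement to Lemma~\ref{03-12-22x} by first cutting down to the case in which $eB$ is a single matrix algebra and then exploiting the matrix-unit structure to build the desired projection from a projection in a corner of $A$.

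First I would replace $e$ by a smaller central projection of $B$ as follows. Since $B$ is finite dimensional, its centre decomposes as a direct sum of minimal central projections and $e$ is a sum of some of them; pick any minimal central projection $z$ of $B$ with $z \leq e$. Then $zB' \subseteq eB'$, because any $a \in zB'$ lies in $B'$ (as $z \in B$) and satisfies $a = za = eza \in eA$. Hence a non-zero projection $f \in zB'$ with the required property automatically lies in $eB'$, and we may assume from the outset that $eB \cong M_n(\mathbb C)$.

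Fix matrix units $\{e_{ij}\}_{i,j=1}^n$ in $eB$ with $\sum_{i=1}^n e_{ii} = e$, and define self-adjoint unitaries in $A^\dagger$ by $u_1 := 1$ and, for $2 \leq j \leq n$,
$$
u_j := e_{j1} + e_{1j} + (1 - e_{11} - e_{jj}).
$$
A direct computation gives $u_j e_{11} = e_{j1}$, $e_{11} u_j = e_{1j}$, and $u_j e_{11} u_j = e_{jj}$. Each $u_j$ lies in $\mathbb C 1 + A$, so $\Ad(u_j)$ is an inner automorphism of $A$ and acts trivially on $K_0(A)$. Consequently, for every $\gamma \in F$ and every $j,l \in \{1,\dots,n\}$, the composition $\tilde\gamma_{j,l} := \Ad(u_j) \circ \gamma \circ \Ad(u_l)$ satisfies $(\tilde\gamma_{j,l})_* = \gamma_*$ and is therefore $K$-outer. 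The collection $\tilde F := \{\tilde\gamma_{j,l} : \gamma \in F,\ 1 \leq j,l \leq n\}$ is a finite set of $K$-outer automorphisms of $A$.

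Apply Lemma~\ref{03-12-22x} to the non-zero projection $e_{11}$, the family $\tilde F$ and the tolerance $\epsilon/n^2$, obtaining a non-zero projection $f_0 \leq e_{11}$ with $\|f_0\, \tilde\gamma(f_0)\| \leq \epsilon/n^2$ for every $\tilde\gamma \in \tilde F$. Since $f_0 \leq e_{11}$, one has $u_l f_0 u_l = e_{l1} f_0 e_{1l}$ and $f_0 u_j = f_0 e_{1j}$, so this bound rewrites as
$$
\bigl\| f_0\, e_{1j}\, \gamma(e_{l1}\, f_0\, e_{1l}) \bigr\| \;\leq\; \frac{\epsilon}{n^2}
$$
for all $\gamma \in F$ and $j,l \in \{1,\dots,n\}$.

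Finally, set $f := \sum_{j=1}^n e_{j1}\, f_0\, e_{1j}$. The summands are mutually orthogonal projections each equivalent to $f_0$, so $f$ is a non-zero projection with $f \leq e$. A direct computation gives $f e_{rs} = e_{rs} f = e_{r1} f_0 e_{1s}$, so $f$ commutes with the matrix units $e_{rs}$, and combined with $f \leq e$ this yields $f \in eB'$. Expanding $f\gamma(f)$ and using the bound above term by term,
$$
\|f\gamma(f)\| \;\leq\; \sum_{j,l=1}^n \bigl\| e_{j1}\, f_0\, e_{1j}\, \gamma(e_{l1}\, f_0\, e_{1l}) \bigr\| \;\leq\; n^2 \cdot \frac{\epsilon}{n^2} \;=\; \epsilon
$$
for every $\gamma \in F$, as required.

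The main obstacle is that $n^2$ different off-diagonal cross-terms appear in $f\gamma(f)$ and they must all be small simultaneously; this is exactly what the passage from $F$ to the enlarged family $\tilde F$ is designed to achieve, the crucial observation being that the inner perturbations $\Ad(u_j)$ and $\Ad(u_l)$ preserve $K$-outerness, so a single application of the preceding lemma handles every cross-term at once.
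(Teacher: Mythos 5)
Your proof is correct, and it takes a genuinely different route from the paper's. The paper also reduces to a minimal central projection so that $eB$ is a full matrix algebra, but it then works with a cyclic shift unitary $w = u + (1-e)$ satisfying $w^k = 1$, enlarges $F$ only by one-sided compositions $\Ad w^i \circ \gamma$, and runs an iterative back-and-forth construction: it produces projections $f_i \leq e_i$ one at a time, repeatedly pulling each new projection back under powers of $w$ and shrinking all the previously constructed ones so that in the end $uf_iu^* = f_{i+1}$ and every cross term $\|f_i\gamma(f_j)\|$ is controlled. Your two-sided composition $\Ad(u_j)\circ\gamma\circ\Ad(u_l)$ — using the transposition-type self-adjoint unitaries $u_j$ rather than the cyclic shift — is exactly what lets you collapse that whole iteration into a single application of Lemma \ref{03-12-22x}: one projection $f_0 \leq e_{11}$ already controls all $n^2$ cross terms simultaneously, and $f := \sum_j e_{j1}f_0e_{1j}$ visibly commutes with the matrix units. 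The key observation in both arguments is the same (inner automorphisms act trivially on $K_0$, so the enlarged family is still $K$-outer), but your version buys a shorter and more transparent proof at no cost in generality; the paper's version buys the extra structural conclusion $uf_iu^* = f_{i+1}$ along the way, which your construction also yields implicitly but which is not needed for the statement. All the individual steps of your argument check out: $f_0u_j = f_0e_{1j}$ and $u_lf_0u_l = e_{l1}f_0e_{1l}$ because $f_0 \leq e_{11}$, right multiplication by the unitary $u_j$ preserves norms, and the $n^2$ summands of $f\gamma(f)$ are each bounded by $\epsilon/n^2$.
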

\begin{proof} Let $e_0$ be a minimal non-zero central projection  in $B$ such that $e_0 \leq e$. Then $e_0B' \subseteq eB'$ and it suffices therefore to find $f$ when $ e=e_0$, i.e. we may assume that $e$ is a minimal central projection in $B$. This means that $eB$ is a full matrix algebra, say $eB \simeq M_k(\mathbb C)$. Then $eB$ is generated by $k$ mutually orthogonal projections $e_1,e_2, \cdots, e_k$, and a unitary $u$ with the properties that $ue_iu^* = e_{i+1}$ for $i \leq k-1$, $ue_ku^* = e_1$ and $u^k = e$. Set $w := u + (1-e)$, which is a unitary in the algebra $A^\dagger$ obtained from $A$ by adjoining a unit. Then $w^k =1$, and we set
$$
F_1 := \left\{ \Ad w^i \circ \gamma: \ \gamma \in F, \ i = 1,2, \cdots, k \right\}. 
$$
This is a finite set of $K$-outer automorphisms of $A$. Set $\delta := k^{-2}\epsilon$. It follows from Lemma \ref{03-12-22x} that there is non-zero projection $f_1 \in A$ such that $f_1 \leq e_1$ and $\left\|f_1 \gamma(f_1)\right\| \leq \delta$ for all $\gamma \in F_1$. There is also a non-zero projection $f_2 \leq e_2$ such that $\left\|f_2 \gamma(f_2)\right\| \leq \delta$ for all $\gamma \in F_1$ and a non-zero projection $f_1' \leq u^*f_2u$ such that $\left\|f_1' \gamma(f_1')\right\| \leq \delta$ for all $\gamma \in F_1$. Exchange $f_1$ with $f_1'$ and $f_2$ with $uf_1'u^*$. Then $0 \neq f_i \leq e_i$, $\left\|f_i\gamma(f_i)\right\| \leq \delta, \ i = 1,2$, for all $\gamma \in F_1$, and $uf_1u^* = f_2$. By Lemma \ref{03-12-22x} there is a non-zero projection $f_3' \leq uf_2u^*$ such that $\left\|f_3'\gamma(f_3')\right\| \leq \delta$ for all $\gamma \in F_1$, and there is a non-zero projection $f_1'' \leq {u^*}^2f_3'u^2$ such that $\left\|f_1''\gamma(f_1'')\right\| \leq \delta$ for all $\gamma \in F_1$ . Exchange $f_1$ with $f_1''$, $f_2$ with $uf_1''u^*$ and set $f_3 = u^2f_1''{u^2}^*$. Then $0 \neq f_i \leq e_i$, $\left\|f_i\gamma(f_i)\right\| \leq \delta, \ i = 1,2,3$, for all $\gamma \in F_1$, and $uf_iu^* = f_{i+1}, \ i = 1,2$. This process can be continued and successfully completed because $u^k =e$ to get projections $f_i, i = 1,2,\cdots, k$, such that $0 \neq f_i \leq e_i$ and $\left\|f_i\gamma(f_i)\right\| \leq \delta, \ i = 1,2,3, \cdots k$, for all $\gamma \in F_1$, and $uf_iu^* = f_{i+1}, \ i = 1,2, \cdots, k-1$, while $uf_ku^* = f_1$. Let $i,j \in \left\{1,2,3,\cdots, k\right\}$ and $\gamma \in F$. There are then $x,y \in \{1,2,\cdots, k\}$ such that $w^xf_j{w^x}^* = f_i$, $w^y = {w^x}^*$, and then
$$
\left\|f_i\gamma(f_j)\right\| = \left\|w^xf_j{w^x}^*\gamma(f_j)\right\| = \left\|f_j{w^x}^*\gamma(f_j)w^x\right\| = \left\|f_jw^y\gamma(f_j){w^y}^*\right\| \leq \delta
$$
since $\Ad w^y \circ \gamma \in F_1$. Set $f=\sum_{i=1}^k f_i$. Then $0 \neq f \leq e$ and 
$$
\left\|f\gamma(f)\right\| \leq \sum_{i,j=1}^k\left\|f_i\gamma(f_j)\right\| \leq k^2 \delta \leq \epsilon.
$$
By construction $f \leq e$ and $f$ commutes with each $e_i$ and with $u$; thus $f \in eB'$.
\end{proof}

\begin{lemma}\label{05-12-22a} Fix $a \in A$ and let $S$ be a finite subset of $A$ and $F$ be a finite set of $K$-outer automorphisms of $A$. For each $\epsilon > 0$ there is a projection $f \in A$ such that
\begin{itemize}
\item[$\bullet$] $\left\|faf\right\| \geq \|a\| - \epsilon$,
\item[$\bullet$] $\left\|fb-bf\right\| \leq \epsilon$ for all $b \in S$, and
\item[$\bullet$] $\left\|f\gamma(f)\right\| \leq \epsilon$ for all $\gamma \in F$.
\end{itemize}
\end{lemma}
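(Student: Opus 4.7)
The plan is to reduce to the setting of Lemma \ref{05-12-22x} by first approximating $a$ and the elements of $S$ inside a finite-dimensional $C^*$-subalgebra $B \subseteq A$, which is available because $A$ is AF. Fix a small parameter $\delta > 0$ to be specified at the end in terms of $\epsilon$. First I would choose $B$ large enough that there are $a' \in B$ and $b' \in B$, one for each $b \in S$, with $\|a-a'\| \leq \delta$ and $\|b - b'\| \leq \delta$. Decomposing $B = \bigoplus_k e_k B$ into its simple summands, I would then select a minimal central projection $e \in B$ realizing the maximum $\|ea'\| = \max_k \|e_k a'\| = \|a'\|$.

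With $B$ and $e$ in hand, I would invoke Lemma \ref{05-12-22x} (using the hypothesis that every $\gamma \in F$ is $K$-outer) to produce a non-zero projection $f \in eB'$ such that $\|f\gamma(f)\| \leq \epsilon$ for all $\gamma \in F$. This immediately delivers the third required property. The approximate commutation condition is then straightforward: since $f \in B'$ commutes exactly with each $b'$, one obtains $\|fb - bf\| \leq \|fb - fb'\| + \|b'f - bf\| \leq 2\delta$ for each $b \in S$.

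The main obstacle will be establishing the first condition $\|faf\| \geq \|a\| - \epsilon$ from the abstract projection produced by Lemma \ref{05-12-22x}. The strategy is to exploit the structure of $f$ and $ea'$ inside the corner $eAe$. Since $f \leq e$ and $f \in B'$, one first reduces $faf = f(ea')f$, and since $ea' \in eB$ commutes with $f \in (eB)' \cap eAe$, this simplifies further to $fa'f = (ea')f$. Fixing a system of matrix units in $eB \cong M_n(\mathbb C)$ identifies $eAe$ with $M_n(p_0 A p_0)$ for a minimal projection $p_0 \leq e$ in $eB$; under this identification $eB$ corresponds to the scalar matrices $M_n(\mathbb C) \otimes 1$ while $(eB)' \cap eAe$ corresponds to $1 \otimes p_0 A p_0$. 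Writing $ea' \leftrightarrow (\alpha_{ij}) \otimes 1$ and $f \leftrightarrow 1 \otimes f_0$ for a non-zero projection $f_0 \in p_0 A p_0$, the product becomes the elementary tensor $(\alpha_{ij}) \otimes f_0$ in $M_n(\mathbb C) \otimes p_0 A p_0$, whose norm equals $\|(\alpha_{ij})\| \cdot \|f_0\| = \|ea'\|$ by the standard tensor-norm identity for $M_n(\mathbb C)$. Hence $\|fa'f\| = \|ea'\| = \|a'\|$, and the inequality $\|faf\| \geq \|a\| - 2\delta$ follows from $\|faf - fa'f\| \leq \delta$. Taking $\delta = \epsilon/2$ from the outset makes all three estimates hold simultaneously.
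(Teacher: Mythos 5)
Your proposal is correct and follows essentially the same route as the paper's proof: approximate $a$ and $S$ in a finite-dimensional subalgebra $B$, pick a minimal central projection $e$ of $B$ with $\|ea'\| = \|a'\|$, apply Lemma \ref{05-12-22x} to get $f \in eB'$, and use that multiplication by $f$ is isometric on the full matrix algebra $eB$. Your explicit tensor-product computation of $\|(ea')f\|$ is just an unpacked version of the paper's observation that $eB \ni d \mapsto df$ is an injective $*$-homomorphism of a simple $C^*$-algebra and hence an isometry (and the line ``$faf = f(ea')f$'' should read ``$fa'f = f(ea')f$'', a typo you correct implicitly when you account for the $\delta$ error at the end).
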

\begin{proof} Set $\delta := \frac{\epsilon}{2}$. Since $A$ is AF there is a finite dimensional $C^*$-subalgebra $B \subseteq A$, an element $b \in B$ and a subset $T \subseteq B$ such that $\left\|a-b\right\| \leq \delta$ and $\dist(s,T) \leq \delta$ for all $s \in S$. There is a minimal non-zero central projection $e$ in $B$ such that $\|eb\| = \|b\|$. By Lemma \ref{05-12-22x} there is a non-zero projection $f \in eB'$ such that $\left\|f\gamma(f)\right\| \leq \delta$ for all $\gamma \in F$. Since $eB$ is a full matrix algebra commuting with $f$, the map $eB \ni d \mapsto df$ is an injective $*$-homomorphism and hence an isometry. Therefore $\left\|fbf\right\| = \left\|febf\right\| = \left\|ebf\right\| = \left\|eb\right\| = \left\|b\right\|$. It follows that $\|faf\| \geq \left\|fbf\right\| - \delta = \left\|b\right\| - \delta \geq \|a\| -2 \delta$. Similarly, since $f \in B'$ commutes with the elements of $T$ we have that $\left\|fs-sf\right\| \leq 2 \delta$ for all $s \in S$. 
\end{proof}

We are now ready for the \emph{proof of Theorem \ref{02-12-22}:} Let $J$ be an ideal in $A \rtimes_{\alpha,r} G$. Then $J \cap A$ is an $\alpha$-invariant ideal in $A$ and by Theorem 4.1.8 of \cite{Th5} it corresponds to an order ideal $I$ in $K_0(A)$ which is left globally invariant by ${\alpha_g}_*$ for all $g \in G$. By assumption this ideal $I$ is either $K_0(A)$ or $\{0\}$. If $I = K_0(A)$ it follows from Theorem 4.1.8 of \cite{Th5} that $A \cap J = A$. Since $A$ contains an approximate unit for $A \rtimes_{\alpha,r} G$ by Lemma \ref{08-09-23}, this implies that $J= A \rtimes_{\alpha,r} G$. The proof will be completed by showing that the second possibility, $I = \{0\}$ implies that $J = \{0\}$. So assume that $I = \{0\}$. By Theorem 4.1.8 of \cite{Th5} this means that $J \cap A = \{0\}$. Let $j \in J$ and let $\delta > 0$. By definition of $A \rtimes_{\alpha,r} G$ there is a finitely supported function $F : G  \to A$ such that 
\begin{equation}\label{16-11-23}
\left\|j - F(e)  - \sum_{g \neq e} F(g)u_g\right\| \leq \delta.
\end{equation} 
Let $P : A \rtimes_{\alpha,r} G \to A$ be the canonical conditional expectation, cf. Section \ref{canonical}. Since $P$ is of norm $1$ and annihilates $F(g)u_g$ when $g \neq e$, we have that $\left\|P(j) - F(e)\right\| \leq \epsilon$. Choose $\epsilon > 0$ such that 
$$
 \epsilon \sum_{g \neq e} \left\|F(g)\right\|  + (n+1)\epsilon \leq \delta,
 $$  
 where $n$ is the number of non-zero elements of $F(G)$. By assumption $\alpha_g$ is K-outer when $g\neq e$, and it follows therefore from Lemma \ref{05-12-22a} that there is a projection $f \in A$ such that $\left\|fF(e)f\right\| \geq \left\|F(e)\right\| - \epsilon$, $\left\| fb-bf\right\| \leq \epsilon$ for all $b \in F(G)$ and $\left\|f \alpha_g(f)\right\| \leq \epsilon$ when $g \neq e$. Let $\pi : A \rtimes_{\alpha,r} G \to (A \rtimes_{\alpha,r} G)/J$ be the quotient map and note that
$$
\left\|\pi\left( \sum_{g \in G} F(g) u_g\right)\right\| \leq \delta ,
$$
thanks to \eqref{16-11-23}.
Since $\pi$ is injective and hence isometric on $A$ because $A \cap J = \{0\}$, it follows that $\left\|\pi(fF(e)f)\right\| = \left\|fF(e)f\right\|$. Hence
\begin{align*}
&\left\|F(e)\right\| \leq \left\|fF(e)f\right\| + \epsilon =  \left\|\pi(fF(e)f)\right\| + \epsilon  \\
& \leq \left\|\pi(f(F(e) + \sum_{g \neq e} F(g)u_g)f)\right\| +\left\|\pi(f(\sum_{g \neq e} F(g)u_g)f)\right\| + \epsilon  \\
& \leq \|\pi(fjf)\| +\left\|\pi(f(\sum_{g \neq e} F(g)u_g)f)\right\| + \delta + \epsilon\\
& \leq  \left\|\sum_{g \neq e} fF(g)u_gf\right\|  + \delta + \epsilon \\
& \leq \left\|\sum_{g \neq e} F(g)fu_gf\right\|  + \delta + \epsilon + n\epsilon \\
& \leq \sum_{g \neq e} \left\|F(g)\right\| \left\|f\alpha_g(f)u_g\right\|  + \delta + (n+1)\epsilon \\
& \leq \epsilon \sum_{g \neq e} \left\|F(g)\right\|  + \delta +(n+1)\epsilon \leq 2\delta.
\end{align*}
Since $\left\|P(j)\right\| \leq  \left\|F(e)\right\| + \delta$ we conclude first that $\left\|P(j)\right\| \leq 3 \delta$, and then since $\delta > 0$ was arbitrary, that $P(j) =0$. Thus $P(J) = \{0\}$ and as $P$ is faithful (by Lemma \ref{31-08-23}) it follows that $J= \{0\}$. \ $\qed$

\begin{notes}\label{06-12-22x} \textnormal{This section is based almost entirely on Elliotts paper \cite{E2}, but the statement of the main result Theorem \ref{02-12-22} differs from that in \cite{E2}. The reason is that we here want a version where the assumptions in the theorem only refer to K-theory data since this fits better with the way the theorem is used in the proof of Theorem \ref{12-11-22}.} 
\end{notes}

\end{appendices}




\end{document}